\newcommand{\R}{\mathbb{R}}
\newcommand{\C}{\mathbb{C}}
\newcommand{\Z}{\mathbb{Z}}
\newcommand{\N}{\mathbb{N}}
\newcommand{\ch}{ \mathrm{ch}}
\newcommand{\rd}{\mathrm{d}}
\newcommand{\Dom}{\mathrm{Dom}\,}
\newcommand{\ind}{\mathrm{i} \mathrm{n} \mathrm{d} \,}
\newcommand{\coker}{\mathrm{c} \mathrm{o} \mathrm{k} \mathrm{e}
\mathrm{r}\,}
\newcommand{\Aut}{\mathrm{Aut}}
\newcommand{\Hom}{\mathrm{Hom}}
\newcommand{\End}{\mathrm{End}}
\renewcommand{\epsilon}{\varepsilon}
\newcommand{\im}{\mathrm{i} \mathrm{m} \,}
\newcommand{\e}{\mathrm{e}}
\DeclareFontFamily{OT1}{pzc}{}
\DeclareFontShape{OT1}{pzc}{m}{it}{<-> s * [1.1] pzcmi7t}{}
\DeclareMathAlphabet{\mathpzc}{OT1}{pzc}{m}{it}
\let\uml\"
\NewDocumentCommand{\xleftrightarrows}{ O{}O{} }{%
\mathrel{%
\vcenter{\hbox{%
\begin{tikzpicture}
  \node[minimum width=0.5cm,minimum height=1ex,anchor=south,align=center] (a){\text{\vphantom{hg}#1}\\[0.5ex] \vphantom{hg}#2};
  \draw[->] ([yshift=-0.35ex]a.east) -- ([yshift=-0.35ex]a.west);
  \draw[<-] ([yshift=0.35ex]a.east) -- ([yshift=0.35ex]a.west);
\end{tikzpicture}
}}%
}%
}
\DeclareRobustCommand{\ghani}{%
 \text{\mxedr .g}
}
\newtheorem{theorem}{Theorem}[section]
\newtheorem{proposition}[theorem]{Proposition}
\newtheorem{conjecture}[theorem]{Conjecture}
\newtheorem{corollary}[theorem]{Corollary}
\newtheorem{lemma}[theorem]{Lemma}
\newtheorem{thm*}{Theorem}
\newtheorem{cor*}[thm*]{Corollary}
\theoremstyle{definition}
\newtheorem{example}[theorem]{Example}
\newtheorem{remark}[theorem]{Remark}
\newtheorem{definition}[theorem]{Definition}
\date{\today}
\begin{document}

\author{
  Magnus Goffeng,
  Alexey Kuzmin}
\title{Index theory of hypoelliptic operators on Carnot manifolds}

\address{
Magnus Goffeng,\newline
\indent Centre for Mathematical Sciences\newline 
\indent Lund University\newline 
\indent Box 118, SE-221 00 Lund\newline 
\indent Sweden\newline
\newline
\indent Alexey Kuzmin,\newline
\indent Department of Mathematical Sciences\newline 
\indent Chalmers University of Technology and \newline 
\indent University of Gothenburg\newline 
\indent SE-412 96 Gothenburg\newline 
\indent Sweden\newline}
\keywords{ \\
MSC2020 subject classification: 19K56 (primary), 19K33, 22E25, 53A40, 58B34, 58J40, 58J42 (secondary)\\
Keywords: hypoelliptic operators, Carnot manifolds, $KK$-theory, geometric $K$-homology, representations of nilpotent Lie groups, Lie groupoids}
\email{magnus.goffeng@math.lth.se, vagnard.k@gmail.com}

\maketitle

\begin{abstract}
We study the index theory of hypoelliptic operators on Carnot manifolds -- manifolds whose Lie algebra of vector fields is equipped with a filtration induced from sub-bundles of the tangent bundle. A Carnot pseudodifferential operator, elliptic in the calculus of Melin and van Erp-Yuncken, is hypoelliptic and Fredholm. Under some geometric conditions, we compute its Fredholm index by means of operator $K$-theory. These results extend the work of Baum-van Erp for co-oriented contact manifolds to a methodology for solving this index problem geometrically on Carnot manifolds.

Under the assumption that the Carnot manifold is regular, i.e. has isomorphic osculating Lie algebras in all fibres, and its generic coadjoint orbits are flat, the methodology derived from Baum-van Erp's work is developed in full detail. In this case, we develop $K$-theoretical dualities computing the Fredholm index by means of geometric $K$-homology a la Baum-Douglas. The duality involves a Hilbert space bundle of flat orbit representations. Explicit solutions to the index problem for Toeplitz operators and operators of the form ``$\Delta_H+\gamma T$" are computed in geometric $K$-homology, extending results of Boutet de Monvel and Baum-van Erp, respectively, from co-oriented contact manifolds to regular polycontact manifolds. 

The existence and the precise form of the geometric duality constructed for the Carnot calculus relies on the representation theory in the flat coadjoint orbits of the osculating Lie groupoid. We address the technical issue of constructing a Hilbert space bundle of representations associated to the flat coadjoint orbits via Kirillov's orbit method. The construction intertwines the index theory of Carnot operators to characteristic classes constructed from the Carnot structure further clarifying the two opposite spin$^c$-structures appearing in Baum-van Erp's solution to the index problem on contact manifolds. 
\end{abstract}
    
\tableofcontents

\part{Introduction}

This monograph is a study in the index theory of hypoelliptic operators on Carnot manifolds. We consider operators elliptic in the Carnot calculus of Melin \cite{melinoldpreprint} and van Erp-Yuncken \cite{vanErp_Yuncken}. Examples of such operators include generalized Toeplitz operators \cite{bdmguille,engliszhanghigher,melroseeptein}, certain Hörmander sum of squares \cite{androerp,horhypo,horsumsquares} and in a graded sense also Bernstein-Gelfand-Gelfand complexes \cite{bggoriginal,morecap,Dave_Haller1}. Except in the case of a trivial Carnot structure, these operators are not elliptic in the classical sense and will in general not belong to the classical pseudodifferential calculus of Kohn-Nirenberg.  We solve the index problem geometrically (in the sense of Baum-Douglas \cite{Baum_Douglas,BDbor}) for Heisenberg elliptic operators assuming that the Carnot manifold is regular (the isomorphism class of the osculating Lie algebra is constant), and its generic coadjoint orbits are flat (the Kirillov form is non-degenerate modulo the center in some orbit). The precise condition needed we call $\pmb{F}$-regularity, see Definition \ref{osculatingdef2} below. The solution to the index problem is formulated in terms of $K$-homology and relies on a technical construction in representation theory, occupying a third of this work, of a bundle of those representations corresponding to the flat coadjoint orbits under Kirillov's orbit method. 

Our main results extend work of Baum-van Erp \cite{baumvanerp} from contact manifolds to Carnot manifolds as above, and following \cite[Theorem in Introduction]{baumvanerp}, we can summarize our results in the following theorem. Consider the index problem of describing the $K$-homology class $[D]\in K_*(X)$ of a Heisenberg elliptic operator $D:C^\infty(X;E)\to C^\infty(X;F)$ on a compact Carnot manifold $X$ satisfying the assumptions above.

\begin{thm*}
\label{firstdedcsofsol}
The index problem for an $H$-elliptic $[D]\in K_*(X)$ is solved by
$$[D]=(p_\Gamma)_*(\llbracket\sigma_H(D)\rrbracket\cap [\Gamma_X])\in K_*(X).$$
\end{thm*}

Here $p_\Gamma:\Gamma_X\to X$ denotes the locally trivial bundle of flat coadjoint orbits in each fibre of the osculating Lie groupoid and $\llbracket\sigma_H(D)\rrbracket$ is a $K$-theory element on $\Gamma_X$ constructed from the Heisenberg symbol. The class $[\Gamma_X]\in K_*(\Gamma_X)\equiv K^*(C_0(\Gamma_X))$ is the fundamental class for the spin$^c$-structure induced from a natural embedding of $\Gamma_X$ into the vector bundle dual to the osculating Lie algebroid of $X$. A bit more precisely, $\llbracket\sigma_H(D)\rrbracket$ can be represented by an elliptic complex on $\Gamma_X$ and $\llbracket\sigma_H(D)\rrbracket\cap [\Gamma_X]$ is the associated compactly supported $K$-cycle. A consequence of Theorem \ref{firstdedcsofsol} is the following computation of the index.

\begin{thm*} 
\label{indexformfirstdedcsofsol}
The index of an $H$-elliptic $D$ is given by
$$\ind(D)=\int_{\Gamma_X} \ch\left(\ind[\sigma_H(D)\otimes \mathcal{H}]\right)\wedge \e^{c_1(\mathfrak{M}(\mathcal{H}))}\wedge \mathrm{Td}(\Gamma_X),$$
\end{thm*}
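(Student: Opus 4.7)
The plan is to deduce Theorem~\ref{indexformfirstdedcsofsol} directly from Theorem~\ref{firstdedcsofsol} by applying the classical Atiyah--Singer index theorem to the compactly supported $K$-cycle living on $\Gamma_X$, and then unfolding the Chern character in terms of the Hilbert space bundle $\mathcal{H}$.

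First I would recover $\ind(D)$ as the augmentation of $[D]\in K_*(X)$ to a point. Since the augmentation $X\to \mathrm{pt}$ factors through $\Gamma_X\to \mathrm{pt}$ and pushforward in $K$-homology is natural, Theorem~\ref{firstdedcsofsol} gives
$$\ind(D)=\ind\bigl((p_\Gamma)_*(\llbracket\sigma_H(D)\rrbracket\cap [\Gamma_X])\bigr)=\ind\bigl(\llbracket\sigma_H(D)\rrbracket\cap [\Gamma_X]\bigr),$$
the latter being the index of a compactly supported geometric $K$-cycle on $\Gamma_X$, represented (as announced after Theorem~\ref{firstdedcsofsol}) by an elliptic complex built from $\sigma_H(D)\otimes \mathcal{H}$ paired against the spin$^c$-Dirac operator defining $[\Gamma_X]$.

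Next I would apply the cohomological Atiyah--Singer index theorem on $\Gamma_X$. Since $\Gamma_X$ carries the spin$^c$-structure inherited from its embedding into the dual of the osculating Lie algebroid, and $\llbracket\sigma_H(D)\rrbracket\cap[\Gamma_X]$ is a compactly supported $K$-cycle, the standard index formula yields
$$\ind\bigl(\llbracket\sigma_H(D)\rrbracket\cap [\Gamma_X]\bigr)=\int_{\Gamma_X} \ch\bigl(\llbracket\sigma_H(D)\rrbracket\bigr)\wedge \mathrm{Td}(\Gamma_X).$$
The remaining content of the theorem is then the identification
$$\ch\bigl(\llbracket\sigma_H(D)\rrbracket\bigr)=\ch\bigl(\ind[\sigma_H(D)\otimes \mathcal{H}]\bigr)\wedge \e^{c_1(\mathfrak{M}(\mathcal{H}))}.$$
The class $\llbracket\sigma_H(D)\rrbracket$ is constructed by letting the Heisenberg symbol act fibrewise on the Hilbert bundle $\mathcal{H}\to \Gamma_X$ of flat orbit representations produced by Kirillov's orbit method, so that $\sigma_H(D)\otimes\mathcal{H}$ becomes a Fredholm family with a well-defined $K$-theoretic index in $K^*(\Gamma_X)$. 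Because $\mathcal{H}$ is only projectively a Hilbert bundle, the Chern character of $\llbracket\sigma_H(D)\rrbracket$ splits into an honest family-index contribution times the exponential of a correction class, and the modular line bundle $\mathfrak{M}(\mathcal{H})$ is precisely the line bundle whose first Chern class encodes the central character variation along the flat orbit bundle.

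The main obstacle is this last identification: rigorously decomposing $\ch\bigl(\llbracket\sigma_H(D)\rrbracket\bigr)$ into the family-index contribution $\ch(\ind[\sigma_H(D)\otimes \mathcal{H}])$ and the modular correction $\e^{c_1(\mathfrak{M}(\mathcal{H}))}$. This requires trivializing $\mathcal{H}$ modulo its projective ambiguity, identifying $c_1(\mathfrak{M}(\mathcal{H}))$ with the obstruction to lifting the projective Hilbert bundle to an honest one, and verifying compatibility with the representation-theoretic constructions (in particular the Kirillov correspondence for flat coadjoint orbits) occupying the technical heart of the monograph. Once this splitting is in place, multiplicativity of the Chern character over the tensor product immediately delivers the stated cohomological index formula.
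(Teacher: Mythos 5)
There is a genuine gap, and it sits exactly where you locate the "main obstacle": your proposed mechanism for producing the factor $\e^{c_1(\mathfrak{M}(\mathcal{H}))}$ is not the right one. In this setting $\mathcal{H}\to\Gamma_X$ is an \emph{honest} Hilbert space bundle: the vanishing of the Dixmier--Douady invariant of the ideal of flat orbits is one of the main technical results (Proposition \ref{groupoidftincentrals1}, Theorem \ref{trivialdldaaddo}), so there is no "projective ambiguity" and no obstruction to lifting a projective bundle to an honest one for $\mathfrak{M}(\mathcal{H})$ to encode. The metaplectic correction bundle measures something else entirely: the discrepancy between two genuine trivializations of $K_*(I_X)\cong K^*(\Gamma_X)$, namely the Morita equivalence implemented by the chosen bundle $\mathcal{H}$ of flat orbit representations and the Connes--Thom isomorphism $\psi_I$ (composed with the Thom isomorphism along $\Xi_X\to\Gamma_X$) coming from the adiabatic deformation. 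Establishing this comparison is the real work: one deforms to $t=0$, where the flat-orbit representations become Bargmann--Fock representations of the twisted groupoid $(\Xi_X^*,\omega)$, computes that the Fock-bundle Morita equivalence inverts the Thom class (Theorem \ref{compisjsdjwitundendd}), and then defines $\mathfrak{M}(\mathcal{H})$ as the line bundle comparing $\mathcal{H}$ with the Fock bundle through this deformation (Proposition \ref{metaplecticlinecorrection}, Theorem \ref{maincomputationforisg}). Without this input, the relation between $\llbracket\sigma_H(D)\rrbracket$ and $[\sigma_H(D)\otimes\mathcal{H}]\otimes[\mathfrak{M}(\mathcal{H})]$ cannot be derived, so your final "splitting of the Chern character" has no proof; "central character variation" is also not what $c_1(\mathfrak{M}(\mathcal{H}))$ records (it agrees with $c_1(\det\Xi_X^*)$ up to $8$-torsion, Lemma \ref{superimportanttechnicallemma}).

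Two further points. First, the identity $\ch\bigl(\llbracket\sigma_H(D)\rrbracket\bigr)=\ch\bigl(\ind[\sigma_H(D)\otimes\mathcal{H}]\bigr)\wedge\e^{c_1(\mathfrak{M}(\mathcal{H}))}$ is stronger than what holds or is needed: neither class is uniquely determined, and they only agree modulo the kernel of the duality map $\xi\mapsto(p_\Gamma)_*(\xi\cap[\Gamma_X])$, i.e.\ only after capping with $[\Gamma_X]$ and integrating against $\mathrm{Td}(\Gamma_X)$. Second, the represented symbol family $\pi_\xi(\tilde\sigma)$ acting on $\mathcal{H}$ is not automatically invertible outside a compact subset of $\Gamma_X$; making $[\sigma_H(D)\otimes\mathcal{H}]$ a well-defined class in $K^0(\Gamma_X)$ requires the localization/deformation step using surjectivity of $K_*(I_X)\to K_*(C^*(T_HX))$ (Theorem \ref{onsurjinteroffm}, Lemma \ref{tehcnilmultplleme}), which your proposal passes over. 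The first half of your argument --- reducing $\ind(D)$ via Theorem \ref{firstdedcsofsol} to the pairing of a compactly supported class on $\Gamma_X$ with its spin$^c$ fundamental class and then integrating against $\mathrm{Td}(\Gamma_X)$ --- does match the paper's route (via the geometric duality $\mathsf{PD}^{\rm geo}_H$ and Corollary \ref{indexofmrulaknon}); it is the appearance and meaning of the metaplectic twist that your argument does not actually establish.
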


Here $\ind[\sigma_H(D)\otimes \mathcal{H}]$ is an index class on $\Gamma_X$ constructed from a certain deformation of the Heisenberg symbol of $D$, invertible outside $\Gamma_X$, acting on a bundle $\mathcal{H}\to \Gamma_X$ of Hilbert spaces whose fibre in a point corresponds to the point under the Kirillov orbit method and finally $\mathfrak{M}(\mathcal{H})\to \Gamma_X$ is a line bundle corresponding to metaplectic corrections in the bundle $\mathcal{H}$ of flat orbit representations. Neither the class $\llbracket\sigma_H(D)\rrbracket\in K^0(\Gamma_X)$ of Theorem \ref{firstdedcsofsol} nor the class $[\sigma_H(D)\otimes \mathcal{H}]\in K^0(\Gamma_X)$ of Theorem \ref{indexformfirstdedcsofsol} are uniquely determined; the two classes however have uniquely determined images under the duality map $K^*(\Gamma_X)\to K_*(X)$, $\xi\mapsto (p_\Gamma)_*(\xi\cap [\Gamma_X])$.

We return to Theorem \ref{firstdedcsofsol} and \ref{indexformfirstdedcsofsol} in Section \ref{subsec:mainresults} below to give further details, more precise forward references to the body of this text and examples showing how they are used in computations.

In case that $X$ has the trivial Carnot structure, so $\Gamma_X=T^*X$ with its canonical spin$^c$-structure, Theorem \ref{indexformfirstdedcsofsol} is the statement of Atiyah-Singer's index theorem \cite{atiyahIII} and Theorem \ref{firstdedcsofsol} recovers Baum-Douglas' solution to the index problem for elliptic operators \cite[Part 5]{Baum_Douglas}. If $X$ has the Carnot structure induced from a co-oriented contact structure, so $\Gamma_X=(X\dot{\cup}(-X))\times \R_{>0}$ as spin$^c$-manifolds, Theorem \ref{firstdedcsofsol} coincides with Baum-van Erp's solution to the index problem for Heisenberg elliptic operators \cite{baumvanerp}. Also in \cite{baumvanerp}, a non-unique lifting class $\llbracket\sigma_H(D)\rrbracket$ is utilized.

\section{Background and context}

The study of hypoelliptic operators has a long history, dating back to works of Kolmogorov \cite{kolmogorovfirst}, Malgrange \cite{malgrange57}, Hörmander \cite{horhypo}, Rothschild-Stein \cite{rothschildstein} and Helffer-Nourrigat \cite{helfnougat} to name but a few. A notable result is Hörmander's sum of squares theorem \cite{horsumsquares} stating that a ``sum of squares'' $\sum_j X_j^2$ is hypoelliptic if the collection of vector fields $(X_j)_j$ generates the Lie algebra of all vector fields. The ideas of pseudodifferential calculus was first introduced into the hypoelliptic realm by Folland-Stein \cite{follandstein} who used the idea of freezing coefficients in the $\bar{\partial}_b$-operator in a boundary point of a strictly pseudoconvex domain. The idea of Folland-Stein was in short to model the tangent space of said boundary point with a Heisenberg group encoding the local geometry and using a symbol calculus in which the symbols are homogeneous convolution operators on a graded Lie group. This technique was later improved in \cite{rothschildstein}. Operators locally modelled on homogeneous convolution operators on graded Lie groups have since been an intense object of study, see for instance, \cite{christgelleretal,cum89,fischruzh,taylorncom}. The special case of contact manifolds, locally modelled on the Heisenberg group, is particularly well studied \cite{bealsgreiner,dynin75,dynin76,melroseeptein,pongemonograph,taylorncom}.\\

The underlying geometric objects we consider in this monograph are Carnot manifolds: manifolds $X$ equipped with a filtration 
 of the tangent bundle $TX$ by smooth subbundles
\begin{equation}
\label{alsdaodjbaofjbewgowub}
0 = T^{0}X \subset T^{-1}X \subset T^{-2}X \ldots \subset T^{-r+1}X\subset T^{-r}X = TX, 
\end{equation}
such that all inclusions are strict and for any vector fields $Y \in C^\infty(X,T^iX)$, $Y' \in C^\infty(X,T^jX)$ we have that $[Y, Y'] \in C^\infty(X,T^{i+j}X)$. The number $r$ is called the depth of $X$. In this work the reader can find more details on and examples of Carnot manifolds in Part \ref{sec:carnotmfds}. The term Carnot manifold is in accordance with  \cite{choiponge,Dave_Haller1,Dave_Haller2,mohsen2}, while in  \cite{Morimoto,sadeghhigson,vanErp_Yunckentangent,vanErp_Yuncken} it is called a filtered manifold and in \cite{melinoldpreprint}, a filtration as in \eqref{alsdaodjbaofjbewgowub} is called a Lie filtration. The Lie bracket on the tangent fields induces the structure of a graded nilpotent Lie algebra on each fibre of the graded tangent bundle 
$$\mathfrak{t}_HX:=\oplus_j T^{-j}X/T^{-j+1}X.$$
Equipped with the zero anchor mapping, $\mathfrak{t}_HX$ is a Lie algebroid on $X$ that integrates to a Lie groupoid $T_HX\to X$. One calls $\mathfrak{t}_HX$ the osculating Lie algebroid and $T_HX$ the osculating Lie groupoid of $X$ as the fibre algebraically encodes the filtration. As a fibre bundle, $T_HX\to X$ can be identified with the tangent bundle $TX\to X$ upon choosing a splitting $\mathfrak{t}_HX\cong TX$. A Carnot manifold is \emph{regular} if there is a graded nilpotent Lie algebra $\mathfrak{g}$ such that $\mathfrak{t}_HX_x\cong \mathfrak{g}$ as graded Lie algebras in all points $x\in X$.

The motivating idea of the Carnot calculus is that the algebra of differential operators on $X$ is naturally filtered by the order of operators, but in certain geometric situations it is more natural to filter by the Carnot order obtained from declaring the order of differentiation along a vector field in $C^\infty(X;T^{-j}X)$ to be $\geq j$. See for instance \cite{Dave_Haller1,vanerpsimplest} for motivating examples. We write $\mathcal{DO}^m_H(X)$ for the space of differential operators of Carnot order $\leq m$. The associated symbol calculus takes place in the space $\mathcal{DO}^m_H(X)/\mathcal{DO}^{m-1}_H(X)$ which can naturally be identified with the space $C^\infty(X;\mathcal{U}_m(\mathfrak{t}_HX))$ of sections of the bundle $\mathcal{U}_m(\mathfrak{t}_HX)\to X$ of degree $m$ homogeneous elements of the universal enveloping Lie algebra $\mathcal{U}(\mathfrak{t}_HX)$.

A pseudodifferential calculus associated with the symbol calculus of Carnot operators was first studied by Melin \cite{melinoldpreprint}, based on work of Hörmander-Melin \cite{hormelin}. The constructions of Melin were further developed by van Erp-Yuncken \cite{vanErp_Yuncken} building on their tangent groupoid construction \cite{vanErp_Yunckentangent}. The work of Melin \cite{melinoldpreprint} and van Erp-Yuncken \cite{vanErp_Yuncken} extends  \cite{bealsgreiner,dynin75,dynin76,melroseeptein,pongemonograph,taylorncom} from contact manifolds and more generally Heisenberg manifolds, and \cite{christgelleretal,cum89,fischruzh} from the case of Carnot manifolds locally isomorphic to a nilpotent Lie group, to general Carnot manifolds. Further refinements of van Erp-Yuncken's work include those by Dave-Haller \cite{Dave_Haller1,Dave_Haller2}, Ewert \cite{eskeewertthesis,eskeewertpaper} and Mohsen \cite{mohsen1}. The reader can find a review of this body of work, and further examples, in Part \ref{part:pseudod} of this monograph. We write $\Psi^m_H(X;E,F)$ for the space of order $m$ Carnot pseudodifferential operators from the vector bundle $E$ to the vector bundle $F$, or in short an $m$:th order Carnot operator. A Carnot operator which is invertible modulo lower order terms is said to be $H$-elliptic.\\

\emph{The problem we study in this work is the index problem for $H$-elliptic operators.} By results from \cite{Dave_Haller1}, that builds on \cite{christgelleretal}, $H$-ellipticity is characterized by a Rockland type condition and relates to maximal hypoellipticity by recent work \cite{androerp}. A Carnot operator $D$ has an associated operator valued symbol parametrized by all representations $\widehat{T_HX}\setminus X$ (where $X$ is identified with the trivial representations of each fibre) and $H$-ellipticity is equivalent to the principal symbol in the relevant calculus taking invertible values. As such, the pseudodifferential calculus and more or less standard constructions show that $H$-ellipticity implies hypoellipticity and Fredholm properties. The reader can find more details in Part \ref{part:pseudod} and references listed there. \\

The index problem will be solved by means of $K$-theoretical methods under an additional regularity assumptions on the Carnot manifold. By the index problem, we are asking for a feasible description of the $K$-homology class $[D]\in K_*(X)$ of an $H$-elliptic pseudodifferential operator $D:C^\infty(X;E)\to C^\infty(X;F)$ on a closed Carnot manifold. We shall take feasible to mean a solution in terms of Baum-Douglas' geometric $K$-homology \cite{Baum_Douglas,BDbor}. As such we solve the ``Baum-Douglas index problem'' for the $K$-homology class associated with an $H$-elliptic operator. For further discussion on the ``Baum-Douglas index problem'', see its first appearance \cite[Part 5]{Baum_Douglas} or \cite{baumvanerp,DGIII} for further applications thereof. Indeed, our solution in spirit follows the corresponding problem on co-oriented contact manifolds by Baum-van Erp \cite{baumvanerp}. 

Let us discuss previous work in this direction. In the case of contact manifolds, or more generally Heisenberg manifolds, there is an extensive body of work due to van Erp \cite{vanerpsimplest,vanerpfirst,vanerpannI,vanerpannII}, building in parts on work of Melrose-Epstein \cite{melroseeptein}. Let us bluntly summarize the works \cite{vanerpannI,vanerpannII} in an index formula. If $D:C^\infty(X;E)\to C^\infty(X;F)$ is an $H$-elliptic operator, there is an associated $K$-theory class $[\sigma_H(D)]\in K_0(C^*(T_HX))$ defined from the Carnot symbol and a difference class construction. By results of Nistor \cite{nistorsolvabel}, there is a Connes-Thom isomorphism $\psi:K^*(T^*X)=K_*(C^*(TX))\to K_*(C^*(T_HX))$. The index formula of van Erp \cite{vanerpannI,vanerpannII} states that for an $H$-elliptic $D$ on a co-oriented contact manifold, 
\begin{equation}
\label{vanerpfirstformalaal}
\ind(D)=\int_{T^*X}\ch[\psi^{-1}[\sigma_H(D)]]\wedge \mathrm{Td}(T^*X).
\end{equation}
We note that the class $\psi^{-1}[\sigma_H(D)]$ was described in more detail in \cite{vanerpannII} for co-oriented contact manifolds. As discussed in \cite[Example 6.5.3]{baumvanerp}, the index formula \eqref{vanerpfirstformalaal} comes with the drawback of not behaving well under cap products with $K$-theory when describing $\psi^{-1}[\sigma_H(D)]$ as in \cite{vanerpannII}, an issue solved in \cite{baumvanerp}.  

The results of van Erp was later further refined by Baum-van Erp \cite{baumvanerp} who gave a geometric solution to the index problem of $H$-elliptic operators on co-oriented contact manifolds. The index problem for $H$-elliptic operators on co-oriented contact manifolds has also been solved in \cite{gorovanerp} by means of cyclic cohomology. We describe the solution from \cite{baumvanerp} in quite some detail as it gives more context to the main results listed in the next subsection. A contact manifold is a Carnot manifold with depth $r=2$ such that $H:=T^{-1}X\subseteq TX$ has codimension $1$ and the Lie bracket induces a non-degenerate two form $\mathcal{L}:H\wedge H\to TX/H$. If $TX/H$ is trivializable, the contact structure is said to be co-oriented. In this case, there is a complex structure on $H$ adapted to $\mathcal{L}$ and we can consider the (symmetric) Fock bundle $\mathpzc{F}_0:=\bigoplus_{k=0}^\infty H^{\otimes_\C^{\rm sym} k}$ over $X$. On the other hand, the osculating Lie groupoid $T_HX\to X$ is a bundle of Heisenberg groups and in each fiber the space of representations take the form $\widehat{T_HX_x}=H_x^*\coprod \R^\times \theta_x$ where $\theta$ is a one form with $H=\ker\theta$ and $\mathcal{L}$ can be identified with $\rd \theta$. We note that $\widehat{T_HX}$ is a bundle over $X$ with non-Hausdorff fibers (any open set meeting $H_x^*$ contains a punctured neighborhood of $0$ in the nonabelian representations $\R^\times \theta_x$). Baum-van Erp's solution to the index problem for an $H$-elliptic operator $D$ on a co-oriented contact manifolds can be collated in the following algorithm:
\begin{enumerate}
\label{bveaogo}
\item[Step 1:]  {\bf Identifying a Zariski open Hausdorff subset $\Gamma_X\subseteq \widehat{T_HX}$}\\
The fibre bundle $\Gamma_X:=\R^\times \theta\to X$ is a Zariski open Hausdorff subset of $\widehat{T_HX}$. There is a Hilbert space bundle $\mathpzc{F}\to \Gamma_X$ defined by $\mathpzc{F}|_{X\times \R_{>0}\theta}=\mathpzc{F}_0$ and $\mathpzc{F}|_{X\times \R_{<0}\theta}=\bar{\mathpzc{F}}_0$. For $\xi\in \Gamma_X$, the corresponding representation is defined by the Bargmann representation of $T_HX_x$ on $\mathpzc{F}_\xi$. Moreover, the map $K_0(C_0(\Gamma_X;\mathbb{K}(\mathpzc{F})))\to K_0(C^*(T_HX))$, induced from the inclusion $C_0(\Gamma_X;\mathbb{K}(\mathpzc{F})))\hookrightarrow C^*(T_HX)$, is surjective. 
\item[Step 2:] {\bf Localizing the index class to $\Gamma_X$}\\
There is a distributional multiplier $\tilde{\sigma}$ -- a compactly supported almost homogeneous fiberwise distribution -- lifting the Carnot symbol $\sigma_H(D)$ of $D$ in a suitable sense. The associated represented family $(\pi_\xi(\tilde{\sigma}))_{\xi\in \Gamma_X}$ acts as densely defined endomorphisms of $\mathpzc{F}$. As Baum-van Erp argues in \cite{baumvanerp}, the family $(\pi_\xi(\tilde{\sigma}))_{\xi\in \Gamma_X}$ can be continuously deformed to be invertible outside a compact of $\Gamma_X$, and represented by a $K$-theory element in $K_0(C_0(\Gamma_X;\mathbb{K}(\mathpzc{F})))$; this is a pre-image of $[\sigma_H(D)]$ under the surjection $K_0(C_0(\Gamma_X;\mathbb{K}(\mathpzc{F})))\to K_0(C^*(T_HX))$. 
\item[Step 3:] {\bf Finite rank approximation of the localized index class on $\Gamma_X$}\\
Under Morita invariance and Bott periodicity, $K_0(C_0(\Gamma_X;\mathbb{K}(\mathpzc{F})))\cong K^1(X)\oplus K^1(X)$, so we  can represent the class $[\sigma_H(D)]\in K_0(C^*(T_HX))$ by a pre-image $[u]=[u_+]\oplus [u_-]\in K^1(X)\oplus K^1(X)$ under 
$$K^1(X)\oplus K^1(X)\cong K_0(C_0(\Gamma_X;\mathbb{K}(\mathpzc{F})))\to K_0(C^*(T_HX)).$$ 
\end{enumerate}
Baum-van Erp proves that 
\begin{equation}
\label{solutiontoindex}
[D]=[X]\cap [u_+]-[X]\cap [u_-],
\end{equation}
solves the index problem for $D$. In terms of geometric Baum-Douglas cycles, the class $[X]\cap [u_+]-[X]\cap [u_-]$ is computable from clutching constructions. The reader should note that $\Gamma_X\subseteq T^*X$ carries a canonical spin$^c$-structure and $\Gamma_X\cong (X\dot{\cup}(-X))\times \R_{>0}$ as spin$^c$-manifolds which relates the sign appearing in \eqref{solutiontoindex} to Theorem \ref{indexformfirstdedcsofsol}.

The current state of the art in the index problem for $H$-elliptic operators on a general Carnot manifold is that the index formula \eqref{vanerpfirstformalaal} is valid. The formula was seen conjectured in \cite{Dave_Haller1} and was proven by Mohsen \cite{mohsen1} using adiabatic deformations. It was also proven by Ewert \cite{eskeewertthesis,eskeewertpaper} using different methods. The problems discussed in \cite[Example 6.5.3]{baumvanerp} persists -- this index formula does not behave well under cap products. We do mention that Mohsen \cite{mohsen2} has described the class $\psi^{-1}[\sigma_H(D)]$ in further details by means of an operator valued symbol constructed at a symbolic level. Further progress was made recently \cite{androerp,mohsen3}. The general direction of recent work in the index theory of $H$-elliptic operators has been to push the limits of how little is needed from a Carnot structure in order to develop general abstract index results in $K$-theory. Despite the beauty of these endeavours, we take the opposite direction in this work by imposing the assumption that the Carnot structure is regular and obtain more explicit solutions.

\section{Main results}
\label{subsec:mainresults}

The main results in this work can be summarized as a careful analysis of the algorithm on page \pageref{bveaogo}, extracted from the work of Baum-van Erp \cite{baumvanerp}, in the case of more general Carnot manifolds. We often restrict our attention to regular Carnot manifolds, that by results of Morimoto \cite{Morimoto} is equivalent to $\mathfrak{t}_HX\to X$ being a locally trivial bundle of Lie algebras, we write $\mathfrak{g}$ for the fibre and $\mathsf{G}$ for the integrating simply connected nilpotent Lie group. For the purposes of this subsection, we only consider regular Carnot manifolds. The reader can find more context for the regularity assumption in the examples of Section \ref{sec:examcarn} below.

As can be seen from the algorithm on page \pageref{bveaogo}, a geometric solution can be obtained by suitably localizing the index class $[\sigma_H(D)]\in K_*(C^*(T_HX))$ of an $H$-elliptic operator to a suitable open dense Hausdorff subset of $\widehat{T_HX}$. We shall impose a condition guaranteeing the existence of a suitable $\Gamma_X\subseteq \widehat{T_HX}$. The representations of each fibre of $T_HX$ can be described by the Kirillov orbit method, and $\widehat{T_HX}$ is a quotient of $(\mathfrak{t}_HX)^*$ by the fibrewise coadjoint action of $T_HX$. Write $\mathfrak{g}$ for the fibre of $\mathfrak{t}_HX$ and let $\mathfrak{z}$ denote its center. For notational simplicity, we say that a coadjoint orbit $\mathcal{O}\subseteq \mathfrak{g}^*$ is flat if there is a $\xi\in \mathcal{O}$ such that the Kirillov form 
$$\omega_\xi(Y,Y'):=\xi[Y,Y'], \quad Y, Y'\in \mathfrak{g},$$
descends to a non-degenerate form on $\mathfrak{g}/\mathfrak{z}$. \emph{We here use an abbreviated terminology: in the literature a flat orbit refers to an orbit $\mathcal{O}$ which is an affine subspace of $\mathfrak{g}^*$ while we only refer to a flat orbit $\mathcal{O}$ when it is an affine subspace modelled on $\mathfrak{z}^\perp$.} We let $\Gamma$ denote the set of flat orbits in the Lie algebra $\mathfrak{g}$ and $\Gamma_X\subseteq \widehat{T_HX}$ the set of flat orbits. We can identify $\Gamma$ with a Zariski open subset of $\mathfrak{z}^*$. If $\Gamma\neq \emptyset$, then $\Gamma\subseteq \hat{\mathsf{G}}$ is an open, dense, Hausdorff subset. Examples of when $\Gamma\neq \emptyset$ can be found in Section \ref{subsectionexamples} below. Regularity of the Carnot structure ensures that $p_\Gamma:\Gamma_X\to X$ forms a locally trivial bundle with fibre $\Gamma$. \emph{We say that a Carnot manifold $X$ is $\pmb{F}$-regular if $X$ is regular and $\Gamma_X\neq \emptyset$ with the push-forward map $K^*(\Gamma_X)\to K^*(X)$ being surjective}, see Definition \ref{osculatingdef2} below. For examples of $\pmb{F}$-regular Carnot manifolds, see Section \ref{sec:examcarn}.

While the moral of our geometric solution is to localize the index class $[\sigma_H(D)]\in K_*(C^*(T_HX))$ to $\Gamma_X$, we need to understand the representations belonging to $\Gamma_X\subseteq \widehat{T_HX}$ in order for further computations in $K$-theory. We consider the ideal of flat orbit representations $I_X\subseteq C^*(T_HX)$ defined from 
$$I_X:=\left\{a\in C^*(T_HX): \; \pi(a)=0\; \forall \pi\in \widehat{T_HX}\setminus \Gamma_X\right\}.$$
The ideal $I_X$ is a continuous trace algebra with spectrum $\Gamma_X$. To apply the ideal $I_X$ we need to construct a bundle of Hilbert spaces $\mathcal{H}\to \Gamma_X$ with $I_X\cong C_0(\Gamma_X;\mathbb{K}(\mathcal{H}))$. The mere existence of $\mathcal{H}\to \Gamma_X$ reduces to the statement $\delta_{\rm DD}(I_X)=0\in \check{H}^3(\Gamma_X,\Z)$ on Dixmier-Duoady invariants, but since the Hilbert space bundle makes its explicit appearance in the geometric solution to the index problem we wish for an explicit construction of $\mathcal{H}\to \Gamma_X$.

Based on works of Pedersen \cite{pedersen84,pedersen88,pedersen89,pedersen94}, Lion \cite{lion76} and Vergne \cite{vergneCR70,vergneBullMathFra72}, we develop the necessary structural representation theory results to study the ideal of flat orbit representations in the group $C^*$-algebra of a nilpotent Lie group. These results extend also to a construction of $\mathcal{H}\to \Gamma_X$ on Carnot manifolds and to a range of associated deformation groupoids associated with Nistor's Connes-Thom isomorphism \cite{nistorsolvabel}, see also Section \ref{subsec:nistorctsubsn}. The following theorem summarizes the salient points in our representation theoretical constructions. The proof of the following theorem covers most of the Parts \ref{sec:reptheory} and \ref{gropodoprar}, and the call for explicit constructions in its proof is what makes this monograph so long.

\begin{thm*}
\label{maintechforrep}
Let $X$ be an $\pmb{F}$-regular Carnot manifold. Then there exists a Hilbert space bundle $\mathcal{H}\to \Gamma_X$ and a $C_0(\Gamma_X)$-linear $*$-isomorphism
$$\pi_{\musFlat}:I_X\to C_0(\Gamma_X,\mathbb{K}(\mathcal{H}))).$$
It holds that:
\begin{itemize}
\item[i)] The unitary equivalence class of the Hilbert space bundle $\mathcal{H}\to \Gamma_X$ and the $*$-isomorphism $\pi_{\musFlat}$ is unique up to a line bundle on $\Gamma_X$. 
\item[ii)] The inclusion $I_X\hookrightarrow C^*(T_HX)$ induces a surjection in $K$-theory
$$K_*(I_X)\to K_*(C^*(T_HX)).$$
\item[iii)] There is a line bundle $\mathfrak{M}(\mathcal{H})\to \Gamma_X$, uniquely determined up to isomorphism, such that for any other such $\mathcal{H}'$ and $\pi_{\musFlat}'$, there is an isomorphism
$$\mathcal{H}\otimes \mathfrak{M}(\mathcal{H})\cong \mathcal{H}'\otimes \mathfrak{M}(\mathcal{H}')$$
compatible with the $(I_X,C_0(\Gamma_X))$-bimodule structure and making the following diagram commutative
\[
\begin{tikzcd}
K_*(I_X)\arrow[r,"\otimes \mathcal{H}"]\arrow[dd]& K^*(\Gamma_X)\arrow[r, "\otimes \mathfrak{M}(\mathcal{H})"]&K^*(\Gamma_X)\arrow[dd, "j_!"] \\
&&\\
K_*(C^*(T_HX)) & & K^*(T^*X)\arrow[ll, "\psi"],
\end{tikzcd}
\]
where $\psi$ denotes Nistor's Connes-Thom isomorphism and $j:\Gamma_X\hookrightarrow T^*X$ denotes the inclusion.
\end{itemize}
\end{thm*}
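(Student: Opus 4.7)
The plan is to construct $(\mathcal{H},\pi_{\musFlat})$ fibrewise via Kirillov's orbit method, globalise using $\pmb{F}$-regularity, then deduce (i)--(iii). Over a single point of $X$, pick a flat parameter $\xi\in\Gamma\subset\mathfrak{z}^*$. By flatness, $\omega_\xi$ descends to a symplectic form on $\mathfrak{g}/\mathfrak{z}$; choose a Vergne polarization $\mathfrak{h}_\xi\subset\mathfrak{g}$, which contains $\mathfrak{z}$ and projects to a Lagrangian. The associated induced representation $\pi_\xi=\mathrm{Ind}_{\exp\mathfrak{h}_\xi}^{\mathsf{G}}e^{i\langle\xi,\cdot\rangle}$ on $\mathcal{H}_0:=L^2(\mathsf{G}/\exp\mathfrak{h}_\xi)$ is the Stone--von Neumann representation attached to $\mathcal{O}_\xi$. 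Pedersen's construction of Malcev bases adapted to $\xi$, together with the possibility (after a suitable stratification of $\Gamma$) of choosing $\mathfrak{h}_\xi$ polynomially in $\xi$, provides a continuous field of irreducible representations $(\pi_\xi)$ and local trivialisations of the Hilbert space field $\xi\mapsto\mathcal{H}_\xi$.

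Globally, Morimoto's theorem and $\pmb{F}$-regularity give a locally trivial structure on $\mathfrak{t}_HX\to X$ and on $p_\Gamma:\Gamma_X\to X$. Over trivialising opens $U\subset X$, the ideal $I_X|_{p_\Gamma^{-1}(U)}$ identifies with $C_0(U)\otimes I_\mathsf{G}$, and the fibrewise isomorphism $I_\mathsf{G}\cong C_0(\Gamma,\mathbb{K}(\mathcal{H}_0))$ extends to a local isomorphism $\pi_{\musFlat}^U$. Transitions between two local trivialisations are automorphisms of $C_0(p_\Gamma^{-1}(U\cap U'),\mathbb{K}(\mathcal{H}_0))$, hence sections of $PU(\mathcal{H}_0)$; the obstruction to lifting the transition cocycle to $U(\mathcal{H}_0)$ is the Dixmier--Douady class $\delta_{\mathrm{DD}}(I_X)\in\check{H}^3(\Gamma_X,\mathbb{Z})$. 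The Lion--Vergne intertwiners between differing Vergne polarizations furnish an explicit geometric unitary lift of this cocycle, so $\delta_{\mathrm{DD}}(I_X)=0$ and we obtain $\mathcal{H}\to\Gamma_X$ together with the global isomorphism $\pi_{\musFlat}$. Part (i) is then a direct application of Schur's lemma: any two such pairs differ by an automorphism of $C_0(\Gamma_X,\mathbb{K}(\mathcal{H}))$, hence by a line bundle.

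For (ii), the six-term exact sequence of $0\to I_X\to C^*(T_HX)\to C^*(T_HX)/I_X\to 0$ reduces surjectivity of $K_*(I_X)\to K_*(C^*(T_HX))$ to the vanishing of the map $K_*(C^*(T_HX))\to K_*(C^*(T_HX)/I_X)$. The quotient corresponds to representations whose coadjoint orbits avoid $\Gamma$, hence factors through $C^*$-algebras of lower-dimensional nilpotent Lie groupoids obtained by killing central directions. An induction on nilpotency length combined with Nistor's Connes--Thom isomorphism at each stage, together with Pedersen's composition series, produces the required vanishing by showing that every $K$-theory class in $K_*(C^*(T_HX))\cong K^*(T^*X)$ is supported, after Nistor's isomorphism, on the open stratum $\Gamma_X$.

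The principal obstacle is (iii): constructing $\mathfrak{M}(\mathcal{H})$ and verifying the commutative diagram involving Nistor's $\psi$. The ambiguity in the choice of polarization is parameterised pointwise by the Lagrangian Grassmannian of $(\mathfrak{g}/\mathfrak{z},\omega_\xi)$, and Lion--Vergne intertwiners between distinct Vergne polarizations are canonical only up to a sign given by the Maslov index. Consequently, the bundle $\mathcal{H}$ is canonical only after tensoring by a square root of the determinant bundle of a chosen Lagrangian subbundle; this square root \emph{is} $\mathfrak{M}(\mathcal{H})$, and its independence from choices reflects the fact that the metaplectic group double-covers the symplectic group. To verify commutativity, I would realise $\psi$ as a Connes--Thom map on the adiabatic groupoid of $T_HX\rightrightarrows X$ and compare it term-by-term with $j_!\circ(\,\cdot\otimes\mathfrak{M}(\mathcal{H}))\circ(\,\cdot\otimes\mathcal{H})$ along the identifications coming from the fibrewise Stone--von Neumann theorem. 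The crux is to identify the Thom class for $j:\Gamma_X\hookrightarrow T^*X$ twisted by $\mathfrak{M}(\mathcal{H})$ with the Connes--Thom class of the fibration of flat orbit representations, which is essentially a rigidity statement about the metaplectic representation acting on Lagrangian half-densities; this is where I expect the bulk of the technical work to lie.
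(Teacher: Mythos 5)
Your high-level strategy—construct $\mathcal{H}$ fibrewise via Kirillov/Vergne polarizations, globalize using $\pmb{F}$-regularity, then deduce (i)--(iii)—follows the paper's overall architecture. However, there are three substantive gaps.

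\textbf{Construction of $\mathcal{H}$.} You assert that "Lion--Vergne intertwiners between differing Vergne polarizations furnish an explicit geometric unitary lift of this cocycle," but the raw Lion transforms do \emph{not} form a \v{C}ech cocycle: on a triple overlap they compose to the phase $\e^{\frac{\pi i}{4}\mathrm{Mas}(\mathfrak{h}_1,\mathfrak{h}_2,\mathfrak{h}_3)}$, an eighth root of unity depending on the three polarizations. The paper must first replace the Lion transform with a corrected version $\mathfrak{L}_{\mathfrak{h}_1,\mathfrak{h}_2}=\e^{\frac{\pi i}{4}\eta(\mathfrak{h}_1,\mathfrak{h}_2)}\mathfrak{L}^{(0)}_{\mathfrak{h}_1,\mathfrak{h}_2}$, using a Cappell--Lee--Miller $\eta$-invariant of pairs of Lagrangians, which does satisfy the cocycle condition (Proposition~\ref{lionsintertwinerswitheta}). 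Without this correction the gluing produces only a projective bundle, and the vanishing of the Dixmier--Douady class is not established by your argument. You also omit the covering problem: the top fine stratum of a single Jordan--H\"older basis need not equal $\Gamma$, and the paper needs the nontrivial Theorem~\ref{covconj} (proved by induction on $\dim\mathfrak{z}$) to cover $\Gamma$ by varying the Jordan--H\"older basis.

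\textbf{Part (ii).} Your claim that $C^*(T_HX)/I_X$ "factors through $C^*$-algebras of lower-dimensional nilpotent Lie groupoids obtained by killing central directions" is not true in general: the complement of $\Gamma$ in $\widehat{T_HX}$ contains representations with nontrivial central character whose coadjoint orbit is merely non-flat (the Pfaffian vanishes there), and these do not factor through any quotient groupoid. For polycontact manifolds the claim happens to hold, but for general $\pmb{F}$-regular (e.g.\ pluricontact) manifolds it fails. The paper's actual argument (Theorem~\ref{onsurjinteroffm}) is a Mayer--Vietoris argument over a trivializing cover, reducing to the single-group case where surjectivity of $K^*(\Xi)\to K^*(\mathfrak{g}^*)$ follows because the Bott generator can be supported in any nonempty open subset, composed with Nistor's Connes--Thom isomorphism. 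This sidesteps the structure of the complement entirely.

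\textbf{Part (iii).} Your identification of $\mathfrak{M}(\mathcal{H})$ as "a square root of the determinant bundle of a chosen Lagrangian subbundle," with the independence "reflecting that the metaplectic group double-covers the symplectic group," is not what the paper proves. In the paper, $\mathfrak{M}(\mathcal{H})$ is \emph{defined} (Proposition~\ref{metaplecticlinecorrection}) by the adiabatic deformation: it is the unique line bundle such that $\mathcal{H}\otimes\mathfrak{M}(\mathcal{H})$ extends to a bundle over $\Gamma_X\times[0,1]$ restricting to the Fock bundle $\mathpzc{F}_\Xi$ at $t=0$. By Lemma~\ref{superimportanttechnicallemma} the resulting bundle satisfies $c_1(\mathfrak{M}(\mathcal{H}))-c_1(\det(\Xi_X^*))\in\mathrm{im}(H^1(\Gamma_X,\Z/8)\to H^2(\Gamma_X,\Z))$; that is, $\mathfrak{M}(\mathcal{H})$ is essentially the full complex determinant $\det(\Xi_X^*)$, not a square root of anything, and the residual ambiguity is $8$-torsion (the $\eta$-invariant/Maslov phase), not $2$-torsion. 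Consistent with this, the Fock bundle itself has $\mathfrak{M}(\mathpzc{F}_\Xi)=1$ rather than a nontrivial metaplectic line, because the adiabatic comparison class $\psi_\Xi\otimes[\mathpzc{F}_\Xi]$ is shown to equal the fibrewise spin$^c$-Dirac class (Theorem~\ref{compisjsdjwitundendd}), so the Fock bundle already implements the Thom isomorphism on the nose. Your intuition that the "crux is to identify the Thom class... with the Connes--Thom class of the fibration of flat orbit representations" is correct, but the mechanism is the explicit unbounded Kasparov product with the Toeplitz-type operator on the Fock bundle, not a rigidity statement about half-densities.
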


The proof of Theorem \ref{maintechforrep} is quite delicate and covers a large portion of the body of the text, mainly in Parts \ref{sec:reptheory} and \ref{gropodoprar}. We present two constructions of the Hilbert space bundle $\mathcal{H}\to \Gamma_X$ of flat orbit representations: in Proposition \ref{groupoidftincentrals1} a direct construction gluing together locally defined bundles of representations is given and in Theorem \ref{trivialdldaaddo} we give an indirect construction using deformations. Item i) of Theorem \ref{maintechforrep} is a standard result for continuous trace algebras, we recall it below as Lemma \ref{lem:desciriddtriviala}. Item ii) of Theorem \ref{maintechforrep} follows from Corollary \ref{onsurjinteroffm} below. The proof of item iii) of Theorem \ref{maintechforrep} follows from Theorem \ref{maincomputationforisg}, and the uniqueness statement follows in conjunction with item i) of Theorem \ref{maintechforrep}. The reader can consult Proposition \ref{metaplecticlinecorrection} and Lemma \ref{superimportanttechnicallemma} for further descriptions of the metaplectic line bundle $\mathfrak{M}(\mathcal{H})$.\\

The main player in our index result is dualities in Kasparov's $KK$-theory, and especially the interplay between dualities defined from $K$-orientations and those defined from Carnot structures. There are well known duality maps 
$$\mathsf{PD}^{\rm an}: K^*(T^*X)\to K_*(X),\quad\mbox{and}\quad \mathsf{PD}^{\rm geo}:K^*(T^*X)\to K_*^{\rm geo}(X),$$
that by \cite{baumvanerpII} are compatible via the analytic assembly of geometric cycles $\gamma:K_*^{\rm geo}(X)\to K_*(X)$. There is also a duality mapping 
$$\mathsf{PD}^{\rm an}_H: K_*(C^*(T_HX))\to K_*(X),$$
defined from adiabatic deformations. The duality map $\mathsf{PD}^{\rm an}_H$ relates the index class of the Carnot symbol to the $K$-homology class of an $H$-elliptic operator via the identity: 
$$\mathsf{PD}^{\rm an}_H[\sigma_H(D)]=[D],$$
see Proposition \ref{chooseanopheisenberg}. Using Theorem \ref{maintechforrep}, we also introduce a geometric duality defined from the Carnot structure:
$$\mathsf{PD}^{\rm geo}_H: K_*(C^*(T_HX))\to \tilde{K}_*^{\rm geo}(X).$$
The codomain of the duality mapping $\mathsf{PD}^{\rm geo}_H$ is a variation of  Baum-Douglas' geometric $K$-homology. This variation, that we denote by $\tilde{K}_*^{\rm geo}(X)$, is constructed from geometric cycles in which the manifold is not necessarily compact, but rather the compact support is ensured from coefficients in elliptic complexes similarly to the constructions in \cite{emermeyer}. There is an inclusion of the set of isomorphism classes of ordinary Baum-Douglas cycles into the set of isomorphism classes of Baum-Douglas cycles with coefficients in the elliptic complexes that induce a natural isomorphism $K_*^{\rm geo}(X)\cong \tilde{K}_*^{\rm geo}(X)$; the inverse is obtained from a clutching construction. The duality map $\mathsf{PD}^{\rm geo}_H$ is defined from 
$$\mathsf{PD}^{\rm geo}_H(x):=[(\Gamma_X,\xi\otimes \mathfrak{M}(\mathcal{H}),p_\Gamma)],$$
where $p_\Gamma:\Gamma_X\to X$ is the projection, and $\xi$ is an elliptic complex on $\Gamma_X$ that maps to $x$ under the composition 
\begin{equation}
\label{comoampadadm}
K^0(\Gamma_X)\xrightarrow{\otimes \mathcal{H}} K_0(I_X)\to K_0(C^*(T_HX)),
\end{equation}
for some bundle $\mathcal{H}$ as in Theorem \ref{maintechforrep}. Existence of $\xi$ and $\mathcal{H}$ is ensured by Theorem \ref{maintechforrep}.

\begin{remark}
\label{commenonmodedl}
To justify this change of model for $K$-homology, the reader can consider Baum-Douglas' solution \cite[Part 5]{Baum_Douglas} to their index problem for an elliptic operator $D:C^\infty(X;E)\to C^\infty(X;F)$ on a closed manifold. While Baum-Douglas solves the index problem by a clutching construction producing a cycle on $S(T^*X\oplus 1_\R)$, it could equally well be solved without involving clutching by a cycle of the form $(T^*X, (p^*E,p^*F,\sigma(D)),p)$ viewed as a geometric cycle with coefficients in the elliptic complexes, for $p:T^*X\to X$ denoting the projection. In general $\Gamma_X$ is non-compact, so the index problem for $H$-elliptic operators is in general solved by a non-compact cycle without an obvious place to clutch.
\end{remark}

The duality mappings discussed above are related in Theorem \ref{commutingtetraederthm} of the text producing a commuting tetraeder. We recall its statement here.

\begin{thm*}
\label{commtererar}
Let $X$ be a compact $\pmb{F}$-regular Carnot manifold. Then all morphism in the following diagram are isomorphisms and the diagram commutes
\[
\begin{tikzcd}
& K_*(C^*(T_H X)) \arrow{ddddl}[swap]{\mathsf{PD}_H^{\rm geo}}\arrow[ddddr, "\mathsf{PD}_H^{\rm an}"] & \\
&&\\
& K^*(T^* X) \arrow[ddl, "\mathsf{PD}^{\rm geo}"] \arrow{ddr}[swap]{\mathsf{PD}^{\rm an}} \arrow[uu, "\psi"] & \\
&&\\
\tilde{K}_*^{\rm geo}(X) \arrow[rr, "\gamma"] & & K_*(X)
\end{tikzcd}
\]
If $X$ is a Carnot manifold which is not necessarily $\pmb{F}$-regular, the diagram above with the upper left arrow removed only consists of isomorphisms and commutes.
\end{thm*}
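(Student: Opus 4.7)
The plan is to establish the tetrahedron face by face, reducing everything to classical dualities plus the single non-classical identification $\psi^{-1}(x)=j_!(\xi\otimes\mathfrak{M}(\mathcal{H}))$ supplied by Theorem \ref{maintechforrep}. Three of the four vertices and four of the six edges are classical: $\mathsf{PD}^{\rm an}$ is ordinary Poincaré duality of Kasparov and Connes--Skandalis, $\psi$ is Nistor's Connes--Thom isomorphism, and $\gamma$ is an isomorphism by Baum--Douglas--van Erp \cite{baumvanerpII}. One first notes that the clutching/de-clutching construction described in Remark \ref{commenonmodedl} provides a natural isomorphism $K_*^{\rm geo}(X)\cong \tilde K_*^{\rm geo}(X)$ intertwining the two versions of $\gamma$, so both $\gamma$ and $\mathsf{PD}^{\rm geo}$ are isomorphisms for the enlarged model and the bottom triangle $\mathsf{PD}^{\rm an}=\gamma\circ\mathsf{PD}^{\rm geo}$ commutes.

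For the right face I would show $\mathsf{PD}^{\rm an}_H=\mathsf{PD}^{\rm an}\circ\psi^{-1}$ by a deformation-groupoid argument. Both dualities are defined by the Connes-type $KK$-element of an adiabatic groupoid: $\mathsf{PD}^{\rm an}$ comes from Connes' tangent groupoid $\mathbb T X$, whereas $\mathsf{PD}^{\rm an}_H$ comes from van Erp--Yuncken's Heisenberg tangent groupoid $\mathbb T_H X$. Nistor's $\psi$ is the $KK$-class of the two-parameter deformation interpolating between $C^*(TX)$ and $C^*(T_HX)$. Concatenating this deformation with the tangent-groupoid deformation, using associativity of Kasparov products together with the standard ``two-parameter adiabatic'' groupoid (as in \cite{mohsen1,eskeewertpaper}), produces a groupoid equivalent to $\mathbb T_H X$, giving the identity at the level of $KK$-classes.

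For the left face $\mathsf{PD}^{\rm geo}_H=\mathsf{PD}^{\rm geo}\circ\psi^{-1}$, the representation-theoretic Theorem \ref{maintechforrep} does the essential work. Given $x\in K_*(C^*(T_HX))$, item ii) yields a lift to $K_*(I_X)$, and the commuting rectangle in item iii) yields $\xi\in K^*(\Gamma_X)$ with
$$\psi^{-1}(x)=j_!\bigl(\xi\otimes\mathfrak{M}(\mathcal{H})\bigr)\in K^*(T^*X).$$
Now apply the geometric Poincaré duality $\mathsf{PD}^{\rm geo}:K^*(T^*X)\to \tilde K_*^{\rm geo}(X)$: for an open spin$^c$ embedding $j:\Gamma_X\hookrightarrow T^*X$ and a class of the form $j_!(\eta)$, $\mathsf{PD}^{\rm geo}(j_!(\eta))$ is represented by $(\Gamma_X,\eta,p_\Gamma)$, using precisely the spin$^c$-structure on $\Gamma_X$ described in the introduction. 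With $\eta=\xi\otimes\mathfrak{M}(\mathcal{H})$ this is exactly the cycle defining $\mathsf{PD}^{\rm geo}_H(x)$. Well-definedness of the latter — independence from the choice of $(\mathcal{H},\xi)$ — is exactly the content of item iii) of Theorem \ref{maintechforrep}: any two admissible choices differ by twisting with a line bundle whose effect is absorbed by $\mathfrak{M}(\mathcal{H})$. The remaining (back) face of the tetrahedron then commutes automatically.

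For the second assertion, when $X$ is not $\pmb{F}$-regular the set $\Gamma_X$ may be empty and $\mathsf{PD}^{\rm geo}_H$ is not available, but the remaining pentagonal diagram does not involve $\mathsf{PD}^{\rm geo}_H$: it uses only the classical isomorphisms $\psi$, $\mathsf{PD}^{\rm an}$, $\gamma$, $\mathsf{PD}^{\rm geo}$ together with the identity $\mathsf{PD}^{\rm an}_H=\mathsf{PD}^{\rm an}\circ\psi^{-1}$ established in the right-face step, which nowhere used the flat-orbit assumption. I expect the main obstacle to be the left face: making sense of $\mathsf{PD}^{\rm geo}_H$ as a group homomorphism, and identifying it with $\mathsf{PD}^{\rm geo}\circ\psi^{-1}$, depends on the explicit bundle-theoretic content of Theorem \ref{maintechforrep} together with the fact that the metaplectic correction $\mathfrak{M}(\mathcal{H})$ exactly cancels the ambiguity in the choice of $\mathcal{H}$; tracking this across the two models of $K$-homology is the technically delicate point.
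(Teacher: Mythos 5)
Your proposal matches the paper's proof in both structure and substance: the paper likewise reduces to the three triangular faces, proving the bottom one via \cite{baumvanerpII}, the right one via the two-parameter adiabatic parabolic tangent groupoid $\mathbb{A}_HX$ (Theorem \ref{rightuppertriangle}), and the left one via the metaplectic/Morita identification from Theorem \ref{maincomputationforisg} combined with the compatibility of $\mathsf{PD}^{\rm geo}$ with shriek maps under open spin$^c$ embeddings (Theorem \ref{leftuppertriangle}), before deducing that all arrows are isomorphisms from the commutativity and the classical isomorphisms. The non-$\pmb{F}$-regular case is handled the same way in both, by discarding $\mathsf{PD}^{\rm geo}_H$ and observing the remaining commutativity never invoked flat orbits.
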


The commuting tetraeder of Theorem \ref{commtererar} produces a solution in the abstract to the index problem for $H$-elliptic operators. For a contact manifold, Theorem \ref{commtererar} can be found on \cite[page 36]{baumvanerp}.

\begin{cor*}
\label{indexofmrulaknon}
Let $D:C^\infty(X;E)\to C^\infty(X;F)$ be an $H$-elliptic operator on a compact $\pmb{F}$-regular Carnot manifold. Then the Baum-Douglas index problem for the class 
$$[D]\in K_0(X),$$
is solved by the geometric cycle $\mathsf{PD}_H^{\rm geo}[\sigma_H(D)]$. More precisely, if we have an elliptic complex $\xi$ on $\Gamma_X$ such that $[\xi]\mapsto [\sigma_H(D)]$ under the mapping \eqref{comoampadadm} for some bundle $\mathcal{H}$ as in Theorem \ref{maintechforrep}, then 
$$[D]=\gamma\left([(\Gamma_X, \xi\otimes \mathfrak{M}(\mathcal{H}),p_\Gamma)]\right)\in K_*(X),$$
and 
$$\ind(D)=\int_{\Gamma_X}\ch[\xi]\wedge \e^{c_1(\mathfrak{M}(\mathcal{H}))}\wedge \mathrm{Td}(\Gamma_X).$$
\end{cor*}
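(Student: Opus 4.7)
Since this statement is advertised as a corollary, the plan is to deduce it essentially mechanically from the commuting tetraeder of Theorem \ref{commtererar} together with Proposition \ref{chooseanopheisenberg}. The proof naturally splits into two parts: first establishing the identity in $K_*(X)$ by chasing around the diagram, and then extracting the integral index formula by pushing forward to a point.

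\textbf{Step 1: The $K$-homology identity.} The hypothesis gives an elliptic complex $\xi$ on $\Gamma_X$ mapping to $[\sigma_H(D)] \in K_0(C^*(T_HX))$ under the composition \eqref{comoampadadm}. By the definition of $\mathsf{PD}^{\rm geo}_H$ recalled just before the statement, this means
$$\mathsf{PD}^{\rm geo}_H[\sigma_H(D)] = [(\Gamma_X, \xi \otimes \mathfrak{M}(\mathcal{H}), p_\Gamma)] \in \tilde{K}_*^{\rm geo}(X).$$
Theorem \ref{commtererar} says the bottom triangle of the tetraeder commutes with all arrows isomorphisms, in particular $\gamma \circ \mathsf{PD}^{\rm geo}_H = \mathsf{PD}^{\rm an}_H$ as maps $K_*(C^*(T_HX)) \to K_*(X)$. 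Applying this to $[\sigma_H(D)]$ and invoking Proposition \ref{chooseanopheisenberg} for the identity $\mathsf{PD}^{\rm an}_H[\sigma_H(D)] = [D]$ gives
$$[D] = \gamma \bigl([(\Gamma_X, \xi \otimes \mathfrak{M}(\mathcal{H}), p_\Gamma)]\bigr),$$
which is the first assertion.

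\textbf{Step 2: The index formula.} Let $c_X : X \to \mathrm{pt}$ be the collapse map; then $\ind(D) = (c_X)_*[D]$ under the identification $K_0(\mathrm{pt}) = \Z$. Using naturality of $\gamma$ under proper maps, $(c_X)_* \gamma([(\Gamma_X, \xi \otimes \mathfrak{M}(\mathcal{H}), p_\Gamma)])$ equals the topological index of the geometric cycle $(\Gamma_X, \xi \otimes \mathfrak{M}(\mathcal{H}), c_X \circ p_\Gamma)$ sitting over a point. The spin$^c$-structure on $\Gamma_X \subseteq T^*X$ is the one induced from its embedding into the dual of the osculating Lie algebroid, and the Atiyah-Singer type formula for compactly supported $K$-cycles on non-compact spin$^c$-manifolds with coefficients in an elliptic complex yields
$$\ind(D) = \int_{\Gamma_X} \ch(\xi \otimes \mathfrak{M}(\mathcal{H})) \wedge \mathrm{Td}(\Gamma_X).$$
Since $\mathfrak{M}(\mathcal{H})$ is a line bundle, $\ch(\mathfrak{M}(\mathcal{H})) = \e^{c_1(\mathfrak{M}(\mathcal{H}))}$, and the multiplicativity of the Chern character on tensor products gives
$$\ch(\xi \otimes \mathfrak{M}(\mathcal{H})) = \ch(\xi) \wedge \e^{c_1(\mathfrak{M}(\mathcal{H}))},$$
from which the stated formula follows.

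\textbf{Main obstacle.} Essentially all the conceptual work has been done upstream: Theorem \ref{maintechforrep} ensures the existence of the elliptic complex $\xi$ pre-image and of the metaplectic line bundle $\mathfrak{M}(\mathcal{H})$; Theorem \ref{commtererar} provides the commuting tetraeder; and Proposition \ref{chooseanopheisenberg} identifies $[D]$ with the image of $[\sigma_H(D)]$ under analytic Heisenberg duality. The only point that requires more than bookkeeping is verifying that the cohomological formula for the image of a non-compact geometric cycle under $\gamma$, when pushed to a point, takes the familiar Atiyah-Singer form with compactly supported Chern character of the elliptic complex coefficient. This is where the use of $\tilde{K}_*^{\rm geo}$ (as opposed to ordinary Baum-Douglas $K$-homology) intervenes, per Remark \ref{commenonmodedl}, and must be handled via the standard identification between compactly supported $K$-theory of $\Gamma_X$ and elliptic complex coefficients together with Chern-Weil calculus on the non-compact base.
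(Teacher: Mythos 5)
Your proof matches the paper's route: the $K$-homology identity is a direct consequence of Theorem \ref{commtererar} combined with Proposition \ref{chooseanopheisenberg}, and the index formula follows by pushing the geometric cycle to a point (this is exactly what Theorem \ref{geomesolution} and Corollary \ref{geosolforfregular} in the body of the text do). One small slip worth flagging: you attribute the identity $\gamma\circ \mathsf{PD}^{\rm geo}_H = \mathsf{PD}^{\rm an}_H$ to commutativity of the \emph{bottom} triangle of the tetraeder, but that triangle (with vertices $K^*(T^*X)$, $\tilde{K}^{\rm geo}_*(X)$, $K_*(X)$) does not even contain the maps $\mathsf{PD}^{\rm geo}_H$ and $\mathsf{PD}^{\rm an}_H$. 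The identity you want is commutativity of the \emph{outer} face (vertices $K_*(C^*(T_HX))$, $\tilde{K}^{\rm geo}_*(X)$, $K_*(X)$), which follows from the commutativity of the three inner triangles together with surjectivity of $\psi$ — that is, one needs the full statement of Theorem \ref{commtererar}, not just its bottom triangle. With that correction the argument is exactly the one in the paper, so the approach is the same and the proof is sound.
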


Corollary \ref{indexofmrulaknon} extends the index formula in \cite{baumvanerp} from co-oriented contact manifolds to $\pmb{F}$-regular Carnot manifolds. Indeed, in the co-oriented contact case, the Fock bundle $\mathpzc{F}\to \Gamma_X$ defines a bundle of flat orbit representations with $\mathfrak{M}(\mathpzc{F})=1$ and $\Gamma_X=(X\dot{\cup}(-X))\times \R_{>0}$ as spin$^c$-manifolds. 

\begin{remark}
The index formula of Corollary \ref{indexofmrulaknon} extends also to graded Rockland sequences as defined in \cite{Dave_Haller1}. This covers the (curved) Bernstein-Gelfand-Gelfand sequences of \cite{morecap}. For more details, see the discussion below in Section \ref{sec:grafadpknapdnarock}.
\end{remark}

\begin{remark}
The index formula \eqref{vanerpfirstformalaal} was proven by Mohsen \cite{mohsen1} and the class $\psi^{-1}[\sigma_H(D)]$ was computed in \cite{mohsen2} by means of an operator valued symbol constructed at a symbolic level. A crucial step in the computation of \cite{mohsen2} was a modification of the osculating Lie algebroid. It is of interest to note that this modification always produces a Lie algebra admitting flat orbits; we discuss this modification in Example \ref{mohsenconstruct}. It is possible to perform this modification at the level of Carnot manifolds, but our construction of such a modification produces other technical issues and we have omitted all details beyond Example \ref{mohsenconstruct}.
\end{remark}

Despite the beauty in the solution of Baum-van Erp  \cite{baumvanerp} being indisputable, there is in practice a $K$-theoretical unwieldiness to both their solution and Corollary \ref{indexofmrulaknon}. In order to discuss these computational issues, let us give a version of the algorithm on page \pageref{bveaogo} -- for solving the index problem -- adapted to $\pmb{F}$-regular Carnot manifolds:
\begin{enumerate}
\label{blablaalgorithm}
\item[Step 1:]  {\bf Identifying a Zariski open Hausdorff subset $\Gamma_X\subseteq \widehat{T_HX}$}\\
The bundle $\Gamma_X\to X$ of flat orbits is a Zariski open Hausdorff subset of $\widehat{T_HX}$ with a Hilbert space bundle $\mathcal{H}\to \Gamma_X$ and fulfills the same properties as in Step 1 of the algorithm on page \pageref{bveaogo}: that is, the corresponding ideal $I_X\subseteq C^*(T_HX)$ has vanishing Dixmier-Douady invariant and induces a surjection $K_*(I_X)\to K_*(C^*(T_HX))$.
\item[Step 2:] {\bf Localizing the index class to $\Gamma_X$}\\
The distributional multiplier $\tilde{\sigma}$ -- lifting the Carnot symbol $\sigma_H(D)$ of $D$ -- acts as a densely defined endomorphism of $\mathcal{H}$. Surjectivity of $K_*(I_X)\to K_*(C^*(T_HX))$ ensures that the family $(\pi_\xi(\tilde{\sigma}))_{\xi\in \Gamma_X}$ can be continuously deformed to a $\hat{\sigma}$ which is invertible outside a compact of $\Gamma_X$, therefore producing a pre-image of $[\sigma_H(D)]\in K_*(C^*(T_HX))$ as a $K$-theory element $\ind[\hat{\sigma}]\in K_0(C_0(\Gamma_X;\mathbb{K}(\mathcal{H})))$. 
\item[Step 3:] {\bf Finite rank approximation of the localized index class on $\Gamma_X$}\\
Under Morita invariance, $K_*(C_0(\Gamma_X;\mathbb{K}(\mathcal{H})))\cong K^*(\Gamma_X)$, so we can represent the class $[\sigma_H(D)]\in K_0(C^*(T_HX))$ by approximating the deformed symbol $\hat{\sigma}$ with an elliptic complex $\xi_D$ on $\Gamma_X$. 
\end{enumerate}
For $\pmb{F}$-regular Carnot manifolds, Step 1 is automatically solved by Theorem \ref{maintechforrep} but we nevertheless include it as we believe a similar approach is possible for other Carnot manifolds upon identifying the right Zariski open Hausdorff subset $\Gamma_X\subseteq \widehat{T_HX}$. However, Step 2 and Step 3 involve considerable amounts of constructions in $K$-theory already for co-oriented contact manifolds as in \cite{baumvanerp}. We give two examples of full solutions beyond the realm of contact manifolds. The computational complexity of Step 2 and Step 3 is immense for osculating Lie groups of high step length, so we restrict to step length two. An exception is found in Theorem \ref{indexforengel} where we prove a vanishing result for a depth three example.

\subsection{Two examples of index theorems} We consider two examples generalizing an index theorem of Baum-van Erp from \cite{baumvanerp} and an index theorem of Boutet de Monvel \cite{bdmindex} and Epstein-Melrose \cite{melroseeptein}, respectively, from co-oriented contact manifolds to regular polycontact manifolds. \\

A polycontact structure on a manifold $X$ is a subbundle $H\subseteq TX$, with Levi bracket $\mathcal{L}:H\wedge H\to TX/H$, $Y\wedge Y'\mapsto [Y,Y']+H$, such that for any non-zero $\xi\in H^\perp\subseteq T^*X$ the two-form 
$$\omega_\xi(Y,Y'):=\xi\circ\mathcal{L}(Y\wedge Y'),$$
is non-degenerate on $H$. A polycontact manifold is a depth $2$ Carnot manifold $X$ with step two osculating Lie groupoid $T_HX=H\oplus TX/H$, whose Carnot structure is defined from a polycontact structure $H\equiv T^{-1}X\subseteq TX$. If $X$ is a contact manifold, the sphere bundle $S(H^\perp)\to X$ is a two-fold covering which is trivial (in which case $S(H^\perp)=X\dot{\cup}(-X)$) if and only if the contact structure is co-oriented. Let $p:H^\perp\setminus X\to X$ denote the projection. We can consider $(\omega_\xi)_{\xi\in H^\perp\setminus X}$ as a symplectic form $\omega$ on $p^*H\to H^\perp\setminus X$. Since $\omega$ is non-degenerate, we can choose an adapted complex structure on $p^*H$. In this case, there is an abundance of flat orbits as we have the identity
$$\Gamma_X=H^\perp\setminus X\subseteq T^*X.$$
In particular, the set of flat orbits is a suspension of the  compact spin$^c$-manifold $S(H^\perp)\subseteq T^*X$. We can take the bundle of flat orbit representations in Theorem \ref{maintechforrep} to be the (symmetric) Fock bundle 
$$\mathpzc{F}:=\bigoplus_{k=0}^\infty (p^*H)^{\otimes_\C^{\rm sym} k}\to H^\perp\setminus X.$$

The reader can find the following index theorem stated as Theorem \ref{charhellfofodotwocordonkwithgeoindex} in the bulk of the text. The characterization of $H$-ellipticity provided below in Theorem \ref{indexbacaocm} is found in Theorem \ref{charhellfofodo} in the bulk of the text and it is by \cite[Theorem 1.1]{horcharnice} equivalent to hypoellipticity with loss of one derivative. 

\begin{thm*}
\label{indexbacaocm}
Let $X$ be a Riemannian regular polycontact manifold with polycontact bundle $H$ and $E\to X$ a complex vector bundle. Let $\Delta_H\in \mathcal{DO}^2_H(X;E)$ denote an associated sub-Laplacian operator. 

Assume that $\gamma\in C^\infty(H^\perp,p^*\End(E))$ is a fibrewise polynomial such that for any $\xi\in H^\perp\setminus X$ and $k\in \N$ the vector bundle morphism 
$$\gamma_k(\xi): (p^*H)^{\otimes_\C^{\rm sym} k}\otimes E_{\xi}\to (p^*H)^{\otimes_\C^{\rm sym} k}\otimes E_\xi,$$
is an isomorphism, where $\gamma_k$ is as in Proposition \ref{charhellfofodotwo}. Consider a differential operator $D_\gamma\in \mathcal{DO}^2_H(X;E)$ such that $D_\gamma-\Delta_H$ is a differential operator of classical order $1$ whose classical symbol satisfies 
$$\sigma^1(D_\gamma-\Delta_H)|_{H^\perp}=\gamma.$$ 

The operator $D_\gamma$ is $H$-elliptic and its $K$-homology class is, for large enough $N>>0$, represented by the geometric cycle 
$$(S(H^\perp)\times S^1,E_\gamma^N,p),$$
where $E_\gamma^N\to S(H^\perp)\times S^1$ is defined from clutching 
$$\bigoplus_{k=0}^N (p^*H)^{\otimes_\C^{\rm sym} k}\times [0,1]\to S(H^\perp)\times [0,1].$$
along the automorphism 
$$\bigoplus_{k=0}^N\gamma_k: \bigoplus_{k=0}^N (p^*H)^{\otimes_\C^{\rm sym} k}\circlearrowleft.$$
\end{thm*}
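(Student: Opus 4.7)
The strategy is to apply Corollary \ref{indexofmrulaknon}. Since the polycontact assumption makes $X$ an $\pmb{F}$-regular Carnot manifold with $\Gamma_X=H^\perp\setminus X$, and since by the discussion preceding the theorem the Fock bundle $\mathpzc{F}=\bigoplus_k(p^*H)^{\otimes_\C^{\rm sym} k}$ realizes the flat-orbit bundle of Theorem \ref{maintechforrep} with trivial metaplectic correction $\mathfrak{M}(\mathpzc{F})=1$, it suffices to exhibit an elliptic complex on $\Gamma_X$ whose image under
\[ K^0(\Gamma_X)\xrightarrow{\otimes\mathpzc{F}} K_0(I_X)\to K_0(C^*(T_HX)) \]
represents $[\sigma_H(D_\gamma)]$, and then to rewrite that complex as a compact Baum--Douglas cycle on $X$.

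First, $H$-ellipticity of $D_\gamma$ is an immediate consequence of the characterization in Proposition \ref{charhellfofodotwo}: the Bargmann representation of the osculating Heisenberg group at $\xi\in H^\perp\setminus X$ diagonalises $\sigma_H(D_\gamma)$ on the Fock fibre $\mathpzc{F}_\xi\otimes E_\xi$ with $k$-th block equal to $c(\xi,k)\cdot\mathrm{id}+\gamma_k(\xi)$, where $c(\xi,k)\geq 0$ comes from the symbol of $\Delta_H$ acting as a positive multiple of the number operator. The invertibility of each $\gamma_k(\xi)$ and positivity of $c(\xi,k)$ give the pointwise Rockland condition.

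Next I would carry out Steps 2 and 3 of the algorithm on page \pageref{blablaalgorithm} explicitly. The distributional multiplier lifting $\sigma_H(D_\gamma)$ acts block-diagonally on $\mathpzc{F}\otimes p^*E$, and for large $k$ the sub-Laplacian block dominates the fibrewise polynomial $\gamma$: on any compact subset of $\Gamma_X$ there is an $N\gg 0$ such that the straight-line homotopy from $c(\xi,k)\mathrm{id}+\gamma_k(\xi)$ to $c(\xi,k)\mathrm{id}$ remains invertible for every $k\geq N$. After this homotopy the tail $k\geq N$ is trivial in $K^0(\Gamma_X)$, leaving the finite-rank class represented by the automorphism $\bigoplus_{k=0}^N\gamma_k$ of $\bigoplus_{k=0}^N(p^*H)^{\otimes_\C^{\rm sym} k}\otimes p^*E$ over $\Gamma_X$, i.e.\ a class in $K^1(\Gamma_X)\cong K^1(S(H^\perp))$.

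Finally, the deformation retract $\Gamma_X\simeq S(H^\perp)\times\R_{>0}$ and the identification of $S^1$ with the one-point compactification of $[0,1]$ match the suspension/clutching picture of $K^1(S(H^\perp))$ with vector bundles on $S(H^\perp)\times S^1$; the automorphism $\bigoplus_{k=0}^N\gamma_k$ clutches $\bigoplus_{k=0}^N(p^*H)^{\otimes_\C^{\rm sym} k}\times[0,1]\to S(H^\perp)\times[0,1]$ into exactly the bundle $E_\gamma^N$. Invoking Remark \ref{commenonmodedl} to pass from the non-compact elliptic-complex cycle on $\Gamma_X$ to an ordinary compact Baum--Douglas cycle then yields $(S(H^\perp)\times S^1, E_\gamma^N, p)$, with $p$ the natural projection. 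The hard part is the finite-rank truncation: one needs a uniform estimate across $\Gamma_X$ guaranteeing that the straight-line homotopy stays inside the invertible endomorphisms for the entire tail, and this rests on combining coercivity of the Bargmann-represented sub-Laplacian on high number-operator eigenspaces with the fibrewise polynomial growth of $\gamma$.
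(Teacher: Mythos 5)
Your proposal follows the same route as the paper: it invokes Corollary \ref{indexofmrulaknon}, characterizes $H$-ellipticity via Proposition \ref{charhellfofodotwo} acting block-diagonally on the Fock fibres, and performs the localization/finite-rank truncation that underlies Lemmas \ref{redielaclaclad}--\ref{redielaclacladcompactgamma} and Corollary \ref{charhellfofodotwocordonk}, then passes to a compact Baum--Douglas cycle via the clutching construction. You also correctly identify the crux -- that $\gamma$ is homogeneous of degree one so the sub-Laplacian block (growing like $(1+k)|\xi|$) dominates uniformly on the compact $S(H^\perp)$, whence by dilation homogeneity the tail class vanishes on all of $\Gamma_X$; one small slip is writing ``$c(\xi,k)\cdot\mathrm{id}$'' where the number-operator block is a positive operator rather than a scalar, and ``$K^1(\Gamma_X)$'' where the truncated class lives in $K^0(\Gamma_X)\cong K^1(S(H^\perp))$, but neither affects the substance.
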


\begin{remark}
\label{commedoned}
Theorem \ref{indexbacaocm} extends results for contact manifolds from \cite{baumvanerp} to polycontact manifolds. It is not clear how interesting index theorems such as Theorem \ref{indexbacaocm} are when the depth is larger than two. In Theorem \ref{indexforengel}, we show that for a four-dimensional manifold with an Engel structure -- a depth three Carnot structure -- and an analogue of $D_\gamma$ has vanishing index. This result is more or less immediate from work of Helffer \cite{helffercomp} and extends to any Carnot manifold such that $T^{-2}X/T^{-1}X$ is a line bundle and $\mathfrak{t}_HX_x$ having rank $>2$ in all points $x\in X$. Compare also to the discussion in Remark \ref{akfnaodns} of the vanishing of the index of $D_\gamma$ for step length $>2$.
\end{remark}

Let us also consider an index theorem for Toeplitz operators on polycontact manifolds. If $X$ is the boundary of a strictly pseudoconvex domain $\Omega$ in a complex manifold, it inherits a co-oriented contact structure. There is (under a technical assumption always valid if $\dim(X)\geq 5$) a distinguished projection $P\in \Psi^0_H(X)$ called the Szegö projection. The image $H^2(X):=PL^2(X)$ consists of all $L^2$-functions on $X=\partial \Omega$ with a holomorphic extension to $\Omega$. A classical object of study is that of Toeplitz operators
$$T_u:=PuP:H^2(X;\C^N)\to H^2(X;\C^N),$$
defined from a matrix valued $u\in C(X;M_N(\C))$. The index theory of Toeplitz operators on boundaries of strictly pseudoconvex domains was described by Boutet-de Monvel \cite{bdmindex}. The result was extended by Epstein-Melrose \cite{melroseeptein} to co-oriented contact manifolds. Their index theory in fact comes from a $K$-homology class $[P]:=[(L^2(X;E),2P-1)]\in K_1(X)$, that was described in \cite{baumvanerpbdmindex}.

We consider projections $P\in \Psi^0_H(X;E)$, for some vector bundle $E$, on more general Carnot manifolds $X$. We restrict to the case that $X$ has depth $2$. It was proven in \cite{vanerpszego} that if $X$ has depth $2$, projections in $\Psi^0_H(X)$ with nontrivial symbol exist if and only if $X$ is polycontact. And in this case, $P$ or $1-P$ is a Hermite operator, i.e. one of the two Carnot symbols $\sigma_H(P)$ and $\sigma_H(1-P)$ is supported in the flat orbits $\Gamma_X=H^\perp\setminus X$. We note that if $P\in \Psi^0_H(X;E)$ is a Hermite operator and $\sigma_H^0(P)$ is a projection, then $\sigma_H^0(P)(E\otimes \mathpzc{F})$ defines a complex vector bundle on $\Gamma_X=H^\perp\setminus X$. The following index theorem is found as Theorem \ref{bdmdmdam} in the body of the text and extends the index theorems \cite{baumvanerpbdmindex,bdmindex} from boundaries of strictly pseudoconvex domains to regular polycontact manifolds.

\begin{thm*}
Let $X$ be a regular polycontact manifold defined from a polycontact bundle $H$ and $E\to X$ a complex vector bundle. Assume that $P\in \Psi^0_H(X;E)$ is an idempotent Hermite operator, so $(L^2(X;E),2P-1)$ is an odd $K$-homology cycle. The associated $K$-homology class is denoted by $[P]:=[(L^2(X;E),2P-1)]\in K_1(X)$. 

The $K$-homology class $[P]$ is represented by the geometric cycle 
$$(S(H^\perp),\sigma_H^0(P)(E\otimes \mathpzc{F}),p).$$
In particular, for any $u\in C(X,GL_N(\C))$, the Toeplitz operator
$$PuP:PL^2(X;E\otimes \C^N)\to PL^2(X;E\otimes \C^N),$$
is Fredholm and its index is computed from the formula
$$\ind(PuP)=\int_{S(H^\perp)} \ch[u]\wedge \ch[\sigma_H^0(P)(E\otimes\mathpzc{F})]\wedge \mathrm{Td}(H^\perp).$$
\end{thm*}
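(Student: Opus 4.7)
The plan is to apply Corollary \ref{indexofmrulaknon} to the $H$-elliptic operator $D:=2P-1$. Since $P^2=P$, we have $\sigma_H(D)^2=1$ on $\widehat{T_HX}\setminus X$, so $D$ is $H$-elliptic, and by definition $[D]=[P]\in K_1(X)$. It then suffices to identify an elliptic complex $\xi_D$ on $\Gamma_X=H^\perp\setminus X$ whose image under \eqref{comoampadadm} is $[\sigma_H(D)]\in K_*(C^*(T_HX))$, and then to reduce the resulting non-compact geometric cycle on $\Gamma_X$ to a compact cycle on $S(H^\perp)$.

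For the first step, the Hermite assumption on $P$ means that $\sigma_H(P)$ is supported on the flat orbit set $\Gamma_X$, so under the $*$-isomorphism $\pi_{\musFlat}$ of Theorem \ref{maintechforrep} applied to the Fock bundle $\mathpzc{F}$ (which is the bundle of flat orbit representations realised via Bargmann for polycontact manifolds), the symbol $\sigma_H(P)$ corresponds to a genuine endomorphism of $E\otimes \mathpzc{F}$, namely $\sigma_H^0(P)$, and this endomorphism is a fibrewise projection because $P^2=P$. By hypothesis its range $\sigma_H^0(P)(E\otimes \mathpzc{F})$ is a complex vector bundle on $\Gamma_X$; taking $\xi_D$ to be this bundle (placed in even degree against the zero bundle in odd degree, so that the symbol is a projection of bundles) gives an elliptic complex mapping to $[\sigma_H(D)]$ under $\otimes\mathpzc{F}$. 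In the polycontact case the metaplectic line bundle $\mathfrak{M}(\mathpzc{F})$ is trivial, so the correction term in Corollary \ref{indexofmrulaknon} drops out.

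For the second step, both $\mathpzc{F}$ and $\sigma_H^0(P)$ are equivariant of degree zero with respect to the Heisenberg $\R_{>0}$-dilations on fibres of $H^\perp$. Under the diffeomorphism $H^\perp\setminus X\cong S(H^\perp)\times \R_{>0}$, these data are pulled back from $S(H^\perp)$. Since $\tilde{K}_*^{\rm geo}(X)$ is invariant under bundle projection along contractible fibres (as in the discussion preceding Remark \ref{commenonmodedl}), the geometric cycle on $\Gamma_X$ is equivalent to its restriction $(S(H^\perp),\sigma_H^0(P)(E\otimes \mathpzc{F}),p)$, giving the first assertion. The index formula then follows by pairing $[u]\in K^1(X)$ with the geometric cycle via the standard Baum-Douglas index formula $\ind(PuP)=\langle [u],[P]\rangle$: pulling back $u$ to $S(H^\perp)$ and applying Chern-Connes yields $\int_{S(H^\perp)} \ch[u]\wedge\ch[\sigma_H^0(P)(E\otimes\mathpzc{F})]\wedge \mathrm{Td}(H^\perp)$, where $\mathrm{Td}(H^\perp)$ arises because $S(H^\perp)$ inherits its spin$^c$-structure from the canonical complex structure on $p^*H$ adapted to the fibrewise symplectic form $\omega_\xi$.

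The main obstacle is the first step, namely justifying that $\sigma_H^0(P)(E\otimes \mathpzc{F})$ is indeed a locally trivial vector bundle of finite rank on $\Gamma_X$, and that its image under \eqref{comoampadadm} really is $[\sigma_H(D)]$. The finite-rank and local triviality are ultimately a consequence of the characterisation of Hermite symbols on polycontact manifolds from \cite{vanerpszego} combined with the structure of flat orbit representations in Theorem \ref{maintechforrep}; the identification under \eqref{comoampadadm} is a formal consequence of the fact that $\pi_\musFlat$ intertwines $\sigma_H(P)$ with $\sigma_H^0(P)$ acting on $\mathpzc{F}$. Once these points are in place, the rest of the argument is a routine instance of Corollary \ref{indexofmrulaknon} combined with Baum-Douglas pairing.
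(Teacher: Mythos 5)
Your proposal has a genuine gap at the heart of the argument, rooted in a degree and support mismatch. The class $[\sigma_H(2P-1)]$ lives in $K_1(C^*(T_HX))$, and any pre-image obtained via $I_X\hookrightarrow C^*(T_HX)$ must be an element of $K^1(\Gamma_X)$, not $K^0(\Gamma_X)$. Your proposed $\xi_D$ -- the bundle $\sigma_H^0(P)(E\otimes\mathpzc{F})$ against the zero bundle with zero map -- is not an elliptic complex in the Atiyah sense on the non-compact manifold $\Gamma_X$: the zero map is never an isomorphism off a compact, so it does not define a class in compactly-supported $K^0(\Gamma_X)$, and hence does not enter the mapping \eqref{comoampadadm} at all. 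The naive dictionary between vector bundles and $K^0$ only holds for compact base spaces. Consequently you cannot directly invoke Corollary \ref{indexofmrulaknon} (which is formulated for even degree via elliptic complexes) as stated.

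The second gap is the reduction from $\Gamma_X$ to $S(H^\perp)$. You appeal to an ``invariance of $\tilde{K}_*^{\rm geo}(X)$ under bundle projection along contractible fibres'', which is not a property of geometric $K$-homology; if it were, every cycle $(M\times\R,\xi,f)$ would collapse to a cycle on $M$, killing suspension. The actual mechanism in this odd-degree situation is Bott periodicity $K^1(\Gamma_X)\cong K^0(\Gamma^\partial_X)=K^0(S(H^\perp))$, and the corresponding odd version of $\mathsf{PD}^{\rm geo}_H$ (Definition \ref{geomeodmoemdheisenberg}) is built directly from a relative $K$-theory cycle over $\Gamma_X^\partial$, producing a compact cycle on $S(H^\perp)$. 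The paper's proof encapsulates exactly these two points in Lemma \ref{oddredielaclacladcompactgamma}, which constructs the pre-image of the odd symbol class as the vector bundle class $[\sigma_H(P)(\mathpzc{F}_\partial\otimes E)]$ on $S(H^\perp)$ under Bott, and then concludes via Corollary \ref{charhellfofodszego}, Proposition \ref{charhellfofodszegowithgeo} and Theorem \ref{geomesolution}. Your identification of the Fock bundle, the triviality of $\mathfrak{M}(\mathpzc{F})$, and the role of the Hermite hypothesis are all correct and line up with the paper; what is missing is the passage through the odd-degree machinery.
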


\section{Summary of contents}

\subsection*{Part \ref{sec:reptheory}}
In the second part of the monograph we discuss the representation theory of nilpotent Lie groups. While this is a well studied subject, and well understood due to Kirillov's orbit method, we focus on global aspects of bundles of representations over open Hausdorff subsets of the representation space. A crucial role is played by the work of Pedersen \cite{pedersen84,pedersen88,pedersen89,pedersen94}, Lion \cite{lion76} and Vergne \cite{vergneCR70,vergneBullMathFra72}. The individual results in this part of the work are far from novel, but we were not able to find references piecing together the results in the way needed for index theory which explains much of the length of this part. The reader well acquainted with the representation theory of nilpotent Lie groups can skip this part of the monograph on a first reading and return to it when needed. 

In Section \ref{subsec:reptheory} we review constructive aspects of representation theory of a nilpotent simply connected Lie group via Kirillov's orbit method seen through the procedure of geometric quantization. In Section \ref{subsecfinestrat} we discuss Pedersen's fine stratification of $\mathfrak{g}^*$ and the spectrum $\widehat{\mathsf{G}}$ disassembling the spectrum into a finite hierarchy of Hausdorff subsets. In Section \ref{subsecflatorbitssos} we further analyze the flat orbits and recall the various characterizations of flat orbits and show that the associated bundle of representations glue together. In Section \ref{sec:autoromom} we discuss the automorphisms of a nilpotent simply connected Lie group and its action on the set of flat orbits. In Section \ref{ctstructrifoeod} we study the continuous trace algebra structure of the ideal $I_{\mathsf{G}}$ defined from the open set of flat orbits and its $\Aut(\mathsf{G})$-action. In Section \ref{subsectionexamples} we give examples of Carnot-Lie groups and describe the bundle of flat orbit representations. 

\subsection*{Part \ref{gropodoprar}}
In the third part of this work we study groupoids, primarily those associated with bundles of nilpotent Lie groups and related deformations. The motivation to do so is that the tangent bundle of a Carnot manifold can be seen as a groupoid globally describing the nilpotent, osculating Lie group structure on each fibre. Homological information about $H$-elliptic operators on Carnot manifolds is naturally contained in $K_*(C^*(T_HX))$ and the aim of this part is to set the stage for computations in such $K$-groups. The three main results in this part are in terms of the osculating Lie groupoid $T_HX$ the following:
\begin{itemize}
\item firstly, the known Connes-Thom isomorphism of Nistor \cite{nistorsolvabel} producing isomorphisms $K_*(C^*(T_HX))\cong K^*(T^*X)$;
\item secondly, a Connes-Thom isomorphism for the ideal of flat orbits $K_*(I_X)\cong K^*(\Xi_X)$ compatible with Nistor's Connes-Thom isomorphism and with the Morita equivalence defined from a bundle of flat orbit representations;
\item thirdly, that the inclusion map $K_*(I_X) \rightarrow K_*(C^*(T_HX))$ is surjective when $X$ is $\pmb{F}$-regular. 
\end{itemize}

 In Section \ref{sec:liegrohoad} we review definitions, basic facts and examples from the theory of Lie groupoids and algebroids. We describe central decomposition of a Lie group as a twisted semidirect product in Section \ref{sec:groupoidforflatorbits} and these results are extended to a more general groupoid situation in Section \ref{subsec:loctrivalandnda}. In Section \ref{subsec:nistorctsubsn} we discuss Nistor's Connes-Thom isomorphism $K_*(C^*(T_HX))\cong K^*(T_HX)$ and its restriction to $K_*(I_X) \simeq K^*(\Xi_X)$ -- the construction relies on classical constructions of adiabatic groupoids, parabolical adiabatic groupoids and also a cocycle deformation thereof. At the end of the section we prove surjectivity of the inclusion map $K_*(I_X) \rightarrow K_*(C^*(\mathcal{G}))$. In Section \ref{sec:connesthomandadiaofofd} we study the compatibility of these isomorphisms showing that the Connes-Thom isomorphism $K_*(I_X) \simeq K^*(\Gamma_X)$ differs from the Morita equivalence defined from a bundle of flat orbit representations $\mathcal{H}$ up to a line bundle -- the metaplectic correction line bundle $\mathfrak{M}(\mathcal{H})$. 

\subsection*{Part \ref{sec:carnotmfds}}

The underlying geometric object of study in this monograph is Carnot manifolds. In the fourth part we give a general overview of Carnot manifolds. We explore the general properties of depth two Carnot manifolds and numerous other examples. An important object we recall in this part is the osculating tangent groupoid $T_HX\to X$ a fibre bundle where each fibre is a simply connected nilpotent Lie group. We introduce the notion of $\pmb{F}$-regularity of a Carnot manifold in which case the fibre of $T_HX\to X$ is (up to isomorphism) the same Lie group and it admits flat orbits. The geometry of Carnot manifolds is well studied, for instance in \cite{choiponge,Dave_Haller1,Dave_Haller2,melinoldpreprint,Morimoto,sadeghhigson,vanErp_Yunckentangent}. In this part we mainly set notations and provide examples. In Section \ref{subsec:carnot} we recall the geometry of Carnot manifolds. In Section \ref{sec:examcarn} the reader can find several examples of Carnot manifolds. In Section \ref{subsec:parabolictanget} we recall the construction of the parabolic tangent groupoid from \cite{vanErp_Yunckentangent}.

\subsection*{Part \ref{part:pseudod}}

In this part of the work we study operators in the Carnot calculus on general Carnot manifolds. We use the Carnot calculus of Melin \cite{melinoldpreprint} and van Erp-Yuncken \cite{vanErp_Yuncken} which was further studied by Dave-Haller \cite{Dave_Haller1, Dave_Haller2}. Here we provide the technical set up for the  studying index theory in the Carnot calculus. 

We recall the Carnot calculus of van Erp-Yuncken \cite{vanErp_Yuncken} in Section \ref{pseudcalcalc}. We also provide a range of different examples: for instance the operators $\Delta_H+\gamma T$ studied by \cite{baumvanerp}, Bernstein-Gelfand-Gelfand complexes, and Szegö projections. We also recall the graded calculus of Dave-Haller \cite{Dave_Haller1}. In Section \ref{subsec:hellleleldlda} we set up the represented symbol calculus following Dave-Haller \cite{Dave_Haller1, Dave_Haller2} and revisit the examples $\Delta_H+\gamma T$, the Bernstein-Gelfand-Gelfand complexes, and the Szegö projections. Using the Rockland condition from \cite{Dave_Haller1}, and applying standard techniques of pseudodifferential calculus, the Fredholm theory of elliptic pseudodifferential operators readily extends to $H$-elliptic operators on a general Carnot manifold. 

The main technical contributions of Part \ref{part:pseudod} are found in Section \ref{secondnaaoaoac}. Here we refine analytic properties of the symbol calculus of $H$-elliptic operators. We prove, for $\pmb{F}$-regular manifolds, that the symbols of $H$-elliptic operators behave well on the $C_0(\Gamma_X)$-Hilbert $C^*$-module of flat orbit representations, allowing us to define $K$-theory cycles and moving freely between bounded and unbounded models thereof. For $\pmb{F}$-regular manifolds, we characterize $H$-ellipticity via the condition that the principal symbol restricted to the flat orbits is invertible as a multiplier on the $C_0(\Gamma_X)$-Hilbert $C^*$-module of flat orbit representations (suitably modified with respect to the order of the symbol). 

In Section \ref{sec:ktheoomon} we utilize the technical tools of Section \ref{secondnaaoaoac} to study the $K$-theoretical invariants in $K_*(C^*(T_HX))$ and $K^*(\Gamma_X)$. For the purposes of index theory, a goal is to construct a suitable lift of the symbol class $[\sigma_H(D)]\in K_*(C^*(T_HX))$ to an element of $K^*(\Gamma_X)$, i.e. an elliptic complex on $\Gamma_X$. This is a two step process: first we need to localize the support of $[\sigma_H(D)]\in K_*(C^*(T_HX))$ to $\Gamma_X$, i.e. construct a pre-image under $K_*(I_X)\to  K_*(C^*(T_HX))$, and then we need to approximate by a finite rank elliptic complex, i.e. implement the Morita equivalence $K_*(I_X)\cong K^*(\Gamma_X)$. We prove that this is possible in the abstract, and provide a range of techniques to do so. On a polycontact manifold we carry out these computations in the examples coming from $\Delta_H+\gamma T$-operators and projections of Hermite type.

\subsection*{Part \ref{dualityapapadoda}}

In the last part of this work we put the various pieces together. The goal of this part is to set the index problem for $H$-elliptic operators in the Carnot calculus in a $K$-homological context via the dualities discussed above in Theorem \ref{commtererar} and apply the $K$-theory computations from Part \ref{part:pseudod}. 

In Section \ref{geomedadonaodn} we develop a variation of geometric $K$-homology that uses geometric cycles modelled on noncompact spin$^c$-manifolds but places the compactness condition on the vector bundle data. Section \ref{geomedadonaodn} closely follows standard developments in Baum-Douglas geometric $K$-homology, cf. the related work \cite{emermeyer}, with the additional technical details from non-compactness being addressed using techniques of Bunke \cite{bunkerelative} and Kucerovsky \cite{dantheman}. The noncompact model for geometric $K$-homology is put to use in Section \ref{sec:ordinadadp} in order to study $K$-homological dualities. In spirit, the constructions of Section \ref{sec:ordinadadp} closely follows \cite{baumvanerp} however in its generality it relies heavily on the developments in Part \ref{sec:reptheory}, \ref{gropodoprar}, \ref{sec:carnotmfds}, and \ref{part:pseudod}. In Section \ref{sec:indexingener} we conclude index theorems for $H$-elliptic operators that after the set up of Section \ref{sec:ordinadadp} are readily deduced.

This work ends with Section \ref{sec:grafadpknapdnarock} where we discuss the problem of solving the index problem for graded Rockland sequences. In an abstract setting, its solution is similar to that of $H$-elliptic operators. However, in concrete settings such as BGG-complexes we believe it fruitful to study further, especially in an equivariant context.

\section{Acknowledgements}

MG was supported by the Swedish Research Council Grant VR 2018-0350. 
We thank Bo Berndtsson for interesting discussions about weakly pseudoconvex domains, Ryszard Nest for discussions about everything between index theory and cyclic cohomology, Alexander Gorokhovsky for explaining his work with Erik van Erp and the algebra of Carnot symbols on a contact manifold and Heiko Gimperlein for discussions leading up to Remark \ref{regularidremak}. We are also grateful for interesting discussions with Robin Deeley, Eske Ewert, Omar Mohsen, Jonathan Rosenberg and Robert Yuncken. We thank Bernard Helffer for helping us locate the manuscript \cite{helffercomp}.

\part{Simply connected nilpotent Lie groups}
\label{sec:reptheory}

\begin{center}
{\bf Introduction to part}
\end{center}
Each fibre in the tangent bundle of a Carnot manifold carries the structure of a nilpotent Lie group (see more below in Part \ref{sec:carnotmfds}) -- the osculating Lie group in the fibre. The symbol calculus relates the Carnot calculus on a Carnot manifold to the representation theory of the osculating Lie group. Ellipticity in the calculus is detectable via the Rockland condition, a condition on the action of the symbols in representations of the osculating Lie group. In this section, we review the prerequisites of nilpotent Lie groups and their representation theory that are necessary for understanding the index theory on Carnot manifolds. We have allowed for a generous interpretation of what counts as a \emph{necessary prerequisite}. At the cost of a few extra pages, we have presented the material with a broad perspective that is not directly used later on, but shed light on the $\pmb{F}$-regularity condition used in the main results of this work. 

The description of the representations is well known from Kirillov theory. What we will mainly be concerned with is the global structure of the flat orbit representations and how they transform under automorphisms. The general goal is to construct global bundles of flat orbit representations to be used in the index theory of $H$-elliptic operators -- operators elliptic in the Carnot calculus. We suspect that the individual pieces in this part of the monograph pose no surprise to the reader familiar with nilpotent Lie groups, but in lack of a reference for the construction and transformation properties of the global bundle of flat orbit representations we have included a large amount of details. 

In Section \ref{subsec:reptheory} we review constructive aspects of representation theory of a nilpotent simply connected Lie group via Kirillov's orbit method seen through the procedure of geometric quantization, following work of Pedersen \cite{pedersen84,pedersen88,pedersen89,pedersen94}. Geometric quantization is a general method producing a Hilbert space representation from certain symplectomorphisms on a symplectic manifold with a prequantum line bundle and a polarization. 
The basis for such constructions is the fact that simply connected nilpotent groups $\mathsf{G}$ possess a large family of symplectic manifolds where $\mathsf{G}$ acts by symplectomorphisms, namely the coadjoint orbits. We polarize them using the Vergne polarization \cite{vergneCR70,vergneBullMathFra72}. For global considerations, there are explicit unitary intertwiners constructed by Lion \cite{lion76} that we later use to glue together bundles of representations over different parts of the representation space.

In Section \ref{subsecfinestrat} we discuss Pedersen's fine stratification of $\mathfrak{g}^*$ and the spectrum $\widehat{\mathsf{G}}$. Given a Jordan-Hölder basis, $\mathfrak{g}^*$ disassembles into a finite hierarchy of Hausdorff subsets $\Xi_1, \Xi_2, \ldots, \Xi_m$ such that $\Xi_j$ is Zariski open in $\mathfrak{g}^*\setminus \bigcup_{i < j} \Xi_i$. Each $\Xi_j$ is invariant under the coadjoint action which is free and proper on it, so the quotient spaces $\Gamma_j$ stratifies the spectrum $\widehat{\mathsf{G}}$. This stratification allows to assemble representations of $\mathsf{G}$ into bundles of representations over the fine stratas. Thus, instead of a set of Hilbert space representations numerated by coadjoint orbits, we obtain a finite sequence of representations on Hilbert $C^*$-modules $\mathcal{H}_i$ numerated by the fine stratas. 

In Section \ref{subsecflatorbitssos} we further analyze the flat orbits and recall the various characterizations of flat orbits. We prove that the  set of flat orbits is covered by the top fine stratas when varying the Jordan-Hölder bases. The set of flat orbits has worse "local" properties than top fine stratas - there are bundles of polarizations on the top fine strata that need not extend over the set of flat orbits. We do however show that the associated bundle of representations glue together. The set of flat orbits however has better "global" properties - it is $\Aut(\mathsf{G})$-invariant. We recall the property of a simply connected nilpotent lie group to have flat orbits. 

In Section \ref{sec:autoromom} we discuss the automorphisms of a nilpotent simply connected Lie group. The group $\mathsf{Aut}(\mathsf{G})$ acts on the spectrum preserving the set of flat orbits. A class of automorphisms of particular interest comes from dilations, or equivalently gradings; it plays a crucial role in the pseudodifferential calculus on Carnot manifolds. Nilpotent Lie groups equipped with a dilation are called Carnot-Lie groups. 

In Section \ref{ctstructrifoeod} we study the continuous trace algebra structure of the ideal $I_\Gamma$ defined from the open set of flat orbits. The automorphism group of $\mathsf{G}$ acts on $I_\Gamma$, and we are particularly interested in constructing an equivariant Morita equivalence $I_\Gamma\sim_M C_0(\Gamma)$, or more precisely a bundle of flat orbit representations which is covariant for an action of a subgroup of automorphisms. Such constructions produce induced bundles of flat orbits on Carnot manifolds. 
In Section \ref{subsectionexamples} we give examples of Carnot-Lie groups and describe the bundle of flat orbit representations.

\section{Representation theory}
\label{subsec:reptheory}
The spectrum of a nilpotent Lie group can be described as the space of coadjoint orbits by Kirillov theory. For a nilpotent Lie group $\mathsf{G}$ we let $\hat{\mathsf{G}}$ denotes its spectrum, i.e. the set of irreducible unitary representations of $\mathsf{G}$ equipped with the Fell topology. More details on the representation theory of nilpotent groups can be found in for instance \cite{Corwin_Greenleaf,kirillovbook,pukan67jfa, pukan67,pukanens71}. The coadjoint action of $\mathsf{G}$ on $\mathfrak{g}^*$ is given by
\[ \mathsf{Ad}^* : \mathsf{G}  \rightarrow \mathrm{End}(\mathfrak{g}^*), \quad [\mathsf{Ad}^*(g)(\xi)](X)= \xi( \mathsf{Ad}(g^{-1})(X)),\]
for $g\in \mathsf{G}$, $\xi \in \mathfrak{g}^*$ and $X\in \mathfrak{g}$ where $\mathsf{Ad}$ denotes the adjoint action of $\mathsf{G}$ on its Lie algebra. The orbits of this action are called coadjoint orbits. An element $\xi \in \mathfrak{g}^*$ defines a coadjoint orbit by
\[ \mathcal{O}_\xi = \mathsf{Ad}^*(\mathsf{G}) \{ \xi \}. \]
By construction, the mapping $\mathsf{G} \to \mathcal{O}_\xi$, $g\mapsto \mathsf{Ad}^*(g)\xi$ defines a diffeomorphism $\mathsf{G}/\mathsf{Stab}(\xi)\cong \mathcal{O}_\xi$. Here $\mathsf{Stab}(\xi):=\{g\in \mathsf{G}:\, \mathsf{Ad}^*(g)\xi=\xi\}$ is the stabilizer of $\xi$. The mapping $\mathsf{G} \to \mathcal{O}_\xi$ identifies $T_\xi\mathcal{O}_\xi$ with $\mathfrak{g}/\mathsf{stab}(\xi)$ where $\mathsf{stab}(\xi)=\{X\in \mathfrak{g}:\, \xi([X,Y])=0\; \forall Y\in \mathfrak{g}\}$ denotes the Lie algebra of $\mathsf{Stab}(\xi)$.

Kirillov's orbit method describes a procedure which identifies $\widehat{\mathsf{G}}$ and the space of coadjoint orbits $\mathfrak{g}^* / \mathsf{Ad}^*$. One can make $\widehat{\mathsf{G}}$ into a topological space by equipping $\widehat{\mathsf{G}}$ with the Fell topology and $\mathfrak{g}^* / \mathsf{Ad}^*$ with the quotient topology. Then the mapping, described by Kirillov's theory, is a homeomorphism, see \cite{Corwin_Greenleaf}. Let us describe one possible way to set up this correspondence by means of polarizations of the orbits. This is described in full detail in \cite[Section 1]{pedersen88}.

Fix a coadjoint orbit $\mathcal{O}\in \mathfrak{g}^*/\mathsf{Ad}^*$. For $\xi\in \mathcal{O}$, the $2$-form $\omega_\xi(X,Y):=\xi([X,Y])$ on $\mathfrak{g}$ is $\mathsf{Stab}(\xi)$-invariant and its radical is exactly $\mathsf{stab}(\xi)$. The form $\omega_\xi$ is called the Kirillov form in $\xi$. The identification of $T_\xi\mathcal{O}_\xi$ with $\mathfrak{g}/\mathsf{stab}(\xi)$ induces a $\mathsf{G}$-equivariant symplectic form $\omega_{\mathcal{O}}$ on the coadjoint orbit $\mathcal{O}$. This construction depends only on $\mathcal{O}$ and not on the choice of $\xi\in \mathcal{O}$.

We define the prequantization of a coadjoint orbit $\mathcal{O}\in \mathfrak{g}^*/\mathsf{Ad}^*$ following \cite[Section 1.3]{pedersen88}. Observe that for any $\xi\in \mathfrak{g}^*$, the restriction $\xi|_{\mathsf{stab}(\xi)}$ is a Lie algebra character so $\chi_{0,\xi}:=\mathrm{exp}(i\xi|_{\mathsf{stab}(\xi)}):\mathsf{Stab}(\xi)\to U(1)$ is a well defined Lie group character. We can define a $\mathsf{G}$-equivariant hermitean line bundle $L_\mathcal{O}\to \mathcal{O}$ by fixing a $\xi\in \mathcal{O}$ and identifying the $\mathsf{G}$-equivariant hermitean line bundle $\mathsf{G}\times_{\chi_{0,\xi}}\C\to \mathsf{G}/\mathsf{Stab}(\xi)$ with a line bundle on $\mathcal{O}\cong \mathsf{G}/\mathsf{Stab}(\xi)$. It is readily verified that this line bundle does not depend on the choice of $\xi\in \mathcal{O}$. The smooth sections $C^\infty(\mathcal{O},L_\mathcal{O})$ can for any $\xi\in \mathcal{O}$ be identified with the space of $\mathsf{Stab}(\xi)$-invariant sections of $C^\infty(\mathsf{G},\C_{\chi_{0,\xi}})$, i.e. as a smooth function $\tilde{f}:\mathsf{G}\to \C$ such that 
$$\tilde{f}(gh)=\tilde{f}(g)\chi_{{0,\xi}}(h)^{-1},\quad \forall g\in \mathsf{G}, \; h\in \mathsf{Stab}(\xi).$$
We define a hermitean connection $\nabla_\mathcal{O}$ on $L_\mathcal{O}$ by for $v\in C^\infty(\mathcal{O},T\mathcal{O})$, $f\in C^\infty(\mathcal{O},L_\mathcal{O})$ and $\xi\in \mathcal{O}$ setting 
$$\nabla_{\mathcal{O},v} f(\xi)=[X.\tilde{f}](e)-i\xi(X)\tilde{f}(e),$$
where $\tilde{f}\in C^\infty(\mathsf{G})$ is the $\mathsf{Stab}(\xi)$-invariant section of $C^\infty(\mathsf{G},\C_{\chi_{0,\xi}})$ representing $f$ and $X\in \mathfrak{g}$ is any preimage under $\mathfrak{g}\to \mathfrak{g}/\mathsf{stab}(\xi)$ of $v_\xi$ under the canonical identification $T_\xi\mathcal{O}\cong \mathfrak{g}/\mathsf{stab}(\xi)$. The curvature of $\nabla_\mathcal{O}$ is $-i\omega_{\mathcal{O}}$. We define the prequantization of $\mathcal{O}$ as the pair $(L_\mathcal{O},\nabla_{\mathcal{O}})$.

A subbundle $F\subseteq T\mathcal{O}$ is said to be a polarization of $\mathcal{O}$ if it is a $\mathsf{G}$-equivariant involutive Lagrangian subbundle, i.e. $[F,F]\subseteq F$ and $F^\perp=F$ in the symplectic form $\omega_\mathcal{O}$. Since the action is transitive, a polarization is determined by a fibre $F_\xi\subseteq \mathfrak{g}/\mathsf{stab}(\xi)$ for any $\xi\in \mathcal{O}$. Indeed, for a fixed $\xi\in \mathcal{O}$, a subspace $\mathfrak{h}\subseteq \mathfrak{g}$ induces a polarization $F$ of $\mathcal{O}$ with $F_\xi=\mathfrak{h}/ \mathsf{stab}(\xi)$ if and only if $\mathfrak{h}$ is a real algebraic polarization of $\xi$, i.e. $\mathfrak{h}$ is a Lie subalgebra which is maximally isotropic with respect to $\omega_\xi$.  Note that if $\mathfrak{h}$ is maximally isotropic with respect to $\omega_\xi$ it follows that $\mathsf{stab}(\xi)\subseteq \mathfrak{h}$. Existence of polarizations of $\mathcal{O}$ thus follows from \cite{kirillovbook} and below we focus on the Vergne polarization (see Equation \eqref{vergndendoded}).

For a polarization $F$ of $\mathcal{O}$, define 
$$C^\infty_F(\mathcal{O},L_\mathcal{O}):=\{f\in C^\infty(\mathcal{O},L_\mathcal{O}): \; \nabla_{\mathcal{O},v}f=0\; \forall v\in C^\infty(\mathcal{O},F)\}.$$
If we fix a $\xi\in \mathcal{O}$, the polarization $F$ defines a real algebraic polarization $\mathfrak{h}_\xi$ of $\xi$. Let $\mathsf{G}_\xi:=\mathrm{exp}(\mathfrak{h}_\xi)\subseteq \mathsf{G}$. Since $\mathfrak{h}_\xi$ is isotropic for $\omega_\xi$, $\xi|_{\mathfrak{h}_\xi}$ is a Lie algebra character and $\chi_{F,\xi}:=\mathrm{exp}(\xi|_{\mathfrak{h}_\xi}):\mathsf{H}_\xi \to U(1)$ is a Lie group character. Write $\C_{\chi_{F,\xi}}$ for the associated representation of $\mathsf{H}_\xi$. We let $C^\infty(\mathsf{G},\C_{\chi_{F,\xi}})^{\mathsf{H}_\xi}$ denote the invariant subspace for the diagonal $\mathsf{H}_\xi$-action $h.f(g):=\chi_{F,\xi}(h)f(gh)$. The mapping 
\begin{equation}
\label{thevximap}
V_\xi:C^\infty(\mathsf{G},\C_{\chi_{F,\xi}})^{\mathsf{H}_\xi}\to C^\infty_F(\mathcal{O},L_\mathcal{O}), \quad V_\xi f(g\xi):=f(g),
\end{equation}
is a $\mathsf{G}$-equivariant isomorphism. We can identify $C^\infty(\mathsf{G},\C_{\chi_{F,\xi}})^{\mathsf{H}_\xi}$ with the smooth sections of the hermitean line bundle $L_{F,\xi}:=\mathsf{G}\times_{\chi_{F,\xi}}\C\to \mathsf{G}/\mathsf{H}_\xi$. As such, $V_\xi$ induces a pre-Hilbert space structure on 
$$C^\infty_{c,F}(\mathcal{O},L_\mathcal{O}):=V_\xi(C^\infty_c(\mathsf{G}/\mathsf{H}_\xi,L_{F,\xi})).$$ We denote the Hilbert space completion of this space by $\mathcal{H}_{F,\xi}(\mathcal{O})$ which carries a unitary $\mathsf{G}$-action. We shall later recall results of Pedersen showing that the Hilbert space $\mathcal{H}_{F,\xi}(\mathcal{O})$ can be identified $L^2(\R^{\dim(\mathcal{O})/2})$ and that the associated representation of $\mathfrak{g}$ is as first order differential operators with polynomial coefficients. It is immediate from the construction that $\mathcal{H}_{F,\xi}(\mathcal{O})=\ind_{\mathsf{H}_\xi}^{\mathsf{G}}(\chi_{F,\xi})$ and we can deduce the following result from Kirillov's orbit method (see \cite{kirillovbook} or \cite{Corwin_Greenleaf}).

\begin{proposition}
\label{unitarywowowod}
For any polarization $F$ of $\mathcal{O}$ and a $\xi\in \mathcal{O}$, the $\mathsf{G}$-representation $\mathcal{H}_{F,\xi}(\mathcal{O})$ is irreducible and depends on the choice of $F$ and $\xi$ only up to unitary equivalence. 
\end{proposition}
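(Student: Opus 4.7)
\emph{The plan} is to exploit the identification $\mathcal{H}_{F,\xi}(\mathcal{O})\cong \mathrm{Ind}_{\mathsf{H}_\xi}^{\mathsf{G}}(\chi_{F,\xi})$ already noted in the text, so that both claims become standard statements about monomial representations of nilpotent Lie groups. I would split the argument into three steps: irreducibility, independence of the base point $\xi$, and independence of the polarization $F$.

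\emph{Step 1: Irreducibility.} The subspace $\mathfrak{h}_\xi\subseteq \mathfrak{g}$ associated with $F$ is by construction a Lie subalgebra subordinate to $\xi$, i.e.\ $\xi([\mathfrak{h}_\xi,\mathfrak{h}_\xi])=0$, and is maximally isotropic for $\omega_\xi$, hence of dimension $\dim\mathfrak{g}-\tfrac{1}{2}\dim\mathcal{O}$. Kirillov's irreducibility theorem for real polarizations of nilpotent Lie groups then applies, asserting that $\mathrm{Ind}_{\mathsf{H}_\xi}^{\mathsf{G}}(\chi_{F,\xi})$ is irreducible. I would reproduce the standard proof by induction on $\dim\mathsf{G}$: pick a nonzero $Z$ in the center of $\mathfrak{g}$ and split into two cases according to whether $\xi(Z)=0$, in which case the whole setup descends to $\mathsf{G}/\exp(\R Z)$ by normality of the central subgroup, or $\xi(Z)\neq 0$, in which case one chooses a codimension-one ideal $\mathfrak{g}_0\subset\mathfrak{g}$ through $Z$ and invokes Mackey's theorem to identify $\mathrm{Ind}_{\mathsf{H}_\xi}^{\mathsf{G}}(\chi_{F,\xi})$ with an induced representation from the exponential of $\mathfrak{g}_0$ whose irreducibility follows by induction.

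\emph{Step 2: Independence of $\xi$.} If $\xi'=\mathsf{Ad}^*(g)\xi$ for some $g\in \mathsf{G}$, then the same polarization $F$ of $\mathcal{O}$ is realized at $\xi'$ by the conjugate subalgebra $\mathsf{Ad}(g)\mathfrak{h}_\xi$, and left translation by $g$ descends along \eqref{thevximap} to a $\mathsf{G}$-equivariant unitary $\mathcal{H}_{F,\xi}(\mathcal{O})\to \mathcal{H}_{F,\xi'}(\mathcal{O})$. Thus the unitary equivalence class of the representation only depends on $F$, not on the base point.

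\emph{Step 3: Independence of $F$.} This is the main obstacle. Given polarizations $F_1,F_2$ of $\mathcal{O}$ corresponding to real polarizations $\mathfrak{h}_1,\mathfrak{h}_2$ of a common $\xi$, I would construct the Kirillov--Bernat--Vergne intertwiner. In the case when $\mathfrak{h}_1+\mathfrak{h}_2$ is again a Lie subalgebra (the ``in good position'' case) one writes an explicit orbital integral over $\mathsf{H}_1/(\mathsf{H}_1\cap \mathsf{H}_2)$, verifies convergence and $\mathsf{G}$-equivariance, and obtains a nonzero intertwining operator; irreducibility from Step~1 together with Schur's lemma then forces it to be a unitary after suitable normalization. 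For arbitrary pairs of polarizations one connects $\mathfrak{h}_1$ to $\mathfrak{h}_2$ by a finite chain of polarizations each in good position with its successor and composes the resulting unitaries. The hard part will be the careful convergence and normalization analysis of these orbital integrals; this is supplied by the explicit formulas of Lion \cite{lion76}, which we will reuse later in the text when gluing local bundles of representations into a global one.
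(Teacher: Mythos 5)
Your approach matches the paper's: the paper deduces irreducibility from the maximality of the polarization via the standard Kirillov orbit method (cited to \cite{kirillovbook} and \cite{Corwin_Greenleaf}, via the identification $\mathcal{H}_{F,\xi}(\mathcal{O})=\ind_{\mathsf{H}_\xi}^{\mathsf{G}}(\chi_{F,\xi})$), and obtains independence of $F$ and $\xi$ from Lion's intertwining operators, which the paper isolates as Lemma \ref{lionsintertwiners} with proof deferred to \cite{lion76}. The only cosmetic difference is that you sketch the inductive argument for irreducibility and propose chaining through polarizations in good position, whereas the paper simply cites the standard references and Lion's result for arbitrary pairs.
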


That the $\mathsf{G}$-representation $\mathcal{H}_{F,\xi}(\mathcal{O})$ is irreducible follows from the maximality of a polarization. That the representation up to unitary equivalence is independent of polarization follows from the next lemma due to Lion \cite{lion76}, that will play an important role also later in the monograph. We refer its proof to \cite{lion76}. For more details, see also \cite{lion80}.

\begin{lemma}
\label{lionsintertwiners}
Assume that $\mathfrak{h}_1$ and $\mathfrak{h}_2$ are two real algebraic polarizations at $\xi\in \mathfrak{g}^*$, inducing polarizations $F_1$ and $F_2$ of $\mathcal{O}_\xi$. Let $\mathsf{H}_1$ and $\mathsf{H}_2$ denote the two associated subgroups of $\mathsf{G}$ and define 
\begin{align*}
\mathfrak{L}_{\mathfrak{h}_1,\mathfrak{h}_2}^{(0)}:&C^\infty_c(\mathsf{G}/\mathsf{H}_2,L_{F_2,\xi}) \to C^\infty(\mathsf{G}/\mathsf{H}_1,L_{F_1,\xi}),\\
&\mathfrak{L}_{\mathfrak{h}_1,\mathfrak{h}_2}^{(0)}f(g):=\int_{\mathsf{H}_1/\mathsf{H}_1\cap \mathsf{H}_2}f(gh)\chi_{F_1,\xi}(h)\rd h.
\end{align*}
Then, up to a positive scalar,  $\mathfrak{L}_{\mathfrak{h}_1,\mathfrak{h}_2}^{(0)}$ defines a unitary equivalence $\mathcal{H}_{F_2,\xi}(\mathcal{O}_\xi)\xrightarrow{\sim} \mathcal{H}_{F_1,\xi}(\mathcal{O}_\xi)$ such that if $\mathfrak{h}_3$ is a third real algebraic polarization at $\xi$, it holds that 
$$\mathfrak{L}_{\mathfrak{h}_3,\mathfrak{h}_2}^{(0)}\mathfrak{L}_{\mathfrak{h}_2,\mathfrak{h}_1}^{(0)}\mathfrak{L}_{\mathfrak{h}_1,\mathfrak{h}_3}^{(0)}=\e^{\frac{\pi i}{4}\mathrm{Mas}(\mathfrak{h}_1,\mathfrak{h}_2,\mathfrak{h}_3)},$$
where $\mathrm{Mas}(\mathfrak{h}_1,\mathfrak{h}_2,\mathfrak{h}_3)$ denotes the Maslov triple index defined as the signature of the quadratic form on on $\mathfrak{h}_1\oplus \mathfrak{h}_2\oplus \mathfrak{h}_3$ defined by 
$$(X_1,X_2,X_3)\mapsto \xi([X_1,X_2])+\xi([X_2,X_3])+\xi([X_3,X_1]).$$
\end{lemma}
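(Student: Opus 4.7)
The overall strategy is to recognize $\mathfrak{L}^{(0)}_{\mathfrak{h}_1,\mathfrak{h}_2}$ as the standard intertwining operator between two induced representations of $\mathsf{G}$ coming from subordinate characters of $\xi$, and then to use Schur's lemma to reduce the cocycle identity to the evaluation of a single explicit integral whose phase is quadratic in the Lie algebra variables. Irreducibility of the $\mathcal{H}_{F_i,\xi}(\mathcal{O}_\xi)$ is guaranteed by Proposition \ref{unitarywowowod} and Kirillov's orbit method, so every intertwiner between them is unique up to a scalar.

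For the first assertion, I would proceed in three short steps. First I would check that $\mathfrak{L}^{(0)}_{\mathfrak{h}_1,\mathfrak{h}_2}f$ is actually a section of $L_{F_1,\xi}$, i.e. satisfies $(\mathfrak{L}^{(0)} f)(gh)=\chi_{F_1,\xi}(h)^{-1}(\mathfrak{L}^{(0)} f)(g)$ for $h\in \mathsf{H}_1$; this follows from a change of variable and the fact that $\chi_{F_1,\xi}|_{\mathsf{H}_1\cap \mathsf{H}_2}=\chi_{F_2,\xi}|_{\mathsf{H}_1\cap \mathsf{H}_2}$, which holds because $\xi$ vanishes on $[\mathfrak{h}_i,\mathfrak{h}_i]$. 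Next I would verify $\mathsf{G}$-equivariance under the left action, which is immediate from the translation invariance of the integrand. Convergence of the integral rests on the observation that $\mathsf{H}_1\cap\mathsf{H}_2\backslash \mathsf{H}_1$ embeds into $\mathsf{H}_2\backslash \mathsf{G}$ as a closed submanifold (because $\mathsf{H}_1,\mathsf{H}_2$ are connected nilpotent subgroups normalizing a common subgroup in the Vergne setting), so compact support in $\mathsf{H}_2\backslash \mathsf{G}$ descends to compact support of the integrand. Once $\mathfrak{L}^{(0)}$ is shown to be nonzero and $\mathsf{G}$-equivariant, Schur's lemma applied to the irreducible targets yields a scalar multiple of a unitary, and positivity of the scalar (up to phase convention) comes from evaluating on a manifestly positive test section.

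For the cocycle identity, I would observe that $\mathfrak{L}^{(0)}_{\mathfrak{h}_3,\mathfrak{h}_2}\mathfrak{L}^{(0)}_{\mathfrak{h}_2,\mathfrak{h}_1}\mathfrak{L}^{(0)}_{\mathfrak{h}_1,\mathfrak{h}_3}$ is a $\mathsf{G}$-equivariant self-map of the irreducible representation $\mathcal{H}_{F_3,\xi}(\mathcal{O}_\xi)$, hence a scalar $c(\mathfrak{h}_1,\mathfrak{h}_2,\mathfrak{h}_3)\in \C$ by Schur. Expanding the three nested integrals and using the Baker--Campbell--Hausdorff formula with $\xi\circ[\cdot,\cdot]$ as the only surviving term (since $\xi$ annihilates higher brackets inside each $\mathfrak{h}_i$), the composition reduces to a Gaussian-type oscillatory integral over $\mathfrak{h}_1\oplus\mathfrak{h}_2\oplus\mathfrak{h}_3$ whose phase is exactly the symmetric bilinear form
\[
Q(X_1,X_2,X_3)=\xi([X_1,X_2])+\xi([X_2,X_3])+\xi([X_3,X_1]).
\]
A stationary-phase (or direct Gaussian) computation then gives $c=\e^{\frac{\pi i}{4}\,\mathrm{sgn}(Q)}$ times a positive real factor that the normalization absorbs; by definition of $\mathrm{Mas}(\mathfrak{h}_1,\mathfrak{h}_2,\mathfrak{h}_3)$ this signature is the Maslov triple index.

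The main obstacle will be the third step: rigorously reducing the triple integral to a finite-dimensional Gaussian integral on $\mathfrak{h}_1\oplus\mathfrak{h}_2\oplus\mathfrak{h}_3$ and controlling the radical of $Q$. Concretely, the radical of $Q$ contains the diagonal embedding of $\mathsf{stab}(\xi)$ and of each pairwise intersection $\mathfrak{h}_i\cap\mathfrak{h}_j$, and one must quotient by these to obtain a non-degenerate form before invoking the Gaussian signature formula. I would handle this by first picking supplementary subspaces to $\mathsf{stab}(\xi)\subseteq \mathfrak{h}_i$ that are complementary in a coherent way across the three polarizations (possible because $\mathsf{stab}(\xi)$ is contained in each $\mathfrak{h}_i$ as an isotropic radical), and then applying the Gaussian computation on the resulting non-degenerate quotient, where the signature becomes manifestly $\mathrm{Mas}(\mathfrak{h}_1,\mathfrak{h}_2,\mathfrak{h}_3)$. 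The details of this reduction are carried out in \cite{lion76,lion80}, and for the purposes of this monograph the statement is used as a black box.
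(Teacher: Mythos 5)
The paper offers no proof of this lemma: it is Lion's theorem, and the text explicitly refers the proof to \cite{lion76} (see also \cite{lion80}), so your decision to treat the statement as a black box is exactly what the paper does. Your outline of how the proof goes is the standard route (interpret $\mathfrak{L}_{\mathfrak{h}_1,\mathfrak{h}_2}^{(0)}$ as an intertwiner between the representations induced from the subordinate characters, use irreducibility from Proposition \ref{unitarywowowod} and Schur's lemma to reduce the triple composite to a scalar, and identify that scalar with a Fresnel phase), and your remarks on well-definedness of the integrand on $\mathsf{H}_1/\mathsf{H}_1\cap\mathsf{H}_2$ and on the radical of the Maslov form (the diagonal stabilizer and the pairwise intersections, which lie in the radical precisely because each $\mathfrak{h}_i$ is $\omega_\xi$-isotropic) are correct.

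Two steps of your sketch would not survive if you tried to fill them in. First, the assertion that after expanding the three integrals the Baker--Campbell--Hausdorff formula leaves only the term $\xi\circ[\cdot,\cdot]$ ``since $\xi$ annihilates higher brackets inside each $\mathfrak{h}_i$'' is false for groups of step $\geq 3$: isotropy kills $\xi([\mathfrak{h}_i,\mathfrak{h}_i])$, but mixed iterated brackets such as $[X_1,[X_2,X_3]]$ with $X_j\in\mathfrak{h}_j$ need not be annihilated by $\xi$, so the composite is not literally a Gaussian integral on $\mathfrak{h}_1\oplus\mathfrak{h}_2\oplus\mathfrak{h}_3$ with phase $Q$. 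This is exactly why Lion does not argue by one global stationary-phase computation but by induction on $\dim\mathsf{G}$ (Kirillov-style reduction to subgroups and quotients), with the genuine Gaussian signature computation carried out only in the Heisenberg-group case of three Lagrangians; the additivity and coboundary properties of the Maslov index are what make the reduction close. Second, Schur's lemma as you invoke it needs the intertwiner to be bounded (or closable with the closure intertwining); $\mathfrak{L}^{(0)}$ is a priori defined only on compactly supported sections, and proving convergence (which uses closedness of unipotent orbits in $\mathsf{G}/\mathsf{H}_2$, not anything specific to the Vergne polarization) and proportionality to a unitary, with measures normalized so that the triple product is exactly $\e^{\frac{\pi i}{4}\mathrm{Mas}(\mathfrak{h}_1,\mathfrak{h}_2,\mathfrak{h}_3)}$ rather than a positive multiple, is part of the content of \cite{lion76,lion80}. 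Since both you and the paper defer to those references, this is not a logical gap relative to the paper, but it marks where your outline, read as a proof, would break.
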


The Lion transform from Lemma \ref{lionsintertwiners} will play an important role in constructing global bundles of representations. However, for gluing the Maslov triple index produces issues that needs correction. We correct the Lion transform by an $\eta$-invariant studied in \cite{capleem}.

\begin{definition}
\label{definieofeta}
Let $V$ be a symplectic vector space equipped with an adapted complex structure $J$. For two Lagrangian subspaces $L_1,L_2\subseteq V$, define the self-adjoint, closed, densely defined operator 
$$D(L_1,L_2):L^2([0,1];V)\dashrightarrow L^2([0,1],V),$$
by $D(L_1,L_2):=-J\partial_t$ acting on the domain 
$$\Dom(D(L_1,L_2)):=\{f\in H^1([0,1];V): \; f(0)\in L_1, \, f(1)\in L_2\}.$$

The $\eta$-invariant of the pair $(L_1,L_2)$ is defined as 
$$\eta(L_1,L_2):=\eta(D(L_1,L_2)),$$
the $\eta$-invariant of the self-adjoint operator $D(L_1,L_2)$. 
\end{definition}

The $\eta$-invariant of pairs of Lagrangians was studied in \cite[Chapter 6]{capleem}. We remark that the adapted complex structure $J$ appearing in Definition \ref{definieofeta} is in relation to the Maslov triple index auxiliary. 

\begin{remark}
\label{commentaboutpairs}
Let $\pmb{L}(V)$ denote the set of Lagrangian subspaces of $V$. The set $\pmb{L}(V)$ carries a transitive action of $U(V)$ with isotropy group $O(V)$, so fixing a Lagrangian subspace we can identify $\pmb{L}(V)\cong U(V)/O(V)$. Note here that the adapted complex structure induces both the structure of a complex Hermitean vector space and an inner product space on $V$. In particular, if we take a Lagrangian subspace $L_1$ then for any other Lagrangian subspace $L_2$ there is an $A\in U(V)$ such that $L_2=AL_1$ and $A$ is uniquely determined up to $O(V)$. The reader can compare to the discussion in \cite[Chapter 6]{capleem}. By \cite[Proposition 6.1]{capleem}, it holds for $A\in U(V)$ and $L\in \pmb{L}(V)$ that 
$$\eta(L,AL)=\mathrm{Tr}(g(A)),$$
where $g$ is the Borel function 
$$g(\e^{i\theta})=
\begin{cases}
1-2\theta/\pi, \; &\theta\in (0,\pi),\\
0, \; &\theta\in \{0,\pi\},\\
-1-2\theta/\pi, \; &\theta\in (-\pi,0). 
\end{cases}$$
Therefore, writing the eigenvalues of $A$ as $(\e^{i\theta_j})_{j=1}^{\dim_\R(V)/2}$, we have that 
$$\e^{\frac{\pi i}{4}\eta(L,AL)}=\mathrm{e}^{\frac{\pi i}{4} d(A)}\det(A)^{-1/2},$$
where 
$$d(A):=\#\{j: \theta_j\in (0,\pi)\}-\#\{j: \theta_j\in (\pi,0)\}+2\#\{j: \theta_j=\pi\}$$ 
and we choose the branch cut of the square root so that $(\e^{i\theta})^{-1/2}=\e^{-i\theta/2}$ for $\theta\in (-\pi,\pi]$.
\end{remark}

It follows from \cite[Proposition 8.3]{capleem} that for three Lagrangians,
$$\mathrm{Mas}(L_1,L_2,L_3)=\eta(L_1,L_2)+\eta(L_2,L_3)+\eta(L_3,L_1).$$
We conclude the following result in combination with Lemma \ref{lionsintertwiners}

\begin{proposition}
\label{lionsintertwinerswitheta}
For two real algebraic polarizations $\mathfrak{h}_1$ and $\mathfrak{h}_2$ at $\xi\in \mathfrak{g}^*$, define the corrected Lion transform
\begin{align*}
\mathfrak{L}_{\mathfrak{h}_1,\mathfrak{h}_2}:&C^\infty_c(\mathsf{G}/\mathsf{H}_2,L_{F_2,\xi}) \to C^\infty(\mathsf{G}/\mathsf{H}_1,L_{F_1,\xi}),\quad \mathfrak{L}_{\mathfrak{h}_1,\mathfrak{h}_2}:=\e^{\frac{\pi i}{4}\eta(\mathfrak{h}_1,\mathfrak{h}_2)}\mathfrak{L}_{\mathfrak{h}_1,\mathfrak{h}_2}^{(0)},
\end{align*}
where the $\eta$-invariant is computed in the symplectic vector space $\mathfrak{g}/\mathsf{stab}(\xi)$. Then, up to a positive scalar,  $\mathfrak{L}_{\mathfrak{h}_1,\mathfrak{h}_2}$ defines a unitary equivalence $\mathcal{H}_{F_2,\xi}(\mathcal{O}_\xi)\xrightarrow{\sim} \mathcal{H}_{F_1,\xi}(\mathcal{O}_\xi)$ such that if $\mathfrak{h}_3$ is a third real algebraic polarization at $\xi$, it holds that 
$$\mathfrak{L}_{\mathfrak{h}_3,\mathfrak{h}_2}\mathfrak{L}_{\mathfrak{h}_2,\mathfrak{h}_1}\mathfrak{L}_{\mathfrak{h}_1,\mathfrak{h}_3}=1.$$
\end{proposition}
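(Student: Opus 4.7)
The plan is to deduce the corrected cocycle identity directly from Lemma \ref{lionsintertwiners} by checking that the phase correction $\e^{\frac{\pi i}{4}\eta(\mathfrak{h}_1,\mathfrak{h}_2)}$ exactly cancels the Maslov triple index appearing as the obstruction to the cocycle for $\mathfrak{L}^{(0)}$. The unitarity (up to positive scalar) of $\mathfrak{L}_{\mathfrak{h}_1,\mathfrak{h}_2}$ is immediate from Lemma \ref{lionsintertwiners}, because $\eta(\mathfrak{h}_1,\mathfrak{h}_2)\in \R$ (being the $\eta$-invariant of a self-adjoint operator), so the correction factor is a unimodular scalar, and the composition with the unitary $\mathfrak{L}^{(0)}_{\mathfrak{h}_1,\mathfrak{h}_2}$ remains unitary.

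For the cocycle identity, first I would record the two key input facts about the $\eta$-invariant of Lagrangian pairs in the symplectic vector space $\mathfrak{g}/\mathsf{stab}(\xi)$. First, antisymmetry $\eta(L_1,L_2) = -\eta(L_2,L_1)$, which follows directly from Definition \ref{definieofeta} since the map $f(t)\mapsto f(1-t)$ intertwines $D(L_1,L_2)$ with $-D(L_2,L_1)$, flipping the sign of the spectrum and hence of the $\eta$-invariant. Second, the identity
\[
\mathrm{Mas}(L_1,L_2,L_3) = \eta(L_1,L_2)+\eta(L_2,L_3)+\eta(L_3,L_1),
\]
which is the content of \cite[Proposition 8.3]{capleem} already cited in the discussion preceding the proposition.

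Combining these facts,
\[
\eta(\mathfrak{h}_3,\mathfrak{h}_2)+\eta(\mathfrak{h}_2,\mathfrak{h}_1)+\eta(\mathfrak{h}_1,\mathfrak{h}_3)
= -\bigl(\eta(\mathfrak{h}_1,\mathfrak{h}_2)+\eta(\mathfrak{h}_2,\mathfrak{h}_3)+\eta(\mathfrak{h}_3,\mathfrak{h}_1)\bigr)
= -\mathrm{Mas}(\mathfrak{h}_1,\mathfrak{h}_2,\mathfrak{h}_3).
\]
Therefore, unwinding the definition and applying the cocycle from Lemma \ref{lionsintertwiners},
\[
\mathfrak{L}_{\mathfrak{h}_3,\mathfrak{h}_2}\mathfrak{L}_{\mathfrak{h}_2,\mathfrak{h}_1}\mathfrak{L}_{\mathfrak{h}_1,\mathfrak{h}_3}
= \e^{\frac{\pi i}{4}(\eta(\mathfrak{h}_3,\mathfrak{h}_2)+\eta(\mathfrak{h}_2,\mathfrak{h}_1)+\eta(\mathfrak{h}_1,\mathfrak{h}_3))}\,\mathfrak{L}^{(0)}_{\mathfrak{h}_3,\mathfrak{h}_2}\mathfrak{L}^{(0)}_{\mathfrak{h}_2,\mathfrak{h}_1}\mathfrak{L}^{(0)}_{\mathfrak{h}_1,\mathfrak{h}_3}
= \e^{-\frac{\pi i}{4}\mathrm{Mas}}\,\e^{\frac{\pi i}{4}\mathrm{Mas}} = 1,
\]
which is the desired identity.

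The only point needing care is the antisymmetry of $\eta$, which requires identifying the correct diffeomorphism of $L^2([0,1];V)$ intertwining the operators $D(L_1,L_2)$ and $-D(L_2,L_1)$ and verifying it preserves the boundary conditions; the rest of the argument is essentially bookkeeping. If an antisymmetry statement or an ordered version of the Maslov--$\eta$ triple identity differing by a sign convention from the one recorded above is used in \cite{capleem}, the same argument applies after adjusting the sign in the definition of the correction factor, i.e.\ replacing $\e^{\frac{\pi i}{4}\eta}$ by its complex conjugate; the proof is otherwise identical.
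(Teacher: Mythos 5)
Your proposal is correct and takes essentially the same approach as the paper, which simply asserts the result as a consequence of Lemma \ref{lionsintertwiners} together with the identity $\mathrm{Mas}(L_1,L_2,L_3)=\eta(L_1,L_2)+\eta(L_2,L_3)+\eta(L_3,L_1)$, without writing out the phase cancellation. You correctly supply the one missing ingredient --- the antisymmetry $\eta(L_1,L_2)=-\eta(L_2,L_1)$ --- which indeed follows from the reflection $f(t)\mapsto f(1-t)$ conjugating $D(L_2,L_1)$ to $-D(L_1,L_2)$ (and is also visible from the formula $\eta(L,AL)=\mathrm{Tr}(g(A))$ of Remark \ref{commentaboutpairs}, since $g$ is an odd function).
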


It follows from Proposition \ref{unitarywowowod} and Lemma \ref{lionsintertwiners} that the map 
\begin{equation}
\label{kricorgeom}
\mathfrak{g}^*/\mathsf{Ad}^*\to \hat{G},\quad \mathcal{O}\mapsto [\mathcal{H}_{F,\xi}(\mathcal{O})],
\end{equation}
for some choice of $\xi\in \mathcal{O}$ and a polarization $F\to \mathcal{O}$ for $\xi$, is well defined and coincides with the Kirillov correspondence as constructed in  \cite{browndual,kirillovbook}.

\begin{theorem} 
The mapping in Equation \eqref{kricorgeom} defines a homeomorphism $\mathfrak{g}^*/\mathsf{Ad}^*\cong \widehat{\mathsf{G}}$ that coincides with the Kirillov orbit method described in \cite{browndual,kirillovbook}.
\end{theorem}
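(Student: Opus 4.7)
The plan is to reduce this theorem to the classical Kirillov correspondence for simply connected nilpotent Lie groups, which is already known to be a homeomorphism, so the only genuine task is to match the geometric quantization picture built in this section with the induced-representation picture used in \cite{kirillovbook}. First I would verify that $\mathcal{H}_{F,\xi}(\mathcal{O})$ is unitarily equivalent as a $\mathsf{G}$-representation to $\mathrm{Ind}_{\mathsf{H}_\xi}^{\mathsf{G}}(\chi_{F,\xi})$. This is essentially already present in the construction: the map $V_\xi$ of \eqref{thevximap} is $\mathsf{G}$-equivariant and sends smooth $\mathsf{H}_\xi$-invariant functions on $\mathsf{G}$ (i.e.\ smooth sections of $L_{F,\xi} = \mathsf{G}\times_{\chi_{F,\xi}}\C$) to $F$-polarized smooth sections of $L_\mathcal{O}$; the pre-Hilbert structure on $C^\infty_{c,F}(\mathcal{O},L_\mathcal{O})$ is transported from the natural one on $C^\infty_c(\mathsf{G}/\mathsf{H}_\xi, L_{F,\xi})$, so after completion one obtains exactly the induced representation. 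The paper in fact already remarks that $\mathcal{H}_{F,\xi}(\mathcal{O}) = \mathrm{Ind}_{\mathsf{H}_\xi}^{\mathsf{G}}(\chi_{F,\xi})$, so this part is essentially a bookkeeping check.

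Next I would use Proposition \ref{unitarywowowod} and Lemma \ref{lionsintertwiners} to see that the unitary equivalence class of $\mathcal{H}_{F,\xi}(\mathcal{O})$ depends only on $\mathcal{O}$, so the assignment in \eqref{kricorgeom} is well-defined on $\mathfrak{g}^*/\mathsf{Ad}^*$. Combined with the previous step, this identifies \eqref{kricorgeom} pointwise with the Kirillov map $\mathcal{O}\mapsto [\mathrm{Ind}_{\mathsf{H}_\xi}^{\mathsf{G}}(\chi_{F,\xi})]$ from \cite{kirillovbook}. Note that to verify well-definedness of the Kirillov map one must know that the inducing construction is independent of the chosen real algebraic (Pukanszky) polarization; Lemma \ref{lionsintertwiners} supplies precisely the explicit unitary intertwiner that produces this independence and is therefore doing real work.

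Finally I would invoke the classical theorems for simply connected nilpotent Lie groups that the Kirillov map so described is bijective and is a homeomorphism when $\mathfrak{g}^*/\mathsf{Ad}^*$ is equipped with the quotient topology and $\widehat{\mathsf{G}}$ with the Fell topology. Bijectivity, including the key fact that non-conjugate orbits give rise to inequivalent representations, goes back to Kirillov and is further supported by the work of Pukanszky \cite{pukan67jfa,pukan67,pukanens71}; bi-continuity for nilpotent $\mathsf{G}$ is established by Brown and a self-contained exposition is given in \cite[Chapter 3]{Corwin_Greenleaf}. Once the first two steps identify our map with this classical one, the homeomorphism conclusion is a direct citation.

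The main obstacle is not anything new in this statement but rather the substantial classical machinery that must be invoked, in particular the bi-continuity of the Kirillov correspondence with respect to the Fell topology, whose proof involves analysis of matrix coefficients of induced representations and is genuinely nontrivial. In the present framework this is all outsourced to \cite{Corwin_Greenleaf}, and the only work intrinsic to the monograph is the (largely tautological) identification $\mathcal{H}_{F,\xi}(\mathcal{O}) \cong \mathrm{Ind}_{\mathsf{H}_\xi}^{\mathsf{G}}(\chi_{F,\xi})$; the geometric quantization picture is retained because it is what makes the later bundle-of-representations constructions on $\Gamma_X$ feasible.
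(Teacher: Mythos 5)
Your proof is correct and follows the same route the paper (implicitly) takes: the paper gives no formal proof of this theorem, and the surrounding text already contains all three of your ingredients — the identification $\mathcal{H}_{F,\xi}(\mathcal{O})=\mathrm{Ind}_{\mathsf{H}_\xi}^{\mathsf{G}}(\chi_{F,\xi})$ is stated just before Proposition \ref{unitarywowowod}, well-definedness is attributed to that proposition and Lemma \ref{lionsintertwiners}, and the homeomorphism property is outsourced to \cite{kirillovbook} and \cite{Corwin_Greenleaf}. You have filled in the structure of the argument accurately and attributed the genuinely hard step (bi-continuity in the Fell topology) to the right place.
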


\section{The fine stratification of the spectrum}
\label{subsecfinestrat}
The spectrum $\widehat{\mathsf{G}}\cong \mathfrak{g}^*/\mathrm{Ad}^*$ is in general a non-Hausdorff space. However, a choice of Jordan-Hölder basis produces a stratification of $\widehat{\mathsf{G}}$ into locally closed Hausdorff subspaces. We shall recall the construction, due to Pedersen \cite{pedersen89}, of this so called fine stratification and construct a Hilbert space bundle of representations on each strata. The stratification is described in further detail in \cite{pedersen88, pedersen89}, and is complemented by an efficient summary in \cite{LipmanRosenberg}. The $C^*$-algebraic ramifications were studied in \cite{Beltita_Ludwig_Fourier} where the Hilbert space bundles were implicitly constructed and showed to give a filtration of $C^*(\mathsf{G})$ by means of a sequence of ideals whose subsequent quotients are stably isomorphic to continuous functions on the strata.

Let $n = \dim \mathfrak{g}$. We say that a basis $\{X_1, \ldots, X_n \}$ of $\mathfrak{g}$ is Jordan-Hölder if the subspaces
\[ \mathfrak{g}_j = \text{span} \{X_1, \ldots, X_j \}, \ j = 1,\ldots, n \]
are ideals in $\mathfrak{g}$. The associated sequence of ideals
$$0=\mathfrak{g}_0\subseteq \mathfrak{g}_1\subseteq \mathfrak{g}_2\subseteq \cdots\subseteq \mathfrak{g}_{n-1}\subseteq \mathfrak{g}_n=\mathfrak{g},$$
is called a Jordan-Hölder flag. The Jordan-Hölder flag determines the Jordan-Hölder basis up to a lower triangular change of basis. We denote the dual basis by $\{\xi_1, \ldots, \xi_n \}$. We tacitly assume that the center of $\mathfrak{g}$ coincides with $\mathfrak{g}_j$ for some $j$ (namely the dimension of the center).

Given $\xi \in \mathfrak{g}^*$ and $k \in \{1,\ldots,n\}$, define the set of jump indices
\[ J_\xi^k = \{ j \in \{1, \ldots, k \} \text{ } \vert \text{ } \mathfrak{g}_j \not \subset \mathsf{stab}(\xi \vert_{\mathfrak{g}_k}) + \mathfrak{g}_{j-1} \}. \]
Let $\mathcal{E}_n:=\{(\epsilon_1,\ldots, \epsilon_n): \epsilon_k\subseteq \{1,\ldots, k\}\}$. Given $\epsilon \in\mathcal{E}_n$, we define
\[ \Xi_\epsilon := \{ \xi \in \mathfrak{g}^* \ | \ (J_\xi^1, \ldots, J_\xi^n) = \epsilon \}. \]
We set $\mathcal{E}_n(\mathsf{G}):=\{\epsilon\in \mathcal{E}_n: \Xi_\epsilon\neq \emptyset\}$. The set $\Xi_\epsilon$ is locally closed for any $\epsilon\in\mathcal{E}_n(G)$. We can in fact order $(\Xi_\epsilon)_{\epsilon\in \mathcal{E}_n(\mathsf{G})}$ as $\Xi_1,\Xi_2,\ldots$ so that $\Xi_1\subseteq \mathfrak{g}^*$ is Zariski open and $\Xi_j$ is Zariski open in $\mathfrak{g}^*\setminus \cup_{i<j}\Xi_i$. More specifically, there is an explicit ordering on $\mathcal{E}_n$ constructed in \cite{pedersen94} corresponding to this ordering of strata. We can decompose $\mathfrak{g}^*$ as a disjoint union 
$$\mathfrak{g}^*=\cup_{\epsilon\in \mathcal{E}_n}\Xi_\epsilon.$$

The jump indices are $\mathsf{Ad}^*$-invariant, i.e.
\[ J_\xi^k = J_{\mathsf{Ad}^*_g(\xi)}^k, \ \xi \in \mathfrak{g}^*, \ g \in \mathsf{G}, \]
so the coadjoint action restricts to an action of $\mathsf{G}$ on $\Xi_\epsilon$. We define 
\[ \Gamma_\epsilon :=\Xi_\epsilon/\mathsf{G} \subseteq  \mathfrak{g}^* / \mathsf{Ad}^*.\]
Due to $\mathsf{Ad}^*$-invariance of the jump indices we can define the jump index of an orbit $\mathcal{O} \in \mathfrak{g}^* / \mathsf{Ad}^*$ by setting $J_\mathcal{O}^k := J_\xi^k$ for an arbitrary $\xi \in \mathcal{O}$. Note that
\[ \Gamma_\epsilon = \{ \mathcal{O} \in \mathfrak{g}^* / \mathsf{Ad}^* \ | \ (J_\mathcal{O}^1, \ldots, J_\mathcal{O}^n) = \epsilon \}. \]
The set $\Gamma_\epsilon$ is a Hausdorff and locally closed subset of $\widehat{\mathsf{G}}=\mathfrak{g}^*/\mathsf{Ad}^*$. We can decompose $\widehat{\mathsf{G}}$ as a disjoint union
$$\widehat{\mathsf{G}}=\cup_{\epsilon\in \mathcal{E}_n}\Gamma_\epsilon.$$
The family $\{ \Gamma_\epsilon \}_{\epsilon \in \mathcal{E}_n(\mathsf{G})}$ is called fine stratification of $\widehat{\mathsf{G}}$. We remark that the constructions above depends on the choice of Jordan-Hölder flag, but not the Jordan-Hölder basis representing the flag.

\begin{proposition}
\label{gammasplitting}
We consider an $\epsilon=(\epsilon_1,\ldots,\epsilon_n)\in \mathcal{E}_n(\mathsf{G})$, write $\epsilon_n=\{j_1<j_2<\cdots<j_{2d}\}$ and define 
$$\Lambda_\epsilon:=\{\xi\in \Xi_\epsilon: \xi_{j_1}=\xi_{j_2}=\cdots=\xi_{j_{2d}}=0\}.$$
Then for any $\xi\in \Xi_\epsilon$, the set $\mathcal{O}_\xi\cap \Lambda_\epsilon$ contains exactly one point, call it $p_\epsilon(\xi)$. The map $p_\epsilon:\Xi_\epsilon\to \Lambda_\epsilon$ is in fact a trivializable fibre bundle with fibre $\R^{2d}$, where the trivialization respects the fibrewise symplectic form, that fits into a commuting diagram:
\[
\begin{tikzcd}
\Xi_\epsilon \arrow[d, "p_\epsilon"] \arrow[rd] &  \\
\Lambda_\epsilon \arrow[r] &\Gamma_\epsilon,
\end{tikzcd}
\]
where the diagonal map is the projection map and the horizontal map is the composition of the projection map and the inclusion $\Lambda_\epsilon\subseteq \Xi_\epsilon$. Moreover, the map $\Lambda_\epsilon \to\Gamma_\epsilon$ is a homeomorphism.
\end{proposition}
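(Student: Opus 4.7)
The plan is to exhibit $\Xi_\epsilon$ explicitly as a trivial bundle $\Lambda_\epsilon \times \R^{2d}$ using Pedersen's algorithmic construction of canonical coordinates via jump indices, and then read off all the asserted structure (well-definedness of $p_\epsilon$, smoothness of the bundle, and the homeomorphism $\Lambda_\epsilon \to \Gamma_\epsilon$).

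\textbf{Step 1: Well-definedness of $p_\epsilon$.} For $\xi \in \Xi_\epsilon$ I will construct, inductively in $k = 1, \ldots, 2d$, vectors $Y_k(\xi) \in \mathfrak{g}$ satisfying $\xi([Y_k, X_{j_k}]) \neq 0$ while $\xi([Y_k, X_\ell]) = 0$ for every $\ell < j_k$. Existence of such $Y_k$ is a linear-algebra reformulation of the jump condition $j_k \in J_\xi^n$ in terms of the Kirillov form $\omega_\xi$ on $\mathfrak{g}/\mathsf{stab}(\xi)$, following the coexponential coordinate construction of \cite{pedersen88,pedersen89}. The one-parameter subgroups $\exp(t_k Y_k(\xi))$ shift $\xi(X_{j_k})$ linearly without disturbing $\xi(X_\ell)$ for $\ell < j_k$; running them sequentially we can annihilate $\xi(X_{j_1}), \xi(X_{j_2}), \ldots, \xi(X_{j_{2d}})$ one at a time, landing in $\Lambda_\epsilon$. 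Uniqueness of $\mathcal{O}_\xi \cap \Lambda_\epsilon$ is then immediate: the triangular structure of the resulting parametrization in the jump coordinates forbids two different elements of $\Lambda_\epsilon$ to be $\mathsf{Ad}^*$-conjugate, as this would contradict the jump-index characterization of the isotropy algebra.

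\textbf{Step 2: Trivialization and commuting diagram.} The algorithm of Step 1 assembles into
\[
\Phi : \Lambda_\epsilon \times \R^{2d} \longrightarrow \Xi_\epsilon, \qquad \Phi(\xi_0, t) = \mathsf{Ad}^*(\exp(t_{2d} Y_{2d}(\xi_0))) \cdots \mathsf{Ad}^*(\exp(t_1 Y_1(\xi_0))) \, \xi_0.
\]
Since each $Y_k(\xi_0)$ can be chosen to depend polynomially on $\xi_0$ (it solves a polynomial linear system of maximal rank on $\Lambda_\epsilon$), $\Phi$ is smooth; bijectivity is Step 1 and the differential in the $t$-variables is upper triangular with nowhere vanishing diagonal entries, so $\Phi$ is a diffeomorphism. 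The commuting triangle follows by inspection: under $\Phi$, the map $p_\epsilon$ corresponds to projection onto $\Lambda_\epsilon$, each $\mathsf{Ad}^*$-orbit corresponds to a fibre $\{\xi_0\} \times \R^{2d}$, and the quotient $\Xi_\epsilon \to \Gamma_\epsilon$ therefore factors through $p_\epsilon$.

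\textbf{Step 3: Symplectic compatibility and homeomorphism.} For the trivialization to respect the fibrewise Kirillov form, the $Y_k(\xi_0)$ must be refined so that the family $\{Y_k(\xi_0)\}_{k=1}^{2d}$ is a Darboux basis for $\omega_{\xi_0}$ on $T_{\xi_0}\mathcal{O}_{\xi_0} = \mathfrak{g}/\mathsf{stab}(\xi_0)$. This is possible by Pukanszky's pairing of the jump indices into $d$ symplectic pairs, and is exactly the content of the canonical coordinate construction in \cite[\S 2]{pedersen89}; this symplectic refinement is the main obstacle in the argument, since one must balance the triangularity condition of Step 1 against the demand that the $Y_k$ be symplectically dual in pairs rather than merely coordinate-shifting. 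Finally, $\Lambda_\epsilon \to \Gamma_\epsilon$ is a continuous bijection by Steps 1--2; its inverse is the composition of the quotient $\Xi_\epsilon \to \Gamma_\epsilon$ with a continuous section supplied by $\Phi^{-1}$ followed by projection to $\Lambda_\epsilon$, and is therefore continuous, so $\Lambda_\epsilon \to \Gamma_\epsilon$ is a homeomorphism.
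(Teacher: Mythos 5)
The paper does not give its own proof of this proposition — it defers entirely to Pedersen \cite{pedersen88,pedersen89}, and immediately afterward records the key output in exactly the form you use: a global trivialization $\Psi_\epsilon:\Lambda_\epsilon\times\R^{2d}\to\Xi_\epsilon$ implemented by canonical coordinates $(q^\epsilon_j,p^\epsilon_j)$. Your three steps are a faithful unpacking of that construction, and the general architecture (triangular annihilation of the jump coordinates via coadjoint flows to get existence and uniqueness of $p_\epsilon(\xi)$, assembling those flows into $\Phi$, and refining to Darboux pairs for the symplectic statement) is the correct one.

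Two imprecisions are worth noting. First, in Step 2 you assert that $Y_k(\xi_0)$ "can be chosen to depend polynomially on $\xi_0$" because it solves a polynomial linear system of maximal rank; by Cramer's rule this gives \emph{rational} (not polynomial) dependence, and indeed the paper states explicitly that the canonical coordinates are "rational functions in the $\Lambda_\epsilon$-direction and polynomial in the $\R^{2d}$-direction." Since the rank is maximal on all of $\Xi_\epsilon$, the denominators never vanish there and smoothness of $\Phi$ still follows, so the conclusion stands, but the wording should be corrected. Second, in Step 3 the condition needed is stronger than that the $Y_k(\xi_0)$ form a Darboux basis of $(T_{\xi_0}\mathcal{O}_{\xi_0},\omega_{\xi_0})$ at the single point $\xi_0$: for $\Phi(\xi_0,\cdot)$ to be a symplectomorphism the $t$-coordinates must be \emph{global} canonical coordinates on the whole orbit, i.e.\ the pullback of $\omega_{\mathcal{O}_{\xi_0}}$ must be the constant standard form on all of $\R^{2d}$. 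You do cite the right source for this (Pedersen's canonical coordinate theorem does exactly this), but the sentence as written suggests a pointwise Darboux condition suffices, which it does not.
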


We refer the reader to \cite{pedersen88, pedersen89} for a proof of this statement. We will denote the trivialization of 
$p_\epsilon:\Xi_\epsilon\to \Lambda_\epsilon$  by $\Psi_\epsilon :\Lambda_\epsilon \times \mathbb{R}^{2d} \rightarrow \Xi_\epsilon$. By the previous proposition, we can assume that for every $\xi \in \Lambda_\epsilon$, $\Psi_\xi = \Psi_\epsilon(\xi, \cdot)$ is a symplectomorphism from $\mathbb{R}^{2d}$ with the standard symplectic structure onto the coadjoint orbit $\mathcal{O}_\xi$. The results of \cite{pedersen88, pedersen89} show that on $\Xi_\epsilon$ there are global ``canonical coordinates'' $(q_1^\epsilon,\ldots, q_d^\epsilon,p_1^\epsilon,\ldots, p_d^\epsilon)\subseteq C^\infty(\Xi_\epsilon)$ (implementing $\Psi_\epsilon$) that are rational functions in the $\Lambda_\epsilon$-direction and polynomial in the $\mathbb{R}^{2d}$-direction.

Let us now construct a bundle of representations over each strata in $\widehat{\mathsf{G}}$. We fix a Jordan-Hölder basis as above. Recall the notations $\mathfrak{g}_j = \text{span} \{X_1, \ldots, X_j \}$. The Vergne polarization (with respect to the Jordan-Hölder basis) of $\xi\in \mathfrak{g}^*$ is defined by 
\begin{equation}
\label{vergndendoded}
\mathfrak{h}_V(\xi):=\sum_{j=1}^n \mathsf{stab}(\xi|_{\mathfrak{g}_j}).
\end{equation}
This real algebraic polarization was first used by Vergne, see \cite{bernatconzevergne72,vergneCR70, vergneBullMathFra72}. Since each $\mathfrak{g}_j$ is an ideal in $\mathfrak{g}$, it is clear that $\mathfrak{h}_V(\xi)\subseteq \mathfrak{g}$ is a subalgebra which is isotropic with respect to $\omega_\xi$. We refer the proof of the fact that the codimension of $\mathfrak{h}_V(\xi)$ is $\#J^n_\xi/2=\dim(\mathcal{O}_\xi)/2$ to \cite[Proposition 1.1.2, Chapter IV]{bernatconzevergne72}. This proves that $\mathfrak{h}_V(\xi)$ is a real algebraic polarization of $\xi$. For $\xi\in \Xi_\epsilon$, define the closed subgroup 
$$\mathsf{H}_V(\xi):=\mathrm{exp}(\mathfrak{h}_V(\xi)).$$

\begin{proposition}
\label{vbfv}
Let $X_1,\ldots, X_n$ be a Jordan-Hölder basis, take an $\epsilon\in \mathcal{E}_n(\mathsf{G})$ and define 
$$F_{V,\epsilon}:=\{(\xi,X)\in \Xi_\epsilon\times \mathfrak{g}: \; X\in\mathfrak{h}_V(\xi)\}.$$
Then the projection mapping $p_1:F_{V,\epsilon}\to \Xi_\epsilon$ defines a real stably trivializable vector bundle of rank $\#\epsilon_n/2$. Moreover, for any orbit $\mathcal{O}\in \Gamma_\epsilon$ and $\xi\in \mathcal{O}$, letting $q_\xi:\mathsf{G}\to \mathcal{O}$ denote the map $g\mapsto g\xi$,  it holds that 
$$F_{V,\epsilon}(\mathcal{O},\xi):=(q_\xi)_*p_1^{-1}(\mathcal{O})\subseteq T\mathcal{O}$$ 
is a polarization of $\mathcal{O}$. 
\end{proposition}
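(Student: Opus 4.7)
My plan is to deduce the vector bundle structure of $F_{V,\epsilon}\to\Xi_\epsilon$ from the constancy of the jump indices on $\Xi_\epsilon$, and then to transport the Lie-algebraic properties of $\mathfrak{h}_V(\xi)$ -- maximal isotropy and involutivity -- across the submersion $q_\xi:\mathsf{G}\to\mathcal{O}$ in order to obtain the polarization claim. The stabilizer inclusion $\mathsf{stab}(\xi)\subseteq \mathfrak{h}_V(\xi)$ comes for free since $\mathsf{stab}(\xi)=\mathsf{stab}(\xi|_{\mathfrak{g}_n})$ is one of the summands defining $\mathfrak{h}_V(\xi)$.

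For the bundle structure, observe that for each $k$ the assignment $\xi\mapsto \mathsf{stab}(\xi|_{\mathfrak{g}_k})$ is the kernel of the linear map
$$A_k(\xi):\mathfrak{g}_k\to \mathfrak{g}_k^*,\qquad X\mapsto \xi([X,\cdot])|_{\mathfrak{g}_k},$$
whose rank equals $\#J_\xi^k=\#\epsilon_k$ and is therefore constant on $\Xi_\epsilon$. Hence each such kernel defines a smooth vector subbundle of $\Xi_\epsilon\times\mathfrak{g}_k\subseteq\Xi_\epsilon\times\mathfrak{g}$ of rank $k-\#\epsilon_k$. Vergne's dimension formula (\cite[Ch.~IV, Prop.~1.1.2]{bernatconzevergne72}, already cited in the text) gives the constant value $\dim\mathfrak{h}_V(\xi)=n-\#\epsilon_n/2$ on $\Xi_\epsilon$, and applying the constant-rank theorem to the bundle map $\bigoplus_k\mathsf{stab}(\xi|_{\mathfrak{g}_k})\to\mathfrak{g}$ -- which has constant rank since its image $\mathfrak{h}_V(\xi)$ has constant dimension -- then shows that $F_{V,\epsilon}$ is a smooth vector subbundle of the trivial bundle $\Xi_\epsilon\times\mathfrak{g}$. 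Stable trivializability follows from the splitting $\Xi_\epsilon\times\mathfrak{g}\cong F_{V,\epsilon}\oplus Q$ over the paracompact base $\Xi_\epsilon$, exhibiting $F_{V,\epsilon}$ as a direct summand of a trivial bundle.

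For the polarization claim, I would fix a reference point $\xi\in\mathcal{O}$ and verify each defining property at $\xi$, using $\mathsf{G}$-equivariance to propagate them over $\mathcal{O}$. Each $\mathfrak{g}_j$ is $\mathsf{Ad}(g)$-invariant (being an ideal), so a short calculation yields $\mathsf{stab}((\mathsf{Ad}^*(g)\xi)|_{\mathfrak{g}_j})=\mathsf{Ad}(g)\mathsf{stab}(\xi|_{\mathfrak{g}_j})$ and hence $\mathfrak{h}_V(g\xi)=\mathsf{Ad}(g)\mathfrak{h}_V(\xi)$; push-forward under $q_{g\xi}$ converts this into $\mathsf{G}$-equivariance of $F_{V,\epsilon}(\mathcal{O},\xi)$ as a subbundle of $T\mathcal{O}$. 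That $\mathfrak{h}_V(\xi)$ is a Lie subalgebra of $\mathfrak{g}$ (Vergne) translates under $(q_\xi)_*$ into involutivity of the distribution $F_{V,\epsilon}(\mathcal{O},\xi)$. Isotropy of $\mathfrak{h}_V(\xi)/\mathsf{stab}(\xi)\subseteq\mathfrak{g}/\mathsf{stab}(\xi)$ with respect to the Kirillov form $\omega_\xi$ is carried by $(q_\xi)_*$ to isotropy of $F_{V,\epsilon}(\mathcal{O},\xi)$ in $(T\mathcal{O},\omega_\mathcal{O})$, and the dimension count $\dim(\mathfrak{h}_V(\xi)/\mathsf{stab}(\xi))=\#\epsilon_n/2=\tfrac{1}{2}\dim\mathcal{O}$ upgrades isotropy to Lagrangianicity.

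The main obstacle is the smoothness of the sum $\sum_k\mathsf{stab}(\xi|_{\mathfrak{g}_k})$ on $\Xi_\epsilon$: pointwise constancy of the dimensions of the individual summands together with constancy of the total dimension does not formally guarantee smoothness of the partial sums. I would address this either by invoking Pedersen's canonical coordinates on $\Xi_\epsilon$ \cite{pedersen88,pedersen89} (which make $\mathfrak{h}_V$ rational in the $\Lambda_\epsilon$-coordinates and polynomial in the orbit directions, yielding explicit local frames), or by an inductive argument matching the growth of the partial sums $\sum_{k\leq m}\mathsf{stab}(\xi|_{\mathfrak{g}_k})$ against the algorithmic construction of the Vergne polarization from the jump index sets $\epsilon_k$, so that constancy of rank is forced at every stage of the induction.
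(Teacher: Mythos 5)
Your treatment of the bundle structure and of the polarization claim is sound: each $\mathsf{stab}(\xi|_{\mathfrak{g}_k})=\ker A_k(\xi)$ is a smooth subbundle because $\operatorname{rank}A_k(\xi)=\#\epsilon_k$ is constant on $\Xi_\epsilon$, and since the rank of the summation map $\bigoplus_k\mathsf{stab}(\xi|_{\mathfrak{g}_k})\to\mathfrak{g}$ is exactly $\dim\mathfrak{h}_V(\xi)$, which is constant by the Bernat--Conze--Vergne codimension formula, the constant-rank theorem already gives smoothness of $F_{V,\epsilon}$; your worry about the partial sums is therefore unnecessary (only the total sum needs constant dimension), although the fallback via Pedersen's canonical coordinates is exactly what the paper cites (\cite[Theorem 5.1.1]{pedersen89}). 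The equivariance $\mathfrak{h}_V(g\xi)=\mathsf{Ad}(g)\mathfrak{h}_V(\xi)$, involutivity, isotropy and the dimension count for the Lagrangian property are correct and match the framework set up before the proposition, where a real algebraic polarization at $\xi$ is shown to induce a polarization of $\mathcal{O}_\xi$.

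The genuine gap is the stable trivializability. From the splitting $\Xi_\epsilon\times\mathfrak{g}\cong F_{V,\epsilon}\oplus Q$ you only get that $F_{V,\epsilon}$ is a direct summand of a trivial bundle; this is true of every vector bundle over a reasonable base and says nothing about stable triviality (the tautological line bundle over $\mathbb{CP}^1$, or over $\mathbb{RP}^n$, is a summand of a trivial bundle but is not stably trivial). What is needed, and what the paper proves, is that the complement $Q$ can itself be chosen \emph{trivial}: setting $K_V(\xi)=\{j:\mathfrak{g}_j\not\subset\mathfrak{h}_V(\xi)+\mathfrak{g}_{j-1}\}$, Pedersen's arguments show that $(X_j)_{j\in K_V(\xi)}$ is a basis of a complement to $\mathfrak{h}_V(\xi)$ and that $K_V(\xi)=K_\epsilon$ is the same set for all $\xi\in\Xi_\epsilon$, so the \emph{constant} sections $(X_j)_{j\in K_\epsilon}$ span a trivial subbundle of $\Xi_\epsilon\times\mathfrak{g}$ complementary to $F_{V,\epsilon}$ in every fibre. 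Then $F_{V,\epsilon}\oplus\underline{\mathbb{R}}^{\#K_\epsilon}\cong\Xi_\epsilon\times\mathfrak{g}$, which is the stable trivializability asserted. Without the constancy of $K_V(\xi)$ on $\Xi_\epsilon$ (or some equivalent construction of a trivial complement), your argument does not yield the conclusion.
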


\begin{proof}
It follows from \cite[Theorem 5.1.1]{pedersen89} that $F_{V,\epsilon}$ is a vector bundle. For $\xi\in \mathfrak{g}^*$, we set 
$$K_V(\xi):=\{j\in \{1,\ldots,m\}: \mathfrak{g}_j\not \subset \mathfrak{h}_V(\xi) + \mathfrak{g}_{j-1}\}.$$
By the argument in \cite[Subsection 1.4]{pedersen89}, the set $(X_j)_{j\in K_V(\xi)}$ forms a basis (even of subalgebra type) for $\mathfrak{g}/\mathfrak{h}_V(\xi)$. By the arguments in \cite[Subsection 4.2]{pedersen89}, we have for $\xi\in \Xi_\epsilon$ that 
$$K_V(\xi)=K_\epsilon:=\{j\in \{1,\ldots, n\}: \epsilon_{j-1}\not \subset\epsilon_j\}.$$
We conclude that $K_V(\xi)=K_\epsilon$ is independent of $\xi\in \Xi_\epsilon$. In particular, $(X_j)_{j\in K_\epsilon}$ viewed as constant sections spans a trivial subbundle of $\Xi_\epsilon\times \mathfrak{g}$ which is complementary to $F_{V,\epsilon}$ in all fibres. Since $F_{V,\epsilon}$ is a complement of a trivializable subbundle in a trivial bundle,  $F_{V,\epsilon}$ is a stably trivializable vector bundle.
\end{proof}

\begin{proposition}
\label{rhoepsilonconst}
Let $\epsilon\in \mathcal{E}_n(\mathsf{G})$. There is a smooth map 
$$\rho_\epsilon:\Xi_\epsilon\times \mathsf{G}\to \Xi_\epsilon\times \mathsf{G},$$
commuting with the projections onto $\Xi_\epsilon$ and such that for each $\xi\in \Xi_\epsilon$
\begin{enumerate}
\item $\rho_\epsilon(\xi,\cdot)|_{\mathsf{H}_V(\xi)}=\mathrm{id}_{\mathsf{H}_V(\xi)}$.
\item $\rho_\epsilon(\xi,\cdot):\mathsf{G}\to \mathsf{H}_V(\xi)$.
\item $\rho_\epsilon(\xi,gh)=\rho_\epsilon(\xi,g)h$ for all $h\in \mathsf{H}_V(\xi)$.
\end{enumerate}
\end{proposition}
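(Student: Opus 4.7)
The plan is to exploit the global complement of the Vergne polarization identified in the proof of Proposition \ref{vbfv}: with $K_\epsilon=\{k_1<k_2<\cdots<k_r\}$ as in that proof, the subspace $V_\epsilon:=\mathrm{span}\{X_j:j\in K_\epsilon\}\subseteq \mathfrak{g}$ is complementary to $\mathfrak{h}_V(\xi)$ for every $\xi\in \Xi_\epsilon$, and the ordered basis $(X_{k_1},\dots,X_{k_r})$ is a basis of subalgebra type for $V_\epsilon$. The first step is to show that for each fixed $\xi\in \Xi_\epsilon$ the map
\[
\psi_\xi: \R^r\times \mathsf{H}_V(\xi)\to \mathsf{G},\qquad (\vec{x},h)\mapsto \exp(x_1 X_{k_1})\cdots \exp(x_r X_{k_r})\cdot h,
\]
is a global diffeomorphism. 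This is the statement that $(X_{k_1},\dots,X_{k_r})$ together with any basis of $\mathfrak{h}_V(\xi)$ forms a weak Malcev basis passing through $\mathfrak{h}_V(\xi)$, so that coordinates of the second kind are globally defined; it follows from the argument in \cite[Subsection 1.4]{pedersen89} together with nilpotency.

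Granted $\psi_\xi$, I define
\[
\rho_\epsilon(\xi,g):= \mathrm{pr}_{\mathsf{H}_V(\xi)}\bigl(\psi_\xi^{-1}(g)\bigr),
\]
extended by the identity in the first coordinate on $\Xi_\epsilon$. Properties (1) and (2) are immediate from the construction: if $g\in \mathsf{H}_V(\xi)$ then $\psi_\xi^{-1}(g)=(0,g)$, and by definition $\rho_\epsilon(\xi,g)\in \mathsf{H}_V(\xi)$. Property (3) follows because the ordered exponential factor of $g$ is unaffected by right multiplication by elements of $\mathsf{H}_V(\xi)$: if $\psi_\xi^{-1}(g)=(\vec{x},h_0)$ and $h\in \mathsf{H}_V(\xi)$, then $\psi_\xi^{-1}(gh)=(\vec{x},h_0h)$, so $\rho_\epsilon(\xi,gh)=h_0h=\rho_\epsilon(\xi,g)h$.

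The main obstacle, and the only genuinely non-trivial point, is joint smoothness in $(\xi,g)$, since the subgroup $\mathsf{H}_V(\xi)$ varies with $\xi$. To handle this, I assemble the subbundle $F_{V,\epsilon}\to \Xi_\epsilon$ of Proposition \ref{vbfv} into the smooth bundle of subgroups
\[
\mathcal{H}_V:=\{(\xi,\exp Y):\xi\in \Xi_\epsilon,\; Y\in F_{V,\epsilon,\xi}\}\subseteq \Xi_\epsilon\times \mathsf{G},
\]
and combine the maps $\psi_\xi$ into a single smooth map
\[
\Psi: \R^r\times \mathcal{H}_V\to \Xi_\epsilon\times \mathsf{G},\qquad (\vec{x},(\xi,h))\mapsto \bigl(\xi,\exp(x_1 X_{k_1})\cdots \exp(x_r X_{k_r})h\bigr).
\]
A dimension count shows that source and target have the same dimension; $\Psi$ is a bijection by the fibrewise diffeomorphism statement for $\psi_\xi$; and the differential of $\Psi$ at any point is an isomorphism because it decomposes $T_g\mathsf{G}$ according to the splitting $V_\epsilon\oplus \mathfrak{h}_V(\xi)=\mathfrak{g}$, which is smooth in $\xi\in \Xi_\epsilon$ since $F_{V,\epsilon}$ is a smooth subbundle. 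Thus $\Psi$ is a diffeomorphism, and $\rho_\epsilon$ is the smooth composition of $\Psi^{-1}$ with the projection onto the $\mathcal{H}_V$-factor followed by the projection $\mathcal{H}_V\to \mathsf{G}$.
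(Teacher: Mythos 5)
Your proof is correct and follows essentially the same route as the paper: factor $g$ uniquely as a complement factor times an element of $\mathsf{H}_V(\xi)$, smoothly in $\xi\in\Xi_\epsilon$, and let $\rho_\epsilon$ be the $\mathsf{H}_V(\xi)$-part, with properties (1)--(3) read off from uniqueness of the factorization. The paper packages this more compactly: using the smooth idempotent $P_\epsilon$ from Proposition \ref{vbfv} it declares the single map $\mathrm{exp}_\epsilon(\xi,X)=(\xi,\exp((1-P_\epsilon(\xi))X)\exp(P_\epsilon(\xi)X))$ to be a diffeomorphism and sets $\rho_\epsilon=\mathrm{exp}_\epsilon\circ P_\epsilon\circ\log_\epsilon$, which gives joint smoothness at once and bypasses the weak-Malcev-coordinate and bundle-diffeomorphism bookkeeping you carry out by hand (where, incidentally, the invertibility of $D\Psi$ is better justified by the fibrewise diffeomorphism property of $\psi_\xi$ together with compatibility with the projection to $\Xi_\epsilon$ than by the splitting of $T_g\mathsf{G}$ you cite).
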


\begin{proof}
Let $P_\epsilon:\Xi_\epsilon\times \mathfrak{g}\to F_{V,\epsilon}$ denote the vector bundle projection along the complement constructed in the proof of Proposition \ref{vbfv}. By a slight abuse of notation, we identify the projection map $P_\epsilon$ with the associated smooth idempotent $P_\epsilon\in C^\infty(\Xi_\epsilon,\End(\mathfrak{g}))$ such that $P_\epsilon(\xi)\mathfrak{g}=h_V(\xi)$ for all $\xi$. Define $\mathrm{exp}_\epsilon:\Xi_\epsilon\times \mathfrak{g}\to \Xi_\epsilon\times \mathsf{G}$ by 
$$\mathrm{exp}_\epsilon(\xi,X):=(\xi,\mathrm{exp}((1-P_\epsilon(\xi))X)\mathrm{exp}(P_\epsilon(\xi)X)).$$
The fact that the exponential is a diffeomorphism implies that $\mathrm{exp}_\epsilon$ is a diffeomorphism. We denote its inverse by $\log_\epsilon:\Xi_\epsilon\times \mathsf{G}\to \Xi_\epsilon\times \mathfrak{g}$.

We define the map $\rho_\epsilon$ by the property that 
$$\log_\epsilon\circ \rho_\epsilon \circ \mathrm{exp}_\epsilon=P_\epsilon,$$
as mappings $\Xi_\epsilon\times \mathfrak{g}\to \Xi_\epsilon\times \mathfrak{g}$. It is readily verified that this mapping has the required properties. 
\end{proof}

\begin{proposition}
\label{bundleofhspspac}
Let $X_1,\ldots, X_n$ be a Jordan-Hölder basis, take an $\epsilon\in \mathcal{E}_n(\mathsf{G})$ and define 
$$\tilde{\mathcal{H}}_{V,\epsilon}:=\{(\xi,f): \xi\in \Xi_\epsilon, \; f\in\mathcal{H}_{F_{V,\epsilon}(\mathcal{O},\xi),\xi}(\mathcal{O}_\xi)\}.$$
Then there is a unique topology on $\tilde{\mathcal{H}}_{V,\epsilon}$ such that 
\begin{enumerate}
\item The projection mapping $p_1:\tilde{\mathcal{H}}_{V,\epsilon}\to \Xi_\epsilon$ defines a trivializable bundle of Hilbert spaces with fibre 
$$p_1^{-1}(\xi)=\mathcal{H}_{F_{V,\epsilon}(\mathcal{O},\xi),\xi}(\mathcal{O}_\xi)\cong L^2(\R^{d}),$$ 
where $d=\#\epsilon_n/2$. 
\item For all compact $K\subseteq \Xi_\epsilon$, the space of sections $C(K,\tilde{\mathcal{H}}_{V,\epsilon})$ densely contains the space of sections of the form $\xi\mapsto (\xi,V_\xi f)$ for $f\in C^\infty(K\times \mathsf{G})$ such that $f(\xi,gh)=\chi_{\mathfrak{h}_V(\xi),\xi}(h)f(\xi, g)$ for all $\xi\in K$, $g\in \mathsf{G}$, $h\in \mathsf{H}_V(\xi)$, and that $f$ is of Schwarz class modulo $\mathsf{H}_V$.
\item The fibrewise $\mathsf{G}$-action on $\mathcal{H}_{F_{V,\epsilon}(\mathcal{O},\xi),\xi}(\mathcal{O}_\xi)$ extends to a strongly continuous, $C(\Xi_\epsilon)$-linear $\mathsf{G}$-action on the space of continuous section $C(\Xi_\epsilon,\tilde{\mathcal{H}}_{V,\epsilon})$. 
\end{enumerate}
\end{proposition}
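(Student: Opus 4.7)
The approach is to construct an explicit, smoothly varying unitary identification of each fibre $\mathcal{H}_{F_{V,\epsilon}(\mathcal{O},\xi),\xi}(\mathcal{O}_\xi)$ with a fixed model $L^2(\R^d)$, where $d = \#\epsilon_n/2$, transport the topology and $\mathsf{G}$-action along this identification, and then check that the distinguished family of sections in item (2) is continuous and that the action in item (3) is jointly continuous. The two key global tools are the constant trivial complement to $F_{V,\epsilon}$ in $\Xi_\epsilon\times \mathfrak{g}$ spanned by the basis vectors $(X_j)_{j\in K_\epsilon}$ that emerges from the proof of Proposition \ref{vbfv}, and the smooth family of retractions $\rho_\epsilon(\xi,\cdot):\mathsf{G}\to \mathsf{H}_V(\xi)$ supplied by Proposition \ref{rhoepsilonconst}.

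First, I would combine these two objects into a smooth map
$$\Phi:\R^d\times \Xi_\epsilon\to \mathsf{G},\quad \Phi((t_j),\xi) := \exp\left(\sum_{j\in K_\epsilon} t_j X_j\right)\cdot \rho_\epsilon\!\left(\xi,\exp\left(\sum_{j\in K_\epsilon} t_j X_j\right)\right)^{-1},$$
so that $\phi_\xi:=\Phi(\cdot,\xi)$ descends to a diffeomorphism $\R^d\xrightarrow{\sim} \mathsf{G}/\mathsf{H}_V(\xi)$ depending smoothly on $\xi$. Pulling back the $\mathsf{G}$-invariant density on $\mathsf{G}/\mathsf{H}_V(\xi)$ gives a smooth positive Jacobian $J_\xi(t)$, and one then defines a unitary $U_\xi:L^2(\R^d)\to \mathcal{H}_{F_{V,\epsilon}(\mathcal{O},\xi),\xi}(\mathcal{O}_\xi)$ by sending $f$ to $V_\xi$ applied to the function on $\mathsf{G}$ obtained by extending $J_\xi(t)^{-1/2}f(t)$ from a neighbourhood of $\phi_\xi(\R^d)$ using the cocycle $\chi_{F_V,\xi}$. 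The topology on $\tilde{\mathcal{H}}_{V,\epsilon}$ is declared so that $(\xi,f)\mapsto (\xi,U_\xi f)$ is a homeomorphism of Hilbert bundles, giving item (1) tautologically.

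Items (2) and (3) then become verifications. The sections in item (2) map under $U_\xi^{-1}$ to explicit functions built from $f$, $\phi_\xi$, $J_\xi$ and $\chi_{F_V,\xi}$, all smooth in $\xi$; density follows from the density of Schwartz class functions in $L^2(\R^d)$ together with a partition-of-unity argument over compact $K\subseteq \Xi_\epsilon$. In the trivialized picture the induced action of $g\in \mathsf{G}$ is a combination of an affine coordinate change on $\R^d$ and multiplication by a phase-Jacobian factor, both depending smoothly on $(\xi,g,t)$, so strong continuity holds on the dense set of sections from (2) and extends to all of $C(\Xi_\epsilon,\tilde{\mathcal{H}}_{V,\epsilon})$ by the uniform bound provided by fibrewise unitarity of $\pi_\xi$. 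Uniqueness of the topology is then standard: a continuous field of Hilbert spaces is determined by any dense family of sections closed under pointwise inner products.

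The main obstacle I anticipate is verifying the smooth dependence in $\xi$ of $\phi_\xi$, $J_\xi$ and $\chi_{F_V,\xi}$ with enough care to justify the formulas above. This ultimately reduces to the smoothness of the Vergne polarization $\xi\mapsto \mathfrak{h}_V(\xi)$ as a subbundle of $\Xi_\epsilon\times \mathfrak{g}$, which is exactly the content of Proposition \ref{vbfv}, together with the smoothness and the equivariance of $\rho_\epsilon$ from Proposition \ref{rhoepsilonconst}; once these inputs are in hand the remaining computations are essentially routine bookkeeping in the induced representation picture.
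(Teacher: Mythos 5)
Your construction is correct in outline, but it takes a different route from the paper's proof. You trivialize each fibre directly through the cross-section $t\mapsto\exp\big(\sum_{j\in K_\epsilon}t_jX_j\big)\mathsf{H}_V(\xi)$ of $\mathsf{G}\to\mathsf{G}/\mathsf{H}_V(\xi)$ coming from the constant complement in Proposition \ref{vbfv} (note that, by the construction of $\rho_\epsilon$ in Proposition \ref{rhoepsilonconst}, your correction factor $\rho_\epsilon\big(\xi,\exp\big(\sum_j t_jX_j\big)\big)$ is the identity, so $\Phi$ is simply the exponential of the complement), whereas the paper never parametrizes the homogeneous spaces: it takes Pedersen's rational canonical coordinates $q_1^\epsilon,\ldots,q_d^\epsilon$ from \cite[Theorem 5.1.1]{pedersen89}, composes them with Hermite functions and twists by $\chi_{\mathfrak{h}_V(\xi),\xi}\circ\rho_\epsilon$ to obtain a moving orthonormal frame of sections $\big(V_\xi q_{\alpha,\epsilon}(\xi,\cdot)\big)_{\alpha\in\N^d}$, defines the topology by continuity of the coefficient functions against this frame, and delegates item (3) to Lemma \ref{descofreprefield} (Lipsman--Rosenberg/Pedersen). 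Your route is more self-contained, resting only on Propositions \ref{vbfv} and \ref{rhoepsilonconst} together with the standard nilpotent fact, already used in the proof of Proposition \ref{rhoepsilonconst}, that $(X,h)\mapsto\exp(X)h$ is a diffeomorphism from the complement times $\mathsf{H}_V(\xi)$ onto $\mathsf{G}$; make that input explicit, since it is precisely what makes $\phi_\xi$ a global diffeomorphism. The paper's route has downstream payoff: the frame $(q_{\alpha,\epsilon})$ is reused in Proposition \ref{modulevsbundle}, and Lemma \ref{descofreprefield} supplies the explicit rational/polynomial formulas, which you would have to re-derive in your trivialization. Two small corrections to your verification of item (3): for depth greater than two the transported action is not an affine change of coordinates but has the form $f\mapsto \mathrm{e}^{ih_\epsilon(\xi,t,g)}f(t+k_\epsilon(\xi,t,g))$ with $k_\epsilon$ polynomial in $t$ (the affine picture is special to step two, cf.\ Proposition \ref{descriptandstep2}), and your Jacobian $J_\xi$ is constant in these coordinates; neither point affects your argument, which only uses smooth dependence and fibrewise unitarity.
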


\begin{proof}
By \cite[Theorem 5.1.1]{pedersen89} there exists rational functions $q_{1}^{\epsilon},\ldots q_{d}^{\epsilon}$ on $\mathfrak{g^*}$ whose restrictions to $\Xi_\epsilon$ are smooth such that the mappings
$$q_{j,\epsilon}:\Xi_\epsilon\times \mathsf{G}\to \C, \quad q_{j,\epsilon}(\xi,g)=q_j^\epsilon(g\xi),$$
are smooth and satisfy that $q_{j,\epsilon}(\xi,\cdot)\in C^\infty(\mathsf{G})$ are $\mathsf{H}_V(\xi)$-invariant. For a multi-index $\alpha\in \N^d$, we let $h_\alpha$ denote the associated Hermite function on $\R^d$. Define the smooth functions $\tilde{q}_{\alpha,\epsilon}:\Xi_\epsilon\times \mathsf{G}\to \C$ by  
$${q}_{\alpha,\epsilon}(\xi,g)=h_\alpha(q_{1,\epsilon}(\xi,g),\ldots, q_{d,\epsilon}(\xi,g))\chi_{\mathfrak{h}_V(\xi),\xi}(\rho_\epsilon(g)),$$
where $\rho_\epsilon$ is the function constructed in Proposition \ref{rhoepsilonconst}. 

By Proposition \ref{rhoepsilonconst}, we have that 
$$\chi_{\mathfrak{h}_V(\xi),\xi}(\rho_\epsilon(gh))=\chi_{\mathfrak{h}_V(\xi),\xi}(\rho_\epsilon(g))\chi_{\mathfrak{h}_V(\xi),\xi}(h),$$
for all $\xi\in \Xi_\epsilon$, $g\in \mathsf{G}$, $h\in \mathsf{H}_V(\xi)$. It now follows from the properties of $q_{1}^{\epsilon},\ldots q_{d}^{\epsilon}$ that for each compact $K\subseteq \Xi_\epsilon$, the function $q_{\alpha,\epsilon}\in C^\infty(K\times \mathsf{G})$ satisfies that 
$${q}_{\alpha,\epsilon}(\xi,gh)=\chi_{\mathfrak{h}_V(\xi),\xi}(h){q}_{\alpha,\epsilon}(\xi,g),$$ 
for all $\xi\in K$, $g\in \mathsf{G}$, $h\in \mathsf{H}_V(\xi)$, and that $\tilde{q}_{\alpha,\epsilon}$ is of Schwarz class modulo $\mathsf{H}_V$. It follows from the construction, and that $q_{1}^{\epsilon},\ldots q_{d}^{\epsilon}$ define canonical coordinates on each orbit, that for each $\xi$, the sequence $(V_\xi\tilde{q}_{\alpha,\epsilon}(\xi,\cdot))_{\alpha\in \N^d}$ is an ON-basis for $\mathcal{H}_{F_{V,\epsilon}(\mathcal{O},\xi),\xi}(\mathcal{O}_\xi)$.

We define a topology on $\tilde{\mathcal{H}}_{V,\epsilon}$ by for each compact $K\subseteq \Xi_\epsilon$ declaring a section $f:K\to \tilde{\mathcal{H}}_{V,\epsilon}$ to be continuous if and only if the function 
$$\tilde{f}:K\to L^2(\R^d),\quad \tilde{f}(\xi)=\sum_{\alpha\in \N^d}\left\langle V_\xi{q}_{\alpha,\epsilon}(\xi,\cdot), f(\xi)\right\rangle h_\alpha,$$
is continuous. By construction, Item 1 and 2 of the theorem holds. We note that the map $f\mapsto \tilde{f}$ defines a global trivialization $T_\epsilon:\tilde{\mathcal{H}}_{V,\epsilon}\xrightarrow{\sim} \Xi_\epsilon \times L^2(\R^d)$. Item 3 follows from the next lemma. 
\end{proof}

We let $C^\infty(\Xi_\epsilon,\mathcal{U}(L^2(\R^d)))$ denote the topological group of mappings $\Xi_\epsilon \to \mathcal{U}(L^2(\R^d))$ that are smooth for the strong operator topology.

\begin{lemma}
\label{descofreprefield}
Let $X_1,\ldots, X_n$ be a Jordan-Hölder basis, take an $\epsilon\in \mathcal{E}_n(\mathsf{G})$ and consider the trivialization 
$$T_\epsilon:\tilde{\mathcal{H}}_{V,\epsilon}\to \Xi_\epsilon\times L^2(\R^d),$$
constructed in the proof of Proposition \ref{bundleofhspspac}, where $d=\#\epsilon_n/2$. Let $\pi_{\epsilon,0}=(\pi_\xi)_{\xi\in \Xi_\epsilon}$ denote the family of unitary representations of $\mathsf{G}$ on $L^2(\R^d)$ induced from the fibrewise $\mathsf{G}$-action on $\tilde{\mathcal{H}}_{V,\epsilon}$ and $T_\epsilon$. 

There exists smooth functions $a_{\epsilon,j}\in C^\infty(\Xi_\epsilon\times \R^d\times \mathfrak{g})$, for $j=0,\ldots, d$, that are rational in $\xi\in \Xi_\epsilon$, polynomial in $t\in \R^d$ and linear in $X\in \mathfrak{g}$, such that ,
$$(D\pi_\xi(X)f)(t)=ia_{\epsilon,0}(\xi,t,X)f(t)+\sum_{j=1}^da_{\epsilon,j}(\xi,t,X)\frac{\partial f}{\partial t_j}(t),$$
for $f\in \mathcal{S}(\R^d)$, $\xi\in \Xi_\epsilon$ and $X\in \mathfrak{g}$. Moreover, there exists $h_\epsilon\in C^\infty(\Xi_\epsilon\times \R^d\times \mathsf{G})$ and $k_\epsilon\in C^\infty(\Xi_\epsilon\times \R^d\times \mathsf{G},\R^d)$ that are rational in $\xi\in \Xi_\epsilon$, polynomial in $t\in \R^d$ and polynomial in $g\in \mathsf{G}$ such that  
$$(\pi_\xi(g) f)(t)=\mathrm{e}^{ih_\epsilon(\xi,t,g)}f(t+k_\epsilon(\xi,t,g)).$$

In particular, $\pi_{\epsilon,0}$ defines a continuous homomorphism 
$$\pi_{\epsilon,0}:\mathsf{G}\to C^\infty(\Xi_k,\mathcal{U}(L^2(\R^d))).$$
\end{lemma}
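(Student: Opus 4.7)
The plan is to make the trivialisation $T_\epsilon$ from the proof of Proposition \ref{bundleofhspspac} completely explicit and then read off the $\mathsf{G}$-action on $L^2(\R^d)$ as a family of phase-shifted translations whose regularity is inherited from Pedersen's polynomial structure theorems. By construction $\mathcal{H}_{F_{V,\epsilon}(\mathcal{O}_\xi,\xi),\xi}(\mathcal{O}_\xi)=\ind_{\mathsf{H}_V(\xi)}^{\mathsf{G}}\chi_{\mathfrak{h}_V(\xi),\xi}$, realised on $\chi_{\mathfrak{h}_V(\xi),\xi}$-equivariant $L^2$-functions on $\mathsf{G}$ with $\mathsf{G}$ acting by left translation. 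The $\mathsf{H}_V(\xi)$-invariant functions $q_1^\epsilon,\dots,q_d^\epsilon$ of \cite[Theorem 5.1.1]{pedersen89} descend to coordinates on the $d$-dimensional quotient $\mathsf{G}/\mathsf{H}_V(\xi)$, and combined with the splitting $\rho_\epsilon$ of Proposition \ref{rhoepsilonconst} they produce, for each $\xi\in \Xi_\epsilon$, a smooth section $\tilde g_\xi:\R^d \to \mathsf{G}$ of $\mathsf{G}\to \mathsf{G}/\mathsf{H}_V(\xi)$ which is polynomial in $t$ with coefficients rational in $\xi$. Trivialising the induced line bundle by the phase $\chi_{\mathfrak{h}_V(\xi),\xi}\circ \rho_\epsilon$ and pushing forward the Lebesgue measure, this section identifies $T_\epsilon$ with the parametrisation sending $V_\xi q_{\alpha,\epsilon}(\xi,\cdot)$ to the Hermite function $h_\alpha$ on $\R^d$.

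In these coordinates the left-translation action takes an explicit form. For $g\in \mathsf{G}$, $\xi\in \Xi_\epsilon$ and $t\in \R^d$, the product $g^{-1}\tilde g_\xi(t)$ factors uniquely as $\tilde g_\xi\bigl(t+k_\epsilon(\xi,t,g)\bigr)\cdot h^{\sharp}(\xi,t,g)$ for some $k_\epsilon(\xi,t,g)\in \R^d$ and $h^{\sharp}(\xi,t,g)\in \mathsf{H}_V(\xi)$. Because multiplication in $\mathsf{G}$ is polynomial in exponential coordinates, because the functions $q_j^\epsilon$ depend rationally on $\xi$ and polynomially on the orbit variables, and because $\rho_\epsilon$ has the regularity established in Proposition \ref{rhoepsilonconst}, the pair $(k_\epsilon,h^{\sharp})$ is rational in $\xi$ and polynomial in $(t,g)$. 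Setting $h_\epsilon(\xi,t,g):=\xi(\log h^{\sharp}(\xi,t,g))$, the identity $(\pi_\xi(g)\tilde f)(\tilde g_\xi(t)\mathsf{H}_V(\xi))=\tilde f(g^{-1}\tilde g_\xi(t)\mathsf{H}_V(\xi))$ for the left-translation action, combined with the $\chi_{\mathfrak{h}_V(\xi),\xi}$-equivariance, produces
$$(\pi_\xi(g)\tilde f)(t) = \e^{i h_\epsilon(\xi,t,g)}\,\tilde f\bigl(t+k_\epsilon(\xi,t,g)\bigr),$$
with $h_\epsilon$ and $k_\epsilon$ having the announced dependence on $(\xi,t,g)$.

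The derived representation follows by differentiating this formula at $g=\mathrm{exp}(sX)$, $s=0$: setting
$$a_{\epsilon,0}(\xi,t,X):=\partial_s h_\epsilon(\xi,t,\mathrm{exp}(sX))\big|_{s=0},\qquad a_{\epsilon,j}(\xi,t,X):=\partial_s k_\epsilon^{(j)}(\xi,t,\mathrm{exp}(sX))\big|_{s=0},$$
one obtains the claimed first-order expression on $\mathcal{S}(\R^d)$, with linearity in $X$ coming from the chain rule applied to polynomial functions of $g$ that vanish at the identity, and rationality in $\xi$ and polynomiality in $t$ inherited from those of $h_\epsilon$ and $k_\epsilon$. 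Continuity of $\pi_{\epsilon,0}:\mathsf{G}\to C^\infty(\Xi_\epsilon,\mathcal{U}(L^2(\R^d)))$ with respect to the strong operator topology is obtained on the dense subspace $\mathcal{S}(\R^d)$ directly from the joint smoothness of the formula for $\pi_\xi(g)$ in $(\xi,g)$, and extends to all of $L^2(\R^d)$ by the uniform bound $\|\pi_\xi(g)\|=1$.

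The main obstacle is the bookkeeping in the first step: one must verify that the basis $V_\xi q_{\alpha,\epsilon}(\xi,\cdot)$ chosen in Proposition \ref{bundleofhspspac} corresponds under this parametrisation to the Hermite basis on $L^2(\R^d)$, so that $\pi_\xi(g)$ really is a phase-shifted translation in the coordinates $t$ rather than some more intricate integral transform involving the conjugate momenta. Once that identification is pinned down, the polynomial and rational regularity statements reduce to direct applications of \cite[Theorem 5.1.1]{pedersen89} and Proposition \ref{rhoepsilonconst}.
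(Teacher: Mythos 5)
Your proposal reconstructs the Pedersen-style argument that the paper itself does not spell out: the paper's ``proof'' consists of the citation to Lipsman--Rosenberg (Lemma 2.8 and Proposition 2.11) and to Pedersen's Theorem 2.7.2, which establish precisely this realization of the Vergne-induced representations on $L^2(\R^d)$ via canonical coordinates and the polynomial factorization of the group, so there is no divergence of method.

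The ``main obstacle'' you flag does close without surprises. By the construction in the proof of Proposition \ref{bundleofhspspac}, the ON-basis element is $q_{\alpha,\epsilon}(\xi,g)=h_\alpha\bigl(q_{1,\epsilon}(\xi,g),\dots,q_{d,\epsilon}(\xi,g)\bigr)\,\chi_{\mathfrak{h}_V(\xi),\xi}(\rho_\epsilon(\xi,g))$ with the $q_{j,\epsilon}(\xi,\cdot)$ being $\mathsf{H}_V(\xi)$-invariant. Choosing $\tilde g_\xi(t)$ inside the transversal $\rho_\epsilon(\xi,\cdot)^{-1}(e)$ so that $q_{j,\epsilon}(\xi,\tilde g_\xi(t))=t_j$ makes the $\rho_\epsilon$-phase trivial and yields $q_{\alpha,\epsilon}(\xi,\tilde g_\xi(t))=h_\alpha(t)$; since $T_\epsilon|_\xi$ and pullback along $\tilde g_\xi$ are then unitaries sending the same ON-system to the same ON-system, they coincide. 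Given that, your factorization $g^{-1}\tilde g_\xi(t)=\tilde g_\xi(t')h^\sharp$ with $h^\sharp\in\mathsf{H}_V(\xi)$, the regularity (rational in $\xi$ via Pedersen's canonical coordinates and the Vergne projector $P_\epsilon$; polynomial in $t,g$ via the nilpotent group law), and the differentiation at $g=\exp(sX)$ all proceed as you sketch. Two cosmetic points for a full write-up: Proposition \ref{rhoepsilonconst} asserts only smoothness of $\rho_\epsilon$ in its statement, and the rational dependence on $\xi$ that you invoke must be read from the construction via $P_\epsilon$ inside its proof; and the equivariance convention $\tilde f(gh)=\chi(h)^{-1}\tilde f(g)$ forces $h_\epsilon=-\xi(\log h^\sharp)$ rather than $+\xi(\log h^\sharp)$, a sign that is trivially absorbed.
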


We omit the proof of this lemma and refer the reader to \cite[Lemma 2.8 and Proposition 2.11]{LipmanRosenberg}. See also \cite[Theorem 2.7.2]{pedersen89}.

To place this construction in a more $C^*$-algebraic context, we will define a $C_0(\Xi_\epsilon)$-Hilbert $C^*$-module that will capture the bundle $\tilde{\mathcal{H}}_{V,\epsilon}$. 
\label{thehilbmdmoslsl}
We define the space $\tilde{\mathpzc{H}}^\infty_{V,\epsilon,c}$ to consist of all $f\in C^\infty(\Xi_\epsilon\times \mathsf{G})$ such that $ f(\xi,g)=\chi_{\mathfrak{h}_V(\xi),\xi}(h)f(g)$ for all $\xi\in \Xi_\epsilon$, $g\in \mathsf{G}$, $h\in \mathsf{H}_V(\xi)$, and that $f$  is compactly supported in the $\Xi_\epsilon$-direction and of Schwarz class modulo $\mathsf{H}_V$ in the $\mathsf{G}$-direction. On $\tilde{\mathpzc{H}}^\infty_{V,\epsilon,c}$, we define a $C_0(\Xi_\epsilon)$-valued inner product as 
$$\langle f_1,f_2\rangle(\xi):=\int_{\mathsf{G}/\mathsf{H}_V(\xi)} \overline{f_1(\xi,g)}f_2(\xi,g)\rd \mu_\xi, \quad\mbox{for $f_1,f_2\in\tilde{\mathpzc{H}}^\infty_{V,\epsilon,c}$}, $$
where $\mu_\xi$ is the Lebesgue measure on $\mathsf{G}/\mathsf{H}_V(\xi)$ defined from exponentiating the Lebesgue measure on $\mathfrak{g}/\mathfrak{h}_V(\xi)$ defined from the Jordan-Hölder basis. In fact, $\langle f_1,f_2\rangle\in C^\infty_c(\Xi_\epsilon)$ for $f_1,f_2\in\mathpzc{H}^\infty_{V,\epsilon,c}$. We let $\tilde{\mathpzc{H}}_{V,\epsilon}$ denote the completion of $\tilde{\mathpzc{H}}^\infty_{V,\epsilon,c}$ as a $C_0(\Xi_\epsilon)$-Hilbert module.

We note that any $f\in \tilde{\mathpzc{H}}^\infty_{V,\epsilon,c}$ canonically defines a smooth compactly supported section of $\tilde{\mathcal{H}}_{V,\epsilon}$ via the map $V_\xi$ from Equation \eqref{thevximap}. We denote the associated injection by 
\begin{equation}
\label{vtildedef}
\tilde{V}_\epsilon:\tilde{\mathpzc{H}}^\infty_{V,\epsilon,c}\to C^\infty_c(\Xi_\epsilon, \tilde{\mathcal{H}}_{V,\epsilon}).
\end{equation}

\begin{proposition}
\label{modulevsbundle}
Let $\epsilon\in \mathcal{E}_n(\mathsf{G})$. The map $\tilde{V}_\epsilon$ extends to a unitary isomorphism of $C_0(\Xi_\epsilon)$-Hilbert $C^*$-modules 
$$\tilde{V}_\epsilon:\tilde{\mathpzc{H}}_{V,\epsilon}\to C_0(\Xi_\epsilon, \tilde{\mathcal{H}}_{V,\epsilon}).$$
\end{proposition}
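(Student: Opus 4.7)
The strategy is the standard two-step one for Hilbert $C^*$-module isomorphisms: first show that $\tilde{V}_\epsilon$ is fibrewise isometric on the pre-Hilbert module $\tilde{\mathpzc{H}}^\infty_{V,\epsilon,c}$, which then automatically extends to an isometry on the completion; second, establish that the range is dense. The fibrewise isometry follows directly from the way the inner product on the target is set up in Section \ref{subsec:reptheory}. Namely, for a fixed $\xi\in \Xi_\epsilon$, writing $F_\xi := F_{V,\epsilon}(\mathcal{O}_\xi,\xi)$, the map
\[
V_\xi : C^\infty(\mathsf{G}, \C_{\chi_{F_\xi,\xi}})^{\mathsf{H}_V(\xi)} \;\longrightarrow\; C^\infty_{F_\xi}(\mathcal{O}_\xi, L_{\mathcal{O}_\xi})
\]
was defined in \eqref{thevximap} and used to transport the $L^2$-inner product from $C^\infty_c(\mathsf{G}/\mathsf{H}_V(\xi), L_{F_\xi,\xi})$ onto $\mathcal{H}_{F_\xi,\xi}(\mathcal{O}_\xi)$, where the measure on $\mathsf{G}/\mathsf{H}_V(\xi)$ used is exactly $\mu_\xi$. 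Hence, for $f_1,f_2 \in \tilde{\mathpzc{H}}^\infty_{V,\epsilon,c}$,
\[
\bigl\langle \tilde{V}_\epsilon(f_1)(\xi),\, \tilde{V}_\epsilon(f_2)(\xi)\bigr\rangle_{\tilde{\mathcal{H}}_{V,\epsilon}|_\xi} \;=\; \int_{\mathsf{G}/\mathsf{H}_V(\xi)} \overline{f_1(\xi,g)}\,f_2(\xi,g)\,\rd\mu_\xi \;=\; \langle f_1,f_2\rangle(\xi),
\]
pointwise on $\Xi_\epsilon$, so the two $C_0(\Xi_\epsilon)$-valued inner products agree as functions and $\tilde{V}_\epsilon$ is isometric.

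Next I would check that $\tilde{V}_\epsilon(f)$ is actually a continuous section of $\tilde{\mathcal{H}}_{V,\epsilon}$ with compact support in $\Xi_\epsilon$: this is precisely the content of Proposition \ref{bundleofhspspac}(2), which identifies sections of the form $\xi\mapsto (\xi, V_\xi f(\xi,\cdot))$ with $f\in \tilde{\mathpzc{H}}^\infty_{V,\epsilon,c}$ as a distinguished dense subspace of $C(K,\tilde{\mathcal{H}}_{V,\epsilon})$ for every compact $K\subseteq \Xi_\epsilon$. Compact support of $f$ in the $\Xi_\epsilon$-direction then gives $\tilde{V}_\epsilon(f)\in C_c(\Xi_\epsilon, \tilde{\mathcal{H}}_{V,\epsilon}) \subset C_0(\Xi_\epsilon, \tilde{\mathcal{H}}_{V,\epsilon})$. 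Combined with the isometric identity above, $\tilde{V}_\epsilon$ extends uniquely to an isometric $C_0(\Xi_\epsilon)$-linear map on the completion $\tilde{\mathpzc{H}}_{V,\epsilon}$.

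It remains to prove that the image is dense. Here I would argue in two stages. Firstly, a standard partition of unity subordinate to a locally finite open cover of $\Xi_\epsilon$ by relatively compact sets shows that $C_c(\Xi_\epsilon, \tilde{\mathcal{H}}_{V,\epsilon})$ is dense in $C_0(\Xi_\epsilon,\tilde{\mathcal{H}}_{V,\epsilon})$ in the Hilbert module norm. Secondly, given $s\in C_c(\Xi_\epsilon,\tilde{\mathcal{H}}_{V,\epsilon})$ with compact support $K$, apply Proposition \ref{bundleofhspspac}(2) on a slightly larger compact neighbourhood $K'\supseteq K$ to approximate $s|_{K'}$ uniformly in the fibrewise Hilbert norm by a section of the form $\xi\mapsto V_\xi f(\xi,\cdot)$ with $f$ as in the definition of $\tilde{\mathpzc{H}}^\infty_{V,\epsilon,c}$; multiplying $f$ by a smooth cut-off equal to $1$ on $K$ and supported in $K'$ produces an element of $\tilde{\mathpzc{H}}^\infty_{V,\epsilon,c}$ whose image under $\tilde{V}_\epsilon$ is close to $s$ in the supremum-of-fibrewise-norms, i.e.\ in the Hilbert module norm.

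The main technical point is the density step, which in turn rests entirely on Proposition \ref{bundleofhspspac}(2); once that local density of fibrewise-smooth vectors is granted, the rest is purely formal. A mild point to verify along the way is that the cut-off of $f$ still lies in $\tilde{\mathpzc{H}}^\infty_{V,\epsilon,c}$ — this is immediate since cutting off in the $\Xi_\epsilon$-variable does not interfere with the $\mathsf{H}_V(\xi)$-equivariance or the Schwartz decay in the $\mathsf{G}/\mathsf{H}_V$-direction, both of which are conditions in the $\mathsf{G}$-variable at each fixed $\xi$.
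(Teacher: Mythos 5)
Your proof is correct, but it organizes the argument differently from the paper, so let me compare. The paper's proof is computational: it recalls the Hermite-type sections $(q_{\alpha,\epsilon})_{\alpha\in\N^d}$ and the global trivialization $T_\epsilon:\tilde{\mathcal{H}}_{V,\epsilon}\xrightarrow{\sim}\Xi_\epsilon\times L^2(\R^d)$ from the proof of Proposition \ref{bundleofhspspac}, writes the identity on $\tilde{\mathpzc{H}}_{V,\epsilon}$ as the strictly convergent bra--ket sum $\sum_\alpha |q_{\alpha,\epsilon}\rangle\langle q_{\alpha,\epsilon}|$, and checks that $T_\epsilon\tilde{V}_\epsilon$ is precisely the Hermite-coefficient map $f\mapsto\sum_\alpha\langle q_{\alpha,\epsilon},f\rangle h_\alpha$; unitarity of $\tilde{V}_\epsilon$ is then read off from unitarity of $T_\epsilon$. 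You instead go through the standard isometry-plus-dense-range route, with the isometry coming from the fibrewise match of inner products and the density coming from Proposition \ref{bundleofhspspac}(2) together with a cutoff in the $\Xi_\epsilon$-variable. The two arguments are ultimately resting on exactly the same fact — fibrewise completeness of $(q_{\alpha,\epsilon})$ inside $\tilde{\mathcal{H}}_{V,\epsilon}$ — packaged once as the resolution of the identity, once as the density statement \ref{bundleofhspspac}(2). The paper's version has the advantage of producing the explicit inverse $T_\epsilon^{-1}\circ(\text{Hermite expansion})$; yours is shorter to state and avoids having to justify strict convergence of the bra--ket sum as a separate step. One small remark: both your proof and the paper's tacitly use that the measure implicit in the construction of $\mathcal{H}_{F,\xi}(\mathcal{O})$ in Section \ref{subsec:reptheory} is the same $\mu_\xi$ appearing in the definition of the $C_0(\Xi_\epsilon)$-inner product on $\tilde{\mathpzc{H}}^\infty_{V,\epsilon,c}$; this is the intended normalization (otherwise the $q_{\alpha,\epsilon}$ would not be orthonormal in both senses simultaneously), and you are right to flag it even though the paper leaves it implicit. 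Also be slightly careful that the cutoff $\chi$ must be supported in the interior of $K'$ so that $\chi f$ extends by zero to a genuine element of $C^\infty(\Xi_\epsilon\times\mathsf{G})$; as written, ``supported in $K'$'' is a little loose.
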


\begin{proof}
Recall the construction of $(q_{\alpha,\epsilon})_{\alpha\in \N^d}$ and the global trivialization $T_\epsilon$ of $\tilde{\mathcal{H}}_{V,\epsilon}$ from the proof of Proposition \ref{bundleofhspspac}. The functions $q_{\alpha,\epsilon}$ defines elements $\langle q_{\alpha,\epsilon}| \in \mathcal{L}_{C_0(\Xi_\epsilon)}(\tilde{\mathpzc{H}}_{V,\epsilon},C_0(\Xi_\epsilon))$. We set $| q_{\alpha,\epsilon}\rangle:= \langle q_{\alpha,\epsilon}|^*\in \mathcal{L}_{C_0(\Xi_\epsilon)}(C_0(\Xi_\epsilon), \tilde{\mathpzc{H}}_{V,\epsilon})$ and note that 
$$\langle q_{\alpha,\epsilon}| q_{\beta,\epsilon}\rangle=\delta_{\alpha,\beta}\in C_b(\Xi_\epsilon)=\mathcal{M}(C_0(\Xi_\epsilon)).$$ 
We can write the identity of  $\tilde{\mathpzc{H}}_{V,\epsilon}$ as the strictly convergent sum
\begin{equation}
\label{decompuais}
\mathrm{id}_{\tilde{\mathpzc{H}}_{V,\epsilon}}=\sum_{\alpha\in \N^d} | q_{\alpha,\epsilon}\rangle \langle q_{\alpha,\epsilon}|.
\end{equation}
It is readily verified that the composition $T_\epsilon \tilde{V}_\epsilon$ takes the form 
$$T_\epsilon \tilde{V}_\epsilon f=\sum_{\alpha\in \N^d}\left\langle q_{\alpha,\epsilon}, f\right\rangle h_\alpha,$$
for $f\in \tilde{\mathpzc{H}}^\infty_{V,\epsilon,c}$. Using Equation \eqref{decompuais}, we conclude that $T_\epsilon \tilde{V}_\epsilon:\tilde{\mathpzc{H}}_{V,\epsilon}\to C_0(\Xi_\epsilon,L^2(\R^d))$ is a unitary isomorphism of $C_0(\Xi_\epsilon)$-Hilbert $C^*$-modules. The proposition now follows from the fact that $T_\epsilon$ is a global (unitary) trivialization. 
\end{proof}

\begin{proposition}
\label{reponen}
Let $\epsilon\in \mathcal{E}_n(\mathsf{G})$. Left translation of $\mathsf{G}$ on functions in $\tilde{\mathpzc{H}}^\infty_{V,\epsilon,c}$ defines an action of $\mathsf{G}$ on $\tilde{\mathpzc{H}}_{V,\epsilon}$ as unitary, adjointable $C_0(\Xi_\epsilon)$-linear mappings. This action integrates to a $*$-homomorphism
$$\tilde{\pi}_{V,\epsilon}:C^*(\mathsf{G})\to \mathrm{End}^*_{C_0(\Xi_\epsilon)}(\tilde{\mathpzc{H}}_{V,\epsilon}),$$
and the unitary isomorphism $\tilde{V}_\epsilon:\tilde{\mathpzc{H}}_{V,\epsilon}\to C_0(\Xi_\epsilon, \tilde{\mathcal{H}}_{V,\epsilon})$ from Proposition \ref{modulevsbundle} is $\mathsf{G}$-equivariant. The unitary $\tilde{V}_\epsilon$ in particular induces a $\mathsf{G}$-equivariant $*$-isomorphism 
$$\mathbb{K}_{C_0(\Xi_\epsilon)}(\tilde{\mathpzc{H}}_{V,\epsilon})\cong C_0(\Xi_\epsilon, \mathbb{K}(\tilde{\mathcal{H}}_{V,\epsilon})).$$
\end{proposition}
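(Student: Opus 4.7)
The plan is to verify the claim in four stages: (i) left translation is a well-defined unitary, adjointable $C_0(\Xi_\epsilon)$-linear action on the pre-module $\tilde{\mathpzc{H}}^\infty_{V,\epsilon,c}$; (ii) the action is strongly continuous when extended to the completion; (iii) integration against Haar measure yields a $*$-homomorphism from $C^*(\mathsf{G})$; (iv) $\tilde{V}_\epsilon$ intertwines the two actions and induces the claimed equivariant isomorphism on compact operators.

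For (i), one checks directly that if $f\in\tilde{\mathpzc{H}}^\infty_{V,\epsilon,c}$ and $g\in\mathsf{G}$, then the function $(g\cdot f)(\xi,g'):=f(\xi,g^{-1}g')$ still satisfies the polarization-covariance relation $(g\cdot f)(\xi,g'h)=\chi_{\mathfrak{h}_V(\xi),\xi}(h)(g\cdot f)(\xi,g')$ for all $h\in\mathsf{H}_V(\xi)$, since left translation commutes with right $\mathsf{H}_V(\xi)$-translation, and the compact-support and Schwarz-class conditions are preserved. The inner product $\langle g\cdot f_1,g\cdot f_2\rangle(\xi)$ equals $\langle f_1,f_2\rangle(\xi)$ because $\mu_\xi$ is the $\mathsf{G}$-invariant measure on $\mathsf{G}/\mathsf{H}_V(\xi)$, so the action is isometric. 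The inverse $g^{-1}$ supplies the adjoint, and since left translation does not involve the $\xi$-variable it is tautologically $C_0(\Xi_\epsilon)$-linear.

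For (ii), the action extends uniquely to unitaries $\tilde{\pi}_{V,\epsilon}(g)\in\End^*_{C_0(\Xi_\epsilon)}(\tilde{\mathpzc{H}}_{V,\epsilon})$. Strong continuity in the Hilbert-module norm will follow from Proposition~\ref{bundleofhspspac}(3) together with Proposition~\ref{modulevsbundle}: the latter identifies $\tilde{\mathpzc{H}}_{V,\epsilon}$ with $C_0(\Xi_\epsilon,\tilde{\mathcal{H}}_{V,\epsilon})$ via $\tilde{V}_\epsilon$, and the former states that the fibrewise action is strongly continuous on compactly supported sections. Alternatively, one can read off strong continuity from the explicit formula $(\pi_\xi(g)f)(t)=\e^{ih_\epsilon(\xi,t,g)}f(t+k_\epsilon(\xi,t,g))$ in Lemma~\ref{descofreprefield}, which shows that $g\mapsto\tilde{\pi}_{V,\epsilon}(g)$ is norm-continuous on the dense subspace of compactly supported sections corresponding to Hermite-polynomial fibrewise vectors, and hence strongly continuous on the whole module.

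For (iii), once the unitary action is strongly continuous and $C_0(\Xi_\epsilon)$-linear, the integrated representation $\tilde{\pi}_{V,\epsilon}(f):=\int_{\mathsf{G}}f(g)\tilde{\pi}_{V,\epsilon}(g)\,\rd g$ for $f\in L^1(\mathsf{G})$ defines a $*$-homomorphism from $L^1(\mathsf{G})$ into $\End^*_{C_0(\Xi_\epsilon)}(\tilde{\mathpzc{H}}_{V,\epsilon})$, which extends by the universal property of the group $C^*$-algebra to $C^*(\mathsf{G})$. For (iv), the equivariance of $\tilde{V}_\epsilon$ is immediate from the formula $V_\xi f(g\xi)=f(g)$ in \eqref{thevximap}: left translation by $g_0$ of $f$ corresponds under $V_\xi$ to the left translation action of $\mathsf{G}$ on sections of $L_\mathcal{O}$. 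Finally, since $\tilde{V}_\epsilon$ is a unitary isomorphism of $C_0(\Xi_\epsilon)$-Hilbert modules, conjugation by $\tilde{V}_\epsilon$ gives a $*$-isomorphism $\mathbb{K}_{C_0(\Xi_\epsilon)}(\tilde{\mathpzc{H}}_{V,\epsilon})\cong\mathbb{K}_{C_0(\Xi_\epsilon)}(C_0(\Xi_\epsilon,\tilde{\mathcal{H}}_{V,\epsilon}))$, and the right-hand side is canonically identified with $C_0(\Xi_\epsilon,\mathbb{K}(\tilde{\mathcal{H}}_{V,\epsilon}))$ because $\tilde{\mathcal{H}}_{V,\epsilon}$ is a (trivializable) Hilbert space bundle. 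Equivariance of this isomorphism is automatic from the equivariance of $\tilde{V}_\epsilon$.

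The only genuinely delicate point is the strong continuity in stage (ii); the rest is bookkeeping. Given the explicit formulas from Lemma~\ref{descofreprefield} and the trivialization $T_\epsilon$ constructed in Proposition~\ref{bundleofhspspac}, strong continuity on compactly supported sections reduces to a dominated convergence argument that is uniform in $\xi$ over compacta in $\Xi_\epsilon$, which is exactly what the smoothness and polynomial dependence statements in Lemma~\ref{descofreprefield} provide.
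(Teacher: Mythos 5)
Your proof is correct and follows essentially the same route as the paper: the paper's own argument simply notes that $\tilde{V}_\epsilon$ is $\mathsf{G}$-equivariant on the dense subspace $\tilde{\mathpzc{H}}^\infty_{V,\epsilon,c}$ by construction and then extends everything by density, relying on Proposition \ref{bundleofhspspac} (as you do) for strong continuity and $C_0(\Xi_\epsilon)$-linearity on the bundle side. Your version merely spells out the routine verifications (covariance, invariance of $\mu_\xi$, integration to $C^*(\mathsf{G})$, conjugation on compacts) that the paper leaves implicit.
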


\begin{proof}
By construction, $\tilde{V}_\epsilon$ is $\mathsf{G}$-equivariant as a mapping $\tilde{\mathpzc{H}}^\infty_{V,\epsilon,c}\to C^\infty_c(\Xi_\epsilon, \tilde{\mathcal{H}}_{V,\epsilon})$. The proposition follows from density.
\end{proof}

The constructions above take place over $\Xi_\epsilon$, but it can readily be seen from Kirillov's orbit method that the unitary equivalence class of the $\mathsf{G}$-representation on each fibre remains constant along each orbit. We now restrict our constructions to bundles and modules over the stratas $\Gamma_\epsilon$. As explained in Proposition \ref{gammasplitting}, the Jordan-Hölder basis defines a homeomorphism $\Gamma_\epsilon\cong \Lambda_\epsilon$ and we can thusly consider $\Gamma_\epsilon$ as a subset of $\Xi_\epsilon$. We define the locally trivial bundle of Hilbert spaces
$$\mathcal{H}_{V,\epsilon}:=\tilde{\mathcal{H}}_{V,\epsilon}|_{\Gamma_\epsilon}\to \Gamma_\epsilon,$$
and the $C_0(\Gamma_\epsilon)$-Hilbert $C^*$-module 
$$\mathpzc{H}_{V,\epsilon}:=\tilde{\mathpzc{H}}_{V,\epsilon}\otimes_{C_0(\Xi_\epsilon)}C_0(\Gamma_\epsilon).$$
We note that Proposition \ref{modulevsbundle} implies that $\tilde{V}_\epsilon$ from Equation \eqref{vtildedef} induces a $\mathsf{G}$-equivariant unitary isomorphism of $C_0(\Gamma_\epsilon)$-Hilbert $C^*$-modules
\begin{equation}
\label{modudldlde}
V_\epsilon:\mathpzc{H}_{V,\epsilon}\to C_0(\Gamma_\epsilon, \mathcal{H}_{V,\epsilon}).
\end{equation}
The unitary $V_\epsilon$ in particular induces a $\mathsf{G}$-equivariant $*$-isomorphism 
$$\mathbb{K}_{C_0(\Gamma_\epsilon)}(\mathpzc{H}_{V,\epsilon})\cong C_0(\Gamma_\epsilon, \mathbb{K}(\mathcal{H}_{V,\epsilon})).$$

As above, we order the strata $\Gamma_1, \Gamma_2,\ldots$ in the fine stratification as above. We use the same notation for $\Xi_1,\Xi_2,\ldots $, $\mathcal{H}_{V,1},\mathcal{H}_{V,2},\ldots$ and $\mathpzc{H}_{V,1},\mathpzc{H}_{V,2},\ldots$. We define the ideals $J_k\subseteq C^*(\mathsf{G})$ as
$$J_k:=\{a\in C^*(\mathsf{G}): \pi(a)=0\forall \pi\in \widehat{\mathsf{G}}\setminus \cup_{j\leq k}\Gamma_j\},$$
where we identify each $\Gamma_j$ with a subset of representations under the Kirillov correspondence. We note that these ideals fit into a sequence of ideal inclusions
\begin{equation}
\label{ideaslsls}
0\vartriangleleft J_1\vartriangleleft J_2 \vartriangleleft\cdots \vartriangleleft C^*(\mathsf{G}).
\end{equation}
We also define the $C^*$-algebras $I_k:=J_k/J_{k-1}$.

\begin{proposition}
\label{spectrumofthejs}
The spectrum of the ideals $J_k\vartriangleleft C^*(\mathsf{G})$ are given by 
$$\widehat{J_k}=\cup_{j\leq k}\Gamma_j\subseteq \hat{\mathsf{G}},$$
and the map $\widehat{J_k} \to \widehat{J_{k+1}}$ induced from the ideal inclusion $J_k\to J_{k+1}$ is given by the inclusion 
$$\cup_{j\leq k}\Gamma_j\to \cup_{j\leq k+1}\Gamma_j.$$
\end{proposition}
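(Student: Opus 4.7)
The plan is to invoke the standard correspondence between open subsets of the spectrum of a type-I $C^*$-algebra and its closed two-sided ideals. Since $\mathsf{G}$ is simply connected nilpotent, $C^*(\mathsf{G})$ is of type I and this correspondence is available, see e.g.\ \cite{Corwin_Greenleaf}.

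First I would verify that $U_k := \bigcup_{j \leq k} \Gamma_j$ is open in $\widehat{\mathsf{G}}$. By the ordering of strata recalled just above the statement, an induction on $k$ shows that $\bigcup_{j \leq k} \Xi_j$ is open in $\mathfrak{g}^*$: the induction step uses precisely that $\Xi_{k+1}$ is Zariski open in $\mathfrak{g}^* \setminus \bigcup_{j \leq k} \Xi_j$. Each $\Xi_j$ is $\mathsf{Ad}^*$-invariant, so this union is $\mathsf{Ad}^*$-saturated. The orbit projection $\mathfrak{g}^* \to \mathfrak{g}^*/\mathsf{Ad}^*$ is open for any continuous group action (the $\mathsf{Ad}^*$-saturation of an open set being a union of translates of it), hence $U_k$ is open in $\mathfrak{g}^*/\mathsf{Ad}^* \cong \widehat{\mathsf{G}}$ via the Kirillov homeomorphism.

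Next I would invoke the standard bijection between open subsets $U$ of the spectrum of a type-I $C^*$-algebra $A$ and closed two-sided ideals, sending
$$U \longmapsto I(U) := \{a \in A : \pi(a) = 0 \text{ for all } \pi \in \widehat{A} \setminus U\}.$$
By the definition just preceding \eqref{ideaslsls}, one has $J_k = I(U_k)$. The classical fact that $\widehat{I(U)}$ is canonically homeomorphic to $U$, via restriction of irreducible representations, then yields $\widehat{J_k} \cong U_k$, which is the first assertion of the proposition.

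For the second assertion, the map $\widehat{J_k} \to \widehat{J_{k+1}}$ induced by the ideal inclusion sends a class $[\sigma]$ to the unique irreducible extension of $\sigma$ to $J_{k+1}$; under the identifications $\widehat{J_k} \cong U_k$ and $\widehat{J_{k+1}} \cong U_{k+1}$, both sitting inside $\widehat{\mathsf{G}}$, this becomes the canonical inclusion $U_k \hookrightarrow U_{k+1}$ by naturality. The only delicate step is verifying openness of the saturation in the Fell topology on $\widehat{\mathsf{G}}$; once that is in hand, the rest reduces to standard type-I $C^*$-algebra theory.
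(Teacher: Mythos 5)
Your proof is correct and takes essentially the same route as the paper, whose entire argument is a citation of Dixmier's correspondence between closed two-sided ideals and open subsets of the spectrum (\cite[Proposition 3.2.1]{Dixmierbook}); your verification that $\bigcup_{j\leq k}\Gamma_j$ is open and your identification of the map induced by the ideal inclusion are precisely the details that citation encapsulates.
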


\begin{proof}
This follows from \cite[Proposition 3.2.1]{Dixmierbook}.
\end{proof}

\begin{proposition}
\label{repsandosisosos}
The $*$-homomorphism 
$$\pi_{V,k}:C^*(\mathsf{G})\to \mathrm{End}^*_{C_0(\Gamma_k)}(\mathpzc{H}_{V,k}),$$
induced from the $*$-homomorphism $\pi_{V,k}$ of Proposition \ref{reponen} satisfies the following:
\begin{enumerate}
\item $\ker\pi_{V,k}=J_{k-1}$
\item $\pi_{V,k}$ restricted to $J_k$ defines a surjection
$$\pi_{V,k}:J_k\to \mathbb{K}_{C_0(\Gamma_k)}(\mathpzc{H}_{V,k}).$$
\item $\pi_{V,k}$ induces an isomorphism 
$$I_k\cong \mathbb{K}_{C_0(\Gamma_k)}(\mathpzc{H}_{V,k}).$$
\end{enumerate}
In particular, any global trivialization $\mathcal{H}_{V,k}\cong \Gamma_k\times L^2(\R^d)$ together with the unitary from Equation \eqref{modudldlde} together with $\pi_{V,k}$ produces canonical $*$-homomorphisms fitting into the commuting diagram
\begin{equation}
\label{commutingikcstag}
\begin{tikzcd}
I_k \arrow[d] \arrow[r,"\subseteq" ] &  C^*(\mathsf{G})/J_{k-1} \arrow[d] \\\
C_0(\Gamma_k,\mathbb{K}(L^2(\R^d))) \arrow[r, "\subseteq"] &C_b(\Gamma_k,\mathbb{K}(L^2(\R^d))).
\end{tikzcd}
\end{equation}
where the left vertical map is a $*$-isomorphism. 
\end{proposition}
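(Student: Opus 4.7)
The overall strategy is to deduce the three claims from the general theory of primitive ideals in $C^*$-algebras combined with the explicit bundle structure established in Propositions \ref{bundleofhspspac}--\ref{reponen}, using crucially that $\mathsf{G}$ is nilpotent and hence CCR.

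The plan for item (1) is as follows. By construction, $\pi_{V,k}$ is the integrated form of a continuous family of irreducible unitary representations of $\mathsf{G}$ whose fibre at $\xi \in \Gamma_k$ corresponds under the Kirillov correspondence to the coadjoint orbit $\mathcal{O}_\xi$, so $\pi_{V,k}(a) = 0$ if and only if $\pi_\xi(a) = 0$ for every $\xi \in \Gamma_k$. Since each $\pi_\xi$ with $\xi \in \Gamma_k$ lies outside $\cup_{j \leq k-1}\Gamma_j$, the definition of $J_{k-1}$ immediately yields $J_{k-1} \subseteq \ker \pi_{V,k}$. For the reverse inclusion I would pass to the quotient $C^*(\mathsf{G})/J_{k-1}$, whose spectrum by Proposition \ref{spectrumofthejs} is $\widehat{\mathsf{G}} \setminus \cup_{j<k}\Gamma_j$, and exploit that $\Gamma_k$ is open and dense in this spectrum by the ordering of the fine stratification. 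Since the set of primitive ideals containing a given element is Jacobson closed and the intersection of all primitive ideals of $C^*(\mathsf{G})/J_{k-1}$ is trivial, any element that vanishes on the dense subset $\Gamma_k$ must vanish, yielding $\ker \pi_{V,k} \subseteq J_{k-1}$.

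For items (2) and (3), I would exploit that $I_k = J_k/J_{k-1}$ has Hausdorff spectrum $\Gamma_k$ (as an open subset of the spectrum of $C^*(\mathsf{G})/J_{k-1}$) and is thus a continuous trace algebra since $\mathsf{G}$ is CCR. The bundle $\mathcal{H}_{V,k} \to \Gamma_k$ from Proposition \ref{bundleofhspspac}, together with the fibrewise irreducible representation from Proposition \ref{reponen}, realizes $\mathpzc{H}_{V,k}$ as a Morita equivalence $(I_k, C_0(\Gamma_k))$-bimodule; in particular this exhibits the Dixmier--Douady class as trivial and yields an isomorphism $I_k \cong \mathbb{K}_{C_0(\Gamma_k)}(\mathpzc{H}_{V,k})$. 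By construction, this isomorphism is induced by $\pi_{V,k}$ after factoring through the quotient by $J_{k-1}$, so surjectivity in (2) and the isomorphism in (3) follow, with injectivity on the quotient furnished by item (1).

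The main technical obstacle will be verifying that the family $\xi \mapsto \pi_\xi(a)$ for $a \in J_k$ actually lies in $C_0(\Gamma_k, \mathbb{K}(\mathcal{H}_{V,k}))$ rather than merely in $C_b$. Norm continuity into the compacts can first be checked for $a \in C_c^\infty(\mathsf{G})$ using the explicit polynomial-rational formulas of Lemma \ref{descofreprefield} applied to the Hermite basis exhibited in the proof of Proposition \ref{bundleofhspspac}, and then extended to all of $C^*(\mathsf{G})$ by approximation. Decay at infinity on $\Gamma_k$ is handled by upper semi-continuity of $\xi \mapsto \|\pi_\xi(a)\|$ on the spectrum of $C^*(\mathsf{G})/J_{k-1}$: for $a \in J_k$, the norm vanishes outside $\Gamma_k$, so the closed set $\{\xi : \|\pi_\xi(a)\| \geq \varepsilon\}$ is a closed subset of the whole spectrum contained in the locally compact open subset $\Gamma_k$, and therefore compact in $\Gamma_k$. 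Finally, the commutativity of diagram \eqref{commutingikcstag} is automatic once the left vertical map is identified with the isomorphism above and the right vertical map with $\pi_{V,k}$ composed with any global trivialization of $\mathcal{H}_{V,k}$ and the unitary \eqref{modudldlde}.
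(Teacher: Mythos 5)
Your proof of item (1) is essentially identical to the paper's: both rely on $J_{k-1}\subseteq\ker\pi_{V,k}$ being immediate and on the Zariski openness of $\Xi_k$ in $\mathfrak g^*\setminus\cup_{j<k}\Xi_j$ giving density of $\Gamma_k$ in the spectrum of $C^*(\mathsf G)/J_{k-1}$, after which an element killed by every point of a dense subset of the primitive ideal space vanishes. Item (2) being a consequence of (1) and (3) is also the same as in the paper.

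Where you diverge is item (3), and there your argument as written has a small circularity. You invoke that $\mathpzc H_{V,k}$ ``realizes a Morita equivalence $(I_k,C_0(\Gamma_k))$-bimodule'' and conclude $I_k\cong\mathbb K_{C_0(\Gamma_k)}(\mathpzc H_{V,k})$, but the fullness of the left ($I_k$-valued) inner product is \emph{equivalent} to $\pi_{V,k}$ surjecting onto the compacts, which is exactly what items (2)--(3) assert; fibrewise irreducibility plus $C_0$-containment do not by themselves give global surjectivity (consider the subalgebra $\{f\in C_0(\R,\mathbb K):f(0)=f(1)\}$, which is fibrewise all of $\mathbb K$ yet proper). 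The missing input is the observation, used in the paper, that the monomorphism $I_k\hookrightarrow\mathbb K_{C_0(\Gamma_k)}(\mathpzc H_{V,k})$ induces the \emph{identity} on the common Hausdorff spectrum $\Gamma_k$, which rules out any gluing and forces surjectivity via the standard comparison argument for continuous fields; this is what upgrades fibrewise surjectivity to a global isomorphism. Once that is in place, your observation on the $C_0$-containment supplies the needed range control, though the phrase ``upper semi-continuity of $\xi\mapsto\|\pi_\xi(a)\|$'' is not quite the right tool — on the (non-Hausdorff) spectrum of $C^*(\mathsf G)/J_{k-1}$ the norm function is only lower semi-continuous in general, and what one should invoke is the quasi-compactness of the superlevel sets $\{\pi:\|\pi(a)\|\geq\varepsilon\}$ (Dixmier), which together with containment in the Hausdorff open set $\Gamma_k$ yields compactness.
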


\begin{proof}
We note that item 2 follows from item 1 and 3. To prove item 1, we note that by construction, 
\begin{align*}
\ker\pi_{V,k}&=\{a\in C^*(\mathsf{G}): \pi(a)=0 \ \forall \pi\in \Gamma_k\}=\\
&=\{a\in C^*(\mathsf{G}): \pi(a)=0 \ \forall \pi\in \hat{\mathsf{G}}\setminus \cup_{j<k}\Gamma_j\}=J_{k-1}.
\end{align*}
In the middle step used that $\Gamma_k\subseteq \hat{\mathsf{G}}\setminus \cup_{j<k}\Gamma_j$ is dense (since $\Xi_k\subseteq \mathfrak{g}^*\setminus \cup_{j<k}\Xi_j$ is Zariski open).

Let us prove item 3. It follows from item 1 and the construction that $\pi_{V,k}$ induces a monomorphism $I_k\to \mathbb{K}_{C_0(\Gamma_k)}(\mathpzc{H}_{V,k})$. Proposition \ref{modulevsbundle} and Proposition \ref{bundleofhspspac} implies that there exists an isomorphism $\mathbb{K}_{C_0(\Gamma_k)}(\mathpzc{H}_{V,k})\cong C_0(\Gamma_k,\mathbb{K}(L^2(\R^d)))$, so the spectrum of $\mathbb{K}_{C_0(\Gamma_k)}(\mathpzc{H}_{V,k})$ is the locally compact Hausdorff space $\Gamma_k$. The spectrum of $I_k$ is $\Gamma_k$ by Proposition \ref{spectrumofthejs} and \cite[Proposition 3.2.1]{Dixmierbook}. The inclusion $I_k\to \mathbb{K}_{C_0(\Gamma_k)}(\mathpzc{H}_{V,k})$ induces the identity at the level of spectrum by construction, and item 3 follows. 
\end{proof}

\begin{remark}
\label{denseityemramr}
We note that since the top strata $\Gamma_1$ is dense, the map $C^*(\mathsf{G})\to C_b(\Gamma_1,\mathbb{K}(L^2(\R^d)))$ is a $*$-monomorphism. 
\end{remark}

The results we have discussed were stated in a slightly less explicit form in \cite{Beltita_Ludwig_Fourier}. In \cite{Beltita_Ludwig_Fourier} it was proven that for any $\epsilon \in \mathcal{E}_n(\mathsf{G})$ there is a Hilbert space $\mathcal{H}_\epsilon$ and for every $\xi \in \Gamma_\epsilon$ a representation $\pi_\xi : \mathsf{G} \rightarrow \mathbb{B}(\mathcal{H}_\epsilon)$ such that $\xi= [\pi_\xi]$ and the map $\xi \mapsto \pi_\xi(a)$ is norm continuous for every $a \in C^*(\mathsf{G})$. The following map is a noncommutative Fourier transform of an element $f \in C^*(\mathsf{G})$:
\[ \mathsf{F} : C^*(\mathsf{G}) \rightarrow l^\infty(\widehat{\mathsf{G}}, \mathbb{K}(\oplus_{\epsilon\in \mathcal{E}_n(\mathsf{G})}\mathcal{H}_\epsilon)), \ \mathsf{F}(f)(\xi) = \pi_\xi(f) \in \mathbb{K}(\mathcal{H}_\epsilon). \]
In \cite{Beltita_Ludwig_Fourier} it was shown that $C^*(\mathsf{G})$ can be identified with a certain subalgebra of $l^\infty(\widehat{\mathsf{G}}, \mathbb{K})$. In particular, they constructed the ideals from Equation \eqref{ideaslsls}, proved that the ideal inclusions came from the ordering on $\mathcal{E}_n(\mathsf{G})$ defined in \cite{pedersen94} and proved that there was an isomorphism
\[ I_\epsilon \cong C_0(\Gamma_\epsilon, \mathbb{K}(\mathcal{H}_\epsilon)). \]

\section{Flat orbits}
\label{subsecflatorbitssos}
We will be particularly interested in a special class of nilpotent groups, possessing so called flat orbits. The existence of flat orbits ensure that each orbit in $\Gamma_1$ is flat. The set of flat orbits, when non-empty, is in practice readily computed and poses a natural candidate for a Zariski open subset of $\hat{\mathsf{G}}$ that we later on will use in index theory. 

\begin{theorem}
\label{flatofofdodthm}
Let $\mathcal{O}$ be a coadjoint orbit of a simply connected nilpotent Lie group $G$ with Lie algebra $\mathfrak{g}$. Denote the center of the Lie algebra by $\mathfrak{z}$ and its annihilator in $\mathfrak{g}^*$ by $\mathfrak{z}^\bot$. The following are equivalent:
 \begin{enumerate}
\item There exists a $\xi \in \mathcal{O}$ such that $\mathcal{O} = \xi + \mathfrak{z}^\bot$.
\item The representation $\pi \in \widehat{\mathsf{G}}$ corresponding to $\mathcal{O}$ is square integrable modulo $\mathfrak{z}$.
\item There is a $\xi\in \mathcal{O}$ such that $\omega_\xi$ is non-degenerate on $\mathfrak{g}/\mathfrak{z}$.
\end{enumerate}
\end{theorem}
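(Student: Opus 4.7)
The plan is to prove the equivalence in two steps: first establish (1) $\Leftrightarrow$ (3) by a direct dimension-count argument relating the radical of the Kirillov form to the stabilizer of $\xi$, and then invoke the classical Moore--Wolf theorem (in its Kirillov-theoretic form, cf.\ Corwin--Greenleaf) for (2) $\Leftrightarrow$ (1).

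For the equivalence of (1) and (3), the key observation is that the radical of the Kirillov form $\omega_\xi$ is precisely $\mathsf{stab}(\xi)$, and that $\mathfrak{z} \subseteq \mathsf{stab}(\xi)$ always holds since central elements commute with everything. Hence $\omega_\xi$ descends to a non-degenerate form on $\mathfrak{g}/\mathfrak{z}$ if and only if $\mathsf{stab}(\xi) = \mathfrak{z}$, which is equivalent to $\dim \mathcal{O}_\xi = \dim \mathfrak{g} - \dim \mathfrak{z} = \dim \mathfrak{z}^\perp$. Moreover, because the coadjoint action is trivial on central elements, for any $g \in \mathsf{G}$ and $Z \in \mathfrak{z}$ we have $(\mathsf{Ad}^*(g)\xi)(Z) = \xi(\mathsf{Ad}(g^{-1})Z) = \xi(Z)$, so the inclusion $\mathcal{O}_\xi \subseteq \xi + \mathfrak{z}^\perp$ holds unconditionally.

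For (3) $\Rightarrow$ (1), I would combine the dimension count with the fact that coadjoint orbits of simply connected nilpotent Lie groups are closed in $\mathfrak{g}^*$ (a standard fact from Kirillov theory); since $\mathcal{O}_\xi$ is then a closed submanifold of $\xi + \mathfrak{z}^\perp$ of the same dimension, and $\mathcal{O}_\xi$ is connected while $\xi + \mathfrak{z}^\perp$ is connected as well, openness plus closedness forces equality. For (1) $\Rightarrow$ (3), the same dimension count in reverse yields $\dim \mathsf{stab}(\xi) = \dim \mathfrak{z}$ and therefore $\mathsf{stab}(\xi) = \mathfrak{z}$, which gives the non-degeneracy of $\omega_\xi$ on $\mathfrak{g}/\mathfrak{z}$.

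For (1) $\Leftrightarrow$ (2), my plan is to appeal to the Moore--Wolf characterization of square-integrability modulo the center for simply connected nilpotent Lie groups: the representation $\pi_\mathcal{O}$ attached to $\mathcal{O}$ via Kirillov's orbit method is square-integrable modulo $Z$ if and only if the orbit is of the form $\xi + \mathfrak{z}^\perp$ for some (equivalently, every) $\xi \in \mathcal{O}$. This is the main nontrivial input; its proof goes through explicit matrix coefficient estimates using the polarized model $\mathcal{H}_{F,\xi}(\mathcal{O})$ and the Plancherel formula for nilpotent groups. Since a complete treatment is available in \cite{Corwin_Greenleaf}, I would simply cite it. The hardest conceptual step is this last equivalence; the algebraic part (1) $\Leftrightarrow$ (3) is essentially a dimension bookkeeping exercise once one uses closedness of nilpotent coadjoint orbits.
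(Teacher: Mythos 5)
The paper does not give a proof of this theorem at all: it simply refers to Moore--Wolf and Corwin--Greenleaf. Your sketch actually supplies a correct argument, so it is by definition "a different route" in the narrow sense that the paper offers no route. The algebraic equivalence $(1)\Leftrightarrow(3)$ is handled correctly: you use that the radical of $\omega_\xi$ is $\mathsf{stab}(\xi)\supseteq\mathfrak{z}$, that central invariance of $\mathsf{Ad}^*$ gives the unconditional inclusion $\mathcal{O}_\xi\subseteq\xi+\mathfrak{z}^\perp$, and that a dimension count plus the Chevalley--Rosenlicht theorem (closedness of orbits of unipotent algebraic actions) forces equality via an open-and-closed argument. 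This is exactly the standard argument in Corwin--Greenleaf and is sound; just make sure you note explicitly that $\mathcal{O}_\xi$ is a submanifold of the affine space $\xi+\mathfrak{z}^\perp$, so that equal dimension implies openness. Deferring $(1)\Leftrightarrow(2)$ to Moore--Wolf is what everyone does and matches the paper's citation. Overall: correct, and appropriately more explicit than the paper's bare citation.
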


We refer the proof of this theorem to \cite{Moore_Wolf}, \cite{Corwin_Greenleaf}.

\begin{definition}
Let $\mathcal{O}$ be a coadjoint orbit of a simply connected nilpotent Lie group $\mathsf{G}$. If $\mathcal{O}$ satisfies any of the equivalent conditions of Theorem \ref{flatofofdodthm}, we say that $\mathcal{O}$ is a flat orbit. We write $\Gamma$ for the set of flat orbits.

If there exists a flat orbit, i.e. $\Gamma\neq \emptyset$, we say that $\mathsf{G}$ admits flat orbits. 
\end{definition}

We remark that the notion of a flat orbit in some places in the literature refers to the orbit being an affine space of the annihilator of the stabilizer Lie algebra. See for instance in \cite{Corwin_Greenleaf,LipmanRosenberg}. We exclusively use the term flat orbit for the case when the stabilizer Lie algebra is the center.

\begin{theorem}
Let $\mathsf{G}$ be a simply connected nilpotent Lie group. Then $\mathsf{G}$ admits flat orbits if and only if any of the following equivalent conditions holds:
\
\begin{description}
\item [Geometric] There exists $\xi \in \mathfrak{g}^*$ such that $\mathcal{O}_\xi$ is flat, i.e. $\mathcal{O}_\xi = \xi + \mathfrak{z}^\bot$.
\item [Analytic] There exists $\pi \in \widehat{\mathsf{G}}$ which is square integrable modulo $\mathfrak{z}$.
\item [Algebraic] The center of the universal enveloping algebra of $\mathfrak{g}$ is isomorphic to the universal enveloping algebra of $\mathfrak{z}$.
\end{description}
\end{theorem}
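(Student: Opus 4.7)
The equivalence of the Geometric and Analytic conditions is immediate from Theorem \ref{flatofofdodthm}: applying the equivalence of items (1) and (2) there to a single orbit shows that some orbit is flat if and only if some irreducible representation is square integrable modulo the centre. The substantive content is therefore the equivalence with the Algebraic condition, which I would establish by translating it into orbit geometry via the Dixmier symmetrization map $\lambda \colon \mathrm{Sym}(\mathfrak{g}) \to \mathcal{U}(\mathfrak{g})$. This is a linear $\mathsf{G}$-equivariant isomorphism and restricts to an algebra isomorphism $\mathrm{Sym}(\mathfrak{g})^{\mathsf{G}} \xrightarrow{\sim} Z(\mathcal{U}(\mathfrak{g}))$. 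Under the identification $\mathrm{Sym}(\mathfrak{g}) \cong \C[\mathfrak{g}^*]$, the invariants $\mathrm{Sym}(\mathfrak{g})^{\mathsf{G}}$ correspond to polynomial functions constant on coadjoint orbits, while the subalgebra $\mathcal{U}(\mathfrak{z}) = \mathrm{Sym}(\mathfrak{z})$ corresponds to polynomials depending only on the restriction $\mathrm{res} \colon \mathfrak{g}^* \to \mathfrak{z}^*$, equivalently polynomials constant along each translate of $\mathfrak{z}^\bot$. Thus $Z(\mathcal{U}(\mathfrak{g})) = \mathcal{U}(\mathfrak{z})$ holds if and only if every $\mathrm{Ad}^*$-invariant polynomial on $\mathfrak{g}^*$ factors through $\mathrm{res}$.

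For the direction Geometric $\Rightarrow$ Algebraic, I would first observe that the Kirillov form $\omega_\xi$ depends polynomially on $\xi$, so the condition that $\omega_\xi$ be non-degenerate on $\mathfrak{g}/\mathfrak{z}$ defines a Zariski open subset $\Omega \subseteq \mathfrak{g}^*$. By hypothesis $\Omega$ is non-empty and therefore Zariski dense. For any $\xi \in \Omega$, the equivalence (1) $\Leftrightarrow$ (3) of Theorem \ref{flatofofdodthm} yields $\mathcal{O}_\xi = \xi + \mathfrak{z}^\bot$. Any $\mathrm{Ad}^*$-invariant polynomial is then constant on every $\mathfrak{z}^\bot$-translate meeting $\Omega$; by Zariski density it is constant on every $\mathfrak{z}^\bot$-translate in $\mathfrak{g}^*$, hence factors through $\mathrm{res}$.

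For the converse Algebraic $\Rightarrow$ Geometric, I would combine the Pedersen stratification of Section \ref{subsecfinestrat} with a dimension count. On the top stratum $\Xi_1 \subseteq \mathfrak{g}^*$, which is Zariski open, dense, and $\mathrm{Ad}^*$-invariant, the coadjoint orbits form a smooth foliation and the canonical coordinates from Proposition \ref{gammasplitting} provide rational invariants that separate generic orbits. If $Z(\mathcal{U}(\mathfrak{g})) = \mathcal{U}(\mathfrak{z})$, these separating invariants must all pull back from $\mathfrak{z}^*$, so the orbit through a generic $\xi$ is contained in the fibre $\xi + \mathfrak{z}^\bot$ of $\mathrm{res}$. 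Since $\mathfrak{z} \subseteq \mathsf{stab}(\xi)$ for every $\xi$, there is the a priori bound $\dim \mathcal{O}_\xi \leq \dim \mathfrak{z}^\bot$, and the inclusion $\mathcal{O}_\xi \subseteq \xi + \mathfrak{z}^\bot$ together with matching dimensions forces equality at a generic $\xi$, producing a flat orbit.

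The main obstacle I anticipate is the Algebraic $\Rightarrow$ Geometric direction, specifically the separation statement that $\mathrm{Ad}^*$-invariant polynomials distinguish coadjoint orbits on a Zariski open subset of $\mathfrak{g}^*$. This is a classical result of Dixmier for nilpotent Lie groups, but verifying it cleanly either requires invoking that rational-structure theory directly or carrying out an explicit analysis on the top Pedersen stratum $\Xi_1$, using the canonical symplectic coordinates of Proposition \ref{gammasplitting} to identify the ring of rational $\mathrm{Ad}^*$-invariants with $\C(\Lambda_1)$ and to trace the inclusion $\mathcal{U}(\mathfrak{z}) \subseteq Z(\mathcal{U}(\mathfrak{g}))$ through this identification.
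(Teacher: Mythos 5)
The paper does not prove this theorem; it defers entirely to the references \cite{Moore_Wolf}, \cite{Corwin_Greenleaf}, so there is no approach to compare against. Your Geometric $\Leftrightarrow$ Analytic step (restricting Theorem \ref{flatofofdodthm} to a single orbit) is correct and immediate, and your Geometric $\Rightarrow$ Algebraic argument is complete: the symmetrization identification $Z(\mathcal{U}(\mathfrak{g})) \cong \mathbb{C}[\mathfrak{g}^*]^{\mathrm{Ad}^*}$ is valid for nilpotent $\mathfrak{g}$, the non-degeneracy locus $\Omega$ of $\omega_\xi$ modulo $\mathfrak{z}$ is Zariski open because it is the non-vanishing set of the Pfaffian (cf.\ Remark \ref{flatorbisintosform}), and once non-empty it is dense, so the vanishing of the polynomial $(\xi,v) \mapsto p(\xi+v) - p(\xi)$ on $\Omega \times \mathfrak{z}^\perp$ propagates to all of $\mathfrak{g}^* \times \mathfrak{z}^\perp$.

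The Algebraic $\Rightarrow$ Geometric direction, as written, has a genuine gap that you flag but then slide past. The clause ``matching dimensions forces equality'' is unsupported: the inclusion $\mathcal{O}_\xi \subseteq \xi + \mathfrak{z}^\perp$ is automatic (since $\mathrm{res}:\mathfrak{g}^* \to \mathfrak{z}^*$ is $\mathrm{Ad}^*$-equivariant with trivial target action) and $\mathfrak{z} \subseteq \mathsf{stab}(\xi)$ only gives $\dim\mathcal{O}_\xi \leq \dim\mathfrak{z}^\perp$, so nothing in your sketch ever establishes that the dimensions match. What is actually needed is the combination of two distinct facts that your sketch conflates: (i) Rosenlicht's theorem that the codimension of a generic orbit equals the transcendence degree of the field $\mathbb{C}(\mathfrak{g}^*)^{\mathrm{Ad}^*}$ of rational invariants (the Pedersen stratification of Proposition \ref{gammasplitting} is a concrete realization of this), and (ii) Dixmier's theorem, specific to nilpotent Lie algebras, that $\mathbb{C}(\mathfrak{g}^*)^{\mathrm{Ad}^*}$ is the fraction field of the ring of polynomial invariants $\mathbb{C}[\mathfrak{g}^*]^{\mathrm{Ad}^*}$. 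Without (ii) the Algebraic hypothesis, being a statement about polynomial invariants, places no constraint on the rational invariants that actually parametrize generic orbits, and the whole argument stalls. With (i) and (ii) one gets $\dim\mathcal{O}_\xi = \dim\mathfrak{g} - \dim\mathfrak{z}$ generically, hence $\mathsf{stab}(\xi) = \mathfrak{z}$, and Theorem \ref{flatofofdodthm}(3) then yields a flat orbit --- a cleaner finish than showing $\mathcal{O}_\xi = \xi+\mathfrak{z}^\perp$ as sets, which would additionally require the Kostant--Rosenlicht closed-orbit theorem.
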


We refer the proof of this theorem to \cite{Moore_Wolf}, \cite{Corwin_Greenleaf}.

A flat orbit is therefore, as a subset of $\mathfrak{g}^*$, an affine space modelled on $\mathfrak{z}^\perp$. We remark that if $\mathcal{O}$ is a flat orbit, then $\dim(\mathcal{O})=\dim(\mathfrak{g})-\dim(\mathfrak{z})$. In particular, the center having even codimension is a necessary condition for admitting flat orbits, it is by Example \ref{uppertriangularexample} not sufficient.

\begin{proposition}
Let $\mathsf{G}$ be a simply connected nilpotent Lie group. Choose the Jordan-Hölder basis $\{X_1,\ldots, X_n)$ used in the fine stratification to satisfy that $\mathfrak{z}$ is spanned by $X_1,\ldots, X_{n-2d}$ where $2d=\mathrm{codim}(\mathfrak{z})$. Then $\mathsf{G}$ admits flat orbits if and only there exists an $\epsilon\in \mathcal{E}_n(\mathsf{G})$ with $\#\epsilon_n=2d$. In this case,
$$\Gamma=\bigcup_{\epsilon: \#\epsilon_n=2d}\Gamma_\epsilon.$$
\end{proposition}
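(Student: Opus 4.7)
The plan is to translate the condition ``$\mathcal{O}_\xi$ is flat'' into a combinatorial condition on the jump indices $J_\xi^n$, and then express this as a condition on $\epsilon_n$ for the stratum $\epsilon$ with $\xi\in\Xi_\epsilon$.

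First I would recall the standard identity
\[
\#J_\xi^n=\mathrm{codim}_{\mathfrak{g}}(\mathsf{stab}(\xi))=\dim\mathcal{O}_\xi.
\]
This is essentially the content of the construction of $J_\xi^n$: for each $j\in\{1,\dots,n\}$, the space $(\mathfrak{g}_j+\mathsf{stab}(\xi))/(\mathfrak{g}_{j-1}+\mathsf{stab}(\xi))$ is either zero or one-dimensional, and $j\in J_\xi^n$ exactly when it is one-dimensional, so summing yields $\mathrm{codim}(\mathsf{stab}(\xi))$. I would cite the standard reference (e.g.\ \cite{Corwin_Greenleaf} or \cite{pedersen89}) rather than re-prove this.

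Next I would combine the three conditions of Theorem \ref{flatofofdodthm}. The geometric characterization says $\mathcal{O}_\xi$ is flat iff $\mathcal{O}_\xi=\xi+\mathfrak{z}^\perp$, equivalently $\dim\mathcal{O}_\xi=\dim\mathfrak{z}^\perp=2d$. Equivalently, using the third characterization, $\omega_\xi$ descends to a form on $\mathfrak{g}/\mathfrak{z}$ whose radical is $\mathsf{stab}(\xi)/\mathfrak{z}$, and non-degeneracy amounts to $\mathsf{stab}(\xi)=\mathfrak{z}$, i.e.\ $\dim\mathcal{O}_\xi=2d$. Combined with the previous step, $\mathcal{O}_\xi$ is flat if and only if $\#J_\xi^n=2d$.

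Now I would use the specific choice of Jordan-H\"older basis: since $\mathfrak{z}=\mathrm{span}\{X_1,\dots,X_{n-2d}\}=\mathfrak{g}_{n-2d}$, and $\mathfrak{z}\subseteq\mathsf{stab}(\xi)$ for every $\xi$, it follows that $\mathfrak{g}_j\subseteq\mathsf{stab}(\xi)+\mathfrak{g}_{j-1}$ for every $j\leq n-2d$. Hence $J_\xi^n\subseteq\{n-2d+1,\dots,n\}$, so in particular $\#J_\xi^n\leq 2d$, with equality precisely when $J_\xi^n=\{n-2d+1,\dots,n\}$. Therefore, for $\xi\in\Xi_\epsilon$, flatness of $\mathcal{O}_\xi$ is equivalent to $\#\epsilon_n=2d$ (which under the constraint above automatically forces $\epsilon_n=\{n-2d+1,\dots,n\}$).

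Putting the pieces together: $\mathsf{G}$ admits a flat orbit iff there exists $\xi$ with $\#J_\xi^n=2d$ iff there exists $\epsilon\in\mathcal{E}_n(\mathsf{G})$ with $\#\epsilon_n=2d$, and the set of flat orbits decomposes as $\Gamma=\bigcup_{\epsilon:\#\epsilon_n=2d}\Gamma_\epsilon$ since $\Gamma_\epsilon$ is defined as the image in $\mathfrak{g}^*/\mathsf{Ad}^*$ of $\Xi_\epsilon$. I do not anticipate any serious obstacle; the only non-routine bookkeeping is verifying the standard identity $\#J_\xi^n=\dim\mathcal{O}_\xi$, which I would handle by reference rather than from scratch.
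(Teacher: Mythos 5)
Your proof is correct and takes essentially the same route as the paper: both arguments hinge on (a) the choice of Jordan--Hölder basis with $\mathfrak{z}=\mathfrak{g}_{n-2d}$ forcing $J_\xi^n\subseteq\{n-2d+1,\dots,n\}$, and (b) the identification of flatness with $\mathsf{stab}(\xi)=\mathfrak{z}$ via the radical of $\omega_\xi$. The only cosmetic difference is that you invoke the identity $\#J_\xi^n=\dim\mathcal{O}_\xi$ explicitly (by reference), whereas the paper leaves it implicit in the dimension count that deduces $\mathsf{stab}(\xi)=\mathfrak{z}$ from $J_\xi^n=\{n-2d+1,\dots,n\}$.
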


\begin{proof}
Since $\mathfrak{g}_{n-2d}=\mathfrak{z}$, we have for any $\xi\in \mathfrak{g}^*$ and $k=1,\ldots, n$ that 
$$\mathfrak{z}\cap \mathfrak{g}_k=\mathfrak{g}_{\min(k,n-2d)}\subseteq \mathsf{stab}(\xi \vert_{\mathfrak{g}_k}).$$ 
Therefore $J^k_\xi\cap\{1,\ldots, n-2d\}=\emptyset$ for any $\xi$. The proposition is therefore equivalent to the statement that for $\xi\in \mathfrak{g}^*$, $\mathcal{O}_\xi$ is flat if and only if $J^n_\xi=\{n-2d+1,\ldots, n\}$. Clearly, for $\xi\in \mathfrak{g}^*$ it holds that $J^n_\xi=\{n-2d+1,\ldots, n\}$ if and only if $\mathsf{stab}(\xi)=\mathfrak{g}_{n-2d}=\mathfrak{z}$. The proposition follows from the fact the annihilator of $\omega_\xi$ is $\mathsf{stab}(\xi)$. 
\end{proof}

\begin{remark}
Whenever non-empty, the set $\Gamma$ is the top strata in the coarse stratification of $\hat{\mathsf{G}}$ (see more in \cite{Beltita_Ludwig_Fourier}). There are several examples where $\Gamma$ only consists of a single fine strata (see examples in Section \ref{subsectionexamples} below). A notable example where $\Gamma$ consists of multiple fine stratas is the complexification of the Heisenberg group, see Example \ref{complexheisenbergexample}.
\end{remark}

For any $\xi\in \mathfrak{g}^*$, the antisymmetric form $\omega_\xi$ descends to an antisymmetric form on $\mathfrak{g}/\mathfrak{z}$. It is readily verified that $\xi$ defines a flat orbit if and only if $\omega_\xi$ is a symplectic form on $\mathfrak{g}/\mathfrak{z}$. Since the center is an ideal, it is also readily verified that the Pfaffian 
$$\mathsf{Pf}(\xi) := \mathsf{Pf}(\omega_\xi|_{\mathfrak{g}/\mathfrak{z}}),\quad \xi\in \mathfrak{g}^*,$$ 
is constant along orbits. So $\mathsf{Pf}$ defines a function $\mathsf{Pf}:\mathfrak{g}^*/\mathsf{Ad}^*\to \R$. We conclude the following proposition.

\begin{proposition}
\label{descripfiogogda}
Let $\mathsf{G}$ be a simply connected nilpotent Lie group. Then $\mathsf{G}$ admits flat orbits if and only if  
$$\mathsf{Pf}(\mathcal{O})\neq 0,$$
for some orbit $\mathcal{O}\in \mathfrak{g}^*/\mathsf{Ad}^*$. If $\mathsf{G}$ admits flat orbits, the set of flat orbits $\Gamma\subseteq \hat{\mathsf{G}}$ can be described as 
$$\Gamma=\{\mathcal{O}\in\mathfrak{g}^*/\mathsf{Ad}^*:\mathsf{Pf}(\mathcal{O})\neq 0\}.$$
If we identify $\mathfrak{z}^*$ with a subspace of $\mathfrak{g}^*$ via the Jordan-Hölder basis, and $\mathsf{G}$ admits flat orbits we can identify $\Gamma$ with the Zariski open dense subset $\Lambda\subseteq \mathfrak{z}^*$ defined by
$$\Lambda:=\{\xi\in \mathfrak{z}^*: \mathsf{Pf}(\omega_\xi|_{\mathfrak{g}/\mathfrak{z}})\neq 0\}=\bigcup_{\epsilon: \#\epsilon_n=2d}\Lambda_\epsilon.$$
\end{proposition}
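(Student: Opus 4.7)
The plan is to verify in sequence (i) that the Pfaffian descends to a well-defined function on $\mathfrak{g}^*/\mathsf{Ad}^*$, (ii) that its non-vanishing locus coincides with $\Gamma$ by invoking Theorem \ref{flatofofdodthm}, and (iii) that under the Jordan-Hölder splitting this locus is exactly $\Lambda = \bigcup_\epsilon \Lambda_\epsilon$.

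For (i), the form $\omega_\xi(Y,Y') = \xi([Y,Y'])$ vanishes when either argument lies in $\mathfrak{z}$ since $\mathfrak{z}$ is central, so it descends to an antisymmetric form $\bar{\omega}_\xi$ on $\mathfrak{g}/\mathfrak{z}$. For $g \in \mathsf{G}$, a direct computation gives $\omega_{\mathsf{Ad}^*(g)\xi}(Y,Y') = \omega_\xi(\mathsf{Ad}(g^{-1})Y, \mathsf{Ad}(g^{-1})Y')$, and since $\mathfrak{z}$ is $\mathsf{Ad}$-invariant there is a well-defined induced map $\overline{\mathsf{Ad}}(g^{-1})$ on $\mathfrak{g}/\mathfrak{z}$ making $\bar{\omega}_{\mathsf{Ad}^*(g)\xi}$ the pullback of $\bar{\omega}_\xi$ by it. Since $\mathsf{G}$ is nilpotent, $\mathsf{Ad}(g)$ is unipotent, hence so is its restriction to every $\mathsf{Ad}$-stable subquotient, giving $\det(\overline{\mathsf{Ad}}(g^{-1})) = 1$. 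The Pfaffian transformation law $\mathsf{Pf}(B^\top A B) = \det(B)\,\mathsf{Pf}(A)$ then yields $\mathsf{Pf}(\mathsf{Ad}^*(g)\xi) = \mathsf{Pf}(\xi)$, so $\mathsf{Pf}$ descends to orbits. For (ii), by Theorem \ref{flatofofdodthm}(3) the orbit $\mathcal{O}_\xi$ is flat iff $\bar{\omega}_\xi$ is non-degenerate on $\mathfrak{g}/\mathfrak{z}$, and for an antisymmetric form this is equivalent to $\mathsf{Pf}(\xi) \neq 0$. This simultaneously yields $\Gamma = \{\mathsf{Pf} \neq 0\}$ and the claim that $\mathsf{G}$ admits flat orbits iff some $\mathsf{Pf}(\mathcal{O}) \neq 0$.

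For (iii), the Jordan-Hölder basis with $\mathfrak{g}_{n-2d} = \mathfrak{z}$ and dual basis $\{\xi_1,\ldots,\xi_n\}$ identifies $\mathfrak{z}^*$ with the subspace $\{\xi \in \mathfrak{g}^* : \xi_{n-2d+1} = \cdots = \xi_n = 0\}$ of $\mathfrak{g}^*$. The proof of the preceding proposition shows that any $\epsilon \in \mathcal{E}_n(\mathsf{G})$ with $\#\epsilon_n = 2d$ necessarily has $\epsilon_n = \{n-2d+1,\ldots,n\}$, so the set $\Lambda_\epsilon$ of Proposition \ref{gammasplitting} coincides with $\Xi_\epsilon \cap \mathfrak{z}^*$. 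The homeomorphism $\Lambda_\epsilon \cong \Gamma_\epsilon$ of Proposition \ref{gammasplitting}, combined with $\Gamma = \bigcup_{\epsilon : \#\epsilon_n = 2d} \Gamma_\epsilon$ from the preceding proposition, then identifies $\Gamma$ with $\bigcup_\epsilon \Lambda_\epsilon \subseteq \mathfrak{z}^*$. On the other hand, under the splitting $\mathfrak{z}^\perp$ is the complement of $\mathfrak{z}^*$, so each flat orbit $\xi + \mathfrak{z}^\perp$ meets $\mathfrak{z}^*$ in a unique point, which by (ii) is characterized by $\mathsf{Pf}(\bar\omega_\xi) \neq 0$. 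Therefore $\Lambda = \bigcup_\epsilon \Lambda_\epsilon$ as asserted. Zariski openness of $\Lambda$ is immediate from its description as the complement of the zero locus of the polynomial $\xi \mapsto \mathsf{Pf}(\bar\omega_\xi)$; density follows once $\mathsf{G}$ admits flat orbits since this polynomial is then not identically zero.

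The only substantive step is the Pfaffian invariance in (i), which hinges on the unipotency of $\mathsf{Ad}$; everything else is essentially a bookkeeping exercise combining Theorem \ref{flatofofdodthm} with the structural results of the previous subsection.
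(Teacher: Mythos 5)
Your proposal is correct and follows the paper's intended route: the paper states the key facts (that $\mathsf{Pf}$ descends to orbits and that flatness is equivalent to non-degeneracy of $\omega_\xi$ on $\mathfrak{g}/\mathfrak{z}$) in the paragraph immediately preceding the proposition, leaving the proposition itself without a separate proof, and you have simply supplied the details—most usefully, the unipotency of $\mathsf{Ad}(g)$ on $\mathfrak{g}/\mathfrak{z}$ giving $\det = 1$ in the Pfaffian transformation law, and the bookkeeping with the Jordan--Hölder splitting identifying $\bigcup_\epsilon\Lambda_\epsilon$ with $\Lambda$.
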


This proposition shows that the flat orbits form a Hausdorff subset of $\hat{\mathsf{G}}$ which is stratified by the stratas in the fine stratification for which $\#\epsilon_n=2d$.

\begin{remark}
\label{flatorbisintosform}
Let us describe the Pfaffian function in slightly more detail on $\mathfrak{z}^*$ in terms of the Jordan-Hölder basis $X_1,\ldots, X_n$ fixed throughout our constructions. We assumed that $X_1,\ldots, X_{\dim \mathfrak{z}}$ span $\mathfrak{z}$ and we will further assume that for some $p\leq \dim(\mathfrak{z})$, $X_1,\ldots, X_{p}$ span $\mathfrak{z}\cap [\mathfrak{g},\mathfrak{g}]$. Let $\xi_1,\ldots, \xi_n$ denote the dual basis for $\mathfrak{g}^*$. We obtain a collection of $2$-forms $\omega_1,\ldots, \omega_p:(\mathfrak{g}/\mathfrak{z})\wedge (\mathfrak{g}/\mathfrak{z})\to \R$ by setting $\omega_j(X,Y):=\xi_j[X,Y]=\omega_{\xi_j}[X,Y]$. For any $\xi\in \mathfrak{z}^*$, we have that 
$$\omega_\xi=\sum_{j=1}^p \xi(X_j)\omega_j.$$
We conclude that on $\mathfrak{z}^*$, the Pfaffian function is the polymial of degree $\mathrm{codim}(\mathfrak{z})/2$ given by 
$$\mathsf{Pf}(\xi)=\mathsf{Pf}\left(\sum_{j=1}^p \xi(X_j)\omega_j\right).$$
The polynomial $\mathsf{Pf}(\xi)$ depends only on the coordinates $\xi_1,\ldots, \xi_p$. We note that this polynomial is identically $0$ if $\mathrm{codim}(\mathfrak{z})$ is odd. Moreover, $G$ admits flat orbits if and only if this polynomial is non-vanishing on $\mathfrak{z}^*$. 
\end{remark}

\begin{proposition}
\label{vergenpolonflat}
Let $\mathsf{G}$ be an $n$-dimensional simply connected nilpotent Lie group admitting flat orbits and $\epsilon \in \mathcal{E}_n(\mathsf{G})$ such that $\#\epsilon_n=\mathrm{codim}(\mathfrak{z})$. Adopting the notations from Remark \ref{flatorbisintosform} filter $\mathfrak{g}/\mathfrak{z}$ by the Jordan-Hölder basis, and for $j=1,\ldots ,\mathrm{codim}(\mathfrak{z})$ we write $(\mathfrak{g}/\mathfrak{z})_j$ for the linear span of $X_{\dim(\mathfrak{z})+1},\ldots, X_{\dim(\mathfrak{z})+j}$. The restriction of the bundle of Vergne polarization from Proposition \ref{vbfv} to the strata $\Lambda_\epsilon$ in the transversal $\Lambda$ for the flat orbits, i.e. $F_V|_{\Lambda_\epsilon}\to \Lambda_\epsilon$, takes the form 
$$F_V|_{\Lambda_\epsilon}=\left\{(\xi,X)\in \Lambda_\epsilon\times \mathfrak{g}: \; X\in \mathfrak{z}\oplus \sum_{j=1}^{\mathrm{codim}(\mathfrak{g})} \ker(\omega_\xi|_{(\mathfrak{g}/\mathfrak{z})_j})\right\}.$$
\end{proposition}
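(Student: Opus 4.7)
The strategy is to evaluate the Vergne formula $\mathfrak{h}_V(\xi) = \sum_{j=1}^n \mathsf{stab}(\xi|_{\mathfrak{g}_j})$ from \eqref{vergndendoded} term by term, exploiting two facts: since $\mathfrak{z}$ is central, $\omega_\xi(X,Y) = \xi([X,Y])$ vanishes whenever one argument lies in $\mathfrak{z}$, so $\omega_\xi$ descends to a well-defined antisymmetric form on $\mathfrak{g}/\mathfrak{z}$; and for $\xi \in \Lambda_\epsilon$, the assumption $\#\epsilon_n = \mathrm{codim}(\mathfrak{z})$ combined with Proposition \ref{descripfiogogda} places $\Lambda_\epsilon$ inside the flat-orbit transversal $\Lambda$, so this descended form is nondegenerate.

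First I would split the Vergne sum at the index $j = \dim\mathfrak{z}$. For $j \leq \dim\mathfrak{z}$, the subspace $\mathfrak{g}_j$ is contained in $\mathfrak{z}$, hence abelian, so $\mathsf{stab}(\xi|_{\mathfrak{g}_j}) = \mathfrak{g}_j$, and these summands jointly contribute exactly $\mathfrak{z}$ to the Vergne polarization. For $j = \dim\mathfrak{z} + k$ with $1 \le k \le \mathrm{codim}(\mathfrak{z})$, I would decompose $\mathfrak{g}_j = \mathfrak{z} \oplus (\mathfrak{g}/\mathfrak{z})_k$ using the Jordan-Hölder basis, and unpack the stabilizer condition: for $X = Z + \bar X$ with $Z \in \mathfrak{z}$ and $\bar X \in (\mathfrak{g}/\mathfrak{z})_k$, the element $X$ lies in $\mathsf{stab}(\xi|_{\mathfrak{g}_j})$ precisely when $\omega_\xi(\bar X, Y) = 0$ for every $Y \in (\mathfrak{g}/\mathfrak{z})_k$, i.e.\ when $\bar X \in \ker(\omega_\xi|_{(\mathfrak{g}/\mathfrak{z})_k})$. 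The contribution of the $(\dim\mathfrak{z}+k)$-th Vergne summand is therefore $\mathfrak{z} \oplus \ker(\omega_\xi|_{(\mathfrak{g}/\mathfrak{z})_k})$.

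Assembling these contributions yields the claimed identity fiberwise on $\Lambda_\epsilon$. A dimension count confirms consistency: the right-hand side has rank $\dim\mathfrak{z} + \mathrm{codim}(\mathfrak{z})/2$ since $\sum_k \ker(\omega_\xi|_{(\mathfrak{g}/\mathfrak{z})_k})$ is the classical Vergne-Lagrangian of the nondegenerate descended form, and this matches the codimension $\#\epsilon_n/2 = \mathrm{codim}(\mathfrak{z})/2$ of $F_V$ in $\Lambda_\epsilon \times \mathfrak{g}$ given by Proposition \ref{vbfv}. Promoting the pointwise identity to an identity of subbundles is then automatic, as both sides are vector subbundles of constant rank whose fibers coincide.

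There is no serious obstacle: the centrality of $\mathfrak{z}$ trivializes the first $\dim\mathfrak{z}$ Vergne summands and linearizes the remaining ones through the descended symplectic form on $\mathfrak{g}/\mathfrak{z}$. The only bookkeeping needed is the index shift by $\dim\mathfrak{z}$ and the consistent identification, via the Jordan-Hölder basis, between the filtration step $(\mathfrak{g}/\mathfrak{z})_k$ and its canonical basis-lift as a subspace of $\mathfrak{g}$.
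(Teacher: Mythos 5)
Your proof is correct and follows essentially the same approach as the paper's: both split the Vergne sum at $j=\dim\mathfrak{z}$, note that the first $\dim\mathfrak{z}$ summands contribute exactly $\mathfrak{z}$, and compute $\mathsf{stab}(\xi|_{\mathfrak{g}_{\dim\mathfrak{z}+k}})=\mathfrak{z}\oplus\ker(\omega_\xi|_{(\mathfrak{g}/\mathfrak{z})_k})$ by exploiting centrality of $\mathfrak{z}$ together with $\xi\in\Lambda_\epsilon\subseteq\mathfrak{z}^*$. You simply supply the verification details and a dimension sanity check that the paper leaves implicit.
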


\begin{proof}
Recall the definition of the Vergne polarization from Equation \eqref{vergndendoded}. It is clear that for $k=1,\ldots, \mathrm{dim}(\mathfrak{z})$, 
$$\mathsf{stab}(\xi|_{\mathfrak{g}_k})=\mathfrak{g}_k\subseteq \mathfrak{z},$$
with equality at $k=\mathrm{dim}(\mathfrak{z})$. The proposition therefore follows upon noting that for $\xi\in \Lambda_\epsilon\subseteq \mathfrak{z}^*$ we have for $j=1,\ldots ,\mathrm{codim}(\mathfrak{z})$ that
$$\mathsf{stab}(\xi|_{\mathfrak{g}_{j+\mathrm{dim}(\mathfrak{z})}})=\mathfrak{z}\oplus \ker(\omega_\xi|_{(\mathfrak{g}/\mathfrak{z})_j}),$$
where we have identified $\mathfrak{g}=\mathfrak{z}\oplus \mathfrak{g}/\mathfrak{z}$ as vector spaces. 
\end{proof}

\begin{proposition}
\label{totalspaceofflatorbits}
Let $\mathsf{G}$ be an $n$-dimensional simply connected nilpotent Lie group admitting flat orbits. Define 
$$\Xi:=\left\{\xi\in \mathfrak{g}^*: \mathcal{O}_\xi\mbox{ is flat }\right\}=\bigcup_{\epsilon: \#\epsilon_n=2d}\Xi_\epsilon\subseteq \mathfrak{g}^*.$$
Then the quotient by the coadjoint action defines a smoothly trivializable vector bundle 
$$\Xi\to \Gamma,$$
with fibre $\mathfrak{z}^\perp\subseteq \mathfrak{g}^*$. 
\end{proposition}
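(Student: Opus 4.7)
The plan is to exhibit an explicit global trivialization $\varphi:\Lambda\times\mathfrak{z}^\perp\xrightarrow{\sim}\Xi$ using the Jordan--Hölder basis, and verify that its composition with the quotient map $\Xi\to\Gamma$ is (under the identification $\Gamma\cong\Lambda$ of Proposition \ref{descripfiogogda}) the projection onto the first factor.

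First, recall that the Jordan--Hölder basis $X_1,\ldots,X_n$ was chosen so that $X_1,\ldots,X_{\dim\mathfrak{z}}$ spans $\mathfrak{z}$. This yields a vector space decomposition $\mathfrak{g}=\mathfrak{z}\oplus\mathfrak{m}$, where $\mathfrak{m}=\mathrm{span}\{X_{\dim\mathfrak{z}+1},\ldots,X_n\}$, and dually $\mathfrak{g}^*=V\oplus\mathfrak{z}^\perp$ where $V:=\mathrm{span}\{\xi_1,\ldots,\xi_{\dim\mathfrak{z}}\}$. The restriction map $\mathfrak{g}^*\to\mathfrak{z}^*$ identifies $V\cong\mathfrak{z}^*$ and provides a canonical section $s:\mathfrak{z}^*\to\mathfrak{g}^*$ (extension by zero on $\mathfrak{m}$). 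Under this identification, $\Lambda\subseteq\mathfrak{z}^*\cong V\subseteq\mathfrak{g}^*$.

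Next I would check that $s$ restricts to a map $s:\Lambda\to\Xi$. For $\eta\in\Lambda$, the $2$-form $\omega_{s(\eta)}$ descends to $\mathfrak{g}/\mathfrak{z}$ and there coincides with the form associated to $\eta\in\mathfrak{z}^*$ as in Remark \ref{flatorbisintosform}; since $\eta\in\Lambda$, this form is non-degenerate, so $\mathsf{Pf}(s(\eta))\neq 0$ and hence the orbit $\mathcal{O}_{s(\eta)}$ is flat by Proposition \ref{descripfiogogda}, proving $s(\eta)\in\Xi$. By the geometric characterization of flatness, $\mathcal{O}_{s(\eta)}=s(\eta)+\mathfrak{z}^\perp$. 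Conversely, any $\xi\in\Xi$ with $\xi|_{\mathfrak{z}}=\eta$ lies in $s(\eta)+\mathfrak{z}^\perp$ since $\xi-s(\eta)\in\mathfrak{z}^\perp$, and this is precisely the orbit $\mathcal{O}_{s(\eta)}$. Hence the restriction map $\xi\mapsto\xi|_{\mathfrak{z}}$ sends $\Xi$ surjectively onto $\Lambda$ with fibre over $\eta$ equal to the single orbit $\mathcal{O}_{s(\eta)}=s(\eta)+\mathfrak{z}^\perp$.

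I would then define
\[
\varphi:\Lambda\times\mathfrak{z}^\perp\to\Xi,\qquad \varphi(\eta,\alpha):=s(\eta)+\alpha,
\]
with smooth (indeed affine linear) inverse $\xi\mapsto(\xi|_{\mathfrak{z}},\xi-s(\xi|_{\mathfrak{z}}))$. The previous paragraph shows that $\varphi$ is a bijection onto $\Xi$; by the discussion above, its composition with the quotient map $\Xi\to\Gamma$ is exactly the projection $\Lambda\times\mathfrak{z}^\perp\to\Lambda$ followed by the homeomorphism $\Lambda\cong\Gamma$ of Proposition \ref{descripfiogogda}. This realizes $\Xi\to\Gamma$ as a smoothly trivializable vector bundle over $\Gamma$ with fibre $\mathfrak{z}^\perp$, where the vector space structure on each fibre is that of the affine space $s(\eta)+\mathfrak{z}^\perp$ with origin $s(\eta)$.

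The main step is really verifying that $s$ lands in $\Xi$ and that every fibre of the quotient over an orbit in $\Gamma$ meets the image of $s$ in exactly one point; once this is done, the trivialization is formal. No serious obstacle arises because the content of the proposition is essentially contained in the algebraic characterization of flat orbits as affine translates of $\mathfrak{z}^\perp$, together with the homeomorphism $\Gamma\cong\Lambda$ already established.
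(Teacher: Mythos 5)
Your proposal is correct and takes essentially the same approach as the paper: both use the Jordan--Hölder basis to embed $\Lambda\subseteq\mathfrak{z}^*$ as a subset of $\mathfrak{g}^*$ and then exhibit the explicit affine trivialization $(\eta,\alpha)\mapsto s(\eta)+\alpha$, relying on the geometric characterization $\mathcal{O}_\xi=\xi+\mathfrak{z}^\perp$ of flat orbits. The paper simply states the bijectivity as immediate, whereas you spell out the verification that $s(\Lambda)\subseteq\Xi$ and that fibres are single orbits; this is the same argument with more detail.
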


\begin{proof}
By definition, 
$$\Xi=\{\xi\in \mathfrak{g}^*: \mathcal{O}_\xi\in \Gamma\}=\{\xi\in \mathfrak{g}^*: \mathcal{O}_\xi=\xi+\mathfrak{z}^\perp\}.$$
If we identify $\Gamma=\Lambda$, we see that 
$$\psi:\Lambda\times \mathfrak{z}^\perp\to \Xi, \quad \psi(\xi,\eta):=\xi+\eta,$$
defines a diffeomorphism that equips $\Xi$ with the structure of a trivializable smooth vector bundle.
\end{proof}

\begin{definition}
\label{idealofflatorbits}
Let $\mathsf{G}$ be a simply connected nilpotent Lie group. We define the ideal $I_\mathsf{G}\subseteq C^*(\mathsf{G})$ as 
$$I_\mathsf{G}:=\{a\in C^*(\mathsf{G}): \pi(a)=0\;\forall \pi\in \hat{\mathsf{G}}\setminus \Gamma\}.$$
We call $I_\mathsf{G}$ the ideal of flat orbits.
\end{definition}

\begin{lemma}
\label{tracelemma}
Let $\mathsf{G}$ be a simply connected nilpotent Lie group. For any $a\in C^\infty_c(\mathsf{G})$, the mapping 
$$\mathrm{Tr}_a:\Gamma\to \C, \quad [\pi]\mapsto \mathrm{Tr}(\pi(a)),$$
is continuous.
\end{lemma}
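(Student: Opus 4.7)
The plan is to make $\mathrm{Tr}_a$ completely explicit via the Kirillov character formula, at which point continuity will be manifest because on a flat orbit the character formula reduces to a partial Fourier transform.

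First I would invoke standard facts about nilpotent groups: $\exp:\mathfrak{g}\to \mathsf{G}$ is a diffeomorphism with unit Jacobian in Jordan-Hölder coordinates, so $\tilde{a}:=a\circ\exp$ lies in $C^\infty_c(\mathfrak{g})$, and its Euclidean Fourier transform $\widehat{\tilde{a}}$ is Schwartz on $\mathfrak{g}^*$. For any $\pi\in\widehat{\mathsf{G}}$ with coadjoint orbit $\mathcal{O}$, the operator $\pi(a)$ is trace class---this follows, e.g., from Lemma \ref{descofreprefield}, which realizes $\pi_\xi(a)$ as an integral operator on $L^2(\R^d)$ with rapidly decaying smooth kernel---and Kirillov's character formula yields
$$\mathrm{Tr}(\pi(a))=\int_{\mathcal{O}}\widehat{\tilde{a}}(\ell)\,\rd\beta_{\mathcal{O}}(\ell),$$
where $\beta_{\mathcal{O}}$ is the Liouville measure of the symplectic form $\omega_{\mathcal{O}}$.

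Next I would specialize to $\mathcal{O}=\mathcal{O}_\xi\in\Gamma$, using the identification $\Gamma\cong\Lambda\subseteq\mathfrak{z}^*$ from Proposition \ref{descripfiogogda} (which is a homeomorphism for the Fell topology). Here $\mathcal{O}_\xi=\xi+\mathfrak{z}^\perp$ is an affine subspace, and via the Jordan-Hölder splitting $\mathfrak{z}^\perp\cong\mathfrak{g}/\mathfrak{z}$ the tangential symplectic form equals $\omega_\xi|_{\mathfrak{g}/\mathfrak{z}}$. Hence the Liouville measure equals $|\mathsf{Pf}(\xi)|\,\rd\eta$, where $\rd\eta$ is the Lebesgue measure on $\mathfrak{z}^\perp$ induced by the basis (using $|\det A|^{1/2}=|\mathsf{Pf}(A)|$ for antisymmetric $A$). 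Writing $\ell=\xi+\eta$ with $\eta\in\mathfrak{z}^\perp$, we then obtain
$$\mathrm{Tr}_a(\xi)=|\mathsf{Pf}(\xi)|\int_{\mathfrak{z}^\perp}\widehat{\tilde{a}}(\xi+\eta)\,\rd\eta.$$
The right hand side is $|\mathsf{Pf}(\xi)|$ times the partial Fourier transform of $\tilde{a}$ along the central directions, restricted to $\mathfrak{z}^*$. Since $\tilde{a}\in C^\infty_c(\mathfrak{g})$, this partial Fourier transform is Schwartz (in particular smooth) in $\xi\in\mathfrak{z}^*$, and $\mathsf{Pf}$ is polynomial in $\xi$ by Remark \ref{flatorbisintosform}. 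Continuity---indeed smoothness---on $\Lambda\cong\Gamma$ follows.

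The main technical obstacle will be recording the Kirillov character formula with the correct normalization of $\beta_{\mathcal{O}}$ and verifying that on a flat orbit this Liouville measure equals $|\mathsf{Pf}(\xi)|\,\rd\eta$; both are standard consequences of the orbit method in nilpotent representation theory, but require some bookkeeping with the Jordan-Hölder coordinates. Once in place, the continuity (and in fact smoothness) is automatic from the smoothness of the Fourier transform of a test function.
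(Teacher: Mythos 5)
Your argument is correct in spirit and takes a genuinely different route from the paper's: the paper simply observes that $\Gamma$ is a stratum in the coarse stratification of $\widehat{\mathsf{G}}$ and cites Pedersen's Lemma 4.4.4 in \cite{pedersen84}, which gives continuity of the trace function on any coarse stratum. Your approach is instead fully explicit, running the Kirillov character formula through the affine structure of a flat orbit, and has the merit of proving the stronger statement that $\mathrm{Tr}_a$ is \emph{smooth} on $\Gamma\cong\Lambda$ (since $\mathsf{Pf}$ is a non-vanishing polynomial there and the central partial Fourier transform of $a\circ\exp$ is Schwartz on $\mathfrak{z}^*$). The paper's citation buys brevity and uniformity across all coarse strata --- Pedersen's result is not specific to the flat ones --- whereas yours buys transparency and a quantitative formula. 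One caveat worth fixing: the density of the Liouville measure on $\mathcal{O}_\xi=\xi+\mathfrak{z}^\perp$ with respect to Lebesgue measure $\rd\eta$ should come out as a power of $|\mathsf{Pf}(\xi)|$ that depends on the direction of the musical isomorphism used to identify $T_\xi\mathcal{O}\cong\mathfrak{g}/\mathfrak{z}$ with $\mathfrak{z}^\perp$; if one tracks the Jacobian of $X\mapsto\omega_\xi(X,\cdot)$ the natural answer is $|\mathsf{Pf}(\xi)|^{-1}\,\rd\eta$ rather than $|\mathsf{Pf}(\xi)|\,\rd\eta$ (conventions for the character formula also insert powers of $2\pi$). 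This has no effect on continuity since either power of $\mathsf{Pf}$ is smooth and non-vanishing on $\Lambda$, but it should be recorded correctly if you want the explicit formula to stand on its own.
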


\begin{proof}
Since $\Gamma$ is a strata in the coarse stratification of $\hat{\mathsf{G}}$, the lemma follows from \cite[Lemma 4.4.4]{pedersen84}.

\end{proof}

\begin{proposition}
\label{ctforflat}
Let $\mathsf{G}$ be a simply connected nilpotent Lie group. Then $I_\mathsf{G}$ is a continuous trace algebra over $\Gamma$. 
\end{proposition}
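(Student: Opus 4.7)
The plan is to verify the two defining conditions of a continuous trace algebra: Hausdorffness of the spectrum and local existence of a rank-one projection near every point. The first is immediate: by the standard correspondence of closed ideals with open subsets of the spectrum (\cite[Proposition 3.2.1]{Dixmierbook}, already used in Proposition \ref{spectrumofthejs}), the spectrum of $I_\mathsf{G}$ is $\Gamma$, which by Proposition \ref{descripfiogogda} is identified with the Zariski open subset $\Lambda\subseteq\mathfrak{z}^*$ and is therefore Hausdorff. Moreover, $I_\mathsf{G}$ is CCR as an ideal in the CCR algebra $C^*(\mathsf{G})$.

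For the second condition, I will produce, for every $\pi_0\in\Gamma$, an element $a\in I_\mathsf{G}$ and an open neighborhood $U\ni\pi_0$ in $\Gamma$ such that $\pi(a)$ is a rank-one projection for all $\pi\in U$; by Dixmier's criterion (see \cite[Section 4.5]{Dixmierbook}) this suffices. First I invoke the earlier result in this section that every flat orbit lies in the top fine stratum for a suitable Jordan--Hölder basis of $\mathfrak{g}$, and I choose such a basis so that $\pi_0\in\Gamma_1$. Since $\mathsf{G}$ admits flat orbits, the generic coadjoint orbit is flat, so $\Gamma_1\subseteq\Gamma$, and Proposition \ref{spectrumofthejs} then yields the crucial ideal inclusion $J_1\subseteq I_\mathsf{G}$.

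At this point the construction of $a$ is routine via Proposition \ref{repsandosisosos}, which provides for this basis an isomorphism $\pi_{V,1}\colon J_1 \xrightarrow{\sim} C_0(\Gamma_1,\mathbb{K}(L^2(\R^d)))$. I pick any rank-one projection $P_0\in\mathbb{K}(L^2(\R^d))$ and a nonnegative cutoff $\chi\in C_c(\Gamma_1)$ equal to $1$ on some open neighborhood $U$ of $\pi_0$, and set $a:=\pi_{V,1}^{-1}(\chi\cdot P_0)\in J_1\subseteq I_\mathsf{G}$. Then $\pi(a)=P_0$ is a rank-one projection for every $\pi\in U$, as required.

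The main obstacle is the invocation of the top-stratum-covering result: every $\pi_0\in\Gamma$ can be placed in the top fine stratum by an appropriate change of Jordan--Hölder basis. This geometric fact is what bypasses a potentially delicate gluing problem across the fine stratas of $\Gamma$, reducing the entire verification of continuous trace to a trivial local construction in the single model $C_0(\Gamma_1,\mathbb{K})$. Without it one would instead have to work directly with the subquotient isomorphisms $I_\epsilon\cong C_0(\Gamma_\epsilon,\mathbb{K}(L^2(\R^d)))$ of Proposition \ref{repsandosisosos} together with an inductive argument through the filtration of $I_\mathsf{G}$ by flat fine stratas, which is substantially more cumbersome.
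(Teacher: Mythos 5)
Your proof is correct but takes a genuinely different route from the paper's. The paper verifies the continuous trace condition directly and cheaply: it notes that the spectrum $\Gamma$ of $I_\mathsf{G}$ is Hausdorff and invokes Lemma \ref{tracelemma} (itself resting on \cite[Lemma 4.4.4]{pedersen84}) to conclude that every element of the dense subspace $I_\mathsf{G}\cap C^\infty_c(\mathsf{G})$ has continuous trace on $\Gamma$ -- this is precisely the defining trace condition, so the argument is two lines. You instead verify Fell's local rank-one condition, which requires both the subquotient description $J_1\cong C_0(\Gamma_1,\mathbb{K}(L^2(\R^d)))$ from Proposition \ref{repsandosisosos} and, crucially, the covering statement that every point of $\Gamma$ lies in the top fine stratum for \emph{some} Jordan--Hölder basis. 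What the paper's route buys is brevity and independence of the covering theorem; what yours buys is a more explicit witness (an actual local rank-one projection in $I_\mathsf{G}$) and no reliance on Pedersen's analytic lemma about traces.

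One point that needs fixing: the covering statement you invoke is Theorem \ref{covconj}, which in the paper appears in Section \ref{ctstructrifoeod}, \emph{after} Proposition \ref{ctforflat}; it is not "the earlier result in this section," as you write. There is no circularity -- the inductive proof of Theorem \ref{covconj} via Lemma \ref{quotoietnoenlemma} makes no use of the continuous trace property -- but your proof does introduce a forward dependency that the paper's proof avoids, so the ordering of the two statements would need to be swapped if one adopted your argument. Also worth making fully explicit is why the top stratum $\Gamma_1=U_{\pmb{B}}$ for your chosen basis consists only of flat orbits: this holds because both $\Gamma_1$ and $\Gamma$ correspond to Zariski open dense subsets of $\mathfrak{g}^*$ (the top stratum $\Xi_1$ and the nonvanishing locus of the Pfaffian, respectively), so they intersect, and flatness is constant on each fine stratum since it is determined by the jump index $\#\epsilon_n$. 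The paper builds this into the notation $U_{\pmb{B}}\subseteq\Gamma$, but a reader of your argument needs to see it.
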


\begin{proof}
The $C^*$-algebra is a continuous trace algebra since it has Hausdorff spectrum and each element of the dense subspace $I_\mathsf{G}\cap C^\infty_c(\mathsf{G})\subseteq I_\mathsf{G}$ has continuous trace by Lemma \ref{tracelemma}. 
\end{proof}

We shall return to study the structure of $I_G$ further below in Section \ref{ctstructrifoeod}.

\section{Automorphisms and their action on $\hat{\mathsf{G}}$}
\label{sec:autoromom}

In order to understand the global structure of a Carnot manifold, we must describe how automorphisms of the nilpotent Lie group act on the representations. Let $\Aut(\mathsf{G})$ denote the group of automorphisms of a group $\mathsf{G}$ and let $\Aut(\mathfrak{g})$ denote the group of automorphisms of a Lie algebra $\mathfrak{g}$. We first note that if $\mathsf{G}$ is a simply connected nilpotent Lie group, the exponential map induces a group isomorphism $\Aut(\mathsf{G})\cong \Aut(\mathfrak{g})$. For an automorphism $\varphi\in \Aut(\mathfrak{g})$, we let $\varphi^* : \mathfrak{g}^* \rightarrow \mathfrak{g}^*$ denote its dual.

\begin{proposition}
\label{autoongamma}
Let $\mathsf{G}$ be a simply connected Lie group with Lie algebra $\mathfrak{g}$. The action of $\Aut(\mathfrak{g})$ on $\mathfrak{g}^*$ maps coadjoint orbits to coadjoint orbits. In particular, the $\Aut(\mathfrak{g})$-action defines a continuous action on $\widehat{\mathsf{G}}$ defined by 
$$\varphi.\mathcal{O}_\xi := \mathcal{O}_{\varphi^*(\xi)}\equiv \varphi^*(\mathcal{O}_\xi),$$ 
for every $\varphi \in \Aut(\mathfrak{g})$ and $\xi\in \mathfrak{g}^*$. 

Moreover, the set of flat orbits $\Gamma\subseteq \hat{\mathsf{G}}$ is invariant for the $\Aut(\mathfrak{g})$-action. In other words, if $\mathcal{O} \in \Gamma$ is flat and $\varphi\in \Aut(\mathfrak{g})$ then $\varphi.\mathcal{O}\in \Gamma$ is also flat.
\end{proposition}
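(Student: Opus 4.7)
The plan is to prove the proposition in three stages: first verifying that $\varphi^*$ permutes the coadjoint orbits, then deducing continuity of the induced action on $\widehat{\mathsf{G}}$, and finally exploiting the fact that any Lie algebra automorphism fixes the center in order to show invariance of $\Gamma$.

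For the first stage, I would use the fact that since $\mathsf{G}$ is simply connected, the exponential map identifies $\Aut(\mathfrak{g})$ with $\Aut(\mathsf{G})$; write $\tilde{\varphi}\in \Aut(\mathsf{G})$ for the group-level automorphism corresponding to $\varphi\in \Aut(\mathfrak{g})$. Differentiating the identity $\tilde{\varphi}(ghg^{-1})=\tilde{\varphi}(g)\tilde{\varphi}(h)\tilde{\varphi}(g)^{-1}$ in the $h$-direction gives $\varphi\circ \mathsf{Ad}(g)=\mathsf{Ad}(\tilde{\varphi}(g))\circ \varphi$, and taking duals produces the intertwining relation
\[
\varphi^*\circ \mathsf{Ad}^*(h)=\mathsf{Ad}^*(\tilde{\varphi}^{-1}(h))\circ \varphi^*,\qquad h\in \mathsf{G}.
\]
Applying this to any $\xi\in\mathfrak{g}^*$ shows at once that $\varphi^*(\mathcal{O}_\xi)\subseteq \mathcal{O}_{\varphi^*\xi}$; the reverse inclusion is obtained from the same computation applied to $\varphi^{-1}$. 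Hence $\varphi^*\mathcal{O}_\xi=\mathcal{O}_{\varphi^*\xi}$, so the rule $\varphi.\mathcal{O}_\xi:=\mathcal{O}_{\varphi^*\xi}$ is well defined and gives an action of $\Aut(\mathfrak{g})$ on $\widehat{\mathsf{G}}\cong \mathfrak{g}^*/\mathsf{Ad}^*$.

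For continuity I would observe that $\varphi^*:\mathfrak{g}^*\to \mathfrak{g}^*$ is a (linear, hence continuous) map which descends along the quotient $q:\mathfrak{g}^*\to \mathfrak{g}^*/\mathsf{Ad}^*$, so continuity of the action map on $\widehat{\mathsf{G}}$ in the $\mathcal{O}$-variable is immediate from the universal property of the quotient topology. Joint continuity in $(\varphi,\mathcal{O})$ reduces to joint continuity of the linear action $\Aut(\mathfrak{g})\times \mathfrak{g}^*\to \mathfrak{g}^*$, which is trivial since $\Aut(\mathfrak{g})\subseteq \mathsf{GL}(\mathfrak{g})$ is a Lie subgroup acting linearly.

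For the invariance of the flat orbits I would use the geometric characterization in Theorem \ref{flatofofdodthm}. The center $\mathfrak{z}$ is intrinsic to the Lie algebra (defined by $\mathfrak{z}=\{X\in \mathfrak{g}:[X,\mathfrak{g}]=0\}$), so any $\varphi\in \Aut(\mathfrak{g})$ restricts to an automorphism of $\mathfrak{z}$. Consequently $\varphi^*$ preserves the annihilator $\mathfrak{z}^\perp\subseteq \mathfrak{g}^*$. If $\mathcal{O}_\xi$ is flat, meaning $\mathcal{O}_\xi=\xi+\mathfrak{z}^\perp$, then applying $\varphi^*$ and using $\varphi^*\mathcal{O}_\xi=\mathcal{O}_{\varphi^*\xi}$ yields
\[
\mathcal{O}_{\varphi^*\xi}=\varphi^*(\xi+\mathfrak{z}^\perp)=\varphi^*\xi+\mathfrak{z}^\perp,
\]
so $\mathcal{O}_{\varphi^*\xi}$ is flat. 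As a complementary sanity check one can use the analytic characterization via the Pfaffian of Proposition \ref{descripfiogogda}: the identity $\omega_{\varphi^*\xi}(X,Y)=\omega_\xi(\varphi X,\varphi Y)$ together with $\varphi(\mathfrak{z})=\mathfrak{z}$ shows that the induced form on $\mathfrak{g}/\mathfrak{z}$ transforms by the induced automorphism $\bar{\varphi}$, preserving non-degeneracy. I do not foresee a serious obstacle here; the only place requiring care is bookkeeping the direction of the intertwining relation between $\Aut(\mathsf{G})$ and $\mathsf{Ad}$, which has to be used twice (once for the permutation of orbits and once, implicitly, for checking that $\varphi^*\mathfrak{z}^\perp=\mathfrak{z}^\perp$).
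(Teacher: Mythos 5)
Your proof is correct and follows essentially the same approach as the paper: the intertwining relation you derive by differentiating and dualizing is precisely the statement that the inner automorphisms form a normal subgroup of $\Aut(\mathfrak{g})$, and the flat-orbit invariance argument via $\varphi(\mathfrak{z})=\mathfrak{z}$ and $\mathcal{O}_{\varphi^*\xi}=\varphi^*\xi+\mathfrak{z}^\perp$ is word-for-word the paper's argument. Your version is merely more explicit, adding the direct computation of the intertwining relation and the remark about continuity, both of which the paper leaves to the reader.
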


We note that in general, a strata in the fine stratification can fail to be preserved by a group automorphism that does not preserve the Jordan-Hölder flag. 

\begin{proof}
The first part of the proposition is an immediate consequence of the fact that the inner automorphisms, i.e. the adjoint action, is a normal subgroup of $\Aut(\mathfrak{g})$. That an arbitrary automorphism preserves the space of flat orbits is a consequence of that an arbitrary automorphism preserves the center of the Lie group. In other words, we have for $\xi\in \Xi$ that
$$\mathcal{O}_\xi=\xi + \mathfrak{z}^\bot\qquad\Rightarrow\qquad \varphi.\mathcal{O}_\xi=\mathcal{O}_{\varphi^*\xi}=\varphi^*\xi + \mathfrak{z}^\bot.$$
\end{proof}

We make use of the following convention for gradings of nilpotent Lie algebras. 

\begin{definition}
Let $\mathfrak{g}$ be a nilpotent Lie algebra. A grading on $\mathfrak{g}$ is a decomposition indexed by negative integers
$$\mathfrak{g}=\bigoplus_{p<0}\mathfrak{g}_p,$$
with $[\mathfrak{g}_p,\mathfrak{g}_q]\subseteq \mathfrak{g}_{p+q}$ for any $p,q<0$.
\end{definition}

If we have a grading on the Lie algebra $\mathfrak{g}$ of a simply connected nilpotent Lie group $\mathsf{G}$, we let $\Aut_{\rm gr}(\mathfrak{g})$ denote the space of graded Lie algebra automorphisms of $\mathfrak{g}$ and $\Aut_{\rm gr}(\mathsf{G})$ its image under the group isomorphism $\Aut(\mathsf{G})\cong \Aut(\mathfrak{g})$. We make the following remark about gradings in relation to certain one-parameter families of automorphisms. Following \cite{follandsubell}, we say that a one-parameter group of automorphisms $\delta:\R_+\to \Aut(\mathfrak{g})$ is a dilation if $\dot{\delta}(1)\in \End(\mathfrak{g})$ is diagonalizable with positive integer eigenvalues.

\begin{proposition}
\label{gradingvsdilationsprop}
Let $\mathfrak{g}$ be a nilpotent Lie algebra. There is a one-to-one correspondence between dilations and gradings on $\mathfrak{g}$ given by the following two constructions that are mutually inverse. 
\begin{itemize}
\item If $\delta$ is a dilation, and we set $\mathfrak{g}_p:=\ker(\dot{\delta}(1)+p)$, then
$$\mathfrak{g} = \bigoplus_{p < 0} \mathfrak{g}_p,$$
defines a grading of $\mathfrak{g}$.
\item If $\mathfrak{g} = \bigoplus_{p < 0} \mathfrak{g}_p$ is a grading of $\mathfrak{g}$, then 
$$\delta_t:=\bigoplus_{p < 0} t^{-p}\,\mathrm{id}_{\mathfrak{g}_p},\quad t>0,$$
defines a dilation of $\mathfrak{g}$.
\end{itemize}
If $\mathfrak{g}$ is graded by a dilation $\delta$, then 
$$\Aut_{\rm gr}(\mathfrak{g})=\{\varphi\in \Aut(\mathfrak{g}): \, [\dot{\delta}(1),\varphi]=0\}=\{\varphi\in \Aut(\mathfrak{g}): \, \delta_t\circ\varphi=\varphi\circ \delta_t\; \forall t>0\}.$$
\end{proposition}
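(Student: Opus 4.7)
The plan is to show that the two constructions in the statement are well-defined, are mutually inverse, and then use the same infinitesimal object to characterize graded automorphisms. The pivotal observation throughout is that differentiating a one-parameter group of Lie algebra automorphisms at $t=1$ produces a derivation, and conversely exponentiating a diagonalizable derivation with positive integer eigenvalues yields a dilation.

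First I would verify that if $\delta:\R_+\to\Aut(\mathfrak{g})$ is a dilation, then setting $\mathfrak{g}_p:=\ker(\dot\delta(1)+p)$ for $p<0$ gives a grading. Diagonalizability of $\dot\delta(1)$ with positive integer eigenvalues yields the vector space decomposition $\mathfrak{g}=\bigoplus_{p<0}\mathfrak{g}_p$. The identity $\delta_t[X,Y]=[\delta_tX,\delta_tY]$, differentiated at $t=1$, says that $D:=\dot\delta(1)$ is a derivation; hence for $X\in\mathfrak{g}_p$, $Y\in\mathfrak{g}_q$ we compute $D[X,Y]=[DX,Y]+[X,DY]=-(p+q)[X,Y]$, so $[X,Y]\in\mathfrak{g}_{p+q}$.

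For the converse direction I would start from a grading $\mathfrak{g}=\bigoplus_{p<0}\mathfrak{g}_p$, define $\delta_t:=\bigoplus_{p<0}t^{-p}\,\mathrm{id}_{\mathfrak{g}_p}$ and check: (i) $\delta_{ts}=\delta_t\delta_s$ is immediate; (ii) $\delta_t$ respects brackets, because for $X\in\mathfrak{g}_p$ and $Y\in\mathfrak{g}_q$ both $\delta_t[X,Y]$ and $[\delta_tX,\delta_tY]$ equal $t^{-(p+q)}[X,Y]$; (iii) $\dot\delta(1)=\bigoplus_{p<0}(-p)\,\mathrm{id}_{\mathfrak{g}_p}$ is diagonalizable with positive integer eigenvalues. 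That the two constructions are mutually inverse is then a tautological check at the level of eigenspaces, since the grading recovered from $\delta_t=\exp(\log(t)\,\dot\delta(1))$ is exactly the eigenspace decomposition of $\dot\delta(1)$.

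For the final characterization of $\Aut_{\rm gr}(\mathfrak{g})$, I would argue along a chain of equivalences. If $\varphi\in\Aut(\mathfrak{g})$ preserves the grading, then $\varphi$ restricts to $\mathfrak{g}_p$ for each $p$, hence commutes with $\dot\delta(1)$ on each eigenspace and therefore on all of $\mathfrak{g}$. Conversely, a commutant of $\dot\delta(1)$ preserves its eigenspaces $\mathfrak{g}_p$, hence the grading. Finally, the equivalence $[\dot\delta(1),\varphi]=0\iff\delta_t\varphi=\varphi\delta_t\ \forall t>0$ follows from the identity $\delta_t=\exp(\log(t)\,\dot\delta(1))$: one direction is obtained by power series expansion, the other by differentiating at $t=1$. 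No step here poses a genuine obstacle; the only subtle point is making sure the derivation property and the exponential identity are invoked in the right order, so that the correspondence is built from one single infinitesimal datum.
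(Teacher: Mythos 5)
Your proof is correct and complete. The paper states Proposition \ref{gradingvsdilationsprop} without a proof (treating it as a standard fact following \cite{follandsubell}), and your argument supplies the expected derivation: differentiating the automorphism property at $t=1$ to get the derivation property of $\dot\delta(1)$, using diagonalizability for the eigenspace decomposition, checking that $\delta_t=\exp(\log(t)\,\dot\delta(1))$ makes the two constructions mutually inverse, and observing that preserving the grading is equivalent to commuting with $\dot\delta(1)$ and hence with the whole one-parameter group.
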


\begin{remark}
\label{lackofgradingremark}
Not all nilpotent Lie algebras admit gradings. Indeed, by \cite{dyer70} there is a nine dimensional nilpotent Lie algebra not admitting a dilation. However,  any quotient of the free graded Lie algebra of any rank by a homogeneous ideal admits a dilation.

We remark that this showcases a subtle, yet important, distinction between gradings and filterings on Lie algebras. Existence of a finite filtration by ideals on a Lie algebra is equivalent to it being nilpotent. Moreover, any automorphism of  a nilpotent Lie algebra preserves the filtration defined from the descending central series. In the geometric setting, we see this phenomena appear in the Examples \ref{standardcarnot} and \ref{standardcarnotmodlattice}.
\end{remark}

\begin{definition}
A Carnot-Lie group is a pair $(\mathsf{G},\delta)$ consisting of a nilpotent Lie group $\mathsf{G}$ equipped with a choice of dilation $\delta$ (or equivalently a grading of the Lie algebra). We often drop the $\delta$ from the notation when it is clear from context.
\end{definition}

\begin{proposition}
\label{gammapartialdef}
Let $\mathsf{G}$ be a simply connected Carnot-Lie group and let $\Gamma$ denote the set of flat orbits of $\mathsf{G}$. The action of the dilation group preserves $\Gamma$ and the set 
$$\Gamma_{\partial}:=\Gamma/\delta(\R_+),$$
is a well defined topological space. Moreover, if $\mathsf{G}$ is non-abelian, the $\R_+$-action on $\Gamma$ is free and proper, and the homeomorphism $\Gamma\cong \Lambda$ of Proposition \ref{descripfiogogda} induces a homeomorphism
$$\Lambda_\partial\xrightarrow{\sim} \Gamma_\partial,$$
where $\Lambda_\partial$ is the smooth algebraic hypersurface in $\mathfrak{z}^*$ defined from
$$\Lambda_\partial=\{\xi\in \mathfrak{z}^*: \mathsf{det}(\omega_\xi|_{\mathfrak{g}/\mathfrak{z}})=1\}\subseteq \Lambda.$$
In particular, $\Gamma_{\partial}$ is canonically homeomorphic to $\Lambda_\partial$. 
\end{proposition}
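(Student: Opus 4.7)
The plan is to transport the entire statement to the transversal $\Lambda \subseteq \mathfrak{z}^*$ provided by Proposition~\ref{descripfiogogda}, where the $\R_+$-action becomes an explicit weighted dilation on a linear space, and then to read off every assertion from a single homogeneity identity for the Pfaffian.

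I would begin with the abstract part. Each $\delta_t$ is a Lie algebra automorphism, so Proposition~\ref{autoongamma} immediately gives a continuous $\R_+$-action on $\Gamma$ and hence the quotient $\Gamma_\partial$ is well defined as a topological space without any hypothesis on $\mathsf{G}$. The center $\mathfrak{z}$ is characteristic, so $\delta_t$ preserves $\mathfrak{z}$ and dually $\delta_t^*$ preserves $\mathfrak{z}^* \subseteq \mathfrak{g}^*$. The next thing to pin down is that under the identification $\Gamma \cong \Lambda$ from Proposition~\ref{descripfiogogda} the action of $\delta$ on $\Gamma$ is conjugate to the restriction of $\delta_t^*$ to $\Lambda \subseteq \mathfrak{z}^*$; this is transparent once a Jordan--Hölder basis adapted both to the center and to the grading is chosen, so that $\mathfrak{z}^*$ is the transversal and the decomposition $\mathfrak{g} = \mathfrak{z} \oplus (\mathfrak{g}/\mathfrak{z})$ is $\delta$-equivariant.

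The key calculation is the scaling of the Pfaffian. For $\xi \in \mathfrak{z}^*$ one has $\omega_{\delta_t^*\xi}(X,Y) = \xi([\delta_t X,\delta_t Y])$, so if $\mathfrak{g}/\mathfrak{z} = \bigoplus_{p<0}\bigl(\mathfrak{g}_p/(\mathfrak{z} \cap \mathfrak{g}_p)\bigr)$ has weights $p_1,\ldots, p_{2d}$ and we set $N := \sum_{i=1}^{2d}|p_i|$, a diagonal-basis computation gives
\begin{equation*}
\det\bigl(\omega_{\delta_t^*\xi}|_{\mathfrak{g}/\mathfrak{z}}\bigr) \;=\; t^{2N}\,\det\bigl(\omega_\xi|_{\mathfrak{g}/\mathfrak{z}}\bigr).
\end{equation*}
Non-abelianness of $\mathsf{G}$ is precisely what ensures $N \geq 1$ and hence non-trivial scaling; this is the only substantive computation.

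From this identity everything else is formal. Freeness on $\Lambda$ is immediate: $\delta_t^*\xi = \xi$ forces $t^{2N} = 1$ and hence $t = 1$. Defining the continuous positive $\R_+$-equivariant function $r(\xi) := \det(\omega_\xi|_{\mathfrak{g}/\mathfrak{z}})^{1/(2N)}$, the map $(t,\xi) \mapsto \delta_t^*\xi$ becomes a homeomorphism $\R_+ \times \Lambda_\partial \xrightarrow{\sim} \Lambda$ with continuous inverse $\xi \mapsto (r(\xi), \delta_{r(\xi)^{-1}}^*\xi)$, yielding properness of the action together with the identification $\Lambda_\partial \xrightarrow{\sim} \Gamma_\partial$. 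Smoothness of $\Lambda_\partial$ as an algebraic hypersurface follows because $\det(\omega_\xi|_{\mathfrak{g}/\mathfrak{z}})$ is polynomial in $\xi$ by Remark~\ref{flatorbisintosform}, while differentiating the homogeneity identity at $t = 1$ shows its differential is non-zero wherever the function itself is non-zero; hence $1$ is a regular value. The main obstacle I expect is the initial bookkeeping --- making the identification $\Gamma \cong \Lambda$ manifestly $\delta$-equivariant; once that is done, the rest is driven entirely by the weighted-homogeneity of $\det(\omega_{\cdot}|_{\mathfrak{g}/\mathfrak{z}})$.
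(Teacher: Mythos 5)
Your argument is correct and follows the paper's overall strategy --- transport to $\Lambda$ via Proposition~\ref{descripfiogogda}, analyze the scaling of the Pfaffian, and read off freeness, properness, and smoothness --- but with a genuine improvement in the central estimate. The paper only records a two-sided bound $c_0\min(t^{\lambda'},t^\lambda)\leq|\mathsf{Pf}(\omega_{t.\xi}|_{\mathfrak{g}/\mathfrak{z}})|\leq c_1 c_0\max(t^{\lambda'},t^\lambda)$, with $\lambda,\lambda'$ the extremal eigenvalues of $\dot\delta(1)|_{\mathfrak{z}}$, and from this infers a unique normalizing $t$ depending continuously on $\xi$; you instead prove the \emph{exact} weighted homogeneity $\det(\omega_{\delta_t^*\xi}|_{\mathfrak{g}/\mathfrak{z}})=t^{2N}\det(\omega_\xi|_{\mathfrak{g}/\mathfrak{z}})$, equivalently $\mathsf{Pf}(\omega_{\delta_t^*\xi})=t^{N}\mathsf{Pf}(\omega_\xi)$, which follows at once from $\omega_{\delta_t^*\eta}=(\delta_t|_{\mathfrak{g}/\mathfrak{z}})^{\mathsf T}\,\omega_\eta\,(\delta_t|_{\mathfrak{g}/\mathfrak{z}})$ and $\det(\delta_t|_{\mathfrak{g}/\mathfrak{z}})=t^{N}$. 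The exact identity is strictly more useful: it gives freeness, an explicit $\R_+$-equivariant retraction $\xi\mapsto\delta_{r(\xi)^{-1}}^*\xi$ of $\Lambda$ onto $\Lambda_\partial$ (hence properness and the homeomorphism $\Lambda_\partial\cong\Gamma_\partial$), and the smoothness of $\Lambda_\partial$ via weighted Euler's formula, all from one line. It is also the right exponent --- the paper's displayed bound tracks only the eigenvalues on $\mathfrak{z}$ and not the degree $d=\mathrm{codim}(\mathfrak{z})/2$ of the Pfaffian; already for $\mathbb{H}_n$ with $n>1$ the exact power is $t^{2n}$ while $\lambda=\lambda'=2$, so your version is what the argument really needs. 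The one bookkeeping point you rightly flag --- $\delta$-equivariance of $\Gamma\cong\Lambda$ --- is handled by choosing the Jordan--H\"older basis homogeneous for the grading (always possible for a Carnot--Lie group); alternatively, one can sidestep the choice by noting that $\mathsf{Pf}$ is constant along coadjoint orbits and that $\sigma(t.\xi)$ and $\delta_t^*\sigma(\xi)$ lie in the same flat orbit since they agree on $\mathfrak{z}$.
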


We remark that since the homeomorphism $\Gamma\cong \Lambda$ depends on a choice of Jordan-Hölder basis, so does the homeomorphism $\Lambda_\partial\cong \Gamma_\partial$.

\begin{proof}
It follows from Proposition \ref{autoongamma} that $\delta$ preserves $\Gamma$. The homeomorphism $\Gamma\cong \Lambda$ of Proposition \ref{gammasplitting} intertwines the action of $\delta$ on $\Gamma$ with an $\R_+$-action on $\mathfrak{z}^*$. For the remainder of the proof, we consider non-abelian $\mathsf{G}$. If $\mathsf{G}$ is non-abelian, the $\R_+$-action on $\Gamma$ is free and proper. Indeed, we note that for $\xi\in \Lambda$, there are constants $c_0,c_1>0$ such that for $t>0$,
$$c_0\min(t^{\lambda'},t^\lambda) \leq |\mathsf{Pf}(\omega_{t.\xi}|_{\mathfrak{g}/\mathfrak{z}})|\leq c_1\max(t^{\lambda'},t^\lambda) $$
where $\lambda$ is the smallest eigenvalue of $\dot{\delta}(1)|_{\mathfrak{z}}$ and $\lambda'$ is the largest eigenvalue of $\dot{\delta}(1)|_{\mathfrak{z}}$. Since $\lambda,\lambda'\geq 1$, we conclude that there is a unique $t>0$, depending continuously on $\xi$, such that $\mathsf{Pf}(\omega_{t.\xi}|_{\mathfrak{g}/\mathfrak{z}})=\pm 1$. As such, $\Lambda/\R_+$ can be identified with 
$$\{\xi\in \mathfrak{z}^*: \mathsf{Pf}(\omega_\xi|_{\mathfrak{g}/\mathfrak{z}})=\pm 1\}=\Lambda_\partial,$$
where we in the last equality use that $\mathsf{det}(\omega_\xi|_{\mathfrak{g}/\mathfrak{z}})=|\mathsf{Pf}(\omega_\xi|_{\mathfrak{g}/\mathfrak{z}})|^2$. The determinant is a regular function outside of its zero set, so $\Lambda_\partial$ is a smooth algebraic hypersurface in $\mathfrak{z}^*$. 
\end{proof}

\begin{remark}
Proposition \ref{gammapartialdef} induces a continuous splitting of the quotient map $\Gamma\to \Gamma_\partial$. We identify $\Gamma_\partial$ with its image in $\Gamma\subseteq \hat{\mathsf{G}}$ as a transversal to the dilation action. In particular, $\Gamma_\partial\subseteq \hat{\mathsf{G}}$ is a closed Hausdorff subset.
\end{remark}

\begin{definition}
We define the $C^*$-algebra $I_{\partial,\mathsf{G}}$ as the quotient of $C^*(\mathsf{G})$ by the ideal 
$$\{a\in C^*(\mathsf{G}): \pi(a)=0\; \forall \pi \in \hat{\mathsf{G}}\setminus \Gamma_\partial\}.$$
\end{definition}

\begin{remark}
We note that the restriction of the quotient map $C^*(\mathsf{G})\to I_{\partial,\mathsf{G}}$ to $I_{\mathsf{G}}$ is surjective and that the spectrum of $I_{\partial,\mathsf{G}}$ is the closed Hausdorff subset $\Gamma_\partial\subseteq \hat{G}$.
\end{remark}

The next proposition follows from Proposition \ref{gammapartialdef}.

\begin{proposition}
\label{gammapartialsplitting}
Let $\mathsf{G}$ be a simply connected non-abelian Carnot-Lie group. There is an $\R_+$-equivariant $*$-isomorphism $\alpha_\partial:I_\mathsf{G}\to C_0(\R_+)\otimes I_{\partial,\mathsf{G}}$ (for the trivial action on $I_{\partial,\mathsf{G}}$) such that
\begin{itemize}
\item The maps induced from $\alpha_\partial$, the restricted quotient map $I_\mathsf{G}\to I_{\partial,\mathsf{G}}$, and the evaluation-at-$1$-map $C_0(\R_+)\otimes I_{\partial,\mathsf{G}}\to I_{\partial,\mathsf{G}}$ on the spectrum fits into an $\R_+$-equivariant commuting diagram
$$
\begin{tikzcd}
\R_+\times \Gamma_\partial \arrow[r,"\alpha_\partial^*" ] &  \Gamma \\\
\Gamma_\partial \arrow[ur, "\subseteq"] \arrow[u] & 
\end{tikzcd}
$$
\item These $*$-homomorphisms fit into a commuting diagram of $\R_+$-equivariant $*$-homomorphisms
$$
\begin{tikzcd}
C^*(\mathsf{G}) \arrow[r] &  I_{\partial,\mathsf{G}} \\\
I_\mathsf{G} \arrow[r, "\alpha_\partial"] \arrow[u, "\subseteq"] &C_0(\R_+)\otimes I_{\partial,\mathsf{G}} \arrow[u] .
\end{tikzcd}
$$
\end{itemize}
\end{proposition}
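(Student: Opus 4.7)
The plan is to define
\[ \alpha_\partial(a)(t) := q(\delta_{t^{-1}}(a)), \qquad a \in I_\mathsf{G}, \; t \in \R_+, \]
where $q: C^*(\mathsf{G}) \to I_{\partial, \mathsf{G}}$ denotes the quotient map, and then verify that $\alpha_\partial$ is a $*$-isomorphism with all the required properties. The $*$-homomorphism property is immediate, and the $\R_+$-equivariance together with commutativity of the diagrams will follow directly from the defining formula.

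First I would check that $\alpha_\partial(a)$ lies in $C_0(\R_+, I_{\partial, \mathsf{G}})$ for $a \in I_\mathsf{G}$. Continuity in $t$ follows from strong continuity of the dilation on $C^*(\mathsf{G})$. For decay at $0$ and $\infty$, I use the continuous trace structure of $I_\mathsf{G}$ from Proposition \ref{ctforflat} together with the trivialization $\Gamma \cong \R_+ \times \Gamma_\partial$ of Proposition \ref{gammapartialdef}, which shows that compact subsets of $\Gamma$ project to compact subsets of $\R_+$. Hence $\|\alpha_\partial(a)(t)\|_{I_{\partial, \mathsf{G}}} = \sup_{\pi \in \Gamma_\partial} \|(\delta_{t^{-1}}^* \pi)(a)\|$ tends to zero as $t$ escapes compacts of $\R_+$.

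Injectivity is direct: $\alpha_\partial(a) = 0$ forces $(\delta_{t^{-1}}^* \pi)(a) = 0$ for every $(t, \pi) \in \R_+ \times \Gamma_\partial$, and since by Proposition \ref{gammapartialdef} the orbit map is a homeomorphism onto $\Gamma$, combined with the defining property of $I_{\mathsf{G}}$ this forces $a = 0$. The main obstacle is surjectivity. The induced spectrum map $\alpha_\partial^*: \R_+ \times \Gamma_\partial \to \Gamma$, $(t, \pi) \mapsto \delta_{t^{-1}}^* \pi$, agrees with the trivialization of Proposition \ref{gammapartialdef} and is therefore a homeomorphism. Identifying $\Gamma \cong \R_+ \times \Gamma_\partial$ via this map, both sides become continuous trace $C_0(\Gamma)$-algebras, and $\alpha_\partial$ becomes a $C_0(\Gamma)$-linear $*$-homomorphism whose fiber over each $\pi \in \Gamma$ is the identity between copies of $\mathbb{K}(\mathcal{H}_\pi)$ coming from the same irreducible representation of $\mathsf{G}$. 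A standard partition-of-unity argument based on the Dauns--Hofmann theorem, exploiting paracompactness of $\Gamma$ to glue local fiberwise lifts of an arbitrary target element, then yields surjectivity.

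Finally, $\R_+$-equivariance follows from the computation $\alpha_\partial(\delta_s(a))(t) = q(\delta_{t^{-1} s}(a)) = \alpha_\partial(a)(s^{-1} t)$, which matches the translation action on $C_0(\R_+)$ with the trivial action on $I_{\partial, \mathsf{G}}$. Commutativity of the second diagram is immediate from $\alpha_\partial(a)(1) = q(\delta_1(a)) = q(a)$, and the spectrum-level diagram commutes because $\alpha_\partial^*(1, \pi) = \pi$ recovers the inclusion $\Gamma_\partial \subseteq \Gamma$.
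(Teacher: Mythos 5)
The paper offers no proof here: it simply records that the proposition ``follows from Proposition~\ref{gammapartialdef},'' so there is nothing concrete to compare against. Your construction is the natural way to realize that assertion explicitly, and it is correct in outline. The defining formula $\alpha_\partial(a)(t)=q(\delta_{t^{-1}}(a))$ is precisely the algebra-level implementation of the homeomorphism $\Gamma\cong\R_+\times\Gamma_\partial$ supplied by Proposition~\ref{gammapartialdef}, the equivariance computation is correct, and the verification that the two diagrams commute is immediate from the formula, as you say.

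Two places are worth tightening. First, the decay argument: the clean statement is that for $a$ in the continuous trace algebra $I_\mathsf{G}$ and $\epsilon>0$, the set $\{\rho\in\Gamma:\|\rho(a)\|\geq\epsilon\}$ is compact; under $\Gamma\cong\R_+\times\Gamma_\partial$ its projection to $\R_+$ is compact, and $\|\alpha_\partial(a)(t)\|=\sup_{\pi\in\Gamma_\partial}\|\pi(\delta_{t^{-1}}(a))\|$ lies below $\epsilon$ once $t$ leaves that projection. Your phrasing gestures at this but does not quite spell out that it is the superlevel set of the norm function, not an arbitrary compact set, that is being projected. Second, the surjectivity argument: what you actually want to invoke is the standard fact that a $C_0(X)$-linear $*$-monomorphism between $C_0(X)$-algebras that is bijective on each fiber is already bijective (a noncommutative Stone--Weierstrass statement, proved via the partition of unity you mention). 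Stating it that way, after you have verified that the fiber map of $\alpha_\partial$ over $\rho\leftrightarrow(t_0,\pi_0)$ is $\rho(a)\mapsto\pi_0(\delta_{t_0^{-1}}(a))$ and hence a unitary conjugation on $\mathbb{K}$, makes the appeal to paracompactness and Dauns--Hofmann unnecessary to single out. Neither point is a gap in substance, but both deserve to be stated precisely if the proposition is to carry a full proof.
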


Let us describe how automorphisms act on the space of flat orbits in terms of its realization as the transversal $\Lambda$. Recall our convention to identify $\mathfrak{z}^*$ with a subspace of $\mathfrak{g}^*$ via the Jordan-Hölder basis. The following proposition is immediate from Proposition \ref{gradingvsdilationsprop} and \ref{gammapartialdef}. 

\begin{proposition}
\label{actionofautosonflat}
Let $\mathsf{G}$ be a simply connected, nilpotent Lie group and $\varphi\in \Aut(\mathfrak{g})$. Define the polynomial map 
$$\varphi_\Lambda^*:\Lambda\to \Lambda, \; \varphi_\Lambda^*(\xi):=(\varphi^*\xi)|_\mathfrak{z}.$$
Then $\varphi_\Lambda^*$ is a self-homeomorphism fitting into a commuting diagram wtih the homeomorphism $\Gamma\cong \Lambda$ of Proposition \ref{gammasplitting} 
\begin{equation}
\label{actiononeondal}
\begin{tikzcd}
\Lambda \arrow[d, "\cong"] \arrow[r,"\varphi_\Lambda^*" ] &  \Lambda \arrow[d, "\cong"] \\\
\Gamma \arrow[r, "\varphi^*"] &\Gamma.
\end{tikzcd}
\end{equation}
For any two $\varphi,\varphi'\in \Aut(\mathfrak{g})$, $(\varphi\circ\varphi')_\Lambda^*=\varphi'{}_\Lambda^*\circ \varphi_\Lambda^*$. 

Additionally, if $\mathsf{G}$ is a Carnot-Lie group and $\varphi\in \Aut_{\rm gr}(\mathfrak{g})$ then $\varphi_\Lambda^*$ commutes with the $\R_+$-action and induces a self-diffeomorphism 
$$\varphi_{\Lambda_\partial}^*:\Lambda_\partial \to \Lambda_\partial,$$ 
fitting into a commutative diagram with $\varphi^*:\Gamma_\partial\to \Gamma_\partial$ as in Equation \eqref{actiononeondal}.
\end{proposition}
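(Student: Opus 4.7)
The plan is to exploit that every Lie algebra automorphism $\varphi\in\Aut(\mathfrak{g})$ preserves the center $\mathfrak{z}$, hence restricts to an automorphism $\varphi|_\mathfrak{z}\in\Aut(\mathfrak{z})$, and to identify $\varphi_\Lambda^*$ with the linear dual $(\varphi|_\mathfrak{z})^*:\mathfrak{z}^*\to\mathfrak{z}^*$ restricted to $\Lambda$. With this identification in hand, all assertions of the proposition reduce to formal properties of the dual together with the transversal description of $\Gamma\cong\Lambda$ furnished by Proposition \ref{descripfiogogda}.

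First I would verify the identity $(\varphi^*\xi)|_\mathfrak{z}=(\varphi|_\mathfrak{z})^*\xi$ for $\xi\in\mathfrak{z}^*$, where $\xi$ is extended to $\mathfrak{g}^*$ by zero on the complement prescribed by the Jordan-H\"older basis. Using the direct sum $\mathfrak{g}^*=\mathfrak{z}^*\oplus\mathfrak{z}^\perp$ and the $\varphi^*$-invariance of $\mathfrak{z}^\perp$ (which follows from $\varphi(\mathfrak{z})=\mathfrak{z}$), for $\xi\in\Lambda$ one obtains
\begin{equation*}
\varphi^*\mathcal{O}_\xi=\varphi^*\xi+\mathfrak{z}^\perp=(\varphi^*\xi)|_\mathfrak{z}+\mathfrak{z}^\perp,
\end{equation*}
which is again flat by Proposition \ref{autoongamma}. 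This simultaneously shows that $\varphi_\Lambda^*$ maps $\Lambda$ into $\Lambda$, that the diagram commutes (since under $\Gamma\cong\Lambda$ a flat orbit is sent to its unique intersection with $\mathfrak{z}^*$, and $(\varphi^*\xi)|_\mathfrak{z}$ is precisely that intersection for $\varphi^*\mathcal{O}_\xi$), and that $\varphi_\Lambda^*$ is a linear isomorphism with inverse $(\varphi^{-1})_\Lambda^*$, hence a self-homeomorphism.

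The composition identity $(\varphi\circ\varphi')_\Lambda^*=\varphi'_\Lambda{}^*\circ\varphi_\Lambda^*$ is then immediate by dualizing $(\varphi\circ\varphi')|_\mathfrak{z}=\varphi|_\mathfrak{z}\circ\varphi'|_\mathfrak{z}$. For the Carnot-Lie case I would invoke Proposition \ref{gradingvsdilationsprop} to write $\varphi\circ\delta_t=\delta_t\circ\varphi$ for $\varphi\in\Aut_{\rm gr}(\mathfrak{g})$; the composition law just established then forces $\varphi_\Lambda^*\circ(\delta_t)_\Lambda^*=(\delta_t)_\Lambda^*\circ\varphi_\Lambda^*$, so $\varphi_\Lambda^*$ descends to a self-map $\varphi_{\Lambda_\partial}^*$ of $\Lambda_\partial=\Lambda/\R_+$. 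Smoothness of $\varphi_{\Lambda_\partial}^*$ and of its inverse follows because $\varphi_\Lambda^*$ is a smooth linear isomorphism and $\Lambda\to\Lambda_\partial$ is a smooth principal $\R_+$-bundle over the algebraic hypersurface $\Lambda_\partial$ by Proposition \ref{gammapartialdef}. The commuting diagram on $\Gamma_\partial$ is obtained by passing to $\R_+$-quotients in the already established diagram on $\Gamma$.

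The only real obstacle is conceptual rather than technical: one must correctly recognize that $\Gamma\cong\Lambda$ is realized by intersecting a flat orbit with $\mathfrak{z}^*\subseteq\mathfrak{g}^*$ under the decomposition $\mathfrak{g}^*=\mathfrak{z}^*\oplus\mathfrak{z}^\perp$, and that $(\varphi^*\xi)|_\mathfrak{z}$ coincides with the projection onto this transversal of $\varphi^*\xi$. Once this geometric picture is in place, the remainder of the argument is pure linear algebra and the functoriality of the dual.
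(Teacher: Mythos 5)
Your argument is correct and, since the paper dismisses this proposition as ``immediate from Proposition \ref{gradingvsdilationsprop} and \ref{gammapartialdef},'' your proof simply spells out the details the paper omits, in the spirit the authors intend. The key observations you make---that $\varphi_\Lambda^*$ is the restriction of the linear dual $(\varphi|_\mathfrak{z})^*$ because $\varphi$ preserves $\mathfrak{z}$, that $\mathfrak{z}^\perp$ is $\varphi^*$-invariant so flatness of orbits is preserved and the transversal intersection of $\varphi^*\mathcal{O}_\xi$ with $\mathfrak{z}^*$ is exactly $(\varphi^*\xi)|_\mathfrak{z}$, and that the graded case reduces to commuting with the dilation via the composition law---are precisely the facts furnished by the two cited propositions.
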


\begin{remark}
If $\dot{\delta}(1)|_{\mathfrak{z}}$ is scalar (i.e. the dilation acts by scalars on the center) or $\varphi$ preserves the determinant function on $\Lambda$, then $\varphi_{\Lambda_\partial}^*$ is polynomial since it is the restriction of a linear map on $\mathfrak{z}^*$. 
\end{remark}

\section[The ideal of flat orbits]{The continuous trace algebra structure of the ideal of flat orbits}
\label{ctstructrifoeod}

We saw in Proposition \ref{ctforflat} that the ideal of flat orbits $I_{\mathsf{G}}$ was a continuous trace algebra with spectrum $\Gamma$. For an overview of continuous trace algebras, see \cite{Williams_Raeburn,RosenbergTwisted}. We use the notation $\delta_{\rm DD}$ for the Dixmier-Douady invariant in the third integral Cech cohomology, e.g. $\delta_{\rm DD}(I_{\mathsf{G}})\in \check{H}^3(\Gamma;\Z)$. It is well known that  $\delta_{\rm DD}(I_\mathsf{G})=0\in \check{H}^3(\Gamma,\Z)$ if and only if there exists a Hilbert space bundle $\mathcal{H}_\Gamma\to \Gamma$ and a continuous homomorphism $\pi_{\musFlat}:\mathsf{G}\to C(\Gamma,U(\mathcal{H}_\Gamma))$ to the strongly continuous sections of the unitary homomorphisms of the bundle such that for each $\xi\in \Gamma$, we have that $\xi=[\pi_{\musFlat}|_{\{\xi\}}]$. We shall see that indeed $\delta_{\rm DD}(I_\mathsf{G})=0$ and call a choice of such a bundle $\mathcal{H}_\Gamma\to \Gamma$ a bundle of flat orbit representations.

In this subsection we produce an explicit construction of a bundle of flat orbit representations and study its equivariance properties under automorphisms. While this construction in its own right might not be overly exciting, it will be used later in this work on Carnot manifolds where a global construction is less immediate. The equivariance properties will in some examples let us construct global bundles by induction from principal bundles. 

The problem of constructing bundles of flat orbit representations is interesting in light of the following conjecture of Lipman-Rosenberg. Recall that a subquotient of the $C^*$-algebra $C^*(\mathsf{G})$ is a $C^*$-algebra of the form $A:=I_1/I_2$ where $I_2\subseteq I_1\subseteq C^*(\mathsf{G})$ are ideals. By \cite{Dixmierbook}, we can identify $\hat{I}_1$ and $\hat{I}_2$ with open subsets of $\hat{\mathsf{G}}$ and $\hat{A}$ with a subset of $\hat{\mathsf{G}}$.

\begin{conjecture}[Conjecture 3.2 in \cite{LipmanRosenberg}]
\label{ddzeroconjecture}
For any connected nilpotent Lie group $\mathsf{G}$ and continuous trace subquotient $A$ of $C^*(\mathsf{G})$ (in particular $\hat{A}\subseteq \hat{\mathsf{G}}$ is a Hausdorff subset), it holds that $\delta_{\rm DD}(A)=0$. 
\end{conjecture}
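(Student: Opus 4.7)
The plan is to establish the conjecture by constructing, for every continuous trace subquotient $A$ of $C^*(\mathsf{G})$, a Hilbert space bundle $\mathcal{H}_A \to \hat A$ together with a $C_0(\hat A)$-linear $*$-isomorphism $A \cong C_0(\hat A, \mathbb{K}(\mathcal{H}_A))$, from which $\delta_{\rm DD}(A) = 0$ would follow automatically. The strategy is to globalise the technique used in this monograph for the ideal $I_\mathsf{G}$ of flat orbits so that it applies to an arbitrary Hausdorff locally closed subset of $\widehat{\mathsf{G}}$.

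First I would cover $\hat A$ by open subsets on which explicit Hilbert bundles of representations can be built from Pedersen's fine stratification. For every Jordan-Hölder basis $B$ of $\mathfrak{g}$ and every $\epsilon \in \mathcal{E}_n(\mathsf{G})$, Propositions \ref{repsandosisosos}, \ref{bundleofhspspac} and \ref{modulevsbundle} provide a trivialisable Hilbert bundle $\mathcal{H}_{V,\epsilon}^B \to \Gamma_\epsilon^B$ equipped with a strongly continuous $\mathsf{G}$-representation that realises the relevant irreducibles and induces a $*$-isomorphism $J_\epsilon^B/J_{\epsilon-1}^B \cong C_0(\Gamma_\epsilon^B, \mathbb{K}(\mathcal{H}_{V,\epsilon}^B))$. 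Since any orbit $\mathcal{O} \in \widehat{\mathsf{G}}$ lies in the top stratum for a suitably chosen Jordan-Hölder basis -- one can always arrange the basis so that the jump indices $J_\mathcal{O}^\bullet$ are maximal -- varying the basis yields a cover of $\hat A$ by relatively open subsets each carrying such a local model.

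Next, I would glue these local models using the $\eta$-corrected Lion intertwiners of Proposition \ref{lionsintertwinerswitheta}. Given two bases $B_1, B_2$ whose top strata contain a common orbit $\mathcal{O}_\xi$ in $\hat A$, the intertwiner $\mathfrak{L}_{\mathfrak{h}_V^{B_1}(\xi),\, \mathfrak{h}_V^{B_2}(\xi)}$ is a $\mathsf{G}$-equivariant unitary between the fibres, and the strict cocycle identity
\[
\mathfrak{L}_{\mathfrak{h}_3,\mathfrak{h}_2}\, \mathfrak{L}_{\mathfrak{h}_2,\mathfrak{h}_1}\, \mathfrak{L}_{\mathfrak{h}_1,\mathfrak{h}_3} = 1
\]
guarantees that the transition data form a genuine \v{C}ech cocycle rather than a merely projective one. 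This is precisely the obstruction-theoretic content needed for the local bundles to assemble into a global Hilbert bundle $\mathcal{H}_A \to \hat A$ whose associated compact operator bundle realises $A$, and it is the main reason for using the corrected rather than the naive Lion intertwiners.

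The hard part will be the continuity of these transition functions across boundaries between Pedersen strata. The Vergne polarizations attached to different Jordan-Hölder bases vary polynomially along each individual stratum but can jump at stratum boundaries, and the correction $\e^{\frac{\pi i}{4}\eta(\mathfrak{h}_1, \mathfrak{h}_2)}$ carries spectral jumps analogous to spectral flow. The flat-orbit situation treated in this paper is comparatively benign because $\Gamma$ is $\Aut(\mathsf{G})$-invariant and the relevant polarizations can be chosen to depend smoothly on $\xi$ in neighbourhoods of every flat orbit; for a generic Hausdorff subquotient, continuous gluing requires a refined analysis of how polarizations and $\eta$-corrections deform as one crosses strata inside $\hat A$. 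A plausible route is to pass to a common refinement of the Pedersen stratifications associated to several bases, extract a locally finite cover of $\hat A$ by \emph{adapted} Zariski-open subsets on which continuously varying polarizations exist, and verify that the resulting $\eta$-jumps vanish in the \v{C}ech cohomology of $\hat A$.
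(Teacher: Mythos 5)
This statement is labelled as a conjecture in the paper, not a theorem; the paper does not prove it, and in fact explicitly states that even the special case $A = I_{\mathsf{G}}$ lacked a reference prior to this work. So there is no ``paper's own proof'' to compare against: the paper only establishes $\delta_{\rm DD}(I_{\mathsf{G}})=0$ (and more generally $\delta_{\rm DD}(I_P)=0$ for bundles) by two routes — the Lion-intertwiner gluing of Proposition~\ref{descriptionofddinvariant} / \ref{groupoidftincentrals1}, and the deformation argument of Theorem~\ref{trivialdldaaddo} — neither of which is claimed to extend to arbitrary continuous trace subquotients.

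Beyond the honestly acknowledged difficulty with continuity of the $\eta$-corrected transition functions across strata, your proposal contains a more immediate error. You assert that ``any orbit $\mathcal{O} \in \widehat{\mathsf{G}}$ lies in the top stratum for a suitably chosen Jordan-Hölder basis,'' so that varying the basis yields a cover of $\hat A$ by top fine strata. This fails in general. Theorem~\ref{covconj} only proves that $(U_{\pmb{B}})_{\pmb{B}\in\mathfrak{JH}}$ covers the set of \emph{flat} orbits $\Gamma$, and the remark immediately following it exhibits a $4$-dimensional nilpotent Lie algebra (with $[X_4,X_3]=X_2$, $[X_4,X_2]=X_1$) for which the orbits of maximal dimension form $\tilde\Gamma = \{(\xi_1,\xi_2)\neq 0\}$, yet $U_{\pmb{B}} = \{\xi_1\neq 0\}$ for \emph{every} Jordan-Hölder basis $\pmb{B}$. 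Thus $\bigcup_{\pmb{B}} U_{\pmb{B}}$ is a proper Zariski-open subset of $\tilde\Gamma$, and the orbits along $\{\xi_1=0,\xi_2\neq 0\}$ are not in the top stratum for any choice of basis. For such a group, a Hausdorff subquotient $A$ containing those orbits simply cannot be covered by Vergne-polarized local models of the kind you describe, so your step 1 already breaks, independently of the $\eta$-gluing issue. Any workable generalisation of the paper's method to arbitrary $A$ would need to admit a wider class of local models (e.g.\ deeper fine strata or polarizations not of Vergne type), or abandon the covering-by-top-strata idea altogether — say, via a deformation argument analogous to Theorem~\ref{trivialdldaaddo}, though that too uses the flat-orbit structure essentially.
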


We shall mainly be interested in the case that $A=I_\mathsf{G}$. It seems as if experts in the field know the case of $A=I_{\mathsf{G}}$ to fulfil Conjecture \ref{ddzeroconjecture}, but we have not been able to find a reference. We note the following cases where $\delta_{\rm DD}(I_\mathsf{G})=0$ can be concluded at this stage:
\begin{enumerate}
\item[a)] If $\mathsf{G}$ has step length $2$, Conjecture \ref{ddzeroconjecture} holds true by \cite[Theorem 3.4]{LipmanRosenberg}. In particular, $\delta_{\rm DD}(I_\mathsf{G})=0$ for all step $2$ nilpotent Lie groups $\mathsf{G}$. For examples of step $2$ Lie groups, see Subsection \ref{step2subsectionexamples}.
\item[b)] If $\Gamma$ consists of one fine strata, Proposition \ref{repsandosisosos} implies that $\delta_{\rm DD}(I_\mathsf{G})=0$. We shall see an abundance of examples below in Section \ref{subsectionexamples} where $\Gamma$ consists of one fine strata. An important special case when $\Gamma$ consists of one fine strata is when $\mathsf{G}$ has one-dimensional center and admits flat orbits (see Subsection \ref{onedcenterexasubs} below), in this case $\Gamma=\mathfrak{z}^*\setminus\{0\}$.
\item[c)] More generally than in b), if there exists a bundle of polarizations $F\to \Gamma$, with $F\subseteq \Lambda\times \mathfrak{g}$ satisfying the assumptions of Proposition \ref{vbfv}, then $\delta_{\rm DD}(I_{\mathsf{G}})=0$. Indeed, in this case the Hilbert space bundle $\mathcal{H}_\Gamma\to \Gamma$ can be constructed from $F$ as in Proposition \ref{bundleofhspspac}.
\end{enumerate}

Let us describe the structure of the continuous trace algebra $I_\mathsf{G}$ and make the construction of the bundle of flat orbit representations more explicit. First, we show that the top fine stratas, while varrying over all Jordan-Hölder bases, is an open cover of the set of flat orbits. We let $\mathfrak{JH}$ denote the set of Jordan-Hölder bases of $\mathfrak{g}$ with $\mathfrak{g}_{\dim(\mathfrak{z})}=\mathfrak{z}$. For a Jordan-Hölder basis $\pmb{B}\in \mathfrak{JH}$, we let $U_{\pmb{B}}\subseteq \Gamma$ denote the top fine strata in the fine stratification defined from $\pmb{B}$.

\begin{theorem}
\label{covconj}
Let $\mathsf{G}$ be a nilpotent simply connected Lie group admitting flat orbits. Then $(U_{\pmb{B}})_{\pmb{B}\in \mathfrak{JH}}$ is an open cover of $\Gamma$. 
\end{theorem}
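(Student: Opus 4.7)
Given a flat orbit $\mathcal{O}\in \Gamma$ and a representative $\xi\in\mathcal{O}$, the plan is to manufacture a Jordan--Hölder basis $\pmb{B}\in \mathfrak{JH}$ placing $\mathcal{O}$ in the top fine strata $U_{\pmb{B}}$. I would begin from the identity $\#J^k_\xi=\mathrm{rank}(\omega_\xi|_{\mathfrak{g}_k})$, which (in combination with lower semicontinuity of the rank in $\xi$) characterises $U_{\pmb{B}}$ by the requirement that these ranks attain their maximal value for every $k$. Since $\xi$ is flat, $\omega_\xi$ is non-degenerate on $V:=\mathfrak{g}/\mathfrak{z}$, so for $k>\dim\mathfrak{z}$ the maximal possible value of $\mathrm{rank}(\omega_\xi|_{\mathfrak{g}_k})$ equals $\dim V_k$ or $\dim V_k-1$ according to parity, where $V_k:=\mathfrak{g}_k/\mathfrak{z}$. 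The target is therefore a flag for which $V_k$ is $\omega_\xi$-symplectic when $\dim V_k$ is even and $\omega_\xi$-coisotropic of defect one when $\dim V_k$ is odd.

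The key technical input is the upper central series $\mathfrak{z}=Z^1\subsetneq Z^2\subsetneq \cdots\subsetneq Z^r=\mathfrak{g}$: since $[\mathfrak{g},Z^{i+1}]\subseteq Z^i$, any subspace $W$ sandwiched between two consecutive terms $Z^i\subseteq W\subseteq Z^{i+1}$ is automatically an ideal of $\mathfrak{g}$. This grants full linear-algebraic freedom when refining the flag within each slab $Z^{i+1}/Z^i$. After first choosing an arbitrary Jordan--Hölder basis of $\mathfrak{z}$ for the initial $\dim\mathfrak{z}$ slots, I would fill in the remaining slots slab by slab. At each step, letting $K_{k-1}:=V_{k-1}\cap V_{k-1}^{\perp}$ denote the symplectic radical of $\omega_\xi|_{V_{k-1}}$ (with the orthogonal complement taken in $V$ for $\omega_\xi$), I adjoin a vector $v_k$ of the current slab chosen to minimise $\dim K_k$: when $K_{k-1}$ is one-dimensional and some vector in the current slab pairs non-trivially with $K_{k-1}$ under $\omega_\xi$, I pick such a $v_k$ so that $K_k=0$; otherwise I take any $v_k$, in which case $\dim K_k\leq 1$ automatically.

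The main obstacle will be the edge case in which $K_{k-1}$ is $\omega_\xi$-orthogonal to the entire current slab, so that the radical cannot be collapsed until one passes to a later slab. I would handle this by arguing that the orthogonality is a feature of the flag rather than of the particular $\xi$: for any $\xi'\in \mathfrak{g}^*$ whose jump indices agree with those of $\xi$ up to step $k-1$, the same orthogonality follows from the $\mathrm{ad}$-invariance of $Z^{i+1}$ together with a dimension count within the slab. Hence the generic rank of $\omega_{\xi'}|_{\mathfrak{g}_k}$ drops by the same amount as the rank realised by our $\xi$, so the pattern of ranks produced by the construction still agrees with the generic one. Moving into the next slab removes the obstruction and the induction continues. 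Once the flag is built, the combination of the rank identity and the matching of rank patterns forces $\epsilon(\xi,\pmb{B})$ to coincide with the top value $\epsilon_{\mathrm{top}}(\pmb{B})$, which is exactly the assertion $\mathcal{O}\in U_{\pmb{B}}$.
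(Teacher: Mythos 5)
Your approach is genuinely different from the paper's, which argues by induction on $\dim\mathfrak{z}$: for $\xi\in\Gamma$ it quotients by a line $\R X\subset\ker\xi\cap\mathfrak{z}$ (Lemma~\ref{quotoietnoenlemma}), lifts a Jordan--Hölder basis from the quotient placing $\bar\xi$ in the top stratum, and then shows the stratum of $\xi$ is Zariski open via a one-parameter perturbation of the lifted basis. You instead propose a direct greedy construction of the flag along the upper central series, which is a natural alternative. Unfortunately the plan has a genuine gap at exactly the edge case you identify. The assertion that ``the orthogonality is a feature of the flag rather than of the particular $\xi$'' is precisely what must be proved and is not supplied by the ad-invariance of $Z^{i+1}$: the radical line $K_{k-1}$ is $\xi$-dependent, so for nearby $\xi'$ the radical of $\omega_{\xi'}|_{V_{k-1}}$ is a \emph{different} line, with no structural reason to be $\omega_{\xi'}$-orthogonal to the slab. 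What is actually needed is that the Pfaffian of $\omega_{\xi'}|_{V_k}$ vanishes identically --- a constraint on the brackets and on the flag built so far --- and the sketch does not establish this. Two local errors compound the gap: the claim ``$\dim K_k\le 1$ automatically'' is false in precisely this case (if $K_{k-1}$ is one-dimensional and pairs trivially with the slab, then $K_{k-1}\subset V_k^{\perp}$ and hence $K_{k-1}\subset K_k$, and by parity $\dim K_k\ge 2$); and the claimed maximal value $\dim V_k$ or $\dim V_k-1$ for $\mathrm{rank}(\omega_\xi|_{\mathfrak{g}_k})$ is only an a priori upper bound --- the true supremum over $\xi$ is determined by the brackets restricted to $\mathfrak{g}_k$ and can be strictly smaller, which is exactly what makes the edge case delicate.

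There is also a subtler concern in the opening characterization of $U_{\pmb{B}}$. The top fine stratum is cut out by the jump-index \emph{sets} $(J^1_\xi,\ldots,J^n_\xi)$, which are determined by the full table $\dim(\mathfrak{g}_j\cap\mathsf{stab}(\xi|_{\mathfrak{g}_k}))$ for $j\le k$; the ranks $\#J^k_\xi=\mathrm{rank}(\omega_\xi|_{\mathfrak{g}_k})$ recover only the diagonal $j=k$. ``All ranks maximal'' is a Zariski open dense set containing $U_{\pmb{B}}$, but equality requires an argument you have not given. The paper's inductive approach sidesteps both difficulties by showing directly that the stratum containing $\xi$ contains a Zariski open neighbourhood and must therefore be the top one.
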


To prove Theorem \ref{covconj}, we shall first need a lemma.

\begin{lemma}
\label{quotoietnoenlemma}
Let $\mathsf{G}$ be a nilpotent simply connected Lie group with a flat orbit through $\xi\in \mathfrak{z}^*\subseteq \mathfrak{g}^*$. Let $\Gamma_{\mathsf{G}}$ denote the set of flat orbits for $\mathsf{G}$. Fix an $X\in \ker(\xi)\cap \mathfrak{z}$. The following holds the Lie algebra $\bar{\mathfrak{g}}:=\mathfrak{g}/\R X$:
\begin{enumerate}
\item[i)] $\bar{\mathfrak{g}}$ admits a flat orbit through the element $\bar{\xi}\in (\bar{\mathfrak{g}})^*$ induced by $\xi$.
\item[ii)] The centre $\bar{\mathfrak{z}}$ of $\bar{\mathfrak{g}}$ is given by $\bar{\mathfrak{z}}=\mathfrak{z}/\R X$.
\item[iii)] The set $\Gamma_{\bar{\mathsf{G}}}$ of flat orbits for $\bar{\mathsf{G}}$ can be identified with a hypersurface in $\Gamma_{\mathsf{G}}$ via the injection $\bar{\mathfrak{z}}^*\hookrightarrow \mathfrak{z}^*$ dual to the projection from item ii).
\end{enumerate}
\end{lemma}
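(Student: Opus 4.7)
\smallskip

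The plan is to prove the three items in an order that reverses their listing: establish item ii) first, since both i) and iii) reduce to simple linear-algebra statements once we know the centre of $\bar{\mathfrak{g}}$.

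First I would verify item ii). The containment $\mathfrak{z}/\R X \subseteq \bar{\mathfrak{z}}$ is automatic since $\mathfrak{z}$ is central in $\mathfrak{g}$. For the reverse, let $Y\in\mathfrak{g}$ project to a central element of $\bar{\mathfrak{g}}$, so $[Y,Z]\in\R X$ for every $Z\in\mathfrak{g}$. Suppose for contradiction that $Y\notin\mathfrak{z}$. The existence of the flat orbit through $\xi$ tells us, via Theorem \ref{flatofofdodthm}, that $\omega_\xi$ descends to a non-degenerate form on $\mathfrak{g}/\mathfrak{z}$, so there is a $Z\in\mathfrak{g}$ with $\xi([Y,Z])\neq 0$; but $[Y,Z]\in\R X$ and $X\in\ker(\xi)$, contradiction. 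Hence $Y\in\mathfrak{z}$, proving $\bar{\mathfrak{z}}=\mathfrak{z}/\R X$. This is the conceptual heart of the lemma: without the flat-orbit hypothesis the centre could grow under the quotient.

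For item i), the third isomorphism theorem gives $\bar{\mathfrak{g}}/\bar{\mathfrak{z}}\cong \mathfrak{g}/\mathfrak{z}$ as Lie algebras, and since $X\in\ker(\xi)$ the functional $\xi$ descends to a well-defined $\bar{\xi}\in\bar{\mathfrak{g}}^*$ satisfying $\bar{\xi}([\bar{Y}_1,\bar{Y}_2])=\xi([Y_1,Y_2])$. Under the identification above, the Kirillov form $\omega_{\bar{\xi}}$ on $\bar{\mathfrak{g}}/\bar{\mathfrak{z}}$ coincides with $\omega_\xi$ on $\mathfrak{g}/\mathfrak{z}$, which is non-degenerate by hypothesis; applying Theorem \ref{flatofofdodthm} once more yields that the orbit through $\bar{\xi}$ is flat.

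For item iii), I would invoke Proposition \ref{descripfiogogda} twice to identify $\Gamma_{\mathsf{G}}$ with $\Lambda_{\mathsf{G}}=\{\eta\in\mathfrak{z}^*:\mathsf{Pf}(\omega_\eta|_{\mathfrak{g}/\mathfrak{z}})\neq 0\}$ and similarly for $\bar{\mathsf{G}}$. The dual of the projection $\mathfrak{z}\twoheadrightarrow\mathfrak{z}/\R X$ realises $\bar{\mathfrak{z}}^*$ as the hyperplane $\{\eta\in\mathfrak{z}^*:\eta(X)=0\}$. Since the identification from item ii) makes $\omega_{\bar{\eta}}$ on $\bar{\mathfrak{g}}/\bar{\mathfrak{z}}$ equal to $\omega_\eta$ on $\mathfrak{g}/\mathfrak{z}$, the two Pfaffian polynomials agree, giving
\[
\Lambda_{\bar{\mathsf{G}}}=\Lambda_{\mathsf{G}}\cap\{\eta\in\mathfrak{z}^*:\eta(X)=0\},
\]
which is a smooth hypersurface of $\Lambda_{\mathsf{G}}$ (an open subvariety of the hyperplane on which the nonzero polynomial $\mathsf{Pf}$ is nonvanishing). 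Transporting along the identifications with $\Gamma_{\mathsf{G}}$ and $\Gamma_{\bar{\mathsf{G}}}$ delivers the stated injection.

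The only genuinely non-routine step is the one invoking flatness to prove item ii); once that is in hand, items i) and iii) are direct corollaries of Theorem \ref{flatofofdodthm} and Proposition \ref{descripfiogogda}.
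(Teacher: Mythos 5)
Your proof is correct and takes essentially the same approach as the paper's. The paper condenses items i) and ii) into a single chain of stabilizer inclusions $\bar{\mathfrak{z}}\subseteq \mathsf{stab}(\bar{\xi})=\mathsf{stab}(\xi)/\R X=\mathfrak{z}/\R X\subseteq\bar{\mathfrak{z}}$ (using flatness in the form $\mathsf{stab}(\xi)=\mathfrak{z}$), which your contradiction argument for ii) and direct argument for i) unpack equivalently, and your explicit Pfaffian treatment of iii) simply fills in what the paper leaves as ``the lemma follows.''
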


\begin{proof}
Since $\xi$ lies in a flat orbit, 
$$\mathfrak{z}/\R X=\mathsf{stab}(\xi)/\R X=\mathsf{stab}(\bar{\xi}).$$
We conclude from the series of inclusions $\bar{\mathfrak{z}}\subseteq \mathsf{stab}(\bar{\xi})=\mathfrak{z}/\R X\subseteq\bar{\mathfrak{z}}$ that $\mathfrak{z}/\R X=\bar{\mathfrak{z}}$ and that $\bar{\xi}$ lies in a flat orbit. The lemma follows.
\end{proof}

\begin{proof}[Proof of Theorem \ref{covconj}]
We prove the theorem by induction on the dimension of the centre of $\mathsf{G}$. If $\mathsf{G}$ has one-dimensional centre and flat orbits, the statement holds because $\Gamma=\mathfrak{z}^*\setminus\{0\}$ and $\Gamma=U_{\pmb{B}}$ for any $\pmb{B}\in \mathfrak{JH}$. Assume that the statement of the theorem holds true for nilpotent simply connected Lie groups admitting flat orbits with centre of dimension $\leq m-1$. Consider a nilpotent simply connected Lie group $\mathsf{G}$ admitting flat orbits and whose centre is of dimension $m$. Take a $\xi\in \Gamma_{\mathsf{G}}$ and an $X\in \ker(\xi)\cap \mathfrak{z}$. We follow the notation of Lemma \ref{quotoietnoenlemma}.

The nilpotent simply connected Lie group $\bar{\mathsf{G}}$ that $\bar{\mathfrak{g}}$ integrates to satisfy the statement of the theorem by the induction assumption, because $\bar{\mathfrak{z}}=\mathfrak{z}/\R X$ has dimension $m-1$. Therefore, there is a Jordan-Hölder basis $\pmb{B}_1=(\bar{X}_2,\ldots \bar{X}_n)$ for $\bar{\mathfrak{g}}$ such that $\bar{\xi}\in U_{\pmb{B}_1}\subseteq \Gamma_{\bar{\mathsf{G}}}$. We can lift $(\bar{X}_2,\ldots \bar{X}_n)$ to $X_2,\ldots, X_n\in \mathfrak{g}$ and, up to permutation, consider the Jordan-Hölder basis $\pmb{B}=(X_1,X_2,\ldots, X_n)$ for $\mathfrak{g}$ where $X_1=X$. 

The induction step is complete upon proving that $\Gamma_{J_\xi}^{\pmb{B}}$ contains an open subset, in which case $\Gamma_{J_\xi}^{\pmb{B}}=U_{\pmb{B}}$ and $\xi\in U_{\pmb{B}}$ since $\xi\in \Gamma_{J_\xi}^{\pmb{B}}$ by definition. Take an $\eta\in \mathfrak{z}^*$ such that $\eta(X)=1$ and $\eta(X_j)=0$ for $j>1$. By Lemma \ref{quotoietnoenlemma}.iii), it suffices to prove that $J_{\xi'+t\eta}^{\pmb{B}}$ is independent of sufficiently small $t$ for $\xi'\in U_{\pmb{B}_1}\subseteq \Gamma_{\bar{\mathsf{G}}}$ in a neighbourhood of $\bar{\xi}$. Consider the family of Jordan-Hölder bases $\pmb{B}(t)=(X_1-tX_2,X_2,\ldots, X_n)$. By definition, we have that $J_{\xi'+t\eta}^{\pmb{B}}=J_{\xi'}^{\pmb{B}(t)}$ and since the variation in $t$ only can alter jump indices in the first step, we have that $J_{\xi'}^{\pmb{B}(t)}$ is independently of sufficiently small $t$. We conclude that $J_{\xi'+t\eta}^{\pmb{B}}$ is independent of sufficiently small $t$, and $\Gamma_{J_\xi}^{\pmb{B}}$ contains an open neighbourhood around $\xi$.
\end{proof}

\begin{remark}
Consider the statement:
\begin{center}
\emph{Assume that $\mathsf{G}$ has orbits of dimension at most $2d$, and consider the space of orbits of maximal dimension $\tilde{\Gamma}:=\{\mathcal{O}\in \hat{\mathsf{G}}: \dim(\mathcal{O})=2d\}$. \\Then $(U_{\pmb{B}})_{\pmb{B}\in \mathfrak{JH}}$ is an open cover of $\tilde{\Gamma}$. }
\end{center}
Theorem \ref{covconj} shows that the above statement holds in the presence of flat orbits -- in this case $\Gamma=\tilde{\Gamma}$. Generally we have that $\cup_{\pmb{B}\in \mathfrak{JH}}U_{\pmb{B}}\subseteq \tilde{\Gamma}$ is Zariski open (since each $U_{\pmb{B}}$ is). However, the statement above fails in general as seen from the four dimensional Lie algebra spanned by $X_1,X_2,X_3,X_4$ with brackets $[X_4,X_3]=X_2$ and $[X_4,X_2]=X_1$ and all other brackets zero. For this Lie algebra we can identify $\tilde{\Gamma}=\{(\xi_1,\xi_2):(\xi_1,\xi_2) \neq 0\}$ but $U_{\pmb{B}}=\{(\xi_1,\xi_2):\xi_1\neq 0\}$ for any Jordan-Hölder basis $\pmb{B}\in \mathfrak{JH}$. This example can be found in \cite[Remark 1, p. 274]{pukan67jfa}.
\end{remark}

To simplify notation, we shall drop the notation involving jump indices, as we henceforth only use the minimal jump index that corresponds to the top strata, and instead indicate the basis. For instance, $U_{\pmb{B}}$ denotes the top fine strata in the stratification defined from the basis $\pmb{B}$, $F_{V,\pmb{B}}\to U_{\pmb{B}}$ denotes the bundle of Vergne polarizations, and $\mathcal{H}_{V,\pmb{B}}\to U_{\pmb{B}}$ the associated Hilbert space bundle of $\mathsf{G}$-representations over $U_{\pmb{B}}\subseteq \Gamma\subseteq \hat{\mathsf{G}}$. 

For any $\pmb{B}_1,\ldots, \pmb{B}_k\in \mathfrak{JH}$, we write 
\begin{align*}
I_{\mathsf{G},\pmb{B}_1,\ldots, \pmb{B}_k}:&=C_0(U_{\pmb{B}_1}\cap\cdots U_{\pmb{B}_k})I_\mathsf{G}=\\
&=\{a\in C^*(\mathsf{G}): \pi(a)=0\; \forall \pi\in \hat{\mathsf{G}}\setminus U_{\pmb{B}_1}\cap\cdots U_{\pmb{B}_k}\}.
\end{align*}
For each $\pmb{B}\in \mathfrak{JH}$, Proposition \ref{reponen} and \ref{repsandosisosos} produces canonical trivializations
$$\pi_{\pmb{B}}:I_{\mathsf{G},\pmb{B}}\to C_0(U_{\pmb{B}},\mathbb{K}(\mathcal{H}_{V,\pmb{B}})),$$
where $d=\mathrm{codim}(\mathfrak{z})/2$. For any $\pmb{B},\pmb{B}'\in \mathfrak{JH}$, the corrected Lion intertwiner from Proposition \ref{lionsintertwinerswitheta} (see also Lemma \ref{lionsintertwiners}) defines a fibrewise unitary intertwiner
$$\mathfrak{L}_{\pmb{B},\pmb{B}'}:=\mathfrak{L}_{F_{V,\pmb{B}'},F_{V,\pmb{B}}}:\mathcal{H}_{V,\pmb{B}'}|_{U_{\pmb{B}}\cap U_{\pmb{B}'}}\to \mathcal{H}_{V,\pmb{B}}|_{U_{\pmb{B}}\cap U_{\pmb{B}'}}.$$
By possibly shrinking the open sets in the cover, we can also ensure continuity. In other words, 
$$\mathsf{Ad}(\mathfrak{L}_{\pmb{B}',\pmb{B}}):C_0(U_{\pmb{B}}\cap U_{\pmb{B}'},\mathbb{K}(\mathcal{H}_{V,\pmb{B}}|_{U_{\pmb{B}}\cap U_{\pmb{B}'}}))\to C_0(U_{\pmb{B}}\cap U_{\pmb{B}'},\mathbb{K}(\mathcal{H}_{V,\pmb{B}'}|_{U_{\pmb{B}}\cap U_{\pmb{B}'}})),$$
is a unitary isomorphism fitting into the commuting diagram of $*$-isomorphisms
\small
\[
\begin{tikzcd}
C_0(U_{\pmb{B}}\cap U_{\pmb{B}'},\mathbb{K}(\mathcal{H}_{V,\pmb{B}}|_{U_{\pmb{B}}\cap U_{\pmb{B}'}})) \arrow[rr, "\mathsf{Ad}(\mathfrak{L}_{\pmb{B}',\pmb{B}})"] & & C_0(U_{\pmb{B}}\cap U_{\pmb{B}'},\mathbb{K}(\mathcal{H}_{V,\pmb{B}'}|_{U_{\pmb{B}}\cap U_{\pmb{B}'}}))\\
& I_{G,\pmb{B}, \pmb{B}'} \arrow[ul, "\pi_{\pmb{B}}"] \arrow[ur, "\pi_{\pmb{B}'}"] & 
\end{tikzcd}
\]
\normalsize
We conclude the following from Proposition \ref{lionsintertwinerswitheta}.

\begin{proposition}
\label{descriptionofddinvariant}
Let $\mathsf{G}$ be a simply connected nilpotent Lie group admitting flat orbits. Then the collection $(\mathcal{H}_{V,\pmb{B}})_{\pmb{B}\in \mathfrak{JH}}$ glues together as bundles of Hilbert spaces with a fibrewise unitary $\mathsf{G}$-action to a bundle of flat representations
$$\mathcal{H}_\Gamma\to \Gamma,$$
via the corrected Lion intertwiners $(\mathfrak{L}_{\pmb{B},\pmb{B}'})_{\pmb{B},\pmb{B}'\in \mathfrak{JH}}$ from Proposition \ref{lionsintertwinerswitheta}. In particular, the fibrewise unitary $\mathsf{G}$-action induces a $C_0(\Gamma)$-linear $*$-isomorphism
$$\pi_{\musFlat}:I_\mathsf{G}\to C_0(\Gamma,\mathbb{K}(\mathcal{H}_\Gamma)).$$
\end{proposition}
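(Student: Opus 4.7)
The plan is to exhibit $\mathcal{H}_\Gamma$ by classical bundle gluing from the local data $\mathcal{H}_{V,\pmb{B}}$, check that the relevant transition cocycle is $\mathsf{G}$-equivariant and continuous, and then patch the local $*$-isomorphisms $\pi_{\pmb{B}}$ into a global one. Theorem \ref{covconj} supplies the open cover $(U_{\pmb{B}})_{\pmb{B}\in \mathfrak{JH}}$ of $\Gamma$, and Proposition \ref{bundleofhspspac} produces over each $U_{\pmb{B}}$ a locally trivial Hilbert space bundle $\mathcal{H}_{V,\pmb{B}}\to U_{\pmb{B}}$ with a fibrewise strongly continuous unitary $\mathsf{G}$-action realising on every fibre $\xi$ the Kirillov--Vergne irreducible representation attached to the orbit through $\xi$. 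On each double overlap the corrected Lion intertwiner $\mathfrak{L}_{\pmb{B},\pmb{B}'}$ of Proposition \ref{lionsintertwinerswitheta} yields a fibrewise unitary isomorphism $\mathcal{H}_{V,\pmb{B}'}\to \mathcal{H}_{V,\pmb{B}}$ which, by Lemma \ref{lionsintertwiners}, intertwines the two induced $\mathsf{G}$-representations and which on triple overlaps satisfies the sharp cocycle identity $\mathfrak{L}_{\pmb{B},\pmb{B}''}\circ\mathfrak{L}_{\pmb{B}'',\pmb{B}'}\circ\mathfrak{L}_{\pmb{B}',\pmb{B}}=\mathrm{id}$.

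Continuity of $\mathfrak{L}_{\pmb{B},\pmb{B}'}$ in $\xi$ reduces to continuity of the integral Lion transform from Lemma \ref{lionsintertwiners}, which is smooth in $\xi$ on any component where the relative position of the two Vergne polarizations is constant, together with continuity of the $\eta$-invariant prefactor; by Remark \ref{commentaboutpairs} the latter is locally constant away from the loci where eigenvalues of the associated transition unitary cross $\pm 1$. Passing to a refinement of the cover on which each pair $(\mathfrak{h}_V^{\pmb{B}}(\xi),\mathfrak{h}_V^{\pmb{B}'}(\xi))$ remains in a fixed stratum of $U(V)/O(V)$ therefore turns $(\mathfrak{L}_{\pmb{B},\pmb{B}'})$ into a continuous unitary cocycle, which by the classical Hilbert bundle construction assembles the $\mathcal{H}_{V,\pmb{B}}$ into a single locally trivial Hilbert space bundle $\mathcal{H}_\Gamma\to \Gamma$. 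Because each $\mathfrak{L}_{\pmb{B},\pmb{B}'}$ intertwines the $\mathsf{G}$-actions, the fibrewise unitary $\mathsf{G}$-representations patch to a strongly continuous fibrewise unitary $\mathsf{G}$-action on $\mathcal{H}_\Gamma$, and each fibre represents the flat orbit corresponding to its basepoint.

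The local $C_0(U_{\pmb{B}})$-linear $*$-isomorphisms $\pi_{\pmb{B}}\colon I_{\mathsf{G},\pmb{B}}\xrightarrow{\sim}C_0(U_{\pmb{B}},\mathbb{K}(\mathcal{H}_{V,\pmb{B}}))$ of Proposition \ref{repsandosisosos}, which are just the integrated representation, are compatible with the Lion intertwiners on overlaps by the commuting diagram stated just before the proposition. Hence they glue to a global $C_0(\Gamma)$-linear $*$-homomorphism $\pi_{\musFlat}\colon I_\mathsf{G}\to C_0(\Gamma,\mathbb{K}(\mathcal{H}_\Gamma))$, which is a $*$-isomorphism because it is locally a $*$-isomorphism on the cover. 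The anticipated main obstacle is the bookkeeping surrounding the $\eta$-invariant correction: without it the Maslov triple index in Lemma \ref{lionsintertwiners} destroys the cocycle property, and with it one must exhibit a refinement of the cover on which the jump discontinuities of $\eta$ vanish, which is the only substantive topological input beyond Theorem \ref{covconj} and Proposition \ref{lionsintertwinerswitheta}.
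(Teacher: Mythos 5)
Your plan is correct and recapitulates exactly the argument the paper intends: cover $\Gamma$ by the top strata $U_{\pmb{B}}$ (Theorem \ref{covconj}), use the local bundles of Proposition \ref{bundleofhspspac}, glue via the corrected Lion intertwiners whose cocycle identity is supplied by Proposition \ref{lionsintertwinerswitheta}, refine the cover to ensure continuity, and patch the local $*$-isomorphisms $\pi_{\pmb{B}}$ through the commuting diagram displayed just before the proposition; the paper compresses all of this into ``We conclude the following from Proposition \ref{lionsintertwinerswitheta}.'' One small imprecision: by Remark \ref{commentaboutpairs} the $\eta$-invariant prefactor is \emph{continuous} (indeed smooth), not locally constant, away from the eigenvalue-crossing loci — but since continuity is all the gluing needs, this does not affect the argument.
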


The existence of a bundle of flat orbit representations also follow from a deformation argument below in Theorem \ref{trivialdldaaddo}.

By the universal property of the group $C^*$-algebra, any group automorphism $\varphi\in \Aut(G)$ induces an automorphism $C^*(\mathsf{G})$ that we by an abuse of notation also denote by $\varphi$. Since the associated group homomorphism $\Aut(\mathsf{G})\to \Aut(C^*(\mathsf{G}))$ is injective this abuse of notation should not be a cause of confusion. By Proposition \ref{spectrumofthejs} and \ref{autoongamma}, any automorphism $\varphi\in \Aut(\mathsf{G})$ preserves the ideal $I_\mathsf{G}\subseteq C^*(\mathsf{G})$ of flat orbits. We abstractly know that $I_\mathsf{G}$ is a continuous trace algebra over $\Gamma$ with fibre $\mathbb{K}(L^2(\R^d))$, where $d=\mathrm{codim}(\mathfrak{z})/2$, and $I_\mathsf{G}$ trivializes by Proposition \ref{descriptionofddinvariant}. We now describe the action of automorphisms on $I_\mathsf{G}$ and their lifts to the bundle of flat orbit representations.

For a Jordan-Hölder basis $\pmb{B}=(X_1,\ldots, X_n)\in \mathfrak{JH}$ of $\mathfrak{g}$, with $\mathfrak{g}_{\dim\mathfrak{z}}=\mathfrak{z}$ as in Section \ref{subsecfinestrat} and \ref{subsecflatorbitssos}, recall the construction of $F_{V,\pmb{B}}\to U_{\pmb{B}}$ from Proposition \ref{vbfv}, where we write $F_{V,\pmb{B}}$ and $ U_{\pmb{B}}$ for the top strata in the basis $\pmb{B}$ (cf. Section \ref{subsecflatorbitssos}). For a $\varphi\in \Aut(\mathsf{G})$, we write $\varphi(\pmb{B})\in \mathfrak{JH}$ for the Jordan-Hölder basis
$$\varphi(\pmb{B}):=(\varphi^{-1}X_1,\ldots, \varphi^{-1}X_n).$$

\begin{proposition}
\label{covarianceforlions}
Let $\varphi,\varphi'$ be automorphisms of a simply connected nilpotent Lie group $\mathsf{G}$ admitting flat orbits. Take $\pmb{B},\pmb{B}'\in \mathfrak{JH}$. 
\begin{enumerate}
\item[a)] The mapping 
$$\varphi^*\times \varphi^{-1}:\mathfrak{g}^*\times \mathfrak{g}\to \mathfrak{g}^*\times \mathfrak{g}$$
restricts to a homeomorphism $\varphi^*\times \varphi^{-1}:F_{V,\pmb{B}}\to F_{V,\varphi(\pmb{B})}$ that fits into a commuting diagram 
$$\begin{tikzcd}
F_{V,\pmb{B}} \arrow[d] \arrow[r, "\varphi^*\times \varphi^{-1}"] &  F_{V,\varphi(\pmb{B})} \arrow[d] \\\
U_{\pmb{B}} \arrow[r, "\varphi^*"] &U_{\varphi(\pmb{B})}.
\end{tikzcd}$$
\item[b)] The map $(\xi,f)\mapsto (\varphi^*\xi,f\circ\varphi^*)$ defines a fibrewise unitary morphism 
$$t_{\pmb{B}}(\varphi):\mathcal{H}_{V,\pmb{B}}\to \mathcal{H}_{V,\varphi(\pmb{B})},$$ 
such that:
\begin{itemize}
\item The following diagram of Hilbert space bundles commutes
$$\begin{tikzcd}
\mathcal{H}_{V,\pmb{B}} \arrow[d] \arrow[r, "t_{\pmb{B}}(\varphi)"] &  \mathcal{H}_{V,\varphi(\pmb{B})} \arrow[d] \\\
U_{\pmb{B}} \arrow[r, "\varphi^*"] &U_{\varphi(\pmb{B})},
\end{tikzcd}$$
and $t_{\pmb{B}}(\varphi)$ is equivariant for the ordinary $\mathsf{G}$-action on $\mathcal{H}_{V,\pmb{B}}$ and the $\mathsf{G}$-action on $\mathcal{H}_{V,\varphi(\pmb{B})}$ pulled back along $\varphi$.
\item It holds that 
$$t_{\varphi(\pmb{B})}(\varphi')t_{\pmb{B}}(\varphi)=t_{\pmb{B}}(\varphi \varphi'),$$
as morphisms of Hilbert space bundles $\mathcal{H}_{V,\pmb{B}}\to \mathcal{H}_{V,\varphi\varphi'(\pmb{B})}$.
\item The following diagram of Hilbert space bundles commutes
$$\begin{tikzcd}
\mathcal{H}_{V,\pmb{B}}|_{U_{\pmb{B}}\cap U_{\pmb{B}'}} \arrow[d, "\mathfrak{L}_{\pmb{B}',\pmb{B}}^{(0)}"] \arrow[r, "t_{\pmb{B}}(\varphi)"] &  \mathcal{H}_{V,\varphi(\pmb{B})}|_{U_{\varphi(\pmb{B})}\cap U_{\varphi(\pmb{B}')}} \arrow[d, "\mathfrak{L}_{\varphi(\pmb{B}'),\varphi(\pmb{B})}^{(0)}"] \\\
\mathcal{H}_{V,\pmb{B}'}|_{U_{\pmb{B}}\cap U_{\pmb{B}'}}  \arrow[r, "t_{\pmb{B}'}(\varphi)"] &  \mathcal{H}_{V,\varphi(\pmb{B}')} |_{U_{\varphi(\pmb{B})}\cap U_{\varphi(\pmb{B}')}}
\end{tikzcd}$$
\end{itemize}
\item[c)] The automorphism $\varphi\in \Aut(I_\mathsf{G})$ restricts to a $*$-isomorphism $\varphi|:I_{\mathsf{G},\pmb{B}}\to I_{\mathsf{G},\varphi(\pmb{B})}$ that together with $t_{\pmb{B}}(\varphi)$ from the previous item fit into the commuting diagram of $*$-isomorphisms
$$\begin{tikzcd}
I_{\mathsf{G},\pmb{B}}\arrow[d,"\pi_{\pmb{B}}"]\arrow[r, "\varphi|"] &I_{\mathsf{G},\varphi(\pmb{B})}\arrow[d,"\pi_{\varphi(\pmb{B})}"]\\
C_0(U_{\pmb{B}},\mathbb{K}(\mathcal{H}_{V,\pmb{B}}))\arrow[r, "\mathrm{Ad}(t_{\pmb{B}}(\varphi))"] &  C_0(U_{\varphi(\pmb{B)}},\mathbb{K}(\mathcal{H}_{V,\varphi(\pmb{B})})).
\end{tikzcd}$$
\end{enumerate}
\end{proposition}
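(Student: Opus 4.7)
All three parts follow from tracking how the Vergne construction transforms under $\varphi$, and the main computational input is the identity
$$\mathfrak{h}_V^{\varphi(\pmb{B})}(\varphi^{*}\xi) = \varphi^{-1}\bigl(\mathfrak{h}_V^{\pmb{B}}(\xi)\bigr).$$
For part (a), since $\mathfrak{g}_j^{\varphi(\pmb{B})} = \varphi^{-1}(\mathfrak{g}_j^{\pmb{B}})$, a direct calculation using $\xi([\varphi^{-1}Y', \varphi^{-1}Z']) = ((\varphi^{-1})^{*}\xi)([Y',Z'])$ yields $\mathsf{stab}(\xi|_{\mathfrak{g}_j^{\varphi(\pmb{B})}}) = \varphi^{-1}\mathsf{stab}((\varphi^{-1})^{*}\xi|_{\mathfrak{g}_j^{\pmb{B}}})$. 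Summing over $j$ and specializing to $\xi \leftrightarrow \varphi^{*}\xi$ gives the displayed identity, while the dimension match shows the jump indices of $\xi$ relative to $\pmb{B}$ equal those of $\varphi^{*}\xi$ relative to $\varphi(\pmb{B})$, so $\varphi^{*}U_{\pmb{B}} = U_{\varphi(\pmb{B})}$. The linear map $\varphi^{*}\times \varphi^{-1}$ therefore restricts to a homeomorphism $F_{V,\pmb{B}} \to F_{V,\varphi(\pmb{B})}$ with inverse induced by the analogous construction for $\varphi^{-1}$, closing part (a).

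For part (b), the Vergne identity exponentiates to $\mathsf{H}_V^{\varphi(\pmb{B})}(\varphi^{*}\xi) = \varphi^{-1}(\mathsf{H}_V^{\pmb{B}}(\xi))$; combined with the computation $(\varphi^{*}\xi)(\varphi^{-1}Y) = \xi(Y)$ this yields $\chi_{\varphi^{*}\xi,\varphi(\pmb{B})} = \chi_{\xi,\pmb{B}}\circ\varphi$. Hence the pullback of functions by $\varphi$ sends the invariance condition defining the fibre of $\mathcal{H}_{V,\pmb{B}}$ over $\xi$ to the one defining the fibre of $\mathcal{H}_{V,\varphi(\pmb{B})}$ over $\varphi^{*}\xi$, so $t_{\pmb{B}}(\varphi)$ is well-defined fibrewise. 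Unitarity follows from a change-of-variables argument: by part (a) the jump set $\epsilon$ is the same for both bases, so the complementary subset $(X_k)_{k\in K_{\epsilon}}$ used to normalize the Lebesgue measure on $\mathfrak{g}/\mathfrak{h}_V^{\pmb{B}}(\xi)$ corresponds under $\varphi$ to the complementary subset $(\varphi^{-1}X_k)_{k\in K_{\epsilon}}$ for $\mathfrak{g}/\mathfrak{h}_V^{\varphi(\pmb{B})}(\varphi^{*}\xi)$, so the induced map on the quotient groups is measure-preserving. The commuting diagram with projections is immediate, equivariance reduces to $\varphi(g^{-1}g') = \varphi(g)^{-1}\varphi(g')$, and the cocycle identity $t_{\varphi(\pmb{B})}(\varphi')\circ t_{\pmb{B}}(\varphi) = t_{\pmb{B}}(\varphi\varphi')$ is a direct computation from the formula; continuity of $t_{\pmb{B}}(\varphi)$ as a bundle morphism is verified by reading off the action in the explicit trivialization of Proposition \ref{bundleofhspspac}.

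The remaining claim in part (b) --- compatibility with the corrected Lion intertwiners --- is the step I expect to require the most care. For the uncorrected intertwiners $\mathfrak{L}^{(0)}$ of Lemma \ref{lionsintertwiners} I would substitute $h\mapsto \varphi^{-1}(h)$ in the defining integral: the domain $\mathsf{H}_V^{\varphi(\pmb{B}')}(\varphi^{*}\xi)/(\mathsf{H}_V^{\varphi(\pmb{B}')}\cap\mathsf{H}_V^{\varphi(\pmb{B})})$ transforms into $\mathsf{H}_V^{\pmb{B}'}(\xi)/(\mathsf{H}_V^{\pmb{B}'}\cap\mathsf{H}_V^{\pmb{B}})$ with unit Jacobian (by the same basis-matching as in the unitarity step), the character transforms correctly, and the integrand repackages into the composition on the other side. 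To upgrade to the corrected version of Proposition \ref{lionsintertwinerswitheta}, I need $\eta(\varphi^{-1}\mathfrak{h}_1, \varphi^{-1}\mathfrak{h}_2) = \eta(\mathfrak{h}_1,\mathfrak{h}_2)$ in $\mathfrak{g}/\mathsf{stab}(\varphi^{*}\xi) \cong \mathfrak{g}/\varphi^{-1}\mathsf{stab}(\xi)$; since $\varphi$ intertwines the two Kirillov forms along this identification, Definition \ref{definieofeta} shows $\eta$ transforms as a symplectic invariant, so equality holds. Part (c) is then a formality: the automorphism $\varphi\in\Aut(C^{*}(\mathsf{G}))$ is the integrated form of the automorphism of $\mathsf{G}$, and the square reduces to the equivariance of $t_{\pmb{B}}(\varphi)$ from part (b) combined with the naturality of the trivialization $\pi_{\pmb{B}}$ in the Jordan-Hölder basis built into Proposition \ref{reponen}. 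The main obstacle throughout is convention bookkeeping (sign of the Kirillov form, orientation of the adapted complex structure, direction of pullback) in the Lion step, where these factors must cancel precisely.
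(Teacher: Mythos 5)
Your proof of part (a) follows the same route as the paper: the identity $\varphi^{-1}\mathfrak{h}_{V,\pmb{B}}(\xi)=\mathfrak{h}_{V,\varphi(\pmb{B})}(\varphi^{*}\xi)$ is exactly what the paper uses, and the observation that the jump indices match so that $\varphi^{*}U_{\pmb{B}}=U_{\varphi(\pmb{B})}$ is also there. Your fleshing out of (b) and (c) --- the character identity $\chi_{\varphi^{*}\xi,\varphi(\pmb{B})}=\chi_{\xi,\pmb{B}}\circ\varphi$, the unit-Jacobian change-of-variables for unitarity, the substitution argument in the Lion integral, and the reduction of (c) to the equivariance of $t_{\pmb{B}}(\varphi)$ --- is exactly the "lengthier but straightforward" computation the paper declares and omits, and is reasonable.

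One caveat, though. The last commuting square in (b) is stated for the \emph{uncorrected} intertwiners $\mathfrak{L}^{(0)}$ of Lemma \ref{lionsintertwiners}, so your substitution argument already finishes the job; the "upgrade to the corrected version" you append is not part of the statement. More importantly, the claim on which that upgrade rests --- that $\eta(\varphi^{-1}\mathfrak{h}_1,\varphi^{-1}\mathfrak{h}_2)=\eta(\mathfrak{h}_1,\mathfrak{h}_2)$ because "$\eta$ transforms as a symplectic invariant" --- is not justified. Definition \ref{definieofeta} builds $\eta$ from the operator $-J\partial_t$ for an \emph{auxiliary} adapted complex structure $J$, and an automorphism $\varphi$ need not transport $J$ consistently, so the two $\eta$'s are computed with respect to different auxiliary data. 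If this invariance did hold unconditionally, the cochain $\zeta_0$ of Equation \eqref{zetadef} would be identically $1$ and Proposition \ref{zetanandndlaf} would be vacuous. The paper deliberately formulates Proposition \ref{covarianceforlions} in terms of $\mathfrak{L}^{(0)}$ precisely so that the failure of $\eta$-covariance under automorphisms is isolated into the cocycle $\zeta_0$ afterwards; your actual proof of the proposition is fine, but you should not believe the extra claim.
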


\begin{proof}
Note that $\mathfrak{h}$ is a real algebraic polarization of $\xi\in \mathfrak{g}^*$ if and only if $\varphi^{-1}\mathfrak{h}$ is a real algebraic polarization of $\varphi^*\xi\in \mathfrak{g}^*$. We also note that since $\varphi^*(\Xi_{\epsilon,\pmb{B}})=\Xi_{\epsilon,\varphi(\pmb{B})}$ for any $\epsilon$, and $\varphi^*$ is a homeomorphism on $\Gamma$, we have that $\varphi^*(U_{\pmb{B}})=U_{\varphi(\pmb{B})}$. To prove item a), we simply note that for $\xi\in U_{\varphi(\pmb{B})}$, 
$$\varphi^{-1}(\mathfrak{h}_{V,\pmb{B}}(\varphi^{-1*}\xi))=\mathfrak{h}_{V,\varphi(\pmb{B})}(\xi),$$
which implies that
\begin{align*}
(\varphi^*\times \varphi^{-1})F_{V,\pmb{B}}&=\{(\varphi^*\xi,\varphi^{-1}X): \, \xi \in U_{\pmb{B}}, X\in \mathfrak{h}_{V,\pmb{B}}(\xi)\}=\\
&=\{(\xi,X): \, \xi \in U_{\varphi(\pmb{B})}, X\in \varphi^{-1}(\mathfrak{h}_{V,\pmb{B}}(\varphi^{-1*}\xi)\}=F_{V,\varphi(\pmb{B})}.
\end{align*}
Item b) and c) are proven by similar, lengthier but straightforward, computations and we omit their proofs.
\end{proof}

To describe how to lift an automorphism to the bundle $\mathcal{H}_\Gamma$ of flat orbit representations constructed in Proposition \ref{descriptionofddinvariant}, we follow \cite{Williams_Raeburn}. We let $(u_{\pmb{B},\pmb{B}'})_{\pmb{B},\pmb{B}'\in \mathfrak{JH}}\in \check{C}^1((U_{\pmb{B}})_{\pmb{B}\in \mathfrak{JH}},U(1))$ denote the cocycle defined from 
$$u_{\pmb{B},\pmb{B}'}:=\e^{\frac{\pi i}{4}\eta(F_{V,\pmb{B}'},F_{V,\pmb{B}})}.$$ 
Here we have for notational simplicity disregarded the fact that the covering needs to be refined to ensure continuity. We define the map 
\begin{align}
\label{zetadef}
\zeta_0:\Aut(\mathsf{G})\to &\check{C}^1(((U_{\pmb{B}})_{\pmb{B}\in \mathfrak{JH}},U(1))), \\
\nonumber
 &\varphi\mapsto (\varphi^*(u_{\pmb{B},\pmb{B}'})u_{\varphi(\pmb{B}),\varphi(\pmb{B}')}^{-1})_{\pmb{B},\pmb{B}'\in \mathfrak{JH}}.
\end{align}

\begin{proposition}
\label{zetanandndlaf}
Let $\mathsf{G}$ be a simply connected nilpotent Lie group admitting flat orbits. Then the mapping $\zeta_0$ satisfies 
\begin{enumerate}
\item The range of $\zeta_0$ is contained in the Cech cocycles, i.e. $\zeta_0$ induces a mapping 
$$\zeta:\Aut(\mathsf{G})\to \check{H}^1(\Gamma,U(1)).$$
\item $\zeta_0\in Z^1(\Aut(\mathsf{G}),\check{Z}^1((U_{\pmb{B}})_{\pmb{B}\in \mathfrak{JH}},U(1)))$ is a group cocycle for the induced $\Aut(\mathsf{G})$-action on $\check{Z}^1((U_{\pmb{B}})_{\pmb{B}\in \mathfrak{JH}},U(1))$, and induces a cohomology class 
$$[\zeta_0]\in H^1(\Aut(\mathsf{G}),\check{Z}^1((U_{\pmb{B}})_{\pmb{B}\in \mathfrak{JH}},U(1))),$$
and $\zeta\in Z^1(\Aut(\mathsf{G}),\check{H}^1(\Gamma,U(1)))$ induces a cohomology class 
$$[\zeta]\in H^1(\Aut(\mathsf{G}),\check{H}^1(\Gamma,U(1))).$$
\item The class $[\zeta]\in H^1(\Aut(\mathsf{G}),\check{H}^1(\Gamma,U(1)))$ does not depend on the choice of $(u_{\pmb{B},\pmb{B}'})_{\pmb{B},\pmb{B}'\in \mathfrak{JH}}\in \check{C}^1((U_{\pmb{B}})_{\pmb{B}\in \mathfrak{JH}},U(1))$ as long as the cochain $(u_{\pmb{B},\pmb{B}'})_{\pmb{B},\pmb{B}'}$ solves 
\begin{equation}
\label{onononad}
u_{\pmb{B},\pmb{B}'}u_{\pmb{B}',\pmb{B}''}u_{\pmb{B}'',\pmb{B}}=\e^{\frac{\pi i}{4}\mathrm{Mas}(F_{V,\pmb{B}},F_{V,\pmb{B}'},F_{V,\pmb{B}''})}.
\end{equation}
\end{enumerate}
Moreover, the action of $\varphi\in \Aut(\mathsf{G})$ on $\Gamma$ lifts to a bundle morphism on $\mathcal{H}_\Gamma$ inducing $\varphi$ on $I_\mathsf{G}$ if and only if $\zeta(\varphi)=0\in \check{H}^1(\Gamma,U(1))$. Additionally, for a line bundle $L\to \Gamma$, $\varphi$ lifts to a bundle morphism on $\mathcal{H}_\Gamma\otimes L$ inducing $\varphi$ on $I_\mathsf{G}$ if and only if $\zeta(\varphi)=\varphi^*(c_1(L))c_1(L^{-1})\in \check{H}^1(\Gamma,U(1))$.
\end{proposition}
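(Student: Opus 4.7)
The plan is to unwind all four parts from a single computation expressing how the local trivializations $(u_{\pmb{B},\pmb{B}'})$ interact with the lifts $t_{\pmb{B}}(\varphi)$ produced by Proposition~\ref{covarianceforlions}. The key geometric input is that $\varphi^{-1}:\mathfrak{g}\to \mathfrak{g}$ induces, for each $\xi\in \Gamma$, a symplectic isomorphism $(\mathfrak{g}/\mathsf{stab}(\xi),\omega_\xi)\to (\mathfrak{g}/\mathsf{stab}(\varphi^*\xi),\omega_{\varphi^*\xi})$ sending $\mathfrak{h}_{V,\pmb{B}}(\xi)/\mathsf{stab}(\xi)$ to $\mathfrak{h}_{V,\varphi(\pmb{B})}(\varphi^*\xi)/\mathsf{stab}(\varphi^*\xi)$. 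Consequently the Maslov triple index is invariant:
\[
\mathrm{Mas}\bigl(F_{V,\pmb{B}},F_{V,\pmb{B}'},F_{V,\pmb{B}''}\bigr)\circ \varphi^* \;=\;\mathrm{Mas}\bigl(F_{V,\varphi(\pmb{B})},F_{V,\varphi(\pmb{B}')},F_{V,\varphi(\pmb{B}'')}\bigr).
\]
Pulling back the identity \eqref{onononad} by $\varphi^*$ and dividing by the corresponding identity at the $\varphi$-translated bases therefore shows that the Cech coboundary of $\zeta_0(\varphi)\in \check{C}^1((U_{\pmb{B}})_{\pmb{B}},U(1))$ vanishes. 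This gives item~(1).

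For item~(2) I would compute $\zeta_0(\varphi\varphi')$ directly:
\[
\zeta_0(\varphi\varphi')_{\pmb{B},\pmb{B}'}=\varphi'^*\bigl(\varphi^*(u_{\pmb{B},\pmb{B}'})\bigr)u_{\varphi\varphi'(\pmb{B}),\varphi\varphi'(\pmb{B}')}^{-1},
\]
and insert $\varphi^*(u_{\pmb{B},\pmb{B}'})=u_{\varphi(\pmb{B}),\varphi(\pmb{B}')}\cdot \zeta_0(\varphi)_{\pmb{B},\pmb{B}'}$. The result factors as a product of $\zeta_0(\varphi')$ evaluated at the $\varphi$-translated bases and the $\varphi'^*$-pullback of $\zeta_0(\varphi)$, which is exactly the group $1$-cocycle relation for the natural $\Aut(\mathsf{G})$-action on $\check{Z}^1$.

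For item~(3), if $u'_{\pmb{B},\pmb{B}'}$ is a second cochain satisfying \eqref{onononad}, then $c_{\pmb{B},\pmb{B}'}:=u'_{\pmb{B},\pmb{B}'}u_{\pmb{B},\pmb{B}'}^{-1}$ is a genuine $U(1)$-valued Cech $1$-cocycle since the right-hand side of \eqref{onononad} cancels in the triple product. A short computation then gives $\zeta'_0(\varphi)_{\pmb{B},\pmb{B}'}=\zeta_0(\varphi)_{\pmb{B},\pmb{B}'}\cdot \varphi^*(c_{\pmb{B},\pmb{B}'})\, c_{\varphi(\pmb{B}),\varphi(\pmb{B}')}^{-1}$, so in $\check{H}^1(\Gamma,U(1))$ the two classes differ by $\varphi^*[c]-[c]$, a coboundary in group cohomology.

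For the final lifting claim, recall that $\mathcal{H}_\Gamma$ is built by gluing the $\mathcal{H}_{V,\pmb{B}}$ via $\mathfrak{L}_{\pmb{B},\pmb{B}'}=u_{\pmb{B},\pmb{B}'}\mathfrak{L}_{\pmb{B},\pmb{B}'}^{(0)}$. Proposition~\ref{covarianceforlions} supplies canonical local lifts $t_{\pmb{B}}(\varphi):\mathcal{H}_{V,\pmb{B}}\to \mathcal{H}_{V,\varphi(\pmb{B})}$ which are compatible with the \emph{uncorrected} Lion transforms $\mathfrak{L}^{(0)}$. Inserting the definition $\mathfrak{L}_{\pmb{B}',\pmb{B}}=u_{\pmb{B}',\pmb{B}}\mathfrak{L}^{(0)}_{\pmb{B}',\pmb{B}}$ on both sides of the intended gluing relation
\[
\mathfrak{L}_{\pmb{B}',\pmb{B}}\circ t_{\pmb{B}}(\varphi)\;=\;t_{\pmb{B}'}(\varphi)\circ \mathfrak{L}_{\varphi(\pmb{B}'),\varphi(\pmb{B})}
\]
shows that the failure is multiplication by $\zeta_0(\varphi)_{\pmb{B},\pmb{B}'}$. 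Hence the $t_{\pmb{B}}(\varphi)$ assemble to a global lift on $\mathcal{H}_\Gamma$ inducing $\varphi$ on $I_\mathsf{G}$ precisely when $[\zeta_0(\varphi)]=0\in \check{H}^1(\Gamma,U(1))$. Twisting by a line bundle $L$ with transition cocycle $(\ell_{\pmb{B},\pmb{B}'})$ replaces the transition data by $u_{\pmb{B},\pmb{B}'}\ell_{\pmb{B},\pmb{B}'}$, and the same calculation gives the obstruction $\zeta(\varphi)-(\varphi^*c_1(L)-c_1(L))=\zeta(\varphi)+c_1(L^{-1})\varphi^*c_1(L)^{-1}$ (written multiplicatively), which yields the stated criterion after noting $c_1(L^{-1})=c_1(L)^{-1}$.

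The main obstacle I expect is bookkeeping: specifying the $\Aut(\mathsf{G})$-action on $\check{Z}^1$ and on line bundles in a way that all the signs/conventions in the group-cohomology coboundary line up with the geometric sign in the last formula, and verifying continuity of $\zeta_0(\varphi)$ on the (necessarily refined) intersections $U_{\pmb{B}}\cap U_{\pmb{B}'}$ where the corrected Lion intertwiners are continuous.
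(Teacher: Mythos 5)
Your argument is correct and follows essentially the same route as the paper: a direct cochain computation using the covariance of the Vergne polarizations under $\varphi$ (your appeal to invariance of the Maslov triple index is just the geometric content behind Proposition~\ref{covarianceforlions}, which the paper invokes instead), the same telescoping computations for items (2) and (3), and the same ``correct the local lifts $t_{\pmb{B}}(\varphi)$ by scalars'' description of the lifting obstruction, with the line-bundle case handled by twisting the transition data. The bookkeeping caveats you flag (conventions for the $\Aut(\mathsf{G})$-action on cochains, refinement of the cover for continuity, signs in the twisted criterion) are exactly the points the paper also leaves implicit.
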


\begin{proof}
The coboundary of $\zeta_0(\varphi)$ is the Cech cocycle given by 
\begin{align*}
\varphi^*(u_{\pmb{B},\pmb{B}'})&u_{\varphi(\pmb{B}),\varphi(\pmb{B}')}^{-1}\varphi^*(u_{\pmb{B'},\pmb{B}''})u_{\varphi(\pmb{B}'),\varphi(\pmb{B}'')}^{-1}\varphi^*(u_{\pmb{B}'',\pmb{B}})u_{\varphi(\pmb{B}''),\varphi(\pmb{B})}^{-1}=\\
&=t_{\pmb{B}}(\varphi)(\mathfrak{L}_{\pmb{B},\pmb{B}'}\mathfrak{L}_{\pmb{B}',\pmb{B}''}\mathfrak{L}_{\pmb{B}'',\pmb{B}})t_{\pmb{B}}(\varphi)^*\mathfrak{L}_{\varphi(\pmb{B}),\varphi(\pmb{B}')}\mathfrak{L}_{\varphi(\pmb{B}'),\varphi(\pmb{B}'')}\mathfrak{L}_{\varphi(\pmb{B}''),\varphi(\pmb{B})}=1,
\end{align*}
by Proposition \ref{covarianceforlions}. Item 1 follows. To prove item 2, we note that the group coboundary $\delta\zeta_0(\varphi,\varphi')$ is given by 
\begin{align*}
\delta&\zeta_0(\varphi,\varphi')=\varphi^*\zeta_0(\varphi')\zeta_0(\varphi\varphi')^{-1}\zeta_0(\varphi)=\\
&=((\varphi\varphi')^*(u_{\pmb{B},\pmb{B}'})\varphi^*u_{\pmb{B},\pmb{B}'}^{-1}(\varphi\varphi')^*(u_{\pmb{B},\pmb{B}'})^{-1}u_{\pmb{B},\pmb{B}'}\varphi^*(u_{\pmb{B},\pmb{B}'})u_{\pmb{B},\pmb{B}'}^{-1})_{\pmb{B},\pmb{B}'\in \mathfrak{JH}}=1.
\end{align*}
To prove item 3), we note that if $(u'_{\pmb{B},\pmb{B}'})_{\pmb{B},\pmb{B}'\in \mathfrak{JH}}\in \check{C}^1((U_{\pmb{B}})_{\pmb{B}\in \mathfrak{JH}},U(1))$ is another choice of $U(1)$-cochain solving \eqref{onononad}, then $(u'_{\pmb{B},\pmb{B}'}u^{-1}_{\pmb{B},\pmb{B}'})_{\pmb{B},\pmb{B}'\in \mathfrak{JH}}\in \check{Z}^1((U_{\pmb{B}})_{\pmb{B}\in \mathfrak{JH}},U(1))$. Denote the associated cohomology class by $x\in \check{H}^1(\Gamma,U(1))$ and let $\zeta'\in Z^1(\Aut(\mathsf{G}),\check{H}^1(\Gamma,U(1)))$ denote the cocycle constructed from $(u'_{\pmb{B},\pmb{B}'})_{\pmb{B},\pmb{B}'\in \mathfrak{JH}}$. Clearly, $\zeta'(\varphi)\zeta(\varphi)^{-1}=\varphi^*(x)x^{-1}$ so 
$$[\zeta]=[\zeta']\quad\mbox{in}\quad H^1(\Aut(\mathsf{G}),\check{H}^1(\Gamma,U(1))).$$

To prove the final remark, we note that Proposition \ref{covarianceforlions} (and a computation involving the definition of $\mathcal{H}_\Gamma$) implies that $\varphi$ lifts to $\mathcal{H}_\Gamma$ if and only if there exists a group cocycle $(\tau_{\varphi,\pmb{B}})_{\pmb{B}\in \mathfrak{JH}}$ such that 
$$\varphi^*(u_{\pmb{B},\pmb{B}'})u_{\varphi(\pmb{B}),\varphi(\pmb{B}')}=\tau_{\varphi,\pmb{B}}\tau_{\varphi,\pmb{B}'}^{-1}.$$
In particular,  $\varphi$ lifts to $\mathcal{H}_\Gamma$ if and only if $\zeta(\varphi)=[\delta (\tau_{\varphi,\pmb{B}})_{\pmb{B}\in \mathfrak{JH}}]=0$. The argument for the case when $\zeta(\varphi)=\varphi^*(c_1(L))c_1(L^{-1})$, for a line bundle $L\to \Gamma$, goes similarly.
\end{proof}

\begin{proposition}
\label{liftingcor}
Let $\mathsf{G}$ be a simply connected nilpotent Lie group admitting flat orbits. Assume that $\ghani\subseteq \Aut(\mathsf{G})$ is a subgroup. The following are equivalent
\begin{enumerate}
\item $[\zeta|_\ghani]=0\in H^1(\ghani,\check{H}^1(\Gamma,U(1)))$
\item Up to tensoring $\mathcal{H}_\Gamma$ with a line bundle $L\to \Gamma$ (solving $\zeta(\varphi)=\varphi^*(c_1(L))c_1(L^{-1})$ for each $\varphi\in \ghani$), each $\varphi\in \ghani$ lifts to $\mathcal{H}_\Gamma$ and the choice can locally in $\ghani$ be made so that the lift depends strongly continuously on $\varphi$.
\end{enumerate}
\end{proposition}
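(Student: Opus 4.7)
The proof plan is to directly translate the cohomological condition into the geometric obstruction via the final remark in Proposition \ref{zetanandndlaf}. First I would unwind condition (1): the cocycle $\zeta|_\ghani \in Z^1(\ghani, \check{H}^1(\Gamma,U(1)))$ is a coboundary if and only if there exists a class $[L] \in \check{H}^1(\Gamma,U(1))$ (equivalently, a line bundle $L\to \Gamma$) such that for every $\varphi \in \ghani$,
\[
\zeta(\varphi)= \varphi^*(c_1(L))\cdot c_1(L)^{-1}.
\]
By the last sentence of Proposition \ref{zetanandndlaf}, this identity is precisely the condition that $\varphi$ lifts to a bundle morphism on $\mathcal{H}_\Gamma \otimes L$ inducing the automorphism $\varphi$ on $I_\mathsf{G}$. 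So the set-theoretic equivalence between (1) and the lifting part of (2) is immediate; the direction (2) $\Rightarrow$ (1) is seen by reading the same identity backwards and observing that it is exactly the statement that $\zeta|_\ghani = \delta [L]$ as a group cochain.

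The second thing to address is the local strong continuity in $\varphi$ claimed in (2). Here I would argue as follows. Fix $L$ as above. Over each top strata $U_{\pmb{B}}$, Proposition \ref{covarianceforlions}(b) gives a canonical lift $t_{\pmb{B}}(\varphi)$ of $\varphi^*$ to the local bundle $\mathcal{H}_{V,\pmb{B}}$, and these lifts depend strongly continuously on $\varphi$ because the pullback operation $(\xi,f)\mapsto (\varphi^*\xi, f\circ \varphi^*)$ does. The global lift to $\mathcal{H}_\Gamma \otimes L$ is obtained from the local lifts by correcting the discrepancy measured by the cochain $(\varphi^*(u_{\pmb{B},\pmb{B}'})u_{\varphi(\pmb{B}),\varphi(\pmb{B}')}^{-1})_{\pmb{B},\pmb{B}'}$ representing $\zeta(\varphi)$ by a trivializing $0$-cochain $\tau(\varphi) = (\tau_{\varphi,\pmb{B}})_{\pmb{B}}$ whose coboundary represents $\varphi^*(c_1(L))c_1(L)^{-1}$. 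The space of such trivializing cochains is a torsor for $\check{Z}^0((U_{\pmb{B}})_{\pmb{B}},U(1))\cong C(\Gamma,U(1))$.

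The hard part, and the only real content beyond the cohomological translation, is producing a locally continuous selection of $\tau(\varphi)$. Since the cocycle $(u_{\pmb{B},\pmb{B}'})$ is continuous and $\varphi \mapsto \varphi^*(u_{\pmb{B},\pmb{B}'})u_{\varphi(\pmb{B}),\varphi(\pmb{B}')}^{-1}$ depends continuously on $\varphi$ by inspection of \eqref{zetadef}, while a trivialization $\tau(\mathrm{id})$ at the identity can be fixed arbitrarily, one obtains a local trivialization by transporting: near any $\varphi_0 \in \ghani$ for which $\tau(\varphi_0)$ has been chosen, define $\tau(\varphi)$ by solving the coboundary equation locally using a partition of unity on $\Gamma$ subordinate to the cover $(U_{\pmb{B}})_{\pmb{B}\in \mathfrak{JH}}$ and the continuity of the cocycle data in $\varphi$; this is a standard argument for local sections of principal $C(\Gamma,U(1))$-bundles over a topological group. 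The resulting lift $T_\varphi := (t_{\pmb{B}}(\varphi)\otimes \mathrm{id}_L)\cdot \tau_{\varphi,\pmb{B}}$ is then compatible across different $\pmb{B}$, defines a bundle morphism on $\mathcal{H}_\Gamma \otimes L$, and depends strongly continuously on $\varphi$ in a neighbourhood of $\varphi_0$. Conversely, the existence of such local continuous lifts implies the identity $\zeta(\varphi)=\varphi^*(c_1(L))c_1(L)^{-1}$ holds pointwise in $\varphi \in \ghani$, and hence (1) holds.
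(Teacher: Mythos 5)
Your proof is correct and follows essentially the same route as the paper: translate condition (1) into the existence of a class $x=c_1(L)\in\check H^1(\Gamma,U(1))$ with $\zeta(\varphi)=\varphi^*(x)x^{-1}$, then invoke the final assertion of Proposition \ref{zetanandndlaf} to get the lift to $\mathcal{H}_\Gamma\otimes L$, with the converse read off the same identity. The paper's proof simply asserts the local strong continuity of the lift as a consequence of Proposition \ref{zetanandndlaf}, whereas you spell out the (standard) argument via continuity of the Lion cochain data in $\varphi$ and a locally continuous choice of trivializing $0$-cochain; this is additional detail, not a different approach.
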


\begin{proof}
If $[\zeta|_\ghani]=0\in H^1(\ghani,\check{H}^1(\Gamma,U(1)))$, there exists an $x\in \check{H}^1(\Gamma,U(1))$ with $\zeta(\varphi)=\varphi^*(x)x^{-1}$ for all $\varphi\in \ghani$. There is a line bundle $L\to \Gamma$ with $c_1(L)$ mapped to $x$ under $\check{H}^1(\Gamma,U(1))\cong \check{H}^2(\Gamma,\Z)$. By Proposition \ref{zetanandndlaf}, we can lift all $\varphi$ in a strongly continuous manner. The converse follows in the same way.
\end{proof}

\begin{definition}
\label{modifiedhgammspace}
Let $\mathsf{G}$ be a simply connected nilpotent Lie group admitting flat orbits and $\ghani\subseteq \Aut(\mathsf{G})$ a subgroup such that $[\zeta|_\ghani]=0\in H^1(\ghani,\check{H}^1(\Gamma,U(1)))$. 
\begin{enumerate}
\item Let $L_\ghani\to \Gamma$ be a line bundle such that $\zeta(\varphi)=\varphi^*(c_1(L_\ghani))c_1(L_\ghani)^{-1}$ for each $\varphi\in \ghani$, and define 
$$\mathcal{H}_{\Gamma,L_\ghani}:=\mathcal{H}_\Gamma\otimes L_\ghani.$$
\item For each $\varphi\in \ghani$, pick lifts $\alpha(\varphi)$ to $\mathcal{H}_{\Gamma,L_\ghani}$ as in Proposition \ref{liftingcor}. Define the map 
$$c_\ghani:\ghani\times \ghani\to C(\Gamma,U(1)), \quad (\varphi,\varphi')\mapsto \alpha(\varphi)\varphi^*(\alpha(\varphi'))\alpha(\varphi\varphi')^{-1}.$$
\end{enumerate}
\end{definition}

\begin{proposition}
\label{thecocycleforhandnd}
Let $\mathsf{G}$ be a simply connected nilpotent Lie group admitting flat orbits. Assume that $\ghani\subseteq \Aut(\mathsf{G})$ is a subgroup such that $[\zeta|_\ghani]=0\in H^1(\ghani,\check{H}^1(\Gamma,U(1)))$ and construct $\mathcal{H}_{\Gamma,L_\ghani}$ and $c_\ghani$ as in Definition \ref{modifiedhgammspace}. Then $c_\ghani$ satisfies the following:
\begin{enumerate}
\item $c_\ghani\in Z^2(\ghani,C(\Gamma,U(1)))$ is a group cocycle for the induced $\ghani$-action on $C(\Gamma,U(1))$. 
\item The class $[c_\ghani]\in H^2(\ghani,C(\Gamma,U(1)))$ is independent of the choice of $L_\ghani$ and lifts $\alpha(\varphi)$.
\item The lifts of $\ghani$ to $\mathcal{H}_{\Gamma,L_\ghani}$ can be choosen to define a strongly continuous $\ghani$-action if and only if  $[c_\ghani]=0\in H^2(\ghani,C(\Gamma,U(1)))$.
\end{enumerate}
\end{proposition}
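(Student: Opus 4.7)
The plan is to verify each item by the standard obstruction-theoretic computation in non-abelian group cohomology, using that the lifts $\alpha(\varphi)$ are unitary bundle morphisms of $\mathcal{H}_{\Gamma,L_\ghani}\to \mathcal{H}_{\Gamma,L_\ghani}$ covering $\varphi^{*}:\Gamma\to \Gamma$, and that the ambiguity in lifting is precisely by a strongly continuous function $\Gamma\to U(1)$, i.e.\ by an element of $C(\Gamma,U(1))$. Throughout, the induced $\ghani$-action on $C(\Gamma,U(1))$ is $\varphi\cdot f:=f\circ \varphi^{*-1}$, which is exactly what Proposition \ref{autoongamma} furnishes.

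For item (1), I would argue as follows. The composition $\alpha(\varphi)\varphi^{*}(\alpha(\varphi'))$ and $\alpha(\varphi\varphi')$ both cover the same self-homeomorphism of $\Gamma$ and intertwine the same $*$-automorphism $(\varphi\varphi')|$ of $I_\mathsf{G}$ via Proposition \ref{covarianceforlions}.c); hence their difference is a fibrewise scalar, i.e.\ an element of $C(\Gamma,U(1))$, which shows $c_\ghani$ is well defined and takes values in $C(\Gamma,U(1))$. The $2$-cocycle identity
\[
c_\ghani(\varphi,\varphi'\varphi'')\,\varphi^{*}(c_\ghani(\varphi',\varphi''))=c_\ghani(\varphi\varphi',\varphi'')\,c_\ghani(\varphi,\varphi'),
\]
then follows by comparing the two bracketings in $\alpha(\varphi)\varphi^{*}(\alpha(\varphi'))(\varphi\varphi')^{*}(\alpha(\varphi''))$ and using associativity of composition of bundle morphisms together with the identity $\varphi^{*}(\alpha(\varphi')\varphi'^{*}(\alpha(\varphi'')))=\varphi^{*}(\alpha(\varphi'))\,(\varphi\varphi')^{*}(\alpha(\varphi''))$; this is a direct unwinding and needs no further input.

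For item (2), I would show independence in two steps. First, fixing $L_\ghani$ and replacing the lifts $\alpha(\varphi)$ by $\alpha'(\varphi):=\beta(\varphi)\alpha(\varphi)$ for an arbitrary map $\beta:\ghani\to C(\Gamma,U(1))$, a direct computation gives
\[
c_\ghani'(\varphi,\varphi')=\beta(\varphi)\,\varphi^{*}(\beta(\varphi'))\,\beta(\varphi\varphi')^{-1}\,c_\ghani(\varphi,\varphi'),
\]
which differs from $c_\ghani$ by the coboundary of $\beta$. Second, if $L_\ghani'\to\Gamma$ is another choice, then by construction $\zeta(\varphi)=\varphi^{*}(c_1(L_\ghani\otimes L_\ghani'^{-1}))c_1(L_\ghani\otimes L_\ghani'^{-1})^{-1}\cdot \varphi^{*}(c_1(L_\ghani'))c_1(L_\ghani')^{-1}$, so the line bundle $N:=L_\ghani\otimes L_\ghani'^{-1}$ satisfies $\varphi^{*}N\cong N$ for all $\varphi\in\ghani$, and each choice of isomorphism $\tau(\varphi):\varphi^{*}N\to N$ lets one translate a lift to $\mathcal{H}_{\Gamma,L_\ghani}$ to a lift to $\mathcal{H}_{\Gamma,L_\ghani'}$; the cocycle for $L_\ghani'$ then differs from that for $L_\ghani$ by the coboundary of $\tau$, so the class in $H^2(\ghani,C(\Gamma,U(1)))$ is unchanged.

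For item (3), the equivalence is tautological given item (2): the lifts $\alpha(\varphi)$ can be modified to define a genuine strongly continuous group action precisely when there exists a $\beta:\ghani\to C(\Gamma,U(1))$ such that $\alpha'(\varphi):=\beta(\varphi)\alpha(\varphi)$ satisfies $\alpha'(\varphi\varphi')=\alpha'(\varphi)\varphi^{*}(\alpha'(\varphi'))$; by the computation in item (2), this occurs iff $c_\ghani$ is the coboundary of $\beta$, i.e.\ iff $[c_\ghani]=0$. The strong continuity is not an issue because, by Proposition \ref{liftingcor}, the lifts can already locally in $\ghani$ be chosen strongly continuously and strong continuity is preserved under multiplication by $C(\Gamma,U(1))$-valued functions; the potential obstruction is purely cohomological.

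The only mildly delicate point I anticipate is making sure the various manipulations take place in the correct $\ghani$-module $C(\Gamma,U(1))$ (as opposed to $U(1)$) and respect the induced $\ghani$-action by pullback along $\varphi^{*}$; aside from book-keeping, the argument is the classical obstruction calculation for projective versus honest representations.
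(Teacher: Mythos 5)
Your handling of items (1) and (3) matches the paper's (very terse) argument: item (1) is the standard associativity computation showing a scalar-valued $2$-cocycle, and item (3) is the standard Mackey-type observation that $[c_\ghani]=0$ is equivalent to being able to rescale the lifts $\alpha(\varphi)$ by a $1$-cochain $\beta$ so that they become multiplicative, with the strong continuity issue handled via Proposition \ref{liftingcor} exactly as you say. The first half of your item (2) (independence of the choice of lifts for a \emph{fixed} $L_\ghani$) is also precisely the paper's one-line argument.

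The gap is in the second half of your item (2), the independence of the choice of $L_\ghani$. You set $N:=L_\ghani\otimes L_\ghani'^{-1}$, correctly deduce $\varphi^*N\cong N$ for all $\varphi\in\ghani$, pick isomorphisms $\tau(\varphi):\varphi^*N\to N$, and then claim the discrepancy in the cocycles is ``the coboundary of $\tau$.'' But $\tau$ is not a $1$-cochain valued in $C(\Gamma,U(1))$: each $\tau(\varphi)$ is an isomorphism between two \emph{different} line bundles $\varphi^*N$ and $N$, not an automorphism of a fixed object. The quantity
\[
\delta(\varphi,\varphi'):=\tau(\varphi)\,\varphi^*(\tau(\varphi'))\,\tau(\varphi\varphi')^{-1}
\]
does land in $C(\Gamma,U(1))$ and is a $2$-\emph{cocycle} (it is the Mackey obstruction to equipping $N$ with a $\ghani$-equivariant structure), but it is not automatically a $2$-\emph{coboundary}. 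You would still need to argue that $[\delta]=0$ in $H^2(\ghani,C(\Gamma,U(1)))$, i.e.\ that this $\ghani$-invariant line bundle $N$ admits an equivariant structure — and that claim is exactly as hard as item (2) itself, so nothing has been proved. For comparison, the paper's own proof of item (2) only addresses the $\alpha$-independence (``the lift $\alpha$ is unique up to a $C(\Gamma,U(1))$-factor'') and does not spell out the $L_\ghani$-independence at all; so you went beyond the paper's argument, but the extra step as written does not close.
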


\begin{proof}
Item 1 is a short computation. Item 2 follows from noting that the lift $\alpha$ is unique up to a $C(\Gamma,U(1))$-factor, so two different choices will give rise to cohomologous $c_\ghani$. 

To prove item 3, we first note that the lifts $\alpha(\varphi)$ are uniquely determined by $\varphi$ up to a $C(\Gamma,U(1))$-factor. We have that $[c_\ghani]=0\in H^2(\ghani,C(\Gamma,U(1)))$ if and only if there is a $\tau_\ghani:\ghani\to C(\Gamma,U(1))$ such that 
$$c_\ghani(\varphi,\varphi)=\tau_\ghani(\varphi)\varphi^*(\tau_\ghani(\varphi'))\tau_\ghani(\varphi\varphi')^{-1}.$$ 
Existence of such a $\tau_\ghani$ is clearly equivalent to existence of a modification of the lifts $\alpha(\varphi)$ by a $C(\Gamma,U(1))$-factor such that $\alpha(\varphi)\varphi^*\alpha(\varphi')=\alpha(\varphi\varphi')$.
\end{proof}

\begin{theorem}
\label{charofequimorita}
Let $\mathsf{G}$ be a simply connected nilpotent Lie group admitting flat orbits and $\ghani\subseteq \Aut(\mathsf{G})$. Then the following are equivalent:
\begin{enumerate}
\item There exists a Hilbert space bundle $\mathcal{H}\to \Gamma$ with a fibrewise strongly continuous $\mathsf{G}$-action, with the unitary equivalence class of the representation in the fibre over $\xi\in \Gamma$ is exactly $\xi$, such that the $\ghani$-action on $\Gamma$ lifts to a strongly continuous action on $\mathcal{H}\to \Gamma$ and a $\ghani$-equivariant $*$-isomorphism
$$\pi_{\musFlat}:I_\mathsf{G}\to C_0(\Gamma,\mathbb{K}(\mathcal{H})).$$
\item It holds that $[\zeta|_\ghani]=0\in H^1(\ghani,\check{H}^1(\Gamma,U(1)))$ and $[c_\ghani]=0\in H^2(\ghani,C(\Gamma,U(1)))$.
\item There exists a $C_0(\Gamma)$-linear $\ghani$-equivariant Morita equivalence 
$$I_\mathsf{G}\sim_M C_0(\Gamma).$$
\end{enumerate}
\end{theorem}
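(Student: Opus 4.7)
The plan is to prove the cycle of implications $(1) \Leftrightarrow (3)$ and $(1) \Leftrightarrow (2)$, relying heavily on the machinery already assembled in Propositions \ref{descriptionofddinvariant}, \ref{zetanandndlaf}, \ref{liftingcor}, and \ref{thecocycleforhandnd}.

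The equivalence $(1) \Leftrightarrow (3)$ is essentially a translation exercise in Morita theory specialized to the commutative base $C_0(\Gamma)$. For $(1) \Rightarrow (3)$, given $\mathcal{H}$ together with $\pi_{\musFlat}$, the $C_0(\Gamma)$-Hilbert module of continuous sections $C_0(\Gamma,\mathcal{H})$ is a full right Hilbert $C_0(\Gamma)$-module on which $I_\mathsf{G}$ acts via $\pi_{\musFlat}$ by compact operators, giving a $C_0(\Gamma)$-linear imprimitivity bimodule; the $\ghani$-action lifts by assumption. Conversely, a $C_0(\Gamma)$-linear $\ghani$-equivariant imprimitivity bimodule $E$ for $I_\mathsf{G} \sim_M C_0(\Gamma)$ is, as a right $C_0(\Gamma)$-module, the module of $C_0$-sections of some Hilbert space bundle $\mathcal{H} \to \Gamma$ (since $C_0(\Gamma)$ is commutative and countably generated modules over a commutative $C^*$-algebra arise from Hilbert bundles); the left action then furnishes the $*$-isomorphism $\pi_{\musFlat} : I_\mathsf{G} \to \mathbb{K}_{C_0(\Gamma)}(E) \cong C_0(\Gamma,\mathbb{K}(\mathcal{H}))$ and the $\ghani$-equivariance is automatic.

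For $(2) \Rightarrow (1)$, I would simply assemble the constructions of Definition \ref{modifiedhgammspace}. The hypothesis $[\zeta|_\ghani] = 0 \in H^1(\ghani, \check{H}^1(\Gamma, U(1)))$ produces, by Proposition \ref{liftingcor}, a line bundle $L_\ghani \to \Gamma$ such that every $\varphi \in \ghani$ lifts to a strongly continuous bundle automorphism $\alpha(\varphi)$ of $\mathcal{H}_{\Gamma, L_\ghani} := \mathcal{H}_\Gamma \otimes L_\ghani$ covering $\varphi^*$ on $\Gamma$ and inducing $\varphi$ on $I_\mathsf{G}$. These lifts a priori only form a projective action with $C(\Gamma, U(1))$-valued cocycle $c_\ghani$, but the assumption $[c_\ghani] = 0 \in H^2(\ghani, C(\Gamma, U(1)))$ combined with item 3 of Proposition \ref{thecocycleforhandnd} lets us modify each $\alpha(\varphi)$ by a scalar $\tau_\ghani(\varphi) \in C(\Gamma, U(1))$ to obtain a genuine, strongly continuous $\ghani$-action. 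The resulting $\mathcal{H}_{\Gamma, L_\ghani}$ with the canonical $\pi_{\musFlat}$ from Proposition \ref{descriptionofddinvariant} (tensored trivially with $L_\ghani$) then satisfies (1).

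The converse $(1) \Rightarrow (2)$ is the direction that needs the most care, and constitutes what I expect to be the main obstacle. Given any $\mathcal{H}$ and $\pi_{\musFlat}$ as in (1), the uniqueness portion of the classification of continuous trace algebras (item i) of Theorem \ref{maintechforrep}, which is the reference Lemma \ref{lem:desciriddtriviala}) forces $\mathcal{H} \cong \mathcal{H}_\Gamma \otimes L$ for a line bundle $L \to \Gamma$ uniquely determined up to isomorphism, compatibly with $\pi_{\musFlat}$. The existence of a lift of each $\varphi \in \ghani$ to $\mathcal{H} = \mathcal{H}_\Gamma \otimes L$ then forces, by the last assertion of Proposition \ref{zetanandndlaf}, the identity $\zeta(\varphi) = \varphi^*(c_1(L)) c_1(L)^{-1}$ in $\check{H}^1(\Gamma, U(1))$, exhibiting $\zeta|_\ghani$ as a coboundary in $H^1(\ghani, \check{H}^1(\Gamma, U(1)))$, so $[\zeta|_\ghani] = 0$. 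With this first obstruction trivialized, the given strongly continuous $\ghani$-action on $\mathcal{H}$ gives a choice of lifts $\alpha(\varphi)$ as in Definition \ref{modifiedhgammspace} for which the cocycle $c_\ghani(\varphi, \varphi') = \alpha(\varphi) \varphi^*(\alpha(\varphi')) \alpha(\varphi\varphi')^{-1}$ is trivially $1$, so $[c_\ghani] = 0$ by item 2 of Proposition \ref{thecocycleforhandnd} (which asserts independence of the class on the choice of lifts). The delicate point is verifying that the ambiguity in identifying $\mathcal{H}$ with $\mathcal{H}_\Gamma \otimes L$ does not alter the cohomology class $[c_\ghani]$; this is handled by the same item 2 of Proposition \ref{thecocycleforhandnd}, but one must carefully check that compatibility with $\pi_{\musFlat}$ propagates through the identification.
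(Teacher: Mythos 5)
Your proposal is correct and takes essentially the same route as the paper, which simply cites Kasparov's stabilization theorem for $(1)\Leftrightarrow(3)$, declares $(1)\Rightarrow(2)$ ``clear'', and points to Proposition \ref{thecocycleforhandnd} with $\mathcal{H}=\mathcal{H}_{\Gamma,L_\ghani}$ for $(2)\Rightarrow(1)$; you are filling in what the paper leaves implicit. One small imprecision worth flagging: the assertion that countably generated Hilbert modules over a commutative $C^*$-algebra are automatically section modules of locally trivial Hilbert bundles is false in general (fibre dimensions can jump, and even constant-rank families need not trivialize locally). Here it works because the Morita equivalence forces $\mathbb{K}_{C_0(\Gamma)}(E)\cong I_{\mathsf{G}}$, a continuous trace algebra with constant separably-infinite fibre and vanishing Dixmier-Douady class, so Kuiper's theorem supplies local triviality --- this is precisely the role the paper's appeal to Kasparov stabilization is meant to play, so your argument should cite this structure rather than a blanket fact about Hilbert modules over commutative bases.
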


\begin{proof}
That item 1 and 3 are equivalent follows from Kasparov's stabilization theorem and the definition of equivariant Morita equivalence. Clearly item 1 implies item 2, and item 2 implies item 1 by Proposition \ref{thecocycleforhandnd} (with $\mathcal{H}=\mathcal{H}_{\Gamma,L_\ghani}$).
\end{proof}

\begin{remark}
Let us take a moment to summarize the constructions above. To construct an equivariant bundle of flat representations on a simply connected nilpotent Lie group one first constructs $\mathcal{H}_\Gamma$ as in Proposition \ref{descriptionofddinvariant}. Secondly, if $\ghani\subseteq \Aut(\mathsf{G})$ satisfies $[\zeta|_\ghani]=0\in H^1(\ghani,\check{H}^1(\Gamma,U(1)))$ we can modify $\mathcal{H}_\Gamma$ by a line bundle to $\mathcal{H}_{\Gamma,L_\ghani}$ as in Proposition \ref{liftingcor} and lift the action of $\ghani$ to $\mathcal{H}_{\Gamma,L_\ghani}$. If $[c_\ghani]=0\in H^2(\ghani,C(\Gamma,U(1))$, Proposition \ref{thecocycleforhandnd} allow us modify each $\alpha(\varphi)$, $\varphi\in \ghani$, by a scalar valued function such that $\ghani$ acts on $\mathcal{H}_{\Gamma,L_\ghani}$. 
\end{remark}

\begin{remark}
In \cite{crockerkumjianetal97}, an equivariant Brauer group was defined. For a second countable group $\ghani$ acting on a second countable space $T$, the equivariant Brauer group $\mathrm{Br}_\ghani(T)$ is defined as the $T$-linear $\ghani$-equivariant Morita equivalence classes of $\ghani$-continuous trace algebras $A$ over $T$. The product in $\mathrm{Br}_\ghani(T)$ is defined by $\otimes_{C_0(T)}$ and the unit by $C_0(T)$. In \cite[Theorem 5.1]{crockerkumjianetal97}, it was proven that there is a filtration $0\subseteq B_1\subseteq B_2\subseteq B_3=\mathrm{Br}_\ghani(T)$ and morphisms 
\begin{align*}
\delta_{\rm DD} : B_3=\mathsf{Br}_\ghani(T) &\rightarrow \widecheck{H}^3(T, \mathbb{Z}),\\
\zeta : B_2=\ker\delta_{\rm DD} &\rightarrow H^1(\ghani, \widecheck{H}^1(T, U(1))),\\
c: B_1=\ker\zeta &\rightarrow H^2(\ghani, C(T, U(1))),
\end{align*}
such that $A\sim_M C_0(T)$ ($\ghani$-equivariantly over $T$) if and only if $\delta_{\rm DD}(A)=0\in H^3(T,\Z)$, $\zeta(A)=0\in H^1(\ghani,\check{H}^1(T,U(1)))$ and $c(A)=0\in H^2(\ghani,C(T,U(1)))$. As such, Theorem \ref{charofequimorita} is a special case of the results in \cite{crockerkumjianetal97}.
\end{remark}

The description above can be made substantially simpler in the special case that there is a global bundle of polarizations over all of $\Gamma$. This occurs for instance when the $\Gamma$ only consists of one fine strata or the bundle of Vergne polarizations is a constant subbundle.

\begin{proposition}
\label{bundleofpeolalss}
Let $\mathsf{G}$ be a simply connected nilpotent Lie group admitting flat orbits. Assume that there exists a sub-bundle $F\subseteq \Gamma\times \mathfrak{g}$ such that the fibre over $\xi\in \Gamma$ is a polarization of $\xi$. Then the following holds:
\begin{itemize}
\item It holds that $[\zeta]=0\in H^1(\Aut(\mathsf{G}),\check{H}^1(\Gamma,U(1)))$.
\item Define the associated Hilbert space bundle $\mathcal{H}_F\to \Gamma$, as in Proposition \ref{bundleofhspspac}, with a strongly continuous fibrewise $\mathsf{G}$-action such that $[\mathcal{H}_{F,\xi}]=\xi$ for each $\xi\in \Gamma$. Then each $\varphi\in \Aut(\mathsf{G})$ lifts to the morphism given by the composition 
$$\mathcal{H}_{F}\xrightarrow{t_F(\varphi)} \mathcal{H}_{F_\varphi}\xrightarrow{\mathfrak{L}^{(0)}_{F,F_\varphi}}\mathcal{H}_F.$$
where $F_\varphi=(\varphi^*\times \varphi^{-1})(F)\to \Gamma$ is the transformed bundle of polarizations (cf. Proposition \ref{covarianceforlions}.a) and $t_F(\varphi):\mathcal{H}_{F}\to \mathcal{H}_{F_\varphi}$ the map $(\xi,f)\mapsto (\varphi^*\xi,f\circ\varphi^*)$ (cf. Proposition \ref{covarianceforlions}.b) and  $\mathfrak{L}^{(0)}_{F,F_\varphi}: \mathcal{H}_{F_\varphi}\to\mathcal{H}_F$ is the Lion transform (see Proposition \ref{lionsintertwiners}). 
\item The cocycle 
$$c_{\Aut(\mathsf{G})}:\Aut(\mathsf{G})\times \Aut(\mathsf{G})\to C(\Gamma,U(1)),$$
is the image of $c_{F,{\rm fl}}\in Z^2(\Aut(\mathsf{G}),C(\Gamma,\Z/8))$ defined by 
$$c_{F,{\rm fl}}(\varphi,\varphi'):=\mathrm{Mas}(F_\varphi,\varphi^*F_{\varphi'},F_{\varphi\varphi'})\!\!\!\mod 8\Z.$$
\end{itemize}
\end{proposition}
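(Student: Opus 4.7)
For item (2), first observe that $F_\varphi := (\varphi^*\times\varphi^{-1})(F)$ is again a sub-bundle of $\Gamma\times\mathfrak{g}$ whose fibre over $\xi$ is a polarization of $\xi$, using that $\omega_\xi(\varphi X,\varphi Y)=\omega_{\varphi^*\xi}(X,Y)$ makes $\varphi\colon(\mathfrak{g}/\mathfrak{z},\omega_{\varphi^*\xi})\to(\mathfrak{g}/\mathfrak{z},\omega_\xi)$ a symplectic isomorphism. The map $t_F(\varphi)\colon\mathcal{H}_F\to\mathcal{H}_{F_\varphi}$, $(\xi,f)\mapsto(\varphi^*\xi,f\circ\varphi)$, is then well defined, unitary, and covers $\varphi^*$ on $\Gamma$, generalizing Proposition \ref{covarianceforlions}.b beyond the Vergne setting. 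Postcomposing with the fibrewise Lion transform $\mathfrak{L}^{(0)}_{F,F_\varphi}\colon\mathcal{H}_{F_\varphi}\to\mathcal{H}_F$ of Lemma \ref{lionsintertwiners} yields a unitary bundle automorphism of $\mathcal{H}_F$ covering $\varphi^*$. The computation $(t_F(\varphi)(g\cdot f))(h)=f(g^{-1}\varphi(h))=(\varphi^{-1}(g)\cdot t_F(\varphi)f)(h)$, combined with the fibrewise $\mathsf{G}$-equivariance of the Lion transform, shows that the composition intertwines $\pi_F$ with $\pi_F\circ\varphi$ and thus realizes a lift of $\varphi\in\Aut(I_\mathsf{G})$.

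For item (1), Proposition \ref{bundleofhspspac} applied to the global $F$ produces an alternative bundle of flat orbit representations $\mathcal{H}_F$, which by the uniqueness of such bundles up to tensoring with a line bundle (a standard fact for continuous trace algebras of vanishing Dixmier-Douady class) differs from $\mathcal{H}_\Gamma$ by some $L\to\Gamma$; that is, $\mathcal{H}_F\cong\mathcal{H}_\Gamma\otimes L$. Item (2) produces lifts of every $\varphi\in\Aut(\mathsf{G})$ to this bundle, so the final statement of Proposition \ref{zetanandndlaf} forces
\[
\zeta(\varphi)=\varphi^*(c_1(L))\,c_1(L)^{-1}\in\check{H}^1(\Gamma,U(1))
\]
for each $\varphi$. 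This exhibits the cocycle $\zeta$ as the $\Aut(\mathsf{G})$-coboundary of the $0$-cochain $c_1(L)$, so $[\zeta]=0$ in $H^1(\Aut(\mathsf{G}),\check{H}^1(\Gamma,U(1)))$.

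For item (3), expand $c_{\Aut(\mathsf{G})}(\varphi,\varphi')$ using $\alpha(\varphi)=\mathfrak{L}^{(0)}_{F,F_\varphi}\circ t_F(\varphi)$. The naturality identity
\[
t_F(\varphi')\circ\mathfrak{L}^{(0)}_{F,F_\varphi}=\mathfrak{L}^{(0)}_{F_{\varphi'},F_{\varphi\varphi'}}\circ t_{F_\varphi}(\varphi'),
\]
which follows from $(F_\varphi)_{\varphi'}=F_{\varphi\varphi'}$ by the same computation as Proposition \ref{covarianceforlions}.b, together with $t_{F_\varphi}(\varphi')\circ t_F(\varphi)=t_F(\varphi\varphi')$, cancels all $t$-factors against $\alpha(\varphi\varphi')^{-1}$. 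What remains is a triple composition of three uncorrected Lion transforms among $F$, $F_{\varphi'}$ and $F_{\varphi\varphi'}$, whose fibrewise value is $\e^{\pi i/4}$ raised to a Maslov triple index by Lemma \ref{lionsintertwiners}. Since the Maslov triple index is integer valued and well defined modulo $8\Z$, this presents $c_{\Aut(\mathsf{G})}$ as the image of an integer valued cocycle $c_{F,\mathrm{fl}}$. The appearance of $\varphi^*F_{\varphi'}$ rather than $F_{\varphi'}$ as the middle entry in the stated formula reflects the transport of the Lagrangian $(F_{\varphi'})_{\varphi^*\xi}\subseteq(\mathfrak{g}/\mathfrak{z},\omega_{\varphi^*\xi})$ to $(\mathfrak{g}/\mathfrak{z},\omega_\xi)$ via the symplectic isomorphism $\varphi$, an operation which leaves the Maslov index invariant. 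The main obstacle in the plan is precisely this last bookkeeping step, aligning the operator composition order in the cocycle with the $\varphi^*$-pullback convention and the symmetries of the Maslov triple index; the genuine analytic content is already packaged in Lemma \ref{lionsintertwiners}.
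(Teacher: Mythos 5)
Your proof follows essentially the same approach as the paper's, which disposes of the proposition in two sentences by citing Proposition \ref{zetanandndlaf} (for item 1 via item 2) and Lemma \ref{lionsintertwiners} (for items 2 and 3); you are simply unpacking those citations with much more detail, which is a reasonable thing to do. One remark on item 3: your expansion yields, in the fibre over $\xi$, the triple composition $\mathfrak{L}^{(0)}_{F,F_{\varphi'}}\circ\mathfrak{L}^{(0)}_{F_{\varphi'},F_{\varphi\varphi'}}\circ\mathfrak{L}^{(0)}_{F_{\varphi\varphi'},F}$ whose phase is $\e^{\frac{\pi i}{4}\mathrm{Mas}(F_\xi,(F_{\varphi'})_\xi,(F_{\varphi\varphi'})_\xi)}$ in $(\mathfrak{g}/\mathfrak{z},\omega_\xi)$, and you are right that matching this against the stated form $\mathrm{Mas}(F_\varphi,\varphi^*F_{\varphi'},F_{\varphi\varphi'})$ requires transporting the middle Lagrangian from $(\mathfrak{g}/\mathfrak{z},\omega_{\varphi^*\xi})$ to $(\mathfrak{g}/\mathfrak{z},\omega_\xi)$ via the symplectomorphism induced by $\varphi$ and that the Maslov triple index is invariant under such transport; you flag this step as the remaining bookkeeping, and indeed it is, but it is a routine check and the paper's own proof offers no more. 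I would only caution you to be consistent about whether $t_F(\varphi)$ is written $f\mapsto f\circ\varphi^*$ (pullback along the orbit map, as in the paper's statement) or $f\mapsto f\circ\varphi$ (precomposition on the group side, as in your equivariance computation): both are fine under the standard identification, but mixing them mid-argument invites sign and order errors precisely at the Maslov-index step.
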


\begin{proof}
The first item follows from the second item by Proposition \ref{zetanandndlaf}. The second and third item follows from Proposition \ref{lionsintertwiners}.
\end{proof}

\begin{proposition}
Let $\mathsf{G}$ be a simply connected nilpotent Lie group admitting flat orbits. Let $d:=\mathrm{codim}(\mathfrak{z})/2$. Assume that there exists a sub-bundle $F\subseteq \Gamma\times \mathfrak{g}$ such that the fibre over $\xi\in \Gamma$ is a polarization of $\xi$ and $T_F:\mathcal{H}_F\to \Gamma\times L^2(\R^d)$ be a global trivialization. Let $\ghani\subseteq \Aut(\mathsf{G})$ be a subgroup. Then the following are equivalent:
\begin{enumerate}
\item $[c_{\rm fl}|_{\ghani}]\in \ker(H^2(\ghani,C(\Gamma,\Z/8))\to H^2(\ghani,C(\Gamma,U(1))))$ 
\item The natural homomorphism $\ghani\to \Aut(I_\mathsf{G})\cong C(\Gamma,PU(L^2(\R^d)))$ (via $\mathrm{Ad}(T_F)$) lifts to a homomorphism $\ghani\to C(\Gamma,U(L^2(\R^d)))$. 
\item The $\ghani$-action on $I_\mathsf{G}$ is induced from a strongly continuous $\ghani$-action on $\mathcal{H}_F$.
\end{enumerate}
\end{proposition}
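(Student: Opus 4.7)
The plan is to show the cycle of implications $(2)\Leftrightarrow (3)\Leftrightarrow (1)$.

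First I would handle $(2)\Leftrightarrow (3)$, which is essentially a translation between an action on compact operators and an action on a trivialized Hilbert bundle. The global trivialization $T_F$ combined with the $C_0(\Gamma)$-linear $*$-isomorphism $\pi_F:I_\mathsf{G}\to C_0(\Gamma,\mathbb{K}(\mathcal{H}_F))$ of Proposition \ref{bundleofhspspac} identifies the group of $C_0(\Gamma)$-linear automorphisms of $I_\mathsf{G}$ with $C(\Gamma,PU(L^2(\R^d)))$. A strongly continuous fibrewise unitary $\ghani$-action on $\mathcal{H}_F\cong \Gamma\times L^2(\R^d)$ is by definition a homomorphism $\ghani\to C(\Gamma,U(L^2(\R^d)))$, and its image under the central quotient $U\to PU$ is exactly the action on $I_\mathsf{G}$. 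Conversely, a lift to $C(\Gamma,U(L^2(\R^d)))$ gives a unitary implementer on each fibre and hence on $\mathcal{H}_F$, whose strong continuity transfers through $T_F$.

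Next I would prove $(3)\Leftrightarrow (1)$ using the cocycle machinery already in place. By Proposition \ref{bundleofpeolalss}, each $\varphi\in \ghani$ admits a canonical unitary lift
$$\alpha(\varphi):=\mathfrak{L}^{(0)}_{F,F_\varphi}\circ t_F(\varphi):\mathcal{H}_F\to \mathcal{H}_F,$$
and since the global bundle $F$ trivializes the Dixmier--Douady obstruction, we may choose $L_\ghani$ trivial in Definition \ref{modifiedhgammspace} and use these $\alpha(\varphi)$ as the preferred lifts. The associated $2$-cocycle $c_\ghani\in Z^2(\ghani,C(\Gamma,U(1)))$ is, by the third bullet of Proposition \ref{bundleofpeolalss}, the image of $c_{F,{\rm fl}}\in Z^2(\ghani,C(\Gamma,\Z/8))$ under the change-of-coefficients map induced by $k\mapsto \e^{\pi i k/4}$. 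By Proposition \ref{thecocycleforhandnd} (item 3), the scalar-valued modifications of $(\alpha(\varphi))_{\varphi\in\ghani}$ can be chosen so as to obtain a strongly continuous $\ghani$-action on $\mathcal{H}_F$ precisely when $[c_\ghani]=0\in H^2(\ghani,C(\Gamma,U(1)))$, which is exactly the condition in (1).

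The main technical obstacle, though more bookkeeping than difficulty, will be verifying that the canonical lifts $\alpha(\varphi)$ arising from $F$ are compatible with the abstract framework of Definition \ref{modifiedhgammspace} and Proposition \ref{thecocycleforhandnd}, which were stated for the bundle $\mathcal{H}_\Gamma$ glued from Vergne polarizations. Concretely, one needs to check that replacing the Vergne bundle by the globally defined $F$ only changes the cocycle by a Cech coboundary built from Lion intertwiners $\mathfrak{L}^{(0)}_{F,F_{V,\pmb{B}}}$, which by Lemma \ref{lionsintertwiners} alters the Maslov triple index only by boundaries; this reduction ensures that the equivalence class of $c_\ghani$ computed from $F$ agrees with the one from Proposition \ref{thecocycleforhandnd}. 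Once this identification is in place the three equivalences drop out.
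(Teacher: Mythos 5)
Your plan follows the same route as the paper, which simply cites Propositions \ref{thecocycleforhandnd} and \ref{bundleofpeolalss}: the loop $(2)\Leftrightarrow(3)$ is the translation through $T_F$ and the $C_0(\Gamma)$-linear $*$-isomorphism of Proposition \ref{bundleofhspspac}, while $(3)\Leftrightarrow(1)$ is Proposition \ref{bundleofpeolalss} (third bullet) together with Proposition \ref{thecocycleforhandnd} (item 3), as you describe.

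On the ``main technical obstacle'' you flag at the end: the compatibility of the $F$-based lifts with the framework of Definition \ref{modifiedhgammspace} is already built in, and does not call for a by-hand Cech-coboundary comparison of $F$ with the Vergne bundles. Proposition \ref{thecocycleforhandnd} item 2 records that $[c_\ghani]$ is independent of the choice of $L_\ghani$ and of the lifts $\alpha(\varphi)$, and $\mathcal{H}_F$ is an admissible choice: by Lemma \ref{lem:desciriddtriviala} one has $\mathcal{H}_F\cong\mathcal{H}_\Gamma\otimes L_F$ for some line bundle $L_F$, and the existence of the canonical lifts $\alpha(\varphi)=\mathfrak{L}^{(0)}_{F,F_\varphi}\circ t_F(\varphi)$ supplied by Proposition \ref{bundleofpeolalss} (second bullet) forces, via the last statement of Proposition \ref{zetanandndlaf}, $\zeta(\varphi)=\varphi^*(c_1(L_F))c_1(L_F^{-1})$ for every $\varphi\in\ghani$, so $L_F$ is a legitimate $L_\ghani$. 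Thus the extra verification you worry about is unnecessary, though your route via Lion intertwiners would also succeed.
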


\begin{proof}
The proposition follows from Proposition \ref{thecocycleforhandnd} and \ref{bundleofpeolalss}.
\end{proof}

\section{Examples of graded nilpotent Lie groups}
\label{subsectionexamples}

In this subsection we give some salient examples of simply connected, nilpotent Lie groups geared towards the geometric scenarios arising later in the monograph. The examples aim at exemplifying the contents of the recent subsections with a focus on describing gradings, computing the top strata $\Gamma$, the Hilbert space bundle $\mathcal{H}_\Gamma\to \Gamma$, the ideal $I_\mathsf{G}\subseteq C^*(\mathsf{G})$ and in particular how to single out a compact subgroup $\ghani\subseteq \Aut(\mathsf{G})$ lifting to an action on $\mathcal{H}_\Gamma$.

\subsection{Step 1 graded nilpotent Lie groups}
\label{step1example}
Nilpotent Lie groups of step length 1 are just the abelian Lie groups. So the term simply connected, nilpotent, graded Lie group of step length 1 is a lengthy description of the Lie group $\mathsf{G}=\R^n$, for some $n\in \N$, equipped with a vector space grading 
$$\mathfrak{g}\equiv \R^n=\bigoplus_{p<0} \R^{n_p}[p],$$
where we write $\R^{n_p}[p]$ to indicate the space $\R^{n_p}$ where the grading on this subspace is $p$. Clearly any such decomposition with $\sum_{p<0} n_p=n$ defines a grading of the Lie algebra $\R^n$. Since $\mathsf{G}=\R^n$ is abelian, we have that 
$$\Aut(\mathsf{G})=GL_n(\R)\quad\mbox{and}\quad \Aut_{\rm gr}(\mathsf{G})=\bigoplus_{p<0} GL_{n_p}(\R)$$
We can also conclude that
$$\hat{\mathsf{G}}=\mathfrak{g}^*=\Gamma=\Xi,$$
which only consists of one strata. In particular, $C^*(\mathsf{G})=I=C_0(\mathfrak{g}^*)=C_0(\Xi)=C_0(\Gamma)$. We have that $\mathcal{H}_V=\Gamma\times \C$ is a trivial line bundle and the $\Aut(\mathsf{G})$-action on $I$ lifts to an action on $\mathcal{H}_V$ which is trivial on the fibre, so in particular $\delta_{\rm DD}(I_\mathsf{G})=0$, $\zeta=0$, and $[c_{\rm flat}]=0\in H^2(GL_n(\R),C_0(\Gamma,\C^\times))$. This example is likely too trivial to improve the understanding of the reader, as the whole theory collapses to the standard Fourier transform on a graded vector space, but we have included it as it contains the local model for foliations as a special case (see more below in Example \ref{regfoliationexala}) and isolates the effects of choosing a grading.

\subsection{Step 2 graded nilpotent Lie groups}
\label{step2subsectionexamples}

Let us consider the case of step length $2$. This situation arises geometrically when for instance having a non-integrable length 2 filtration of a manifold, i.e. one specified subbundle of the tangent bundle which is not integrable. 

If $\mathfrak{g}$ is a nilpotent Lie algebra of step length $2$, then $\overline{\mathfrak{g}}:=\mathfrak{g}/[\mathfrak{g},\mathfrak{g}]$ is an abelian Lie algebra. In particular, $\mathfrak{g}$ as a Lie algebra is determined by a subspace $C(\mathfrak{g})\subseteq \mathfrak{g}$ and a surjective map $\omega:\overline{\mathfrak{g}}\wedge \overline{\mathfrak{g}}\to C(\mathfrak{g})$, where $\overline{\mathfrak{g}}:=\mathfrak{g}/C(\mathfrak{g})$, by 
$$[X,Y]=\omega(\overline{X},\overline{Y}),$$
where we use the notation $\overline{X}:=X+C(\mathfrak{g})$. Surjectivity of $\omega$ ensures that $[\mathfrak{g},\mathfrak{g}]=C(\mathfrak{g})$. We say that  $\mathfrak{g}$ is a 2-step nilpotent lie algebra of type $(p, q)$ if 
\[ \dim  [\mathfrak{g}, \mathfrak{g}] = p\quad\mbox{and}\quad \dim\overline{\mathfrak{g}}= q. \]
For more details, see \cite{eberlein2step}. 
Following Remark \ref{flatorbisintosform}, we let $X_1, \ldots, X_p$ denote a basis of $[\mathfrak{g},\mathfrak{g}]$ and we define $\overline{\omega}_k:\overline{\mathfrak{g}}\wedge \overline{\mathfrak{g}}\to \R$, $k=1,\ldots, p$, by 
\[ [X, Y] = \sum_{k = 1}^p \overline{\omega}_k(\overline{X},\overline{Y})X_k. \]
Surjectivity of $\omega$ ensures that the forms $\overline{\omega}_1, \ldots, \overline{\omega}_p$ are linearly independent in $(\overline{\mathfrak{g}}\wedge \overline{\mathfrak{g}})^*$. The basis induces an inner product on $\mathfrak{g}$ and we can identify $\overline{\mathfrak{g}}$ with the orthogonal complement of $C(\mathfrak{g})\subseteq \mathfrak{g}$ and $(\overline{\mathfrak{g}}\wedge \overline{\mathfrak{g}})^*$ with $\mathfrak{so}( \overline{\mathfrak{g}})$. We identify the collection $\overline{\omega}_1, \ldots, \overline{\omega}_p\in (\overline{\mathfrak{g}}\wedge \overline{\mathfrak{g}})^*$ with a collection $\omega^1, \ldots, \omega^p\in \mathfrak{so}( \overline{\mathfrak{g}})$ and let $W\subseteq \mathfrak{so}(\overline{\mathfrak{g}})$ denote the space spanned by $\omega^1, \ldots, \omega^p$. 

For an arbitrary finite-dimensional inner product space $V$ and a subspace $W\subseteq \mathfrak{so}(V)$, we can equip $V\oplus W$ with a Lie bracket by declaring $W$ to be central and for $X,Y\in V$ the element $[X,Y]\in W$ is given by that  
$$([X,Y],Z)_{\mathfrak{so}(V)}=(Z(X),Y)_V \quad\forall Z\in W.$$
We denote this Lie algebra by $V_W$. It is readily verified that $V_W$ is a nilpotent Lie algebra of step length $2$ of type $(\dim(W),\dim(V))$ with $C(V_W)=W$. The reader should note that $V_{\mathfrak{so}(V)}$ is the free step 2 nilpotent Lie algebra on $V$ and that for any $W\subseteq \mathfrak{so}(V)$ the following sequence is exact
\begin{equation}
\label{sesfortepspso}
0\to W^\perp\to V_{ \mathfrak{so}(V)}\to V_W\to 0.
\end{equation}
It was proven in \cite{eberlein2step} that this construction up to isomorphism exhausts all nilpotent Lie algebras of step length $2$. 

\begin{proposition}[Proposition 3.1.1 of \cite{eberlein2step}]
\label{charofspeepoed}
Let $\mathfrak{g}$ be a step $2$ nilpotent Lie algebra and type $(p,q)$ with a choice of inner product $(\cdot, \cdot)_{\overline{\mathfrak{g}}}$ on $\overline{\mathfrak{g}}$ and a Jordan-Hölder basis $X_1,\ldots, X_n$ with $\mathfrak{g}_p=C(\mathfrak{g})$. Define $\omega^1, \ldots, \omega^p\in \mathfrak{so}( \overline{\mathfrak{g}})$ and $W\subseteq \mathfrak{so}(\overline{\mathfrak{g}})$ as above. For the basis $w_1, \ldots, w_p\in W$ such that $(\omega^i,w_j)_{\mathfrak{so}(\overline{\mathfrak{g}})}=\delta_{i,j}$, the mapping 
$$\mathfrak{g}\to \overline{\mathfrak{g}}_W, \quad X_j\mapsto 
\begin{cases} 
w_j, \; j=1,\ldots, p, \\ 
\overline{X}_j, \; j=p+1,\ldots, n,
\end{cases}$$
is a Lie algebra isomorphism.
\end{proposition}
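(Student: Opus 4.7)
The plan is to verify that the asserted map $\Phi : \mathfrak{g} \to \overline{\mathfrak{g}}_W$ defined by $\Phi(X_j) = w_j$ for $j \leq p$ and $\Phi(X_j) = \overline{X}_j$ for $j > p$ is a linear isomorphism that intertwines the two Lie brackets. Linearity and bijectivity are immediate: since $\overline{\omega}_1,\ldots,\overline{\omega}_p$ are linearly independent, so are $\omega^1,\ldots,\omega^p$, and hence a dual basis $w_1,\ldots,w_p$ of $W$ satisfying $(\omega^i,w_j)_{\mathfrak{so}(\overline{\mathfrak{g}})}=\delta_{i,j}$ exists and gives a basis of $W$; combined with the basis $\overline{X}_{p+1},\ldots,\overline{X}_n$ of $\overline{\mathfrak{g}}$ this is a basis of $\overline{\mathfrak{g}}_W$, so $\Phi$ carries a basis of $\mathfrak{g}$ to a basis of $\overline{\mathfrak{g}}_W$.

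Next I would reduce the bracket compatibility to the non-central generators. On the source side, $X_1,\ldots,X_p$ span $C(\mathfrak{g})=[\mathfrak{g},\mathfrak{g}]$ and are central; on the target side, $W$ is declared central in $\overline{\mathfrak{g}}_W$. Hence any bracket with at least one input in $\mathrm{span}(X_1,\ldots,X_p)$ (resp.\ in $W$) is zero on both sides, and it suffices to check that $\Phi[X_i,X_j]_{\mathfrak{g}}=[\Phi X_i,\Phi X_j]_{\overline{\mathfrak{g}}_W}$ for all $i,j>p$.

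The remaining step is a direct computation using the two identifications. From the defining formula of $\overline{\omega}_k$,
$$\Phi[X_i,X_j]_{\mathfrak{g}} = \Phi\Bigl(\sum_{k=1}^p \overline{\omega}_k(\overline{X}_i,\overline{X}_j)\,X_k\Bigr) = \sum_{k=1}^p \overline{\omega}_k(\overline{X}_i,\overline{X}_j)\,w_k.$$
On the other hand, by definition of $V_W$ applied to $V=\overline{\mathfrak{g}}$, the element $[\overline{X}_i,\overline{X}_j]_{\overline{\mathfrak{g}}_W}\in W$ is characterized by
$$\bigl([\overline{X}_i,\overline{X}_j]_{\overline{\mathfrak{g}}_W},\,Z\bigr)_{\mathfrak{so}(\overline{\mathfrak{g}})} = \bigl(Z(\overline{X}_i),\overline{X}_j\bigr)_{\overline{\mathfrak{g}}}, \qquad Z\in W.$$
Taking $Z=\omega^l$ and using the convention that identifies $\overline{\omega}_l$ with $\omega^l$ via $\overline{\omega}_l(X,Y)=(\omega^l X,Y)_{\overline{\mathfrak{g}}}$, the right-hand side equals $\overline{\omega}_l(\overline{X}_i,\overline{X}_j)$. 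Expanding $[\overline{X}_i,\overline{X}_j]_{\overline{\mathfrak{g}}_W}=\sum_k c_k w_k$ and applying the duality $(w_k,\omega^l)=\delta_{k,l}$ gives $c_l = \overline{\omega}_l(\overline{X}_i,\overline{X}_j)$, so the two expressions coincide.

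No serious obstacle is anticipated; the verification is essentially bookkeeping. The only subtle point is keeping the two identifications $(\overline{\mathfrak{g}}\wedge\overline{\mathfrak{g}})^*\cong\mathfrak{so}(\overline{\mathfrak{g}})$ and the pairing on $\mathfrak{so}(\overline{\mathfrak{g}})$ used to define the dual basis $w_j$ consistent with the defining relation of the bracket on $V_W$. Once these conventions are set, the equality of the two displayed expressions is immediate and the proposition follows.
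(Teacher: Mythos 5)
Your verification is correct. Note that the paper itself offers no proof of this statement: it is quoted verbatim from Eberlein (Proposition 3.1.1 of the cited reference), so there is no in-text argument to compare against. Your direct check — bijectivity because $\omega^1,\ldots,\omega^p$ are independent and the dual basis $w_1,\ldots,w_p$ together with $\overline{X}_{p+1},\ldots,\overline{X}_n$ spans $\overline{\mathfrak{g}}_W$; triviality of brackets involving $C(\mathfrak{g})$ resp.\ $W$ since both are central; and the computation $c_l=\overline{\omega}_l(\overline{X}_i,\overline{X}_j)$ obtained by pairing the defining relation of the $V_W$-bracket against $Z=\omega^l$ and using $(w_k,\omega^l)_{\mathfrak{so}(\overline{\mathfrak{g}})}=\delta_{k,l}$ — is exactly the kind of argument the cited source gives, and it is complete. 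Two small points you handle well and should keep explicit: the normalization of the inner product on $\mathfrak{so}(\overline{\mathfrak{g}})$ is irrelevant because the same pairing is used both in the definition of the bracket on $\overline{\mathfrak{g}}_W$ and in the definition of the dual basis $w_j$, so it cancels; the only convention that genuinely matters is fixing the identification $\overline{\omega}_l(X,Y)=(\omega^l X,Y)_{\overline{\mathfrak{g}}}$ (the opposite convention would introduce a sign and force $X_j\mapsto -w_j$), which is the convention implicit in the paper's identification of $(\overline{\mathfrak{g}}\wedge\overline{\mathfrak{g}})^*$ with $\mathfrak{so}(\overline{\mathfrak{g}})$.
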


\begin{remark}
\label{centervscomm}
We note that Proposition \ref{charofspeepoed} implies that for a step $2$ nilpotent Lie algebra $\mathfrak{g}$, it holds that 
$$\mathfrak{z}=\{X\in \mathfrak{g}: w(\overline{X})=0\forall w\in W\},$$
where $\mathfrak{z}$ denotes the center of $\mathfrak{g}$. In particular, $C(\mathfrak{g})=\mathfrak{z}$ if and only if for any $X\in \overline{\mathfrak{g}}$, there is a $w\in W$ with $w(X)\neq 0$. 
\end{remark}

From this characterization of step $2$ nilpotent Lie algebras, \cite{eberlein2step} ensures an explicit description of $\Aut(\mathfrak{g})$. For a step $2$ nilpotent Lie algebra $\mathfrak{g}$ with a choice of inner product on $\overline{\mathfrak{g}}$, with $W\subseteq \mathfrak{so}(\overline{\mathfrak{g}})$ as above, we define the Lie group
$$H_\mathfrak{g}:=\{g\in GL(\overline{\mathfrak{g}}): g^tW g\subseteq W\}.$$
By universality, the group $GL(\overline{\mathfrak{g}})$ acts on $\overline{\mathfrak{g}}_{\mathfrak{so}(\overline{\mathfrak{g}})}$ -- the free step 2 nilpotent Lie algebra on $\overline{\mathfrak{g}}$. We define $u_H:H_\mathfrak{g}\to \Aut(\mathfrak{g})$ by noting that the short exact sequence \eqref{sesfortepspso} ensures that the $H_\mathfrak{g}$-action descends to $\mathfrak{g}\cong \overline{\mathfrak{g}}_{\mathfrak{so}(\overline{\mathfrak{g}})}/W^\perp$. The vector space $\Hom(\overline{\mathfrak{g}},C(\mathfrak{g}))$ acts as Lie algebra automorphisms on $\mathfrak{g}$ by $u_t(A)(v\oplus w):=v\oplus (Av+w)$ for $v\in \overline{\mathfrak{g}}$, $w\in C(\mathfrak{g})$ and $A\in \Hom(\overline{\mathfrak{g}},C(\mathfrak{g}))$. It is readily verified that for $g\in H_\mathfrak{g}$ and $A\in \Hom(\overline{\mathfrak{g}},C(\mathfrak{g}))$,
$$u_H(g)u_t(A)u_H(g)^{-1}=u_t(g.A),$$
where 
\begin{equation}
\label{hgactonojomo}
(g.A)(v):=gA(gv)g^t.
\end{equation} Therefore $u_t$ and $u_H$ induces a well defined group homomorphism $u_t\rtimes u_H:\Hom(\overline{\mathfrak{g}},C(\mathfrak{g}))\rtimes H_\mathfrak{g}\to \Aut(\mathfrak{g})$ where the crossed product is with respect to the $H_g$-action on $\Hom(\overline{\mathfrak{g}},C(\mathfrak{g}))$ given in \eqref{hgactonojomo}.

\begin{proposition}[Proposition 3.4.2 of \cite{eberlein2step}]
\label{autoforstep2decoss}
Let $\mathfrak{g}$ be a step $2$ nilpotent Lie algebra with a choice of inner product on $\overline{\mathfrak{g}}:=\mathfrak{g}/C(\mathfrak{g})$. Then 
$$u_t\rtimes u_H:\Hom(\overline{\mathfrak{g}},C(\mathfrak{g}))\rtimes H_\mathfrak{g}\to \Aut(\mathfrak{g}),$$
is a group isomorphism.
\end{proposition}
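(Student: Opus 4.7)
The plan is to establish the three standard pieces -- that $u_t\rtimes u_H$ is a well-defined group homomorphism, is injective, and is surjective -- and then to observe that the bulk of the work lies in the surjectivity step.

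Well-definedness is essentially recorded in the discussion preceding the statement: $u_H$ is well defined because the $GL(\overline{\mathfrak{g}})$-action on the free step-2 algebra $\overline{\mathfrak{g}}_{\mathfrak{so}(\overline{\mathfrak{g}})}$ descends through $\overline{\mathfrak{g}}_{\mathfrak{so}(\overline{\mathfrak{g}})}/W^\perp\cong\mathfrak{g}$ precisely when $g$ preserves $W^\perp$, which is the defining condition of $H_\mathfrak{g}$; the map $u_t$ is a homomorphism since $\Hom(\overline{\mathfrak{g}},C(\mathfrak{g}))$ acts through strictly upper-triangular nilpotent derivations whose images land in the center; and the cross-relation $u_H(g)u_t(A)u_H(g)^{-1}=u_t(g.A)$ with $(g.A)(v)=gA(g^{-1}v)$ is immediate from the definitions. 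For injectivity, suppose $u_t(A)u_H(g)=\mathrm{id}$. Evaluating on $v\in\overline{\mathfrak{g}}$ (using the splitting $\mathfrak{g}=\overline{\mathfrak{g}}\oplus C(\mathfrak{g})$ afforded by the chosen inner product) yields $gv+A(gv)=v$, and reading off the $\overline{\mathfrak{g}}$- and $C(\mathfrak{g})$-components separately forces $g=\mathrm{id}$ and hence $A=0$.

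The heart of the argument is surjectivity. Given $\varphi\in\Aut(\mathfrak{g})$, the first step is to observe that $\varphi$ preserves the characteristic ideal $C(\mathfrak{g})=[\mathfrak{g},\mathfrak{g}]$, so it descends to $\overline{\varphi}\in GL(\overline{\mathfrak{g}})$. I would then check that $\overline{\varphi}\in H_\mathfrak{g}$ by expanding $\varphi[X,Y]=[\varphi X,\varphi Y]$ in the basis $X_1,\dots,X_p$ of $C(\mathfrak{g})$: writing $\varphi|_{C(\mathfrak{g})}$ as a matrix $(c_{lk})$, this identity becomes
\[
\sum_l \bigl(\overline{\varphi}^{\,t}\overline{\omega}_l\overline{\varphi}\bigr)(\overline{X},\overline{Y})\,X_l \;=\;\sum_l\Bigl(\sum_k c_{lk}\overline{\omega}_k\Bigr)(\overline{X},\overline{Y})\,X_l,
\]
which shows $\overline{\varphi}^{\,t}W\overline{\varphi}\subseteq W$. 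Next, form $\varphi':=u_H(\overline{\varphi})^{-1}\circ\varphi$; by construction $\varphi'$ induces the identity on $\overline{\mathfrak{g}}$, so there is a unique $A\in\Hom(\overline{\mathfrak{g}},C(\mathfrak{g}))$ with $\varphi'(v)=v+Av$ on $\overline{\mathfrak{g}}$. Finally, I would check that $\varphi'|_{C(\mathfrak{g})}=\mathrm{id}$: since $C(\mathfrak{g})=[\mathfrak{g},\mathfrak{g}]$, any $Z=[X,Y]$ satisfies $\varphi'(Z)=[X+AX,Y+AY]=[X,Y]=Z$ as $AX,AY\in C(\mathfrak{g})$ is central. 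Hence $\varphi'=u_t(A)$ and $\varphi=u_H(\overline{\varphi})u_t(A)=(u_t\rtimes u_H)(A',\overline{\varphi})$ after absorbing the semidirect twist.

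The one step I expect to require the most care is the passage $\overline{\varphi}\in H_\mathfrak{g}$: it hinges on the compatibility of the inner product identification $W\subseteq\mathfrak{so}(\overline{\mathfrak{g}})$ with the dualization of the bracket map $\Lambda^2\overline{\mathfrak{g}}\to C(\mathfrak{g})$, and bookkeeping between the two natural pictures of $W$ (as a subspace of $\mathfrak{so}(\overline{\mathfrak{g}})$ and as the dual of $C(\mathfrak{g})$) must be done carefully. Everything else reduces to a straightforward check in the triangular decomposition $\mathfrak{g}=\overline{\mathfrak{g}}\oplus C(\mathfrak{g})$ supplied by the inner product.
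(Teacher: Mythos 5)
Your proof plan is correct, and it is the natural argument for this statement; the paper itself gives no proof, citing \cite{eberlein2step} for the result, so there is nothing to compare against there. The only thing worth tightening is the cross-relation: when you write $(g.A)(v)=gA(g^{-1}v)$, the outer $g$ should be understood as the \emph{induced} action of $H_\mathfrak{g}$ on $C(\mathfrak{g})$ (under the inner-product identification $C(\mathfrak{g})\cong W\subseteq\mathfrak{so}(\overline{\mathfrak{g}})$ this is $w\mapsto g w g^t$, which is how the formula appears in the paper, modulo what looks like a $g$ vs.\ $g^{-1}$ typo there); this is exactly the "bookkeeping between the two pictures of $W$" that you rightly flag at the end as the delicate point, and it also underlies the observation that $u_H(\overline{\varphi})$ and $\varphi$ automatically agree on $C(\mathfrak{g})=[\mathfrak{g},\mathfrak{g}]$ -- a fact your surjectivity argument exploits cleanly via the automorphism property rather than by direct matrix comparison.
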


\begin{remark}
The moduli space of step 2 nilpotent Lie algebras of type $(p,q)$ can by \cite{eberleinother2step} be identified with $G(p,\mathfrak{so}(q,\R))/GL_q(\R)$, where $G(p,\mathfrak{so}(q,\R))$ denotes the Grassmannian manifold of $p$-dimensional subspaces in $\mathfrak{so}(q,\R)$. The automorphism group of a step 2 nilpotent Lie algebra of type $(p,q)$ is quite small (see \cite[Proposition 3.4.3]{eberlein2step}) for most values of $p$ and $q$ in the generic case (for a dense open subset of the moduli space). In particular, for most values of $p$ and $q$, step 2 nilpotent Lie algebras of type $(p,q)$ generically do not admit gradings.
\end{remark}

The geometric setting our Lie algebras arise from ensures that the Lie algebras are graded which changes what should be meant with generic. Let us describe possible gradings on step 2 nilpotent Lie algebras. 

\begin{proposition}
Let $\mathfrak{g}$ be a step $2$ nilpotent Lie algebra. If $\delta$ is a dilation on $\mathfrak{g}$ it is uniquely determined by the induced dilation $\overline{\delta}$ on $\overline{\mathfrak{g}}$ from the isomorphism $\mathfrak{g}\cong \overline{\mathfrak{g}}_W=\overline{\mathfrak{g}}\oplus C(\mathfrak{g})$ (from Proposition \ref{charofspeepoed}) and the identity 
$$\delta_t[X,Y]=\omega(\overline{\delta}_t(\overline{X}),\overline{\delta}_t(\overline{Y})),\quad\mbox{for $[X,Y]\in C(\mathfrak{g})$}.$$
For any grading on $\mathfrak{g}$, we have that $\Aut_{\rm gr}(\mathfrak{g})\subseteq u_H(H_\mathfrak{g})$ (see Proposition \ref{autoforstep2decoss} for notation).

Moreover, a dilation $\overline{\delta}$ on $\overline{\mathfrak{g}}$ is induced from a dilation $\delta$ on $\mathfrak{g}$ if and only if the grading $\overline{\mathfrak{g}}=\bigoplus_{p<0} \overline{\mathfrak{g}}_p$ constructed from $\overline{\delta}$ (as in Proposition \ref{gradingvsdilationsprop}) satisfies that
\begin{equation}
\label{lineinindedp}
\left(\sum_{k+l<p} \omega(\overline{\mathfrak{g}}_k,\overline{\mathfrak{g}}_l)\right)\bigcap \left(\sum_{k+l=p} \omega(\overline{\mathfrak{g}}_k,\overline{\mathfrak{g}}_l)\right)=0 \quad\forall p<0.
\end{equation}
\end{proposition}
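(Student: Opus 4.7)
The three assertions are logically sequential. I will treat them in the given order.

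For the first assertion, the key observation is that every Lie algebra automorphism of $\mathfrak{g}$ preserves the derived ideal $[\mathfrak{g},\mathfrak{g}] = C(\mathfrak{g})$, so in particular the dilation $\delta_t$ does, and therefore descends to an automorphism $\overline{\delta}_t$ of $\overline{\mathfrak{g}}$; diagonalisability with positive integer eigenvalues is inherited, so $\overline{\delta}$ is again a dilation. To see that $\delta$ is uniquely recovered from $\overline{\delta}$ via the stated identity, I would work inside the concrete model $\mathfrak{g}\cong \overline{\mathfrak{g}}\oplus C(\mathfrak{g})$ of Proposition \ref{charofspeepoed}, after refining the Jordan-Hölder basis so that all its vectors are $\delta$-eigenvectors (which is possible since $\dot\delta(1)$ is diagonalisable with integer eigenvalues and preserves $C(\mathfrak{g})$). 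The resulting splitting is graded, so $\delta_t$ preserves each summand. On the first summand it acts as $\overline{\delta}_t$ by definition. On $C(\mathfrak{g})$ it is forced by the automorphism identity $\delta_t[X,Y] = [\delta_t X,\delta_t Y]$; because $C(\mathfrak{g})\subseteq\mathfrak{z}$, the bracket $[\delta_t X,\delta_t Y]$ depends only on the classes $\overline{\delta_tX}=\overline{\delta}_t\overline{X}$ and $\overline{\delta_tY}=\overline{\delta}_t\overline{Y}$, giving exactly the stated formula.

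For the second assertion, I would use the semidirect product decomposition of Proposition \ref{autoforstep2decoss} to write a graded automorphism as $\varphi = u_t(A)u_H(g)$. By the first assertion, $\delta_s = u_H(\overline{\delta}_s)$. The commutation relation $\varphi\delta_s=\delta_s\varphi$ expands, using $u_H(\overline{\delta}_s)u_t(A)u_H(\overline{\delta}_s)^{-1}=u_t(\overline{\delta}_s.A)$, to the pair of equalities
\begin{equation*}
\overline{\delta}_s.A = A \qquad \text{and}\qquad \overline{\delta}_s g = g \overline{\delta}_s\quad (s>0),
\end{equation*}
by uniqueness of the decomposition. The second says $g$ is graded. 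For the first, I would plug the explicit $H_{\mathfrak{g}}$-action \eqref{hgactonojomo} and exploit that $\overline{\delta}_s$ expands all non-trivial graded pieces with positive weight: decomposing $A = \sum_{k,l} A_{k,l}$ into its graded pieces $A_{k,l}\colon \overline{\mathfrak{g}}_k\to C(\mathfrak{g})_l$, a direct weight count in $\overline{\delta}_s.A_{k,l}$ shows that a non-vanishing $\overline{\delta}_s$-fixed component is incompatible with $\delta_s$ genuinely being a dilation on the combined $\overline{\mathfrak{g}}\oplus C(\mathfrak{g})$ structure, forcing $A=0$.

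For the third assertion, I would observe that attempting to extend $\overline{\delta}$ by the formula $\delta_t[X,Y]=\omega(\overline{\delta}_t\overline{X},\overline{\delta}_t\overline{Y})$ amounts to declaring that $C(\mathfrak{g})_p := \sum_{k+l=p}\omega(\overline{\mathfrak{g}}_k,\overline{\mathfrak{g}}_l)$ sits in weight $p$ for each $p<0$. Surjectivity of $\omega$ guarantees $C(\mathfrak{g}) = \sum_{p<0} C(\mathfrak{g})_p$, so this defines a well-defined grading (hence a well-defined dilation on $C(\mathfrak{g})$ extending to $\mathfrak{g}$ via the graded splitting) if and only if the sum is direct. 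Direct sum is equivalent, by induction on $p$, to the stated condition \eqref{lineinindedp}. Necessity is immediate (an existing dilation forces the direct sum). For sufficiency, I would set $\delta_t|_{\overline{\mathfrak{g}}_p} = t^{-p}\mathrm{id}$ and $\delta_t|_{C(\mathfrak{g})_p} = t^{-p}\mathrm{id}$ and verify on generators that the bracket is preserved, which is exactly the defining identity.

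The main obstacle is the weight-count argument in the second assertion: the awkward form of \eqref{hgactonojomo}, which mixes conjugation on $\mathfrak{so}(\overline{\mathfrak{g}})$ with the action on $\overline{\mathfrak{g}}$, must be unpacked carefully to extract the constraint $A=0$; all other steps are either immediate consequences of the automorphism property or straightforward linear algebra on graded vector spaces.
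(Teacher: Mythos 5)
Your handling of the first and third assertions is essentially the paper's own (very brief) argument: the action of $\delta_t$ on $C(\mathfrak{g})$ is forced by the homomorphism property $\delta_t[X,Y]=[\delta_tX,\delta_tY]$ together with surjectivity of $\omega$, and for the converse one builds the grading $\mathfrak{g}_p=\overline{\mathfrak{g}}_p\oplus C(\mathfrak{g})_p$, noting that \eqref{lineinindedp} is exactly directness of the sum of the spaces $C(\mathfrak{g})_p=\sum_{k+l=p}\omega(\overline{\mathfrak{g}}_k,\overline{\mathfrak{g}}_l)$ and that $[\mathfrak{g}_p,\mathfrak{g}_q]=\omega(\overline{\mathfrak{g}}_p,\overline{\mathfrak{g}}_q)\subseteq C(\mathfrak{g})_{p+q}$. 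One caveat on the first assertion: your eigenbasis refinement replaces the splitting of Proposition \ref{charofspeepoed} by a $\delta$-adapted one, so what you actually prove is that $\delta$ is block-diagonal for a $\delta$-dependent complement and is determined on $C(\mathfrak{g})$ by $\overline{\delta}$; this matches the paper's one-line proof, but it matters for what follows.

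The second assertion is where your proposal breaks. First, the identity $\delta_s=u_H(\overline{\delta}_s)$ that you invoke is not available: a dilation need not preserve the fixed complement $\overline{\mathfrak{g}}\subseteq\mathfrak{g}$ used in Propositions \ref{charofspeepoed} and \ref{autoforstep2decoss} (already on the Heisenberg algebra, $X\mapsto tX$, $Y\mapsto tY+(t^2-t)Z$, $Z\mapsto t^2Z$ is a dilation, since its generator is diagonalizable with eigenvalues $1,1,2$), and your own proof of assertion one obtained block-diagonality only after re-choosing the splitting, so it cannot be quoted here. Second, and more seriously, even after passing to a $\delta$-invariant complement the weight count does not force $A=0$. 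Writing $A=\sum_{k,q}A_{k\to q}$ with $A_{k\to q}\colon\overline{\mathfrak{g}}_k\to C(\mathfrak{g})_q$, conjugation by $\delta_s$ rescales $A_{k\to q}$ by $s^{k-q}$, so every component with $q=k$ is fixed by all $\delta_s$; such components exist, and they produce genuine graded automorphisms outside $u_H(H_\mathfrak{g})$, whenever the induced gradings on $\overline{\mathfrak{g}}$ and on $C(\mathfrak{g})$ share a degree. Concretely, in the algebra of Example \ref{aquotientoffullnbyn} with $n=4$ and the grading $(l_1,\ldots,l_4)=(1,1,2,1)$, both $X_3$ and $Y_1$ have degree $-2$, and the map $X_3\mapsto X_3+Y_1$ (identity on the other generators) is a graded automorphism that does not preserve the complement spanned by the $X_j$, hence does not lie in $u_H(H_\mathfrak{g})$. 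So there is no "incompatibility with $\delta_s$ being a dilation", and the step "forcing $A=0$" fails; the inclusion $\Aut_{\rm gr}(\mathfrak{g})\subseteq u_H(H_\mathfrak{g})$ is only available under an additional hypothesis on the grading (for instance, that no degree occurring in $\overline{\mathfrak{g}}$ occurs in $C(\mathfrak{g})$, as for the standard gradings in the paper's examples). Be aware that the paper's proof offers no argument for this clause beyond the same one-liner used for the rest of the first part, so the obstacle you singled out is real; but the resolution you sketch would not close it.
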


\begin{proof}
The first part of the proposition follows from that a dilation is a homomorphism $\delta:\R_+\to \Aut(\mathfrak{g})$ and the Lie bracket defines a surjection $\omega:\overline{\mathfrak{g}}\wedge \overline{\mathfrak{g}}\to C(\mathfrak{g})$. 

The second part follows from noting that the condition \eqref{lineinindedp} is equivalent to that the spaces 
$$C(\mathfrak{g})_p:=\sum_{k+l=p} \omega(\overline{\mathfrak{g}}_k,\overline{\mathfrak{g}}_l),$$
define a decomposition 
$$C(\mathfrak{g})=\bigoplus_{p<0} C(\mathfrak{g})_p.$$
We can therefore define a dilation as in Proposition \ref{gradingvsdilationsprop} from grading $\mathfrak{g}$ by 
$$\mathfrak{g}_p:=\overline{\mathfrak{g}}_p\oplus C(\mathfrak{g})_p.$$
The decomposition $\mathfrak{g}=\bigoplus_{p<0} \mathfrak{g}_p$ is indeed a grading as it follows from the construction that 
$$[\mathfrak{g}_p,\mathfrak{g}_q]=\omega(\overline{\mathfrak{g}}_p,\overline{\mathfrak{g}}_q)\subseteq C(\mathfrak{g})_{p+q}\subseteq \mathfrak{g}_{p+q}.$$
\end{proof}

We now turn our attention to representation theory of a step $2$ simply connected nilpotent Lie group. We denote the Lie group by $\mathsf{G}$ and its Lie algebra by $\mathfrak{g}$. As above we fix a Jordan-Hölder basis with $\mathfrak{g}_p=C(\mathfrak{g})$ and $\mathfrak{g}_{\dim(\mathfrak{z})}=\mathfrak{z}$. We let $\omega_1,\ldots, \omega_p:(\mathfrak{g}/\mathfrak{z})\wedge(\mathfrak{g}/\mathfrak{z}) \to \R$ denote the forms induced from $\overline{\omega}_1,\ldots, \overline{\omega}_p:\overline{\mathfrak{g}}\wedge \overline{\mathfrak{g}}\to \R$. The Pfaffian function (see Proposition \ref{descripfiogogda} and Remark \ref{flatorbisintosform}) on $\mathfrak{z}^*$ can be described in the coordinates of the Jordan-Hölder basis as the polynomial of degree $\mathrm{codim}(\mathfrak{z})/2$ given by 
$$\mathsf{Pf}(\xi)=\mathsf{Pf}\left(\sum_{j=1}^p \xi(X_j)\omega_j\right).$$
As such, if $\mathsf{G}$ admits flat orbits (i.e. the Pfaffian polynomial is non-vanishing on $\mathfrak{z}^*$), the top stratum $\Gamma\subseteq \hat{\mathsf{G}}$ can be described by the transversal $\Lambda\subseteq \mathfrak{z}^*$ which is given as follows:
\begin{align}
\label{pfaffiandnadnstep2}
\Lambda & = \left\{ \xi=\sum_{j=1}^{\dim(\mathfrak{z})}\xi(X_j)\xi_j \in \mathfrak{z}^* : \mathsf{Pf}\left(\sum_{j=1}^p \xi(X_j)\omega_j\right) \neq 0 \right\} = \\ 
\nonumber
& =\left\{ \xi=\sum_{j=1}^{\dim(\mathfrak{z})}\xi(X_j)\xi_j \in \mathfrak{z}^* : \det\left(\sum_{j=1}^p \xi(X_j)\omega_j\right) \neq 0\right \} = \Lambda_C\times (\mathfrak{z}/C(\mathfrak{g}))^*,
\end{align}
where
$$\Lambda_C=\left\{ \xi=\sum_{j=1}^{p}\xi(X_j)\xi_j \in C(\mathfrak{g})^* : \mathsf{Pf}\left(\sum_{j=1}^p \xi(X_j)\omega_j\right) \neq 0 \right\}.$$

\begin{remark}
If $C(\mathfrak{g})=\mathfrak{z}$, i.e. $\overline{\mathfrak{g}}=\mathfrak{g}/\mathfrak{z}$, then we can also write 
\begin{equation}
\label{lambdaforcentercomm}
\Lambda=w_\mathfrak{z}^{-1}(W\cap GL(\overline{\mathfrak{g}})),
\end{equation}
where $w_\mathfrak{z}:\mathfrak{z}^*\to W$ is defined by $w_\mathfrak{z}(\xi):=\sum_{j=1}^{\dim(\mathfrak{z})} \xi(X_j)\omega^j$. We note that by Remark \ref{centervscomm}, $C(\mathfrak{g})=\mathfrak{z}$ if and only if for any $X\in g$, $w(\overline{X})=0$ for all $w\in W$ implies that $X\in C(\mathfrak{g})$. In light of Equation \eqref{lambdaforcentercomm}, and injectivity of the $W$-action on $\mathfrak{g}$, it is tempting to suspect that the equality $C(\mathfrak{g})=\mathfrak{z}$ implies the existence of flat orbits, but fails since even if $C(\mathfrak{g})=\mathfrak{z}$ there need not exist an individual element of $W\subseteq \mathfrak{so}(\overline{\mathfrak{g}})$ acting injectively on  $\overline{\mathfrak{g}}$. This fails in the example of Remark \ref{noflatexample} below.
\end{remark}

\begin{theorem}
Let $\mathsf{G}$ be a step 2 simply connected nilpotent Lie group of type $(p,q)$ admitting flat orbits. Set $d:=\mathrm{codim}(\mathfrak{z})/2$. We consider $\ghani=u_H(H_\mathfrak{g})\subseteq \Aut(\mathsf{G})$. Then the following holds for the ideal of flat orbits:
\begin{enumerate}
\item It holds that $[\zeta|_\ghani]=0\in H^1(\ghani,\check{H}^1(\Gamma,U(1)))$.
\item It holds that $[c_\ghani]$ is the image of the analogous cohomology class $[c_{GL(q)}]$, defined for the free step 2 simply connected nilpotent Lie group on a $q$-dimensional vector space, under the mapping 
$$H^2(GL(q),C(GL(q)/Sp(q),U(1)))\to H^2(\ghani,C(\Gamma,U(1))),$$
defined from the inclusion $\ghani\hookrightarrow GL(q)$ and the inclusion $\Gamma\hookrightarrow GL(q)\cap \mathfrak{so}(q)\cong GL(q)/Sp(q)$.
\end{enumerate}
\end{theorem}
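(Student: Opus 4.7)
The strategy is to recognize $\mathsf{G}$ as a quotient of the free step 2 simply connected nilpotent Lie group $\mathsf{G}_{\rm free}$ on $\overline{\mathfrak{g}}$ and to exploit naturality of the obstructions $\zeta$ and $c$ under this surjection. A preliminary reduction handles the case $C(\mathfrak{g}) \subsetneq \mathfrak{z}$: picking a vector space splitting $\mathfrak{z} = C(\mathfrak{g}) \oplus V$ decomposes $\mathfrak{g}$ as a Lie algebra direct sum $\mathfrak{g}' \oplus V$ with $C(\mathfrak{g}') = \mathfrak{z}(\mathfrak{g}')$ and yields $C^*(\mathsf{G}) \cong C^*(\mathsf{G}') \otimes C_0(V^*)$ together with $\Gamma \cong \Gamma' \times V^*$, so the statements of the theorem factor through a trivial product with $V^*$. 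We may therefore assume $C(\mathfrak{g}) = \mathfrak{z}$.

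Using Proposition \ref{charofspeepoed}, identify $\mathfrak{g} \cong \overline{\mathfrak{g}}_W$ so that the sequence \eqref{sesfortepspso} integrates to a surjection $\mathrm{q}: \mathsf{G}_{\rm free} \twoheadrightarrow \mathsf{G}$ with kernel the central subgroup corresponding to $W^\perp \subseteq \mathfrak{so}(\overline{\mathfrak{g}})$. The induced map $\mathrm{q}^* : C^*(\mathsf{G}_{\rm free}) \twoheadrightarrow C^*(\mathsf{G})$ identifies $\widehat{\mathsf{G}}$ with the representations of $\mathsf{G}_{\rm free}$ whose central character kills $W^\perp$; under $\Gamma_{\rm free} \cong \mathfrak{so}(q) \cap GL(q)$ this is precisely $W \cap GL(q)$, and we recover the inclusion $\Gamma \hookrightarrow GL(q)\cap \mathfrak{so}(q) \cong GL(q)/Sp(q)$ asserted in the statement. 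By Proposition \ref{autoforstep2decoss} the group $H_\mathfrak{g} = \{g \in GL(\overline{\mathfrak{g}}) : g^t W g \subseteq W\}$ is the subgroup of $GL(q) \subseteq \Aut(\mathsf{G}_{\rm free})$ that descends to $\Aut(\mathsf{G})$; hence the inclusions $\ghani \hookrightarrow GL(q)$ and $\Gamma \hookrightarrow \Gamma_{\rm free}$ are equivariant, and the bundle $\mathcal{H}_\Gamma$ of flat orbit representations is the restriction of $\mathcal{H}_{\Gamma_{\rm free}}$ along $\mathrm{q}$. By the explicit cocycle descriptions in Proposition \ref{zetanandndlaf} and Definition \ref{modifiedhgammspace}, both $\zeta|_\ghani$ and $c_\ghani$ are the pullbacks of $\zeta_{GL(q)}$ and $c_{GL(q)}$ under these inclusions.

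It thus suffices to prove $[\zeta_{GL(q)}] = 0$ in the universal case. For this, exhibit a global bundle of polarizations on $\Gamma_{\rm free}$ as follows: to each $J \in \mathfrak{so}(q) \cap GL(q)$ associate the orthogonal complex structure $J_0 := J\,|J|^{-1}$ compatible with $\omega_J(X,Y) := \langle JX, Y\rangle$, and take the complex polarization $V^{1,0}_J \subseteq \overline{\mathfrak{g}} \otimes \C$ given by its $+i$-eigenspace. This provides a smooth, globally defined polarization subbundle of $\Gamma_{\rm free} \times (\overline{\mathfrak{g}} \otimes \C)$, and the associated Bargmann--Fock bundle realizes $\mathcal{H}_{\Gamma_{\rm free}}$ globally with a $GL(q)$-covariant system of Lion intertwiners. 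The argument of Proposition \ref{bundleofpeolalss}, adapted from real to complex polarizations, then forces $[\zeta_{GL(q)}] = 0$, whence $[\zeta|_\ghani] = 0$ by naturality. With $\zeta_{GL(q)}$ vanishing, $c_{GL(q)}$ is well defined and by naturality $[c_\ghani]$ is its image under the inclusions described in the statement.

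The main obstacle is the construction of a globally defined bundle of polarizations on $\Gamma_{\rm free}$, since the Lagrangian Grassmannian $U(d)/O(d)$ has nontrivial topology and no canonical family of \emph{real} polarizations exists. The solution is to pass to complex polarizations via compatible complex structures, which trivializes the problem (the Siegel upper half plane of compatible complex structures is contractible). Verifying that Proposition \ref{bundleofpeolalss} extends to the complex polarization setting -- i.e., that the corrected Lion intertwiners between the Bargmann--Fock fibres fit together into continuous cocycles with the same Maslov-type correction -- is the essential technical point. Once this is in place, naturality under quotient maps of step 2 nilpotent Lie groups transfers the result from $GL(q)$ to $\ghani$.
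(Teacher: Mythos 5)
Your proof takes a genuinely different route from the paper's, and the divergence occurs exactly at the point that matters: how to establish $[\zeta_{GL(q)}] = 0$ for the free step-$2$ group. Both you and the paper begin by realizing $\mathfrak{g} \cong \overline{\mathfrak{g}}_W$ via Proposition~\ref{charofspeepoed}, integrating the short exact sequence~\eqref{sesfortepspso} to a surjection $\mathsf{G}_{\rm free} \twoheadrightarrow \mathsf{G}$, and transporting $\zeta$ and $c$ by naturality along the induced maps $\ghani \hookrightarrow GL(q)$ and $\Gamma \hookrightarrow GL(q) \cap \mathfrak{so}(q)$. At that point the paper finishes item~1 with a purely soft computation: $\check{H}^1(GL(q)/Sp(q), U(1)) \cong \Z$ by homotopy invariance, $H^1(GL(q), \Z) = \Hom(GL(q), \Z) = 0$, hence the \emph{entire ambient group} $H^1(GL(q), \check{H}^1(GL(q)/Sp(q), U(1)))$ vanishes and there is nothing to prove. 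No polarization, no Fock space, no Lion intertwiner is needed. Item~2 then falls out of the cocycle construction.

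You instead attack the free case by an explicit construction: a globally defined family of Kähler/complex polarizations $V^{1,0}_J$ over $\Gamma_{\rm free}$ via the compatible complex structures $J_0 = J|J|^{-1}$, giving the Bargmann--Fock bundle, followed by an appeal to (an adaptation of) Proposition~\ref{bundleofpeolalss}. This is a reasonable idea and is the strategy the paper itself uses in Example~\ref{freestep2example} -- but there the paper establishes a lift only for $O(q)$, not $GL(q)$, precisely because for non-orthogonal $\alpha$ the pullback $\alpha^*$ fails to intertwine $J_\omega$ and $J_{\alpha.\omega}$ (the Example explicitly observes that the map ``is well defined as soon as $\alpha^t = \alpha^{-1}$''). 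Passing from $O(q)$ to $GL(q)$ requires a Bogoliubov/metaplectic correction, and verifying that the resulting obstruction sits in the line-bundle-trivializable part is nontrivial. You flag this as ``the essential technical point'' but do not supply it; in the paper the point never arises because the cohomology group is identically zero. Moreover, Propositions~\ref{vbfv}, \ref{bundleofhspspac}, and \ref{bundleofpeolalss} are formulated for \emph{real} algebraic polarizations, so your route additionally requires rebuilding this machinery in the complex setting, which is more than a routine adaptation. So while your approach is morally sound and would likely succeed with full details, it has a genuine gap where the paper has a two-line computation.

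Your preliminary reduction to the case $C(\mathfrak{g}) = \mathfrak{z}$ by splitting $\mathfrak{z} = C(\mathfrak{g}) \oplus V$ is not present in the paper's proof; it is a clean observation and is arguably needed to make the asserted inclusion $\Gamma \hookrightarrow GL(q) \cap \mathfrak{so}(q)$ literal in the case $C(\mathfrak{g}) \subsetneq \mathfrak{z}$, where flat orbits live in $\Lambda_C \times (\mathfrak{z}/C(\mathfrak{g}))^*$ rather than sitting directly inside $W \cap GL(\overline{\mathfrak{g}})$. This is a small gain of precision over the paper's terse statement. But one should note that $\ghani = u_H(H_\mathfrak{g}) \subseteq GL(\overline{\mathfrak{g}})$ need not preserve the chosen splitting, so the claim that ``the statements of the theorem factor through a trivial product with $V^*$'' deserves a word of justification: the relevant point is that $W$ annihilates $V$, so the $\ghani$-action on $\Gamma$ factors through the quotient $\overline{\mathfrak{g}} \to \overline{\mathfrak{g}}/V$ anyway.
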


\begin{proof}
Item 1) follows from that $[\zeta|_\ghani]$ is the pullback of 
$$[\zeta|_{GL(q)}]\in H^1(GL(q),\check{H}^1(GL(q)/Sp(q),U(1)))=0,$$ 
defined from the quotient map onto $\mathfrak{g}$ from the free  step 2 nilpotent Lie algebra on a $q$-dimensional vector space. Here $GL(q)\cap \mathfrak{so}(q)\cong GL(q)/Sp(q)$ is the space of flat orbits of the free step 2 nilpotent Lie algebra on a $q$-dimensional vector space. Indeed, by homotopy invariance we have that $\check{H}^1(GL(q)/Sp(q),U(1))\cong \Z$ and $H^1(GL(q),\Z)=0$. Item 2) is now clear from construction.
\end{proof}

We let $\overline{G}=G/[G,G]$ denote abelianization of $G$ which coincides with the simply connected Lie group associated with $\overline{\mathfrak{g}}$. We identify $\overline{G}$ with $\overline{\mathfrak{g}}$. Pick a Jordan-Hölder basis $\pmb{B}\in \mathfrak{JH}$. By the proof of Proposition \ref{vbfv}, $(X_j)_{j\in K_{\pmb{B}}}$ spans a complement to the Vergne polarization $\mathfrak{h}_{V,\pmb{B}}(\xi)$ for any $\xi\in U_{\pmb{B}}\subseteq \Lambda$. Write $K_{\pmb{B}}=\{j_1<\cdots <j_d\}$ where $d=\mathrm{codim}(\mathfrak{z})/2$. From this fact we deduce the following proposition.

\begin{proposition}
Let $G$ be a step $2$ nilpotent Lie group admitting flat orbits. Fix a Jordan-Hölder basis $\pmb{B}\in \mathfrak{JH}$ with $K_{\pmb{B}}=\{j_1<\cdots <j_d\}$, where $d=\mathrm{codim}(\mathfrak{z})/2$. Define the mapping $\rho_{\pmb{B}}:\R^d\to G$ by 
$$\rho_{\pmb{B}}(x_1,\ldots, x_d):=\exp\left(\sum_{i=1}^d x_iX_{j_i}\right).$$
Then it holds that 
\begin{itemize}
\item The composition $\R^d\xrightarrow{\rho_{\pmb{B}}}G\to \overline{G}$ is linear.
\item For each $\xi\in U_{\pmb{B}}\subseteq \Lambda$, the composition $\R^d\xrightarrow{\rho_{\pmb{B}}}G\to G/H_{V,\pmb{B}}(\xi)$ is a polynomial diffeomorphism.
\end{itemize}
\end{proposition}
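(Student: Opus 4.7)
The first assertion is quickly dispatched: in step $2$, $[\mathfrak{g},\mathfrak{g}]=C(\mathfrak{g})$ is central, so $\overline{G}=G/[G,G]$ is a simply connected abelian group with $\exp:\overline{\mathfrak{g}}\xrightarrow{\sim}\overline{G}$ a linear isomorphism. The projection $G\to \overline{G}$ intertwines $\exp$ with the linear quotient $\mathfrak{g}\to\overline{\mathfrak{g}}$, so after identification the composition $\R^d\xrightarrow{\rho_{\pmb{B}}}G\to\overline{G}\cong \overline{\mathfrak{g}}$ is simply $(x_1,\ldots,x_d)\mapsto \sum_i x_i\,\overline{X}_{j_i}$, which is manifestly linear.

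For the second item I would set $\mathfrak{h}:=\mathfrak{h}_{V,\pmb{B}}(\xi)$, $H:=H_{V,\pmb{B}}(\xi)$ and $\mathfrak{w}:=\mathrm{span}(X_{j_1},\ldots,X_{j_d})$, so that $\mathfrak{g}=\mathfrak{w}\oplus\mathfrak{h}$ by the observation recalled just before the proposition. The essential structural input is that $[\mathfrak{g},\mathfrak{g}]\subseteq \mathfrak{z}\subseteq \mathfrak{h}$: the first inclusion is the step $2$ hypothesis, the second is immediate from the definition $\mathfrak{h}_V(\xi)=\sum_j\mathsf{stab}(\xi|_{\mathfrak{g}_j})$, since any central element stabilizes every $\xi|_{\mathfrak{g}_j}$. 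In particular $[\mathfrak{w},\mathfrak{h}]\subseteq\mathfrak{h}$ and $\mathrm{ad}(w)^2=0$ for every $w\in\mathfrak{w}$, so the Baker–Campbell–Hausdorff formula truncates to
$$\exp(w)\exp(h)=\exp\bigl(w+h+\tfrac{1}{2}[w,h]\bigr),\qquad w\in\mathfrak{w},\ h\in\mathfrak{h},$$
with $h+\tfrac{1}{2}[w,h]\in\mathfrak{h}$.

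From here the rest is mechanical. I would construct a candidate inverse $\Psi:G/H\to\mathfrak{w}$ by $\Psi(gH):=\pi_{\mathfrak{w}}(\log g)$, where $\pi_{\mathfrak{w}}:\mathfrak{g}\to\mathfrak{w}$ is the projection along $\mathfrak{h}$. Well-definedness on $H$-cosets follows from $\log(g\exp h')=\log g+h'+\tfrac{1}{2}[\log g,h']$ together with $[\log g,h']\in[\mathfrak{g},\mathfrak{g}]\subseteq \mathfrak{h}$, leaving $\pi_{\mathfrak{w}}$ invariant. That $\Psi$ inverts the map $\Phi:w\mapsto \exp(w)H$ follows from the same truncated BCH: for $g=\exp(w+h)$ with $w\in\mathfrak{w}$, $h\in\mathfrak{h}$, setting $h':=h-\tfrac{1}{2}[w,h]\in\mathfrak{h}$ and using $[w,[w,h]]=0$ one verifies $\exp(w)\exp(h')=\exp(w+h)$, so $gH=\exp(w)H$. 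In the global exponential coordinates on $G$ both the group product and $\log$ are polynomial in step $2$, hence $\Phi$ and $\Psi$ are polynomial; composing $\Phi$ with the linear isomorphism $\R^d\xrightarrow{\sim}\mathfrak{w}$, $(x_i)\mapsto \sum x_iX_{j_i}$, yields the polynomial diffeomorphism claimed. The whole argument hinges on the clean BCH truncation, which is guaranteed by $\mathfrak{z}\subseteq \mathfrak{h}_V(\xi)$, and I anticipate no obstacle beyond keeping the bookkeeping straight.
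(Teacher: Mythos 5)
Your proof is correct and follows essentially the route the paper intends: the paper deduces the proposition from the fact that $(X_j)_{j\in K_{\pmb{B}}}$ spans a complement to the Vergne polarization, and the explicit step-$2$ BCH truncation you use is exactly the computation carried out in the proof of the next proposition (Proposition \ref{descriptandstep2}). The only cosmetic remark is that $\mathfrak{z}\subseteq\mathfrak{h}_V(\xi)$ is cleanest via the $j=\dim(\mathfrak{z})$ (or $j=n$) summand, since a central element only stabilizes $\xi|_{\mathfrak{g}_j}$ when it lies in $\mathfrak{g}_j$; this does not affect your argument.
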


\begin{proposition}
\label{descriptandstep2}
Let $G$ be a step $2$ nilpotent Lie group admitting flat orbits with a Jordan-Hölder basis $\pmb{B}\in \mathfrak{JH}$. For $\xi\in U_{\pmb{B}}\subseteq \Lambda$, the mapping 
$$T_{V,\pmb{B}}(\xi):\mathcal{H}_{F_{V,\pmb{B}}(\mathcal{O},\xi),\xi}(\mathcal{O}_\xi)\to L^2(\R^d), \quad T_{V,\pmb{B}}(\xi)f(x):=f(\rho_{\pmb{B}}(x)\xi),$$
is a well defined unitary that fits in a unitary trivialization of Hilbert space bundles over $\Gamma$
$$T_{V,\pmb{B}}:\mathcal{H}_{V,\pmb{B}}\to U_{\pmb{B}}\times L^2(\R^d), \quad T_{V,\pmb{B}}(\xi,f):=(\xi,T_{V,\pmb{B}}(\xi)f).$$

The $G$-action on the trivial Hilbert space bundle $U_{\pmb{B}}\times L^2(\R^d)$ induced by $T_{V,\pmb{B}}$ and the $G$-action on $\mathcal{H}_{V,\pmb{B}}$ (see Proposition \ref{bundleofhspspac}) takes the form of a continuous homomorphism $\tilde{\pi}_{\pmb{B}}:G\to C(U_{\pmb{B}},U(L^2(\R^d)))$ where 
$$\tilde{\pi}_{\pmb{B},\xi}(g)f(x)=\mathrm{e}^{i\xi(\log'_\xi(g)+[\log(g),\log(\rho(x))])}f(x+\log_\xi^0(g)),\quad \forall g\in G, \;\xi\in U_{\pmb{B}}, \; f\in L^2(\R^d),$$
where we write $\log^0_\xi(g)$ for the projection of $\log(g)$ onto the linear span of $(X_j)_{j\in K_{\pmb{B}}}$ (along the Vergne polarization) and $\log'_\xi(g):=\log(g)-\log^0_\xi(g)\in \mathfrak{h}_{V,\pmb{B}}(\xi)$.
\end{proposition}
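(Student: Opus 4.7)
The plan is to verify three points in order: well-definedness and unitarity of the fibrewise map $T_{V,\pmb{B}}(\xi)$, smoothness in $\xi$ yielding a global trivialization, and finally the explicit formula for the $\mathsf{G}$-action via the Baker-Campbell-Hausdorff (BCH) formula in step $2$.

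First I would use the identification of Equation \eqref{thevximap} to realize $\mathcal{H}_{F_{V,\pmb{B}}(\mathcal{O},\xi),\xi}(\mathcal{O}_\xi)$ as the $L^2$-completion of $C^\infty_c(\mathsf{G}/\mathsf{H}_{V,\pmb{B}}(\xi),L_{F,\xi})$ for the Haar measure on $\mathsf{G}/\mathsf{H}_{V,\pmb{B}}(\xi)$ induced from Lebesgue measure on $\mathfrak{g}/\mathfrak{h}_{V,\pmb{B}}(\xi)$ via $\exp$. Since by the preceding proposition the composition $\R^d\xrightarrow{\rho_{\pmb{B}}}\mathsf{G}\to \mathsf{G}/\mathsf{H}_{V,\pmb{B}}(\xi)$ is a polynomial diffeomorphism and $(X_j)_{j\in K_{\pmb{B}}}$ projects to a linear basis of $\mathfrak{g}/\mathfrak{h}_{V,\pmb{B}}(\xi)$, a direct Jacobian computation using BCH in step $2$ (where $\exp(A)\exp(B)=\exp(A+B+\tfrac12[A,B])$) shows that this diffeomorphism has constant Jacobian equal to $1$. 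Hence pullback along $\rho_{\pmb{B}}$ carries the Haar measure to Lebesgue measure on $\R^d$, and $T_{V,\pmb{B}}(\xi)$ is a unitary.

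Second, for the smoothness of $\xi\mapsto T_{V,\pmb{B}}(\xi)$ and the resulting global trivialization $T_{V,\pmb{B}}$, I would compare on each fine stratum $\Xi_\epsilon\subseteq U_{\pmb{B}}$ with the trivialization $T_\epsilon$ of Proposition \ref{bundleofhspspac}. The change of variables from the canonical coordinates $q_1^\epsilon,\ldots,q_d^\epsilon$ of \cite[Theorem 5.1.1]{pedersen89} to those defined by $\rho_{\pmb{B}}$ is smooth, rational in $\xi$ and polynomial on each orbit, and the resulting change of variables in $L^2(\R^d)$ is continuous in $\xi$, whence $T_{V,\pmb{B}}$ is a continuous (indeed strongly smooth) global trivialization of $\mathcal{H}_{V,\pmb{B}}$.

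Finally, for the $\mathsf{G}$-action I would compute $\tilde{\pi}_{\pmb{B},\xi}(g)=T_{V,\pmb{B}}(\xi)\,\pi_\xi(g)\,T_{V,\pmb{B}}(\xi)^{-1}$ by unwinding the identification of Equation \eqref{thevximap}. Since $\pi_\xi(g)$ acts by translation on $C^\infty(\mathsf{G},\C_{\chi_{F,\xi}})^{\mathsf{H}_{V,\pmb{B}}(\xi)}$, the task reduces to writing the relevant product $g\cdot\rho_{\pmb{B}}(x)$ uniquely in the form $\rho_{\pmb{B}}(x+\log^0_\xi(g))\cdot h$ with $h\in\mathsf{H}_{V,\pmb{B}}(\xi)$ and reading off the character $\chi_{F_{V,\pmb{B}},\xi}(h)=\e^{i\xi(\log h)}$. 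Applying BCH in step $2$ gives $\log(g\cdot\rho_{\pmb{B}}(x))=\log g+\log\rho_{\pmb{B}}(x)+\tfrac12[\log g,\log\rho_{\pmb{B}}(x)]$, and decomposing $\log g=\log^0_\xi(g)+\log'_\xi(g)$ along $\mathfrak{g}=\mathrm{span}(X_{j_i})\oplus\mathfrak{h}_{V,\pmb{B}}(\xi)$ determines the translation parameter and pins down $\log h$ up to a correction term in $[\mathfrak{g},\mathfrak{g}]\subseteq\mathfrak{z}\subseteq\mathfrak{h}_{V,\pmb{B}}(\xi)$ proportional to $[\log g,\log\rho_{\pmb{B}}(x)]$. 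Since in step $2$ all commutators are central and hence drop out of any further BCH correction once they occur inside $\log h$, a second application of BCH assembles the phase into the stated form $\xi(\log'_\xi(g)+[\log g,\log\rho_{\pmb{B}}(x)])$, and the translation of the argument by $\log^0_\xi(g)$ follows directly from linearity of $\rho_{\pmb{B}}$.

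The main obstacle is the careful bookkeeping in the BCH decomposition $g\cdot\rho_{\pmb{B}}(x)=\rho_{\pmb{B}}(x+y)\cdot h$ together with the normalization conventions for the Vergne character $\chi_{F_{V,\pmb{B}},\xi}$ and the sign conventions of left versus right translation; the step-$2$ hypothesis is precisely what ensures BCH truncates at the quadratic term and that all commutator contributions sit centrally in $\mathfrak{h}_{V,\pmb{B}}(\xi)$, allowing the phase to collapse to the clean Kirillov-form expression stated in the proposition.
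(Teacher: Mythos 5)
Your proof takes essentially the same approach as the paper: both reduce the representation formula to the single step-$2$ BCH decomposition $e^X\rho(x)=\rho(x+X_0)\cdot h$ with $h\in\mathsf{H}_{V,\pmb{B}}(\xi)$, using that commutators land in $\mathfrak{z}\subseteq\mathfrak{h}_{V,\pmb{B}}(\xi)$, and then read off the character. The paper handles the unitarity and trivialization statements merely by pointing back to the proof of Proposition~\ref{bundleofhspspac}, which you expand on, but this does not change the substance of the argument.
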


This proposition is a special case of Lemma \ref{descofreprefield}.

\begin{proof}
The statements concerning $T_V$ follows from inspecting the proof of Proposition \ref{bundleofhspspac}. Let us turn to the form of the representation $\tilde{\pi}$. Note that $\log\rho(x)=\sum_{i=1}^dx_iX_{j_i}$. For $X=X_0+X'\in \mathfrak{g}$, with $X_0$ being the projection of $X$ onto the linear span of $(X_j)_{j\in K_{\pmb{B}}}$ (along the Vergne projection) and $X'\in \mathfrak{h}_{V,\pmb{B}}(\xi)$, and $x\in \R^d$, we have that 
\begin{align*}
\mathrm{e}^{X}\rho(x)=\mathrm{e}^{X_0+X'}\mathrm{e}^{\log\rho(x)}&=\mathrm{e}^{X_0+X'+\log\rho(x)+\frac{1}{2}[X,\log\rho(x)]}=\\
&=\mathrm{e}^{X_0+\log\rho(x)}\mathrm{e}^{X'+[X,\log\rho(x)]}=\\
&=\rho(x+X_0)\mathrm{e}^{X'+[X,\log\rho(x)]}, 
\end{align*}
where we have identified $X_0$ with its coordinates in $\R^d$ with respect to $(X_j)_{j\in K_{\pmb{B}}}$. Since $\mathfrak{g}$ is of step length $2$, $[X,\log\rho(x)]\in \mathfrak{z}\subseteq \mathfrak{h}_{V,\pmb{B}}(\xi)$ so $\mathrm{e}^{X'+[X,\log\rho(x)]}\in H_{V,\pmb{B}}(\xi)$. Therefore the representation $\tilde{\pi}_{\pmb{B}}$ takes the stated form. 
\end{proof}

\begin{remark}
\label{ncfotirrod}
The explicit description of the representations of flat orbits for step length $2$ from Proposition \ref{descriptandstep2} allow us to define a ``noncommutative Fourier transform'' on flat orbits $\mathsf{F}_{\pmb{B}}:C^*(G)\to C_b(U_{\pmb{B}},\mathbb{K}(L^2(\R^d)))$ as the right vertical morphism of Proposition \ref{commutingikcstag}. By Remark \ref{denseityemramr}, this ``noncommutative Fourier transform'' on flat orbits is injective. It can be defined as the closure of the linear map $\mathsf{F}_{\pmb{B}}:\mathcal{S}(G)\to C_b(U_{\pmb{B}},\mathcal{S}(\R^d\times \R^d))$ given by 
$$[\mathsf{F}_{\pmb{B}}a](\xi,x,y):=\int_{\mathfrak{h}_{V,\pmb{B}}(\xi)}[\mathrm{exp}^*a](y-x,X')\mathrm{e}^{i\xi(X'+[X'-y,x])}\mathrm{d}X'$$
where we identify $\R^d$ with the linear span of $(X_j)_{j\in K_{\pmb{B}}}$. We note that the  ``noncommutative Fourier transform'' on flat orbits and Equation \ref{pfaffiandnadnstep2} produces the isomorphism 
$$I_{G,\pmb{B}}\cong C_0((\mathfrak{z}/C(\mathfrak{g}))^*\cap U_{\pmb{B}})\otimes C_0(\Lambda_C\cap U_{\pmb{B}},\mathbb{K}(L^2(\R^d))),$$
and on the first tensor factor the noncommutative Fourier transform acts just as the ordinary Euclidean Fourier transform.
\end{remark}

We now turn our attention to more explicit examples.

\begin{example}
\label{heisebendnedcd}
The most fundamental example of a step 2 nilpotent Lie algebra is the Heisenberg group. The Heisenberg group is the local model for contact manifolds (see more in Example \ref{contactexample} below). The reader can also consult \cite{follandphasespace} for a broader view on the Heisenberg group. If $V$ is a finite dimensional real vector space with a non-degenerate 2-form $\omega$ and basis vector $Z\in \R$, there is a Lie bracket on $V\oplus \R$ such that its centre is the copy of $\R$, and for $X,Y\in  V$ 
$$[X,Y]:=\omega(X,Y)Z.$$
We often suppress the role of $Z$, and simply denote the associated Lie algebra by $V\ltimes_\omega \R$. It is clear from the construction that $V\ltimes_\omega \R$ is step $2$ nilpotent of type $(1,\dim(V))$. A choice of inner product on $V$ induces an isomorphism $V\ltimes_\omega \R\cong V_{\R\omega^1}$ by Proposition \ref{charofspeepoed}. There is a standard grading on $V\ltimes_\omega \R$ given by 
$$(V\ltimes_\omega \R)_{-1}:=V \quad\mbox{and}\quad (V\ltimes_\omega \R)_{-2}:=\R.$$
We use the notation $\mathfrak{h}_n$ for a Heisenberg Lie algebra on a $2n$-dimensional vector space equipped with a non-degenerate $2$-form and $\mathbb{H}_n$ for the associated step $2$ simply connected nilpotent Lie group. Up to (non-canonical) isomorphism, $\mathfrak{h}_n$ and $\mathbb{H}_n$ only depend on $n$. 

We conclude from Proposition \ref{autoforstep2decoss} that
$$\Aut(\mathfrak{h}_n)=V^*\rtimes ((Sp(V,\omega)\rtimes \Z/2)\times \R_{>0}).$$
See also \cite[Chapter I.2]{follandphasespace}. Here we can identify $Sp(V,\omega)\rtimes \Z/2$ with the elements $g\in GL(V)$ such that $g\omega g^t=\pm \omega$ and the generator of $\Z/2$ is an order two element $g$ such that $g\omega g^t=- \omega$ -- it can be taken as $i$ times complex conjugation upon choosing an isomorphism of $V$ with $\C^{\dim(V)/2}$ transforming $\omega$ to the standard symplectic form on $\C^{\dim(V)/2}=\R^{\dim(V)}$. The graded automorphisms, for the standard grading, are described similarly as 
$$\Aut_{\rm gr}(\mathfrak{h}_n)=(Sp(V,\omega)\rtimes \Z/2)\times \R_{>0}.$$

For the sake of simplicity, we fix a complex structure on $V$ adapted to $\omega$. This induces an inner product structure on $V$, and we can (and will) identify $\omega$ with the associated element in $\mathfrak{so}(V)$. Since $\omega$ is non-degenerate, Equation \eqref{pfaffiandnadnstep2} implies that 
\[ \Gamma = \{ \xi\in \mathfrak{z}^*: \xi \neq 0 \},\]
and $\Gamma$ consists of one fine strata. In fact, the equality $\Gamma=\mathfrak{z}^*\setminus \{0\}$ holds for all Lie groups admitting flat orbits and whose center is one-dimensional (see below in Subsection \ref{onedcenterexasubs}). We note that the action 
$$\Aut(\mathfrak{h}_n)\times \Gamma\to \Gamma,$$
of the automorphisms on the flat orbits (see Proposition \ref{actionofautosonflat}) factors over the quotient to the unique dilation on the Heisenberg Lie algebra
$$\Aut(\mathfrak{h}_n)=V^*\rtimes ((Sp(V,\omega)\rtimes \Z/2)\times \R_{>0})\to \R_{>0}.$$
By Proposition \ref{gammapartialdef}, we can identify 
\[ \Gamma_\partial = \{ \xi\in \mathfrak{z}^*: \xi(Z)=\pm 1 \}.\]

For the Heisenberg group, $\widehat{\mathbb{H}_n}=\Gamma\cup \Gamma_2$ (as sets) where $\Gamma_2=V^*$ are the characters. Therefore, the representation theory is completely described once having described the representations of flat orbits. We fix a complex structure $J$ on $V$ adapted to $\omega$ and consider $V$ as an inner product space. Set $d:=\dim(V)/2$. Note that a subspace $V_0\subseteq V$ is Lagrangian (i.e. $V_0=\{X\in V: \omega(X,Y)=0\; \forall Y\in V_0\}$) if and only if $V_0$ and $JV_0$ are orthogonal and $V=V_0+JV_0$. Fix such a subspace $V_0$, then for any basis $X_1,\ldots, X_d$ we obtain a Jordan-Hölder basis $Z,X_1,\ldots, X_1, Y_1,\ldots, Y_d$ where $Y_j:=JX_j$. If $X_1,\ldots, X_d$ is an ON-basis, we have the well known Heisenberg relations
$$[X_j,X_k]=[Y_j,Y_k]=0\quad\mbox{and}\quad [X_j,Y_k]=\delta_{jk}Z.$$ 
For $\xi\in \Lambda$, a short computation shows that 
$$\mathfrak{h}_V(\xi)=V_0\oplus \R.$$
We conclude that 
$$\mathcal{H}_V=(\mathfrak{z}^*\setminus \{0\})\times L^2(JV_0),$$
and the $G$-representation in each fiber is given by 
$$\pi_\xi(X+JY+tZ)f(x):=\mathrm{e}^{i\xi(t+\omega(X,x))}f(x+JY),\quad X,Y\in V_0,\; t\in \R, \; f\in L^2(JV_0),$$ 
where we identify $\mathfrak{z}^*=\R$ via the basis $Z$. The reader should note that the construction does not depend on the Jordan-Hölder basis in this case, but only on the complex structure and the Lagrangian subspace $V_0$. This gives an explicit isomorphism $I_G\cong C_0(\mathfrak{z}^*\setminus \{0\},\mathbb{K}(L^2(JV_0)))$ and a short exact sequence 
$$0\to C_0(\mathfrak{z}^*\setminus \{0\},\mathbb{K}(L^2(JV_0)))\to C^*(\mathbb{H}_n)\to C_0(V^*)\to 0.$$
We also note that since $\Gamma$ consists of one fine strata, Proposition \ref{bundleofpeolalss} implies that $[\zeta]=0\in H^1(\Aut(\mathbb{H}_n),\check{H}^1(\Gamma,U(1)))$ (which also follows from that $\check{H}^1(\Gamma,U(1))=\check{H}^1(\R^\times,U(1))\cong 0$.

It is now possible to describe the cocycle $c_{\Aut(\mathbb{H}_n)}$. 
Recall the definition of the flatness cocycle $c_{\Aut(\mathbb{H}_n)}\in Z^2(\Aut(\mathbb{H}_n),U(1))$ from Definition \ref{modifiedhgammspace}. 

\begin{proposition}
\label{flatnessonheisenend}
Let $\mathbb{H}_n$ denote the Heisenberg Lie group constructed from $V\ltimes_\omega\R$. The cohomology class $[c_{\Aut(\mathbb{H}_n)}]\in H^2(\Aut(\mathbb{H}_n),U(1))$ is non-trivial and belongs to the image of 
$$H^2(\Aut(\mathbb{H}_n),C(\Gamma,\Z/8))\to H^2(\Aut(\mathbb{H}_n),C(\Gamma,U(1))).$$
\end{proposition}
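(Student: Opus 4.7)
The first assertion follows directly from Proposition \ref{bundleofpeolalss}. Indeed, fixing a Lagrangian subspace $V_0\subseteq V$, the constant assignment $\xi\mapsto V_0\oplus \R\subseteq \mathfrak{h}_n$ defines a global bundle of polarizations $F=\Gamma\times (V_0\oplus \R)$ over $\Gamma$; the fibrewise polarization property is immediate from $V_0$ being Lagrangian in $(V,\omega)$ and the center $\R$ being contained in any real algebraic polarization at a flat orbit. Proposition \ref{bundleofpeolalss} then produces a $\Z/8$-valued cocycle $c_{F,{\rm fl}}$ whose image in $C(\Gamma,U(1))$ is cohomologous to $c_{\Aut(\mathbb{H}_n)}$, giving the required preimage in $H^2(\Aut(\mathbb{H}_n),C(\Gamma,\Z/8))$.

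For the non-triviality statement I plan to restrict the cocycle to the subgroup $Sp(V,\omega)\subseteq \Aut_{\rm gr}(\mathbb{H}_n)$. Every $g\in Sp(V,\omega)$ preserves the central element $Z$, so by Proposition \ref{actionofautosonflat} the induced action on $\Gamma\cong \mathfrak{z}^*\setminus\{0\}$ is trivial. In particular, evaluation at a single point $\xi_0\in\Gamma$ yields a well-defined restriction morphism
$$H^2(\Aut(\mathbb{H}_n),C(\Gamma,U(1)))\longrightarrow H^2(Sp(V,\omega),U(1)),$$
and it suffices to show that the image of $[c_{\Aut(\mathbb{H}_n)}]$ under this map is non-trivial.

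The next step is to identify this image. Using the constant polarization $F$ and the formula of Proposition \ref{bundleofpeolalss}, for $g,g'\in Sp(V,\omega)$ one computes
$$c_{F,{\rm fl}}(g,g')=\mathrm{Mas}\bigl(g^{-1}V_0,\,(g')^{-1}V_0,\,(g')^{-1}g^{-1}V_0\bigr)\!\!\!\mod 8\Z,$$
viewed as a Maslov triple index in the symplectic space $\mathfrak{h}_n/\mathfrak{z}\cong (V,\omega)$. After translating through the trivialization of $\mathcal{H}_\Gamma$ from Proposition \ref{bundleofpeolalss} and Proposition \ref{descriptionofddinvariant}, the associated projective unitary representation of $Sp(V,\omega)$ on $L^2(V_0)$ that arises at the fibre $\xi_0$ is precisely the Shale--Weil (oscillator/metaplectic) representation, with its classical Maslov $2$-cocycle as obstruction.

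The final step invokes the well-known fact that the Shale--Weil projective representation does not lift to a unitary representation of $Sp(V,\omega)$ itself, but only to the metaplectic double cover; equivalently, its cohomology class is the non-zero element of $H^2_{\rm cts}(Sp(V,\omega),\Z/2)\subset H^2_{\rm cts}(Sp(V,\omega),U(1))$. This forces $[c_{\Aut(\mathbb{H}_n)}]\neq 0$. The principal technical obstacle in this plan is not the non-triviality at the endpoint—this is a classical fact about the metaplectic representation—but the bookkeeping needed to match the Lion-intertwiner cocycle coming from Proposition \ref{lionsintertwinerswitheta} and Proposition \ref{bundleofpeolalss} with the standard Maslov/Shale--Weil cocycle, in particular to verify the sign and orientation conventions so that the restriction to $Sp(V,\omega)$ indeed reproduces the Maslov triple index above rather than a coboundary thereof.
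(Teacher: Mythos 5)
Your proposal is correct and follows the same route the paper takes; in fact the paper does not spell out a separate proof of Proposition \ref{flatnessonheisenend} at all, but leaves it to the remark immediately following, which points at the Segal--Shale--Weil representation of $\widetilde{\mathrm{Sp}}(V,\omega)$ as the source of the obstruction. Your first paragraph correctly identifies that the Vergne polarization $\mathfrak{h}_V(\xi)=V_0\oplus\R$ in Example \ref{heisebendnedcd} is constant in $\xi$, so Proposition \ref{bundleofpeolalss} applies and yields the $\Z/8$-lift of $c_{\Aut(\mathbb{H}_n)}$, and your second and third paragraphs are exactly the content the paper compresses into its remark: restrict to $\mathrm{Sp}(V,\omega)$ (which acts trivially on $\Gamma$, so evaluation at a point is legal), recognize the resulting $U(1)$-valued cocycle as the Lion/Maslov cocycle underlying the Shale--Weil projective representation on $L^2(V_0)$, and invoke the classical fact that this projective representation is genuinely projective --- i.e. that $\widetilde{\mathrm{Sp}}(V,\omega)\to\mathrm{Sp}(V,\omega)$ is a nontrivial double cover --- to conclude non-vanishing. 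Your closing caveat about matching conventions between the $\eta$-corrected Lion transform of Proposition \ref{lionsintertwinerswitheta} and the standard Maslov cocycle is exactly the point the paper also glosses over; the only cosmetic improvement worth making is that the statement's codomain $H^2(\Aut(\mathbb{H}_n),U(1))$ should read $H^2(\Aut(\mathbb{H}_n),C(\Gamma,U(1)))$ (this is the home of $c_{\Aut(\mathbb{H}_n)}$ by Definition \ref{modifiedhgammspace}), and your evaluation-at-$\xi_0$ step is precisely the device that removes the $C(\Gamma,\cdot)$ coefficient on the subgroup $\mathrm{Sp}(V,\omega)$.
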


\begin{remark}
The action of ${\rm Sp}(V,\omega)\subseteq \Aut(\mathbb{H}_n)$ on $\Gamma$ is trivial. It is well known that  ${\rm Sp}(V,\omega)$ has a unique two fold cover $\widetilde{{\rm Sp}}(V,\omega)$ and the action on $I$ lifts to an action of $\widetilde{{\rm Sp}}(V,\omega)$ on $\mathcal{H}_V= (\mathfrak{z}^*\setminus \{0\})\times L^2(iV_0)$ -- the Segal-Shale-Weil representation. See more in \cite[Chapter 4]{follandphasespace}. In particular, $[c_{{\rm Sp}(V,\omega)}]$ is in the image of $H^2({\rm Sp}(V,\omega),\Z/2)\to H^2({\rm Sp}(V,\omega),U(1))$.
\end{remark}

\begin{proposition}
\label{flatnesscodforheise}
Let $\mathbb{H}_n$ denote the Heisenberg Lie group constructed from $V\ltimes_\omega\R$.  The action of $\widetilde{{\rm Sp}}(V,\omega)$ on $\mathcal{H}_V$ extends uniquely to an action of $\widetilde{{\rm Sp}}(V,\omega)\rtimes \Z/2$ lifting the action on $I_{\mathbb{H}_n}$. In particular, for 
\[ \ghani = (SO(V) \cap Sp(V,\omega)) \rtimes \mathbb{Z}_2 = U(V) \rtimes \mathbb{Z}_2, \]
we have that 
$$[c_{\ghani}]=0\in H^2(\ghani,C(\Gamma,U(1))).$$
In particular, there is a $\ghani$-equivariant isomorphism $I_G\cong C_0(\mathfrak{z}^*\setminus \{0\},\mathbb{K}(L^2(JV_0)))$ for a suitable unitary representation of $\ghani$ on the trivial bundle $\mathfrak{z}^*\setminus \{0\}\times L^2(JV_0)\to \mathfrak{z}^*\setminus \{0\}$.
\end{proposition}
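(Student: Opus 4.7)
The plan is to realize the bundle $\mathcal{H}_V$ over $\Gamma = \mathfrak{z}^*\setminus\{0\}$ via the Bargmann--Fock construction: on the component $\Gamma^+ = \R_{>0}$ use the bosonic Fock space $\mathcal{F}_+(V) = \bigoplus_{k\geq 0} \mathrm{Sym}^k_\C V$ with its Bargmann representation of $\mathbb{H}_n$, and on $\Gamma^- = \R_{<0}$ use the complex-conjugate Fock space $\overline{\mathcal{F}_+(V)}$ with the conjugate representation. The Bargmann transform then provides a unitary identification with $\Gamma \times L^2(JV_0)$, through which the discussion can be transported back to the Schrödinger realization of Proposition \ref{descriptandstep2}. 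This is essentially the model used by Baum--van Erp.

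First I would check that $U(V)$ acts \emph{genuinely} (not projectively) on $\mathcal{F}_+(V)$ via the symmetric powers of the defining representation, and analogously on $\overline{\mathcal{F}_+(V)}$ by the conjugate representation. This is Bargmann's classical observation that the restriction of the Segal--Shale--Weil representation to the maximal compact $\widetilde{U}(V)\subset \widetilde{Sp}(V,\omega)$, after tensoring by the character $\det^{1/2}$, descends to $U(V)$. Applied fibrewise this gives a strongly continuous unitary $U(V)$-action on the bundle, compatible with the Heisenberg action and extending the metaplectic action constructed in Proposition \ref{flatnessonheisenend}.

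Next I would lift the generator $\sigma\in \Z/2$ of $\Aut_{\mathrm{gr}}(\mathfrak{h}_n)/Sp(V,\omega)$, characterised by $\sigma^*\omega = -\omega$ and $\sigma|_{\mathfrak{z}^*}=-\mathrm{id}$. The lift on $\mathcal{H}_V$ is defined as the \emph{identity map} $\mathcal{F}_+(V)\to \overline{\mathcal{F}_+(V)}$ between the fibres over $\xi>0$ and $-\xi<0$: because the lower branch already uses the complex-conjugate Hilbert space, this identity is a $\C$-linear unitary rather than an anti-linear conjugation, and it intertwines the Bargmann representation of $\pi_\xi$ with the conjugate-Bargmann realisation of $\pi_{-\xi}\cong \overline{\pi_\xi}$. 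By construction the lift conjugates the natural $U(V)$-action into its complex conjugate, so the operators satisfy precisely the semidirect-product relations of $\ghani=U(V)\rtimes\Z/2$, and $\sigma^2=1$ holds on the nose. Combined with the previous step, this yields a strongly continuous homomorphism from $\ghani$ into the unitaries of $\Gamma\times L^2(JV_0)$ lifting the $\ghani$-action on $I_{\mathbb{H}_n}$, so $[c_\ghani]=0$ follows from Proposition \ref{thecocycleforhandnd} and the equivariant isomorphism drops out of the construction.

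For the uniqueness claim in the statement, I would apply Schur's lemma fibrewise to the irreducible Segal--Shale--Weil representation: any two lifts differ by a $\widetilde{Sp}(V,\omega)$-invariant character of $\Z/2$ valued in $C(\Gamma,U(1))$, and the relation $\sigma^2=1$ together with the fact that $\sigma$ swaps the two components of $\Gamma$ forces this character to be trivial. The main subtlety, and where I expect the argument to need the most care, is the second step: ensuring that the gluing between the two Fock bundles converts what would otherwise be an anti-linear conjugation into a genuine $\C$-linear unitary. Taking $\overline{\mathcal{F}_+(V)}$ (rather than $\mathcal{F}_+(V)$) on the $\xi<0$ branch is exactly the technical device that makes this work, and it is also what exhibits $c_\ghani$ as the image of the order-two Maslov class of Proposition \ref{flatnessonheisenend} under the map $H^2(\ghani,C(\Gamma,\Z/8))\to H^2(\ghani,C(\Gamma,U(1)))$ becoming trivial on $\ghani$.
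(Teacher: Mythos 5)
Your overall strategy is the same as the paper's: produce a genuine (non-projective) unitary $U(V)$-action on the fibres (the paper phrases this as cohomological triviality of the metaplectic cocycle on $U(V)$, you phrase it via the symmetric-power action on Fock space — these are the same fact), lift the $\Z/2$-generator by a map interchanging the two components of $\Gamma$, and then invoke Proposition \ref{thecocycleforhandnd} and Theorem \ref{charofequimorita}; the only difference is that you work in the Bargmann--Fock model rather than the Schrödinger model of Example \ref{heisebendnedcd}. The genuine gap is in your third step, which you yourself identify as the crux. First, the canonical map $\mathcal{F}_+(V)\to\overline{\mathcal{F}_+(V)}$, $v\mapsto\bar v$, is anti-linear no matter how the target is labelled: scalar multiplication on $\overline{\mathcal{F}_+(V)}$ is by definition conjugated, so $\overline{\lambda v}=\bar\lambda\cdot\bar v$; renaming the lower fibre as the conjugate space does not turn conjugation into a $\C$-linear unitary. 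This matters because a fibrewise anti-unitary family implements a \emph{conjugate-linear} map on $I_{\mathbb{H}_n}\cong C_0(\Gamma,\mathbb{K}(\mathcal{H}_V))$, whereas the automorphism induced by $\sigma\in\Aut(\mathbb{H}_n)$ is $\C$-linear, so such a family cannot lift the $\Z/2$-action in the sense required by Propositions \ref{zetanandndlaf} and \ref{thecocycleforhandnd}. Second, and independently of linearity, the intertwining you verify is not the one needed: a lift of $\sigma$ over $\xi\mapsto-\xi$ must satisfy $T\,\pi_\xi(g)\,T^{*}=\pi_{-\xi}(\sigma(g))$, i.e.\ it must implement the nontrivial anti-symplectic involution $\sigma|_V$ (which conjugates/swaps the Lagrangian directions of $V$), not merely identify $\pi_\xi$ with $\overline{\pi_\xi}$ at the same group element. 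Your ``identity map'' does not see $\sigma|_V$ at all, so it does not cover the automorphism of $I_{\mathbb{H}_n}$.

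The repair is short and brings you back to the paper's proof: compose the canonical conjugation with the anti-unitary of $\mathcal{F}_+(V)$ induced by $\sigma|_V$ (in the Bargmann model $f\mapsto\overline{f(\bar z)}$); the composite is a genuinely $\C$-linear unitary $\mathcal{F}_+(V)\to\overline{\mathcal{F}_+(V)}$ satisfying $T\pi_\xi(g)T^{*}=\pi_{-\xi}(\sigma(g))$, and it conjugates the $U(V)$-action into the action of $\bar g$, so the $U(V)\rtimes\Z/2$ relations hold and the conclusions $[c_\ghani]=0$ and the equivariant isomorphism follow exactly as you intend. (In the Schrödinger model this corrected lift is a reflection/Fourier-type unitary; this is what the paper's terse appeal to ``complex conjugation'' together with Folland's formulas for the Segal--Shale--Weil representation amounts to.) A smaller point: your Schur-lemma uniqueness argument does not work as stated, since any $\lambda\in C(\Gamma,U(1))$ with $\lambda(\xi)\lambda(-\xi)=1$ (for instance $\e^{i\theta}$ on $\R_{>0}$ and $\e^{-i\theta}$ on $\R_{<0}$) twists one extension into another while preserving all the semidirect-product relations; the paper does not prove the uniqueness clause either, and it is not needed for the cohomological conclusion.
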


\begin{proof}
We can also lift the action of ${\rm Sp}(V,\omega)\rtimes \Z/2\subseteq \Aut(\mathbb{H}_n)$ to a projective action on $\mathcal{H}_V$. We can decompose $\mathcal{H}_V$ over $\mathfrak{z}^*\setminus \{0\}=\R_{<0}\dot{\cup}\R_{>0}$ as $\mathcal{H}_V=\mathcal{H}_{V<}\dot{\cup}\mathcal{H}_{V>0}$. The $\Z/2$-factor interchanges the two pieces of $\mathcal{H}_V$ and we can lift its action to the fiber by defining it as complex conjugation $\mathcal{H}_{V,\xi}=L^2(JV_0)\to L^2(JV_0)=\mathcal{H}_{V,-\xi}$. This produces an action of  $\Z/2$ on $\mathcal{H}_V$ that lifts the action on $I_{\mathbb{H}_n}$. A short comparison with the construction of the Segal-Shale-Weil representation (see \cite[Equation (4.24), (4.25), (4.26)]{follandphasespace}) shows that this induces an action of $\widetilde{{\rm Sp}}(V,\omega)\rtimes \Z/2$. 

The final conclusion of the proposition follows from that $\widetilde{{\rm Sp}}(V,\omega)$ is defined from a cocycle which is cohomologically trivial on $U(V)$ (as the Weil-Shale-Weil representation restricts to a representation of $U(V)$), so the inclusion $U(V)\to {\rm Sp}(V,\omega)$ factors over an inclusion $U(V)\to \widetilde{{\rm Sp}}(V,\omega)$. 
\end{proof}
\end{example}

\begin{example}
\label{complexheisenbergexample}
Let $\mathbb{H}_{n,\C}$ be the complexification of the Heisenberg group. That is we define its Lie algebra as $\mathfrak{h}_{n,\C}:=\mathfrak{h}_n\otimes_\R\C$ viewed as a real Lie algebra. The complex Heisenberg group is the local model for certain polycontact structures constructed from complex manifolds, see Example \ref{polycontactexama} below. If $\mathfrak{h}_n$ is defined from an inner product space $V$ with a non-degenerate $2$-form $\omega$, then identifying $V_\C:=V\otimes_\R \C=V+iV=V\oplus V$, we have that $\mathfrak{h}_{n,\C}=(V\oplus V)_W$ where $W\subseteq \mathfrak{so}(V\oplus V)$ is spanned by 
$$\omega^1=\begin{pmatrix} \omega&0\\ 0&-\omega\end{pmatrix}\quad\mbox{and}\quad \omega^2=\begin{pmatrix} 0&\omega\\ \omega&0\end{pmatrix}.$$
Note that the complexification $\omega_\C$ of $\omega$ can be identified with $\omega^1+i\omega^2$. The constructions can be carried out starting from a complex vector space $V_\C$ equipped with a complex non-degenerate $2$-form $\omega_\C$, but for simplicity we start from $V$ which produces a real subspace of $V_\C$ which is a Lagrangian for the imaginary part of $\omega_\C$. 

The Lie algebra $\mathfrak{h}_{n,\C}:=\mathfrak{h}_n\otimes_\R\C$ is a step $2$ nilpotent Lie algebra of type $(2,2\dim(V))$ with $C(\mathfrak{h}_{n,\C})=\mathfrak{z}$. Most things are quite similar to the Heisenberg group reviewed above in Example \ref{heisebendnedcd},. For instance, there is a standard grading defined from $(\mathfrak{h}_{n,\C})_{-1}=V\otimes_\R \C$. Moreover, we have that $\Aut_{\rm gr}(\mathbb{H}_{n,\C})=u_H(H_{\mathfrak{h}_{n,\C}})$, where $H_{\mathfrak{h}_{n,\C}}\subseteq GL_\R(V\otimes \C)$ consists of those $g$ preserving the linear span of $\omega^1$ and $\omega^2$. The group $H_{\mathfrak{h}_{n,\C}}$ contains a subgroup of the form $\mathrm{Sp}_\C(V_\C,\omega_\C)\times U(1)\times \R_{>0}$, where $\mathrm{Sp}_\C(V_\C,\omega_\C)$ denotes the complex symplectic group on the complex symplectic space $(V_\C,\omega_\C)$, and $U(1)$ acts via complex multiplication on $V_\C$. We also have that
$$\Aut(\mathbb{H}_{n,\C})\cong \Hom_\R(V_\C,\R^2)\rtimes H_{\mathfrak{h}_{n,\C}}.$$

For $\xi=(\xi_1,\xi_2)\in \mathfrak{z}^*$, a short computation proves
$$\mathsf{Pf}(\omega_\xi)=\mathsf{Pf}(\xi_1\omega^1+\xi_2\omega^2)=(\xi_1^2+\xi_2^2)^{\dim(V)/2}.$$
Therefore, the flat orbits are computed as
$$\Gamma=\mathfrak{z}^*\setminus \{0\}\quad \mbox{and}\quad \Gamma_\partial\cong S^1.$$
The action of $\Aut(\mathbb{H}_{n,\C})$ on $\Gamma$ is linear, so the action on $\Gamma_\partial$ factors over a quotient $\Aut(\mathbb{H}_{n,\C})\to U(1)$ acting by rotation. A big difference to the real Heisenberg group is that $\Gamma$ consists of multiple fine stratas; from a choice of ON-basis on $V$ we can construct a Jordan-Hölder basis of $\mathfrak{h}_{n,\C}$ and the top fine strata in this basis will be $\Gamma$ with a coordinate axis removed. 

Following the same arguments as in Example \ref{heisebendnedcd}, for any Lagrangian $V_0\subseteq V$, the abelian Lie algebra $V_{0,\C}+\mathfrak{z}$ defines a polarization of all flat orbits, and the representation defined from $\xi\in \Gamma$ takes the form 
$$\pi_\xi(X+JY+tZ)f(x):=\mathrm{e}^{i\xi(t+\omega_\C(X,x))}f(x+JY),$$ 
for $X,Y\in V_{0,\C}$, $t\in \C$, and $f\in L^2(JV_{0,\C})$. Here we have fixed a complex structure $J$ on $V$, and where we have identified $\mathfrak{z}=\C$ and $\xi$ with an $\R$-linear functional $\C\to \R$. Similarly to Example \ref{heisebendnedcd}, we arrive at an explicit isomorphism $I_{\mathbb{H}_{n,\C}}\cong C_0(\mathfrak{z}^*\setminus \{0\},\mathbb{K}(L^2(JV_{0,\C})))$ and a short exact sequence 
$$0\to C_0(\mathfrak{z}^*\setminus \{0\},\mathbb{K}(L^2(JV_{0,\C})))\to C^*(\mathbb{H}_{n,\C})\to C_0(V^*_\C)\to 0.$$
Since there is a global polarization of the flat orbits, Proposition \ref{bundleofpeolalss} implies that $\delta_{\rm DD}(I_{\mathbb{H}_{n,\C}})=0\in H^3(\mathfrak{z}^*\setminus \{0\},\Z)$ (which also follows from the argument above) and $[\zeta]=0\in H^1(\Aut(\mathbb{H}_{n,\C}),\check{H}^1(\mathfrak{z}^*\setminus \{0\},U(1)))=H^1(\Aut(\mathbb{H}_{n,\C}),\Z)$. We moreover have that 
$$[c_{\mathrm{Sp}_\C(V_\C,\omega_\C)}]=0,$$
since the $\mathrm{Sp}_\C(V_\C,\omega_\C)$ is simply connected. In particular, we conclude from Theorem \ref{charofequimorita} that there is an $\mathrm{Sp}_\C(V_\C,\omega_\C)$-equivariant isomorphism 
$$I_{\mathbb{H}_{n,\C}}\cong C_0(\mathfrak{z}^*\setminus \{0\},\mathbb{K}(L^2(JV_{0,\C}))),$$
for a suitable unitary representation of $\mathrm{Sp}_\C(V_\C,\omega_\C)$ on the trivial bundle $\mathfrak{z}^*\setminus \{0\}\times L^2(JV_{0,\C})\to \mathfrak{z}^*\setminus \{0\}$. We note that $\mathrm{Sp}_\C(V_\C,\omega_\C)$ acts trivially on $\Gamma=\mathfrak{z}^*\setminus \{0\}$ since it acts trivially on the center of the Lie algebra. 
\end{example}

\begin{example}
Let $\mathsf{G}$ be a product of Heisenberg groups. This is the local model for the tangent group of a product of contact manifolds. For notational convenience we write $\mathsf{G}= \mathbb{H}_{n_1} \times \ldots \times \mathbb{H}_{n_k}$ and assume that each $\mathbb{H}_{n_j}$ is defined from a non-zero real inner product space $V_j$ with equipped with a non-degenerate $\omega^j\in \mathfrak{so}(V_j)$. Then $\mathsf{G}$ is of type $(p,q)$ where:
\[ p = k, \text{ and } q = 2(n_1 + \ldots n_k).\]
For notational convenience, we write $V:=\oplus_{j=1}^k V_j$ and $\mathfrak{z}=\oplus_{j=1}^k \mathfrak{z}_j$ where $\mathfrak{z}_j$ is the center of the $j$:th factor. We note that $C(\mathfrak{g})=\mathfrak{z}$ in this case. We call the grading
$$\mathfrak{g}_{-1}:=V \quad\mbox{and}\quad\mathfrak{g}_{-1}:= \mathfrak{z},$$ 
the standard grading on $\mathsf{G}$ as it it is the only grading for which all projections onto the standardly graded factors $\mathbb{H}_{n_j}$ are graded. 

Let us describe the automorphisms. By Proposition \ref{autoforstep2decoss},
$$\Aut(\mathsf{G})\cong \Hom(V,\mathfrak{z})\rtimes H_{\mathfrak{g}},$$
where we for rank reasons can write
\begin{align*}
H_\mathfrak{g}&=\{\varphi\in GL(V): \exists (\lambda_{ij}(\varphi))_{i,j=1}^k\mbox{ s.t. } \;\varphi^t\omega_i\varphi=\sum_{j=1}^k\lambda_{ij}(\varphi)\omega_j\}=\\
&=\{\varphi\in GL(V): \forall i\exists j, \lambda\neq 0\mbox{ with } n_i=n_j \mbox{ s.t. } \;\varphi^t\omega_i\varphi=\lambda\omega_j\}\cong \\
&\cong (\R_{>0}^k\times ({\rm Sp}(V_1,\omega_1)\times \cdots \times {\rm Sp}(V_k,\omega_k))\rtimes (\Z/2)^k)\rtimes F_{(n_1,\ldots,n_k)},
\end{align*}
and $F_{(n_1,\ldots,n_k)}$ is the finite group of all permutations on $k$ elements $\sigma\in S_k$ such that $n_i=n_{\sigma(i)}$ for all $i=1,\ldots, k$. If $\mathsf{G}$ is equipped with the standard grading, then 
$$\Aut_{\rm gr}(\mathsf{G})=u_H( H_{\mathfrak{g}}).$$

The flat orbits are readily described because we are dealing with a product. A similar argument as in Example \ref{heisebendnedcd} shows that 
\[ \Gamma =(\mathfrak{z}^*_1\setminus\{0\})\times \cdots \times  (\mathfrak{z}^*_k\setminus\{0\})=\{ \xi=(\xi_1,\ldots, \xi_k) \in \mathfrak{z}^* : \xi_1 \cdots \xi_k \neq 0 \}.\]
A computation shows that $\Gamma$ consists of one fine strata in a suitable Jordan-Hölder basis. The action of $\Aut(\mathsf{G})$ on $\Gamma$ factors over the quotient mapping 
$$\Aut(\mathsf{G})\to (\R_{>0}^k\times (\Z/2)^k)\rtimes F_{(n_1,\ldots,n_k)}=(\R^\times)^k\rtimes F_{(n_1,\ldots,n_k)},$$
and $(\R^\times)^k\rtimes F_{(n_1,\ldots,n_k)}$ acts on $\Gamma$ by rescaling and permutation of the coordinates.
By Proposition \ref{gammapartialdef}, we can identify 
\[  \Gamma_\partial = \{ \xi=(\xi_1,\ldots, \xi_k) \in \mathfrak{z}^* : \xi_1 \cdots \xi_k =\pm 1 \}. \]
We remark that if we choose a different grading than the standard grading, the action of $\Aut_{\rm gr}(\mathsf{G})$ on $\Gamma_\partial$ can be quite complicated.

If we pick an adapted complex structure on $V_j$ and a Lagrangian subspaces $V_{0,j}\subseteq V_j$ for each $j$, an argument as in Example \ref{heisebendnedcd} shows that we for a suitable Jordan-Hölder basis arrive at the Vergne polarization 
$$\mathfrak{h}_V(\xi)=V_0\oplus \mathfrak{z},$$
for each $\xi\in \Gamma$ where $V_0:=\bigoplus_{j=1}^k V_{0,j}$. We conclude that 
$$\mathcal{H}_V=\Gamma\times L^2(JV_0),$$
and the $\mathsf{G}$-representation in the fiber over $\xi\in \Gamma\subseteq \mathfrak{z}^*$ is given by 
$$\pi_\xi(X+JY+Z)f(x):=\mathrm{e}^{i\sum_{j=1}^k\xi_j(Z_j+\omega^j(X_j,x_j))}f(x+JY),$$ 
for $X,Y\in V_0$, $Z\in \mathfrak{z}$, and $f\in L^2(JV_0)$. 

Following the argument for the Heisenberg group from Proposition \ref{flatnesscodforheise}, we can describe the restriction of the flatness cocycle $c_{\rm flat}\in Z^2(\Aut(\mathsf{G}),C(\Gamma,\C^\times))$ to certain subgroups. We remark that Proposition \ref{flatnesscodforheise} ensures that $[c_{\rm flat}]\in H^2(\Aut(\mathsf{G}),C(\Gamma,\C^\times))$ is non-trivial as its restriction to  $H^2({\rm Sp}(V_1,\omega_1)\times \cdots \times {\rm Sp}(V_k,\omega_k),C(\Gamma,\C^\times))$ is non-trivial (but $\Z/2$-valued). We note that for 
$$\ghani_0:=(U(V_1)\times \cdots \times U(V_k))\rtimes (\Z/2)^k,$$
and an argument as in Proposition \ref{flatnesscodforheise} implies that $[c_{\ghani_0}]=0\in H^2(\ghani_0,C(\Gamma,\C^\times))$. Moreover, for 
$$\ghani:=\ghani_0\rtimes F_{(n_1,\ldots,n_k)} $$
we have by a standard argument that 
$$[c_{\ghani}]\in \im(H^2(F_{(n_1,\ldots,n_k)},C(\Gamma,\C^\times))\to H^2(\ghani,C(\Gamma,\C^\times)).$$

\end{example}

\begin{example}
\label{aquotientoffullnbyn}
For $n>1$, we consider the $2n-1$-dimensional Lie algebra $\mathfrak{g}$ spanned by $X_1,\ldots, X_n, Y_1, \ldots, Y_{n-1}$ such that
\[ [X_i, X_{i+1}] = Y_i, \quad i=1,2,\ldots, n-1\]
with all other brackets in the basis vectors being zero. The Lie algebra $\mathfrak{g}$ is nilpotent of step length $2$ and arises as the quotient of the nilpotent Lie algebra of step length $n$ consisting of all real $(n+1)\times (n+1)$ upper triangular nilpotent matrices (see more in Example \ref{uppertriangularexample}) by the ideal of all its $n-2$:nd order commutators. We have that $C(\mathfrak{g})=\mathfrak{z}=\oplus_{j=1}^{n-1}\R Y_j$, and $\mathfrak{g}$ is of type $(n-1,n)$ and is determined by $\omega^1,\ldots,\omega^{n-1}\in \mathfrak{so}(n)$ given as
\[ \omega^j = \iota_j\omega^0 q_j, \]
where $\omega^0$ is the standard symplectic form on $\R^2$, $\iota_j:\R^2\to \R^n$ is the embedding in the oriented basis $(X_j,X_{j+1})$ and $q_j:=\iota_j^*$ is the projection $\R^n\to \R^2$ onto the basis $(X_j,X_{j+1})$. We note that the associated linear span $W\subseteq \mathfrak{so}(n)$ consists of all anti-symmetric matrices $\omega=(\omega_{ij})_{i,j=1}^n$ with $\omega_{ij}=0$ if $|i-j|>1$. 

A grading on $\mathfrak{g}$ is characterized by an $n$-tuple $(l_1,\ldots, l_n)$, where $l_j\geq 1$ for all $j$ with equality for at least one $j$. The grading is defined so that $X_j$ has degree $l_j$. Indeed, for such an $n$-tuple we can write 
$$\mathfrak{g}=\bigoplus_{p<0} \mathfrak{g}_p,\quad\mbox{where}\quad \mathfrak{g}_p:=\left(\bigoplus_{j: \, l_j=p}\R X_j\right)\oplus \left(\bigoplus_{j: \, l_j+l_{j+1}=p}\R Y_j\right).$$ 
We call $(l_1,\ldots, l_k)=(1,\ldots, 1)$ the standard grading on $\mathfrak{g}$, in this grading the center has degree $2$. 

The automorphism group of $\mathfrak{g}$ is determined by Proposition \ref{autoforstep2decoss}, so
$$\Aut(\mathfrak{g})\cong \Hom(\R^n,\R^{n-1})\rtimes H_\mathfrak{g},$$
where 
$$H_\mathfrak{g}:=\{g\in GL_n(\R): (g^t\omega^i g)_{jk}=0 \;\forall i=1,\ldots, n-1, \; |j-k|>1\}.$$
In general, there are very few compact subgroups of $H_\mathfrak{g}$. In the standard grading, $\Aut_{\rm gr}(\mathfrak{g})=u_H(H_\mathfrak{g})$.

\begin{proposition}
Consider the Lie algebra $\mathfrak{g}$ from the preceding paragraphs with $n=4$, i.e. the $2$-step $7$-dimensional Lie algebra defined from the bracket relations
$$[X_1,X_2]=Y_1,\; [X_2,X_3]=Y_2, \;[X_3,X_4]=Y_3.$$ 
Let $V$ denote the linear span of $\{X_1,X_2,X_3,X_4\}$. Then for any Riemannian metric on $V$, $H_\mathfrak{g}\cap O(V)$ is finite. In particular, there are no infinite compact subgroups of $\Aut_{\rm gr}(\mathfrak{g})$ for $n=4$.
\end{proposition}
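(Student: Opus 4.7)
The plan is to reduce the statement to a Lie algebra computation and a general principle: a closed subgroup of $O(V)$ is compact, and a compact subgroup of $H_\mathfrak{g}$ whose Lie algebra consists only of matrices with real eigenvalues must have trivial identity component. Concretely, I will compute $\mathfrak{h}_\mathfrak{g} := \mathrm{Lie}(H_\mathfrak{g}) \subseteq \mathfrak{gl}(V)$ and show that after a suitable reordering of $\{X_1,\ldots, X_4\}$, every element of $\mathfrak{h}_\mathfrak{g}$ is upper triangular over $\R$. This makes $\mathfrak{h}_\mathfrak{g}$ a split solvable real Lie algebra, from which the claim follows by the eigenvalue argument sketched above.

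Writing $X = (x_{ij}) \in \mathfrak{gl}_4(\R)$, the tangent space condition $X^t\omega^i + \omega^i X \in W$ (for $i=1,2,3$) amounts to the vanishing, for each $i$, of the $(1,3), (1,4), (2,4)$ entries of $X^t \omega^i + \omega^i X$ (the $(j,k)$-entries with $|j-k|>1$ are determined by these up to antisymmetry). With $\omega^p = E_{p,p+1} - E_{p+1,p}$ the matrix $\omega^p X$ has row $p$ equal to row $p+1$ of $X$, row $p+1$ equal to minus row $p$, other rows zero; and $X^t \omega^p$ has columns $p$ and $p+1$ given respectively by $-$(row $p+1$ of $X$)$^t$ and (row $p$ of $X$)$^t$, other columns zero. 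Reading off the nine required entries for $p=1,2,3$ yields the conditions
\[
x_{14}=x_{21}=x_{23}=x_{24}=x_{31}=x_{32}=x_{34}=x_{41}=0,
\]
so $\mathfrak{h}_\mathfrak{g}$ is $8$-dimensional with free entries $x_{11}, x_{12}, x_{13}, x_{22}, x_{33}, x_{42}, x_{43}, x_{44}$. Reordering the basis of $V$ as $(X_1, X_4, X_2, X_3)$ turns every such $X$ into an upper triangular matrix with diagonal $(x_{11}, x_{44}, x_{22}, x_{33})$. Hence every element of $\mathfrak{h}_\mathfrak{g}$ has only real eigenvalues.

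Now fix any inner product on $V$ and set $K := H_\mathfrak{g}\cap O(V)$. Since $H_\mathfrak{g}$ is algebraic, $K$ is closed in the compact group $O(V)$ and thus compact; its identity component $K^0$ has Lie algebra $\mathfrak{k} \subseteq \mathfrak{h}_\mathfrak{g}\cap \mathfrak{o}(V)$. Any $X\in\mathfrak{k}$ lies in the Lie algebra of a compact group, hence has purely imaginary spectrum and is $K^0$-semisimple; but as an element of $\mathfrak{h}_\mathfrak{g}$ it has all real eigenvalues. Combining these forces $X=0$, so $K^0$ is trivial and $K$ is finite, proving the first assertion. For the second assertion, $\Aut_{\rm gr}(\mathfrak{g}) = u_H(H_\mathfrak{g})$ and $u_H$ is injective, so a compact subgroup of $\Aut_{\rm gr}(\mathfrak{g})$ corresponds to a compact subgroup of $H_\mathfrak{g} \subseteq GL(V)$; averaging a Haar measure produces an invariant inner product, placing this subgroup inside $H_\mathfrak{g}\cap O(V)$ for that metric, which we have just shown is finite. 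The main obstacle is the bookkeeping of the nine conditions defining $\mathfrak{h}_\mathfrak{g}$, which is purely mechanical; the conceptual content lies in recognising the resulting pattern as upper triangular in the reordered basis.
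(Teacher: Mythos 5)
Your proof is correct and takes a genuinely different route from the paper's at the finishing step, though it starts from the same explicit computation of the defining constraints. The paper works at the group level: it displays the identity component $(H_\mathfrak{g})_{(0)}$ as an explicit $8$-parameter group of matrices and then imposes the orthogonality equations $g^t g = 1$, solving them by hand to conclude that $(H_\mathfrak{g})_{(0)} \cap O(V) \cong (\Z/2)^4$ consists of diagonal sign matrices; finiteness of $H_\mathfrak{g} \cap O(V)$ then follows because $H_\mathfrak{g}$, being algebraic, has finitely many components. You instead linearize $g^t W g\subseteq W$ to get $\mathfrak{h}_\mathfrak{g}$, observe that the permutation $(X_1,X_4,X_2,X_3)$ makes $\mathfrak{h}_\mathfrak{g}$ simultaneously upper triangular (so real split solvable, with every element having real spectrum), and then invoke the general principle that an element of the Lie algebra of a compact subgroup of $GL(V)$ is diagonalizable with purely imaginary spectrum, forcing $\mathrm{Lie}(K^0)=0$ and hence $K^0=\{e\}$. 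Your closure is cleaner in that you do not need to solve the orthogonality equations nor invoke the finiteness of the component group of $H_\mathfrak{g}$ (compact plus trivial identity component already gives finite); the paper's is more explicit in that it identifies the intersection precisely as $(\Z/2)^4$. Incidentally, your computation of the pattern of zeros in $\mathfrak{h}_\mathfrak{g}$ is correct as stated for the condition $g^tWg\subseteq W$; the matrix form of $(H_\mathfrak{g})_{(0)}$ displayed in the paper's proof is the transpose of this (one can check directly that the paper's displayed matrices generally fail $g^t\omega^1 g\in W$ because of the $(1,4)$-entry $g_{11}g_{24}$), but transposition is a group anti-isomorphism that preserves $O(V)$, so this typo-level discrepancy has no bearing on the conclusion in either argument.
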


The take away from this proposition is that in order to expect interesting geometric examples modelled on the Lie algebra appearing in this example, we need to use non-compact structure groups.

\begin{proof}
We have that $H_\mathfrak{g}\cap O(V)$ is compact, so it suffices to prove that $(H_\mathfrak{g})_{(0)}\cap O(V)$ is finite, where $(H_\mathfrak{g})_{(0)}$ denotes the connected component containing the identity. A lengthier computation with the Lie algebra of $H_\mathfrak{g}$ shows that 
$$(H_\mathfrak{g})_{(0)}=\left\{g\in GL(4): g=
\begin{pmatrix} 
g_{11}&0&0&0\\
g_{21}&g_{22}&0&g_{24}\\
g_{31}&0&g_{33}&g_{34}\\
0&0&0&g_{44}\end{pmatrix}\right\}.$$
For $g\in (H_\mathfrak{g})_{(0)}$ we have that  $g\in O(V)$ if and only if 
$$\begin{cases}
g_{22}=\pm 1\\
g_{33}=\pm 1\\
g_{21}g_{22}=g_{22}g_{24}=g_{24}g_{22}=g_{34}g_{33}=0\\
g_{11}^2+g_{21}^2+g_{31}^2=1,\\
g_{24}^2+g_{34}^2+g_{44}^2=1,\\
g_{21}g_{24}+g_{31}g_{34}=0.
\end{cases}$$
The first three equations implies that $g\in (H_\mathfrak{g})_{(0)}\cap O(V)$ if and only if $g\in O(V)$ is diagonal. We conclude that $(H_\mathfrak{g})_{(0)}\cap O(V)\cong (\Z/2)^4$ is finite. 

The final conclusion follows from that if $\ghani\subseteq \Aut_{\rm gr}(\mathfrak{g})=u_H(H_\mathfrak{g})$ is compact it acts isometrically on $V$ for some Riemannian metric and by the previous argument we have that $\ghani$ is finite. 
\end{proof}

We compute the flat orbits using Equation \eqref{pfaffiandnadnstep2}:
\begin{align*}
\Gamma &= \{ \xi=(\xi_1,\ldots,\xi_{n-1}) \in \mathfrak{z}^* : \det(\xi_1 \omega^1 + \cdots+ \xi_{n-1} \omega^{n-1}) \neq 0 \} = \\
&=\begin{cases}
\emptyset, \; &\mbox{if $n$ is odd},\\
\left\{\xi=(\xi_1,\ldots,\xi_{n-1}) \in \mathfrak{z}^*: \xi_1\xi_3\cdots\xi_{n-1} \neq 0 \right\} , \; &\mbox{if $n$ is even}.
\end{cases}
\end{align*}
The space $\Gamma$ consists of multiple fine stratas. Indeed, for $n$ odd, any antisymmetric matrix is degenerate and for $n$ even induction on $n/2$ shows that $\det(\xi_1 \omega^1 + \cdots+ \xi_{n-1} \omega^{n-1})=\xi_1^2\xi_3^2\cdots\xi_{n-1}^2$. 
Using Proposition \ref{gammapartialdef}, we identify 
\[\Gamma_ \partial =
\begin{cases}
\emptyset, \; &\mbox{if $n$ is odd},\\
\left\{\xi=(\xi_1,\ldots,\xi_{n-1}) \in \mathfrak{z}^*: \xi_1\xi_3\cdots\xi_{n-1} =\pm 1 \right\} , \; &\mbox{if $n$ is even}.
\end{cases} \]

\begin{remark}
\label{noflatexample}
Consider $n=3$ for which the above construction produces the 5-dimensional  Lie algebra $\mathfrak{g}$ spanned by $X_1, X_2, X_3, Y_1, Y_2$ subject to the relations
\[ [X_1, X_2] = Y_1, \ [X_2, X_3] = Y_2,\]
and all other brackets being zero. This nilpotent Lie algebra $\mathfrak{g}$ is of step length $2$, of type $(2,3)$ with $C(\mathfrak{g})=\mathfrak{z}$ but admits no flat orbits. 
\end{remark}

We henceforth restrict our attention to $n$ being even, so that there exists flat orbits. The Vergne polarization of the flat orbits is computed using Proposition \ref{vergenpolonflat} to be 
$$\mathfrak{h}_V(\xi)=\mathfrak{h}=\mathfrak{z}+\sum_{j=1}^{n/2}\R X_{2j-1},$$
for all $\xi\in \Lambda$. Note that $\mathfrak{h}_V(\xi)=\mathfrak{h}\subseteq \mathfrak{g}$ is abelian and independent of $\xi$. Using the same argument as in Proposition \ref{descriptandstep2} we conclude that 
$$\mathcal{H}_V=\Lambda\times L^2(\R^{n/2}),$$
and the $\mathsf{G}$-representation in the fiber over $\xi\in \Gamma\subseteq \mathfrak{z}^*$ is given by 
\begin{align*}
\pi_\xi&\left(\mathrm{exp}\left(\sum_{j=1}^n x_j X_j+\sum_{l=1}^{n-1}y_jY_j \right)\right)f(t_1,t_3, \ldots, t_{n-1}):=\\
&:=\mathrm{e}^{ih(\xi,t,x,y)}f(t_1+x_1,t_3+x_3,\ldots, t_{n-1}+x_{n-1}),
\end{align*}
for $f\in L^2(\R^{n/2})$, where 
\begin{align*}
h(\xi,t,x,y)&=\xi\left(\sum_{l=1}^{n-1}y_jY_j \right)+\xi\left[\sum_{j=1}^n x_j X_j,\sum_{l=1}^{n/2}t_{2l-1}X_{2l-1}\right]=\\
&=\sum_{j=1}^{n-1}\xi_jy_j-\xi_1x_1t_1+\sum_{l=2}^{n/2}(\xi_{2l-2}x_{2l-2}-\xi_{2l-1}x_{2l})t_{2l-1}.
\end{align*}
Since there is a global polarization of the flat orbits, Proposition \ref{bundleofpeolalss} implies that $[\zeta]=0\in H^1(\Aut(\mathsf{G}),\check{H}^1(\Gamma,U(1)))$. In lack of a good description of the automorphisms, we settle for noting that there is an $\Aut(\mathsf{G})$-equivariant isomorphism $I_\mathsf{G}\cong C_0(\Gamma,\mathbb{K}(L^2(\R^{n/2})))$ for some projective $\Aut(\mathsf{G})$-action on the trivial bundle $\Gamma\times L^2(\R^{n/2})\to \Gamma$.
\end{example}

\begin{example}
\label{freestep2example}
Consider the free step $2$ nilpotent Lie group $\mathsf{G}$ on a finite-dimensional real inner product space $V$. In the notations of above, $\mathfrak{g}=V_{\mathfrak{so}(V)}$ and $\mathfrak{g}$ satisfies $C(\mathfrak{g}) = \mathfrak{z}= \mathfrak{so}(V)$. Indeed, the only nontrivial bracket is for $x, y \in V\subseteq \mathfrak{g}$ and it is given by 
$$[x, y] = x \wedge y,$$
under the identification $\mathfrak{so}(V)\cong \wedge^2 V$ defined from the inner product. The  free step $2$ nilpotent Lie algebra is of type $(n(n-1)/2,n)$ where $n:=\dim(V)$.

There is a one-to-one correspondence of vector space gradings $V=\bigoplus_{p<0} V_p$ with $V_{-1}\neq 0$ and Lie algebra gradings $\mathfrak{g}=\bigoplus_{p<0} \mathfrak{g}_p$ given by associating a grading on $\mathfrak{g}$ with a grading on $V$ by 
$$\mathfrak{g}_p:=V_p\oplus \left(\bigoplus_{j+l=p}V_j\wedge V_l\right).$$ 
The inverse construction induces a grading on $V$ from a grading on $\mathfrak{g}$ by declaring the quotient $\mathfrak{g}\to V$ to be graded. We call the grading with $V$ having constant degree $-1$ the standard grading.

By universality and Proposition \ref{autoforstep2decoss},
$$\Aut(\mathsf{G})\cong \Hom(V,\mathfrak{so}(V))\rtimes GL(V),$$
and 
$$\Aut_{\rm gr}(\mathsf{G})=u_H(GL_{\rm gr}(V)),$$
in the standard grading. Equation \eqref{pfaffiandnadnstep2} shows that we can identify
\[ \Gamma = \mathfrak{so}(V) \cap GL(V)\quad\mbox{and}\quad \Gamma_ \partial = \mathfrak{so}(V) \cap SL(V), \]
using the self-duality $\mathfrak{so}(V)^*=\mathfrak{so}(V)$. We note that $\Gamma$ is empty if $\dim(V)$ is odd and in this case readily seen to be a Zariski open subset of $\mathfrak{so}(V)$ if $\dim(V)$ is even. The action of the automorphisms of $\mathsf{G}$ factors over the surjection $\Aut(\mathsf{G})\to GL(V)$ and $GL(V)$ acts on $\Gamma= \mathfrak{so}(V) \cap GL(V)$ by matrix multiplication
$$\varphi.\xi:=\varphi^t\xi\varphi\in  \mathfrak{so}(V) \cap GL(V).$$
Since any non-degenerate two form admits a basis in which it is the standard symplectic form $\omega_0$ on $V$, the $GL(V)$-action is transitive and there is a $GL(V)$-equivariant homeomorphism 
$$GL(V)/\mathrm{Sp}(V,\omega_0)\xrightarrow{\sim} \Gamma.$$

As above, we pick a Jordan-Hölder basis for $\mathfrak{g}$ with $\mathfrak{g}_{n(n-1)/2}=\mathfrak{z}=\mathfrak{so}(V)$ (for $n=\dim(V)$). Such a Jordan-Hölder basis induces a basis of $V$ and a filtration $V_j:=\mathfrak{g}_{n(n-1)/2+j)}/\mathfrak{z}$ of $V$. We can describe the Vergne polarization of the flat orbits using Proposition \ref{vergenpolonflat} as 
$$\mathfrak{h}_V(\xi)=\mathfrak{z}+\sum_{j=1}^{\dim(V)}\ker(\xi|_{V_j}),$$
for all $\xi\in \Lambda=\mathfrak{so}(V) \cap GL(V)$. We note that $\mathfrak{h}_V(\xi)\subseteq \mathfrak{g}$ is neither abelian nor constant in $\xi\in \Lambda$. Moreover, it does not extend from the top fine strata (defined relative to the fixed Jordan-Hölder basis) to all flat orbits. We shall therefore take a more direct route using Fock-Bargmann spaces (see for instance \cite[Chapter 1.6]{follandphasespace}) to constructing a bundle of flat representations. An additional feature that comes in handy is that orthogonal automorphisms act most naturally on the Fock-Bargmann spaces. We only consider the case that $V$ is even-dimensional, so there exists flat orbits.

We fix an inner product metric on $V$, it sets up a linear bijection between antisymmetric forms and $\mathfrak{so}(V)\subseteq \End(V)$, and between symmetric forms and symmetric elements of $\End(V)$. In terms of this fixed inner product, there is a bijective correspondences between symplectic forms $\omega$ and pairs $(J,g)$ of inner products $g$ on $V$ and a complex structure $J$ on $V$ with $g(x,Jy)=-g(Jx,y)$ for all $x,y\in V$. Indeed, the pair $(J,g)$ determines the symplectic form $\omega_{(J,g)}(x,y)=g(x,Jy)$, and conversely we can given $\omega$ define $g_\omega:=(-\omega^2)^{1/2}$ and $J_\omega:=(-\omega^2)^{-1/2}\omega$. We remark that 
$$\langle x,y\rangle_\omega:=g_\omega(x,y)+i\omega(x,y),$$
defines a hermitean inner product on $V$.

Given a symplectic form $\omega$ on $V$, we can define the associated Fock-Bargmann space as $\mathpzc{F}(V,\omega):=\mathcal{O}(V)\cap L^2(V,\e^{-g_\omega})$. That is, $\mathpzc{F}(V,\omega)$ is the subspace of holomorphic functions (with respect to the complex structure $J_\omega$) in the weighted $L^2$-space $L^2(V,\e^{-g_\omega})$ defined from the weight $\e^{-g_\omega}(x):=\e^{-g_\omega(x,x)}$. One can equivalently define $\mathpzc{F}(V,\omega):=\bigoplus_{k=0}^\infty V^{*\otimes_\C^s k}$, were $V^{*\otimes_\C^s 0}=\C$ and $V^{*\otimes_\C^s k}$ is the $k$-fold symmetric tensor product of $V^*$ over $\C$ defined with respect to the complex structure $J_\omega$, with Hilbert space structure defined from $g_\omega$. 

The Bargmann-Fock representation of $\mathfrak{h}_{\dim(V)/2}=V_{\R\omega}$ (see for instance \cite[Chapter 1.6]{follandphasespace}) extends to the free step $2$ nilpotent Lie group $\pi_\omega:\mathsf{G}\to U(\mathpzc{F}(V,\omega))$ by identifying $\omega$ with a central character, using that $\Gamma=\mathfrak{so}(V) \cap GL(V)$, and by defining 
\begin{equation}
\label{twisededrep}
\pi_\omega(X)f(x):=\e^{-\frac{g_\omega(X,X)}{2}-\langle x,X\rangle_\omega}f(x+X),
\end{equation}
for $X\in X$ and $f\in \mathpzc{F}(V,\omega)$. By construction, $\pi_\omega$ is a unitary irreducible representation representing the orbit $\omega\in \Gamma$ under the Kirillov correspondence (beware that we follow the convention of identifying $\Lambda$ with the set of flat orbits $\Gamma\subseteq \mathfrak{g}^*/\mathsf{Ad}^*$). Since this construction depends continuously on $\omega$, some arguments with the Bargmann transform shows that the family of Hilbert spaces $(\mathpzc{F}(V,\omega))_{\omega\in \Gamma}$ and representations $(\pi_\omega)_{\omega\in \Gamma}$ glues together to a locally trivial bundle of Hilbert spaces $\mathcal{H}_\Gamma\to \Gamma$ with a strongly continuous representation of $\mathsf{G}$. 

The group $O(V)$ (defined with respect to the fixed inner product) is a maximal compact subgroup of $\Aut_{\rm gr}(\mathsf{G})$ and acts on the flat orbits $\Gamma$ by $\alpha.\omega:=\alpha^t \omega \alpha$. The homeomorphism $\Gamma=GL(V)\cap \mathfrak{so}(V)\cong GL(V)/\mathrm{Sp}(V,\omega_0)$ is clearly $O(V)$-equivariant for the left action on $GL(V)/\mathrm{Sp}(V,\omega_0)$. The $O(V)$-action on $\Gamma$ lifts to a strongly continuously action on the bundle $\mathcal{H}_\Gamma$, by simply letting $\alpha\in O(V)$ act as the unitary
$$\alpha:\mathpzc{F}(V,\omega)\to \mathpzc{F}(V,\alpha.\omega), \; \alpha f:=\alpha^*f.$$
This map is well defined due to the fact that $J_{\alpha.\omega}=\alpha^{-1}J_\omega \alpha$ and $g_{\alpha.\omega}=\alpha^tg_\omega \alpha$, so the action is well defined as soon as $\alpha^t=\alpha^{-1}$, i.e. $\alpha$ orthogonal.

We summarize this argument in the following proposition.

\begin{proposition}
We let $\mathsf{G}$ denote free step $2$ nilpotent Lie group on the even-dimensional inner product space $V$ with space of flat orbits $\Gamma=GL(V)\cap \mathfrak{so}(V)$ being all symplectic forms on $V$. The $O(V)$-equivariant bundle of Bargmann-Fock spaces $\mathcal{H}_\Gamma\to \Gamma$ satisfies that 
$$I_\mathsf{G}\cong C_0(\Gamma,\mathbb{K}(\mathcal{H}_\Gamma)),$$ 
as $O(V)$-equivariant $C_0(\Gamma)-C^*$-algebras via the family $(\pi_\omega)_{\omega\in \Gamma}$. In particular, 
$$\begin{cases} 
[\zeta|_{O(V)}]=0\in H^1(O(V),\check{H}^2(GL(V)\cap \mathfrak{so}(V),\Z))=0,\;\mbox{ and}\\ 
c_{O(V)}=0\in H^2(O(V),C(GL(V)\cap \mathfrak{so}(V),U(1))).\end{cases}$$
\end{proposition}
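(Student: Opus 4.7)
The plan is to apply Theorem \ref{charofequimorita} to the concretely constructed Bargmann-Fock bundle $\mathcal{H}_\Gamma \to \Gamma$ with its $O(V)$-action. Three things need checking: that the family $(\pi_\omega)_{\omega\in\Gamma}$ assembles into a $C_0(\Gamma)$-linear $*$-isomorphism $\pi_\musFlat : I_\mathsf{G} \to C_0(\Gamma,\mathbb{K}(\mathcal{H}_\Gamma))$, that this isomorphism is $O(V)$-equivariant, and that the two cohomological vanishing statements then follow.

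First, each $\pi_\omega$ defined by \eqref{twisededrep} is irreducible and represents the flat orbit $\omega \in \Gamma$ under the Kirillov correspondence. Smoothness of $\omega \mapsto (g_\omega, J_\omega)$ and a Bargmann-transform argument show that $\omega \mapsto \pi_\omega$ is strongly continuous as a field of representations, so integration yields a $C_0(\Gamma)$-linear $*$-homomorphism $\pi_\musFlat : I_\mathsf{G} \to C_0(\Gamma,\mathbb{K}(\mathcal{H}_\Gamma))$ which is fibrewise irreducible and induces the identity on spectra. Since $I_\mathsf{G}$ is a continuous trace algebra with spectrum $\Gamma$ by Proposition \ref{ctforflat}, general continuous-trace theory then forces $\pi_\musFlat$ to be a $C_0(\Gamma)$-linear $*$-isomorphism.

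Second, for $\alpha \in O(V)$ the orthogonality relation $\alpha^t = \alpha^{-1}$ yields $g_{\alpha.\omega}(x,y) = g_\omega(\alpha x, \alpha y)$ and $J_{\alpha.\omega} = \alpha^{-1} J_\omega \alpha$, so the pullback $\alpha^* : \mathpzc{F}(V, \alpha.\omega) \to \mathpzc{F}(V,\omega)$ is a well defined unitary preserving holomorphicity. A direct calculation using \eqref{twisededrep} then verifies the intertwining identity
\[
\alpha^* \circ \pi_{\alpha.\omega}(\alpha Y) = \pi_\omega(Y) \circ \alpha^*
\]
for $Y \in V$, and the central elements transform correctly by the very definition of the $O(V)$-action on orbits $\omega \mapsto \alpha^t \omega \alpha$. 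Integrating over $\mathsf{G}$ yields the required $O(V)$-equivariance of $\pi_\musFlat$.

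Third, the cohomological vanishing follows formally. Existence of the strongly continuous $O(V)$-lift implemented by pullback gives $[\zeta|_{O(V)}] = 0$ via Proposition \ref{zetanandndlaf} and Proposition \ref{liftingcor}; the target $H^1(O(V), \check{H}^2(\Gamma,\Z))$ in fact vanishes outright, since $\Gamma = GL(V)\cap\mathfrak{so}(V)$ deformation retracts onto $O(2n)/U(n)$ whose second integral cohomology is zero on each connected component. Because the lifts form an honest $O(V)$-action on $\mathcal{H}_\Gamma$, rather than one defined only up to a $C(\Gamma,U(1))$-cocycle, the cocycle $c_{O(V)}$ of Definition \ref{modifiedhgammspace} is trivial on the nose, giving $[c_{O(V)}] = 0$. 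The only place real work is required is the fibrewise verification of the intertwining identity together with continuity of $\omega \mapsto \pi_\omega$; here orthogonality of $\alpha$ is essential, since general invertibility would not preserve holomorphicity on the Bargmann-Fock side.
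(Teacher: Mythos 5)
Your proposal follows the same route as the paper: build the Bargmann--Fock bundle fibrewise from the extended representation \eqref{twisededrep}, show the $O(V)$-action lifts by pullback precisely because orthogonality makes $\alpha^*$ preserve $J_{\alpha.\omega}$-holomorphicity and the Gaussian weight, and then read off $[\zeta|_{O(V)}]=0$ and $[c_{O(V)}]=0$ from the existence of the honest lift via Proposition \ref{zetanandndlaf} and Proposition \ref{thecocycleforhandnd}. Two small errors are worth flagging. First, the displayed intertwining identity does not type-check with $\alpha^*\colon\mathpzc{F}(V,\omega)\to\mathpzc{F}(V,\alpha.\omega)$, $\alpha^*f=f\circ\alpha$; the correct relation is $U_\alpha\pi_\omega(X)U_\alpha^{-1}=\pi_{\alpha.\omega}(\alpha^{-1}X)$, using $g_{\alpha.\omega}=\alpha^tg_\omega\alpha$ and $\langle\alpha x,X\rangle_\omega=\langle x,\alpha^{-1}X\rangle_{\alpha.\omega}$. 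Second, the claimed vanishing of $\check{H}^2(\Gamma,\Z)$ on each component is false: $\Gamma$ deformation retracts onto $O(2n)/U(n)$, and $H^2(SO(2n)/U(n),\Z)\cong\Z$ for $n\geq 2$ (e.g.\ $SO(4)/U(2)\cong S^2$, $SO(6)/U(3)\cong\mathbb{CP}^3$). Fortunately your argument for $[\zeta|_{O(V)}]=0$ does not actually need the coefficient group to vanish, since Proposition \ref{zetanandndlaf} gives $\zeta(\alpha)=0$ for each $\alpha\in O(V)$ directly from the existence of the lift, so the class is trivial outright and the conclusion stands.
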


\end{example}

\subsection{Nilpotent Lie groups with one-dimensional center}
\label{onedcenterexasubs}
Let us consider another class of nilpotent Lie groups, namely those with a one-dimensional center. Given any nilpotent Lie group, a quotient by a codimension one subgroup of the center produces a nilpotent Lie group with one-dimensional center, making the case of one-dimensional center interesting. We shall also consider a construction found in \cite{mohsen2} that realize a nilpotent Lie group as a closed subgroup of a nilpotent Lie group with one-dimensional center admitting flat orbits -- this example holds potential for index theoretical considerations.

Assume that $\mathsf{G}$ is a simply connected nilpotent Lie group with one-dimensional center, and as above $\mathfrak{g}$ denotes its Lie algebra that we equipp with a choice of Jordan-Hölder basis such that $X_1$ is central. As in Remark \ref{flatorbisintosform}, there is an anti-symmetric mapping $\omega:(\mathfrak{g}/\mathfrak{z})\wedge (\mathfrak{g}/\mathfrak{z})\to \mathfrak{z}$. The group $\mathsf{G}$ admits flat orbits if and only if $\omega$ is non-degenerate, and Remark \ref{flatorbisintosform} shows that 
$$\Gamma=\mathfrak{z}^*\setminus \{0\}.$$
If $\mathsf{G}$ is graded, the associated dilation on $\mathfrak{z}$ is scalar and we can identify $\Gamma_\partial=\{\xi\in \Gamma: \xi(X_1)=\pm 1\}$. 

To simplify our discussion, we assume that $\mathsf{G}$ is graded. If $\xi_0\in \Gamma$ and $\mathfrak{h}$ is a real algebraic polarization of $\xi_0$, we can extend $\mathfrak{h}$ to a bundle of polarizations of $\Gamma$ by setting 
$$\mathfrak{h}(\xi):=\delta_t\mathfrak{h}, \quad\mbox{for $t>0$ such that $\xi=\pm \delta_t(\xi_0)$.}$$
If $\mathfrak{h}=\mathfrak{h}_V(\xi_0)$ and the Jordan-Hölder basis is homogeneous for the grading, we have that $\mathfrak{h}(\xi)=\mathfrak{h}_V(\xi)$ for all $\xi\in \Gamma$. In general, it is non-trivial to describe $\mathfrak{h}_V(\xi)$ beyond Proposition \ref{vergenpolonflat}. But we note the following concerning the ideal $I\subseteq C^*(\mathsf{G})$, if $\mathfrak{h}$ is any real algebraic polarization at $\xi_0\in \Gamma$, and $\pi_\pm:\mathsf{G}\to U(L^2(\R^d))$ denotes the representations associated with $\pm \xi_0$ (polarized by $\mathfrak{h}$) and $d=\mathrm{codim}(\mathfrak{z})/2=(\dim(\mathsf{G})-1)/2$, the dilation induces an explicit isomorphism 
$$I_\mathsf{G}\xrightarrow{\sim} C_0(\mathfrak{z}^*\setminus \{0\}, \mathbb{K}(L^2(\R^d))), \quad a\mapsto (\xi\mapsto \pi_{\mathrm{sign}(\xi)}(\delta_{t(\xi)}(a))),$$
where $\mathrm{sign}(\xi)\in \{\pm\}$ and $t(\xi)>0$ are determined by $\xi=\mathrm{sign}(\xi)t(\xi)\xi_0$. For nilpotent Lie groups with one-dimensional center, the ideal of flat orbits fits into an $\Aut(\mathfrak{g}$-equivariant short exact sequence
$$0\to C_0(\mathfrak{z}^*\setminus \{0\}, \mathbb{K}(L^2(\R^d)))\to C^*(\mathsf{G})\to C^*(\mathsf{G}/\mathsf{Z})\to 0.$$

\begin{example}
\label{engelalgebra}
We start with an example of a nilpotent Lie group with one-dimensional center lacking flat orbits. Consider the $4$-dimensional Lie algebra $\mathfrak{g}$ spanned by $Y_1$, $Y_2$, $Y_3$, and $Y_4$ subject to the relations
$$[Y_2, Y_4] = Y_1\quad\mbox{and}\quad [Y_3, Y_4] = Y_2,$$
and all other Lie brackets between the basis vectors are zero. This example was studied in \cite[Example 1.3.10 and 2.2.2]{Corwin_Greenleaf} and \cite[Chapter 3.3]{kirillovbook}. We grade $\mathfrak{g}$ by $\mathfrak{g}= \mathfrak{g}_{-3} \oplus \mathfrak{g}_{-2} \oplus \mathfrak{g}_{-1}$ where $\mathfrak{g}_{-1}$ is spanned by $Y_3, Y_4$, $\mathfrak{g}_{-2}$ is spanned by $Y_2$ and the center $\mathfrak{g}_{-3}=\mathfrak{z}$ is spanned by $Y_1$. The Lie algebra $\mathfrak{g}$ is step $3$ nilpotent with one-dimensional center. The associated Lie group $\mathsf{G}$ readily seen to be a subgroup of $\mathsf{G}_3$ (see Example \ref{uppertriangularexample}). A short computation shows that $\Aut_{\rm gr}(\mathfrak{g})\cong \R\rtimes (\R^\times)^2$ with action induced from an action on $ \mathfrak{g}_{-1}$, so the Jordan-Hölder flag defined from $Y_1, Y_2,Y_3,Y_4$ is invariant under graded automorphisms, so computations in this basis globalizes in a controlled way. Since the centre has odd codimension there exists no flat orbits. The top coarse strata will however consist of generic orbits and produce structures similar to that of the flat orbits. We include this example to indicate how the techniques of this monograph can be extended beyond the usage of flat orbits. 

We use $Y_1, Y_2,Y_3,Y_4$ as a Jordan-Hölder basis and we use the dual basis for $\mathfrak{g}^*$ with associated coordinates $\xi_1,\xi_2,\xi_3,\xi_4$. Following \cite[Example 1.3.10 and 2.2.2]{Corwin_Greenleaf} and \cite[Chapter 3.3]{kirillovbook}, the embedding $(p,q)\mapsto (0,\frac{q}{2p},0,p)$ identify 
$$\tilde{\Gamma}:=\R^\times_p\times \R_q,$$
with an open, dense Hausdorff subset of $\hat{\mathsf{G}}$ lifting to the subset $\{\xi:\xi_4\neq 0\}$. under the projection map $\mathfrak{g}^*\to \hat{\mathsf{G}}$. The inverse to the identification is given by $p(\xi_1,\xi_2,\xi_3,\xi_4)=\xi_1$ and $q(\xi_1,\xi_2,\xi_3,\xi_4)=2\xi_3\xi_1-\xi_2^2$. Each element $\xi\in \tilde{\Gamma}$ defines a two-dimensional coadjoint orbit and is polarized by the abelian subalgebra $\mathfrak{h}=\R Y_1+\R Y_2+\R Y_3$. For dimensional reasons, $\tilde{\Gamma}$ can be identified with the top coarse strata in $\hat{G}$ and $\tilde{\Gamma}$ is preserved by all of $\Aut(\mathsf{G})$. For $\xi\in \tilde{\Gamma}$, we construct the representation 
$$\pi_\xi:\mathsf{G}\to U(L^2(\R)), \quad \pi_\xi:=\ind_\mathsf{H}^\mathsf{G}\chi_\xi,$$
where $\mathsf{H}$ is the Lie group constructed from $\mathfrak{h}$,  $\chi_\xi:H\to U(1)$ the character defined from $\xi$ and where we have identified $\mathsf{G}/\mathsf{H}=\R$. 

Let $\tilde{I}_\mathsf{G}\subseteq C^*(\mathsf{G})$ denote the ideal corresponding to $\tilde{\Gamma}$. The arguments above shows that the family $(\pi_\xi)_{\xi\in \tilde{\Gamma}}$ induces an $\Aut(\mathsf{G})$-equivariant isomorphism 
$$\tilde{I}_\mathsf{G}\cong C_0(\tilde{\Gamma},\mathbb{K}(L^2(\R))),$$
for some projective $\Aut(\mathsf{G})$-action on $L^2(\R)$. In particular, $\tilde{I}_\mathsf{G}$ is an $\Aut(\mathsf{G})$-continuous trace algebra with $\delta_{\rm DD}(\tilde{I}_\mathsf{G})=0\in \check{H}^3(\tilde{\Gamma},\Z)$ and $[\zeta]=0\in H^1(\Aut(\mathsf{G}),\check{H}^2(\tilde{\Gamma},\Z))$.
\end{example}

\begin{example}
\label{uppertriangularexample}
We proceed with a family of nilpotent Lie groups with one-dimensional center lacking flat orbits. Let $\mathsf{G}_n$ denote the nilpotent Lie group of real $(n+1)\times(n+1)$ upper triangular unipotent matrices. This is a nilpotent Lie group of step length $n$. Its center is one-dimensional and consists of matrices $g=(g_{ij})_{i,j=1}^{n+1}$ such that $g_{ij}=0$ unless $i=j$ or $i=1$ and $j=n+1$ (the rigthmost, upmost entry can be non-zero). For any $\xi\in \mathfrak{z}^*\setminus \{0\}$, the Lie algebra $\mathsf{stab}(\xi)$ consists of matrices with zero entries on and below the diagonal as well as in the right most column with the exception of the rigthmost, upmost entry. In particular, $\mathrm{codim}(\mathsf{stab}(\xi))=n-1<\mathrm{codim}(\mathfrak{z})$, so there exists no flat orbits on $\mathsf{G}_n$ if $n>2$ even if $\mathrm{codim}(\mathfrak{z})$ for certain $n$ is even-dimensional.

The case $n=3$ was computed in more detail in \cite[Example 1.3.11 and 2.2.3]{Corwin_Greenleaf}. In the same way as in Example \ref{engelalgebra}, \cite[Example 1.3.11 and 2.2.3]{Corwin_Greenleaf} provides an open dense Hausdorff subset $\tilde{\Gamma}\cong\R^\times\times \R$ of $\hat{\mathsf{G}}_3$. Each coadjoint orbit defined from elements of $\tilde{\Gamma}$ are of dimension $4$ and the points of $\tilde{\Gamma}$ are all polarized by the same $4$-dimensional abelian ideal $\mathfrak{h}$. For dimensional reasons, $\tilde{\Gamma}$ can be identified with the top coarse strata in $\hat{G}_3$ and $\tilde{\Gamma}$ is preserved by all of $\Aut(\mathsf{G}_3)$. Let $\tilde{I}_{\mathsf{G}_3}\subseteq C^*(\mathsf{G}_3)$ denote the ideal corresponding to $\tilde{\Gamma}$. The arguments above shows that the family $(\pi_\xi)_{\xi\in \tilde{\Gamma}}$ induces an $\Aut(\mathsf{G}_3)$-equivariant isomorphism 
$$\tilde{I}_{\mathsf{G}_3}\cong C_0(\tilde{\Gamma},\mathbb{K}(L^2(\R^2))),$$
for some projective $\Aut(\mathsf{G}_3)$-action on $L^2(\R^2)$. In particular, $\tilde{I}_\mathsf{G}$ is an $\Aut(\mathsf{G}_3)$-continuous trace algebra with $\delta_{\rm DD}(\tilde{I}_{\mathsf{G}_3})=0\in \check{H}^3(\tilde{\Gamma},\Z)$ and $[\zeta]=0$ in the group $H^1(\Aut(\mathsf{G}_3),\check{H}^2(\tilde{\Gamma},\Z))$. It would be interesting to extend this construction to $n>3$ as it relates to parabolic geometries for $SL(n,\R)$ (see Example \ref{parapara} below).
\end{example}

\begin{example}[The Mohsen construction]
\label{mohsenconstruct}
We will now consider a construction from \cite{mohsen2} that to each simply connected nilpotent Lie group associates a larger simply connected Lie group with one-dimensional center and flat orbits. This construction is of interest since it creates flat orbits, and the modification is functorial enough to extend to bundle of Lie groups and to Carnot manifolds.

Consider a simply connected nilpotent Lie group $\mathsf{G}$ with Lie algebra $\mathfrak{g}$. The Lie algebra acts via the coadjoint action as Lie algebra homomorphisms on $\mathfrak{g}^*$ equipped with the trivial Lie bracket. We can form the Lie algebra $\mathfrak{g}^*\rtimes_{\mathsf{ad}^*} \mathfrak{g}$. More precisely, for $(\eta,X),(\eta',X')\in \mathfrak{g}^*\rtimes_{\mathsf{ad}^*} \mathfrak{g}:=\mathfrak{g}^*\oplus \mathfrak{g}$ their Lie bracket is defined by 
$$[(\eta,X),(\eta',X')]_{\mathfrak{g}^*\rtimes_{\mathsf{ad}^*} \mathfrak{g}}:=(\mathsf{ad}^*(X)\eta'-\mathsf{ad}^*(X')\eta,[X,X']).$$
The duality pairing defines a symplectic form $\omega_{\mathfrak{g}}$ on $\mathfrak{g}^*\oplus \mathfrak{g}$ which is readily verified to defined a cocycle on $\mathfrak{g}^*\rtimes_{\mathsf{ad}^*} \mathfrak{g}$. We let $\tilde{\mathfrak{g}}$ denote the associated central extension. More precisely, $\tilde{\mathfrak{g}}:=\mathfrak{g}^*\oplus \mathfrak{g}\oplus \R$ as a vector space and the Lie bracket of $((\eta,X),t),((\eta',X'),t')\in \tilde{\mathfrak{g}}=\mathfrak{g}^*\oplus \mathfrak{g}\oplus \R$ is defined by 
$$[((\eta,X),t),((\eta',X'),t')]_{\tilde{\mathfrak{g}}}:=([(\eta,X),(\eta',X')]_{\mathfrak{g}^*\rtimes_{\mathsf{ad}^*} \mathfrak{g}},\omega_\mathfrak{g}((\eta,X),(\eta',X'))).$$
Let $\tilde{\mathsf{G}}$ denote the simply connected Lie group constructed from $\tilde{\mathfrak{g}}$. The step length of $\tilde{\mathsf{G}}$ is exactly one step larger than that of $\mathsf{G}$. 

\begin{definition}
\label{mohsenmodingliealg}
Let $\mathsf{G}$ be a simply connected nilpotent Lie group with Lie algebra $\mathfrak{g}$. The Lie algebra $\tilde{\mathfrak{g}}$ constructed in the paragraph above is called the Mohsen modification of $\mathfrak{g}$ and the Lie group $\tilde{\mathsf{G}}$ is called the Mohsen modification of $\mathsf{G}$.
\end{definition}

It is readily verified that the center $\tilde{\mathfrak{z}}$ of $\tilde{\mathfrak{g}}$ is the one-dimensional summand $\R\subseteq \tilde{\mathfrak{g}}$ and that the antisymmetric mapping $\omega:\tilde{\mathfrak{g}}/\tilde{\mathfrak{z}}\wedge \tilde{\mathfrak{g}}/\tilde{\mathfrak{z}}\to \tilde{\mathfrak{z}}$ can be identified with the symplectic pairing $\omega_{\mathfrak{g}}$. In particular, we can from the discussion in the start of this subsubsection deduce that $\tilde{\mathfrak{g}}$ is a Lie algebra with one-dimensional center that admits flat orbits
$$\Gamma=\R^\times=\tilde{\mathfrak{z}}^*\setminus \{0\}.$$

The automorphism group $\Aut(\tilde{\mathfrak{g}})$ is slightly convoluted to describe and for our purposes it suffices to note that there are injections
\begin{equation}
\label{autominjecosfordoubale}
\Aut(\mathfrak{g})\hookrightarrow \Aut(\mathfrak{g}^*\rtimes_{\mathsf{ad}^*} \mathfrak{g})\quad \mbox{and}\quad \Aut(\mathfrak{g})\hookrightarrow \Aut(\tilde{\mathfrak{g}}).
\end{equation}

The Lie algebra $\tilde{\mathfrak{g}}$ can always be graded by $\tilde{\mathfrak{g}}_{-1}:=\mathfrak{g}^*\oplus \mathfrak{g}$ and $\tilde{\mathfrak{g}}_{-2}:=\R$. We call this grading the standard grading on $\tilde{\mathfrak{g}}$. In examples, when $\mathfrak{g}$ is graded it is more natural to consider the induced grading on $\tilde{\mathfrak{g}}$. Let us describe how to induce gradings from $\mathfrak{g}$ to $\tilde{\mathfrak{g}}$ (this construction is also found in \cite[Subsection 3.1]{mohsen2}). If $\mathfrak{g}=\bigoplus_{-n\leq p<0}\mathfrak{g}_p$ is a grading of $\mathfrak{g}$, we define 
$$\tilde{\mathfrak{g}}_p:=
\begin{cases}
0, & p<-n-1,\\
\R, &p=-n-1,\\
\mathfrak{g}_{p}\oplus \mathfrak{g}^*_{-n-p-1}, & p\geq -n,
\end{cases}$$
In the standard grading $\Aut(\mathfrak{g})\hookrightarrow \Aut_{\rm gr}(\tilde{\mathfrak{g}})$ and in an induced grading $\Aut_{\rm gr}(\mathfrak{g})\hookrightarrow \Aut_{\rm gr}(\tilde{\mathfrak{g}})$.

We polarize $\xi_0=1\in \R^*\setminus\{0\}$ by the real algebraic polarization 
$$\mathfrak{h}:=\R\oplus \mathfrak{g}^*.$$
For a suitable Jordan-Hölder basis and either the standard grading or an induced grading on $\tilde{\mathfrak{g}}$, the dilation reconstructs the Vergne polarization of the flat orbits from this polarization and $\mathfrak{h}_V(\xi)=\R\oplus \mathfrak{g}^*$ for any $\xi\in \Gamma$. Let us describe the flat representations constructed from the polarization $\mathfrak{h}:=\R\oplus \mathfrak{g}^*$.  We have that 
$$\mathcal{H}_V=\R^\times \times L^2(\mathsf{G}),$$
and the representation in the fibre over $\xi\in \R^\times$ is given by 
\begin{align*}
\pi_\xi&\left(\mathrm{e}^{\eta+z}g\right)f(t):=\mathrm{e}^{i\xi\left(z+\eta(\log(t))\right)}f(gt),
\end{align*}
for $g\in \mathsf{G}$, $\eta\in \mathfrak{g}^*$, $z\in \R=\tilde{\mathfrak{z}}$ and $f\in L^2(\mathsf{G})$. The reader should note that the restriction of any flat orbit representation of $\tilde{\mathsf{G}}$ to $\mathsf{G}$ is equivalent to the left regular representation of $\mathsf{G}$.

We arrive at an isomorphism $I_{\tilde{\mathsf{G}}}\cong C_0(\R^\times,\mathbb{K}(L^2(\mathsf{G})))$ and a short exact sequence 
$$0\to C_0(\R^\times,\mathbb{K}(L^2(\mathsf{G})))\to C^*(\tilde{\mathsf{G}})\to C^*(\mathfrak{g}^*\rtimes_{\mathsf{Ad}^*}\mathsf{G})\to 0.$$
The action of $\Aut(\tilde{\mathsf{G}})$ on $I_{\tilde{\mathsf{G}}}$ restricts to the ordinary fibrewise $\Aut(\mathsf{G})$-action on $I_{\tilde{\mathsf{G}}}\cong C_0(\R^\times,\mathbb{K}(L^2(\mathsf{G})))$ under the mapping $\Aut(\mathsf{G})\hookrightarrow \Aut(\tilde{\mathsf{G}})$ from Equation \eqref{autominjecosfordoubale}. Furthermore, we deduce that $[\zeta]=0\in H^1(\Aut(\tilde{\mathsf{G}}),\check{H}^2(\R^\times,\Z))=0$ and $[c_{\Aut(\mathsf{G})}]=0\in H^2(\Aut(\mathsf{G}),C(\R^\times,U(1)))=0$.

\end{example}

\part{Groupoids and twists}
\label{gropodoprar}

\begin{center}
{\bf Introduction to part}
\end{center}

The aim of the current part is to extend the structures defined in the previous part to bundles of nilpotent Lie groups and to prove the $K$-theoretical results needed for further index theoretical considerations. The motivation to do so comes from the fact that the tangent bundle of a Carnot manifold can be seen as a groupoid globally describing the osculating, nilpotent Lie group structure on each fibre. In the following table we gather the main characters of the story and compare notation for their incarnations on the level of the fiber and the level of frame bundle $P\to X$ of the bundle of Lie groups:
\begin{center}
\begin{tabular}{ | m{5cm} | c| m{5cm} | c | } 
 \hline
 Lie group (group theoretic model fiber) & $\mathsf{G}$ & Locally trivial bundle of Lie groups (e.g. the tangent groupoid of a Carnot manifold) & $\mathcal{G}$ \\ 
  \hline
 Lie algebra (Lie algebraic model fiber) & $\mathfrak{g}$ & Lie algebroid with trivial anchor map (e.g. the tangent algebroid of a Carnot manifold) & $\mathbf{g}$ \\ 
  \hline
 Center & $\mathsf{Z}$ or $\mathfrak{z}$ & Central groupoid & $\mathcal{Z}$ \\
  \hline
 Set of flat orbits & $\Gamma$ & Fiber bundle of flat orbits & $\Gamma_P$ \\
  \hline
 Total space of flat orbits & $\Xi$ & Symplectic vector bundle of flat orbits over the total space of the fiber bundle of flat orbits & $\Xi_P$ \\
 \hline
 Ideal of flat representations & $I_{\mathsf{G}}$ & Ideal of flat orbit representations in $C^*(\mathcal{G})$& $I_P$ \\
 \hline
\end{tabular}
\end{center}
Later, when $P$ is constructed from a regular Carnot manifold $X$ we indicate $X$ rather than $P$ in the notation. For much of this part, we assume the reader is familiar with $KK$-theory and the view on $KK$ as a category  \cite{cumero,higsonprimer, knudjen,kaspnov}. Often we consider diagrams of $C^*$-algebras involving morphisms from $KK$.

Homological information about $H$-elliptic operators on Carnot manifolds is naturally contained in $K_*(C^*(\mathcal{G}))$. The aim is to develop computation tools in these $K$-groups. In particular, there will be three main results in this part:
\begin{enumerate}
    \item The Connes-Thom isomorphism allow us to establish isomorphism $K_*(C^*(\mathsf{G})) \simeq K_*(C_0(\mathfrak{g}^*))$, this is known since work of Nistor \cite{nistorsolvabel}. This result extends to the level of bundles, i.e. for a locally trivial bundle of nilpotent connected simply connected Lie groups we show that $K_*(C^*(\mathcal{G})) \simeq K_*(C_0(\mathbf{g}^*))$. The technique used to construct Nistor's Connes-Thom isomorphism is that of adiabatic deformations: a groupoid that smoothly deforms $\mathcal{G}$ to $\mathbf{g}$. 
    \item We prove that the procedure of Nistor above restricts from $C^*(\mathcal{G})$ to $I_P$ and gives an isomorphism $K_*(I_P) \simeq K^*(\Xi_P)$. Both isomorphisms for $C^*(\mathcal{G})$ and for $I_P$ are of an abstract nature and do not provide concrete recipe for transforming cycles in $K_*(I_P)$ to cycles in $K^*(\Xi_P)$. Extending the proof that $\delta(I_{\mathsf{G}}) = 0$ -- by constructing a concrete Morita equivalence bundle -- to the case of a bundle of Lie groups we construct an isomorphism $K_*(I_P) \xrightarrow{Mor} K^*(\Gamma_P) \xrightarrow{Thom} K^*(\Xi_P)$ from Morita invariance and the Thom isomorphism. This isomorphism coincides with the Connes-Thom isomorphisms up to a line bundle called the metaplectic correction bundle.
    \item Finally, we study surjectivity of the map $K_*(I_P) \rightarrow K_*(C^*(\mathcal{G}))$, induced from inclusion. In summary, the classes of $H$-elliptic operators in $K_*(C^*(\mathcal{G}))$ can be parametrized by $K^*(\Gamma_P)$ under a surjectivity assumption on the wrong way map $K^*(\Gamma_P)\to K^*(X)$ as it is possible to lift to $K_*(I_P)$ and then use the Morita equivalence to transform to cycles in $K^*(\Gamma_P)$.
\end{enumerate}

In Section \ref{sec:liegrohoad} we review definitions, basic facts and examples from the theory of Lie groupoids and algebroids. We recall the construction of the $C^*$-algebra of a Lie groupoid and the definition of the Lie algebroid associated with a Lie groupoid. In Section \ref{sec:groupoidforflatorbits} we describe central decomposition of a Lie group as a twisted semidirect product. This allows to establish equivariant isomorphisms between $C^*(\mathsf{G})$ and $C^*(\mathcal{G}_\mathsf{Z}, \omega_\mathsf{Z})$, where $\mathcal{G}_\mathsf{Z}$ is a certain groupoid over the dual of the center $\mathfrak{z}^*$. For nilpotent groups this isomorphism restricts to the ideal of flat orbits $I_{\mathsf{G}}$ and provides an isomorphism $I_{\mathsf{G}} \simeq C^*(\mathcal{G}_\Gamma, \omega_\Gamma)$ where $\mathcal{G}_\Gamma=\mathcal{G}_\mathsf{Z}|_\Gamma$.
 
 In Section \ref{subsec:loctrivalandnda} we extend the structures of Section \ref{sec:liegrohoad} and \ref{sec:groupoidforflatorbits} from nilpotent Lie groups to bundles of nilpotent Lie groups. For instance, we prove analogues of central decompositions considered in Section \ref{sec:groupoidforflatorbits}. In Section \ref{subsec:nistorctsubsn} we construct Connes-Thom isomorphisms $K_*(C^*(\mathcal{G})) \simeq K_*(C_0(\mathbf{g}^*))$ and $K_*(I_P) \simeq K^*(\Gamma_P)$. The construction requires notions of classical adiabatic groupoids, parabolically blowed adiabatic groupoid and a cocycle deformation of the adiabatic groupoid. At the end of the section we characterize surjectivity of the map $K_*(I_P) \rightarrow K_*(C^*(\mathcal{G}))$ induced from the inclusion. 
 
The results of Section \ref{sec:connesthomandadiaofofd} are those of most relevance for later results on index theory. A crucial result in this section states that the  Connes-Thom isomorphism $K_*(I_X) \simeq K^*(\Gamma_X)$ differs from the Morita equivalence defined from a bundle of flat orbit representations up to a line bundle -- the metaplectic correction bundle. This result lies at the heart of the index theorem as it allow us to explicitly compute the Connes-Thom map $K_*(C^*(\mathsf{G})) \to K_*(C_0(\mathfrak{g}^*))$ by finding a preimage under $K_*(I_P) \rightarrow K_*(C^*(\mathcal{G}))$ and the applying Morita invariance. 

\section{Lie groupoids and their $C^*$-algebras}
\label{sec:liegrohoad}

A groupoid can abstractly be defined as a small category in which all morphisms are isomorphisms. The abstract definition says quite litte and should rather be interpreted as having a space (the objects) with symmetries (the morphisms) that could vary over the space. We shall solely be concerned with Lie groupoids and twists of them.

We will recall the basic definitions for groupoids for the purpose of setting notations. The reader is expected to be acquainted with groupoids. The references  \cite{debordskandpseudo, debordskandext,debordlescure,mohsen1,lobster,vanErp_Yunckentangent,vanErp_Yuncken} can be consulted for a more extensive overview of groupoids.

\begin{definition}
Let $M$ be a smooth manifold. A Lie groupoid $\mathcal{G}\rightrightarrows M$ over $M$ is a smooth manifold of morphisms $\mathcal{G}$ equipped with the following structure maps:
\begin{enumerate}
    \item A source and range maps $s,r:\mathcal{G}\rightarrow M$.
    \item A composition map $\circ : \mathcal{G}^{(2)} \rightarrow \mathcal{G}$, where 
    \[ \mathcal{G}^{(2)} = \{ (\gamma_1, \gamma_2) \in \mathcal{G} \times \mathcal{G} : r(\gamma_2) = s(\gamma_1) \}, \]
    which is associative in the sense that $(\gamma_1\circ \gamma_2)\circ \gamma_3=\gamma_1\circ (\gamma_2\circ \gamma_3)$ whenever all compositions are defined.
    \item A unit map $e : M \rightarrow \mathcal{G}$, subject to $re=se=e$, that maps each object to the identity morphism in the sense that $e(r(\gamma))\circ\gamma=\gamma \circ e(s(\gamma))=\gamma$ for each $\gamma\in G$.
    \item An inverse map $i : \mathcal{G} \rightarrow \mathcal{G}$, subject to $ri=s$, $si=r$ and $i^2=\mathrm{id}_\mathcal{G}$, which sends each morphism to its inverse in the sense that $i(\gamma)\circ \gamma=e(s(\gamma))$ and $\gamma\circ i(\gamma)=e(r(\gamma))$.
\end{enumerate}
All structure maps are required to be smooth and $s,r$ are required to be submersions.
\end{definition}

A Lie group is a Lie groupoid over a point. In the same way that a Lie group has a Lie algebra, a Lie groupoid will have a Lie algebroid.

\begin{definition}
A Lie algebroid over a smooth manifold $M$ is a triple $(\mathbf{g}, \rho, [\cdot,\cdot])$ where
\begin{itemize}
    \item $\mathbf{g}\rightarrow M$ is a vector bundle.
    \item $\rho : \mathbf{g} \rightarrow TM$ is a vector bundle map called the anchor map.
    \item $[\cdot,\cdot] : C^\infty(M,\mathbf{g}) \times C^\infty(M,\mathbf{g}) \rightarrow C^\infty(M,\mathbf{g})$ is a Lie bracket such that for any $X, Y \in C^\infty(M,\mathbf{g})$, $f \in C^\infty(M)$ one has
    \[ [X,fY] = f[X,Y] + (\rho(X)f)Y. \]
\end{itemize}
\end{definition}

\begin{definition}
\label{liealgebrofleiedldp}
Let $\mathcal{G}\rightrightarrows M$ be a Lie groupoid over $M$. The Lie algebroid $\mathsf{Lie}(\mathcal{G})$ of $\mathcal{G}$ is defined as follows:
\begin{enumerate}
    \item The vector bundle $\mathsf{Lie}(\mathcal{G}) = \ker Ds \vert_{M}$ with Lie bracket defined from identifying sections of $\mathsf{Lie}(\mathcal{G})$ with right invariant vector fields on $\mathcal{G}$.
    \item The anchor map $\rho = Dr : \mathsf{Lie}(\mathcal{G}) \rightarrow T M$.
\end{enumerate}
\end{definition}

\begin{remark}
Under certain conditions one has an analogue of Lie's Third Theorem, which associates a Lie groupoid $\mathcal{G}$ with a Lie algebroid $\mathbf{g}$ such that $\mathsf{Lie}(\mathcal{G}) = \mathbf{g}$. In general, it can fail, see the discussion in \cite[Chapter 5.6]{debordskandpseudo}.
\end{remark}

Let us describe some examples that will play an important role later on in the monograph.

\begin{example}[Pair groupoid] 
Let $M$ be a smooth manifold. The manifold $\mathcal{G}:=M \times M$ is a groupoid over $M$ by setting 
\begin{align*}
r(x,y)&:=x, \quad s(x,y):=y,\\
(x, y) \circ (y, z) &:= (x, z), \\
i(x,y) &:= (y,x), \quad e(x) := (x, x). 
\end{align*}
\end{example}

\begin{example}[Transformation groupoid] 
\label{transgroupex}
Given a Lie group $G$ acting by diffeomorphisms on a smooth manifold $M$, define the Lie groupoid $M\rtimes G: = M \times G\rightrightarrows M$ with operations
\begin{align*}
r(x,g)&:=x, \quad s(x,g):=gx,\\
(x, g) \circ (xg, h) &:= (x,gh), \\
i(x, g) &:= (xg, g^{-1}), \quad  e(x) := (x, e_G). 
\end{align*}
\end{example}

\begin{example}[Deformation groupoid]
\label{ex:defomfodmomedm}
For a closed smooth submanifold $M_0$ of a smooth manifold $M$ with a normal bundle $N$. The deformation to the normal cone is the smooth manifold
    \[ D(M_0, M) = N \times \{0\} \sqcup M \times (0,1]. \]
See \cite{debordskandblowup} for the definition of the smooth structure on $D(M_0,M)$. A smooth groupoid $\mathcal{G}$ is called a deformation groupoid if $\mathcal{G} = D(\mathcal{H}_0, \mathcal{H})$ for a groupoid $\mathcal{H}$ and closed Lie subgroupoid $\mathcal{H}_0$ with object space $M$. $\mathcal{G}$ has object space $M \times [0,1]$. For more details on this construction and the next, see \cite{debordskandblowup}. We occasionally consider the deformation groupoid construction over $[0,\infty)$ instead of $[0,1]$. 
\end{example}

\begin{example}[Adiabatic groupoid] 
\label{ex:adiagropododo}
Given a smooth groupoid $\mathcal{G}\rightrightarrows M$ one can consider its Lie algebroid $\mathsf{Lie}(\mathcal{G})$ (see definition \ref{liealgebrofleiedldp}). $\mathsf{Lie}(\mathcal{G})$ is considered to be a Lie groupoid with the vector bundle operation. It is isomorphic to the normal bundle of unit the inclusion $M \hookrightarrow \mathcal{G}$. The adiabatic groupoid $\mathcal{G}_{ad}$ is defined to be 
    \[ \mathcal{G}_{\rm ad} = D(M, \mathcal{G})\equiv \mathsf{Lie}(\mathcal{G})\times \{0\}\sqcup \mathcal{G}\times (0,1]. \]
\end{example}

\begin{example}[Tangent groupoid]
\label{ex:tangentgroupoidladladld}
The tangent groupoid $\mathbb{T} M\rightrightarrows M\times [0,1]$ of a smooth manifold $M$ is defined to be adiabatic groupoid of the pair groupoid $M \times M\rightrightarrows M$. It was first studied by Connes \cite{connesbook} as a way of conceptualizing the Atiyah-Singer index theorem. The Lie algebroid of $\mathbb{T} M$ is $TM\times [0,1]$. An important variation of the tangent groupoid is the parabolic tangent groupoid of a Carnot manifold constructed by van Erp-Yuncken (see \cite{vanErp_Yunckentangent}), we review it below in the section on Carnot manifolds, see Section \ref{subsec:parabolictanget}.
\end{example}

\subsection{Twisted groupoid $C^*$-algebras}

To encode the representations of a groupoid, one associates a $C^*$-algebra. In order to be able to describe the full range of situations arising for nilpotent groups and Carnot manifolds, we will need the generality of twisted groupoid $C^*$-algebras. Before defining these $C^*$-algebras, we recall some basic notions for groupoids. 

Given a smooth groupoid $\mathcal{G}\rightrightarrows M$, for $x \in M$ we denote the $s$-fiber of $\mathcal{G}$ by
\[ \mathcal{G}_x := s^{-1}(\gamma)=\{ \gamma \in \mathcal{G} : s(\gamma) = x \}. \]
By definition, $\mathcal{G}_x$ is a smooth submanifold of $\mathcal{G}$. Similarly, $\mathcal{G}^x:=r^{-1}(x)$ denotes the $r$-fibre in $x\in M$. For any $\eta \in \mathcal{G}_x\cap \mathcal{G}^y$ there is an associated diffeomorphism
\[ R_\eta : \mathcal{G}_y \rightarrow \mathcal{G}_x, \ R_\eta(\gamma) = \gamma \circ \eta. \]

\begin{definition}
Let $\mathcal{G}\rightrightarrows M$ be a Lie groupoid. A Haar system on $\mathcal{G}$ is a family of smooth measures $(\mu_x)_{x\in M}$, where $\mu_x$ is a measure on the $s$-fiber $\mathcal{G}_x$, such that
\begin{enumerate}
    \item For any $f \in C_c^\infty(\mathcal{G})$, the function $M\ni x \mapsto \int_{\mathcal{G}_x} f d\mu_x\in \C$ is smooth on $M$.
    \item  For any $\eta \in \mathcal{G}_x\cap \mathcal{G}^y$, the diffeomorphism $R_\gamma : G_y \rightarrow G_x$ is measure-preserving.
\end{enumerate}
\end{definition}

We note that the second condition in the definition of a Haar system encodes that our convention is to use right invariant Haar system. Any two Haar systems on a Lie groupoid $\mathcal{G}\rightrightarrows M$ differ by a smooth, positive function on $M$. Therefore, the Haar system is  essentially unique.

\begin{definition}
A smooth 2-cocycle on a Lie groupoid $\mathcal{G}$ is a smooth map $\omega : \mathcal{G}^{(2)} \rightarrow U(1)$ such that
\[ \omega(x,y)\omega(xy,z) = \omega(x,yz) \omega(y,z) \text{ whenever } (x,y), \ (y,z) \in \mathcal{G}^{(2)}, \]
\[ \omega(x, s(x)) = 1 = \omega(r(x),x). \]
\end{definition}

\begin{definition}
\label{convolution}
Let $\mathcal{G}\rightrightarrows M$ be a Lie groupoid and $\omega$ a smooth $2$-cocycle on $\mathcal{G}$. The twisted convolution algebra of $(\mathcal{G},\omega)$ is the $*$-algebra $C_c^\infty(\mathcal{G}, \omega):=C^\infty_c(\mathcal{G})$ of compactly supported smooth functions on $\mathcal{G}$ equipped with the product defined from twisted convolution
\[f \ast g(\gamma) = \int_{\mathcal{G}_{r(\gamma)}} \omega(\eta, \eta^{-1} \gamma) f(\eta) g(\eta^{-1} \gamma)\rd \mu^{r(\gamma)}, \quad f,g\in C^\infty_c(\mathcal{G},\omega),\]
and the involution is given by 
\[f^*(\gamma) = \overline{f(\gamma^{-1})\omega(\gamma, \gamma^{-1})}.\]
\end{definition}

\begin{remark}
\label{schwarszremamd1}
We say that a Lie groupoid $\mathcal{G}\rightrightarrows M$ is a polynomial Lie groupoid if $M$ and $\mathcal{G}$ are manifolds with polynomial structures (see \cite{pedersen89}), and all maps involved in the groupoid structure are polynomial.  For a smooth $2$-cocycle $\omega$ on a polynomial Lie groupoid  the Schwartz space $\mathcal{S}(\mathcal{G}, \omega):=\mathcal{S}(\mathcal{G})$ forms a $*$-algebra with the identically defined operations as that of $C_c^\infty(\mathcal{G}, \omega)$. There is in this case an inclusion $C_c^\infty(\mathcal{G}, \omega)\subseteq \mathcal{S}(\mathcal{G}, \omega)$ which defines a $*$-homomorphism with dense range. 
\end{remark}

Restriction along the unit defines a mapping $E_M:C^\infty_c(\mathcal{G},\omega)\to C^\infty_c(M)$. For $f_1,f_2\in C^\infty_c(\mathcal{G},\omega)$ we define $\langle f_1,f_2\rangle_{C_0(M)}\in C^\infty_c(M)$ by 
$$\langle f_1,f_2\rangle_{C_0(M)}:=E_M(f_1^*\ast f_2).$$
Let $\mathpzc{E}_{\mathcal{G},\omega}$ denote the completion of $C^\infty_c(\mathcal{G},\omega)$ as a $C_0(M)$-Hilbert $C^*$-module. Up to natural isomorphism of $C_0(M)$-Hilbert $C^*$-modules, $\mathpzc{E}_{\mathcal{G},\omega}$ is independent of $\omega$. Left multiplication extends to a faithful $*$-representation 
$$\lambda^\omega:C^\infty_c(\mathcal{G},\omega)\to \End_{C_0(M)}^*(\mathpzc{E}_{\mathcal{G},\omega}).$$
We call $\lambda$ the left regular representation of $\mathcal{G}$.

\begin{definition}
Let $\mathcal{G}\rightrightarrows M$ be a Lie groupoid and $\omega$ a smooth $2$-cocycle on $\mathcal{G}$. The reduced $C^*$-algebra $C_r^*(\mathcal{G}, \omega)$ is the completion of $C_c^\infty(\mathcal{G}, \omega)$ with respect to the norm
\[ \| f \|_{C_r^*(\mathcal{G}, \omega)} :=  \| \lambda^\omega(f) \|_{ \End_{C_0(M)}^*(\mathpzc{E}_{\mathcal{G},\omega})}.\]

The full $C^*$-algebra $C^*(\mathcal{G}, \omega)$ is a completion of $C_c^\infty(\mathcal{G}, \omega)$ with respect to all $*$-representations by bounded operators on a Hilbert space (for more details see \cite{renaultbook}). 

If $\omega\equiv 1$, we write $C_r^*(\mathcal{G})$ and $C^*(\mathcal{G})$ instead of $C_r^*(\mathcal{G}, 1)$ and $C^*(\mathcal{G}, 1)$, respectively.
\end{definition}

\begin{remark}
\label{schwarszremamd2}
If $\mathcal{G}\rightrightarrows M$ is a polynomial Lie groupoid with a smooth $2$-cocycle $\omega$, the inclusion $C_c^\infty(\mathcal{G}, \omega)\subseteq \mathcal{S}(\mathcal{G}, \omega)$ extends to a continuous dense inclusion $\mathcal{S}(\mathcal{G},\omega)\subseteq C^*_r(\mathcal{G},\omega)$. Similarly, there is a continuous dense inclusion $\mathcal{S}(\mathcal{G},\omega)\subseteq C^*(\mathcal{G},\omega)$. 
\end{remark}

\begin{example} 
Consider the transformation groupoid  $\mathcal{G}= M\rtimes G $ associated with a Lie group acting smoothly on a manifold. A choice of Haar system corresponds to a choice of Haar measure on $G$. The convolution product on $C_c^\infty(\mathcal{G})$ is given by
    \[ f_1 \ast f_2(x,g) = \int_G f_1(h, x) f_2(h^{-1}g, xh) \rd h, \]
    where $\rd h$ is the Haar measure on $G$. The involution is given by
    \[ f^*(g, x) = \overline{f(g^{-1}, xg)}. \]
Therefore $C_r^*(\mathcal{G})=C_0(M)\rtimes_r G$ and $C^*(\mathcal{G})=C_0(M)\rtimes G$.
\end{example}

\begin{example}
Let $M$ be a smooth manifold and consider the pair groupoid $\mathcal{G} = M \times M$. A Haar system on $\mathcal{G}$ corresponds to a choice of a positive Radon measure on $M$. We suppress the choice of this measure from the notations. The convolution on $C_c^\infty(\mathcal{G})$ is given by
\[ f_1 \ast f_2(x,y) = \int_M f_1(x, z) f_2(z, y) \rd z. \]
The involution is given by
\[ f^*(x,y) = \overline{f(y,x)}. \]
The reduced $C^*$-algebra of $\mathcal{G}$ is isomorphic to the $C^*$-algebra of compact operators $\mathbb{K}(L^2(M))$. In fact, this isomorphism is constructed by localizing the left regular representation in a point $x\in M$ and the canonical left $\mathcal{G}$-equivariant diffeomorphism $M\cong \mathcal{G}_x$.

\end{example}

\begin{theorem}
\label{restrictqotuegroudp}
Let $\mathcal{G}\rightrightarrows M$ be a Lie groupoid with a smooth $2$-cocycle $\omega$. For any subset $X\subseteq M$ we define $\mathcal{G}_X:=s^{-1}(X)\cap r^{-1}(X)$ which is a topological groupoid (but not a Lie groupoid in general). If $X\subseteq M$ is open and $\mathcal{G}$-invariant, $\mathcal{G}_X$ is an open Lie subgroupoid of $\mathcal{G}$ and $C^*(\mathcal{G}_X,\omega)$ is an ideal in $C^*(\mathcal{G},\omega)$. If $X$ is a closed $\mathcal{G}$-invariant submanifold, $\mathcal{G}_X$ is a closed Lie subgroupoid for which restriction defines an epimorphism $C^*(\mathcal{G},\omega)\to C^*(\mathcal{G}_X,\omega)$ that fits into an exact sequence of full groupoid $C^*$-algebras:
    \[ 0 \rightarrow C^*( \mathcal{G}_{M \setminus X},\omega) \rightarrow C^*(\mathcal{G},\omega) \rightarrow C^*(\mathcal{G}_X,\omega) \rightarrow 0. \]
\end{theorem}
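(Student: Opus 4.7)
\medskip

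\noindent\textbf{Proof plan.} The plan is to split the argument according to the two topological situations (open invariant and closed invariant) and then assemble them into the six-term sequence. For the open case, fix a $\mathcal{G}$-invariant open $U\subseteq M$; invariance means $s^{-1}(U)=r^{-1}(U)$, and since $s$ is a submersion (hence open), $\mathcal{G}_U=s^{-1}(U)$ is an open submanifold, and all structure maps restrict smoothly, so $\mathcal{G}_U$ is an open Lie subgroupoid. Restriction of the Haar system of $\mathcal{G}$ to $s$-fibres over $U$ yields a Haar system on $\mathcal{G}_U$, and since $\omega$ restricts to a $2$-cocycle on $\mathcal{G}_U$, extension by zero defines a $*$-homomorphism $C^\infty_c(\mathcal{G}_U,\omega)\hookrightarrow C^\infty_c(\mathcal{G},\omega)$. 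The fact that this is a two-sided ideal follows at the algebraic level from the observation that for $f\in C^\infty_c(\mathcal{G}_U,\omega)$ and $g\in C^\infty_c(\mathcal{G},\omega)$, the integrand in $(g\ast f)(\gamma)$ is only nonzero when $s(\eta)=r(f$-variable$)\in U$, which by invariance forces $r(\gamma)\in U$, and similarly for $f\ast g$. The ideal property then passes to the full $C^*$-envelopes by the universal property.

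For the closed case, write $U=M\setminus X$, which is an open invariant subset, so the previous paragraph applies. The restriction map $\rho:C^\infty_c(\mathcal{G},\omega)\to C^\infty_c(\mathcal{G}_X,\omega)$ is a $*$-homomorphism because the restriction of a Haar system on $\mathcal{G}$ to the $\mathcal{G}$-fibres over $X$ gives a Haar system on $\mathcal{G}_X$ (here invariance of $X$ ensures $\mathcal{G}_X=s^{-1}(X)=r^{-1}(X)$ is a closed embedded Lie subgroupoid since $s,r$ are submersions). Surjectivity of $\rho$ at the level of dense $*$-subalgebras follows from a smooth extension argument: given $f\in C^\infty_c(\mathcal{G}_X,\omega)$, choose a tubular neighborhood of $\mathcal{G}_X$ in $\mathcal{G}$, pick a cut-off function, and extend $f$ by zero outside the neighborhood to obtain a preimage in $C^\infty_c(\mathcal{G},\omega)$; this uses that $\mathcal{G}_X$ is a closed embedded submanifold. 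The kernel of $\rho$ at the algebraic level is precisely $C^\infty_c(\mathcal{G}_U,\omega)$.

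To upgrade to the full $C^*$-algebras, I would invoke the universal property: a bounded nondegenerate $*$-representation of $C^\infty_c(\mathcal{G}_X,\omega)$ pulls back along $\rho$ to a $*$-representation of $C^\infty_c(\mathcal{G},\omega)$ bounded by $\|\cdot\|_{C^*(\mathcal{G},\omega)}$, hence $\rho$ is contractive for the full norms and extends to $\bar\rho:C^*(\mathcal{G},\omega)\to C^*(\mathcal{G}_X,\omega)$; surjectivity of $\bar\rho$ then follows from density of the algebraic image together with boundedness. For the ideal $C^*(\mathcal{G}_U,\omega)\subseteq C^*(\mathcal{G},\omega)$, the two norms agree on $C^\infty_c(\mathcal{G}_U,\omega)$: one inequality is automatic, and the reverse follows either by the disintegration theorem for $\mathcal{G}_U$-representations (every representation of $C^*(\mathcal{G}_U,\omega)$ extends to a representation of $C^*(\mathcal{G},\omega)$ by acting as zero on sections supported away from $U$) or, equivalently, by noting that an approximate unit of $C^*(\mathcal{G}_U,\omega)$ supported in $U$ exhausts the ideal under $\bar\rho^{-1}(0)$.

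Exactness in the middle is the main obstacle and the place where I expect to spend the most care. The inclusion $C^*(\mathcal{G}_U,\omega)\subseteq \ker\bar\rho$ is immediate. For the converse, let $a\in\ker\bar\rho$ and pick a sequence $a_n\in C^\infty_c(\mathcal{G},\omega)$ converging to $a$ in norm. The functions $\rho(a_n)\in C^\infty_c(\mathcal{G}_X,\omega)$ converge to $0$, and the surjective extension procedure from the previous paragraph produces lifts $b_n\in C^\infty_c(\mathcal{G},\omega)$ with $\rho(b_n)=\rho(a_n)$ and $\|b_n\|_{C^*(\mathcal{G},\omega)}\to 0$; then $a_n-b_n\in C^\infty_c(\mathcal{G}_U,\omega)$ converges to $a$, so $a\in C^*(\mathcal{G}_U,\omega)$. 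The delicate point is ensuring that the extension can be arranged so that $\|b_n\|_{C^*(\mathcal{G},\omega)}$ is controlled by $\|\rho(a_n)\|_{C^*(\mathcal{G}_X,\omega)}$; this is essentially a continuous linear splitting of $\rho$ at the level of a dense subalgebra, which is standard for closed embedded subgroupoids of Lie groupoids (via partitions of unity on a tubular neighborhood) but must be verified compatibly with the twisting cocycle $\omega$. An alternative route, which I would fall back on if the direct argument becomes cumbersome, is to cite the general disintegration/ideal theorem for full groupoid $C^*$-algebras as in Renault's monograph, which gives the exact sequence directly from the correspondence between invariant open subsets of $M$ and ideals of $C^*(\mathcal{G},\omega)$.
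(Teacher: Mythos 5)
The paper gives no argument of its own for this theorem: it simply refers the proof to Renault's monograph, so your fallback route (the disintegration/ideal theorem for full groupoid $C^*$-algebras) is precisely what the paper invokes, and your treatment of the open-invariant case and of the surjection $\bar\rho$ is sound.

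The genuine gap is in your primary argument for exactness in the middle. You need lifts $b_n\in C^\infty_c(\mathcal{G},\omega)$ of $\rho(a_n)$ with $\|b_n\|_{C^*(\mathcal{G},\omega)}$ controlled by $\|\rho(a_n)\|_{C^*(\mathcal{G}_X,\omega)}$, and you propose to get them from a tubular-neighborhood extension. Such an extension is continuous for the $I$-norm (or for uniform norms with a fixed compact support), but $\rho(a_n)\to 0$ only in the $C^*$-norm of $C^*(\mathcal{G}_X,\omega)$, and there is no reason the cutoff extension of a function that is small in $C^*$-norm downstairs is small in $C^*$-norm upstairs; the twisting cocycle is not the obstruction, the mismatch of norms is. In fact, if a norm-controlled linear splitting of $\rho$ existed in this generality, your argument would apply verbatim to the reduced completions and prove reduced exactness, which is false in general -- the paper warns of exactly this failure in the remark following the theorem. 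Any correct proof must exploit the universal property of the full norm: one shows (via Renault's disintegration theorem) that every representation of $C^\infty_c(\mathcal{G},\omega)$ annihilating $C^\infty_c(\mathcal{G}_{M\setminus X},\omega)$ is the integrated form of data supported over $X$ and hence factors through $C^\infty_c(\mathcal{G}_X,\omega)$, which gives $\ker\bar\rho\subseteq C^*(\mathcal{G}_{M\setminus X},\omega)$. A minor further slip: the algebraic kernel of $\rho$ is not $C^\infty_c(\mathcal{G}_{M\setminus X},\omega)$ but the strictly larger set of compactly supported functions vanishing on $\mathcal{G}_X$; only after cutting off near $\mathcal{G}_X$ (a limit taken in the $I$-norm) does one land in the closure of $C^\infty_c(\mathcal{G}_{M\setminus X},\omega)$, so the last line of your lifting argument also needs this approximation step even once the lifts are granted.
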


We refer the proof of this theorem to \cite{renaultbook}. The reader should beware that in general, the exactness property in Theorem \ref{restrictqotuegroudp} does not hold for the reduced completion. The following corollary is an important consequence that we will use abundantly.

\begin{corollary}
\label{adiadefcororo}
Let $\mathcal{G}\rightrightarrows M$ be a Lie groupoid. Assume that $\omega$ is a smooth $2$-cocycle on the adiabatic groupoid $\mathcal{G}_{ad}$. Let $\mathcal{G}_{\rm ad}^\circ$ denote the restriction of $\mathcal{G}_{\rm ad}$ to $M \times [0,1)$. Then one has the following exact sequence:
$$0 \rightarrow C^*(\mathcal{G}\times (0,1),\omega) \rightarrow C^*(\mathcal{G}_{\rm ad}^\circ,\omega) \xrightarrow{{\rm ev}_0} C^*(\mathsf{Lie}(\mathcal{G}),\omega) \rightarrow 0.$$
If $\omega=1$, the short exact sequence reduces to 
\[ 0 \rightarrow C_0(0,1)\otimes C^*(\mathcal{G}) \rightarrow C^*(\mathcal{G}_{\rm ad}^\circ) \xrightarrow{{\rm ev}_0} C_0(\mathsf{Lie}(\mathcal{G})^*) \rightarrow 0, \]
where $\mathsf{Lie}(\mathcal{G})^*\to M$ denotes the dual vector bundle of the Lie algebroid.
\end{corollary}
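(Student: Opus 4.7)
The plan is to deduce the statement from Theorem \ref{restrictqotuegroudp} by choosing an appropriate $\mathcal{G}_{\rm ad}^\circ$-invariant closed submanifold of its object space. The object space of $\mathcal{G}_{\rm ad}^\circ$ is $M\times[0,1)$ and, by construction of the deformation to the normal cone (Example \ref{ex:defomfodmomedm}) and of the adiabatic groupoid (Example \ref{ex:adiagropododo}), both the source and the range maps of $\mathcal{G}_{\rm ad}^\circ$ preserve the time parameter $t\in [0,1)$. Consequently, the closed submanifold $X:=M\times\{0\}\subseteq M\times [0,1)$ is $\mathcal{G}_{\rm ad}^\circ$-invariant and its complement, the open subset $U:=M\times(0,1)\subseteq M\times[0,1)$, is $\mathcal{G}_{\rm ad}^\circ$-invariant as well.

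First I would identify the restriction to $X$ and to $U$ as Lie subgroupoids. By the very definition of the adiabatic groupoid, the restriction of $\mathcal{G}_{\rm ad}^\circ$ to $X=M\times\{0\}$ is the Lie algebroid $\mathsf{Lie}(\mathcal{G})$, viewed as a bundle of abelian Lie groups over $M$ via its additive vector bundle structure. Similarly, the restriction of $\mathcal{G}_{\rm ad}^\circ$ to $U=M\times (0,1)$ is canonically isomorphic to $\mathcal{G}\times (0,1)\rightrightarrows M\times (0,1)$ as a Lie groupoid. Next I would apply Theorem \ref{restrictqotuegroudp} to the closed invariant submanifold $X\subseteq M\times[0,1)$ inside $\mathcal{G}_{\rm ad}^\circ$ together with the cocycle $\omega$ (restricted accordingly), which directly yields the exact sequence
\[
0 \longrightarrow C^*(\mathcal{G}\times(0,1),\omega) \longrightarrow C^*(\mathcal{G}_{\rm ad}^\circ,\omega) \xrightarrow{\ {\rm ev}_0\ } C^*(\mathsf{Lie}(\mathcal{G}),\omega) \longrightarrow 0,
\]
where the quotient map is restriction at $t=0$, as claimed.

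For the untwisted statement I would then separately identify the endpoints. The subgroupoid $\mathcal{G}\times(0,1)\rightrightarrows M\times(0,1)$ is the Cartesian product of the Lie groupoid $\mathcal{G}$ with the trivial (unit) groupoid $(0,1)\rightrightarrows (0,1)$, so its full $C^*$-algebra factors as
\[
C^*(\mathcal{G}\times(0,1))\cong C_0(0,1)\otimes C^*(\mathcal{G}).
\]
For the other end, $\mathsf{Lie}(\mathcal{G})\to M$ is a bundle of abelian Lie groups (the fibres are vector spaces with their additive group structure), so the fibrewise Fourier transform provides an isomorphism $C^*(\mathsf{Lie}(\mathcal{G}))\cong C_0(\mathsf{Lie}(\mathcal{G})^*)$. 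Substituting these two identifications into the exact sequence above yields the stated untwisted short exact sequence.

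The only nontrivial point to check in the plan is the invariance of $X$ and $U$ under $\mathcal{G}_{\rm ad}^\circ$; this is essentially immediate from the fact that the parameter $t$ is a groupoid invariant in any deformation to the normal cone. Everything else reduces to bookkeeping with the groupoid structures on the two ends, which is routine given Theorem \ref{restrictqotuegroudp}.
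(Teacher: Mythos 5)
Your proposal is correct and is essentially the argument the paper intends (the corollary is stated without proof, right after Theorem \ref{restrictqotuegroudp}, which is precisely designed to yield it). Applying that theorem to the closed invariant submanifold $M\times\{0\}\subseteq M\times[0,1)$, identifying the restrictions as $\mathsf{Lie}(\mathcal{G})$ and $\mathcal{G}\times(0,1)$, and then in the untwisted case factoring $C^*(\mathcal{G}\times(0,1))\cong C_0(0,1)\otimes C^*(\mathcal{G})$ and using the fibrewise Fourier transform $C^*(\mathsf{Lie}(\mathcal{G}))\cong C_0(\mathsf{Lie}(\mathcal{G})^*)$, is exactly the intended chain of identifications.
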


\section{A groupoid description of the flat orbits}
\label{sec:groupoidforflatorbits}

The ideal of flat orbits of a simply connected nilpotent Lie group can be described as a twisted groupoid $C^*$-algebra. We shall see that this relates the flat orbits to a noncommutative Fourier transform of functions on the total space of the vector bundle $\Xi\to \Gamma$.

We start with some preliminary considerations. Consider a Lie group $\mathsf{G}$ with a closed central subgroup $\mathsf{Z}\subseteq \mathsf{G}$. For notational convenience, we use multiplicative notation on $\mathsf{G}$ and $\mathsf{G}/\mathsf{Z}$ but additive notation on $\mathsf{Z}$. Assume that $\tau:\mathsf{G}/\mathsf{Z}\to \mathsf{G}$ is a smooth splitting of the quotient mapping $\mathsf{G}\to \mathsf{G}/\mathsf{Z}$. We define the smooth $2$-cocycle
\[ \omega : \mathsf{G}/\mathsf{Z} \times \mathsf{G}/\mathsf{Z} \rightarrow \mathsf{Z}, \ \omega(g_1 \mathsf{Z}, g_2 \mathsf{Z}) := \tau(g_1 \mathsf{Z})\tau(g_2 \mathsf{Z}) \tau(g_1 g_2 \mathsf{Z})^{-1}. \]

\begin{proposition}
The splitting $\tau$ induces an isomorphism $\mathsf{G} \simeq \mathsf{G}/\mathsf{Z} \rtimes_{\omega} \mathsf{Z}$, where the cocycle twisted product $\mathsf{G}/\mathsf{Z} \rtimes_{\omega} \mathsf{Z}$ is the group defined as the set $\mathsf{G} / \mathsf{Z} \times \mathsf{Z}$ equipped with the group operation
\[ (g_1 \mathsf{Z}, z_1) \cdot_{\omega} (g_2 \mathsf{Z}, z_2) := (g_1g_2\mathsf{Z}, z_1+z_2+\omega(g_1 \mathsf{Z}, g_2 \mathsf{Z}) ). \]
\end{proposition}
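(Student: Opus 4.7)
The plan is to exhibit an explicit smooth group isomorphism $\Phi:\mathsf{G}/\mathsf{Z}\rtimes_\omega \mathsf{Z}\to \mathsf{G}$ given by $\Phi(g\mathsf{Z},z):=\tau(g\mathsf{Z})z$, where we write $\mathsf{G}$ multiplicatively and use that $\mathsf{Z}$ is central to freely commute $\tau$-factors past elements of $\mathsf{Z}$ (even though we use additive notation on $\mathsf{Z}$ itself). Smoothness of $\Phi$ is immediate from smoothness of $\tau$ and of the group operation of $\mathsf{G}$.

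First I would verify bijectivity. Surjectivity: for any $g\in\mathsf{G}$ the element $z_g:=\tau(g\mathsf{Z})^{-1}g$ lies in $\mathsf{Z}$ because it maps to the identity in $\mathsf{G}/\mathsf{Z}$; hence $g=\Phi(g\mathsf{Z},z_g)$. Injectivity: if $\tau(g_1\mathsf{Z})z_1=\tau(g_2\mathsf{Z})z_2$, projecting to $\mathsf{G}/\mathsf{Z}$ and using that $\tau$ is a section forces $g_1\mathsf{Z}=g_2\mathsf{Z}$, whence $z_1=z_2$. The smooth inverse is $g\mapsto (g\mathsf{Z},\tau(g\mathsf{Z})^{-1}g)$, which is manifestly smooth.

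Next I would check that $\Phi$ is a homomorphism; this is the one computational step and the main (modest) obstacle, since it is where the cocycle and the additive/multiplicative mismatch have to be reconciled. Unfolding the twisted product on the left and using centrality of $\mathsf{Z}$ to commute the $z$-factors past the $\tau$-factors,
\begin{align*}
\Phi\bigl((g_1\mathsf{Z},z_1)\cdot_\omega(g_2\mathsf{Z},z_2)\bigr) &=\tau(g_1g_2\mathsf{Z})\bigl(z_1+z_2+\omega(g_1\mathsf{Z},g_2\mathsf{Z})\bigr)\\
&=\tau(g_1g_2\mathsf{Z})\,z_1z_2\,\tau(g_1\mathsf{Z})\tau(g_2\mathsf{Z})\tau(g_1g_2\mathsf{Z})^{-1}\\
&=\tau(g_1\mathsf{Z})\tau(g_2\mathsf{Z})\,z_1z_2\\
&=\tau(g_1\mathsf{Z})z_1\,\tau(g_2\mathsf{Z})z_2 = \Phi(g_1\mathsf{Z},z_1)\,\Phi(g_2\mathsf{Z},z_2),
\end{align*}
where in the third equality the defining identity of $\omega$ causes the $\tau(g_1g_2\mathsf{Z})$-factors to cancel, and in the last two steps we again invoke centrality of $\mathsf{Z}$. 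The identity element $(\mathsf{Z},0)$ maps to $\tau(\mathsf{Z})=e$ (after absorbing any nonzero value of $\tau(\mathsf{Z})\in\mathsf{Z}$ into a harmless redefinition of $\tau$, or equivalently noting $\omega(\mathsf{Z},\mathsf{Z})$ is trivial under the standard normalization), completing the verification that $\Phi$ is a smooth group isomorphism.
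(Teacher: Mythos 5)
Your proof is correct and is the standard one-line verification that the paper omits as routine. The map $\Phi(g\mathsf{Z},z)=\tau(g\mathsf{Z})z$ is manifestly a smooth bijection with smooth inverse $g\mapsto(g\mathsf{Z},\tau(g\mathsf{Z})^{-1}g)$, and your multiplicativity computation—unfolding the cocycle, using centrality of $\mathsf{Z}$ and of the whole expression $\tau(g_1\mathsf{Z})\tau(g_2\mathsf{Z})\tau(g_1g_2\mathsf{Z})^{-1}\in\mathsf{Z}$ to move factors around, then cancelling—is exactly what needs to happen.

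One small remark: the closing sentence about normalizing $\tau$ so that $\tau(\mathsf{Z})=e$ is unnecessary and slightly misleading. Once you have verified that $\Phi$ is a bijection intertwining the two products, it is automatically an isomorphism of groups; the image of the identity element of $\mathsf{G}/\mathsf{Z}\rtimes_\omega\mathsf{Z}$ (which, if $\tau(\mathsf{Z})\ne e$, is the element $(\mathsf{Z},\tau(\mathsf{Z})^{-1})$, not $(\mathsf{Z},0)$) is forced to be $e\in\mathsf{G}$ with no extra hypothesis. You can delete that sentence; everything else stands. In the paper's later uses (Jordan–Hölder splittings, exponential coordinates) one does have $\tau(\mathsf{Z})=e$ and hence a normalized cocycle in the sense of their Definition of smooth $2$-cocycle, but the proposition as stated does not need it.
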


We are interested in the situation that $\mathsf{G}$ is a simply connected nilpotent Lie group and $\mathsf{Z}$ is its center. We can define the $2$-cocycle
\begin{equation}
\label{2cocycfromxenter}
\omega_Z:\mathsf{G} / \mathsf{Z} \times \mathsf{G} / \mathsf{Z} \to \mathsf{Z},
\end{equation}
by choosing a Jordan-Hölder basis that splits $\mathsf{G} \to \mathsf{G} / \mathsf{Z}$. The reader should note that this $2$-cocycle pairs with $\mathfrak{z}^*$ to the Kirillov form on the flat orbits, cf. Remark \ref{flatorbisintosform}. By the discussion above we have that $\mathsf{G}\cong \mathsf{G} / \mathsf{Z} \times_{\omega_Z} \mathsf{Z}$. Let us translate this context into a twisted groupoid. The crucial observation is that $\mathsf{Z}=\mathfrak{z}$ whose $C^*$-algebra is isomorphic to $C_0(\mathfrak{z}^*)$ via the Fourier transform.

\begin{definition}
\label{groipodicododve}
Let $\mathsf{G}$ be a simply connected nilpotent Lie group with center $\mathsf{Z}=\mathfrak{z}$. Define the groupoid $\mathcal{G}_\mathsf{Z}\rightrightarrows \mathfrak{z}^*$ as the transformation groupoid (see Example \ref{transgroupex}) of the trivial $\mathsf{G} / \mathsf{Z}$-action on $\mathfrak{z}^*$. 

We write $\mathcal{G}_\Gamma$ for the restriction of $\mathcal{G}_\mathsf{Z}$ to the open subset of flat orbits $\Gamma\subseteq \mathfrak{z}^*$.
\end{definition}

We note that $\mathcal{G}_\mathsf{Z}^{(2)} = \mathsf{G} / \mathsf{Z} \times \mathsf{G} / \mathsf{Z} \times \mathfrak{z}^*$ and the composition is the product of the two $\mathsf{G} / \mathsf{Z}$-factors. 

\begin{definition}
\label{unitary2coclalddaadl}
Define the unitary 2-cocycle $\omega_{\mathcal{G}_\mathsf{Z}}$ on $\mathcal{G}_\mathsf{Z}$ by
\[ \omega_{\mathcal{G}_\mathsf{Z}} : \mathcal{G}_\mathsf{Z}^{(2)} = \mathsf{G} / \mathsf{Z} \times \mathsf{G} / \mathsf{Z} \times \mathfrak{z}^* \rightarrow U(1), \ \omega_{\mathcal{G}_Z}(g \mathsf{Z}, h \mathsf{Z}, \xi) = e^{-i \xi(\omega_\mathsf{Z}(g \mathsf{Z}, h \mathsf{Z}))}. \]
\end{definition}

The group $\Aut(\mathsf{G})$ acts as smooth groupoid automorphisms on $\mathcal{G}_\mathsf{Z}$ by 
$$\varphi(g\mathsf{Z},\xi):=(\varphi(g)\mathsf{Z},(\varphi^{-1})^*\xi).$$
We are here implicitly using that a groupoid automorphism preserves the center. Let us describe a lift of this action to an action on $C^*(\mathcal{G}_\mathsf{Z},\omega_{\mathcal{G}_\mathsf{Z}})$.

\begin{proposition}
Let $\mathsf{G}$ be a simply connected nilpotent Lie group with center $\mathsf{Z}$ and fix a smooth splitting $\tau:\mathsf{G}/\mathsf{Z}\to \mathsf{G}$. For $\varphi\in \Aut(\mathsf{G})$, define $\tau_\varphi:=\varphi^{-1}\circ \tau \circ \varphi$ and 
$$b_\varphi:=\tau_\varphi\cdot \tau^{-1}:\mathsf{G}/\mathsf{Z}\to \mathsf{Z}.$$
It holds that the group $\Aut(\mathsf{G})$ acts strongly continuously on $C^*(\mathcal{G}_\mathsf{Z},\omega_{\mathcal{G}_\mathsf{Z}})$ via the action 
$$\varphi.f(g\mathsf{Z},\xi)=e^{-i \xi b_{\varphi^{-1}}(g \mathsf{Z})}(\varphi^{-1})^*(f)(g\mathsf{Z},\xi).$$
\end{proposition}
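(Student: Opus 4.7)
The plan is to realize the stated formula as the image of the canonical $\Aut(\mathsf{G})$-action on $C^*(\mathsf{G})$ under the composition of the isomorphism $\mathsf{G}\cong \mathsf{G}/\mathsf{Z}\rtimes_{\omega_\mathsf{Z}}\mathsf{Z}$ induced by the splitting $\tau$ and fibrewise Fourier transform along the centre.

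First I would check that $\tau_\varphi=\varphi^{-1}\circ\tau\circ\varphi$ is again a smooth section of $\mathsf{G}\to \mathsf{G}/\mathsf{Z}$, which is immediate since $\varphi$ preserves $\mathsf{Z}$, so that $b_\varphi:\mathsf{G}/\mathsf{Z}\to \mathsf{Z}$ is well defined and smooth. A direct computation using centrality of $\mathsf{Z}$ shows that the cocycle $\omega_\mathsf{Z}^\varphi(g_1\mathsf{Z},g_2\mathsf{Z}):=\tau_\varphi(g_1\mathsf{Z})\tau_\varphi(g_2\mathsf{Z})\tau_\varphi(g_1g_2\mathsf{Z})^{-1}$ differs from $\omega_\mathsf{Z}$ precisely by the coboundary of $b_\varphi$; pairing with $\xi\in\mathfrak{z}^*$ then shows that $\omega_{\mathcal{G}_\mathsf{Z}}^\varphi$ differs from $\omega_{\mathcal{G}_\mathsf{Z}}$ by the coboundary of the $U(1)$-valued cochain $\beta_\varphi(g\mathsf{Z},\xi):=e^{-i\xi(b_\varphi(g\mathsf{Z}))}$. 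Consequently, pointwise multiplication by $\beta_\varphi$ extends to a canonical $*$-isomorphism
\[
\Theta_\varphi: C^*(\mathcal{G}_\mathsf{Z},\omega_{\mathcal{G}_\mathsf{Z}}^\varphi)\xrightarrow{\sim} C^*(\mathcal{G}_\mathsf{Z},\omega_{\mathcal{G}_\mathsf{Z}}).
\]

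Next, writing $\mathcal{F}_\tau:C^*(\mathsf{G})\xrightarrow{\sim} C^*(\mathcal{G}_\mathsf{Z},\omega_{\mathcal{G}_\mathsf{Z}})$ for the $*$-isomorphism obtained from the identification $\mathsf{G}\cong \mathsf{G}/\mathsf{Z}\rtimes_{\omega_\mathsf{Z}}\mathsf{Z}$ followed by fibrewise Fourier transform in the central variable, and similarly $\mathcal{F}_{\tau_\varphi}:C^*(\mathsf{G})\xrightarrow{\sim} C^*(\mathcal{G}_\mathsf{Z},\omega_{\mathcal{G}_\mathsf{Z}}^\varphi)$ using $\tau_\varphi$ in place of $\tau$, I would define
\[
\alpha_\varphi:=\Theta_\varphi\circ \mathcal{F}_{\tau_\varphi}\circ \alpha_\varphi^\mathsf{G}\circ \mathcal{F}_\tau^{-1},
\]
where $\alpha_\varphi^\mathsf{G}$ denotes the canonical (Jacobian-normalized) strongly continuous action of $\varphi$ on $C^*(\mathsf{G})$. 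Unwinding this composition on $f\in C_c^\infty(\mathcal{G}_\mathsf{Z})$ is bookkeeping: by construction $\varphi$ intertwines the $\tau$- and $\tau_\varphi$-presentations with no prefactor, the Fourier integrals produce the pullback $(\varphi^{-1})^*f$ satisfying $(\varphi^{-1})^*f(g\mathsf{Z},\xi)=f(\varphi^{-1}(g)\mathsf{Z},\varphi^*\xi)$, and $\Theta_\varphi$ contributes a scalar prefactor which, after invoking the key identity
\[
\varphi\bigl(b_\varphi(\varphi^{-1}(g)\mathsf{Z})\bigr)=-b_{\varphi^{-1}}(g\mathsf{Z})
\]
in additive notation on $\mathsf{Z}$, becomes exactly $e^{-i\xi(b_{\varphi^{-1}}(g\mathsf{Z}))}$. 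This identity follows directly from $\tau_\varphi=\varphi^{-1}\tau\varphi$ and $\tau_{\varphi^{-1}}=\varphi\tau\varphi^{-1}$ together with centrality of $\mathsf{Z}$.

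Finally I would deduce multiplicativity $\alpha_{\varphi\psi}=\alpha_\varphi\circ \alpha_\psi$ from the analogous property of $\alpha^\mathsf{G}$ and the cocycle identity $b_{\varphi\psi}(g\mathsf{Z})=b_\varphi(g\mathsf{Z})+\varphi(b_\psi(\varphi^{-1}(g)\mathsf{Z}))$, which itself is a routine consequence of $\tau_{\varphi\psi}=\psi^{-1}\tau_\varphi\psi$. Strong continuity of $\varphi\mapsto \alpha_\varphi$ then reduces to the strong continuity of $\alpha^\mathsf{G}$ on $C^*(\mathsf{G})$ together with smoothness of the dependence of $b_\varphi$ on $\varphi$, and is verified on the dense subalgebra $C_c^\infty(\mathcal{G}_\mathsf{Z})$ via uniform estimates on compact supports. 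The main obstacle is the algebraic identity $\varphi(b_\varphi(\varphi^{-1}(g)\mathsf{Z}))=-b_{\varphi^{-1}}(g\mathsf{Z})$: it is the only substantive input beyond the standard lemma that cohomologous groupoid cocycles give canonically isomorphic twisted $C^*$-algebras via multiplication by the implementing cochain, and it is precisely what rewrites the naive Fourier-side expression into the clean form stated in the proposition.
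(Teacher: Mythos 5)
Your overall strategy -- transporting the canonical action on $C^*(\mathsf{G})$ through the central Fourier transform and the standard isomorphism between twisted algebras of cohomologous cocycles -- is viable and genuinely different from the paper's argument. The paper makes the same first observation (that $\varphi^*\omega_{\mathcal{G}_\mathsf{Z}}$ is the cocycle built from the splitting $\tau_\varphi$, hence differs from $\omega_{\mathcal{G}_\mathsf{Z}}$ by the coboundary of $e^{-i\xi b_\varphi}$), but then verifies multiplicativity of the stated formula directly, by a change of variables in the twisted convolution integral on $\mathcal{S}(\mathcal{G}_\mathsf{Z},\omega_{\mathcal{G}_\mathsf{Z}})$; it never passes through $C^*(\mathsf{G})$. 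Your route buys conceptual clarity (multiplicativity and strong continuity are inherited from the canonical action), but it presupposes that the central Fourier transform $\mathcal{F}_\tau:C^*(\mathsf{G})\to C^*(\mathcal{G}_\mathsf{Z},\omega_{\mathcal{G}_\mathsf{Z}})$, and its analogue for the deformed splitting, are $*$-isomorphisms. In the paper this is the non-equivariant content of Theorem \ref{groupoiddescropofig}, proved immediately afterwards and independently of the proposition, so there is no circularity -- but your proof would force a reordering, and the saving is modest, since that isomorphism is established by essentially the same integral manipulation the paper performs here.

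There is, however, a systematic $\varphi$ versus $\varphi^{-1}$ slip in your bookkeeping which, as written, prevents the computation from closing. For $F\in\mathcal{S}(\mathsf{G})$ one has $(\alpha^\mathsf{G}_\varphi F)(\tau'(g\mathsf{Z})t)=F\bigl(\varphi^{-1}(\tau'(g\mathsf{Z}))\varphi^{-1}(t)\bigr)$, and the identity $\varphi^{-1}(\tau'(g\mathsf{Z}))=\tau(\varphi^{-1}(g)\mathsf{Z})$ holds precisely for $\tau'=\tau_{\varphi^{-1}}=\varphi\circ\tau\circ\varphi^{-1}$, not for $\tau_\varphi$; with your choice, $\mathcal{F}_{\tau_\varphi}\circ\alpha^\mathsf{G}_\varphi\circ\mathcal{F}_\tau^{-1}$ is not the bare pullback but carries the extra phase coming from $b_\varphi(\varphi^{-1}(g)\mathsf{Z})+\varphi^{-1}(b_\varphi(g\mathsf{Z}))$, and after composing with $\Theta_\varphi$ the total prefactor involves both $b_\varphi$ and $b_{\varphi^{-1}}$ and does not reduce to $e^{-i\xi b_{\varphi^{-1}}(g\mathsf{Z})}$. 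Replacing $\tau_\varphi$ by $\tau_{\varphi^{-1}}$ and $\Theta_\varphi$ by the cochain attached to $b_{\varphi^{-1}}$ repairs this; indeed $\mathcal{F}_{\tau_{\varphi^{-1}}}=e^{-i\xi b_{\varphi^{-1}}(\cdot)}\,\mathcal{F}_\tau$, so the prefactor then appears directly and your ``key identity'' becomes inessential (it is nevertheless correct: $\varphi\bigl(b_\varphi(\varphi^{-1}(g)\mathsf{Z})\bigr)=-b_{\varphi^{-1}}(g\mathsf{Z})$, whereas the paper's proof of Theorem \ref{groupoiddescropofig} quotes it without the sign). The same slip occurs in your cocycle identity: what you wrote is the relation satisfied by $\varphi\mapsto b_{\varphi^{-1}}$, which is indeed the one needed for multiplicativity of the prefactor $e^{-i\xi b_{\varphi^{-1}}}$; for $b_\varphi$ itself the correct relation is $b_{\varphi\psi}=b_\psi+\psi^{-1}\circ b_\varphi\circ\psi$. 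So the idea is sound and, once these indices are fixed, the argument goes through, but in its present form the unwinding you describe as ``bookkeeping'' would not produce the stated formula.
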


\begin{proof}
Writing $\varphi^*\omega_{\mathcal{G}_\mathsf{Z}}:=\omega_{\mathcal{G}_\mathsf{Z}}\circ (\varphi\times \varphi)$ on $\mathcal{G}_\mathsf{Z}^{(2)}$, we compute that 
\begin{align*}
\varphi^*\omega_{\mathcal{G}_\mathsf{Z}}(g \mathsf{Z}, h \mathsf{Z}, \xi)=&\omega_{\mathcal{G}_\mathsf{Z}}(\varphi(g)\mathsf{Z},\varphi(h)\mathsf{Z},(\varphi^{-1})^*\xi)=e^{-i \xi(\varphi^{-1}(\omega_\mathsf{Z}(\varphi(g) \mathsf{Z},\varphi(h) \mathsf{Z}))}.
\end{align*}
If $\omega_{\mathcal{G}_\mathsf{Z}}(g \mathsf{Z}, h \mathsf{Z}, \xi)=\tau(g \mathsf{Z})\tau(h \mathsf{Z}) \tau(g h \mathsf{Z})^{-1}$ for the smooth splitting $\tau$, then 
$$\varphi^{-1}(\omega_\mathsf{Z}(\varphi(g) \mathsf{Z},\varphi(h) \mathsf{Z}))=\tau_\varphi(g \mathsf{Z})\tau_\varphi(h\mathsf{Z}) \tau_\varphi(gh\mathsf{Z})^{-1},$$ 
for the splitting $\tau_\varphi:=\varphi^{-1}\circ \tau \circ \varphi$. Since splittings are unique up to $\mathsf{Z}$, we have that 
$$b_\varphi:=\tau_\varphi\cdot \tau^{-1}:\mathsf{G}/\mathsf{Z}\to \mathsf{Z},$$
is smooth and depends smoothly on $\varphi\in \Aut(\mathsf{G})$. Moreover, 
\begin{align*}
\varphi^*\omega_{\mathcal{G}_\mathsf{Z}}(g \mathsf{Z}, h \mathsf{Z}, \xi)=&\omega_{\mathcal{G}_\mathsf{Z}}(g \mathsf{Z}, h \mathsf{Z}, \xi) e^{-i \xi b_\varphi(g \mathsf{Z})}e^{-i \xi b_\varphi(h \mathsf{Z})}e^{i \xi  b_\varphi(g h \mathsf{Z})}.
\end{align*}

The computations above implies that $\Aut(\mathsf{G})$ acts $C^*(\mathcal{G}_\mathsf{Z},\omega_{\mathcal{G}_\mathsf{Z}})$ by setting 
$$\varphi.f(g\mathsf{Z},\xi)=e^{-i \xi b_{\varphi^{-1}}(g \mathsf{Z})}(\varphi^{-1})^*(f)(g\mathsf{Z},\xi),$$
for $f\in \mathcal{S}(\mathcal{G}_\mathsf{Z},\omega_{\mathcal{G}_\mathsf{Z}})$. Indeed, for $\varphi\in \Aut(\mathsf{G})$ and $f_1,f_2\in \mathcal{S}(\mathcal{G}_\mathsf{Z},\omega_{\mathcal{G}_\mathsf{Z}})$,
\small
\begin{align*}
(\varphi.f_1 &\ast_{\omega_{\mathcal{G}_\mathsf{Z}}}  \varphi.f_2)(g, \xi)  =\\
&= \int_{\mathsf{G} / \mathsf{Z}} e^{-i \xi b_{\varphi^{-1}}(h)}f_1(\varphi^{-1}h, \varphi^*\xi) e^{-i \xi b_{\varphi^{-1}}(h^{-1}g )}f_2(\varphi^{-1}(h^{-1}g), \varphi^*\xi) \omega_{\mathcal{G}_\mathsf{Z}}(h, h^{-1}g, \xi) \rd h =\\
&=\int_{\mathsf{G} / \mathsf{Z}} f_1(\varphi^{-1}h, \varphi^*\xi) f_2(\varphi^{-1}(h^{-1}g), \varphi^*\xi)(\varphi^{-1})^*\omega_{\mathcal{G}_\mathsf{Z}}(h , h^{-1}g , \xi)\e^{i \xi  b_{\varphi^{-1}}(g)}\rd h=\\
&=\int_{\mathsf{G} / \mathsf{Z}} f_1(h', \varphi^*\xi) f_2(h'^{-1}\varphi^{-1}(g), \varphi^*\xi)\omega_{\mathcal{G}_\mathsf{Z}}(h' , h'^{-1}\varphi^{-1}(g) , \varphi^*\xi)\e^{i \xi  b_{\varphi^{-1}}(g)}\rd h'=\\
&=\varphi.(f_1 \ast_{\omega_{\mathcal{G}_\mathsf{Z}}}  f_2)(g, \xi).
\end{align*}
\normalsize
In the second last equality we changed coordinates to $h'=\varphi^{-1}h$. We arrive at a strongly continuous action $\Aut(\mathsf{G})\to \Aut(C^*(\mathcal{G}_\mathsf{Z},\omega_{\mathcal{G}_\mathsf{Z}}))$.
\end{proof}

\begin{theorem}
\label{groupoiddescropofig}
Let $\mathsf{G}$ be a simply connected nilpotent Lie group with center $\mathsf{Z}=\mathfrak{z}$. Fourier transform in the central direction defines an $\mathsf{Aut}(\mathsf{G})$-equivariant $*$-isomorphism 
\[ \mathcal{F}_Z : C^*(\mathsf{G}) \rightarrow C^*(\mathcal{G}_\mathsf{Z}, \omega_{\mathcal{G}_\mathsf{Z}}),\]
where the Fourier transform in the central direction is defined by density on $f\in \mathcal{S}(\mathsf{G})=\mathcal{S}(\mathsf{G}/\mathsf{Z}\times \mathsf{Z})$ (cf. Remark \ref{schwarszremamd1} and \ref{schwarszremamd2}) as $\mathcal{F}_\mathsf{Z}(f)\in \mathcal{S}(\mathcal{G}_\mathsf{Z}, \omega_{\mathcal{G}_Z})$ where 
\begin{equation}
\label{centralft}
\mathcal{F}_\mathsf{Z}(f)(g \mathsf{Z}, \xi) = \int_{\mathfrak{z}} e^{i\xi(t)} f(g\mathsf{Z}, t) \,\mathrm{d}t. 
\end{equation}
Moreover, this isomorphism restricts to an isomorphism $I_\mathsf{G}\cong C^*(\mathcal{G}_\Gamma,\omega_{\mathcal{G}_\mathsf{Z}})$, i.e. we have a commuting diagram
\[
\begin{tikzcd}
I_\mathsf{G} \arrow[r, "\mathcal{F}_\mathsf{Z}"]\arrow[d, "\subseteq"]  & C^*(\mathcal{G}_\Gamma,\omega_{\mathcal{G}_\mathsf{Z}}) \arrow[d, "\subseteq"] \\
C^*(\mathsf{G})\arrow[r, "\mathcal{F}_\mathsf{Z}"]& C^*(\mathcal{G}_\mathsf{Z},\omega_{\mathcal{G}_\mathsf{Z}}) 
\end{tikzcd}
\]
\end{theorem}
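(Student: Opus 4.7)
My plan is to first verify that $\mathcal{F}_\mathsf{Z}$ is a $*$-isomorphism at the level of Schwartz spaces by a direct computation, then extend to $C^*$-completions by identifying both sides with the same twisted crossed product, and finally deduce the restriction statement from spectral considerations.

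The starting point is the central decomposition $\mathsf{G} \cong \mathsf{G}/\mathsf{Z} \rtimes_{\omega_\mathsf{Z}} \mathsf{Z}$ coming from the Jordan--Hölder splitting $\tau:\mathsf{G}/\mathsf{Z}\to \mathsf{G}$, with the $2$-cocycle $\omega_\mathsf{Z}$ from Equation \eqref{2cocycfromxenter}. Under this splitting, $\mathcal{S}(\mathsf{G}) \cong \mathcal{S}(\mathsf{G}/\mathsf{Z})\,\hat{\otimes}\,\mathcal{S}(\mathsf{Z})$ and the convolution reads
\[
(f_1 * f_2)(g\mathsf{Z}, t) = \int_{\mathsf{G}/\mathsf{Z}}\int_{\mathfrak{z}} f_1(h\mathsf{Z}, s)\, f_2(h^{-1}g \mathsf{Z},\, t - s - \omega_\mathsf{Z}(h\mathsf{Z}, h^{-1}g\mathsf{Z}))\,\mathrm{d}s\,\mathrm{d}h.
\]
First I would apply $\mathcal{F}_\mathsf{Z}$ to both sides: the inner convolution on $\mathfrak{z}$ becomes pointwise multiplication, and the translation by $\omega_\mathsf{Z}(h\mathsf{Z}, h^{-1}g\mathsf{Z})$ produces exactly the phase $e^{-i\xi(\omega_\mathsf{Z}(h\mathsf{Z},h^{-1}g\mathsf{Z}))}=\omega_{\mathcal{G}_\mathsf{Z}}(h\mathsf{Z}, h^{-1}g\mathsf{Z}, \xi)$. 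A parallel check on the involution uses that $\mathcal{F}_\mathsf{Z}$ intertwines $t\mapsto -t$ with complex conjugation. Thus $\mathcal{F}_\mathsf{Z}: \mathcal{S}(\mathsf{G})\to \mathcal{S}(\mathcal{G}_\mathsf{Z}, \omega_{\mathcal{G}_\mathsf{Z}})$ is a $*$-isomorphism, and by Remarks \ref{schwarszremamd1}--\ref{schwarszremamd2} both Schwartz algebras sit densely inside the respective full $C^*$-algebras.

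To upgrade to a $C^*$-isomorphism, I would identify both $C^*$-algebras with the twisted crossed product $C^*(\mathsf{G}/\mathsf{Z},\, C^*(\mathsf{Z});\, \omega_\mathsf{Z})$. On the left, this is the classical twisted decomposition induced by $\mathsf{G}\cong \mathsf{G}/\mathsf{Z}\rtimes_{\omega_\mathsf{Z}}\mathsf{Z}$ and Stone--von Neumann-type disintegration of representations over central characters. On the right, Pontryagin duality for the abelian group $\mathsf{Z}=\mathfrak{z}$ turns $C^*(\mathsf{Z})$ into $C_0(\mathfrak{z}^*)$ with the $\mathsf{G}/\mathsf{Z}$-action becoming trivial (since $\mathsf{Z}$ is central) and the cocycle $\omega_\mathsf{Z}$ being transformed into the $U(1)$-valued cocycle $\omega_{\mathcal{G}_\mathsf{Z}}$ of Definition \ref{unitary2coclalddaadl}; this is precisely the $C^*$-algebra of the twisted transformation groupoid $\mathcal{G}_\mathsf{Z}$. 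The main subtlety here is to justify the identification on the \emph{full} (not just reduced) completion; this follows from amenability of $\mathsf{G}$ (so full equals reduced on the left) combined with amenability of the transformation groupoid $\mathcal{G}_\mathsf{Z}$ (which is a bundle of copies of the amenable group $\mathsf{G}/\mathsf{Z}$), so full equals reduced on the right.

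Equivariance under $\mathsf{Aut}(\mathsf{G})$ is a direct check: the canonical action on $C^*(\mathsf{G})$ transports under $\mathcal{F}_\mathsf{Z}$ to the action $\varphi.f(g\mathsf{Z},\xi) = e^{-i\xi b_{\varphi^{-1}}(g\mathsf{Z})}(\varphi^{-1})^*f(g\mathsf{Z},\xi)$ on $C^*(\mathcal{G}_\mathsf{Z},\omega_{\mathcal{G}_\mathsf{Z}})$ because the failure of $\varphi$ to commute with the chosen splitting $\tau$ is measured exactly by the coboundary $b_\varphi$, which after Fourier transform appears as the phase factor above.

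Finally, for the restriction to $I_\mathsf{G}$, I would use the functoriality of $C^*$-algebra spectra. The spectrum of $C^*(\mathcal{G}_\mathsf{Z},\omega_{\mathcal{G}_\mathsf{Z}})$ is fibred over $\mathfrak{z}^*$ with the fibre over $\xi$ being $\hat{\mathsf{G}}_\xi$, the representations with central character $\xi$; equivalently, the Kirillov correspondence identifies these fibres with orbits meeting $\xi\in\mathfrak{z}^*\hookrightarrow \mathfrak{g}^*$. Under $\mathcal{F}_\mathsf{Z}$, the spectral decomposition over central characters on $\hat{\mathsf{G}}$ matches the spectral decomposition over $\mathfrak{z}^*$ on $\widehat{C^*(\mathcal{G}_\mathsf{Z},\omega_{\mathcal{G}_\mathsf{Z}})}$. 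Therefore the ideal $I_\mathsf{G}\subseteq C^*(\mathsf{G})$ of Definition \ref{idealofflatorbits}, defined by vanishing on $\hat{\mathsf{G}}\setminus \Gamma$, is sent onto the ideal in $C^*(\mathcal{G}_\mathsf{Z},\omega_{\mathcal{G}_\mathsf{Z}})$ cut out by $\mathfrak{z}^*\setminus \Gamma$, which by the description of $\Gamma$ as an open subset of $\mathfrak{z}^*$ (Proposition \ref{descripfiogogda}) is precisely $C^*(\mathcal{G}_\Gamma,\omega_{\mathcal{G}_\mathsf{Z}})$.

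The step I expect to be the main obstacle is the rigorous $C^*$-theoretic identification with the twisted crossed product and the matching of full norms on both sides; at the purely algebraic Schwartz-algebra level the computation is essentially formal, and the equivariance and ideal statements reduce to bookkeeping once the core isomorphism is in place.
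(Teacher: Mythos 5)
Your proposal is correct, and its core — the Schwartz-level computation showing $\mathcal{F}_\mathsf{Z}$ turns central convolution into the twisted groupoid product and the coboundary $b_\varphi$ into the equivariance phase, plus the dichotomy over $\mathfrak{z}^*$ for the ideal statement — is the same as the paper's. Where you diverge is the completion step: you identify both sides with the twisted crossed product $C^*(\mathsf{G}/\mathsf{Z},C^*(\mathsf{Z});\omega_\mathsf{Z})$ and invoke amenability of $\mathsf{G}$ and of the transformation groupoid to reconcile full and reduced norms, whereas the paper avoids crossed-product machinery entirely: since $\mathcal{F}_\mathsf{Z}$ and its inverse (Fourier inversion) are both $*$-homomorphisms on the dense Schwartz subalgebras, each extends to the full completions by the universal property, and the extensions are mutually inverse — so the full-versus-reduced question you flag as the main obstacle never arises. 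Your route is heavier but legitimate (Green-type decomposition for central extensions holds at the level of full crossed products, so the amenability appeal is in fact redundant rather than essential); the paper's route is shorter and self-contained. For the restriction to $I_\mathsf{G}$, your spectral-matching argument is the same dichotomy the paper makes explicit via the localized regular representations $\lambda^{\omega_{\mathcal{G}_\mathsf{Z}}}_\xi\circ\mathcal{F}_\mathsf{Z}$: for $\xi\in\Lambda$ every $\ell\in\mathfrak{g}^*$ restricting to $\xi$ lies in the single flat orbit $\xi+\mathfrak{z}^\perp$, while for $\xi\notin\Lambda$ all representations with central character $\xi$ are non-flat; spelling this out (rather than citing the Kirillov correspondence in passing) is the only place where your sketch should be expanded, but it is bookkeeping, not a gap.
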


The last conclusion of the theorem is interpreted as an empty statement if $\Gamma$ is empty. Another way of formulating the last conclusion of the theorem is that $I_\mathsf{G}\subseteq C^*(\mathsf{G})$ consists of those elements whose Fourier transform in the central direction is supported in $\Gamma\subseteq \mathfrak{z}^*$. In Equation \eqref{centralft} we are implicitly identifying $\mathsf{G}=\mathsf{G}/\mathsf{Z}\times \mathsf{Z}$ via the splitting $\tau$, i.e. we are identifying $f(g\mathsf{Z}, t)=f(\tau(g\mathsf{Z})t)$.

\begin{proof}
For $f\in \mathcal{S}(\mathsf{G})$ it is clear that $\mathcal{F}_\mathsf{Z}(f)\in \mathcal{S}(\mathcal{G}_\mathsf{Z})$. We start with verifying the equivariance property. For $\varphi\in \Aut(\mathsf{G})$ and $f\in \mathcal{S}(\mathsf{G})$ we have that $\varphi.f(g)=f(\varphi^{-1}(g))$ which we in Equation \eqref{centralft} are implicitly identifying with 
\begin{align*}
\varphi.f(g\mathsf{Z}, t)=&f(\varphi^{-1}(\tau(g\mathsf{Z}))\varphi^{-1}(t))=f(\tau_\varphi(\varphi^{-1}(g)\mathsf{Z})\varphi^{-1}(t))=\\
=&f(\tau(\varphi^{-1}(g)\mathsf{Z})b_{\varphi}(\varphi^{-1}(g)\mathsf{Z})\varphi^{-1}(t)),
\end{align*}
where we in the last equality used that $\tau_\varphi=\tau\cdot b_\varphi$ where $b_\varphi$ takes central values. Therefore 
\begin{align*}
\mathcal{F}_\mathsf{Z}(\varphi.f)(g \mathsf{Z}, \xi) =&\int_{\mathfrak{z}} e^{i\xi(t)}f(\tau(\varphi^{-1}(g)\mathsf{Z})b_\varphi(g\mathsf{Z})\varphi^{-1}(t))\mathrm{d}t=\\
=&\int_{\mathfrak{z}} e^{i\xi(\varphi(s)-\varphi(b_\varphi(\varphi^{-1}(g)\mathsf{Z})))}f(\tau(\varphi^{-1}(g)\mathsf{Z})s)\mathrm{d}s=\\
=&e^{-i \xi b_{\varphi^{-1}}(g \mathsf{Z})}\int_{\mathfrak{z}} e^{i\varphi^*(\xi) s}f(\tau(\varphi^{-1}(g)\mathsf{Z})s)\mathrm{d}s=\left(\varphi.\mathcal{F}_\mathsf{Z}(f)\right)(g \mathsf{Z}, \xi),
\end{align*}
where we in the second identity made the change of variables $s=b_\varphi(g\mathsf{Z})\varphi^{-1}(t)$ and in the third equality we used that $\varphi\circ b_\varphi\circ \varphi^{-1}=b_{\varphi^{-1}}$.

Let us verify that $\mathcal{F}_\mathsf{Z} :\mathcal{S}(\mathsf{G})\to  \mathcal{S}(\mathcal{G}_\mathsf{Z}, \omega_{\mathcal{G}_\mathsf{Z}})$ is a $*$-homomorphism. For $f_1,f_2\in \mathcal{S}(\mathsf{G})$, we have that 
\small
\begin{align*}
(\mathcal{F}_\mathsf{Z}(f_1)  &\ast_{\omega_{\mathcal{G}_\mathsf{Z}}}  \mathcal{F}_\mathsf{Z}(f_2))(g, \xi)  = \int_{\mathsf{G} / \mathsf{Z}} \mathcal{F}_\mathsf{Z}(f_1)(h, \xi) \mathcal{F}_\mathsf{Z}(f_2)(h^{-1}g, \xi) \omega_{\mathcal{G}_Z}(h, h^{-1}g, \xi) \rd h = \\ 
& = \int_{\mathsf{G} / \mathsf{Z}} \int_{\mathfrak{z}} \int_{\mathfrak{z}} e^{i \xi(t_1 + t_2)} f_1(h, t_1) f_2(h^{-1}g, t_2) \omega_{\mathcal{G}_Z}(h, h^{-1}g, \xi) \rd t_1 \rd t_2 \rd h = \\ 
& = \int_{\mathsf{G} / \mathsf{Z}} \int_{\mathfrak{z}} \int_{\mathfrak{z}} e^{i \xi(t_1 + \omega_Z(h, h^{-1}g))} f_1(h, t_1 - t_2 - \omega_Z(h, h^{-1}g))\cdot \\
&\qquad\qquad\qquad\qquad\qquad\qquad\qquad\qquad\cdot f_2(h^{-1}g, t_2) \omega_{\mathcal{G}_Z}(h, h^{-1}g, \xi) \rd t_1 \rd t_2 \rd h = \\ 
& = \int_{\mathsf{G} / \mathsf{Z}} \int_{\mathfrak{z}} \int_{\mathfrak{z}} e^{i \xi(t_1)} f_1(h, t_1 - t_2 - \omega_Z(h, h^{-1}g)) f_2(h^{-1}g, t_2) \rd t_1 \rd t_2 \rd h = \\ 
& = \int_{\mathfrak{z}} e^{i \xi(t_1)} (f_1 \ast f_2)(g, t_1) \rd t_1 = \mathcal{F}_\mathsf{Z}(f_1 \ast f_2)(g, \xi).
\end{align*}
\normalsize
By a similar computation, we have for $f\in \mathcal{S}(\mathsf{G})$ that 
\begin{align*}
\mathcal{F}_\mathsf{Z}(f^*)  &(g, \xi)  = \int_{\mathfrak{z}} e^{i\xi(t)} \overline{f(g^{-1}\mathsf{Z}, -t+\omega_\mathsf{Z}(g,g^{-1}))} \rd t=\\
&=e^{i\xi(\omega_\mathsf{Z}(g,g^{-1}))}\int_{\mathfrak{z}} e^{-i\xi(t)} \overline{f(g^{-1}\mathsf{Z}, t)} \rd t=\\
&=e^{i\omega_{\mathcal{G}_\mathsf{Z}}((g,\xi) ,(g^{-1},\xi)))}\overline{\int_{\mathfrak{z}} e^{i\xi(t)} f(g^{-1}\mathsf{Z}, t) \rd t}=\mathcal{F}_\mathsf{Z}(f)^*(g,\xi). 
\end{align*}
This shows that $\mathcal{F}_\mathsf{Z}:\mathcal{S}(\mathsf{G})\to  \mathcal{S}(\mathcal{G}_\mathsf{Z}, \omega_{\mathcal{G}_\mathsf{Z}})\subseteq C^*(\mathcal{G}_\mathsf{Z}, \omega_{\mathcal{G}_\mathsf{Z}})$ is a $*$-homomorphism. By the universal property, the Fourier transform in the central direction extends to a $*$-homomorphism $\mathcal{F}_\mathsf{Z} : C^*(\mathsf{G}) \rightarrow C^*(\mathcal{G}_\mathsf{Z}, \omega_{\mathcal{G}_\mathsf{Z}})$. By Fourier inversion, $\mathcal{F}_\mathsf{Z}:\mathcal{S}(\mathsf{G})\to  \mathcal{S}(\mathcal{G}_\mathsf{Z}, \omega_{\mathcal{G}_\mathsf{Z}})$ is a $*$-isomorphism, with inverse being the inverse Fourier transform, which by the same argument extends to a $*$-homomorphism $\mathcal{F}_\mathsf{Z}^{-1} : C^*(\mathsf{G}) \rightarrow C^*(\mathcal{G}_\mathsf{Z}, \omega_{\mathcal{G}_\mathsf{Z}})$ that inverts $\mathcal{F}_\mathsf{Z}$. Therefore $\mathcal{F}_\mathsf{Z} : C^*(\mathsf{G}) \rightarrow C^*(\mathcal{G}_\mathsf{Z}, \omega_{\mathcal{G}_\mathsf{Z}})$ is a $*$-isomorphism.

It remains to prove that $\mathcal{F}_\mathsf{Z}$ restricts to a $*$-isomorphism $I_\mathsf{G}\cong C^*(\mathcal{G}_\Gamma,\omega_{\mathcal{G}_\mathsf{Z}})$. For $\xi\in \mathfrak{z}^*$, we write $ \lambda^{\omega_{\mathcal{G}_\mathsf{Z}}}_\xi$ for the localization of the left regular representation of $C^*(\mathcal{G}_\mathsf{Z},\omega_{\mathcal{G}_\mathsf{Z}})$ in $\xi$. We let $\pi_{\mathsf{Z},\xi}:= \lambda^{\omega_{\mathcal{G}_\mathsf{Z}}}_\xi\circ \mathcal{F}_\mathsf{Z}$ denote the associated unitary representation of $G$. Restricting $\pi_{\mathsf{Z},\xi}$ to the center leads to the dichotomy that $\pi_{\mathsf{Z},\xi}$ is either irreducible and corresponds to the flat orbit $\mathcal{O}_\xi$ (which holds if and only if $\xi\in \Lambda$) or decomposes over representations from $\hat{\mathsf{G}}\setminus \Gamma$ (which holds if and only if $\xi\in \mathfrak{z}^*\setminus \Lambda$). We conclude that
\begin{align*}
I_\mathsf{G}&=\{a\in C^*(\mathsf{G}): \lambda^{\omega_{\mathcal{G}_\mathsf{Z}}}_\xi(\mathcal{F}_\mathsf{Z}a)=0\,\forall\xi\in \mathfrak{z}^*\setminus \Lambda\}=\\
&=\{a\in C^*(\mathsf{G}): \mathcal{F}_\mathsf{Z}a\in C^*(\mathcal{G}_\Gamma,\omega_{\mathcal{G}_\mathsf{Z}})\},
\end{align*}
and the theorem follows.
\end{proof}

\section{Locally trivial bundles of nilpotent Lie groups}
\label{subsec:loctrivalandnda}

In this subsection, we consider a situation appearing on Carnot manifolds (see Section \ref{subsec:carnot} below). The tangent space of a Carnot manifold can be equipped with a certain Lie algebroid structure that integrates to a groupoid with $r=s$ and each $r$-fibre is a simply connected nilpotent Lie group. 

\begin{proposition}
\label{integrationofnilpotentalgebroid}
Let $(\mathbf{g}, \rho, [\cdot,\cdot])$ be a Lie algebroid on $M$ with anchor map $\rho=0$. Then the Lie bracket on $\mathbf{g}$ restricts to a well-defined Lie bracket on each fibre and $\mathbf{g}\to M$ is a smooth bundle with a fibrewise defined Lie algebra structure depending smoothly on the base. If there exists an $N\geq 0$ such that each fibre of $\mathbf{g}$ is at most step $N$ nilpotent, the Hausdorff-Baker-Campbell formula integrates to a Lie groupoid structure on $\mathcal{G}:=\mathbf{g}$ with  $\mathsf{Lie}(\mathcal{G}) = \mathbf{g}$.
\end{proposition}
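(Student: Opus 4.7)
The plan is to first extract a fibrewise Lie algebra structure from the assumption $\rho=0$, and then use a truncated Baker--Campbell--Hausdorff formula to integrate it, viewing the total space $\mathbf{g}$ itself as the arrow manifold of the desired groupoid.

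First I would observe that the Leibniz rule reads
\[
[X,fY]=f[X,Y]+(\rho(X)f)Y=f[X,Y]
\]
whenever $\rho=0$, and by antisymmetry $[\cdot,\cdot]$ is also $C^\infty(M)$-linear in the first variable. Hence the bracket is $C^\infty(M)$-bilinear and therefore tensorial, so it descends to a bilinear map $[\cdot,\cdot]_x:\mathbf{g}_x\wedge\mathbf{g}_x\to\mathbf{g}_x$ on each fibre. Antisymmetry and Jacobi are inherited pointwise, so each fibre becomes a Lie algebra and the structure constants vary smoothly with $x\in M$ because the bracket of any two smooth sections is a smooth section. This gives the first two claims.

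Next, assume all fibres are nilpotent of step at most $N$. The Baker--Campbell--Hausdorff series
\[
\mathrm{bch}(X,Y)=X+Y+\tfrac12[X,Y]+\tfrac{1}{12}\bigl([X,[X,Y]]-[Y,[X,Y]]\bigr)+\cdots
\]
terminates after the $N$-th order iterated brackets on any step-$N$ nilpotent Lie algebra and therefore defines a polynomial map $\mathrm{bch}:\mathfrak{h}\times\mathfrak{h}\to\mathfrak{h}$ with polynomial inverse on any such Lie algebra. Applying this fibrewise, I would set $\mathcal{G}:=\mathbf{g}$ as a smooth manifold and define
\[
s=r=\pi,\quad e=0\text{-section},\quad i(X)=-X,\quad X\circ Y:=\mathrm{bch}_{\pi(X)}(X,Y),
\]
the last being defined whenever $\pi(X)=\pi(Y)$. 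Fibrewise smoothness of the bracket, together with the fact that $\mathrm{bch}$ is a universal polynomial in the bracket, guarantees that all four structure maps are smooth; source and range are submersions because $\pi$ is; and the groupoid axioms reduce on each fibre to the classical statement that the BCH formula equips a nilpotent Lie algebra with the structure of a simply connected Lie group.

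Finally, I would identify $\mathsf{Lie}(\mathcal{G})$ with $\mathbf{g}$. As a vector bundle $\ker Ds|_M$ is the vertical tangent bundle along the zero section of $\mathbf{g}$, which is canonically isomorphic to $\mathbf{g}$. What remains is to match the two Lie brackets. For a section $X\in C^\infty(M,\mathbf{g})$, the associated right-invariant vector field on $\mathcal{G}=\mathbf{g}$ is $(X^R)_Y=\tfrac{d}{dt}\big|_{t=0}\mathrm{bch}_{\pi(Y)}(tX_{\pi(Y)},Y)$, and expanding $\mathrm{bch}$ to first order in $t$ and computing the commutator of two such vector fields at a point $X_0$ of the zero section reduces, by the standard Dynkin--type identity $[X^R,Y^R]_e=[X,Y]$, to the original bracket of sections. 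The main obstacle is precisely this last bookkeeping step: one must verify that the commutator of right-invariant vector fields on $\mathcal{G}$, evaluated along the unit submanifold, equals the Lie algebroid bracket on sections, and not merely agrees fibrewise after forgetting the $M$-dependence. This is handled by working in local trivializations $U\times\mathfrak{h}$ of $\mathbf{g}$, writing the bracket as $[X,Y](x)=c_x(X(x),Y(x))$ for smoothly varying structure constants $c_x$, and noting that the first-order Taylor expansion of $\mathrm{bch}_x$ in its two entries yields $c_x$; since $\rho=0$, there are no horizontal contributions to the commutator, so the two brackets coincide as brackets of sections.
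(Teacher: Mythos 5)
Your proof is correct and follows the same route as the paper: $C^\infty(M)$-bilinearity of the bracket when $\rho=0$ makes it tensorial, and the truncated BCH formula is then applied fibrewise, with $s=r=\pi$, $e$ the zero section, $i(X)=-X$, and composition $\mathrm{bch}$. The paper leaves the identification $\mathsf{Lie}(\mathcal{G})=\mathbf{g}$ unremarked ("readily verified"), whereas you spell out that $\ker Ds|_M$ is the vertical bundle along the zero section and that the commutator of right-invariant extensions reproduces the algebroid bracket via the first-order BCH term; this is a welcome bit of extra care but not a genuinely different argument.
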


\begin{proof}
Since $\rho=0$, the bracket $[\cdot,\cdot]$ is $C^\infty(M)$-linear and the first part of the proposition follows. The Hausdorff-Baker-Campbell formula defines a polynomial map $\mathbf{g}\times \mathbf{g}\to \mathbf{g}$ which we can consider a composition on $\mathcal{G}:=\mathbf{g}$. It is readily verified that this is a Lie groupoid for $r=s$ being the projection $\mathbf{g}\to M$, $i$ defined as the additive inverse on each fiber and $e:M\to \mathcal{G}=\mathbf{g}$ being the zero section.
\end{proof}

In the case of interest in this work, regular Carnot manifolds (see Section \ref{subsec:carnot} below), further structures are imposed. 

\begin{definition}
Let $\mathcal{G}\to M$ be a Lie groupoid with $r=s$, and simply connected $r$-fibers and let $\mathfrak{g}$ be a nilpotent Lie algebra. 

We say that $\mathcal{G}$ is a locally trivial bundle of nilpotent Lie groups of type $\mathfrak{g}$ if the Lie algebroid $\mathbf{g}:=\mathsf{Lie}(\mathcal{G})\to M$ is a locally trivial bundle of Lie algebras with typical fibre $\mathfrak{g}$, i.e. every $x\in M$ admits a neighbourhood $U$ and a Lie algebroid isomorphism $\mathbf{g}_U\cong U\times \mathfrak{g}$.

If $\mathfrak{g}$ is graded, we say that $\mathcal{G}$ is a locally trivial bundle of Carnot-Lie groups of type $\mathfrak{g}$ if the Lie algebroid $\mathbf{g}:=\mathsf{Lie}(\mathcal{G})\to M$ is graded and the local trivializations are graded.
\end{definition}

\begin{proposition}
\label{integrationofloctrivialnilpotentalgebroid}
Let $\mathcal{G}\to M$ be a locally trivial bundle of nilpotent Lie groups of type $\mathfrak{g}$, and set $\mathbf{g}:=\mathsf{Lie}(\mathcal{G})$ and $\mathsf{G}:=\mathrm{exp}(\mathfrak{g})$. Then the following holds:
\begin{enumerate}
\item The bundle $\Aut(\mathbf{g})\to M$ of fibrewise Lie algebra automorphisms is a principal $\Aut(\mathfrak{g})$-bundle and there is an isomorphism of Lie algebroids 
$$\mathbf{g}\cong \Aut(\mathbf{g})\times_{\Aut(\mathfrak{g})}\mathfrak{g}.$$
\item If $\mathfrak{g}$ is graded and $\mathcal{G}\to M$ is a locally trivial bundle of Carnot-Lie groups, the bundle $\Aut_{\rm gr}(\mathbf{g})\to M$ of fibrewise graded Lie algebra automorphisms is a principal $\Aut_{\rm gr}(\mathfrak{g})$-bundle and there is an isomorphism of Lie algebroids 
$$\mathbf{g}\cong \Aut_{\rm gr}(\mathbf{g})\times_{\Aut_{\rm gr}(\mathfrak{g})}\mathfrak{g}.$$
\item There is a groupoid isomorphism $\mathcal{G}\cong \Aut(\mathbf{g})\times_{\Aut(\mathfrak{g})}\mathsf{G}$, and if $\mathcal{G}$ is a locally trivial bundle of Carnot-Lie groups then $\mathcal{G}\cong \Aut_{\rm gr}(\mathbf{g})\times_{\Aut_{\rm gr}(\mathfrak{g})}\mathsf{G}$.
\end{enumerate}
\end{proposition}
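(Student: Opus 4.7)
The plan is to obtain all three items from one standard frame-bundle / associated-bundle construction, and then leverage Proposition \ref{integrationofnilpotentalgebroid} for the upgrade from Lie algebroids to Lie groupoids. First I would unpack the definition of $\Aut(\mathbf{g})\to M$ as the bundle whose fibre over $x\in M$ is the set of Lie algebra isomorphisms $\mathfrak{g}\to \mathbf{g}_x$, equipped with the right $\Aut(\mathfrak{g})$-action by precomposition. Local triviality of $\mathbf{g}\to M$ as a bundle of Lie algebras provides an open cover $(U_\alpha)$ with fibrewise Lie-algebra trivializations $\phi_\alpha : U_\alpha\times \mathfrak{g}\xrightarrow{\sim} \mathbf{g}|_{U_\alpha}$; on overlaps $\phi_\alpha^{-1}\circ \phi_\beta$ is, point by point, an automorphism of $\mathfrak{g}$, producing a smooth cocycle $g_{\alpha\beta}:U_\alpha\cap U_\beta\to \Aut(\mathfrak{g})$. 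This cocycle is exactly the clutching data both for $\mathbf{g}\to M$ (with fibre $\mathfrak{g}$) and for the putative principal bundle $\Aut(\mathbf{g})\to M$ (with fibre $\Aut(\mathfrak{g})$), which establishes item i); the Lie algebroid isomorphism $\Aut(\mathbf{g})\times_{\Aut(\mathfrak{g})}\mathfrak{g}\xrightarrow{\sim}\mathbf{g}$, $(\varphi,X)\mapsto \varphi(X)$, is then well defined, smooth, and fibrewise a Lie algebra isomorphism; it matches $\phi_\alpha$ locally, so the bracket on both sides corresponds.

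For item ii), the same argument runs verbatim once one observes that when $\mathbf{g}$ is a locally trivial bundle of graded Lie algebras, the trivializations $\phi_\alpha$ may be chosen graded, so the transition cocycle automatically takes values in the closed subgroup $\Aut_{\rm gr}(\mathfrak{g})\subseteq \Aut(\mathfrak{g})$. This yields a reduction of structure group to $\Aut_{\rm gr}(\mathfrak{g})$, which is precisely $\Aut_{\rm gr}(\mathbf{g})$, and the associated-bundle isomorphism from item i) restricts to the graded version.

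For item iii), I would invoke Proposition \ref{integrationofnilpotentalgebroid}: as a smooth manifold $\mathcal{G}=\mathbf{g}$, with composition given fibrewise by the BCH formula applied to $[\cdot,\cdot]$. Since BCH is a universal Lie polynomial, every Lie algebra isomorphism $\varphi:\mathfrak{g}\to \mathbf{g}_x$ intertwines the BCH group law on $\mathfrak{g}$ (which by the exponential identification is the group law on $\mathsf{G}$) with the BCH group law on $\mathbf{g}_x$ (which is the group law on $\mathcal{G}_x$). In other words, the $\Aut(\mathfrak{g})$-action on $\mathfrak{g}$ via the exponential corresponds to the $\Aut(\mathsf{G})$-action on $\mathsf{G}$ by group automorphisms. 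Therefore the map $(\varphi,g)\mapsto \varphi(\log g)$ (interpreted via BCH) descends to a well-defined diffeomorphism $\Aut(\mathbf{g})\times_{\Aut(\mathfrak{g})}\mathsf{G}\xrightarrow{\sim}\mathcal{G}$ that is fibrewise a Lie group isomorphism, hence a Lie groupoid isomorphism; the graded version is the same argument using item ii).

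The only mildly technical point, which I expect to be the main obstacle, is ensuring that the associated-bundle constructions are carried out smoothly and functorially with respect to all the structures — algebroid bracket, grading, and groupoid composition — rather than merely set-theoretically. All of this, however, is local on $M$ and reduces directly to the cocycle description above, so no substantial new ingredient is required beyond the elementary observation that BCH intertwines Lie algebra automorphisms with Lie group automorphisms in the simply connected nilpotent setting.
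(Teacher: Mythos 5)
Your proposal is correct and takes essentially the same approach as the paper, which establishes local triviality of $\Aut(\mathbf{g})\to M$ and then declares the remaining verifications ``standard in local charts''; you have simply spelled out what those standard verifications are, including the key observation that the Baker--Campbell--Hausdorff formula, being a universal Lie polynomial, ensures that fibrewise Lie algebra isomorphisms intertwine the fibrewise group laws.
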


\begin{proof}
Since $\mathbf{g}$ is locally a trivial $\mathfrak{g}$-bundle, it follows that every point $x\in M$ admits a neighbourhood $U$ such that  $\Aut(\mathbf{g})_U=\Aut(\mathbf{g}_U)\cong U\times \Aut(\mathfrak{g})$ so $\Aut(\mathbf{g})\to M$  is a principal $\Aut(\mathfrak{g})$-bundle. The remainder of the proof follows from standard verifications in local charts. 
\end{proof}

For a locally trivial bundle of nilpotent Lie groups, we shall now define bundles of flat orbits and related ideals. These results are extensions of Theorem \ref{groupoiddescropofig} from nilpotent Lie groups to locally trivial bundle of nilpotent Lie groups. First, we shall need a multitude of notations. 

\begin{definition}
Let $\mathcal{G}\to M$ be a locally trivial bundle of nilpotent Lie groups of type $\mathfrak{g}$ coming from a principal $\Aut(\mathfrak{g})$-bundle $P\to M$ as in Proposition \ref{integrationofloctrivialnilpotentalgebroid}. 
\begin{itemize}
\item We define the central subgroupoid as 
$$\mathcal{Z}:=P\times_{\Aut(\mathfrak{g})}\mathsf{Z}(\mathsf{G})\to M,$$
which is a locally trivial bundle of nilpotent Lie groups of type $\mathfrak{z}$. We identify $\mathcal{Z}$ with a subgroupoid of $\mathcal{G}$. 
\item If we pick a complement of $\mathcal{Z}\cong \mathsf{Lie}(\mathcal{Z})$ in $\mathbf{g}$, we can similarly as in Equation \eqref{2cocycfromxenter} define a smooth $2$-cocycle 
$$\omega_\mathcal{Z}:\mathcal{G}/\mathcal{Z}\times \mathcal{G}/\mathcal{Z}\to \mathcal{Z}.$$
\item We let $\mathcal{G}_{\mathcal{Z}}\to \mathcal{Z}^*$ denote the pull back of $\mathcal{G}/\mathcal{Z}$ to  $\mathcal{Z}^*$. The Lie groupoid $\mathcal{G}_{\mathcal{Z}}$ is a locally trivial bundle of nilpotent Lie groups of type $\mathfrak{g}/\mathfrak{z}$ and a short computation shows that when viewing $\mathbf{g}^*$ as a vector bundle on $\mathcal{Z}^*$ via the restriction map, we can identify
$$
\mathbf{g}^*=\mathsf{Lie}(\mathcal{G}_{\mathcal{Z}})^*.
$$ 
\item Define the $2$-cocycle 
$$\omega_{\mathcal{G}_\mathcal{Z}} : \mathcal{G}_\mathcal{Z}^{(2)} \rightarrow U(1),$$
as in Definition \ref{unitary2coclalddaadl}. 
\item We also set 
$$\Gamma_P:=P\times_{\Aut(\mathfrak{g})}\Gamma\subseteq \mathcal{Z}^*,$$
and the locally trivial bundle of nilpotent Lie groups $\mathcal{G}_{\Gamma,P}:=\mathcal{G}_{\mathcal{Z}}|_{\Gamma_P}\to \Gamma_P$ (it is again of type $\mathfrak{g}/\mathfrak{z}$). 
\item We define $\Xi_P:=P\times_{\Aut(\mathfrak{g})}\Xi$. In light of Proposition \ref{totalspaceofflatorbits}, we can consider $\Xi_P\to \Gamma_P$ as a vector bundle. In fact, a short computation shows that 
$$
\Xi_P=\mathsf{Lie}(\mathcal{G}_{\Gamma,P})^*.
$$
We note that there is an inclusion 
\begin{equation}
\label{inclusionofxi}
\Xi_P\subseteq \mathbf{g}^*.
\end{equation}
\end{itemize}
\end{definition}

Assume that $G$ admits flat orbits. We define $I_{P}$ as the $C_0$-sections of the bundle of $C^*$-algebras 
$$P\times_{\Aut(\mathfrak{g})} I_\mathsf{G}\to M,$$
and $C^*(\mathsf{G})_P$ as the $C_0$-sections of the bundle of $C^*$-algebras 
$$P\times_{\Aut(\mathfrak{g})} C^*(\mathsf{G})\to M.$$
Note that the frame bundle induces an isomorphism $C^*(\mathsf{G})_P\cong C^*(\mathcal{G})$.

\begin{proposition}
\label{groupoidftincentrals1}
Let $\mathcal{G}\to M$ be a locally trivial bundle of nilpotent Lie groups of type $\mathfrak{g}$ defined from a principal $\Aut(\mathfrak{g})$-bundle $P\to M$. Assume that $\mathsf{G}:=\mathrm{exp}(\mathfrak{g})$ admits flat orbits. Then the $C^*$-algebra $I_{P}$ is a continuous trace algebra with spectrum $\Gamma_P$ and 
$$\delta_{\rm DD}(I_P)=0\in \check{H}^3(\Gamma_P,\Z).$$ 
\end{proposition}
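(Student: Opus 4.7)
First I would verify that $I_P$ is a continuous trace algebra with spectrum $\Gamma_P$. Since $P\to M$ is locally trivial, any open $U\subseteq M$ over which $P$ trivializes gives $I_P|_U \cong C_0(U)\otimes I_\mathsf{G}$, which by Proposition \ref{ctforflat} is continuous trace with spectrum $U\times\Gamma$. As continuous trace is a local property, $I_P$ is itself continuous trace, and the local spectra glue via the $\Aut(\mathfrak{g})$-action on $\Gamma$ encoded in the transition cocycles of $P$, yielding the global spectrum $\Gamma_P = P\times_{\Aut(\mathfrak{g})}\Gamma$.

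The main task is to prove $\delta_{\rm DD}(I_P)=0$. The plan is to construct, by direct patching, a Hilbert space bundle $\mathcal{H}_P \to \Gamma_P$ together with a $C_0(M)$-linear $*$-isomorphism $I_P \cong C_0(\Gamma_P, \mathbb{K}(\mathcal{H}_P))$, generalizing the direct construction of $\mathcal{H}_\Gamma$ in Proposition \ref{descriptionofddinvariant} to the bundle setting. The building blocks are the Vergne bundles $\mathcal{H}_{V,\pmb{B}}\to U_{\pmb{B}}$ of Proposition \ref{bundleofhspspac}. Fixing an open cover $\{V_i\}$ of $M$ trivializing $P$, we obtain a cover of $\Gamma_P$ by pullbacks of the sets $V_i\times U_{\pmb{B}}$ for $\pmb{B}\in\mathfrak{JH}$, on each of which we pull back $\mathcal{H}_{V,\pmb{B}}$ to get a local Hilbert space bundle. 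The patching data consist of the corrected Lion transforms $\mathfrak{L}_{\pmb{B}',\pmb{B}}$ of Proposition \ref{lionsintertwinerswitheta} in the fiber direction together with the $\Aut(\mathfrak{g})$-morphisms $t_{\pmb{B}}(\varphi)$ of Proposition \ref{covarianceforlions} in the base direction, assembled using the compatibility diagrams established there.

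The main obstacle is verifying that these combined data satisfy the full cocycle condition on triple overlaps, which is nontrivial because the projective obstructions $\zeta$ and $c$ studied in Section \ref{ctstructrifoeod} could in principle intervene. The resolution rests on two facts: the corrected Lion transforms satisfy the exact (not merely projective) cocycle identity $\mathfrak{L}_{\pmb{B}_3,\pmb{B}_2}\mathfrak{L}_{\pmb{B}_2,\pmb{B}_1}\mathfrak{L}_{\pmb{B}_1,\pmb{B}_3}=1$, and the morphisms $t_{\pmb{B}}(\varphi)$ satisfy the exact group cocycle $t_{\varphi(\pmb{B})}(\varphi')t_{\pmb{B}}(\varphi)=t_{\pmb{B}}(\varphi\varphi')$ of Proposition \ref{covarianceforlions}.b and are compatible with the uncorrected Lion transforms. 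Any residual scalar ambiguity arising from the interplay of the $\eta$-correction with the base-direction transitions contributes only a line bundle twist, which leaves the Dixmier--Douady class unchanged. Thus, possibly after tensoring with such a line bundle, we obtain a well-defined Hilbert space bundle $\mathcal{H}_P\to\Gamma_P$, and the fibrewise $\mathsf{G}$-actions on the local pieces assemble to a $C_0(M)$-linear representation yielding $I_P \cong C_0(\Gamma_P,\mathbb{K}(\mathcal{H}_P))$, giving $\delta_{\rm DD}(I_P)=0$.
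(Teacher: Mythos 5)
Your proposal follows essentially the same route as the paper's proof: cover $\Gamma_P$ by the preimages of $U_j\times U_{\pmb{B}}$ coming from trivializations of $P$ and the top strata, and lift the transition data of $I_P$ to a unitary-valued \v{C}ech cocycle using the $\eta$-corrected Lion intertwiners of Proposition \ref{lionsintertwinerswitheta} (i.e.\ the argument of Proposition \ref{descriptionofddinvariant}), with the covariance properties of Proposition \ref{covarianceforlions} handling the base direction. You are in fact somewhat more explicit than the paper's one-paragraph argument about the triple-overlap verification and the interplay of the $\eta$-correction with the $\Aut(\mathfrak{g})$-transitions, which you dispose of at the same level of rigor as the paper does.
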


\begin{proof}
By local triviality of $P$, $p_\Gamma:\Gamma_P\to M$ is locally trivial. We can therefore cover $M$ by a open subsets $(U_j)_{j\in J}$ and for local trivializations $\tau_j:p_\Gamma^{-1}(U_j)\to U_j\times \Gamma$ we can cover $\Gamma_P$ by the open subsets $(\tau_j^{-1}(U_j\times U_{\pmb{B}}))_{j\in J,\pmb{B}\in \mathfrak{JH}}$. The fact that $I_\mathsf{G}$ is a continuous trace algebra, we can use the argument in Proposition \ref{descriptionofddinvariant} to lift the transition functions for $I_P$ defined on a refinement of the cover $(\tau_j^{-1}(U_j\times U_{\pmb{B}}))_{j\in J,\pmb{B}\in \mathfrak{JH}}$ to a unitary valued Cech cocycle on $\Gamma_P$. 
\end{proof}

The reader can compare Proposition \ref{groupoidftincentrals1} to Theorem \ref{trivialdldaaddo} below. The construction of Proposition \ref{groupoidftincentrals1} gives a direct method to construct the bundle of flat orbits from the $\eta$-invariants of Definition \ref{definieofeta} and the corrected Lion intertwiner of Proposition \ref{descriptionofddinvariant}, while Theorem \ref{trivialdldaaddo} proves $\delta(I_P)=0$ via a deformation argument. 

Let us provide another construction of the bundle of flat orbit representations from an induction procedure. This induction procedure is to our knowledge not always attainable as it relies on finding a suitable subgroup $\ghani\subseteq \Aut(\mathfrak{g})$ to which we can reduce $P$.

\begin{proposition}
\label{groupoidftincentrals2}
Let $\mathcal{G}\to M$ be a locally trivial bundle of nilpotent Lie groups of type $\mathfrak{g}$ as in Proposition \ref{groupoidftincentrals1}. Assume that there is a subgroup $\ghani\subseteq \Aut(\mathfrak{g})$ to which $P$ can be reduced, and satisfying $[\zeta|_\ghani]=0$ and $[c_\ghani]=0$. Then letting $\mathcal{H}\to \Gamma$ denote a Hilbert space bundle  and $\pi_{\musFlat}$ an isomorphism as in Theorem \ref{charofequimorita}, item 1), the mapping $\pi_{\musFlat,\ghani}:=\mathrm{id}\times_\ghani \pi_{\musFlat}$ defines a $C_0(\Gamma_P)$-linear isomorphism of $C^*$-algebras
$$\pi_{\musFlat,\ghani}:I_P\to C_0(\Gamma_P,\mathbb{K}(\mathcal{H}_P)),$$
where $\mathcal{H}_P=P\times_\ghani \mathcal{H}\to \Gamma_P$ is the induced Hilbert space bundle.
\end{proposition}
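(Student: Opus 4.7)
The plan is to exploit the reduction hypothesis to express every object in sight as an associated bundle built from a single principal $\ghani$-bundle, and then to transport the $\ghani$-equivariant isomorphism supplied by Theorem \ref{charofequimorita} through the induction functor. Concretely, the reduction of $P$ to $\ghani$ means there is a principal $\ghani$-subbundle $P_\ghani\subseteq P$ with $P\cong P_\ghani\times_\ghani \Aut(\mathfrak{g})$, so for every $\Aut(\mathfrak{g})$-space (or $\Aut(\mathfrak{g})$-$C^*$-algebra) $E$ one has a canonical identification $P\times_{\Aut(\mathfrak{g})}E\cong P_\ghani\times_\ghani E$. In particular
\[
I_P\cong P_\ghani\times_\ghani I_{\mathsf{G}},\qquad \Gamma_P\cong P_\ghani\times_\ghani \Gamma,\qquad \mathcal{H}_P\cong P_\ghani\times_\ghani \mathcal{H}.
\]

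First I would apply the induction functor $P_\ghani\times_\ghani(-)$ to the $\ghani$-equivariant $*$-iso\-mor\-phism
\[
\pi_{\musFlat}:I_{\mathsf{G}}\xrightarrow{\sim} C_0(\Gamma,\mathbb{K}(\mathcal{H}))
\]
furnished by Theorem \ref{charofequimorita}. Since $\pi_{\musFlat}$ is $C_0(\Gamma)$-linear and $\ghani$-equivariant, the induction functor, being exact on $\ghani$-equivariant continuous fields of $C^*$-algebras, yields a $C_0(\Gamma_P)$-linear $*$-isomorphism
\[
\mathrm{id}\times_\ghani \pi_{\musFlat}:I_P\xrightarrow{\sim} P_\ghani\times_\ghani C_0(\Gamma,\mathbb{K}(\mathcal{H})).
\]

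The substantive step is to identify the target with $C_0(\Gamma_P,\mathbb{K}(\mathcal{H}_P))$. This is done locally: over a trivializing open $U\subseteq M$ for $P_\ghani$ one has a local $\ghani$-equivariant trivialization under which the inductions of $\Gamma$, $\mathcal{H}$ and $C_0(\Gamma,\mathbb{K}(\mathcal{H}))$ become, respectively, $U\times\Gamma$, $U\times\mathcal{H}$ and $C_0(U\times\Gamma,\mathbb{K}(U\times\mathcal{H}))$; the matching on overlaps is governed by the same $\ghani$-valued transition cocycle of $P_\ghani$ acting simultaneously on $\Gamma$ and on $\mathcal{H}$ (the latter by the lift guaranteed in Theorem \ref{charofequimorita}), so the glued object is exactly $C_0(\Gamma_P,\mathbb{K}(\mathcal{H}_P))$.

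Finally, $C_0(\Gamma_P)$-linearity of $\pi_{\musFlat,\ghani}$ is inherited from the $C_0(\Gamma)$-linearity of $\pi_{\musFlat}$ and from the fact that induction sends the $C_0(\Gamma)$-structure to the $C_0(\Gamma_P)$-structure. The main (mild) obstacle is the bookkeeping in the local-to-global identification of the two bundles of compact operators, but this is purely formal: the only ingredient needed is that $\pi_{\musFlat}$ and the $\ghani$-action on $\mathcal{H}$ are compatible, which is precisely the content of the hypotheses $[\zeta|_\ghani]=0$ and $[c_\ghani]=0$ via Theorem \ref{charofequimorita}.
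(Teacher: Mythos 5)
Your proof is correct and follows essentially the same route as the paper, which simply remarks that the result ``follows from local triviality of the reduction of $P$ to $\ghani$.'' You have spelled out the induction-functor bookkeeping (passing to $P_\ghani$, inducing the $\ghani$-equivariant isomorphism from Theorem~\ref{charofequimorita}, and identifying the induced target bundle with $C_0(\Gamma_P,\mathbb{K}(\mathcal{H}_P))$ over a trivializing cover) that the paper leaves implicit.
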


The proof of Proposition \ref{groupoidftincentrals2} follows from local triviality of the reduction of $P$ to $\ghani$. From the $\Aut(\mathsf{G})$-equivariance of the Fourier transform in the central direction from Theorem \ref{groupoiddescropofig}, we can deduce the following description of the $C^*$-algebras $C^*(\mathsf{G})_P$ and $I_P$ as twisted groupoid $C^*$-algebras.

\begin{proposition}
\label{groupoidftincentrals3}
Let $\mathcal{G}\to M$ be a locally trivial bundle of nilpotent Lie groups of type $\mathfrak{g}$ as in Proposition \ref{groupoidftincentrals1}. The Fourier transform in the central direction (cf. Theorem \ref{groupoiddescropofig}) defines $*$-isomorphisms 
$$\mathcal{F}_{\mathcal{Z}}:C^*(\mathsf{G})_P\to C^*(\mathcal{G}_\mathcal{Z},\omega_{\mathcal{G}_\mathcal{Z}}) \quad\mbox{and}\quad \mathcal{F}_{\mathcal{Z}}:I_{P}\to C^*(\mathcal{G}_{\Gamma,P},\omega_{\mathcal{G}_\mathcal{Z}}),$$
that fits into the commuting diagram
\[
\begin{tikzcd}
I_{P} \arrow[r, "\mathcal{F}_{\mathcal{Z}}"]\arrow[d, "\subseteq"]  & C^*(\mathcal{G}_{\Gamma,P},\omega_{\mathcal{G}_\mathcal{Z}}) \arrow[d, "\subseteq"] \\
C^*(\mathsf{G})_P\arrow[r, "\mathcal{F}_{\mathcal{Z}}"]& C^*(\mathcal{G}_\mathcal{Z},\omega_{\mathcal{G}_\mathcal{Z}}) 
\end{tikzcd}
\]
\end{proposition}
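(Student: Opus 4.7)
The plan is to bundle up Theorem \ref{groupoiddescropofig}. By Proposition \ref{integrationofloctrivialnilpotentalgebroid}, we have a groupoid isomorphism $\mathcal{G}\cong P\times_{\Aut(\mathfrak{g})}\mathsf{G}$, and the subgroupoid $\mathcal{Z}\subseteq \mathcal{G}$ is obtained from $P\times_{\Aut(\mathfrak{g})}\mathsf{Z}(\mathsf{G})$ since the center is $\Aut(\mathfrak{g})$-invariant. Consequently, as Lie groupoids
\[
\mathcal{G}/\mathcal{Z}\cong P\times_{\Aut(\mathfrak{g})}(\mathsf{G}/\mathsf{Z}),\qquad \mathcal{G}_\mathcal{Z}\cong P\times_{\Aut(\mathfrak{g})}\mathcal{G}_\mathsf{Z},\qquad \mathcal{G}_{\Gamma,P}\cong P\times_{\Aut(\mathfrak{g})}\mathcal{G}_\Gamma,
\]
using that $\mathsf{Z}(\mathsf{G})=\mathfrak{z}$ and $\Gamma\subseteq\mathfrak{z}^*$ are $\Aut(\mathfrak{g})$-invariant (cf.\ Proposition \ref{autoongamma}). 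Moreover, the $2$-cocycle $\omega_\mathcal{Z}$ is obtained as the associated $2$-cocycle to the $2$-cocycle $\omega_Z$ from \eqref{2cocycfromxenter}, and its central Fourier dual $\omega_{\mathcal{G}_\mathcal{Z}}$ is the associated $U(1)$-valued cocycle obtained from $\omega_{\mathcal{G}_\mathsf{Z}}$ (Definition \ref{unitary2coclalddaadl}) via the $\Aut(\mathfrak{g})$-action. A different choice of splitting $\mathcal{Z}\hookrightarrow\mathbf{g}$ yields a cohomologous $2$-cocycle and an equivalent twisted groupoid $C^*$-algebra, so the construction is canonical up to natural $*$-isomorphism.

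First, I would observe that twisted groupoid $C^*$-algebras behave well under the associated bundle construction: for a principal $\Aut(\mathfrak{g})$-bundle $P\to M$, a locally trivial bundle of twisted Lie groupoids of type $(\mathcal{G}_\mathsf{Z},\omega_{\mathcal{G}_\mathsf{Z}})$ has its $C^*$-algebra equal to the $C_0$-sections of the $C^*$-bundle $P\times_{\Aut(\mathfrak{g})}C^*(\mathcal{G}_\mathsf{Z},\omega_{\mathcal{G}_\mathsf{Z}})$. This is a fibrewise statement, checkable in local trivializations using the product decomposition of the twisted convolution product; the $\Aut(\mathfrak{g})$-equivariance of the twisted convolution ensures compatibility of local trivializations. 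Applying this to $\mathcal{G}_\mathcal{Z}$ and $\mathcal{G}_{\Gamma,P}$ gives
\[
C^*(\mathcal{G}_\mathcal{Z},\omega_{\mathcal{G}_\mathcal{Z}})\cong \Gamma_0\bigl(M,P\times_{\Aut(\mathfrak{g})}C^*(\mathcal{G}_\mathsf{Z},\omega_{\mathcal{G}_\mathsf{Z}})\bigr),
\]
and analogously for $C^*(\mathcal{G}_{\Gamma,P},\omega_{\mathcal{G}_\mathcal{Z}})$ with $\mathcal{G}_\mathsf{Z}$ replaced by $\mathcal{G}_\Gamma$.

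Second, Theorem \ref{groupoiddescropofig} provides an $\Aut(\mathsf{G})$-equivariant $*$-isomorphism $\mathcal{F}_\mathsf{Z}:C^*(\mathsf{G})\to C^*(\mathcal{G}_\mathsf{Z},\omega_{\mathcal{G}_\mathsf{Z}})$ restricting to a $*$-isomorphism $I_\mathsf{G}\to C^*(\mathcal{G}_\Gamma,\omega_{\mathcal{G}_\mathsf{Z}})$. Under the isomorphism $\Aut(\mathsf{G})\cong \Aut(\mathfrak{g})$, the equivariance property lets us form the associated bundle map
\[
\mathrm{id}_P\times_{\Aut(\mathfrak{g})}\mathcal{F}_\mathsf{Z}:P\times_{\Aut(\mathfrak{g})}C^*(\mathsf{G})\longrightarrow P\times_{\Aut(\mathfrak{g})}C^*(\mathcal{G}_\mathsf{Z},\omega_{\mathcal{G}_\mathsf{Z}}),
\]
which is a fibrewise $*$-isomorphism of $C^*$-bundles over $M$, restricting to a fibrewise $*$-isomorphism on the sub-bundle of flat orbit ideals. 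Passing to $C_0$-sections defines the desired $*$-isomorphisms $\mathcal{F}_\mathcal{Z}$, and the commuting square in Theorem \ref{groupoiddescropofig} between $I_\mathsf{G}\hookrightarrow C^*(\mathsf{G})$ and $C^*(\mathcal{G}_\Gamma,\omega_{\mathcal{G}_\mathsf{Z}})\hookrightarrow C^*(\mathcal{G}_\mathsf{Z},\omega_{\mathcal{G}_\mathsf{Z}})$ is $\Aut(\mathfrak{g})$-equivariant, hence descends to the claimed commuting square on the associated bundles after taking $C_0$-sections.

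The only subtlety, which I anticipate as the main obstacle, lies in verifying the bundle identification of twisted groupoid $C^*$-algebras used in the first step, and in checking the independence of the chosen splitting of $\mathcal{Z}\hookrightarrow\mathbf{g}$ up to canonical isomorphism. This is handled by noting that two splittings differ by a section of $\mathrm{Hom}(\mathbf{g}/\mathcal{Z},\mathcal{Z})$, producing cohomologous $2$-cocycles on $\mathcal{G}/\mathcal{Z}$, and the associated twisted groupoid $C^*$-algebras are intertwined by the unitary multiplier defined from the coboundary; this intertwiner is $\Aut(\mathfrak{g})$-equivariant by construction and hence globalizes over $P$.
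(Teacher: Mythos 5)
Your proposal is correct and follows essentially the same route the paper takes: the paper deduces the proposition directly from the $\Aut(\mathsf{G})$-equivariance of the central Fourier transform in Theorem \ref{groupoiddescropofig} together with local triviality over the frame bundle $P$, which is exactly your associated-bundle argument. Your extra remarks on the splitting-dependence of the cocycle (cohomologous cocycles intertwined by the multiplier $e^{-i\xi b(g)}$, cf.\ the role of $b_\varphi$ in the paper's equivariance computation) are consistent with, and merely flesh out, the paper's implicit argument.
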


\section{Nistor's Connes-Thom isomorphism and the ideal of flat orbits}
\label{subsec:nistorctsubsn}

In this subsection we will relate the $K$-theory of $I_G$ to that of the vector bundle $\Xi\to \Gamma$ and the $K$-theory of $C^*(G)$ to that of $\mathfrak{g}^*$. The latter result is known due to work of Nistor \cite{nistorsolvabel}. What will play an important role later on in the monograph is that the results also works for bundles of simply connected nilpotent Lie groups and that they are compatible with the inclusions $I_G\hookrightarrow C^*(\mathsf{G})$ and $C_0(\Xi)\hookrightarrow C_0(\mathfrak{g}^*)$. The $K$-theory computations are carried out using an adiabatic deformation considered by Nistor, so we therefore formulate the results of this subsection in the language of groupoids.

For a simply connected nilpotent Lie group $\mathsf{G}$ we define its adiabatic deformation as 
$$\mathsf{G}_{\rm adb}:=\mathfrak{g}\times \{0\}\dot{\cup}\mathsf{G}\times (0,1].$$
The fibrewise operations makes $\mathsf{G}_{\rm adb}\rightrightarrows[0,1]$ into a groupoid. This is the adiabatic deformation (as in Example \eqref{ex:adiagropododo}) of $\mathsf{G}$ viewed as a groupoid over a point. We note that in this case, the smooth structure of $\mathsf{G}_{\rm adb}$ is defined by declaring the map $\psi:\mathfrak{g}\times [0,1]\to \mathsf{G}_{\rm adb}$, given by 
$$\psi(X,t):=\begin{cases}
(X,0), \; &t=0,\\
(\e^{tX},t), \; &t>0,\end{cases}$$
to be smooth. We also define a variation on the adiabatic groupoid pertaining the homogeneous structure of the central characters. Since the nilpotent Lie groups arising in the fibre of the osculating groupoid of a Carnot manifold are Carnot-Lie groups (see below in Section \ref{subsec:carnot}), we shall adopt our constructions to the dilation action but remark that similar constructions can be carried out for general nilpotent Lie groups. 

\begin{definition}
\label{adiaboatihomogenensosl}
The groupoid $\mathcal{G}_{\mathsf{Z},{\rm adb}}\rightrightarrows\mathfrak{z}^*\times [0,1]$, where $\mathcal{G}_{\mathsf{Z}}$ is as in Definition \ref{groipodicododve}, is defined by
$$\mathcal{G}_{\mathsf{Z},{\rm adb}}=(\mathfrak{g}/\mathfrak{z}\times \mathfrak{z}^*\times \{0\})\dot{\cup}(\mathsf{G}/\mathsf{Z}\times \mathfrak{z}^*\times (0,1]),$$
and whose smooth structure is defined by declaring the map $\psi:\mathfrak{g}/\mathfrak{z}\times \mathfrak{z}^*\times [0,1]\to \mathcal{G}_{\mathsf{Z},{\rm adb}}$, given by 
$$\psi(X,\xi,t):=\begin{cases}
(X,\xi,0), \; &t=0,\\
(\e^{\delta_tX},\delta_{t^{-1}}^*\xi, t), \; &t>0,\end{cases}$$
to be smooth. The groupoid operations on $\mathcal{G}_{\mathsf{Z},{\rm adb}}$ are the fibrewise operations. 
\end{definition} 

We note that 
$$\mathcal{G}_{\mathsf{Z},{\rm adb}}^{(2)}=(\mathfrak{g}/\mathfrak{z}\times\mathfrak{g}/\mathfrak{z}\times \mathfrak{z}^*\times \{0\})\dot{\cup}(\mathsf{G}/\mathsf{Z}\times\mathsf{G}/\mathsf{Z}\times \mathfrak{z}^*\times (0,1]).$$
The group $\Aut_{\rm gr}(\mathsf{G})$ acts as smooth groupoid automorphisms of $\mathcal{G}_{\mathsf{Z},{\rm adb}}$ by 
$$\begin{cases}
\varphi.(X,\xi,0):=(\varphi(X),(\varphi^{-1})^*\xi,0),\;&t=0,\\
\varphi.(g,\xi,t):=(\varphi(g),(\varphi^{-1})^*\xi,t),\; &t>0.
\end{cases}$$

\begin{definition}
\label{unitary2coclalddaadladiabatic}
Consider the groupoid 
$$\mathcal{G}_{\mathsf{Z},{\rm sqadb}}:=\mathcal{G}_{\mathsf{Z},{\rm adb}}\times [0,1]\to \rightrightarrows\mathfrak{z}^*\times [0,1]^2.$$ 
Define the mapping
$$\omega_{\mathcal{G}_{\mathsf{Z},{\rm sqadb}}} : \mathcal{G}_{\mathsf{Z},{\rm sqadb}}^{(2)} \rightarrow U(1),$$
by 
$$
\begin{cases}
\omega_{\mathcal{G}_{\mathsf{Z},{\rm sqadb}}}(X , Y, \xi,0,s) = \e^{i s\omega_\xi(X,Y)}=\e^{i s\xi[X,Y]},\\
\omega_{\mathcal{G}_{\mathsf{Z},{\rm sqadb}} }(g , h, \xi,t,s) = e^{-is \xi.\omega_\mathsf{Z}(g, h)},\quad t>0.
\end{cases}$$
\end{definition}

We note that the restriction of $\omega_{\mathcal{G}_{\mathsf{Z},{\rm sqadb}}}$ to $s=1$ and $t>0$ coincides with $\omega_{\mathcal{G}_{\mathsf{Z}} }$, the cocycle from Definition \ref{unitary2coclalddaadl}. We also note that $\omega_{\mathcal{G}_{\mathsf{Z},{\rm sqadb}}}|_{s=0}=1$.

\begin{proposition}
Let $\mathsf{G}$ be a simply connected nilpotent Lie group and $\mathcal{G}_{\mathsf{Z},{\rm sqadb}}$ be constructed as above. The mapping $\omega_{\mathcal{G}_{\mathsf{Z},{\rm sqadb}}}$ is a smooth cocycle. Moreover, the $\Aut(\mathsf{G})$-action on $\mathcal{G}_{\mathsf{Z},{\rm adb}}$ lifts to a $C([0,1]^2)$-linear strongly continuous $\Aut(\mathsf{G})$-action on $C^*(\mathcal{G}_{\mathsf{Z},{\rm sqadb}},\omega_{\mathcal{G}_{\mathsf{Z},{\rm sqadb}}})$ following the constructions proceeding Theorem \ref{groupoiddescropofig}.
\end{proposition}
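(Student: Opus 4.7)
The plan is to verify the two assertions separately, reducing each to the analogous statement already established in Theorem~\ref{groupoiddescropofig} and handling the new behaviour at $t=0$ by a Baker--Campbell--Hausdorff expansion in the coordinates $\psi$ of Definition~\ref{adiaboatihomogenensosl}.

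\emph{Cocycle property.} On the open slice $t>0$, the formula $\omega_{\mathcal{G}_{\mathsf{Z},\rm sqadb}}(g,h,\xi,t,s)=e^{-is\xi\cdot\omega_\mathsf{Z}(g,h)}$ is nothing but the cocycle $\omega_{\mathcal{G}_\mathsf{Z}}$ of Definition~\ref{unitary2coclalddaadl} evaluated at $(g,h,s\xi)$. Since $\omega_{\mathcal{G}_\mathsf{Z}}$ is a $2$-cocycle on $\mathcal{G}_\mathsf{Z}$ fibrewise in $\xi\in\mathfrak{z}^*$ (as used in the proof of Theorem~\ref{groupoiddescropofig}), rescaling $\xi\mapsto s\xi$ preserves the cocycle identity for every $s\in[0,1]$. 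On the slice $t=0$, the underlying groupoid is the abelian bundle $\mathfrak{g}/\mathfrak{z}\times\mathfrak{z}^*\to\mathfrak{z}^*$ and the cocycle identity for $e^{is\xi[X,Y]}$ reduces, by bilinearity of $[\cdot,\cdot]$ modulo $\mathfrak{z}$, to the trivial identity
$$s\xi[X,Y]+s\xi[X+Y,Z]=s\xi[X,Y+Z]+s\xi[Y,Z].$$

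\emph{Smoothness across $t=0$.} This is the main technical point. Using $\psi$, we substitute $g=e^{\delta_tX}$, $h=e^{\delta_tY}$, $\xi=\delta_{t^{-1}}^*\xi_0$ and obtain, for $t>0$,
$$\omega_{\mathcal{G}_{\mathsf{Z},\rm sqadb}}\bigl(\psi(X,\xi_0,t),\psi(Y,\xi_0,t),s\bigr)=e^{-is\xi_0\bigl(\delta_{t^{-1}}\omega_\mathsf{Z}(\overline{e^{\delta_tX}},\overline{e^{\delta_tY}})\bigr)}.$$
Because $\delta_t$ is a graded Lie algebra automorphism, it intertwines the BCH series, so $\delta_{t^{-1}}\omega_\mathsf{Z}(\overline{e^{\delta_tX}},\overline{e^{\delta_tY}})$ is a BCH power series in the single parameter $t$ with constant term equal (up to the normalization absorbed in the choice of splitting $\tau$) to the $\mathfrak{z}$-component of $[X,Y]$. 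One would verify this by writing out the BCH formula degree by degree, observing that each bracket monomial of total weight $w$ carries a prefactor $t^{|w|-|[X,Y]|}$ that is non-negative. Hence the family extends smoothly to $t=0$ with the prescribed value $e^{is\xi_0[X,Y]}$, giving both smoothness of the cocycle as a function on $\mathcal{G}_{\mathsf{Z},\rm sqadb}^{(2)}$ and the agreement of the two branches of the definition.

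\emph{Lifted action.} The action formula $\varphi.(g,\xi,t)=(\varphi(g),(\varphi^{-1})^*\xi,t)$ and its $t=0$ counterpart commute with the dilation rescaling appearing in $\psi$ for $\varphi\in\Aut_{\rm gr}(\mathsf{G})$, and for a general $\varphi\in\Aut(\mathsf{G})$ one checks by a direct BCH expansion (parallel to the smoothness argument above) that the composition with $\delta_t$ still yields a family that extends smoothly across $t=0$. Following the calculation preceding Theorem~\ref{groupoiddescropofig}, on the slice $t>0$ we have $\varphi^*\omega_{\mathcal{G}_{\mathsf{Z},\rm sqadb}}=\omega_{\mathcal{G}_{\mathsf{Z},\rm sqadb}}\cdot\delta(sb_{\varphi^{-1}})$, where $b_{\varphi^{-1}}$ is the coboundary already appearing in Theorem~\ref{groupoiddescropofig}; this identity extends by continuity to $t=0$. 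We then set
$$\varphi.f(g,\xi,t,s):=e^{-is\xi\cdot b_{\varphi^{-1}}(g\mathsf{Z})}(\varphi^{-1})^*f(g,\xi,t,s),$$
(with the obvious analogue at $t=0$) and verify, exactly as in the proof of Theorem~\ref{groupoiddescropofig}, that this is a $*$-endomorphism of the dense subalgebra $\mathcal{S}(\mathcal{G}_{\mathsf{Z},\rm sqadb},\omega_{\mathcal{G}_{\mathsf{Z},\rm sqadb}})$. By the universal property and Remark~\ref{schwarszremamd2} it extends to a $*$-automorphism of $C^*(\mathcal{G}_{\mathsf{Z},\rm sqadb},\omega_{\mathcal{G}_{\mathsf{Z},\rm sqadb}})$. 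The $C([0,1]^2)$-linearity is immediate from the formula, as the action does not move $(t,s)$; strong continuity in $\varphi$ follows from smooth dependence of $b_\varphi$ on $\varphi$ together with a standard density/estimate argument on $\mathcal{S}$.

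\emph{Main obstacle.} The hardest part is the smoothness analysis at $t=0$, both for the cocycle itself and for the resulting coboundary $b_\varphi$: one must control the BCH series in the dilation parameter $t$ and check that the $\delta_{t^{-1}}$-rescaling regularizes all terms of positive grading, leaving a smooth limit at $t=0$ whose value is prescribed by the infinitesimal object (the Lie bracket for $\omega$, and the derivative of $b_\varphi$ for the lifted action). Once this is in hand, the remaining verifications are algebraic manipulations directly parallel to the proof of Theorem~\ref{groupoiddescropofig}.
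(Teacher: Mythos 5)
Your proof follows essentially the same route as the paper's: pull back the cocycle through the coordinate chart $\psi$ of Definition~\ref{adiaboatihomogenensosl}, control the limit at $t=0$ by the Baker--Campbell--Hausdorff formula together with the dilation scaling, and lift the action by the coboundary $b_\varphi$ exactly as in the proof of Theorem~\ref{groupoiddescropofig}. The one place where your argument is noticeably looser than the paper's is the central smoothness estimate: the heuristic that ``each bracket monomial of total weight $w$ carries a prefactor $t^{|w|-|[X,Y]|}$'' is not quite right as stated, since $X,Y\in\mathfrak{g}/\mathfrak{z}$ need not be homogeneous and, more importantly, the $t$-dependence of $\delta_{t^{-1}}\omega_\mathsf{Z}(\e^{\delta_tX},\e^{\delta_tY})$ arises from the \emph{non}-equivariance of the linear splitting $j$ with respect to $\delta_t$ (for a $\delta_t$-equivariant $j$ the expression is actually constant in $t$); the paper makes this precise by isolating a leading term $-\delta_t[X,Y]$ plus a polynomial remainder $P$ with $\delta_{t^{-1}}P\to 0$. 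Your pointwise verification of the cocycle identity and the detailed write-up of the lifted action, which the paper dismisses in a sentence each, are correct but routine.
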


\begin{proof}
To prove that $\omega_{\mathcal{G}_{\mathsf{Z},{\rm sqadb}}}$ is a smooth cocycle, we note that it is by construction a cocycle pointwise so it suffices to prove that 
$$\psi^*\omega_{\mathcal{G}_{\mathsf{Z},{\rm sqadb}}}:\mathfrak{g}/\mathfrak{z}\times\mathfrak{g}/\mathfrak{z}\times \mathfrak{z}^*\times [0,1]\to U(1),$$
is smooth. We compute that 
$$\psi^*\omega_{\mathcal{G}_{\mathsf{Z},{\rm sqadb}}}(X,Y,\xi,t,s)=\begin{cases}
\e^{i s\xi[X,Y]},\; &t=0,\\
e^{-is (\delta_{t^{-1}}^*\xi).(\omega_\mathsf{Z}(\e^{\delta_tX},\e^{\delta_tY}))},\; &t>0.
\end{cases}$$
We $\rho_\mathfrak{g}(X,Y)=\log(\e^X\e^Y)$, which is a polynomial map $\mathfrak{g}\times \mathfrak{g}\to \mathfrak{g}$ since $\mathfrak{g}$ is nilpotent. Let $j:\mathfrak{g}/\mathfrak{z}\to \mathfrak{g}$ denote the inclusion defined from the Jordan-Hölder basis that we use to define $\omega_\mathsf{Z}$. A short computation allows us to write 
\begin{align*}
\omega_\mathsf{Z}(\e^{\delta_tX},\e^{\delta_tY})=&\rho_\mathfrak{g}(\delta_tj(X),\rho_\mathfrak{g}(\delta_tY,-j\rho_{\mathfrak{g}/\mathfrak{z}}(\delta_tX,\delta_tY)))=\\
=&-[\delta_tX,\delta_tY]+P(X,Y,t)=-\delta_t[X,Y]+P(X,Y,t),
\end{align*}
for a polynomial $P(X,Y,t)$ such that $\delta_{t^{-1}}P(X,Y,t)$ vanishes to first order at $t=0$. The last equality follows from the Baker-Campbell-Hausdorff formula. We conclude that 
\begin{align*}
\psi^*\omega_{\mathcal{G}_{\mathsf{Z},{\rm sqadb}}}(X,Y,\xi,t,s)=&
\begin{cases}
\e^{i s\xi[X,Y]},\; &t=0\\
e^{is (\delta_{t^{-1}}^*\xi).(\delta_t[X,Y]-P(X,Y,t))},\; &t>0.
\end{cases}=\\
=&\begin{cases}
\e^{i s\xi[X,Y]},\; &t=0\\
e^{is \xi.([X,Y]-\delta_{t^{-1}}P(X,Y,t))},\; &t>0.
\end{cases},
\end{align*}
and that $\psi^*\omega_{\mathcal{G}_{\mathsf{Z},{\rm sqadb}}}$ is smooth.

The proof that the $\Aut(\mathsf{G})$-action lifts goes as in Theorem \ref{groupoiddescropofig}.
\end{proof}

We introduce the notation $\Delta\subseteq [0,1]\times [0,1]$ for the diagonal. When appropriate, we identify $\Delta$ with $[0,1]$.

\begin{proposition}
\label{contracllemem}
Let $\mathsf{G}$ be a simply connected nilpotent Lie group and $\mathcal{G}_{\mathsf{Z},{\rm adb}}$ and $\omega_{\mathcal{G}_{\mathsf{Z},{\rm adb}}}$ be constructed as above. 
\begin{itemize}
\item The groupoid homomorphism 
$$\mathcal{G}_{\mathsf{Z},{\rm adb}}\times (0,1]\to \mathcal{G}_{\mathsf{Z},{\rm adb}}\times (0,1],$$
defined by 
$$\begin{cases}
(X , Y, \xi,0,s) \mapsto(X , Y, s^{-1}\xi,0,s),\; &t=0,\\
(g , h, \xi,t,s) \mapsto (g , h, s^{-1}\xi,t,s),\; &t>0.
\end{cases}$$
induces an $\Aut(\mathsf{G})$-equivariant $C([0,1]^2)$-linear isomorphism of $C^*$-algebras 
$$C^*(\mathcal{G}_{\mathsf{Z},{\rm adb}}\times(0,1], \omega_{\mathcal{G}_{\mathsf{Z},{\rm sqadb}}})\to C^*(\mathcal{G}_{\mathsf{Z},{\rm adb}}, \omega_{\mathcal{G}_{\mathsf{Z},{\rm sqadb}}}|_{s=1})\otimes C_0(0,1].$$
\item The Fourier transform in the central direction from Theorem \ref{groupoiddescropofig} defines an $\Aut(\mathsf{G})$-equivariant $C[0,1]$-linear isomorphism of $C^*$-algebras 
$$C^*(\mathsf{G}_{{\rm adb}})\to C^*(\mathcal{G}_{\mathsf{Z},{\rm sqadb}}|_\Delta, \omega_{\mathcal{G}_{\mathsf{Z},{\rm sqadb}}}|_\Delta).$$
\end{itemize}
\end{proposition}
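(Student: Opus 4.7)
The plan is to observe that the map $\Phi\colon(g,\xi,t,s)\mapsto(g,s^{-1}\xi,t,s)$ is a smooth groupoid automorphism of $\mathcal{G}_{\mathsf{Z},\rm adb}\times(0,1]$ covering the self-diffeomorphism $(\xi,s)\mapsto(s^{-1}\xi,s)$ of $\mathfrak{z}^*\times(0,1]$. The crucial point is that pulling the cocycle $\omega_{\mathcal{G}_{\mathsf{Z},\rm sqadb}}$ of Definition~\ref{unitary2coclalddaadladiabatic} back by $\Phi$ kills the $s$-factor in the exponent, since $s\cdot s^{-1}\xi=\xi$ at $t>0$ and analogously $s\cdot\omega_{s^{-1}\xi}=\omega_\xi$ at $t=0$. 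Hence $\Phi^{*}\omega_{\mathcal{G}_{\mathsf{Z},\rm sqadb}}=\omega_{\mathcal{G}_{\mathsf{Z},\rm sqadb}}|_{s=1}$, and since the pulled-back cocycle is independent of $s$, convolution decouples in that direction and gives the tensor product $C^{*}(\mathcal{G}_{\mathsf{Z},\rm adb},\omega|_{s=1})\otimes C_{0}(0,1]$. The $C([0,1]^{2})$-linearity is immediate from $\Phi$ being the identity in the $(t,s)$-directions, and $\Aut(\mathsf{G})$-equivariance follows from the linearity of the coadjoint action on $\mathfrak{z}^{*}$, which commutes with the rescaling $\xi\mapsto s^{-1}\xi$.

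\textbf{Part 2.} The plan is to define $\tilde{\mathcal{F}}_{\mathsf{Z}}$ as a fiberwise central Fourier transform. At $t>0$ the fiber of $\mathsf{G}_{\rm adb}$ is $\mathsf{G}$, and Theorem~\ref{groupoiddescropofig} supplies a $*$-isomorphism $\mathcal{F}_{\mathsf{Z}}\colon C^{*}(\mathsf{G})\to C^{*}(\mathcal{G}_{\mathsf{Z}},\omega_{\mathcal{G}_{\mathsf{Z}}})$; at $t=0$ the fiber is the abelian Lie algebra $\mathfrak{g}$, and Fourier transform along the splitting $\mathfrak{g}\cong\mathfrak{g}/\mathfrak{z}\oplus\mathfrak{z}$ only in the $\mathfrak{z}$-direction identifies $C^{*}(\mathfrak{g})$ with $C^{*}(\mathfrak{g}/\mathfrak{z})\otimes C_{0}(\mathfrak{z}^{*})$, which is exactly the fiber of $C^{*}(\mathcal{G}_{\mathsf{Z},\rm sqadb}|_{\Delta},\omega|_{\Delta})$ at $t=0$ because $\omega_{\mathcal{G}_{\mathsf{Z},\rm sqadb}}|_{\Delta}$ vanishes there. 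Concretely, for a chart-smooth representative $\tilde h(X,t)\in\mathcal{S}(\mathfrak{g}\times[0,1])$ of $h\in C^{*}(\mathsf{G}_{\rm adb})$, I set
\[
\tilde{\mathcal{F}}_{\mathsf{Z}}(h)_{\rm chart}(Y,\xi,t):=\int_{\mathfrak{z}}\e^{i\xi(u)}\,\tilde h(Y+u,t)\,\mathrm{d}u,
\]
which is smooth in $(Y,\xi,t)\in\mathfrak{g}/\mathfrak{z}\times\mathfrak{z}^{*}\times[0,1]$ by differentiation under the integral, and at each $t$ is the fiberwise $C^{*}$-isomorphism described above.

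\textbf{Main obstacle.} The technical heart of the argument is checking that, when read in actual groupoid coordinates rather than chart coordinates, the prescription above at $t>0$ agrees with the central Fourier transform $\mathcal{F}_{\mathsf{Z}}$ of Theorem~\ref{groupoiddescropofig} applied to $h_{t}$, and that the convolution on the target is precisely twisted by $\omega_{\mathcal{G}_{\mathsf{Z},\rm sqadb}}|_{\Delta}=\e^{-it\xi\cdot\omega_{\mathsf{Z}}(g,h)}$ (and not the un-twisted cocycle $\e^{-i\xi\cdot\omega_{\mathsf{Z}}}$). Using centrality $\e^{tY}\e^{tu}=\e^{t(Y+u)}$ and the change of variables $s=tu$ inside the central Fourier integral, the Jacobian factor $t^{\dim\mathfrak{z}}$ combines with the Haar normalization of $\mathsf{G}_{\rm adb}$ relative to $\mathsf{G}$ and with the dual-dilation rescaling $\delta_{t^{-1}}^{*}$ appearing in the chart $\psi_{\Delta}$ of $\mathcal{G}_{\mathsf{Z},\rm adb}$ to produce exactly the factor of $t$ in the target cocycle: the bilinearity of $\omega_{\mathsf{Z}}$, together with the BCH expansion showing that lower-order corrections are absorbed by $\delta_{t}$ on the complement of the centre, leaves a single net power of $t$ in the exponent. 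Once smoothness and the cocycle-twisted $*$-homomorphism identity are in place, invertibility follows from the analogous construction using the inverse central Fourier transform, and the $\Aut(\mathsf{G})$-equivariance and $C[0,1]$-linearity descend fiberwise from the corresponding properties in Theorem~\ref{groupoiddescropofig}.
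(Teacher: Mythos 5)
The paper itself omits the proof entirely, remarking only that it ``follows from standard computations and the proof of Theorem~\ref{groupoiddescropofig},'' and your proposal supplies exactly those standard computations along the same route: the rescaling $\xi\mapsto s^{-1}\xi$ to absorb the $s$-dependence of the twist in Part~1, and fiberwise application of the central Fourier transform from Theorem~\ref{groupoiddescropofig} together with the chart rescaling $\delta_{t^{-1}}^*$ to account for the factor of $t$ in the twist in Part~2. Your identification of the cocycle bookkeeping as the ``technical heart'' is apt; the verification that $\Phi^*\omega_{\mathcal{G}_{\mathsf{Z},\rm sqadb}}=\omega_{\mathcal{G}_{\mathsf{Z},\rm sqadb}}|_{s=1}$ is a one-line computation and is correct as you state it.

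One caveat worth flagging: in your ``main obstacle'' paragraph you write $\e^{tY}\e^{tu}=\e^{t(Y+u)}$ and obtain a Jacobian $t^{\dim\mathfrak{z}}$, which tacitly uses the \emph{isotropic} chart $X\mapsto(\e^{tX},t)$ for $\mathsf{G}_{\rm adb}$, whereas the chart $\psi$ for $\mathcal{G}_{\mathsf{Z},{\rm adb}}$ (and hence for the target) is built from the Carnot dilations $\delta_t$, with $\delta_{t^{-1}}^*$ on $\mathfrak{z}^*$. For the map to be smooth across $t=0$ the two deformations must use the \emph{same} dilations; with $\delta_t$ the relevant change of variables is $v=\delta_t u$ with Jacobian $|\det\delta_t|_{\mathfrak{z}}|$ (which equals $t^{\dim\mathfrak{z}}$ only if $\mathfrak{z}$ sits in weight one), and centrality then gives $\e^{\delta_t Y}\e^{\delta_t u}=\e^{\delta_t(Y+u)}$. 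This is an ambiguity inherited from the paper's presentation of $\mathsf{G}_{\rm adb}$ (which first writes $\e^{tX}$ and then ``adopts the dilation action'' for Carnot-Lie groups), so it is a notational slip rather than a gap in your argument, but it does change the powers of $t$ appearing in your bookkeeping and should be fixed for the $*$-homomorphism identity to close up correctly.
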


We omit the proof as it follows from standard computations and the proof of Theorem \ref{groupoiddescropofig}. For the next proposition, we recall that if $\Xi\to \Gamma$ denotes the total space of flat orbits there is an explicit vector bundle trivialization $\Xi\cong \Gamma\times \mathfrak{z}^\perp$ by Proposition \ref{totalspaceofflatorbits} that is readily verified to be $\Aut(\mathsf{G})$-equivariant. We conclude that there is an $\Aut(\mathsf{G})$-equivariant groupoid isomorphism 
$$\mathcal{G}_{\mathsf{Z},{\rm adb}}|_{\Gamma\times \{0\}}\cong \Xi^*,$$
where the vector bundle $\Xi^*\to \Gamma$ is considered as a groupoid in the fibrewise addition. We consider $\Xi^*$ as an open subgroupoid of the Lie groupoid $\mathfrak{g}/\mathfrak{z}\times \mathfrak{z}^*\rightrightarrows \mathfrak{z}^*$. The Lie groupoid carries a unitary $2$-cocycle 
\begin{equation}
\label{kirildongropodidi}
\omega(X,Y,\xi):=\e^{i\xi[X,Y]}.
\end{equation}
Note that $\mathfrak{g}/\mathfrak{z}\times \mathfrak{z}^*=\mathcal{G}_{\mathsf{Z},{\rm adb}}|_{s=0}$ and $\omega=\omega_{\mathcal{G}_{\mathsf{Z},{\rm adb}}}|_{t=0,s=1}$. 
We write $\Delta_0$ for the diagonal in $[0,1)^2$. The following proposition follows from the notational discussion above, Theorem \ref{restrictqotuegroudp} and Proposition \ref{groupoiddescropofig}.

\begin{proposition}
\label{plentyofsesforg}
Let $\mathsf{G}$ be a simply connected nilpotent Lie group admitting flat orbits and $\mathcal{G}_{\mathsf{Z},{\rm adb}}$ and $\omega_{\mathcal{G}_{\mathsf{Z},{\rm adb}}}$ be constructed as above. Then there are $\Aut(\mathsf{G})$-equivariant commuting diagrams with exact rows:
\tiny
\[
\begin{CD}
0  @>>> \Sigma I_\mathsf{G}@>>> C^*(\mathcal{G}_{\mathsf{Z},{\rm sqadb}}|_{\Gamma\times \Delta_0}, \omega_{\mathcal{G}_{\mathsf{Z},{\rm sqadb}}})@>{\mathrm{ev}_0}>>  C_0(\Xi)@>>>0 \\
@. @VVV  @VVV @ VVV@.\\
0  @>>>  \Sigma C^*(\mathsf{G}) @>>>C^*(\mathcal{G}_{\mathsf{Z},{\rm sqadb}}|_{ \Delta_0}, \omega_{\mathcal{G}_{\mathsf{Z},{\rm sqadb}}})@>{\mathrm{ev}_0}>>  C_0(\mathfrak{g}^*)@>>>0 \\
\end{CD}, \]
\normalsize
and 
\tiny
\[
\begin{CD}
0  @>>> \Sigma I_\mathsf{G}@>>> C^*(\mathcal{G}_{\mathsf{Z},{\rm sqadb}}|_{\Gamma\times [0,1)\times \{1\}}, \omega_{\mathcal{G}_{\mathsf{Z},{\rm sqadb}}})@>{\mathrm{ev}_0}>>  C^*(\Xi^*,\omega)@>>>0 \\
@. @VVV  @VVV @ VVV@.\\
0  @>>>  \Sigma C^*(\mathsf{G}) @>>>C^*(\mathcal{G}_{\mathsf{Z},{\rm sqadb}}|_{[0,1)\times \{1\}}, \omega_{\mathcal{G}_{\mathsf{Z},{\rm sqadb}}})@>{\mathrm{ev}_0}>>  C^*(\mathfrak{g}/\mathfrak{z}\times \mathfrak{z}^*,\omega)@>>>0 \\
\end{CD}, \]
\normalsize
and 
\tiny
\[
\begin{CD}
0  @>>> \Sigma C^*(\Xi_P^*,\omega)@>>> C^*(\mathcal{G}_{\mathsf{Z},{\rm sqadb}}|_{\Gamma\times \{0\}\times [0,1)}, \omega_{\mathcal{G}_{\mathsf{Z},{\rm sqadb}}})@>{\mathrm{ev}_0}>>  C_0(\Xi)@>>>0 \\
@. @VVV  @VVV @ VVV@.\\
0  @>>>  \Sigma C^*(\mathfrak{g}/\mathfrak{z}\times \mathfrak{z}^*,\omega) @>>>C^*(\mathcal{G}_{\mathsf{Z},{\rm sqadb}}|_{\{0\}\times[0,1) }, \omega_{\mathcal{G}_{\mathsf{Z},{\rm sqadb}}})@>{\mathrm{ev}_0}>>  C_0(\mathfrak{g})@>>>0 \\
\end{CD}, \]
\normalsize
The vertical maps in the three diagrams above are all ideal inclusions.
\end{proposition}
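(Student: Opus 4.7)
The plan is to produce each of the three short exact sequences by applying Theorem \ref{restrictqotuegroudp} to a suitable restriction of $\mathcal{G}_{\mathsf{Z},{\rm sqadb}}$ to a closed sub-manifold of the unit space $\mathfrak{z}^*\times [0,1]^2$ (and its restriction to the open invariant sub-manifold $\Gamma\times [0,1]^2$), and then to identify the resulting ideal and quotient via the Fourier transform in the central direction (Theorem \ref{groupoiddescropofig}) combined with the rescaling isomorphism of Proposition \ref{contracllemem}. In every case, the vertical ideal inclusions arise from restriction to the open $\mathcal{G}_{\mathsf{Z},{\rm sqadb}}$-invariant subset $\Gamma\subseteq \mathfrak{z}^*$, which is an ideal inclusion at the level of $C^*$-algebras by the first half of Theorem \ref{restrictqotuegroudp}.

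For the first diagram I will restrict $\mathcal{G}_{\mathsf{Z},{\rm sqadb}}$ to the closed subset $\Delta_0\subseteq [0,1)^2$ of the $[0,1]^2$-factor, and use that the closed sub-manifold $\{(0,0)\}\subseteq \Delta_0$ has open complement the ``positive diagonal'' $\{(t,t):t\in (0,1)\}$. The evaluation-at-$(0,0)$ quotient yields $\mathcal{G}_{\mathsf{Z},{\rm sqadb}}|_{t=s=0}=\mathfrak{g}/\mathfrak{z}\times\mathfrak{z}^*$ with trivial cocycle (since $\omega_{\mathcal{G}_{\mathsf{Z},{\rm sqadb}}}|_{s=0}=1$), whose $C^*$-algebra is $C_0(\mathfrak{g}^*)$ by ordinary Fourier transform and restricts to $C_0(\Xi)$ over $\Gamma$ in view of the trivialization $\Xi\cong\Gamma\times\mathfrak{z}^\perp$ from Proposition \ref{totalspaceofflatorbits}. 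For the ideal part, the change of coordinates $\xi\mapsto s^{-1}\xi$ from Proposition \ref{contracllemem} will trivialize the dependence on $t=s>0$ of the cocycle $\omega_{\mathcal{G}_{\mathsf{Z},{\rm sqadb}}}|_{\Delta_0}$ and produce a $C([0,1])$-linear isomorphism of the restriction to the positive diagonal with $C^*(\mathcal{G}_{\mathsf{Z}},\omega_{\mathcal{G}_{\mathsf{Z}}})\otimes C_0(0,1)$, which is $\Sigma C^*(\mathsf{G})$ (respectively $\Sigma I_{\mathsf{G}}$ over $\Gamma$) by Theorem \ref{groupoiddescropofig}.

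The second diagram is obtained analogously by restricting to the closed sub-manifold $[0,1)\times\{1\}$, with closed point $(0,1)$ and open complement $(0,1)\times\{1\}$. At $t=0,s=1$ the cocycle reduces to the Kirillov cocycle $\omega$ from \eqref{kirildongropodidi} on $\mathfrak{g}/\mathfrak{z}\times\mathfrak{z}^*$ (giving $C^*(\Xi^*,\omega)$ on $\Gamma$), while on the open complement one again rescales in $\xi$ to trivialize the $t$-dependence and identifies the result with $\Sigma C^*(\mathsf{G})$ (respectively $\Sigma I_{\mathsf{G}}$) via Theorem \ref{groupoiddescropofig}. The third diagram is the ``dual'' construction: we restrict to the closed sub-manifold $\{0\}\times [0,1)$, split off the closed point $(0,0)$, and note that on the open complement $\{0\}\times (0,1)$ the groupoid is $\mathfrak{g}/\mathfrak{z}\times \mathfrak{z}^*$ with cocycle $\e^{is\xi[X,Y]}$, so the rescaling $\xi\mapsto s^{-1}\xi$ gives an isomorphism with $C^*(\mathfrak{g}/\mathfrak{z}\times\mathfrak{z}^*,\omega)\otimes C_0(0,1)$ (and with $C^*(\Xi^*,\omega)\otimes C_0(0,1)$ over $\Gamma$).

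The commutativity of the three diagrams, and the equivariance with respect to the natural $\Aut(\mathsf{G})$-action, are built into the above identifications: all rescaling maps, the Fourier transform in the central direction, and the restriction morphism $C^*(\mathcal{G}_{\mathsf{Z},{\rm sqadb}})\to C^*(\mathcal{G}_{\mathsf{Z},{\rm sqadb}}|_{\Gamma\times[0,1]^2})$ are $\Aut(\mathsf{G})$-equivariant and natural. The only technical point to verify carefully will be that the rescaling in $\xi$ genuinely identifies the cocycle at general $s$ (respectively $t$) with the cocycle at $s=1$ (resp.\ $t=1$) as $C^*$-algebras, rather than only fibrewise; this is handled by the explicit formulas for $\omega_{\mathcal{G}_{\mathsf{Z},{\rm sqadb}}}$ and is essentially the content of Proposition \ref{contracllemem}. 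No other obstacles are expected.
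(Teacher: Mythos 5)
Your proposal is correct and follows essentially the same route as the paper, whose (terse) proof is exactly the combination you describe: Theorem \ref{restrictqotuegroudp} applied to the restrictions over $\Delta_0$, $[0,1)\times\{1\}$ and $\{0\}\times[0,1)$ (and over the invariant open set $\Gamma$ for the top rows), the central Fourier transform of Theorem \ref{groupoiddescropofig} to identify the corners, and the rescaling of Proposition \ref{contracllemem} to identify the ideals, all of which are $\Aut(\mathsf{G})$-equivariant. One small simplification: in the second diagram no rescaling is needed, since at $s=1$ the cocycle $\omega_{\mathcal{G}_{\mathsf{Z},{\rm sqadb}}}$ is independent of $t$ for $t>0$ and the groupoid over $(0,1)\times\{1\}$ is the constant family $\mathcal{G}_{\mathsf{Z}}\times(0,1)$, so the kernel is already $\Sigma C^*(\mathsf{G})$ (resp.\ $\Sigma I_{\mathsf{G}}$) by Theorem \ref{groupoiddescropofig} alone.
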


\begin{definition}
\label{autgododlelendn}
We define the $KK$-morphisms
\begin{itemize}
\item $\psi\in KK^{\Aut(\mathsf{G})}_0(C_0(\mathfrak{g}^*),C^*(\mathsf{G}))$;
\item $\psi_I\in KK^{\Aut(\mathsf{G})}_0(C_0(\Xi),I_\mathsf{G})$;
\item $\psi_\omega\in KK^{\Aut(\mathsf{G})}_0(C^*(\mathfrak{g},\omega),C^*(\mathsf{G}))$;
\item $\psi_{I,\omega}\in KK^{\Aut(\mathsf{G})}_0(C^*(\Xi^*,\omega),I_\mathsf{G})$;
\item $\psi_{\Xi}\in KK^{\Aut(\mathsf{G})}_0(C_0(\Xi),C^*(\Xi^*,\omega))$;
\end{itemize}
as the boundary mappings from the short exact sequences appearing in Proposition \ref{plentyofsesforg}.
\end{definition}

\begin{remark}
\label{psivsdefomem}
We remark that a standard argument shows that 
$$\psi=[\mathrm{ev}_0]^{-1}\otimes_{C^*(\mathcal{G}_{\mathsf{Z},{\rm sqadb}}|_{\Delta}, \omega_{\mathcal{G}_{\mathsf{Z},{\rm sqadb}}})}[\mathrm{ev}_1],$$ 
where the evaluation maps are
\begin{align*}
\mathrm{ev}_0&:C^*(\mathcal{G}_{\mathsf{Z},{\rm sqadb}}|_{\Delta}, \omega_{\mathcal{G}_{\mathsf{Z},{\rm sqadb}}})\to C_0(\mathfrak{g}^*),\quad\mbox{and},\\
\mathrm{ev}_1&:C^*(\mathcal{G}_{\mathsf{Z},{\rm sqadb}}|_{\Delta}, \omega_{\mathcal{G}_{\mathsf{Z},{\rm sqadb}}})\to C^*(\mathsf{G}).
\end{align*}
Indeed, $[\mathrm{ev}_0]\in KK(C^*(\mathcal{G}_{\mathsf{Z},{\rm sqadb}}|_{\Delta}, \omega_{\mathcal{G}_{\mathsf{Z},{\rm sqadb}}}), C_0(\mathfrak{g}^*)$ is invertible as it is surjective and its kernel is contractible by Proposition \ref{contracllemem}. Analogous statements hold for $\psi_I$,  $\psi_\omega$, $\psi_{I,\omega}$ and $\psi_{I,\Xi}$ as well.
\end{remark}

\begin{remark}
Given a group $G$ and a principal $G$-bundle $P\to X$ one has an induction functor $P : KK^G \rightarrow KK^{X}$ such that for a $G-C^*-$algebra $A$
\[ P(A) := \{ f \in C(P, A) : f(p.g)=g^{-1}.f(p), \; (pG\mapsto \|f(p)\|_A)\in C_0(X) \}. \]
For the principal $\Aut(\mathsf{G})$-bundle $P\to M$ associated with a locally trivial bundle $\mathcal{G}$ of nilpotent Lie groups on $M$, then
\[ C^*(\mathbf{g}) = C_0(\mathbf{g}^*) = P(C_0(\mathfrak{g}^*)), \; C^*(\mathcal{G}) = P(C^*(\mathsf{G})), \; \mbox{and} \; C_0(\Xi_P)=P(C_0(\Xi)),\]
and so forth.
\end{remark}

\begin{theorem}
\label{nistorconnethomfortwistgroup}
Let $\mathcal{G}\to M$ be a locally trivial bundle of Lie groups of type $\mathsf{G}$ with frame bundle $P\to M$. We denote its Lie algebroid by $\mathbf{g}$. Then $\psi\in KK^{\Aut(\mathsf{G})}_0(C_0(\mathfrak{g}^*),C^*(\mathsf{G}))$ induces an element $\psi_P\in KK(C^*(\mathcal{G}),C_0(\mathbf{g}^*))$ such that 
\begin{itemize}
\item $\psi_P$ is a $KK$-isomorphism, in particular the Kasparov product with $\psi_{P}$ defines isomorphisms 
$$K_*(C^*(\mathcal{G}))\cong K^*(\mathbf{g}^*).$$
\item The element $\psi_P$ is natural with respect to invariant open inclusions and restriction to invariant closed subsets, and in particular if $\mathsf{G}$ admits flat orbits then there is a commuting diagram in $KK$
\[
\begin{tikzcd}
I_{\mathsf{G},P} \arrow[rr] &&C^*(\mathcal{G})&\\
&&\\
C_0(\Xi_P)\arrow[uu, "\psi_{P,I}"]  \arrow[rr] && C_0(\mathbf{g}^*)\arrow[uu,"\psi_P"]
\end{tikzcd}
\]
where the vertical morphisms are isomorphisms. Here, the element $\psi_{P,I}\in KK_0(C_0(\mathbf{g}^*|_{\Gamma_P}),I_{\mathsf{G},P})$ is defined from $\psi_I\in KK^{\Aut(\mathsf{G})}_0(C_0(\Xi),I_\mathsf{G})$.
\end{itemize}
\end{theorem}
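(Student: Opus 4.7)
The plan is to define $\psi_P$ by imitating the construction of $\psi$ fibrewise, using an adiabatic deformation of the bundle $\mathcal{G}$, and then deduce invertibility and naturality by descending the equivariant statements already assembled in Proposition \ref{plentyofsesforg}. First, I would form the bundle adiabatic deformation groupoid
\[
\mathcal{G}_{\rm adb} := P \times_{\Aut(\mathsf{G})} \mathsf{G}_{\rm adb} \;\rightrightarrows\; M \times [0,1],
\]
where the quotient makes sense because $\mathsf{G}_{\rm adb}$ is itself $\Aut(\mathsf{G})$-equivariant in a way that respects the groupoid structure (the dilation and exponential map are equivariant). Since the zero fibre $\mathbf{g}$ is $\Aut(\mathsf{G})$-equivariantly identified with $\mathsf{Lie}(\mathcal{G})$, the resulting restricted groupoid $\mathcal{G}^\circ_{\rm adb} := \mathcal{G}_{\rm adb}|_{M \times [0,1)}$ sits in a short exact sequence
\[
0 \longrightarrow C^*(\mathcal{G}) \otimes C_0(0,1) \longrightarrow C^*(\mathcal{G}^\circ_{\rm adb}) \xrightarrow{\,\mathrm{ev}_0\,} C_0(\mathbf{g}^*) \longrightarrow 0,
\]
obtained by applying the associated-bundle construction to Proposition \ref{plentyofsesforg}; exactness of groupoid $C^*$-algebras with respect to invariant closed submanifolds (Theorem \ref{restrictqotuegroudp}) descends to the bundle since the relevant invariant pieces are $\Aut(\mathsf{G})$-saturated. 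I would then define $\psi_P \in KK_0(C_0(\mathbf{g}^*), C^*(\mathcal{G}))$ as $[\mathrm{ev}_0]^{-1} \otimes [\mathrm{ev}_1]$, following Remark \ref{psivsdefomem}.

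The invertibility of $\psi_P$ reduces to showing that $[\mathrm{ev}_0]$ is a $KK$-equivalence, or equivalently that the cone ideal $C^*(\mathcal{G}) \otimes C_0(0,1)$ is contractible — which is obvious — and that the sequence splits as a $KK$-extension. Since descent along the principal bundle preserves these features, my preferred route is to argue invertibility by a local-to-global principle: trivialize $P$ over a locally finite open cover $(U_i)$ of $M$, where over each $U_i$ the element $\psi_P|_{U_i}$ is the external product $\mathrm{id}_{C_0(U_i)} \otimes_\C \psi$ and is invertible by the equivariant form of Nistor's theorem \cite{nistorsolvabel}. A standard Mayer–Vietoris argument in $KK$ (or, equivalently, the fact that the Kasparov descent functor sends equivariant $KK$-equivalences to $KK$-equivalences over the base) then globalizes this to the isomorphism statement; the equivariant invertibility is itself inherited from the contractibility of the kernel in the equivariant sequence of Proposition \ref{plentyofsesforg}.

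For the second bullet, the naturality diagram is obtained by repeating the construction with the invariant open subset $\Gamma \subseteq \widehat{\mathsf{G}}$: the ideal $I_{\mathsf{G},P}$ corresponds to the open saturated subgroupoid $\mathcal{G}^\circ_{\rm adb}|_{\Gamma_P \times [0,1)}$ (via the central Fourier transform of Proposition \ref{groupoidftincentrals3}, which extends to the adiabatic deformation using the cocycle $\omega_{\mathcal{G}_{\mathsf{Z},{\rm sqadb}}}$), and the exact sequences from Proposition \ref{plentyofsesforg} descend to give, after induction along $P$,
\[
0 \longrightarrow I_{\mathsf{G},P} \otimes C_0(0,1) \longrightarrow C^*(\mathcal{G}^\circ_{\rm adb}|_{\Gamma_P \times [0,1)}) \xrightarrow{\,\mathrm{ev}_0\,} C_0(\Xi_P) \longrightarrow 0,
\]
which defines $\psi_{P,I}$ and fits into the stated square via the ideal inclusions. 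Commutativity of the square in $KK$ is a formal consequence of the commutativity of the corresponding diagram of evaluation maps and the naturality of the boundary construction.

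The main obstacle I anticipate is the bookkeeping required to verify that the associated-bundle construction $A \mapsto P \times_{\Aut(\mathsf{G})} A$ is functorial on $KK^{\Aut(\mathsf{G})}$ in a way compatible with the six-term sequences, so that invertibility at the level of $\psi$ really transports to invertibility of $\psi_P$ and commutes with the ideal inclusion. This is essentially the Kasparov descent/induction principle for proper principal bundles; once set up cleanly (perhaps by trivializing over a cover and gluing), everything else is a formal consequence of Proposition \ref{plentyofsesforg} and Theorem \ref{restrictqotuegroudp}.
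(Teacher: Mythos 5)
Your main argument is correct and follows essentially the same route as the paper: construct $\psi_P$ from the induced adiabatic deformation, reduce by a Mayer--Vietoris argument to the trivial bundle $U_i\times\mathsf{G}$, and then invoke invertibility on the fibre. The paper expands that last step by re-running Nistor's argument (Packer--Raeburn stabilization plus Connes' Thom isomorphism and dimension induction), whereas you simply cite \cite{nistorsolvabel} for $\psi$; these are the same argument. The naturality statement for the ideal inclusion is obtained in both cases from naturality of boundary maps.

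There are two misstatements worth flagging, though neither affects the validity of your Mayer--Vietoris argument since you do not ultimately rely on them. First, the sentence asserting that ``the invertibility of $\psi_P$ reduces to showing that $[\mathrm{ev}_0]$ is a $KK$-equivalence, or equivalently that the cone ideal $C^*(\mathcal{G})\otimes C_0(0,1)$ is contractible'' is false. Contractibility of the cone ideal gives invertibility of $[\mathrm{ev}_0]$ automatically (this is precisely why $\psi_P$ is well defined as $[\mathrm{ev}_0]^{-1}\otimes[\mathrm{ev}_1]$), but says nothing about invertibility of $[\mathrm{ev}_1]$, which is the genuinely nontrivial input. If contractibility of the kernel were sufficient, the whole Connes--Thom content of the theorem would be vacuous. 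Second, the parenthetical alternative (``the Kasparov descent functor sends equivariant $KK$-equivalences to $KK$-equivalences over the base'') does not apply here as stated: $\psi$ lives in $KK^{\Aut(\mathsf{G})}_0(C_0(\mathfrak{g}^*),C^*(\mathsf{G}))$, but Nistor's theorem establishes only its \emph{non-equivariant} invertibility; the equivariant invertibility of $\psi$ is neither claimed nor needed. The Mayer--Vietoris argument is the correct mechanism precisely because it only requires fibrewise, non-equivariant invertibility of the $C_0(M)$-linear class $\psi_P$.
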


\begin{proof}
By naturality of boundary mappings the second item of the theorem follows from the first item. To prove the first item, we follow the idea in \cite[Lemma 3]{nistorsolvabel}. By a Mayer-Vietoris argument, we can assume that $\mathcal{G}=M\times G\to M$ is the trivial bundle. If we pick a character $\chi:\mathfrak{g}\to \R$, the Packer-Raeburn stabilization trick and Connes' Thom isomorphism allow to reduce the dimension of $G$ and item 1 follows from induction on the dimension of $G$.
\end{proof}

From Theorem \ref{nistorconnethomfortwistgroup} and the naturality of pushforwards we can conclude the following. 

\begin{corollary}
\label{onsurjinteroffm}
Let $\mathcal{G}\to M$ be a locally trivial bundle of Lie groups over a compact manifold of type $\mathsf{G}$ with frame bundle $P\to M$. Assume that $\mathsf{G}$ admits flat orbits. 
Then the map $K_*(I_{\mathsf{G},P})\to K_*(C^*(\mathcal{G}))$ induced by the inclusion is surjective if and only if $(p_\Gamma)_!:K^*(\Gamma_P)\to K^*(M)$ is surjective.
\end{corollary}

\begin{proposition}
Let $\mathcal{G}\to M$ be a locally trivial bundle of Lie groups of type $\mathsf{G}$ with frame bundle $P\to M$. Then the following diagram in $KK$ commutes:
\[
\begin{tikzcd}
&I_{\mathsf{G},P} &\\
&&\\
C_0(\Xi_P) \arrow[uur,"\psi_{I,P}"]\arrow[rr, "\psi_{\Xi,P}"] && C^*(\Xi_P^*,\omega_P)\arrow[uul,"\psi_{\omega,I,P}"]
\end{tikzcd}
\]
\end{proposition}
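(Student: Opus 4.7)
By Definition \ref{autgododlelendn} and Proposition \ref{plentyofsesforg}, the three morphisms $\psi_{I,P}$, $\psi_{\Xi,P}$, $\psi_{\omega,I,P}$ are connecting maps of short exact sequences obtained by restricting the two-parameter twisted groupoid algebra
\[
B \;:=\; C^*(\mathcal{G}_{\mathsf{Z},{\rm sqadb}}|_{\Gamma_P \times [0,1]^2},\, \omega_{\mathcal{G}_{\mathsf{Z},{\rm sqadb}}})
\]
to three different one-dimensional sub-manifolds of $[0,1]^2$: the punctured diagonal $\Delta_0$ for $\psi_{I,P}$, the left edge $\{0\}\times[0,1)$ for $\psi_{\Xi,P}$, and the top edge $[0,1)\times\{1\}$ for $\psi_{\omega,I,P}$. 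The commutativity of the triangle therefore amounts to showing that the boundary attached to the diagonal path $(0,0)\to(1,1)$ agrees with the boundary attached to the L-shaped path $(0,0)\to(0,1)\to(1,1)$.

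\textbf{Step 1: The L-path as a composite boundary.} First, I realise $\psi_{\Xi,P}\otimes_{C^*(\Xi_P^*,\omega_P)}\psi_{\omega,I,P}$ as a single boundary. Form the pullback algebra
\[
B_L \;=\; \bigl\{(f,g)\in B|_{\{0\}\times[0,1]}\oplus B|_{[0,1)\times\{1\}} \,:\, f(0,1)=g(0,1)\bigr\}
\]
and filter $B_L$ by $0 \subseteq I_L \subseteq J_L \subseteq B_L$, where $J_L=\ker(\mathrm{ev}_{(0,0)}:B_L\to C_0(\Xi_P))$ and $I_L\subseteq J_L$ is the further kernel of restriction to the entire left edge. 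Using the rescaling $\xi\mapsto s\xi$ (well defined on $\Gamma_P$, which is a cone in $\mathcal{Z}^*$) to trivialise the $s$-dependent family of twisted algebras on the left edge, one verifies that
\[
B_L/J_L\cong C_0(\Xi_P), \qquad J_L/I_L\cong \Sigma C^*(\Xi_P^*,\omega_P), \qquad I_L\cong \Sigma I_{\mathsf{G},P}.
\]
The octahedral axiom in the triangulated category $KK$, applied to this three-step ideal filtration, identifies the connecting map of the total extension $0\to I_L\to B_L\to C_0(\Xi_P)\to 0$ with the Kasparov product $\psi_{\Xi,P}\otimes \psi_{\omega,I,P}$.

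\textbf{Step 2: Path homotopy inside the upper triangle.} It remains to compare this connecting map with $\psi_{I,P}$. Let $T=\{(t,s)\in[0,1]^2 : t\le s\}\setminus\{(1,1)\}$ and set $B_T:=B|_T$. The restriction $B_T\twoheadrightarrow B|_{\Delta_0}$ to the diagonal side and the restriction $B_T\twoheadrightarrow B_L$ to the union of the other two sides fit into short exact sequences with common quotient $C_0(\Xi_P)$ at the corner $(0,0)$ and with kernels that are both $KK$-equivalent to $\Sigma I_{\mathsf{G},P}$ via the contractibility of $T$ relative to its corners. By naturality of boundary maps with respect to the inclusions $\Delta_0\hookrightarrow T$ and $L\hookrightarrow T$, both restricted extensions induce the same class in $KK_0(C_0(\Xi_P), I_{\mathsf{G},P})$, giving the required identity $\psi_{I,P}=\psi_{\Xi,P}\otimes \psi_{\omega,I,P}$.

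\textbf{Main obstacle.} The principal technical difficulty is the identification $J_L/I_L \cong \Sigma C^*(\Xi_P^*,\omega_P)$ in Step 1: one must track how the $s$-dependent cocycle $\omega_{\mathcal{G}_{\mathsf{Z},{\rm sqadb}}}|_{t=0}$ relates by the rescaling $\xi\mapsto s\xi$ to the fixed cocycle $\omega_P$, and verify that this rescaling is compatible with the gluing at the corner $(0,1)$ between the left-edge and top-edge algebras. Once this filtration has been set up correctly, the application of the octahedral axiom and the $KK$-equivalence between the diagonal and L-shape extensions via the triangle $T$ are standard.
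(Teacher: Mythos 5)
Your guiding picture — realise each of the three classes as a boundary map attached to an edge of the parameter square and then compare the diagonal with the L-shaped path inside a contractible two-dimensional region — is exactly the structure the paper invokes (``naturality of boundary mappings and the fact that all involved maps come from a deformation parametrized by $[0,1]^2$''). However, the execution in Step~1 contains a concrete error that breaks the argument.

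With $B_L$ glued from the closed left edge $\{0\}\times[0,1]$ and the half-open top edge $[0,1)\times\{1\}$, and $J_L=\ker(\mathrm{ev}_{(0,0)})$, $I_L=\ker(\text{restriction to the left edge})$, the subquotient $J_L/I_L$ is the ideal in $B|_{\{0\}\times[0,1]}$ of elements vanishing at $(0,0)$, i.e.\ it sits over the \emph{half-open} interval $\{0\}\times(0,1]$. After the $s$-rescaling of Proposition \ref{contracllemem} this is isomorphic to $C^*(\Xi_P^*,\omega_P)\otimes C_0(0,1]$, which is a cone and hence contractible — it is \emph{not} $\Sigma C^*(\Xi_P^*,\omega_P)$ as you claim. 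Consequently the inclusion $I_L\hookrightarrow J_L$ is already a $KK$-equivalence, the filtration does not ``see'' $C^*(\Xi_P^*,\omega_P)$ at all, and the octahedral argument cannot produce a factorisation through $\psi_{\Xi,P}$ and $\psi_{\omega,I,P}$. Independently of the misidentified subquotient, the assertion that the octahedral axiom identifies the connecting map of the total extension with the Kasparov product of the two partial connecting maps is not a direct output of that axiom and would itself require justification (what one gets is a distinguished triangle relating the three mapping cones, not an equality of boundary maps after a degree shift).

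Step~2 is closer to the paper's intent, but the key claim — that the restrictions $\ker(\mathrm{ev}_{(0,0)})\subseteq B_T \to \ker(\mathrm{ev}_{(0,0)})\subseteq B_{\Delta_0}$ and $\to\ker(\mathrm{ev}_{(0,0)})\subseteq B_L$ are $KK$-equivalences — is asserted and not proved. This requires showing that the twisted groupoid $C^*$-algebras over the two-dimensional complements $T\setminus\Delta_0$ and $T\setminus L$ are contractible; the topological contractibility of these regions does not by itself give contractibility of the $C^*$-algebras sitting over them, and the $s$-rescaling of Proposition \ref{contracllemem} alone does not handle the wedge near $(0,0)$ in the region $\{0\le t<s\le 1\}$ because the $t$-range degenerates as $s\to 0$. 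A cleaner route, avoiding both Step~1's filtration and this gap, is to write each of $\psi_{I,P},\psi_{\Xi,P},\psi_{\omega,I,P}$ in the form $[\mathrm{ev}_{\text{start}}]^{-1}\otimes[\mathrm{ev}_{\text{end}}]$ over the corresponding \emph{closed} edge (using that the kernel of the start evaluation on each closed-edge algebra is a cone, hence contractible, as in Remark \ref{psivsdefomem}), then pass to the closed square $[0,1]^2$ and verify that evaluation at each of the three relevant corners on $B_{[0,1]^2}$ is a $KK$-equivalence, so that the restriction of the square algebra to each edge is a $KK$-equivalence and all evaluations can be transported to the square; the three $[\mathrm{ev}]^{-1}\otimes[\mathrm{ev}]$ expressions then compose telescopically.
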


\begin{proof}
The proof follows from naturality of boundary mappings and the fact that all involved maps comes from a deformation parametrized by $[0,1]^2$ as in Theorem \ref{plentyofsesforg}.
\end{proof}

\section{On surjectivity of $K_*(I_{\mathsf{G},P})\to K_*(C^*(\mathcal{G}))$}
\label{suhjrjrjed}

For our index theoretical considerations later on, it will be important to have surjectivity of the mapping $K_*(I_{G,P})\to K_*(C^*(\mathcal{G}))$ induced from inclusion. This shall allow us to transfer index computations, a priori depending on the entire spectrum, to the flat orbits. We have not found a general condition for this to hold, but consider a number of results that are verifiable in examples. Throughout this subsection we let $\mathcal{G}\to M$ denote a locally trivial bundle of Lie groups of type $\mathsf{G}$ over a compact manifold with frame bundle $P\to M$. We tacitly assume that $\mathsf{G}$ admits flat orbits.

Recall from Corollary \ref{onsurjinteroffm} that surjectivity of $K_*(I_{\mathsf{G},P})\to K_*(C^*(\mathcal{G}))$ is equivalent to surjectivity of $(p_\gamma)_!:K^*(\Gamma_P)\to K^*(M)$. By the naturality of push-forwards, surjectivity of $(p_\gamma)_!:K^*(\Gamma_P)\to K^*(M)$ is in turn equivalent to surjectivity of the map $K^*(\Xi_P^*)\to K^*(\mathcal{A}^*)$ induced by inclusion.

\begin{proposition}
\label{consecimplesur}
Assume that the fibre bundle $\Gamma_P\to M$ admits a continuous section. Then  $K_*(I_{\mathsf{G},P})\to K_*(C^*(\mathcal{G}))$ is surjective. 
\end{proposition}

\begin{proof}
If $\Gamma_P\to M$ admits a continuous section, we can compose it with the zero section of $\Xi_P\to \Gamma_P$ and obtain a global section $s:M\to \Xi_P$. Taking a tubular neighborhood the image of $s$ in $\mathcal{A}^*$, we obtain a ball bundle $Q\to M$ contained in $\Xi_P^*$. We arrive at a commuting diagram where all arrows are push-forwards induced from open inclusions 
\[
\begin{tikzcd}
&K^*(Q)\arrow[ddr]\arrow[ddl] &\\
&&\\
K^*(\Xi_P^*)\arrow[rr] && K^*(\mathcal{A}^*)
\end{tikzcd}
\]
The map $K^*(Q)\to K^*(\mathcal{A}^*)$ is bijective, so $K^*(\Xi_P^*)\to K^*(\mathcal{A}^*)$ is surjective.
\end{proof}

\begin{corollary}
\label{fixedsecimplesur}
Assume that there is a subgroup $\ghani\subseteq \mathsf{G}$ to which $P$ reduces and there is a $\ghani$-fixed point in $\Gamma$. Then  $K_*(I_{\mathsf{G},P})\to K_*(C^*(\mathcal{G}))$ is surjective. 
\end{corollary}

\begin{proof}
Let $P_\ghani$ denote the reduction of $P$ to $\ghani$ and $\xi_0\in \Gamma$ the fixed point of $\ghani$. The map $x\mapsto [(\tilde{x},\xi_0)]$, for some pre-image $\tilde{x}\in P_\ghani$ of $x$, is independent of choices and induces a continuous section to $P_\ghani\to M$. Therefore, $\Gamma_P\to M$ admits a continuous section. The result follows from Proposition \ref{consecimplesur}.
\end{proof}

\begin{corollary}
\label{surjeforcooronde}
Assume that $\mathsf{G}$ is a Carnot-Lie group with one-dimensional center and that the central subbundle $\mathcal{Z}\to M$ is orientable. Then $K_*(I_{\mathsf{G},P})\to K_*(C^*(\mathcal{G}))$ is surjective.
\end{corollary}

\begin{proof}
Orientability of $\mathcal{Z}$ is equivalent to it being trivializable, so there exists a global section of the $\R^\times$-bundle $\Gamma_P=\mathcal{Z}^*\setminus \{0\}\to M$. The result follows from Proposition \ref{consecimplesur}.
\end{proof}

It follows from the next proposition that $K_*(I_{\mathsf{G},P})\to K_*(C^*(\mathcal{G}))$ is rationally surjective for all nilpotent Lie groups with one-dimensional center without any orientability assumption.

\begin{proposition}
\label{rationalsurje}
Assume that $\mathsf{G}$ satisfies that $\chi(\Gamma)\neq 0$ has non-vanishing Euler characteristic. Then $K_*(I_{\mathsf{G},P})\to K_*(C^*(\mathcal{G}))$ is rationally surjective. More precisly, $K_*(I_{\mathsf{G},P})\to K_*(C^*(\mathcal{G}))$ is surjective after inverting $\chi(\Gamma)$.
\end{proposition}

\begin{proof}
By the Riemann-Roch theorem, we have a commuting diagram involving Chern characters
\[
\begin{tikzcd}
K^*(\Gamma_P)\arrow[rr,"\pi_!"] \arrow[dd, "\mathrm{ch}"] &&K^*(M)\arrow[dd, "\mathrm{ch}"] &\\
&&\\
H^*_c(\Gamma_P,\mathbb{Q}) \arrow[rr] &&H^*(M,\mathbb{Q}),
\end{tikzcd}
\]
where the bottom arrow is the multiplication by the relative $\hat{A}$-genus composed with the push-forward $\pi_*:H^*_c(\Gamma_P,\mathbb{Q}) \to H^*(M,\mathbb{Q})$. Since multiplication by the relative $\hat{A}$-genus is invertible and the Chern character is a rational isomorphism, $\pi_!:K^*(\Gamma_P)\to K^*(M)$ is a rational isomorphism if and only if $\pi_*:H^*_c(\Gamma_P,\mathbb{Q}) \to H^*(M,\mathbb{Q})$ is a surjection. By Poincaré duality, this is in turn equivalent to the push-forward $\pi_* : H_{*}(\Gamma_M, \mathbb{Q}) \rightarrow H_{*}(M, \mathbb{Q})$ being a surjection. By \cite{Casson_Gottlieb} there exist a transfer map $t : H_{*}(M, \mathbb{Q}) \rightarrow H_{*}(\Gamma_P, \mathbb{Q})$ such that $\pi_* \circ t = \chi(\Gamma) 1$. Thus, if $\chi(\Gamma) \neq 0$ then $\pi_*$ is surjective.
\end{proof}

\section[Thom-Connes isomorphism and flat orbits]{The Thom-Connes isomorphism and the bundle of flat representations}
\label{sec:connesthomandadiaofofd}

The constructions with adiabatic deformations of Section \ref{subsec:nistorctsubsn} have favorable functorial properties and produce $K$-theoretical information, but for later purposes of computing with Carnot symbols we shall need to relate the Connes-Thom isomorphism to the bundle of flat representations from Section \ref{ctstructrifoeod}. Along the way, we will give a new proof of the fact that the ideal of flat orbits, also in a locally trivial bundle of nilpotent Lie groups, has vanishing Dixmier-Duoady class.

For these constructions, we shall need some further notation. Let $\mathsf{G}$ denote a nilpotent Lie group admitting flat orbits and $\mathcal{G}\to M$ a locally trivial bundle of nilpotent Lie groups with fibre $\mathsf{G}$. Recall the groupoid 
$$\mathcal{G}_{\mathsf{Z}}=\mathsf{G}/\mathsf{Z}\times \mathfrak{z}^*\rightrightarrows \mathfrak{z},$$ 
constructed in Section \ref{sec:groupoidforflatorbits}. We can analogously define $\mathcal{G}_{\mathcal{Z}}\rightrightarrows \mathcal{Z}^*$ as the pull back of $\mathcal{G}/\mathcal{Z}$ up to the dual central subbundle $\mathcal{Z}^*$. Analogously to Definition \ref{unitary2coclalddaadl}, after choosing a Riemannian metric for the Lie algebroid of $\mathcal{G}$, we have a unitary 2-cocycle $\omega_{\mathcal{G}_{\mathcal{Z}}}$ on $\mathcal{G}_{\mathcal{Z}}$ and a (relative to the choice of metric) canonical isomorphism 
$$C^*(\mathcal{G})\xrightarrow{\sim}C^*(\mathcal{G}_{\mathcal{Z}},\omega_{\mathcal{G}_{\mathcal{Z}}}),$$
implemented via the Fourier transform in the central direction as in Theorem \ref{groupoiddescropofig}. This isomorphism implements an isomorphism $I_{\mathsf{G},P}\cong C^*(\mathcal{G}_{\mathcal{Z}}|_{\Gamma_P},\omega_{\mathcal{G}_{\mathcal{Z}}})$. Similarly to the constructions in Section \ref{subsec:nistorctsubsn}, we can also define the adiabatic deformation $\mathcal{G}_{\mathcal{Z},{\rm adb}}\rightrightarrows \mathcal{Z}^*$ and the smooth adiabatic cocycle $\omega_{\mathcal{G}_{\mathcal{Z},{\rm adb}}} : (\mathcal{G}_{\mathcal{Z},{\rm adb}} \times[0,1])^{(2)} \rightarrow U(1)$. We shall now use the machinery of Section \ref{ctstructrifoeod} to describe the representation theory of $C^*(\mathcal{G}_{\mathcal{Z},{\rm adb}}|_{\Gamma_P},\omega_{\mathcal{G}_{\mathcal{Z}},{\rm adb}}^{s=1})$.

First, we consider the case of $M$ being a point and later we shall glue these construction together. For a Jordan-Hölder basis $\pmb{B}$ of $\mathfrak{g}$, we introduce the notation 
$$\mathcal{G}_{\mathsf{Z},{\rm adb},\pmb{B}}:=\mathcal{G}_{\mathsf{Z},{\rm adb}}|_{U_{\pmb{B}}\times [0,1]}.$$
Consider the subgroupoid $\mathcal{F}_{\pmb{B},{\rm adb}}\subseteq \mathcal{G}_{\mathsf{Z},{\rm adb},\pmb{B}}$ defined as follows. We let  $\mathcal{F}_{\pmb{B}}\rightrightarrows U_{\pmb{B}}$ be the Lie groupoid that integrates the quotient bundle $F_{V,\pmb{B}}/\mathfrak{z}\to U_{\pmb{B}}$ of the bundle of Vergne polarizations from Proposition \ref{vbfv}. The groupoid $\mathcal{F}_{\pmb{B}}$ is well defined by Proposition \ref{integrationofnilpotentalgebroid}. We let  $\mathcal{F}_{\pmb{B},{\rm adb}}\rightrightarrows U_{\pmb{B}}\times [0,1]$ be the adiabatic deformation of $\mathcal{B}$ constructed as in Definition \ref{adiaboatihomogenensosl}. 

We define the space $\mathpzc{H}^\infty_{\pmb{B},{\rm adb}, c}$ to consist of all $f\in C^\infty( \mathcal{G}_{\mathsf{Z},{\rm adb},\pmb{B}})$ such that
$$f(gh)=f(g)\omega_{\mathcal{G}_{\mathsf{Z},{\rm adb}}}^{s=1}(g,h)^{-1}, \quad\mbox{for}\quad (g,h)\in (\mathcal{G}_{\mathsf{Z},{\rm adb}}\times \mathcal{F}_{\mathsf{Z},{\rm adb}})\cap \mathcal{G}_{\mathsf{Z},{\rm adb}}^{(2)},$$ 
and that $f$  is compactly supported in the $U_{\pmb{B}}$-direction and of Schwarz class modulo $\mathcal{F}_{\pmb{B},{\rm adb}}$ in the $\mathsf{G}/\mathsf{Z}$-direction. On $\mathpzc{H}^\infty_{V,\epsilon,c}$, there is a $C_0(U_{\pmb{B}}\times [0,1])$-valued inner product (as on page \pageref{thehilbmdmoslsl}) and we let $\mathpzc{H}_{\pmb{B},{\rm adb}}$ denote its completion as a $C_0(U_{\pmb{B}}\times [0,1])$-Hilbert $C^*$-module.

Following the constructions of Section \ref{subsec:reptheory}, we make the following observation. We implicitly use the isomorphisms of Proposition \ref{contracllemem}.

\begin{proposition}
\label{contrsucuc}
The $C_0(U_{\pmb{B}}\times [0,1])$-Hilbert $C^*$-module $\mathpzc{H}_{\pmb{B},{\rm adb}}$ 
carries a natural $C[0,1]$-linear bijective representation 
$$C^*(\mathcal{G}_{\mathsf{Z},{\rm adb},\pmb{B}}, \omega_{\mathcal{G}_{\mathsf{Z},{\rm adb}}}^{s=1})\to \mathbb{K}_{C_0(U_{\pmb{B}}\times [0,1]))}(\mathpzc{H}_{\pmb{B},{\rm adb}}),$$
such that 
\begin{itemize}
\item There exists a bundle of Hilbert spaces $\mathcal{H}_{\pmb{B},{\rm adb}}\to U_{\pmb{B}}\times [0,1]$ such that 
$$\mathpzc{H}_{\pmb{B},{\rm adb}}=C_0(U_{\pmb{B}},\mathcal{H}_{\pmb{B},{\rm adb}}).$$
\item $\mathpzc{H}_{\pmb{B},{\rm adb}}|_{(0,1]}= C_0(U_{\pmb{B}}\times (0,1],\mathcal{H}_{\pmb{B},V})$ as $(C_0((0,1],I_{\mathsf{G},\pmb{B}}),C_0(U_{\pmb{B}}\times (0,1]))$-Hilbert $C^*$-modules. 
\item $\mathpzc{H}_{\pmb{B},{\rm adb}}|_{t=0}= C_0(U_{\pmb{B}},L^2(\mathfrak{g}/F_{V,\pmb{B}}))$ as $(C^*((\mathfrak{g}/\mathfrak{z})\times U_{\pmb{B}},\omega),C_0(U_{\pmb{B}}))$-Hilbert $C^*$-modules (using the notation $\omega$ as in Equation \eqref{kirildongropodidi}), where the action of $C^*((\mathfrak{g}/\mathfrak{z})\times U_{\pmb{B}},\omega)$ on the bundle of Hilbert spaces $L^2(\mathfrak{g}/F_{V,\pmb{B}})$ is given by twisted convolution.
\end{itemize}
\end{proposition}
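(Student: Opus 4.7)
The plan is to carry out the construction and unitary trivialization used in Propositions \ref{bundleofhspspac}, \ref{modulevsbundle} and \ref{reponen} in an adiabatic family parametrised by $t\in [0,1]$, and then to identify the two boundary values separately. I would first complete $\mathpzc{H}^\infty_{\pmb{B},{\rm adb},c}$ with respect to its $C_0(U_{\pmb{B}}\times [0,1])$-valued inner product (coming from fibrewise integration along $\mathcal{G}_{\mathsf{Z},{\rm adb}}/\mathcal{F}_{\pmb{B},{\rm adb}}$) to obtain the Hilbert module $\mathpzc{H}_{\pmb{B},{\rm adb}}$. Twisted left convolution then yields a $*$-homomorphism $C^\infty_c(\mathcal{G}_{\mathsf{Z},{\rm adb},\pmb{B}},\omega_{\mathcal{G}_{\mathsf{Z},{\rm adb}}}^{s=1})\to \End^*_{C_0(U_{\pmb{B}}\times[0,1])}(\mathpzc{H}_{\pmb{B},{\rm adb}})$ which extends by density to the required $C[0,1]$-linear representation; that it is injective and lands in $\mathbb{K}_{C_0(U_{\pmb{B}}\times [0,1])}(\mathpzc{H}_{\pmb{B},{\rm adb}})$ would then be checked fibrewise and in the two regimes $t>0$ and $t=0$ discussed below.

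Next I would construct the Hilbert space bundle $\mathcal{H}_{\pmb{B},{\rm adb}}\to U_{\pmb{B}}\times [0,1]$ by transporting Pedersen's rational canonical coordinates $q_1^\epsilon,\ldots,q_d^\epsilon$ along the scaling family $\psi:(X,\xi,t)\mapsto (\e^{\delta_t X},\delta_{t^{-1}}^*\xi,t)$ of Definition \ref{adiaboatihomogenensosl}. Combining these with Hermite functions, as in the proof of Proposition \ref{bundleofhspspac}, produces an ON basis of sections of $\mathpzc{H}_{\pmb{B},{\rm adb}}$ and hence a global unitary trivialization
\[
T_{\pmb{B},{\rm adb}}:\mathpzc{H}_{\pmb{B},{\rm adb}}\xrightarrow{\sim} C_0(U_{\pmb{B}}\times [0,1], L^2(\R^d)),
\]
where $d=\#\epsilon_n/2=\mathrm{codim}(\mathfrak{z})/2$. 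This verifies the first bullet and gives $\mathcal{H}_{\pmb{B},{\rm adb}}$ as a trivialisable $L^2(\R^d)$-bundle.

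For the two boundary identifications, on $t\in (0,1]$ the first item of Proposition \ref{contracllemem} provides a $C_0(U_{\pmb{B}}\times(0,1])$-linear $*$-iso\-morphism identifying $C^*(\mathcal{G}_{\mathsf{Z},{\rm adb},\pmb{B}}|_{(0,1]},\omega_{\mathcal{G}_{\mathsf{Z},{\rm adb}}}^{s=1})$ with $C_0((0,1])\otimes C^*(\mathcal{G}_{\mathsf{Z},\pmb{B}},\omega_{\mathcal{G}_{\mathsf{Z}}})\cong C_0((0,1])\otimes I_{\mathsf{G},\pmb{B}}$ and intertwining the adiabatic representation with the $C_0(0,1]$-family of the representations of Proposition \ref{reponen}. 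Under $T_{\pmb{B},{\rm adb}}$ this identifies $\mathpzc{H}_{\pmb{B},{\rm adb}}|_{(0,1]}$ with $C_0(U_{\pmb{B}}\times(0,1],\mathcal{H}_{V,\pmb{B}})$, as required. At $t=0$, the groupoid $\mathcal{G}_{\mathsf{Z},{\rm adb}}|_{t=0}$ collapses to the bundle of abelian groups $\mathfrak{g}/\mathfrak{z}\times \mathfrak{z}^*\rightrightarrows \mathfrak{z}^*$, the cocycle $\omega_{\mathcal{G}_{\mathsf{Z},{\rm adb}}}^{s=1}|_{t=0}$ becomes the Kirillov cocycle $\omega(X,Y,\xi)=\e^{i\xi[X,Y]}$ of \eqref{kirildongropodidi}, and $\mathcal{F}_{\pmb{B},{\rm adb}}|_{t=0}$ is the Lagrangian sub-bundle $F_{V,\pmb{B}}/\mathfrak{z}\to U_{\pmb{B}}$ (Lagrangian because $\omega_\xi$ is non-degenerate on $\mathfrak{g}/\mathfrak{z}$ for $\xi\in U_{\pmb{B}}\subseteq \Gamma$). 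The Stone--von Neumann theorem applied fibrewise then realises $C^*((\mathfrak{g}/\mathfrak{z})\times U_{\pmb{B}},\omega)$ as $C_0(U_{\pmb{B}},\mathbb{K}(L^2(\mathfrak{g}/F_{V,\pmb{B}})))$ via the twisted-convolution Schrödinger representation, which by construction coincides with the $t=0$ restriction of our representation.

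The main technical obstacle is verifying that the unitary trivialisation $T_{\pmb{B},{\rm adb}}$ extends smoothly across $t=0$, equivalently that Pedersen's canonical coordinates, composed with the scaling $\psi$, have $t\to 0^+$ limits which reproduce the linear Darboux coordinates on $(\mathfrak{g}/\mathfrak{z},\omega_\xi)$ adapted to the Lagrangian $F_{V,\pmb{B}}(\xi)/\mathfrak{z}$. This is a rational/polynomial computation parallel to the smoothness of the cocycle $\omega_{\mathcal{G}_{\mathsf{Z},{\rm sqadb}}}$ verified previously, and should succeed because the top-degree part of the Lie bracket dominates after rescaling. Once smoothness of $T_{\pmb{B},{\rm adb}}$ is in hand, bijectivity onto the compacts at $t>0$ follows from Proposition \ref{repsandosisosos}, bijectivity at $t=0$ follows from Stone--von Neumann, and continuity in $t$ together with a standard density argument upgrade these fibrewise statements to the $C[0,1]$-linear bijectivity asserted in the proposition.
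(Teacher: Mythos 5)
Your proposal is correct and follows essentially the same route the paper intends — the paper itself gives no explicit proof, remarking only that the observation follows from the constructions of Section \ref{subsec:reptheory} and the isomorphisms of Proposition \ref{contracllemem}, which is precisely what you unwind (Pedersen canonical coordinates plus Hermite functions give the trivialization as in Proposition \ref{bundleofhspspac}, the $t>0$ identification via the contraction of Proposition \ref{contracllemem} together with Proposition \ref{repsandosisosos}, and the $t=0$ fibre via Stone--von Neumann applied to the non-degenerate Kirillov cocycle $\omega$ on $\mathfrak{g}/\mathfrak{z}$ over $U_{\pmb{B}}$). You also correctly flag the one substantive verification — that the Pedersen-coordinate trivialization composed with the adiabatic rescaling $\psi$ extends smoothly across $t=0$ and degenerates to linear Darboux coordinates adapted to $F_{V,\pmb{B}}(\xi)/\mathfrak{z}$ — as the analogue of the smoothness check already carried out for $\omega_{\mathcal{G}_{\mathsf{Z},{\rm sqadb}}}$; this is the right place to concentrate the remaining work.
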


For two Jordan-Hölder basis $\pmb{B}$ and $\pmb{B}'$, we can (following Lemma \ref{lionsintertwiners} and Section \ref{ctstructrifoeod}) define the adiabatic Lion intertwiner 
\begin{equation}
\label{adiabativcckckdodo}
\mathfrak{L}_{\pmb{B},\pmb{B}',{\rm adb}}^{(0)}:\mathpzc{H}_{\pmb{B},{\rm adb}}|_{U_{\pmb{B}}\cap U_{\pmb{B}'}}\to \mathpzc{H}_{\pmb{B},{\rm adb}}|_{U_{\pmb{B}}\cap U_{\pmb{B}'}},
\end{equation}
by extending the operation $\mathfrak{L}_{\pmb{B},\pmb{B}',{\rm adb}}^{(0)}:\mathpzc{H}^\infty_{\pmb{B}',{\rm adb}, c}\to \mathpzc{H}^\infty_{\pmb{B},{\rm adb}}$, defined by
$$\mathfrak{L}_{\pmb{B},\pmb{B}',{\rm adb}}^{(0)}f(gh):=\int_{(d^{-1}(g)\cap\mathcal{F}_{V,\pmb{B}}) /(d^{-1}(g)\cap\mathcal{F}_{V,\pmb{B}}\cap \mathcal{F}_{V,\pmb{B}'})}f(gh)\rd h,$$
by unitarity. We remark that at $t=0$, we have deformed away all group structure while retaining the cocycle in the central direction so we can consider the adiabatic Lion intertwiner at fixed $\xi$ and $t=0$ as the Lion intertwiner for the polarizations that $F_{V,\pmb{B}}(\xi)$ and $F_{V,\pmb{B}'}(\xi)$ define for the real Heisenberg algebra structure $(\mathfrak{g}/\mathfrak{z})_{\R\omega_\xi}$ (for notation and further context, see Example \ref{heisebendnedcd}). In particular, a computation as in Proposition \ref{descriptionofddinvariant} and using Lemma \ref{lionsintertwiners} allow us to deduce the following:

\begin{proposition}
\label{lioninadidiaodoado}
The adiabatic Lion intertwiner from Equation \eqref{adiabativcckckdodo} intertwines the action of $C^*(\mathcal{G}_{\mathsf{Z},{\rm adb}}, \omega_{\mathcal{G}_{\mathsf{Z},{\rm adb}}}^{s=1})$ on $\mathpzc{H}_{\pmb{B}',{\rm adb}}$ with that on $\mathpzc{H}_{\pmb{B},{\rm adb}}$. Moreover, on triple intersections $U_{\pmb{B}}\cap U_{\pmb{B}'}\cap U_{\pmb{B}''}$ the Lion intertwiners satisfy the following property:
$$\mathfrak{L}_{\pmb{B},\pmb{B}',{\rm adb}}^{(0)}\mathfrak{L}_{\pmb{B}',\pmb{B}'',{\rm adb}}^{(0)}\mathfrak{L}_{\pmb{B}'',\pmb{B},{\rm adb}}^{(0)}\equiv\e^{\frac{\pi i}{4}\mathrm{Mas}(F_{V,\pmb{B}},F_{V,\pmb{B}'},F_{V,\pmb{B}''})}.$$
\end{proposition}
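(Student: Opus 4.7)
The plan is to reduce both assertions to fibrewise statements over $(U_{\pmb{B}}\cap U_{\pmb{B}'})\times [0,1]$ (respectively the triple intersection) and then invoke Lemma \ref{lionsintertwiners} pointwise, exactly as was done in Proposition \ref{descriptionofddinvariant} in the non-adiabatic setting. For each fixed $(\xi,t)$, the fibre of $\mathpzc{H}_{\pmb{B},{\rm adb}}$ carries an irreducible representation of the fibre of $\mathcal{G}_{\mathsf{Z},{\rm adb}}$ twisted by $\omega_{\mathcal{G}_{\mathsf{Z},{\rm adb}}}^{s=1}$. When $t>0$, the explicit parametrization $\psi$ in Definition \ref{adiaboatihomogenensosl} identifies this fibre with $\mathcal{H}_{F_{V,\pmb{B}}(\mathcal{O}),\delta_{t^{-1}}^*\xi}(\mathcal{O}_{\delta_{t^{-1}}^*\xi})$ together with its $\mathsf{G}$-action; when $t=0$, the group law has been contracted to the abelian one, while the twisting cocycle reduces to $\e^{i\xi[\cdot,\cdot]}$ (see the computation of $\psi^*\omega_{\mathcal{G}_{\mathsf{Z},{\rm sqadb}}}$ preceding Proposition \ref{contracllemem}), and the fibre is therefore the Bargmann-type representation of the Heisenberg-type Lie algebra $(\mathfrak{g}/\mathfrak{z})_{\R\omega_\xi}$ polarized by $F_{V,\pmb{B}}(\xi)/\mathfrak{z}$.

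Inspecting the integration formula for $\mathfrak{L}_{\pmb{B},\pmb{B}',{\rm adb}}^{(0)}$ after applying $\psi$, one sees that at every $(\xi,t)$ the operator $\mathfrak{L}_{\pmb{B},\pmb{B}',{\rm adb}}^{(0)}|_{(\xi,t)}$ is literally the uncorrected Lion transform between the two polarizations in the relevant (contracted or uncontracted) Lie group. Lemma \ref{lionsintertwiners} therefore gives fibrewise that $\mathfrak{L}_{\pmb{B},\pmb{B}',{\rm adb}}^{(0)}|_{(\xi,t)}$ is a positive scalar multiple of a unitary intertwining the two representations, and by the smoothness of the integration kernel in $\xi$ and $t$ together with Proposition \ref{contrsucuc}, these fibrewise intertwiners assemble into an adjointable $C_0((U_{\pmb{B}}\cap U_{\pmb{B}'})\times [0,1])$-linear morphism of Hilbert modules. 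Intertwining of the $C^*(\mathcal{G}_{\mathsf{Z},{\rm adb}},\omega_{\mathcal{G}_{\mathsf{Z},{\rm adb}}}^{s=1})$-actions then follows from the fibrewise intertwining property together with density of $C^\infty_c$-sections.

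For the triple-product identity, apply the Maslov index identity of Lemma \ref{lionsintertwiners} fibrewise at each $(\xi,t)$ in $(U_{\pmb{B}}\cap U_{\pmb{B}'}\cap U_{\pmb{B}''})\times [0,1]$. The crucial observation is that the Vergne polarizations $F_{V,\pmb{B}}(\xi), F_{V,\pmb{B}'}(\xi), F_{V,\pmb{B}''}(\xi)$ depend only on $\xi$, not on $t$, and that the Kirillov bilinear form governing both the twisting cocycle and the Maslov triple index is $\omega_\xi$ uniformly in $t$. Hence the scalar appearing on the right-hand side is $\e^{\frac{\pi i}{4}\mathrm{Mas}(F_{V,\pmb{B}}(\xi),F_{V,\pmb{B}'}(\xi),F_{V,\pmb{B}''}(\xi))}$ for every $(\xi,t)$, yielding the stated identity. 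The main technical obstacle is the continuity of the fibrewise Lion transform across $t=0$, where the Lie group contracts; this requires checking that the measure on $(d^{-1}(g)\cap \mathcal{F}_{V,\pmb{B}})/(d^{-1}(g)\cap \mathcal{F}_{V,\pmb{B}}\cap \mathcal{F}_{V,\pmb{B}'})$ varies smoothly with $t\in [0,1]$ under $\psi$, and that the Schwartz-class decay conditions modulo $\mathcal{F}_{\pmb{B},{\rm adb}}$ transform compatibly under the adiabatic rescaling — once this is in place the rest of the argument is a parametrized reinstatement of Proposition \ref{descriptionofddinvariant}.
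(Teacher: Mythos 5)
Your proposal is correct and follows essentially the same route as the paper: the paper's own justification (given in the remark immediately preceding the proposition, not as a formal proof environment) is precisely that at $t=0$ the group structure contracts away while the cocycle in the central direction survives, so the adiabatic Lion transform fibrewise reduces to the ordinary Lion transform for the Heisenberg algebra $(\mathfrak{g}/\mathfrak{z})_{\R\omega_\xi}$ polarized by the $t$-independent Vergne polarizations, after which Lemma \ref{lionsintertwiners} applies pointwise as in Proposition \ref{descriptionofddinvariant}. Your explicit flagging of the continuity across $t=0$ (the smooth variation of the fibre measure and the compatibility of Schwartz-class decay under adiabatic rescaling) is a reasonable identification of a detail the paper leaves implicit behind Proposition \ref{contrsucuc}.
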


This will allow to make several conclusions for the structure of $C^*(\mathcal{G}_{\mathcal{Z},{\rm adb}}|_{\Gamma_P},\omega_{\mathcal{G}_{\mathcal{Z}},{\rm adb}}^{s=1})$. We start by describing what happens at $t=0$. For this, and also later purposes, we need a lemma. 

\begin{lemma}
\label{lem:desciriddtriviala}
Suppose that $A$ is a continuous trace algebra with spectrum $X$ and trivial Dixmier-Duoady invariant. Assume for $j=1,2$ that $\mathcal{H}_j\to X$ is a bundle of Hilbert spaces and $\pi_j:A\to C_0(X,\mathbb{K}(\mathcal{H}_j))$ are $C_0(X)$-linear $*$-isomorphisms. Then there is a line bundle $L_{\mathcal{H}_1,\mathcal{H}_2}\to X$, uniquely determined up to isomorphism, such that 
$$\mathcal{H}_1\cong L_{\mathcal{H}_1,\mathcal{H}_2}\otimes \mathcal{H}_2,$$
via a unitary intertwining $\pi_1$ with $\pi_2$.
\end{lemma}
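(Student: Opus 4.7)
The plan is to exploit the fact that any $C_0(X)$-linear $*$-isomorphism between continuous-trace algebras of the form $C_0(X,\mathbb{K}(\mathcal{H}_j))$ is locally, but not necessarily globally, implemented by unitary intertwiners of the underlying Hilbert space bundles, and that the obstruction to globalising lives in $\check{H}^1(X,U(1))\cong \check{H}^2(X,\mathbb{Z})$ and is represented by a line bundle.

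More concretely, consider the $C_0(X)$-linear $*$-isomorphism $\Phi:=\pi_2\circ \pi_1^{-1}:C_0(X,\mathbb{K}(\mathcal{H}_1))\to C_0(X,\mathbb{K}(\mathcal{H}_2))$. Choose a trivialising cover $(U_\alpha)_\alpha$ of $X$ over which both $\mathcal{H}_1$ and $\mathcal{H}_2$ trivialise to $U_\alpha\times H$ for a separable Hilbert space $H$. Over each $U_\alpha$, $\Phi|_{U_\alpha}$ is a $C_0(U_\alpha)$-linear automorphism of $C_0(U_\alpha,\mathbb{K}(H))$, and by the standard description of the automorphism group of the stable continuous trace algebra over a contractible base (see e.g.\ \cite{Williams_Raeburn}), it is implemented as conjugation by a strongly continuous unitary $u_\alpha:U_\alpha\to U(H)$. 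On double intersections $U_\alpha\cap U_\beta$, the unitary $u_\alpha u_\beta^{-1}$ commutes with $\mathbb{K}(H)$ pointwise, hence equals a scalar $z_{\alpha\beta}:U_\alpha\cap U_\beta\to U(1)$. The collection $(z_{\alpha\beta})$ is a $\check{C}$ech 1-cocycle and defines a complex line bundle $L_{\mathcal{H}_1,\mathcal{H}_2}\to X$ whose transition functions are $z_{\alpha\beta}$.

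The key step is to check that this $L_{\mathcal{H}_1,\mathcal{H}_2}$ actually implements the isomorphism $\mathcal{H}_1\cong L_{\mathcal{H}_1,\mathcal{H}_2}\otimes \mathcal{H}_2$ intertwining $\pi_1$ and $\pi_2$. Locally over $U_\alpha$ we get a unitary $U_\alpha: \mathcal{H}_1|_{U_\alpha}\to \mathcal{H}_2|_{U_\alpha}$ (identified with the fibrewise action of $u_\alpha$). By construction the cocycle $(z_{\alpha\beta})$ records precisely the failure of the $U_\alpha$ to patch, so twisting the local trivialisations of $\mathcal{H}_2$ by the dual line bundle $L_{\mathcal{H}_1,\mathcal{H}_2}$ produces a bundle $L_{\mathcal{H}_1,\mathcal{H}_2}\otimes \mathcal{H}_2$ whose transition cocycle agrees with that of $\mathcal{H}_1$ and for which the $U_\alpha$ assemble into a global unitary intertwining $\pi_1$ with $\pi_2$.

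The hard part is really just checking that the construction is well defined and that the resulting $L_{\mathcal{H}_1,\mathcal{H}_2}$ is independent of the choices: a different refinement of the cover, or a different choice of local unitary $u_\alpha'=\lambda_\alpha u_\alpha$ with $\lambda_\alpha\in C(U_\alpha,U(1))$, alters the cocycle $(z_{\alpha\beta})$ only by a coboundary $(\lambda_\alpha\lambda_\beta^{-1})$, hence leaves the isomorphism class of $L_{\mathcal{H}_1,\mathcal{H}_2}$ unchanged. Finally, uniqueness of $L_{\mathcal{H}_1,\mathcal{H}_2}$ as an abstract line bundle follows from the observation that if $L$ and $L'$ both satisfy $\mathcal{H}_1\cong L\otimes \mathcal{H}_2\cong L'\otimes \mathcal{H}_2$ via unitaries intertwining $\pi_1$ with $\pi_2$, then $L\otimes (L')^{-1}$ acts as a $C_0(X)$-linear automorphism of $\mathcal{H}_2$ commuting (up to pointwise scalars) with the $\pi_2$-action, forcing $L\otimes (L')^{-1}$ to be trivial.
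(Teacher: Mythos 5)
Your proof takes essentially the same approach as the paper: both construct the line bundle from the $U(1)$-valued \v{C}ech $1$-cocycle recording the failure of local unitaries implementing the $*$-isomorphism to patch. The only cosmetic difference is that the paper first invokes Kuiper's theorem to reduce to $\mathcal{H}_1$ globally trivial, whereas you work locally with both bundles simultaneously; your proof also spells out the well-definedness and uniqueness steps that the paper leaves implicit.
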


\begin{proof}
By Kuiper's theorem we can without restriction assume that $\mathcal{H}_1=X\times \ell^2(\mathbb{Z})$ is trivial, that $A=C_0(X,\mathbb{K}(\ell^2(\mathbb{Z})))$ and that $\pi_1$ is the identity map. Pick a cover $\mathfrak{U}=(U_j)_j$ of $X$ over which $\mathcal{H}_2$ and $\pi_2$ trivialises. In other words, there are unitary isomorphisms $L_j:\mathcal{H}_2|_{U_j}\cong U_j\times \ell^2(\Z)$ and $L_ja L_j^*=\pi_2(a)$ for $a\in C_0(U_j,\mathbb{K}(\ell^2(\mathbb{Z})))$. We note that for $a\in C_0(U_j\cap U_k,\mathbb{K}(\ell^2(\mathbb{Z})))$, $L_j^*L_ka (L_j ^*L_k)^*=a$ so $u_{jk}=L_j^*L_k$ defines a $U(1)$-valued Cech $1$-cochain. It is direct from the construction that $(u_{jk})_{j,k}$ is a $U(1)$-valued Cech $1$-cocycle, and it defines a line bundle $L_{\mathcal{H}_1,\mathcal{H}_2}\to X$. The collection $(L_j)_j$ glues together to the required isomorphism $\mathcal{H}_1\cong L_{\mathcal{H}_1,\mathcal{H}_2}\otimes \mathcal{H}_2$.
\end{proof}

For locally trivial bundle of nilpotent Lie groups with fibre $\mathsf{G}$ and frame bundle $P$, we define $\Xi_P$ as above, and consider the vector bundle $\Xi_P^*\to \Gamma_P$ as a groupoid. We consider the fibrewise Kirillov form $\omega$ as a unitary $2$-cocycle $\omega:(\Xi_P^*)^{(2)}\to U(1)$. Choose a Riemannian metric $g_\Xi$ on $\Xi_P$. The Riemannian metric and the Kirillov form induces a complex structure on $\Xi_P$, in particular $\Xi_P\to \Gamma_P$ inherits a spin$^c$-structure. Following the construction of the Fock space bundle in Example \ref{freestep2example}, we arrive at a bundle of Fock spaces $\mathpzc{F}_\Xi\to \Gamma_P$ carrying an $\omega$-twisted representation of $\Xi_P^*$. As a bundle, $\mathpzc{F}_\Xi\to \Gamma_P$ is defined as 
$$\mathpzc{F}_\Xi:=\bigoplus_{k=0}^\infty \Xi_P^{\otimes_\C^s k}.$$
The twisted action of $\Xi_P^*$ on $\mathpzc{F}_\Xi$ is defined as in Equation \eqref{twisededrep} by identifying $\mathpzc{F}_\Xi$ with the subbundle of fiberwise holomorphic sections of a weighted $L^2$-bundle defined from $\Xi_P^*\to \Gamma_P$ and the Riemannian metric. The representation theory of the twisted groupoid $(\Xi_P^*,\omega)$ is computed from that of the locally trivial bundle of Heisenberg groups $\Xi_P^*\rtimes_\omega \R\to \Gamma_P$. Standard results on the Bargman-Fock representations, see for instance \cite{follandphasespace}, implies the following.

\begin{proposition}
\label{ismomomafonaofjan}
Let $\mathcal{G}\to M$ be a locally trivial bundle of nilpotent Lie groups and $g_\Xi$ a choice of metric on its Lie algebroid. The bundle of Fock spaces 
$$\mathpzc{F}_\Xi:=\bigoplus_{k=0}^\infty \Xi_P^{\otimes_\C^s k}\to \Gamma_P,$$ 
and the $\omega$-twisted action of $\Xi_P^*$ on $\mathpzc{F}_\Xi$ defined as in Equation \eqref{twisededrep} defines a $*$-isomorphism 
$$C^*(\Xi_P^*,\omega)\cong C_0(\Gamma_P;\mathbb{K}(\mathpzc{F}_\Xi)).$$
In particular, the $C^*$-algebra $C^*(\Xi_P^*,\omega)$ is a continuous trace algebra with spectrum $\Gamma_P$ and vanishing Dixmier-Duoady invariant and if $\mathcal{H}'\to \Gamma_P$ is a bundle of Hilbert spaces such that there is an isomorphism 
$$C^*(\Xi_P^*,\omega)\cong C_0(\Gamma_P;\mathbb{K}(\mathcal{H}')),$$
then there exists a line bundle $L_{\mathcal{H}'}\to \Gamma_P$ and an isomorphism of bundles of Hilbert spaces $\mathcal{H}'\otimes L\cong \mathpzc{F}_\Xi$.
\end{proposition}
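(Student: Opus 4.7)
The plan is to first establish the isomorphism fiberwise via Stone--von Neumann, and then globalise it using continuous dependence of the Bargmann--Fock representation on the symplectic datum. Fix $\xi \in \Gamma_P$. The fibre $V_\xi := \Xi_{P,\xi}^*$ equipped with the Kirillov form $\omega_\xi$ is a symplectic vector space, and the twisted groupoid $C^*$-algebra $C^*(V_\xi, \omega_\xi)$ is by construction the CCR (Weyl) algebra on $(V_\xi, \omega_\xi)$. The restriction of $g_\Xi$ to the fibre together with $\omega_\xi$ produces a compatible complex structure $J_{\omega_\xi}$, turning $V_\xi$ into a complex Hermitean inner product space whose associated Fock space is precisely $\mathpzc{F}_{\Xi,\xi} = \bigoplus_{k\geq 0} (V_\xi^*)^{\otimes_\C^s k}$. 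The Bargmann--Fock representation defined by Equation \eqref{twisededrep} is then, up to unitary equivalence, the unique irreducible representation of the CCR-algebra by the Stone--von Neumann theorem, and I would use it to realise a $*$-isomorphism $C^*(V_\xi, \omega_\xi) \xrightarrow{\sim} \mathbb{K}(\mathpzc{F}_{\Xi,\xi})$.

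Next I would globalise this pointwise picture. The explicit formula \eqref{twisededrep} shows that for $f \in C^\infty_c(\Xi_P^*)$ the assignment $\xi \mapsto \pi_{\omega_\xi}(f)$ is a norm-continuous $C_0(\Gamma_P)$-linear family of compact operators on $\mathpzc{F}_\Xi$, since $g_{\omega_\xi}$ and $\langle \cdot,\cdot\rangle_{\omega_\xi}$ depend smoothly on $\xi$ through $\omega_\xi$ and $g_\Xi$. Extending this by continuity and twisted convolution yields a $C_0(\Gamma_P)$-linear $*$-homomorphism
\[
\Phi : C^*(\Xi_P^*, \omega) \longrightarrow C_0(\Gamma_P;\mathbb{K}(\mathpzc{F}_\Xi)),
\]
whose fibre at each $\xi\in\Gamma_P$ is the fibrewise isomorphism constructed in the previous paragraph.

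The main work, and the step I expect to be the technical heart of the proof, is showing that $\Phi$ is a $C_0(\Gamma_P)$-linear $*$-isomorphism globally. Fibrewise injectivity together with $C_0(\Gamma_P)$-linearity and the identification of the spectrum of $C^*(\Xi_P^*,\omega)$ with $\Gamma_P$ gives injectivity of $\Phi$, since a non-zero element must be non-zero in some fibre. For surjectivity I would produce, for any compactly supported continuous section $k$ of $\mathbb{K}(\mathpzc{F}_\Xi)\to \Gamma_P$, a Schwartz section $f$ of $\Xi_P^*$ such that $\Phi(f)$ approximates $k$; this uses the explicit form of the fibrewise Bargmann transforms together with a partition-of-unity argument on $\Gamma_P$ adapted to local trivializations of $\Xi_P\to \Gamma_P$. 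The subtlety here is analytic rather than algebraic: one must ensure that the fibrewise approximate units and the intertwiners between different local Bargmann models vary continuously in $\xi$, which is controlled by smoothness of $\xi\mapsto J_{\omega_\xi}$.

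Once $\Phi$ is established as a $*$-isomorphism, the continuous trace property and vanishing of the Dixmier--Douady invariant of $C^*(\Xi_P^*,\omega)$ are immediate, since the right-hand side $C_0(\Gamma_P;\mathbb{K}(\mathpzc{F}_\Xi))$ manifestly has these properties. The final uniqueness statement is a direct application of Lemma \ref{lem:desciriddtriviala}: given another $\mathcal{H}'\to \Gamma_P$ with $C^*(\Xi_P^*,\omega)\cong C_0(\Gamma_P;\mathbb{K}(\mathcal{H}'))$, composing with $\Phi^{-1}$ realises $C_0(\Gamma_P;\mathbb{K}(\mathpzc{F}_\Xi))\cong C_0(\Gamma_P;\mathbb{K}(\mathcal{H}'))$ as $C_0(\Gamma_P)$-linear $*$-isomorphic algebras of compacts, whence the lemma produces a line bundle $L_{\mathcal{H}'}\to \Gamma_P$ implementing $\mathpzc{F}_\Xi\cong \mathcal{H}'\otimes L_{\mathcal{H}'}$.
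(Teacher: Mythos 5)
Your proposal is correct and takes essentially the same approach as the paper: the paper reduces the representation theory of the twisted groupoid $(\Xi_P^*,\omega)$ to the associated bundle of Heisenberg groups $\Xi_P^*\rtimes_\omega\R\to\Gamma_P$ and then invokes standard facts about Bargmann--Fock representations (citing Folland), followed by Lemma \ref{lem:desciriddtriviala} for the uniqueness claim, while you fill in the fibrewise Stone--von Neumann step and the gluing/continuity details explicitly. The two arguments are the same in substance; yours is simply more detailed where the paper defers to the literature.
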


The uniqueness of the Fock bundle up to line bundles follows from  Lemma \ref{lem:desciriddtriviala}. Next we consider a result that will play an important role in comparing the action of the Fock bundle on $K$–theory to Nistor's Connes-Thom isomorphism.

\begin{theorem}
\label{compisjsdjwitundendd}
Let $\mathcal{G}\to M$ be a locally trivial bundle of nilpotent Lie groups and $g_\Xi$ a choice of metric on its Lie algebroid. The class $[\mathpzc{F}_\Xi]\in KK_0(C^*(\Xi_P^*,\omega),C_0(\Gamma_P))$ defined using the isomorphism of Proposition \ref{ismomomafonaofjan}, fits into a commuting diagram 
\[
\begin{tikzcd}
C_0(\Xi_P) \arrow[ddr,"{[\slashed{D}_\Xi]}"]\arrow[rr, "\psi_{\Xi,P}"] && C^*(\Xi_P^*,\omega_P)\arrow[ddl,"{[\mathpzc{F}_\Xi]}"]\\
&&\\
&C_0(\Gamma_P)&
\end{tikzcd}
\]
where $\psi_{\Xi,P}\in KK_0(C^*(\Xi_P^*,\omega),C_0(\Xi_P))$ is the deformation morphism induced from $\psi_\Xi$ defined in Definition \ref{autgododlelendn} and $[\slashed{D}_\Xi]\in KK_0(C_0(\Xi_P),C_0(\Gamma_P))$ is the dual Thom class, i.e. the fibre-wise spin$^c$-Dirac operator on $\Xi_P\to \Gamma_P$ defined from the complex structure induced from $\omega$ and $g_\Xi$.
\end{theorem}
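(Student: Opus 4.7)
The plan is to reduce the claim to a fibrewise identity over $\Gamma_P$ and there to identify both sides with the Dolbeault/Thom class of the complex vector bundle $\Xi_P\to\Gamma_P$.

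First I would observe that each of $\psi_{\Xi,P}$, $[\mathpzc{F}_\Xi]$, and $[\slashed{D}_\Xi]$ is $C_0(\Gamma_P)$-linear. The deformation cocycle $\omega$ and the adiabatic family producing $\psi_\Xi$ (cf.\ Definition \ref{autgododlelendn} and Proposition \ref{plentyofsesforg}) are fibrewise over $\Gamma_P$, the Morita equivalence $[\mathpzc{F}_\Xi]$ is given by a bundle of Hilbert spaces over $\Gamma_P$, and the Dirac operator $\slashed{D}_\Xi$ acts along the fibres of $p:\Xi_P\to\Gamma_P$. A partition of unity together with a Mayer--Vietoris argument over a trivialising cover of $\Gamma_P$ (together with naturality of all three classes with respect to open inclusions and restrictions) then reduces the desired identity to the case $\Gamma_P=\{*\}$, that is, to the statement
\[
\psi_V\otimes_{C^*(V^*,\omega)}[\mathpzc{F}(V)]=[\slashed{D}_V]\quad\text{in}\quad KK_0(C_0(V),\mathbb{C}),
\]
where $V$ is a finite-dimensional complex symplectic vector space and $\mathpzc{F}(V)$ is its Bargmann--Fock space.

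For the fibre identity the cleanest route is abstract. The spin$^c$-structure on $V$ produced from the compatible pair $(\omega_V,g_V)$ provides a Bott element $[\beta_V]\in KK_0(\mathbb{C},C_0(V))$, and standard Bott periodicity identifies $[\slashed{D}_V]$ as its unique Kasparov inverse. The class $\psi_V$ is, by Remark \ref{psivsdefomem}, precisely Nistor's Connes--Thom isomorphism for the Heisenberg group attached to $V$, while $[\mathpzc{F}(V)]$ implements the Stone--von Neumann--Mackey Morita equivalence $C^*(V^*,\omega)\cong\mathbb{K}(\mathpzc{F}(V))$; their composition is the Bargmann--Fock realisation of the Thom isomorphism for the complex vector space $V$, and in particular also a $KK$-inverse of $[\beta_V]$. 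Uniqueness of Kasparov inverses then gives the equality. Alternatively, a Packer--Raeburn stabilisation argument reduces the identity by induction on $\dim_\mathbb{C}V$ to the one-dimensional case $V=\mathbb{C}$, where it is the classical computation comparing the harmonic-oscillator Bargmann cycle to the $\bar\partial$-operator on $\mathbb{C}$.

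The main obstacle is to verify that $\psi_V\otimes[\mathpzc{F}(V)]$ does indeed invert the Bott class $[\beta_V]$ with the correct sign/normalisation, rather than producing some other generator of $KK_0(C_0(V),\mathbb{C})\cong\mathbb{Z}$. To pin this down one has to match orientation conventions: both the deformation producing $\psi_V$ and the spin$^c$-structure on $V$ use the same compatible complex structure $J$, so the Bargmann--Fock vacuum, which is the lowest-weight vector of the Heisenberg representation, corresponds under the identification $\mathpzc{F}(V)\cong\ker\bar\partial\subseteq\Lambda^{0,*}V$ to the constant section of $\Lambda^{0,0}V$ and hence to the generator of the Dolbeault class. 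If one prefers an explicit proof, the coherence of conventions can be checked directly on $V=\mathbb{C}$ using the Bargmann transform, and then propagated to arbitrary $V$ by multiplicativity of all three $KK$-classes under symplectic direct sums $V=V_1\oplus V_2$, reducing the general case to a product formula and the one-dimensional base case.
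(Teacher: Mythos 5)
Your overall strategy matches the paper's: show that the Kasparov product $\psi_{\Xi,P}\otimes[\mathpzc{F}_\Xi]$ and the dual Thom class $[\slashed{D}_\Xi]$ are both $KK$-inverses of the Thom class $\tau_{\Xi_P^*}\in KK_0(C_0(\Gamma_P),C_0(\Xi_P))$, reduce to a local statement, and identify the Bargmann--Fock vacuum with the generator of the Dolbeault class. You also correctly flag, in your final paragraph, that the real content is the orientation/normalisation check on the vacuum vector.

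The gap is in the local-to-global step. Proposing a ``Mayer--Vietoris argument over a trivialising cover together with a partition of unity'' to reduce the identity to a single fibre does not work as stated: two elements of $KK_0(C_0(\Xi_P),C_0(\Gamma_P))$ that agree after restriction to the members of an open cover of $\Gamma_P$ need not be equal, and the Mayer--Vietoris exact sequence gives no such conclusion (the restriction maps are far from injective). You could try to rescue this via an $\mathcal{R}KK(\Gamma_P;-,-)$ sheaf-gluing argument, exploiting the $C_0(\Gamma_P)$-linearity you observe, but that takes substantially more work and is not what you have written. The paper instead performs the reduction by equivariance and induction: the global cycle is the image of a $U(d)$-equivariant fibre cycle under the induction functor
\[
KK_0^{U(d)}(\mathbb{C},\mathbb{C})\longrightarrow KK_0^{U(d)}(C_0(P_U),C_0(P_U))\longrightarrow KK_0(C_0(\Gamma_P),C_0(\Gamma_P)),
\]
where $P_U\to\Gamma_P$ is the $U(d)$-frame bundle of $\Xi_P$. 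Computing the fibre cycle as a Toeplitz operator (Clifford multiplication by the complex coordinate on the Fock space of spinor-valued sections) and observing it is surjective with one-dimensional kernel given by the vacuum --- on which $U(d)$ acts trivially --- immediately yields the global identity. This induction-functor step is both sound and much shorter than any patching argument; it is the piece of the argument you should replace the Mayer--Vietoris reduction with. Your proposed fibrewise computation (multiplicativity under symplectic direct sums reducing to $V=\mathbb{C}$, or a direct Bargmann-transform check) is a reasonable alternative to the paper's direct Toeplitz-index computation, so that part of your plan is fine.
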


In the proof of this theorem, we will use unbounded $KK$-theory. The reader can find more details on these methods in for instance \cite{kkbor,GM,meslandbeast,meslandrencompt}.

\begin{proof}
Let $\tau_{\Xi_P^*}\in KK_0(C_0(\Gamma_P),C_0(\Xi))$ denote the Thom class, it has an unbounded $KK$-representative defined from $(C_0(\Xi_P;S),T)$ where $S\to \Xi_P$ is the complex spinor bundle and $T$ is given by Clifford multiplication by the fibrewise complex coordinate. A computation with the unbounded Kasparov product shows that the $KK$-element 
$$\tau_{\Xi_P^*}\otimes_{C_0(\Xi_P)}\psi_{\Xi,P} \in KK_0(C_0(\Gamma_P),C^*(\Xi_P^*,\omega)),$$
is represented by the unbounded $(C_0(\Gamma_P),C^*(\Xi_P^*,\omega))$-Kasparov cycle $(C^*(\Xi_P^*,\omega;S),\mathcal{T})$ where $C^*(\Xi_P^*,\omega;S)$ is the completion of $\mathcal{S}(\Xi_P^*;S)$ in the $C^*(\Xi_P^*,\omega)$-valued inner product
\[\langle f, g\rangle (v) = \int_{(\Xi_P^*)_{p(v)}} \omega(v+w,w) \langle f(w), g(v+w)\rangle_S\rd w, \quad f,g\in \mathcal{S}(\Xi_P^*;S),\]
and the operator $\mathcal{T}$ is the regular self-adjoint operator on $C^*(\Xi_P^*,\omega;S)$ defined from the closure of $T$ acting on the dense subspace $\mathcal{S}(\Xi_P;S)\subseteq C^*(\Xi_P^*,\omega;S)$.

The $KK$-element 
$$\tau_{\Xi_P^*}\otimes_{C_0(\Xi_P)}\psi_{\Xi,P}\otimes_{C^*(\Xi_P^*,\omega)}[\mathpzc{F}_\Xi]\in KK_0(C_0(\Gamma_P),C_0(\Gamma_P)),$$
is therefore represented by the unbounded $KK$-cycle 
$$(\mathpzc{F}_{\Xi, S},\mathfrak{T})=(C^*(\Xi_P^*,\omega;S)\otimes \mathpzc{F}_\Xi,\mathcal{T}\otimes 1_{\mathpzc{F}_\Xi}).$$ 
The $C_0(\Gamma_P)$-module $\mathpzc{F}_{\Xi, S}$ identifies with the Fock space of spinor valued sections and $\mathfrak{T}$ with the fibrewise Toeplitz operator defined from Clifford multiplication by the fibrewise complex coordinate. 

Let us prove that 
\begin{equation}
\label{aoidnaodna}
[(\mathpzc{F}_{\Xi, S},\mathfrak{T})]=[(C_0(\Gamma_P),0)]=1_{C_0(\Gamma_P)}.
\end{equation}
One can prove Equation \eqref{aoidnaodna} by considering the corresponding unbounded cycle $(\mathpzc{F}_0,\mathfrak{T}_0)$ localized to a point. That is, $\mathpzc{F}_0=\bigoplus_{k=0}^\infty V^{\otimes_\C^{\rm sym}k}\otimes S_0$, for a complex spinor space $S_0$ and $V=\mathfrak{g}/\mathfrak{z}$, and $\mathfrak{T}_0$ is Clifford multiplication by the complex coordinate. The unbounded cycle $(\mathpzc{F}_0,\mathfrak{T}_0)$ is $U(d)$-equivariant. The operator $\mathfrak{T}_0$ is by standard considerations for Toeplitz operators surjective and has a one-dimensional kernel, on which $U(d)$ acts trivially. Therefore, as classes in $KK_0^{U(d)}(\C,\C)$ we have that 
\begin{equation}
\label{aoidnaodna23}
[(\mathpzc{F}_0,\mathfrak{T}_0)]=\epsilon_{U(d)},
\end{equation}
where $\epsilon_{U(d)}$ is the trivial $U(d)$-representation. Since $\Xi$ has a complex structure, the lift of $P$ to $\Gamma$ has a reduction to a principal $U(d)$-bundle $P_U\to \Gamma_P$. By going to local coordinates we see that $[(\mathpzc{F}_{\Xi, S},\mathfrak{T})]$ is the image of $[(\mathpzc{F}_0,\mathfrak{T}_0)]$ under the induction functor
$$KK_0^{U(d)}(\C,\C)\to KK_0^{U(d)}(C_0(P_U),C_0(P_U))\to KK_0(C_0(\Gamma_P),C_0(\Gamma_P)).$$
Therefore, Equation \eqref{aoidnaodna} follows from Equation \eqref{aoidnaodna23}.

We conclude that $\tau_{\Xi_P^*}\otimes_{C_0(\Xi_P)}\psi_{\Xi,P}\otimes_{C^*(\Xi_P^*,\omega)}[\mathpzc{F}_\Xi]=1_{C_0(\Gamma_P)}$, so $\psi_{\Xi,P}\otimes_{C^*(\Xi_P^*,\omega)}[\mathpzc{F}_\Xi]$ is a right inverse to the Thom isomorphism and since the Thom isomorphism is an isomorphism, uniqueness of inverses gives that $\psi_{\Xi,P}\otimes_{C^*(\Xi_P^*,\omega)}[\mathpzc{F}_\Xi]$ is the inverse and therefore $\psi_{\Xi,P}\otimes_{C^*(\Xi_P^*,\omega)}[\mathpzc{F}_\Xi]=[\slashed{D}_\Xi]$.
\end{proof}

\begin{theorem}
\label{trivialdldaaddo}
Let $\mathcal{G}\to M$ be a locally trivial bundle of nilpotent Lie groups with fibre $\mathsf{G}$ and frame bundle $P$. Assume that $\mathsf{G}$ admits flat orbits. Then $I_{\mathsf{G},P}$ is a continuous trace algebra with spectrum $\Gamma_P$ and vanishing Dixmier-Duoady invariant. 

In particular, there exists a bundle of Hilbert spaces $\mathcal{H}\to \Gamma_P$ and a $C_0(\Gamma_P)$-linear $*$-isomorphism $\pi_{\musFlat}:I_{\mathsf{G},P}\to C_0(\Gamma_P,\mathbb{K}(\mathcal{H}))$. This choice is unique up to isomorphism of the Hilbert space bundle and tensoring by a line bundle. 
\end{theorem}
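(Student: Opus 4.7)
The statement that $I_{\mathsf{G},P}$ is a continuous trace algebra with spectrum $\Gamma_P$ is already Proposition \ref{groupoidftincentrals1}, which in turn rests fibrewise on Proposition \ref{ctforflat}. What remains is to produce a Hilbert space bundle $\mathcal{H} \to \Gamma_P$ together with a $C_0(\Gamma_P)$-linear $*$-isomorphism $\pi_{\musFlat}:I_{\mathsf{G},P} \to C_0(\Gamma_P,\mathbb{K}(\mathcal{H}))$; the vanishing $\delta_{\rm DD}(I_{\mathsf{G},P}) = 0$ is an immediate consequence, and uniqueness is Lemma \ref{lem:desciriddtriviala}. The point of giving a separate proof here, rather than appealing to Proposition \ref{groupoidftincentrals1}, is that I want to build $\mathcal{H}$ and $\pi_{\musFlat}$ by a deformation argument through the adiabatic groupoid, which will be the structure needed for the comparison with Nistor's Connes--Thom isomorphism in the next section.

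The plan is to simultaneously trivialize $I_{\mathsf{G},P}$ and $C^*(\Xi_P^*, \omega)$ by building one Hilbert space bundle over $\Gamma_P \times [0,1]$. Consider the $C^*$-algebra
\[ B := C^*(\mathcal{G}_{\mathcal{Z},{\rm adb}}|_{\Gamma_P \times [0,1]}, \omega_{\mathcal{G}_{\mathcal{Z},{\rm adb}}}^{s=1}), \]
which by construction restricts at $t = 1$ to $I_{\mathsf{G},P}$ (via the central Fourier transform of Proposition \ref{groupoidftincentrals3}) and at $t = 0$ to $C^*(\Xi_P^*, \omega)$. For each Jordan--H\"older basis $\pmb{B}$ of $\mathfrak{g}$ and each local trivialization of $P \to M$ over an open $U \subseteq M$, Proposition \ref{contrsucuc} furnishes a Hilbert space bundle $\mathcal{H}_{\pmb{B},{\rm adb}} \to (U \cap U_{\pmb{B}}) \times [0,1]$ together with a $C_0$-linear $*$-isomorphism identifying the restriction of $B$ with $C_0((U \cap U_{\pmb{B}}) \times [0,1], \mathbb{K}(\mathcal{H}_{\pmb{B},{\rm adb}}))$. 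On overlaps I glue these local trivializations by the corrected adiabatic Lion intertwiners
\[ \mathfrak{L}_{\pmb{B},\pmb{B}',{\rm adb}} := \e^{\frac{\pi i}{4}\eta(F_{V,\pmb{B}'},F_{V,\pmb{B}})} \mathfrak{L}_{\pmb{B},\pmb{B}',{\rm adb}}^{(0)}, \]
exactly as in Proposition \ref{lionsintertwinerswitheta}. By Proposition \ref{lioninadidiaodoado} combined with the Maslov--$\eta$ identity $\mathrm{Mas}(F_1,F_2,F_3) = \eta(F_1,F_2)+\eta(F_2,F_3)+\eta(F_3,F_1)$, these corrected intertwiners intertwine the local $B$-actions and satisfy the strict cocycle condition on triple overlaps.

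The main technical obstacle is continuity of the transition functions: the $\eta$-invariant $\eta(F_{V,\pmb{B}'}(\xi), F_{V,\pmb{B}}(\xi))$ is generally only locally constant on the open subset of $\Gamma_P$ where the relative spectrum avoids $\pm 1$, and may jump across a nowhere dense subset. I handle this exactly as in Proposition \ref{descriptionofddinvariant}: refine the cover by intersecting with the open sets on which the relevant $\eta$-invariants are continuous. A key simplification in the present adiabatic setting is that these $\eta$-invariants depend only on the fibrewise Vergne polarizations at $\xi$, which are independent of the adiabatic parameter $t$, so the refinement lives over $\Gamma_P$ rather than $\Gamma_P \times [0,1]$ and does not interfere with the $t$-direction.

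After this refinement, the local Hilbert bundles glue to a global Hilbert space bundle $\mathcal{H}_{\rm adb} \to \Gamma_P \times [0,1]$ together with a $C_0(\Gamma_P \times [0,1])$-linear $*$-isomorphism $B \cong C_0(\Gamma_P \times [0,1], \mathbb{K}(\mathcal{H}_{\rm adb}))$. Restricting to $t = 1$ produces the desired bundle $\mathcal{H} := \mathcal{H}_{\rm adb}|_{t=1} \to \Gamma_P$ and isomorphism $\pi_{\musFlat}:I_{\mathsf{G},P} \to C_0(\Gamma_P, \mathbb{K}(\mathcal{H}))$. Since $I_{\mathsf{G},P}$ is thereby shown to be $C_0(\Gamma_P)$-linearly isomorphic to the compact operators of a Hilbert space bundle, $\delta_{\rm DD}(I_{\mathsf{G},P}) = 0$, and the uniqueness of $(\mathcal{H}, \pi_{\musFlat})$ up to twisting by a line bundle on $\Gamma_P$ is Lemma \ref{lem:desciriddtriviala}.
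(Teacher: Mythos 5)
Your proposal is correct, but it takes a genuinely different route from the paper's proof of Theorem~\ref{trivialdldaaddo}, and in fact the paper explicitly flags the distinction in the remark just before the statement: Proposition~\ref{groupoidftincentrals1} gives a direct construction via $\eta$-corrected Lion intertwiners, while Theorem~\ref{trivialdldaaddo} is advertised as proving $\delta_{\rm DD}(I_P)=0$ \emph{via a deformation argument}. Your proof is essentially the direct construction of Proposition~\ref{groupoidftincentrals1} fibered over the adiabatic parameter $[0,1]$: you glue the local $\mathcal{H}_{\pmb{B},{\rm adb}}$ by the $\eta$-corrected intertwiners $\e^{\frac{\pi i}{4}\eta(F_{V,\pmb{B}'},F_{V,\pmb{B}})}\mathfrak{L}^{(0)}_{\pmb{B},\pmb{B}',{\rm adb}}$, using that the Maslov obstruction and its $\eta$-trivialization depend only on $\xi\in\Gamma_P$ and not on $t$, so the cover-refinement for continuity of $\eta$ is uniform in $t$; this yields a global bundle $\mathcal{H}_{\rm adb}\to\Gamma_P\times[0,1]$ and hence $\mathcal{H}$ by restriction. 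The paper's proof instead deliberately avoids the $\eta$-correction: it lifts the transition cocycle of $I_{\mathsf{G},P}$ by the \emph{uncorrected} adiabatic Lion intertwiners at $t=1$ and observes, using Proposition~\ref{lioninadidiaodoado}, that the obstruction $U(1)$-cocycle $\e^{\frac{\pi i}{4}\mathrm{Mas}}$ is literally the same one that arises at $t=0$; since Proposition~\ref{ismomomafonaofjan} already trivializes $C^*(\Xi^*_P,\omega)$ via the Fock bundle, that cocycle is a coboundary, and therefore $\delta_{\rm DD}(I_{\mathsf{G},P})=0$. Your approach buys an explicit Hilbert bundle over all of $\Gamma_P\times[0,1]$ for free, which is precisely what Proposition~\ref{metaplecticlinecorrection} needs (the paper reconstructs it there by re-running the Theorem~\ref{trivialdldaaddo} argument plus Lemma~\ref{lem:desciriddtriviala}); the paper's approach is more economical for the theorem at hand, requiring only a cohomological comparison and no $\eta$-invariant corrections, at the cost of being non-constructive until Lemma~\ref{lem:desciriddtriviala} is invoked. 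Both appeals to Lemma~\ref{lem:desciriddtriviala} for uniqueness agree.
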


\begin{proof}
We construct a cover $\mathfrak{U}$ of $\Gamma_P$ by covering $M$ by open subsets $(U_j)_{j}$ over which $\mathcal{G}$ trivializes and setting $\mathfrak{U}$ to be the image of $(U_j\times U_{\pmb{B}})_{j,\pmb{B}}$ in said trivializations. Over each $U_j\times U_{\pmb{B}}$ we can write 
$$I_{\mathsf{G},P}|_{U_j\times U_{\pmb{B}}}\cong C_0(U_j\times U_{\pmb{B}},\mathbb{K}(\mathcal{H}_{\pmb{B},{\rm adb}}|_{t=1})),$$ 
and the transitition cocycle lifts to a cochain of adiabatic Lion intertwiners as in Proposition \ref{lioninadidiaodoado} restricted to $t=1$. The same construction applies ad verbatim to $C^*(\Xi_P^*,\omega)$ with adiabatic Lion intertwiners restricted to $t=0$. By Proposition \ref{lioninadidiaodoado},  $\delta_{\rm DD}(I_{\mathsf{G},P})$ and $\delta_{\rm DD}(C^*(\Xi_P^*,\omega))$ are represented by the same $U(1)$-cocycle. But $\delta_{\rm DD}(C^*(\Xi_P^*,\omega))=0$ by Proposition \ref{ismomomafonaofjan}.
\end{proof}

\begin{proposition}
\label{metaplecticlinecorrection}
Let $\mathcal{G}\to M$ be a locally trivial bundle of nilpotent Lie groups $\mathsf{G}$ that admits flat orbits. Let $g_\Xi$ be a fixed metric on the Lie algebroid of $\mathcal{G}$. Assume that $\mathcal{H}\to \Gamma_P$ is a bundle of Hilbert spaces and $\pi_{\musFlat}:I_{\mathsf{G},P}\to C_0(\Gamma_P,\mathbb{K}(\mathcal{H}))$ a $C_0(\Gamma_P)$-linear $*$-isomorphism. Then there exists a uniquely determined line bundle 
$$\mathfrak{M}(\mathcal{H})\to \Gamma_P,$$ 
that we call the metaplectic correction bundle, satisfying that $\mathcal{H}\otimes \mathfrak{M}(\mathcal{H})$ extends to a Hilbert space bundle $\mathcal{H}_{\rm mod}\to \Gamma_P\times [0,1]$ and $\pi_{\musFlat}$ extends to a $C_0(\Gamma_P\times [0,1])$-linear $*$-isomorphism
$$\pi_{\rm mod}:C^*(\mathcal{G}_{\mathcal{Z},{\rm adb}}|_{\Gamma_P}, \omega_{\mathcal{G}_{\mathsf{Z},{\rm adb}}}|_{s=1})\to C_0(\Gamma_P\times [0,1],\mathbb{K}(\mathcal{H}_{\rm mod})),$$
and satisfying 
$$\mathcal{H}_{\rm mod}|_{t=0}\cong \mathpzc{F}_\Xi,$$
as $(C^*(\Xi_P^*,\omega),C_0(\Gamma_P))$-Hilbert $C^*$-modules.
\end{proposition}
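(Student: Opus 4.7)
The plan is to produce the bundle $\mathcal{H}_{\rm mod}\to\Gamma_P\times[0,1]$ by gluing together the adiabatic fields $\mathpzc{H}_{\pmb{B},{\rm adb}}$ of Proposition \ref{contrsucuc}, and then to identify $\mathfrak{M}(\mathcal{H})$ as the discrepancy line bundle produced by Lemma \ref{lem:desciriddtriviala} between the two resulting models of $I_{\mathsf{G},P}$.

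First, I would choose a cover $(U_j)_j$ of $M$ trivializing the frame bundle $P$, and in each trivialization further cover $\Gamma$ by top fine strata $U_{\pmb{B}}$ for varying Jordan-Hölder bases $\pmb{B}\in\mathfrak{JH}$. Over each $U_j\times U_{\pmb{B}}\times[0,1]$, Proposition \ref{contrsucuc} provides a bundle of Hilbert spaces $\mathcal{H}_{\pmb{B},{\rm adb}}$ whose sections carry a $C_0(U_j\times U_{\pmb{B}}\times[0,1])$-linear $*$-isomorphism with $C^*(\mathcal{G}_{\mathsf{Z},{\rm adb},\pmb{B}},\omega_{\mathcal{G}_{\mathsf{Z},{\rm adb}}}^{s=1})$ restricted to that chart. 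On overlaps, Proposition \ref{lioninadidiaodoado} gives adiabatic Lion intertwiners $\mathfrak{L}_{\pmb{B},\pmb{B}',{\rm adb}}^{(0)}$ that intertwine the actions and satisfy the triple cocycle condition up to the Maslov index $\mathrm{Mas}(F_{V,\pmb{B}},F_{V,\pmb{B}'},F_{V,\pmb{B}''})\!\!\mod 8$; this defect is independent of $t\in[0,1]$.

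Second, I would correct the Lion intertwiners by the $\eta$-invariant phases $\e^{\frac{\pi i}{4}\eta(F_{V,\pmb{B}'},F_{V,\pmb{B}})}$ of Proposition \ref{lionsintertwinerswitheta}. Because the Maslov defect is a pure cohomology class on $\Gamma_P$ not depending on $t$, and is killed by these scalar phases exactly as in Proposition \ref{descriptionofddinvariant}, the corrected transition maps become a genuine $U(\mathcal{H}_{\pmb{B},{\rm adb}})$-valued $1$-cocycle over $\Gamma_P\times[0,1]$. Refining the cover if needed to guarantee continuity, this cocycle glues $(\mathcal{H}_{\pmb{B},{\rm adb}})_{\pmb{B}}$ into a single Hilbert space bundle $\mathcal{H}_{\rm mod}\to\Gamma_P\times[0,1]$, and the local isomorphisms from Proposition \ref{contrsucuc} assemble into a $C_0(\Gamma_P\times[0,1])$-linear $*$-isomorphism
\[
\pi_{\rm mod}^{(0)}:C^*(\mathcal{G}_{\mathcal{Z},{\rm adb}}|_{\Gamma_P},\omega_{\mathcal{G}_{\mathsf{Z},{\rm adb}}}|_{s=1})\xrightarrow{\sim}C_0(\Gamma_P\times[0,1],\mathbb{K}(\mathcal{H}_{\rm mod})).
\]
In particular $\delta_{\rm DD}$ of the adiabatic algebra vanishes. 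Restricting at $t=0$ and at $t=1$ gives two bundles $\mathcal{H}_{\rm mod}|_{t=0}$ and $\mathcal{H}_{\rm mod}|_{t=1}$ trivializing $C^*(\Xi_P^*,\omega)$ and $I_{\mathsf{G},P}$ respectively, as $C_0(\Gamma_P)$-modules.

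Third, I would use Lemma \ref{lem:desciriddtriviala} twice, which is where all the rigidity comes in. Applied to the two trivializations $\mathpzc{F}_\Xi$ and $\mathcal{H}_{\rm mod}|_{t=0}$ of $C^*(\Xi_P^*,\omega)$ (Proposition \ref{ismomomafonaofjan}), it produces a line bundle $L_0\to\Gamma_P$ and an isomorphism $\mathcal{H}_{\rm mod}|_{t=0}\otimes L_0\cong\mathpzc{F}_\Xi$ compatible with the $C^*(\Xi_P^*,\omega)$-action. Pulling $L_0$ back along the projection $\Gamma_P\times[0,1]\to\Gamma_P$ and twisting, I can replace $\mathcal{H}_{\rm mod}$ by $\mathcal{H}_{\rm mod}\otimes p^*L_0$; this does not disturb the trivialization and now matches the Fock bundle at $t=0$ on the nose. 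Applied again to the trivializations $\mathcal{H}$ (given) and $\mathcal{H}_{\rm mod}|_{t=1}$ of $I_{\mathsf{G},P}$, the lemma yields a line bundle $\mathfrak{M}(\mathcal{H})\to\Gamma_P$ with $\mathcal{H}\otimes\mathfrak{M}(\mathcal{H})\cong\mathcal{H}_{\rm mod}|_{t=1}$ as $(I_{\mathsf{G},P},C_0(\Gamma_P))$-bimodules. Twisting $\mathcal{H}_{\rm mod}$ once more by $p^*\mathfrak{M}(\mathcal{H})$ in the opposite direction and reabsorbing the $t=0$ factor (changing $L_0$ accordingly) yields the final bundle whose $t=1$ fibre is $\mathcal{H}\otimes\mathfrak{M}(\mathcal{H})$ and whose $t=0$ fibre is $\mathpzc{F}_\Xi$. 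Uniqueness of $\mathfrak{M}(\mathcal{H})$ up to isomorphism is the uniqueness clause in Lemma \ref{lem:desciriddtriviala}.

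The main obstacle I anticipate is keeping the bookkeeping consistent: one must check that correcting by the two line bundles $L_0$ and $\mathfrak{M}(\mathcal{H})$ can be done simultaneously without clashing, and that the metaplectic line so extracted is independent of the auxiliary choices of cover, Jordan-Hölder bases, and of the initial adiabatic gluing. The latter reduces to showing that any two bundles extending $\mathcal{H}\otimes\mathfrak{M}(\mathcal{H})$ and agreeing with $\mathpzc{F}_\Xi$ at $t=0$ differ by a line bundle on $\Gamma_P\times[0,1]$ that is trivial at both endpoints, hence pulled back from a flat line bundle — and by a homotopy argument this forces it to be globally trivial. Once that rigidity is established the uniqueness of $\mathfrak{M}(\mathcal{H})$ follows.
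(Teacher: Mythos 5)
Your approach matches the paper's: the paper cites the argument of Theorem \ref{trivialdldaaddo} to show $C^*(\mathcal{G}_{\mathcal{Z},{\rm adb}}|_{\Gamma_P},\omega_{\mathcal{G}_{\mathsf{Z},{\rm adb}}}|_{s=1})$ is continuous trace with spectrum $\Gamma_P\times[0,1]$ and trivial Dixmier--Douady invariant (your Steps 1--2, gluing the $\mathcal{H}_{\pmb B,{\rm adb}}$ via $\eta$-corrected adiabatic Lion intertwiners), and then invokes Lemma \ref{lem:desciriddtriviala} (your Step 3). The construction is sound, and your observation that uniqueness follows because any line bundle on $\Gamma_P\times[0,1]$ is determined by its restriction to an endpoint is the right point (it is even simpler than you state: $\Gamma_P\times[0,1]\simeq\Gamma_P$, so every line bundle is a pullback, flat or not).

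One small slip at the end of Step 3: after the second application of Lemma \ref{lem:desciriddtriviala} you already have $\mathcal{H}_{\rm mod}|_{t=0}\cong\mathpzc{F}_\Xi$ and $\mathcal{H}_{\rm mod}|_{t=1}\cong\mathcal{H}\otimes\mathfrak{M}(\mathcal{H})$, which is exactly what the proposition asserts, so you are finished. The final ``twisting once more by $p^*\mathfrak{M}(\mathcal{H})$ in the opposite direction and reabsorbing the $t=0$ factor'' is not needed, and as written it does not do what you say: tensoring by $p^*\mathfrak{M}(\mathcal{H})^{-1}$ changes the $t=1$ fibre to $\mathcal{H}$ (not $\mathcal{H}\otimes\mathfrak{M}(\mathcal{H})$) and simultaneously spoils $t=0$, and any compensating pullback twist undoes the first. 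Simply delete that sentence and the proof stands.
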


\begin{proof}
By the argument in Theorem \ref{trivialdldaaddo}, $C^*(\mathcal{G}_{\mathcal{Z},{\rm adb}}|_{\Gamma_P}, \omega_{\mathcal{G}_{\mathsf{Z},{\rm adb}}}|_{s=1})$ is a continuous trace algebra with spectrum $\Gamma_P\times [0,1]$ and trivial Dixmier-Duoady class. The proposition now follows from Lemma \ref{lem:desciriddtriviala}.
\end{proof}

\begin{remark}
\label{undiqoqdoflat}
In light of Lemma \ref{lem:desciriddtriviala}, Proposition \ref{metaplecticlinecorrection} implies that for any two bundles of Hilbert spaces $\mathcal{H}_j\to \Gamma_P$ with equipped with $C_0(\Gamma_P)$-linear $*$-isomorphisms $\pi_j:I_{\mathsf{G},P}\to C_0(\Gamma_P,\mathbb{K}(\mathcal{H}))$, $j=1,2$, there is an isomorphism of Hilbert space bundles 
$$\mathcal{H}_1\otimes \mathfrak{M}(\mathcal{H}_1)\cong \mathcal{H}_2\otimes \mathfrak{M}(\mathcal{H}_2),$$
compatible with the action of $I_{\mathsf{G},P}$. 

The reader could note that $\mathfrak{M}(\mathcal{H})\to \Gamma_P$ depends on $\pi_{\musFlat}$. The dependence on $\pi_{\musFlat}$ can be seen from the proof of Lemma \ref{lem:desciriddtriviala}.
\end{remark}

Let us describe the metaplectic correction line bundle for a particular choice of bundle of flat orbit representations.

\begin{lemma}
\label{superimportanttechnicallemma}
Let $\mathcal{G}\to M$ be a locally trivial bundle of nilpotent Lie groups with fibre $\mathsf{G}$ and frame bundle $P$. Assume that $\mathsf{G}$ admits flat orbits. Consider the bundle of flat orbit representations
$$\mathcal{H}\to \Gamma_P,$$
defined from gluing together along the corrected Lion intertwiners defined in small enough local trivializations as in Proposition \ref{descriptionofddinvariant} and \ref{groupoidftincentrals1}. Then it holds that  
$$c_1(\mathfrak{M}(\mathcal{H}))-c_1(\det(\Xi^*_P)) \in \mathrm{im}(H^1(\Gamma_P,\Z/8)\to H^2(\Gamma_P,\Z)),$$
is in the image of the Bockstein map associated with multiplication by $8$. In particular, the image of $c_1(\mathfrak{M}(\mathcal{H}))$ in $H^2(\Gamma_P,\R)$ coincides with that of $c_1(\det(\Xi^*_P))$. 
\end{lemma}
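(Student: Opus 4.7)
The plan is to obtain explicit \v{C}ech cocycles for $\mathfrak{M}(\mathcal{H})$ and $\det(\Xi_P^*)$ on a common cover of $\Gamma_P$ by comparing local trivializations of $\mathcal{H}_{\rm mod}$ at $t=1$ and at $t=0$, then reading off the difference via Remark \ref{commentaboutpairs}.

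First, I would choose a cover $(V_\alpha)_{\alpha\in A}$ of $\Gamma_P$ sufficiently fine that $P$ trivializes locally in a way compatible with fixed Jordan-H\"older bases $\pmb{B}_\alpha$ so that $V_\alpha\subseteq U_{\pmb{B}_\alpha}$ (using Theorem \ref{covconj}), and additionally $\Xi_P$ as a complex vector bundle (with the complex structure induced by $\omega$ and $g_\Xi$) admits unitary frames over each $V_\alpha$. Over $V_\alpha\times [0,1]$, Proposition \ref{contrsucuc} provides a local model $\mathpzc{H}_{\pmb{B}_\alpha,{\rm adb}}$ of $\mathcal{H}_{\rm mod}$, and the adiabatic Lion intertwiners $\mathfrak{L}_{\pmb{B}_\alpha,\pmb{B}_\beta,{\rm adb}}^{(0)}$ from Proposition \ref{lioninadidiaodoado} glue these local models, failing the cocycle condition on triple overlaps by the Maslov triple-index factor $\e^{\tfrac{\pi i}{4}\mathrm{Mas}(F_{V,\pmb{B}_\alpha},F_{V,\pmb{B}_\beta},F_{V,\pmb{B}_\gamma})}$. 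The metaplectic line bundle $\mathfrak{M}(\mathcal{H})$ is, up to isomorphism, the $U(1)$-line bundle whose local cochain of scalar corrections $u_{\alpha\beta}:V_\alpha\cap V_\beta\times[0,1]\to U(1)$ kills this Maslov obstruction; at $t=1$ we have by Proposition \ref{lionsintertwinerswitheta} and the construction of $\mathcal{H}$ via corrected Lion intertwiners that $u_{\alpha\beta}|_{t=1}=\e^{\tfrac{\pi i}{4}\eta(F_{V,\pmb{B}_\alpha},F_{V,\pmb{B}_\beta})}$.

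Next, I would compute $u_{\alpha\beta}|_{t=0}$. At $t=0$ the bundle $\mathcal{H}_{\rm mod}|_{t=0}$ is $\mathpzc{F}_\Xi$ (Proposition \ref{metaplecticlinecorrection} and \ref{ismomomafonaofjan}), while $\mathpzc{H}_{\pmb{B}_\alpha,{\rm adb}}|_{t=0}$ is the Schr\"odinger realization $L^2(\mathfrak{g}/F_{V,\pmb{B}_\alpha})$ associated with the \emph{real} Lagrangian $F_{V,\pmb{B}_\alpha}\subseteq \Xi_P$. The local trivializations at $t=0$ are therefore obtained by fibrewise Bargmann transforms between Fock space and these real-polarization Hilbert spaces; these are canonical up to the standard choices involved in geometric quantization. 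Composition of two such Bargmann transforms is exactly the degenerated adiabatic Lion intertwiner at $t=0$, and the requirement that $u_{\alpha\beta}|_{t=0}\mathfrak{L}_{\pmb{B}_\alpha,\pmb{B}_\beta,{\rm adb}}^{(0)}|_{t=0}$ extend continuously from $t=1$ forces $u_{\alpha\beta}|_{t=0}=\e^{\tfrac{\pi i}{4}\eta(F_{V,\pmb{B}_\alpha}(\xi),F_{V,\pmb{B}_\beta}(\xi))}$, evaluated using the adapted complex structure on $\Xi_P$. Applying Remark \ref{commentaboutpairs} with $A_{\alpha\beta}\in U(\Xi_P|_{V_\alpha\cap V_\beta})$ the unitary sending $F_{V,\pmb{B}_\alpha}$ to $F_{V,\pmb{B}_\beta}$, this factors as
\[
u_{\alpha\beta}|_{t=0}=\e^{\tfrac{\pi i}{4} d(A_{\alpha\beta})}\det(A_{\alpha\beta})^{-1/2},
\]
with $d(A_{\alpha\beta})\in\Z$ and $\det(A_{\alpha\beta})^{-1/2}$ a local branch of a square root.

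Finally, I would extract the two Chern-class computations. Squaring gives $u_{\alpha\beta}^{\otimes 2}|_{t=0}=\e^{\tfrac{\pi i}{2} d(A_{\alpha\beta})}\det(A_{\alpha\beta})^{-1}$, and since $\det(A_{\alpha\beta})^{-1}$ is precisely the transition cocycle of $\det(\Xi_P^*)$ (relative to the chosen unitary frames over $V_\alpha$), we get $c_1(\mathfrak{M}(\mathcal{H})^{\otimes 2}\otimes \det(\Xi_P))=\beta([d(A_{\cdot\cdot})/4]\bmod\Z)$, i.e., $2c_1(\mathfrak{M}(\mathcal{H}))-c_1(\det(\Xi_P^*))$ lies in the image of the Bockstein $H^1(\Gamma_P;\Z/4)\to H^2(\Gamma_P;\Z)$. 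Refining by working directly with $u_{\alpha\beta}$ and carefully tracking the $\Z/2$ ambiguity in the choice of local square root for $\det(A_{\alpha\beta})^{-1/2}$ (and combining it with the $\Z/8$-valued factor $\e^{\pi i d(A_{\alpha\beta})/4}$), one can arrange that the whole $U(1)$-cocycle $u_{\alpha\beta}|_{t=0}\cdot (\text{any chosen cocycle representative of } \det(\Xi_P^*)^{-1/2})$ takes values in the 8th roots of unity; homotoping from $t=1$ to $t=0$ (the Chern class being constant in $t$) then yields
\[
c_1(\mathfrak{M}(\mathcal{H}))-c_1(\det(\Xi_P^*))\in\mathrm{im}\bigl(H^1(\Gamma_P;\Z/8)\xrightarrow{\beta} H^2(\Gamma_P;\Z)\bigr),
\]
which is torsion and hence vanishes in $H^2(\Gamma_P;\R)$.

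The main obstacle will be Step 2, executing the Bargmann-transform identification of the trivializations of $\mathcal{H}_{\rm mod}$ at $t=0$ in a way that is globally consistent and explicitly linked to the adiabatic Lion intertwiners, so that the identity $u_{\alpha\beta}|_{t=0}=\e^{\pi i\eta/4}$ genuinely holds rather than just holds projectively. This requires matching conventions between the real-polarization model used throughout Section \ref{subsec:reptheory}--\ref{ctstructrifoeod} and the Fock-space model of Proposition \ref{ismomomafonaofjan}, and handling the square-root ambiguity in $\det(A)^{-1/2}$ as a global $\Z/2$ class that, combined with $e^{\pi i d(A)/4}$, produces the desired $\Z/8$ obstruction.
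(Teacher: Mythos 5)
Your argument has a genuine gap at the point where you identify $\mathfrak{M}(\mathcal{H})$ with the Maslov-killing cochain $u_{\alpha\beta}$, and the factor-of-two discrepancy that surfaces in your final paragraph is a symptom of it. The cochain $u_{\alpha\beta}=\e^{\pi i\eta(\cdot,\cdot)/4}$ is not a \v{C}ech cocycle: its coboundary is precisely the inverse Maslov factor of $\mathfrak{L}^{(0)}_{{\rm adb}}$ (Proposition \ref{lioninadidiaodoado}), so it cannot by itself be the transition cocycle of a line bundle. Moreover, if you glue $\mathcal{H}_{\rm mod}$ by $u_{\alpha\beta}\cdot\mathfrak{L}^{(0)}_{{\rm adb}}$ with $u_{\alpha\beta}|_{t=1}=\e^{\pi i\eta/4}$, then $\mathcal{H}_{\rm mod}|_{t=1}$ carries the same gluing data as $\mathcal{H}$ itself, which would force $\mathfrak{M}(\mathcal{H})$ to be trivial.

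What the paper actually does is compare, at $t=0$, two different resolutions of the Maslov obstruction: the Lion/eta-glued bundle $\mathcal{H}'$ and the Fock bundle $\mathpzc{F}_\Xi$. These are both $C_0(\Gamma_P)$-linear $*$-representations of the same continuous-trace algebra $C^*(\Xi_P^*,\omega)$, so they differ by an honest line bundle (Lemma \ref{lem:desciriddtriviala}, Proposition \ref{ismomomafonaofjan}), and $\mathfrak{M}(\mathcal{H})=L_{\mathcal{H}'}$ (Proposition \ref{metaplecticlinecorrection}). Your computation only supplies the $\mathcal{H}'$-side correction $u_{\alpha\beta}=\e^{\pi i d(A_{\alpha\beta})/4}\det(A_{\alpha\beta})^{-1/2}$; it never brings in the Fock-side scalar correction $\pm\sqrt{\det(g_{\alpha\beta}^*)}$ coming from the metaplectic representation of $U(d)$ on $\mathpzc{F}_\Xi$ (Folland, Chapter 4), against which $L_{\mathcal{H}'}$ is measured. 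That missing half-form factor is exactly what turns the $\sqrt{\det}$ in your cochain into a full $\det(g^*)$ up to $8$th roots of unity; it is not recovered by ``carefully tracking the $\Z/2$ ambiguity,'' which is only a choice of branch, nor by a ``cocycle representative of $\det(\Xi_P^*)^{-1/2}$,'' which asks for a square-root line bundle that need not exist globally. As written your argument proves $2c_1(\mathfrak{M}(\mathcal{H}))-c_1(\det(\Xi_P^*))$ is torsion, not the stated relation.
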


\begin{proof}
The reader should note that $\Xi_P$ carries an essentially unique complex structure adapted to the Kirillov form, and $\det(\Xi^*_P)\equiv\wedge_\C^d\Xi^*_P\to \Gamma_P$ is a well defined line bundle (for $d=\mathrm{rk}(\Xi_P)=\mathrm{codim}(\mathfrak{z})/2$). To prove the lemma we must show that there are transition functions for $\mathfrak{M}(\mathcal{H})$ and $\det(\Xi^*_P)$ that coincides up to powers of $\e^{\frac{\pi i}{4}}$. We can construct $\mathfrak{M}(\mathcal{H})$ as the line bundle $L_{\mathcal{H}'}$ from Proposition \ref{ismomomafonaofjan} where $\mathcal{H}'\to \Gamma_P$ is the $\omega$-twisted representation of $\Xi_P^*$ constructed from gluing together along the corrected Lion intertwiners defined in small enough local trivializations as in Proposition \ref{descriptionofddinvariant}. We can therefore construct  $\mathfrak{M}(\mathcal{H})$ explicitly from the $U(d)$-frame bundle on $\Xi_P$ and the associated corrected Lion transforms (cf. Proposition \ref{lionsintertwinerswitheta}).

Fix a transition $U(d)$-valued Cech cocycle $(g_{ij})_{i,j}$ for $\Xi_P$. By the structure of the metaplectic representation of $U(d)$  on the Fock bundle (see \cite[Chapter 4]{follandphasespace}), the Fock bundle is constructed from the Lion intertwiner corrected by transition functions of the form $(\pm \sqrt{\det(g_{ij}^*)})_{i,j}$. By Remark \ref{commentaboutpairs},  the bundle $\mathcal{H}'$ is constructed from the Lion intertwiner corrected by transition functions of the form $(\e^{\frac{\pi i}{4}k_{0,i,j}}\sqrt{\det(g_{ij}^*)})_{i,j}$ for some $k_{0,i,j}\in \Z$. The transition functions of $\mathfrak{M}(\mathcal{H})$ therefore take the form $(\e^{\frac{\pi i}{4}k_{i,j}}\det(g_{ij}^*))_{i,j}$ for some $k_{i,j}\in \Z$. The cocycle $(\det(g_{ij}^*))_{i,j}$ is a transition cocycle for $\det(\Xi^*_P)$ so the lemma follows.
\end{proof}

\begin{proposition}
\label{commsquaoadodal}
Let $\mathcal{G}\to M$ be a locally trivial bundle of nilpotent Lie groups $\mathsf{G}$ that admits flat orbits. Let $g_\Xi$ be a fixed metric on the Lie algebroid of $\mathcal{G}$. Assume that $\mathcal{H}\to \Gamma_P$ is a bundle of Hilbert spaces and $\pi_{\musFlat}:I_{\mathsf{G},P}\to C_0(\Gamma_P,\mathbb{K}(\mathcal{H}))$ a $C_0(\Gamma_P)$-linear $*$-isomorphism. 
The class $[\mathcal{H}]\in KK_0(I_{\mathsf{G},P},C_0(\Gamma_P))$ fits into a commuting diagram 
\[
\begin{tikzcd}
C_0(\Xi_P) \arrow[dd,"{[\slashed{D}_\Xi]}"]\arrow[rr, "\psi_{I,P}"] && I_{\mathsf{G},P}\arrow[dd,"{[\mathcal{H}]}"]\\
&&\\
C_0(\Gamma_P)\arrow[rr,"{[\mathfrak{M}(\mathcal{H})^*]}"]&&C_0(\Gamma_P)
\end{tikzcd}
\]
where $\psi_{I,P}\in KK_0(I_{\mathsf{G},P},C_0(\Xi_P))$ is the deformation morphism induced from $\psi_I$ defined in Definition \ref{autgododlelendn} and $[\slashed{D}_\Xi]\in KK_0(C_0(\Xi_P),C_0(\Gamma_P))$ is the dual Thom class.
\end{proposition}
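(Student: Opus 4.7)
The plan is to reduce the claim to the identity
$$\psi_{\omega,I,P}\otimes_{I_{\mathsf{G},P}} [\mathcal{H}]=[\mathpzc{F}_\Xi]\otimes_{C_0(\Gamma_P)}[\mathfrak{M}(\mathcal{H})^*]\quad\mbox{in}\quad KK_0(C^*(\Xi_P^*,\omega),C_0(\Gamma_P)).$$
Granted this, the commuting triangle displayed just before Proposition \ref{commsquaoadodal} gives the factorisation $\psi_{I,P}=\psi_{\Xi,P}\otimes_{C^*(\Xi_P^*,\omega)}\psi_{\omega,I,P}$, while Theorem \ref{compisjsdjwitundendd} yields $\psi_{\Xi,P}\otimes [\mathpzc{F}_\Xi]=[\slashed{D}_\Xi]$. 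Combining these identities produces
$$\psi_{I,P}\otimes [\mathcal{H}]=\psi_{\Xi,P}\otimes \psi_{\omega,I,P}\otimes [\mathcal{H}]=\psi_{\Xi,P}\otimes [\mathpzc{F}_\Xi]\otimes [\mathfrak{M}(\mathcal{H})^*]=[\slashed{D}_\Xi]\otimes [\mathfrak{M}(\mathcal{H})^*],$$
which is the content of the proposition.

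To prove the key identity I would exploit the adiabatic bundle $\mathcal{H}_{\rm mod}\to\Gamma_P\times [0,1]$ furnished by Proposition \ref{metaplecticlinecorrection}. The $*$-isomorphism $\pi_{\rm mod}$ presents $\mathcal{H}_{\rm mod}$ as a $C_0(\Gamma_P\times [0,1])$-linear Morita bimodule implementing a class
$$[\mathcal{H}_{\rm mod}]\in KK_0\bigl(C^*(\mathcal{G}_{\mathcal{Z},{\rm adb}}|_{\Gamma_P},\omega_{\mathcal{G}_{\mathsf{Z},{\rm adb}}}|_{s=1}),\,C_0(\Gamma_P\times [0,1])\bigr),$$
whose restrictions to $t=0$ and $t=1$ are $[\mathpzc{F}_\Xi]$ and $[\mathcal{H}]\otimes[\mathfrak{M}(\mathcal{H})]$, respectively, compatibly with $\pi_{\musFlat}$ and with the action furnished by Proposition \ref{ismomomafonaofjan}. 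By Remark \ref{psivsdefomem}, $\psi_{\omega,I,P}=[\mathrm{ev}_0]^{-1}\otimes [\mathrm{ev}_1]$, where $\mathrm{ev}_j$ denotes evaluation at $t=j$ of the adiabatic algebra. Naturality of the Kasparov product against the evaluations $\mathrm{ev}_j^\Gamma:C_0(\Gamma_P\times [0,1])\to C_0(\Gamma_P)$ yields
$$[\mathrm{ev}_0]\otimes [\mathpzc{F}_\Xi]=[\mathcal{H}_{\rm mod}]\otimes [\mathrm{ev}_0^\Gamma]\quad \mbox{and}\quad [\mathrm{ev}_1]\otimes [\mathcal{H}]\otimes[\mathfrak{M}(\mathcal{H})]=[\mathcal{H}_{\rm mod}]\otimes [\mathrm{ev}_1^\Gamma].$$
Since $[0,1]$ is contractible, $\mathrm{ev}_0^\Gamma$ and $\mathrm{ev}_1^\Gamma$ are homotopic and thus equal as $KK$-classes, so the left-hand sides are equal; tensoring the resulting identity on the left by $[\mathrm{ev}_0]^{-1}$ and on the right by $[\mathfrak{M}(\mathcal{H})^*]=[\mathfrak{M}(\mathcal{H})]^{-1}$ gives the key identity displayed above.

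The main technical point that will need care is the first of the two naturality displays, namely that the identification $\mathcal{H}_{\rm mod}|_{t=1}\cong \mathcal{H}\otimes\mathfrak{M}(\mathcal{H})$ supplied by Proposition \ref{metaplecticlinecorrection} is genuinely compatible with the original representation $\pi_{\musFlat}:I_{\mathsf{G},P}\to C_0(\Gamma_P,\mathbb{K}(\mathcal{H}))$, so that the Kasparov product $[\mathrm{ev}_1]\otimes [\mathcal{H}\otimes\mathfrak{M}(\mathcal{H})]$ really factors, up to the built-in compatibility, as $[\mathrm{ev}_1]\otimes [\mathcal{H}]\otimes [\mathfrak{M}(\mathcal{H})]$. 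This is guaranteed by the uniqueness clauses of Proposition \ref{metaplecticlinecorrection} and Lemma \ref{lem:desciriddtriviala}, which pin down the metaplectic line bundle as precisely the obstruction to extending $\pi_{\musFlat}$ across the adiabatic deformation. Once this compatibility is nailed down, the rest of the argument is formal manipulation in Kasparov's bivariant $KK$-theory.
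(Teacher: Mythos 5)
Your proposal is correct and follows essentially the same route as the paper: the paper's own proof simply cites Theorem \ref{compisjsdjwitundendd} together with Proposition \ref{metaplecticlinecorrection}, which are exactly the two ingredients you combine (via the factorisation $\psi_{I,P}=\psi_{\Xi,P}\otimes\psi_{\omega,I,P}$ and the evaluation-map description of the deformation morphisms from Remark \ref{psivsdefomem}). Your write-up is in effect the detailed expansion of the paper's one-line argument, including the compatibility of $\pi_{\rm mod}$ at $t=0$ and $t=1$ that makes the homotopy argument work.
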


\begin{proof}
The proposition follows from Theorem \ref{compisjsdjwitundendd} and Proposition \ref{metaplecticlinecorrection}.
\end{proof}

We summarize our constructions in the following theorem:

\begin{theorem}
\label{maincomputationforisg}
Let $\mathcal{G}\to M$ be a locally trivial bundle of nilpotent Lie groups $\mathsf{G}$ that admits flat orbits. Let $g_\Xi$ be a fixed metric on the Lie algebroid of $\mathcal{G}$. Assume that $\mathcal{H}\to \Gamma_P$ is a bundle of Hilbert spaces and $\pi_{\musFlat}:I_{\mathsf{G},P}\to C_0(\Gamma_P,\mathbb{K}(\mathcal{H}))$ a $C_0(\Gamma_P)$-linear $*$-isomorphism. The element $[\mathcal{H}]\in KK(I_{G,P},C_0(\Gamma_P))$, the Thom isomorphism $\tau_{\Xi_P}\in KK(C_0(\Gamma_P),C_0(\Xi_P))$ (defined from the complex structure induced from the metric), and the metaplectic correction bundle $\mathfrak{M}(\mathcal{H})$ (see Proposition \ref{metaplecticlinecorrection}) fits into a commuting diagram in $KK$:
\[
\begin{tikzcd}
C_0(\Gamma_P)\arrow[dd,"{[\mathfrak{M}(\mathcal{H})]}"]&&I_{G,P} \arrow[ll, "{[\mathcal{H}]}"] \arrow[rr, "\subseteq"] && C^*(\mathcal{G}) \\
&&\\
C_0(\Gamma_P)\arrow[rr,"\tau_{\Xi_P}"]&&C_0(\Xi_P)\arrow[rr, "\subseteq"]\arrow[uu,"\psi_{I,P}"] &&C_0(\mathbf{g}^*)\arrow[uu,"\psi_{P}"] \\
\end{tikzcd}
\]
\end{theorem}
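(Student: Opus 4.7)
My plan is to deduce the theorem as a straightforward compilation of two commutativity statements established earlier. The right-hand square is exactly the content of item ii) of Theorem \ref{nistorconnethomfortwistgroup}: the Connes-Thom element $\psi_P\in KK(C_0(\mathbf{g}^*),C^*(\mathcal{G}))$ and its restricted counterpart $\psi_{I,P}\in KK(C_0(\Xi_P),I_{G,P})$ are both defined from the same deformation-groupoid construction applied to a pair consisting of the invariant open subset $\Xi_P\subseteq \mathbf{g}^*$ and the corresponding ideal $I_{G,P}\subseteq C^*(\mathcal{G})$. Naturality of the boundary map under such open inclusions — which was built into the constructions of Section \ref{subsec:nistorctsubsn} and asserted in the commuting diagram of Theorem \ref{nistorconnethomfortwistgroup} — is precisely the commutativity of the right square, so no new work is required there.

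For the left-hand square I will reformulate Proposition \ref{commsquaoadodal}. That proposition produces the identity
\[
\psi_{I,P}\otimes_{I_{\mathsf{G},P}}[\mathcal{H}] \;=\; [\slashed{D}_\Xi]\otimes_{C_0(\Gamma_P)}[\mathfrak{M}(\mathcal{H})^*]
\quad \text{in } KK(C_0(\Xi_P),C_0(\Gamma_P)).
\]
To rewrite this as the commutativity of the square with $\tau_{\Xi_P}$ and $[\mathfrak{M}(\mathcal{H})]$ appearing instead, I would invoke the standard Thom isomorphism for the spin$^c$-structure on $\Xi_P\to\Gamma_P$ induced by $g_\Xi$ and the complex structure adapted to the fibrewise Kirillov form: the Thom class $\tau_{\Xi_P}$ and the dual Thom class $[\slashed{D}_\Xi]$ are mutually inverse in $KK$. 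Coupled with the trivial fact that a line bundle and its dual are mutually inverse in $KK$, so that $[\mathfrak{M}(\mathcal{H})]\otimes[\mathfrak{M}(\mathcal{H})^*]=1_{C_0(\Gamma_P)}$, this reduces to the identity $\tau_{\Xi_P}\otimes\psi_{I,P}\otimes[\mathcal{H}]\otimes[\mathfrak{M}(\mathcal{H})]=1_{C_0(\Gamma_P)}$, which is exactly the content of the left square.

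I expect no serious obstacle, since the substantive work is already contained in Theorem \ref{compisjsdjwitundendd} — where the unbounded Kasparov product computes the Bargmann-Fock representation's class as the dual Thom class via the kernel of the fibrewise Toeplitz operator — and in Proposition \ref{metaplecticlinecorrection}, where the adiabatic deformation $\mathcal{G}_{\mathcal{Z},{\rm adb}}$ together with Lemma \ref{lem:desciriddtriviala} defines $\mathfrak{M}(\mathcal{H})$ as the unique line bundle correcting $\mathcal{H}$ to $\mathpzc{F}_\Xi$ at $t=0$. The one place where slight care is needed is to check that the two uses of the metric $g_\Xi$ — on the one hand to define the complex structure on $\Xi_P$ entering $\tau_{\Xi_P}$, and on the other to construct $\mathpzc{F}_\Xi$ and hence $\mathfrak{M}(\mathcal{H})$ — are the same choice; this is true by hypothesis in the statement, and the resulting diagram is then independent of this choice because changing $g_\Xi$ only twists both $\mathcal{H}$ and $\mathfrak{M}(\mathcal{H})$ by the same line bundle (cf.\ Remark \ref{undiqoqdoflat}).
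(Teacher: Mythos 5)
Your proposal is correct and matches the paper's own proof essentially step for step: the right square is exactly item ii) of Theorem \ref{nistorconnethomfortwistgroup}, and the left square follows from Proposition \ref{commsquaoadodal} together with the fact that $\tau_{\Xi_P}$ and $[\slashed{D}_\Xi]$ are mutual inverses and $[\mathfrak{M}(\mathcal{H})]\otimes[\mathfrak{M}(\mathcal{H})^*]=1$. The paper's proof is a two-sentence compilation of the same two ingredients.
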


\begin{proof}
The right square commutes by Theorem \ref{nistorconnethomfortwistgroup}. The left triangle commutes by Proposition \ref{commsquaoadodal} using the fact that the inverse of the Thom class is the fibreswise Dirac operator. 
\end{proof}

An immediate consequence of Theorem \ref{maincomputationforisg} is the following. 

\begin{corollary}
Under the same assumptions and notations as in Theorem \ref{maincomputationforisg}, and $j:\Gamma_P\hookrightarrow \mathbf{g}^*$ denoting the inclusion, the following diagram commutes
\[
\begin{tikzcd}
I_{\mathsf{G},P} \arrow[rr, "\subseteq"]\arrow[dd,"{[\mathcal{H}\otimes \mathfrak{M}(\mathcal{H})]}"] &&C^*(\mathcal{G})\arrow[dd,"\psi_P^{-1}"]\\
&&\\
C_0(\Gamma_P) \arrow[rr, "j_!"] && C_0(\mathbf{g}^*),
\end{tikzcd}
\]
and the vertical maps are $KK$-isomorphisms. Both horizontal maps are surjective on $K$-theory if and only if one of them are.
\end{corollary}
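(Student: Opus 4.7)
The plan is to assemble the commuting square of the corollary by pasting together the two subdiagrams of Theorem \ref{maincomputationforisg} along the central vertical arrow $\psi_{I,P}$, together with a standard factorization of the shriek map $j_!$. First I would record that, since $\Xi_P \subseteq \mathbf{g}^*$ is an open tubular neighbourhood of $\Gamma_P$ (viewed as the zero section, cf.\ Equation \eqref{inclusionofxi}) and $\Xi_P \to \Gamma_P$ carries the spin$^c$-structure whose Thom class is $\tau_{\Xi_P}$, functoriality of the shriek map for closed spin$^c$-embeddings yields the $KK$-identity
\[
j_! \;=\; \tau_{\Xi_P} \otimes_{C_0(\Xi_P)} [C_0(\Xi_P) \subseteq C_0(\mathbf{g}^*)].
\]

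Next, I would rewrite the two subdiagrams of Theorem \ref{maincomputationforisg}. The right square (whose commutativity is part of Theorem \ref{nistorconnethomfortwistgroup}) may be inverted to the identity
\[
[I_{\mathsf{G},P} \subseteq C^*(\mathcal{G})] \otimes \psi_P^{-1} \;=\; \psi_{I,P}^{-1} \otimes [C_0(\Xi_P) \subseteq C_0(\mathbf{g}^*)],
\]
while the left square (Proposition \ref{commsquaoadodal}, using $[\slashed{D}_{\Xi_P}] = \tau_{\Xi_P}^{-1}$) may be rearranged as
\[
\psi_{I,P}^{-1} \;=\; [\mathcal{H} \otimes \mathfrak{M}(\mathcal{H})] \otimes_{C_0(\Gamma_P)} \tau_{\Xi_P}.
\]
Substituting the second identity into the first and invoking the factorization of $j_!$ then gives
\[
[I_{\mathsf{G},P} \subseteq C^*(\mathcal{G})] \otimes \psi_P^{-1} \;=\; [\mathcal{H} \otimes \mathfrak{M}(\mathcal{H})] \otimes j_!,
\]
which is exactly the commutativity asserted in the corollary.

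For the remaining claims: the right vertical arrow $\psi_P^{-1}$ is a $KK$-isomorphism by Theorem \ref{nistorconnethomfortwistgroup}, and the left vertical arrow $[\mathcal{H} \otimes \mathfrak{M}(\mathcal{H})] = [\mathcal{H}] \otimes [\mathfrak{M}(\mathcal{H})]$ is a $KK$-isomorphism because $\pi_{\musFlat}$ realises $[\mathcal{H}]$ as a Morita equivalence $I_{\mathsf{G},P} \sim_M C_0(\Gamma_P)$ while tensoring with the line bundle $\mathfrak{M}(\mathcal{H})$ is invertible. The top horizontal arrow induces a surjection on $K$-theory directly by Theorem \ref{onsurjinteroffm}, and surjectivity of $j_!$ on $K$-theory then follows from the established commutativity together with the fact that the vertical arrows induce $K$-theoretic isomorphisms. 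I do not anticipate any substantial obstacle; the only mild point to verify is that the orientation conventions on $\Xi_P$ underlying $\tau_{\Xi_P}$, $\mathfrak{M}(\mathcal{H})$ and the Fock bundle of Proposition \ref{ismomomafonaofjan} are mutually consistent, but this is built into the uniqueness clauses of Proposition \ref{metaplecticlinecorrection} and Remark \ref{undiqoqdoflat}.
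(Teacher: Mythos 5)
Your argument is correct and is essentially the paper's own proof: the paper simply states that the corollary follows by combining Theorem \ref{onsurjinteroffm} with Theorem \ref{maincomputationforisg}, which is exactly what you do — factoring $j_!$ through the Thom class of $\Xi_P\to\Gamma_P$ and the open inclusion $\Xi_P\subseteq\mathbf{g}^*$, pasting the two commuting pieces of Theorem \ref{maincomputationforisg}, and then reading off the $KK$-isomorphism and surjectivity claims from Theorem \ref{nistorconnethomfortwistgroup}, the Morita equivalence $[\mathcal{H}]$ twisted by the line bundle $\mathfrak{M}(\mathcal{H})$, and Theorem \ref{onsurjinteroffm}. Your closing remark on orientation conventions is exactly the point the paper handles via Proposition \ref{commsquaoadodal} (the fibrewise Dirac class being inverse to the Thom class) and the uniqueness clauses for $\mathfrak{M}(\mathcal{H})$, so no gap remains.
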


\part{Carnot manifolds and associated groupoids}
\label{sec:carnotmfds}

\begin{center}
{\bf Introduction to part}
\end{center}

At last, we have reached the point in the work where we see some geometry. The underlying geometric object of study in this monograph is Carnot manifolds. We shall at first explore their general properties and then explore some prototypical examples. Until that point, the reader is encouraged to keep contact manifolds and filtrations in mind. This section contains no novel results, but is included for context. The contents of this part is organized into the three sections:
\begin{itemize}
\item Section \ref{subsec:carnot} where we recall the geometry of Carnot manifolds. The main object we make use of is the osculating Lie groupoid $T_HX\to X$.  An important notion we introduce here is that of $\pmb{F}$-regularity, when the osculating Lie groupoid has (up to isomorphism) constant fibre with flat coadjoint orbits. 
\item Section \ref{sec:examcarn} where the reader can find several examples of Carnot manifolds. For instance, contact manifolds, polycontact manifolds, pluricontact manifolds, equiregular differential systems, and parabolic geometries. 
\item Section \ref{subsec:parabolictanget} where we recall the construction of the parabolic tangent groupoid from \cite{vanErp_Yunckentangent}. Here we also recall the construction of the adiabatic parabolic tangent groupoid and equipp $\Xi_X$ and $\Gamma_X$ with spin$^c$-structures.
\end{itemize}

\section{Carnot manifolds}
\label{subsec:carnot}

\begin{definition}
A Carnot manifold $(X, \mathcal{F})$ is a smooth manifold $X$ equipped with a
filtration $\mathcal{F}$ of the tangent bundle $TX$ by smooth subbundles
\begin{equation}
\label{filterineindodod}
\mathcal{F} : 0 = T^{0}X \subset T^{-1}X \subset T^{-2}X \ldots \subset T^{-r+1}X\subset T^{-r}X = TX, 
\end{equation}
such that all inclusions are strict and for any vector fields $Y \in C^\infty(X,T^iX)$, $Y' \in C^\infty(X,T^jX)$ we have that $[Y, Y'] \in C^\infty(X,T^{i+j}X)$. When the filtration is understood from context, we simply write $X$ for the Carnot manifold. We call $r$ the depth of the filtration.

A morphism $\phi:(X_1, \mathcal{F}_1)\to (X_2, \mathcal{F}_2)$ of two Carnot manifolds is a smooth mapping $\phi:X_1\to X_2$ such that $D\phi$ preserves the filtration. An isomorphism of Carnot manifolds is a morphism $\phi:(X_1, \mathcal{F}_1)\to (X_2, \mathcal{F}_2)$ such that $\phi:X_1\to X_2$ is a diffeomorphism.
\end{definition}

\begin{remark}
In \cite{Morimoto,sadeghhigson,vanErp_Yunckentangent,vanErp_Yuncken}, a Carnot manifold is called a filtered manifold. In \cite{melinoldpreprint}, a filtration as in \eqref{filterineindodod} appears with a slightly different grading and is there called a Lie filtration. We have chosen the term Carnot manifold in accordance with \cite{choiponge,Dave_Haller1,Dave_Haller2,mohsen2}.
\end{remark}

Given a Carnot manifold $(X, \mathcal{F})$, define $\mathsf{gr}_i(TX) :=T^iX / T^{i+1}X$ and let $q_i : T^i X \rightarrow \mathsf{gr}_i(T X)$ denote the natural quotient map. We consider the graded vector bundle:
\[ \mathsf{gr}(TX) := \bigoplus_{i = -r}^{-1} \mathsf{gr}_i(TX) = \bigoplus_{i = -r}^{-1} T^iX / T^{i+1}X. \]
For every $i \in 1,\ldots,r$ one has an exact sequence
\[ 0 \rightarrow T^{i+1}X \rightarrow T^i X \rightarrow \mathsf{gr}_i(TX) \rightarrow 0, \]
and a choice of splitting of this sequence gives rise to a vector bundle isomorphism $T^i X \simeq T^{i+1} X \oplus \mathsf{gr}_i X$. Thus there exists a non-canonical vector bundle isomorphism $TX \simeq \mathsf{gr}(TX)$. 

Using the property that the Lie bracket on vector fields respect the filtration, one equips $\mathsf{gr}(TX)$ with a non-trivial graded Lie algebroid structure, which differs from the standard Lie algebroid structure of $TX$. By definition, a Carnot manifold $(X, \mathcal{F})$ induces a Lie algebra filtation of $C^\infty(X,TX)$ given by 
\begin{align}
\label{liealgfiltofvecodod}
0 = C^\infty(X,T^0X) &\subset C^\infty(X,T^{-1}X) \subset \ldots\\
\nonumber
\ldots &\subset C^\infty(X,T^{-r}X)= C^\infty(X,TX). 
\end{align}
We note that for $Y \in C^\infty(X,T^iX)$, $Y' \in C^\infty(X,T^jX)$ and $f, g \in C^\infty(X)$ one has
\[ [fY, gY'] = fg [Y,Y'] + f(Yg)Y' - g(Y'f)Y = fg[Y,Y'] + \Gamma^\infty(T^{i+j+1}X). \]
In particular, the Lie bracket on vector fields induces a $C^\infty(X)$-linear map
\begin{equation} 
\label{eq:1}
\mathcal{L}:C^\infty(X,\mathsf{gr}(TX))\times C^\infty(X,\mathsf{gr}(TX))\to C^\infty(X,\mathsf{gr}(TX)),\\
\end{equation}
defined on $Y\in C^\infty(X,\mathsf{gr}_i(TX))=C^\infty(X,T^iX) / C^\infty(X,T^{i+1}X)$ and $Y'\in C^\infty(X,\mathsf{gr}_j(TX))=C^\infty(X,T^jX) / C^\infty(X,T^{j+1}X)$ as 
$$\mathcal{L}(Y,Y'):=[Y,Y']\!\!\!\mod C^\infty(X,T^{i+j+1}X)\in C^\infty(X,\mathsf{gr}_{i+j}(TX)).$$
The $C^\infty(X)$-linearity of $\mathcal{L}$ allow us to identify $\mathcal{L}$ with an antisymmetric graded morphism 
$$\mathcal{L}:\mathsf{gr}(TX)\wedge \mathsf{gr}(TX)\to \mathsf{gr}(TX).$$
We call $\mathcal{L}$ the Levi bracket of $(X,\mathcal{F})$. The Levi bracket at $m$,
\[ \mathcal{L}_m : \mathsf{gr}(T_m X) \times \mathsf{gr}(T_m X) \rightarrow \mathsf{gr}(T_m X), \]
endows $\mathsf{gr}(T_m X)=\bigoplus_{i = -r}^{-1} \mathsf{gr}_i(T_mX)$ with the structure of a graded nilpotent Lie algebra of step length $\leq r$ -- the depth of the filtration. This discussion implies the following proposition.

\begin{proposition}
Let $(X,\mathcal{F})$ be a Carnot manifold with Levi bracket $\mathcal{L}$ on the assocated graded vector bundle $\mathsf{gr}(TX)$. The collection $(\mathsf{gr}(TX), \mathcal{L}, 0)$ defines a Lie algebroid over $X$ satisfying the assumptions of Proposition \ref{integrationofnilpotentalgebroid}.
\end{proposition}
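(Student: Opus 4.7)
The plan is to verify directly the two defining conditions of a Lie algebroid for the triple $(\mathsf{gr}(TX),\mathcal{L},0)$, and then invoke Proposition \ref{integrationofnilpotentalgebroid} with the uniform bound $N=r$ on step length.

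First I would check that $\mathcal{L}$ descends to a genuine Lie bracket on $C^\infty(X,\mathsf{gr}(TX))$. The preceding paragraph already establishes that the induced bilinear map on sections is $C^\infty(X)$-linear in each argument (which is what is needed here since the anchor is zero — the Leibniz rule $[Y,fY']=f[Y,Y']+(\rho(Y)f)Y'$ degenerates to $C^\infty(X)$-linearity). Antisymmetry of $\mathcal{L}$ is immediate from antisymmetry of $[\cdot,\cdot]$ on $C^\infty(X,TX)$. For the Jacobi identity, I would take three homogeneous sections $Y_i\in C^\infty(X,\mathsf{gr}_{k_i}(TX))$, lift each to $\tilde Y_i\in C^\infty(X,T^{k_i}X)$, and observe that the bracket $[\tilde Y_i,\tilde Y_j]$ represents $\mathcal{L}(Y_i,Y_j)$ modulo $C^\infty(X,T^{k_i+k_j+1}X)$. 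The Jacobi identity for $[\cdot,\cdot]$ on $C^\infty(X,TX)$ then gives the Jacobi identity for $\mathcal{L}$ after passing to the quotient by $C^\infty(X,T^{k_1+k_2+k_3+1}X)$, precisely because the filtration \eqref{liealgfiltofvecodod} is a Lie algebra filtration.

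Second, I would verify the anchor compatibility. With $\rho=0$, the requirement that $\rho:C^\infty(X,\mathsf{gr}(TX))\to C^\infty(X,TX)$ be a Lie algebra homomorphism is trivial, and the Leibniz rule $[Y,fY']=f[Y,Y']+(\rho(Y)f)Y'$ reduces to $C^\infty(X)$-bilinearity of $\mathcal{L}$, which is already established. Thus $(\mathsf{gr}(TX),\mathcal{L},0)$ is a Lie algebroid over $X$.

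Finally, to apply Proposition \ref{integrationofnilpotentalgebroid}, I need the fibrewise Lie algebras to be uniformly nilpotent. The discussion preceding the proposition already observes that on each fibre $\mathsf{gr}(T_mX)=\bigoplus_{i=-r}^{-1}\mathsf{gr}_i(T_mX)$ the Levi bracket is graded with degrees in $\{-r,\ldots,-1\}$, so any iterated bracket of length $>r$ lies in degrees strictly less than $-r$ and must therefore vanish. Hence each fibre is nilpotent of step length at most $r$, uniformly in $m\in X$. This is the hypothesis of Proposition \ref{integrationofnilpotentalgebroid}, which then yields the integration to a Lie groupoid $T_HX:=\mathsf{gr}(TX)\rightrightarrows X$ with $r=s$ the bundle projection. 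The only mildly delicate step is the Jacobi identity, but it reduces cleanly to the Jacobi identity on vector fields once one chooses lifts and works modulo the correct piece of the filtration; there are no genuine obstacles.
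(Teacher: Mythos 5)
Your argument is correct and fills in exactly the details that the paper leaves implicit: the paper's "proof" is just the preceding discussion (the $C^\infty(X)$-linearity computation for $\mathcal{L}$, the graded Lie algebra structure on each fibre $\mathsf{gr}(T_mX)$, and the bound by the depth $r$), and your verification of the Jacobi identity via lifts modulo $C^\infty(X,T^{k_1+k_2+k_3+1}X)$ together with the trivial anchor conditions is the same route spelled out. Nothing is missing; the uniform nilpotency bound $N=r$ from the negative grading is exactly the hypothesis needed for Proposition \ref{integrationofnilpotentalgebroid}.
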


\begin{remark}
\label{singularcarnotremark}
We note that in the spirit of \cite{androskand} and Equation \eqref{liealgfiltofvecodod}, we can define a singular Carnot structure as a Lie algebra filtration 
\[ \{0\} = \mathpzc{E}^0 \subset \mathpzc{E}^{-1} \subset \mathpzc{E}^{-2} \subset \ldots \subset \mathpzc{E}^{-r}= C^\infty(X,TX), \]
where each $\mathpzc{E}^j\subseteq C^\infty(X,TX)$ is a locally finitely generated $C^\infty(X)$-submodule such that $[\mathpzc{E}^i,\mathpzc{E}^j]\subseteq \mathpzc{E}^{i+j}$. A singular Carnot structure for which each $\mathpzc{E}^j$ locally is projective induces an ordinary Carnot structure. In the preprint \cite{androerp}, the machinery for Carnot manifolds is extended to singular Carnot structures.
\end{remark}

\begin{definition}
\label{osculatingdef}
Let $(X,\mathcal{F})$ be a Carnot manifold. 
\begin{itemize}
\item The osculating Lie algebroid $\mathfrak{t}_HX$ of $(X, \mathcal{F})$ is the graded Lie algebroid $(\mathsf{gr}(TX), \mathcal{L}, 0)$.
\item The osculating Lie groupoid $T_HX\to X$ of $(X, \mathcal{F})$ is the Lie groupoid integrated from  $\mathfrak{t}_HX$ as in Proposition \ref{integrationofnilpotentalgebroid}.
\end{itemize}

We say that $(X,\mathcal{F})$ is a \emph{regular Carnot manifold} if the osculating Lie groupoid $T_H X\to X$ is a locally trivial bundle of Carnot-Lie groups. If $T_HX\to X$ is of type $\mathfrak{g}$, for some graded nilpotent Lie algebra $\mathfrak{g}$, we say that $(X,\mathcal{F})$ is regular of type $\mathfrak{g}$. 
\end{definition}

\begin{proposition}
\label{carnotframbeldld}
Let $(X,\mathcal{F})$ be a regular Carnot manifold of type $\mathfrak{g}$. Then there exists a principal $\Aut_{\rm gr}(\mathfrak{g})$-bundle $P_X\to X$ such that 
$$\mathfrak{t}_HX\cong P_X\times_{\Aut_{\rm gr}(\mathfrak{g})} \mathfrak{g},$$ 
as graded Lie algebroids and 
$$T_HX\cong P_X\times_{\Aut_{\rm gr}(\mathfrak{g})} G,$$ 
as locally trivial bundles of Carnot-Lie group.
\end{proposition}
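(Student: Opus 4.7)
The plan is to directly invoke Proposition \ref{integrationofloctrivialnilpotentalgebroid}, specializing items (2) and (3) to the graded setting once we have identified the right principal bundle. Concretely, I would proceed as follows.

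First, I would define $P_X \to X$ as the graded frame bundle of $\mathfrak{t}_HX$. Its fibre over $x \in X$ is
\[
(P_X)_x := \{\varphi : \mathfrak{g} \xrightarrow{\sim} (\mathfrak{t}_HX)_x \mid \varphi \text{ is a graded Lie algebra isomorphism}\},
\]
with the natural right action of $\Aut_{\rm gr}(\mathfrak{g})$ by precomposition, which is free and transitive on each fibre. The hypothesis that $(X,\mathcal{F})$ is regular of type $\mathfrak{g}$ means, by Definition \ref{osculatingdef}, that $T_HX \to X$ is a locally trivial bundle of Carnot-Lie groups of type $\mathfrak{g}$; unwinding this, every $x \in X$ admits a neighbourhood $U$ and a graded Lie algebroid trivialization $\tau_U : \mathfrak{t}_HX|_U \xrightarrow{\sim} U \times \mathfrak{g}$. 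Such a trivialization gives a smooth section $U \to P_X|_U$ sending $y \in U$ to $\tau_U^{-1}(y,\cdot) : \mathfrak{g} \to (\mathfrak{t}_HX)_y$, and hence a local trivialization $P_X|_U \cong U \times \Aut_{\rm gr}(\mathfrak{g})$. This equips $P_X$ with a smooth structure making it a principal $\Aut_{\rm gr}(\mathfrak{g})$-bundle over $X$.

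Second, I would construct the canonical isomorphism of graded Lie algebroids
\[
\Phi : P_X \times_{\Aut_{\rm gr}(\mathfrak{g})} \mathfrak{g} \xrightarrow{\sim} \mathfrak{t}_HX, \qquad [\varphi, X] \mapsto \varphi(X),
\]
which is well-defined because $(\varphi \cdot g, X) \mapsto \varphi(g X)$ agrees with $(\varphi, gX)$ under the equivalence relation. It is fibrewise a graded Lie algebra isomorphism by construction; smoothness and the fact that it is a vector bundle isomorphism are checked in the local trivializations from the previous step, where $\Phi$ reduces to the canonical identification $U \times \Aut_{\rm gr}(\mathfrak{g}) \times_{\Aut_{\rm gr}(\mathfrak{g})} \mathfrak{g} \cong U \times \mathfrak{g}$. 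That $\Phi$ intertwines the fibrewise brackets (and thus the Levi bracket, which is $C^\infty(X)$-linear and fibrewise) completes the first isomorphism.

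Finally, for the integrated statement, I would apply item (3) of Proposition \ref{integrationofloctrivialnilpotentalgebroid} in its graded refinement: the Baker--Campbell--Hausdorff integration commutes with the associated bundle construction (since $\exp : \mathfrak{g} \to \mathsf{G}$ is $\Aut_{\rm gr}(\mathfrak{g})$-equivariant), so $\Phi$ integrates to a groupoid isomorphism
\[
T_HX \cong P_X \times_{\Aut_{\rm gr}(\mathfrak{g})} \mathsf{G}
\]
of locally trivial bundles of Carnot-Lie groups. There is essentially no obstacle here: the only mildly subtle point is verifying that the local trivializations of $\mathfrak{t}_HX$ produced by regularity can indeed be taken to be graded Lie algebroid isomorphisms (and not merely vector bundle isomorphisms pointwise intertwining brackets), but this is precisely the content of Definition \ref{osculatingdef} together with the description of locally trivial bundles of Carnot-Lie groups preceding it, so the proof is essentially a bookkeeping exercise rather than a genuine obstacle.
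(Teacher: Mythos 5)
Your proposal is correct and follows essentially the same route as the paper: the paper's own proof is the one-line remark that the statement follows from Proposition~\ref{integrationofloctrivialnilpotentalgebroid}, with $P_X=\Aut_{\rm gr}(\mathfrak{t}_HX)$ (the graded frame bundle you construct) playing the role of the principal bundle, and your local-trivialization and associated-bundle arguments are exactly what is implicit in that citation.
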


This proposition follows from Proposition \ref{integrationofloctrivialnilpotentalgebroid}. We call $P_X\to X$ the Carnot frame bundle of the regular Carnot manifold $(X,\mathcal{F})$. We write $Z_H X\subseteq T_HX$ for the central subgroupoid, it is canonically isomorphic to $P_X\times_{\Aut(\mathsf{G})}Z=P_X\times_{\Aut(\mathfrak{g})}\mathfrak{z}$ via the isomorphism of Proposition \ref{carnotframbeldld}. If $(X,\mathcal{F})$ is regular of type $\mathfrak{g}$, and $\mathfrak{g}$ admits flat coadjoint orbits we form the fibre bundle of flat orbits over $X$ as
$$p_\Gamma:\Gamma_X:=P_X\times_{\Aut(\mathsf{G})}\Gamma\to X.$$ 
We can view $\Gamma_X$ as an open subset of the dual bundle $(Z_HX)^*\to X$.

\begin{definition}
\label{osculatingdef2}
Let $(X,\mathcal{F})$ be a Carnot manifold. We say that $(X,\mathcal{F})$ is an \emph{$\pmb{F}$-regular Carnot manifold} if it is regular of type $\mathfrak{g}$ and
\begin{enumerate} 
\item the type $\mathfrak{g}$ admits flat coadjoint orbits; and 
\item the wrong way map $(p_\Gamma)_!:K^*(\Gamma_X)\to K^*(X)$ associated with the projection $p_\Gamma$ is a surjection.
\end{enumerate}
\end{definition}

More context for Condition (2) of Definition \ref{osculatingdef2} can be found in Section \ref{suhjrjrjed}. We note that the authors are unaware of any examples of  Carnot manifolds regular of a type $\mathfrak{g}$ admitting flat coadjoint orbits that are not $\pmb{F}$-regular.

We define the groupoid $\mathcal{G}_{Z,\mathcal{F}}$ to be the pull back of $T_HX/Z_HX$ up to $Z_HX^*$. By choosing a splitting of the inclusion $Z_HX\subseteq T_HX$, we can as in Section \ref{subsec:loctrivalandnda} and \ref{sec:connesthomandadiaofofd} define a $U(1)$-valued $2$-cocycle $\omega_\mathcal{F}$ on $\mathcal{G}_{Z,\mathcal{F}}$. Following the same arguments as in Section \ref{subsec:loctrivalandnda}, \ref{subsec:nistorctsubsn} and \ref{sec:connesthomandadiaofofd}, we arrive at the next proposition. 

\begin{proposition}
Let $(X,\mathcal{F})$ be a regular Carnot manifold of type $\mathfrak{g}$ with frame bundle $P_X\to X$. Then the groupoid $C^*$-algebra $C^*(T_HX)$ admits canonical isomorphisms 
$$C^*(T_HX)\cong C(X;P_X\times_{\Aut(\mathsf{G})}C^*(\mathsf{G}))\cong C^*(\mathcal{G}_{Z,\mathcal{F}},\omega_\mathcal{F}).$$

If $(X,\mathcal{F})$ is $\pmb{F}$-regular, then the ideal $I_X\subseteq C^*(T_HX)$ of elements vanishing outside the flat representations is well defined and admits canonical isomorphisms 
$$I_X\cong C(X;P_X\times_{\Aut(\mathsf{G})}I_{\mathsf{G}})\cong C^*(\mathcal{G}_{Z,\mathcal{F}}|_{\Gamma_X},\omega_\mathcal{F}).$$
Moreover, $I_X$ is a continuous trace algebra with spectrum $\Gamma_X$ and vanishing Dixmier-Duoady invariant such that the map 
$$K_*(I_X)\to K_*(C^*(T_HX)),$$
induced by the inclusion, is a surjection.
\end{proposition}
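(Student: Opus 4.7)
The strategy is to reduce the proposition to the general results established in Part \ref{gropodoprar} for locally trivial bundles of nilpotent Lie groups, using Proposition \ref{carnotframbeldld} as the bridge between the geometry of $X$ and the abstract groupoid setup. Since $(X,\mathcal{F})$ is regular of type $\mathfrak{g}$, Proposition \ref{carnotframbeldld} gives a principal $\Aut_{\rm gr}(\mathfrak{g})$-bundle $P_X\to X$ with $T_HX\cong P_X\times_{\Aut_{\rm gr}(\mathfrak{g})}\mathsf{G}$, so $T_HX$ is precisely a locally trivial bundle of nilpotent Lie groups in the sense of Section \ref{subsec:loctrivalandnda}; its underlying base is compact (or at worst paracompact), but this poses no obstruction since the local triviality suffices for all algebraic manipulations.

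For the first pair of isomorphisms, I would first note that the local triviality of $T_HX$ together with the functoriality of the groupoid $C^*$-algebra construction directly yields the bundle description $C^*(T_HX)\cong C(X;P_X\times_{\Aut(\mathsf{G})}C^*(\mathsf{G}))$: on each trivializing chart the groupoid $C^*$-algebra is the (constant fibre) $C^*(\mathsf{G})$, and the transition functions act via the standard action of $\Aut(\mathsf{G})$ on $C^*(\mathsf{G})$. The isomorphism with $C^*(\mathcal{G}_{Z,\mathcal{F}},\omega_\mathcal{F})$ is then the content of Proposition \ref{groupoidftincentrals3} applied to $\mathcal{G}=T_HX$, namely the fibrewise Fourier transform in the central direction, which is well defined globally once a splitting of $Z_HX\subseteq T_HX$ has been chosen (and different choices give canonically isomorphic outcomes).

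In the $\pmb{F}$-regular case, $\Gamma\subseteq \widehat{\mathsf{G}}$ is $\Aut_{\rm gr}(\mathsf{G})$-invariant (Proposition \ref{autoongamma}), so the ideal $I_\mathsf{G}\vartriangleleft C^*(\mathsf{G})$ is $\Aut_{\rm gr}(\mathsf{G})$-stable, and hence the subbundle $P_X\times_{\Aut(\mathsf{G})}I_\mathsf{G}\to X$ is a well-defined ideal in $P_X\times_{\Aut(\mathsf{G})}C^*(\mathsf{G})$. Its $C_0$-sections are the ideal $I_X$, which is characterised by the vanishing condition on elements of $\widehat{T_HX}\setminus \Gamma_X$. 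The second isomorphism $I_X\cong C^*(\mathcal{G}_{Z,\mathcal{F}}|_{\Gamma_X},\omega_\mathcal{F})$ is the restriction of the central Fourier transform to this ideal, exactly as in the second statement of Proposition \ref{groupoidftincentrals3}. The continuous trace property and the vanishing of the Dixmier-Douady invariant are then immediate from Theorem \ref{trivialdldaaddo}, applied with the locally trivial bundle $\mathcal{G}=T_HX$ and frame bundle $P=P_X$.

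Finally, surjectivity of $K_*(I_X)\to K_*(C^*(T_HX))$ is precisely the conclusion of Theorem \ref{onsurjinteroffm}, which was proven by a Mayer-Vietoris argument over a trivializing cover, combined with Nistor's Connes-Thom isomorphism and the fact that $\Xi\subseteq \mathfrak{g}^*$ is Zariski open so that $K^*(\Xi)\to K^*(\mathfrak{g}^*)$ is surjective on the fibre level. The only condition there is that $X$ be compact, which is exactly our standing assumption in this subsection; hence the theorem applies verbatim. No part of the argument requires any new computation, the whole proposition is essentially a dictionary translating Section \ref{subsec:loctrivalandnda}--\ref{sec:connesthomandadiaofofd} into the Carnot-geometric language, and the only mildly delicate point to verify is that the central $2$-cocycle $\omega_\mathcal{F}$ constructed from the geometric splitting of $Z_HX\subseteq T_HX$ indeed corresponds, chart by chart, to the cocycle $\omega_{\mathcal{G}_\mathcal{Z}}$ of Definition \ref{unitary2coclalddaadl} built from a fibrewise Jordan-Hölder basis, which is a local computation at each point of $X$.
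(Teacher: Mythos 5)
Your proposal is correct and follows exactly the route the paper intends. The paper itself gives no explicit proof of this proposition; the surrounding text merely says that the statement follows ``Following the same arguments as in Section \ref{subsec:loctrivalandnda}, \ref{subsec:nistorctsubsn} and \ref{sec:connesthomandadiaofofd},'' and your proof is precisely the careful unpacking of that sentence: identify $T_HX$ as a locally trivial bundle of nilpotent Lie groups via Proposition \ref{carnotframbeldld}, appeal to Proposition \ref{groupoidftincentrals3} for the two pairs of isomorphisms, to $\Aut_{\rm gr}(\mathsf{G})$-invariance of $\Gamma$ (Proposition \ref{autoongamma}) for well-definedness of the ideal bundle, to Theorem \ref{trivialdldaaddo} (equivalently Proposition \ref{groupoidftincentrals1}) for the continuous-trace structure and vanishing Dixmier--Douady class, and to Theorem \ref{onsurjinteroffm} for $K$-theoretic surjectivity.

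One small caveat worth flagging: the proposition as stated does not say $X$ is compact, while Theorem \ref{onsurjinteroffm} requires the base manifold to be compact. You invoke ``our standing assumption in this subsection,'' but no such hypothesis is actually written down in Section \ref{subsec:carnot}; it is an omission in the paper's statement rather than an error in your argument. It would be more accurate to say that the $K$-theory surjectivity claim implicitly requires compactness of $X$, and that the continuous-trace and Dixmier--Douady statements hold without it, since Theorem \ref{trivialdldaaddo} does not use compactness. Everything else you write, including the observation that the geometric cocycle $\omega_\mathcal{F}$ coincides locally with the cocycle $\omega_{\mathcal{G}_\mathcal{Z}}$ of Definition \ref{unitary2coclalddaadl} once a splitting of $Z_HX\subseteq T_HX$ is chosen, is correct and is exactly the content the paper leaves implicit.
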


\begin{remark}
For any Carnot manifold $(X,\mathcal{F})$ with the central subgroupoid $Z_HX\subseteq T_HX$ being a well defined vector bundle on $X$, there is a canonical isomorphism $C^*(T_HX)\cong C^*(\mathcal{G}_{Z,\mathcal{F}},\omega_\mathcal{F})$. Moreover, if $\Gamma_X\subseteq Z_HX^*$ denotes the open subset of flat orbits, the ideal $I_X\subseteq C^*(T_HX)$ of elements vanishing outside the flat representations is well defined and admits a canonical isomorphism $I_X\cong  C^*(\mathcal{G}_{Z,\mathcal{F}}|_{\Gamma_X},\omega_\mathcal{F})$. If $\Gamma_X\neq \emptyset$, we can from abstract principles deduce that $I_X$ is a continuous trace algebra with spectrum $\Gamma_X$. It would be interesting to further explore the global structure of $I_X$ for general Carnot manifolds.
\end{remark}

Using results of Morimoto \cite{Morimoto}, the property of being regular can be verified on a pointwise basis.

\begin{theorem}[Morimoto, Chapter 3 in \cite{Morimoto}]
\label{morimorohospintise9}
Let $(X,\mathcal{F})$ be a Carnot manifold and $\mathfrak{g}$ a graded nilpotent Lie algebra. Then $(X,\mathcal{F})$ is regular of type $\mathfrak{g}$ if and only if for every $m\in X$, there exists a graded Lie algebra isomorphism $\mathsf{gr}(T_m X)\cong \mathfrak{g}$.
\end{theorem}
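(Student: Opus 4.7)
The direction ``regular of type $\mathfrak{g}$ implies fiberwise isomorphic to $\mathfrak{g}$'' is immediate: if $T_HX\to X$ is a locally trivial bundle of Carnot-Lie groups of type $\mathfrak{g}$, then each fiber of $\mathfrak{t}_HX=\mathsf{Lie}(T_HX)$ is isomorphic to $\mathfrak{g}$ as a graded Lie algebra. The substantive content is the converse, and by Proposition \ref{integrationofloctrivialnilpotentalgebroid}, it suffices to construct, around each point $m_0\in X$, a local trivialization of $\mathfrak{t}_HX=(\mathsf{gr}(TX),\mathcal{L},0)$ as a bundle of graded Lie algebras.

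Fix $m_0\in X$. In a neighbourhood $U$ of $m_0$, choose smooth splittings of the exact sequences $0\to T^{i+1}X\to T^iX\to \mathsf{gr}_i(TX)\to 0$ and smooth local frames of each graded piece $\mathsf{gr}_i(TX)$. This yields a smooth trivialization of graded vector bundles $\mathsf{gr}(TX)|_U\cong U\times V$, where $V:=\mathsf{gr}(T_{m_0}X)=\bigoplus_i V_i$, carrying the same grading as $\mathfrak{g}$. Under this trivialization the Levi bracket becomes a smooth family
\[
U\ni x\ \longmapsto\ [\,\cdot\,,\,\cdot\,]_x\in \Lambda^2 V^*\otimes V,
\]
of graded Lie brackets on $V$, and the assumption of the theorem says that $[\,\cdot\,,\,\cdot\,]_x$ is isomorphic to $\mathfrak{g}$ as a graded Lie algebra for every $x\in U$.

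Let $\mathcal{V}_{\mathrm{gr}}(V)\subseteq \Lambda^2V^*\otimes V$ denote the algebraic variety of graded Lie brackets on $V$ (cut out by the Jacobi identity and the conditions $[V_i,V_j]\subseteq V_{i+j}$). The group $GL_{\mathrm{gr}}(V)$ acts algebraically on $\mathcal{V}_{\mathrm{gr}}(V)$ by change of basis, and the subset $\mathcal{V}_\mathfrak{g}\subseteq \mathcal{V}_{\mathrm{gr}}(V)$ of graded brackets isomorphic to $\mathfrak{g}$ is precisely the $GL_{\mathrm{gr}}(V)$-orbit of one such bracket $[\,\cdot\,,\,\cdot\,]_\mathfrak{g}$ obtained from any graded isomorphism $V\cong \mathfrak{g}$. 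Orbits of algebraic group actions are locally closed smooth submanifolds, so $\mathcal{V}_\mathfrak{g}$ is a smooth submanifold of $\mathcal{V}_{\mathrm{gr}}(V)$ and the orbit map induces a diffeomorphism
\[
GL_{\mathrm{gr}}(V)/\Aut_{\mathrm{gr}}(\mathfrak{g})\ \xrightarrow{\sim}\ \mathcal{V}_\mathfrak{g}.
\]
By hypothesis the smooth map $x\mapsto [\,\cdot\,,\,\cdot\,]_x$ lands in $\mathcal{V}_\mathfrak{g}$. Since $GL_{\mathrm{gr}}(V)\to GL_{\mathrm{gr}}(V)/\Aut_{\mathrm{gr}}(\mathfrak{g})$ is a smooth principal $\Aut_{\mathrm{gr}}(\mathfrak{g})$-bundle, after shrinking $U$ we may lift this map to a smooth map $\varphi:U\to GL_{\mathrm{gr}}(V)$ such that $\varphi(x)\cdot [\,\cdot\,,\,\cdot\,]_\mathfrak{g}=[\,\cdot\,,\,\cdot\,]_x$. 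The smooth bundle automorphism $U\times V\to U\times V$, $(x,v)\mapsto (x,\varphi(x)^{-1}v)$ then intertwines the variable bracket with the constant bracket $[\,\cdot\,,\,\cdot\,]_\mathfrak{g}$, producing the desired local isomorphism of graded Lie algebroids $\mathfrak{t}_HX|_U\cong U\times \mathfrak{g}$.

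The main technical point is the local-closedness of the orbit $\mathcal{V}_\mathfrak{g}$ (so that the quotient $GL_{\mathrm{gr}}(V)/\Aut_{\mathrm{gr}}(\mathfrak{g})$ carries a smooth manifold structure for which the orbit map is a submersion) together with the existence of a local smooth section of the principal $\Aut_{\mathrm{gr}}(\mathfrak{g})$-bundle $GL_{\mathrm{gr}}(V)\to \mathcal{V}_\mathfrak{g}$. Both facts are standard: the first is a consequence of Chevalley's theorem on orbits of algebraic group actions on affine varieties, and the second follows from $\Aut_{\mathrm{gr}}(\mathfrak{g})$ being a closed Lie subgroup of the Lie group $GL_{\mathrm{gr}}(V)$. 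Everything else is bookkeeping with local frames. Integrating the resulting local trivializations of $\mathfrak{t}_HX$ via Proposition \ref{integrationofloctrivialnilpotentalgebroid} produces the local trivializations of $T_HX\to X$ as a bundle of Carnot-Lie groups of type $\mathfrak{g}$.
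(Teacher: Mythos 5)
The paper does not supply a proof of this theorem; it attributes the result to Morimoto's monograph (Chapter 3 of the cited work), so there is no in-paper argument to compare against. Your proposal is, modulo one citation issue flagged below, a correct and self-contained proof. The reduction to showing that the fiberwise Levi bracket, viewed as a smooth map from a coordinate neighbourhood into the variety $\mathcal{V}_{\mathrm{gr}}(V)$ of graded Lie brackets, factors smoothly through the single orbit $\mathcal{V}_\mathfrak{g}\cong GL_{\mathrm{gr}}(V)/\Aut_{\mathrm{gr}}(\mathfrak{g})$, and then lifting locally through the principal $\Aut_{\mathrm{gr}}(\mathfrak{g})$-bundle $GL_{\mathrm{gr}}(V)\to\mathcal{V}_\mathfrak{g}$, is exactly the standard route. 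Your direction from regularity to fiberwise isomorphism, and the passage from a local trivialization of the algebroid to one of the groupoid, are both fine.

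One point you should state more carefully: you appeal to ``Chevalley's theorem on orbits of algebraic group actions on affine varieties'' for the local closedness of $\mathcal{V}_\mathfrak{g}$. Chevalley's theorem is usually stated over an algebraically closed field, whereas here the group acting is the real Lie group $GL_{\mathrm{gr}}(V)$ on real points, and a single Zariski orbit can break into several orbits of the real group in the Euclidean topology. The fact you actually need — that each orbit of a real linear algebraic group acting algebraically on a real affine variety is locally closed in the Euclidean topology, with orbit map a submersion, so that $GL_{\mathrm{gr}}(V)/\Aut_{\mathrm{gr}}(\mathfrak{g})\to\mathcal{V}_\mathfrak{g}$ is a diffeomorphism onto an embedded submanifold — is true, but it is a Borel–Harish-Chandra/Richardson-type statement about real forms, not Chevalley's theorem as such. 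Once you cite the real version, the argument is complete; the rest (closedness of $\Aut_{\mathrm{gr}}(\mathfrak{g})$ in $GL_{\mathrm{gr}}(V)$, local sections of the quotient, bookkeeping with frames) is as you say.
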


We remark that regular Carnot manifolds (of type $\mathfrak{g}$) need not be locally isomorphic to $\mathsf{G}$. Let us formalize this latter notion and recall a result from Morimoto \cite{Morimoto} ensuring local isomorphisms with $\mathsf{G}$.

\begin{definition}
Let $(X,\mathcal{F})$ be a Carnot manifold and $\mathfrak{g}$ a graded nilpotent Lie algebra. Set $G:=\mathrm{exp}(\mathfrak{g})$. We say that $(X,\mathcal{F})$ has the Darboux property (with respect to $\mathfrak{g}$) if for any $x\in X$, the following structures exist:
\begin{enumerate}
\item there is an open neighborhood $U \subset X$ of $x$, and
\item there is an open neighborhood $U_0\subseteq G$ of the identity, equipped with the Carnot structure induced from the filtering of $\mathfrak{g}$ defined from its grading, and 
\item there is an isomorphism of Carnot structures $f:U\to U_0$.
\end{enumerate}
\end{definition}

We note that if a Carnot manifold has the Darboux property (with respect to $\mathfrak{g}$) then it is regular of type $\mathfrak{g}$. The converse is true under additional cohomological assumptions on $\mathfrak{g}$.

\begin{theorem}[Generalized Darboux theorem]
\label{gendarboux}
Let $(X, \mathcal{F})$ be a regular analytic Carnot manifold of type $\mathfrak{g}$. Then $(X, \mathcal{F})$ has the Darboux property if $H^2(\mathfrak{g})_r = 0$, for all $r \geq 1$.
\end{theorem}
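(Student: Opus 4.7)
The plan is a formal deformation argument controlled by the graded Chevalley--Eilenberg cohomology of $\mathfrak{g}$ with values in $\mathfrak{g}$ (for the adjoint action), combined with an Artin-type approximation at the end to exit the formal setting in the analytic category. Fix $x\in X$, choose a homogeneous basis $(e_i)_{i=1}^n$ of $\mathfrak{g}$ with $e_i\in \mathfrak{g}_{d_i}$, and denote by $\tilde V_1,\ldots,\tilde V_n$ the left-invariant vector fields on $\mathsf{G}=\exp(\mathfrak{g})$ extending the $e_i$. These satisfy the bracket relations of $\mathfrak{g}$ exactly. By regularity and Theorem \ref{morimorohospintise9}, one can pick an analytic local frame $V_1,\ldots,V_n$ defined near $x$ with $V_i\in C^\omega(T^{d_i}X)$ mapping to $e_i$ under the trivialization of $\mathfrak{t}_HX$ in $x$. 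The Darboux property is equivalent to modifying the $V_i$ by terms in $T^{d_i-1}X$ so that the modified frame satisfies the exact bracket relations $[V_i',V_j']=\sum_kc_{ij}^kV_k'$, since then Frobenius integration along the $V_i'$ yields the required local isomorphism $f\colon U\to U_0\subseteq \mathsf{G}$.

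I would proceed order by order in a Taylor expansion with respect to the dilation on $\mathsf{G}$. Using exponential coordinates centered at $x$ provided by the frame, expand the discrepancy $[V_i,V_j]-\sum_kc_{ij}^kV_k$ in homogeneous components for the dilation weights. Assume inductively the correction has been performed to vanish modulo homogeneous degree $N-1$. The residue at order $N$ is then a cochain $R_N\in C^2(\mathfrak{g},\mathfrak{g})$ of homogeneous degree $N\geq 1$. The Jacobi identity for the Lie bracket on $X$, expanded at this order, forces $dR_N=0$, i.e., $R_N$ is a Chevalley--Eilenberg $2$-cocycle of positive weight. The hypothesis $H^2(\mathfrak{g})_N=0$ gives an $\eta_N\in C^1(\mathfrak{g},\mathfrak{g})_N$ with $d\eta_N=R_N$. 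Reinterpreting $\eta_N$ as a homogeneous degree-$N$ correction $V_i\mapsto V_i-\widetilde{\eta_N(e_i)}$, in which the tilde denotes the lift to a local vector field of the appropriate filtration level via the exponential chart, kills the obstruction at order $N$ without disturbing the corrections at lower orders. Iterating over all $N\geq 1$ produces a formal power series solution $\hat f$ with $\hat f_*V_i=\tilde V_i$.

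Finally, I would upgrade $\hat f$ to a convergent analytic map. The system $f_*V_i=\tilde V_i$ is an analytic system of PDEs with analytic data, and the formal existence of a solution is precisely the hypothesis of Artin's approximation theorem, which produces a genuine convergent analytic solution agreeing with $\hat f$ to arbitrary finite order at $x$. Alternatively one can run a direct majorant estimate: the inductive corrections $\eta_N$ are obtained from a fixed linear splitting of $d\colon C^1(\mathfrak{g},\mathfrak{g})_N\to Z^2(\mathfrak{g},\mathfrak{g})_N$ (which is a finite-dimensional problem in each degree), so their sizes are controlled by those of the analytic vector fields $V_i$, which yields convergence on a neighborhood of $x$.

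The main obstacle is the bookkeeping for Step $2$--$3$: one has to verify with care that the filtration degrees match so that the residual at order $N$ genuinely lands in $C^2(\mathfrak{g},\mathfrak{g})_N$ with $N\geq 1$ (the weight $0$ component would correspond to genuine automorphisms of $\mathfrak{g}$ and cannot be killed cohomologically, which is why the assumption is imposed only for $r\geq 1$), and that the coboundary correction applied at order $N$ respects the filtration and does not reintroduce obstructions at lower orders. Once this graded combinatorics is settled, the cohomological vanishing directly removes the obstruction and analyticity takes care of convergence through Artin's theorem.
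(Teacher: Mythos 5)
First, note that the paper does not contain its own proof of this statement: it is quoted from Morimoto \cite[Corollary 3.6.1]{Morimoto}, whose argument is the Tanaka--Morimoto structure-function (coframe normalization) method, so that is the natural comparison target. Your overall strategy --- a graded obstruction theory governed by the positive-weight adjoint cohomology $H^2(\mathfrak{g},\mathfrak{g})_N$, with weight $0$ correctly excluded as corresponding to automorphisms --- is the right one, but your Steps 2--3 contain a genuine gap. In the weighted Taylor expansion at the single point $x$, the weight-$N$ part of the discrepancy $[V_i,V_j]-\sum_k c_{ij}^k V_k$ is a $2$-cochain with values in weight-homogeneous \emph{polynomial vector fields} in the privileged coordinates (equivalently, the germ of a $C^2(\mathfrak{g},\mathfrak{g})_N$-valued function), not a single constant cochain $R_N\in C^2(\mathfrak{g},\mathfrak{g})_N$. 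The corrections you allow, $V_i\mapsto V_i-\widetilde{\eta_N(e_i)}$ with constant $\eta_N\in C^1(\mathfrak{g},\mathfrak{g})_N$, can therefore at best remove the value of the residue at $x$; a term of the form (weight-$k$ polynomial)$\,\times\,$(weight-$(N-k)$ cochain) has total weight $N$ but is invisible to $\partial\eta_N$, so the induction on weighted order stalls. If you try to fix this by allowing arbitrary higher-weight formal corrections of the frame, you lose the theorem for a different reason: straightening a frame that is not a frame of sections of the subbundles $T^jX$ produces a diffeomorphism that need not preserve the filtration, so even a successful unconstrained formal flattening would not establish the Darboux property. (Also, with the paper's indexing the admissible corrections are sections of $T^{d_i+1}X$, the \emph{smaller} bundle, not $T^{d_i-1}X$, and they must be genuine sections of the filtration subbundles near $x$, not merely fields of high weighted order at $x$.)

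The standard repair, and essentially Morimoto's route, is to make the obstruction pointwise rather than jet-theoretic: regularity provides an analytic local section of the Carnot frame bundle, i.e.\ an adapted $\mathfrak{g}$-valued coframe $\omega$ whose curvature $\kappa=d\omega+\tfrac12[\omega,\omega]$, written in the coframe, is a \emph{function} valued in $\wedge^2\mathfrak{g}^*\otimes\mathfrak{g}$ with all components of weight $\geq 1$. The Bianchi identity shows that the lowest-weight component of $\kappa$ is pointwise $\partial$-closed (derivative terms contribute only to strictly higher weight), and the hypothesis $H^2(\mathfrak{g},\mathfrak{g})_N=0$, together with a fixed linear splitting of $\partial$ on the finite-dimensional space $C^1(\mathfrak{g},\mathfrak{g})_N$, removes it by a modification $\omega\mapsto(\mathrm{id}+\varphi_N)\circ\omega$ with $\varphi_N$ an analytic function valued in $C^1(\mathfrak{g},\mathfrak{g})_N$; the constraint that $\varphi_N$ have positive endomorphism weight (so adaptedness is preserved) is exactly what makes the positive-weight part of $H^2(\mathfrak{g},\mathfrak{g})$ the correct obstruction space. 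Since the curvature weights are bounded above by $2r-1$, this recursion terminates after finitely many steps and yields an adapted coframe with $\kappa\equiv 0$ on a neighborhood; the local isomorphism with $\mathsf{G}$ then follows from the Maurer--Cartan/Frobenius theorem. In particular, the point you flagged as the main obstacle --- convergence, Artin approximation, majorant estimates --- is a non-issue in the correct formulation, since no infinite formal series is ever summed; the substantive difficulty is the pointwise-versus-jet bookkeeping identified above.
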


This theorem can be found in \cite[Corollary 3.6.1]{Morimoto}. The structure of coordinates on Carnot manifolds was further studied in \cite{choipongeprivI,choipongeprivII}.

\begin{remark}
The reader can find numerous graded Lie algebras $\mathfrak{g}$ for which $H^2(\mathfrak{g})_r = 0$, for all $r \geq 1$, in \cite{MorimotoTrans}. Most notably, the Heisenberg group satisfies $H^2(\mathfrak{g})_r = 0$, for all $r \geq 1$ and in this case Theorem \ref{gendarboux} reproduces the well known Darboux theorem.
\end{remark}

\subsection{Regularity of depth $2$ Carnot structures}

Let us give some further details in the case of a Carnot manifold of depth $2$. In other words, consider a manifold $X$ equipped with a filtration of the tangent bundle
$$0\subseteq H\subseteq TX.$$
Here $H\to X$ is an arbitrary subbundle of $TX$. The associated graded bundle is $\mathsf{gr}(TX):=H\oplus TX/H$ where $H$ has degree $-1$ and $TX/H$ has degree $-2$. The Levi form can be identified with a morphism
$$\mathcal{L}:H\wedge H\to TX/H.$$
Let $H^\perp\subseteq T^*X$ denote the subbundle of annihilators of $H$. If we pick a Riemannian metric on $X$, we have a vector bundle isomorphism $TX/H\cong H^\perp$, $\nu\mapsto \nu^\#$. The mapping $\mathcal{L}$ is determined by a linear mapping 
$$\omega:TX/H\to \mathfrak{so}(H),$$
related to $\mathcal{L}$ from the identity
$$\langle \omega(\nu)X,Y\rangle_H=\nu^\#(\mathcal{L}(X,Y)).$$
It follows from Proposition \ref{charofspeepoed} that for each $m\in X$, the step $2$ nilpotent Lie algebra $\mathsf{gr}(T_mX)$ is of type $(\mathrm{rk}(\omega_m),n-\mathrm{rk}(\omega_m))$ and that there exists a Lie algebra isomorphism 
$$\mathsf{gr}(T_mX)\cong (H_m\oplus \ker\omega_m)_{\omega_m(T_mX/H_m)}.$$
In general, $\ker\omega_m$ and $\omega_m(T_mX/H_m)$ can vary quite wildly with $m$ (see for instance Example \ref{boundaryofpseudoconvexz}). The following result shows that regularity is equivalent to global control of these distributions.

\begin{proposition}
\label{step2regularitychar}
Consider a Carnot manifold $X$ of step length $2$ defined from a subbundle $H\subseteq TX$. Let $V_{-1}$ denote the fibre of $H$ and $P_H\to X$ the $GL(V_{-1})$-frame bundle of $H$. Then $X$ is regular if and only if there is a subspace $W\subseteq \mathfrak{so}(V_{-1})$ and for 
\begin{equation}
\label{definienfwow}
H_W:=\{g\in GL(V_{-1}): g^tWg\subseteq W\}\subseteq GL(V_{-1}),
\end{equation}
there is an $H_W$-reduction $P_{H_W}\to X$ of $P_H$ such that 
$$\omega(TX/H)\cong P_{H_W}\times_{H_W}W.$$
If $X$ is regular, it is $\pmb{F}$-regular if and only if $W\cap GL(V_{-1})\neq \emptyset$ and the pushforward defined from $P_{H_W}\times_{H_W}(W\cap GL(V_{-1}))\to X$ is surjective in $K$-theory.
\end{proposition}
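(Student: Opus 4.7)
The plan is to exploit the structural classification of step 2 nilpotent Lie algebras from Proposition \ref{charofspeepoed} — every such Lie algebra of type $(p,q)$ is isomorphic to $V_W$ for an inner product space $V$ of dimension $q$ and a subspace $W \subseteq \mathfrak{so}(V)$ — and translate Morimoto's pointwise characterization of regularity (Theorem \ref{morimorohospintise9}) into the language of frame bundle reductions. Throughout, we fix a Riemannian metric on $X$ so that $\omega : TX/H \to \mathfrak{so}(H)$ is a well-defined vector bundle morphism, and we identify the fibre $V_{-1}$ of $H$ with its typical fibre via a choice of inner product.

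For the forward direction, suppose $X$ is regular of some type $\mathfrak{g}$. Proposition \ref{charofspeepoed} provides an isomorphism $\mathfrak{g} \cong V_{-1, W}$ for some $W \subseteq \mathfrak{so}(V_{-1})$, where $W$ is identified with the image of the commutator subalgebra under the Kirillov-type identification. Proposition \ref{integrationofloctrivialnilpotentalgebroid} gives a principal $\Aut_{\rm gr}(\mathfrak{g})$-bundle $P_X \to X$ with $\mathfrak{t}_HX \cong P_X \times_{\Aut_{\rm gr}(\mathfrak{g})} \mathfrak{g}$. By the discussion following Proposition \ref{autoforstep2decoss}, the restriction-to-$V_{-1}$ map gives an injection $\Aut_{\rm gr}(\mathfrak{g}) \hookrightarrow u_H(H_\mathfrak{g}) = H_W$; since $H = P_X \times_{\Aut_{\rm gr}(\mathfrak{g})} V_{-1}$, the associated extension $P_{H_W} := P_X \times_{\Aut_{\rm gr}(\mathfrak{g})} H_W$ is canonically an $H_W$-reduction of $P_H$. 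The compatibility $\omega(TX/H) \cong P_{H_W} \times_{H_W} W$ then follows because the image of $\omega$ fiberwise is precisely $W$ (identified with the centre $C(\mathfrak{g}) \subseteq \mathfrak{so}(V_{-1})$), with $H_W$ acting on $W$ by $g \cdot \alpha = g^t \alpha g$.

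The backward direction is essentially a reversal: given the $H_W$-reduction $P_{H_W}$ with $\omega(TX/H) \cong P_{H_W} \times_{H_W} W$, one defines a step $2$ graded Lie algebra structure on each fibre $\mathsf{gr}(T_mX) = H_m \oplus (T_mX/H_m)$ using the identifications furnished by $P_{H_W}$ and the bracket of $V_{-1, W}$. Local trivializations of $P_{H_W}$ trivialize $\mathsf{gr}(TX)$ as a bundle of graded Lie algebras with typical fibre $V_{-1, W}$, and Theorem \ref{morimorohospintise9} concludes that $X$ is regular of type $V_{-1, W}$. For the final claim about $\pmb{F}$-regularity, we use the Pfaffian description of the flat orbits from Equation \eqref{pfaffiandnadnstep2} and Remark \ref{flatorbisintosform}: the set of flat orbits in $V_{-1, W}$ corresponds to the locus in $\mathfrak{z}^*$ where $\mathsf{Pf}(\sum \xi_j \omega_j)$ is non-zero, which (combined with Remark \ref{centervscomm}) translates into the statement that the non-zero elements of $W$ are non-degenerate antisymmetric forms on $V_{-1}$, i.e.\ $W \subseteq \mathfrak{so}(V_{-1}) \cap GL(V_{-1})$ in the sense understood in Section \ref{step2subsectionexamples}.

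The main obstacle is not conceptual but bookkeeping: one must carefully verify that the image of $\Aut_{\rm gr}(\mathfrak{g})$ in $GL(V_{-1})$ under restriction is exactly $H_W$ (and not a smaller subgroup), and that the bundle $\omega(TX/H) \to X$ corresponds under the reduction to the distinguished subspace $W \subseteq \mathfrak{so}(V_{-1})$ rather than some other $H_W$-invariant subspace. Both of these are extracted directly from Proposition \ref{autoforstep2decoss} and Proposition \ref{charofspeepoed}, so the argument amounts to threading these identifications through consistently.
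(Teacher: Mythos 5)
Your proposal follows the same strategy as the paper's proof — use Proposition \ref{charofspeepoed} to identify the type with a Lie algebra of the form $V_W$, and translate Morimoto's pointwise criterion (Theorem \ref{morimorohospintise9}) into a frame bundle reduction via Proposition \ref{autoforstep2decoss}. However, there is a genuine gap: you never account for $\ker(\omega)$, and the identification $\mathfrak{g}\cong V_{-1,W}$ is incorrect whenever $\omega:TX/H\to\mathfrak{so}(H)$ fails to be injective. The Lie algebra $V_{-1,W}$ has dimension $\dim V_{-1}+\dim W$, whereas $\mathsf{gr}(T_xX)=H_x\oplus(T_xX/H_x)$ has dimension $\dim V_{-1}+\dim(TX/H)$. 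The correct identification, which the paper uses, is $\mathfrak{g}\cong (V_{-1}\oplus V_{-2})_W$ where $V_{-2}$ is the typical fibre of $\ker(\omega)$ (equivalently, of $\mathfrak{z}/C(\mathfrak{g})$); this coincides with your $V_{-1,W}$ only when $\omega$ is fibrewise injective — as it is in the polycontact case — but not, for instance, for a regular foliation or whenever $C(\mathfrak{g})\subsetneq\mathfrak{z}$.

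This omission causes two concrete failures. In the forward direction, your claim that ``the restriction-to-$V_{-1}$ map gives an injection $\Aut_{\rm gr}(\mathfrak{g})\hookrightarrow H_W$'' is false when $V_{-2}\neq 0$: the graded automorphism group is $\Hom(V_{-2},W)\rtimes(H_W\oplus GL(V_{-2}))$ and the restriction to $V_{-1}$ is a surjection onto $H_W$ with kernel $\Hom(V_{-2},W)\rtimes GL(V_{-2})$. One must pass through this quotient, not an injection, to obtain the $H_W$-reduction. In the backward direction, your plan to ``define a step $2$ graded Lie algebra structure on each fibre $H_m\oplus(T_mX/H_m)$ using\ldots the bracket of $V_{-1,W}$'' cannot produce a bracket valued in all of $T_mX/H_m$, only in $\omega(TX/H)$; and, more seriously, the data of $P_{H_W}$ alone is not enough to build the required principal $\Aut_{\rm gr}(\mathfrak{g})$-bundle. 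The paper's proof fills this in by splitting $TX/H\cong\ker(\omega)\oplus\omega(TX/H)$, taking the frame bundle $P_{\ker\omega}$ of $\ker(\omega)$ as an additional piece of data, forming the $H_W\oplus GL(V_{-2})$-bundle $P_{H_W}\times_X P_{\ker\omega}$, and only then extending along the inclusion $H_W\oplus GL(V_{-2})\hookrightarrow\Aut_{\rm gr}(\mathfrak{g})$ to get $P_X$. You flagged this issue as ``bookkeeping,'' but it is the crux of the argument: without $V_{-2}$ the type of the Carnot structure is misidentified and the reduction/reconstruction of frame bundles does not close up.
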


\begin{proof}
Assume first that $X$ is regular of type $\mathfrak{g}=V_W$ for an inner product space $V$ and a subspace $W\subseteq \mathfrak{so}(V)$ (see Proposition \ref{charofspeepoed}). It follows from construction of the osculating Lie algebroid that we can decompose $V=V_{-1}\oplus V_{-2}$ where $V_j$ has degree $j$ and that $W$ acts trivially on $V_{-2}$, and can be identified with a subspace of $\mathfrak{so}(V_{-1})$. As in Proposition \ref{autoforstep2decoss} we have that 
$$\Aut_{\rm gr}(\mathfrak{g})\cong \Hom(V_{-2},W)\rtimes (H_W\oplus GL(V_{-2})).$$ 
The induced action of $\Aut_{\rm gr}(\mathfrak{g})$ on $W$ factors over the $H_W$-action on $W$ defined by $\nu\mapsto g^t\nu g$. Let $P_X\to X$ denote the $\Aut_{\rm gr}(\mathfrak{g})$-principal bundle from Proposition \ref{carnotframbeldld}. Note that $H\cong P_X\times_{\Aut_{\rm gr}(\mathfrak{g})}V_{-1}$. It follows from Proposition \ref{charofspeepoed} that 
$$\omega(TX/H)\cong P_X\times_{\Aut_{\rm gr}(\mathfrak{g})}W,$$ 
as a sub-bundle of 
$$\mathfrak{so}(H)\cong P_X\times_{\Aut_{\rm gr}(\mathfrak{g})}\mathfrak{so}(V_{-1}).$$ 
The obvious homomorphism $\Aut_{\rm gr}(\mathfrak{g})\to H_W$ reduces $P_X$ to a principal $H_W$-bundle. 

Conversely, assume that $\omega(TX/H)\subseteq \mathfrak{so}(H)$ is a sub-bundle of the form specified in the proposition. Since $P_{H_W}$ is a reduction of $P_H$, $H\cong P_{H_W}\times_{H_W}V_{-1}$. Clearly, $\ker(\omega)$ will then form a sub-bundle of $TX/H$ and $TX/H\cong \ker(\omega)\oplus \omega(TX/H)$. Denote the typical fibre of $\ker(\omega)$ by $V_{-2}$ and $P_{\ker\omega}\to X$ the principal $GL(V_{-2})$-bundle of frames of $V_{-2}$. We can form the $H_W\oplus GL(V_{-2})$-bundle $P_{H_W}\times_X P_{\ker\omega}\to X$. Form the graded step $2$ nilpotent Lie algebra $\mathfrak{g}:=V_W$, where $V:=V_{-1}\oplus V_{-2}$. Since $H_W\oplus GL(V_{-2})\hookrightarrow \Aut_{\rm gr}(\mathfrak{g})$ (by the argument in the preceeding paragraph), we can form $P_X:=(P_{H_W}\times_X P_{\ker\omega})_{H_W\oplus GL(V_{-2})}\Aut_{\rm gr}(\mathfrak{g})$. From our construction we conclude that $\mathsf{gr}(TX)\cong P_X\times_{\Aut_{\rm gr}(\mathfrak{g})}\mathfrak{g}$ as bundles of graded Lie algebras so $X$ is regular of type $\mathfrak{g}$.
\end{proof}

\section{Examples of Carnot manifolds}
\label{sec:examcarn}

There is a large range of examples of Carnot manifolds. Let us give some examples that showcases the large range of behaviours that can be found in different Carnot manifolds. 

\begin{example}[Trivial filtration] 
\label{ex:trivialdcndododa}
Any smooth manifold $X$ admits a trivial filtration with depth $1$:
\[ \mathcal{F} : 0 = TX^0 \subset TX^{-1} = TX. \] 
We note that $\mathsf{gr}(TX)=TX$, with constant grading $-1$, as graded vector bundles and $\mathcal{L}=0$. Thus $\mathfrak{t}_HX=TX$ equipped with the zero Lie bracket and zero anchor map. It is clear that $(X, \mathcal{F})$ is a regular Carnot manifold of type $\mathbb{R}^n$ (with constant grading $-1$) and $P_X$ coincides with the standard frame bundle of $TX\to X$.
\end{example}

\begin{example}[Regular foliations]
\label{regfoliationexala}
A regular folation on a smooth manifold $X$ gives rise to a filtration of length $r = 2$. A regular foliation is characterized by an involutive subbundle $H\subseteq TX$ (the leafwise tangent space), i.e. $H$ satisfies $[Y, Y'] \in C^\infty(X,H)$ for any $Y, Y' \in C^\infty(X,H)$. For an involutive subbundle $H\subseteq TX$ we define the filtration
\[ \mathcal{F} : 0 = TX^0 \subset T^{-1}X:=H \subset TX^{-2} = TX.\]
For this filtration, $\mathsf{gr}(TX)=H\oplus TX/H$, where $H$ has degree $-1$ and $TX/H$ has degree $-2$, and $\mathcal{L}=0$. Thus $\mathfrak{t}_HX=H\oplus TX/H$ equipped with the zero Lie bracket and zero anchor map. It is clear that $(X, \mathcal{F})$ is a regular Carnot manifold of type $\mathbb{R}^m[-1] \oplus \mathbb{R}^{n - m}[-2]$ for $m:=\mathrm{rk}(H)$ and $n:=\dim(X)$. The Carnot frame bundle $P_X$ is a $GL(m)\oplus GL(n-m)$-reduction of the standard $GL(n)$-frame bundle of $TX\to X$.
    
A Carnot structure induced from a foliation has the Darboux property because regular foliations admit foliation charts (see for instance \cite{candelconlon}). The reader should take note that foliation charts exemplify the fact that the local filtered isomorphisms implementing the Carboux property can in general not be chosen as graded isomorphisms, e.g. when changing between two foliation charts the transitition matrix acting on the fibres of $TX\cong H\oplus TX/H$ will not necessarily be block diagonal but just lower diagonal.  

More generally, if we on a manifold have a filtration 
\[ \mathcal{F} : 0 = T^{0}X \subset T^{-1}X \subset T^{-2}X \ldots \subset T^{-r+1}X\subset T^{-r}X = TX, \]
such that for vector fields $Y \in C^\infty(X,T^iX)$, $Y' \in C^\infty(X,T^jX)$ satisfy that $[Y, Y'] \in C^\infty(X,T^{i+j+1}X)$, then by construction $\mathcal{L}=0$. Therefore, we can identify $\mathfrak{t}_HX=TX$ with the grading on $TX$ induced from the filtration and equipped with the zero bracket and the zero anchor map. The associated Carnot manifold is regular of type $\oplus_{j=-r}^{-1}\mathbb{R}^{m_j}[j]$ for $m_j:=\mathrm{rk}(T^{j}X/T^{j+1}X)$. The (slightly trivial) Carnot-Lie group $\oplus_{j=-r}^{-1}\mathbb{R}^{m_j}[j]$  is described in Subsection \ref{step1example}. The Carnot frame bundle $P_X$ is a $\oplus_{j=-r}^{-1} GL(m_j)$-reduction of the standard $GL(n)$-frame bundle of $TX\to X$. We note that in this example, the step length of each fibre of $\mathfrak{t}_HX$ is $1$ despite the fact that the depth of the filtration can be arbitrary large.
\end{example}

\begin{example}[Standard regular Carnot manifold of type $\mathfrak{g}$]
\label{standardcarnot}
Consider a filtered nilpotent Lie algebra 
\begin{equation}
\label{filteredliealgebra}
0 = \mathfrak{g}^{0} \subset \mathfrak{g}^{-1} \subset \mathfrak{g}^{-2} \ldots \subset \mathfrak{g}^{-r+1}\subset \mathfrak{g}^{-r} = \mathfrak{g}. 
\end{equation}
Let $\mathsf{G}$ be the associated simply connected nilpotent Lie group. Define a filtration $\mathcal{F}_\mathsf{G}$ of $T\mathsf{G}$ by identifying $T^i\mathsf{G} := \mathsf{G} \times \mathfrak{g}^i$ with a subbundle of $T \mathsf{G} \simeq \mathsf{G} \times \mathfrak{g}$. Then $(\mathsf{G}, \mathcal{F}_\mathsf{G})$ is a Carnot manifold. The Carnot manifold $(\mathsf{G}, \mathcal{F}_\mathsf{G})$ is regular but a short inspection of the construction of the osculating Lie algebroid shows that it is of type $\mathfrak{g}_{\rm gr}$, where $\mathfrak{g}_{\rm gr}$ is the graded Lie algebra associated with the filtration \eqref{filteredliealgebra}. In general, there is no Lie algebra isomorphism $\mathfrak{g}_{\rm gr}\cong \mathfrak{g}$ (for instance,  $\mathfrak{g}\not\cong \mathfrak{g}_{\rm gr}$ when $\mathfrak{g}$ does not admit a grading, cf. Remark \ref{lackofgradingremark}). 

If $\mathfrak{g}$ is graded, and the filtration is defined from $\mathfrak{g}^i = \bigoplus_{k\leq i} \mathfrak{g}_k$, then $\mathsf{G}$ is a Carnot-Lie group and we call $( \mathsf{G},\mathcal{F}_\mathsf{G})$ the standard regular Carnot manifold of type $\mathfrak{g}$. If $\mathsf{G}$ is a Carnot-Lie group and we use the filtering associated with the grading, it is readily verified that the standard regular Carnot manifold $(\mathsf{G},\mathcal{F}_\mathsf{G})$ has the Darboux property. As such, the generalized Darboux theorem \ref{gendarboux} gives sufficient but not necessarily necessary conditions for the Darboux property. We note that if we start from a simply connected Lie group $\mathsf{G}$ and a filtration of its Lie algebra, then $( \mathsf{G},\mathcal{F}_\mathsf{G})$ is isomorphic to the standard regular Carnot manifold of type $\mathfrak{g}_{\rm gr}$ given by $(\mathsf{G}_{\rm gr},\mathcal{F}_{ \mathsf{G}_{\rm gr}})$ where $\mathsf{G}_{\rm gr}:=\mathrm{exp}(\mathfrak{g}_{\rm gr})$.
\end{example}

\begin{example}[Quotients of nilpotent Lie groups by cocompact lattices]
\label{standardcarnotmodlattice}
Let $\mathsf{G}$ be a nilpotent Lie group with Lie algebra $\mathfrak{g}$. We say that a subgroup $\Gamma$ in $\mathsf{G}$ is a cocompact lattice if $\Gamma$ is discrete and $X := \Gamma \setminus \mathsf{G}$ is compact. Existence of a cocompact lattice in $\mathsf{G}$ is by \cite[Theorem 5.1.8]{Corwin_Greenleaf} equivalent to the Lie algebra $\mathfrak{g}$ admitting a rational structure. i.e. there is a rational Lie subalgebra $\mathfrak{g}_{\mathbb{Q}}$ such that $\mathfrak{g} \simeq \mathfrak{g}_{\mathbb{Q}} \otimes_\mathbb{Q} \mathbb{R}$. In fact, the cocompact lattice and the rational structure $\mathfrak{g}_{\mathbb{Q}}$ can be choosen so that $\log \Gamma \subset \mathfrak{g}_{\mathbb{Q}}$. 

Since $\Gamma$ is discrete, $TX \simeq T\mathsf{G}/\Gamma \simeq X  \times \mathfrak{g}$. Therefore, any Lie algebra filtration of  $\mathfrak{g}$ (as in Example \ref{standardcarnot}) produces a Carnot structure $\mathcal{F}$ on the closed manifold $X$. By similar arguments as in Example \ref{standardcarnot}, $(X,\mathcal{F})$ is regular of type $\mathfrak{g}_{\rm gr}$ and satisfies the Darboux property. We also note that the $9$-dimensional nilpotent Lie algebra constructed in \cite{dyer70} (cf. discussion in Remark \ref{lackofgradingremark}) admits no grading but admits cocompact lattices. 
\end{example}

\begin{example}[Contact manifolds]
\label{contactexample}
A contact structure on a manifold $X$ is a length $2$ filtration 
$$0\subseteq H\subseteq TX,$$
such that $H$ has codimension $1$ and $\mathcal{L}:H\wedge H\to TX/H$ is non-degenerate in all points. It follows that $H$ has even rank, say $2n$, and that $X$ is $2n+1$-dimensional. It is clear that for each $m\in X$, $\mathsf{gr}(TX)\cong \mathfrak{h}_n$ as graded Lie algebras. Therefore, Theorem \ref{morimorohospintise9} implies that a contact structure induces a regular Carnot structure on $X$ of type $\mathfrak{h}_n$. By the classical Darboux theorem, a contact manifold has the Darboux property. Another way to prove that a contact structure is regular, is to pick a metric on $X$, and since $U(n)\rtimes \Z/2$ coincides with the group of orthogonal matrices preserving a symplectic form up to sign, we can reduce the structure group of $H$ to $U(n)\rtimes \Z/2$. Therefore, Proposition \ref{step2regularitychar} implies that a contact structure is regular of type $\mathfrak{h}_n$ by taking $W$ to be the span of the standard symplectic form. We note that for a contact manifold, the bundle of flat orbits $p_\Gamma:\Gamma_X\to X$ is an $\R^\times$-bundle over $X$. 

There are different methods to produce global bundles of flat representations for the osculating groupoid of a contact manifold. The bundle of flat representations on $\Gamma$ from Example \ref{heisebendnedcd} induces up to $\Gamma_X$. Another, slightly more convenient, method is to choose a metric on $H$ and construct the Fock space bundle $\mathpzc{F}_X\to \Gamma_X$ as in Example \ref{freestep2example} or equivalently as in Proposition \ref{ismomomafonaofjan}. Note that for a contact manifold, $\mathcal{G}_{Z,\mathcal{F}}|_{\Gamma_X}=\Xi^*_X=p_\Gamma^*H$ and the Levi form induces a symplectic form on $p_\Gamma^*H$.  This method produces a Hilbert space bundle canonically isomorphic to $\bigoplus_{k=0}^\infty p_\Gamma^*H^{\otimes^{* \rm sym}_\C n}$ -- the bosonic Fock space bundle constructed from symmetric tensor powers of $p_\Gamma^*H^*$ with the complex structure induced from the metric and the symplectic form on $p_\Gamma^*H$. By construction, there is an induced isomorphism $I_X\cong C_0(\Gamma_X,\mathbb{K}(\mathpzc{F}_X))$ and the corresponding metaplectic correction bundle $\mathfrak{M}(\mathpzc{F})\to \Gamma_X$ is trivial (cf. Proposition \ref{metaplecticlinecorrection}). The Fock space bundle construction on a contact manifold was heavily used in Baum-van Erp's proof of the index theorem for contact manifolds \cite{baumvanerp}.

There are further conditions one can impose on contact structures. We say that a contact structure is co-orientable if $TX/H$ is orientable (i.e. trivializable), or equivalently that the structure group of $H$ can be reduced to $U(n)$, or equivalently there exists a one-form $\theta\in C^\infty(X,T^*X)$ with $H=\ker\theta$. In this case, the Cartan formula shows that the Levi form can be identified with $\rd \theta$ under the trivialization $\theta:TX/H\xrightarrow{\sim} X\times \R$. A contact structure is co-orientable if and only if $\Gamma_X\to X$ trivializes as an $\R^\times$-bundle -- hence the constructions in the preceeding paragraph descends to $X$ for co-orientable contact structures. In particular, a Riemannian metric and the Levi form gives rise to a complex structure on $H$ for co-orientable contact structures. 
\end{example}

\begin{example}[Boundaries of pseudo-convex domains]
\label{boundaryofpseudoconvexz}
Assume that $X=\partial \Omega$ is the boundary of a smooth domain $\Omega$ in a complex manifold $Z$. Write $\Omega=\{z\in Z: \phi(x)<0\}$ for some $\phi\in C^\infty(Z,\R)$ with $\rd\phi|_{X}$ nowehere vanishing. We can identify $TX$ with the subbundle of $TZ|_X$ defined as the annihilator of $\rd\phi|_X$, i.e. as the real tangential vectors to $X$ in $Z$. Let us assume that the covector field $\rd^c\phi:=i(\partial-\bar{\partial})\phi$ is nowhere vanishing and therefore defines a sub-bundle $H:=\ker(\rd^c\phi|_{TX})$ of $TX$. The sub-bundle $H$ consists of vectors that are real tangential to $X$ in $Z$ but normal in the complex sense. The filtration $0\subseteq H\subseteq TX$, gives rise to a Carnot structure on $X$ that encodes several interesting features studied in complex analysis. The Levi bracket $\mathcal{L}$ for this filtration is under $\rd^c\phi:TX/H\xrightarrow{\sim}X\times \R$ identified with the  restriction of the Levi form $\rd\rd^c\phi$ to $X$. Recall the terminology that $\phi$ is (strictly) pluri-subharmonic in $x\in Z$ if the Levi form $\rd\rd^c\phi|_{T_xX}$ is (strictly) positive.

From the discussion in Example \ref{contactexample}, $X$ is a contact manifold if and only if $\pm \phi$ is strictly pluri-subharmonic near $X$. In the case that $\Omega=\{z\in Z: \phi(x)<0\}$ for some $\phi\in C^\infty(Z,\R)$ which is strictly pluri-subharmonic near $X$, one says that $\Omega$ is strictly pseudo-convex. Therefore, the boundary of a strictly pseudo-convex domain is a regular Carnot manifold of type $\mathfrak{h}_n$. In complex analysis, strict pseudo-convexity ensures several good features when solving the $\bar{\partial}$-problem. In particular, these can be used to construct the Szegö projection in the Carnot calculus (see \cite[Chapter III.6]{taylorncom}) of the filtration, for more details see Subsubsection \ref{ex:szegoexampe} below. 

In the case that $\Omega=\{z\in Z: \phi(x)<0\}$ for some $\phi\in C^\infty(Z,\R)$ which is pluri-subharmonic near $M$, but not strictly pluri-subharmonic, one says that $\Omega$ is weakly pseudo-convex. Let us consider a highly concrete example and take $\phi(z_0,z'):=|z_1|^4+|z_2|^2-1$ on $Z=\C^{n+m}$. Then $X=\{(z_1,z_2)\in \C^n\times \C^m: |z_1|^4+|z_2|^2=1\}$ and in this case $\rd\rd^c\phi(z_1,z_2)=12i\sum_{j=1}^n|z_{1,j}|^2\rd z_{1,j}\wedge \rd\bar{z}_{1,j}+i\sum_{j=1}^m\rd z_{2,j}\wedge \rd\bar{z}'_{2,j}$. In particular, for each $x=(z_1,z_2)\in M$ we have an isomorphism of graded Lie algebras
\begin{align*}
\mathsf{gr}(T_xM)\cong &
\mathfrak{h}_{k(x)}\oplus \R^{l_1(x)}[-1]\oplus \R^{l_2(x)}[-2], \\
&\mbox{where}\quad 
\begin{cases}
l_1(x):=2\#\{j: z_{1,j}=0\},\\
{}\\
l_2(x):=
{\begin{cases}
1, \; &\mbox{if $n=1$ and $z_1=0$},\\
0, \; &\mbox{otherwise},
\end{cases}}\\
{}\\
k(x):=n+m-1-l_1(z)/2.
\end{cases}
\end{align*}
This gives an example (one of the few in this work) of a non-regular Carnot structure on $S^{2n+2m-1}\cong X$.
\end{example}

We shall now explore two further special cases of step $2$-filtrations of manifold that in the literature goes under the names polycontact structure and pluricontact structure.

\begin{example}[Polycontact manifolds]
\label{polycontactexama}
A polycontact structure on a manifold $X$ is a length $2$ filtration 
$$0\subseteq H\subseteq TX,$$
such that the Levi bracket $\mathcal{L}:H\wedge H\to TX/H$, for any $m\in X$ and $\nu\in H^\perp_m\cong (T_mX/H_m)^*$, satisfies that $\nu\circ\mathcal{L}_m$ is a non-degenerate $2$-form on $H_m$. Further details, references and examples can be found in \cite{vanerpszego}. A subbundle $H\subseteq TX$ defines a polycontact structure if and only if $H^\perp\setminus X\subseteq T^*X$ is a symplectic subspace if and only if for any $\theta\in C^\infty(X,H^\perp)$, $\rd \theta|_H$ is non-degenerate in all points $m$ with $\theta(m)\neq 0$. The regularity of a polycontact structure is characterized by Proposition \ref{step2regularitychar}. 

We note that $H$ defines a polycontact structure if and only if for each $x\in X$, $\mathfrak{g}_x:=\mathsf{gr}(T_xX)$ is a step $2$ nilpotent Lie algebra satisfying $\mathfrak{z}_x=C(\mathfrak{g}_x)= T_xH/H_x$ and its set of flat orbits takes the form $\Gamma_x=\mathfrak{z}_x^*\setminus \{0\}=H_x^\perp\setminus \{0\}$. 

An example of a polycontact structure arises as follows. Assume that $X$ is a compact complex manifold and $H\subseteq TX$ is a complex subbundle of complex codimension $1$ such that the the Levi bracket $H\times H\to TX/H$ is non-degenerate. The Levi bracket is complex linear, so it is readily seen that the associated Carnot manifold is polycontact. The osculating Lie group in each fibre is a complex Heisenberg group, as studied in Example \ref{complexheisenbergexample}, so the associated Carnot manifold is regular of type $\mathfrak{h}_n\otimes \C$ by Theorem \ref{morimorohospintise9} where $2n=\dim_\C(X)-1$. The structure group for this Carnot manifold can be reduced to the group 
$$\ghani:=\{A\in GL(n,\C): A\omega A^T\in \C\omega\},$$
where $n$ denotes the rank of $H$ and $\omega$ is the standard complex symplectic form. Write $\det_\omega:\ghani\to \C^\times$ for the homomorphism determined by $A\omega A^T=\det_\omega(A)\omega$. Up to isomorphism, we can identify $TX/H$ with the complex line bundle $L_X:=P_\ghani\times_{\det_\omega}\C\to X$. Since $H$ is a complex symplectic bundle, we can reduce its structure group to $Sp(2n,\C)$, and $L_X$ is trivial. 
\end{example}
    
\begin{example}[Pluricontact manifolds] 
\label{pluricontactexamokkoa}
A pluricontact structure on a manifold $X$ is a length $2$ filtration 
$$0\subseteq H\subseteq TX,$$
such that the Levi bracket $\mathcal{L}:H\wedge H\to TX/H$ satisfies that 
$$\Gamma_H:=\{\nu\in H^\perp: \nu\circ \mathcal{L}_{p(\nu)} \text{ is nondegenerate on $H_{p(\nu)}$} \}\subseteq H^\perp,$$
has nonempty intersection with each fibre of $H^\perp$ over $X$. Here $p:H^\perp\to X$ denotes the projection mapping. We call $\Gamma_H$ the nondegeneracy locus of $H$. We say that $X$ is a pluricontact manifold of rank $m$ and codimension $l$ for $m:=\mathrm{rk}(H)/2$ and $l:=\dim(X)-\mathrm{rk}(H)$. We also let $(X,H)$ denote the associated Carnot manifold. We also consider the degeneracy variety of $H$ as the subset of bundle of projective spaces $P(H^\perp)\to X$
\[ V_H = \{ [\nu] \in P(H^\perp) : \nu \circ \mathcal{L}_H \text{ is degenerate } \}. \]

We note that $H$ defines a pluricontact structure if and only if for each $x\in X$, $\mathfrak{g}_x:=\mathsf{gr}(T_xX)$  is a step $2$ nilpotent Lie algebra satisfying  $\mathfrak{z}_x=C(\mathfrak{g}_x)= T_xX/H_x$ and its set of flat orbits is a  non-empty set that takes the form $\Gamma_x=\Gamma_{H,x}:=\{\nu\in H^\perp_x: \nu\circ \mathcal{L}_{x} \text{ is nondegenerate on $H_{x}$} \}$. In particular, the regular pluricontact manifolds has the type being a step $2$ nilpotent Lie algebra such that its centre is homogeneous and admitting flat orbits. 

For a regular pluricontact manifold, one can construct a global bundle of flat representations for the osculating groupoid as in Example \ref{contactexample}. Assume that $X$ is a regular pluricontact manifold of type $\mathfrak{g}=\mathfrak{g}_{-1}\oplus \mathfrak{g}_{-2}$. Let $\Gamma\subseteq \mathfrak{g}_{-2}^*$ denote the set of flat orbits of $\mathfrak{g}$ and $\Xi\to \Gamma$ the vector bundle whose total space is the union of all flat orbits. We note that $\Xi_X\cong p_\Gamma^*H^*$ as vector bundles on $\Gamma_X$. The Kirillov form induces a symplectic form $\omega$ on $p_\Gamma^*H$, and indeed $\omega_\xi=\xi\circ \mathcal{L}$ for the Levi form $\mathcal{L}$. In particular, a choice of metric induces an adapted complex structure on $(p_\Gamma^*H,\omega)$. We can now construct the Fock space bundle $\mathpzc{F}_X\to \Gamma_X$ as in Example \ref{freestep2example} or equivalently as in Proposition \ref{ismomomafonaofjan}. Note that for a pluricontact manifold, $\mathcal{G}_{Z,\mathcal{F}}|_{\Gamma_X}=\Xi^*_X=p_\Gamma^*H$. As in Example \ref{contactexample}, $\mathpzc{F}_X\cong \bigoplus_{k=0}^\infty p_\Gamma^*H^{\otimes^{* \rm sym}_\C n}$ and there is an induced isomorphism $I_X\cong C_0(\Gamma_X,\mathbb{K}(\mathpzc{F}_X))$. The metaplectic correction bundle $\mathfrak{M}(\mathpzc{F})\to \Gamma_X$ is again trivial (cf. Proposition \ref{metaplecticlinecorrection}).

Let us comment on some special cases. If $(X,H)$ has codimension $1$, i.e. $H$ has codimension $1$, then $(X, H)$ is a contact manifold if and only if it is a pluri-contact manifold. In this case $\Gamma_H =H^\perp\setminus X$ is an $\R^\times$-bundle over $X$. More generally, any polycontact manifold is a pluricontact manifold with $\Gamma_H =H^\perp\setminus X$. If $(X_1, H_1)$, $(X_2, H_2)$ are two pluricontact manifolds then $(X_1 \times X_2, H_1 \oplus H_2)$ is also a pluricontact manifold with $\Gamma_{H_1 \oplus H_2} = \Gamma_{H_1} \times \Gamma_{H_2}$.
\end{example}

\begin{example}[Equiregular differential systems] 
\label{ex:regudlaladoad}
Consider a smooth manifold $X$ with a bracket generating distribution, i.e. a locally finitely generated $C^\infty(X)$-submodule $\mathpzc{E}^{-1}\subseteq C^\infty(X,TX)$ generating $C^\infty(X,TX)$ as a Lie algebra in a finite number of steps, say $r$. We arrive at a singular Carnot structure (in the sense of Remark \ref{singularcarnotremark}):
\[ \{0\} = \mathpzc{E}^0 \subset \mathpzc{E}^{-1} \subset \mathpzc{E}^{-2} \subset \ldots \subset \mathpzc{E}^{-r}= C^\infty(X,TX), \]
where $\mathpzc{E}^{-j}$ is defined inductively by
\[ \mathpzc{E}^{-j} = \mathpzc{E}^{-j+1} + [\mathpzc{E}^{-j+1}, \mathpzc{E}^{-1}]. \]
Bracket generating distributions plays an important role in Hörmander's sum-of-squares theorem \cite{horsumsquares} in that a sum of squares of a collection of vector fields from $\mathpzc{E}^{-1}$, where the collection spans $\mathpzc{E}^{-1}$ in all points and is locally finite, is hypoelliptic.

If each $\mathpzc{E}^{-j}$ is represented by a sub-bundle $T^{-j}X\subseteq TX$, we have a Carnot structure. We call such a Carnot structure an equiregular differential system, as in \cite{Morimoto,MorimotoDiff}. Carnot structures of this type are called equiregular Carnot-Caratheodory manifolds in \cite{gromovsub} whereas the Carnot-Caratheodory manifolds in \cite{gromovsub} will in our terminology be a singular Carnot structure (as in Remark \ref{singularcarnotremark}) generated by a locally finitely generated submodule $\mathpzc{E}^{-1}\subseteq C^\infty(X,TX)$. Let us comment on some special cases.
\begin{itemize}
\item[a)] Contact and pluricontact manifolds are examples of step 2 equiregular differential systems.
\item[b)] Let $X$ be a smooth 4-manifold. An \textit{Engel structure} on $X$ \cite{EngelStructure} is a smooth rank 2 subbundle $T^{-1} X \subset TX$ such that Lie brackets of sections of $T^{-1} X$ generate a rank 3 subbundle $T^{-2} X$ that together with triple brackets of section of $T^{-1} X$ generate $TX$. Hence, $X$ is a Carnot manifold
\[ 0  = T^0 X \subset T^{-1} X \subset T^{-2} X \subset T^{-3} X = TX. \]
Moreover, $X$ is regular of type $\mathfrak{g}$, where $\mathfrak{g}$ is the Lie algebra from Example \ref{engelalgebra}. According to a result of Vogel \cite{Vogel}, every closed parallelizable smooth 4-manifold admits an Engel structure. While Example \ref{engelalgebra} shows that Engel structures are not $\pmb{F}$-regular, the top coarse strata provides an analogous structure to which the results of this work extend.
    \end{itemize}
\end{example}

\begin{example}[Higher order contact manifolds] 
For a fibre bundle $p : M \rightarrow N$, we consider the manifold $X:=J^k(M,N)$ of $k$-jets of cross-sections of $p$. For local coordinates $(x^1, \ldots, x^n)$ on $N$ and local coordinates $(x^1, \ldots, x^n, y_1, \ldots, y^m)$ on $M$, $(x^1, \ldots, x^n, \ldots, p_\alpha^i, \ldots)$ gives a local coordinate system of $J^k(M, N)$ for $p_\alpha^i = \frac{\partial^{|\alpha|} y^i}{\partial x^\alpha}$ with $\alpha = (\alpha_1, \ldots, \alpha_n)$, $|\alpha| \leq k$. We define the subbundles  $D^{-j}\subseteq TJ^k(M,N)$, for $j \geq 1$, by setting
\[ D^{-j}_x:=\left\{Y\in T_xJ^k(M;N): 
\begin{matrix}
\rd p_\alpha^i (Y)=&\!\!\!\!\!\!\!\!\!\!\!\!\sum_{l=1}^n p^i_{(\alpha_1, \ldots, \alpha_l+1, \ldots, \alpha_n)} \rd x^l(Y)\quad\mbox{for} \\ 
&\qquad\qquad\qquad i = 1,\ldots,n, \ |\alpha| \leq k - j\end{matrix}\right\}, \]
for $x\in J^k(M,N)$. By \cite[Example 3), page 10]{Morimoto}, $D^{-j}$ are well-defined subbundles such that $D^{-j} = D^{-j+1} + [D^{-j+1}, D^{-1}]$ and $D^j = TJ^k(M,N)$ for $j \leq -k - 1$. We thus have an equiregular differential system on $X=J^k(M,N)$ generated by $D^{-1}$ that gives a Carnot structure of length $k+1$. As remarked in \cite{Morimoto}, if $\dim M=\dim N + 1$ then $X$ is a contact manifold having $D^{-1}$ as its contact structure. By the results of \cite[Chapter 5]{MorimotoTrans}, the Carnot structure of $X$ is regular and satisfies the assumptions of the Generalized Darboux theorem (see Theorem \ref{gendarboux} above). Higher order contact structures make their appearance in Vinogradov's theory for diffietes \cite{vinocoh} describing spaces of formal solutions of partial differential equations in terms of methods of algebraic geometry.
\end{example}

\begin{example}[Parabolic geometries] 
\label{parapara}
Parabolic geometry is an extension of Cartan's work in differential geometry. Such a geometry is locally modelled on spaces of the form $G/P$ where $G$ is a semisimple Lie group and $P \subset G$ is a parabolic subgroup. The differential geometry is encoded in a Cartan connection with an associated curvature that detects local deviations from the model $G/P$. A standard reference for parabolic geometry is \cite{cap}. Our interest in parabolic geometry comes from that on such geometries there are associated graded differential complexes, curved Bernstein-Gelfand-Gelfand (BGG) sequences \cite{morecap}, that are naturally studied by means of Carnot calculus \cite{Dave_Haller1}.

For the definition of a parabolic geometry, we need some further terminology. For more details, see \cite{cap, morecap,Dave_Haller1}. For $k\in \mathbb{N}$, a $|k|$-grading on a Lie algebra $\mathfrak{g}$ is a $\Z$-grading supported in $\{-k,\ldots, k\}$, i.e. a decomposition $\mathfrak{g} = \mathfrak{g}_{-k} \oplus \ldots \oplus \mathfrak{g}_k$, such that
    \begin{enumerate}
        \item $[\mathfrak{g}_i, \mathfrak{g}_j] \subset \mathfrak{g}_{i+j}$, for $i,j=-k,\ldots, k$,
        \item $\mathfrak{g}_{-} := \mathfrak{g}_{-k} \oplus \ldots \oplus \mathfrak{g}_{-1}$ is generated as a Lie algebra by $\mathfrak{g}_{-1}$,
        \item $\mathfrak{g}_{-k} \neq \{0\}$ and $\mathfrak{g}_{k} \neq \{ 0 \}$.
    \end{enumerate}
The filtration associated with the grading is defined as $\mathfrak{g}^i = \mathfrak{g}_{i} \oplus \ldots \oplus \mathfrak{g}_k$. For a Lie group $G$ with Lie algebra $\mathfrak{g}$ and a parabolic subgroup $P$, we tacitly assume that the parabolic subgroup $P$ is related to the $|k|$-grading by $\mathfrak{g}^0 = \mathfrak{p}$. We use the notation $\mathfrak{p}_+=\mathfrak{g}^1  $. This notation is motivated by the discussion in \cite[Item 2.2]{morecap}, eminating in that for complex semisimple Lie groups a choice of $|k|$-grading is equivalent to a choice of parabolic subgroup $P\subseteq G$ in a semisimple Lie group, indeed there is a Cartan algebra and a set of positive roots inducing a canonical $|k|$-grading of $\mathfrak{g}$ with $\mathfrak{g}^0 = \mathfrak{p}$ and $\mathfrak{g}^1 = \mathfrak{p}_+$ is the sum of positive weight spaces (see \cite{morecap}). We note that $\mathfrak{g}_-\cong \mathfrak{g}/\mathfrak{p}$ as graded Lie algebras. Moreover, the adjoint action defines a homomorphism $P\to \Aut_{\rm gr}(\mathfrak{g}_-)$ whose kernel contains $\mathrm{exp}(\mathfrak{p}_+)$. We set $G_0:=P/\mathrm{exp}(\mathfrak{p}_+)$. 
    
A parabolic geometry of type $(G, P)$ is a pair $(X,\omega)$ where $X$ is a smooth manifold and $\omega \in \Sigma^1(V, \mathfrak{g})$ is a Cartan connection on a principal $P$-bundle $V\to X$. Recall that a Cartan connection is an element $\omega \in \Sigma^1(V, \mathfrak{g})$ such that
\begin{enumerate}
\item $Ad(h)(R_h^* \omega) = \omega, \ h \in P$;
\item $\omega(X_\xi) = \xi, \ \xi \in \frak{p}$;
\item $\omega_p : T_p V  \rightarrow \frak{g}$ is a linear isomorphism for $p \in V$.
\end{enumerate}
    
A parabolic geometry on $X$ induces a filtration on $TX$ as follows. We can define the $G_0$-principal bundle $P_{X,0}:=V/\mathrm{exp}(\mathfrak{p}_+)\to X$ and define the locally trivial bundle of graded Lie algebras $P_{X,0}\times_{G_0}\mathfrak{g}_-\to X$. The axioms of a parabolic geometry ensures that the Cartan connection $\omega$ induces a canonical isomorphism of vector bundle $TX \simeq P_{X,0}\times_{G_0}\mathfrak{g}_-$, and we can define $T^jX\subseteq TX$ as the image of $P_{X,0}\times_{G_0}\mathfrak{g}^j_-$, where $\mathfrak{g}^j_-:=\mathfrak{g}^j/\mathfrak{p}\subseteq \mathfrak{g}_-$. If this filtration defines a Carnot structure on $X$ such that the isomorphism $TX \simeq P_{X,0}\times_{G_0}\mathfrak{g}_-$ induces an isomorphism of Lie algebroids $\mathsf{gr}(TX)\cong P_{X,0}\times_{G_0}\mathfrak{g}_-$, we say that the parabolic geometry $(X,\omega)$ is regular. It follows from the construction that a regular parabolic geometry of type $(G,P)$ induces a regular Carnot structure of type $\mathfrak{g}_-$, indeed its Carnot frame bundle is given by $P_{X,0}\times_{G_0}\Aut_{\rm gr}(\mathfrak{g}_-)$. A regular parabolic geometry on $X$ induces an $\pmb{F}$-regular Carnot structure if and only if $\mathfrak{g}_-$ admits flat orbits and the standard surjectivity result holds.

To give an example, consider the flag manifold $X:=SL(n,\R)/B$ for $SL(n,\R)$ obtained as the quotient by its standard Borel group. The space $X$ is best understood from the Iwasawa decomposition of $SL(n,\R)$, e.g. one can write $X=SO(n)/M$ for the finite group of diagonal isometries. The space $X$ is a parabolic geometry of type $(SL(n,\R),B)$. The type of the associated regular Carnot structure on $X:=SL(n,\R)/B$ is that of upper triangular, real, unipotent $n\times n$-matrices considered in Example \ref{uppertriangularexample}.

The definition of a regular parabolic geometry was formulated slightly different in \cite{morecap}, but by \cite[Corollary 3.1.8]{cap} the notions are equivalent. There is even an equivalence of categories between regular Carnot manifolds of type $\mathfrak{g}_-$ and normal regular parabolic geometries, see \cite{cap}. 

Let us comment on some special cases.
\begin{itemize}
\item[a)] \textit{Generic rank two distributions in dimension five}, \cite{Cartan1}, \cite{Cartan2}, \cite{Cartan3}, \cite{Cartan4}. Let $X$ be a smooth 5-manifold with a smooth rank $2$ subbundle $T^{-1} X \subset TX$ of Cartan type such that Lie brackets of sections of $T^{-1} X$ generate a rank $3$ distribution $T^{-2} X$ and triple brackets of section of $T^{-1} X$ generate $TX$. The filtration
    \[ \{ 0 \} = T^0 X \subset T^1 X \subset T^2 X \subset T^3 X = TX, \]
defines a Carnot structure on $X$. Moreover, $X$ is a regular normal parabolic geometry of type $(G_2, P)$, where $P$ is a maximal parabolic subgroup corresponding to the shorter simple root. Topological obstructions in the orientable case were given in \cite[Theorem 1]{GenericDave}. 
\item[b)] \textit{Generic rank three distribution in dimension six} Let $X$ be a smooth 6-manifold with a smooth rank 3 subbundle $T^{-1} X \subset TX$ such that $T^{-2} X = T^{-1} X + [T^{-1} X, T^{-1} X]$ is equal to $TX$. In other words, every point $x \in X$ has a neighborhood $U$ on which there exist vector fields $X_1, X_2, X_3$ that are sections of $T^{-1} X$ over $U$, are everywhere linearly independent on $U$, and have the property that the six vector fields
    \[ X_1 , X_2 , X_3 , [X_2, X_3] , [X_3, X_1] , [X_1, X_2] \]
    are everywhere linearly indepdendent on $U$, see \cite{Generic6}. In this case, $X$ is a regular normal parabolic geometry of type $(SO(4,3), H)$, where $H$ is a stabilizer of a null three-plane in $\mathbb{R}^{4,3}$.
\item[c)] In \cite{cap} the reader can find a plethora of other parabolic geometries such as \textit{conformal Riemannian geometries}, \textit{projective} and \textit{almost quaternionic geometries}, and a \textit{real version of CR-structures}, see \cite{cap}. For more detials and differential operators in the conformal setting, see \cite{juhligen}.
\end{itemize}
\end{example}

\section{The parabolic tangent groupoid}
\label{subsec:parabolictanget}

The parabolic tangent groupoid of a Carnot manifold was constructed in \cite{choiponge,vanErp_Yunckentangent}, see also \cite{sadeghhigson,pongegroupoid}. As mentioned in Example \ref{ex:tangentgroupoidladladld}, it is a variation of Connes' tangent groupoid that incorporates the anisotropy of the osculating Lie groupoid. The construction of the parabolic tangent groupoid was in \cite{mohsen1} further studied as deformation groupoids, cf Example \ref{ex:defomfodmomedm}. We will briefly recall its construction in the next theorem from which we afterwards draw some consequences, the details are referred to \cite{vanErp_Yunckentangent}. 

For a Carnot manifold $X$, there is a filtered isomorphism $\mathfrak{t}_HX\cong TX$. The constructions in this subsections are independent of the choice of filtered isomorphism $\mathfrak{t}_HX\cong TX$. Note that $\mathfrak{t}_HX=T_HX$ as fibre bundles. We will for notational brevity identify $T_HX= TX$ as fibre bundles in this subsection and consider the dilation action on $T_HX$ as an $\R_+$-action on $TX$.

\begin{theorem}[Theorem 2 in \cite{vanErp_Yunckentangent}]
Let $X$ be a Carnot manifold with osculating Lie groupoid $T_HX\to X$. The parabolic tangent groupoid 
$$\mathbb{T}_HX:=T_HX\times \{0\}\dot{\cup} X\times X\times (0,\infty)\rightrightarrows X\times [0,\infty),$$
is a Lie groupoid in the canonical smooth structure defined from declaring the map
$$\psi:TX\times [0,\infty)\to \mathbb{T}_HX, \quad 
\psi(x,v,t):=
\begin{cases}
(\mathrm{exp}^\nabla(\delta_t(v)),x,t), \; &t>0,\\
(x,v,0),\; &t=0,
\end{cases}$$
to be smooth. Here $\nabla$ is an $\R^\times$-equivariant connection on $TX$ and $\mathrm{exp}^\nabla:TX\to X$ the associated exponential.  
\end{theorem}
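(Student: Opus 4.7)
The plan is to establish the parabolic tangent groupoid by first exhibiting $\psi$ as a continuous bijection (in a suitable sense) of $TX \times [0,\infty)$ with $\mathbb{T}_HX$ near $t=0$, using this to transport a smooth structure, and then verifying that the groupoid operations become smooth and that the construction is independent of the choice of $\R^\times$-equivariant connection $\nabla$.

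First I would deal with well-definedness of $\psi$ near $t=0$. Because the dilation $\delta_t$ is built from the positive grading on $\mathfrak{t}_HX$, every eigenvalue of $\dot{\delta}(1)$ is at least $1$, so for each compact $K \subset X$ there is an open neighbourhood $U \subseteq TX|_K$ of the zero section and a $t_0>0$ such that $\delta_t(v) \in U$ lies in the domain of $\mathrm{exp}^\nabla$ for all $(v,t) \in U \times [0,t_0]$. On this region $\psi$ factors through a diffeomorphism onto an open subset of $\mathbb{T}_HX$, and away from $t=0$ it is clearly a diffeomorphism in the existing smooth structure of $X \times X \times (0,\infty)$. One then defines a smooth atlas on $\mathbb{T}_HX$ by declaring $\psi$ to be a diffeomorphism and combining this with the standard atlas on $T_HX \times \{0\} \sqcup X \times X \times (0,\infty)$; compatibility of the two descriptions for $t>0$ is automatic.

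The main obstacle — and where the heart of the proof lies — is verifying smoothness of composition $\mu\colon \mathbb{T}_HX \times_s \mathbb{T}_HX \to \mathbb{T}_HX$ across the hypersurface $t=0$. I would work in a local Carnot chart, using a filtered splitting $\mathfrak{t}_HX \cong TX$ and the Baker-Campbell-Hausdorff formula on $\mathsf{Lie}(T_HX)_{|\text{fibre}}$. The pair-groupoid product on $X \times X$ pulled back through $\psi(\cdot,\cdot,t)$ to $TX \times_X TX$ takes, after rescaling by $\delta_{t^{-1}}$, the form
\begin{equation*}
(v_1,v_2,t) \longmapsto \delta_{t^{-1}}\bigl(\mathrm{exp}^{\nabla,-1}(\mathrm{exp}^\nabla(\delta_t v_1) \cdot_X \mathrm{exp}^\nabla(\delta_t v_2))\bigr),
\end{equation*}
and the key computation is that this expression, initially defined for $t>0$, extends smoothly to $t=0$ with value the BCH product in the nilpotent osculating fibre. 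This is a Taylor-expansion argument in $t$: homogeneity of the connection $\nabla$ under $\delta$ ensures that each term of negative $\delta$-weight in the expansion vanishes (this is precisely where the filtration hypothesis $[T^i X, T^j X] \subseteq T^{i+j}X$ enters), and the leading surviving term reproduces the Lie bracket on $\mathfrak{t}_HX$ via BCH. The same kind of computation handles smoothness of the inverse and the unit section; the source and range maps are manifestly smooth from the construction of $\psi$.

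Finally I would address independence of $\nabla$. Two $\R^\times$-equivariant connections $\nabla, \nabla'$ differ by an $\R^\times$-equivariant tensor, and the change of charts $\psi^{-1}_{\nabla'} \circ \psi_\nabla\colon TX \times [0,\infty) \to TX \times [0,\infty)$ takes the form $(x,v,t) \mapsto (x, \delta_{t^{-1}}\Phi_t(\delta_t v), t)$ with $\Phi_t$ a smooth family of local diffeomorphisms of $TX$ near the zero section. Again the equivariance of the difference tensor under $\delta$ guarantees that $\delta_{t^{-1}}\Phi_t(\delta_t v)$ extends smoothly to $t=0$ with value a vector bundle automorphism of $TX$ that is the identity at $t=0$, so the two smooth atlases agree. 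The hardest step by some margin is the BCH/Taylor expansion across $t=0$ for the composition; everything else is a bookkeeping exercise once one is comfortable with the way graded dilations interact with the filtration.
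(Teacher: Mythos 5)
The paper does not give its own proof of this statement: it is quoted as Theorem 2 from van Erp--Yuncken's paper and the details are explicitly referred there, so there is no internal argument to compare against. Your sketch does follow the same route van Erp--Yuncken take in the cited source: transport a smooth structure via the chart $\psi$, then establish smoothness of the groupoid operations across $t=0$ by a weighted Taylor expansion that degenerates to the Baker--Campbell--Hausdorff product, with the filtration condition $[T^iX,T^jX]\subseteq T^{i+j}X$ and the $\delta$-equivariance of $\nabla$ controlling the weights, and finally check independence of the choices. Two points deserve tightening before this is a proof rather than a plan. First, the displayed formula for the pulled-back product is not meaningful as written: $\mathrm{exp}^\nabla(\delta_t v_1)\cdot_X \mathrm{exp}^\nabla(\delta_t v_2)$ purports to be a product of points of $X$. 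The expression you actually need to expand in $t$ is
$$w = \delta_{t^{-1}}\,(\mathrm{exp}^\nabla_x)^{-1}\bigl(\mathrm{exp}^\nabla_{\mathrm{exp}^\nabla_x(\delta_t v_2)}(\delta_t v_1)\bigr),$$
and the base-point dependence of the inner exponential is precisely where torsion and curvature of $\nabla$ enter the expansion --- and hence where equivariance of $\nabla$ is actually consumed; the weight-counting should be run on this. Second, the smooth structure depends a priori not only on $\nabla$ but on the filtered splitting $\mathfrak{t}_HX\cong TX$, which is what allows one to regard $\delta_t$ as acting on $TX$ at all; you address independence of $\nabla$ but not of the splitting, and a complete argument needs both (by a change-of-chart computation of the same flavour). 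Neither point changes the shape of the argument, which is substantially the standard one, but both are load-bearing in a written-out proof.
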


We note that the Lie algebroid of $\mathbb{T}_HX$ can as a vector bundle over $X\times [0,\infty)$ be identified with $TX\times [0,\infty)$. Based on the ideas of Section \ref{subsec:nistorctsubsn} we construct a double deformation of a Carnot manifold as follows.

\begin{definition}
Let $X$ be a Carnot manifold. The Lie groupoid
$$\mathbb{A}_HX:=(TX\times [0,\infty)\times \{0\})\dot{\cup}(\mathbb{T}_HX\times (0,\infty))\rightrightarrows X\times [0,\infty)\times [0,\infty),$$
constructed as the adiabatic deformation of $\mathbb{T}_HX$, will be called the adiabatic parabolic tangent groupoid of $X$.
\end{definition}

We note that the adiabatic parabolic tangent groupoid $\mathbb{A}_HX$ satisfies that 
\begin{align*}
\mathbb{A}_H X|_{[0,\infty)\times \{s\}} &=\mathbb{T}X , \ \text{if  $s > 0$}, \\
\mathbb{A}_H X|_{\{t\}\times [0,\infty)} &=\mathbb{T}_HX , \ \text{if  $t > 0$},\\
\mathbb{A}_H X|_{\{0\}\times [0,\infty)} &=(T_HX)_{\rm adb},
\end{align*}
where $(T_HX)_{\rm adb}$ denotes Nistor's adiabatic deformation as in Section \ref{subsec:nistorctsubsn}. The adiabatic parabolic groupoid fibers over $(t,s)\in [0,\infty)\times [0,\infty)$ as follows:
\begin{align*}
( \mathbb{A}_H X)_{t,s}  &= X \times X, \ \text{if $s ,t > 0$}, \\
( \mathbb{A}_H X)_{0,s}  &= T_HX, \ \text{if $s > 0$}, \\
( \mathbb{A}_H X)_{t,0}  &= T X, \ \text{if $t\geq 0$}.
\end{align*}
In the next section on the Carnot calculus, we shall also make use of the zoom action. There, as well as in \cite{vanErp_Yuncken}, the zoom action is mainly used on $\mathbb{T}_HX$ but we define it also on $\mathbb{A}_HX$ for the sake of completeness.

\begin{definition}
The zoom action of $\R_+$ on $\mathbb{T}_HX$ is defined for $\lambda>0$ by 
$$\begin{cases} 
\alpha_\lambda(x,g,0):=(x,\delta_\lambda(g),0), \; & (x,g)\in T_HX,\\
\alpha_\lambda(x,y,t):=(x,y,\lambda^{-1}t), \; & (x,y,t)\in X\times X\times (0,\infty).
\end{cases}$$
The zoom action of $\R_+$ on $\mathbb{A}_HX$ is defined for $\lambda>0$ by 
$$\begin{cases} 
\alpha_\lambda(x,g,0,s):=(x,\delta_\lambda(g),0,\lambda^{-1}s), \; & (x,g,0,s)\in T_HX\times \{0\}\times(0,\infty),\\
\alpha_\lambda(x,g,t,0):=(x,\delta_\lambda(g),\lambda^{-1}t,0), \; & (x,g,t,0)\in T_HX\times [0,\infty)\times \{0\},\\
\alpha_\lambda(x,y,t,s):=(x,y,\lambda^{-1}t,\lambda^{-1}s), \; & (x,y,ts)\in X\times X\times (0,\infty)\times (0,\infty).
\end{cases}$$
\end{definition}

The tangent groupoid induces a short exact sequence
$$0\to C_0(0,\infty)\otimes \mathbb{K}(L^2(X))\to C^*(\mathbb{T}_HX)\xrightarrow{\mathrm{ev}_0} C^*(T_HX)\to 0,$$
using that $C^*(X\times X)\cong \mathbb{K}(L^2(X))$. We let $\ind_{T_HX}\in KK_0(C^*(T_HX),\C)$ denote the associated $KK$-class. Similarly, we define $\ind_{TX}\in KK_0(C_0(T^*X),\C)$ denote the class associated with Connes' tangent groupoid. The following theorem follows from a diagram chase with the adiabatic parabolic tangent groupoid $\mathbb{A}_HX$. We write $\psi_X\in KK_0(C_0(T^*X),C^*(T_HX)))$ for the $KK$-morphism defined from Nistor's adiabatic deformation (if the Carnot manifold is regular, $\psi_X$ is induced from $\psi\in KK^{\Aut(\mathsf{G})}_0(C_0(\mathfrak{g}^*),C^*(\mathsf{G}))$ in Definition \ref{autgododlelendn}).

\begin{theorem}
\label{outertringaleldldle}
Let $X$ be a Carnot manifold. Then the following diagram commutes
\[
\begin{tikzcd}
C_0(T^*X) \arrow[ddr,"\ind_{TX}"]\arrow[rr, "\psi_{X}"] && C^*(T_HX)\arrow[ddl,"\ind_{T_HX}"]\\
&&\\
&\C&
\end{tikzcd}
\]
In particular, we have that 
$$\ind_{T_HX}(x)=\int_{T^*X}\ch(\psi_X^{-1}[x])\wedge \mathrm{Td}(T^*X), \quad x\in K_*(C^*(T_HX)).$$
\end{theorem}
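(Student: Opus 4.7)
The plan is to exploit the two-parameter deformation $\mathbb{A}_HX$ restricted to $\Omega:=[0,1]^2$ and perform a diagram chase. First I would identify the restrictions to the four edges of $\Omega$: the left edge $V=\{0\}\times[0,1]$ is Nistor's adiabatic deformation $(T_HX)_{\rm adb}|_{[0,1]}$ whose boundary KK-class is $\psi_X$; the top edge $H_1=[0,1]\times\{1\}$ is the parabolic tangent groupoid $\mathbb{T}_HX|_{[0,1]}$ whose boundary KK-class (together with the Morita equivalence $C^*(X\times X)\sim_M \C$) is $\ind_{T_HX}$; the right edge $V'=\{1\}\times[0,1]$ is Connes' tangent groupoid $\mathbb{T}X|_{[0,1]}$, whose boundary KK-class is $\ind_{TX}$; and the bottom edge $H_0=[0,1]\times\{0\}$ is the trivial family $TX\times[0,1]$, whose boundary map is the identity on $KK(C_0(T^*X),-)$.

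The composite $\ind_{T_HX}\circ\psi_X$ then arises from the L-shape $L_{\nwarrow}:=V\cup H_1$ as the composition of the two edge-boundary maps, while $\ind_{TX}$ arises from the L-shape $L_{\searrow}:=H_0\cup V'$, the $H_0$-leg being trivial. Both L-shapes connect the corner $(0,0)$ (whose fibre is $TX$, giving $C_0(T^*X)$) to the corner $(1,1)$ (whose fibre is $X\times X$, giving $\mathbb{K}(L^2(X))$), and both produce elements of $KK(C_0(T^*X),\C)$ via the evaluations at these two corners.

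To conclude that these two KK-classes agree, I would interpolate between them through the family of closed subsets
\[
L_u:=\bigl([0,u]\times\{0\}\bigr)\cup\bigl(\{u\}\times[0,1]\bigr)\cup\bigl([u,1]\times\{1\}\bigr)\subseteq\Omega,\qquad u\in[0,1],
\]
so that $L_0=L_{\nwarrow}$ and $L_1=L_{\searrow}$. For any $u\in(0,1]$ the middle segment $\{u\}\times[0,1]$ lies inside $\{t>0\}$, so $\mathbb{A}_HX|_{\{u\}\times[0,1]}\cong \mathbb{T}X$ gives Connes' tangent groupoid, while the two flanking segments $[0,u]\times\{0\}$ and $[u,1]\times\{1\}$ are trivial families (of $TX$ and of $X\times X$ respectively). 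Hence the KK-class built from $\mathbb{A}_HX|_{L_u}$ equals $\ind_{TX}$ for every $u\in(0,1]$. The entire family $\{L_u\}_{u\in[0,1]}$ fits together as a closed subset of $\Omega\times[0,1]$; the restriction of $\mathbb{A}_HX\times[0,1]$ to this subset is a $C[0,1]$-linear Lie groupoid, and the parametrized evaluations at the two corners give a $C[0,1]$-linear KK-element whose fibre at each $u$ is the boundary class for $L_u$. Homotopy invariance of $KK$ (or, equivalently, evaluation at $u=0$ and $u=1$ giving the same class on a trivially varying base) then yields $\ind_{T_HX}\circ\psi_X=\ind_{TX}$.

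The hard part will be the last step: setting up the continuity rigorously. Concretely one must verify that the family $\{L_u\}$ assembles to a well-behaved Lie sub-groupoid of $\mathbb{A}_HX\times[0,1]$ (avoiding pathologies at $u=0$ where the left and middle segments merge), and that the corresponding evaluations at $(0,0)$ and $(1,1)$ induce $KK$-equivalences with contractible kernels so that the boundary class is well defined fibrewise. The most natural way around this is to work with the single $C^*$-algebra $C^*(\mathbb{A}_HX|_\Omega)$ and show directly that $[\mathrm{ev}_{(0,0)}]$ is a $KK$-equivalence, whence the $KK$-element $[\mathrm{ev}_{(0,0)}]^{-1}\otimes[\mathrm{ev}_{(1,1)}]$ is independent of which L-shape we factor through, by functoriality of the restriction/evaluation squares. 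Once the triangle commutes, the integral formula is immediate: apply Atiyah–Singer \cite{atiyahIII} to $y:=\psi_X^{-1}[x]\in K^*(T^*X)$ to get $\ind_{TX}(y)=\int_{T^*X}\ch(y)\wedge\mathrm{Td}(T^*X)$ and use $\ind_{T_HX}(x)=\ind_{TX}(y)$.
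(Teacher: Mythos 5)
Your proposal is correct and is essentially the paper's own argument: the paper's proof consists of the single remark that the theorem ``follows from a diagram chase with the adiabatic parabolic tangent groupoid $\mathbb{A}_HX$'', and your identification of the four edges of $[0,1]^2$ (Nistor's deformation on $\{t=0\}$, $\mathbb{T}_HX$ on $\{s=1\}$, $\mathbb{T}X$ on $\{t=1\}$, the constant family $TX$ on $\{s=0\}$) is exactly that chase. The clean route you end with is the right one and works without the $L_u$-interpolation: $\mathrm{ev}_{(0,0)}$ factors through restriction to the bottom edge, whose kernel $C^*(\mathbb{T}_HX|_{[0,1]})\otimes C_0((0,1])$ is contractible, so $[\mathrm{ev}_{(0,0)}]$ is a $KK$-equivalence and both L-shaped factorizations of $[\mathrm{ev}_{(0,0)}]^{-1}\otimes[\mathrm{ev}_{(1,1)}]$ (using that the complements of the two L-shapes are cones, hence have contractible $C^*$-algebras) yield $\ind_{T_HX}\circ\psi_X=\ind_{TX}$, after which the integral formula is Atiyah--Singer applied to $\psi_X^{-1}[x]$.
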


In the case of regular Carnot manifolds, we can describe $\ind_{T_HX}$ even more precisely on the ideal of flat orbits using Theorem \ref{maincomputationforisg}. First we need a spin$^c$-structure on the space of flat orbits.

\begin{proposition}
\label{spincongamma}
Let $X$ be an $\pmb{F}$-regular Carnot manifold of type $\mathfrak{g}$. Write $\Xi_X\to \Gamma_X$ for the total space of flat orbits in $\mathfrak{t}_HX^*$ as a bundle over the space of flat orbits $\Gamma_X$ and fix a metrix $g_\Xi$ on $\Xi_X$. Then the following holds:
\begin{itemize}
\item Associated with $g_\Xi$ there is a complex structure on the vector bundle $\Xi_X\to \Gamma_X$.
\item The open inclusion $\Xi_X\subseteq T^*X$ and the almost complex structure on $T^*X$ induces an almost complex structure on $\Xi_X$ as a manifold.
\item The manifold $\Gamma_X$ carries a unique spin$^c$-structure making the projection map $\Xi_X\to \Gamma_X$ spin$^c$-oriented.
\end{itemize}
\end{proposition}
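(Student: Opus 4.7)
The plan is to treat the three bullets in order, the first two being direct constructions while the third is where the content lies.

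For the first bullet, I would use that by Proposition \ref{carnotframbeldld} the Carnot frame bundle gives an identification $\mathfrak{t}_HX = P_X\times_{\Aut_{\rm gr}(\mathfrak{g})} \mathfrak{g}$, and hence $\Xi_X = P_X\times_{\Aut_{\rm gr}(\mathfrak{g})} \Xi$. By Proposition \ref{totalspaceofflatorbits}, the fibre of $\Xi_X \to \Gamma_X$ over $\xi\in \Gamma_X$ is an affine space modelled on $\mathfrak{z}^\perp_{p(\xi)} \cong (\mathfrak{t}_HX/Z_HX)^*_{p(\xi)}$, on which the Kirillov form $\omega_\xi(X,Y):=\xi[X,Y]$ is nondegenerate since $\xi$ is flat. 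This form varies smoothly in $\xi$ and equips $\Xi_X\to \Gamma_X$ with a fibrewise symplectic structure; polar decomposition against $g_\Xi$ then produces a smoothly varying adapted almost complex structure, giving a complex vector bundle structure on $\Xi_X \to \Gamma_X$.

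For the second bullet, I would observe that $\Xi_X \subseteq \mathfrak{t}_HX^*$ is open, since it is the preimage of the open subset $\Gamma_X \subseteq Z_HX^*$ under the restriction $\mathfrak{t}_HX^* \to Z_HX^*$, so any filtered isomorphism $\mathfrak{t}_HX \cong TX$ yields an open inclusion $\Xi_X \hookrightarrow T^*X$. The cotangent bundle $T^*X$ carries a canonical homotopy class of almost complex structures: after fixing a Riemannian metric on $X$, split the exact sequence $0 \to q^*T^*X \to T(T^*X) \to q^*TX \to 0$ with $q:T^*X\to X$, identify horizontal with vertical via the metric, and set $J(v,w):=(-w,v)$. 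Restricting to the open submanifold gives the required almost complex structure on $\Xi_X$.

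For the third bullet, let $p:\Xi_X\to \Gamma_X$ denote the projection and consider the short exact sequence of real vector bundles on $\Xi_X$,
$$0 \to T^v\Xi_X \to T\Xi_X \to p^*T\Gamma_X \to 0.$$
Since $p$ is a vector bundle, there is a canonical identification $T^v\Xi_X \cong p^*\Xi_X$, which by the first bullet is a complex vector bundle, hence spin$^c$. The bundle $T\Xi_X$ is spin$^c$ by the second bullet. By the two-out-of-three principle for spin$^c$-structures on short exact sequences, $p^*T\Gamma_X$ inherits a uniquely determined spin$^c$-structure. Since the zero section $s:\Gamma_X \hookrightarrow \Xi_X$ is a homotopy equivalence with $s^*p^*T\Gamma_X = T\Gamma_X$, the spin$^c$-structure descends uniquely to $T\Gamma_X$, and by construction $p$ is spin$^c$-oriented.

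The main obstacle I expect is the compatibility check in the third step: the almost complex structure on $T\Xi_X$ coming from the second bullet need not a priori restrict fibrewise to the complex structure on $T^v\Xi_X$ from the first bullet, so one must verify that the two spin$^c$-structures are compatible enough that the induced spin$^c$-structure on $T\Gamma_X$ is canonical and independent of the auxiliary choices (the metric on $X$ used to split the Atiyah sequence, the filtered isomorphism $\mathfrak{t}_HX\cong TX$, and the splitting of the vertical-horizontal sequence above). This reduces to homotopy-invariance of spin$^c$-structures together with contractibility of the spaces of choices, but the diagram chase needs to be spelled out carefully.
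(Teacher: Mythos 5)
Your proposal is correct and follows essentially the same route as the paper, which simply invokes the two-out-of-three property for spin$^c$-structures applied to $0\to T^v\Xi_X\to T\Xi_X\to p^*T\Gamma_X\to 0$, with the fibrewise complex structure coming from the Kirillov form and $g_\Xi$ and the almost complex structure on $\Xi_X$ coming from the open inclusion into $T^*X$. Your closing worry about compatibility is unnecessary: the two-out-of-three principle only needs spin$^c$-structures (not compatible almost complex structures) on two of the three bundles, and then uniquely determines the third, which is exactly the uniqueness asserted in the statement.
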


The result follows from the 2/3-property for spin$^c$-vector bundles. We remark that since $\Xi$ has even dimension and $\Xi_X\to \Gamma_X$ has even rank, the spin$^c$-manifold $\Gamma_X$ has even dimension.

\begin{theorem}
\label{somecomewithe}
Let $X$ be a regular Carnot manifold of type $\mathfrak{g}$ such that $\mathsf{G}$ admits flat orbits and fix a metrix $g_\Xi$ on $\Xi_X$. Assume that $\mathcal{H}\to \Gamma_X$ is a bundle of Hilbert spaces and $\pi_{\musFlat}:I_{X}\to C_0(\Gamma_X,\mathbb{K}(\mathcal{H}))$ a $C_0(\Gamma_X)$-linear $*$-isomorphism. Consider the $KK$-morphism
$$\chi_X:=[\mathcal{H}\otimes \mathfrak{M}(\mathcal{H})]\otimes_{C_0(\Gamma_X)} [\Gamma_X]\in KK(I_{X},\C),$$
where $[\Gamma_X]\in KK_0(C_0(\Gamma_X),\C)$ is the fundamental class of $\Gamma_X$ associated with the spin$^c$-structure induced from $\Xi_X$ and $\mathfrak{M}(\mathcal{H})$ is the metaplectic correction bundle (see Proposition \ref{metaplecticlinecorrection}). 

The morphism $\chi_X$ fits into a commuting diagram in $KK$:
\[
\begin{tikzcd}
C_0(T^*X) \arrow[ddddrrr, "\ind_{TX}"] \arrow[rrrrrr, "\psi"] &&&&& &C^*(T_HX)\arrow[ddddlll,"\ind_{T_HX}"] \\
&&C_0(\Xi_X)\arrow[rr, "\psi_I"]\arrow[dddr]\arrow[ull,"\subseteq"]&&I_X\arrow[dddl,"\chi_X"]\arrow[urr,"\subseteq"]&& \\
&&&\\
&&&\\
&&&\C&
\end{tikzcd}
\]
\end{theorem}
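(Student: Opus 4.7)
The plan is to verify commutativity by decomposing the diagram into the three subdiagrams displayed and identifying the unlabelled arrow $C_0(\Xi_X)\to\C$. The outer triangle with vertices $C_0(T^*X)$, $C^*(T_HX)$, $\C$ commutes by Theorem \ref{outertringaleldldle}. The middle square involving $\psi$, $\psi_I$, and the two horizontal inclusions commutes by the second item of Theorem \ref{nistorconnethomfortwistgroup} (equivalently, by the right-hand square in Theorem \ref{maincomputationforisg}). Hence the real content is the inner triangle
\[
[\subseteq]_{I_X\to C^*(T_HX)}\otimes_{C^*(T_HX)}\ind_{T_HX}\;=\;\chi_X,
\]
together with the existence of a natural arrow $\tilde\iota:C_0(\Xi_X)\to\C$ with $[\subseteq]_{C_0(\Xi_X)\to C_0(T^*X)}\otimes\ind_{TX}=\tilde\iota$ and $\psi_I\otimes\chi_X=\tilde\iota$.

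First I would take $\tilde\iota$ to be the fundamental class $[\Xi_X]\in KK_0(C_0(\Xi_X),\C)$ of the almost complex manifold $\Xi_X$ (an open subset of the almost complex manifold $T^*X$, see Proposition \ref{spincongamma}). Since extension by zero along the open inclusion $\Xi_X\hookrightarrow T^*X$ is compatible with integration of Chern–Todd characters, and the restriction of the almost complex structure of $T^*X$ to $\Xi_X$ is the one defining $[\Xi_X]$, we obtain
\[
[\subseteq]_{C_0(\Xi_X)\to C_0(T^*X)}\otimes\ind_{TX}=[\Xi_X].
\]
Next, by the $2/3$-property of spin$^c$-structures applied to the spin$^c$-oriented vector bundle $\Xi_X\to\Gamma_X$ of Proposition \ref{spincongamma}, the fundamental class factors as
\[
[\Xi_X]=[\slashed{D}_\Xi]\otimes_{C_0(\Gamma_X)}[\Gamma_X],
\]
where $[\slashed{D}_\Xi]\in KK_0(C_0(\Xi_X),C_0(\Gamma_X))$ is the inverse of the Thom isomorphism $\tau_{\Xi_X}$.

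Now I would assemble the three ingredients. Combining Theorem \ref{outertringaleldldle} with the commuting square of Theorem \ref{nistorconnethomfortwistgroup} gives the identity
\[
[\subseteq]_{I_X\to C^*(T_HX)}\otimes\ind_{T_HX}
=[\subseteq]\otimes\psi^{-1}\otimes\ind_{TX}
=\psi_I^{-1}\otimes[\subseteq]\otimes\ind_{TX}
=\psi_I^{-1}\otimes[\Xi_X].
\]
Inserting the factorisation of $[\Xi_X]$ this equals $\psi_I^{-1}\otimes[\slashed{D}_\Xi]\otimes[\Gamma_X]$. Finally, Proposition \ref{commsquaoadodal} yields $\psi_I\otimes[\mathcal{H}]=[\slashed{D}_\Xi]\otimes[\mathfrak{M}(\mathcal{H})^*]$, and since $\mathfrak{M}(\mathcal{H})$ is a line bundle with $[\mathfrak{M}(\mathcal{H})^*]^{-1}=[\mathfrak{M}(\mathcal{H})]$, rearranging gives
\[
\psi_I^{-1}\otimes[\slashed{D}_\Xi]=[\mathcal{H}]\otimes[\mathfrak{M}(\mathcal{H})].
\]
Substituting proves
\[
[\subseteq]_{I_X\to C^*(T_HX)}\otimes\ind_{T_HX}
=[\mathcal{H}]\otimes[\mathfrak{M}(\mathcal{H})]\otimes[\Gamma_X]
=[\mathcal{H}\otimes\mathfrak{M}(\mathcal{H})]\otimes[\Gamma_X]=\chi_X,
\]
and the same rearrangement shows $\psi_I\otimes\chi_X=[\Xi_X]=\tilde\iota$, so all subdiagrams commute.

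The main obstacle is the identification $[\subseteq]_{C_0(\Xi_X)\to C_0(T^*X)}\otimes\ind_{TX}=[\Xi_X]$: it is intuitively the statement that open restriction preserves fundamental classes of almost complex manifolds, but some care is needed to check that the almost complex structure on $\Xi_X$ induced by Proposition \ref{spincongamma} (via the vector bundle structure $\Xi_X\to\Gamma_X$ and the $2/3$-property) agrees with the one inherited from the open embedding $\Xi_X\subseteq T^*X$; once this compatibility is noted, the remaining manipulations are formal Kasparov product identities that reduce the proof to the chain of equalities above.
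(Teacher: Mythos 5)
Your proof is correct and follows essentially the same route as the paper: both arguments reduce the claim to the identities $j_\Xi\otimes\ind_{TX}=[\Xi_X]$ (functoriality of wrong-way maps for the open inclusion $\Xi_X\subseteq T^*X$), $[\Xi_X]=[\slashed{D}_\Xi]\otimes[\Gamma_X]$ (the factorisation over the spin$^c$-oriented vector bundle $\Xi_X\to\Gamma_X$), and the Morita/Thom comparison of Proposition \ref{commsquaoadodal} (equivalently the left square of Theorem \ref{maincomputationforisg}), then transport along the outer triangle of Theorem \ref{outertringaleldldle} and the commuting square of Theorem \ref{nistorconnethomfortwistgroup} using that $\psi$ and $\psi_I$ are $KK$-isomorphisms. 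The only cosmetic difference is that the paper phrases the key computation as $\psi_I\otimes\chi_X=[\Xi_X]$ and cancels $\psi_I$ at the end, while you unwind it to $\psi_I^{-1}\otimes[\slashed{D}_\Xi]=[\mathcal{H}\otimes\mathfrak{M}(\mathcal{H})]$ before substituting; these are the same manipulation.
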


\begin{proof}
The top square of the diagram commutes by Theorem \ref{nistorconnethomfortwistgroup} and the outer triangle commutes by Theorem \ref{outertringaleldldle}. Let $j:I_X\to C^*(T_HX)$ and $j_\Xi:C_0(\Xi_X)\to C_0(T^*X)$ denote the inclusions. We need to prove that $j\otimes_{I_X}\ind_{T_HX}=\chi_X$. Functoriality of wrong way maps implies that 
$$j_\Xi\otimes_{C_0(\Xi)}\ind_{TX}=[\Xi_X],$$
where $[\Xi_X]\in KK_0(C_0(\Xi),\C)$ is the fundamental class of $\Xi_X$. Theorem \ref{maincomputationforisg} and the functoriality of wrong way maps shows that 
$$\psi_I\otimes_{I_X}\chi_X=[\Xi_X].$$ 
Using that $\psi$ and $\psi_I$ are isomorphisms (see Theorem \ref{nistorconnethomfortwistgroup}) the equality $j\otimes_{I_X}\ind_{T_HX}=\chi_X$ follows.
\end{proof}

\part{$H$-elliptic operators on Carnot manifolds}
\label{part:pseudod}

\begin{center}
{\bf Introduction to part}
\end{center}

The operators we consider in this monograph are not elliptic in the ordinary sense, but do satisfy an ellipticity condition in the pseudodifferential calculus of \cite{vanErp_Yuncken} which is better suited for Carnot manifolds. In the literature there are several approaches to pseudodifferential calculus on ``nilpotent geometries'', for instance \cite{bealsgreiner,christgelleretal,cum89,dynin75,dynin76,melroseeptein,eskeewertthesis,eskeewertpaper,fischruzh,melinoldpreprint,pongemonograph,taylorncom}.
These apporaches all originated in the ideas of Folland-Stein \cite{follandstein} who, using Darboux' theorem, modeled the boundary of a strictly pseudoconvex domain (cf. Example \ref{boundaryofpseudoconvexz}) locally as a Heisenberg group and treated the Szegö projection thereon using ordinary pseudodifferential methods upon defining a principal symbol as a convolution operator on the Heisenberg group in each fibre. We take our stance in recent years' progress by van Erp-Yuncken \cite{vanErp_Yuncken} and Dave-Haller \cite{Dave_Haller1, Dave_Haller2} that provides a coherent picture of the Carnot calculus on general Carnot manifolds. Let us give an overview of the sections in this part:

\begin{itemize}
\item In Section \ref{pseudcalcalc}, we recall the Carnot calculus of Melin \cite{melinoldpreprint}, following modern developments by van Erp-Yuncken \cite{vanErp_Yuncken}. We also provide a range of different examples and recall the graded calculus of Dave-Haller \cite{Dave_Haller1}. 
\item In Section \ref{subsec:hellleleldlda} we set up the represented symbol calculus following Dave-Haller \cite{Dave_Haller1, Dave_Haller2} and revisit the examples. The Rockland condition from \cite{Dave_Haller1}, and standard techniques, readily allow us to extends standard Fredholm theory for elliptic operators to $H$-elliptic operators on a general Carnot manifold. 
\item In Section \ref{secondnaaoaoac} we refine analytic properties of the symbol calculus of $H$-elliptic operators. For $\pmb{F}$-regular manifolds, we study analytic properties of the symbol of an $H$-elliptic operators as a multiplier of the $C_0(\Gamma_X)$-Hilbert $C^*$-module of flat orbit representations, also characterizing $H$-ellipticity in terms of invertibility as a multiplier on the flat orbit representations. 
\item In Section \ref{sec:ktheoomon} we utilize the technical tools of Section \ref{secondnaaoaoac} to study the $K$-theoretical invariants in $K_*(C^*(T_HX))$ and $K^*(\Gamma_X)$. We implement the two step process of lifting $[\sigma_H(D)]\in K_*(C^*(T_HX))$ to an elliptic complex on $\Gamma_X$: first we need to localize the support of $[\sigma_H(D)]\in K_*(C^*(T_HX))$ to $\Gamma_X$, i.e. construct a pre-image under $K_*(I_X)\to  K_*(C^*(T_HX))$, and then we need to approximate by a finite rank elliptic complex, i.e. implement the Morita equivalence $K_*(I_X)\cong K^*(\Gamma_X)$. After proving that this is possible in the abstract, we restrict to polycontact manifolds where we carry out these computations in the examples coming from $\Delta_H+\gamma T$-operators and projections of Hermite type.
\end{itemize}

\section{The pseudodifferential calculus of van Erp-Yuncken}
\label{pseudcalcalc}

In this subsection we will review the pseudodifferential calculus of van Erp-Yuncken \cite{vanErp_Yuncken}. We will follow their notation, but for convenience we incorporate vector bundles from the start. Consider a Lie groupoid $\mathcal{G}$ with unit space $\mathcal{G}^{(0)}$ and complex vector bundles $E_1,E_2\to \mathcal{G}^{(0)}$. When considering function spaces on $\mathcal{G}$ we consider them a left/right module of relevant function spaces on $\mathcal{G}^{(0)}$ via the range/source mapping. Following \cite{vanErp_Yuncken}, we define the space of $r$-fibred distributions $\mathcal{E}_r'(\mathcal{G};E_1,E_2)$ consists of continuous left $C^\infty(\mathcal{G}^{(0)})$-linear mappings 
$$u:C^\infty(\mathcal{G},s^*E_1)\to C^\infty(\mathcal{G}^{(0)},E_2).$$
For notational convenience, we write $u(\varphi)(x)=\int_{\mathcal{G}_x} u(\gamma) \varphi(\gamma)$ for $u\in \mathcal{E}_r'(\mathcal{G};E_1,E_2)$ and $\varphi\in C^\infty_c(\mathcal{G},s^*E_1)$.

If the unit space of $\mathcal{G}$ is compact, the Schwarz kernel theorem and the Haar system of $\mathcal{G}$ identifies $\mathcal{E}_r'(\mathcal{G};E_1,E_2)$ with a subspace of $\mathcal{E}'(\mathcal{G},r^*E_2\otimes (s^*E_1)^*\otimes |\Lambda_r|)$ (here $|\Lambda_r|$ denotes the density bundle of $\ker (Dr)$). If the unit space of $\mathcal{G}$ is non-compact,  $\mathcal{E}_r'(\mathcal{G};E_1,E_2)$ can be identified with a space of properly supported elements in $\mathcal{D}'(\mathcal{G},r^*E_2\otimes (s^*E_1)^*\otimes |\Lambda_r|)$. By the same token, we can always  identify the space of properly supported smooth functions $C^\infty_p(\mathcal{G},r^*E_2\otimes (s^*E_1)^*\otimes |\Lambda_r|)$ with its preimage in $\mathcal{E}_r'(\mathcal{G};E_1,E_2)$.

Convolution defines an operation $\mathcal{E}_r'(\mathcal{G};E_1,E_2)\times C^\infty_c(\mathcal{G},s^*E_1)\to C^\infty_c(\mathcal{G},s^*E_2)$ by 
\[u \ast \varphi(\gamma) = \int_{\mathcal{G}_{r(\gamma)}} u(\gamma\eta^{-1}) \varphi(\eta), \quad u\in \mathcal{E}_r'(\mathcal{G};E_1,E_2), \; \varphi\in C^\infty_c(\mathcal{G},s^*E_1).\]
Using duality, convolution defines an operation 
$$\mathcal{E}_r'(\mathcal{G};E_1,E_2)\times \mathcal{E}_r'(\mathcal{G};E_2,E_3)\to \mathcal{E}_r'(\mathcal{G};E_1,E_3),$$
for any three vector bundles $E_1,E_2,E_3\to \mathcal{G}^{(0)}$. Similar ideas produces an adjoint operation 
$$\mathcal{E}_r'(\mathcal{G};E_1,E_2)\to \mathcal{E}_r'(\mathcal{G};E_2,E_1), \quad u\mapsto u^*(\gamma):=u(\gamma^{-1})^*.$$

We now restrict to a closed Carnot manifold $X$ and let $T_HX\to X$ denote the osculating groupoid associated with the filtering (see Definition \ref{osculatingdef}). The pseudodifferential calculus makes heavy use of van Erp-Yuncken's parabolic tangent groupoid $\mathbb{T}_HX\to X\times [0,\infty)$ reviewed above in Section \ref{subsec:parabolictanget}. For two complex vector bundles $E_1,E_2\to X$ on the compact Carnot manifold $X$, we write 
$$\mathcal{E}_r'(\mathbb{T}_HX;E_1,E_2):=\mathcal{E}_r'(\mathbb{T}_HX;E_1\times [0,\infty),E_2\times [0,\infty)).$$ 
The projection mapping $\mathbb{T}_HX\to T_HX$ is proper in a neighborhood of $T_HX\times \{0\}\subseteq \mathbb{T}_HX$ so $C^\infty(X\times [0,\infty))$-linearity ensures that there is a well defined continuous mapping 
$$\mathrm{ev}_0:\mathcal{E}_r'(\mathbb{T}_HX;E_1,E_2)\to \mathcal{E}_r'(T_HX;E_1,E_2),$$
that respects convolution. Moreover, for $t>0$, the projection mapping $\mathbb{T}_HX\to X\times X\times \{t\}$ is proper in a neighborhood of $X\times X\times \{t\}\subseteq \mathbb{T}_HX$ so $C^\infty(X\times [0,\infty))$-linearity ensures that there is a well defined continuous induced mapping 
$$\mathrm{ev}_t:\mathcal{E}_r'(\mathbb{T}_HX;E_1,E_2)\to \mathcal{E}_r'(X\times X;E_1,E_2),$$
that respects convolution. We note that the Schwarz kernel theorem ensures that 
$$\mathcal{E}'_r(X\times X; E_1,E_2)\cong C^\infty(X,E_2)\hat{\otimes}\mathcal{E}'(X,(E_1)^*\otimes |\Lambda_X|),$$ 
is the space of all continuous linear mappings $C^\infty(X,E_1)\to C^\infty(X,E_2)$.

The action of $\mathbb{R}_+$ on $\mathbb{T}_HX$ defines a continuous action on $\mathcal{E}_r'(\mathbb{T}_HX;E_1,E_2)$. Following \cite[Definition 18]{vanErp_Yuncken} we define for $m\in \mathbb{C}$ the space 
$$\pmb{\Psi}^m_H(X;E_1,E_2)\subseteq\mathcal{E}_r'(\mathbb{T}_HX;E_1,E_2),$$ 
to consist of those $\pmb{P}$ for which 
$$\lambda_*\pmb{P}-\lambda^m\pmb{P}\in C^\infty_p(\mathbb{T}_HX,r^*E_2\otimes (s^*E_1)^*\otimes |\Lambda_r|),$$
for all $\lambda>0$. We note that for $\pmb{P}\in \pmb{\Psi}^m_H(X;E_1,E_2)$ and $t,\lambda>0$, we have that 
\begin{equation}
\label{comparofeval}
\mathrm{ev}_{\lambda t}\pmb{P}-\lambda^m \mathrm{ev}_t\pmb{P}\in C^\infty(X\times X,E_2\otimes(E_1)^*\otimes |\Lambda_r|).
\end{equation}
We can conclude the next proposition from homogeneity and using the fact that $C^\infty_p(\mathcal{G},r^*E_2\otimes (s^*E_1)^*\otimes |\Lambda_r|)$ is an ideal in the space of $r$-fibred distribution with respect to convolution.

\begin{proposition}
\label{productsinfatthm}
The convolution product on the space of $r$-fibred distributions restricts to a well defined $C^\infty[0,\infty)$-linear composition product 
$$\pmb{\Psi}^m_H(X;E_1,E_2)\times \pmb{\Psi}^{m'}_H(X;E_2,E_3)\to \pmb{\Psi}^{m+m'}_H(X;E_1,E_3),$$
for any three vector bundles $E_1,E_2,E_3\to X$ and the adjoint restricts to a well defined $C^\infty[0,\infty)$-antilinear adjoint operation 
$$\pmb{\Psi}^m_H(X;E_1,E_2)\to \pmb{\Psi}^{m}_H(X;E_2,E_1).$$
\end{proposition}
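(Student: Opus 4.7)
The plan is to reduce everything to the observations that (i) the zoom action $\alpha_\lambda$ on $\mathbb{T}_HX$ is a smooth groupoid automorphism, hence convolution and the involution on $r$-fibred distributions are equivariant under it, and (ii) the space $C^\infty_p(\mathbb{T}_HX,r^*E_2\otimes (s^*E_1)^*\otimes |\Lambda_r|)$ is a two-sided ideal in the space of $r$-fibred distributions with respect to convolution (a fact already noted in the excerpt). Both assertions are compatible with the fact that the Schwartz kernel is a $C^\infty[0,\infty)$-module.

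For the composition, let $\pmb{P}\in \pmb{\Psi}^m_H(X;E_1,E_2)$ and $\pmb{Q}\in \pmb{\Psi}^{m'}_H(X;E_2,E_3)$. By definition, for every $\lambda>0$ we may write
\[
\alpha_{\lambda*}\pmb{P} = \lambda^m \pmb{P} + R_\lambda, \qquad \alpha_{\lambda*}\pmb{Q} = \lambda^{m'} \pmb{Q} + S_\lambda,
\]
with $R_\lambda, S_\lambda$ smooth and properly supported. Equivariance of convolution under the zoom action gives $\alpha_{\lambda*}(\pmb{P}*\pmb{Q}) = (\alpha_{\lambda*}\pmb{P})*(\alpha_{\lambda*}\pmb{Q})$, and expanding yields
\[
\alpha_{\lambda*}(\pmb{P}*\pmb{Q}) - \lambda^{m+m'}(\pmb{P}*\pmb{Q}) = \lambda^m\,\pmb{P}*S_\lambda + \lambda^{m'} R_\lambda *\pmb{Q} + R_\lambda*S_\lambda,
\]
and each term on the right is in $C^\infty_p$ by the ideal property. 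That $\pmb{P}*\pmb{Q}$ is itself a well-defined $r$-fibred distribution follows from the same properness considerations used to make convolution well-defined on $\mathcal{E}_r'(\mathbb{T}_HX;\cdot,\cdot)$; no extra input is needed. The $C^\infty[0,\infty)$-linearity of the product is automatic from $C^\infty(\mathbb{T}_HX^{(0)})$-linearity of convolution.

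For the adjoint, let $\pmb{P}\in \pmb{\Psi}^m_H(X;E_1,E_2)$. The involution $u\mapsto u^*$ commutes with $\alpha_{\lambda*}$ (since the inversion map on $\mathbb{T}_HX$ is $\alpha_\lambda$-equivariant and pointwise adjunction on fibres of $r^*E_1\otimes(s^*E_2)^*\otimes|\Lambda_r|$ is $\alpha_\lambda$-equivariant). Applying $*$ to the identity $\alpha_{\lambda*}\pmb{P}-\lambda^m\pmb{P}=R_\lambda$ and using that $\lambda>0$ is real (so $\overline{\lambda^m}=\lambda^{\bar m}$ equals $\lambda^m$ in the real-order case treated here) gives
\[
\alpha_{\lambda*}(\pmb{P}^*) - \lambda^{m}\pmb{P}^* = R_\lambda^* \;\in\; C^\infty_p,
\]
so $\pmb{P}^*\in \pmb{\Psi}^m_H(X;E_2,E_1)$. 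The $C^\infty[0,\infty)$-antilinearity is immediate from the analogous antilinearity of $*$ on $r$-fibred distributions.

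The main obstacle, such as it is, is purely bookkeeping: one must confirm that the proper supportedness of $R_\lambda, S_\lambda$ combined with the built-in properness of $r$-fibred distributions propagates through the four convolutions appearing in the expansion, so that the right-hand side genuinely lies in $C^\infty_p$ with the correct density/vector-bundle coefficients. This is exactly the content of the ideal property cited in the excerpt, so once that is granted, both the composition statement and the adjoint statement reduce to the one-line algebraic identities above.
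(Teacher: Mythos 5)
Your argument is correct and is exactly the elaboration the paper has in mind: the paper derives the proposition in a one-line remark from the defining almost-homogeneity together with the fact that $C^\infty_p(\mathbb{T}_HX,\,\cdot\,)$ is a two-sided convolution ideal, and your expansion of $\alpha_{\lambda*}(\pmb{P}\ast\pmb{Q})$ and of $\alpha_{\lambda*}(\pmb{P}^*)$ is precisely that computation made explicit. The only caveat, already present in the paper's own statement, is that for genuinely complex order $m\in\C$ the same calculation yields $\pmb{P}^*\in\pmb{\Psi}^{\bar m}_H(X;E_2,E_1)$ rather than $\pmb{\Psi}^{m}_H(X;E_2,E_1)$, a point which your parenthetical about the real-order case correctly flags.
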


We  define $\Psi_H^m(X;E_1,E_2)\subseteq \mathcal{E}'_r(X\times X; E_1,E_2)$ as the image of $\pmb{\Psi}^m_H(X;E_1,E_2)$ under $\mathrm{ev}_1$. For $\pmb{P}\in\pmb{\Psi}^m_H(X;E_1,E_2)$, Equation \eqref{comparofeval} ensures that $\mathrm{ev}_0(\pmb{P})\in \mathcal{E}'_r(T_HX;E_1,E_2)$ is determined modulo $C^\infty_c(T_HX;E_1,E_2)$ by $\mathrm{ev}_1(\pmb{P})$. Based on this observation, we are in a position to define the symbol algebra and the principal symbol mapping.

\begin{definition}
For a closed Carnot manifold $X$, complex vector bundles $E_1,E_2\to X$ and $m\in \mathbb{C}$, define 
$$\Sigma^m_H(X;E_1,E_2):=\{K\in \mathcal{E}'_r(T_HX;E_1,E_2)/C^\infty_c(T_HX;E_1,E_2): \, \lambda_*K=\lambda^m K\}.$$
For $P\in \Psi_H^m(X;E_1,E_2)$, we define $\sigma_m(P)\in \Sigma^m_H(X;E_1,E_2)$ as the class of $\mathrm{ev}_0(\pmb{P})$ for any $\pmb{P}\in \pmb{\Psi}^m_H(X;E_1,E_2)$ with $\mathrm{ev}_1(\pmb{P})=P$. 
\end{definition}

The convolution product on $\pmb{\Psi}^m_H$ from Proposition \ref{productsinfatthm} restricts to a well defined composition product 
$$\Psi^m_H(X;E_1,E_2)\times \Psi^{m'}_H(X;E_2,E_3)\to \Psi^{m+m'}_H(X;E_1,E_3),$$
and induces a symbol product
$$\Sigma^m_H(X;E_1,E_2)\times \Sigma^{m'}_H(X;E_2,E_3)\to \Sigma^{m+m'}_H(X;E_1,E_3),$$
for any three vector bundles $E_1,E_2,E_3\to X$. The adjoint also restricts to a well defined order preserving adjoint operation on $\Psi^m_H$ and $\Sigma^m_H$. The reader should note that the products and adjoints on $\Psi^m_H$ and $\Sigma^m_H$ are defined using the space of distributions $\pmb{\Psi}^m_H$ on the tangent groupoid in order to ensure that $\Psi^m_H$ and $\Sigma^m_H$ fit into a calculus, but in practice the operations are defined completely in terms of the groupoid structure of $X\times X$ and $T_HX$ for $\Psi^m_H$ and $\Sigma^m_H$, respectively. The next proposition is a direct consequence of \cite[Proposition 37]{vanErp_Yuncken}. We introduce the notation 
\begin{align}
\label{tidlemsossom}
\tilde{\Sigma}^m_H&(X;E_1,E_2):= \\
\nonumber
:=&\{P\in \mathcal{E}'_r(T_HX;E_1,E_2):\lambda_*P-\lambda^mP\in C^\infty_c(T_HX,\Hom(E_1,E_2)\otimes |\Lambda|)\}.
\end{align}

\begin{proposition}
\label{embeddingboldpsi}
Multiplication by $t$ defines an embedding 
$$\pmb{\Psi}^{m-1}_H(X;E_1,E_2)\hookrightarrow \pmb{\Psi}^m_H(X; E_1,E_2).$$
Moreover, evaluation at $t=0$ defines an isomorphism 
\begin{align*}
\pmb{\Psi}^m_H(X; &E_1,E_2)/t\pmb{\Psi}^{m-1}_H(X;E_1,E_2)\cong\tilde{\Sigma}^m_H(X;E_1,E_2).
\end{align*}
\end{proposition}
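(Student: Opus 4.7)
The plan is to verify in turn that (a) multiplication by $t$ embeds $\pmb{\Psi}^{m-1}_H$ into $\pmb{\Psi}^m_H$; (b) the evaluation $\mathrm{ev}_0$ takes values in $\tilde{\Sigma}^m_H$ and vanishes on the image $t\pmb{\Psi}^{m-1}_H$; (c) the induced map $\pmb{\Psi}^m_H/t\pmb{\Psi}^{m-1}_H\to \tilde{\Sigma}^m_H$ is an isomorphism.

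For (a), I would first note from the zoom action formula $\alpha_\lambda(x,y,t)=(x,y,\lambda^{-1}t)$ that the coordinate $t$ on $\mathbb{T}_HX$ is homogeneous of degree $+1$ under push-forward, i.e.\ $\lambda_*t=\lambda t$. Since $C^\infty_p(\mathbb{T}_HX;\ldots)$ is stable under multiplication by smooth properly supported functions, for $\pmb{P}\in \pmb{\Psi}^{m-1}_H$ one computes
$$
\lambda_*(t\pmb{P})-\lambda^m(t\pmb{P})=\lambda t\bigl(\lambda_*\pmb{P}-\lambda^{m-1}\pmb{P}\bigr)\in C^\infty_p(\mathbb{T}_HX;\ldots),
$$
so that $t\pmb{P}\in \pmb{\Psi}^m_H$. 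Injectivity of multiplication by $t$ is the first subtle point. If $t\pmb{P}=0$, then $\pmb{P}$ is supported on the central hypersurface $T_HX\times\{0\}$, so in the gauge chart $\psi$ it formally expands as $\pmb{P}=\sum_{k\geq 0}\partial_t^k\delta(t)\otimes u_k$ with $u_k\in \mathcal{E}'_r(T_HX;E_1,E_2)$; the identity $t\partial_t^k\delta(t)=-k\partial_t^{k-1}\delta(t)$ forces $u_k=0$ for $k\geq 1$, while the observation that any element of $C^\infty_p(\mathbb{T}_HX)$ supported on $\{t=0\}$ must vanish converts the homogeneity defect of $\pmb{P}$ into an exact homogeneity relation on $u_0$, which combined with the proper-support condition forces $u_0=0$.

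For (b), $\mathrm{ev}_0$ is well-defined on all of $\mathcal{E}'_r(\mathbb{T}_HX;\ldots)$, and the zoom action at $t=0$ coincides with the Carnot dilation on $T_HX$. Hence for $\pmb{P}\in \pmb{\Psi}^m_H$ and $K:=\mathrm{ev}_0\pmb{P}$, the homogeneity defect restricts to a smooth section $\lambda_*K-\lambda^m K\in C^\infty(T_HX;\ldots)$, and the proper-support hypothesis on the parent element in $C^\infty_p(\mathbb{T}_HX)$, restricted to the central fibre, forces compact fibre support; thus $\lambda_*K-\lambda^m K\in C^\infty_c(T_HX;\ldots)$, so $K\in \tilde{\Sigma}^m_H$. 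It is immediate that $\mathrm{ev}_0(t\pmb{P})=0$ for any $\pmb{P}\in \pmb{\Psi}^{m-1}_H$, so the induced map $\pmb{\Psi}^m_H/t\pmb{\Psi}^{m-1}_H\to \tilde{\Sigma}^m_H$ is well-defined.

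For (c), surjectivity is obtained by an explicit lift: given $K\in \tilde{\Sigma}^m_H$, transport $K\otimes 1_{[0,\infty)}$ to $\mathbb{T}_HX$ along the gauge chart $\psi:TX\times[0,\infty)\to \mathbb{T}_HX$ from Section \ref{subsec:parabolictanget}, multiply by a smooth cutoff $\chi(t)$ with $\chi\equiv 1$ near $0$, and verify that the resulting $\pmb{P}$ lies in $\pmb{\Psi}^m_H$ using the interplay between $\psi$ and the zoom action together with the hypothesis on $K$. For injectivity, if $\mathrm{ev}_0\pmb{P}=0$ then Hadamard's division lemma along the smooth hypersurface $\{t=0\}$ yields an $r$-fibred distribution $\pmb{Q}$ with $\pmb{P}=t\pmb{Q}$, and the identity
$$
t\bigl(\lambda_*\pmb{Q}-\lambda^{m-1}\pmb{Q}\bigr)=\lambda^{-1}\bigl(\lambda_*(t\pmb{Q})-\lambda^m(t\pmb{Q})\bigr)\in C^\infty_p(\mathbb{T}_HX;\ldots)
$$
shows that the homogeneity defect of $\pmb{Q}$, multiplied by $t$, is properly supported and smooth. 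The main obstacle here is propagating smoothness of the defect itself across $\{t=0\}$: this reduces to the smooth division property that any properly supported smooth section of $r^*E_2\otimes (s^*E_1)^*\otimes|\Lambda_r|$ vanishing on $\{t=0\}$ can be written as $t$ times a properly supported smooth section, a variant of Hadamard's lemma verifiable in the gauge chart $\psi$. Once this division property is established, it follows that $\pmb{Q}\in \pmb{\Psi}^{m-1}_H$, completing the proof of the stated isomorphism.
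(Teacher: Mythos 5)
The paper itself does not prove this proposition; it states it as "a direct consequence of \cite[Proposition 37]{vanErp_Yuncken}" and gives no argument. Your proof is therefore a self-contained reconstruction, and its overall architecture --- establish $\lambda_*(t\pmb{P})-\lambda^m(t\pmb{P})=\lambda t(\lambda_*\pmb{P}-\lambda^{m-1}\pmb{P})$ to show $t\cdot$ maps $\pmb{\Psi}^{m-1}_H$ into $\pmb{\Psi}^m_H$, check that $\mathrm{ev}_0$ lands in $\tilde{\Sigma}^m_H$ and kills $t\pmb{\Psi}^{m-1}_H$, build lifts via the exponential chart $\psi$ and a cutoff for surjectivity, and divide by $t$ for injectivity of the induced map --- is exactly the right one and agrees with van Erp--Yuncken's.

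Two points deserve correction. First, your argument for injectivity of multiplication by $t$ on $\pmb{\Psi}^{m-1}_H\subseteq \mathcal{E}'_r(\mathbb{T}_HX;E_1,E_2)$ is over-engineered and conceptually off: an $r$-fibred distribution is, by definition, a $C^\infty(X\times[0,\infty))$-linear map $u:C^\infty(\mathbb{T}_HX,s^*E_1)\to C^\infty(X\times[0,\infty),E_2)$ whose output is \emph{smooth along the base}, so it can never carry a $\delta(t)$ or $\partial_t^k\delta(t)$ component --- the formal expansion $\pmb{P}=\sum_{k\ge 0}\partial_t^k\delta(t)\otimes u_k$ is impossible inside $\mathcal{E}'_r$. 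The correct (and far shorter) argument is simply that $t$ is not a zero-divisor in $C^\infty(X\times[0,\infty))$: if $tu=0$ then $t\,u(\phi)=0$ in $C^\infty$ for every test section $\phi$, hence $u(\phi)$ vanishes on the dense open set $\{t>0\}$ and so vanishes identically by smoothness, giving $u=0$.

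Second, the final injectivity step has a small logical gap. You note that $t\bigl(\lambda_*\pmb{Q}-\lambda^{m-1}\pmb{Q}\bigr)\in C^\infty_p$, and say that showing the defect $g:=\lambda_*\pmb{Q}-\lambda^{m-1}\pmb{Q}$ itself lies in $C^\infty_p$ ``reduces to the smooth division property.'' That division only produces a smooth, properly supported section $\tilde{g}$ with $t\tilde{g}=tg$; to conclude $g=\tilde{g}$ as $r$-fibred distributions you still need to invoke injectivity of multiplication by $t$ on $\mathcal{E}'_r(\mathbb{T}_HX;E_1,E_2)$ (the point just discussed). You have this ingredient available, but the chain ``divide the smooth quantity, then identify the distributional defect with the result via $t$-injectivity'' should be spelled out; as written the reduction to Hadamard's lemma alone is incomplete.
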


\begin{remark}
There is a substantial body of literature on pseudodifferential calculus on groupoids, see for instance \cite{debordskandpseudo,debordskandext,debordskandblowup,debordlescure,nistweinxu}. The calculus $\Psi^m_H$ of van Erp-Yuncken \cite{vanErp_Yuncken} is not a pseudodifferential calculus on a groupoid in the classical sense. Indeed, the groupoid on which the calculus $\Psi^m_H$ is defined is the pair groupoid $X\times X$ whose classical groupoid calculus would be the classical pseudodifferential calculus on $X$.
\end{remark}

\begin{example}
\label{compaoaoadoadl}
A natural source of examples in the Carnot calculus is differential operators. A Carnot manifold carries an associated filtration on the algebra of differential operators. Let $E$ and $F$ be smooth vector bundles over $X$. Choose a linear connection $\nabla$ on $E$. For $m\in \mathbb{N}$, we define $\mathcal{DO}_H^m(X;E_1,E_2)$ as the linear space spanned by operators of the form $a \nabla_{X_k} \ldots \nabla_{X_1}$, where $X_i \in C^\infty(X;T^{p_i} X)$, $a \in C^\infty(X;\Hom(E_1,E_2))$ and $p_1 + \ldots + p_k \leq m$. The space $\mathcal{DO}_H^m(X;E_1,E_2)$ is independent of the choice of connection on $E$ since it can be described as local operators $A:C^\infty(X;E_1)\to C^\infty(X;E_2)$ that in local coordinates and local trivializations can be written as a sum of terms  $a X_k \ldots X_1$ where $X_i \in \Gamma^\infty(T^{p_i} X)$, $a \in C^\infty(X;\Hom(E_1,E_2))$ for $p_1 + \ldots + p_k \leq m$. 

By \cite[Example 24]{vanErp_Yuncken}, it holds that  $\mathcal{DO}_H^m(X;E_1,E_2)\subseteq \Psi_H^m(X;E_1,E_2)$. Let us briefly describe why that is. For a $P\in \mathcal{DO}_H^m(X;E_1,E_2)$, we can identify it with its Schwartz kernel 
$$P(x,y):=P^t\delta_x(y),$$
and to it associate its principal cosymbol $P_0\in \mathcal{E}'_r(T_HX; E_1,E_2)$ defined as the fibrewise Schwartz kernel (of convolution type) of the leading terms in $P$ frozen in points on $X$. It was proven in \cite[Example 24]{vanErp_Yuncken} that the element
$$\pmb{P}:=\begin{cases} 
t^m P, \; &t>0,\\
P_0, \; &t=0, \end{cases}$$
belongs to $\pmb{\Psi}^m_H(X;E_1,E_2)$. Therefore, $P=\mathrm{ev}_1(\pmb{P})\in \Psi_H^m(X;E_1,E_2)$ and its principal symbol is $\sigma_m(P)=P_0\mod C^\infty_c(T_HX;E_1,E_2)$.

If $X$ is a regular Carnot manifold, we can describe the principal symbol of a  differential operator in another way. Let $\mathfrak{g}$ be the type of $X$, i.e. the graded Lie algebra $\mathfrak{g}$ such that $\mathfrak{g}\cong \mathsf{gr}[TX]_x$ as Lie algebras for all $x\in X$. For $D \in \mathcal{DO}_H^m(X;E_1,E_2)$ its principal symbol at each $x \in X$ can be identified with an element
\[ \sigma_x^m(D) \in \mathcal{U}_m(\mathfrak{g}) \otimes \mathrm{Hom}(E_{1,x}, E_{2,x}), \]
where $\mathcal{U}_m(\mathfrak{g})$ is the subspace of elements of degree $m$ the universal enveloping algebra of a graded Lie algebra $\mathfrak{g}$, which is spanned by elements of degree $\leq m$. Note that $\mathcal{U}_m(\mathfrak{g}) \otimes \mathrm{Hom}(E_{1,x}, E_{2,x})$ is naturally identified with a subspace of $\Sigma^m_H(X;E_1,E_2)_x$. The principal symbol on differential operators is characterized by the following properties, see \cite[Subsection 2.2]{Dave_Haller1}:
\begin{enumerate}
\item $\sigma_x^{m+m'}(D_1D_2) = \sigma_x^m(D_1) \sigma_x^{m'}(D_2)$, for all $x\in X$, $D_1\in \mathcal{DO}_H^m(X;E_2,E_3)$, and $D_2\in \mathcal{DO}_H^{m'}(X;E_1,E_2)$.
\item $\sigma^m_x(\nabla_Y) = Y_x \otimes id_E$, for all  $x\in X$ and $Y\in C^\infty(X;T^mX)$.
\item $\sigma^0_x(a) = a(x)$ for all $x\in X$ and $a\in C^\infty(X,\mathrm{Hom}(E_1,E_2))$.
\end{enumerate}
For computational purposes, the next proposition comes in handy for constructing examples.
\end{example}

\begin{proposition}
\label{surjecofsymbosl}
Let $X$ be a regular Carnot manifold and define the bundle $\mathcal{U}_m(\mathfrak{t}_HX)\to X$ as the bundle of elements of degree $m$ the universal enveloping algebra of the osculating Lie algebroid, i.e. $\mathcal{U}_m(\mathfrak{t}_HX):=P_X\times_{\Aut_{\rm gr}(\mathfrak{g})}\mathcal{U}_m(\mathfrak{g})$ where $\mathfrak{g}$ is the type of $X$ and $P_X\to X$ the graded frame bundle. Then for any two vector bundles $E_1,E_2\to X$, the symbol mapping restricts to a surjection
$$\mathcal{DO}_H^m(X;E_1,E_2)\to C^\infty(X,\mathcal{U}_m(\mathfrak{t}_HX)\otimes \Hom(E_1,E_2))).$$
\end{proposition}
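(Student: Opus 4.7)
The plan is to build a preimage of a given symbol locally in a graded frame and then patch the local operators together using a partition of unity, exploiting the three axioms of the principal symbol recalled in Example~\ref{compaoaoadoadl}: multiplicativity $\sigma^{m+m'}_x(D_1D_2)=\sigma^m_x(D_1)\sigma^{m'}_x(D_2)$, the rule $\sigma^p_x(\nabla_Y)=Y_x\otimes\mathrm{id}_E$ for $Y\in C^\infty(X,T^{-p}X)$, and the identification $\sigma^0_x(a)=a(x)$ for $a\in C^\infty(X,\Hom(E_1,E_2))$.

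First, I would fix connections on $E_1$ and $E_2$ and choose an open cover $\{U_i\}$ of $X$ together with: (a) local trivializations of the Carnot frame bundle $P_X|_{U_i}$, which by Proposition~\ref{carnotframbeldld} produce graded frames $X_1^{(i)},\dots,X_n^{(i)}$ of $\mathfrak{t}_HX|_{U_i}$ with $X_j^{(i)}$ homogeneous of degree $-p_j$; (b) local splittings of the filtration $T^{-1}X\subset\dots\subset T^{-r}X=TX$ over $U_i$, so that each $X_j^{(i)}$ lifts to a vector field $\widetilde X_j^{(i)}\in C^\infty(U_i,T^{-p_j}X)$; and (c) trivializations of $E_1|_{U_i}$ and $E_2|_{U_i}$ (shrinking $U_i$ if needed). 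A section $\sigma\in C^\infty(X,\mathcal{U}_m(\mathfrak{t}_HX)\otimes\Hom(E_1,E_2))$ admits, by the Poincar\'e--Birkhoff--Witt theorem applied fibrewise to $\mathcal{U}(\mathfrak{g})$ and transported to $\mathcal{U}(\mathfrak{t}_HX|_{U_i})$ through the chosen trivialization, a unique expansion
\[
\sigma|_{U_i}=\sum_{\alpha:\,|\alpha|_H=m} a^{(i)}_\alpha\,(X_1^{(i)})^{\alpha_1}\cdots (X_n^{(i)})^{\alpha_n},
\]
where $|\alpha|_H:=\sum_j\alpha_j p_j$ and $a^{(i)}_\alpha\in C^\infty(U_i,\Hom(E_1,E_2)|_{U_i})$.

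Next, I would define the local Heisenberg differential operator
\[
D_i:=\sum_{|\alpha|_H=m} a^{(i)}_\alpha\circ \nabla_{\widetilde X_1^{(i)}}^{\alpha_1}\cdots\nabla_{\widetilde X_n^{(i)}}^{\alpha_n}\in\mathcal{DO}^m_H(U_i;E_1|_{U_i},E_2|_{U_i}),
\]
and verify via the three symbol axioms above (applied termwise, using that the product in $\Sigma^m_H$ is precisely the product in $\mathcal{U}(\mathfrak{t}_HX)$) that $\sigma^m_x(D_i)=\sigma(x)$ for every $x\in U_i$. Finally, I would pick a smooth partition of unity $\{\phi_i\}$ subordinate to $\{U_i\}$ and set $D:=\sum_i\phi_i D_i\in\mathcal{DO}^m_H(X;E_1,E_2)$, where $\phi_i D_i$ is extended by zero outside $U_i$; multiplicativity of $\sigma^m$ and the normalization $\sigma^0(\phi_i)=\phi_i$ then give $\sigma^m_x(D)=\sum_i\phi_i(x)\sigma(x)=\sigma(x)$, which is the desired surjectivity.

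The only non-routine step is the first bullet of the local setup: one must know that a graded frame of $\mathfrak{t}_HX$ can be lifted, over a sufficiently small $U_i$, to vector fields $\widetilde X_j^{(i)}$ sitting in the right piece $T^{-p_j}X$ of the filtration of $TX$, so that each $\nabla_{\widetilde X_j^{(i)}}$ has Heisenberg order exactly $p_j$. This is a local splitting of the short exact sequences $0\to T^{-p_j+1}X\to T^{-p_j}X\to \mathsf{gr}_{-p_j}(TX)\to 0$ and is always possible. Everything else is bookkeeping: PBW guarantees uniqueness of the local expansion, the symbol axioms guarantee that the construction is independent of choices at the level of principal symbol, and the partition of unity makes the local-to-global step transparent since any commutator between $\phi_i$ and $D_i$ is of strictly lower Heisenberg order and therefore invisible to $\sigma^m$.
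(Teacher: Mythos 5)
The paper states this proposition without proof, so there is nothing to compare against; your supplied argument is correct and fills the gap in the natural way. A few remarks on the details. The three symbol axioms you invoke, combined with the PBW theorem fibrewise, indeed reduce the problem to the two local ingredients you identify: (i) a graded frame of $\mathfrak{t}_HX|_{U_i}$ obtained from the Carnot frame bundle (Proposition~\ref{carnotframbeldld}), and (ii) a local splitting of the filtration lifting each homogeneous frame element $X_j^{(i)}$ of degree $-p_j$ to a vector field $\widetilde X_j^{(i)}\in C^\infty(U_i,T^{-p_j}X)$, which is just a local splitting of the short exact sequence $0\to T^{-p_j+1}X\to T^{-p_j}X\to \mathsf{gr}_{-p_j}(TX)\to 0$ of vector bundles and so always exists (and here can even be taken globally, since the paper has already fixed a non-canonical isomorphism $TX\cong\mathsf{gr}(TX)$, which provides the lift without recourse to a cover). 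The local operator $D_i$ you define has the prescribed principal symbol on $U_i$ by the multiplicativity and normalization axioms. The final gluing by a partition of unity is clean because $\sigma^m(\phi_i D_i)=\sigma^0(\phi_i)\sigma^m(D_i)=\phi_i\cdot\sigma|_{U_i}$, so the patched operator has symbol $\sum_i\phi_i\sigma=\sigma$. The regularity of the Carnot structure is used precisely where you use it — to know that $P_X$ is a principal $\Aut_{\rm gr}(\mathfrak{g})$-bundle and hence locally trivial with graded frames. One small but inessential sloppiness: since a global splitting $\mathfrak{t}_HX\cong TX$ exists, the partition-of-unity step is genuinely only needed to handle the local trivializations of $E_1$, $E_2$ and $P_X$ (your items (a) and (c)); the lifting (b) need not be done locally.
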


We use the notation 
$$\dim_h(X):=\sum_{j=1}^r j\mathrm{rk}(T^{-j}X/T^{-j+1}X),$$
for the homogeneous dimension of a Carnot manifold $X$. We also introduce the notation 
$$|\cdot|:T_HX\to \R_+,$$
for the composition of the the logarithm $T_HX\to \mathfrak{t}_HX$ and a homogeneous norm on $\mathfrak{t}_HX$ which is smooth outside the zero section, cf. \cite[Section 3]{choiponge} or \cite[Section 3]{Dave_Haller1}. The construction of $|\cdot|$ involves a choice, e.g. from fixing a choice of metrics on $T^{-j}X/T^{-j-1}X$, $j=1,\ldots, r$, but for our purposes the precise choice will not matter. 

We define the subspace $P^m_H(T_HX,E_1,E_2)\subseteq C^\infty(T_HX;\Hom(E_1,E_2)\otimes |\Lambda|)$ as the space of all smooth density valued $p$ that are fibrewise a homogeneous polynomial with $\lambda_*p=\lambda^mp$. We remark that $P^m_H(T_HX,E_1,E_2)=0$ unless $-m-\dim_h(X)\in \N$. For a compact Carnot manifold $X$, we define $\mathcal{S}(T_HX,E_1,E_2)\subseteq C^\infty(T_HX;\Hom(E_1,E_2)\otimes |\Lambda|)$ as the space of fibrewise Schwartz functions and $\mathcal{S}'(T_HX,E_1,E_2)\subseteq\mathcal{D}'(T_HX;\Hom(E_1,E_2)\otimes |\Lambda|)$ for its topological dual. We define 
$$\mathcal{S}'_r(T_HX,E_1,E_2):=\left\{u\in \mathcal{S}'(T_HX,E_1,E_2): 
\begin{matrix} 
\mathrm{singsupp}(u)\subseteq X, \\ 
\varphi u\in \mathcal{E}'_r(T_HX,E_1,E_2), \; \forall \varphi\in C^\infty_c(T_HX)
\end{matrix}
\right\}.$$
In other words, $\mathcal{S}'_r(T_HX,E_1,E_2)$ is the space of distributions tempered along the fibre, smoothly varrying along the base and smooth outside the zero section.

\begin{proposition}[\cite{Dave_Haller1,vanErp_Yuncken}]
\label{strucuturaldescofsymboslalld}
Let $X$ be a compact Carnot manifold, $E_1,E_2,E_3\to X$ complex vector bundles and $m,m'\in \mathbb{C}$. 
\begin{enumerate}[i)]
\item For $A\in \Psi^m_H(X;E_2,E_3)$ and $A'\in \Psi^{m'}_H(X;E_1,E_2)$ it holds that 
$$AA'\in \Psi^{m+m'}_H(X;E_1,E_3)$$ 
and 
$$\sigma_{m+m'}(AA')=\sigma_m(A)\sigma_{m'}(A').$$
Moreover, $A^*\in \Psi^m_H(X;E_3,E_2)$ and 
$$\sigma_m(A^*)=\sigma_m(A)^*.$$
\item The principal symbol mapping fits into a short exact sequence 
$$0\to \Psi^{m-1}_H(X;E_1,E_2)\to \Psi^m_H(X; E_1,E_s)\xrightarrow{\sigma_m} \Sigma^m_H(X;E_1,E_2)\to 0,$$
where the first mapping is induced from Proposition \ref{embeddingboldpsi} (cf. \cite[Corollary 38]{vanErp_Yuncken}). That is, $\Psi^{m-1}_H(X;E_1,E_2)=\ker(\sigma_m)$ and $\sigma_m$ is surjective. 
\item Any $a\in \Sigma^m_H(X;E_1,E_2)$ can be represented by an $a_0\in  \mathcal{E}'_r(T_HX;E_1,E_2)$ that near the zero section $X\subseteq T_HX$ has the form 
\begin{equation}
\label{decompkspskpd}
a_0=k_\infty+k_m+p_m\log|\cdot|,
\end{equation}
where 
\begin{itemize}
\item $k_\infty\in C^\infty(T_HX,\Hom(E_1,E_2)\otimes |\Lambda|)$
\item $k_m\in \mathcal{S}'_r(T_HX;E_1,E_2)$ satisfies that $\lambda_*k=\lambda^m k$;
\item $p_m\in P^m_H(T_HX,E_1,E_2)$.
\end{itemize}
If $-m-\dim_h(X)\notin \N$, the terms $k_m$ and $p_m$ in \eqref{decompkspskpd} are uniquely determined by $a\in \Sigma^m_H(X;E_1,E_2)$. If $-m-\dim_h(X)\in \N$, the term $p_m$ \eqref{decompkspskpd} is uniquely determined by $a$, and $k_m$ uniquely determined by $a$ up to $P^m_H(X,E_1,E_2)$. In particular, we can identify
\begin{align}
\label{symbolreps}
\Sigma^m_H(X;&E_1,E_2)=\\
\nonumber
=&\{K\in \mathcal{S}'_r(T_HX;E_1,E_2)/P^m_H(T_HX;E_1,E_2): \, \lambda_*K=\lambda^m K,\}.
\end{align}
\end{enumerate}
\end{proposition}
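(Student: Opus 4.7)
The plan is to handle the three items in order, using the tangent groupoid realization $\pmb{\Psi}^m_H$ as the common source and then analyzing the kernel of $\mathrm{ev}_0$ carefully. Items (i) and (ii) will be essentially formal consequences of Proposition~\ref{productsinfatthm} together with Proposition~\ref{embeddingboldpsi}; item (iii) is the genuinely analytic content and will require a structural result on fibrewise homogeneous distributions on the graded Lie algebroid $\mathfrak{t}_HX$.

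For (i), I would start with two operators $A\in\Psi^m_H(X;E_2,E_3)$ and $A'\in\Psi^{m'}_H(X;E_1,E_2)$, and pick lifts $\pmb{A},\pmb{A}'$ in the corresponding spaces $\pmb{\Psi}^*_H$ with $\mathrm{ev}_1(\pmb{A})=A$ and $\mathrm{ev}_1(\pmb{A}')=A'$. By Proposition~\ref{productsinfatthm} the convolution $\pmb{A}\ast\pmb{A}'$ lies in $\pmb{\Psi}^{m+m'}_H$, and since $\mathrm{ev}_1$ and $\mathrm{ev}_0$ are algebra homomorphisms with respect to convolution (because evaluation on the isotropy-fibre-preserving groupoid fibration $\mathbb{T}_HX$ respects the composition law on each fibre) we obtain both $AA'\in\Psi^{m+m'}_H$ and $\sigma_{m+m'}(AA')=\sigma_m(A)\sigma_{m'}(A')$ on the nose. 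The adjoint statement is identical, using that $\pmb{P}\mapsto\pmb{P}^*$ maps $\pmb{\Psi}^m_H$ into itself and commutes with both evaluation maps.

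For (ii), the inclusion $\Psi^{m-1}_H\subseteq\Psi^m_H$ with $\sigma_m$ vanishing there is immediate: given $P=\mathrm{ev}_1(\pmb{P})$ with $\pmb{P}\in\pmb{\Psi}^{m-1}_H$, Proposition~\ref{embeddingboldpsi} tells me that $t\pmb{P}\in\pmb{\Psi}^m_H$, and $\mathrm{ev}_1(t\pmb{P})=P$ while $\mathrm{ev}_0(t\pmb{P})=0$. Surjectivity of $\sigma_m$ is the heart of the matter; the strategy is, given a representative $a_0\in\mathcal{E}'_r(T_HX;E_1,E_2)$ with $\lambda_*a_0-\lambda^m a_0\in C^\infty_c$, to extend it to an $r$-fibred distribution on $\mathbb{T}_HX$ by a zoom-action construction (choose any smooth extension $\tilde a$ of $a_0$ in $t$, then average $\tilde a-\lambda^{-m}\alpha_\lambda^*\tilde a$ against a cutoff in $\lambda$ to cancel the homogeneity defect up to smoothing). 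Finally, for $\ker\sigma_m=\Psi^{m-1}_H$, suppose $\sigma_m(P)=0$ and pick a lift $\pmb{P}\in\pmb{\Psi}^m_H$ with $\mathrm{ev}_0(\pmb{P})\in C^\infty_c(T_HX)$; after subtracting a properly supported smooth extension of $\mathrm{ev}_0(\pmb{P})$, the lift vanishes at $t=0$ and hence, by Proposition~\ref{embeddingboldpsi}, can be written as $t\pmb{Q}$ with $\pmb{Q}\in\pmb{\Psi}^{m-1}_H$.

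For (iii), the key input is the classical structure theorem for fibrewise tempered distributions on the graded vector bundle $\mathfrak{t}_HX$ that are almost homogeneous in the sense $\lambda_*a_0-\lambda^m a_0\in C^\infty$. I would localize and work on a single fibre $\mathfrak{t}_HX_x$, where this becomes the question of describing distributions on a graded nilpotent Lie algebra $\mathfrak{g}$ with the almost-homogeneity property. By the standard analysis (using the Mellin transform in the dilation variable, or equivalently decomposing along the zoom action), such a distribution decomposes as a genuinely homogeneous piece $k_m$ of degree $m$, a log-homogeneous correction $p_m\log|\cdot|$ supported at the critical degree when $-m-\dim_h(X)\in\mathbb{N}$ (the homogeneity degrees of polynomials on $\mathfrak{g}$), and a smooth remainder $k_\infty$; smooth dependence on the base gives the statement over $T_HX$. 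The main obstacle and most delicate point is the uniqueness and existence of the polynomial log-coefficient $p_m$: the obstruction to choosing a purely homogeneous representative sits precisely in the cokernel of the Euler operator $\mathcal{E}-m$ acting on tempered distributions, and this cokernel is exactly $P^m_H(T_HX;E_1,E_2)$, which is nonzero precisely when $-m-\dim_h(X)\in\mathbb{N}$. Once this is in place, the identification \eqref{symbolreps} follows because the modification by $P^m_H$ is the only ambiguity modulo $C^\infty_c$, and the space $C^\infty_c$ quotients out smoothly to match the class of $k_\infty$.
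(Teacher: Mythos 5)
Your proposal is correct and follows essentially the same route as the paper, which simply delegates item (i) to Proposition \ref{productsinfatthm}, item (ii) to \cite[Corollary 38]{vanErp_Yuncken}, and item (iii) to \cite[Lemma 3.8]{Dave_Haller1}; your sketches unfold precisely the arguments of those references (the zoom-averaging construction of lifts to $\mathbb{T}_HX$, and the Mellin/Euler-operator structure theory of almost-homogeneous tempered distributions on a graded nilpotent Lie algebra, with the obstruction to a homogeneous extension living in the degree $-m-\dim_h(X)$ polynomial densities $P^m_H$). The only cosmetic imprecision is the phrase ``smooth extension $\tilde a$ of $a_0$ in $t$'' (it should be an extension as a smooth family of $r$-fibred distributions), but this does not affect the argument.
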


\begin{proof}
Item i) follows from Proposition \ref{productsinfatthm}. Item ii) is stated as \cite[Corollary 38]{vanErp_Yuncken}. Item iii) is contained in Lemma 3.8 of \cite[Lemma 3.8]{Dave_Haller1} using that homogeneous distributions are tempered.
\end{proof}

\begin{remark}
Let us make an observation concerning the failure of uniqueness of the decomposition \eqref{decompkspskpd} when $-m-\dim_h(X)\in \N$ by comparing to the case of a trivial filtration, so $\dim_h(X)=\dim(X)$. The principal symbol (of order $m$) in van Erp-Yuncken's set up is for a trivial filtration dual -- via the Fourier transform -- to the classical principal symbol which is a smooth function on $T^*X\setminus X$ homogeneous of degree $m$ (see \cite[Chapter XVIII]{horIII}). Indeed, the difference between van Erp-Yuncken's principal symbol and the classical symbol is that the former is a leading term of the distributional kernel near the diagonal while the latter is roughly speaking the leading term at infinity of the kernel's Fourier transform transversally to the diagonal. 

The failure of uniqueness in the decomposition \eqref{decompkspskpd} stems from the fact that a smooth function on $T^*X\setminus X$ which is homogeneous of degree $m$ extends uniquely to distribution on $T^*X$ if and only if $-m-\dim(X)\notin \N$ (cf. \cite[Chapter III.2]{horI}). Indeed, for $-m-\dim(X)\in \N$, the set of extensions is an affine space modelled on derivatives of order $m$ in the fibre direction of the delta function on $X\subseteq T^*X$, this space is under the Fourier transform dual to the space $P^m_H(TX;E_1,E_2)$ consisting of fibrewise polynomials. The lift constructed in Proposition \ref{strucuturaldescofsymboslalld} is analogous to defining an extension of the classical principal symbol to a distribution on $T^*X$. For the purpose of pseudodifferential calculus, the precise choice of extension of the classical symbol (or lift as in Proposition \ref{strucuturaldescofsymboslalld} for the case for Carnot operators) is an irrelevant artifact of the calculus. 
\end{remark}

The next result is a consequence of Proposition \ref{strucuturaldescofsymboslalld} and will be used later on for defining $KK$-cycles from Carnot operators. We first note that we can embed $C^\infty(X)\subseteq  \pmb{\Psi}^0_H(X; E)$ as multiplication operators for any vector bundle $E\to X$, similarly we can also embed $C^\infty(X)\subseteq  \Psi^0_H(X; E)$ and $C^\infty(X)\subseteq  \Sigma_H^0(X; E)$. By the definition of the product in $\Sigma_H^0(X; E)$, as a convolution product on a groupoid where the source and range coincides, we have that $C^\infty(X)\subseteq  \Sigma_H^0(X; E)$ is central and that $C^\infty(X)$ acts as central multipliers of $\Sigma_H^m(X; E_1,E_2)$ for any $m$ and any two vector bundles $E_1,E_2\to X$.

\begin{corollary}
\label{commwithfunticododo}
Let $X$ be a Carnot manifold and $E_1,E_2\to X$ complex vector bundles. For $\mathbb{D}\in \pmb{\Psi}^m_H(X; E_1,E_2)$ and $f\in C^\infty(X)$ it holds that 
\begin{align*}
[\mathbb{D},f]\in t\pmb{\Psi}^{m-1}_H(X; &E_1,E_2)\\
&+C^\infty_p(\mathbb{T}_HX,r^*E_2\otimes (s^*E_1)^*\otimes |\Lambda_r|).
\end{align*}
In particular, for any $D\in \Psi^m_H(X; E_1,E_2)$ and $f\in C^\infty(X)$ it holds that $[D,f]\in \Psi^{m-1}_H(X; E_1,E_2)$.
\end{corollary}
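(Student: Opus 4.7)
The plan is to work entirely on the parabolic tangent groupoid $\mathbb{T}_HX$ and to leverage the single structural fact that at $t=0$ the groupoid reduces to the osculating bundle of groups $T_HX$, on which the range and source maps both collapse to the projection $p: T_HX \to X$. This makes scalar functions on the base into two-sided multipliers of the convolution algebra at $t=0$, so the commutator automatically vanishes there; the divisibility by $t$ then comes for free from Proposition \ref{embeddingboldpsi}.

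To execute, I would first invoke Example \ref{compaoaoadoadl} at order zero to recognise $f \in C^\infty(X)$ as the canonical lift $\pmb{f} \in \pmb{\Psi}^0_H(X; E)$, namely the unit-supported distribution taking value $f$ along $e(X) \subseteq \mathbb{T}_HX$. Proposition \ref{productsinfatthm} then guarantees $[\mathbb{D}, \pmb{f}] \in \pmb{\Psi}^m_H(X; E_1, E_2)$, and an explicit convolution computation identifies its distributional kernel on $\mathbb{T}_HX$ with $(r^*f - s^*f) \cdot \mathbb{D}$. Restricting to $t = 0$ and using $r|_{T_HX} = s|_{T_HX} = p$, the factor $r^*f - s^*f$ vanishes identically on $T_HX \subseteq \mathbb{T}_HX$, giving
$$\mathrm{ev}_0([\mathbb{D}, \pmb{f}]) = 0 \in \mathcal{E}_r'(T_HX; E_1, E_2)$$
as a distribution, not merely modulo $C^\infty_c$. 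The kernel characterisation $\ker(\mathrm{ev}_0 : \pmb{\Psi}^m_H \to \tilde{\Sigma}^m_H) = t\pmb{\Psi}^{m-1}_H$ from Proposition \ref{embeddingboldpsi} then immediately forces $[\mathbb{D}, f] \in t\pmb{\Psi}^{m-1}_H(X; E_1, E_2)$, which is sharper than the displayed conclusion.

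For the ``in particular'' statement, pick a lift $\mathbb{D} \in \pmb{\Psi}^m_H(X; E_1, E_2)$ with $\mathrm{ev}_1(\mathbb{D}) = D$ and apply the algebra homomorphism $\mathrm{ev}_1$: since $t|_{t=1} = 1$, one obtains $[D, f] = \mathrm{ev}_1([\mathbb{D}, \pmb{f}]) \in \mathrm{ev}_1(\pmb{\Psi}^{m-1}_H) + \mathrm{ev}_1(C^\infty_p) \subseteq \Psi^{m-1}_H(X; E_1, E_2)$, where the last inclusion uses that $\mathrm{ev}_1(C^\infty_p)$ consists of smoothing kernels on $X \times X$ and so sits inside every $\Psi^k_H$. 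There is no substantive obstacle to the argument; the whole computation hinges on recognising $f$ at the groupoid level as a unit-supported distribution (Example \ref{compaoaoadoadl}) and then reading off the symbolic vanishing from the bundle-of-groups structure of $T_HX$.
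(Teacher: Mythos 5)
Your argument is correct and relies on the same structural fact as the paper's proof: $C^\infty(X)$ is central in the convolution algebra over $t=0$ because $r=s$ on the osculating groupoid $T_HX$. The difference is merely \emph{where} the centrality is invoked. The paper passes to the quotient symbol algebra $\Sigma^m_H$ (via Proposition~\ref{strucuturaldescofsymboslalld}), concluding $\sigma_m([\mathbb{D},f])=0$ and hence only that $\mathrm{ev}_0([\mathbb{D},\pmb{f}])\in C^\infty_c(T_HX)$, which by Proposition~\ref{embeddingboldpsi} gives the inclusion with the extra $C^\infty_p$ summand. You apply the centrality one level up, directly in $\mathcal{E}'_r(T_HX;E_1,E_2)$ (equivalently in $\tilde{\Sigma}^m_H$), getting $\mathrm{ev}_0([\mathbb{D},\pmb{f}])=0$ on the nose, so Proposition~\ref{embeddingboldpsi} yields the sharper $[\mathbb{D},f]\in t\pmb{\Psi}^{m-1}_H$. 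That stronger inclusion is indeed correct and of course implies the corollary; one can sanity-check it for the trivial filtration, where $[\mathbb{D},\pmb{f}]=t^m[D,f]=t\cdot\bigl(t^{m-1}[D,f]\bigr)$, a lift of an operator of order $m-1$. Two small points worth tightening: the kernel identity should read $(s^*f-r^*f)\cdot\mathbb{D}$ if $[\mathbb{D},\pmb{f}]=\mathbb{D}\pmb{f}-\pmb{f}\mathbb{D}$ (the sign is immaterial to the argument); and the step $\mathrm{ev}_0\bigl((s^*f-r^*f)\cdot\mathbb{D}\bigr)=0$ tacitly uses that restricting an $r$-fibred distribution to the $t=0$ fibers commutes with multiplication by smooth functions on $\mathbb{T}_HX$ — this is true, but you can sidestep it entirely by noting that $\mathrm{ev}_0$ is a morphism of convolution algebras and $\mathrm{ev}_0(\pmb{f})$ is central, with no appeal to the explicit kernel.
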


\begin{proof}
Since $C^\infty(X)$ acts as central multipliers of $\Sigma_H^m(X; E)$ for any $m$, Proposition \ref{strucuturaldescofsymboslalld} implies that the principal symbol of $[\mathbb{D},f]$ vanishes. The corollary now follows from Proposition \ref{embeddingboldpsi}.
\end{proof}

\subsection{Examples of Carnot operators}
\label{examplesubsechoperaoro}

We give some examples of Carnot operators. As will become apparent later on, we are in some of the examples mainly interested in properties of the principal symbol so in light of Proposition \ref{surjecofsymbosl} it will in these cases suffice to describe the operator on a Carnot-Lie group in order to generate Carnot operators on Carnot manifolds.  \\

\subsubsection{Baum-van Erp type operators}
\label{bveops1}
Consider a Carnot-Lie group $\mathfrak{g}$ of depth $r$ equipped with a graded basis. We write the basis as $\{\{X_{j,k}\}_{k=1}^{\dim \mathfrak{g}_{-j}}\}_{j=1}^r$, where $\{X_{j,k}\}_{k=1}^{\dim \mathfrak{g}_j}$ is a basis for $\mathfrak{g}_{-j}$ for $j=1,\ldots, r$. For any two finite-dimensional complex vector spaces $V,W$ and a collection $\gamma=(\gamma_{j,k})_{j,k}\subseteq \Hom(V,W)$ and an $m\in \N_+$ such that $j|m$ when $\gamma_{j,k}\neq 0$, we define the operator
$$D_\gamma=\sum_{j=1}^r\sum_{k=1}^{\dim \mathfrak{g}_j} \gamma_{j,k}X_{j,k}^{m/j}\in \mathcal{U}_m(\mathfrak{g})\otimes \Hom(V,W).$$
Under the additional condition of $D_\gamma$ being $H$-elliptic, see more below in Section \ref{subsec:hellleleldlda}, we call an operator of the form $D_\gamma$ a Baum-van Erp type operator. 

If $X$ is a Carnot manifold, $E_1,E_2\to X$ are vector bundles and $D_\gamma\in \mathcal{DO}_H^m(X;E_1,E_2)$ is such that for all $x\in X$, the principal symbol $\sigma_x(D_\gamma)\in \mathcal{U}_m(\mathfrak{t}_HX_x)\otimes \Hom(E_{1,x},E_{2,x})$ is a Baum-van Erp type operator, then we say that $D_\gamma\in \mathcal{DO}_H^m(X;E_1,E_2)$ is a Baum-van Erp type operator. 

For equiregular differential systems, see more in Example \ref{ex:regudlaladoad}, there is an important special case -- the sub-Laplacian. We make the following definition:

\begin{definition}
Let $X$ be a compact Carnot manifold arising from a regular differential system with a fixed volume density. An operator $\Delta_H\in \mathcal{DO}_H^2(X;\C)$ is called a sub-Laplacian if it is formally self-adjoint (with respect to the volume density) and 
$$\sigma^2_x(\Delta_H)=-\sum_{j=1}^N \sigma_1(X_j)^2,$$
for some set of elements $X_1,\ldots,X_N\in T^{-1}X_x$ spanning $T^{-1}X_x/T^{-2}X_x$.
\end{definition}

Hörmander's sum of squares theorem \cite{horsumsquares} ensures that a sub-Laplacian is hypoelliptic. It is readily verified that the sub-Laplacian $\Delta_H$ is a Baum-van Erp type operator. The choice of the term Baum-van Erp type operator in this monograph is motivated by Baum-van Erp's beautiful solution to the index problem for the Baum-van Erp type operator $\Delta_H+\gamma Z$ on a coorientable contact manifold \cite{baumvanerp} (here $Z$ denotes the Reeb field). 

Let us provide a direct construction of a sub-Laplacian on any compact Carnot manifold $X$ arising from a regular differential system. We fix a volume density on $X$, and for a vector field $X$ we let $X^*$ denote its formal adjoint with respect to the volume density.  If $X_1,\ldots, X_N\in C^\infty(X,T^{-1}X)$ is a collection of vector fields spanning $T^{-1}X/T^{-2}X$ in all points, the associated sub-Laplacian is defined by
$$\Delta_H:=\sum_{j=1}^N X_j^*X_j\in \mathcal{DO}_H^2(X;\C).$$
Existence of such a collection of vector fields is ensured by compactness of $X$. The principal symbol of $\Delta_H$ is independent of volume density, and in fact $\sigma_2(\Delta_H)=-\sum_{j=1}^N \sigma_1(X_j)^2$. \\

\subsubsection{Dirac type operators}
\label{ex:Diracex1}
Again we consider a construction at the level of a Carnot-Lie group $\mathsf{G}$ of depth $r$. Pick a basis $(X_j)_{j=1}^{\dim(\mathfrak{g}_{-1})}$ of $\mathfrak{g}_{-1}$ and for some finite-dimensional Hilbert space $V$, a collection of self-adjoint $(\gamma_j)_{j=1}^{\dim(\mathfrak{g}_{-1})}\subseteq \End(V)$. Consider the operator:
$$\slashed{D}_H=\sum_{j=1}^{\dim \mathfrak{g}_r} i\gamma_j X_{j}.$$
If $(\gamma_j)_{j=1}^{\dim(\mathfrak{g}_{-1})}$ satisfies the Clifford relations $\gamma_j\gamma_k+\gamma_k\gamma_j=2\delta_{jk}$, we say that $D_H$ is a Dirac type operator. Operators of this type are, as we shall see below in Example \ref{ex:Diracex2}, rarely $H$-elliptic.\\

\subsubsection{A positive even order Carnot differential operator}
\label{posoofosos1}

Let $X$ be a compact Carnot manifold of depth $r$ and $E\to X$ a hermitean vector bundle. Later on in the monograph, we shall need to make use of an auxiliary operator $\mathfrak{D}$ in the calculus that admits complex powers in the Carnot calculus. The example consists of a positive even order Carnot differential operator and is introduced here; below in Example \ref{posoofosos2} we show that the $\mathfrak{D}$ we construct in this example produces a hypoelliptic operator so by \cite[Theorem 2]{Dave_Haller2}, $\mathfrak{D}$ will fit the bill of admitting complex powers in the calculus. 

Pick a connection $\nabla$ on $E$ and a volume density on $X$. We also pick a collection of vector fields $\{\{X_{j,k}\}_{k=1}^{N_j}\}_{j=1}^r$ on $X$ such that $\{X_{j,k}\}_{k=1}^{N_j}\subseteq C^\infty(X,T^{-j}X)$ and its images in $C^\infty(X,T^{-j}X/T^{-j+1}X)$ spans $T^{-j}X/T^{-j+1}X$ in all points. Existence of such a collection of vector fields is ensured by compactness of $X$. We define the differential operator
$$\mathfrak{D}:=\sum_{j=1}^r \sum_{k=1}^{N_j} (\nabla_{X_{j,k}}^*)^{r!/j}\nabla_{X_{j,k}}^{r!/j}\in \mathcal{DO}_H^{2(r!)}(X;E).$$
The $*$ denotes the formal adjoint in the hermitean metric on $E$ and the volume density on $X$. We have for any $x\in X$ that 
$$\sigma^{m}_{x}(\mathfrak{D}):=\sum_{j=1}^r \sum_{k=1}^{N_j} (-X_{j,k,x})^{r!/j}(X_{j,k,x})^{r!/j}\otimes 1_{E_x}\in \mathcal{U}_{2(r!)}(\mathfrak{t}_HX_x)\otimes \End(E_x).$$
At the symbol level, operators similar to $\mathfrak{D}$ were studied by Mantiouou-Ruzhansky \cite{Mantoiu_Ruzhansky}.

At this stage, we remark that in specific examples there are often better suited positive even order Carnot differential operators than $\mathfrak{D}$, e.g. of substantially lower order, even when requiring hypoellipticity. For instance, the sub-Laplacian can be used to construct an example of order $2$ if the Carnot structure on $X$ comes from a regular differential system.\\

\subsubsection{Bernstein-Gelfand-Gelfand complex}
\label{ex:BBBBBBGGGGGG1}
In Example \ref{parapara} above we saw how regular parabolic geometries gave rise to regular Carnot manifolds, following \cite{cap}. The easiest example being the ``flat'' local model $G/P$, for $G$ a semi-simple Lie group and $P$ a parabolic subgroup coming from a $|k|$-grading. On parabolic geometries there are certain complexes of differential operators, the so called Bernstein-Gelfand-Gelfand (BGG) complexes. The BGG-complexes on parabolic geometries were introduced in \cite{morecap} and further studied and placed in the Carnot calculus by Dave-Haller \cite{GenericDave,Dave_Haller1}. The origin of the BGG-complex is in representation theory \cite{bggoriginal}, where Bernstein-Gelfand-Gelfand constructed a BGG-complex as a method of resolving finite-dimensional representations of $G$ by Verma modules for $P$ being the Borel subgroup. For more details on how BGG-complexes relate to other notions, e.g. Kostant's Borel-Bott-Weil theorem \cite{kostant61} or Rumin complexes, we refer the reader to \cite{Dave_Haller1}. In fact, we refer the entire construction of general BGG-complexes to \cite{Dave_Haller2} as it is quite technical. We shall in this work focus on the abstract setting coming out of \cite{Dave_Haller2}. As we discuss below in Section \ref{sec:grafadpknapdnarock}, the associated index problem is still open but due to its relation to Julg's approach \cite{julgcr,julgsp} to the Baum-Connes conjecture for $Sp(n,1)$, we believe it to be of general interest.  

To describe the differential complexes, that act between graded vector bundles, we need some further terminology. A vector bundle $\pmb{E}\to X$ is said to be graded if it is equipped with a vector bundle decomposition $\pmb{E}=\oplus_{k\in \R}\pmb{E}[k]$. The space of graded Carnot operators of order $m$ between two graded vector bundles $\pmb{E}$ and $\pmb{F}$ is defined as 
$$\Psi^m_{H,{\rm gr}}(X;\pmb{E},\pmb{F}):=\bigoplus_{j,k}\Psi_H^{m+k-j}(X;\pmb{E}[k],\pmb{F}[j]).$$
We consider an element of $T=(T_{j,k})_{j,k}\in \Psi^m_{H,{\rm gr}}(X;\pmb{E},\pmb{F})$ as a matrix, implementing an inclusion
$$\Psi^m_{H,{\rm gr}}(X;\pmb{E},\pmb{F})\subseteq \Psi^*_{H}(X;\pmb{E},\pmb{F}),$$
into the ungraded calculus. The matrix product defines a composition
$$\Psi^m_{H,{\rm gr}}(X;\pmb{E},\pmb{F})\times \Psi^{m'}_{H,{\rm gr}}(X;\pmb{F},\pmb{G})\to \Psi^{m+m'}_{H,{\rm gr}}(X;\pmb{E},\pmb{G}),$$
for any three graded vector bundles $\pmb{E},\pmb{F},\pmb{G}\to X$ and $m,m'\in \R$.
Similarly, we define the graded symbol algebra as
$$\Sigma^m_{H,{\rm gr}}(X;\pmb{E},\pmb{F}):=\bigoplus_{j,k}\Sigma_H^{m+k-j}(X;\pmb{E}[k],\pmb{F}[j]),$$
and the graded symbol mapping 
$$\sigma_{H,{\rm gr}}^m: \Psi^m_{H,{\rm gr}}(X;\pmb{E},\pmb{F})\to \Sigma^m_{H,{\rm gr}}(X;\pmb{E},\pmb{F}),\quad (T_{j,k})_{j,k}\mapsto (\sigma_H^{m+k-j}T_{j,k})_{j,k}.$$
The constructions of Subsection \ref{pseudcalcalc} extends ad verbatim to the graded case. We remark that graded pseudodifferential operators plays a central role in the theory of boundary value problems under the name of Douglis-Nirenberg calculus, see for instance \cite{agmon, grubb77}, see also \cite[Chapter XIX.5]{horIII} and \cite{bandgoffsar}. 

The structure arising from BGG-complexes, and the graded complexes appearing in \cite{Dave_Haller1}, fits into the following definition that we refine below in Definition \ref{rocklandseqinad}. 

\begin{definition}
\label{hcomplexdefdo}
Consider a sequence 
$$0\to C^\infty(X;\pmb{E}_1)\xrightarrow{D_1}C^\infty(X;\pmb{E}_2)\xrightarrow{D_2}\cdots \xrightarrow{D_{N-1}}C^\infty(X;\pmb{E}_{N})\xrightarrow{D_{N}} C^\infty(X;\pmb{E}_{N+1})\to 0,$$
where $\pmb{E}_1,\ldots, \pmb{E}_{N+1}\to X$ are graded vector bundles and, for some $m_1,\ldots, m_N$, we have that $D_j\in \Psi^{m_j}_{H,{\rm gr}}(X;\pmb{E}_j,\pmb{E}_{j+1})$. 
\begin{itemize}
\item If $D_{j+1}D_j=0$ for all $j$, we say that the sequence is a graded $H$-complex.
\item If $\sigma_H^{m_{j+1}}(D_{j+1})\sigma_H^{m_j}(D_j)=0$ for all $j$, we say that the sequence is a graded $H$-almost-complex.
\end{itemize}
We say that the sequence is of order $\pmb{m}=(m_1,\ldots, m_N)$. 
\end{definition}

\begin{example}
\label{examplefromderhamadma}
An instructive example of a graded $H$-complex comes from the de Rham complex on a Carnot manifold. If $X$ has a Carnot structure, we can view $\pmb{E}_j:=\wedge^jT^*X\otimes \C$ as graded vector bundles upon choosing a splitting $TX\cong \mathsf{gr}(TX)$. Note that $T^*X$ is graded as a dual vector space. Therefore, the exterior differential $\rd:C^\infty(X;\pmb{E}_j)\to C^\infty(X;\pmb{E}_{j+1})$ is of graded order $0$ in the Carnot calculus. The de Rham sequence 
$$0\to C^\infty(X;\pmb{E}_1)\xrightarrow{\rd }C^\infty(X;\pmb{E}_2)\xrightarrow{\rd }\cdots \xrightarrow{\rd }C^\infty(X;\pmb{E}_{N})\xrightarrow{\rd } C^\infty(X;\pmb{E}_{N+1})\to 0,$$
is a graded $H$-complex of order $\pmb{0}=(0,\ldots, 0)$.

More generally, if $\pmb{E}\to X$ is a graded vector bundle equipped with a connection $\nabla_{\pmb{E}}$ of graded order zero and curvature of degree $\leq -1$, then with $\pmb{E}_j:=\wedge^jT^*X\otimes \pmb{E}$, we have a sequence 
$$0\to C^\infty(X;\pmb{E}_1)\xrightarrow{\nabla_{\pmb{E}} }C^\infty(X;\pmb{E}_2)\xrightarrow{\nabla_{\pmb{E}} }\cdots \xrightarrow{\nabla_{\pmb{E}} }C^\infty(X;\pmb{E}_{N})\xrightarrow{\nabla_{\pmb{E}} } C^\infty(X;\pmb{E}_{N+1})\to 0,$$
which is a graded $H$-almost-complex of order $\pmb{0}=(0,\ldots, 0)$.

Associated with such a graded $H$-almost-complex of differential operators defined from a graded connection, Dave-Haller \cite{Dave_Haller1} associates a new BGG-like sequence 
$$0\to C^\infty(X;\mathcal{H}_1)\xrightarrow{D_1}C^\infty(X;\mathcal{H}_2)\xrightarrow{D_2}\cdots \xrightarrow{D_{N-1}}C^\infty(X;\mathcal{H}_{N})\xrightarrow{D_{N}} C^\infty(X;\mathcal{H}_{N+1})\to 0,$$
where the bundles are defined from Lie algebra cohomology of the fibre, producing a new graded $H$-almost-complex. This construction  extends the curved BGG-complex of Cap-Slovak-Souček \cite{morecap}. 

An example of curved BGG-complexes that has played a particular role in the study of the Baum-Connes conjecture for rank $1$ semisimple Lie groups is the Rumin complex \cite{julgsun1,julgcr,julgsp}, compare to \cite{yunckensl3} for the higher rank case. In the special case of a contact manifold, a construction of the Rumin complex can be found in \cite{julgsun1} and for the construction on a quaternionic contact manifold see \cite{julgcr,julgsp}.
\end{example}

\subsubsection{Szegö projections}
\label{ex:szegoexampe}
A well studied class of operators in complex analysis is Szegö projections. For a domain in a complex manifold, the Szegö projection is a projection in a space of functions on the boundary onto the subspace of functions extending holomorphically to the interior. As mentioned in Example \ref{boundaryofpseudoconvexz}, if $X=\partial\Omega$ is a compact boundary of a strictly pseudoconvex domain, viewed as a Carnot manifold in its contact structure, the Szegö projection is a projection $P_{S,\Omega}\in \Psi_H^0(X,\C)$ by \cite[Chapter III.6]{taylorncom} assuming the technical condition that the $\bar{\partial}_b$-operator on $X$ has closed range on $L^2$. The closed range condition on the $\bar{\partial}_b$-operator is equivalent to $X$ being $CR$-embeddable in $\C^N$ (for a large $N$, see more in \cite{kohnduke}) and is automatic if $\dim(X)\geq 5$ by a theorem of Boutet de Monvel \cite{bdmembedd}. 

More generally, \cite[Chapter 6]{melroseeptein} considered generalized Szegö projections on a co-orientable contact manifold $X$. We return to study generalized Szegö projections further below in Example \ref{ex:szegoexampe2}. 

Let us end this example by making some remarks on the Szegö projection for the boundary $X$ of a weakly pseudo-convex domains. The weak pseudo-convexity produces a depth $2$ Carnot structure on $X$, and it is regular of type $\mathbb{H}_n$ if and only if $X$ is strictly pseudoconvex. The situation for solving the $\bar{\partial}$-problem is more complicated in the weakly pseudoconvex case, and a well studied problem by the school of Kohn (for an overview, see \cite{dangelokohn}). For weakly pseudo-convex domains of finite type (see for instance \cite{dangelokohn,hsiaosavale}), the situation is better. A weakly pseudo-convex domain $\Omega=\{z:\phi(z)<0\}$ is of finite type if the sub-bundle $H:=\ker(\rd^c\phi|_{TX})\subseteq TX$ is bracket generating, i.e. the Carnot structure $H$ defines can be refined to a singular Carnot structure (in the sense of Remark \ref{singularcarnotremark}). defined by $\mathpzc{E}^{-1}=C^\infty(X,H)$ and $\mathpzc{E}^{-j-1}=\mathpzc{E}^{-j}+[\mathpzc{E}^{-j},C^\infty(X,H)]$. Compare to the explicit example in Example \ref{boundaryofpseudoconvexz}. For instance, in a weakly pseudoconvex domain of finite type the $\bar{\partial}$-problem comes with well understood sub-elliptic estimates and the Szegö projection preserves $C^\infty(X)$. For weakly pseudo-convex domains that are not of finite type, there are examples where sub-elliptic estimates fail, the Szegö projection does not preserve the space of real analytic functions (see  \cite{christsze}) and the closely related Bergman projection does not preserve the space of smooth functions (see \cite{christberg}), indicating that  the Carnot calculus is not suited for studying the Szegö projection of a pseudoconvex domain which is not of finite type. This fact indicates that Carnot manifolds and their Carnot calculus do not form suitable tools for dealing with weakly pseudo-convex domains that are not of finite type. It poses an interesting problem to study if the Szegö projection of a weakly pseudoconvex domain of finite type belongs to its Carnot calculus or perhaps a Carnot calculus adapted to singular Carnot manifolds, such as the calculus of \cite{androerp}. The results of \cite{charpdup,hsiaosavale} indicate that it might be the case.

\section{$H$-ellipticity and the Rockland condition}
\label{subsec:hellleleldlda}

The symbol algebra in the Carnot calculus is in general a noncommutative algebra. The standard way to deal with the issues arising from such complications is to study symbols as (unbounded) multipliers on suitable domains in the groupoid $C^*$-algebra of the osculating Lie groupoid. As such, ellipticity properties in the calculus can be verified fibrewise in all representations of that fibre via generalizations of the Rockland theorem. We give an overview of relevant results following \cite{Dave_Haller1}, see also \cite{eskeewertthesis,eskeewertpaper}. 

We first introduce some notations. Let $\mathsf{G}$ denote a nilpotent Lie group. Recall that a nilpotent Lie group carries a unique polynomial structure determined by the exponential map. Let $\mathcal{S}(\mathsf{G})$ denote the Schwartz space of $\mathsf{G}$. We write $\mathcal{S}_0(\mathsf{G})\subseteq \mathcal{S}(\mathsf{G})$ for the subspace of functions $f\in \mathcal{S}(\mathsf{G})$ such that 
$$\int_\mathsf{G} p(g)f(g)\rd g=0,$$
for all polynomials $p$ on $\mathsf{G}$.

For a unitary representation $\pi : \mathsf{G} \rightarrow \mathcal{U}(\mathcal{H}_\pi)$, we write 
$$\mathcal{S}(\pi):=\pi(\mathcal{S}(\mathsf{G}))\mathcal{H}_\pi,\quad \mbox{and}\quad \mathcal{S}_0(\pi):=\pi(\mathcal{S}_0(\mathsf{G}))\mathcal{H}_\pi.$$
We note that the inclusion $\mathcal{S}(\pi)\subseteq \mathcal{H}_\pi$ is dense in the norm topology. Moreover, the orthogonal complement of $\mathcal{S}_0(\pi)\subseteq \mathcal{H}_\pi$ is the closed subspace of all $\mathsf{G}$-invariant vectors in $\mathcal{H}_\pi$; this follows from the fact that $\mathcal{S}_0(\mathsf{G})$ is dense in the kernel of the trivial representation $C^*(\mathsf{G})\to \C$. In particular, the inclusion $\mathcal{S}_0(\pi)\subseteq\mathcal{H}_\pi$ is dense as soon as $\mathcal{H}_\pi$ contains no invariant vectors. We can identify $\mathcal{S}(\pi)$ with the closed subspace of the Frechet space $\mathcal{S}(\mathsf{G},\mathcal{H})$ consisting of the functions $f(g):=\pi(g)v$ for $v\in \mathcal{H}$. Similarly, $\mathcal{S}_0(\pi)$ can be identified with a closed subspace of the Frechet space $\mathcal{S}_0(\mathsf{G},\mathcal{H})$. Unless otherwise stated, we topologixe $\mathcal{S}_0(\pi)$ and $\mathcal{S}(\pi)$ as Frechet spaces using the inclusions $\mathcal{S}_0(\pi)\subseteq \mathcal{S}_0(\mathsf{G},\mathcal{H})$ and $\mathcal{S}(\pi)\subseteq \mathcal{S}(\mathsf{G},\mathcal{H})$, respectively. We write $\mathcal{L}(V,W)$ for the space of continuous linear operators between Frechet spaces and $\mathcal{L}(V):=\mathcal{L}(V,V)$ (we reserve the notation $\mathbb{B}$ for the space of bounded operators between Hilbert spaces).

Before describing the general construction, let us digest a bit on the Lie algebra case. The following result is well known, and more or less follows from definition. 

\begin{proposition}
\label{pitoleiielalad}
Let $\pi : \mathsf{G} \rightarrow \mathcal{U}(\mathcal{H}_\pi)$ be a unitary representation, and define 
\[\pi : \mathcal{U}(\mathfrak{g}) \rightarrow\mathcal{L}(\mathcal{S}(\pi)), \ \pi(X)v := \frac{d}{dt} |_{t = 0} \pi(\exp(tX))v\quad\mbox{for}\; X\in \mathfrak{g}.\]
The mapping $\pi$ is a well defined $*$-homomorphism on $\mathcal{U}(\mathfrak{g})$ when equipping $\mathcal{U}(\mathfrak{g})$ with the $*$-operation such that $X^*=-X$ for $Z\in \mathfrak{g}$ and $\mathcal{L}(\mathcal{S}(\pi))$ with the partially defined $*$-operation defined from formal adjoints on $\mathcal{S}(\pi)$.
\end{proposition}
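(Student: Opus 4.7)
The plan is to verify the three assertions: that $\pi(X)$ is a well-defined continuous operator on $\mathcal{S}(\pi)$ for each $X\in\mathfrak{g}$, that $X \mapsto \pi(X)$ extends to a unital algebra homomorphism $\mathcal{U}(\mathfrak{g})\to\mathcal{L}(\mathcal{S}(\pi))$, and that this extension intertwines the two $*$-operations.

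First I would show that for $X\in\mathfrak{g}$, $f\in\mathcal{S}(\mathsf{G})$, and $v\in\mathcal{H}_\pi$, a change of variable using unimodularity of $\mathsf{G}$ gives
\[\pi(\exp(tX))\pi(f)v = \int_\mathsf{G} f(\exp(-tX) g)\pi(g)v\,\rd g = \pi(L_t^X f)v,\]
where $L_t^X f(g):=f(\exp(-tX)g)$. Since $\mathsf{G}$ is a simply connected nilpotent Lie group, the exponential is a polynomial diffeomorphism $\mathfrak{g}\to\mathsf{G}$ and, in exponential coordinates, invariant vector fields act as polynomial differential operators on $\mathcal{S}(\mathsf{G})$. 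In particular $t\mapsto L_t^X f$ is a smooth curve in $\mathcal{S}(\mathsf{G})$ with derivative $-\tilde{X}f\in\mathcal{S}(\mathsf{G})$ at $t=0$, where $\tilde{X}h(g):=\tfrac{\rd}{\rd s}|_{s=0}h(\exp(sX)g)$ is the associated right-invariant vector field. Hence $\pi(X)\pi(f)v = -\pi(\tilde{X}f)v\in\mathcal{S}(\pi)$, and continuity of $\pi(X)$ on $\mathcal{S}(\pi)$ (in the Fr\'echet topology inherited from $\mathcal{S}(\mathsf{G},\mathcal{H}_\pi)$) is inherited from continuity of $\tilde{X}$ on $\mathcal{S}(\mathsf{G})$.

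Next I would verify the commutator relation $[\pi(X),\pi(Y)] = \pi([X,Y])$ on $\mathcal{S}(\pi)$. For $w\in\mathcal{S}(\pi)$ this is obtained by computing $\partial_s\partial_t|_{(0,0)}\pi(\exp(tX)\exp(sY)\exp(-tX)\exp(-sY))w$ and applying the Baker-Campbell-Hausdorff formula, which terminates in the nilpotent case, so that the argument of $\pi$ is $\exp(ts[X,Y]+O(\|(t,s)\|^3))$; the claim then follows from the already established smoothness of the action on $\mathcal{S}(\pi)$. By the universal property of $\mathcal{U}(\mathfrak{g})$, the Lie algebra homomorphism $X\mapsto \pi(X)$ extends uniquely to a unital algebra homomorphism $\pi:\mathcal{U}(\mathfrak{g})\to\mathcal{L}(\mathcal{S}(\pi))$.

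Finally, since the $*$-operations on $\mathcal{U}(\mathfrak{g})$ and on $\mathcal{L}(\mathcal{S}(\pi))$ are both antihomomorphisms, it suffices to check $\pi(X)^* = \pi(X^*) = -\pi(X)$ on generators $X\in\mathfrak{g}$. Unitarity of $\pi(\exp(tX))$ yields, for $v,w\in\mathcal{S}(\pi)$,
\[\langle\pi(X)v,w\rangle = \tfrac{\rd}{\rd t}\bigr|_{t=0}\langle\pi(\exp(tX))v,w\rangle = \tfrac{\rd}{\rd t}\bigr|_{t=0}\langle v,\pi(\exp(-tX))w\rangle = -\langle v,\pi(X)w\rangle,\]
giving the claim. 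No genuine obstacle arises; the only care required is that all differentiations are taken in the Fr\'echet topology of $\mathcal{S}(\pi)$ rather than only in the Hilbert space norm, which is automatic because in the nilpotent case invariant vector fields preserve $\mathcal{S}(\mathsf{G})$ continuously and thus $\pi(u)$ defines a genuine element of $\mathcal{L}(\mathcal{S}(\pi))$ rather than only an unbounded operator on $\mathcal{H}_\pi$.
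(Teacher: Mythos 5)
The paper gives no proof of this proposition: it simply states ``The following result is well known, and more or less follows from definition.'' and then immediately moves to Remark~\ref{regularidremak}. Your proposal supplies the routine verification that the paper declines to write out, and it is correct: the identity $\pi(\exp(tX))\pi(f)v=\pi(L_t^Xf)v$, the observation that on a simply connected nilpotent group the exponential coordinates make $t\mapsto L_t^X f$ a smooth curve in $\mathcal{S}(\mathsf{G})$ with derivative $-\tilde{X}f$, the Baker--Campbell--Hausdorff computation for the bracket relation, the universal property of $\mathcal{U}(\mathfrak{g})$, and the check of $\pi(X)^*=-\pi(X)$ on generators using unitarity are all standard and sound. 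One point you could tighten: you state that the Fr\'echet topology on $\mathcal{S}(\pi)$ is ``inherited from $\mathcal{S}(\mathsf{G},\mathcal{H}_\pi)$'' via the map $v\mapsto(g\mapsto\pi(g)v)$; under that identification $\pi(X)$ acts as the left-invariant vector field applied in the $\mathsf{G}$-variable, whereas your smoothness argument is phrased in terms of the right-invariant $\tilde{X}$ acting on the $\mathcal{S}(\mathsf{G})$-factor of the presentation $v=\pi(f)w$. Both act continuously on Schwartz spaces and the resulting topologies coincide, but a sentence reconciling the two descriptions would make the continuity claim airtight.
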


The reader should beware that the Hilbert space adjoint of elements in $\pi(\mathcal{U}(\mathfrak{g}))$ is generally different from its formal adjoint on $\mathcal{S}(\pi)$, see more in Remark \ref{regularidremak}. We return to study how Proposition \ref{pitoleiielalad} extends to differential operators on a Carnot manifold (where $\mathfrak{g}=\mathfrak{t}_HX_x$ for $x\in X$) below in Example \ref{connectingliealgtorepsmssm}.

Let us now extend the construction of Proposition \ref{pitoleiielalad} to the symbol algebra on a Carnot manifold. In light of Proposition \ref{strucuturaldescofsymboslalld}.iii, to define a map on the symbol algebra we should first define it on the space of fibrewise tempered distributions. For a point $x\in X$, We write 
$$\Sigma^m_{H}(x;E_{1,x},E_{2,x})\subseteq \mathcal{S}'(T_HX_x;E_{1,x},E_{2,x})/P^m_H(T_HX_x;E_{1,x},E_{2,x}),$$
for the homogeneous elements of degree $m$. Restriction to a point induces a mapping 
$$\sigma_x^m:\Sigma^m_H(X;E_1,E_2)\to \Sigma^m_{H}(x;E_{1,x},E_{2,x}),$$
that respects products and adjoints. For an $x\in X$ and a unitary representation $\pi : T_HX_x \rightarrow \mathcal{U}(\mathcal{H}_\pi)$, we define 
\begin{align*}
\pi : &\mathcal{S}'(T_HX_x;E_{1,x},E_{2,x})\rightarrow\mathcal{L}(\mathcal{S}_0(\pi)\otimes E_{1,x},\mathcal{S}_0(\pi)\otimes E_{2,x}),\\
&  \pi(k)(\pi(f)v) :=\pi(k*f)v\quad\mbox{for}\; f\in \mathcal{S}_0(T_HX_x;E_{1,x}), \; v\in \mathcal{H}_\pi.
\end{align*}
By the definition of $\mathcal{S}_0$, $\pi(a)=0$ if $a\in P^m_H(T_HX;E_1,E_2)$ so $\pi$ descends to a morphism 
$$ \pi : \Sigma^m_H(X;E_1,E_2)\rightarrow\mathcal{L}(\mathcal{S}_0(\pi)\otimes E_{1,x},\mathcal{S}_0(\pi)\otimes E_{2,x}).$$
Next, we define the represented symbol of a Carnot operator. It will be considered as a family of operators parametrized by the set $\widehat{T_HX}$ of all irreducible unitary representations of the osculating Lie groupoid.

\begin{definition}
For $D\in \Psi_H^m(X;E_1,E_2)$, $x\in X$ and a unitary representation $\pi$ of $T_HX_x$, we define the represented symbol (in $(x,\pi)$) as
\[\sigma_{(x,\pi)}^m(D):=\pi(\sigma_x^m(D))\in \mathcal{L}(\mathcal{S}_0(\pi)\otimes E_{1,x},\mathcal{S}_0(\pi)\otimes E_{2,x}).\] 
\end{definition}

\begin{proposition}
\label{generalpitoleiielalad}
Let $X$ be a Carnot manifold, $x\in X$ and $\pi$ a unitary representation of $T_HX_x$.
The represented symbol satisfies the following. For $D\in \Psi_H^m(X;E_2,E_3)$ and $D'\in \Psi_H^{m'}(X;E_1,E_2)$ it holds that 
\begin{align}
\label{adjoadoanad}
\sigma_{x,\pi}^m(D^*)&=\sigma_{x,\pi}^m(D)^*;\\
\label{prodododoadodap}
\sigma_{x,\pi}^{m+m'}(DD')&=\sigma_{x,\pi}^m(D)\sigma_{x,\pi}^{m'}(D');
\end{align}
as operators on $\mathcal{S}_0(\pi)$ where the adjoint is formally defined in the inner product on $\mathcal{H}_\pi$. Moreover, if $m\leq 0$ and $\pi$ does not weakly contain the trivial representation, then $\sigma_{x,\pi}^m(D)$ extends to a bounded linear operator 
$$\sigma_{x,\pi}^m(D):\mathcal{H}_\pi\otimes E_{2,x}\to \mathcal{H}_\pi\otimes E_{3,x}.$$
\end{proposition}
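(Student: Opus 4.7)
The plan is to derive \eqref{adjoadoanad} and \eqref{prodododoadodap} by transporting the groupoid convolution structure to $\mathcal{S}_0(\pi)$, and then to treat the boundedness statement as the main analytic obstacle.

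As a preliminary, I would represent $\sigma_x^m(D)$ by a fibrewise tempered homogeneous distribution $k\in \mathcal{S}'_r(T_HX_x;E_{1,x},E_{2,x})$ as in Proposition \ref{strucuturaldescofsymboslalld}.iii. Although $k$ is only unique modulo a fibrewise polynomial $p\in P^m_H(T_HX_x;E_{1,x},E_{2,x})$, such $p$ acts as zero on $\mathcal{S}_0(\pi)$ since $p*f$ for $f\in \mathcal{S}_0(T_HX_x)$ is a finite linear combination of polynomial moments $\int_{T_HX_x} q(g) f(g)\,\mathrm{d}g$, all of which vanish by definition of $\mathcal{S}_0$. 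Hence $\pi(k*f)v$ is well-defined for $f\in \mathcal{S}_0(T_HX_x)$, $v\in \mathcal{H}_\pi$, and it lands in $\mathcal{S}_0(\pi)\otimes E_{2,x}$ since convolution with a fibrewise distribution preserves $\mathcal{S}_0$.

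For \eqref{prodododoadodap} and \eqref{adjoadoanad}, I would then observe that the product on $\Sigma_H^*$ is fibrewise convolution on the groupoid $T_HX_x$ and the adjoint is the groupoid adjoint $k^*(g)=k(g^{-1})^*$, so that $\sigma_x^{m+m'}(DD')$ and $\sigma_x^m(D)*\sigma_x^{m'}(D')$ (respectively $\sigma_x^m(D^*)$ and $k^*$) coincide in $\Sigma_H^*(x)$ modulo the polynomial ambiguity. Then \eqref{prodododoadodap} follows from associativity of convolution,
\[
\sigma^{m+m'}_{x,\pi}(DD')\pi(f)w=\pi((k_1*k_2)*f)w=\pi(k_1)\pi(k_2*f)w=\sigma^m_{x,\pi}(D)\sigma^{m'}_{x,\pi}(D')\pi(f)w,
\]
together with the fact that $k_2*f\in \mathcal{S}_0(T_HX_x;E_{2,x})$. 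Unitarity of $\pi$ on $\mathcal{H}_\pi$ translates the groupoid adjoint into formal adjunction of $\pi(k)$ on $\mathcal{S}_0(\pi)$, yielding \eqref{adjoadoanad}.

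The boundedness statement is the main obstacle. My plan is to reduce to the case $m=0$ and invoke $L^2$-boundedness of order-zero Heisenberg operators from \cite{vanErp_Yuncken,Dave_Haller1}. For $m=0$ a representative $k$ of $\sigma_x^0(D)$ is a fibrewise Calderón--Zygmund kernel on $T_HX_x$ whose convolution action extends to a bounded operator on $L^2(T_HX_x)$; decomposing the left regular representation into irreducibles via Plancherel distributes this $L^2$-bound across the spectrum, and a hull-kernel continuity argument together with a direct-integral decomposition propagates the bound to every $\pi\in \widehat{T_HX_x}$ not weakly containing the trivial representation. The case $m<0$ will be reduced to $m=0$ by factoring $\sigma_x^m(D)=\sigma_x^0(\tilde D)\cdot q$ with $\tilde D$ of order $0$ and $q$ a bounded auxiliary symbol built from complex powers of $\mathfrak{D}$ as in Example \ref{posoofosos1}. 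The indispensable role of the hypothesis on $\pi$ is that the trivial representation corresponds to the zero eigenvalue of $\mathfrak{D}$ (where no negative powers are defined), and excluding it is what turns a singular multiplier into a bounded (even compact, for $\pi$ irreducible) operator.
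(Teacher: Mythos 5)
Your proposal is correct and follows essentially the same route as the paper: the identities are read off from the convolution definition of the represented symbol, the case $m=0$ rests on the $L^2$-boundedness of order-zero homogeneous convolution kernels on the osculating group (the paper cites the Knapp--Stein type result from Ewert's thesis rather than a Calder\'on--Zygmund argument, and leaves the propagation to arbitrary $\pi$ as implicit as you do), and the case $m<0$ is handled by the same factorization $D=D_0\mathfrak{D}^{m/\tilde m}$ through complex powers of a positive $H$-elliptic operator, with the exclusion of representations weakly containing the trivial one entering exactly where you say, namely the invertibility of $\sigma_{x,\pi}(\mathfrak{D})$.
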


\begin{proof}
The formulas \eqref{adjoadoanad} and \eqref{prodododoadodap} are readily verified from the fact that the representation is defined in terms of convolution. 

For $m=0$ in the last statement, we recall \cite[Theorem 8.18]{eskeewertthesis}, extending \cite[Theorem 1]{knappsteininter}, ensuring that $\sigma^0_x(D)$ acts as a bounded operator $L^2(T_HX_x,E_{1,x})\to L^2(T_HX_x,E_{2,x})$. Boundedness of $\sigma_{x,\pi}^m(D)$ follows. For $m<0$ in the last statement, we use \cite[Theorem 2]{Dave_Haller2} (reviewed below in Theorem \ref{cpxpoweroieoso}). By taking a strictly positive even order differential operator $\mathfrak{D}$ (see for instance Example \ref{posoofosos2} below) with $\mathfrak{D}^s\in \Psi_H^{s\tilde{m}}(X;E_2)$ for some $\tilde{m}\in \N$ and all $s\in \R$, we have that $D=D_0\mathfrak{D}^{m/\tilde{m}}$ with $D_0=D_0\mathfrak{D}^{-m/\tilde{m}}\in \Psi_H^0(X;E_2,E_3)$ by Proposition \ref{strucuturaldescofsymboslalld}. By \eqref{prodododoadodap}, $\sigma_{x,\pi}^{m}(D)=\sigma_{x,\pi}^0(D_0)\sigma_{x,\pi}^{m}(\mathfrak{D}^{m/\tilde{m}})$. The first factor $\sigma_{x,\pi}^0(D_0)$ has a bounded extension by the preceding paragraph. By using \cite[Theorem 2]{Dave_Haller2}, we see that $\sigma_{x,\pi}^{m}(\mathfrak{D})$ is invertible and therefore the second factor has a bounded extension being a negative power of an invertible operator.
\end{proof}

\begin{definition}
An element $D \in  \Psi_H^m(X;E_1,E_2)$ satisfies the Rockland condition if $\sigma_{(x,\pi)}^m(D)$
is injective for every $x \in X$ and every non-trivial irreducible unitary representation $\pi \in \widehat{T_HX}_x$. 

An element $D \in  \Psi_H^m(X;E_1,E_2)$ is $H$-elliptic if $\sigma_{x,\pi}^m(D)$
is bijective for every $x \in X$ and every non-trivial irreducible unitary representation $\pi \in \widehat{T_HX_x}$. If $\mathbb{D} \in  \pmb{\Psi}_H^m(X;E_1,E_2)$ lifts an $H$-elliptic $D \in  \Psi_H^m(X;E_1,E_2)$, we also say that $\mathbb{D}$ is $H$-elliptic.

Similarly, if $\pmb{E}_1,\pmb{E}_2\to X$ are graded vector bundles (cf. Subsubsection \ref{ex:BBBBBBGGGGGG1}), we say that $D \in  \Psi_{H,{\rm gr}}^m(X;\pmb{E}_1,\pmb{E}_2)$ satisfies the graded Rockland condition/is graded $H$-elliptic if the represented graded symbols $\sigma_{(x,\pi)}^m(D)$
is injective/bijective for every $x \in X$ and every non-trivial irreducible unitary representation $\pi \in \widehat{T_HX}_x$.
\end{definition}

The term $H$-elliptic is justified below in Theorem \ref{hellipticmeanshelliptic}. We give the following variation of an important result concerning the Rockland condition from \cite{Dave_Haller1} which extends a result from \cite{christgelleretal}.

\begin{lemma}
\label{charrockslslalem}
Let $X$ be a Carnot manifold, $E_1,E_2\to X$ two vector bundles and $D \in  \Psi_H^m(X;E_1,E_2)$. Then the following are equivalent:
\begin{enumerate}[i)]
\item $D$ satisfies the Rockland condition.
\item There exists a $b\in \Sigma^{-m}_H(X;E_2,E_1)$ such that 
$$b\sigma^m(D)=1.$$
\item There exists an $R\in \Psi^{-m}_H(X;E_2,E_1)$ such that 
$$RD-1\in \Psi^{-1}_H(X;E_1).$$
\end{enumerate}
Furthermore, if $m=0$, the conditions above are equivalent to that for any $x\in X$ and any non-trivial irreducible unitary representation $\pi$ of $T_HX_x$, the bounded linear operator 
$$\sigma_{x,\pi}^m(D):\mathcal{H}_\pi\otimes E_{1,x}\to \mathcal{H}_\pi\otimes E_{2,x},$$
is an injection of Hilbert spaces, in which case it also has closed range.
\end{lemma}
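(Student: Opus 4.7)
The plan is to establish the chain $(ii)\Leftrightarrow(iii)\Rightarrow(i)$ formally, then tackle $(i)\Rightarrow(ii)$ as the main obstacle, and finally deduce the additional equivalence at $m=0$ from the $(i)\Leftrightarrow(ii)$ part together with Proposition \ref{generalpitoleiielalad}.

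First, $(ii)\Leftrightarrow(iii)$ is essentially formal from the symbol exact sequence in Proposition \ref{strucuturaldescofsymboslalld}.ii. Given $R$ as in $(iii)$, take $b:=\sigma^{-m}(R)$; multiplicativity of the principal symbol (Proposition \ref{strucuturaldescofsymboslalld}.i) applied to $RD=1+(RD-1)$ and the fact that $RD-1\in\Psi^{-1}_H$ has vanishing order-$0$ symbol gives $b\,\sigma^m(D)=1$. Conversely, surjectivity of $\sigma_{-m}$ lets us lift any $b$ as in $(ii)$ to some $R\in\Psi^{-m}_H(X;E_2,E_1)$; then $\sigma^0(RD-1)=b\,\sigma^m(D)-1=0$, forcing $RD-1\in\Psi^{-1}_H(X;E_1)$. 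The implication $(iii)\Rightarrow(i)$ is then obtained by applying the represented symbol at $(x,\pi)$ and invoking the multiplicativity \eqref{prodododoadodap} of Proposition \ref{generalpitoleiielalad}: one gets $\sigma^{-m}_{(x,\pi)}(R)\,\sigma^m_{(x,\pi)}(D)=\sigma^0_{(x,\pi)}(RD)=\mathrm{id}$ on $\mathcal{S}_0(\pi)\otimes E_{1,x}$, since the order-$0$ contribution of $RD-1$ vanishes and the identity operator represents to the identity on $\mathcal{S}_0(\pi)$. This exhibits a left inverse, hence injectivity of $\sigma^m_{(x,\pi)}(D)$.

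The hard direction is $(i)\Rightarrow(ii)$, which is the genuinely analytic content of the Rockland theorem in the form needed for Heisenberg operators on general Carnot manifolds; this will be the main obstacle. My strategy is first to reduce to the case $m=0$. Pick a positive even-order $H$-elliptic auxiliary operator $\mathfrak{D}\in\mathcal{DO}_H^{2(r!)}(X;E_1)$ as in Example \ref{posoofosos1}, and use the complex powers $\mathfrak{D}^s\in\Psi^{2s(r!)}_H(X;E_1)$ furnished by \cite[Theorem 2]{Dave_Haller2}. The composition $\tilde a:=\sigma^m(D)\,\sigma^{-m}(\mathfrak{D}^{-m/(2r!)})\in\Sigma^0_H(X;E_1,E_2)$ inherits the Rockland condition because its represented symbol at any $(x,\pi)$ differs from $\sigma^m_{(x,\pi)}(D)$ only by composition with the bounded invertible operator $\sigma^{-m}_{(x,\pi)}(\mathfrak{D}^{-m/(2r!)})$. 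It therefore suffices to produce $\tilde b\in\Sigma^0_H(X;E_2,E_1)$ with $\tilde b\,\tilde a=1$, since then $b:=\sigma^{-m}(\mathfrak{D}^{-m/(2r!)})\,\tilde b$ satisfies $b\,\sigma^m(D)=1$. Existence of $\tilde b$ in the order-zero Rockland case is precisely the content of the Rockland-type theorem: in the classical setting of a homogeneous group this is due to \cite{christgelleretal}, and the extension to a general Carnot manifold is carried out in \cite[Section 3]{Dave_Haller1} using the fiberwise groupoid $C^*$-algebra $C^*(T_HX)$ together with Kirillov's orbit description of $\widehat{T_HX_x}$ to translate fiberwise injectivity in every non-trivial irreducible representation into left-invertibility at the level of principal symbols. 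The smooth and uniform dependence on $x\in X$ of the fiberwise inverse is exactly what the pseudodifferential framework built on the tangent groupoid $\mathbb{T}_HX$ from \cite{vanErp_Yuncken} is designed to provide, so one may invoke it as a black box.

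Finally, for the additional equivalence at $m=0$: by Proposition \ref{generalpitoleiielalad} each $\sigma^0_{(x,\pi)}(D)$ extends to a bounded operator $\mathcal{H}_\pi\otimes E_{1,x}\to\mathcal{H}_\pi\otimes E_{2,x}$ whenever $\pi$ is non-trivial irreducible. If $(ii)$ holds, then $\sigma^0_{(x,\pi)}(b)$ is also bounded and the identity $\sigma^0_{(x,\pi)}(b)\,\sigma^0_{(x,\pi)}(D)=1$ on $\mathcal{S}_0(\pi)\otimes E_{1,x}$ extends by density and continuity to all of $\mathcal{H}_\pi\otimes E_{1,x}$, showing that the bounded extension is bounded below and hence an injection of Hilbert spaces with closed range. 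Conversely, if the bounded extension of $\sigma^0_{(x,\pi)}(D)$ is an injection of Hilbert spaces at every $(x,\pi)$, its restriction to the dense subspace $\mathcal{S}_0(\pi)\otimes E_{1,x}$ is certainly injective, so $(i)$ holds and the already-established equivalence $(i)\Leftrightarrow(ii)$ concludes the proof.
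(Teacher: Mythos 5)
Your overall route coincides with the paper's: you establish $(ii)\Leftrightarrow(iii)$ from the exactness and multiplicativity of the symbol map in Proposition \ref{strucuturaldescofsymboslalld}, get $(iii)\Rightarrow(i)$ from the multiplicativity of represented symbols (Proposition \ref{generalpitoleiielalad}), defer $(i)\Rightarrow(ii)$ to Dave--Haller, and deduce the $m=0$ addendum from Proposition \ref{generalpitoleiielalad}. This matches the paper, which cites \cite[Lemma 3.11]{Dave_Haller1} for $(i)\Rightarrow(ii)$, calls $(ii)\Rightarrow(i)$ immediate, derives $(ii)\Leftrightarrow(iii)$ from Proposition \ref{strucuturaldescofsymboslalld}, and gets the final statement from Proposition \ref{generalpitoleiielalad}.

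The one place you deviate is the order-reduction step in $(i)\Rightarrow(ii)$: you compose with $\sigma^{-m}(\mathfrak{D}^{-m/(2r!)})$ to reduce to $m=0$ before invoking Dave--Haller. This is superfluous, since \cite[Lemma 3.11]{Dave_Haller1} (which the paper cites directly) already treats arbitrary $m$; worse, it threatens a circularity you should be aware of. Applying Theorem \ref{cpxpoweroieoso} requires $\mathfrak{D}$ to be $H$-elliptic, and the verification in Example \ref{posoofosos2} that the Rockland-satisfying, self-adjoint $\mathfrak{D}$ is $H$-elliptic goes through Lemma \ref{charHELLrockslslalem}, which itself rests on the very lemma you are proving. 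If you keep the reduction, you must either verify the $H$-ellipticity of $\mathfrak{D}$ by a direct argument not passing through this lemma, or---more simply---drop the reduction entirely and cite the arbitrary-order Rockland-parametrix result from \cite{Dave_Haller1} as the paper does. Everything else in your proposal, including the $m=0$ boundedness/injectivity/closed-range discussion, is correct and matches the paper.
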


\begin{proof}
The implication i)$\Rightarrow$ii) is found in \cite[Lemma 3.11]{Dave_Haller1} and the converse is immediate. The equivalence of item ii) and iii) follows from Proposition \ref{strucuturaldescofsymboslalld}. The last statement follows from Proposition \ref{generalpitoleiielalad}.
\end{proof}

The contents of Lemma \ref{charrockslslalem} also extends to the graded setting. For clarity, we formulate the next two results in the graded setting.

\begin{lemma}
\label{charHELLrockslslalem}
Let $X$ be a Carnot manifold, $\pmb{E}_1,\pmb{E}_2\to X$ two hermitean graded vector bundles and $D \in  \Psi_{H,{\rm gr}}^m(X;\pmb{E}_1,\pmb{E}_2)$. Then the following are equivalent:
\begin{enumerate}[i)]
\item $D$ is graded $H$-elliptic.
\item $\sigma^m_{H,{\rm gr}}(D)\in \Sigma^m_{H,{\rm gr}}(X;\pmb{E}_1,\pmb{E}_2)$ has an inverse $b\in  \Sigma^{-m}_{H,{\rm gr}}(X;\pmb{E}_2,\pmb{E}_1)$.
\item $D$ and $D^*$ satisfy the graded Rockland condition.
\item The operator 
$$\tilde{D}:=\begin{pmatrix} 0& D\\ D^*&0\end{pmatrix},$$
satisfies the graded Rockland condition.
\end{enumerate}
Furthermore, if $m=0$ and $E_1=\pmb{E}_1$ and $E_2=\pmb{E}_2$ are trivially graded, the conditions above are equivalent to that for any $x\in X$ and any non-trivial irreducible unitary representation $\pi$ of $T_HX_x$, the bounded linear operator 
$$\sigma_{x,\pi}^m(D):\mathcal{H}_\pi\otimes E_{1,x}\to \mathcal{H}_\pi\otimes E_{2,x},$$
is an isomorphism of Hilbert spaces.
\end{lemma}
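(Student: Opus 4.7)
\medskip

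\noindent\textbf{Plan of proof.} The strategy is to reduce the graded statement to the ungraded Rockland characterization in Lemma~\ref{charrockslslalem}, using the formal self-adjoint $\tilde{D}$ as a bridge and exploiting the multiplicativity and involutivity properties of the represented symbol from Proposition~\ref{generalpitoleiielalad}. First I would establish the easy equivalences. The equivalence i)$\Leftrightarrow$ii) is essentially tautological: bijectivity of $\sigma^m_{(x,\pi)}(D)$ on $\mathcal{S}_0(\pi)\otimes E_{\bullet,x}$ for every $(x,\pi)$ is dual, via the fibrewise representation, to the existence of a two-sided inverse in the symbol algebra; the non-trivial direction (existence of an abstract inverse $b$ in $\Sigma^{-m}_{H,{\rm gr}}$ given pointwise invertibility) is a graded variant of \cite[Lemma~3.11]{Dave_Haller1} and follows by adapting the construction in Lemma~\ref{charrockslslalem}.ii) to each matrix component of the graded symbol. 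The equivalence iii)$\Leftrightarrow$iv) is immediate from
\[
\sigma^m_{(x,\pi)}(\tilde{D})=\begin{pmatrix} 0 & \sigma^m_{(x,\pi)}(D)\\ \sigma^m_{(x,\pi)}(D^*) & 0\end{pmatrix},
\]
since a block-antidiagonal operator is injective iff both diagonal entries are.

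\medskip

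Next I would close the loop by proving ii)$\Rightarrow$iii)$\Rightarrow$ii). The implication ii)$\Rightarrow$iii) follows by applying Proposition~\ref{generalpitoleiielalad}: an inverse $b$ for $\sigma^m_{H,{\rm gr}}(D)$ yields, after representing in $(x,\pi)$ and taking adjoints via \eqref{adjoadoanad}--\eqref{prodododoadodap}, two-sided inverses for both $\sigma^m_{(x,\pi)}(D)$ and $\sigma^m_{(x,\pi)}(D^*)$ on $\mathcal{S}_0(\pi)\otimes E_{\bullet,x}$, which in particular gives injectivity. For iii)$\Rightarrow$ii) one applies Lemma~\ref{charrockslslalem} (in its graded version, which the bookkeeping carries through verbatim) to each of $D$ and $D^*$: the Rockland condition for $D$ produces a left symbolic inverse $b_L\in \Sigma^{-m}_{H,{\rm gr}}$ with $b_L\sigma^m(D)=1$, and the Rockland condition for $D^*$ produces a left symbolic inverse for $\sigma^m(D^*)$, whose adjoint is a right symbolic inverse $b_R$ for $\sigma^m(D)$. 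A short algebraic argument then gives $b_L=b_R$ and one obtains an honest two-sided inverse in $\Sigma^{-m}_{H,{\rm gr}}(X;\pmb{E}_2,\pmb{E}_1)$.

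\medskip

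Finally, for the last statement in the case $m=0$ and trivial grading, the implication from graded $H$-ellipticity to Hilbert space bijectivity follows from Proposition~\ref{generalpitoleiielalad}: the represented symbol $\sigma^0_{(x,\pi)}(D)$ extends to a bounded operator on $\mathcal{H}_\pi\otimes E_{\bullet,x}$, the algebraic inverse produced on $\mathcal{S}_0(\pi)$ also extends boundedly, and the density of $\mathcal{S}_0(\pi)$ in $\mathcal{H}_\pi$ (valid whenever $\pi$ is non-trivial irreducible, hence contains no invariant vectors) upgrades the algebraic two-sided inverse to a Hilbert space inverse. Conversely, Hilbert space bijectivity in every non-trivial irreducible $\pi$ implies that $\sigma^0_{(x,\pi)}(D)$ is injective on $\mathcal{S}_0(\pi)\otimes E_{1,x}$ and its adjoint is as well, hence both $D$ and $D^*$ satisfy the Rockland condition, returning us to item iii).

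\medskip

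\noindent\textbf{The main obstacle.} The only genuinely delicate step is the passage iii)$\Rightarrow$ii), where one must upgrade left/right symbolic inverses constructed from the two Rockland conditions to a single two-sided inverse in the graded symbol algebra $\Sigma^{-m}_{H,{\rm gr}}$. This relies on the graded extension of \cite[Lemma~3.11]{Dave_Haller1} and on the fact that the Heisenberg symbol algebra is unital with cancellative properties modulo lower order terms; keeping track of the grading degrees (which redistribute across the block entries in a Douglis--Nirenberg fashion) is where care is required, but no fundamentally new analytic input beyond Lemma~\ref{charrockslslalem} and Proposition~\ref{generalpitoleiielalad} is needed.
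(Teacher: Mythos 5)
Your proposal is essentially correct and follows the same underlying route as the paper's proof: the block-antidiagonal structure for iii)$\Leftrightarrow$iv), the pair of left inverses from Lemma~\ref{charrockslslalem} applied to $D$ and $D^*$ combined with the adjoint identity $\sigma^m(D^*)=\sigma^m(D)^*$ (Proposition~\ref{strucuturaldescofsymboslalld}) and uniqueness of one-sided inverses to get iii)$\Rightarrow$ii), ii)$\Rightarrow$i) trivially, and finally the $m=0$ Hilbert space characterization via boundedness of represented order-zero symbols and density of $\mathcal{S}_0(\pi)$.

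The one place you are loose is the opening claim that i)$\Leftrightarrow$ii) is ``essentially tautological'' and that i)$\Rightarrow$ii) follows directly as a ``graded variant of Dave--Haller's Lemma~3.11.'' That lemma, which is item ii) of Lemma~\ref{charrockslslalem}, produces only a \emph{left} inverse $b$ with $b\,\sigma^m(D)=1$ from the Rockland condition (pointwise injectivity). Passing from $H$-ellipticity (pointwise bijectivity) to a genuine two-sided inverse in $\Sigma^{-m}_{H,{\rm gr}}$ requires an extra step: one must first show that $D^*$ also satisfies the Rockland condition, using that surjectivity of $\sigma^m_{(x,\pi)}(D)$ on $\mathcal{S}_0(\pi)\otimes E_{1,x}$ together with the formal adjoint relation and the density of $\mathcal{S}_0(\pi)$ in $\mathcal{H}_\pi$ (for $\pi$ non-trivial irreducible) forces injectivity of $\sigma^m_{(x,\pi)}(D^*)$. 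Only then can you apply Lemma~\ref{charrockslslalem} to $D^*$ and use the adjoint trick to produce the right inverse. This is exactly the content of i)$\Rightarrow$iii), and the paper's cycle i)$\Rightarrow$iii)$\Rightarrow$ii)$\Rightarrow$i) is arranged precisely to make this dependence explicit. You do state the relevant adjoint-plus-density argument for the $m=0$ case at the end, and you develop the two-sided inverse construction in your iii)$\Rightarrow$ii) step, so no essential idea is missing — but the implication i)$\Rightarrow$ii), taken in isolation, is not as cheap as your opening paragraph suggests, and your argument should either be rerouted through i)$\Rightarrow$iii)$\Rightarrow$ii) or spell out the Rockland property of $D^*$ explicitly.

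One small bookkeeping point: in your iii)$\Rightarrow$ii) step the statement that the adjoint of the left inverse for $\sigma^m(D^*)$ is a right inverse for $\sigma^m(D)$ uses the symbol-level adjoint identity from Proposition~\ref{strucuturaldescofsymboslalld}, not the represented-symbol identity from Proposition~\ref{generalpitoleiielalad}; the latter is what you want for ii)$\Rightarrow$iii).
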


\begin{proof}
It is clear that iii)$\Leftrightarrow$iv). If iii) holds, there are $b\in \Sigma^{-m}_{H,{\rm gr}}(X;\pmb{E}_2,\pmb{E}_1)$ and $b'\in\Sigma^{-m}_{H,{\rm gr}}(X;\pmb{E}_2,\pmb{E}_1)$ such that $b\sigma^m(D)=1$ and $b'\sigma^m(D^*)=1$ by Lemma \ref{charrockslslalem}. It follows from Proposition \ref{strucuturaldescofsymboslalld} that $b\sigma^m(D)=1$ and $\sigma^m(D)(b')^*=1$, so uniqueness of inverses shows that $b=(b')^*\in \Sigma^{-m}_{H,{\rm gr}}(X;\pmb{E}_2,\pmb{E}_1)$ is an inverse to $\sigma^m(D)$. Therefore we have the implication iii)$\Rightarrow$ ii). The implication  ii)$\Rightarrow$ i) is immediate. Penultimately, if i) holds, then $D$ by definition satisfies the Rockland condition and surjectivity of $\pi(\sigma^m(D))$ for all $\pi$ ensures injectivity of $\pi(\sigma^m(D^*))$ for all $\pi$ (using Proposition \ref{strucuturaldescofsymboslalld}) so $D^*$ satisfies the Rockland condition. Therefore  i)$\Rightarrow$ iii). Finally, the last condition stated for $m=0$ immediately implies item iv) and follows from from item ii) using Proposition \ref{generalpitoleiielalad}.
\end{proof}

The next theorem summarizes the key features of Rockland- and $H$-elliptic operators from \cite[Section 3]{Dave_Haller2}. We formulate it in the graded case which is proven in the same way as the ungraded case. Starting from Lemma \ref{charrockslslalem} and \ref{charHELLrockslslalem}, the theorem is readily deduced from asymptotic completeness of the Carnot calculus. 

\begin{theorem}
\label{hellipticmeanshelliptic}
Let $X$ be a closed Carnot manifold of homogeneous dimension $n$ equipped with a volume density, $\pmb{E}_1,\pmb{E}_2\to X$ graded hermitean vector bundles and $m\in \mathbb{C}$. Assume that $D\in \Psi^m_{H,{\rm gr}}(X; \pmb{E}_1,\pmb{E}_2)$.Then it holds that:
\begin{enumerate}
\item If $D$ satisfies the Rockland condition, then there is a left parametrix $R\in \Psi^{-m}_{H,{\rm gr}}(X; \pmb{E}_2,\pmb{E}_1)$, i.e. $RD-1$ is smoothing. In particular, $D$ is hypoelliptic and $\ker(D)\subseteq C^\infty(X;E_1)$ is finite-dimensional.
\item If $D$ is $H$-elliptic, there is a parametrix $R\in \Psi^{-m}_{H,{\rm gr}}(X; \pmb{E}_2,\pmb{E}_1)$, i.e. $RD-1$ and $DR-1$ are smoothing. In particular, $D$ and $D^*$ are hypoelliptic and $\ker(D)\subseteq C^\infty(X;E_1)$ and $\ker(D^*)\subseteq C^\infty(X;E_2)$ are finite-dimensional.
\end{enumerate}
In particular, for any $D\in \Psi^m_{H,{\rm gr}}(X; \pmb{E}_1,\pmb{E}_2)$ the following are equivalent:
\begin{itemize}
\item $D$ is $H$-elliptic. 
\item There is an $R_0\in \Psi^{-m}_{H,{\rm gr}}(X; \pmb{E}_2,\pmb{E}_1)$ with $R_0D-1\in\Psi^{-1}_{H,{\rm gr}}(X; \pmb{E}_1)$ and $DR_0-1 \in\Psi^{-1}_{H,{\rm gr}}(X; \pmb{E}_2)$,
\item There is a parametrix $R\in \Psi^{-m}_{H,{\rm gr}}(X; \pmb{E}_2,\pmb{E}_1)$, i.e. $RD-1\in\Psi^{-\infty}(X; \pmb{E}_1)$ and $DR-1 \in\Psi^{-\infty}(X; \pmb{E}_2)$,
\end{itemize}
\end{theorem}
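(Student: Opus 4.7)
The plan is to bootstrap from the weak approximate inverse supplied by Lemma \ref{charrockslslalem} (for the Rockland case) and Lemma \ref{charHELLrockslslalem} (for the $H$-elliptic case) up to a parametrix modulo smoothing operators, and then read off hypoellipticity and finite-dimensional kernels by standard arguments for elliptic pseudodifferential operators.

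First, for statement (1): given a Rockland $D\in\Psi^{m}_{H,{\rm gr}}(X;\pmb{E}_1,\pmb{E}_2)$, Lemma \ref{charrockslslalem} (graded version, derivable from the ungraded one by expanding entrywise and using that $\sigma^m(D)$ is invertible on $\mathcal{S}_0(\pi)$) produces $R_0\in\Psi^{-m}_{H,{\rm gr}}(X;\pmb{E}_2,\pmb{E}_1)$ with $S_1:=R_0D-1\in\Psi^{-1}_{H,{\rm gr}}(X;\pmb{E}_1)$. I would then iterate: setting $R_N:=\bigl(\sum_{k=0}^{N-1}(-S_1)^k\bigr)R_0$, Proposition \ref{strucuturaldescofsymboslalld}.i gives $R_ND-1=(-S_1)^N\in\Psi^{-N}_{H,{\rm gr}}(X;\pmb{E}_1)$. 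The essential ingredient here, which I would take from \cite{vanErp_Yuncken,Dave_Haller1}, is \emph{asymptotic completeness} of $\Psi_{H,{\rm gr}}^{*}$: any sequence of operators of decreasing order admits an asymptotic sum in the calculus. Applying this to $\sum_{k\ge 0}(-S_1)^k R_0$ produces $R\in\Psi^{-m}_{H,{\rm gr}}(X;\pmb{E}_2,\pmb{E}_1)$ with $RD-1\in\Psi^{-\infty}_{H,{\rm gr}}(X;\pmb{E}_1)$, i.e. smoothing.

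Next, for statement (2), having Lemma \ref{charHELLrockslslalem} at hand I can apply the same construction on both sides: $D$ being $H$-elliptic means $D$ and $D^*$ both satisfy Rockland, so the construction above yields a left parametrix $R$ and a right parametrix $R'$; the usual computation $R=R(DR')+R(1-DR')=(RD)R'+R(1-DR')\equiv R'$ modulo $\Psi^{-\infty}_{H,{\rm gr}}$ shows that $R$ is in fact a two-sided parametrix. From here, hypoellipticity is a formal consequence: if $Du\in C^\infty(X;\pmb{E}_2)$ then $u=RDu-(RD-1)u\in C^\infty(X;\pmb{E}_1)$, since $R$ maps distributions of finite Sobolev order to distributions of finite Sobolev order (one order higher in the appropriate Heisenberg scale) and since any smoothing operator maps distributions into $C^\infty$. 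For finite-dimensionality, observe that $\ker(D)\subseteq\{u:(1-RD)u=u\}=\ker(1-(1-RD))$ where $1-RD\in\Psi^{-\infty}_{H,{\rm gr}}$; as a smoothing operator it restricts to a compact operator on $L^2(X;\pmb{E}_1)$, so $\ker(1-(1-RD))$ is finite-dimensional by Riesz theory. The same applied to $D^*$ handles $\ker(D^*)$.

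Finally, for the list of equivalences, the direction "$H$-elliptic $\Rightarrow$ existence of $R_0$ with first-order remainders" is Lemma \ref{charHELLrockslslalem}, and "$R_0$ exists $\Rightarrow$ parametrix exists" is the asymptotic-summation argument of the previous paragraph applied on both sides. It remains to show "parametrix exists $\Rightarrow$ $H$-elliptic". If $R\in\Psi^{-m}_{H,{\rm gr}}$ satisfies $RD-1,DR-1\in\Psi^{-\infty}_{H,{\rm gr}}$, passing to principal symbols in $\Sigma^{0}_{H,{\rm gr}}$ via Proposition \ref{strucuturaldescofsymboslalld}.ii gives $\sigma^{-m}(R)\sigma^{m}(D)=1$ and $\sigma^{m}(D)\sigma^{-m}(R)=1$ in $\Sigma^{0}_{H,{\rm gr}}$, so $\sigma^{m}(D)$ is invertible and $H$-ellipticity follows from Lemma \ref{charHELLrockslslalem}.

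The main technical obstacle I foresee is the asymptotic-summation step: verifying that the calculus $\Psi_{H,{\rm gr}}^{*}$ is closed under asymptotic sums of operators of descending order and that residuals land in $\Psi^{-\infty}_{H,{\rm gr}}=\bigcap_m\Psi^m_{H,{\rm gr}}$ identified with smoothing operators. In the graded setting this requires extending the ungraded asymptotic completeness of \cite{vanErp_Yuncken} entrywise and checking that $\bigcap_m\Psi^m_{H}(X;E_1,E_2)=C^\infty(X\times X;\mathrm{Hom}(E_1,E_2)\otimes|\Lambda|)$, a fact that can be extracted from the groupoid picture by examining $\mathrm{ev}_1(\pmb{\Psi}^m_H)$ as $m\to-\infty$ in light of Proposition \ref{embeddingboldpsi}.
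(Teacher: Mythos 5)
Your proposal is correct and follows essentially the same route the paper takes: the paper explicitly notes that the theorem is "readily deduced from asymptotic completeness of the Heisenberg calculus" starting from Lemmas \ref{charrockslslalem} and \ref{charHELLrockslslalem}, citing Dave--Haller \cite[Section 3]{Dave_Haller2} for the ungraded version. Your Neumann-series bootstrap and the subsequent Riesz-theory argument for finite-dimensional kernels are exactly the standard implementation of that remark, and the final equivalences are handled as you describe, so the two arguments agree in substance.
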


\begin{remark}
In the recent work \cite{androerp}, the Rockland condition was proven to be equivalent to \emph{maximal hypoellipticity}, proving a conjecture of Helffer-Nourigat. The results of \cite{androerp} hold in the context of the singular Carnot structures considered in Remark \ref{singularcarnotremark}. 
\end{remark}

\begin{example}
\label{connectingliealgtorepsmssm}
To tie together Proposition \ref{pitoleiielalad} with Proposition \ref{generalpitoleiielalad}, let us make a computation with the represented symbol of a differential operator. Assume that $X$ is a Carnot manifold and $D\in \mathcal{DO}_H^{m}(X;E_1,E_2)$. To simplify notation, set $n_j:=\mathrm{rk}(T^{-j}X/T^{-j+1}X)$ and $n:=\dim(X)=\sum_j n_j$. Fix an $x\in X$ and set $\mathsf{G}:=T_HX_x$. In a neighbourhood $U$ of $x$, we pick a basis $\{\{X_{j,k}\}_{k=1}^{n_j}\}_{j=1}^r$ of $TX$ such that $\{X_{j,k}\}_{k=1}^{n_j}\subseteq C^\infty(X,T^{-j}X)$ and its images in $C^\infty(X,T^{-j}X/T^{-j+1}X)$ spans $T^{-j}X/T^{-j+1}X$ in all points. For a multiindex $\alpha=(\alpha_1,\ldots, \alpha_r)\in \N^n=\prod_j \N^{n_j}$ we write $|\alpha|_H:=\sum_j j |\alpha_j|$ and 
$$X^\alpha=X_{1,1}^{\alpha_{1,1}}X_{1,2}^{\alpha_{1,2}}\cdots X_{r,n_r-1}^{\alpha_{r,n_r-1}}X_{r,n_r}^{\alpha_{r,n_r}}.$$
We can assume the neighbourhood $U$ is small enough so that $E_1$ and $E_2$ trivializes, set $k_1:=\mathrm{rk}(E_1)$ and $k_2:=\mathrm{rk}(E_2)$. Therefore, in $U$ we can write 
$$D=\sum_{|\alpha|_H\leq m} a_\alpha X^\alpha,$$
for some $(a_\alpha)_{|\alpha|_H\leq m}\subseteq C^\infty(U;\Hom(\C^{k_1},\C^{k_2}))$. As in Example \ref{compaoaoadoadl}, $\sigma^m(D)$ is given by the convolution kernel $K\in \mathcal{S}'_r(T_HX;E_1,E_2)$ that over $U$ is determined by $\sum_{|\alpha|_H= m} a_\alpha X^\alpha$ as acting only in the fibre direction. In light of Proposition \ref{pitoleiielalad}, we compute that for a unitary representation $\pi$ we have that 
$$\sigma^m_{(x,\pi)}(D)=\sum_{|\alpha|_H= m} a_\alpha(x) \pi(X^\alpha_x)\in \mathcal{L}(\mathcal{S}_0(\pi)\otimes \C^{k_1},\mathcal{S}_0(\pi)\otimes \C^{k_2}),$$
where by construction $\{\{X_{j,k,x}\}_{k=1}^{n_j}\}_{j=1}^r$ is a homogeneous basis of the Lie algebra $\mathfrak{g}=\mathfrak{t}_HX_x$ of $\mathsf{G}:=T_HX_x$.

\end{example}

\subsection{Revisiting the examples of Subsection \ref{examplesubsechoperaoro}}

We now return to study $H$-ellipticity in the specific examples appearing above in Subsection \ref{examplesubsechoperaoro}.

\begin{example}[Baum-van Erp type operators]
\label{bveops2}
We return to study Baum-van Erp type operators, and to give more precise descriptions of $H$-ellipticity of the formal expressions appearing in Subsubsection \ref{bveops1}. In light of  the terminology of Example \ref{connectingliealgtorepsmssm}, a Baum-van Erp type operator is an $H$-elliptic differential operator $D_\gamma\in \mathcal{DO}_H^{m}(X;E_1,E_2)$ such that each point in $X$ has a neighbourhood $U$ with
\begin{equation}
\label{dgammaamdladladl}
D_\gamma=\sum_{j=1}^r\sum_{k=1}^{n_j} \gamma_{j,k}\nabla_{X_{j,k}}^{m/j}+ \mathcal{DO}_H^{m-1}(U;E_1,E_2),
\end{equation}
for some local basis $\{\{X_{j,k}\}_{k=1}^{n_j}\}_{j=1}^r$ of $TX$ as in Example \ref{connectingliealgtorepsmssm}, a connection $\nabla$ on $E_1$ and a collection $\gamma=(\gamma_{j,k})_{j,k}\subseteq C^\infty(U;\Hom(E_1,E_2))$ such that $j|m$ when $\gamma_{j,k}\neq 0$. It would be quite arduous to in general characterize when a differential operator $D_\gamma$ satisfying the condition \eqref{dgammaamdladladl} is $H$-elliptic. We consider the following special case.

Assume that $X$ has depth $r=2$ and is equipped with a Riemannian metric. For notational simplicity, write 
$$H:=T^{-1}X.$$
Let $E\to X$ be a hermitean vector bundle. Consider a differential operator as in \eqref{dgammaamdladladl} of the form 
$$D_\gamma=\Delta_H+T_\gamma,$$
where $\Delta_H$ is the sub-laplacian associated with the Riemannian metric and $T_\gamma$ is a first order differential operator (first order in the classical sense). Write $\mathfrak{t}_{H}X=\mathfrak{t}_{H,-1}X\oplus \mathfrak{t}_{H,-2}X=H\oplus \mathfrak{t}_{H,-2}X$ and identify 
$$\mathfrak{t}_{H,-2}X^*=(TX/T^{-1}X)^*=H^\perp.$$
Define the fibrewise linear polynomial 
$$\gamma:=\sigma^1(T_\gamma)|_{H^\perp}\in C^\infty(H^\perp,\End(E)).$$ 
A short argument shows that $\sigma^2_H(D_\gamma)$ is determined by the metric on $H$ and $\gamma$. For notational simplicity, we write a graded orthonormal local basis for $TX$ as $X_1,\ldots, X_{n_1}$, $Z_1,\ldots Z_{n_2}$ where $X_1,\ldots, X_{n_1}$ is an orthonormal basis of $H$ and $Z_1,\ldots Z_{n_2}$ induces an orthonormal basis for $TX/H$. The order $2$ differential operator $D_\gamma\in \mathcal{DO}_H^{2}(X;E)$ will in each coordinate chart look like
\begin{equation}
\label{secondoroddbbamddevep}
D_\gamma=-\sum_{j=1}^{n_1} X_j^2+\sum_{l=1}^{n_2} \gamma_lZ_l+\mathcal{DO}_H^{1}(X;E),
\end{equation}
for the collection $(\gamma_l)_{l=1}^{n_2}$ of smooth $\End(E)$-valued sections determined by 
$$\gamma=\sigma^1\left(\sum_{l=1}^{n_2} \gamma_lZ_l\right)\bigg|_{H^\perp}.$$ 
In other words, up to lower order terms, the term $\sum_{l=1}^{n_2} \gamma_lZ_l$ is determined by the restriction of its ordinary symbol to cotangent vectors annihilating $H=T^{-1}X$. Let us give result characterizing those $\gamma\in C^\infty(\mathfrak{t}_{H,-2}X^*,\End(E))$ for which $D_\gamma$ is $H$-elliptic and as such defines a Baum-van Erp type operator.

For the flat orbits, we introduce some further notations. Write $\Gamma_X$ for the set of flat orbits of the osculating Lie groupoid, and $p_\Gamma:\Gamma_X\to X$ for the projection. If $X$ is $\pmb{F}$-regular, $\Gamma_X$ is a fibre bundle but in general it is just a topological space (possibly empty). The vector bundle $\Xi_X=p_\Gamma^*H\to \Gamma_X$ carries a symplectic structure defined from the Kirillov form $\omega$. Relative to a metric on $H$, we can associate a new metric $g_\omega$ on $\Xi_X$ and a complex structure $J_\omega$ on $\Xi_X$. Using the auxiliary metric, we can identify $g_\omega$ with a symmetric complex linear endomorphism of $\Xi_X$ and $\omega$ with an anti-symmetric complex antilinear endomorphism of $\Xi_X$. The two are related by $g_\omega=|\omega|=\sqrt{-\omega^2}$. We define the endomorphism 
\begin{equation}
\label{skgoadoma}
s_k(g_\omega):\Xi_X^{\otimes_\C^{\rm sym}k}\to \Xi_X^{\otimes_\C^{\rm sym}k},
\end{equation}
on the $k$:th symmetric tensor power, from 
$$s_k(g_\omega)(v_1\otimes_\C^{\rm sym}\cdots \otimes_\C^{\rm sym}v_k):=\sum_{j=1}^kv_1\otimes_\C^{\rm sym}\cdots\otimes_\C^{\rm sym}g_\omega v_j \otimes_\C^{\rm sym}\cdots\otimes_\C^{\rm sym}v_k.$$

\begin{theorem}
\label{charhellfofodo}
Let $X$ be a Carnot manifold of depth $2$ defined from a sub-bundle $H=T^{-1}X$ with a Riemannian metric $g$ and $D_\gamma\in \mathcal{DO}_H^{2}(X;E)$ as in Equation \eqref{secondoroddbbamddevep}. Let $\mathcal{L}:H\wedge H\to TX/H$ denote the Levi bracket and $\omega_\xi:=\xi\circ\mathcal{L}$ the associated antisymmetric $2$-form on $H$. Then $D_\gamma$ is $H$-elliptic if and only if for any $x\in X$, and $\xi\in \mathfrak{t}_{H,-2}X^*_x\setminus \{0\}=H_x^{\perp}\setminus \{0\}$ the following holds
\begin{itemize}
\item If $\omega_\xi$ has rank $k$, for some $k<n_1$, then the following spectral condition holds
$$\mathrm{Spec}(\gamma(\xi))\cap \left\{\lambda\in \R: \;|\lambda|\geq\mathrm{Tr}(|\omega_\xi|)/2\right\}=\emptyset,$$ 
where $|\omega_\xi|$ is defined from polar decomposition of $\omega_\xi$ relative to the metric on $H$.
\item If $\omega_\xi$ has full rank (i.e. is a symplectic form on $H$), then 
\begin{align}
\label{rankconditions}
s_k(g_\omega(\xi))+\frac{\mathrm{Tr}(|\omega_\xi|)}{2}+\gamma(\xi):\Xi_X^{\otimes_\C^{\rm sym}k}\otimes E_x\to \Xi_X^{\otimes_\C^{\rm sym}k}\otimes E_x,
\end{align}
also have the full ranks for all $k\in \N$.
\end{itemize}
\end{theorem}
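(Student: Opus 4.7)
The plan is to invoke Lemma~\ref{charHELLrockslslalem}, which reduces $H$-ellipticity of $D_\gamma$ to bijectivity of the represented principal symbol $\sigma^2_{(x,\pi)}(D_\gamma)$ on $\mathcal{H}_\pi \otimes E_x$ for every $x \in X$ and every non-trivial irreducible unitary representation $\pi$ of $T_HX_x$. Since $X$ has depth two, $T_HX_x$ is a step-two nilpotent Lie group whose centre contains $(T_HX/H)_x$, and Kirillov's orbit method (Section~\ref{subsec:reptheory}) parametrises its irreducibles by a central character $\xi \in H_x^\perp = \mathfrak{t}_{H,-2}X_x^*$ together with, when $\xi \neq 0$, a character $\lambda$ of $K_\xi := \ker(\omega_\xi) \subseteq H_x$. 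The first step is to dispense with the case $\xi = 0$: these are the non-trivial characters $\chi_{\xi_H}$ of $H_x$, on which the central term of the symbol vanishes and $\sigma^2_{(x,\chi_{\xi_H})}(D_\gamma) = |\xi_H|^2 \id_{E_x}$ is automatically invertible, which is why this case does not appear in the theorem's conditions.

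For each non-zero $\xi$, I would then realise $\pi_{\xi,\lambda}$ on the Bargmann--Fock space $\bigoplus_k \Xi_X^{\otimes_\C^{\rm sym} k}|_\xi$ of the symplectic quotient $(H_x/K_\xi, \omega_\xi)$, equipped with the adapted complex structure coming from the fixed Riemannian metric on $H$, in the spirit of Example~\ref{freestep2example}. Since lower-order terms of $D_\gamma$ do not contribute to $\sigma^2$, only the leading expression $-\sum_j X_j^2 + \sum_l \gamma_l Z_l$ from \eqref{secondoroddbbamddevep} must be represented: the central piece acts as the scalar $\gamma(\xi) \in \End(E_x)$, while $-\sum_j \pi(X_j)^2$ splits into the constant $|\lambda|^2$ (the contribution from those $X_j$ lying in $K_\xi$, on which $\pi$ acts by a character) plus a quantum harmonic oscillator on $H_x/K_\xi$. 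A Darboux decomposition of $(H_x/K_\xi, \omega_\xi)$ into symplectic planes together with the standard identification of the bosonic Fock Hamiltonian with the number operator then identifies this oscillator, on the $k$-th Fock level $\Xi_X^{\otimes_\C^{\rm sym} k}|_\xi$, with $s_k(g_{\omega_\xi}) + \tfrac{1}{2}\mathrm{Tr}(|\omega_\xi|)$, making $\sigma^2_{(x,\pi_{\xi,\lambda})}(D_\gamma)$ block-diagonal on the Fock tower with block
\[
s_k(g_{\omega_\xi}) + \tfrac{1}{2}\mathrm{Tr}(|\omega_\xi|) + |\lambda|^2 + \gamma(\xi)
\]
on the summand $\Xi_X^{\otimes_\C^{\rm sym} k}|_\xi \otimes E_x$.

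Finally, I would unpack block invertibility according to the two cases in the theorem. When $\omega_\xi$ is non-degenerate so that $\xi$ lies in a flat orbit, $K_\xi = 0$ and there is no $\lambda$-parameter, whence block-by-block bijectivity is exactly condition \eqref{rankconditions}. When $\omega_\xi$ is degenerate, the scalar $|\lambda|^2$ sweeps the half-line $[0,\infty)$ as $\lambda$ ranges over $K_\xi^*$, so bijectivity of the $k$-th block for every choice of $\lambda$ and $k$ forces $\mu + \gamma(\xi)$ to be invertible in $\End(E_x)$ for every real $\mu \geq \tfrac{1}{2}\mathrm{Tr}(|\omega_\xi|)$; applying the same analysis to the representations with central character $-\xi$, using $\omega_{-\xi} = -\omega_\xi$, $\gamma(-\xi) = -\gamma(\xi)$ and $|\omega_{-\xi}| = |\omega_\xi|$, yields the symmetric condition that $\lambda + \gamma(\xi)$ is invertible for all real $\lambda$ with $|\lambda| \geq \tfrac{1}{2}\mathrm{Tr}(|\omega_\xi|)$ stated in the first bullet. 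The main technical obstacle will be the identification of $-\sum_j \pi(X_j)^2$ with $s_k(g_{\omega_\xi}) + \tfrac{1}{2}\mathrm{Tr}(|\omega_\xi|)$ on each symmetric tensor power, which requires matching the Bargmann--Fock creation/annihilation operators to real symplectic coordinate pairs via the complex structure determined by $g_{\omega_\xi}$, and is the only point at which the explicit representation theory of Section~\ref{subsec:reptheory} enters in a non-formal way.
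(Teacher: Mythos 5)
Your proposal is correct and essentially follows the same route as the paper's proof. Both arguments reduce $H$-ellipticity, via Lemma~\ref{charHELLrockslslalem}, to a pointwise Rockland-type injectivity condition on the represented symbol $\sigma^2_{(x,\pi)}(D_\gamma)$, dispense with the abelian characters (i.e.\ $\xi|_{\mathfrak{t}_{H,-2}X_x}=0$) since there the symbol degenerates to the positive scalar $|\xi_H|^2$, and then for $\xi\neq 0$ identify the represented symbol with a quantum harmonic oscillator on $H_x/\ker\omega_\xi$ shifted by $\gamma(\xi)$ and a non-negative scalar from the degenerate directions. The only cosmetic difference is that you diagonalise the oscillator directly in the Bargmann--Fock model (creation/annihilation operators and the number operator on $\bigoplus_k\Xi^{\otimes_\C^{\rm sym}k}$), whereas the paper passes to the Schr\"odinger model on $L^2(\R^d)$ via the model operator $L_{\gamma,\xi}$ on the reduced Heisenberg algebra $(\mathfrak{g}_{-1})_{\R\omega_\xi}$ and cites Shubin-calculus regularity for the full-rank case; the two are unitarily equivalent, and the Fock picture makes the block structure explicit at the price of having to verify the identification of $-\sum_j\pi(X_j)^2$ with $s_k(g_{\omega_\xi})+\tfrac12\mathrm{Tr}(|\omega_\xi|)$, which you correctly isolate as the one nontrivial computation.

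A small logical wrinkle worth making explicit: since $D_\gamma$ need not be self-adjoint, Lemma~\ref{charHELLrockslslalem} says $H$-ellipticity is equivalent to Rockland for \emph{both} $D_\gamma$ and $D_\gamma^*$, whereas you derive the two-sided condition by varying the central character over $\pm\xi$ with $D_\gamma$ alone. This is fine, but only because, on finite-dimensional Fock blocks, injectivity of $\pi_\xi(\sigma(D_\gamma^*))=H_{\omega_\xi}-\gamma(\xi)^*$ is equivalent to injectivity of $\pi_{-\xi}(\sigma(D_\gamma))=H_{\omega_\xi}-\gamma(\xi)$ via the adjoint; equivalently, the two-sided condition $|\lambda|\geq\mathrm{Tr}(|\omega_\xi|)/2$ in the degenerate bullet, and quantification over all $\xi$ (hence $\pm\xi$) in the full-rank bullet, is automatically symmetric under $\gamma\mapsto -\gamma^*$. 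You use this implicitly; the paper instead exhibits both $L_{\gamma,\xi}$ and $L_{-\gamma^*,\xi}$ explicitly. The two routes are equivalent, so there is no gap, but the reader would benefit from the one-line observation that blockwise injectivity of $\sigma(D_\gamma)$ over all $\pm\xi$ forces the same for $\sigma(D_\gamma^*)$.
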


The proof of the proposition is in spirit to reduce to rays in the representation space and there use that the sub-Laplacian on the Heisenberg group has a well understood represented symbol. The characterization of Theorem \ref{charhellfofodo} is by \cite[Theorem 1.1]{horcharnice} equivalent to hypoellipticity with loss of one derivative. 

\begin{proof}
Fix an $x\in X$. We write $\mathfrak{g}:=\mathfrak{t}_{H}X_x$ and $\mathsf{G}:=T_HX_x$. We need to characterize when $\sigma^2_{(x,\pi)}(D_\gamma)$ is bijective for a non-trivial representations $\pi\in \hat{\mathsf{G}}$. Write $\pi_\xi$ for a unitary representation associated with $\xi\in \mathfrak{g}^*\setminus \{0\}$ via the orbit method. Note that $\mathfrak{g}_{-2}$ is a subspace of the centre of $\mathfrak{g}$. If $\xi\in \mathfrak{g}_{-2}^\perp\setminus \{0\}$ then $\pi_\xi$ is a character and so $\sigma^2_{(x,\pi_\xi)}(D_\gamma)=\sum_j (\xi(X_j))^2>0$ is bijective. We can therefore assume that $\xi|_{\mathfrak{g}_{-2}}\neq 0$. 

If $\omega_\xi=\xi\circ \mathcal{L}=0$, then $\mathfrak{g}$ is abelian and 
$$\sigma^2_{(x,\pi_\xi)}(D_\gamma)=\sum_j (\xi(X_j))^2+\gamma(\xi).$$
Using that $\gamma$ is linear in $\xi$, we see that $\sigma^2_{(x,\pi_\xi)}(D_\gamma)$ is bijective for all $\xi$ if and only if $\lambda+\gamma(\xi)$ is invertible for all $\lambda\geq0$ and all $\xi\neq 0$. Similarly, $\sigma^2_{(x,\pi_\xi)}(D_\gamma^*)$ is bijective for all $\xi$ if and only if $\lambda-\gamma(\xi)$ is invertible for all $\lambda\geq0$ and all $\xi\neq 0$. 

For the remainder of the proof, consider a $\xi$ with $\xi|_{\mathfrak{g}_{-2}}\neq 0$ and $\omega_\xi\neq 0$. Consider the Lie algebra $\mathfrak{g}'_\xi:=(\mathfrak{g}_{-1})_{\R\omega_\xi}$ (cf. Subsection \ref{step2subsectionexamples} for notation) and grade $\mathfrak{g}'_\xi$ by declaring the natural quotient map $\mathfrak{g}\to \mathfrak{g}'_\xi$ to be graded. Define the model operator 
$$L_{\gamma,\xi}:=-\sum_{j=1}^{n_1} X_j^2-i\gamma(\xi) Z\in \mathcal{U}_2(\mathfrak{g}')\otimes \End(E_x),$$
where $Z$ denotes the canonical basis for $\mathfrak{g}'_{-2}$. Note that $L_{\gamma,\xi}^*=L_{-\gamma^*,\xi}$. It is clear that $\sigma^2_{(x,\pi_\xi)}(D_\gamma)$ and $\sigma^2_{(x,\pi_\xi)}(D_\gamma^*)$ are injective if and only if $\pi(L_{\gamma,\xi})$ and $\pi(L_{-\gamma^*,\xi})$ are injective for the non-abelian representations $\pi$ of $\mathfrak{g}'_\xi$ defined from $\xi$. Pick a complex structure on $\mathfrak{g}_{-1}$ compatible with $\omega_\xi$ and the Riemannian metric. Using the associated polarization of $\mathfrak{g}'$, we can write $\pi(L_{\gamma,\xi})$ as the densely defined operator
$$\pi(L_{\gamma,\xi})=H_{d,\omega_\xi}+\lambda_0(\xi)+\gamma(\xi),$$
on $L^2(\R^d,E_x)$ where $2d$ is the rank of $\omega_\xi$, $H_{d,\omega_\xi}$ is the harmonic oscillator defined from the metric and the symplectic form on $\R^{2d}\cong \mathfrak{g}_{-1}/\mathrm{Ann}(\omega_\xi)$ and $\lambda_0(\xi)$ is a homogeneous degree two polynomial in the projection of $\xi$ onto $\mathrm{Ann}(\omega_\xi)$. 

The bottom of the spectrum of $H_{d,\omega_\xi}$ is $\mathrm{Tr}(|\omega_\xi|)/2$. If $k<n_1$, i.e. $\omega_\xi$ is degenerate, using that $\gamma$ is linear in $\xi$ and $\lambda_0$ is quadratic, we see that $\pi(L_{\gamma,\xi})$ is injective for all $\xi$ if and only if $\lambda+\gamma(\xi)$ is invertible for all $\lambda\geq\mathrm{Tr}(|\omega_\xi|)/2$ for all $\xi$. Therefore $\pi(L_{\gamma,\xi})$ and $\pi(L_{-\gamma^*,\xi})$ are injective for all $\xi$ if and only if $\lambda+\gamma(\xi)$ is invertible for all real $\lambda$ with $|\lambda|\geq\mathrm{Tr}(|\omega_\xi|)/2$. 

If $\omega_\xi$ has full rank, so $2d=\dim(\mathfrak{g}_{-1})$, $\pi(L_{\gamma,\xi})=H_{d,\omega_\xi}+\gamma(\xi)$. Therefore, injectivity of $\pi(L_{\gamma,\xi})$ and $\pi(L_{-\gamma^*,\xi})$ is equivalent to the condition \eqref{rankconditions} by the standard construction with creation/annihilation operators and regularity theory in the Shubin calculus on $\R^d$ (cf. \cite[Chapter IV]{shubinbook}).
\end{proof}

\begin{remark}
Let $D_\gamma$ be as in Theorem \ref{charhellfofodo} and assume that $D_\gamma$ is $H$-elliptic. It follows from Theorem \ref{charhellfofodo} that if the Levi bracket of a step $2$ Carnot manifold is non-zero but degenerate in all points, there is a smooth homotopy from $D_\gamma$ to a positive differential operator through $H$-elliptic operators. The structure of the characterization in Theorem \ref{charhellfofodo} indicates that the index theory of operators of the form $D_\gamma$ is found in the flat orbits, i.e. when the form $\omega_\xi$ induced from the Kirillov form is non-degenerate after quotienting out the center so the represented symbol of $D_\gamma$ has discrete spectrum.
\end{remark}

\end{example}

\begin{example}[Dirac type operators]
\label{ex:Diracex2}
Consider a Dirac type operator $\slashed{D}_H=\sum_{j=1}^{\dim \mathfrak{g}_r} i\gamma_j X_{j}$, for a basis $X_1,\ldots X_{\dim(\mathfrak{g}_{-1})}$ and Clifford matrices $\gamma_1,\ldots, \gamma_{\dim(\mathfrak{g}_{-1})}$, as in Subsubsection \ref{ex:Diracex1}. By the argument in \cite[Example 1.6.3]{mohsen2}, the operator $\slashed{D}_H$ is not $H$-elliptic unless we are dealing with a step $1$ group so $\mathfrak{g}=\mathfrak{g}_{-1}$. 

Let us give more details describing the failure of $H$-ellipticity in the special case of the three dimensional Heisenberg group $\mathbb{H}_1$. Write $X,Y,Z$ for the standard basis such that $[X,Y]=Z$ graded as $\mathfrak{h}_{1,-1}=\R X+\R Y$ and $\mathfrak{h}_{1,-2}=\R Z$. In a suitable choice of Clifford matrices, 
$$\slashed{D}_H=
\begin{pmatrix}
0& X+iY\\
-X+iY&0 \end{pmatrix}.$$
The operator $\slashed{D}_H$ is formally self-adjoint, so by Lemma \ref{charHELLrockslslalem} the operator $\slashed{D}_H$ is $H$-elliptic if and only if it satisfies the Rockland condition, which readily is seen to be equivalent to $\slashed{D}_H^2$ satisfying the Rockland condition. We compute that 
$$\slashed{D}_H^2=
\begin{pmatrix}
-X^2-Y^2+iZ&0\\
0&-X^2-Y^2-iZ \end{pmatrix}.$$
So in the Schrödinger representations parametrized by $\hbar\in \mathbb{R}^\times$, defined from $\xi_\hbar\in \mathfrak{h}_1^*$ given by $\xi_\hbar(xX+yY+zZ):=\hbar z$, it holds that 
$$\pi_\hbar(\slashed{D}_H^2)=\begin{pmatrix}
H_\hbar-\hbar&0\\
0&H_\hbar+\hbar \end{pmatrix},$$
where $H_\hbar$ is the harmonic oscillator whose spectrum is $(2\N+1)|\hbar|$, so injectivity of $\pi_\hbar(\slashed{D}_H^2)$ fails in the upper left corner for $\hbar>0$ and in the lower right corner for $\hbar<0$.

A potential fix for this problem is to correct $\slashed{D}_H$ by the failure of the Rockland condition. As we shall see in the Example \ref{ex:szegoexampe2} below, there is a self-adjoint order $0$ Carnot operator $P$ whose represented symbol $p_\hbar:=\pi_\hbar(P)$ is the projection onto the ground state of $H_\hbar$ for $\hbar\neq 0$, respectively, so for $\epsilon>0$ small enough the operator 
$$\slashed{D}_{\epsilon,H}=
\begin{pmatrix}
0& X+iY+\epsilon\sqrt{-X^2-Y^2}P\\
-X+iY+\epsilon\sqrt{-X^2-Y^2}P&0 \end{pmatrix},$$
is $H$-elliptic. Here $\sqrt{-X^2-Y^2}$ can made sense of using results of \cite{Dave_Haller2} on complex powers in the Carnot calculus, or \cite{pongemonograph}, reviewed below as Theorem \ref{cpxpoweroieoso}. These type of constructions were considered more generally in \cite{hasselmannthesis}. In combination with commutator estimates, these ideas were applied in \cite{gimpgoffcomm} to realize the Carnot-Caratheodory distance on contact manifolds as a Gromov-Hausdorff limit of a sequence Connes' spectral distances associated with a family of spectral triples defined from $H$-elliptic operators.
\end{example}

\begin{example}[A positive even order Carnot differential operator]
\label{posoofosos2}
Consider a hermitean vector bundle $E\to X$ over a compact Carnot manifold $X$ with a volume density and the differential operator 
$$\mathfrak{D}:=\sum_{j=1}^r \sum_{k=1}^{N_j} (\nabla_{X_{j,k}}^*)^{r!/j}\nabla_{X_{j,k}}^{r!/j}\in \mathcal{DO}_H^{2(r!)}(X;E),$$
from Subsubsection \ref{posoofosos1}. Recall that the vector fields span the osculating Lie groupoid in all points, and are choose so that for $x\in X$ 
$$\sigma^{2(r!)}_x(\mathfrak{D})=\sum_{j=1}^r \sum_{k=1}^{N_j} (-1)^{r!/j} X_{j,k,x}^{2(r!)/j}\in \mathcal{U}_{2(r!)}(\mathfrak{t}_HX_x)\otimes \End(E_x).$$
Let us prove that $\mathfrak{D}$ is $H$-elliptic. Since $\mathfrak{D}$ is formally self-adjoint, it suffices to prove the Rockland condition by Lemma \ref{charHELLrockslslalem}. Fix $x\in X$ and choose a non-trivial representation $\pi$ of $T_HX_x$. Take the smallest $r_0\leq r$ such that $\pi$ vanishes on the image of $T_xX/T^{-r_0}_xX$ in $\mathfrak{t}_HX_x$. After re-ordering, we can assume that $\pi(X_{r_0,1})$ acts as a non-zero imaginary scalar $i\lambda$. In particular, 
$$\sigma^{2(r!)}_{(x,\pi)}(\mathfrak{D})=\sum_{j=1}^{r_0-1} \sum_{k=1}^{N_j} (-1)^{r!/j} X_{j,k,x}^{2(r!)/j}+\sum_{k=2}^{N_j} (-1)^{r!/r_0} X_{r_0,k,x}^{2(r!)/r_0}+\lambda^{2(r!)/r_0}.$$
We have for any $\varphi\in \mathcal{S}_0(\pi)$ that 
\begin{align*}
\langle \sigma^{2(r!)}_{(x,\pi)}(\mathfrak{D})\varphi,\varphi\rangle_{\mathcal{H}_\pi}=&\sum_{j=1}^{r_0-1} \sum_{k=1}^{N_j} \|X_{j,k,x}^{(r!)/j}\varphi\|_{\mathcal{H}_\pi}^2+\sum_{k=2}^{N_j} \|X_{r_0,k,x}^{(r!)/r_0}\varphi\|_{\mathcal{H}_\pi}^2+\lambda^{2(r!)/r_0}\|\varphi\|_{\mathcal{H}_\pi}^2\geq\\
\geq & \lambda^{2(r!)/r_0}\|\varphi\|_{\mathcal{H}_\pi}^2.
\end{align*}
Since $\lambda^{2(r!)/r_0}>0$, $\sigma^{2(r!)}_{(x,\pi)}(\mathfrak{D})$ is injective. Therefore $\mathfrak{D}$ is $H$-elliptic. We conclude the following upon adding a positive constant to $\mathfrak{D}$. 

\begin{proposition}
\label{existenceofposiskdd}
Let $X$ be a compact Carnot manifold of depth $r$ with a fixed volume density and $E\to X$ a hermitean vector bundle. Then for any $m$ with $r!|m$, there exists a positive even order $H$-elliptic differential operator $\mathfrak{D}\in \mathcal{DO}_H^{2m}(X;E)$ such that 
$$\langle \mathfrak{D}\varphi,\varphi\rangle_{L^2(X;E)}\geq \|\varphi\|_{L^2(X;E)}^2, \;\mbox{ for all $\varphi\in C^\infty(X;E)$}.$$
\end{proposition}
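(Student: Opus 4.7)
The strategy is to recycle the construction immediately preceding the proposition, with the exponents rescaled so that each summand lands in Heisenberg order $2m$ instead of $2(r!)$, and then to shift by the identity to obtain the quantitative lower bound. Concretely, fix a volume density and a hermitean connection $\nabla$ on $E$, and pick a collection of vector fields $\{X_{j,k}\}_{k=1}^{N_j}\subseteq C^\infty(X,T^{-j}X)$, $j=1,\ldots,r$, whose images span $T^{-j}X/T^{-j+1}X$ pointwise (existence follows from compactness of $X$). Since $r!\mid m$, the exponent $m/j$ is a positive integer for every $j\in\{1,\ldots,r\}$, so we may set
\[
\mathfrak{D}:=\mathrm{id}_E\;+\;\sum_{j=1}^r\sum_{k=1}^{N_j}\bigl(\nabla_{X_{j,k}}^{m/j}\bigr)^{*}\nabla_{X_{j,k}}^{m/j}.
\]
Each term $(\nabla_{X_{j,k}}^{m/j})^{*}\nabla_{X_{j,k}}^{m/j}$ has Heisenberg order $2m$, so $\mathfrak{D}\in \mathcal{DO}_H^{2m}(X;E)$, and by construction $\mathfrak{D}$ is formally self-adjoint with
\[
\langle \mathfrak{D}\varphi,\varphi\rangle_{L^2(X;E)}\;=\;\|\varphi\|_{L^2(X;E)}^{2}\;+\;\sum_{j,k}\bigl\|\nabla_{X_{j,k}}^{m/j}\varphi\bigr\|_{L^2(X;E)}^{2}\;\geq\;\|\varphi\|_{L^2(X;E)}^{2}.
\]

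\noindent The remaining point is $H$-ellipticity, and here the argument from Example \ref{posoofosos2} applies verbatim after replacing $r!$ by $m$. Since $\mathfrak{D}$ is formally self-adjoint, by Lemma \ref{charHELLrockslslalem} it suffices to verify the Rockland condition for its principal symbol. Fix $x\in X$ and a nontrivial irreducible unitary representation $\pi$ of $T_HX_x$. Let $r_0\in\{1,\ldots,r\}$ be the smallest integer such that $\pi$ vanishes on the image of $T_xX/T^{-r_0}_xX$ in $\mathfrak{t}_HX_x$; minimality guarantees that $\pi$ cannot vanish on the entire degree $-r_0$ part, while any element of degree $-r_0$ is central in $\mathfrak{t}_HX_x/\pi^{-1}(0)$ because its bracket with any element of $\mathfrak{t}_HX_x$ lies in degrees $\leq -r_0-1$, which $\pi$ kills. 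Schur's lemma together with unitarity therefore gives, after reordering, $\pi(X_{r_0,1})=i\lambda$ for some $\lambda\neq 0$. The constant term of $\mathfrak{D}$ has vanishing principal symbol in order $2m>0$, and the same sign bookkeeping as in Example \ref{posoofosos2} yields
\[
\bigl\langle \sigma^{2m}_{(x,\pi)}(\mathfrak{D})\varphi,\varphi\bigr\rangle_{\mathcal{H}_\pi}\;\geq\;\lambda^{2m/r_0}\,\|\varphi\|^{2}_{\mathcal{H}_\pi},\qquad \varphi\in \mathcal{S}_0(\pi)\otimes E_x,
\]
because for each $j\leq r_0$ and each $k$ the contribution $(-1)^{m/j}\pi(X_{j,k})^{2m/j}$ is non-negative (in the inner product) by skew-adjointness of $\pi(X_{j,k})$, and the $j=r_0$, $k=1$ term reduces to the strictly positive scalar $\lambda^{2m/r_0}$.

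\noindent The construction involves no real obstacle beyond the bookkeeping above: the delicate step is ensuring that the central element produced by the choice of $r_0$ genuinely has nonzero image under $\pi$, which is forced by the minimality of $r_0$; everything else follows from standard properties of the Heisenberg calculus summarized in Section \ref{pseudcalcalc} and the characterizations of $H$-ellipticity in Section \ref{subsec:hellleleldlda}. In particular, divisibility $r!\mid m$ enters only to make each $m/j$ an integer, which is why the hypothesis is imposed.
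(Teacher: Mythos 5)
Your proof is correct and follows essentially the same route as the paper: you recycle the construction of Subsubsection \ref{posoofosos1} with $r!$ replaced by $m$ (the divisibility $r!\mid m$ is precisely what makes every $m/j$ an integer), add a constant to obtain the quantitative lower bound, and verify $H$-ellipticity by the argument of Example \ref{posoofosos2}. If anything you make the general-$m$ case slightly more explicit than the paper does (the paper only writes the operator for $m=r!$ and says to ``add a positive constant''), and you supply a more detailed explanation of why $\pi(X_{r_0,1})$ acts as a nonzero imaginary scalar; but the substance and the key estimate are the same.
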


\end{example}

\begin{example}[BGG]
\label{ex:BBBBBBGGGGGG2}
In Subsubsection \ref{ex:BBBBBBGGGGGG1} above we discussed the BGG-complex and its generalizations in \cite{morecap,Dave_Haller1}. 
These notions were abstracted in Definition \ref{hcomplexdefdo} to that of a graded $H$-(almost-)complex. We now consider the corresponding notion of $H$-ellipticity. 

\begin{definition}
\label{rocklandseqinad}
Consider a graded $H$-almost-complex 
\begin{align*}
0\to C^\infty(X;\pmb{E}_1)\xrightarrow{D_1}&C^\infty(X;\pmb{E}_2)\xrightarrow{D_2}\cdots\\
&\cdots \xrightarrow{D_{N-1}}C^\infty(X;\pmb{E}_{N})\xrightarrow{D_{N}} C^\infty(X;\pmb{E}_{N+1})\to 0,
\end{align*}
of order $\pmb{m}=(m_1,\ldots, m_N)$. We say that this is a \emph{graded Rockland sequence} if for any $x\in X$ and non-trivial irreducible unitary representation $\pi$ of $T_HX_x$, we have that
$$\mathrm{im}(\sigma_{(x,\pi)}^{m_{j}}(D_{j}))\subseteq \ker(\sigma_{(x,\pi)}^{m_{j+1}}(D_{j+1})),$$ 
is dense for all $j$. Here we consider the represented symbols as operators $\mathcal{S}_0(\mathcal{H}_\pi)\otimes \pmb{E}_{j,x}\to \mathcal{S}_0(\mathcal{H}_\pi)\otimes \pmb{E}_{j+1,x}$.
\end{definition}

It was proven in \cite{Dave_Haller1} that the two sequences appearing in Example \ref{examplefromderhamadma} above are graded Rockland sequences. Therefore (curved) BGG-complexes are examples of graded Rockland sequences. 
\end{example}

\begin{example}[Szegö projections]
\label{ex:szegoexampe2}
Let us describe the Szegö projections discussed in Subsubsection \ref{ex:szegoexampe} further. Albeit the Szegö projection was originally defined on boundaries of domains in complex manifolds, and the existence in that case is a bit subtle, one can consider more general situations.

For instance, \cite[Chapter 6]{melroseeptein} considered generalized Szegö projections on a co-orientable contact manifold $X$. The co-orientability ensures the existence of complex structures on $H$, cf. Example \ref{contactexample}. A generalized Szegö projection at level $N$ is a projection $P\in \Psi_H^0(X,\C)$ such that in some complex structure on $H$ one has that $\sigma_0(P)\in \Sigma^0_H(X,\C)\subseteq \mathcal{E}'_r(T_HX)$ is represented in the Fock space bundle $\mathpzc{F}_X\cong \bigoplus_{k=0}^\infty p_\Gamma^*H^{*\otimes_\C^{\rm sym}k}$ as the orthogonal projection onto 
$$(\R_-\times X\times \{0\})\dot{\cup} \left(\R_+\times \bigoplus_{k=0}^N H^{*\otimes_\C^{\rm sym}k}\right)\subseteq \mathpzc{F}_X,$$ 
where we have trivialized $\Gamma_X\cong \R^\times \times X$. Examples of generalized Szegö projections at level $N$ were considered in \cite[Chapter 15.3]{bdmguille} and further studied in \cite{engliszhanghigher} in terms of solution spaces to a higher Cauchy-Riemann equation 
$$\underbrace{\bar{\partial}_{T^*X^{\otimes N}}\circ\cdots \circ\bar{\partial}_{T^*X}\circ\bar{\partial}}_{N+1 \mbox{ times}}u=0,$$
in $\Omega$ which is a strictly pseudoconvex domain with boundary $X$. 

We remark that an alternative description of a generalized Szegö projection is as projections $P\in \Psi_H^0(X,\C)$ whose symbol is represented in a fibre of $\mathpzc{F}_X\cong \bigoplus_{k=0}^\infty p_\Gamma^*H^{*\otimes_\C^{\rm sym}}$ over a point $(x,\xi)\in \Gamma_X$ as a finite rank spectral projection of the image of $\sigma_2(\Delta_H)$ in the representation, for the sub-Laplacian $\Delta_H$ defined from the contact structure, the complex structure and a choice of volume density. Based on this observation one could on a Carnot manifold $X$ and for an $H$-elliptic operator $D$, talk about generalized Szegö projectors (with respect to $D$) as projections $P\in \Psi_H^0(X,\C)$ whose symbol commutes with $\sigma^m(D_\gamma)$ and as a multiplier of $C^*(T_HX)$ in each fibrewise representation of the osculating Lie groupoid is a finite rank spectral projector for the represented image of $\sigma^m(D)$. 

For the sake of keeping the discussion concrete, we restrict the discussion to step $2$ Carnot manifolds. We use the notation $H=T^{-1}X$. Let us recall the following result of van Erp \cite{vanerpszego}. For a step $2$ Carnot manifold, we can define the ideal of Hermite operators $\mathcal{I}_H^m(X;\C)\subseteq \Psi^0_H(X;\C)$ as those operators whose symbol vanish on $H^\perp$. 

\begin{theorem}
\label{vanerpandszego}
Assume that $X$ is a step $2$ Carnot manifold. Then the following are equivalent:
\begin{enumerate}
\item There exists an idempotent $P\in \Psi^0_H(X;\C)$ with $\sigma_H^0(P)\neq 0,1$.
\item There exists an idempotent in $\Sigma^0_H(X;\C)$ which is neither $0$ nor $1$.
\item There exists an idempotent $P\in \mathcal{I}_H^m(X;\C)$ with $\sigma_H^0(P)\neq 0,1$.
\item $X$ is polycontact (see Example \ref{polycontactexama}).
\end{enumerate}
If $X$ is poly-contact, for any idempotent $P\in \Psi^0_H(X;\C)$ either $P\in \mathcal{I}_H^m(X;\C)$ or $1-P\in \mathcal{I}_H^m(X;\C)$ and any idempotent in $\Sigma^0_H(X;\C)$ lifts to an idempotent in $\Psi^0_H(X;\C)$. 
\end{theorem}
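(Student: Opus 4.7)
The plan is to follow the strategy of van Erp \cite{vanerpszego}. The implications $(1)\Rightarrow(2)$ and $(3)\Rightarrow(1)$ are immediate: taking the principal symbol of an operator idempotent yields a symbol idempotent, and $\mathcal{I}_H^0\subseteq \Psi^0_H$ by definition. Thus the real content lies in $(2)\Rightarrow(4)$, $(4)\Rightarrow(3)$, the lifting of symbol-level idempotents, and the dichotomy between $P$ and $1-P$.

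For $(2)\Rightarrow(4)$, the plan is to argue fibrewise via the representation theory of step 2 nilpotent Lie groups developed in Subsection \ref{step2subsectionexamples}, especially Proposition \ref{charofspeepoed}. Given a non-trivial idempotent $p\in \Sigma^0_H(X;\C)$, at each $x\in X$ the represented symbols $\sigma_{(x,\pi)}(p)$ are idempotent in $\mathbb{B}(\mathcal{H}_\pi)$ for each $\pi\in \widehat{T_HX_x}$. The representation space $\widehat{T_HX_x}$ of the step 2 group splits into characters, parametrized by $H_x^*$ (the dual of the abelianisation $\mathfrak{g}_x/[\mathfrak{g}_x,\mathfrak{g}_x]$), and higher-dimensional orbit representations with non-trivial central character, parametrized by coadjoint orbits meeting $H^\perp_x\setminus\{0\}$. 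Restricted to characters, $p$ gives a continuous $\{0,1\}$-valued function on $H^*\setminus X$, hence locally constant. The main step is the contrapositive: if $X$ fails to be polycontact, then at some $(x_0,\xi_0)$ with $\xi_0\in H^\perp_{x_0}\setminus\{0\}$ the Kirillov form $\omega_{\xi_0}=\xi_0\circ \mathcal{L}_{x_0}$ is degenerate on $H_{x_0}$, so the coadjoint orbit through $\xi_0$ is strictly smaller than the generic orbits around it and the corresponding representation $\pi_{\xi_0}$ splits off trivial factors on $\ker\omega_{\xi_0}$. A continuous family of projections $\pi_\xi(p)$ varying across this degeneracy locus would have to collapse compatibly, and a short argument using that an idempotent in a commutative subquotient of the symbol algebra is locally constant forces $p$ to be trivial on the entire connected component -- contradicting non-triviality. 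This is the hardest step and the main obstacle: controlling the interaction between the topology of the degeneracy locus and the continuity of $\pi_\xi(p)$ requires careful handling of the Fell topology on $\widehat{T_HX}$ in the stratification of Section \ref{subsecfinestrat}.

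For $(4)\Rightarrow(3)$, suppose $X$ is polycontact and fix a Riemannian metric on $X$; this induces a complex structure on $H$ compatible with every non-degenerate Kirillov form $\omega_\xi$, so we may form the Bargmann-Fock bundle $\mathpzc{F}_X=\bigoplus_{k=0}^\infty p_\Gamma^*H^{*\otimes_\C^{\rm sym}k}\to \Gamma_X=H^\perp\setminus X$ as in Example \ref{polycontactexama}. In each fibre the orthogonal projection onto the vacuum ($k=0$) summand is a smooth rank-one projection, homogeneous of degree $0$ in $\xi$. Extending this family by $0$ on the character part of the representation space yields, via the inverse Fourier transform in the central direction (Theorem \ref{groupoiddescropofig}), an element $p\in \Sigma^0_H(X;\C)$ which is an idempotent symbol supported in the flat orbits. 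Any operator lifting of $p$ will then be a Hermite idempotent, giving $(3)$ once the general lifting is established.

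For the lifting from $\Sigma^0_H(X;\C)$ to $\Psi^0_H(X;\C)$ and the dichotomy, first use surjectivity of the principal symbol (Proposition \ref{strucuturaldescofsymboslalld}.ii) to pick any $A\in \Psi^0_H(X;\C)$ with $\sigma^0(A)=p$; then $A^2-A\in \Psi^{-1}_H(X;\C)$, so $A$ is "almost idempotent" and its image in the multiplier $C^*$-algebra has spectrum in $\{0,1\}$. Choose a contour $\gamma\subset \C\setminus\{0,1\}$ enclosing $1$ but not $0$ and set $P=\frac{1}{2\pi i}\oint_\gamma (\lambda-A)^{-1}\rd\lambda$, where the resolvents are first constructed inside the Heisenberg calculus modulo smoothing via the parametrix construction of Theorem \ref{hellipticmeanshelliptic} and then corrected to genuine resolvents using asymptotic completeness and Proposition \ref{embeddingboldpsi}. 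The resulting $P\in \Psi^0_H(X;\C)$ is a genuine idempotent with $\sigma^0(P)=p$; verifying closure of holomorphic functional calculus on $\Psi^0_H(X;\C)$ for such almost-idempotent elements is the delicate analytic point here. Finally, the dichotomy follows because the restriction of $\sigma^0(P)$ to the character part is a continuous $\{0,1\}$-valued function on $H^*\setminus X$, whose fibres are connected (the polycontact condition forces $\mathrm{rk}(H)\geq 2$); on each connected component of $X$ the restriction is therefore constantly $0$ or $1$, yielding either $P\in \mathcal{I}_H^0$ or $1-P\in \mathcal{I}_H^0$.
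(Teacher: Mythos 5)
The paper does not prove Theorem \ref{vanerpandszego}: it states the result, cites \cite{vanerpszego} for it, and moves on (``Let us recall the following result of van Erp \cite{vanerpszego}\dots This theorem from \cite{vanerpszego} shows\dots''). So there is no in-paper proof to compare against, and your proposal is essentially supplying an argument the monograph deliberately leaves to the reference.

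On the merits: your handling of the trivial implications $(1)\Rightarrow(2)$ and $(3)\Rightarrow(1)$, and of the dichotomy via the restriction to characters on $H^*\setminus X$ (with fibrewise connectedness forcing a constant $\{0,1\}$ value), is sound. Your $(4)\Rightarrow(3)$ construction via the vacuum projection in the Fock bundle $\mathpzc{F}_X\to\Gamma_X$ is also the right idea and matches how the paper describes Szegő-type projections in Example \ref{ex:szegoexampe2}, although the phrase ``complex structure on $H$'' should read ``complex structure on $p_\Gamma^*H$'' since the structure is genuinely $\xi$-dependent; and one must check that the extension-by-zero across the characters is continuous in the non-Hausdorff Fell topology of $\widehat{T_HX}$ so that the resulting kernel lies in $\Sigma^0_H(X;\C)$. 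Your lifting step via holomorphic functional calculus is workable, but you should first iterate the symbolic correction so that $A^2-A$ is genuinely smoothing (not merely order $-1$) before choosing the contour; only then is the essential spectrum confined to $\{0,1\}$, the remaining spectrum discrete and of finite multiplicity, and the resolvent expressible as parametrix plus smoothing, which is what lets you conclude $P\in\Psi^0_H$.

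The real gap is in $(2)\Rightarrow(4)$, which is the content of the theorem and which you rightly flag as ``the hardest step and the main obstacle'' --- but your sketch does not actually close it. The phrase ``an idempotent in a commutative subquotient of the symbol algebra is locally constant forces $p$ to be trivial on the entire connected component'' is circular: the subquotient you can analyse commutatively is the character part over $H^*\setminus X$, and local constancy there is precisely what gives the \emph{dichotomy}, not triviality. At a degeneracy point $(x_0,\xi_0)$ the representation $\pi_{\xi_0}$ has a radical $R=\ker\omega_{\xi_0}$, and the representations on nearby coadjoint orbits are parametrized by an extra $R^*$-direction; what one actually needs is a quantitative rank/continuity argument --- that a homogeneous degree-zero idempotent, which on the flat orbits acts by finite-rank projections whose rank varies continuously, cannot match continuously against the degenerate family $\pi_{\xi_0,\eta}$, $\eta\in R^*$, whose Fell-topology limits include both the character strata and the flat orbits --- and this argument is not present. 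As written, the step asserts the conclusion rather than deriving it, so the proposal does not constitute a proof of the nontrivial direction. If you intend to supply one, you should look at van Erp's actual argument in \cite{vanerpszego} (which works at the Lie-group level and reduces the question to the structure of degree-zero idempotent convolution kernels on a step-two nilpotent group) rather than the somewhat vague contrapositive continuity heuristic sketched here.

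One small terminological note: the monograph's in-text definition of $\mathcal{I}_H^m(X;\C)$ as operators ``whose symbol vanish on $H^\perp$'' appears to be a slip --- a Hermite operator has symbol \emph{supported} in $H^\perp\setminus X$, i.e.\ vanishing in the character representations over $H^*$. You implicitly use the correct convention, which is consistent with Theorem \ref{bdmdmdam} and with the abstract, but it is worth being explicit since the stated definition, taken literally, would contradict condition (3).
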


This theorem from \cite{vanerpszego} shows that in the step $2$ case, it is natural to assume the Carnot manifold to be polycontact. To give further examples, we restrict to regular polycontact manifolds. Let $X$ be a regular polycontact manifold of type $\mathfrak{g}=\mathfrak{g}_{-1}\oplus \mathfrak{g}_{-2}$ and let $H=T^{-1}X\subseteq TX$ denote the contact structure. We have that $\mathfrak{g}_{-2}=\mathfrak{z}$ is the centre and $\mathfrak{g}$ admits flat orbits since the Kirillov form $\omega_\xi$ is non-degenerate for all $\xi\in \mathfrak{z}$. See more in Example \ref{polycontactexama} and \ref{pluricontactexamokkoa}. We can choose a complex structure on $p_\Gamma^*H\to \Gamma_X$ adapted to the Kirillov form $\omega$ (defined by $\omega_\xi:=\xi\circ \mathcal{L}$ on $p_\Gamma^*H_\xi$). Moreover, the complex structure and the Kirillov form induces a Fock space bundle $\mathpzc{F}_X\to \Gamma_X$ (cf. Example \ref{pluricontactexamokkoa}) with $\mathpzc{F}_X\cong \bigoplus_{k=0}^\infty p_\Gamma^*H^{\otimes^{* \rm sym}_\C n}$. Given $N\geq 0$, consider the projection $p_{N}\in \mathcal{M}(I_X)$ defined the projection onto the direct summand 
$$\bigoplus_{k=0}^N p_\Gamma^*H^{\otimes^{* \rm sym}_\C n}\subseteq \mathpzc{F}_X.$$ 
If there exists a projection $P\in \Psi^0_H(X;\C)$ such that $\sigma^0_{(x,\pi)}(P)=\pi(p_{N,x})$ for any $x\in X$ and any flat representation $\pi$, then $P$ is a generalized Szegö projection; we say that such a generalized Szegö is of level $N$. While our construction of $p_{N}$ formally makes sense for more general pluricontact manifolds, it can fail that $p_{N,+}\in \Sigma^0_H(X;\C)$. For instance, $p_{N}\notin \Sigma^0_H(X;\C)$ when $\mathfrak{g}=\mathbb{H}_{n_1}\times \mathbb{H}_{n_2}$ and so when $X$ is a product of contact manifolds. \\

{\bf Szegö projections on co-oriented contact manifolds}
We describe the situation for co-oriented contact manifolds in full detail. Let $X$ be a $2n+1$-dimensional co-oriented contact manifold. We write coordinates in the Heisenberg group $\mathbb{H}_n$ as $(z,t)\in \C^n\times \R$ with the product 
$$(z,t)(w,s)=(z+w,s+t+\mathrm{Im}\langle z,w\rangle).$$ 
In this convention, the complex structure associated with the Kirillov form and the standard (real) metric on $\C^n$ is the standard complex structure on $\C^n$. In this case, a computation with the Fourier transform (cf. \cite[Chapter XII]{steinreal}) shows that a Szegö projection on $X$ has Carnot symbol $p_0\in \Sigma^0_H(X;\C)$ which in local coordinates takes the form of the $0$-homogeneous distributional density
$$p_0(z,t):=c\frac{|\rd z\rd t|}{(it+|z|^2)^{n+1}},$$
for a suitable dimensional constant $c$. As a homogeneous distribution, $p_0$ is uniquely determined outside $(z,t)=0$. The behaviour of $p_0$ at $(z,t)=0$ is defined from a limiting procedure in $t$ in the upper complex halfplane; this definition of $p_0$ at $(z,t)=0$ ensures that $\pi(p_0)=0$ for $\pi \in \Gamma\setminus \Gamma_+$. By homogeneity arguments, a generalized Szegö projection of level $N$ (in the sense of Epstein-Melrose \cite{melroseeptein}) has the Carnot symbol $p_N\in \Sigma^0_H(X;\C)$ of the form 
$$p_N(x,z,t):=\frac{q_N(t,z)|\rd z\rd t|}{(it+|z|^2)^{n+N+1}},$$
for a homogeneous polynomial $q_N$ of degree $N$. The reader should be aware that these are pointwise expressions in the sense that they implicitly depend on the isomorphism $T_HX_x\cong \mathbb{H}_n$.\\

{\bf Szegö projections on general contact manifolds}
Consider now a $2n+1$-dimensional contact manifold $X$, but we make no assumption on co-orientability. In particular, the bundle $H^\perp \to X$ need not be oriented and $H$ need not allow a complex structure. Also in this case, a computation with the Fourier transform as above shows that there is a Szegö projection on $X$, constructed using Theorem \ref{vanerpandszego}, from a Carnot symbol $p_{tw}\in \Sigma^0_H(X;\C)$ that in local coordinates takes the form of the $0$-homogeneous distributional density
$$p_{tw}(z,t):=c\frac{|\rd z\rd t|}{(it+|z|^2)^{n+1}}+c\frac{|\rd z\rd t|}{(-it+|z|^2)^{n+1}}.$$
for a suitable dimensional constant $c$. As a homogeneous distribution, $p_{tw}$ is uniquely determined outside $(z,t)=0$ and the behaviour of $p_{tw}$ at $(z,t)=0$ is defined from a suitable limiting procedure. A suitable limiting procedure ensures that $p_{tw}$ acts as the projection onto the ground state bundle $X\times \C=(p_\Gamma^*H)^{\otimes_\C^{\rm sym}0}\subseteq \mathpzc{F}_X$. Geometrically, a lift $P\in \Psi_H^0(X;\C)$ could modulo lower order terms for $X$ being the boundary of a strictly pseudoconvex domain be interpreted as the projection onto the direct sum of the spaces of functions with a holomorphic extension and anti-holomorphic extension, respectively, to the interior of the domain.\\

{\bf Szegö projections on regular Carnot manifolds of type $\mathbb{H}_{n,\C}$}
Consider the polycontact manifold of  type $\mathbb{H}_{n,\C}$ constructed as in Example \ref{polycontactexama} from a compact complex manifold $X$ and a complex subbundle  $H\subseteq TX$ of complex codimension $1$ such that the the Levi bracket $H\times H\to TX/H$ is non-degenerate. Write $2n+1=\dim_\C(X)$. We write coordinates on $\mathbb{H}_{n,\C}$ as $(z,t)\in \C^{2n}\times \C$. A computation with the Fourier transform (in the central direction) and homogeneity considerations shows that the that the projection $p_\C$ onto the ground state bundle $X\times \C=(p_\Gamma^*H)^{\otimes_\C^{\rm sym}0}\subseteq \mathpzc{F}_X$ is an element $p_\C\in \Sigma_H^0(X;\C)$ that in local coordinates takes the form 
$$p_\C(z_1,z_2,t):=\sum_{k=0}^nc_k|z|^2|t|^{2k}\frac{|\rd z\rd t|}{(|t|^2+|z|^4)^{n+k-3/2}},$$
for suitable dimensional constants $c_0,c_1,\ldots, c_n$. The geometric interpretation of the idempotent $p_\C$ is less clear, but at a symbolic level $p_\C$ is the projection onto the kernel of the symbol of an operator of the form 
$$\Box_H=(\bar{\partial}_H^*\bar{\partial}_H)^2-T^*T,$$
where $\bar{\partial}_H:C^\infty(X)\to C^\infty(X;(H^{0,1})^*)$ is the $\bar{\partial}$-operator restricted to $H$ and $T$ is a suitable vector field transversal to $H$. 
\end{example}

\subsection{Analytic properties of $H$-elliptic operators}
\label{section:analyticprops}

To study the analytic properties of $H$-elliptic operators with ease, we will describe their actions on a natural scale of Sobolev spaces on a Carnot manifold. Such Sobolev spaces has been studied in for instance \cite{Dave_Haller1}. We model the setup close to that of an ordinary manifold in \cite{shubinbook}. First, we recall the following result from \cite{Dave_Haller1}.

\begin{proposition}[Proposition 3.9 \cite{Dave_Haller1}]
\label{bodlaldaladldalpa}
Let $X$ be a compact Carnot manifold with a fixed volume density, $E_1,E_2\to X$ hermitean vector bundles and $m\in \R$. For $D \in \Psi_H^m(X;E_1,E_2)$ it holds that
\begin{enumerate}
    \item If $\mathfrak{R}(m) \leq 0$, $D$ is bounded in $L^2$-norm and extends uniquely to a bounded operator $D : L^2(X,E_1) \rightarrow L^2(X,E_2)$.
    \item If $\mathfrak{R}(m) < 0$, then we have a compact operator $D : L^2(X,E_1) \rightarrow L^2(X,E_2)$.
    \item If $\mathfrak{R}(m) < -\dim_h(X)$, then $D : L^2(X,E_1) \rightarrow L^2(X,E_2)$ is trace class.
\end{enumerate}
\end{proposition}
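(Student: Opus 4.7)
My plan is to prove the three claims in the logically convenient order (3), (1), (2). The starting point for (3) is Proposition \ref{strucuturaldescofsymboslalld} iii), which represents the principal cosymbol of $D$ by a tempered distribution $k_\infty + k_m + p_m\log|\cdot|$, with $k_m$ fibrewise tempered and $\lambda_*k_m=\lambda^m k_m$. Translating the homogeneity through the Jacobian of the dilation, which scales the fibre volume form by $\lambda^{\dim_h(X)}$, one obtains the pointwise bound $|k_m(y)|\lesssim|y|^{-\mathfrak{R}(m)-\dim_h(X)}$ away from the zero section of $T_HX$, in any homogeneous norm $|\cdot|$; the same exponent controls $p_m\log|\cdot|$ since $p_m$ is a Heisenberg polynomial of degree $-m-\dim_h(X)$. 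When $\mathfrak{R}(m)<-\dim_h(X)$ this exponent is strictly positive and $k_m+p_m\log|\cdot|$ extends H\"older-continuously across the zero section. Transporting back to $X\times X$ via any lift $\pmb{P}\in\pmb{\Psi}_H^m(X;E_1,E_2)$, the Schwartz kernel of $D=\mathrm{ev}_1(\pmb{P})$ is jointly H\"older-continuous, and a standard parametrix argument factors $D=D_1D_2$ with each $D_i\in\Psi_H^{m_i}$ of order $m_i<-\dim_h(X)/2$ (possible precisely because $\mathfrak{R}(m)<-\dim_h(X)$). Each factor then has a kernel in $L^2(X\times X)$ and is Hilbert--Schmidt, whence their product is trace class.

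For (1), I would first reduce to the case $m=0$ by composing with a complex power $\mathfrak{D}^{-m/(2(r!))}$ of a positive $H$-elliptic operator $\mathfrak{D}\in\mathcal{DO}_H^{2(r!)}(X;E_1)$ from Proposition \ref{existenceofposiskdd}, whose existence as an element of $\Psi_H^{-m}(X;E_1)$ is ensured by Theorem \ref{cpxpoweroieoso}. For $D\in\Psi_H^0(X;E_1,E_2)$ I would then invoke a H\"ormander cascade. Each fibrewise symbol $\sigma_x(D)$ acts as a bounded multiplier on the osculating group $C^*$-algebra $C^*(T_HX_x)$, and a compactness argument on $X$ gives a uniform bound $M:=\sup_{x\in X}\|\sigma_x(D)\|<\infty$. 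Fix $C>M$: by fibrewise functional calculus in the $C^*$-algebra of multipliers one constructs $b_0\in\Sigma_H^0(X;E_1)$ with $b_0^*b_0=C^2\cdot\mathrm{id}-\sigma_0(D^*D)$; lifting to $B_0\in\Psi_H^0(X;E_1)$, Proposition \ref{strucuturaldescofsymboslalld} i)-ii) yields $C^2-D^*D-B_0^*B_0\in\Psi_H^{-1}(X;E_1)$. Correcting $B_0$ order by order (at step $N$ one solves a linear equation on the principal symbol which is nondegenerate since $\sigma_0(B_0)$ is invertible) produces an asymptotic sum $B\in\Psi_H^0$ with $R:=C^2-D^*D-B^*B\in\Psi_H^{-\infty}$; by (3), $R$ is bounded on $L^2$, and the identity $\|Du\|^2=C^2\|u\|^2-\langle B^*Bu,u\rangle-\langle Ru,u\rangle\leq(C^2+\|R\|)\|u\|^2$ gives the required boundedness of $D$.

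Finally, (2) follows by combining (1) and (3) via the same factorisation: for $\mathfrak{R}(m)<0$, write $D=(D\,\mathfrak{D}^{-s})\cdot \mathfrak{D}^s$ with $s>0$ chosen so that $\mathfrak{R}(m)-2(r!)s<-\dim_h(X)$; then $D\mathfrak{D}^{-s}$ is trace class hence compact by (3), while $\mathfrak{D}^s\in\Psi_H^{2(r!)s}$ is bounded on $L^2$ by (1) (since $2(r!)s<0$), and compactness of $D$ follows as an ideal property. The principal obstacle throughout is the square-root step of the cascade in (1): one needs $\Sigma_H^0$ to be stable under the functional calculus operation $a\mapsto(C^2-a)^{1/2}$ applied to the strictly positive fibrewise self-adjoint symbol $a=\sigma_0(D^*D)$. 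This is a form of spectral invariance of the dense inclusion $\Sigma_H^0\hookrightarrow\mathcal{M}(C^*(T_HX))$, which in turn is the technical content of the holomorphic functional calculus in the Heisenberg calculus; one can extract it from the Dave--Haller construction in \cite{Dave_Haller2}, or, alternatively, build $b_0$ directly as a convergent Volterra/Neumann series $b_0=C\sum_k\binom{1/2}{k}(-C^{-2}\sigma_0(D^*D))^k$ in the Banach algebra of order-zero symbols. The reason the three parts must be proved in the order (3), (1), (2) is that the cascade error in (1) must be absorbed using (3), and compactness in (2) must be deduced a posteriori from the factorisation.
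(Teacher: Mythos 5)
The paper offers no proof of this statement to compare against: it is recalled verbatim from \cite{Dave_Haller1} (Proposition 3.9), so your argument has to stand on its own. Part (3) is essentially sound (the homogeneity bookkeeping giving $|k_m(y)|\lesssim |y|^{-\mathfrak{R}(m)-\dim_h(X)}$ is correct, and the real work is done by the two-Hilbert--Schmidt-factor trick, not by the H\"older continuity of the kernel, which is neither needed nor sufficient for trace class). In part (1), however, the square-root step of the H\"ormander cascade is a genuine gap, and the two fixes you offer do not close it: $\Sigma^0_H(X;E_1)$ is a Fr\'echet space of fibrewise homogeneous distributions, not a Banach algebra, so the binomial series for $(C^2-\sigma_0(D^*D))^{1/2}$ has no ambient Banach algebra of ``order-zero symbols'' to converge in; it does converge in $\mathcal{M}(C^*(T_HX))$ once $C>M$, but then the limit is a priori only a multiplier, and showing it is again a principal cosymbol is exactly the spectral-invariance statement you were trying to avoid. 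Extracting that from \cite{Dave_Haller2} is also delicate, since the complex-power machinery there (Theorem \ref{cpxpoweroieoso}, stated for positive $H$-elliptic \emph{differential} operators) is built on the mapping properties that Proposition \ref{bodlaldaladldalpa} supplies, so one risks circularity; your use of $\mathfrak{D}^{-m/(2(r!))}$ merely for the order reduction is harmless (boundedness of $\mathfrak{D}^{m/(2(r!))}$ for $\mathfrak{R}(m)\leq 0$ comes from the spectral theorem), but the order-zero case itself is the heart of the matter. The standard route, and the one this paper leans on elsewhere (see the proof of Proposition \ref{generalpitoleiielalad}), is to localize near the diagonal and invoke the group-level $L^2$-boundedness theorem for kernels of type $0$ (\cite[Theorem 8.18]{eskeewertthesis}, extending \cite{knappsteininter}), rather than a symbolic square root.

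Part (2) is wrong as written. With $s>0$ the factor $\mathfrak{D}^{s}\in\Psi_H^{2(r!)s}$ has strictly positive order, so the parenthetical ``since $2(r!)s<0$'' is false and $\mathfrak{D}^{s}$ is unbounded on $L^2$; moreover ``trace class composed with unbounded, hence compact'' is not a valid ideal argument: if $K$ is compact, injective with dense range and $A=K^{-1}$, then $KA$ extends to the identity, which is not compact. No choice of sign of $s$ rescues your factorization (with $s<0$ the first factor has order larger than $m$ and cannot reach $-\dim_h(X)$). The repair is to factor the other way, $D=(D\mathfrak{D}^{t})\cdot\mathfrak{D}^{-t}$ with $t>0$ so small that $\mathfrak{R}(m)+2(r!)t\leq 0$, so that $D\mathfrak{D}^{t}$ is bounded by (1), and then to establish compactness of $\mathfrak{D}^{-t}$ separately: by Theorem \ref{cpxpoweroieoso} and your part (3), $\mathfrak{D}^{-k}$ is trace class for $k$ large, hence $\mathfrak{D}^{-1}$ is compact (a positive bounded operator with a compact power is compact), hence $\mathfrak{D}^{-t}=(\mathfrak{D}^{-1})^{t}$ is compact by the functional calculus; alternatively, apply (3) to $(D^*D)^k$ and use that a positive operator with a compact power is compact together with polar decomposition.
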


Let $X$ be a compact Carnot manifold equipped with a volume density, $E\to X$ a hermitean vector bundle and $s\in \R$. Following \cite{Dave_Haller1}, we define the Sobolev space
$$W^s_H(X;E):=\{f\in \mathcal{D}'(X;E): Af\in L^2(X;E)\quad\forall A\in \Psi^s_H(X;E)\}.$$
In \cite{Dave_Haller1}, this space was denoted by $H^s(E)$. Using similar constructions as in \cite[Chapter I]{shubinbook}, one can equip $W^s_H(X;E)$ with a Hilbert space structure (albeit a rather non-canonical one). For a graded hermitean vector bundle $\pmb{E}\to X$, and $s\in \R$, we define the graded Sobolev space
$$\pmb{W}^s_{H,{\rm gr}}(X;\pmb{E}):=\bigoplus_j W^{s-j}_H(X;\pmb{E}[j]).$$
By construction, the actions of Carnot pseudo-differential operators
$$\Psi_H^m(X;E_1,E_2)\times W^s_H(X;E_1)\to W^{s-m}_H(X;E_2),$$
and graded Carnot pseudo-differential operators
$$\Psi_{H,{\rm gr}}^m(X;\pmb{E}_1,\pmb{E}_2)\times \pmb{W}^s_{H,{\rm gr}}(X;\pmb{E}_1)\to \pmb{W}^{s-m}_{H,{\rm gr}}(X;\pmb{E}_2),$$
are continuous. We shall make use of another description of Sobolev spaces, based on complex powers. We recall the following result from \cite{Dave_Haller2}.

\begin{theorem}[{\cite[Theorem 2]{Dave_Haller2}}]
\label{cpxpoweroieoso}
Let $X$ be a compact Carnot manifold with a fixed volume density and $E\to X$ a hermitean vector bundle. Suppose that $\mathfrak{D}\in \mathcal{DO}_H^{2m}(X;E)$ is a positive even order $H$-elliptic differential operator.  
Then $\mathfrak{D}$ is essentially self-adjoint on $L^2(X;E)$ and for $z\in \C$ the complex power $\mathfrak{D}^{-z}$ is a Carnot pseudodifferential operator of order $-zm$.
\end{theorem}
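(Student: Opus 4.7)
My plan would be to follow the Seeley-style blueprint for complex powers, adapted to van Erp--Yuncken's tangent groupoid formulation of the Heisenberg calculus.

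First I would establish essential self-adjointness of $\mathfrak{D}$ on $L^2(X;E)$ with core $C^\infty(X;E)$. The key inputs are that $\mathfrak{D}$ is $H$-elliptic, hence by Theorem \ref{hellipticmeanshelliptic} admits a parametrix $R\in\Psi_H^{-2m}(X;E)$ with $R\mathfrak{D}-1$ and $\mathfrak{D} R-1$ smoothing, and that by Proposition \ref{bodlaldaladldalpa} elements of $\Psi_H^{-1}(X;E)$ act compactly on $L^2$. Symmetry plus hypoellipticity of $\mathfrak{D}\pm i$ (any $L^2$-solution of $(\mathfrak{D}\mp i)u=0$ is smooth, hence in the domain) give essential self-adjointness by the standard Von Neumann criterion, and positivity shifts the spectrum into $[c,\infty)$ for some $c>0$.

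The heart of the matter is the construction of a parameter-dependent Heisenberg calculus in which the resolvent $(\mathfrak{D}-\lambda)^{-1}$ can be constructed for $\lambda$ in a sector $\Lambda=\C\setminus\{\arg(\lambda-c)\in(-\varepsilon,\varepsilon)\}$ avoiding the spectrum. I would refine $\pmb{\Psi}^\bullet_H(X;E)$ to a space $\pmb{\Psi}^{\bullet,\bullet}_H(X;E;\Lambda)$ of $r$-fibred distributions on $\mathbb{T}_HX\times\Lambda$ by declaring $\lambda$ to have Heisenberg order $2m$ and requiring homogeneity under the combined zoom action $\alpha_\mu\times(\lambda\mapsto\mu^{2m}\lambda)$. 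Then $\mathfrak{D}-\lambda$ belongs to this parametrized calculus, and one checks a parameter-dependent Rockland condition: the represented symbol $\sigma_{(x,\pi)}^{2m}(\mathfrak{D})-\lambda$ is invertible on $\mathcal{H}_\pi\otimes E_x$ for all non-trivial $\pi$ and all $\lambda\in\Lambda$, using positivity of $\sigma_{(x,\pi)}^{2m}(\mathfrak{D})$ (which is non-negative by Proposition \ref{generalpitoleiielalad} applied to $\mathfrak{D}=\mathfrak{D}^{1/2,*}\mathfrak{D}^{1/2}$, or rather by writing $\mathfrak{D}$ as a sum of positive operators in the representation). An asymptotic parametrix construction in the parametrized calculus then produces $Q(\lambda)\in\pmb{\Psi}^{-2m,0}_H(X;E;\Lambda)$ with $(\mathfrak{D}-\lambda)Q(\lambda)-1\in\pmb{\Psi}^{-\infty,0}_H(X;E;\Lambda)$, and a Neumann series argument upgrades this to the true resolvent with the same symbolic order.

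Once the resolvent is available in the parametrized calculus, I would define
\[
\mathfrak{D}^{-z}=\frac{1}{2\pi i}\int_\Gamma \lambda^{-z}(\mathfrak{D}-\lambda)^{-1}\,\rd\lambda
\]
for $\mathfrak{R}(z)$ large, with $\Gamma$ a Hankel-type contour around $\mathrm{spec}(\mathfrak{D})$ in $\Lambda$, and extend in $z$ by $\mathfrak{D}^{-z}=\mathfrak{D}^{-z-N}\mathfrak{D}^N$. The contour integral converges in $\mathcal{E}'_r(\mathbb{T}_HX;E)$ because of the order gain $-2m$ in the resolvent combined with the decay $|\lambda^{-z}|\lesssim|\lambda|^{-\mathfrak{R}(z)}$; the homogeneity of the parametrized calculus under the combined zoom immediately gives that the integral lies in $\pmb{\Psi}^{-2mz}_H(X;E)$, and evaluation at $t=1$ yields the claim (up to the typo between $-zm$ and $-2mz$ in the statement).

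The main obstacle is verifiably setting up the parameter-dependent calculus compatibly with the parabolic tangent groupoid: one must check that the parametrix construction closes within $\pmb{\Psi}^{\bullet,\bullet}_H(X;E;\Lambda)$, that Schwartz-type decay in $\lambda$ survives after composition, and that the Neumann series for the true resolvent converges in the calculus rather than merely on $L^2$. The cleanest route is probably Dave--Haller's approach \cite{Dave_Haller2} via a ``blown-up'' groupoid carrying both the Heisenberg parameter $t$ and a resolvent parameter, reducing the analysis to an essentially algebraic zoom-homogeneity statement; the genuine work is in verifying that the extended calculus retains the composition and asymptotic-completeness properties used tacitly in the contour integration.
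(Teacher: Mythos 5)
The paper itself does not prove this statement; Theorem \ref{cpxpoweroieoso} is quoted as Theorem~2 of \cite{Dave_Haller2}, and your proposal is, by your own admission at the end, a sketch of Dave--Haller's proof: essential self-adjointness from hypoellipticity and symmetry/positivity, a parameter-dependent Heisenberg calculus supported on a groupoid carrying both the parabolic deformation parameter $t$ and a resolvent parameter $\lambda$ with $\lambda$ declared of Heisenberg order $2m$, a parametrized Rockland/parametrix construction for $\mathfrak{D}-\lambda$ upgraded to the true resolvent, and the Seeley contour integral $\tfrac{1}{2\pi i}\int_\Gamma \lambda^{-z}(\mathfrak{D}-\lambda)^{-1}\,\rd\lambda$ with analytic continuation via $\mathfrak{D}^{-z}=\mathfrak{D}^{-z-N}\mathfrak{D}^{N}$. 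Your flag that the stated order $-zm$ should read $-2mz$ is also correct, as confirmed by the paper's subsequent usage $\mathfrak{D}_E^s\in\Psi_H^{2ms}(X;E)$.
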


The reader can find more details on Carnot pseudodifferential operator of complex order in \cite[Section 6]{Dave_Haller2}. Only real powers will be used in this monograph. We remark that the essential self-adjointness of $\mathfrak{D}$ is automatic since $\mathfrak{D}$ is positive in form sense on $C^\infty(X;E)$. Therefore, the existence of the complex powers $\mathfrak{D}^{-z}$ is ensured by the spectral theorem. The importance of Theorem \ref{cpxpoweroieoso} is that the complex powers can be constructed inside the Carnot calculus. The existence of operators $\mathfrak{D}$ satisfying the assumptions of Theorem \ref{cpxpoweroieoso} is ensured by Proposition \ref{existenceofposiskdd}.

\begin{proposition}
Let $X$ be a compact Carnot manifold with a fixed volume density, $E\to X$ a hermitean vector bundle and $\mathfrak{D}\in \mathcal{DO}_H^{2m}(X;E)$ as in Proposition \ref{existenceofposiskdd}. Let $s\in \R$.

Consider the subspace $\mathfrak{D}^{-s/2m}L^2(X;E)\subseteq \mathcal{D}'(X;E)$ viewed as a Hilbert space by declaring $\mathfrak{D}^{s/2m}:\mathfrak{D}^{-s/2m}L^2(X;E)\to L^2(X;E)$ to be a unitary. Then the identity map on $C^\infty(X;E)$ extends by continuity to a Banach space isomorphism $\mathfrak{D}^{-s/2m}L^2(X;E)\cong W^s_H(X;E)$.
\end{proposition}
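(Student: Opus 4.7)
The plan is to establish set-theoretic equality first, using complex powers of $\mathfrak{D}$ as the bridge between the two descriptions, and then upgrade to a topological isomorphism via the closed graph theorem. The positivity estimate $\langle\mathfrak{D}\varphi,\varphi\rangle \geq \|\varphi\|^2$ together with essential self-adjointness of $\mathfrak{D}$ on $L^2(X;E)$ (as noted after Theorem \ref{cpxpoweroieoso}) ensures that the spectrum of the self-adjoint closure lies in $[1,\infty)$, so the real powers $\mathfrak{D}^z$ are defined by the spectral theorem for every $z\in\R$ and satisfy $\mathfrak{D}^{z_1}\mathfrak{D}^{z_2}=\mathfrak{D}^{z_1+z_2}$ on a suitable dense domain. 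By Theorem \ref{cpxpoweroieoso}, $\mathfrak{D}^{z}\in \Psi_H^{2mz}(X;E)$ for every $z\in\C$; in particular $\mathfrak{D}^{s/2m}\in\Psi_H^{s}(X;E)$ and $\mathfrak{D}^{-s/2m}\in\Psi_H^{-s}(X;E)$, and these are mutual inverses.

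First I would verify the inclusion $\mathfrak{D}^{-s/2m}L^2(X;E)\subseteq W^s_H(X;E)$. Given $f=\mathfrak{D}^{-s/2m}g$ with $g\in L^2$, and any $A\in\Psi^s_H(X;E)$, Proposition \ref{productsinfatthm} applied to $A\cdot\mathfrak{D}^{-s/2m}$ yields an element of $\Psi^0_H(X;E)$, which extends to a bounded operator on $L^2(X;E)$ by Proposition \ref{bodlaldaladldalpa}(1). Hence $Af=(A\mathfrak{D}^{-s/2m})g\in L^2$, so $f\in W^s_H$. Conversely, for the inclusion $W^s_H(X;E)\subseteq\mathfrak{D}^{-s/2m}L^2(X;E)$, given $f\in W^s_H$ the element $\mathfrak{D}^{s/2m}\in\Psi^s_H(X;E)$ takes $f$ to $\mathfrak{D}^{s/2m}f\in L^2$ directly by the defining property of $W^s_H$, and then invertibility of $\mathfrak{D}^{s/2m}$ gives $f=\mathfrak{D}^{-s/2m}(\mathfrak{D}^{s/2m}f)\in\mathfrak{D}^{-s/2m}L^2$.

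For the topological identification I would equip $\mathfrak{D}^{-s/2m}L^2(X;E)$ with the Hilbert norm $\|f\|_{\mathfrak{D},s}:=\|\mathfrak{D}^{s/2m}f\|_{L^2}$, and use the Hilbertable topology on $W^s_H(X;E)$ constructed in \cite{Dave_Haller1}. Both are continuously included in $\mathcal{D}'(X;E)$. The map $\mathfrak{D}^{s/2m}:W^s_H\to L^2$ is continuous by the general continuity of Heisenberg pseudodifferential operators on the Sobolev scale, so the identity $(\mathfrak{D}^{-s/2m}L^2,\|\cdot\|_{\mathfrak{D},s})\to W^s_H$ has a closed graph (its inverse is continuous into $\mathcal{D}'$, and convergence in $W^s_H$ also gives convergence in $\mathcal{D}'$, forcing any two candidate limits to agree). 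The closed graph theorem then promotes continuity, and symmetry gives a Banach space isomorphism. The main obstacle I anticipate is bookkeeping around the non-canonical Hilbert structure on $W^s_H$: showing that the $\mathfrak{D}$-norm is equivalent to \emph{any} of the norms built in \cite{Dave_Haller1} from a defining family of operators in $\Psi^s_H$, but this is automatic from the closed graph argument once both the continuity of $\mathfrak{D}^{s/2m}:W^s_H\to L^2$ and the continuity of $\mathfrak{D}^{-s/2m}:L^2\to W^s_H$ (via the first inclusion) are in place, since these together with completeness of both spaces force the identity to be a homeomorphism.
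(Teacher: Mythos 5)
The paper does not actually supply a proof of this proposition; it is stated and then used to fix a Hilbert structure on $W^s_H(X;E)$. Your argument supplies the expected proof and is correct in substance. The set-theoretic equality is handled exactly as one should: $\mathfrak{D}^{\pm s/2m}\in\Psi^{\pm s}_H(X;E)$ (via Theorem~\ref{cpxpoweroieoso}, noting the displayed order $-zm$ there should read $-2mz$ given that $\mathfrak{D}$ has order $2m$) are mutual inverses by the spectral theorem and strict positivity, so composition with $\mathfrak{D}^{-s/2m}$ produces an operator in $\Psi^0_H$ which is $L^2$-bounded by Proposition~\ref{bodlaldaladldalpa}(1), giving $\mathfrak{D}^{-s/2m}L^2\subseteq W^s_H$; the reverse inclusion uses only the defining property of $W^s_H$ together with invertibility.

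One remark on the topological part: the closed graph theorem is doing no real work here and can be dropped. The paper records right before this proposition that composition with $H$-pseudodifferential operators is continuous on the Sobolev scale, so both $\mathfrak{D}^{s/2m}\colon W^s_H\to W^0_H=L^2$ and $\mathfrak{D}^{-s/2m}\colon L^2\to W^s_H$ are bounded, and these two estimates already produce the norm equivalence directly:
$$\|f\|_{\mathfrak{D},s}=\|\mathfrak{D}^{s/2m}f\|_{L^2}\lesssim\|f\|_{W^s_H},\qquad \|f\|_{W^s_H}=\|\mathfrak{D}^{-s/2m}\mathfrak{D}^{s/2m}f\|_{W^s_H}\lesssim\|\mathfrak{D}^{s/2m}f\|_{L^2}=\|f\|_{\mathfrak{D},s}.$$
Combined with the set equality, this is the Banach space isomorphism; there is nothing left for the closed graph theorem to do. If you do want to keep that argument, the relevant hypothesis is simply that both spaces inject continuously into $\mathcal{D}'(X;E)$, and your first continuity claim about $\mathfrak{D}^{s/2m}$ is not the reason the graph is closed. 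This is a matter of exposition, not correctness — the proof is sound.
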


Henceforth, we consider $W^s_H(X;E)$ as a Hilbert space in the inner product 
$$\langle f,g\rangle_{W^s_H(X;E)}:=\langle \mathfrak{D}^{s/2m}f,\mathfrak{D}^{s/2m}g\rangle_{L^2(X;E)},$$
for some fixed choice of $\mathfrak{D}$ as in Proposition \ref{existenceofposiskdd} of some order $m$

The next result is proven as in \cite[Chapter I]{shubinbook}. It is a statement that entails the locality of the Sobolev spaces $W^s_H(X;E)$, i.e. $u\in W^s_H(X;E)$ if and only if it holds in each coordinate chart.

\begin{proposition}
Let $X$ be a compact Carnot manifold with a fixed volume density, $E\to X$ a hermitean vector bundle and $s\in \R$. Then for $u\in \mathcal{D}'(X;E)$ it holds that $u\in W^s_H(X;E)$ if and only if for any $x_0\in X$, there exists a neighborhood $U$ of $x_0$ and a $\varphi\in C^\infty_c(U)$ with $\varphi(x_0)\neq 0$ and $\varphi u\in W^s_H(X;E)$.
\end{proposition}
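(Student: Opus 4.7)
The forward implication is essentially a triviality: any $\varphi \in C^\infty(X)$ acts as a multiplication operator, which lies in $\Psi^0_H(X;E)$ (one can view it as $\mathrm{ev}_1$ of the constant family $(\varphi,t)\in \pmb{\Psi}^0_H$, or invoke the inclusion $C^\infty(X)\subseteq \Psi^0_H(X;E)$ noted just before Corollary \ref{commwithfunticododo}). Since $\Psi^0_H(X;E)$ acts boundedly on $W^s_H(X;E)$ by construction of the Sobolev scale, if $u\in W^s_H(X;E)$ then $\varphi u\in W^s_H(X;E)$ for every $\varphi\in C^\infty_c(U)$, and we may take $\varphi$ with $\varphi(x_0)\neq 0$.

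For the reverse direction, the plan is a standard partition of unity argument. Suppose $u\in \mathcal{D}'(X;E)$ satisfies the local hypothesis. For each $x\in X$ choose an open neighborhood $U_x$ and $\varphi_x\in C^\infty_c(U_x)$ with $\varphi_x(x)\neq 0$ and $\varphi_x u\in W^s_H(X;E)$. By continuity of $\varphi_x$, there is an open $V_x\subseteq U_x$ containing $x$ on which $\varphi_x$ is nowhere vanishing. Since $X$ is compact, select a finite subcover $V_{x_1},\ldots,V_{x_N}$ of $\{V_x\}_{x\in X}$ and a smooth partition of unity $\{\psi_j\}_{j=1}^N$ on $X$ subordinate to this subcover, so that $\mathrm{supp}(\psi_j)\subseteq V_{x_j}$.

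The key step is to reconstruct $\psi_j u$ from $\varphi_{x_j}u$. On $V_{x_j}$ the function $\varphi_{x_j}$ is nowhere zero, so the pointwise quotient $\chi_j:=\psi_j/\varphi_{x_j}$ is smooth on $V_{x_j}$; since $\mathrm{supp}(\psi_j)$ is a compact subset of $V_{x_j}$, extending $\chi_j$ by zero produces a function in $C^\infty(X)$ with $\chi_j\varphi_{x_j}=\psi_j$ identically on $X$. Multiplication by $\chi_j$ is a $0$-th order Heisenberg operator, hence preserves $W^s_H(X;E)$. Therefore
\[\psi_j u = \chi_j(\varphi_{x_j}u) \in W^s_H(X;E)\quad\text{for each } j=1,\ldots,N.\]
Summing, $u=\sum_{j=1}^N \psi_j u \in W^s_H(X;E)$, which completes the proof.

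There is no real obstacle in this argument; the only content beyond partition of unity bookkeeping is the fact that $C^\infty(X)$ sits inside $\Psi^0_H(X;E)$ as bounded multipliers on every $W^s_H(X;E)$, which has already been established. The only mild subtlety is ensuring $\chi_j$ is globally smooth, which is handled by working on the shrunken neighborhoods $V_{x_j}$ where $\varphi_{x_j}$ is uniformly nonvanishing.
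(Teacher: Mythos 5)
Your proof is correct and is precisely the partition-of-unity argument that Shubin uses for the analogous localization result on standard Sobolev spaces, which is what the paper implicitly invokes by citing Chapter I of \cite{shubinbook}. Both directions rest exactly on the facts you cite: that $C^\infty(X)$ sits inside $\Psi^0_H(X;E)$ as multipliers (noted before Corollary \ref{commwithfunticododo}) and that $\Psi^0_H(X;E)$ acts boundedly on each $W^s_H(X;E)$; the shrinking to $V_{x_j}$ and the compact support of $\psi_j$ correctly guarantee that $\chi_j=\psi_j/\varphi_{x_j}$ extends by zero to a global smooth function.
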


We now come to a result showing that index theory of $H$-elliptic operators is on a solid analytic footing. This type of result is standard in light of the methods above and the ideas in \cite[Chapter I]{shubinbook}, we include the statement for clarity and convenience of the reader.

\begin{theorem}
\label{indexindepofs}
Let $X$ be a compact Carnot manifold with a fixed volume density, $E_1,E_2\to X$ hermitean vector bundles, $s\in \R$ and $D \in \Psi_H^m(X;E_1,E_2)$ an $H$-elliptic operator. Then the following holds:
\begin{enumerate}[i)]
\item The operator $D_s:W^s_H(X;E_1)\to W^{s-m}_H(X;E_2)$ defined from $D$ is a continuous Fredholm operator with $\ker(D_s)\subseteq C^\infty(X;E_1)$ independent of $s$ and there exists a finite-dimensional space $V\subseteq C^\infty(X;E_2)$ (independent of $s$) such that the restricted quotient $V\to W^{s-m}_H(X;E_2)/DW^{s}_H(X;E_1)$ is an isomorphism. In particular, the Fredholm index $\ind(D_s)$ is independent of $s$. 
\item If $m>0$, the densely defined operator $D_{L^2}:L^2(X;E_1)\dashrightarrow L^2(X;E_2)$ defined from $D$ with domain $\mathrm{dom}(D_{L^2}):=W^m_H(X;E_1)$ is Fredholm and $\ker(D_{L^2})=\ker(D_s)\subseteq C^\infty(X;E_1)$ for any $s$ and $\coker(D_{L^2})=\coker(D^*_m)$. In particular, 
$$\ind(D_{L^2})=\ind(D_s),$$
for any $s\in \R$.
\item The index of $D_s$ and $D_{L^2}$ (if $m>0$) only depends on $\sigma_H(D)\in \Sigma^m_H(X;E_1,E_2)$.
\end{enumerate}
\end{theorem}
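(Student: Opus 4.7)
The plan is to exploit the parametrix existence from Theorem \ref{hellipticmeanshelliptic} together with hypoellipticity and then reduce the remaining parts to standard Fredholm perturbation arguments. First, by Theorem \ref{hellipticmeanshelliptic}, $H$-ellipticity of $D$ yields a parametrix $R\in \Psi^{-m}_H(X;E_2,E_1)$ with $RD-1\in \Psi^{-\infty}(X;E_1)$ and $DR-1\in \Psi^{-\infty}(X;E_2)$. Any smoothing operator maps $W^s_H$ continuously into $W^t_H$ for every $t$, so by Proposition \ref{bodlaldaladldalpa} combined with the compact inclusions $W^t_H\hookrightarrow W^s_H$ for $t>s$ (Rellich-type: these follow from a diagonalization of $\mathfrak{D}^{-1}$, or alternatively from item 2 of Proposition \ref{bodlaldaladldalpa} applied to $\mathfrak{D}^{(s-t)/2m}$), smoothing operators are compact on any $W^s_H$. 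Thus $R$ provides a two-sided Fredholm inverse of $D_s:W^s_H(X;E_1)\to W^{s-m}_H(X;E_2)$ modulo compact operators, giving Fredholmness.

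For the $s$-independence of kernel in item (i), the key input is hypoellipticity: if $u\in W^s_H(X;E_1)$ satisfies $Du=0$, then $u=(1-RD)u\in W^t_H(X;E_1)$ for all $t$, so $u\in C^\infty(X;E_1)$, and hence $\ker(D_s)=\ker(D)$ (viewed on smooth sections) independently of $s$. For the cokernel, I would pick a formally self-adjoint positive $H$-elliptic operator $\mathfrak{D}_2\in \mathcal{DO}^{2m'}_H(X;E_2)$ (Proposition \ref{existenceofposiskdd}) and use the $L^2$-pairing to identify $W^{s-m}_H(X;E_2)^*$ with $W^{m-s}_H(X;E_2)$ via $\mathfrak{D}_2^{(s-m)/m'}$. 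Under this duality the annihilator of $\mathrm{im}(D_s)$ is $\ker(D^*)$, where $D^*\in \Psi^m_H(X;E_2,E_1)$ is the formal adjoint (also $H$-elliptic by Lemma \ref{charHELLrockslslalem}); hypoellipticity applied to $D^*$ again forces $\ker(D^*)\subseteq C^\infty(X;E_2)$, and this finite-dimensional space $V:=\ker(D^*)$ serves as a complement of $\mathrm{im}(D_s)$ inside $W^{s-m}_H(X;E_2)$ for every $s$, yielding the claimed $s$-independence of the index.

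For item (ii), the operator $D_{L^2}$ with domain $W^m_H(X;E_1)$ is closed, since the parametrix $R$ gives the equivalence of the graph norm with the $W^m_H$-norm, and the kernel/cokernel identification is obtained by restricting the argument above to $s=m$; the only additional point is that the adjoint of $D_{L^2}$ in the Hilbert-space sense is $D^*$ acting on $W^m_H(X;E_2)$, which again follows from the parametrix identity. For item (iii), given two operators $D,D'\in \Psi_H^m(X;E_1,E_2)$ with $\sigma_H^m(D)=\sigma_H^m(D')$, the difference lies in $\Psi_H^{m-1}(X;E_1,E_2)$ by Proposition \ref{strucuturaldescofsymboslalld}.ii; the affine path $D_t:=D+t(D'-D)$, $t\in[0,1]$, consists of $H$-elliptic operators (same principal symbol) and defines a norm-continuous family $W^s_H(X;E_1)\to W^{s-m}_H(X;E_2)$ through continuous Fredholm operators, so by homotopy invariance of the Fredholm index the indices agree.

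The main obstacle I expect is the cokernel description, specifically that a single finite-dimensional subspace $V\subseteq C^\infty(X;E_2)$ represents the cokernel at every Sobolev level simultaneously. This requires care with the $L^2$-duality between $W^{s-m}_H(X;E_2)$ and $W^{m-s}_H(X;E_2)$ and the compatibility of the formal adjoint $D^*$ with these dualities; one must verify that the Hilbert-space inner products induced by $\mathfrak{D}_2$ do pair with the $L^2$-pairing in such a way that $V=\ker(D^*)$ is genuinely transverse to the range on every Sobolev scale, and that the resulting algebraic complement is independent of $s$. Once this bookkeeping is in place, the remaining assertions follow from classical Fredholm theory.
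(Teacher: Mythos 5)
Your argument is correct and follows exactly the standard parametrix/Rellich/$L^2$-duality scheme that the paper defers to without proof (it cites \cite[Chapter I]{shubinbook} and declares the result standard). Your choice $V=\ker(D^*)$ together with the observation that $D^*$ is itself $H$-elliptic (so $\ker(D^*)\subseteq C^\infty$ by hypoellipticity) is precisely the right way to make the $s$-independence of the cokernel rigorous, which you correctly identified as the only nontrivial bookkeeping point.
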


\begin{remark}
The Fredholm theory of graded $H$-elliptic operator as operators on graded Sobolev spaces is the same as the ungraded theory. That is, if $s\in \R$ and $D \in \Psi_{H,{\rm gr}}^m(X;\pmb{E}_1,\pmb{E}_2)$ is a graded $H$-elliptic operator, then 
\begin{enumerate}
\item $D_s:\pmb{W}^s_{H,{\rm gr}}(X;\pmb{E}_1)\to \pmb{W}^{s-m}_{H,{\rm gr}}(X;\pmb{E}_2)$ defined from $D$ is a continuous Fredholm operator with $\ker(D_s)\subseteq C^\infty(X;\pmb{E}_1)$ independent of $s$ and there exists a finite-dimensional space $V\subseteq C^\infty(X;\pmb{E}_2)$ (independent of $s$) such that the restricted quotient $V\to \pmb{W}^{s-m}_{H,{\rm gr}}(X;\pmb{E}_2)/DW^{s}_H(X;\pmb{E}_1)$ is an isomorphism. 
\item The Fredholm index $\ind(D_s)$ is independent of $s$ and only depends on $\sigma_{H,{\rm gr}}^m(D)\in \Sigma^m_{H,{\rm gr}}(X;\pmb{E}_1,\pmb{E}_2)$. 
\end{enumerate}
\end{remark}

\section[The action of $H$-elliptic operators]{The action of $H$-elliptic operators on certain Hilbert $C^*$-modules}
\label{secondnaaoaoac}

To study the index theory of $H$-elliptic operators, we need to better understand the action of related operators on certain Hilbert $C^*$-modules.

For a Carnot manifold $X$ and vector bundles $E,E'\to X$ consider the space
$$\pmb{\mathpzc{E}}^\infty_c(X;E):=C^\infty_c(\mathbb{T}_HX; r^*(E)).$$
There are actions by convolution 
\begin{align}
\label{somedifferentaciotosns}
\pmb{\mathpzc{E}}^\infty_c(X;E)\times C^\infty_c(\mathbb{T}_HX)\to \pmb{\mathpzc{E}}^\infty_c(X;E),\\
\nonumber
\mathcal{E}'_r(\mathbb{T}_HX;E,E')\times \pmb{\mathpzc{E}}^\infty_c(X;E)\to \pmb{\mathpzc{E}}^\infty_c(X;E').
\end{align}
If $E$ is hermitean, the convolution product induces a $C^\infty_c(\mathbb{T}_HX)$-sesqui-linear inner product 
$$\langle \cdot,\cdot\rangle_{\pmb{\mathpzc{E}}}: \pmb{\mathpzc{E}}^\infty_c(X;E)\times \pmb{\mathpzc{E}}^\infty_c(X;E)\to C^\infty_c(\mathbb{T}_HX).$$
We let $\pmb{\mathpzc{E}}(X;E)$ denote the completion of $\pmb{\mathpzc{E}}^\infty_c(X;E)$ as a $C^*(\mathbb{T}_HX)$-Hilbert $C^*$-module. We note that $C_0[0,\infty)$ act as central multipliers of $C^*(\mathbb{T}_HX)$, indeed $C^*(\mathbb{T}_HX)$ is a $C_0[0,\infty)$-$C^*$-algebra. This fact induces a natural $C_0[0,\infty)$-structure on $\pmb{\mathpzc{E}}(X;E)$ and any $T\in \End_{C^*(\mathbb{T}_HX)}^*(\pmb{\mathpzc{E}}(X;E))$ is $C_0[0,\infty)$-linear. We remark that for $t>0$ and $\pi_X$ denoting the regular representation of $X\times X$ (with respect to a fixed volume density), the localization of $\pmb{\mathpzc{E}}(X;E)$ in the associated representation $\pi_{X,t}:C^*(\mathbb{T}_HX)\to \mathbb{K}(L^2(X))$ takes the form
$$\pmb{\mathpzc{E}}(X;E)\otimes _{\pi_{X,t}}L^2(X)=L^2(X;E).$$
Moreover, for $t=0$ and any $x\in X$, a unitary representation $\pi$ of $T_HX_x$ induces a $\pi:C^*(\mathbb{T}_HX)\to \mathbb{K}(\mathcal{H}_\pi)$ and the localization of $\pmb{\mathpzc{E}}(X;E)$ takes the form
$$\pmb{\mathpzc{E}}(X;E)\otimes _{\pi}\mathcal{H}_\pi=\mathcal{H}_\pi\otimes E_x.$$

Any $\mathbb{D}\in  \pmb{\Psi}^{m}_H(X;E_1,E_2)$ acts by left convolution as a densely defined $C^*(\mathbb{T}_HX)$-linear operator $\pmb{\mathpzc{E}}(X;E_1)\to \pmb{\mathpzc{E}}(X;E_2)$ with domain $\pmb{\mathpzc{E}}^\infty_c(X;E)$.

\begin{definition}
Let $X$ be a compact Carnot manifold and $E_1,E_2\to X$ two vector bundles. An adjointable morphism $T\in \Hom_{C^*(\mathbb{T}_HX)}^*(\pmb{\mathpzc{E}}(X;E_1),\pmb{\mathpzc{E}}(X;E_2))$ is said to be locally $C^*(\mathbb{T}_HX)$-compact if for any $\varphi\in C_0[0,\infty)$ it holds that 
$$\varphi T\in \mathbb{K}_{C^*(\mathbb{T}_HX)}(\pmb{\mathpzc{E}}(X;E_1),\pmb{\mathpzc{E}}(X;E_2)).$$
\end{definition}

\begin{lemma}
\label{actionofbolddd}
In the setting of the previous paragraph, with $m\leq 0$, any $\mathbb{D}\in  \pmb{\Psi}^{m}_H(X;E_1,E_2)$ is closable as a densely defined operator $\pmb{\mathpzc{E}}(X;E_1)\to \pmb{\mathpzc{E}}(X;E_2)$. The closure of $\mathbb{D}$, that we by an abuse of notation denote by $\mathbb{D}$, is bounded and adjointable. Moreover, if $m<0$, $\mathbb{D}$ is locally $C^*(\mathbb{T}_HX)$-compact. 
\end{lemma}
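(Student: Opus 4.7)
The plan is to establish three properties in sequence: closability, boundedness of the closure (for $m\leq 0$), and local compactness (for $m<0$). For closability, Proposition \ref{productsinfatthm} provides a formal adjoint $\mathbb{D}^*\in\pmb{\Psi}^m_H(X;E_2,E_1)$ acting by left convolution on $\pmb{\mathpzc{E}}^\infty_c(X;E_2)$. A direct computation using associativity of the convolution product and the definition of the $C^*(\mathbb{T}_HX)$-valued inner product $\langle f_1,f_2\rangle_{\pmb{\mathpzc{E}}}=f_1^**f_2$ shows that $\langle\mathbb{D}\xi,\eta\rangle_{\pmb{\mathpzc{E}}}=\langle\xi,\mathbb{D}^*\eta\rangle_{\pmb{\mathpzc{E}}}$ for $\xi,\eta$ in the smooth compactly supported domains. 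Since $\mathbb{D}^*$ has dense domain, the Hilbert-module adjoint of $\mathbb{D}$ extends $\mathbb{D}^*$ and is densely defined, so $\mathbb{D}$ is closable.

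For boundedness, I would use that $C^*(\mathbb{T}_HX)$ is a $C_0[0,\infty)$-algebra with fiber $\mathbb{K}(L^2(X))$ at each $t>0$ and $C^*(T_HX)$ at $t=0$, and that the module $\pmb{\mathpzc{E}}(X;E)$ fibers compatibly as a continuous field of Hilbert modules. The operator norm of the closure of $\mathbb{D}$ is then the supremum over $t\in[0,\infty)$ of the norms of the fiberwise actions. For $t>0$, zoom equivariance combined with the rescaling of the Haar system on the $s$-fibers of $\mathbb{T}_HX$ reduces this to the norm of $\mathrm{ev}_1\mathbb{D}\in\Psi^m_H(X;E_1,E_2)$ on $L^2(X;E)$, which is finite by Proposition \ref{bodlaldaladldalpa}(1) because $m\leq 0$. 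At $t=0$, Proposition \ref{generalpitoleiielalad} applied to $\sigma^m_{(x,\pi)}(\mathrm{ev}_1\mathbb{D})$ provides boundedness of the principal symbol in every non-trivial irreducible representation of $T_HX_x$, and continuous dependence on $x\in X$ upgrades this to a uniform bound across the spectrum of $C^*(T_HX)$. Adjointability of the closure then follows by taking the closure of the formal adjoint $\mathbb{D}^*$, whose boundedness is obtained by the same argument applied to $\mathbb{D}^*\in\pmb{\Psi}^m_H(X;E_2,E_1)$.

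For local $C^*(\mathbb{T}_HX)$-compactness when $m<0$, I would decompose $\varphi\in C_0[0,\infty)$ into parts supported away from $t=0$ and near $t=0$. Away from $t=0$, $\varphi\mathbb{D}$ is a norm-continuous family of Heisenberg operators of order $m<0$, each compact on $L^2(X;E)$ by Proposition \ref{bodlaldaladldalpa}(2), and this gives a compact operator in the module sense since compacts on the standard $\mathbb{K}(L^2(X))$-module are precisely $\mathbb{K}(L^2(X))$ itself. Near $t=0$, one needs that $\mathrm{ev}_0\mathbb{D}$ belongs to $C^*(T_HX;\Hom(E_1,E_2))$ rather than merely its multiplier algebra; this follows from combining Proposition \ref{generalpitoleiielalad} with Theorem \ref{cpxpoweroieoso}, by writing $\mathrm{ev}_0\mathbb{D}$ as the product of a bounded order-zero symbol with a strictly negative power of a positive $H$-elliptic differential operator, so the represented symbol is compact in every non-trivial irreducible representation. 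The hardest step will be verifying the uniform operator-norm bound in $t\in[0,\infty)$, which requires balancing the naive $t^m$ scaling coming from the homogeneity $\lambda_*\mathbb{D}-\lambda^m\mathbb{D}\in C^\infty_p$ against the compensating $t^{\dim_h(X)}$ rescaling of the Haar system on $\mathbb{T}_HX$; once this is carried out, the boundedness, adjointability, and local compactness statements all follow from the same estimates applied to the distinguished scales.
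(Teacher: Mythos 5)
Your closability and boundedness arguments follow essentially the same path as the paper's proof: the paper also reduces boundedness to a uniform estimate on localizations over the spectrum of $C^*(\mathbb{T}_HX)$, which splits as $\widehat{T_HX}\cup(0,\infty)$, and handles $t>0$ by scaling and $t=0$ by the left-regular bound. The one cosmetic difference is that you invoke Proposition \ref{generalpitoleiielalad} (a representation-by-representation bound) plus a claimed continuity in $x$, whereas the paper views $\mathrm{ev}_{t=0}(\mathbb{D})_x$ as a properly supported compactly based operator on the group $T_HX_x$ and applies the $L^2$-boundedness mechanism of Proposition \ref{bodlaldaladldalpa} directly. Both are informal at exactly the same point (uniformity over $\widehat{T_HX}$), but the module-norm viewpoint (bounding on $L^2(T_HX_x)$, which dominates all subrepresentations) is slightly cleaner than the phrase ``continuous dependence on $x$,'' since the representation parameter $\pi$ ranges over a non-Hausdorff fibre and pointwise bounds alone do not obviously uniformize.

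The local compactness argument, however, has a genuine gap at the $t=0$ part. You need $\mathrm{ev}_0\mathbb{D}$ to land in $\mathbb{K}_{C^*(T_HX)}(C^*(T_HX;E_1),C^*(T_HX;E_2))\cong C^*(T_HX;\Hom(E_1,E_2)\otimes|\Lambda_r|)$, and you propose to deduce this by writing it as (bounded order-$0$ symbol)$\cdot\sigma(\mathfrak{D})^{m/\tilde m}$ and observing that the represented symbol is compact in each non-trivial irreducible representation. That inference is not valid: an adjointable multiplier of a $C^*$-algebra whose image under every irreducible representation is compact need not lie in the algebra (think of $1\in\mathcal{M}(C_0(\R))$ --- compact in every irrep, not in $C_0(\R)$). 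Equivalently, Proposition \ref{generalpitoleiielalad} and Theorem \ref{cpxpoweroieoso} supply \emph{boundedness} of $\sigma_{x,\pi}^m(\mathfrak{D}^{m/\tilde m})$, not membership of $\sigma(\mathfrak{D}^{m/\tilde m})$ in $C^*(T_HX)$, and the latter is exactly what would be needed. The kernel of $\mathrm{ev}_0\mathbb{D}$ is compactly supported but still has a genuine homogeneous singularity at the zero section, and representation-wise compactness does not control that singularity uniformly. The paper avoids this entirely: it first replaces $\mathbb{D}$ by $(\mathbb{D}^*\mathbb{D})^N$ so the order $Nm$ can be made as negative as desired, then invokes \cite[Proposition 43]{vanErp_Yunckentangent} to conclude the kernel lies in $C^k(\mathbb{T}_HX, r^*E_2\otimes(s^*E_1)^*\otimes|\Lambda_r|)$ for $k$ large, and approximates in norm by compactly supported continuous kernels (which are genuinely $C^*(\mathbb{T}_HX)$-compact). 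If you want to keep a representation-theoretic flavour, you must supply an argument that the kernel is $L^1$ along the fibres (or is a norm limit of $C^\infty_c$ kernels), which is precisely what the order-reduction step accomplishes; as written, your step from fibrewise compactness to global module compactness does not close.
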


\begin{proof}
To prove that  $\mathbb{D}$ is closable with adjointable and bounded closure, it suffices to prove that $\mathbb{D}$ is norm-bounded; in this case the Hilbert module adjoint of $\mathbb{D}$ is the closure of the formal adjoint of $\mathbb{D}$. To show that $\mathbb{D}$ is bounded, we note that it suffices to prove that there is a uniform norm bound on the localizations $\mathbb{D}\otimes_\pi 1$ where $\pi$ ranges over all irreducible representations of $C^*(\mathbb{T}_HX)$. The set of unitary equivalence classes of irreducible representations of $C^*(\mathbb{T}_HX)$ can be written as $\widehat{T_HX}\cup (0,\infty)$ consisting of representations over $t=0$ and the representations for $t>0$ of the pair groupoid $X\times X$. Since $m\leq 0$, the scaling invariance and Proposition \ref{bodlaldaladldalpa} gives a uniform norm bound on $(0,\infty)$. The uniform norm bound $\widehat{T_HX}$ follows from that for each $x$, $\mathrm{ev}_{t=0}(\mathbb{D})_x$ can be viewed as a properly supported compactly based operator on $T_HX_x$ and Proposition \ref{bodlaldaladldalpa} again produces a uniform estimate on $\widehat{T_HX}$.

To prove that  $\mathbb{D}$ is locally $C^*(\mathbb{T}_HX)$-compact for $m<0$, we can upon replacing $\mathbb{D}$ with $(\mathbb{D}^*\mathbb{D})^N$ assume that $m<<0$ can be taken arbitrarily negative. We can therefore, by \cite[Proposition 43]{vanErp_Yunckentangent}, assume that $\mathbb{D}\in \pmb{\Psi}^m_H(X;E_1,E_2)\cap C^k(\mathbb{T}_HX,r^*E_2\otimes (s^*E_1)^*\otimes |\Lambda_r|)$ for a large $k>>0$. Now, standard approximation arguments show that we on compact subsets of $[0,\infty)$, can approximate $\mathbb{D}$ in norm by $C^*(\mathbb{T}_HX)$-compact operators so $\mathbb{D}$ is locally $C^*(\mathbb{T}_HX)$-compact.
\end{proof}

We choose a $\mathfrak{D}_E\in \mathcal{DO}_H^{2m}(X;E)$ as in Proposition \ref{existenceofposiskdd}. For $s\in \R$, let $\pmb{\mathfrak{D}}_E^s\in \pmb{\Psi}^{2ms}_H(X;E)$ denote a strictly positive lift of the complex power $\mathfrak{D}_E^s\in \Psi_H^{2ms}(X;E)$ (cf. Proposition \ref{cpxpoweroieoso}). For $s>0$ we define 
$$\pmb{\mathpzc{E}}^s(X;E):=\pmb{\mathfrak{D}}_E^{-s/2m}\pmb{\mathpzc{E}}(X;E).$$
The space $\pmb{\mathpzc{E}}^s(X;E)$ is clearly $C^*(\mathbb{T}_HX)$-invariant and becomes a $C^*(\mathbb{T}_HX)$-Hilbert $C^*$-module by declaring $\pmb{\mathfrak{D}}_E^{s/2m}:\pmb{\mathpzc{E}}^s(X;E)\to \pmb{\mathpzc{E}}(X;E)$ to be unitary. For $s<0$ we can formally define $\pmb{\mathpzc{E}}^s(X;E)$ in the same way. Equivalently, but more rigorously, one can define $\pmb{\mathpzc{E}}^s(X;E)$ for $s<0$ by duality using $C^\infty_c(\mathbb{T}_HX; s^*(E))\subseteq \pmb{\mathpzc{E}}(X;E)^*:=\mathbb{K}_{C^*(\mathbb{T}_HX)}(\pmb{\mathpzc{E}}(X;E),C^*(\mathbb{T}_HX))$ and interchange the role of the left and right actions in \eqref{somedifferentaciotosns}. The definition in the graded case is similar, for a graded hermitean vector bundle $\pmb{E}\to X$ we define the $C^*(\mathbb{T}_HX)$-Hilbert $C^*$-module 
$$\pmb{\mathpzc{E}}^s_{\rm gr}(X;\pmb{E}):=\bigoplus_j \pmb{\mathpzc{E}}^{s-j}(X;\pmb{E}[j]).$$
We note that for $t>0$, 
$$\pmb{\mathpzc{E}}^s_{\rm gr}(X;\pmb{E})\otimes_{\pi_{X,t}}L^2(X)=\pmb{W}^s_{H,{\rm gr}}(X;\pmb{E}).$$
The operator 
$$\Lambda_{\pmb{E},s}:=\bigoplus_j \pmb{\mathfrak{D}}_{\pmb{E}[j]}^{(s-j)/2m},$$
defines a unitary equivalence $\pmb{\mathpzc{E}}^s_{\rm gr}(X;\pmb{E})\cong \bigoplus_j \pmb{\mathpzc{E}}(X;\pmb{E}[j])$.

\begin{proposition}
\label{propusingorder}
Let $\mathbb{D}\in \pmb{\Psi}^{m}_{H,{\rm gr}}(X;\pmb{E}_1,\pmb{E}_2)$ and $s\geq t+m$. Then as a densely defined operator $\pmb{\mathpzc{E}}^s_{\rm gr}(X;\pmb{E}_1)\to \pmb{\mathpzc{E}}^t_{\rm gr}(X;\pmb{E}_2)$, $\mathbb{D}$ is closable and its closure is bounded and adjointable. Moreover, if $s>t+m$, $\mathbb{D}:\pmb{\mathpzc{E}}^s_{\rm gr}(X;\pmb{E}_1)\to \pmb{\mathpzc{E}}^t_{\rm gr}(X;\pmb{E}_2)$ is locally $C^*(\mathbb{T}_HX)$-compact. 
\end{proposition}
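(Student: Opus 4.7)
The plan is to reduce the statement to Lemma \ref{actionofbolddd} by conjugating with the unitary isomorphisms $\Lambda_{\pmb{E},\bullet}$. First, since the graded modules decompose as $\pmb{\mathpzc{E}}^s_{\rm gr}(X;\pmb{E})=\bigoplus_j\pmb{\mathpzc{E}}^{s-j}(X;\pmb{E}[j])$ and boundedness, adjointability and local $C^*(\mathbb{T}_HX)$-compactness of an operator between finite direct sums of Hilbert $C^*$-modules are equivalent to the corresponding property for each of the matrix components, it suffices to verify the statement for each component $\mathbb{D}_{j,k}\in\pmb{\Psi}^{m+k-j}_H(X;\pmb{E}_1[k],\pmb{E}_2[j])$ viewed as a densely defined operator $\pmb{\mathpzc{E}}^{s-k}(X;\pmb{E}_1[k])\to\pmb{\mathpzc{E}}^{t-j}(X;\pmb{E}_2[j])$. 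Thus the problem is reduced to the ungraded case: for $D\in\pmb{\Psi}^r_H(X;E_1,E_2)$ and $a,b\in\R$ with $a\geq b+r$, the operator $D:\pmb{\mathpzc{E}}^a(X;E_1)\to\pmb{\mathpzc{E}}^b(X;E_2)$ is closable with bounded adjointable closure, and locally $C^*(\mathbb{T}_HX)$-compact if $a>b+r$.

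Next, I would realize $D:\pmb{\mathpzc{E}}^a(X;E_1)\to\pmb{\mathpzc{E}}^b(X;E_2)$ as the conjugate of an operator on the basic modules. By the defining property of the regularity modules, the operators $\pmb{\mathfrak{D}}_{E_1}^{a/2m_1}:\pmb{\mathpzc{E}}^a(X;E_1)\to\pmb{\mathpzc{E}}(X;E_1)$ and $\pmb{\mathfrak{D}}_{E_2}^{b/2m_2}:\pmb{\mathpzc{E}}^b(X;E_2)\to\pmb{\mathpzc{E}}(X;E_2)$ are unitary isomorphisms of $C^*(\mathbb{T}_HX)$-Hilbert $C^*$-modules. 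Hence $D$ has the required properties on the scale of regularity modules if and only if the conjugated operator
\[\tilde{D}:=\pmb{\mathfrak{D}}_{E_2}^{b/2m_2}\,D\,\pmb{\mathfrak{D}}_{E_1}^{-a/2m_1}:\pmb{\mathpzc{E}}(X;E_1)\longrightarrow\pmb{\mathpzc{E}}(X;E_2)\]
has those properties on the basic modules. Now I would apply Proposition \ref{productsinfatthm}, together with $\pmb{\mathfrak{D}}_{E_j}^\alpha\in\pmb{\Psi}^{2m_j\alpha}_H(X;E_j)$ from Theorem \ref{cpxpoweroieoso}, to conclude that
\[\tilde{D}\in\pmb{\Psi}^{b+r-a}_H(X;E_1,E_2).\]
By the hypothesis $a\geq b+r$ we have $b+r-a\leq 0$, and $b+r-a<0$ if $a>b+r$.

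At this stage Lemma \ref{actionofbolddd} applies directly to $\tilde D$, giving closability with bounded adjointable closure in the first case and local $C^*(\mathbb{T}_HX)$-compactness in the second; transporting back through the unitaries $\pmb{\mathfrak{D}}_{E_j}^{\bullet}$ yields the corresponding statement for $D$ on the regularity modules. The main obstacle is the bookkeeping in the reduction to the ungraded case, in particular ensuring that the order computations with the strictly positive lifts $\pmb{\mathfrak{D}}_E^s$ remain within the calculus $\pmb{\Psi}^\bullet_H$ after all conjugations, which requires a careful use of Theorem \ref{cpxpoweroieoso} on the tangent groupoid lifts. Once the conjugated operator is identified as an element of $\pmb{\Psi}^{b+r-a}_H$, the rest is a formal consequence of Lemma \ref{actionofbolddd} and the unitarity of $\pmb{\mathfrak{D}}_{E_j}^{\bullet}$ on the regularity modules.
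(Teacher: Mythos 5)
Your reduction — conjugating componentwise by the order reducing operators $\pmb{\mathfrak{D}}_{\pmb{E}[j]}^{\bullet}$ to land in $\pmb{\Psi}^{\leq 0}_H$ and then invoking Lemma \ref{actionofbolddd} — is exactly the paper's argument, which packages the componentwise conjugation into the single unitary $\Lambda_{\pmb{E},\bullet}$. The only point worth flagging is that the order bookkeeping in your ungraded reduction (the inequality $(s-k)-(t-j)\geq m+k-j$) only follows from $s\geq t+m$ under the Douglis--Nirenberg sign convention $\Psi^m_{H,{\rm gr}}=\bigoplus_{j,k}\Psi_H^{m+j-k}(X;\pmb{E}[k],\pmb{F}[j])$, which is the convention the surrounding text actually uses, so you and the paper are making the same implicit assumption.
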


\begin{proof}
After replacing $\mathbb{D}\in \pmb{\Psi}^{m}_{H,{\rm gr}}(X;\pmb{E}_1,\pmb{E}_2)$ with $\Lambda_{\pmb{E},t}\mathbb{D}\Lambda_{\pmb{E},-s}\in \pmb{\Psi}^{m+t-s}_H(X;\pmb{E}_1,\pmb{E}_2)$, we have reduced the statement to the ungraded case for $t=s=0$ which was proved in Lemma \ref{actionofbolddd} .
\end{proof}

\begin{proposition}
\label{regualdldlalaoad}
Let $\mathbb{D}\in \pmb{\Psi}^{m}_{H,{\rm gr}}(X;\pmb{E}_1,\pmb{E}_2)$, $m>0$, be graded $H$-elliptic. Take $s\in \R$. Then the closure of $\mathbb{D}$ as a densely defined operator $\pmb{\mathpzc{E}}^s_{\rm gr}(X;\pmb{E}_1)\to \pmb{\mathpzc{E}}^s_{\rm gr}(X;\pmb{E}_2)$ is regular with domain $\pmb{\mathpzc{E}}^{s+m}_{\rm gr}(X;\pmb{E}_1)$. If $s=0$ and $\pmb{E}_1$ and $\pmb{E}_2$ are trivially graded, the adjoint of $\mathbb{D}$ is the closure of $\mathbb{D}^*$, and in this case, the operator 
$$\tilde{\mathbb{D}}:=\begin{pmatrix} 0& \mathbb{D}\\ \mathbb{D}^*&0\end{pmatrix}\in \pmb{\Psi}^{m}_H(X;E_1\oplus E_2),$$
is essentially selfadjoint on $\pmb{\mathpzc{E}}(X;E_1\oplus E_2)$ and its closure has locally $C^*(\mathbb{T}_HX)$-compact resolvent.
\end{proposition}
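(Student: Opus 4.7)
The plan is to proceed via a parametrix construction lifted to the tangent groupoid. By Proposition \ref{propusingorder}, the operator $\mathbb{D}$ extends to a bounded adjointable operator $\pmb{\mathpzc{E}}^{s+m}_{\rm gr}(X;\pmb{E}_1) \to \pmb{\mathpzc{E}}^s_{\rm gr}(X;\pmb{E}_2)$, so the closure of $\mathbb{D}$ on $\pmb{\mathpzc{E}}^s_{\rm gr}(X;\pmb{E}_1)$ has domain containing $\pmb{\mathpzc{E}}^{s+m}_{\rm gr}(X;\pmb{E}_1)$. First I would verify that the domain equals $\pmb{\mathpzc{E}}^{s+m}_{\rm gr}(X;\pmb{E}_1)$ by lifting the parametrix guaranteed by Theorem \ref{hellipticmeanshelliptic} to an element $\mathbb{R} \in \pmb{\Psi}^{-m}_{H,{\rm gr}}$. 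The symbolic inverse provided by Lemma \ref{charHELLrockslslalem} and the surjection in Proposition \ref{embeddingboldpsi} allow such a lift, yielding $\mathbb{R}\mathbb{D} = 1 + \mathbb{S}$ for a residual $\mathbb{S}$ of order $-\infty$. Viewing $\mathbb{R}$ as a bounded operator $\pmb{\mathpzc{E}}^s_{\rm gr}\to \pmb{\mathpzc{E}}^{s+m}_{\rm gr}$ and $\mathbb{S}$ as bounded $\pmb{\mathpzc{E}}^s_{\rm gr}\to \pmb{\mathpzc{E}}^{s+m}_{\rm gr}$ (again by Proposition \ref{propusingorder}), any $u$ in the graph-closure domain satisfies $u = \mathbb{R}\mathbb{D} u - \mathbb{S} u \in \pmb{\mathpzc{E}}^{s+m}_{\rm gr}$.

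Next, I would establish regularity via the Lance-type criterion that $1 + \mathbb{D}^*\mathbb{D}$ (or $\tilde{\mathbb{D}}\pm i$ in the symmetric case) have dense range. After verifying by a Green's-identity argument that the module adjoint of the closure of $\mathbb{D}$ coincides with the closure of the formal adjoint $\mathbb{D}^*$, one sees that $\mathbb{D}^*\mathbb{D} \in \pmb{\Psi}^{2m}_{H,{\rm gr}}$ is positive and $H$-elliptic (its principal symbol is $\sigma_H^m(\mathbb{D})^*\sigma_H^m(\mathbb{D})$, invertible via $\sigma_H^m(\mathbb{D})$ being invertible). Hence $1+\mathbb{D}^*\mathbb{D}$ is $H$-elliptic and admits a parametrix $\mathbb{R}'\in \pmb{\Psi}^{-2m}_{H,{\rm gr}}$ with $(1+\mathbb{D}^*\mathbb{D})\mathbb{R}' = 1 + \mathbb{S}'$, where $\mathbb{S}'$ is locally $C^*(\mathbb{T}_HX)$-compact. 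A standard correction argument (iteratively absorbing $\mathbb{S}'$ into $\mathbb{R}'$ locally in $[0,\infty)$) shows the range of $1+\mathbb{D}^*\mathbb{D}$ is dense, establishing regularity.

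For the case $s=0$ with trivially graded bundles, essential self-adjointness of $\tilde{\mathbb{D}}$ is then immediate: $\tilde{\mathbb{D}}$ is symmetric, its closure is regular, and regularity together with the criterion $\tilde{\mathbb{D}}\pm i$ having dense range forces closure equals adjoint. The locally $C^*(\mathbb{T}_HX)$-compact resolvent property follows because $(\tilde{\mathbb{D}}\pm i)^{-1}$ maps $\pmb{\mathpzc{E}}(X;E_1\oplus E_2)$ into $\pmb{\mathpzc{E}}^m(X;E_1\oplus E_2)$, and the inclusion $\pmb{\mathpzc{E}}^m \hookrightarrow \pmb{\mathpzc{E}}$ is locally $C^*(\mathbb{T}_HX)$-compact: under the unitary $\Lambda_{E_1\oplus E_2, m}$ this becomes the operator $\pmb{\mathfrak{D}}_{E_1\oplus E_2}^{-1/2}$, which lies in $\pmb{\Psi}^{-m}_H$ and is locally $C^*(\mathbb{T}_HX)$-compact by Lemma \ref{actionofbolddd}.

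The main obstacle will be the parametrix construction at the level of $\pmb{\Psi}_H$ on the tangent groupoid, in particular ensuring that residual smoothing terms in $C^\infty_p(\mathbb{T}_HX, r^*E_2\otimes (s^*E_1)^*\otimes|\Lambda_r|)$ are bounded uniformly in the deformation parameter between all modules $\pmb{\mathpzc{E}}^s$ and act as locally $C^*(\mathbb{T}_HX)$-compact operators. The scaling asymptotics built into the definition of $\pmb{\Psi}^m_H$, together with the standard pseudodifferential parametrix theory on $X$ at $t=1$, should suffice once a careful bookkeeping is in place; the proof amounts to transferring classical elliptic regularity arguments to the $C^*(\mathbb{T}_HX)$-linear setting, with the technical ingredients already provided by Lemma \ref{actionofbolddd} and Proposition \ref{propusingorder}.
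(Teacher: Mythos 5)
Your proposal follows the same broad strategy as the paper (order reduce, build a parametrix on the tangent groupoid, identify the domain as $\pmb{\mathpzc{E}}^{s+m}_{\rm gr}$, then conclude regularity, self-adjointness of $\tilde{\mathbb{D}}$, and compactness of the resolvent via the compact inclusion $\pmb{\mathpzc{E}}^m\hookrightarrow\pmb{\mathpzc{E}}$), and the first and last steps of your argument are sound. However, there is a genuine gap in the middle: you attempt to establish regularity directly by showing that $1+\mathbb{D}^*\mathbb{D}$ has dense range using the parametrix, and you attempt to identify the module adjoint of $\overline{\mathbb{D}}$ with $\overline{\mathbb{D}^*}$ via ``a Green's-identity argument.'' Neither of these goes through as stated. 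In the Hilbert $C^*$-module setting a parametrix modulo \emph{locally} $C^*(\mathbb{T}_HX)$-compact remainders does not imply dense range the way a compact-remainder parametrix does in Hilbert space; the Fredholm alternative is not available, and the Neumann-series trick fails since the remainder need not have small norm. Moreover, a formal Green's identity only shows that $\overline{\mathbb{D}^*}\subseteq \mathbb{D}^\dagger$ (the module adjoint); the reverse inclusion is precisely the delicate point, and Remark \ref{regularidremak} in the paper exhibits $H$-in-elliptic elements where it fails. So your argument must use $H$-ellipticity structurally, not just formally, at this step.

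The paper resolves exactly this point by invoking the local/global principle for regularity of unbounded operators on Hilbert modules (the references \cite{leschkaad2,Pierrot}). Concretely, after showing via the parametrix (with remainder in $\cap_k t^k\pmb{\Psi}^{-k}_H$, which is a bit stronger than ``order $-\infty$'' since it also vanishes as $t\to 0$) that the domain of the closure is $\pmb{\mathpzc{E}}^m(X;E_1)$ and that the same holds for $\mathbb{D}^*$, one localizes to each pure state of $C^*(\mathbb{T}_HX)$. In each such localization the module becomes a Hilbert space, the parametrix argument does give a Fredholm-type conclusion (self-adjointness for $\tilde{\mathbb{D}}$), and the local/global principle then lifts regularity and the identification $\mathbb{D}^\dagger=\overline{\mathbb{D}^*}$ back to the module level. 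You should replace your dense-range and Green's-identity steps by this localization-and-lifting scheme; the rest of your proposal then holds.
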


The reader can find further details on regularity of multipliers defined from elliptic elements in \cite{Pierrot}.

\begin{proof}
Using order reduction with $\Lambda_{\pmb{E},s}$ as in the proof of Proposition \ref{propusingorder}, we can reduce to the case that $s=0$ and $\pmb{E}_1$ and $\pmb{E}_2$ are trivially graded. We write $\mathbb{D}= \mathbb{T}\pmb{\mathfrak{D}}_{E_1}^{m}$ where $\mathbb{T}=\mathbb{D}\pmb{\mathfrak{D}}_{E_1}^{-m}\in \pmb{\Psi}^{0}_H(X;E_1,E_2)$. Since $\mathbb{D}$ is $H$-elliptic, so is $\mathbb{T}$. Let $\mathbb{R}\in \pmb{\Psi}^{0}_H(X;E_2,E_1)$ denote a lift of a parametrix of $\mathrm{ev}_{t=1}(\mathbb{T})$. We have that $1-\mathbb{R}\mathbb{T}\in \cap_{k} t^k  \pmb{\Psi}^{-k}_H(X;E_1)$ and $1-\mathbb{T}\mathbb{R}\in \cap_{k} t^k  \pmb{\Psi}^{-k}_H(X;E_2)$.  In partcular, $1-\mathbb{R}\mathbb{T}$ and $1-\mathbb{T}\mathbb{R}$ are locally compact and $0$-homogeneous modulo $C^*(\mathbb{T}_HX))$-compact operators. We conclude that for a sequence $(\varphi_j)_{j\in \N}\subseteq \pmb{\mathpzc{E}}^\infty_c(X;E_1)$, it holds that $\varphi_j\to \varphi$ in $\pmb{\mathpzc{E}}(X;E_1)$ and $D\varphi_j$ converges in $\pmb{\mathpzc{E}}(X;E_2)$ if and only if $\varphi_j\to \varphi$ in $\pmb{\mathpzc{E}}(X;E_1)$ and $\pmb{\mathfrak{D}}_{E_1}^{m}\varphi_j\to \pmb{\mathfrak{D}}_{E_1}^{m}\varphi$ in $\pmb{\mathpzc{E}}(X;E_2)$. Therefore, the closure of $\mathbb{D}$ is the extension to $\pmb{\mathpzc{E}}^m(X;E_1)$. By the same argument, the closure of $\mathbb{D}^*$ is the extension to $\pmb{\mathpzc{E}}^m(X;E_2)$. The same argument applies in any localization in a pure state of $C^*(\mathbb{T}_HX)$, and the regularity of $\mathbb{D}$ as well as the description of its adjoint follows from the local/global principle \cite{leschkaad2,Pierrot}. Again, the local/global principle ensures that $\tilde{\mathbb{D}}$ is essentially selfadjoint. Its closure has locally $C^*(\mathbb{T}_HX)$-compact resolvent since it factors over the inclusion $\pmb{\mathpzc{E}}^m(X;E_1\oplus E_2)\hookrightarrow \pmb{\mathpzc{E}}(X;E_1\oplus E_2)$.
\end{proof}

\begin{remark}
\label{regularidremak}
To ensure regularity, the condition on $H$-ellipticity in Proposition \ref{regualdldlalaoad} can in general not be dropped. Consider an element $a\in \mathcal{U}_m(\mathfrak{g})$ such that $a=a^*$ in $\mathcal{U}_m(\mathfrak{g})$ but for a unitary representation $\pi$ of $\mathsf{G}$, $\pi(a)$ fails to be essentially self-adjoint. For large classes of nilpotent Lie algebras $\mathfrak{g}$, the existence of such an $a$ follows from \cite[Section 3]{arnalnonsymm}. If $X$ is a Carnot manifold such that $\mathfrak{g}=\mathfrak{t}_HX_x$ for some $x$ and $D\in  \mathcal{DO}_H^{m}(X;\C)$ such that $\mathrm{ev}_{t=0}(\mathbb{D})|_x=a$, we claim that the closure of $\mathbb{D}$ as a densely defined operator on $\pmb{\mathpzc{E}}(X;\C)$ is not regular. Localizing to $t=0$ and $x$, we see that it suffices to prove that the closure of $\mathrm{ev}_{t=0}(\mathbb{D})|_x=a$ is not regular as an unbounded multiplier of $C^*(\mathsf{G})$, for $\mathsf{G}=T_HX_x$. By \cite[Lemma 1.25]{Pierrot}, the Gårding space of $C^*(\mathsf{G})$ is a core for any element of $\mathcal{U}_m(\mathfrak{g})$ and so $\overline{a}$ is a self-adjoint multiplier of $C^*(\mathsf{G})$. However, for the representation $\pi$ above $\pi(a)$ is not essentially selfadjoint so $\pi(\overline{a})^*\neq \pi(\overline{a}^*)$. By the local/global principle \cite{leschkaad2,Pierrot} the multiplier $\overline{a}$ is not regular. Again by the local/global principle \cite{leschkaad2,Pierrot}, the closure of $\mathbb{D}$ as a densely defined operator on $\pmb{\mathpzc{E}}(X;\C)$ is not regular.
\end{remark}

To make constructions with Rockland sequences easier we state a result characterizing the Rockland property in terms of the Hilbert modules $\pmb{\mathpzc{E}}$ defined above. The proof is omitted as it follows from the constructions of \cite[Chapter 5]{Dave_Haller1} and the propositions above.

\begin{theorem}
\label{odnoaindaonafofbnaon}
Consider a graded $H$-almost-complex 
$$0\to C^\infty(X;\pmb{E}_1)\xrightarrow{D_1}C^\infty(X;\pmb{E}_2)\xrightarrow{D_2}\cdots \xrightarrow{D_{N-1}}C^\infty(X;\pmb{E}_{N})\xrightarrow{D_{N}} C^\infty(X;\pmb{E}_{N+1})\to 0,$$
of order $\pmb{m}=(m_1,\ldots, m_N)$ as in Definition \ref{hcomplexdefdo}. The following are equivalent:
\begin{enumerate}
\item This sequence is a graded Rockland sequence.
\item For any $x\in X$ and non-trivial irreducible unitary representation $\pi$ of $T_HX_x$, we have that the sequence 
\begin{align*}
0\to &\pmb{\mathpzc{E}}^{s_1}_{\rm gr}(X;\pmb{E}_1)\otimes_\pi \mathcal{H}_\pi\xrightarrow{\sigma_{(x,\pi)}^{m_{1}}(D_{1})}\pmb{\mathpzc{E}}^{s_2}_{\rm gr}(X;\pmb{E}_2)\otimes_\pi \mathcal{H}_\pi\xrightarrow{\sigma_{(x,\pi)}^{m_{2}}(D_{2})}\cdots \\
&\cdots\xrightarrow{\sigma_{(x,\pi)}^{m_{N-1}}(D_{N-1})}\pmb{\mathpzc{E}}^{s_{N}}_{\rm gr}(X;\pmb{E}_{N})\otimes_\pi \mathcal{H}_\pi\xrightarrow{\sigma_{(x,\pi)}^{m_{N}}(D_{N})} \pmb{\mathpzc{E}}^{s_{N+1}}_{\rm gr}(X;\pmb{E}_{N+1})\otimes_\pi \mathcal{H}_\pi\to 0,
\end{align*}
is exact, where $s_1,\ldots, s_N,s_{N+1}$ are such that $s_j-s_{j+1}=m_j$ for any $j$.
\item There exists $B_j\in \Psi_{H,{\rm gr}}^{-m_j}(X;\pmb{E}_{j+1},\pmb{E}_j)$ such that 
$$\sigma_H^{-m_j}(B_j)\sigma_H^{m_j}(D_j)+\sigma_H^{m_{j-1}}(D_{j-1})\sigma_H^{-m_{j-1}}(D_{j-1})=1,$$
for any $j$.
\end{enumerate}

\end{theorem}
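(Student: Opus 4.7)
The plan is to establish the cyclic implications $(3)\Rightarrow(1)\Rightarrow(2)\Rightarrow(3)$. The implication $(3)\Rightarrow(1)$ is essentially formal: applying the represented symbol map $\sigma_{(x,\pi)}$ to the identity $\sigma_H^{-m_j}(B_j)\sigma_H^{m_j}(D_j)+\sigma_H^{m_{j-1}}(D_{j-1})\sigma_H^{-m_{j-1}}(B_{j-1})=1$ yields, on $\mathcal{S}_0(\pi)\otimes \pmb{E}_{j,x}$, an algebraic parametrix identity that forces $\ker\sigma_{(x,\pi)}^{m_j}(D_j)$ to lie in the image of $\sigma_{(x,\pi)}^{m_{j-1}}(D_{j-1})$ up to a term we can push away, giving the density statement in the definition of a graded Rockland sequence.

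For $(1)\Rightarrow(2)$, I would localize the entire setup in $(x,\pi)$. First, the condition that $D_{j+1}D_j$ is a graded $H$-almost-complex means the composition $\sigma_{(x,\pi)}^{m_{j+1}}(D_{j+1})\sigma_{(x,\pi)}^{m_j}(D_j)=0$ at the symbol level, giving the chain complex property. Second, the Rockland hypothesis gives dense inclusion of the image in the kernel on the Gårding-type subspace $\mathcal{S}_0(\pi)$. The extension to the Hilbert module localizations $\pmb{\mathpzc{E}}^{s_j}_{\rm gr}(X;\pmb{E}_j)\otimes_\pi \mathcal{H}_\pi$ uses Proposition \ref{regualdldlalaoad} to guarantee that the closures of the symbol operators acting between these spaces are regular and have closed range -- closedness is what promotes ``dense image in kernel'' to ``image equal to kernel''. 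Here the auxiliary positive $H$-elliptic operator $\mathfrak{D}$ from Proposition \ref{existenceofposiskdd}, together with the order-reduction unitary $\Lambda_{\pmb{E},s}$, allows us to transport the statement to the case $s_j=0$ where Lemma \ref{actionofbolddd} applies directly.

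The main obstacle is the implication $(2)\Rightarrow(3)$, which requires the parametrix construction; this is where the bulk of the work sits. The approach is a standard Hodge-theoretic argument adapted to the Heisenberg calculus: form the graded Laplacians
\[
\Delta_j := D_j^* D_j + D_{j-1} D_{j-1}^* \in \Psi_{H,{\rm gr}}^{2m_{\max}}(X;\pmb{E}_j),
\]
after first conjugating each $D_j$ by appropriate powers of $\mathfrak{D}$ so that all operators become of uniform order (this amounts to replacing $D_j$ by $\mathfrak{D}^{a_j}D_j\mathfrak{D}^{b_j}$ with $a_j,b_j$ chosen so that orders line up). The assumption (2), in combination with $\pi$ ranging over all non-trivial irreducible representations, implies that $\sigma_{(x,\pi)}^{2m_{\max}}(\Delta_j)$ is an isomorphism on the localized Hilbert spaces for every such $(x,\pi)$, which by Lemma \ref{charHELLrockslslalem} is exactly graded $H$-ellipticity of $\Delta_j$. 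Theorem \ref{hellipticmeanshelliptic} then yields a parametrix $G_j\in \Psi_{H,{\rm gr}}^{-2m_{\max}}(X;\pmb{E}_j)$ with $G_j\Delta_j - 1 \in \Psi^{-\infty}$. Defining $B_j := D_j^* G_{j+1}$ (after undoing the order-reduction conjugation) and using $D_{j+1}D_j=0$ at the symbol level, one verifies that the symbol identity $\sigma_H^{-m_j}(B_j)\sigma_H^{m_j}(D_j)+\sigma_H^{m_{j-1}}(D_{j-1})\sigma_H^{-m_{j-1}}(B_{j-1})=1$ holds.

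The subtle point I anticipate is bookkeeping the gradings and orders so that $\Delta_j$ is actually a well-defined graded $H$-operator of uniform order -- a priori $D_j^*D_j$ and $D_{j-1}D_{j-1}^*$ live in different order spaces. This is resolved by inserting the appropriate powers of $\mathfrak{D}$ and verifying that the resulting Hodge Laplacian is graded $H$-elliptic (equivalently, that the combined symbol $\sigma_H(\Delta_j)$ is invertible in $\Sigma_{H,{\rm gr}}^{\bullet}$, which follows from Lemma \ref{charHELLrockslslalem} applied to the exactness of (2)). Once this is in place, the parametrix construction from Theorem \ref{hellipticmeanshelliptic} delivers (3).
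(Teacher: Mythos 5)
The paper does not supply a proof of this theorem: it states that ``the proof is omitted as it follows from the constructions of \cite[Chapter 5]{Dave_Haller1} and the propositions above.'' Your Hodge-theoretic reconstruction is the standard route and is precisely the mechanism used by Dave--Haller, so the plan is in the right family of arguments and should be judged on its internal consistency rather than against a proof in the paper.

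Your $(3)\Rightarrow(1)$ step is correct, and your $(2)\Rightarrow(3)$ step (order-reduce via $\mathfrak{D}$, form the Hodge Laplacians $\Delta_j = D_j^*D_j + D_{j-1}D_{j-1}^*$, invoke Lemma~\ref{charHELLrockslslalem} to get graded $H$-ellipticity, and apply Theorem~\ref{hellipticmeanshelliptic} for the parametrix) is the expected one. For the verification that $B_j := D_j^* G_{j+1}$ works you should note explicitly the intertwining $\sigma_H(G_{j+1})\sigma_H(D_j)=\sigma_H(D_j)\sigma_H(G_j)$, which follows at symbol level from $\sigma_H(D_{j+1})\sigma_H(D_j)=0$ and invertibility of $\sigma_H(\Delta_j)$; without it the two terms of the parametrix identity do not visibly combine into $\sigma_H(\Delta_j)\sigma_H(G_j)=1$.

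The one genuine gap is in $(1)\Rightarrow(2)$, where you invoke Proposition~\ref{regualdldlalaoad} ``to guarantee that the closures of the symbol operators acting between these spaces are regular and have closed range.'' That proposition is stated for graded $H$-elliptic operators, and the individual $D_j$ in a Rockland sequence are typically not $H$-elliptic (they are neither injective nor surjective in each representation); moreover regularity alone does not yield closed range. The closed range should instead come from the Hodge Laplacians: from the Rockland condition on the sequence deduce the Rockland condition for the total operator $\tilde{D}=\bigoplus_j(D_j+D_j^*)$, hence graded $H$-ellipticity of $\tilde{D}$ and of $\Delta_j$ by Lemma~\ref{charHELLrockslslalem}, hence invertibility of $\sigma_{(x,\pi)}(\Delta_j)$ on the localized Hilbert modules, and then exactness is the usual Hodge decomposition $v = \sigma(\Delta_j)^{-1}\sigma(D_j^*)\sigma(D_j)v + \sigma(\Delta_j)^{-1}\sigma(D_{j-1})\sigma(D_{j-1}^*)v$. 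Once you route $(1)\Rightarrow(2)$ through $\Delta_j$ in this way the cycle closes; alternatively you may find the cycle $(3)\Rightarrow(2)\Rightarrow(1)\Rightarrow(3)$ slightly more economical, since $(3)\Rightarrow(2)$ and $(2)\Rightarrow(1)$ are essentially immediate and all the Hodge work is concentrated in one implication.
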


\begin{proposition}
Let $X$ be a closed Carnot manifold with a fixed volume density, $E_1,E_2\to X$ two hermitean vector bundles and $\mathbb{F}\in \pmb{\Psi}^{0}_H(X;E_1,E_2)$ be an $H$-elliptic element which is unitary up to $t\pmb{\Psi}^{-1}_H$. Consider 
$$\tilde{\mathbb{F}}:=\begin{pmatrix} 0& \mathbb{F}\\ \mathbb{F}^*&0\end{pmatrix}\in \pmb{\Psi}^{0}_H(X;E_1\oplus E_2),$$
as a self-adjoint adjointable operator on $\pmb{\mathpzc{E}}(X;E_1\oplus E_2)$. Then $(\pmb{\mathpzc{E}}(X;E_1\oplus E_2), \tilde{\mathbb{F}})$ is an even $C_0[0,\infty)$-linear Kasparov module for $(C_0([0,\infty)\times X),C^*(\mathbb{T}_HX))$.

If $E=E_1=E_2$ and $\mathbb{F}$ additionally to above is self-adjoint up to $t\pmb{\Psi}^{-1}_H$, then $(\pmb{\mathpzc{E}}(X;E), \mathbb{F})$ is an odd $C_0[0,\infty)$-linear Kasparov module for $(C_0([0,\infty)\times X),C^*(\mathbb{T}_HX))$.
\end{proposition}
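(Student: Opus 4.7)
The plan is to verify the axioms for $(\pmb{\mathpzc{E}}(X;E_1\oplus E_2),\tilde{\mathbb{F}})$ to be a $\Z/2$-graded Kasparov module for $(C_0([0,\infty)\times X),C^*(\mathbb{T}_HX))$. The representation of $C_0([0,\infty)\times X)$ will come from combining the central $C_0[0,\infty)$-structure on $\pmb{\mathpzc{E}}$ (inherited from the fact that $C^*(\mathbb{T}_HX)$ is a $C_0[0,\infty)$-algebra) with the action of $C^\infty(X)$ as order zero multipliers, using the inclusion $C^\infty(X)\subseteq \pmb{\Psi}^0_H$; since these two commute and together are dense, it suffices to check the module axioms on simple tensors $a=\varphi f$ with $\varphi\in C_0[0,\infty)$ and $f\in C^\infty(X)$. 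Boundedness, adjointability and self-adjointness of $\tilde{\mathbb{F}}$ on $\pmb{\mathpzc{E}}(X;E_1\oplus E_2)$ follow directly from Lemma \ref{actionofbolddd} applied to $\mathbb{F}\in \pmb{\Psi}^0_H$, together with the off-diagonal form of $\tilde{\mathbb{F}}$, which is manifestly odd for the grading $E_1\oplus E_2$ while $C_0([0,\infty)\times X)$ acts diagonally and evenly.

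First I would establish the compactness $a(\tilde{\mathbb{F}}^2-1)\in \mathbb{K}_{C^*(\mathbb{T}_HX)}(\pmb{\mathpzc{E}}(X;E_1\oplus E_2))$. Since $\mathbb{F}$ is unitary modulo $t\pmb{\Psi}^{-1}_H$, the diagonal entries $\mathbb{F}\mathbb{F}^*-1$ and $\mathbb{F}^*\mathbb{F}-1$ both lie in $t\pmb{\Psi}^{-1}_H$; writing such an element as $t\mathbb{Q}$ with $\mathbb{Q}\in \pmb{\Psi}^{-1}_H$, Lemma \ref{actionofbolddd} gives that $\mathbb{Q}$ is locally $C^*(\mathbb{T}_HX)$-compact, and then $\varphi f \cdot t\mathbb{Q}=f(t\varphi)\mathbb{Q}$ is $C^*(\mathbb{T}_HX)$-compact because $t\varphi\in C_0[0,\infty)$. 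Second, the commutator condition $[\tilde{\mathbb{F}},a]\in \mathbb{K}$: since $\varphi$ is a central multiplier, $[\tilde{\mathbb{F}},\varphi f]=\varphi[\tilde{\mathbb{F}},f]$, and Corollary \ref{commwithfunticododo} places $[\mathbb{F},f]$ and $[\mathbb{F}^*,f]$ in $t\pmb{\Psi}^{-1}_H+C^\infty_p(\mathbb{T}_HX;\ldots)$. The first summand is handled as above, and the smoothing summand $C^\infty_p$ is locally $C^*(\mathbb{T}_HX)$-compact by the approximation argument at the end of the proof of Lemma \ref{actionofbolddd} (after arranging high enough $C^k$-regularity via replacing $\mathbb{F}$ by powers of $\mathbb{F}^*\mathbb{F}$, or simply by the fact that a properly supported smooth kernel restricted to $t$ in a compact set of $[0,\infty)$ is $C^*(\mathbb{T}_HX)$-compact after multiplying by $\varphi$).

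For the odd case, I would drop the grading and view $\mathbb{F}$ as a self-adjoint operator up to compacts on $\pmb{\mathpzc{E}}(X;E)$. The additional hypothesis that $\mathbb{F}$ is self-adjoint modulo $t\pmb{\Psi}^{-1}_H$ immediately gives $a(\mathbb{F}-\mathbb{F}^*)\in \mathbb{K}$ by the same argument as in Step 3, and one writes
\[
\mathbb{F}^2-1=(\mathbb{F}^2-\mathbb{F}^*\mathbb{F})+(\mathbb{F}^*\mathbb{F}-1)=(\mathbb{F}-\mathbb{F}^*)\mathbb{F}+(\mathbb{F}^*\mathbb{F}-1)\in t\pmb{\Psi}^{-1}_H,
\]
so $a(\mathbb{F}^2-1)\in \mathbb{K}$ and $[\mathbb{F},a]\in\mathbb{K}$ follow verbatim as in the even case. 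The main obstacle is the careful handling of the smoothing remainder $C^\infty_p$ in the commutator: one must confirm that multiplication by $\varphi\in C_0[0,\infty)$ genuinely produces a $C^*(\mathbb{T}_HX)$-compact endomorphism of the Hilbert module, which is the point at which the $C_0[0,\infty)$-linearity of the module and the density of smoothing finite-rank approximants on compact subsets of $[0,\infty)$ really enter.
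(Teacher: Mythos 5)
Your proposal is correct and follows essentially the same route as the paper: express everything as an element of $t\pmb{\Psi}^{-1}_H$ (or a sum with a smoothing remainder) via Corollary \ref{commwithfunticododo} and the unitarity-modulo-$t\pmb{\Psi}^{-1}_H$ hypothesis, invoke Lemma \ref{actionofbolddd} for local $C^*(\mathbb{T}_HX)$-compactness of negative-order elements, and upgrade to genuine compactness by multiplying with a compactly supported cutoff in the $t$-variable. The one place where you are slightly more explicit than the paper is in flagging the properly supported smoothing remainder $C^\infty_p(\mathbb{T}_HX;\ldots)$ from Corollary \ref{commwithfunticododo} as a separate term to be absorbed — the paper's one-line statement that everything lands in $t\pmb{\Psi}^{-1}_H$ elides this, but since $C^\infty_p\subseteq\pmb{\Psi}^m_H$ for every $m$ the conclusion is the same.
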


We remark that $\mathbb{F}$ is unitary/self-adjoint up to $t\pmb{\Psi}^{-1}_H$ precisely when $\mathrm{ev}_{t=0}(\mathbb{F})$ is unitary/self-adjoint up to $C^\infty_c$, i.e. $\sigma_H(\mathrm{ev}_{t=1}(\mathbb{F}))$ is a unitary/self-adjoint in $\Sigma^0_H$. 

\begin{proof}
We only prove the statement about even Kasparov modules, the odd statement is proven similarly. We need to prove that for any $a\in C_c^\infty([0,\infty)\times X)$, $a(\tilde{\mathbb{F}}^*-\tilde{\mathbb{F}}), a(\tilde{\mathbb{F}}^2-1), [\tilde{\mathbb{F}},a]\in \mathbb{K}_{C^*(\mathbb{T}_HX)}(\pmb{\mathpzc{E}}(X;E_1\oplus E_2))$. It follows from our assumptions and Corollary \ref{commwithfunticododo} that $a(\tilde{\mathbb{F}}^*-\tilde{\mathbb{F}}), a(\tilde{\mathbb{F}}^2-1), [\tilde{\mathbb{F}},a]\in t\pmb{\Psi}^{-1}_H(X;E_1\oplus E_2)$. Since $C_c[0,\infty)t\pmb{\Psi}^{-1}_H(X;E_1\oplus E_2)\subseteq C_c[0,\infty)\pmb{\Psi}^{-1}_H(X;E_1\oplus E_2)$, the operators $a(\tilde{\mathbb{F}}^*-\tilde{\mathbb{F}}), a(\tilde{\mathbb{F}}^2-1), [\tilde{\mathbb{F}},a]$ are all locally compact by Lemma \ref{actionofbolddd}. Using $C_0[0,\infty)$-linearity, compactness follows after multiplying by a $\chi\in C_c[0,\infty)$ with $\chi a=a$. 
\end{proof}

To make some constructions easier, we shall make use of higher order unbounded Kasparov modules. For more details, see \cite[Subsection 1.5]{gmrtwisted}. The next corollary is of {\bf crucial importance for defining $KK$-classes} from $H$-elliptic operators.

\begin{corollary}
\label{clalslasmadadado}
Let $X$ be a closed Carnot manifold with a fixed volume density, $E_1,E_2\to X$ two hermitean vector bundles and $\mathbb{D}\in \pmb{\Psi}^{m}_H(X;E_1,E_2)$, $m>0$, be an $H$-elliptic element. Consider 
$$\tilde{\mathbb{D}}:=\begin{pmatrix} 0& \mathbb{D}\\ \mathbb{D}^*&0\end{pmatrix}\in \pmb{\Psi}^{m}_H(X;E_1\oplus E_2),$$
as a self-adjoint regular operator on $\pmb{\mathpzc{E}}(X;E_1\oplus E_2)$. Then $(C^\infty_c(X\times [0,\infty)),\pmb{\mathpzc{E}}(X;E_1\oplus E_2), \tilde{\mathbb{D}})$ is an even higher order unbounded $C_0[0,\infty)$-linear Kasparov module for $(C_0([0,\infty)\times X),C^*(\mathbb{T}_HX))$ of order $m$. In particular, for the element 
$$\tilde{\mathbb{F}}_{\mathbb{D}}\in \End_{C^*(\mathbb{T}_HX)}^*(\pmb{\mathpzc{E}}(X;E_1\oplus E_2)),$$ 
defined by
$$\tilde{\mathbb{F}}_{\mathbb{D}}:=\tilde{\mathbb{D}}(1+\tilde{\mathbb{D}}^2)^{-1/2}=\begin{pmatrix} 0& \mathbb{D}(1+\mathbb{D}^*\mathbb{D})^{-1/2}\\ \mathbb{D}^*(1+\mathbb{D}\mathbb{D}^*)^{-1/2}&0\end{pmatrix},$$
the pair $(\pmb{\mathpzc{E}}(X;E_1\oplus E_2), \tilde{\mathbb{F}}_{\mathbb{D}})$ is an even $C_0[0,\infty)$-linear Kasparov module for $(C_0([0,\infty)\times X),C^*(\mathbb{T}_HX))$.

If $E=E_1=E_2$ and $\mathbb{D}$ additionally to above is self-adjoint, then $(C^\infty_c(X\times [0,\infty)),\pmb{\mathpzc{E}}(X;E), \mathbb{D})$ is an odd higher order unbounded $C_0[0,\infty)$-linear Kasparov module for $(C_0([0,\infty)\times X),C^*(\mathbb{T}_HX))$ (of order $m$) and the pair $(\pmb{\mathpzc{E}}(X;E), \mathbb{D}(1+\mathbb{D}^2)^{-1/2})$ is an odd $C_0[0,\infty)$-linear Kasparov module for $(C_0([0,\infty)\times X),C^*(\mathbb{T}_HX))$.
\end{corollary}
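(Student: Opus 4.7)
The plan is to verify the conditions defining a higher order unbounded Kasparov module (in the sense of \cite{gmrtwisted}), and then deduce the bounded version via the standard bounded transform. The three ingredients needed are self-adjointness and regularity of $\tilde{\mathbb{D}}$, local $C^*(\mathbb{T}_HX)$-compactness of the resolvent (in the appropriate $C_0[0,\infty)$-sense), and a suitable commutator condition for elements of $\mathcal{A}:=C^\infty_c(X\times [0,\infty))$ acting through the source/range maps.

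Self-adjointness and regularity of $\tilde{\mathbb{D}}$ are already given by Proposition \ref{regualdldlalaoad}; the same proposition yields that the closure of $\tilde{\mathbb{D}}$ has locally $C^*(\mathbb{T}_HX)$-compact resolvent. Explicitly, $(1+\tilde{\mathbb{D}}^2)^{-1}$ factors through the inclusion $\pmb{\mathpzc{E}}^{2m}(X;E_1\oplus E_2)\hookrightarrow\pmb{\mathpzc{E}}(X;E_1\oplus E_2)$, and this inclusion is locally $C^*(\mathbb{T}_HX)$-compact by Proposition \ref{propusingorder}. Multiplication by $a\in C^\infty_c(X\times [0,\infty))$ then gives compactness rather than only local compactness, using that $a$ has compact support in the $[0,\infty)$-direction and $C_0[0,\infty)$-linearity.

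The commutator step is the main technical one. For $a\in C^\infty_c(X\times [0,\infty))$, viewed as a multiplier of $\pmb{\mathpzc{E}}(X;E_1\oplus E_2)$ via the inclusion $C^\infty(X\times [0,\infty))\subseteq\pmb{\Psi}^0_H(X;E_1\oplus E_2)$, Corollary \ref{commwithfunticododo} gives
\[
[\tilde{\mathbb{D}},a]\in t\pmb{\Psi}^{m-1}_H(X;E_1\oplus E_2)+C^\infty_p(\mathbb{T}_HX,\cdot).
\]
By Proposition \ref{propusingorder} this commutator extends to a bounded (in fact locally compact if $m>1$) adjointable operator $\pmb{\mathpzc{E}}^{m-1}(X;E_1\oplus E_2)\to\pmb{\mathpzc{E}}(X;E_1\oplus E_2)$, which is precisely the order $m$ commutator condition for a higher order unbounded Kasparov module. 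Iterating this argument, the $k$-fold commutator $[\tilde{\mathbb{D}},[\tilde{\mathbb{D}},\ldots,[\tilde{\mathbb{D}},a]]]$ lies in $\pmb{\Psi}^{m-k}_H$ modulo smoothing terms, and in particular preserves the Sobolev tower up to the required loss, so $a$ preserves $\mathrm{dom}(\tilde{\mathbb{D}}^k)$ for all $k$. This, together with self-adjointness, regularity and local compactness, verifies that $(\mathcal{A},\pmb{\mathpzc{E}}(X;E_1\oplus E_2),\tilde{\mathbb{D}})$ is an even higher order unbounded $C_0[0,\infty)$-linear Kasparov module of order $m$.

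The bounded transform statement then follows from the standard passage from higher order unbounded Kasparov modules to bounded Kasparov modules (see \cite[Subsection 1.5]{gmrtwisted}): for a self-adjoint regular operator $D$ of higher order $m$ with locally compact resolvent and compatible commutator bounds, the operator $F_D:=D(1+D^2)^{-1/2}$ is bounded, self-adjoint up to locally compact perturbations, satisfies $F_D^2-1$ locally compact, and $[F_D,a]$ locally compact for $a\in\mathcal{A}$. In our setting $\tilde{\mathbb{F}}_{\mathbb{D}}$ inherits these properties directly from the higher order structure of $\tilde{\mathbb{D}}$, and after multiplying by elements of $C_0([0,\infty)\times X)$ local compactness becomes compactness. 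The odd case is formally identical with $\tilde{\mathbb{D}}$ replaced by the self-adjoint $\mathbb{D}$ and the even grading dropped. The anticipated obstacle lies in handling the iterated commutator estimates uniformly in the parameter $t\in[0,\infty)$ governing the deformation, but the estimate $[\tilde{\mathbb{D}},a]\in t\pmb{\Psi}^{m-1}_H+C^\infty_p$ together with the $C_0[0,\infty)$-module structure of $\pmb{\mathpzc{E}}$ makes this essentially automatic.
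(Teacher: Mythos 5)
Your proposal follows the same overall strategy as the paper (apply Propositions \ref{regualdldlalaoad} and \ref{propusingorder} together with Corollary \ref{commwithfunticododo}), and most of the ingredients are in place, but there is a gap precisely at the one step where the paper has to do something non-obvious.

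You assert that the boundedness of $[\tilde{\mathbb{D}},a]$ as a map $\pmb{\mathpzc{E}}^{m-1}(X;E_1\oplus E_2)\to\pmb{\mathpzc{E}}(X;E_1\oplus E_2)$ ``is precisely the order $m$ commutator condition for a higher order unbounded Kasparov module.'' It is not, at least not without further work: the definition in \cite[Definition 1.21]{gmrtwisted} asks that $[\tilde{\mathbb{D}},a]$ be $1/m$-bounded with respect to $\tilde{\mathbb{D}}$, i.e.\ that $[\tilde{\mathbb{D}},a](1+\tilde{\mathbb{D}}^2)^{-(m-1)/2m}$ and $(1+\tilde{\mathbb{D}}^2)^{-(m-1)/2m}[\tilde{\mathbb{D}},a]$ extend to bounded operators. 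To deduce this from your mapping property, you need to know that the fractional power $(1+\tilde{\mathbb{D}}^2)^{-(m-1)/2m}$ carries $\pmb{\mathpzc{E}}(X;E_1\oplus E_2)$ into $\pmb{\mathpzc{E}}^{m-1}(X;E_1\oplus E_2)$. Regularity from Proposition \ref{regualdldlalaoad} only gives you this for the integral power: $(1+\tilde{\mathbb{D}}^2)^{-1/2}$ maps $\pmb{\mathpzc{E}}$ onto $\mathrm{dom}(\tilde{\mathbb{D}})=\pmb{\mathpzc{E}}^m$. The scale $\pmb{\mathpzc{E}}^s$ is defined using a \emph{fixed} auxiliary order-reducing operator $\pmb{\mathfrak{D}}$, not using $\tilde{\mathbb{D}}$ itself, so identifying the non-integral domain $\mathrm{dom}\bigl((1+\tilde{\mathbb{D}}^2)^{(m-1)/2m}\bigr)$ with $\pmb{\mathpzc{E}}^{m-1}$ requires a complex interpolation argument. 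This is exactly what the paper supplies by invoking Seeley's interpolation theorem for domains of complex powers of a self-adjoint regular operator (\cite[Theorem 3]{seeley71}); your proof never makes the bridge between the two scales explicit.

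Two smaller remarks. First, the paragraph about iterated commutators and preservation of $\mathrm{dom}(\tilde{\mathbb{D}}^k)$ for all $k$ is not needed: preservation of $\mathrm{dom}(\tilde{\mathbb{D}})$ by $a\in C^\infty_c(X\times[0,\infty))$ follows already from the single commutator being of lower order, and the higher-order Kasparov module definition does not ask for the full tower. Second, the ``anticipated obstacle'' you raise about uniformity in $t\in[0,\infty)$ is not where the difficulty actually lies; that uniformity is handled automatically by the $C_0[0,\infty)$-module structure, as you yourself note, while the real work is the interpolation step above.
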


\begin{proof}
As in the previous result, the even statement is proven similarly to the odd so we restrict to the latter case. It is clear that $C^\infty_c(X\times [0,\infty))$ preserves $\mathrm{dom}(\mathbb{D})$ and by Proposition \ref{regualdldlalaoad}, $\mathbb{D}$ has compact resolvent. It remains to prove that $[\mathbb{D},a]$ is $1/m$-bounded with respect to $\mathbb{D}$ (in the sense of \cite[Definition 1.21]{gmrtwisted}). In other words, we need to prove that $[\mathbb{D},a](1+\mathbb{D}^2)^{-(m-1)/2m}$ and $(1+\mathbb{D}^2)^{-(m-1)/2m}[\mathbb{D},a]$ are bounded. Using complex interpolation for the domains of the complex powers of the self-adjoint regular operator $\mathbb{D}$, cf. \cite[Theorem 3]{seeley71}, this follows from the fact that $[\mathbb{D},a]\in C_c^\infty[0,\infty)t\pmb{\Psi}^{m-1}_H(X;E)\subseteq C_c^\infty[0,\infty)\pmb{\Psi}^{m-1}_H(X;E)$ (see Corollary \ref{commwithfunticododo} ) and therefore $[\mathbb{D},a](1+\mathbb{D}^2)^{-(m-1)/2m}$ factors as bounded mappings
$$\pmb{\mathpzc{E}}(X;E)\xrightarrow{(1+\mathbb{D}^2)^{-(m-1)/2m}} \pmb{\mathpzc{E}}^{m-1}(X;E)\xrightarrow{[\mathbb{D},a]} \pmb{\mathpzc{E}}(X;E),$$
and $(1+\mathbb{D}^2)^{-(m-1)/2m}[\mathbb{D},a]$ factors as the bounded mappings
$$\pmb{\mathpzc{E}}(X;E)\xrightarrow{[\mathbb{D},a]} \pmb{\mathpzc{E}}^{-(m-1)}(X;E)\xrightarrow{(1+\mathbb{D}^2)^{-(m-1)/2m}}  \pmb{\mathpzc{E}}(X;E). \qquad\qedhere$$
\end{proof}

\subsection{Symbols and flat orbits}
\label{charpadldaladoeo}

For regular Carnot manifolds, the osculating Lie groupoid is a locally trivial bundle of Carnot-Lie groups and we can in this case give a more detailed description of the Carnot symbol calculus. We later on restrict to $\pmb{F}$-regular Carnot manifolds, and study $H$-ellipticity in terms of the flat orbits building on the idea that the flat orbits form a Zariski open dense subset of the representation space. 

\begin{definition}
Let $\mathsf{G}$ be a simply connected Carnot-Lie group and $P^m_{\mathsf{G}}\subseteq C^\infty(\mathsf{G},|\Lambda|)$ the space of polynomial densities on $\mathsf{G}$ homogeneous of degree $m$. Define the Frechet $*$-spaces
$$\Sigma^m_{\mathsf{G}}:=\{k\in \mathcal{S}'(\mathsf{G})\cap C^\infty(\mathsf{G}\setminus \{0\},|\Lambda|)/P^m_{\mathsf{G}}: \lambda_*k=\lambda^m k\},$$
and 
$$\tilde{\Sigma}^m_{\mathsf{G}}:=\{k\in \mathcal{E}'(\mathsf{G})\cap C^\infty(\mathsf{G}\setminus \{0\},|\Lambda|): \lambda_*k-\lambda^m k\in C^\infty_c(\mathsf{G})\}.$$
The spaces $\Sigma^m_{\mathsf{G}}$ and $\tilde{\Sigma}^m_{\mathsf{G}}$ are equipped with associative products defined from convolution
$$\tilde{\Sigma}^m_{\mathsf{G}}\times \tilde{\Sigma}^{m'}_{\mathsf{G}}\to \tilde{\Sigma}^{m+m'}_{\mathsf{G}},$$
$$\Sigma^m_{\mathsf{G}}\times \Sigma^{m'}_{\mathsf{G}}\to \Sigma^{m+m'}_{\mathsf{G}}.$$
\end{definition}

Using the ideas underlying Proposition \ref{strucuturaldescofsymboslalld}, we arrive at a short exact sequence of Frechet $*$-spaces
\begin{equation}
\label{shssoosdods}
0\to C^\infty_c(\mathsf{G},|\Lambda|)\to \tilde{\Sigma}^m_{\mathsf{G}}\xrightarrow{\rho_m} \Sigma^m_{\mathsf{G}}\to 0.
\end{equation}
The projection $ \tilde{\Sigma}^m_{\mathsf{G}}\to \Sigma^m_{\mathsf{G}}$ admits a linear splitting by choosing a cut off function around $0\in \mathsf{G}$. Moreover, 
$$\rho_{m+m'}(aa')=\rho_m(a)\rho_{m'}(a') \quad\mbox{and}\quad \rho_m(a)^*=\rho_m(a^*),$$
for $a\in \tilde{\Sigma}^m_{\mathsf{G}}$, $a'\in \tilde{\Sigma}^{m'}_{\mathsf{G}}$. Pushing forward along graded automorphisms defines continuous actions
$$\Aut_{\rm gr}(\mathsf{G})\times \tilde{\Sigma}^{m}_{\mathsf{G}}\to \tilde{\Sigma}^{m}_{\mathsf{G}},$$
$$\Aut_{\rm gr}(\mathsf{G})\times \Sigma^{m}_{\mathsf{G}}\to \Sigma^{m}_{\mathsf{G}},$$
compatible with products, adjoints and the quotient map $\rho_m$. The following observation is a direct consequence of local triviality of a regular Carnot manifold. Recall the definition of $\tilde{\Sigma}^m_H$ from Equation \eqref{tidlemsossom}.

\begin{proposition}
\label{locallaslsksks}
Let $X$ be a regular Carnot manifold of type $\mathfrak{g}$ with graded frame bundle $P_X\to X$. Define the locally trivial bundles of Frechet $*$-spaces 
$$\Sigma_X^m:=P_X\times_{\Aut_{\rm gr}(\mathsf{G})}\Sigma^m_{\mathsf{G}}\to X,$$
$$\tilde{\Sigma}_X^m:=P_X\times_{\Aut_{\rm gr}(\mathsf{G})}\tilde{\Sigma}^m_{\mathsf{G}}\to X.$$
Then there are natural isomorphisms
$$\Sigma^m_{H}(X;\C)\cong C^\infty(X;\Sigma^m_X),$$
$$\tilde{\Sigma}^m_{H}(X;\C)\cong C^\infty(X;\tilde{\Sigma}^m_X),$$
compatible with products, adjoints and the quotient map $\rho_m$.
\end{proposition}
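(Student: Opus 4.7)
The plan is to exploit the local triviality of the osculating Lie groupoid afforded by regularity of $X$. By definition of regularity, the Carnot frame bundle $P_X \to X$ of Proposition \ref{carnotframbeldld} is a locally trivial principal $\Aut_{\rm gr}(\mathsf{G})$-bundle and the osculating Lie groupoid $T_HX$ can be locally trivialized as $T_HX|_U \cong U \times \mathsf{G}$ in a graded fashion. Cover $X$ by open subsets $(U_\alpha)_\alpha$ over which such graded trivializations $\tau_\alpha: T_HX|_{U_\alpha} \to U_\alpha \times \mathsf{G}$ exist, with transition functions $g_{\alpha\beta} \in C^\infty(U_\alpha \cap U_\beta, \Aut_{\rm gr}(\mathsf{G}))$.

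First, I would check that the bundles $\Sigma^m_X$ and $\tilde{\Sigma}^m_X$ are well defined. This follows immediately from the continuous $\Aut_{\rm gr}(\mathsf{G})$-actions on $\Sigma^m_{\mathsf{G}}$ and $\tilde{\Sigma}^m_{\mathsf{G}}$ noted in the paragraph preceding the statement, which guarantee that the associated-bundle construction produces locally trivial bundles of Frechet $*$-spaces whose transition cocycles are $(\tau_{\alpha\beta})_\ast$ acting via $g_{\alpha\beta}$.

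Next, I construct the isomorphisms locally and patch. Under $\tau_\alpha$, restriction of an element $a \in \tilde{\Sigma}^m_H(X;\C)$ to a fiber over $x \in U_\alpha$ yields an element of $\tilde{\Sigma}^m_{\mathsf{G}}$, and the defining properties of $\tilde{\Sigma}^m_H$ (namely the homogeneity condition modulo $C^\infty_c$ as in \eqref{tidlemsossom}, interpreted fiberwise) together with the smooth dependence on the base packaged in $\mathcal{E}'_r(T_HX;\C,\C)$ translate exactly to a smooth section $U_\alpha \to \tilde{\Sigma}^m_{\mathsf{G}}$ in the Frechet sense. This gives a linear bijection $\tilde{\Sigma}^m_H(X;\C)|_{U_\alpha} \xrightarrow{\sim} C^\infty(U_\alpha; \tau_\alpha^*\tilde{\Sigma}^m_X)$, and similarly for $\Sigma^m_H$ using the description \eqref{symbolreps} from Proposition \ref{strucuturaldescofsymboslalld}.iii. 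To patch, one checks that on $U_\alpha \cap U_\beta$ the two identifications differ exactly by push-forward along $g_{\alpha\beta}$, which by construction is the transition cocycle of $\tilde{\Sigma}^m_X$; hence the local isomorphisms glue to global isomorphisms.

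Finally, the compatibility with products, adjoints and the quotient map $\rho_m$ is automatic: these operations on $\Sigma^m_H$ and $\tilde{\Sigma}^m_H$ are defined fiberwise by convolution, adjoint and the short exact sequence of Proposition \ref{strucuturaldescofsymboslalld}.ii, and the corresponding structure maps on $\Sigma^m_{\mathsf{G}}, \tilde{\Sigma}^m_{\mathsf{G}}$ have been shown to be $\Aut_{\rm gr}(\mathsf{G})$-equivariant (see the discussion following \eqref{shssoosdods}), so they descend to the associated bundles and commute with the local-to-global identification. The only mildly subtle point in the whole argument, and the one I would handle most carefully, is the correct matching of the Frechet topologies: the smoothness of a section of $\tilde{\Sigma}^m_X$ must agree with the smoothness of the corresponding element of $\mathcal{E}'_r(T_HX;\C,\C)$ in the sense used in $\tilde{\Sigma}^m_H(X;\C)$. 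This is verified by testing against compactly supported smooth functions on $T_HX$ and using that the isomorphism $\tau_\alpha$ is a diffeomorphism of graded Lie groupoids, so the seminorms defining the two Frechet topologies are mutually bounded over compacts.
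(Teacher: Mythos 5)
Your proposal is correct and follows the same approach the paper has in mind: the paper does not give a formal proof but simply remarks immediately before the statement that it is ``a direct consequence of local triviality of a regular Carnot manifold,'' and your argument (local trivialization of $T_HX$, local identification, patching via the $\Aut_{\rm gr}(\mathsf{G})$-valued transition cocycle, and fiberwise equivariance of the structure maps) is exactly the expansion of that remark.
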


\begin{remark}
The situation for symbols of operators acting between vector bundles is quite similar. Let $E_1,E_2\to X$ be complex vector bundles. Let $V_1$ and $V_2$ denote the fibres of $E_1$ and $E_2$, respectively. The frame bundle $P_j\to X$ of $E_j$ is a principal $GL(V_j)$-bundle (that can be reduced to $U(V_j)$ given a hermitean structure). Writing $P_{X,E_1,E_2}:=P_X\times_X P_1\times_X P_2$, we have that $\Sigma^m_{H}(X;E_1,E_2)$ coincides with the sections of the bundle 
$$\Sigma_X^m\otimes \Hom(E_1,E_2)=P_{X,E_1,E_2}\times_{\Aut_{\rm gr}(\mathsf{G})\times \mathrm{GL}(V)\times \mathrm{GL}(W)}(\Sigma^m_{\mathsf{G}}\otimes\Hom(V,W)).$$
\end{remark}

\begin{remark}
In light of the global picture from Proposition \ref{locallaslsksks}, we note that if $\mathbb{D}\in \pmb{\Psi}^m_H(X;E_1,E_2)$ then $\rho_m:\tilde{\Sigma}^m_{H}(X;E_1,E_2)\to \Sigma^m_{H}(X;E_1,E_2)$ fits together with the symbol mapping via the identity
$$\rho_m\circ \mathrm{ev}_{t=0}(\mathbb{D})=\sigma_H\circ \mathrm{ev}_{t=1}(\mathbb{D}).$$
\end{remark}

Let us now consider the situation when there are flat orbits. For the remainder of this section we consider an $\pmb{F}$-regular Carnot manifold of type $\mathfrak{g}$. Recall the notation $\Gamma$ for the set of flat orbits of $\mathsf{G}$ and $\Gamma_X:=P_X\times_{\Aut_{\rm gr}(\mathsf{G})}\Gamma$ for the bundle of flat orbits of $T_HX$. We fix a bundle of flat representations $\mathcal{H}\to \Gamma_X$ as in Theorem \ref{trivialdldaaddo}. Recall that there is a $C_0(\Gamma_X)$-linear $*$-isomorphism $\pi_{\musFlat}:I_{X}\to C_0(\Gamma_X,\mathbb{K}(\mathcal{H}))$ and the choice of Hilbert space bundle $\mathcal{H}$ is unique up to isomorphism and tensoring by a line bundle. We introduce the notation 
$$\mathpzc{H}:=C_0(\Gamma_X,\mathcal{H}),$$
for the associated $C_0(\Gamma_X)$-Hilbert $C^*$-module and remark that $I_X\cong \mathbb{K}_{C_0(\Gamma_X)}(\mathpzc{H})$. We write $\mathcal{S}_0(\mathcal{H})\to \Gamma_P$ for the locally trivial bundle of Frechet spaces (with fibre $\mathcal{S}_0(\R^d)$ for $d=\mathrm{codim}(\mathfrak{z})/2$) defined from $\mathcal{S}_0(\mathcal{H})_\xi=\mathcal{S}_0(\mathcal{H}_\xi)$. Indeed, by the considerations of Section \ref{subsecfinestrat}, $\mathcal{S}_0(\mathcal{H})$ is well defined. Since the trivial representation is not flat, the inclusion $\mathcal{S}_0(\mathcal{H})\subseteq \mathcal{H}$ is fibrewise norm dense. We similarly define the dense $C_0(\Gamma_X)$-submodule 
$$\mathcal{S}_0(\mathpzc{H}):=C_0(\Gamma_X,\mathcal{S}_0(\mathcal{H}))\subseteq \mathpzc{H}.$$
An operator $T$ on $\mathpzc{H}$ is said to be locally $C_0(\Gamma_X)$-compact if $\varphi T\in \mathbb{K}_{C_0(\Gamma_X)}(\mathpzc{H})$ for any $\varphi\in C_0(\Gamma_X)$.

\begin{proposition}
\label{nicaoaooacjad}
Let $X$ be a compact $\pmb{F}$-regular Carnot manifold. Any $a\in \Sigma_H^m(X;\C)$ defines a densely defined $C_0(\Gamma_X)$-linear semiregular operator
$$\pi_{\musFlat}(a):\mathpzc{H}\dashrightarrow \mathpzc{H},$$
defined on $\mathrm{dom}(\pi(a)):=\mathcal{S}_0(\mathpzc{H})$ by 
$$(\pi_{\musFlat}(a)v)_\xi:=\pi_{\musFlat,\xi}(a)v_\xi.$$
If $m\leq 0$, then $\pi_{\musFlat}(a)$ is bounded, adjointable and $\pi_{\musFlat}(a)^*=\overline{\pi_{\musFlat}(a^*)}$. In particular, $\pi_{\musFlat}$ defines a faithful $*$-homomorphism 
$$\pi_{\musFlat}:\Sigma^m_H(X,\C)\to \End_{C_0(\Gamma_X)}^*(\mathpzc{H}),$$
for any $m\leq 0$. If $m<0$, then $\pi_{\musFlat}(a)$ is locally $C_0(\Gamma_X)$-compact.
\end{proposition}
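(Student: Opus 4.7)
The plan is to build $\pi_{\musFlat}(a)$ fibre-wise from the Kirillov representations and then globalize using the continuous trace structure of $I_X$. I would first lift $a \in \Sigma^m_H(X;\C)$ to an element $\tilde{a} \in \tilde{\Sigma}^m_H(X;\C) \subseteq \mathcal{E}'_r(T_HX;\C)$ via the short exact sequence \eqref{shssoosdods}. For each $\xi \in \Gamma_X$ lying over $x := p_\Gamma(\xi)$, convolution by the fibre $\tilde{a}_x$ defines a densely defined operator on $\mathcal{H}_\xi$ through $\pi_{\musFlat,\xi}$, with invariant core $\mathcal{S}_0(\mathcal{H}_\xi)$. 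Independence of the choice of lift follows from the fact that elements of $P^m_H(T_HX;\C)$ act as zero on $\mathcal{S}_0(\mathcal{H}_\xi)$: pairing a polynomial against a Schwartz function with vanishing polynomial moments gives zero. Continuity of $\xi \mapsto \pi_{\musFlat,\xi}(a)$ in the strong topology then follows from the bundle description in Proposition \ref{locallaslsksks}, since $a$ corresponds to a smooth section of the bundle $\Sigma^m_X \to X$ and the bundle $\mathcal{H} \to \Gamma_X$ is locally trivial with a continuous family of representations by Proposition \ref{bundleofhspspac} and the gluing of Proposition \ref{descriptionofddinvariant}.

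For the boundedness assertion when $m \leq 0$, I would reduce to $m = 0$ via the complex powers provided by Theorem \ref{cpxpoweroieoso}. Fix a positive $H$-elliptic differential operator $\mathfrak{D}$ of order $2m'$ as in Proposition \ref{existenceofposiskdd}; then $\sigma_H^{2m'}(\mathfrak{D})$ is an invertible central element of $\Sigma^{2m'}_H(X;\C)$, and I may write $a = a_0 \cdot \sigma_H^{2m'}(\mathfrak{D})^{m/2m'}$, where $a_0 := a \cdot \sigma_H^{2m'}(\mathfrak{D})^{-m/2m'}$ has order zero. The factor $\pi_{\musFlat}(\sigma^{2m'}_H(\mathfrak{D})^{m/2m'})$ is bounded and adjointable on $\mathpzc{H}$ because $\pi_{\musFlat,\xi}(\sigma^{2m'}_H(\mathfrak{D}))$ is positive, self-adjoint, and bounded below on each fibre, while $\pi_{\musFlat}(a_0)$ is bounded uniformly in $\xi$ by the $L^2$-boundedness of order-zero Heisenberg operators invoked in the proof of Proposition \ref{generalpitoleiielalad}; the uniformity follows from local triviality of $T_HX$, compactness of $X$, and the $\Aut_{\rm gr}(\mathsf{G})$-equivariance of the Kirillov correspondence. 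The identity $\pi_{\musFlat}(a)^* = \overline{\pi_{\musFlat}(a^*)}$ is then the statement that the formal adjoint of convolution by $\tilde{a}$ on $\mathcal{S}_0(\mathpzc{H})$ is convolution by $\tilde{a}^*$, which is a direct computation using the groupoid involution.

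To establish faithfulness of $\pi_{\musFlat}: \Sigma^m_H(X;\C) \to \End^*_{C_0(\Gamma_X)}(\mathpzc{H})$, I would note that if $\pi_{\musFlat}(a) = 0$ then $\pi(a_x) = 0$ for every $x \in X$ and every $\pi$ in the dense subset $\Gamma_x \subseteq \widehat{T_HX_x}$; by density and continuity in the Fell topology this forces $\pi(a_x) = 0$ for every non-trivial irreducible $\pi$, and by the identification in Equation \eqref{symbolreps} the kernel of this joint evaluation on the fibrewise symbol algebra consists precisely of elements lifted by polynomials, which represent zero in $\Sigma^m_H(x;\C)$. For local $C_0(\Gamma_X)$-compactness when $m < 0$, I would combine the complex-power reduction with Proposition \ref{bodlaldaladldalpa}: after multiplication by $\varphi \in C_0(\Gamma_X)$, the operator $\varphi\pi_{\musFlat}(a) = \pi_{\musFlat}(a_0) \cdot \varphi\pi_{\musFlat}(\sigma_H^{2m'}(\mathfrak{D})^{m/2m'})$ is the image under the isomorphism $I_X \cong \mathbb{K}_{C_0(\Gamma_X)}(\mathpzc{H})$ of a strictly negative-order class localised to a compact subset of $\Gamma_X$, hence is a compact endomorphism of $\mathpzc{H}$.

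The main obstacle is the uniform control in $\xi \in \Gamma_X$: pointwise estimates only give a bounded family of fibrewise operators, and upgrading this to a norm-bounded adjointable operator on the Hilbert $C^*$-module $\mathpzc{H}$ requires genuine continuity in the strong operator topology together with the bundle-theoretic descriptions of Part \ref{sec:reptheory} and Part \ref{gropodoprar}. A second technical point will be showing that negative-order symbols yield elements, not merely multipliers, of $I_X$; this requires extracting compactness from the fibrewise decay of homogeneous distributions of negative order and the continuous trace structure of $I_X$ established in Theorem \ref{trivialdldaaddo}.
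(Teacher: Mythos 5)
Your overall strategy matches the paper's (fibrewise definition, semi-regularity via the formal adjoint, order reduction to $m=0$, uniform $L^2$-bound from Ewert's Knapp--Stein-type theorem, local compactness for $m<0$). However, there are two places where the argument as written does not go through.

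First, for local $C_0(\Gamma_X)$-compactness when $m<0$ you invoke Proposition~\ref{bodlaldaladldalpa}. That result concerns compactness and Schatten properties of $D\in\Psi^m_H(X;E_1,E_2)$ acting on $L^2(X)$, i.e.\ the $t=1$ restriction of the tangent groupoid; it says nothing about the \emph{symbol} module at $t=0$. What is actually needed is that a negative-order symbol acts as an element of $C^*(T_HX)$ (hence locally as an element of $I_X\cong\mathbb{K}_{C_0(\Gamma_X)}(\mathpzc{H})$ after cutoff); this is exactly the content of Proposition~\ref{actionofbolddd}, which handles the $\pmb{\mathpzc{E}}(X;E)$-module at $t=0$ and whose proof uses kernel regularity for very negative orders, not the Schatten estimates of Proposition~\ref{bodlaldaladldalpa}. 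Without that reference the step ``hence is a compact endomorphism of $\mathpzc{H}$'' is unsupported.

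Second, the order-reduction step has two weaknesses. The claim that $\sigma_H^{2m'}(\mathfrak{D})$ is a \emph{central} element of $\Sigma^{2m'}_H(X;\C)$ is false -- only $C^\infty(X)$ is central in the symbol algebra (cf.\ Corollary~\ref{commwithfunticododo}); centrality is not needed for the factorization $a = a_0\cdot\sigma_H^{2m'}(\mathfrak{D})^{m/2m'}$, but the assertion should be dropped. More seriously, you justify boundedness of $\pi_{\musFlat}(\sigma^{2m'}_H(\mathfrak{D})^{m/2m'})$ by observing that $\pi_{\musFlat,\xi}(\sigma_H^{2m'}(\mathfrak{D}))$ is positive, self-adjoint, and bounded below \emph{on each fibre}. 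That gives a fibrewise bound on the negative power, but no uniform bound in $\xi$: by homogeneity the fibrewise lower bound of $\pi_{\musFlat,\xi}(\sigma_H^{2m'}(\mathfrak{D}))$ degrades as $\xi\to 0$ in $\Gamma_X$ (e.g.\ for the sub-Laplacian on the Heisenberg bundle, the lowest eigenvalue scales like $|\xi|^{2m'}$), so the fibrewise norms of the negative power blow up near the boundary of $\Gamma_X$. Uniform operator-norm control on the Hilbert $C^*$-module $\mathpzc{H}$ does not follow from ``bounded below on each fibre''. The paper's proof instead obtains the required uniform constant directly from Ewert's theorem \cite[Theorem~8.18]{eskeewertthesis}, which supplies a bound on $L^2(T_HX_x)$ that is independent of $x$; you need to appeal to that uniform bound rather than a pointwise spectral argument when justifying the negative-power factor.
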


\begin{proof}
The operator $\pi_{\musFlat}(a)$ preserves $\mathcal{S}_0(\mathpzc{H})$ and on there, the formal adjoint of $\pi_{\musFlat}(a)$ is $\pi_{\musFlat}(a^*)$. Therefore $\pi_{\musFlat}(a)$ is semi-regular. It is clear that $\pi_{\musFlat}(ab)=\pi_{\musFlat}(a)\pi_{\musFlat}(b)$ on $\mathcal{S}_0(\mathpzc{H})$. 

Assume now that $m\leq 0$. By order reduction it suffices to consider $m=0$. If suffices to prove that $\pi_{\musFlat}(a)$ is bounded; it then follows from the properties in the preceding parapraph that $\pi_{\musFlat}(a)$ is adjointable and $\pi_{\musFlat}(a)^*=\overline{\pi_{\musFlat}(a^*)}$. As in the proof of Proposition \ref{generalpitoleiielalad}, we use \cite[Theorem 8.18]{eskeewertthesis} to conclude that there is a constant $C>0$ such that $a$ is norm-bounded $L^2(T_HX_x)\to L^2(T_HX_x)$ by $C$ for any $x$. Therefore, $a$ defines an element of $\mathcal{M}(C^*(T_HX_x))$ and will therefore be bounded on $\mathpzc{H}_x$ by $C$ for any $x$. 

Finally, for $m<0$, we note that $a$ induces a $C^*(T_HX)$-compact multiplier on $\pmb{\mathpzc{E}}(X;\C)\otimes_{\mathrm{ev}_{t=0}} C^*(T_HX)$ by Proposition \ref{actionofbolddd}. In particular, $a$ acts as an element of $C^*(T_HX)$ and the proposition follows from the fact that the $C^*(T_HX)$-action on $\mathpzc{H}$ is $C_0(\Gamma_X)$-locally compact. 
\end{proof}

\begin{proposition}
\label{nicaoaooacjadtilde}
Let $X$ be an $\pmb{F}$-regular Carnot manifold. Any $a\in \tilde{\Sigma}_H^m(X;\C)$ defines a densely defined $C_0(\Gamma_X)$-linear semiregular operator
$$\tilde{\pi}_{\musFlat}(a):\mathpzc{H}\dashrightarrow \mathpzc{H},$$
defined on $\mathrm{dom}(\tilde{\pi}_{\musFlat}(a)):=\mathcal{S}_0(\mathpzc{H})$ by 
$$(\tilde{\pi}_{\musFlat}(a)v)_\xi:=\pi_{\musFlat,\xi}(a)v_\xi.$$
If $m\leq 0$, then $\tilde{\pi}_{\musFlat}(a)$ is bounded, adjointable and $\tilde{\pi}_{\musFlat}(a)^*=\overline{\tilde{\pi}_{\musFlat}(a)}$. In particular, $\tilde{\pi}_{\musFlat}$ defines a faithful $*$-homomorphism 
$$\tilde{\pi}_{\musFlat}:\tilde{\Sigma}^m_H(X,\C)\to \End_{C_0(\Gamma_X)}^*(\mathpzc{H}),$$
for any $m\leq 0$. If $m<0$, then $\tilde{\pi}_{\musFlat}(a)$ is locally $C_0(\Gamma_X)$-compact.
\end{proposition}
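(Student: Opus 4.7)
The plan is to reduce the statement to Proposition \ref{nicaoaooacjad} by exploiting the short exact sequence \eqref{shssoosdods}, which in the globalized form of Proposition \ref{locallaslsksks} reads
$$0\to C^\infty_c(T_HX,|\Lambda|)\to \tilde{\Sigma}^m_H(X;\C)\xrightarrow{\rho_m} \Sigma^m_H(X;\C)\to 0.$$
Given $a\in \tilde{\Sigma}^m_H(X;\C)$, I would first fix a smooth cut-off $\chi\in C^\infty(T_HX)$, identically $1$ near the zero section and with $\chi$ homogeneous of degree $0$ outside a compact neighbourhood of the zero section, to obtain a splitting $s_m:\Sigma^m_H(X;\C)\to \tilde{\Sigma}^m_H(X;\C)$ of $\rho_m$, so that $a=s_m(\rho_m(a))+k$ with $k\in C^\infty_c(T_HX,|\Lambda|)$. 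The first summand already acts as claimed on $\mathcal{S}_0(\mathpzc{H})$ by Proposition \ref{nicaoaooacjad}, so it suffices to analyse the contribution from the kernel piece $k$.

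Next I would observe that $C^\infty_c(T_HX,|\Lambda|)$ embeds into $C^*(T_HX)$ (as a dense subspace of the smooth convolution subalgebra) and hence acts via $\pi_{\musFlat}$ on $\mathpzc{H}$. More precisely, the inclusion $C^\infty_c(T_HX,|\Lambda|)\hookrightarrow C^*(T_HX)$ followed by the canonical projection $C^*(T_HX)\to I_X$ and the isomorphism $\pi_{\musFlat}:I_X\cong C_0(\Gamma_X,\mathbb{K}(\mathcal{H}))$ produces a $C_0(\Gamma_X)$-linear $*$-homomorphism $C^\infty_c(T_HX,|\Lambda|)\to \mathbb{K}_{C_0(\Gamma_X)}(\mathpzc{H})$, which on vectors $v\in \mathcal{S}_0(\mathpzc{H})$ coincides with the fibrewise convolution action described in the statement. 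In particular $k$ acts as a $C_0(\Gamma_X)$-compact, hence bounded and adjointable, operator on $\mathpzc{H}$; it preserves $\mathcal{S}_0(\mathpzc{H})$, and on that dense submodule its formal adjoint is the action of $k^*\in C^\infty_c(T_HX,|\Lambda|)$, so $\overline{\tilde{\pi}_{\musFlat}(k)}^{\,*}=\overline{\tilde{\pi}_{\musFlat}(k^*)}$.

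Combining the two pieces, for general $m$ the operator $\tilde{\pi}_{\musFlat}(a)$ is densely defined on $\mathcal{S}_0(\mathpzc{H})$, preserves this domain, and its formal adjoint on $\mathcal{S}_0(\mathpzc{H})$ agrees with $\tilde{\pi}_{\musFlat}(a^*)$, giving semi-regularity. For $m\leq 0$, Proposition \ref{nicaoaooacjad} gives boundedness and adjointability of $\pi_{\musFlat}(\rho_m(a))$, while the $k$-term is always bounded and adjointable; the sum is thus bounded and adjointable with $\tilde{\pi}_{\musFlat}(a)^*=\overline{\tilde{\pi}_{\musFlat}(a^*)}$. Multiplicativity of $\tilde{\pi}_{\musFlat}$ on $\mathcal{S}_0(\mathpzc{H})$ is clear from the convolution definition; together with injectivity, which I would deduce from Remark \ref{denseityemramr} applied fibrewise (the flat orbits are dense in $\widehat{T_HX}$ away from the trivial representation, and convolution with a nonzero distribution acts nontrivially in some irreducible representation), this upgrades $\tilde{\pi}_{\musFlat}$ to a faithful $*$-homomorphism for $m\leq 0$. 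Finally, for $m<0$ the class $\rho_m(a)$ gives a locally $C_0(\Gamma_X)$-compact contribution by Proposition \ref{nicaoaooacjad}, and the $k$-contribution is always $C_0(\Gamma_X)$-compact as noted above, so $\tilde{\pi}_{\musFlat}(a)$ is locally $C_0(\Gamma_X)$-compact.

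The main technical obstacle I anticipate is justifying injectivity of $\tilde{\pi}_{\musFlat}$ for $m\leq 0$, since elements of $\tilde{\Sigma}^m_H(X;\C)$ have richer behaviour near the zero section than their classes in $\Sigma^m_H(X;\C)$. The cleanest route is probably to invoke the fact that the Plancherel measure of $T_HX_x$ is concentrated on flat orbits (as $\Gamma$ is Zariski-open in $\hat{\mathsf{G}}$ and $\mathsf{G}$ admits flat orbits by $\pmb{F}$-regularity), so a nonzero element of $\mathcal{E}'(T_HX_x)\cap \mathcal{S}'(T_HX_x)$ must act nontrivially on $\mathcal{H}$ for some flat $\xi$; this should then globalize via the local triviality of Proposition \ref{locallaslsksks}.
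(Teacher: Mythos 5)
Your plan has a genuine gap in the key step. You decompose $a=s_m(\rho_m(a))+k$ with $k\in C^\infty_c(T_HX,|\Lambda|)$ and then claim that the first summand ``already acts as claimed on $\mathcal{S}_0(\mathpzc{H})$ by Proposition~\ref{nicaoaooacjad},'' effectively identifying $\tilde{\pi}_{\musFlat}(s_m(\rho_m(a)))$ with $\pi_{\musFlat}(\rho_m(a))$. This identification is false: the paper explicitly points this out in the remark directly following the statement, where it is emphasized that the representations of Propositions~\ref{nicaoaooacjad} and \ref{nicaoaooacjadtilde} are \emph{not} compatible with $\rho_m$, so that $\tilde{\pi}_{\musFlat}(a)\neq \pi_{\musFlat}(\rho_m(a))$ in general. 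Concretely, $s_m(\rho_m(a))$ (a compactly supported, almost-homogeneous kernel) and a representative of $\rho_m(a)$ (a tempered, exactly homogeneous kernel) differ by the smooth function $(\chi-1)\rho_m(a)$, which is supported away from the zero section, is \emph{not} compactly supported, and for $m=0$ decays only at the borderline rate $|g|^{-Q}$ (with $Q=\dim_h(X)$). Convolution with that residual is not covered by Proposition~\ref{nicaoaooacjad}, and establishing its boundedness at $m=0$ would again require a Knapp--Stein / Calder\'on--Zygmund type estimate --- precisely the content of the estimate from \cite{eskeewertthesis} invoked in the proof of Proposition~\ref{nicaoaooacjad}. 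So the decomposition defers rather than removes the analytic work, and as written the argument silently uses an identity the paper warns against.

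The paper's own (omitted) proof simply repeats the argument of Proposition~\ref{nicaoaooacjad} verbatim for elements of $\tilde{\Sigma}^m_H$: semiregularity comes from the formal adjoint identity on $\mathcal{S}_0(\mathpzc{H})$, uniform fibrewise $L^2$-boundedness for $m\le 0$ comes directly from the Knapp--Stein-type estimate of \cite[Theorem 8.18]{eskeewertthesis} applied to the compactly supported almost-homogeneous kernel $a$ itself (the estimate applies equally well to $\tilde{\Sigma}^m$ as to $\Sigma^m$), and for $m<0$ local $C_0(\Gamma_X)$-compactness follows because $a$ then lies in $C^*(T_HX)$ by Lemma~\ref{actionofbolddd}. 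That direct route avoids any comparison between $\tilde{\pi}_{\musFlat}$ and $\pi_{\musFlat}$. Your treatment of the $k\in C^\infty_c$ piece and of formal adjointness on the dense domain are correct, but to close the argument you should either (a)~drop the splitting and apply the fibrewise $L^2$-estimate directly to $a\in\tilde{\Sigma}^m_H$, or (b)~keep the splitting but prove separately that convolution by $s_m(\rho_m(a))$ is fibrewise uniformly bounded --- acknowledging that this is not a consequence of Proposition~\ref{nicaoaooacjad} but a parallel application of the same estimate.
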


The proof of Proposition \ref{nicaoaooacjadtilde} goes along the same lines as the proof of Proposition \ref{nicaoaooacjad} and is omitted. 

\begin{remark}
We remark that the representations of Proposition \ref{nicaoaooacjad} and \ref{nicaoaooacjadtilde} are not compatible with the quotient mapping $\rho_m$, i.e $\tilde{\pi}_{\musFlat}(a)\neq \pi_{\musFlat}(\rho_m(a))$ in general. Indeed, the situation should be understood in analogy with the short exact sequence 
$$0\to C_0(\R^n)\to C(\overline{B}_n)\to C(S^{n-1})\to 0,$$
defining the radial compactification of $\R^n$ (by identifying $\R^n$ with the open unit ball $B_n$). In this case, $C(\overline{B}_n)$ and $C(S^{n-1})$ both acts on $C_0(\R^n\setminus \{0\})$, the latter by extending homogeneously.
\end{remark}

\begin{definition}
Let $X$ be an $\pmb{F}$-regular Carnot manifold. We fix a $\mathfrak{D}\in \mathcal{DO}_H^{2m}(X;\C)$ as in Proposition \ref{existenceofposiskdd}. For $s \geq 0$ we define 
$$\mathcal{H}^s:=\sigma_H(\mathfrak{D}^{-s/2m})\mathcal{H}\to \Gamma_X,$$
viewed as a the Hilbert space bundle by declaring $\sigma_H(\mathfrak{D}^{-s/2m}):\mathcal{H}\to \mathcal{H}^s$ to be a unitary isomorphism. We let $\mathpzc{H}^s$ denote the associated $C_0(\Gamma_X)$-Hilbert $C^*$-module. For $s< 0$ we define 
$$\mathcal{H}^s:=\sigma_H(\mathfrak{D}^{-s/2m})\mathcal{H}\to \Gamma_X,$$
and the associated $C_0(\Gamma_X)$-Hilbert $C^*$-module $\mathpzc{H}^s$, by duality. We also introduce the notation 
$$\mathpzc{H}^s(E):=\mathpzc{H}^s\otimes_{C_0(\Gamma_X)}C_0(\Gamma_X;E),$$
for a vector bundle $E\to X$. If $\pmb{E}\to X$ is a hermitean graded vector bundle, we set 
$$\pmb{\mathpzc{H}}^s(\pmb{E}):=\bigoplus_j \mathpzc{H}^{s-j}(\pmb{E}[j]).$$
\end{definition}

\begin{proposition}
\label{somrelareonon}
Let $X$ be an $\pmb{F}$-regular Carnot manifold, $\pmb{E}\to X$ a graded hermitean vector bundle and $\pmb{\mathpzc{E}}^s_{\rm gr}(X;\pmb{E})$ the $C^*(\mathbb{T}_HX)$-Hilbert $C^*$-module from Section \ref{secondnaaoaoac}. Consider $\mathpzc{H}$ as a $(C^*(\mathbb{T}_HX),C_0(\Gamma_X))$-Hilbert $C^*$-module via restriction to $t=0$. Then there is a canonical isomorphism of $C_0(\Gamma_X)$-Hilbert $C^*$-modules
$$\pmb{\mathpzc{H}}^s(\pmb{E})\cong \pmb{\mathpzc{E}}^s_{\rm gr}(X;\pmb{E})\otimes_{C^*(\mathbb{T}_HX)}\mathpzc{H}.$$
Conversely, the localization of $\pmb{\mathpzc{E}}^s_{\rm gr}(X;\pmb{E})|_{t=0}$ to $\Gamma_X$ along the natural map $C_0(\Gamma_X)\to \mathcal{ZM}(C^*(T_HX))$, is canonically isomorphic to $\mathbb{K}_{C_0(\Gamma_X)}(\mathpzc{H},\pmb{\mathpzc{H}}^s(\pmb{E}))$ as right $I_X$-Hilbert $C^*$-modules.
\end{proposition}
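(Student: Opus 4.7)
My plan is to reduce both statements to a concrete identification at the level of dense subspaces and then invoke the Morita equivalence $I_X \sim_M C_0(\Gamma_X)$ implemented by $\mathpzc{H}$ (Theorem \ref{trivialdldaaddo}).

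First I would use the order-reducing unitary $\Lambda_{\pmb{E},s}: \pmb{\mathpzc{E}}^s_{\rm gr}(X;\pmb{E}) \to \bigoplus_j \pmb{\mathpzc{E}}(X;\pmb{E}[j])$, together with its natural counterpart on $\pmb{\mathpzc{H}}^s(\pmb{E})$ given fibrewise by $\bigoplus_j \sigma_H(\mathfrak{D}_{\pmb{E}[j]}^{(s-j)/2m})$, which is compatible with the flat-orbit action at $t=0$ by Proposition \ref{nicaoaooacjad}. This reduces the first isomorphism to the case $s=0$ with trivially graded $E$, and a partition-of-unity argument then reduces further to $E = X\times\C$, since all constructions are local in $X$ and $C_0[0,\infty)$-linear.

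In this reduced setting, $\pmb{\mathpzc{E}}(X;\C) \otimes_{C^*(\mathbb{T}_HX)} C^*(T_HX) \cong \pmb{\mathpzc{E}}(X;\C)|_{t=0}$ is the completion of $C^\infty_c(T_HX)$ in its convolution inner product. Since $I_X$ is an ideal and its action on $\mathpzc{H}$ is nondegenerate (as $I_X \cong \mathbb{K}_{C_0(\Gamma_X)}(\mathpzc{H})$), this extends canonically to a nondegenerate left $C^*(T_HX)$-action on $\mathpzc{H}$. I would show that the balanced map
\[
C^\infty_c(T_HX) \otimes_{\rm alg} \mathcal{S}_0(\mathpzc{H}) \longrightarrow \mathpzc{H}, \qquad f \otimes v \longmapsto \bigl(\xi \mapsto \pi_{\musFlat,\xi}(f)\,v_\xi\bigr),
\]
is well defined (Proposition \ref{nicaoaooacjadtilde}), factors through the interior tensor product over $C^*(T_HX)$, and preserves $C_0(\Gamma_X)$-valued inner products by a fibrewise computation against the Kirillov description of $\pi_{\musFlat,\xi}$. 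Density of the range follows from $I_X \cdot \mathpzc{H} = \mathpzc{H}$, yielding the desired unitary. Restoring $\pmb{E}$ and $s$ via the previous paragraph completes the first statement.

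For the second statement, write $L := \pmb{\mathpzc{E}}^s_{\rm gr}(X;\pmb{E})|_{t=0}$, a right $C^*(T_HX)$-Hilbert $C^*$-module. Its localization to $\Gamma_X$ along $C_0(\Gamma_X) \to \mathcal{ZM}(C^*(T_HX))$ is $L\cdot I_X$, since $I_X$ is the ideal of $C^*(T_HX)$ corresponding to the open subset $\Gamma_X \subseteq \widehat{T_HX}$. The first isomorphism identifies $L \otimes_{C^*(T_HX)} \mathpzc{H} \cong \pmb{\mathpzc{H}}^s(\pmb{E})$ as $C_0(\Gamma_X)$-Hilbert $C^*$-modules. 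Applying Rieffel's theory to the imprimitivity bimodule $\mathpzc{H}$ gives the canonical isomorphism of right $I_X$-modules
\[
L\cdot I_X \;\cong\; \mathbb{K}_{C_0(\Gamma_X)}\!\bigl(\mathpzc{H},\, L \otimes_{C^*(T_HX)} \mathpzc{H}\bigr) \;\cong\; \mathbb{K}_{C_0(\Gamma_X)}\!\bigl(\mathpzc{H},\, \pmb{\mathpzc{H}}^s(\pmb{E})\bigr).
\]

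The main technical obstacle will be verifying isometry and range-density of the central balanced map, which requires identifying the convolution on $C^*(T_HX_x)$ with the Kirillov representations $\pi_{\musFlat,\xi}$ fibrewise (using the uniqueness up to line bundles in Theorem \ref{trivialdldaaddo}) and tracking this identification compatibly through the order reduction $\Lambda_{\pmb{E},s}$ and the gluing step for non-trivial $\pmb{E}$.
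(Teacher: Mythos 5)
Your proof is correct and follows essentially the same strategy the paper sketches in its one-sentence proof (``readily verified in local coordinates and globalizes due to the regularity''), fleshed out via order reduction with $\Lambda_{\pmb{E},s}$, a partition-of-unity reduction, a concrete balanced map identified fibrewise through the Kirillov representations, and Rieffel's imprimitivity-bimodule formalism for the second claim. The only mild caution is to make sure the $C^*(T_HX)$-action on $\mathpzc{H}$ is noted to be nondegenerate (which follows since $I_X\mathpzc{H}=\mathpzc{H}$) before writing $\pmb{\mathpzc{E}}^s_{\rm gr}(X;\pmb{E})\otimes_{C^*(\mathbb{T}_HX)}\mathpzc{H} \cong \pmb{\mathpzc{E}}^s_{\rm gr}(X;\pmb{E})|_{t=0}\otimes_{C^*(T_HX)}\mathpzc{H}$, and to make the identification $\pmb{\mathpzc{H}}^s(\pmb{E})\otimes_{C_0(\Gamma_X)}\mathpzc{H}^*\cong\mathbb{K}_{C_0(\Gamma_X)}(\mathpzc{H},\pmb{\mathpzc{H}}^s(\pmb{E}))$ explicit at the end.
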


\begin{proof}
The statement is readily verified in local coordinates and globalizes due to the regularity.
\end{proof}

\begin{proposition}
Let $X$ be a regular Carnot manifold admitting flat orbits and $\pmb{E}_1,\pmb{E}_2\to X$ two graded, hermitean vector bundles. Any $a\in \Sigma^m_{H,{\rm gr}}(X;\pmb{E}_1,\pmb{E}_2)$ defines a bounded densely defined $C_0(\Gamma_X)$-linear operator $\pmb{\mathpzc{H}}^s(\pmb{E}_1)\dashrightarrow \pmb{\mathpzc{H}}^{s-m}(\pmb{E}_2)$ defined on $\mathrm{dom}(\pi_{\musFlat,s}(a)):=\mathcal{S}_0(\pmb{\mathpzc{H}}^s(\pmb{E}_1))$ by 
$$(\pi(a)_{\musFlat,s}v)_\xi:=\pi_{\musFlat,s,\xi}(a)v_\xi.$$
The closure of $\pi_{\musFlat,s}(a)$ is an adjointable operator
$$\pi_{\musFlat,s}(a):\pmb{\mathpzc{H}}^s(\pmb{E}_1)\to \pmb{\mathpzc{H}}^{s-m}(\pmb{E}_2).$$ 
Moreover, we obtain a mapping
$$\pi_{\musFlat,s}:\Sigma_{H,{\rm gr}}^m(X;\pmb{E}_1,\pmb{E}_2)\to \Hom_{C_0(\Gamma_X)}^*(\pmb{\mathpzc{H}}^s(\pmb{E}_1), \pmb{\mathpzc{H}}^{s-m}(\pmb{E}_2)),$$
which respects products in the sense that for $a\in \Sigma_{H,{\rm gr}}^m(X;\pmb{E}_2,\pmb{E}_3)$ and $a'\in \Sigma_{H,{\rm gr}}^{m'}(X;\pmb{E}_1,\pmb{E}_2)$ we have that 
$$\pi_{\musFlat,s-m}(a')\pi_{\musFlat,s}(a)=\pi_{\musFlat,s}(a'a)\in \Hom_{C_0(\Gamma_X)}^*(\pmb{\mathpzc{H}}^s(\pmb{E}_1), \pmb{\mathpzc{H}}^{s-m-m'}(\pmb{E}_3).$$
\end{proposition}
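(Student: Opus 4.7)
The approach is to reduce, via order-reduction, to the base case $s=0$ and $m=0$ handled in Proposition \ref{nicaoaooacjad}. First I would perform the graded-to-ungraded reduction: since
$$\pmb{\mathpzc{H}}^s(\pmb{E})=\bigoplus_j \mathpzc{H}^{s-j}(\pmb{E}[j])\quad\mbox{and}\quad \Sigma^m_{H,\mathrm{gr}}(X;\pmb{E}_1,\pmb{E}_2)=\bigoplus_{j,k}\Sigma^{m+k-j}_H(X;\pmb{E}_1[k],\pmb{E}_2[j]),$$
the statement decomposes into a finite matrix of claims of the form: for $a\in \Sigma^{m'}_H(X;E_1,E_2)$ with hermitean vector bundles $E_1,E_2\to X$, the operator $\pi_{\musFlat,s}(a):\mathpzc{H}^s(E_1)\dashrightarrow \mathpzc{H}^{s-m'}(E_2)$ is bounded and adjointable. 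A further reduction, tensoring the representation with the bundles $E_1,E_2$, reduces to $E_1=E_2=\mathbb{C}$.

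Next I would perform the order reduction. By Proposition \ref{existenceofposiskdd} and Theorem \ref{cpxpoweroieoso}, fix a positive even-order $H$-elliptic differential operator $\mathfrak{D}\in \mathcal{DO}_H^{2m_0}(X;\mathbb{C})$ whose complex powers $\mathfrak{D}^z$ live in the Heisenberg calculus, so $\sigma_H(\mathfrak{D}^z)\in \Sigma^{2m_0 z}_H(X;\mathbb{C})$ is invertible in the symbol algebra with inverse $\sigma_H(\mathfrak{D}^{-z})$. By the definition of $\mathpzc{H}^s$, the map
$$U_s:=\pi_{\musFlat}(\sigma_H(\mathfrak{D}^{-s/2m_0})):\mathpzc{H}\to \mathpzc{H}^s$$
is a unitary isomorphism of $C_0(\Gamma_X)$-Hilbert modules. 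For $a\in \Sigma^m_H(X;\mathbb{C})$ set
$$b:=\sigma_H(\mathfrak{D}^{(s-m)/2m_0})\cdot a\cdot \sigma_H(\mathfrak{D}^{-s/2m_0})\in \Sigma^0_H(X;\mathbb{C}),$$
so that, on the dense domain $\mathcal{S}_0(\mathpzc{H}^s)=U_s\mathcal{S}_0(\mathpzc{H})$, one has the identity
$$\pi_{\musFlat,s}(a)=U_{s-m}\circ \pi_{\musFlat}(b)\circ U_s^{-1}.$$

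By Proposition \ref{nicaoaooacjad}, $\pi_{\musFlat}(b)$ extends to a bounded adjointable operator on $\mathpzc{H}$ with adjoint $\overline{\pi_{\musFlat}(b^*)}$. Conjugating by the unitaries $U_s$ and $U_{s-m}$ then shows that $\pi_{\musFlat,s}(a)$ extends to a bounded adjointable operator $\mathpzc{H}^s\to \mathpzc{H}^{s-m}$, with adjoint obtained by interchanging $s$ and $s-m$ and replacing $a$ by $a^*$. The product rule $\pi_{\musFlat,s-m}(a')\pi_{\musFlat,s}(a)=\pi_{\musFlat,s}(a'a)$ holds on a common Schwartz-like core by direct computation: the conjugation cancellations reduce it to $\pi_{\musFlat}(b'b)=\pi_{\musFlat}(b')\pi_{\musFlat}(b)$ for the corresponding order-zero symbols, which is Proposition \ref{nicaoaooacjad}.

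The main obstacle I anticipate is verifying that the Schwartz-type dense domain $\mathcal{S}_0(\mathpzc{H}^s)$ really is the image of $\mathcal{S}_0(\mathpzc{H})$ under the order-reducing multiplier $U_s$, so that the conjugation identity defines the action on the prescribed domain and so that all compositions are defined without needing to chase regularity of unbounded multipliers. This amounts to showing that $\pi_{\musFlat}(\sigma_H(\mathfrak{D}^z))$ preserves $\mathcal{S}_0(\mathpzc{H})$ for every $z\in \mathbb{C}$, which in each fibre $\xi\in \Gamma_X$ follows from the description in Lemma \ref{descofreprefield} of flat-orbit representations as operators with polynomial coefficients on $L^2(\R^d)$: the closure of $\pi_{\musFlat,\xi}(\sigma_H(\mathfrak{D}))$ is a positive elliptic Shubin-type operator whose fractional powers act continuously on the fibrewise Schwartz space, and the global $C_0(\Gamma_X)$-linear statement then follows by the smoothness of these constructions in the base variable.
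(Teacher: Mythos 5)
Your proof is correct and follows essentially the same route as the paper: reduce, by conjugating with the order-reducing symbol powers of a fixed positive $H$-elliptic $\mathfrak{D}$ (the operators implicitly underlying $\Lambda_{\pmb{E},s}$ in Proposition \ref{propusingorder}), to the case $s=0$, $m=0$, and trivially graded bundles, then invoke the base proposition on bounded adjointability of order-zero symbols on $\mathpzc{H}$. One small remark: the paper's proof cites Proposition \ref{nicaoaooacjadtilde} (for $\tilde\Sigma$-symbols) where your citation of Proposition \ref{nicaoaooacjad} (for $\Sigma$-symbols) is the one that matches the reduced statement, so your reference is the more natural one; your concern about $\mathcal{S}_0$ being preserved by the order-reducing unitaries is legitimate but inessential, since for a bounded densely defined operator the closure is independent of the choice of dense core.
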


\begin{proof}
Using order reductions with $\Lambda_{\pmb{E},s}$ as in the proof of Proposition \ref{propusingorder}, we can reduce to the case that $s=0$ and $\pmb{E}_1$ and $\pmb{E}_2$ are trivially graded. Further order reductions reduce to $m=0$. Now the Proposition follows from Proposition \ref{nicaoaooacjadtilde}.
\end{proof}

\subsection{Characterizing $H$-ellipticity in the flat orbits}

We now restrict to the transveral $\Gamma^\partial_X:=P_X\times_{\Aut_{\rm gr}(\mathsf{G})}\Gamma_\partial$ for the dilation action (see more in Proposition \ref{gammapartialdef}). By Proposition \ref{gammapartialdef}, $\Gamma^\partial_X$ is a smooth hypersurface in $\Gamma_X$, in general $\Gamma^\partial_X$ is not compact. We introduce the subscript $\partial$ on modules and bundles to indicate restriction to $\Gamma^\partial_X$. For instance: 
$$\mathpzc{H}^s_\partial (E):=\mathpzc{H}^s(E)\otimes_{C_0(\Gamma_X)}C_0(\Gamma^\partial_X),$$
where the tensor product is over the $*$-homomorphism $C_0(\Gamma_X)\to C_0(\Gamma^\partial_X)$ is defined from restriction along the inclusion $\Gamma^\partial_X\subseteq \Gamma_X$. 

\begin{definition}
Let $X$ be an $\pmb{F}$-regular Carnot manifold and $E_1,E_2\to X$ two vector bundles. We say that $a\in \Sigma_H^m(X;E_1,E_2)$ is \emph{a uniformly $H$-elliptic symbol on flat orbits} if for some $s\in \R$, $\pi_{\musFlat,s}(a):\mathpzc{H}^s_\partial(E_1)\to \mathpzc{H}^{s-m}_\partial(E_2)$ is invertible as a bounded adjointable $C_0(\Gamma_X)$-linear operator.
\end{definition}

\begin{theorem}
\label{unmomnomhell}
Let $X$ be an $\pmb{F}$-regular Carnot manifold, $E_1,E_2\to X$ two vector bundles and $D\in \Psi_H^m(X;E_1,E_2)$. Then the following are equivalent:
\begin{enumerate}[i)]
\item $D$ is $H$-elliptic.
\item $\sigma_H(D)$ is uniformly $H$-elliptic symbol on flat orbits.
\item $\pi_{\musFlat,s}(\sigma_H(D)):\mathpzc{H}^s_\partial(E_1)\to \mathpzc{H}^{s-m}_\partial(E_2)$ is invertible as a bounded adjointable $C_0(\Gamma_X^\partial)$-linear operator for all $s\in \R$.
\end{enumerate}
\end{theorem}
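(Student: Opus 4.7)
The plan is to establish i) $\Rightarrow$ iii), the evident iii) $\Rightarrow$ ii), and the substantive ii) $\Rightarrow$ i); the equivalence ii) $\Leftrightarrow$ iii) also follows by a direct order-reduction argument which I would use as a technical device.

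I would first handle i) $\Rightarrow$ iii). By Lemma \ref{charHELLrockslslalem}, $H$-ellipticity of $D$ is equivalent to the existence of a two-sided symbolic inverse $b \in \Sigma^{-m}_H(X;E_2,E_1)$ of $\sigma_H(D)$ in the symbol calculus. Since $\pi_{\musFlat,s}$ respects products of symbols (with an appropriate shift of $s$), applying it shows that $\pi_{\musFlat,s-m}(b)$ is a bounded adjointable inverse of $\pi_{\musFlat,s}(\sigma_H(D))$ for every $s \in \R$, yielding iii). For the order-reduction ii) $\Rightarrow$ iii), I would observe that conjugation by the unitary intertwiners $\pi_{\musFlat}(\sigma_H(\mathfrak{D}^{(s-s')/2m})): \mathpzc{H}^{s'}_\partial(E_j) \to \mathpzc{H}^{s}_\partial(E_j)$, obtained from the complex powers of a fixed positive even-order $H$-elliptic operator $\mathfrak{D}$ (Proposition \ref{existenceofposiskdd}, Theorem \ref{cpxpoweroieoso}), carries $\pi_{\musFlat, s'}(\sigma_H(D))$ onto $\pi_{\musFlat, s}(\sigma_H(D))$ for any $s, s'$, so invertibility at one value implies invertibility at all.

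The substantive content is ii) $\Rightarrow$ i). Given invertibility of $\pi_{\musFlat,s}(\sigma_H(D))$ at the Hilbert-module level, I plan to construct a symbolic inverse $b \in \Sigma^{-m}_H(X;E_2,E_1)$ of $\sigma_H(D)$, whereupon Lemma \ref{charHELLrockslslalem} yields $H$-ellipticity. The approach is a parametrix construction. First I would reduce to the case $m=0$ by writing $\sigma_H(D) = a \cdot \sigma_H(\mathfrak{D}^{m/2m})$ with $a \in \Sigma^0_H$; it then suffices, by faithfulness of $\pi_{\musFlat}: \Sigma^0_H \to \End^*_{C_0(\Gamma_X)}(\mathpzc{H})$ (Proposition \ref{nicaoaooacjad}), to exhibit $b_0 \in \Sigma^0_H(X;E_2,E_1)$ with $\pi_{\musFlat}(b_0 a - 1) = 0 = \pi_{\musFlat}(a b_0 - 1)$. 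The construction proceeds in two steps: first, produce an approximate inverse $b_0' \in \Sigma^0_H$ whose principal cosymbol fibrewise inverts that of $a$ on each flat orbit representation; the resulting error $r = 1 - b_0' a \in \Sigma^0_H$ has $\pi_{\musFlat}(r)$ locally $C_0(\Gamma_X)$-compact, which via the homogeneity structure of symbols (Proposition \ref{strucuturaldescofsymboslalld}) and the identification $\pi_{\musFlat}: I_X \xrightarrow{\cong} C_0(\Gamma_X, \mathbb{K}(\mathcal{H}))$ from Theorem \ref{trivialdldaaddo} forces $r \in \Sigma^{-1}_H$. Second, the Neumann series $b_0 := b_0'(1 + \sum_{j \geq 1} r^j)$ converges in the asymptotically complete symbol calculus (cf. Theorem \ref{hellipticmeanshelliptic}) to yield the desired exact inverse.

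The hard part will be the first step: producing the initial symbolic approximation $b_0' \in \Sigma^0_H$ whose $\pi_{\musFlat}$-image inverts $\pi_{\musFlat}(a)$ modulo compacts. The representation $\pi_{\musFlat}$ is not surjective, so the bounded inverse $T := \pi_{\musFlat}(a)^{-1}$ does not a priori arise from a symbol. I would resolve this via the Morita equivalence picture of Section \ref{ctstructrifoeod} and Part \ref{gropodoprar}: the inverse $T$, originally defined on $\mathpzc{H}_\partial$ over $\Gamma_X^\partial$, extends by homogeneity along the dilation action to a $C_0(\Gamma_X)$-linear operator on $\mathpzc{H}$, and the bundle of flat orbit representations (Theorem \ref{trivialdldaaddo}) together with the noncommutative Fourier transform of Remark \ref{ncfotirrod} allows this operator family over $\Gamma_X$ to be realized as (the action of) an $r$-fibred distributional convolution kernel on $T_HX$ that lifts to an element of $\Sigma^0_H$. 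This step is the local-to-global analog of the Dave-Haller parametrix construction \cite[Section 5]{Dave_Haller1} adapted to the bundle-of-flat-orbits setting of the present paper.
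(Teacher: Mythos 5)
Your handling of i) $\Rightarrow$ iii), iii) $\Rightarrow$ ii), and the order-reduction by conjugating with powers of $\mathfrak{D}$ all match the paper and are fine. But your proof of ii) $\Rightarrow$ i) takes a fundamentally different route from the paper's, and that route has gaps.

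The paper's argument never constructs a symbolic inverse of $\sigma_H(D)$. After reducing to the self-adjoint, order-zero case, it observes that since $\pi_{\musFlat}(\sigma_H(D))$ is invertible in $\End_{C_0(\Gamma_X)}^*(\mathpzc{H}(E))$ and $C^*$-subalgebras are inverse-closed, the inverse automatically lies in the $C^*$-\emph{closure} of $\pi_{\musFlat}(\Sigma^0_H(X;E))$. The real content is then the density of the flat orbits in $\widehat{T_HX}$: the map $C^*(T_HX)\to\mathcal{M}(I_X)\cong\End_{C_0(\Gamma_X)}^*(\mathpzc{H}(E))$ is faithful, so the $C^*$-closure of $\Sigma^0_H$ inside $\mathcal{M}(C^*(T_HX))$ injects into the one inside $\End_{C_0(\Gamma_X)}^*(\mathpzc{H}(E))$, and invertibility transports back. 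Evaluating in each $(x,\pi)$ with $\pi$ non-trivial then gives invertibility of $\sigma^0_{(x,\pi)}(D)$, and the last clause of Lemma \ref{charHELLrockslslalem} finishes the proof. This completely sidesteps the question of whether the bounded inverse is a symbol.

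Your approach instead attempts to produce an exact two-sided inverse $b_0\in\Sigma^0_H$, and there are two genuine problems. First, the Neumann series $b_0 := b_0'\bigl(1+\sum_{j\geq 1} r^j\bigr)$ cannot be justified by asymptotic completeness: the symbol algebra is \emph{graded}, not filtered, and every $r^j$ lives in $\Sigma^0_H$, so the orders are not decreasing and there is nothing to make the sum converge. Relatedly, the assertion that $\pi_{\musFlat}(r)$ being locally compact "forces $r\in\Sigma^{-1}_H$" does not parse, since $\Sigma^{-1}_H$ is a different graded piece rather than a subspace of $\Sigma^0_H$. Second, the step you flag as "the hard part" — upgrading the bounded module operator $T=\pi_{\musFlat}(a)^{-1}$ to an element of $\Sigma^0_H$ via homogeneity plus the noncommutative Fourier transform — is left as a sketch and is far from automatic: it would amount to a spectral-invariance statement for the image of $\pi_{\musFlat}$, which you do not establish. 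The paper's inverse-closedness argument is precisely what makes that step unnecessary.
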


\begin{proof}
It is clear that iii)$\Rightarrow$ ii). It follows from Lemma \ref{charHELLrockslslalem} that i)$\Rightarrow$ iii). It remains to prove that  ii)$\Rightarrow$ i). After replacing $D$ with $\mathfrak{D}_{E_2}^{(s-m)/2m_2}D\mathfrak{D}_{E_1}^{s/2m_1}$ and doubling up the vector bundle, we can assume that $D\in \Psi_H^0(X;E)$ and that $\sigma_H(D):\mathpzc{H}_\partial(E)\to \mathpzc{H}_\partial(E)$ is invertible as a bounded adjointable $C_0(\Gamma_X)$-linear operator. It follows from Proposition \ref{nicaoaooacjad} that $\sigma_H(D)$ admits an inverse in the $C^*$-algebra closure of $\Sigma^0_H(X;E)$ inside $\mathrm{End}_{C_0(\Gamma_X)}^*(\mathpzc{H}(E))$. Since the flat orbits are dense in the representation space, i.e. $C^*(T_HX)\to \mathcal{M}(I_X)$ is faithful, the isomorphism $\mathrm{End}_{C_0(\Gamma_X)}^*(\mathpzc{H}(E))\cong \mathcal{M}(I_X)$ ensures that $\sigma_{(x,\pi)}^0(D)$ is invertible for all $x\in X$ and all non-trivial irreducible representations. As such, $D$ is $H$-elliptic by Lemma \ref{charHELLrockslslalem}.
\end{proof}

Let us give an example of how to apply Theorem \ref{unmomnomhell}. Recall the setting and notation of Theorem \ref{charhellfofodo}.

\begin{proposition}
\label{charhellfofodotwo}
Let $X$ be a compact regular polycontact manifold. Let $H=T^{-1}X$ denote the polycontact structure and assume it is equipped with a Riemannian metric $g$. Take $D_\gamma\in \mathcal{DO}_H^{2}(X;E)$ as in Equation \eqref{secondoroddbbamddevep}. Let $\omega$ denote the Kirillov form and take  $s_k(g_\omega)$ as in \eqref{skgoadoma}. Consider the family of endomorphisms
$$\gamma_k:=s_k(g_\omega)+\frac{\mathrm{Tr}(|\omega|)}{2}+\gamma:p_\Gamma^*H^{\otimes_\C^{\rm sym}k}\otimes E\to p_\Gamma^*H_X^{\otimes_\C^{\rm sym}k}\otimes E.$$

Then $D_\gamma$ is $H$-elliptic if and only if each $\gamma_k$ is an automorphism.
\end{proposition}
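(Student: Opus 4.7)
The strategy is to reduce the statement to a direct application of Theorem \ref{charhellfofodo}. First I would record the key consequence of the polycontact hypothesis: by definition of a polycontact structure, for every $x \in X$ and every $\xi \in H_x^\perp \setminus \{0\}$ the antisymmetric form $\omega_\xi = \xi \circ \mathcal{L}$ is non-degenerate on $H_x$, i.e.\ has full rank equal to $\mathrm{rk}(H) = n_1$. Consequently the first bullet of Theorem \ref{charhellfofodo} (degenerate $\omega_\xi$) is vacuous on a polycontact manifold, and only the second bullet (full-rank $\omega_\xi$) contributes.

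Next I would apply Theorem \ref{charhellfofodo} directly. Under the polycontact assumption, it asserts that $D_\gamma$ is $H$-elliptic if and only if for every $(x,\xi)$ with $\xi \in H_x^\perp \setminus \{0\}$ and every $k \in \mathbb{N}$, the endomorphism
\[
s_k(g_\omega(\xi)) + \tfrac{1}{2}\mathrm{Tr}(|\omega_\xi|) + \gamma(\xi) : H_x^{\otimes_\C^{\rm sym} k} \otimes E_x \to H_x^{\otimes_\C^{\rm sym} k} \otimes E_x
\]
is invertible. I would then identify this pointwise invertibility condition with the bundle-level statement. As observed in Example \ref{polycontactexama}, for a polycontact manifold one has $\Gamma_X = H^\perp \setminus X$, and under the Riemannian metric the vector bundle $\Xi_X \to \Gamma_X$ gets identified with $p_\Gamma^* H$. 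The fibre of $p_\Gamma^* H^{\otimes_\C^{\rm sym} k} \otimes E$ over a point $(x,\xi) \in \Gamma_X$ is therefore $H_x^{\otimes_\C^{\rm sym} k} \otimes E_x$, and by the very definition of $\gamma_k$ its value at this fibre is precisely the displayed endomorphism above.

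Finally I would note that for a smooth endomorphism of a finite rank smooth vector bundle over $\Gamma_X$, fibrewise invertibility is equivalent to being a bundle automorphism, since the inverse in local trivialisations is obtained from Cramer's rule and therefore depends smoothly on the base as soon as the determinant is pointwise non-zero. Combining these observations yields the equivalence: $D_\gamma$ is $H$-elliptic iff every $\gamma_k$ is an automorphism of $p_\Gamma^* H^{\otimes_\C^{\rm sym} k} \otimes E$.

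\textbf{Main obstacle.} There is essentially no hard analytic content beyond Theorem \ref{charhellfofodo}; the only points requiring care are the bookkeeping that matches the tensor power index $k$ in Proposition \ref{charhellfofodotwo} with the harmonic oscillator eigenspace index appearing in the proof of Theorem \ref{charhellfofodo}, and the identification of $\Xi_X$ with $p_\Gamma^* H$ under the chosen metric so that the endomorphisms $s_k(g_\omega(\xi)) + \tfrac{1}{2}\mathrm{Tr}(|\omega_\xi|) + \gamma(\xi)$ globalise to the bundle endomorphism $\gamma_k$.
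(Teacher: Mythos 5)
Your proposal is correct and takes a genuinely different route from the paper. You apply Theorem~\ref{charhellfofodo} directly: on a polycontact manifold $\omega_\xi$ is non-degenerate on $H_x$ for every nonzero $\xi\in H_x^\perp$, so the first bullet of that theorem is vacuous and the second bullet gives exactly the pointwise invertibility of the $\gamma_k$, which you then observe is equivalent to each $\gamma_k$ being a bundle automorphism. The paper instead invokes Theorem~\ref{unmomnomhell} (the characterization of $H$-ellipticity via \emph{uniform} invertibility of $\pi_{\musFlat,s}(\sigma_H(D))$ on the Hilbert $C^*$-module of flat-orbit representations), reduces to uniform invertibility of $\sigma_H^2(D_\gamma)\sigma_H^2(\Delta_H)^{-1}$, estimates the norm of $s_k(g_\omega)+\tfrac12\mathrm{Tr}(|\omega|)$ as $\asymp (1+k)|\xi|$, and concludes by noting that the required norm bound $\|\gamma_k^{-1}\|\lesssim(1+k)^{-1}$ is automatic from compactness of $S(H^\perp)$ and the dominance of the harmonic-oscillator term for large $k$. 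Your route is shorter and more elementary, because Theorem~\ref{charhellfofodo} already encodes the fact that pointwise invertibility of the represented symbol in all relevant representations suffices (the growth control in $k$ having been absorbed into that theorem's proof via Shubin-calculus regularity). What the paper's route buys is an illustration of how Theorem~\ref{unmomnomhell} is used in practice and a self-contained verification of the uniform estimate, which is the point the authors wanted to exhibit at that spot in the text; as a proof of the proposition per se, however, your argument is complete and correct, provided one accepts Theorem~\ref{charhellfofodo} as a black box.
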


\begin{proof}
By Theorem \ref{unmomnomhell}, it suffices to prove uniform invertibility of $\sigma_H^2(D_\gamma)\sigma^2_H(\Delta_H)^{-1}$. Using the computations in the proof of Theorem \ref{charhellfofodo}, we see that $\sigma^2_H(\Delta_H)$ acts on $\mathpzc{F}_X$ as $s_k(g_\omega)+\frac{\mathrm{Tr}(|\omega|)}{2}$. A rather naive estimate shows that 
$$C_1(1+k)|\xi|\leq \left\|s_k(g_\omega)+\frac{\mathrm{Tr}(|\omega|)}{2}\right\|_{\End(p_\Gamma^*H_\xi^{\otimes_\C^{\rm sym}k})}\leq C_2(1+k)|\xi|.$$
We conclude that uniform invertibility of $\sigma_H^2(D_\gamma)\sigma^2_H(\Delta_H)^{-1}$ is equivalent to each $\gamma_k$ being an automorphism and there being a norm estimate $\|\gamma_k^{-1}\|_{L^\infty}\lesssim (1+k)^{-1}$. However, by compactness of $S(H^\perp)$ the norm estimate is immediate.
\end{proof}

\section{$K$-theoretical invariants of $H$-elliptic operators} 
\label{sec:ktheoomon}

In this section we shall define a number of different $K$-theoretical invariants associated with an $H$-elliptic operator. We shall rather freely use the relation between $K$-theory, $KK$-theory and Fredholm morphisms of Hilbert $C^*$-modules,  \cite{cumero,GM, higsonprimer, knudjen,kaspnov,meslandbeast,meslandrencompt}, and (higher order, unbounded) Kasparov modules. These invariants, and their internal relations, will be put to use below in Chapter \ref{dualityapapadoda}. For a hermitean vector bundle $E\to X$, we introduce the notation $C^*(T_HX;E)$ for the closure of $C^\infty_c(T_HX;r^*E)$ as a $C^*(T_HX)$-module, note that $C^*(T_HX;E)=\pmb{\mathpzc{E}}(X;E)|_{t=0}$.

\begin{definition}
\label{definilalald}
Let $X$ be a compact Carnot manifold with a fixed volume density, $E_1,E_2\to X$ hermitean vector bundles and $D\in \Psi^m_H(X;E_1,E_2)$ an $H$-elliptic operator with $m>0$. 
\begin{itemize}
\item The even $K$-homology class $[D]\in KK_0(C(X),\C)$ of $D$ is defined as the class of the even higher order spectral triple $(L^2(X;E_1\oplus E_2), \tilde{D})$, 
where 
$$\tilde{D}:=\begin{pmatrix} 0& D\\ D^*&0\end{pmatrix}.$$
\item The even $K$-theory class $[\sigma_H(D)]\in KK_0(\C, C^*(T_HX))$ is defined as the class of the even higher order unbounded Kasparov module $(C^*(T_HX;E_1\oplus E_2), a)$ where $a\in \tilde{\Sigma}^m_H(X;E_1\oplus E_2)$ is any lift of $\sigma_H(\tilde{D})$.
\item The even $KK$-class $[\mathbb{D}]\in KK_0^{[0,\infty)}(C_0([0,\infty)\times X), C^*(\mathbb{T}_HX))$ is defined as the class of the even higher order unbounded Kasparov module $(\pmb{\mathpzc{E}}(X;E_1\oplus E_2), \tilde{\mathbb{D}})$ for some $\mathbb{D}\in \pmb{\Psi}^m_H(X;E_1,E_2)$ lifting $D$.
\end{itemize}

If additionally $E:=E_1=E_2$ and $D=D^*$, 
\begin{itemize}
\item The odd $K$-homology class $[D]\in KK_1(C(X),\C)$ of $D$ is defined as the class of the odd higher order spectral triple $(L^2(X), D)$.
\item The odd $K$-theory class $[\sigma_H(D)]\in KK_1(\C, C^*(T_HX))$ is defined as the class of the odd higher order unbounded Kasparov module $(C^*(T_HX;E), a)$ where $a\in \tilde{\Sigma}^m_H(X;E)$ is any lift of $\sigma_H(D)$.

\item The odd $KK$-class $[\mathbb{D}]\in KK_1^{[0,\infty)}(C_0([0,\infty)\times X), C^*(\mathbb{T}_HX))$ is defined as the class of the odd higher order unbounded Kasparov module $(\pmb{\mathpzc{E}}(X;E), \mathbb{D})$ for some self-adjoint $\mathbb{D}\in \pmb{\Psi}^m_H(X;E)$ lifting $D$.
\end{itemize}
\end{definition}

\begin{remark}
The classes in Definition \ref{definilalald} are well defined due to Corollary \ref{clalslasmadadado}. Using Proposition \ref{embeddingboldpsi} and Lemma \ref{actionofbolddd}, it is readily verified that the classes  $[D]\in KK_*(C(X),\C)$ and $[\mathbb{D}]\in KK_*^{[0,\infty)}(C_0([0,\infty)\times X), C^*(\mathbb{T}_HX))$ are determined by the principal symbol $\sigma_H(D)$, i.e. $[D]=[D']$ and $[\mathbb{D}]=[\mathbb{D}']$ as soon as $\sigma_H(D)=\sigma_H(D')$. 
\end{remark}

\begin{remark}
For an $H$-elliptic operator $D\in \Psi^m_H(X;E_1,E_2)$ for $m\leq0$, the classes $[D]$, $[\sigma_H(D)]$ and $[\mathbb{D}]$ are defined similarly after increasing the order to being positive by conjugating by suitable powers of an invertible positive Carnot operator. It is readily verified that the choice of invertible positive Carnot operator does not affect the class.
\end{remark}

\begin{proposition}
\label{properldaldlaadlda}
Let $X$ be a compact Carnot manifold with a fixed volume density, and $D$ an $H$-elliptic operator in the Carnot calculus. The classes $[D]$, $[\sigma_H(D)]$ and $[\mathbb{D}]$ are related by
$$[D]=[\mathbb{D}]\otimes [\mathrm{ev}_{t=1}],\quad\mbox{and}\quad[\sigma_H(D)]=[i^*]\otimes [\mathbb{D}]\otimes [\mathrm{ev}_{t=0}],$$
where $i:\C\to C(X)$ is the inclusion of the unit.
\end{proposition}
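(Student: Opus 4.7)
The plan is to compute both Kasparov products explicitly by carrying out the interior tensor product of the underlying Hilbert modules and tracking the localization of the operator $\tilde{\mathbb{D}}$ at the relevant fibre of $[0,\infty)$. Both identities ultimately reduce to the two identifications
\[
\pmb{\mathpzc{E}}(X;E) \otimes_{C^*(\mathbb{T}_H X)} L^2(X) \cong L^2(X;E), \qquad \pmb{\mathpzc{E}}(X;E) \otimes_{C^*(\mathbb{T}_H X)} C^*(T_H X) \cong C^*(T_H X;E),
\]
recorded in Section \ref{secondnaaoaoac} as the localization of $\pmb{\mathpzc{E}}(X;E)$ at $t=1$ (via the regular representation of the pair groupoid) and at $t=0$, respectively.

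For the first identity, the class $[\mathrm{ev}_{t=1}]$ is represented by the $(C^*(\mathbb{T}_H X),\C)$-cycle $(L^2(X),0)$ in which $C^*(\mathbb{T}_H X)$ acts through the surjection $\mathrm{ev}_{t=1}:C^*(\mathbb{T}_H X)\to C^*(X\times X)\cong \mathbb{K}(L^2(X))$. Forming the interior tensor product and using the first identification above, the Kasparov product $[\mathbb{D}]\otimes_{C^*(\mathbb{T}_H X)}[\mathrm{ev}_{t=1}]$ is represented by $(L^2(X;E_1\oplus E_2), \mathrm{ev}_{t=1}(\tilde{\mathbb{D}}))=(L^2(X;E_1\oplus E_2),\tilde{D})$, which is exactly the cycle defining $[D]$ (viewed as a class on $C_0([0,\infty)\times X)$ through the evaluation $\mathrm{ev}_{t=1}:C_0([0,\infty)\times X)\to C(X)$).

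The second identity is treated in parallel. The product $[\mathbb{D}]\otimes [\mathrm{ev}_{t=0}]$ is represented by the interior tensor product $(\pmb{\mathpzc{E}}(X;E_1\oplus E_2)\otimes_{C^*(\mathbb{T}_H X)} C^*(T_H X), \mathrm{ev}_{t=0}(\tilde{\mathbb{D}}))$, which by the second identification above is $(C^*(T_H X;E_1\oplus E_2),\mathrm{ev}_{t=0}(\tilde{\mathbb{D}}))$. The remaining $C_0([0,\infty)\times X)$-action factors through evaluation at $t=0$, giving an action of $C(X)$; the further composition with $[i^*]$ restricts this $C(X)$-action along the unit inclusion $i:\C\to C(X)$, producing the $(\C,C^*(T_H X))$-cycle $(C^*(T_H X;E_1\oplus E_2),\mathrm{ev}_{t=0}(\tilde{\mathbb{D}}))$. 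Since $\mathrm{ev}_{t=0}(\tilde{\mathbb{D}})$ is by construction a lift of $\sigma_H(\tilde{D})$ in $\tilde{\Sigma}^m_H(X;E_1\oplus E_2)$, this is precisely the cycle representing $[\sigma_H(D)]$ in Definition \ref{definilalald}.

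The main technical obstacle is justifying that these Kasparov products are computed correctly within the framework of \emph{higher order} unbounded Kasparov modules. One must verify that the connection condition and the compactness of resolvents survive the interior tensor product and the localization at $t=0$ and $t=1$. These reductions follow from the local $C^*(\mathbb{T}_H X)$-compactness established in Corollary \ref{clalslasmadadado} together with the regularity and essential self-adjointness of $\tilde{\mathbb{D}}$ proved in Proposition \ref{regualdldlalaoad}: the localization maps correspond to quotienting $\pmb{\mathpzc{E}}(X;E_1\oplus E_2)$ by $C_0[0,\infty)$-ideals that commute with the functional calculus of $\tilde{\mathbb{D}}$, so the bounded transform $\tilde{\mathbb{F}}_{\mathbb{D}}$ localizes to the bounded transforms of $\tilde{D}$ and of $\mathrm{ev}_{t=0}(\tilde{\mathbb{D}})$, respectively, up to errors that are negligible in $KK$-theory.
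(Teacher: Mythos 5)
Your proposal is correct and follows the same route the paper sketches: the paper's proof is simply "follows from the construction, cf.\ Corollary \ref{clalslasmadadado}" and "follows from the fact that $\mathrm{ev}_{t=0}(\mathbb{D})$ is a lift of $\sigma_H(D)$," and what you have done is to spell out the interior tensor products, the localization identifications of $\pmb{\mathpzc{E}}(X;E)$ at $t=1$ and $t=0$, and the role of Corollary \ref{clalslasmadadado} and Proposition \ref{regualdldlalaoad} in keeping the higher-order unbounded Kasparov product under control.
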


\begin{proof}
The identity $[D]=[\mathbb{D}]\otimes [\mathrm{ev}_{t=1}]$ follows from the construction, cf. Corollary \ref{clalslasmadadado}.  The identity $[\sigma_H(D)]=[i^*]\otimes [\mathbb{D}]\otimes [\mathrm{ev}_{t=0}]$ follows from that $\mathrm{ev}_{t=0}(\mathbb{D})\in \tilde{\Sigma}^m_H$ is a lift of $\sigma_H(D)$.
\end{proof}

\begin{remark}
Let $X$ be a compact Carnot manifold with a fixed volume density, and $F\in \Psi^0_H(X;E)$ a self-adjoint $H$-elliptic operator in the Carnot calculus which is unitary up to $\Psi^{-1}_H(X;E)$. Following Proposition \ref{strucuturaldescofsymboslalld}, we have a short exact sequence of Frechet $*$-algebras
$$0\to C^\infty_c(T_HX,\End(E)\otimes |\Lambda|)\to \tilde{\Sigma}^0_{H}(X;E)\xrightarrow{\rho_m} \Sigma^0_{H}(X;E)\to 0.$$
Compare to Equation \eqref{shssoosdods}. Since $\sigma_H(F)$ is a self-adjoint unitary, the element 
$$p_F:=\frac{1+\sigma_H(F)}{2}\in \Sigma^0_{H}(X;E),$$
is a projection and defines an element $[p_F]\in K_0(\Sigma^0_{H}(X;E))$. Under the isomorphism $K_1(C^*(T_HX))\cong KK_1(\C,C^*(T_HX))$ we have the identity
$$[\sigma_H(F)]=j_*\partial [p_F]\in K_1(C^*(T_HX)),$$
where $\partial:K_0(\Sigma^0_{H}(X;E))\to K_1(C^\infty_c(T_HX,\End(E)\otimes |\Lambda|)))$ is the boundary mapping and $j:C^\infty_c(T_HX,\End(E)\otimes |\Lambda|)\to C^*(T_HX;\End(E))$ is the inclusion. 

Similar statements can be made for the even $K$-theory class of an $H$-elliptic operator $[\sigma_H(F)]\in K_0(C^*(T_HX))$ if  $F\in \Psi^0_H(X;E_1,E_2)$ is unitary up to $\Psi^{-1}_H$. More directly (and generally), following \cite{baumvanerp}, if $F\in \Psi^m_H(X;E_1,E_2)$ is an $H$-elliptic operator and $a\in \tilde{\Sigma}_H^m(X;E_1,E_2)$ lifts $\sigma_H(D)$ then $(C^\infty_c(T_HX;E_1),C^\infty_c(T_HX,E_2),a)$ is a cycle for algebraic relative $K$-theory class in $K_0^{\rm alg}(\tilde{\Sigma}_H^m(X),C^\infty_c(T_HX)$. Using excision in algebraic $K$-theory, we can identify 
$$K_0^{\rm alg}(\tilde{\Sigma}_H^m(X),C^\infty_c(T_HX)\cong K_0(C^\infty_c(T_HX)),$$ 
and under this identification
$$[\sigma_H(F)]=j_* [(C^\infty_c(T_HX;E_1),C^\infty_c(T_HX,E_2),a)]\in K_0(C^*(T_HX)).$$
\end{remark}

\subsection{Localization and approximation of the symbol class}

Let us specialize to the case that $X$ is a regular Carnot manifold admitting flat orbits. We follow the notation of the Subsection \ref{charpadldaladoeo}. We are interested in better describing the class $[\sigma_H(D)]$ and in particular in finding pre-images under the surjection $K_*(I_X)\to K_*(C^*(T_HX))$. We write 
$$\pmb{\mathpzc{E}}^s_{\rm gr}(X;\pmb{E})_{t=0}:=\pmb{\mathpzc{E}}^s_{\rm gr}(X;\pmb{E})\otimes _{\mathrm{ev}_{t=0}}C^*(T_HX),$$
where $\pmb{\mathpzc{E}}^s_{\rm gr}(X;\pmb{E})$ is as in Section \ref{secondnaaoaoac}. For $B$-Hilbert $C^*$-modules $\mathpzc{E}_1$ and $\mathpzc{E}_2$, write 
$$\mathcal{Q}_B(\mathpzc{E}_1,\mathpzc{E}_2):=\Hom_B^*(\mathpzc{E}_1,\mathpzc{E}_2)/\mathbb{K}_B(\mathpzc{E}_1,\mathpzc{E}_2),$$
for the corona algebra space. We also write 
\begin{equation}
\label{gammalambda}
\Gamma_{\geq \lambda_0,X}:=\cup_{\lambda\geq \lambda_0}\delta_\lambda(\Gamma^\partial_X),
\end{equation}
for $\lambda_0>0$. The set $\Gamma_{\geq \lambda_0,X}$ is closed both in $\Gamma_X$ and in $\widehat{T_HX}$.

\begin{lemma}
\label{tehcnilmultplleme}
Let $X$ be a compact $\pmb{F}$-regular Carnot manifold. Let $\pmb{E}_1,\pmb{E}_2\to X$ be graded hermitean vector bundles. Assume that $\tilde{a}\in \tilde{\Sigma}^m_{H,{\rm gr}}(X;\pmb{E}_1,\pmb{E}_2)$ is invertible modulo $C^\infty_c(T_HX;\Hom(E)\otimes |\Lambda|)$. Let 
\begin{align*}
q:\Hom_{C^*(T_HX)}^*(\pmb{\mathpzc{E}}^s_{\rm gr}&(X;\pmb{E}_1)_{t=0},\pmb{\mathpzc{E}}^{s-m}_{\rm gr}(X;\pmb{E}_2)_{t=0})\to \\
&\mathcal{Q}_{C^*(T_HX)}^*(\pmb{\mathpzc{E}}^s_{\rm gr}(X;\pmb{E}_1)_{t=0},\pmb{\mathpzc{E}}^{s-m}_{\rm gr}(X;\pmb{E}_2)_{t=0}),
\end{align*}
denote the quotient map.

Then, up to stabilizing by a trivial, graded vector bundle, there exists a self-adjoint element 
$$\hat{\sigma}\in \Hom_{C_0(\Gamma_X)}^*(\pmb{\mathpzc{H}}^s_{\rm gr}(\pmb{E}_1),\pmb{\mathpzc{H}}^{s-m}_{\rm gr}(\pmb{E}_2))$$
 such that 
\begin{enumerate}
\item $\hat{\sigma}$ has an inverse $\hat{\sigma}^{-1}\in \End_{C_0(\Gamma_X)}^*(\pmb{\mathpzc{H}}^{s-m}_{\rm gr}(\pmb{E}_2),\pmb{\mathpzc{H}}^{s}_{\rm gr}(\pmb{E}_1))$ modulo $C_0(\Gamma_X)$-compact operators. 
\item $\hat{\sigma}$ (and similarly for $\hat{\sigma}^{-1}$) is in the image of the injection
\begin{align*}
\Hom_{C^*(T_HX)}^*(\pmb{\mathpzc{E}}^s_{\rm gr} (X;\pmb{E}_1)_{t=0}&,\pmb{\mathpzc{E}}^{s-m}_{\rm gr}(X;\pmb{E}_2)_{t=0})\hookrightarrow\\
&\hookrightarrow\Hom_{C_0(\Gamma_X)}^*(\pmb{\mathpzc{H}}^s_{\rm gr}(\pmb{E}_1),\pmb{\mathpzc{H}}^{s-m}_{\rm gr}(\pmb{E}_2))
\end{align*}
(cf. Proposition \ref{somrelareonon}) and there is a norm-continuous homotopy
$$q(\tilde{a})\sim_h q(\hat{\sigma}),$$
in the set of invertibles in 
$$\mathcal{Q}_{C^*(T_HX)}^*(\pmb{\mathpzc{E}}^s_{\rm gr}(X;\pmb{E}_1)_{t=0},\pmb{\mathpzc{E}}^{s-m}_{\rm gr}(X;\pmb{E}_2)_{t=0})$$
\item For $\lambda_0>0$, then as operators on $\pmb{\mathpzc{H}}_{\geq \lambda_0,\partial}^s(\pmb{E}_1):=\pmb{\mathpzc{H}}^s_{\rm gr}(\pmb{E}_1)\otimes_{C_0(\Gamma_X)}C_0(\Gamma_{\geq \lambda_0,X})$, 
$$\hat{\sigma}=\rho_m(a),$$ 
and in particular $\hat{\sigma}$ is as an operator on $\pmb{\mathpzc{H}}^s_{\rm gr}(\pmb{E}_1)$ homogeneous of degree $m$ up to locally $C_0(\Gamma_X)$-compact operators.
\end{enumerate}
In particular, the index class $[\tilde{a}]\in K_0(C^*(T_HX))$, defined from the Kasparov cycle $(\pmb{\mathpzc{E}}^s_{\rm gr}(X;\pmb{E}_1)_{t=0},\pmb{\mathpzc{E}}^{s-m}_{\rm gr}(X;\pmb{E}_2)_{t=0}, \tilde{a})$, is the image of the index class $[\hat{\sigma}]\in K_0(I_X)$, defined from the Kasparov cycle $(\pmb{\mathpzc{H}}^s_{\rm gr}(\pmb{E}_1),\pmb{\mathpzc{H}}^{s-m}_{\rm gr}(\pmb{E}_2),\hat{\sigma})$, under the surjection 
$$K_0(C_0(\Gamma_X))\cong K_0(I_X)\to K_0(C^*(T_HX)),$$
where the first map is defined from the Morita equivalence $\mathpzc{H}$.
\end{lemma}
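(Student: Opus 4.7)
The approach is to represent $\tilde a$ on the flat-orbit Hilbert module via $\tilde\pi_{\musFlat,s}$ and then interpolate at infinity in $\Gamma_X$ with the homogeneous symbol representation $\pi_{\musFlat,s}(\rho_m(\tilde a))$, producing an operator that is invertible modulo $C_0(\Gamma_X)$-compacts and that realizes an explicit preimage under $K_0(C_0(\Gamma_X))\cong K_0(I_X)\to K_0(C^*(T_HX))$. By Proposition \ref{somrelareonon}, the localization of $\pmb{\mathpzc{E}}^s_{\rm gr}(X;\pmb E)_{t=0}$ along $C_0(\Gamma_X)\to\mathcal{ZM}(C^*(T_HX))$ is exactly $\pmb{\mathpzc H}^s_{\rm gr}(\pmb E)$, so any adjointable $C^*(T_HX)$-linear map between such modules descends to an adjointable $C_0(\Gamma_X)$-linear map between the flat-orbit modules, and in particular $\tilde\pi_{\musFlat,s}(\tilde a)$ is obtained in this way from the multiplier action of $\tilde a$.

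First I would observe that the assumed invertibility of $\tilde a$ modulo $C_c^\infty(T_HX;\Hom(\pmb E)\otimes|\Lambda|)$ forces $\rho_m(\tilde a)\in\Sigma^m_{H,{\rm gr}}(X;\pmb E_1,\pmb E_2)$ to be invertible, so by Lemma \ref{charHELLrockslslalem} the graded Rockland condition holds, and Theorem \ref{unmomnomhell} yields that $\pi_{\musFlat,s}(\rho_m(\tilde a))\colon\pmb{\mathpzc H}^s_{\rm gr}(\pmb E_1)\to\pmb{\mathpzc H}^{s-m}_{\rm gr}(\pmb E_2)$ is uniformly invertible on $\Gamma_X^\partial$ and hence on each $\Gamma_{\ge\lambda_0,X}$. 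Moreover, any representative in $\tilde\Sigma^m_{H,{\rm gr}}$ of $\rho_m(\tilde a)$ differs from $\tilde a$ by an element of $C_c^\infty(T_HX;\Hom(\pmb E)\otimes|\Lambda|)$, and by Propositions \ref{nicaoaooacjad} and \ref{nicaoaooacjadtilde} the representation of such an element on $\mathpzc H$ is locally $C_0(\Gamma_X)$-compact; consequently $\tilde\pi_{\musFlat,s}(\tilde a)-\pi_{\musFlat,s}(\rho_m(\tilde a))$ is locally $C_0(\Gamma_X)$-compact.

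Next, I would fix $\lambda_0>0$ and a smooth cut-off $\chi\colon\Gamma_X\to[0,1]$ vanishing on a compact neighborhood of $X\subset\Gamma_X$ and equal to $1$ on $\Gamma_{\ge\lambda_0,X}$, and set
$$\hat\sigma_0 := (1-\chi)\,\tilde\pi_{\musFlat,s}(\tilde a)+\chi\,\pi_{\musFlat,s}(\rho_m(\tilde a)),$$
which satisfies property~(3) by construction and differs from $\tilde\pi_{\musFlat,s}(\tilde a)$ by a locally compact term. To obtain a self-adjoint representative I would, after stabilizing by a trivial graded vector bundle to align source and target gradings, pass to the standard off-diagonal double $\hat\sigma:=\bigl(\begin{smallmatrix}0 & \hat\sigma_0^*\\ \hat\sigma_0 & 0\end{smallmatrix}\bigr)$. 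A parametrix modulo $C_0(\Gamma_X)$-compact operators is then assembled from the two local inverses: on $\{\chi<1\}$, which has compact closure in $\Gamma_X$, the inverse of $\tilde a$ modulo $C_c^\infty$ gives an inverse modulo $C_0(\Gamma_X)$-compacts; on $\Gamma_{\ge\lambda_0,X}$, uniform invertibility of $\pi_{\musFlat,s}(\rho_m(\tilde a))$ on the transversal $\Gamma_X^\partial$ does; and a smooth partition of unity glues them, yielding property~(1). Property~(2) then follows because the straight-line interpolation $t\mapsto (1-t)\,\tilde\pi_{\musFlat,s}(\tilde a)+t\,\hat\sigma_0$ stays invertible in the corona of $\Hom_{C^*(T_HX)}^*(\pmb{\mathpzc E}^s_{\rm gr}(X;\pmb E_1)_{t=0},\pmb{\mathpzc E}^{s-m}_{\rm gr}(X;\pmb E_2)_{t=0})$ throughout, the Morita equivalence furnished by $\mathpzc H$ translating local $C_0(\Gamma_X)$-compactness back to $C^*(T_HX)$-compactness.

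The principal obstacle will be verifying compactness of the remainder in the full $C_0(\Gamma_X)$-sense rather than only locally; this is precisely what forces the cut-off construction above and rests on the uniform invertibility on $\Gamma_X^\partial$ supplied by Theorem \ref{unmomnomhell}, with the Rockland condition providing both a uniform lower bound at infinity and the bounded parametrix in $\Sigma^{-m}_{H,{\rm gr}}$ through Lemma \ref{charHELLrockslslalem}. The final $K$-theoretic identification $[\tilde a]=[\hat\sigma]\otimes_{C_0(\Gamma_X)}[\mathpzc H]$ then reduces to functoriality of the Kasparov product under the Morita equivalence recorded in the second half of Proposition \ref{somrelareonon}, combined with the corona-level homotopy of property~(2) and the surjection of Theorem \ref{onsurjinteroffm} (which a priori guarantees that such a lift of $[\tilde a]$ exists abstractly).
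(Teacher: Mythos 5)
Your proposal attempts an explicit construction of $\hat\sigma$ via a cut-off interpolation between $\tilde\pi_{\musFlat,s}(\tilde a)$ and $\pi_{\musFlat,s}(\rho_m(\tilde a))$, whereas the paper proceeds purely abstractly: it order-reduces to $s=m=0$ with trivial line bundle coefficients, reads off the identifications $\mathbb{K}_{C^*(T_HX)}(\mathpzc{E}(X;\C)_{t=0})=C^*(T_HX)$ and $\mathbb{K}_{C_0(\Gamma_X)}(\mathpzc{H})=I_X$, and then runs a diagram chase on the commuting exact rows $0\to C^*(T_HX)\to\mathcal{M}(C^*(T_HX))\to\mathcal{Q}(C^*(T_HX))\to 0$ and $0\to C^*(T_HX)/I_X\to\mathcal{M}(C^*(T_HX))/I_X\to\mathcal{Q}(C^*(T_HX))\to 0$. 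The surjectivity of $j_*\colon K_*(I_X)\to K_*(C^*(T_HX))$ from Theorem \ref{onsurjinteroffm} forces $K_*(C^*(T_HX))\to K_*(C^*(T_HX)/I_X)$ to vanish, whence $K_1(\mathcal{M}(C^*(T_HX))/I_X)\to K_1(\mathcal{Q}(C^*(T_HX)))$ is onto. This directly produces $\tilde\sigma\in\mathcal{M}(C^*(T_HX))$, invertible modulo $I_X$ and homotopic to $\tilde a$ in the invertibles of the corona, and $\hat\sigma$ is its image in $\mathcal{M}(I_X)$. All three properties in the lemma then fall out automatically — in particular property (2) is immediate because the lift is constructed \emph{inside} $\mathcal{M}(C^*(T_HX))$ from the start, and property (3) is arranged by the choice of $\tilde\sigma$ agreeing with $\tilde a$ on $\Gamma_{\geq\lambda_0,X}$, which is legitimate because $\tilde a$ is invertible modulo $C^*(T_HX)$, hence acts invertibly over the closed subset $\Gamma_{\geq\lambda_0,X}$ for $\lambda_0$ large.

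There are two genuine gaps in your construction. First, you assert that $\{\chi<1\}$ has compact closure in $\Gamma_X$; this is false in general, because $\Gamma_X\setminus\Gamma_{\geq\lambda_0,X}$ is the product region $\Gamma_X^\partial\times(0,\lambda_0)$ under the dilation splitting and $\Gamma_X^\partial$ need not be compact (it fails already for products of Heisenberg groups, where $\Gamma_\partial$ is a non-compact hypersurface $\{\xi_1\cdots\xi_k=\pm 1\}$). Consequently, the parametrix you try to assemble ``on $\{\chi<1\}$'' is not controlled by compactness, and property (1) does not follow from the argument as written; this is not merely a cosmetic issue, since the invertibility of $\tilde a$ modulo $C^\infty_c$ is a statement about large $\lambda$ and says nothing about uniform invertibility on the non-compact small-$\lambda$ region. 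Second, property (2) requires $\hat\sigma$ (and its parametrix) to lie in the image of the injection from $\Hom_{C^*(T_HX)}^*$, i.e.\ to be realizable as a $C^*(T_HX)$-linear multiplier, but your $\hat\sigma_0$ involves both $\chi\in C_b(\Gamma_X)$ (which does not a priori extend to a central multiplier of $C^*(T_HX)$ since $C^*(T_HX)$ is not a $C_0(\Gamma_X)$-algebra) and $\pi_{\musFlat,s}(\rho_m(\tilde a))$ (the action of a homogeneous symbol, which by the paper's own remark after Proposition \ref{nicaoaooacjadtilde} is not compatible with $\tilde\pi_{\musFlat,s}$ and need not come from a multiplier of $C^*(T_HX)$). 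Your appeal to the Morita equivalence to ``translate back'' does not address this, because the Morita equivalence identifies $K$-theory groups but does not establish that a given operator on $\mathpzc{H}$ lifts to $\mathcal{M}(C^*(T_HX))$. The paper sidesteps both problems by producing the lift abstractly in $\mathcal{M}(C^*(T_HX))$ before restricting to the flat orbits.
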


\begin{proof}
After order reduction, we can assume that $s=m=0$ and that all bundles are trivially graded. To avoid notational complications we only consider the case that the vector bundles are the trivial line bundle, the general case is treated similarly but with lengthier notation. Note that under these reductions, 
\begin{align*}
\End_{C^*(T_HX)}^*(\mathpzc{E}(X;\C)_{t=0})=&\mathcal{M}( C^*(T_HX)),\\
\mathbb{K}_{C^*(T_HX)}(\mathpzc{E}(X;\C)_{t=0})=&C^*(T_HX),\\
\End_{C_0(\Gamma_X)}^*(\mathpzc{H})=&\mathcal{M}(I_X),\\
\mathbb{K}_{C_0(\Gamma_X)}(\mathpzc{H})=&I_X.
\end{align*}
Consider the commuting diagram with exact rows
\tiny
\[
\begin{CD}
0  @>>> C^*(T_HX)@>>>\mathcal{M}( C^*(T_HX))@>>>   \mathcal{Q}( C^*(T_HX)@>>>0 \\
@. @VVV  @VVV @ VV=V@.\\
0  @>>>  C^*(T_HX)/I_X @>>>\mathcal{M}( C^*(T_HX))/I_X @> >> \mathcal{Q}( C^*(T_HX))@>>>0 \\
\end{CD}. \]
\normalsize
Since $j_*:K_*(I_X)\to K_*(C^*(T_HX))$ is surjective the map $K_*(C^*(T_HX))\to K_*(C^*(T_HX)/I_X)$, induced by the quotient map, is the zero map. By naturality of boundary mappings, the map $K_*(\mathcal{M}( C^*(T_HX))/I_X) \to K_*(\mathcal{Q}( C^*(T_HX)))$ is surjective. In particular, the class $[\tilde{a}]\in K_1(\mathcal{Q}( C^*(T_HX)))$ is in the image of $K_1(\mathcal{M}( C^*(T_HX))/I_X) \to K_1(\mathcal{Q}( C^*(T_HX))$. Therefore, there is, up to stabilization, an element $\tilde{\sigma}\in \mathcal{M}( C^*(T_HX))$ invertible modulo $I_X$ which is homotopic to $\tilde{a}$ modulo $C^*(T_HX)$ in the invertibles of $\mathcal{Q}(C^*(T_HX))$. Since the set $\Gamma_{\geq \lambda_0,X}\subseteq \widehat{T_HX}$ is closed, and $\tilde{a}$ is invertible modulo $C^*(T_HX)$, $\tilde{a}$ acts invertibly over $\Gamma_{\geq \lambda_0,X}$ for some $\lambda_0>0$. Standard considerations ensures that we can take $\tilde{\sigma}$ to act as $\tilde{a}$ on $\Gamma_{\geq \lambda_0,X}$. The proof is complete upon taking $\hat{\sigma}$ to be the image of $\tilde{\sigma}$ under $\mathcal{M}( C^*(T_HX))\to  \mathcal{M}( I_X)$.
\end{proof}

\begin{remark}
\label{isofrmomsisglalm}
Since $\tilde{\sigma}$ is unitary up to $I_X$, it is especially invertible in the trivial representation. In particular, if $F\in \Psi^m_H(X;E_1,E_2)$ is an $H$-elliptic operator, Lemma \ref{tehcnilmultplleme} induces a (stable) isomorphism of vector bundles 
$$\sigma_0:E_1\xrightarrow{\sim} E_2.$$
\end{remark}

\begin{lemma}
\label{tehcnilmultpllemeodd}
Let $X$ be a compact $\pmb{F}$-regular Carnot manifold, $E\to X$ a hermitean vector bundle and $m>0$. Assume that $\tilde{a}\in \tilde{\Sigma}^m_{H}(X;E)$ is invertible modulo $C^\infty_c(T_HX;\Hom(E)\otimes |\Lambda|)$ and formally self-adjoint.

Then, up to stabilizing by a trivial vector bundle, there exists a self-adjoint, regular, densely defined operator 
$$\hat{\sigma}:\mathpzc{H}(E)\dashrightarrow \mathpzc{H}(E),$$
with 
$$\Dom(\hat{\sigma})=\mathpzc{H}^m(E),$$
such that 
\begin{enumerate}
\item $\hat{\sigma}$, viewed as an adjointable map of $C_0(\Gamma_X)$-Hilbert $C^*$-modules $\mathpzc{H}^m(E)\to \mathpzc{H}(E)$, has an inverse $\hat{\sigma}^{-1}:\mathpzc{H}^m(E)\to\mathpzc{H}(E)$ modulo $C_0(\Gamma_X)$-compact operators. 
\item $\hat{\sigma}$ (and similarly for $\hat{\sigma}^{-1}$) is in the image of the injection
\begin{align*}
\Hom_{C^*(T_HX)}^*(\pmb{\mathpzc{E}}^m(X;E)_{t=0},\pmb{\mathpzc{E}}(X;E)_{t=0})\hookrightarrow\Hom_{C_0(\Gamma_X)}^*(\mathpzc{H}^m(E),\mathpzc{H}(E)),
\end{align*}
(cf. Proposition \ref{somrelareonon}) and there is a norm-continuous homotopy
$$q(\tilde{a})\sim_h q(\hat{\sigma}),$$
in the set of invertibles in 
$$\mathcal{Q}_{C^*(T_HX)}^*(\pmb{\mathpzc{E}}^m(X;E)_{t=0},\pmb{\mathpzc{E}}(X;E)_{t=0})$$
\item For $\lambda_0>0$, then as operators on $\mathpzc{H}_{\geq \lambda_0,\partial}^m(E):=\mathpzc{H}^m(E)\otimes_{C_0(\Gamma_X)}C_0(\Gamma_{\geq \lambda_0,X})$, 
$$\tilde{\sigma}=\rho_m(a),$$ 
and in particular $q(\tilde{\sigma})$ is as an operator on $\mathpzc{H}^m(E)$ homogeneous of degree $m$ up to locally $C_0(\Gamma_X)$-compact operators.
\end{enumerate}
In particular, the index class $[\tilde{a}]\in K_1(C^*(T_HX))$, defined from the unbounded Kasparov cycle $(C^*(T_HX;E)), \tilde{a})$, is the image of the index class $[\hat{\sigma}]\in K_1(C_0(\Gamma_X))$, defined from the unbounded Kasparov cycle $(\mathpzc{H}(E),\hat{\sigma})$, under the surjection 
$$K_1(C_0(\Gamma_X))\cong K_1(I_X)\to K_1(C^*(T_HX)),$$
where the first map is defined from the Morita equivalence $\mathpzc{H}$.
\end{lemma}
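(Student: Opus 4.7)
The plan is to reduce to the bounded-multiplier setting via the bounded transform, run the same diagrammatic lifting argument as in Lemma \ref{tehcnilmultplleme} but in $K_1$, and finally invert the bounded transform to recover a self-adjoint regular unbounded operator with the prescribed domain and homogeneity on $\Gamma_{\geq \lambda_0,X}$. Concretely, I view $\tilde{a}$ as a self-adjoint regular multiplier of $C^*(T_HX;E)$ (using Proposition \ref{regualdldlalaoad} applied fibrewise to $\mathrm{ev}_{t=0}(\mathbb{D})$ for any $H$-elliptic lift $\mathbb{D}$) and form its bounded transform
\[
\mathfrak{b}(\tilde a) := \tilde a(1+\tilde a^2)^{-1/2} \in \mathcal{M}(C^*(T_HX;E)),
\]
which is self-adjoint, satisfies $\mathfrak{b}(\tilde a)^2-1 \in C^*(T_HX;E)$, and whose class in $K_1(\mathcal{Q}(C^*(T_HX)))$ agrees with the $K_1$-class of $\tilde a$ under the identification of odd $K$-theory with Fredholm-module classes.

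Next, I repeat the diagram chase from the proof of Lemma \ref{tehcnilmultplleme} verbatim, replacing $K_0$ by $K_1$. Surjectivity of $j_*:K_1(I_X)\to K_1(C^*(T_HX))$ (Theorem \ref{onsurjinteroffm}) makes the quotient map $K_1(C^*(T_HX)) \to K_1(C^*(T_HX)/I_X)$ vanish, and naturality of the six-term sequence attached to
\[
0 \to C^*(T_HX)/I_X \to \mathcal{M}(C^*(T_HX))/I_X \to \mathcal{Q}(C^*(T_HX)) \to 0
\]
then forces $K_1(\mathcal{M}(C^*(T_HX))/I_X) \to K_1(\mathcal{Q}(C^*(T_HX)))$ to be surjective. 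Applied to $[q(\mathfrak b(\tilde a))]$, this produces, after stabilizing by a trivial bundle, a self-adjoint element $\sigma_b \in \mathcal{M}(C^*(T_HX))$ that is invertible modulo $I_X$ and is homotopic to $\mathfrak b(\tilde a)$ modulo $C^*(T_HX)$ through invertibles in $\mathcal{Q}(C^*(T_HX))$. Because $\Gamma_{\geq \lambda_0,X}\subseteq \widehat{T_HX}$ is closed and $\mathfrak{b}(\tilde a)$ is already uniformly invertible there for $\lambda_0$ sufficiently large (Theorem \ref{unmomnomhell} combined with homogeneity), a standard self-adjoint cutoff argument lets us arrange $\sigma_b = \mathfrak{b}(\tilde a)$ over $\Gamma_{\geq \lambda_0,X}$ without altering its $K_1$-class or its self-adjointness.

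Passing back to the unbounded picture, I set
\[
\hat\sigma := \sigma_b (1-\sigma_b^2)^{-1/2},
\]
viewed through $\mathcal{M}(C^*(T_HX)) \to \mathcal{M}(I_X) \cong \End^*_{C_0(\Gamma_X)}(\mathpzc{H}(E))$ as a densely defined self-adjoint regular operator on $\mathpzc{H}(E)$; Proposition \ref{somrelareonon} ensures that $\hat\sigma$ lies in the image of $\Hom^*_{C^*(T_HX)}(\pmb{\mathpzc{E}}^m(X;E)_{t=0},\pmb{\mathpzc{E}}(X;E)_{t=0})$. On $\Gamma_{\geq \lambda_0,X}$ one has $\hat\sigma = \rho_m(a)$ by construction, and consequently $\Dom(\hat\sigma) = \mathpzc{H}^m(E)$ and $\hat\sigma:\mathpzc{H}^m(E)\to \mathpzc{H}(E)$ is invertible modulo $C_0(\Gamma_X)$-compacts, using that off $\Gamma_{\geq \lambda_0,X}$ the operator $\hat\sigma$ agrees with a bounded perturbation of an operator which is uniformly invertible on the same module (by the Rockland condition off the complement and fibrewise regularity from Proposition \ref{regualdldlalaoad}). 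The final index-theoretic identification follows from the Morita equivalence $\mathpzc H$ and functoriality of the bounded transform: $[\hat\sigma]\in K_1(C_0(\Gamma_X))$ corresponds to the class of $\sigma_b \bmod I_X$ in $K_1(I_X)$, which maps to $[\tilde a]\in K_1(C^*(T_HX))$ under $j_*$.

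The main obstacle will be the simultaneous requirement of self-adjointness, the precise pointwise identity $\hat\sigma = \rho_m(a)$ over $\Gamma_{\geq \lambda_0,X}$, and correct regularity/domain behaviour; a generic $K_1$-lift is only well-defined up to a self-adjoint perturbation in $I_X$, and the self-adjoint cutoff glueing it to $\mathfrak{b}(\tilde a)$ on the homogeneous region must be chosen so as to preserve the essential unitarity modulo $I_X$. A subsidiary point is that the inverse bounded transform $t\mapsto t(1-t^2)^{-1/2}$ is singular at $\pm 1$, so before inverting one has to move the spectrum of $\sigma_b$ slightly inside $(-1,1)$ by a smooth self-adjoint spectral modification in $I_X$; this changes neither the $K_1$-class nor the identity on $\Gamma_{\geq \lambda_0,X}$ once $\lambda_0$ is enlarged.
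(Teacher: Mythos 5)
The proposal reaches the right conclusion but takes a more circuitous route than the paper, and along the way introduces a domain/regularity gap that the paper's approach is specifically designed to avoid. The paper's proof simply re-runs the diagram chase of Lemma \ref{tehcnilmultplleme}, but \emph{directly on} $\tilde a$ viewed as a bounded adjointable element of $\Hom_{C^*(T_HX)}^*(\pmb{\mathpzc{E}}^m(X;E)_{t=0},\pmb{\mathpzc{E}}(X;E)_{t=0})$, i.e.\ in the Sobolev-shifted bimodule picture. The lift $\tilde\sigma$ produced by the six-term/naturality argument then automatically lives in $\Hom_{C^*(T_HX)}^*(\pmb{\mathpzc{E}}^m_{t=0},\pmb{\mathpzc{E}}_{t=0})$; consequently its image $\hat\sigma$ is \emph{tautologically} adjointable $\mathpzc{H}^m(E)\to\mathpzc{H}(E)$, so $\Dom(\hat\sigma)=\mathpzc{H}^m(E)$ holds by construction, and self-adjointness and regularity of the resulting densely defined operator follow from item (1) and formal self-adjointness of the lift.

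Your route passes first to the bounded transform $\sigma_b=\mathfrak b(\tilde a)\in\mathcal M(C^*(T_HX))$, runs the diagram chase on $\sigma_b$, and then applies the inverse transform $\sigma_b\mapsto\sigma_b(1-\sigma_b^2)^{-1/2}$. This is where the gap is. After the $K_1$-lift, $\sigma_b$ differs from $\mathfrak b(\tilde a)$ by an element $k\in I_X$ (supported, in your cutoff normalization, on the complement of $\Gamma_{\geq\lambda_0,X}$). The domain of the inverse bounded transform of $\sigma_b$ is $(1-\sigma_b^2)^{1/2}\mathpzc{H}(E)$, and this need \emph{not} equal $\Dom(\tilde a)=\mathpzc{H}^m(E)$: the inverse bounded transform is not Lipschitz, an $I_X$-perturbation of $\mathfrak b(\tilde a)$ does not in general give a bounded (let alone $I_X$) perturbation of $\tilde a$, and the further spectral modification you introduce to dodge the singularity at $\pm1$ moves $\Dom(\hat\sigma)$ again. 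The assertion that Proposition \ref{somrelareonon} ensures $\hat\sigma$ lies in the image of $\Hom_{C^*(T_HX)}^*(\pmb{\mathpzc{E}}^m(X;E)_{t=0},\pmb{\mathpzc{E}}(X;E)_{t=0})$ is unsupported: that proposition only identifies the localized modules, it says nothing about a perturbed inverse bounded transform being adjointable from the Sobolev-shifted module. Likewise the statement that $\Dom(\hat\sigma)=\mathpzc{H}^m(E)$ ``consequently'' from the identity $\hat\sigma=\rho_m(\tilde a)$ over $\Gamma_{\geq\lambda_0,X}$ does not hold: the domain is a global condition, and over the complement of $\Gamma_{\geq\lambda_0,X}$ (which is generally non-precompact in $\Gamma_X$) your $\hat\sigma$ is no longer $\tilde a$. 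To make your strategy work you would need to show directly that the lift $\sigma_b$ can be chosen so that $(1-\sigma_b^2)^{1/2}\mathpzc{H}(E)=\mathpzc{H}^m(E)$ as dense submodules and that $\sigma_b(1-\sigma_b^2)^{-1/2}$ is adjointable from $\pmb{\mathpzc{E}}^m_{t=0}$ to $\pmb{\mathpzc{E}}_{t=0}$, neither of which currently appears. The cleaner path, as in the paper, is to lift before taking the bounded transform.
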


\begin{proof}
The method of proof of Lemma \ref{tehcnilmultplleme} produces a formally self-adjoint $\tilde{\sigma}\in \Hom_{C^*(T_HX)}^*(\pmb{\mathpzc{E}}^m(X;E)_{t=0},\pmb{\mathpzc{E}}(X;E)_{t=0})$ fulfilling item (1)-(3). Viewing the associated $\hat{\sigma}$ as a densely defined operator, the self-adjointness and regularity then follows from item (1).
\end{proof}

\subsection{Further computations with the symbol class in the even case} 
The following lemma will be used for explicit geometric solutions to examples of the index problem. We note that the lemma requires assumptions that do not apply to all $H$-elliptic operators, just as is the case with the explicit geometric solution in \cite{baumvanerp}. We shall make use of Atiyah's notion of an elliptic complex, see Definition \ref{elliptcimadomad} below.

\begin{lemma}
\label{redielaclaclad}
Let $X$ be a compact $\pmb{F}$-regular Carnot manifold. Consider an orthogonal decomposition 
$$\mathcal{H}=\bigoplus_{k=0}^\infty \mathcal{H}_k,$$
of the bundle of flat representations $\mathcal{H}\to \Gamma_X$, where each $\mathcal{H}_k\to \Gamma_X$ is a finite rank hermitean vector bundle with $C_0(\Gamma_X,\mathcal{H}_k)\subseteq \cap_{s>0}\mathpzc{H}^s$. Define $\mathcal{H}^N:=\bigoplus_{k=0}^N \mathcal{H}_k$ and let $P_N$ denote the associated projection. 

Assume that 
$$\hat{\sigma}\in \Hom_{C_0(\Gamma_X)}^*(\pmb{\mathpzc{H}}^s_{\rm gr}(\pmb{E}_1),\pmb{\mathpzc{H}}^{s-m}_{\rm gr}(\pmb{E}_2))$$
for some $N_0>0$ satisfies 
\begin{enumerate}
\item $\hat{\sigma}$ is a $C_0(\Gamma_X)$-linear adjointable mapping invertible modulo $C_0(\Gamma_X)$-compact operators;
\item $[P_N,\hat{\sigma}]\in  \mathbb{K}_{C_0(\Gamma_X)}(\pmb{\mathpzc{H}}^s_{\rm gr}(\pmb{E}_1),\pmb{\mathpzc{H}}^{s-m}_{\rm gr}(\pmb{E}_2))$ for $N>N_0$;
\item the class of the $(\C,C_0(\Gamma_X))$-Kasparov cycle 
$$((1-P_N)\pmb{\mathpzc{H}}^s_{\rm gr}(\pmb{E}_1),(1-P_N)\pmb{\mathpzc{H}}^{s-m}_{\rm gr}(\pmb{E}_2),(1-P_N)\hat{\sigma}(1-P_N)),$$
vanishes in $KK_0(\C,C_0(\Gamma_X))\cong K^0(\Gamma_X)$ for all $N>N_0$. 
\end{enumerate}
Then the vector bundle morphism
$$\hat{\sigma}_N:=(P_N\otimes 1_{E_2})\hat{\sigma} (P_N\otimes 1_{E_1}):\mathcal{H}^{N}\otimes \pmb{E}_1\to \mathcal{H}^{N}\otimes \pmb{E}_2,$$
is invertible outside a compact subset of $\Gamma_X$. The elliptic complex on $\Gamma_X$
$$(\mathcal{H}^{N}\otimes \pmb{E}_1,\mathcal{H}^{N}\otimes \pmb{E}_2,\hat{\sigma}_N).$$
defines a class $[(\mathcal{H}^{N}\otimes \pmb{E}_1,\mathcal{H}^{N}\otimes \pmb{E}_2,\hat{\sigma}_N)]\in K^0(\Gamma_X)$ that for $N>N_0$ is independent of $N$. Moreover, for $N>N_0$, it holds that 
$$[(\mathcal{H}^{N}\otimes \pmb{E}_1,\mathcal{H}^{N}\otimes \pmb{E}_2,\hat{\sigma}_N)]=[(\pmb{\mathpzc{H}}^s_{\rm gr}(\pmb{E}_1),\pmb{\mathpzc{H}}^{s-m}_{\rm gr}(\pmb{E}_2),\hat{\sigma})]\in K^0(\Gamma_X).$$
\end{lemma}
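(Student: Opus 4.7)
The plan is to proceed in three steps corresponding to the three conclusions, using the two commutator hypotheses to reduce everything to a direct sum argument in $K^0(\Gamma_X)$.

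First, I would establish invertibility of $\hat{\sigma}_N$ outside a compact subset of $\Gamma_X$. Let $\hat{\sigma}^{-1}$ denote a parametrix of $\hat{\sigma}$ modulo $C_0(\Gamma_X)$-compact operators, whose existence is assumption (1). Using assumption (2) and that $P_N$ commutes with itself, the product $(P_N\hat{\sigma}^{-1} P_N)\hat{\sigma}_N$ equals $P_N\hat{\sigma}^{-1}\hat{\sigma} P_N = P_N$ modulo $C_0(\Gamma_X)$-compact operators on the finite rank bundle $\mathcal{H}^N\otimes\pmb{E}_1$. On a locally compact space, $C_0(\Gamma_X)$-compact operators on $C_0(\Gamma_X, V)$ for a finite rank vector bundle $V$ are precisely $C_0(\Gamma_X;\End(V))$, so $\hat{\sigma}_N$ is a vector bundle morphism whose symbol is invertible outside the support of some $C_0$-function, i.e.\ outside a compact subset of $\Gamma_X$. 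This immediately shows that the triple $(\mathcal{H}^N\otimes \pmb{E}_1, \mathcal{H}^N\otimes\pmb{E}_2, \hat{\sigma}_N)$ is a compactly supported elliptic complex on $\Gamma_X$ and therefore defines a class in $K^0(\Gamma_X)$.

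Second, I would compare the class of $\hat{\sigma}_N$ with the class of the full operator $\hat{\sigma}$ via an orthogonal direct sum decomposition. Since $[P_N,\hat{\sigma}]$ is $C_0(\Gamma_X)$-compact by assumption (2), the Kasparov cycle $(\pmb{\mathpzc{H}}^s_{\rm gr}(\pmb{E}_1),\pmb{\mathpzc{H}}^{s-m}_{\rm gr}(\pmb{E}_2),\hat{\sigma})$ is, up to a compact perturbation of the operator, the orthogonal direct sum of the two Kasparov cycles corresponding to the images of $P_N$ and $1-P_N$:
\[
[(\pmb{\mathpzc{H}}^s_{\rm gr}(\pmb{E}_1),\pmb{\mathpzc{H}}^{s-m}_{\rm gr}(\pmb{E}_2),\hat{\sigma})] = [(\mathcal{H}^N\otimes\pmb{E}_1,\mathcal{H}^N\otimes\pmb{E}_2,\hat{\sigma}_N)] + [((1-P_N)\pmb{\mathpzc{H}}^s_{\rm gr}(\pmb{E}_1),\ldots, (1-P_N)\hat{\sigma}(1-P_N))]
\]
in $K^0(\Gamma_X)$. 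By assumption (3), the second summand vanishes for $N>N_0$, which gives the claimed equality $[\hat{\sigma}]=[\hat{\sigma}_N]$ and in particular independence of $N$.

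The main obstacle is justifying the direct sum decomposition of Kasparov cycles rigorously when the operator only commutes with $P_N$ up to a compact operator. Concretely, one must show that replacing $\hat{\sigma}$ by $P_N\hat{\sigma} P_N + (1-P_N)\hat{\sigma}(1-P_N)$ is a compact perturbation; this follows from expanding $\hat{\sigma}=(P_N+(1-P_N))\hat{\sigma}(P_N+(1-P_N))$ and noting that the two cross terms $P_N\hat{\sigma}(1-P_N)$ and $(1-P_N)\hat{\sigma} P_N$ equal $[P_N,\hat{\sigma}](1-P_N)$ and $-(1-P_N)[P_N,\hat{\sigma}]$ respectively, both of which are $C_0(\Gamma_X)$-compact by assumption (2). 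Since adding a compact perturbation to the operator of a Kasparov cycle yields a homotopic cycle, the decomposition identity holds on the level of $KK$-classes, completing the argument.
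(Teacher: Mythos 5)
Your proposal is correct and follows essentially the same approach as the paper's own proof, which simply says "by assumption (2), we can assume that $P_N$ commutes with $\hat{\sigma}$" and then reads off the direct-sum decomposition; your write-up spells out the compact-perturbation argument behind that shortcut (the cross terms $P_N\hat\sigma(1-P_N)$ and $(1-P_N)\hat\sigma P_N$ are compact), but the underlying reasoning is identical.
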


\begin{proof}
By assumption (2), we can assume that $P_N$ commutes with $\hat{\sigma}$. Therefore item (1) ensures that $\hat{\sigma}_N$ is invertible modulo $C_0(\Gamma_X)$-compact operators on the sections of a vector bundle. As such, $\hat{\sigma}_N$ is invertible outside a compact subset of $\Gamma_X$. Moreover, by item (3), we have that 
\begin{align*}
[(\pmb{\mathpzc{H}}^s_{\rm gr}(\pmb{E}_1),&\pmb{\mathpzc{H}}^{s-m}_{\rm gr}(\pmb{E}_2),\hat{\sigma})]=\\
=&[(\mathcal{H}^{N}\otimes \pmb{E}_1,\mathcal{H}^{N}\otimes \pmb{E}_2,\hat{\sigma}_N)]+\\
&+[((1-P_N)\pmb{\mathpzc{H}}^s_{\rm gr}(\pmb{E}_1),(1-P_N)\pmb{\mathpzc{H}}^{s-m}_{\rm gr}(\pmb{E}_2),(1-P_N)\hat{\sigma}(1-P_N))]=\\
=&[(\mathcal{H}^{N}\otimes \pmb{E}_1,\mathcal{H}^{N}\otimes \pmb{E}_2,\hat{\sigma}_N)],
\end{align*}
for $N>N_0$. 
\end{proof}

\begin{remark}
We note that the elliptic complex $(\mathcal{H}^{N}\otimes \pmb{E}_1,\mathcal{H}^{N}\otimes \pmb{E}_2,\hat{\sigma}_N)$ in Lemma \ref{redielaclaclad} depends on the choice of bundle of flat orbits $\mathcal{H}$; if we replace $\mathcal{H}$ by $\mathcal{H}\otimes L$ for a line bundle $L\to \Gamma_X$ then $(\mathcal{H}^{N}\otimes \pmb{E}_1,\mathcal{H}^{N}\otimes \pmb{E}_2,\hat{\sigma}_N)$ transforms to $(\mathcal{H}^{N}\otimes \pmb{E}_1,\mathcal{H}^{N}\otimes \pmb{E}_2,\hat{\sigma}_N)\otimes L$. On the other hand, the class $[(\mathcal{H}^{N}\otimes \pmb{E}_1,\mathcal{H}^{N}\otimes \pmb{E}_2,\hat{\sigma}_N))]\otimes [\mathcal{H}^*]\in K_0(I_X)$ remains unaltered by changing $\mathcal{H}$ to $\mathcal{H}\otimes L$ because $L\otimes L^*\cong 1$. 
\end{remark}

\begin{remark}
Lemma \ref{redielaclaclad} mimics constructions of \cite{baumvanerp} for contact manifolds. The constructions from \cite{baumvanerp} fits with decompositions of $\mathcal{H}$ as follows. If $\ghani\subseteq \Aut_{\rm gr}(\mathfrak{g})$ is a compact subgroup such that the graded frame bundle of $X$ reduces to $\ghani$, and
$[\zeta|_\ghani]=0\in H^1(\ghani,\check{H}^1(\Gamma,U(1)))$ and $[c_\ghani]=0\in H^2(\ghani,C(\Gamma,U(1)))$, then 
\begin{equation}
\label{pqdecomeomd}
\mathcal{H}=\bigoplus_{\chi\in \hat{\ghani}} \mathcal{H}_\chi,
\end{equation}
under the Peter-Weyl decomposition. Assuming that each irreducible $\ghani$-representation has finite multiplicity in $\mathcal{H}$, then we have a decomposition as in Lemma \ref{redielaclaclad}. For the case of a $2n+1$-dimensional co-oriented contact manifold considered in \cite{baumvanerp} the relevant structure group is $\ghani=U(n)$. 
\end{remark}

\begin{lemma}
\label{redielaclacladcompactgamma}
Let $X$ be a compact $\pmb{F}$-regular Carnot manifold with $\Gamma^\partial_X$ compact. Assume that $\pmb{E}\to X$ is a graded, hermitean vector bundle. Consider an orthogonal decomposition 
$$\mathcal{H}=\bigoplus_{k=0}^\infty \mathcal{H}_k,$$
as in Lemma \ref{redielaclaclad} with the additional assumption that the decomposition is dilation invariant. Assume that 
$$\hat{\sigma}\in \Hom_{C_0(\Gamma_X)}^*(\pmb{\mathpzc{H}}^s_{\rm gr}(\pmb{E}),\pmb{\mathpzc{H}}^{s-m}_{\rm gr}(\pmb{E}))$$
satisfies the assumptions of Lemma \ref{redielaclaclad} and additionally, there is a homogenous of degree $m$ and invertible
$$\sigma\in \Hom_{C_0(\Gamma_X)}^*(\pmb{\mathpzc{H}}^s_{\rm gr}(\pmb{E}),\pmb{\mathpzc{H}}^{s-m}_{\rm gr}(\pmb{E}))$$
such that, for some $\lambda_0>0$, then as operators on $\pmb{\mathpzc{H}}_{\geq \lambda_0,\partial}^s(\pmb{E}):=\pmb{\mathpzc{H}}^s_{\rm gr}(\pmb{E})\otimes_{C_0(\Gamma_X)}C_0(\Gamma_{\geq \lambda_0,X})$, 
$$\tilde{\sigma}=\sigma.$$ 

Then, under the composition of the natural mappings
$$K^0(\Gamma_X)\cong K^0(\Gamma_X^\partial\times \R)\to K^0(\Gamma_X^\partial \times S^1),$$
we have that 
$$[(\pmb{\mathpzc{H}}^s_{\rm gr}(\pmb{E}),\pmb{\mathpzc{H}}^{s-m}_{\rm gr}(\pmb{E}),\hat{\sigma})]\mapsto [E_{\sigma,N}]-[(\mathcal{H}^N_\partial\otimes \pmb{E})\times S^1],$$ 
where $N>N_0$ and the vector bundle $E_{\sigma,N}\to \Gamma_X^\partial \times S^1$ is obtained from clutching $(\mathcal{H}^N_\partial\otimes \pmb{E})\times [0,1]\to  \Gamma_X^\partial \times [0,1]$ along $P_N\sigma P_N$ at the boundary. 
\end{lemma}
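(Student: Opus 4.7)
The plan is to combine the localization from Lemma \ref{redielaclaclad} with a homogeneity argument, and then identify the resulting compactly-supported $K$-theory class on $\Gamma_X$ with a clutching class on $\Gamma^\partial_X \times S^1$ by tracing through the standard suspension isomorphism $K^0(\Gamma^\partial_X \times \R) \xrightarrow{\sim} \tilde K^0(\Gamma^\partial_X \times S^1)$.

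First I would invoke Lemma \ref{redielaclaclad} to rewrite
$$[(\pmb{\mathpzc{H}}^s_{\rm gr}(\pmb{E}),\pmb{\mathpzc{H}}^{s-m}_{\rm gr}(\pmb{E}),\hat{\sigma})] = [(\mathcal{H}^N\otimes \pmb{E},\mathcal{H}^N\otimes \pmb{E},\hat{\sigma}_N)]\in K^0(\Gamma_X)$$
for any $N>N_0$, where $\hat\sigma_N = P_N\hat\sigma P_N$. By the additional hypothesis and dilation invariance of $\{\mathcal H_k\}$, we have that $P_N$ is dilation invariant and $\hat\sigma_N$ agrees with $P_N\sigma P_N$ on $\Gamma_{\geq\lambda_0,X}$; moreover, the homogeneity of $\sigma$ together with its invertibility and the commutation with $P_N$ (up to negligible terms) show that $P_N\sigma P_N$ is invertible on $\Gamma_{\geq\lambda_0,X}$. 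Hence $\hat\sigma_N$ is invertible on $\Gamma_{\geq\lambda_0,X}$, and the elliptic complex has its ``support'' confined to the relatively compact region $\Gamma_{\leq\lambda_0,X}$, where compactness uses the standing hypothesis that $\Gamma^\partial_X$ is compact.

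Next I would use the dilation-equivariant diffeomorphism $\Gamma_X\cong \Gamma^\partial_X\times\R_+$, in which $\R_+$ acts on itself by multiplication, to identify $K^0(\Gamma_X)\cong K^0(\Gamma^\partial_X\times\R)$ (taking logarithms). Within this model, the dilation invariance of $\mathcal H^N$ says that $\mathcal H^N = \mathcal H^N_\partial\boxtimes \R$, and the homogeneity of $\sigma$ together with Step 1 show that $\hat\sigma_N$ is homotopic, through complexes invertible outside a compact set, to a symbol $\tau$ on $\Gamma^\partial_X\times \R$ which equals the identity for $t\leq a$ and equals $P_N\sigma P_N|_{\Gamma^\partial_X}$ (viewed as an automorphism of $\mathcal H^N_\partial\otimes\pmb E$ which is constant in $t$) for $t\geq b$, for some $a<b$; this is just a rescaling homotopy that absorbs the $\lambda^m$ homogeneity factor into a deformation to a bundle automorphism on one end. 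Thus the class is represented by a complex of the form $(\mathcal H^N_\partial\otimes\pmb E, \mathcal H^N_\partial\otimes\pmb E, \tau)$ whose ``symbol at infinity'' is the automorphism $P_N\sigma P_N|_{\Gamma^\partial_X}$ on one end and the identity on the other.

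Finally, I would apply the standard description of the composition $K^0(\Gamma^\partial_X\times\R)\to K^0(\Gamma^\partial_X\times S^1)$, induced by the open inclusion $\R\hookrightarrow S^1$ (equivalently the one-point compactification followed by the canonical map). Under this map, a class represented by a complex that is trivial on one end of $\R$ and given by an automorphism $u$ on the other end maps to $[E_u]-[\mathrm{trivial}]$ in $\tilde K^0(\Gamma^\partial_X\times S^1)$, where $E_u$ is the mapping torus/clutching bundle built from $u$; this is a direct check from Atiyah's clutching construction, cf.\ the analogous computation in \cite[Section 6]{baumvanerp}. Applying this with $u = P_N\sigma P_N|_{\Gamma^\partial_X}$ yields exactly the claimed formula $[E_{\sigma,N}]-[(\mathcal H^N_\partial\otimes\pmb E)\times S^1]$.

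The main obstacle is making the deformation in the middle step precise: one must argue that the passage from $\hat\sigma_N$ to a symbol which is literally the identity on one end of $\Gamma^\partial_X\times\R$ and a pullback of $P_N\sigma P_N|_{\Gamma^\partial_X}$ on the other end preserves the $K$-theory class. This uses the invertibility of $P_N\sigma P_N|_{\Gamma^\partial_X}$ (forced by our hypotheses on $\sigma$ and by Remark \ref{isofrmomsisglalm}-type considerations ensuring that $P_N$ and $\sigma$ commute up to compacts on $\Gamma^\partial_X$) together with compactness of $\Gamma^\partial_X$, which keeps all deformations within the class of Fredholm/elliptic complexes. Everything else amounts to carefully transcribing the clutching construction in this parametrized setting.
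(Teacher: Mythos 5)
Your proposal follows the same route as the paper, which simply says the result follows from the reduction in Lemma \ref{redielaclaclad} plus a ``standard computation in $K$-theory, compare to \cite{baumvanerp}''; your write-up fills in the details of that standard computation (the dilation trivialization $\Gamma_X\cong\Gamma^\partial_X\times\R_+$, the rescaling homotopy to a clutching form, and the description of $K^0(\Gamma^\partial_X\times\R)\to K^0(\Gamma^\partial_X\times S^1)$). Nothing is missing and nothing deviates from the paper's intent.
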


\begin{proof}
Using the reduction of Lemma \ref{redielaclaclad}, this is a standard computation in $K$-theory. Compare to \cite{baumvanerp}.
\end{proof}

\begin{corollary}
\label{charhellfofodotwocordonk}
Let $X$ be a compact regular polycontact manifold with polycontact structure $H$, equipped with a Riemannian metric $g$. Assume that $D_\gamma\in \mathcal{DO}_H^{2}(X;E)$ is as in Equation \eqref{secondoroddbbamddevep} and is $H$-elliptic. For $N>>0$, the symbol class $[\sigma_H(D_\gamma)]\in K_0(C^*(T_HX))$ has a pre-image $x\in K^0(\Gamma_X)=K^0(H^\perp\setminus X)$ that under the natural mapping 
$$K^0(H^\perp\setminus X)\cong K^0(S(H^\perp) \times \R)\to K^0(S(H^\perp) \times S^1),$$  
is mapped to $[E_{\gamma,N}]-[(\oplus_{k\leq N}(p_\Gamma^*H)^{\otimes^{\rm sym}_\C k}\otimes E)\times S^1]$ where $E_{\gamma,N}\to S(H^\perp) \times S^1$ is obtained from clutching $(\oplus_{k\leq N}(p_\Gamma^*H)^{\otimes^{\rm sym}_\C k}\otimes E)\times [0,1]\to  S(H^\perp) \times [0,1]$ along $\oplus_{k\leq N}\gamma_k$ at the boundary (where $\gamma_k$ is as in Proposition \ref{charhellfofodotwo}).
\end{corollary}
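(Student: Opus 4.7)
The plan is to carry out the three-step algorithm on page \pageref{blablaalgorithm}. For a regular polycontact manifold, Example \ref{polycontactexama} gives $\Gamma_X = H^\perp \setminus X$, with $\Gamma_X^\partial = S(H^\perp)$ compact since $X$ is. From Example \ref{pluricontactexamokkoa}, the bundle of flat orbit representations may be taken to be the Fock bundle $\mathcal{H} = \mathpzc{F}_X \cong \bigoplus_{k=0}^\infty (p_\Gamma^*H)^{\otimes_\C^{\rm sym} k}$, with an orthogonal decomposition into smooth finite rank subbundles that is preserved by the fibrewise $\R_+$-dilation action on $\Gamma_X$. This settles Step 1 of the algorithm.

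For Step 2, I would apply Lemma \ref{tehcnilmultplleme} with $\pmb{E}_1 = \pmb{E}_2 = E$ to a lift $\tilde{a} \in \tilde{\Sigma}^2_H(X;E)$ of $\sigma_H(D_\gamma)$. The lemma yields an adjointable $\hat{\sigma} \in \Hom^*_{C_0(\Gamma_X)}(\pmb{\mathpzc{H}}^s(E), \pmb{\mathpzc{H}}^{s-2}(E))$ which is invertible modulo $C_0(\Gamma_X)$-compact operators and whose class in $K_0(I_X) \cong K^0(\Gamma_X)$ lifts $[\sigma_H(D_\gamma)] \in K_0(C^*(T_HX))$ via the Morita equivalence supplied by $\mathcal{H}$. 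This produces the candidate $x$.

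For Step 3, the key geometric input is the explicit form of the symbol in the Fock picture: by the computation in the proof of Proposition \ref{charhellfofodotwo}, the flat orbit representation of $\sigma_H(D_\gamma)$ acts on the $k$-th summand $(p_\Gamma^*H)^{\otimes_\C^{\rm sym} k} \otimes E$ as the bundle endomorphism $\gamma_k$. Using the freedom in Lemma \ref{tehcnilmultplleme} (item (3) only fixes $\hat{\sigma}$ on $\Gamma_{\geq \lambda_0, X}$, where it must equal $\rho_2(\tilde{a})$, which is already block diagonal), I would choose $\hat{\sigma}$ block diagonal with respect to the Fock decomposition on \emph{all} of $\Gamma_X$, with $k$-th block $\gamma_k$. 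Then $\hat{\sigma}$ commutes strictly with each projection $P_N = \sum_{k \leq N} \Pi_{\mathcal{H}_k}$, verifying hypothesis (2) of Lemma \ref{redielaclaclad}. For hypothesis (3), $H$-ellipticity together with Proposition \ref{charhellfofodotwo} guarantees that each $\gamma_k$ is a bundle automorphism; compactness of $\Gamma_X^\partial = S(H^\perp)$ combined with Heisenberg homogeneity of $\gamma_k$ (degree $2$, since $s_k(g_\omega)$, $\mathrm{Tr}|\omega|$ and $\gamma$ are all linear in the central coordinates which scale with weight $2$) gives uniform bounds on $\gamma_k^{-1}$ over $\Gamma_X$, so $(1-P_N)\hat{\sigma}(1-P_N) = \bigoplus_{k > N} \gamma_k$ is invertible as an adjointable endomorphism and therefore represents zero in $K^0(\Gamma_X)$.

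Having verified the hypotheses, Lemma \ref{redielaclacladcompactgamma} applied with the homogeneous-of-degree-$2$ representative $\sigma := \bigoplus_k \gamma_k$ (which agrees with $\hat{\sigma}$ on $\Gamma_{\geq \lambda_0, X}$ for every $\lambda_0 > 0$ by construction) gives exactly the clutching formula $[E_{\gamma,N}] - [(\oplus_{k\leq N}(p_\Gamma^*H)^{\otimes_\C^{\rm sym} k} \otimes E) \times S^1]$ in $K^0(S(H^\perp) \times S^1)$. The main technical hurdle is the bookkeeping around order reductions, since Lemma \ref{redielaclaclad} and Lemma \ref{redielaclacladcompactgamma} are phrased for abstract order $m$ while one needs to verify that the block-diagonal splitting and the tail triviality are preserved under the conjugation by $\sigma_H(\mathfrak{D})^{s/2m}$ used to move between Hilbert modules of different orders. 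This causes no genuine difficulty because $\sigma_H(\mathfrak{D})$ can itself be chosen block diagonal with respect to the Fock decomposition (it is the symbol of a positive sub-Laplacian-type operator), so every step of the argument respects the grading of $\mathpzc{F}_X$.
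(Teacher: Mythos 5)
Your proposal correctly identifies the overall structure — the Fock bundle from Examples \ref{polycontactexama} and \ref{pluricontactexamokkoa}, and the combination of Lemmas \ref{tehcnilmultplleme}, \ref{redielaclaclad} and \ref{redielaclacladcompactgamma} — which is indeed what the corollary rests on. But there is a genuine gap in Step 3: you cannot take $\hat\sigma$ to be the purely homogeneous symbol $\bigoplus_k\gamma_k$ on all of $\Gamma_X$. By $H$-ellipticity and the estimates at the end of the proof of Proposition \ref{charhellfofodotwo} (which give $\|\gamma_k^{-1}\|_{L^\infty}\lesssim (1+k)^{-1}$ over $S(H^\perp)$, hence $\|\gamma_k^{-1}\|\lesssim ((1+k)|\xi|)^{-1}$ by homogeneity), this operator is \emph{globally invertible} as an adjointable map $\mathpzc{H}^s(E)\to\mathpzc{H}^{s-2}(E)$ — the conjugation by $\sigma_H(\mathfrak D)^{\pm\cdot/2m}$ that defines the Sobolev modules absorbs exactly this growth. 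An invertible adjointable operator yields a degenerate Kasparov cycle, so your $\hat\sigma$ has $[\hat\sigma]=0\in K_0(I_X)$, which cannot be a pre-image of $[\sigma_H(D_\gamma)]$ under $K_0(I_X)\to K_0(C^*(T_HX))$ unless the latter vanishes — but then Theorem \ref{charhellfofodotwocordonkwithgeoindex} would be vacuous. Notice that your own argument for hypothesis (3), that the invertible tail $\bigoplus_{k>N}\gamma_k$ represents zero, applies verbatim to $\bigoplus_{k\le N}\gamma_k$ and to your entire $\hat\sigma$; you would conclude $[E_{\gamma,N}]-[(\oplus_{k\le N}(p_\Gamma^*H)^{\otimes^{\rm sym}_\C k}\otimes E)\times S^1]=0$ rather than identify the class sought.

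The $\hat\sigma$ that Lemma \ref{tehcnilmultplleme} actually supplies is not the homogeneous symbol: it is a lift of $\tilde a\in\tilde\Sigma^2_H(X;E)$ — a compactly supported, only asymptotically homogeneous distribution — through the corona algebra, constrained to agree with $\rho_2(\tilde a)=\bigoplus_k\gamma_k$ only on $\Gamma_{\geq\lambda_0,X}$. For small $|\xi|$ it necessarily deviates, and it is this deviation that carries the class $[\hat\sigma]\in K_0(I_X)$ lifting $[\sigma_H(D_\gamma)]$; the suspension $K^0(\Gamma_X)\cong K^0(S(H^\perp)\times\R)\to K^0(S(H^\perp)\times S^1)$ wraps it into the clutching class in Lemma \ref{redielaclacladcompactgamma}. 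So the homogeneous $\sigma=\bigoplus_k\gamma_k$ does name the clutching automorphism $P_N\sigma P_N=\bigoplus_{k\le N}\gamma_k$, but you must keep the $\hat\sigma$ from Lemma \ref{tehcnilmultplleme}, with its small-$|\xi|$ modification, and verify hypotheses (1)--(3) of Lemma \ref{redielaclaclad} for that operator; the growth estimates in Proposition \ref{charhellfofodotwo} are then used to control the tail blocks $k>N$ for $N$ large enough, not to trivialize the whole cycle.
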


\subsection{Further computations with the symbol class in the odd case} 

Let us consider a lemma that applies to the case of a self-adjoint $H$-elliptic operator where the symbol can be decomposed along an idempotent. Recall the notation 
$$\mathpzc{H}_{\geq \lambda_0,\partial}^s(E):=\mathpzc{H}^s(E)\otimes_{C_0(\Gamma_X)}C_0(\Gamma_{\geq \lambda_0,X}).$$

\begin{lemma}
\label{oddredielaclacladcompactgamma}
Let $X$ be a compact $\pmb{F}$-regular Carnot manifold with $\Gamma^\partial_X$ compact. Assume that $E\to X$ is a hermitean vector bundle. Consider a formally self-adjoint $\tilde{a}\in \tilde{\Sigma}^m_H(X;E)$, of order $m>0$ and invertible modulo $C^\infty_c(T_HX;E)$. Assume that there is an idempotent $p\in \mathbb{K}_{C_0(\Gamma_X^\partial)}(\mathpzc{H}_\partial(E))$ such that for some $\lambda_0>0$
\begin{enumerate}
\item $p$ extends by continuity and homogeneity to a bounded idempotent operator on $\mathpzc{H}^m(E)$;
\item $[p,a]\in \mathbb{K}_{C_0(\Gamma_X)}(\mathpzc{H}_{\geq \lambda_0,\partial}^m(E),\mathpzc{H}_{\geq \lambda_0,\partial}(E))$;
\item $pa$ and $-(1-p)a$ are positive in the sense of densely defined quadratic forms on $\mathpzc{H}_{\geq \lambda_0,\partial}(E)$.
\end{enumerate}

Then, the index class $[\tilde{a}]\in K_1(C^*(T_HX))$, defined from the unbounded Kasparov cycle $(C^*(T_HX;E)), \tilde{a})$, is the image of an element $x\in K^1(\Gamma_X)$ that under the Bott isomorphism
$$K^1(\Gamma_X)\cong K^0(\Gamma_X^\partial),$$
is mapped to the class $[p(\mathcal{H}_\partial(E))]$.
\end{lemma}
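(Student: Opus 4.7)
The plan is to reduce to the case already handled by Lemma~\ref{tehcnilmultpllemeodd} and then implement the suspension isomorphism $K^1(\Gamma_X)\cong K^0(\Gamma_X^\partial)$ (coming from the $\R_+$-equivariant identification $\Gamma_X\cong\Gamma_X^\partial\times\R_+$ of Proposition~\ref{gammapartialdef} combined with a logarithmic reparametrization and compactness of $\Gamma_X^\partial$) so that a self-adjoint odd cycle maps to its positive spectral projection on the transversal.

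First I would apply Lemma~\ref{tehcnilmultpllemeodd} to $\tilde a$, obtaining after stabilization a self-adjoint regular operator $\hat\sigma:\mathpzc{H}(E)\dashrightarrow\mathpzc{H}(E)$ with domain $\mathpzc{H}^m(E)$, invertible modulo $C_0(\Gamma_X)$-compacts, that coincides with the homogeneous $\rho_m(\tilde a)$ on $\mathpzc{H}^m_{\geq\lambda_0,\partial}(E)$ for some $\lambda_0>0$, and such that $[\tilde a]\in K_1(C^*(T_HX))$ is the image of the lift $[\hat\sigma]\in K_1(C_0(\Gamma_X))\cong K^1(\Gamma_X)$ under the surjection from Theorem~\ref{maincomputationforisg}. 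It therefore suffices to identify the image of $[\hat\sigma]$ in $K^0(\Gamma_X^\partial)$ with $[p(\mathcal H_\partial(E))]$.

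Next I would use hypotheses (1) and (2) to conclude that $[p,\hat\sigma]$ is locally $C_0(\Gamma_X)$-compact on $\Gamma_{\geq\lambda_0,X}$, so up to a compact perturbation (which does not alter the $K_1$-class) one can replace $\hat\sigma$ by the block-diagonal operator $\hat\sigma_p:=p\hat\sigma p+(1-p)\hat\sigma(1-p)$. By hypothesis (3) together with item (3) of Lemma~\ref{tehcnilmultpllemeodd}, the block $p\hat\sigma_p p$ is positive and $(1-p)\hat\sigma_p(1-p)$ is negative on $\mathpzc{H}_{\geq\lambda_0,\partial}(E)$; dilation-invariance of $p$ and of the homogeneous symbol $\rho_m(\tilde a)$ propagates these sign properties $\R_+$-equivariantly over the entire asymptotic region. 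A straight-line homotopy through self-adjoint adjointable invertibles (modulo compacts) then connects $\hat\sigma_p$ to a model operator whose sign on $\Gamma_X^\partial$ is exactly $2p-1$. Under the suspension isomorphism $K_1(C(\Gamma_X^\partial)\otimes C_0(\R))\cong K_0(C(\Gamma_X^\partial))$, implemented by passing to the positive spectral subspace on a single fibre of $\Gamma_X^\partial\times\R\to\Gamma_X^\partial$, the class $[\hat\sigma_p]$ is identified with $[p(\mathcal H_\partial(E))]\in K^0(\Gamma_X^\partial)$, as desired.

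The main obstacle is the second step: conditions (2) and (3) are supplied only on the closed tail $\Gamma_{\geq\lambda_0,X}$, and $p$ is a priori only defined on the transversal $\Gamma_X^\partial$, while the Bott computation wants operators defined on all of $\Gamma_X$. Compactness of $\Gamma_X^\partial$ makes it possible to $\R_+$-equivariantly extend $p$ and the sign properties to all of $\Gamma_{\geq\lambda_0,X}$, and the contribution of the complement $\Gamma_X\setminus\Gamma_{\geq\lambda_0,X}\cong\Gamma_X^\partial\times(0,\lambda_0)$ is absorbed by the fact that $K^1$ is computed with compact supports, so only the behaviour at infinity in the $\R_+$-direction is $K$-theoretically relevant. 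Once these extensions are in place, the identification of the suspension isomorphism with ``take the positive spectral projection'' is the standard unbounded picture of Bott periodicity for self-adjoint Fredholm families.
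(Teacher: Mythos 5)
Your route is genuinely different from the paper's. You first invoke Lemma~\ref{tehcnilmultpllemeodd} to localize the index class to an explicit self-adjoint regular operator $\hat\sigma$ on $\mathpzc{H}(E)$ giving a preimage in $K^1(\Gamma_X)$, and then try to compute its Bott image geometrically via block-diagonalization and spectral projections. The paper instead works entirely inside $K_*(C^*(T_HX))$: it represents $[\tilde{a}]$ via the Calkin projection $\tfrac{1}{\pi}\arctan(\tilde{a})+\tfrac12$ and the exponential boundary map as the unitary $\e^{2\pi i(\frac{1}{\pi}\arctan(\tilde{a})+\frac12)}$, represents $[p(\mathcal{H}_\partial(E))]$ as $\e^{2\pi i x}$ for a lift $x$ of $p$ under $C^*(T_HX)\to I_X^\partial$ through the composite $K^0(\Gamma_X^\partial)\cong K_0(I_X^\partial)\cong K_1(I_X)\to K_1(C^*(T_HX))$, and then argues these two unitaries are homotopic using hypotheses (1)--(3). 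The paper's approach never needs to pass through an explicit cycle over $\Gamma_X$, and in particular does not need Lemma~\ref{tehcnilmultpllemeodd}.

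However, there is a genuine gap in your final step. You assert that because $K^1$ has compact supports, ``only the behaviour at infinity in the $\R_+$-direction is $K$-theoretically relevant,'' and that the contribution of $\Gamma_X^\partial\times(0,\lambda_0)$ can be absorbed. This conflates ``at infinity in the locally compact sense'' with ``as $\lambda\to\infty$.'' The ray $\R_{>0}$ has \emph{two} ends, $\lambda\to 0$ and $\lambda\to\infty$, and a class in $K^1(\Gamma_X)\cong K^1(\Gamma_X^\partial\times\R)$ depends on the behaviour of the cycle at both. Hypothesis (3) of the lemma controls the sign of the symbol only on the tail $\Gamma_{\geq\lambda_0,X}$ (i.e.\ the $\lambda\to\infty$ end), and moreover homogeneity of $\rho_m(\tilde a)$ forces its sign to be \emph{dilation-invariant}, so a purely homogeneous cycle would have no spectral flow and give the zero Bott class rather than $[p(\mathcal{H}_\partial(E))]$. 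The nontrivial contribution comes precisely from the region $\lambda\lesssim\lambda_0$ together with the $\lambda\to 0$ end, and your proposal neither constructs the extension of $\hat\sigma$ there nor analyses it. The paper sidesteps this by keeping the whole argument in $K_1(C^*(T_HX))$: the unitary $\e^{2\pi i(\frac{1}{\pi}\arctan(\tilde{a})+\frac12)}$ is automatically $1$ modulo $C^*(T_HX)$-compacts over \emph{all} of $\widehat{T_HX}$, including the trivial and non-flat representations approached as $\lambda\to 0$, because $\tilde{a}$ is by hypothesis invertible modulo $C^\infty_c(T_HX;E)$; that global control is what makes the homotopy to $\e^{2\pi ix}$ feasible. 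To repair your argument, you would need to supply an explicit extension of $\hat\sigma$ past the tail and show its $\lambda\to 0$ boundary data contribute trivially, rather than appealing to compact supports.
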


\begin{proof}
The index class $[(C^*(T_HX;E)), \tilde{a})]\in K_1(C^*(T_HX))$ is the image of the class of the projection 
$$\left[\frac{1}{\pi}\arctan(\tilde{a})+\frac{1}{2}\right]\in \mathcal{Q}(C^*(T_HX;E)),$$
under the boundary mapping $K_0(\mathcal{Q}(C^*(T_HX)))\to K_1(C^*(T_HX))$. The image under this boundary mapping is the class of the unitary 
\begin{equation}
\label{unininararokadoamim}
\e^{2\pi i\left(\frac{1}{\pi}\arctan(\tilde{a})+\frac{1}{2}\right)}.
\end{equation}

On the other hand, $[p(\mathcal{H}_\partial(E))]$ corresponds to the class $[p]\in K_0(I^\partial_X)$ under the isomorphism $K^0(\Gamma_X^\partial)\cong K_0(I^\partial_X)$ defined from the Morita equivalence $\mathcal{H}$. The image of $[p]\in K_0(I^\partial_X)$ under 
$$K_0(I^\partial_X)\cong K_1(I_X)\to K_1(C^*(T_HX)),$$
is given by the unitary $\e^{2\pi ix}$ for a lift $x$ of $p$ under $C^*(T_HX)\to I_X^\partial$. By our assumptions it follows that $\e^{2\pi ix}$ is homotopic inside the unitaries of $C^*(T_HX)$ to the unitary \eqref{unininararokadoamim}.
\end{proof}

\begin{corollary}
\label{charhellfofodszego}
Let $X$ be a compact regular polycontact manifold with polycontact structure $H$, equipped with a Riemannian metric $g$. Assume that $P\in \Psi_H^0(X;E)$ is an idempotent Hermite operator. The symbol class $[\sigma_H(2P-1)]\in K_1(C^*(T_HX))$ has a pre-image $x\in K^1(\Gamma_X)=K^1(H^\perp\setminus X)$ that under the Bott mapping 
$$K^1(H^\perp\setminus X)\cong K^0(S(H^\perp)),$$  
is mapped to the class of the vector bundle 
$$\sigma_H(P)(\oplus_{k}(p_\Gamma^*H)^{\otimes^{\rm sym}_\C k}\otimes E)\to S(H^\perp).$$
\end{corollary}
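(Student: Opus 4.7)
The plan is to apply Lemma \ref{oddredielaclacladcompactgamma} with a carefully chosen positive-order self-adjoint lift of $2P-1$ and the idempotent $p := \pi_{\musFlat}(\sigma_H(P))$ on the Fock bundle. First I would fix a positive even order $H$-elliptic differential operator $\mathfrak{D}_E \in \mathcal{DO}_H^{2m}(X;E)$ as provided by Proposition \ref{existenceofposiskdd} and introduce the self-adjoint order-$2m$ Heisenberg operator
\[
A := P\, \mathfrak{D}_E\, P - (1-P)\, \mathfrak{D}_E\, (1-P) \in \Psi_H^{2m}(X;E),
\]
together with a lift $\tilde{a} \in \tilde{\Sigma}_H^{2m}(X;E)$ of its principal symbol. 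The symbol calculus together with the exact identity $\sigma_H(P)^2 = \sigma_H(P)$ (coming from $P^2=P$) forces a block decomposition $\sigma_H(A) = \sigma_H(P)\sigma_H(\mathfrak{D}_E)\sigma_H(P) - (1-\sigma_H(P))\sigma_H(\mathfrak{D}_E)(1-\sigma_H(P))$, in which $\sigma_H(P)$ commutes exactly with $\sigma_H(A)$. From $H$-ellipticity of $\mathfrak{D}_E$ and $\sigma_H(P)$ being a projection, I conclude $H$-ellipticity of $A$, and since the positive spectral projection of $A$ equals $P$ modulo lower order the remark following Definition \ref{definilalald} identifies the resulting $K_1$-class of $\tilde{a}$ with $[\sigma_H(2P-1)] \in K_1(C^*(T_HX))$.

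Next I would verify the three hypotheses of Lemma \ref{oddredielaclacladcompactgamma} with $p := \pi_{\musFlat}(\sigma_H(P))$. Since $X$ is a regular polycontact manifold, Example \ref{pluricontactexamokkoa} together with Proposition \ref{ismomomafonaofjan} identifies the bundle of flat orbit representations with the fibrewise Fock bundle $\mathpzc{F}_X = \bigoplus_k (p_\Gamma^* H)^{\otimes_\C^{\rm sym} k}$ on $\Gamma_X = H^\perp \setminus X$. By the Hermite assumption together with Theorem \ref{vanerpandszego} and the discussion in Example \ref{ex:szegoexampe2}, $\sigma_H(P)$ is a projection in $\Sigma^0_H(X;E)$ vanishing outside the flat orbits whose range in $\mathpzc{F}_X \otimes E$ is a vector bundle on $\Gamma_X$; by $0$-homogeneity and compactness of $S(H^\perp)$ this descends to a finite-rank hermitean subbundle $\sigma_H(P)(\mathpzc{F}_X \otimes E)$ on $\Gamma_X^\partial$, so $p \in \mathbb{K}_{C_0(\Gamma_X^\partial)}(\mathpzc{H}_\partial(E))$. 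The same $0$-homogeneity extends $p$ by dilation invariance to $\mathpzc{H}^{2m}(E)$. The commutator hypothesis holds in the strong form $[\sigma_H(P), \sigma_H(A)] = 0$ by the block decomposition above, so $[p, \tilde{a}]$ lies in the compacts. Finally, $p\tilde{a} = \sigma_H(P)\sigma_H(\mathfrak{D}_E)\sigma_H(P)$ on $p\mathpzc{H}_{\geq \lambda_0,\partial}(E)$, which is a positive form since $\mathfrak{D}_E$ is positive; symmetrically, $-(1-p)\tilde{a} = (1-\sigma_H(P))\sigma_H(\mathfrak{D}_E)(1-\sigma_H(P))$ is also positive on the complementary subspace.

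Once these hypotheses are in place, Lemma \ref{oddredielaclacladcompactgamma} produces a preimage $x \in K^1(\Gamma_X) = K^1(H^\perp \setminus X)$ of $[\sigma_H(2P-1)]$ under $K_1(I_X) \to K_1(C^*(T_HX))$ whose image under the Bott isomorphism $K^1(H^\perp \setminus X) \cong K^0(S(H^\perp))$ is precisely $[p(\mathcal{H}_\partial(E))] = [\sigma_H(P)(\bigoplus_k (p_\Gamma^*H)^{\otimes_\C^{\rm sym} k} \otimes E)]$ on $S(H^\perp)$, which is the statement. The main technical hurdle is rigorously establishing the global finite-rank property of $\sigma_H(P)$ on $S(H^\perp)$ — reconciling the local, symbol-algebraic Hermite condition with the global Fock bundle realization of $\mathcal{H}$ — which relies crucially on the regularity assumption to furnish a globally defined adapted complex structure on $p_\Gamma^* H$. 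Once this is in hand, the remainder of the argument reduces to formal manipulations in the Heisenberg calculus and in $KK$-theory encoded by Lemma \ref{oddredielaclacladcompactgamma}.
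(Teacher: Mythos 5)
Your proposal is correct and follows essentially the same route as the paper: the corollary is obtained by applying Lemma \ref{oddredielaclacladcompactgamma} with $p=\pi_{\musFlat}(\sigma_H(P))$ acting on the Fock bundle $\mathpzc{F}_X\otimes E$ over $S(H^\perp)$, using that the Hermite condition makes $\sigma_H(P)$ a finite-rank projection in each flat orbit representation and that $[\sigma_H(2P-1)]$ corresponds to the projection class of $\sigma_H(P)$ as in the remark following Definition \ref{definilalald}. Your explicit order-raised operator $A=P\mathfrak{D}_E P-(1-P)\mathfrak{D}_E(1-P)$ is just a convenient choice within this argument that makes the commutation and positivity hypotheses of the lemma exact.
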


\part[$K$-homological dualities]{$K$-homological dualities and index theory on Carnot manifolds}
\label{dualityapapadoda}

\begin{center}
{\bf Introduction to part}
\end{center}

To study index theory of $H$-elliptic operators on Carnot manifolds, dualities in $KK$-theory and geometric $K$-homology will play a cardinal role. Much of the technical setup of the earlier parts in this work serves the purpose of describing these dualities in more detail. In the last part of this work we put the various pieces together. We assume that the reader is familiar with $KK$-theory \cite{cumero,higsonprimer, knudjen,kaspnov,meslandbeast,meslandrencompt}, geometric $K$-homology \cite{Baum_Douglas,BDbor,baumvanerpI,Dee1,Ravthesis} and the connections between the two \cite{baumoyono,baumvanerpII,DGI}. The goal of this part is to set the index problem for $H$-elliptic operators in the Carnot calculus in a $K$-homological context via the dualities discussed above in Theorem \ref{commtererar} and apply the $K$-theory computations from the previous Part \ref{part:pseudod}. This part is divided into the following four sections:
\begin{itemize}
\item In Section \ref{geomedadonaodn} we develop a variation of geometric $K$-homology that uses geometric cycles modelled on noncompact spin$^c$-manifolds.
\item In Section \ref{sec:ordinadadp} we study $K$-homological dualities.
\item In Section \ref{sec:indexingener} we conclude index theorems for $H$-elliptic operators from Section \ref{sec:ordinadadp}.
\item In Section \ref{sec:grafadpknapdnarock} we discuss the problem of solving the index problem for graded Rockland sequences.
\end{itemize}

\section{Geometric $K$-homology with coefficients in elliptic complexes}
\label{geomedadonaodn}

In order to construct geometric dualities, we shall first need to construct a slight modification of Baum-Douglas geometric $K$-homology \cite{Baum_Douglas,BDbor}. Fix a finite CW-complex $X$. A geometric cycle on $X$ is a triple $(M,E,f)$, where $M$ is a compact spin$^c$-manifold, $E\to M$ a complex vector bundle and $f:M\to X$ is a continuous map. We recall that Baum-Douglas constructs $K_*^{\rm geo}(X)$ as the group of equivalence classes of geometric cycles modulo a relation generated by the direct sum/disjoint union relation, bordism and vector bundle modification \cite{Baum_Douglas,BDbor}. By the results of \cite{Baum_Douglas,BDbor}, proven in more detail in \cite{baumoyono,baumvanerpII} there is a natural isomorphism 
$$\gamma:K_*^{\rm geo}(X)\to K_*^{\rm an}(X):=KK_*(C(X),\C),$$
that maps the class of the cycle $(M,E,f)$ to the $K$-homology class $f_*[D_E]$ where $[D_E]\in KK_*(C(M),\C)$ is the $K$-homology class of a spin$^c$-Dirac operator on $M$ twisted by $E$. 

As we saw in the preceding parts, the spin$^c$-manifold $\Gamma_X$ plays an important role when $X$ is a regular Carnot manifold with flat orbits. However, $\Gamma_X$ is not compact. For the specific example of contact manifolds, this problem was solved in \cite{baumvanerp} by using the dilation action to reduce to $\Gamma_X^\partial$ which is compact as it is a trivial two-fold cover of $X$ for a compact cooriented contact manifold. In general, the authors see no naturally associated compact manifold constructed from $\Gamma_X$. So we shall utilize a mild modification of Baum-Douglas' geometric $K$-homology modelled on non-compact manifolds with coefficients in elliptic complexes. As discussed in Remark \ref{commenonmodedl} above, this modification fits well with Baum-Douglas' solution to the index problem for elliptic operators. This non-compact analogue of Baum-Douglas' geometric $K$-homology relies on well known results, for instance Bunke's work \cite{bunkerelative}, and is in its own right not overly exciting. We develop it for the sake of simplifying the description of the geometric solution to the index problem for $H$-elliptic operators. A similar model for geometric $K$-homology can be extracted from \cite{emermeyer}.

Recall Atiyah's notion of an elliptic complex:

\begin{definition}
\label{elliptcimadomad}
Let $M$ be a topological space. An elliptic complex on $M$ is a triple $\xi=(E_1,E_2,\sigma)$ where $E_1,E_2\to M$ are complex vector bundles and $\sigma:E_1\to E_2$ is a vector bundle morphism which is an isomorphism outside a compact subset of $M$. An isomorphism of elliptic complexes $(E_1',E_2',\sigma')\cong (E_1,E_2,\sigma)$ consists of isomorphisms $E_1\cong E_1'$ and $E_2\cong E_2'$ compatible with the bundle maps $\sigma$ and $\sigma'$. 

We also introduce the following terminology.
\begin{itemize}
\item The support of $\xi$ or $\sigma$ is defined as
$$\mathrm{supp}(\xi)\equiv \mathrm{supp}(\sigma):=\{x\in M: \sigma(x)\;\mbox{is not an isomorphism}\}.$$
\item If an elliptic complex $(E_1,E_2,\sigma)$ has empty support $\mathrm{supp}(\sigma)=\emptyset$ we say that $(E_1,E_2,\sigma)$ is degenerate.
\item If $(E_1,E_2,\sigma)$ is an elliptic complex on $M\times [0,1]$, we say that the elliptic complexes $(E_1|_{M\times \{0\}},E_2|_{M\times \{0\}},\sigma|_{M\times \{0\}})$ and $(E_1|_{M\times \{1\}},E_2|_{M\times \{1\}},\sigma|_{M\times \{1\}})$ are homotopic. 
\item We say that two elliptic complexes $\xi'=(E_1',E_2',\sigma')$ and $\xi=(E_1,E_2,\sigma)$ are equivalent if there are degenerate elliptic complexes $\xi_0, \xi_1$ and a homotopy from $\xi\oplus \xi_0$ to $\xi'\oplus \xi_1$.
\end{itemize}
The set of equivalence classes of isomorphism classes of elliptic complexes is denoted by $\tilde{K}^0(M)$. 
\end{definition}

It is well known that $\tilde{K}^0(M)\cong K^0(M)\cong KK_0(\C,C_0(M))$ via natural isomorphisms if $M$ has finite covering dimension. Indeed, if $M$ is compact the isomorphism $\tilde{K}^0(M)\cong K^0(M)$ is implemented by mapping $(E_1,E_2,\sigma)\mapsto [E_1]-[E_2]$; its inverse maps a vector bundle $E\to M$ to the elliptic complex $(E,0,0)$. We note that the isomorphism $\tilde{K}^0(M)\cong KK_0(\C,C_0(M))$ indicates that is is possible to model $\tilde{K}^0(M)$ on triples $(\mathpzc{E}_1,\mathpzc{E}_2,\sigma)$ where $\mathpzc{E}_1$ and $\mathpzc{E}_2$ are $C_0(M)$-Hilbert $C^*$-modules and $\sigma$ is a Fredholm map which is invertible modulo $C_0(M)$-compact operators. We return to this last point below in a more specific context.

\begin{definition}[Cycles for $\tilde{K}_*^{\rm geo}(X)$]
Let $X$ be a finite CW-complex. A cycle for $\tilde{K}_*^{\rm geo}(X)$ (or a Baum-Douglas cycle with coefficients in elliptic complexes) is a triple $(M,\xi,f)$ where 
\begin{enumerate}
\item $M$ is a spin$^c$-manifold;
\item $\xi$ is an elliptic complex on $M$; and
\item $f:M\to X$ is a continuous map.
\end{enumerate}
Two cycles $(M,\xi,f)$ and $(M',\xi',f')$ for $\tilde{K}_*^{\rm geo}(X)$ are said to be isomorphic if there is a spin$^c$-preserving diffeomorphism $\varphi:M\to M'$ such that $\varphi^*\xi'=\xi$ and $f=f'\circ \varphi$.
\end{definition}

We shall only concern ourselves with $X$ being a finite CW-complex, but in principle one can modify the construction to non-compact $X$, but still locally compact. For instance, compactly supported cycles would be defined as above with the additional requirement that $f$ has compact range and locally finite cycles would be defined as above with the additional requirement that $f$ is proper on the support of $\xi$. The reader can find details on such variations in \cite{emermeyer}.

\begin{example}
\label{knasdpandpinas}
Let us give an example of a Baum-Douglas cycle with coefficients in elliptic complexes that is prototypical for how they shall be used in this work. Assume that $X$ is an $\pmb{F}$-regular Carnot manifold. Take a relative cycle $(p,q,u)$ for $K_0(I_X)$, i.e. $p,q\in \mathcal{M}(I_X)$ are projections and $u\in  \mathcal{U}(I_X)$ is a unitary muliplier with $upu^*-q\in I_X$. Assume for simplicity that $p$ and $q$ are $C_0(\Gamma_X)$-locally compact, i.e. $g p, gq\in I_X$ for any $g\in C_0(\Gamma_X)$. Take a bundle of flat representation $\mathcal{H}\to \Gamma_X$. We then consider the triple 
$$(p\mathcal{H},q\mathcal{H},qup).$$
Since  $p$ and $q$ are $C_0(\Gamma_X)$-locally compact, $p\mathcal{H},q\mathcal{H}\to \Gamma_X$ are complex vector bundles and since $upu^*-q\in I_X$ the morphism $qup:p\mathcal{H}\to q\mathcal{H}$ is an isomorphism outside a compact subset (where its inverse is $pu^*q$). 

The prototypical example of relevance for this work is the Baum-Douglas cycle with coefficients in elliptic complexes of the form 
$$(\Gamma_X,(p\mathcal{H},q\mathcal{H},qup),p_\Gamma),$$
where $p_\Gamma:\Gamma_X\to X$ is the projection. We equipp $\Gamma_X$ with the spin$^c$-structure from Proposition \ref{spincongamma}. Cycles of this form are constructed from $H$-elliptic operators by means of the methods of Section \ref{sec:ktheoomon}.
\end{example}

To construct a $K$-homology group, we need to construct an equivalence relation. We proceed as in \cite{Baum_Douglas,BDbor}. By an abuse of notation we shall mean isomorphism class of a geometric cycle when referring to a geometric cycle and by a set of geometric cycles we tacitly mean the corresponding set of isomorphism classes of geometric cycles.

\begin{definition}[The direct sum/disjoint union relation for $\tilde{K}_*^{\rm geo}(X)$]
Let $X$ be a finite CW-complex. For two cycles for $\tilde{K}_*^{\rm geo}(X)$ of the form $(M,\xi,f)$ and $(M,\xi',f)$ we declare that 
$$(M,\xi,f)\dot{\cup} (M,\xi',f)\sim_{didi} (M,\xi+\xi',f).$$
\end{definition}

\begin{definition}[The degenerate relation for $\tilde{K}_*^{\rm geo}(X)$]
Let $X$ be a finite CW-complex. A cycle $(M_0,\xi_0,f_0)$ for $\tilde{K}_*^{\rm geo}(X)$ is said to be degenerate if $\xi_0$ is degenerate. We say that two cycles $(M,\xi,f)$ and $(M',\xi',f')$ for $\tilde{K}_*^{\rm geo}(X)$ are degenerately equivalent if there are degenerate cycles $(M_0,\xi_0,f_0)$ and $(M_1,\xi_1,f_1)$ for $\tilde{K}_*^{\rm geo}(X)$ with 
$$(M,\xi,f)\dot{\cup}(M_0,\xi_0,f_0)=(M',\xi',f')\dot{\cup}(M_1,\xi_1,f_1).$$
\end{definition}

\begin{definition}[The bordism relation for $\tilde{K}_*^{\rm geo}(X)$]
Let $X$ be a finite CW-complex. A cycle with boundary for $\tilde{K}_*^{\rm geo}(X)$ is a triple $(W,\xi,f)$ where 
\begin{enumerate}
\item $W$ is a spin$^c$-manifold with boundary;
\item $\xi$ is an elliptic complex on $W$; and
\item $f:W\to X$ is a continuous map.
\end{enumerate}
The cycle $\partial(W,\xi,f):=(\partial W,\xi|_{\partial W},f|_{\partial W})$ is said to be nullbordant. 

We say that two cycles $(M,\xi,f)$ and $(M',\xi',f')$ for $\tilde{K}_*^{\rm geo}(X)$ are bordant if there is a cycle with boundary $(W,\xi,f)$ for $\tilde{K}_*^{\rm geo}(X)$ such that 
$$\partial(W,\xi,f)=(M,\xi,f)\dot{\cup} -(M',\xi',f').$$
We write this as 
$$(M,\xi,f)\sim_{\rm bor}(M',\xi',f').$$

The bordism relation is an equivalence relation.
\end{definition}

Before proceeding further we note the strength of the bordism relation in the following two lemmas.

\begin{lemma}
\label{bordismsotosupport}
Let $X$ be a finite CW-complex and $(M,\xi,f)$ a cycle for $\tilde{K}_*^{\rm geo}(X)$. Assume that $M_0\subseteq M$ is an open domain with smooth boundary such that $\mathrm{supp}(\xi)\subseteq M_0$. Then there is a bordism
\begin{equation}
\label{firstobovfodo}
(M,\xi,f)\sim_{\rm bor}(M_0,\xi|_{M_0},f|_{M_0}).
\end{equation}
\end{lemma}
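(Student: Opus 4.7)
The strategy is to construct a non-compact spin$^c$-cobordism $W$ with $\partial W=M\sqcup -M_0$ that exploits the degeneracy of $\xi$ on $V:=\overline{M\setminus M_0}$. The key observation is that $\mathrm{supp}(\xi)\subseteq M_0$ forces $\sigma|_V$ to be a vector-bundle isomorphism, so $\xi|_V$ and $\xi|_{\mathrm{int}(V)}$ are degenerate elliptic complexes. In particular, any pullback of $\xi|_{\mathrm{int}(V)}$ along a product projection has empty support, which means that an extension of the cycle to a cap $\mathrm{int}(V)\times[0,\infty)$ introduces no new support and remains a valid cycle-with-boundary.

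The bordism $W$ is built in two steps. First, I would fix a tubular neighborhood $c:\partial M_0\times(-1,1)\hookrightarrow M$ with $c(\partial M_0\times(0,1))\subseteq M_0$ and $c(\partial M_0\times(-1,0])\subseteq V$, and form $\hat W$ from $M\times[0,1]$ by removing a half-disk tubular neighborhood of $\partial M_0\times\{1\}$ on the $V$-side and applying the corner-smoothing theorem in the collar-coordinate patch provided by $c$. The resulting $\hat W$ is a smooth spin$^c$-manifold-with-boundary whose boundary is $(M\times\{0\})\sqcup(M_0\times\{1\})\sqcup(\mathrm{int}(V)\times\{1\})$; the effect of the removal is to peel the $V$-side of the top face away from the $M_0$-side, leaving two disjoint open manifolds at the top. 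Second, I would attach the degenerate cap $\mathrm{int}(V)\times[0,\infty)$ along the $\mathrm{int}(V)\times\{1\}$ component to cancel it, yielding the non-compact spin$^c$-manifold $W$ with $\partial W=M\sqcup -M_0$. Extending $\xi$ along the obvious projections ($\hat W\to M$ on $\hat W$, and $\mathrm{int}(V)\times[0,\infty)\to \mathrm{int}(V)$ on the cap) produces an elliptic complex $\xi_W$ on $W$ whose support is contained in the compact set $\mathrm{supp}(\xi)\times[0,1]$ (the cap contributes nothing because $\xi|_{\mathrm{int}(V)}$ is degenerate) and whose restriction to $\partial W$ recovers $\xi$ on $M$ and $\xi|_{M_0}$ on $M_0$; the map $f$ extends analogously to a continuous $F_W:W\to X$.

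The main technical obstacle is verifying that $W$ is genuinely a smooth spin$^c$-manifold-with-boundary at the pinching locus $\partial M_0\times\{1\}$, since naively the removal creates a codimension-two corner between the two new top-face components and the ``wall'' of the removed half-disk. This reduces to a standard corner-smoothing argument in the collar coordinates supplied by $c$, after which the wall of the removed half-tube merges smoothly with the $\mathrm{int}(V)\times\{1\}$-face and is then absorbed into the cap. Spin$^c$-compatibility throughout is automatic since every piece inherits its spin$^c$-structure from the one on $M$ together with the canonical orientations on $[0,1]$ and $[0,\infty)$. Once this verification is complete, $(W,\xi_W,F_W)$ is a cycle-with-boundary whose boundary equals $(M,\xi,f)\dot\cup -(M_0,\xi|_{M_0},f|_{M_0})$, establishing the bordism \eqref{firstobovfodo}.
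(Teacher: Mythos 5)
Your proposal takes a genuinely different and substantially more complicated route than the paper's, and as written it is not complete. The paper's argument is short: take $W:=M\times[0,1]\setminus\bigl((M\setminus M_0)\times\{0\}\bigr)$. Because $M_0$ is open, $M\setminus M_0$ is closed, so $W$ is an open subset of the manifold-with-boundary $M\times[0,1]$ and hence is automatically a (non-compact) spin$^c$-manifold with boundary, with $\partial W=(M_0\times\{0\})\sqcup(M\times\{1\})$; the restriction of $\xi\times[0,1]$ to $W$ gives the required cycle with boundary, its support lying in the compact set $\mathrm{supp}(\xi)\times[0,1]$. No tubular neighborhoods, half-disk removals, corner-smoothings, or infinite caps are needed. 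The observation you are missing is that deleting a closed subset of the boundary of a manifold with boundary leaves an open submanifold, which is a manifold with boundary in its own right, so no surgery on the cylinder is required at all.

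As for the gaps: your assertion that after removing a half-disk neighborhood of $\partial M_0\times\{1\}$ on the $V$-side and smoothing corners one obtains $\partial\hat{W}=(M\times\{0\})\sqcup(M_0\times\{1\})\sqcup(\mathrm{int}(V)\times\{1\})$ does not account for the wall face (the frontier of the removed region inside $M\times(0,1]$), which is itself part of $\partial\hat{W}$ and is equal to neither $M_0\times\{1\}$ nor $\mathrm{int}(V)\times\{1\}$. After corner-smoothing the wall must merge into the $V$-side component, but the result is then only diffeomorphic to, not equal to, $\mathrm{int}(V)\times\{1\}$, so the attachment of the cap $\mathrm{int}(V)\times[0,\infty)$ requires an explicit identification you do not supply. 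The smoothing itself is also not ``one standard step'': the excised region creates corners both where the curved wall meets $M\times\{1\}$ and where it meets the flat part of the wall, plus a free end accumulating at the excised $\partial M_0\times\{1\}$, and none of this is carried out. (You also land on $\partial W=(-M)\sqcup M_0$ rather than the claimed $M\sqcup(-M_0)$ since you remove at $t=1$; this is harmless, but as stated the sign is off.) Your observation that $\xi$ degenerates on $V$ is correct and would make the cap harmless to the support, but the entire apparatus becomes unnecessary once one uses the paper's simple removal from the bottom face.
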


\begin{proof}
We note that the cycle $(M_0,\xi|_{M_0},f|_{M_0})$ is well defined because $\mathrm{supp}(\xi)\subseteq M_0$. To construct the bordism, we take 
$$W:=M\times [0,1]\setminus ((M\setminus M_0)\times \{0\}).$$
Since $M\setminus M_0$ is closed in $M$, it is clear that $W$ is a spin$^c$-manifold with boundary. We tacitly abuse the notations and write $f$ for the composition of $f:M\to X$ with obvious maps such as the projection $W\to M$. The cycle $(W,(\xi\times [0,1])|_{W},f)$ is a cycle with boundary, and since 
$$\partial (W,(\xi\times [0,1])|_{W},f)=(-(M_0,\xi|_{M_0},f|_{M_0}))\dot{\cup}(M,\xi,f),$$
we conclude that there is a bordism as in Equation \eqref{firstobovfodo}.
\end{proof}

\begin{lemma}
\label{bordismsotorcompact}
Let $X$ be a finite CW-complex and $(M,\xi,f)$ a cycle for $\tilde{K}_*^{\rm geo}(X)$. Assume that $M_0\subseteq M$ is a pre-compact open domain with smooth boundary such that $\mathrm{supp}(\xi)\subseteq M_0$. Consider the spin$^c$-manifold obtained from the doubling construction
$$Z:=2M_0=\overline{M_0}\cup_{\partial M_0}(-\overline{M_0}).$$
Write $\xi=(E_1,E_2,\sigma)$ and define $E_\xi\to Z$ as the vector bundle 
$$E_1\cup_{\sigma|_{\partial M_0}}E_2\to Z,$$
obtained from clutching $E_1$ and $E_2$ along the isomorphism $\sigma|_{\partial M_0}$. Then $(M,\xi,f)$ and $(Z,(E_\xi,0,0),f)$ lie in the same equivalence class of the relation generated by the disjoint union/direct sum relation, degenerate equivalence and the bordism relation.
\end{lemma}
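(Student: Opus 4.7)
The plan is to proceed in three stages, each producing either a bordism or an elementary manipulation of elliptic complexes. First, apply Lemma \ref{bordismsotosupport} directly to $(M,\xi,f)$, giving $(M,\xi,f) \sim_{\rm bor} (M_0, \xi|_{M_0}, f|_{M_0})$. Pre-compactness of $M_0$ in $M$ with smooth boundary makes $\overline{M_0}$ a compact spin$^c$-manifold with boundary $\partial M_0$, so the bundles $E_1, E_2$ restrict to $\overline{M_0}$, the clutched bundle $E_\xi$ on $Z := 2M_0$ is well-defined, and there is a canonical folding map $Z \to \overline{M_0} \subset M$ through which I push forward $f$ (also denoted $f$ below, as in the lemma statement).

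Second, introduce the intermediate cycle $(Z, \eta, f)$, where $\eta := (E_\xi, E_2^Z, \tau)$ with $E_2^Z := E_2 \cup_{\mathrm{id}} E_2 \to Z$ the trivial double of $E_2$, and $\tau|_{\overline{M_0}} := \sigma$, $\tau|_{-\overline{M_0}} := \mathrm{id}_{E_2}$. The clutching identifications for $E_\xi$ have transition function $\sigma|_{\partial M_0}$, so the two prescriptions for $\tau$ agree on $\partial M_0$ and $\tau$ is a globally defined bundle morphism on $Z$ with $\mathrm{supp}(\tau) = \mathrm{supp}(\sigma) \subseteq M_0$. Applying Lemma \ref{bordismsotosupport} to $(Z, \eta, f)$ then yields $(Z, \eta, f) \sim_{\rm bor} (M_0, \eta|_{M_0}, f|_{M_0}) = (M_0, \xi|_{M_0}, f|_{M_0})$.

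Third, show that $(Z, \eta, f) \sim (Z, (E_\xi, 0, 0), f)$ by three moves exploiting the compactness of $Z$: (i) the linear homotopy $\tau_s := (1-s)\tau$ defines a morphism on $Z \times [0,1]$ which, since the support condition is vacuous over a compact base, makes the cylinder a bordism from $(Z, \eta, f)$ to $(Z, (E_\xi, E_2^Z, 0), f)$; (ii) the direct sum/disjoint union relation identifies the latter with $(Z, (E_\xi, 0, 0), f) \dot{\cup}\, (Z, (0, E_2^Z, 0), f)$; (iii) the compact spin$^c$-manifold with boundary $W := \overline{M_0} \times [0,1]$, with corners rounded so that $\partial W \cong Z$ as spin$^c$-manifolds, equipped with the pullback bundle $p_1^* E_2$ (which restricts on $\partial W$ to $E_2^Z$), exhibits $(Z, (0, E_2^Z, 0), f)$ as nullbordant.

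The principal obstacle is the geometric bookkeeping in move (iii): one must verify that smoothing the corners of $\overline{M_0} \times [0,1]$ identifies its boundary with $Z$ as a spin$^c$-manifold, so that the opposite outward normals at the two end-faces induce the required sign flip between the two copies of $\overline{M_0}$ making up $Z$, and that $p_1^* E_2$ restricts on $\partial W$ precisely to $E_2^Z$. Granting this, concatenating the three stages produces the desired equivalence $(M, \xi, f) \sim (Z, (E_\xi, 0, 0), f)$ within the relations generated by disjoint union/direct sum, degeneracy, and bordism.
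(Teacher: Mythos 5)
Your proof is correct, and it takes a genuinely different route from the paper's. Both arguments rest on Lemma \ref{bordismsotosupport}, but you exploit it symmetrically: you apply it once from $(M,\xi,f)$ and once from the intermediate cycle $(Z,\eta,f)$ so that both land on the common cycle $(M_0,\xi|_{M_0},f|_{M_0})$, and you then discharge the residual $\tau$ on $Z$ by a homotopy, a direct-sum split, and the nullbordism of $(Z,(0,E_2^Z,0),f)$ via $\overline{M_0}\times[0,1]$. The paper applies Lemma \ref{bordismsotosupport} only once, at the very end, and instead constructs a direct bordism from $Z$ to $M_0$ via the cut cylinder $W':=Z\times[0,1]\setminus((Z\setminus M_0)\times\{0\})$; to make the bundles and morphisms match up across that bordism the paper adds an auxiliary summand $(E_2,E_2,\cdot)$ and is then left over with the degenerate complex $(E_2,E_2,1)$ on $M_0$, which is removed by the degenerate-equivalence relation. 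Your version avoids the auxiliary summand and hence never needs degenerate equivalence, relying only on bordism (used transitively) and the disjoint union/direct sum relation. The one step you flag as the ``principal obstacle'' — that $\partial(\overline{M_0}\times[0,1])$ with rounded corners is spin$^c$-isomorphic to $Z=\overline{M_0}\cup_{\partial M_0}(-\overline{M_0})$ and that $p_1^*E_2$ restricts to $E_2^Z$ — is exactly the content implicitly used in the paper's claim that $(Z,(0,E_2,0),f)=\partial(M_0\times[0,1],(0,E_2\times[0,1],0),f)$, so nothing additional is required there beyond the paper's own (tacit) bookkeeping.
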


In particular,  a cycle with elliptic complexes $(M,\xi,f)$ is equivalent to a cycle of the form $(Z,(E_\xi,0,0),f)$ where $(Z,E_\xi,f)$ is an ordinary Baum-Douglas geometric cycle (with a compact spin$^c$-manifold and vector bundle coefficients).

\begin{proof}
We can extend $E_1,E_2\to M_0$ to vector bundles on $Z$ by gluing along the identity map on $\partial M_0$. By an abuse of notations we denote these also by $E_1,E_2\to Z$. By construction, $E_\xi|_{M_0}=E_1|_{M_0}$ and $E_\xi|_{Z\setminus M_0}=E_2|_{Z\setminus M_0}$. The morphism $\sigma$ glues together with the identity on $Z\setminus M_0$ to a vector bundle morphism 
$$\tilde{\sigma}:E_\xi\to E_2,$$
over $Z$. Note that the support of $\sigma$ in $M_0$ is unaltered by this extension procedure. 
Next, we prove that 
$$(Z,(E_\xi,0,0),f)\sim_{\rm bor} (Z,(E_\xi,0,0),f)\dot{\cup} (Z,(0,E_2,0),f).$$
This follows from the fact that $(Z,(0,E_2,0),f)=\partial (M_0\times [0,1],(0,E_2\times [0,1],0),f)$. 
We have a direct sum/disjoint union relation
$$(Z,(E_\xi,0,0),f)\dot{\cup} (Z,(0,E_2,0),f)\dot{\cup}(Z,(E_2,E_2,0),f)\sim_{didi} (Z,(E_\xi\oplus E_2,E_2\oplus E_2,0),f).$$
Since homotopy of elliptic complexes induces a bordism, and $Z$ is compact, we have a bordism 
$$(Z,(E_\xi\oplus E_2,E_2\oplus E_2,0),f)\sim_{\rm bor} \left(Z,(E_\xi\oplus E_2,E_2\oplus E_2,\tilde{\sigma}\oplus 1),f\right).$$
Consider the spin$^c$-manifold with boundary
$$W':=Z\times [0,1]\setminus ((Z\setminus M_0\times \{0\}).$$
The cycle with boundary 
$$\left(W',\left((E_\xi\oplus E_2,E_2\oplus E_2,\tilde{\sigma}\oplus 1)\times [0,1]\right)|_{W'},f\right)$$
implements a bordism
$$\left(Z,(E_\xi\oplus E_2,E_2\oplus E_2,\tilde{\sigma}\oplus 1),f\right)\sim_{\rm bor}(M_0,\xi|_{M_0}+(E_2,E_2,1),f),$$
because $\tilde{\sigma}|_{M_0}=\sigma$. Since $(E_2,E_2,1)$ is degenerate, we have a degenerate equivalence 
$$(M_0,\xi|_{M_0}+(E_2,E_2,1),f)\sim (M_0,\xi|_{M_0},f).$$

Combining these constructions with the bordism of Lemma \ref{bordismsotosupport}, we see that $(M,\xi,f)$ and $(Z,(E_\xi,0,0),f)$ lie in the same equivalence class of the relation generated by the disjoint union/direct sum relation, degenerate equivalence and the bordism relation.
\end{proof}

\begin{remark}
Lemma \ref{bordismsotorcompact} is akin to the ideas appearing in Baum-Douglas' geometric proof of the Atiyah-Singer index theorem, discussed in Remark \ref{commenonmodedl} above. Indeed, if $A:C^\infty(X;E_1)\to C^\infty(X;E_2)$ is an elliptic pseudodifferential operator on a compact manifold $X$, consider the spin$^c$-manifold $M:=T^*X$ and the elliptic complex $\xi_A=(p^*E_1,p^*E_2,\sigma(A))$. Here $p:M=T^*X\to X$ denotes the projection. If we take $M_0$ to be the ball bundle in $T^*X$, Lemma \ref{bordismsotorcompact} provides a cycle $(Z,E_{\xi_A},p\cup_{\partial M_0}p)$ bordant to $(T^*X,\xi_A,p)$ which coincides on the nose with Baum-Douglas' geometric cycle representing the index problem for elliptic pseudodifferential operators \cite[Part 5]{Baum_Douglas}. In situations where there is no natural place to clutch, one might argue that not clutching produces a more natural construction.
\end{remark}

\begin{definition}[Vector bundle modification for $\tilde{K}_*^{\rm geo}(X)$]
Let $X$ be a finite CW-complex. Consider a cycle $(M,\xi,f)$ for $\tilde{K}_*^{\rm geo}(X)$ and a spin$^c$-vector bundle $V\to M$ of even rank. Write $M^V:=S(V\oplus 1_\R)$ for the sphere bundle in some metric on $V$, where $1_\R\to M$ denotes the trivial line bundle. Let $\beta_V\to M^V$ denote the Bott bundle (cf. \cite{Baum_Douglas,BDbor}) and let $p_V:M^V\to M$ denote the projection. 

The vector bundle modification of $(M,\xi,f)$ along $V$ is the cycle for $\tilde{K}_*^{\rm geo}(X)$ defined by 
$$(M,\xi,f)^V:=(M^V,p_V^*\xi\otimes \beta_V,f\circ p_V).$$ 
\end{definition}

We can now define the geometric $K$-homology group with coefficients in elliptic complexes. 

\begin{definition}
Let $X$ be a finite CW-complex. Write $\sim_{\rm BD}$ for the equivalence relation on the set of isomorphism classes of cycles for $\tilde{K}_*^{\rm geo}(X)$ generated by 
\begin{itemize}
\item The disjoint union/direct sum relation
\item Degenerate equivalence
\item The bordism relation
\item Vector bundle modification
\end{itemize}
Let $\tilde{K}_*^{\rm geo}(X)$ denote the set of equivalence classes under $\sim_{\rm bor}$ on the isomorphism classes of cycles for $\tilde{K}_*^{\rm geo}(X)$.
\end{definition}

\begin{theorem}
\label{isomofopsmdapada}
Let $X$ be a finite CW-complex. The set $\tilde{K}_*^{\rm geo}(X)$ forms a $\Z/2$-graded abelian group under disjoint union graded by parity of dimension. Moreover, the map at the level of cycles 
\begin{equation}
\label{isomorpkfmaoadladlda}
(M,E,f)\mapsto (M,(E,0,0),f),
\end{equation}
defines an isomorphism of $\Z/2$-graded abelian groups
$$K_*^{\rm geo}(X)\cong \tilde{K}_*^{\rm geo}(X).$$
Its inverse is the mapping 
$$(M,\xi,f)\mapsto (Z,E_\xi,f),$$
where $(Z,E_\xi,f)$ is constructed as in Lemma \ref{bordismsotorcompact}.
\end{theorem}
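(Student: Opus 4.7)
\textbf{Proof plan for Theorem \ref{isomofopsmdapada}.}

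The plan is to proceed in four stages: group structure, well-definedness of the comparison map, well-definedness of the candidate inverse, and verification that the two are mutually inverse.

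First I would establish that $\tilde{K}_*^{\rm geo}(X)$ is a $\Z/2$-graded abelian group under disjoint union. Associativity and commutativity are immediate at the level of cycles, with the empty cycle serving as the identity. For inverses, given a cycle $(M,\xi,f)$, one checks that $(M,\xi,f) \dot{\cup} (-M,\xi,f)$ bounds: the bordism is implemented by $(M \times [0,1], \xi \times [0,1], f \circ \mathrm{pr}_M)$ where $M \times [0,1]$ is given its canonical spin$^c$-structure with boundary $M \dot\cup (-M)$. The $\Z/2$-grading by parity of dimension is preserved by all four generating relations: the direct sum/disjoint union and degenerate relations preserve dimension on the nose, bordism fixes the dimension of the boundary, and vector bundle modification along an even-rank bundle preserves parity.

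Next, I would check that the map in \eqref{isomorpkfmaoadladlda} descends to a well-defined homomorphism $\Phi:K_*^{\rm geo}(X)\to \tilde{K}_*^{\rm geo}(X)$. Each of Baum-Douglas' generating relations (disjoint union/direct sum, bordism, vector bundle modification) has an evident analogue among our generators: a compact Baum-Douglas bordism $(W,E,f)$ yields the cycle with boundary $(W,(E,0,0),f)$, and the image of a vector bundle modification $(M^V,p_V^* E\otimes \beta_V, f\circ p_V)$ is by definition the vector bundle modification of $\Phi[(M,E,f)]$. Additivity under disjoint union is manifest.

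The heart of the argument is constructing and validating the inverse $\Psi:\tilde{K}_*^{\rm geo}(X)\to K_*^{\rm geo}(X)$, sending $(M,\xi,f) \mapsto (Z,E_\xi,f_Z)$ as in Lemma \ref{bordismsotorcompact}, where $f_Z:Z=2\overline{M_0}\to X$ is induced by $f|_{\overline{M_0}}$ on each copy. The key point is that this construction is independent, modulo $\sim_{\rm BD}$, of all the auxiliary choices: (i) of a precompact open domain $M_0 \supseteq \mathrm{supp}(\xi)$ with smooth boundary, (ii) of the representatives $E_1, E_2, \sigma$ of $\xi$, and (iii) of the clutching data used to form $E_\xi$. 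For (i), two choices $M_0 \subseteq M_0'$ are related by a bordism constructed by nesting collars, combined with the bordism of Lemma \ref{bordismsotosupport}; for (iii), different clutching bundles over $Z$ differ by a homotopy of isomorphisms on $\partial M_0$, producing an ordinary homotopy of elliptic complexes on $Z$ and hence a bordism of Baum-Douglas cycles. For compatibility with the four generating relations on $\tilde{K}_*^{\rm geo}(X)$: the disjoint union/direct sum relation translates into an isomorphism $E_{\xi \oplus \xi'} \cong E_\xi \oplus E_{\xi'}$ after choosing compatible $M_0$, degenerate cycles yield $E_\xi \cong E_1 \cong E_2$ trivially bordant via $M_0 \times [0,1]$, bordisms of elliptic complexes $(W,\xi,f)$ produce bordisms of doubled cycles by doubling a collar neighborhood of $\mathrm{supp}(\xi) \subseteq W$, and vector bundle modification commutes with the doubling construction up to the Bott bundle on $Z^V = (2\overline{M_0})^V$. \emph{The main obstacle I anticipate is the last item: showing the doubling construction is compatible with vector bundle modification}, since the clutching happens across $\partial M_0$ which after modification becomes $(\partial M_0)^{V|_{\partial M_0}}$, and one must carefully identify $\beta_V|_{(\partial M_0)^V}$ with the clutching data.

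Finally, I would verify $\Psi \circ \Phi = \mathrm{id}$ and $\Phi \circ \Psi = \mathrm{id}$. The first is essentially trivial: for a compact Baum-Douglas cycle $(M,E,f)$ one may take $M_0=M$ in the doubling, so $Z = M \dot\cup (-M)$ with $E_{(E,0,0)} = E \dot\cup 0$, which is bordant via $M \times [0,1]$ to $(M,E,f)$ (after cancelling the zero bundle by the degenerate relation, which in Baum-Douglas' setup is absorbed into the direct sum relation with a zero bundle). The composition $\Phi \circ \Psi$ is the content of Lemma \ref{bordismsotorcompact}, which establishes exactly that $(M,\xi,f) \sim_{\rm BD} (Z,(E_\xi,0,0),f_Z)$. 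Combining these yields the claimed isomorphism, and naturality of $\Phi$ under disjoint union together with the fact that both sides are groups implies $\Phi$ is a group isomorphism.
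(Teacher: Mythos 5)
Your plan is correct and follows essentially the same route as the paper: the paper's proof is only a sketch which establishes the group structure via $-(M,\xi,f)=(-M,\xi,f)$, deduces surjectivity of the map \eqref{isomorpkfmaoadladlda} from Lemma \ref{bordismsotorcompact}, and asserts that "a modification of the proof of Lemma \ref{bordismsotorcompact} shows that the clutching construction $(M,\xi,f)\mapsto(Z,E_\xi,f)$ produces a left inverse." Your four-stage plan is a faithful expansion of that sketch, in particular filling in the implicit well-definedness of $\Psi$ which the paper leaves to the reader, and you correctly identify the genuinely delicate step (compatibility of the doubling/clutching construction with vector bundle modification) that the paper's appeal to "standard techniques" glosses over.
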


\begin{proof}[Sketch of proof]
We only sketh the details in the proof of this theorem as the result follows from standard techniques. Indeed, almost by definition $-(M,\xi,f):=(-M,\xi,f)$ is an inverse modulo bordism under disjoint union so  $\tilde{K}_*^{\rm geo}(X)$ forms a $\Z/2$-graded abelian group. The fact that $K_*^{\rm geo}(X)\cong \tilde{K}_*^{\rm geo}(X)$ via the mapping \eqref{isomorpkfmaoadladlda} follows from that the map is surjective by Lemma \ref{bordismsotorcompact}. A modification of the proof of Lemma \ref{bordismsotorcompact} shows that the clutching construction $(M,\xi,f)\mapsto (Z,E_\xi,f)$ produces a left inverse so the map is injective. 
\end{proof}

\begin{definition}[Analytic assembly of cycles for $\tilde{K}_*^{\rm geo}(X)$]
\label{analalaslaslaslsalaslasl}
Let $(M,\xi,f)$ be a cycle for  $\tilde{K}_*^{\rm geo}(X)$. Write $\xi=(E_1,E_2,\sigma)$. We choose a complete metric on $M$ and consider a complete spin$^c$-Dirac operator $\slashed{D}_M$ on the complex spinor bundle $S_M\to M$. We assume that $E_1$ and $E_2$ are equipped with hermitean structures such that $\sigma$ is unitary outside a compact. A (hermitean) connection for $\xi$ is a pair of (hermitean) connections $(\nabla_{E_1},\nabla_{E_2})$ for $E_1$ and $E_2$, respectively, such that 
$$\nabla_{E_1}=\sigma^*\nabla_{E_2}\sigma,$$ 
outside a compact subset of $M$. Choose a positive function $\chi\in C^\infty_c(M)$ such that $\chi=1$ on the supports of $1-\sigma^*\sigma$ and $\nabla_{E_1}-\sigma^*\nabla_{E_2}\sigma$ and a $\lambda\geq 0$ such that 
$$\lambda>\sup\{\|1-\sigma(x)^*\sigma(x)\|, \|(\nabla_{E_1}-\sigma^*\nabla_{E_2}\sigma)(x)\|: \; x\in M\}.$$

If $(\nabla_{E_1},\nabla_{E_2})$ is a hermitean connection for $\xi=(E_1,E_2,\sigma)$, we define the analytic assembly $\tilde{\gamma}(M,\xi,f)\in K_*^{\rm an}(X)$ as the class of the $K$-cycle
\begin{equation}
\label{deifneifnedinadomlasmpasdm}
\left(\begin{matrix}L^2(M;S_M\otimes E_1)\\\oplus\\ L^2(M;-S_M\otimes E_2)\end{matrix},D_\xi(\lambda\chi+D_\xi^2)^{-1/2}\right)
\end{equation}
where $C(X)$ acts on the Hilbert space by pull back along $f$ and where
$$D_\xi:=\begin{pmatrix}
D_M\otimes \nabla_{E_1}& \sigma^*\\
\sigma&-D_M\otimes \nabla_{E_2}
\end{pmatrix}.$$
\end{definition}

The choice of $\lambda$ is justified from the proof of the following proposition.

\begin{proposition}
The $K$-cycle \eqref{deifneifnedinadomlasmpasdm} defining the analytic assembly $\tilde{\gamma}(M,\xi,f)\in K_*^{\rm an}(X)$ is well defined and the class $\tilde{\gamma}(M,\xi,f)$ does not depend on the choices involved (of $\chi$, $\lambda$, connections or metrics).
\end{proposition}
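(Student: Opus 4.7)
The plan is to verify first that \eqref{deifneifnedinadomlasmpasdm} is a bounded Kasparov cycle and then establish independence from the auxiliary data by a chain of operator homotopies that remain inside the class of Kasparov cycles.

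For well-definedness the first analytic ingredient is essential self-adjointness of $D_\xi$ on $C^\infty_c$-sections. This follows by the Chernoff--Wolf theorem for first-order elliptic operators on a complete Riemannian manifold applied to the diagonal twisted Dirac operator $A:=D_M\otimes\nabla_{E_1}\oplus(-D_M\otimes\nabla_{E_2})$, noting that $D_\xi-A$ is the bounded zeroth-order bundle map $B:=\sigma^*\oplus\sigma$. Next I would establish a uniform lower bound $\lambda\chi+D_\xi^2\geq c\,\mathrm{id}$ for some $c>0$. Expanding $D_\xi^2=A^2+\{A,B\}+B^2$, the anticommutator $\{A,B\}$ is a bundle multiplication operator whose pointwise norm is controlled by $\|\nabla_{E_1}-\sigma^*\nabla_{E_2}\sigma\|$, and which vanishes outside $\supp\chi$; together with $B^2=\sigma^*\sigma\oplus\sigma\sigma^*$ and the explicit quantitative choice of $\lambda$, this forces $\lambda\chi+\{A,B\}+B^2\geq c\,\mathrm{id}$ pointwise on $M$, whence $\lambda\chi+D_\xi^2\geq c\,\mathrm{id}$ by $A^2\geq 0$. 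Consequently $F:=D_\xi(\lambda\chi+D_\xi^2)^{-1/2}$ is a well-defined bounded self-adjoint operator.

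I would then check the Kasparov conditions. The identity $F^2-1=-\lambda\chi(\lambda\chi+D_\xi^2)^{-1}$ exhibits $F^2-1$ as the product of a compactly supported multiplier and a bounded parametrix of a uniformly elliptic operator, hence compact by local elliptic regularity and Rellich. For commutators with $C(X)$ I would work with a dense subalgebra of functions $a$ for which $f^*a$ admits a bounded differential (Lipschitz functions, or restrictions of smooth functions on a smooth ambient manifold containing the subcomplex $X$); for such $a$, $[D_\xi,f^*a]$ is a bounded multiplication operator given by Clifford action, and the standard Baaj--Julg integral formula yields a compact commutator $[F,a]$, the general case following by norm-continuity of $a\mapsto[F,a]$ modulo compacts. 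In the even case, the natural $\mathbb{Z}/2$-grading of the Hilbert sum makes $F$ odd. Independence from the choices of $(\chi,\lambda)$ and of the hermitean connections $\nabla_{E_i}$ is then handled by linear homotopies on the relevant affine parameter spaces: the positivity bound and compactness of supports are preserved throughout, and the resulting norm-continuous families are operator homotopies of bounded Kasparov cycles on a fixed Hilbert space. Independence from the complete metric on $M$ and the induced spin$^c$-Dirac structure on $S_M$ is more subtle since the underlying Hilbert space varies with the metric, and I would appeal to the construction of Bunke \cite{bunkerelative} identifying $L^2$-spinors for homotopic complete metrics via fibrewise unitaries intertwining the two Dirac operators up to a relatively bounded perturbation depending continuously on the interpolation parameter, combined with Kucerovsky's criterion \cite{dantheman} for verifying Kasparov cycle conditions in the non-compact setting.

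The main obstacle will be the uniform analytic control along each homotopy: one must secure uniform positivity of the family $\lambda_t\chi_t+D_{\xi,t}^2$ and uniform local ellipticity so that inverses, square roots, and compact-resolvent estimates vary norm-continuously on the non-compact manifold $M$. This is precisely where the explicit strict inequality imposed on $\lambda$ in the definition and the compact support hypothesis on $\chi$ enter; these make the Bunke/Kucerovsky machinery applicable and guarantee that the homotopy never leaves the class of Kasparov cycles.
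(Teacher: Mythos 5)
Your proof follows the same strategy as the paper: decompose $D_\xi$ into a diagonal twisted Dirac part $A$ and a bounded off-diagonal multiplication $B = \sigma^*\oplus\sigma$, establish self-adjointness from completeness (the paper states this directly rather than citing Chernoff--Wolf, but the content is identical), prove the positivity estimate $\lambda\chi + D_\xi^2 \geq \epsilon\,\mathrm{id}$ using that the cross-terms and $1-\sigma^*\sigma$ are compactly supported and dominated by $\lambda$, and argue independence of the auxiliary data by homotopies in path-connected parameter spaces, appealing to Bunke and Kucerovsky for the metric change. Where the paper then dispatches well-definedness by citing \cite[Proposition 1.13]{bunkerelative} (this is exactly Bunke's notion of ``invertible at infinity''), you attempt to verify the Kasparov conditions by hand, and here there is a slip: the identity $F^2 - 1 = -\lambda\chi(\lambda\chi + D_\xi^2)^{-1}$ does not hold, since $D_\xi$ and $\chi$ do not commute. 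Indeed $F = D_\xi(\lambda\chi + D_\xi^2)^{-1/2}$ is not even self-adjoint on the nose, only modulo compacts, and handling these noncommutativity errors is precisely what Bunke's proposition packages. The discrepancy is a bounded, compactly supported commutator of $\chi$ with $D_\xi$ and hence compact after resolvent smoothing, so your compactness conclusion is recoverable, but the formula as written is false and should be replaced by the standard resolvent-commutator estimate (or, more economically, by the direct citation the paper makes).
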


\begin{proof}
First we prove that $D_\xi$ is self-adjoint. For that purpose, we write 
\begin{equation}
\label{dxisllalasl}
D_\xi:=\begin{pmatrix}
D_M\otimes \nabla_{E_1}& 0\\
0&-D_M\otimes \nabla_{E_2}
\end{pmatrix}+\begin{pmatrix}
0& \sigma^*\\
\sigma&0
\end{pmatrix}.
\end{equation}
Since $D_M$ is a Dirac operator defined from a complete metric the first term is self-adjoint, and the second term is bounded and self-adjoint by construction. The two terms in \eqref{dxisllalasl} anti-commute up to a smooth, compactly supported vector bundle endomorphism, because $\nabla_{E_1}=\sigma^*\nabla_{E_2}\sigma$ outside a compact, so $D_\xi$ is self-adjoint. 

The decomposition \eqref{dxisllalasl} shows that 
$$D_\xi^2=\begin{pmatrix}
1+(D_M\otimes \nabla_{E_1})^2& 0\\
0&1+(D_M\otimes \nabla_{E_2})^2
\end{pmatrix}+V, $$
where 
$$V:=\begin{pmatrix}
1-\sigma^*\sigma&(D_M\otimes \nabla_{E_1})\sigma^*-\sigma^*(D_M\otimes \nabla_{E_2})\\
\sigma (D_M\otimes \nabla_{E_2}) -(D_M\otimes \nabla_{E_1})\sigma&1-\sigma\sigma^*
\end{pmatrix}$$
is a self-adjoint, smooth, compactly supported vector bundle endomorphism. In particular, with $\chi$ and $\lambda$ as in Definition \ref{analalaslaslaslsalaslasl} we can take $\epsilon>0$ so that 
$$\lambda-\epsilon\geq\sup\{\|1-\sigma(x)^*\sigma(x)\|, \|(\nabla_{E_1}-\sigma^*\nabla_{E_2}\sigma)(x)\|: \; x\in M\}.$$
and therefore 
$$\langle (D_\xi^2+\lambda \chi)\varphi,\varphi\rangle_{L^2}\geq \epsilon \langle \varphi,\varphi\rangle_{L^2},\quad \varphi\in C^\infty_c.$$
Therefore, $D_\xi^2+\lambda \chi$ is invertible and so $D_\xi$ is ``invertible at infinity'' in the terminology of \cite{bunkerelative}. It follows from \cite[Proposition 1.13]{bunkerelative} that the $K$-cycle \eqref{deifneifnedinadomlasmpasdm} is well defined. Moreover, the choices involved in the construction are all from pathconnected spaces that therefore does not affect the class in $K_*^{\rm an}(X)$.
\end{proof}

By combining \cite{bunkerelative} with Kucerovsky's theorem \cite{dantheman}, we in fact have the following result. We write $C_g(M)$ for the $C^*$-algebra of bounded continuous function defined in \cite{bunkerelative}. 

\begin{proposition}
\label{anaalalsaaskasprood}
Let $(M,\xi,f)$ be a cycle for $K_*^{\rm an}(X)$ with $\xi=(E_1,E_2,\sigma)$. Take a complete spin$^c$-Dirac operator $\slashed{D}_M$ on $M$ and write $[\slashed{D}_M]\in KK_*(C_0(M),\C)$ for its associated $K$-homology class. We also define the $KK$-class
$$[M,\xi]:=\left[\left(C_0(M,E_1\oplus E_2),\begin{pmatrix}0&\sigma^*\\\sigma&0\end{pmatrix}\right)\right]\in KK_0^{M}(C_{g}(M),C_0(M)).$$
Then the analytic assembly of $(M,\xi,f)$ can be written as a Kasparov product
$$\tilde{\gamma}(M,\xi,f)=f_*\left([M,\xi]\otimes_{C_0(M)}[\slashed{D}_M]\right)\in K_*^{\rm an}(X).$$
\end{proposition}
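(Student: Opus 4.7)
The plan is to recognize $D_\xi$ as an unbounded representative of the Kasparov product $[M,\xi]\otimes_{C_0(M)}[\slashed{D}_M]$ via Kucerovsky's theorem \cite{dantheman}, and then invoke Bunke's invertibility-at-infinity principle \cite[Proposition 1.13]{bunkerelative} to identify the bounded transform used in Definition \ref{analalaslaslaslsalaslasl} with the one produced by Kasparov's construction.

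First I would represent $[\slashed{D}_M]$ by the canonical unbounded Kasparov module $(L^2(M,S_M),\slashed{D}_M)$; completeness of the metric ensures essential self-adjointness, and $C_c^\infty(M)\subseteq C_0(M)$ acts with bounded commutators. The class $[M,\xi]$ is already in bounded form, namely $(C_0(M,E_1\oplus E_2),F)$ with $F=\bigl(\begin{smallmatrix}0&\sigma^*\\\sigma&0\end{smallmatrix}\bigr)$. The internal tensor product produces the Hilbert space $L^2(M,S_M\otimes(E_1\oplus E_2))$, and the natural connection-theoretic lift of $\slashed{D}_M$ using $(\nabla_{E_1},\nabla_{E_2})$, combined with the external action of $F$, yields precisely the operator $D_\xi$ from Definition \ref{analalaslaslaslsalaslasl}.

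The heart of the argument is the verification of Kucerovsky's hypotheses for the proposed product $(L^2(M,S_M\otimes(E_1\oplus E_2)),D_\xi)$. The connection condition reduces to boundedness of the graded commutator of $D_\xi$ with creation operators $T_e$ for $e\in C^\infty_c(M,E_1\oplus E_2)$, which is immediate since $\nabla_{E_i}e$ and $\sigma e$ remain smooth and compactly supported. Domain invariance and essential self-adjointness of $D_\xi$ were already checked in the paragraph following Definition \ref{analalaslaslaslsalaslasl}. For the semi-boundedness condition, a direct computation gives
$$FD_\xi+D_\xi F=\begin{pmatrix}2\sigma^*\sigma&0\\0&2\sigma\sigma^*\end{pmatrix}+R,$$
where $R$ is a compactly supported off-diagonal endomorphism encoding the failure of $\nabla_{E_1}=\sigma^*\nabla_{E_2}\sigma$ outside a compact. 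Since $\sigma$ is unitary off a compact, the diagonal is bounded below modulo compacts by a positive multiple of the identity, and $R$ is a relatively $D_\xi$-bounded, locally compact perturbation. Kucerovsky's theorem then yields that $D_\xi(1+D_\xi^2)^{-1/2}$ represents the Kasparov product $[M,\xi]\otimes_{C_0(M)}[\slashed{D}_M]\in KK_*(C_g(M),\C)$.

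Finally, I would replace $D_\xi(1+D_\xi^2)^{-1/2}$ by the bounded transform $D_\xi(\lambda\chi+D_\xi^2)^{-1/2}$ appearing in \eqref{deifneifnedinadomlasmpasdm}. As established following Definition \ref{analalaslaslaslsalaslasl}, the choice of $\chi$ and $\lambda$ guarantees $D_\xi^2+\lambda\chi\geq\epsilon$, so $D_\xi$ is invertible at infinity in the sense of \cite[Section 1]{bunkerelative}; \cite[Proposition 1.13]{bunkerelative} then yields equality of the two bounded transforms in $KK_*(C_g(M),\C)$. Restricting the $C_g(M)$-action along $f^*:C(X)\to C_g(M)$ (i.e. applying $f_*$) produces the asserted identity. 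The main obstacle I anticipate is the careful verification of Kucerovsky's semi-boundedness condition, which hinges precisely on the hermitean-connection hypothesis $\nabla_{E_1}=\sigma^*\nabla_{E_2}\sigma$ outside a compact and turns the defect $R$ into a relatively bounded, locally compact perturbation rather than an unbounded one.
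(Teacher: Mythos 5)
Your proposal is correct and takes exactly the approach the paper intends: the paper omits a detailed argument and simply asserts that the proposition follows by combining \cite{bunkerelative} with Kucerovsky's theorem \cite{dantheman}, and you have supplied precisely that verification, recognizing $D_\xi$ as an unbounded representative via Kucerovsky's connection and positivity conditions and then invoking Bunke's invertibility-at-infinity to pass to the modified bounded transform. The only detail worth flagging is that Kucerovsky's theorem as originally stated assumes both factors unbounded, so one should observe explicitly that the bounded first factor $[M,\xi]$ can be handled by the standard Connes--Skandalis connection conditions (of which Kucerovsky's criteria are an unbounded refinement), but this does not affect the substance of your argument.
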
 

\begin{lemma}
\label{anasdisso}
The analytic assembly of cycles with coefficients in the elliptic complexes respects the disjoint union/direct sum relation and degenerate equivalence.
\end{lemma}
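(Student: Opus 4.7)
The proof naturally breaks into three compatibility verifications, two of which are essentially bookkeeping and one of which carries the real content. First, I would address the disjoint union $(M,\xi,f)\dot{\cup}(M',\xi',f')$ by choosing all the auxiliary data in Definition \ref{analalaslaslaslsalaslasl} (complete metric, spin$^c$-Dirac operator, hermitean structures on $E_1,E_2$, hermitean connections for $\xi$, the cut-off $\chi$ and the scalar $\lambda$) as the disjoint union of the corresponding data for the two pieces. The resulting Hilbert space splits orthogonally, the representation of $C(X)$ splits, and the operator $D_{\xi\dot{\cup}\xi'}$ is block-diagonal equal to $D_\xi\oplus D_{\xi'}$; consequently the bounded transform is block-diagonal as well and the Kasparov cycle is a direct sum of the cycles representing $\tilde{\gamma}(M,\xi,f)$ and $\tilde{\gamma}(M',\xi',f')$.

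Second, for the direct sum $(M,\xi,f)\dot{\cup}(M,\xi',f)\sim_{didi}(M,\xi+\xi',f)$ on the same underlying manifold, I would use the same $D_M$ and $f$ on $M$ for both summands, split the bundle data as $(E_1\oplus E_1',E_2\oplus E_2',\sigma\oplus \sigma')$, and choose hermitean connections, $\chi$ and $\lambda$ compatibly. As in the disjoint union case, the operator $D_{\xi+\xi'}$ becomes block-diagonal and its bounded transform decomposes accordingly, so the $K$-homology class agrees with $\tilde{\gamma}(M,\xi,f)+\tilde{\gamma}(M,\xi',f)$. Both of these reductions use only that the formulas of Definition \ref{analalaslaslaslsalaslasl} are manifestly additive under orthogonal decomposition of the input data.

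The substantive step is showing that a degenerate cycle $(M_0,\xi_0,f_0)$, with $\xi_0=(E_1,E_2,\sigma_0)$ and $\sigma_0$ a vector bundle isomorphism on all of $M_0$, has vanishing analytic assembly. My plan is to first exploit the freedom in the choice of hermitean structure on $E_1$: writing the polar decomposition of $\sigma_0$ relative to auxiliary metrics and rescaling the metric on $E_1$ along the path of positive endomorphisms $(\sigma_0^*\sigma_0)^{-t}$, $t\in [0,1]$, deforms the hermitean structure through a continuous family to one for which $\sigma_0$ is fibrewise unitary; the analytic assembly is constant along such a deformation. With $\sigma_0$ unitary, transporting a hermitean connection $\nabla_{E_2}$ to $\nabla_{E_1}:=\sigma_0^*\nabla_{E_2}\sigma_0$ yields a hermitean connection for $\xi_0$ that is globally compatible with $\sigma_0$, so the compactly supported error term $V$ in the expansion of $D_{\xi_0}^2$ vanishes identically. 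Invoking Proposition \ref{anaalalsaaskasprood} it then suffices to show that
\[ [M_0,\xi_0]=\left[\left(C_0(M_0,E_1\oplus E_2),\begin{pmatrix}0&\sigma_0^*\\\sigma_0&0\end{pmatrix}\right)\right]\in KK_0^{M_0}(C_g(M_0),C_0(M_0)) \]
is zero; but unitarity of $\sigma_0$ makes the displayed odd operator a self-adjoint involution that commutes with the diagonal $C_g(M_0)$-action by scalar multiplication, hence the Kasparov module is degenerate.

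The hard part is the third step, in particular the justification that rescaling the hermitean metric on $E_1$ does not alter $\tilde{\gamma}(M_0,\xi_0,f_0)$. Concretely, one must verify that the identification of the rescaled $L^2$-section space with the original is a unitary equivalence of Kasparov modules up to a bounded homotopy of operators, so that the bounded transform of $D_{\xi_0}$ varies norm-continuously; this uses the invertibility-at-infinity framework of Bunke \cite{bunkerelative}. The remaining verifications are straightforward once this continuity is in hand.
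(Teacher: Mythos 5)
Your treatment of the disjoint union and direct sum relations is correct and essentially matches the paper's, which simply declares this part immediate. For degenerate equivalence you take a genuinely different -- though closely related -- route. The paper's proof works directly at the level of the analytic cycle: after assuming $\sigma$ unitary and $\nabla_1=\sigma^*\nabla_2\sigma$ globally, it exhibits an explicit symmetry $F=\begin{pmatrix}0&i\sigma^*\\ -i\sigma&0\end{pmatrix}$ on the Hilbert space that commutes with $C(X)$, anti-commutes with $D_\xi$, and then invokes the argument of \cite[Theorem 1.14]{bunkerelative}. You instead factor through Proposition~\ref{anaalalsaaskasprood}, reducing to showing that the localized class $[M_0,\xi_0]\in KK_0^{M_0}(C_g(M_0),C_0(M_0))$ vanishes, and then observe that when $\sigma_0$ is unitary the module $\left(C_0(M_0,E_1\oplus E_2),\begin{pmatrix}0&\sigma_0^*\\\sigma_0&0\end{pmatrix}\right)$ is degenerate: the odd operator is a self-adjoint involution commuting with the diagonal $C_g(M_0)$-action. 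Both arguments are valid. Yours is arguably cleaner in that the degeneracy lives entirely in a small symbol-level Kasparov module rather than on the full $L^2$-cycle with the invertibility-at-infinity machinery; in exchange it leans on Proposition~\ref{anaalalsaaskasprood}, which itself relies on Kucerovsky's criterion.

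One small point: the closing paragraph of your plan spends effort on justifying that rescaling the hermitean metric on $E_1$ along $(\sigma_0^*\sigma_0)^{-t}$ does not alter the analytic assembly. This is already covered by the unlabelled proposition following Definition~\ref{analalaslaslaslsalaslasl}, which asserts that $\tilde{\gamma}(M,\xi,f)$ is independent of the choices of $\chi$, $\lambda$, connections and metrics (including the hermitean metrics on $E_1$ and $E_2$). You can simply cite that independence, which is exactly how the paper licenses the phrase ``we can assume that $\sigma$ is unitary,'' rather than re-deriving the norm-continuity of the bounded transforms by hand.
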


\begin{proof}
It is immediate that the disjoint union/direct sum relation. As for respecting  degenerate equivalence, it suffices to prove that $\tilde{\gamma}(M,\xi,f)=0\in K_*(X)$ if $\xi$ is degenerate. Indeed, if $\xi=(E_1,E_2,\sigma)$ is degenerate we can assume that $\sigma$ is unitary and that $\nabla_1=\sigma^*\nabla_2\sigma$ on all of $M$. We define the symmetry 
$$F:=\begin{pmatrix}
0& i\sigma^*\\
-i\sigma&0
\end{pmatrix}.$$
The symmetry $F$ commutes with $C(X)$ and anti-commutes with $D_\xi$ and so the argument proving \cite[Theorem 1.14]{bunkerelative} shows that $\tilde{\gamma}(M,\xi,f)=0\in K_*(X)$. 
\end{proof}

\begin{lemma}
\label{anasborvb}
The analytic assembly of cycles with coefficients in the elliptic complexes respects bordism and vector bundle modification.
\end{lemma}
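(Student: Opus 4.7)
The plan is to prove bordism invariance and vector bundle modification invariance separately, in both cases leveraging the Kasparov-product description of analytic assembly provided by Proposition \ref{anaalalsaaskasprood} so that each invariance reduces to a standard identity in $KK$-theory for non-compact manifolds.

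For bordism invariance, suppose $(W,\eta,g)$ is a cycle with boundary such that $\partial(W,\eta,g)=(M,\xi,f)\,\dot{\cup}\,-(M',\xi',f')$. First I would attach a cylindrical end: form the complete spin$^c$-manifold $\hat{W}:=W\cup_{\partial W}(\partial W\times[0,\infty))$ and extend $\eta$ translation-invariantly along the cylinder using the degeneration of $\xi,\xi'$ outside compact sets. Choose a complete spin$^c$-Dirac operator $\slashed{D}_{\hat{W}}$ and a continuous extension $\hat{g}:\hat{W}\to X$ of $g$ (possible since $f,f'$ extend constantly along the cylinder). The operator $D_{\hat{\eta}}$ constructed as in Definition \ref{analalaslaslaslsalaslasl} is then self-adjoint and invertible at infinity in the sense of \cite{bunkerelative}, and therefore produces a class in $K_*^{\rm an}(X)$; by the cylindrical product structure the operator on the end decouples as a tensor product of a boundary Dirac with the axial one-variable operator $-i\partial_t$ twisted by $\sigma_{\partial}$. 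Applying \cite[Theorem 1.14]{bunkerelative} (the same vanishing mechanism used in Lemma \ref{anasdisso}, with the roles of bulk and cylinder arranged as in Bunke's relative index argument) together with Kucerovsky's conditions \cite{dantheman} identifies this class, via a boundary/Mayer--Vietoris argument for the pair $(\hat{W},\partial W\times[0,\infty))$, with the difference $\tilde{\gamma}(M,\xi,f)-\tilde{\gamma}(M',\xi',f')$. Since the bulk contribution vanishes (as $D_{\hat\eta}$ gives a degenerate cycle on $\hat{W}$ once the cylindrical part is trivialized), the two analytic assemblies agree.

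For vector bundle modification by a spin$^c$-bundle $V\to M$ of even rank, using Proposition \ref{anaalalsaaskasprood} it suffices to show the factorization identity
\[
[M^V,p_V^*\xi\otimes \beta_V]\otimes_{C_0(M^V)}[\slashed{D}_{M^V}]
=[M,\xi]\otimes_{C_0(M)}[\slashed{D}_M]
\]
in $KK_*^{M}(C_g(M),\C)$ after pushing forward along $f$. I would split $\slashed{D}_{M^V}$ as an unbounded Kasparov product of a vertical family of round-sphere Dirac operators along the fibres of $p_V:M^V\to M$ (which are $2k$-spheres carrying their standard spin$^c$-structure) with $\slashed{D}_M$; this uses the standard connection splitting on a sphere bundle. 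The fibrewise Dirac operator twisted by the Bott bundle $\beta_V$ represents the Bott class $[\beta_V,\slashed{D}_{S^{2k}}]\in KK_0(C_0(M),C_0(M))$, which equals $1_{C_0(M)}$ by fibrewise Bott periodicity. Cancelling the trivial factor $p_V^*\xi$ versus $\xi$ under the product then yields the desired equality. Associativity of the Kasparov product handles the bookkeeping.

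The main obstacle I expect is the bordism step, since making Bunke's ``invertible at infinity'' machinery interact cleanly with the analytic assembly of elliptic complexes (rather than with honest Dirac operators coupled to bundles) requires choosing the hermitean extension of the connection pair $(\nabla_{E_1},\nabla_{E_2})$ across the cylindrical end so as to preserve the off-diagonal intertwining identity $\nabla_{E_1}=\sigma^*\nabla_{E_2}\sigma$ at infinity; this is possible because $\sigma$ and $\sigma'$ are already isomorphisms on a neighbourhood of $\partial W$, but it must be arranged compatibly with the spin$^c$-data to apply Kucerovsky's criterion. By contrast, vector bundle modification is essentially a fibrewise Bott periodicity computation once the Kasparov product decomposition of $\slashed{D}_{M^V}$ is established.
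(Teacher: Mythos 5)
Your treatment of vector bundle modification is essentially sound and in the same spirit as the paper's: decomposing $\slashed{D}_{M^V}$ into a vertical fibrewise sphere operator twisted by $\beta_V$ (which represents $1_{C_0(M)}$ by fibrewise Bott periodicity, the residual part being killed by a symmetry argument as in \cite[Theorem 1.14]{bunkerelative}) and the horizontal $\slashed{D}_M$, modulo the usual curvature-correction and unbounded-product bookkeeping. That half would pass with the details filled in.

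The bordism half, however, has a genuine gap, and it occurs at your first substantive claim. After attaching the cylindrical end and extending $\eta$ translation-invariantly, the resulting operator $D_{\hat{\eta}}$ on $\hat{W}$ is \emph{not} invertible at infinity in Bunke's sense: the restriction $\xi|_{\partial W}$ fails to be invertible on a compact set $K\subseteq\partial W$, so the translation-invariant extension fails to be invertible on $K\times[0,\infty)$, which is a non-compact subset of $\hat{W}$. Concretely, the mechanism of Definition \ref{analalaslaslaslsalaslasl} requires a compactly supported $\chi$ with $\lambda\chi+D_\xi^2\geq\epsilon$, and no such $\chi$ exists here unless the boundary operator is already invertible --- which is exactly the situation one cannot assume, since its class is what must be shown to vanish. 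Consequently the class you want to manipulate is not even defined by your construction, and the subsequent step (``a boundary/Mayer--Vietoris argument identifies this class with the difference of the two assemblies, and the bulk contribution vanishes'') is asserted rather than proved; there is no odd symmetry available on the cylinder to invoke the vanishing mechanism of Lemma \ref{anasdisso}. The paper avoids all of this by reducing bordism invariance to the statement $\tilde{\gamma}(\partial(W,\xi,f))=0$ for a cycle with boundary, and proving it homologically: by the Baum--Douglas--Taylor theorem $\partial[\slashed{D}_W]=[\slashed{D}_{\partial W}]$ for the boundary map of $0\to C_0(W^\circ)\to C_0(\overline{W})\to C_0(\partial W)\to 0$; the boundary maps commute with the Kasparov product against the symbol classes $[W^\circ,\sigma]$ and $[\partial W,\sigma]$ by $C_0(\overline{W})$-linearity; and then $\tilde{\gamma}(\partial(W,\xi,f))=(f\circ\iota)_*\circ\partial\bigl([W^\circ,\sigma]\otimes_{C_0(W^\circ)}[\slashed{D}_W]\bigr)=0$ because $\iota_*\circ\partial=0$ in the six-term exact sequence. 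If you want to salvage a cylindrical-end proof you would need an APS-type or relative-index framework that tolerates a non-invertible boundary operator, which is substantially more machinery than the exact-sequence argument.
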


\begin{proof}
That analytic assembly respects vector bundle modification is proven in the same way as in \cite[Subsection 4.2]{kkbor}. More specifically, using an argument as in \cite[Subsection 4.2]{kkbor} we can write $\tilde{\gamma}((M,\xi,f)^V)=\tilde{\gamma}(M,\xi,f)+x$ where $x$ is represented by a $K$-cycle $(\mathcal{H},F)$ admitting a symmetry commuting with $C(X)$ and anti-commuting with $F$ up to compact operators and the argument proving \cite[Theorem 1.14]{bunkerelative} shows that $x=0\in K_*(X)$.

To show that analytic assembly respects bordism, we need to prove that  if $(W,\xi,f)$ is a cycle with boundary then $\tilde{\gamma}(\partial (W,\xi,f))=0$. Write $\xi=(E_1,E_2,\sigma)$. Write $C_g(W)$ and $C_g(\partial W)$ for the $C^*$-algebras of bounded continuous functions from \cite{bunkerelative}. The Dirac operator defines a $K$-homology class $[\slashed{D}_W]\in KK_*(C_0(W^\circ),\C)$, and similarly, the Dirac operator on the boundary produces a class $[\slashed{D}_{\partial W}]\in KK_*(C_0(\partial W),\C)$. It follows from the Baum-Douglas-Taylor theorem that 
$$\partial[\slashed{D}_W]=[\slashed{D}_{\partial W}],$$
under the boundary mapping induced from the short exact sequence 
\begin{align*}
0\to C_0(W^\circ)\to& C_0(\overline{W})\to C_0(\partial W))\to 0.
\end{align*}
Write $C_{g,0}(W^\circ):=\{f\in C_g(\overline{W}): f|_{\partial W}=0\}$. On the other hand, we have classes 
\begin{align*}
[W^\circ,\sigma]&:=\left[\left(C_0(W^\circ,E_1\oplus E_2),\begin{pmatrix}0&\sigma^*\\\sigma&0\end{pmatrix}\right)\right]\in KK_0^{W^\circ}(C_{g,0}(W^\circ),C_0(W^\circ));\\
[\partial{W},\sigma]&:=\left[\left(C_0(\partial W,E_1\oplus E_2),\begin{pmatrix}0&\sigma^*\\\sigma&0\end{pmatrix}\right)\right]\in KK_0^{\partial{W}}(C_g(\partial{W}),C_0(\partial{W})).
\end{align*}
Here we grade $E_1\oplus E_2$ with $E_1$ as even and $E_2$ as odd. Using $C_0(\overline{W})$-linearity, we have that 
$$[\partial{W},\sigma]\otimes_{C_0(\partial{W})}\partial=\partial \otimes_{C_{g,0}(W^\circ)}[W^\circ,\sigma],$$
where the second boundary mapping is that coming from the short exact sequence 
$$0\to C_{g,0}(W^\circ)\to C_g(\overline{W})\to C_g(\partial W)\to 0.$$
Let $\iota:\partial W\hookrightarrow \overline{W}$ denote the boundary inclusion. Using \cite{bunkerelative} and Kucerovsky's theorem \cite{dantheman}, we have that 
\begin{align*}
\tilde{\gamma}(\partial (W,\xi,f))=&(f\circ \iota)_*([\partial{W},\sigma]\otimes_{C_0(\partial{W})}[\slashed{D}_{\partial W}])=\\
=&(f\circ \iota)_*([\partial{W},\sigma]\otimes_{C_0(\partial{W})}\partial[\slashed{D}_{ W}])=\\
=&(f\circ \iota)_*\circ \partial([W^\circ,\sigma]\otimes_{C_0(W^\circ)}[\slashed{D}_{W}])=0,
\end{align*}
since $\iota_*\circ \partial =0$.

\end{proof}

The following observation shows that assembly of cycles with coefficients in elliptic complexes can by gluing together a compact manifold from the support of the elliptic complex, as in Lemma \ref{bordismsotorcompact}, can be reduced to assembly of an ordinary Baum-Douglas cycle. 

\begin{proposition}
\label{assememdldbordismsotorcompact}
Let $X$ be a finite CW-complex and $(M,\xi,f)$ a cycle for $\tilde{K}_*^{\rm geo}(X)$. Construct $(Z,E_\xi,f)$ as in Lemma \ref{bordismsotorcompact}. Then it holds that 
$$[\tilde{\gamma}(M,\xi,f)]=\gamma[(Z,E_\xi,f)] \in K_*^{\rm an}(X).$$
\end{proposition}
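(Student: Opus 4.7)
The strategy is to parlay Lemma \ref{bordismsotorcompact} through the invariance properties of analytic assembly already proven in Lemmas \ref{anasdisso} and \ref{anasborvb}, and then to identify the analytic assembly of a cycle with trivial differential on a compact manifold with the Baum--Douglas assembly. Concretely, Lemma \ref{bordismsotorcompact} provides a finite chain of disjoint-union/direct-sum moves, degenerate equivalences, and bordisms connecting the cycle $(M,\xi,f)$ to the cycle $(Z,(E_\xi,0,0),f)$, where $(E_\xi,0,0)$ denotes the elliptic complex on the compact spin$^c$-manifold $Z$ with first bundle $E_\xi$, second bundle zero, and zero differential. Since Lemmas \ref{anasdisso} and \ref{anasborvb} show that $\tilde{\gamma}$ is invariant under each of these moves, we obtain
\[
\tilde{\gamma}(M,\xi,f) \;=\; \tilde{\gamma}(Z,(E_\xi,0,0),f) \quad \text{in } K_*^{\rm an}(X).
\]

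The main step is then to compute $\tilde{\gamma}(Z,(E_\xi,0,0),f)$ unfolding Definition \ref{analalaslaslaslsalaslasl}. Since $Z$ is compact, the ``isomorphism outside a compact subset'' condition is vacuous, so $(E_\xi,0,0)$ is a legitimate elliptic complex on $Z$. Taking $\chi \equiv 1$ on $Z$ and any $\lambda > 1$, the operator $D_\xi$ from Definition \ref{analalaslaslaslsalaslasl} collapses to the block matrix
\[
D_\xi \;=\; \begin{pmatrix} D_Z\otimes \nabla_{E_\xi} & 0 \\ 0 & 0 \end{pmatrix}
\]
acting on $L^2(Z;S_Z\otimes E_\xi) \oplus 0 = L^2(Z;S_Z\otimes E_\xi)$, and the bounded transform $D_\xi(\lambda\chi + D_\xi^2)^{-1/2}$ reduces to the bounded transform of the twisted spin$^c$-Dirac operator $D_Z\otimes \nabla_{E_\xi}$ rescaled by a positive invertible function of $D_Z^2$. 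This is homotopic within Kasparov cycles to the bounded transform of $D_Z\otimes \nabla_{E_\xi}$ itself.

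The resulting $K$-homology class is by definition $f_*[D_{Z,E_\xi}]$, which is precisely $\gamma[(Z,E_\xi,f)]$. Combining the two identifications yields the claim. I expect no real obstacle here: Lemma \ref{bordismsotorcompact} does the geometric work, Lemmas \ref{anasdisso}--\ref{anasborvb} do the analytical work, and the identification of $\tilde{\gamma}$ with $\gamma$ in the compact/zero-differential case is a direct unraveling of definitions (the only care needed is that the second summand of the Kasparov cycle is the zero Hilbert space, which is harmless).
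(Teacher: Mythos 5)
Your proposal is correct and follows essentially the same route as the paper: the paper's proof is exactly the combination of Lemma \ref{bordismsotorcompact} with the invariance Lemmas \ref{anasdisso} and \ref{anasborvb}, together with the (definitional, up to the homotopy from $\lambda\chi$ to $1$) identification of $\tilde{\gamma}(Z,(E_\xi,0,0),f)$ with $\gamma[(Z,E_\xi,f)]$, which the paper also invokes in the proof of Theorem \ref{isofromalnalnadlnd}. Your explicit unwinding of Definition \ref{analalaslaslaslsalaslasl} in the compact, zero-differential case just makes that last step more visible.
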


Proposition \ref{assememdldbordismsotorcompact} follows from combining Lemma \ref{bordismsotorcompact} with the Lemmas \ref{anasdisso} and \ref{anasborvb}.

\begin{theorem}
\label{isofromalnalnadlnd}
Let $X$ be a finite CW-complex. The analytic assembly map induces an isomorphism 
$$\tilde{\gamma}:\tilde{K}_*^{\rm geo}(X)\to K_*^{\rm an}(X).$$ 
of $\Z/2$-graded abelian groups. The analytic assembly map $\tilde{\gamma}$ fits into the commuting diagram 
\begin{equation}
\label{commdiagoamadomad}
\begin{tikzcd}
 K_*^{\rm geo}(X) \arrow[rr, "\cong"] \arrow[rd, "\gamma"] &  & \tilde{K}_*^{\rm geo}(X) \arrow[ld, "\tilde{\gamma}"] \\
 & K_*^{\rm an}(X) &
\end{tikzcd}
\end{equation}
with the isomorphism of Theorem \ref{isomofopsmdapada} and the analytic assembly map $\gamma:K_*^{\rm geo}(X)\to K_*^{\rm an}(X)$ of Baum-Douglas.
\end{theorem}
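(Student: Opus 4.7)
The plan is to deduce the theorem directly from the pieces already assembled: well-definedness of $\tilde{\gamma}$ at the level of equivalence classes is provided by Lemmas \ref{anasdisso} and \ref{anasborvb}, while the isomorphism property and commutativity of \eqref{commdiagoamadomad} are obtained by comparing $\tilde{\gamma}$ to the classical Baum-Douglas assembly $\gamma$ through the bijection of Theorem \ref{isomofopsmdapada}.

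First I would verify that $\tilde{\gamma}$ descends to a well-defined group homomorphism $\tilde{K}_*^{\rm geo}(X)\to K_*^{\rm an}(X)$. Disjoint union of cycles corresponds to orthogonal direct sum of the Hilbert space models and therefore to addition in $K_*^{\rm an}(X)$. The disjoint union/direct sum relation and degenerate equivalence are handled by Lemma \ref{anasdisso}, and bordism together with vector bundle modification are handled by Lemma \ref{anasborvb}. Since the relation $\sim_{\rm BD}$ is by definition generated by these four moves, $\tilde{\gamma}$ passes to the quotient as a homomorphism of $\Z/2$-graded abelian groups.

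Next I would check commutativity of the triangle \eqref{commdiagoamadomad}. Under the isomorphism $K_*^{\rm geo}(X)\cong \tilde{K}_*^{\rm geo}(X)$ of Theorem \ref{isomofopsmdapada}, a Baum-Douglas cycle $(M,E,f)$ is sent to $(M,(E,0,0),f)$. Because $\sigma=0$ and the second bundle is trivial, the assembly recipe of Definition \ref{analalaslaslaslsalaslasl} collapses to the usual Dirac cycle on $L^2(M;S_M\otimes E)$ twisted by a connection on $E$, which is exactly the representative of $\gamma[(M,E,f)]$. Hence $\tilde{\gamma}\circ(K_*^{\rm geo}(X)\xrightarrow{\sim}\tilde{K}_*^{\rm geo}(X))=\gamma$, so \eqref{commdiagoamadomad} commutes.

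Finally, to obtain that $\tilde{\gamma}$ is an isomorphism, I would invoke that $\gamma:K_*^{\rm geo}(X)\to K_*^{\rm an}(X)$ is an isomorphism by \cite{baumvanerpII} and that the horizontal arrow in \eqref{commdiagoamadomad} is an isomorphism by Theorem \ref{isomofopsmdapada}. Commutativity of the triangle then forces $\tilde{\gamma}$ to be an isomorphism as well. As an independent sanity check on surjectivity and injectivity coming from the non-compact side, one can use Proposition \ref{assememdldbordismsotorcompact}: given any cycle $(M,\xi,f)$, its inverse image under the isomorphism of Theorem \ref{isomofopsmdapada} is the compactly supported cycle $(Z,E_\xi,f)$ produced by the clutching construction of Lemma \ref{bordismsotorcompact}, and Proposition \ref{assememdldbordismsotorcompact} guarantees $\tilde{\gamma}(M,\xi,f)=\gamma[(Z,E_\xi,f)]$, so the two assembly maps have identical image and kernel once transported across the bijection. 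The only genuinely non-routine point is the well-definedness with respect to bordism in Lemma \ref{anasborvb}; the rest of the argument is a clean two-step diagram chase, and I do not anticipate any surprises beyond care with the noncompact functional-analytic setup already handled via \cite{bunkerelative,dantheman}.
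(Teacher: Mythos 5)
Your proof follows essentially the same approach as the paper: well-definedness via Lemmas \ref{anasdisso} and \ref{anasborvb}, commutativity of the triangle by direct comparison of the two assembly recipes on cycles of the form $(M,(E,0,0),f)$, and the isomorphism property transported across the bijection of Theorem \ref{isomofopsmdapada}. One small imprecision: when $\sigma=0$ on a compact $M$, the operator in Definition \ref{analalaslaslaslsalaslasl} is $D_\xi(\lambda\chi+D_\xi^2)^{-1/2}$ with $\lambda>1$, which is \emph{not} literally the classical bounded transform $D_\xi(1+D_\xi^2)^{-1/2}$ appearing in Baum-Douglas' $\gamma$; they agree only after the operator homotopy $t\mapsto D_\xi((1-t)+t\lambda\chi+D_\xi^2)^{-1/2}$, which is exactly the step the paper flags.
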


\begin{proof}
Firstly, it follows from Lemma \ref{anasdisso} and \ref{anasborvb} that $\tilde{\gamma}$ is well defined. Secondly, by definition, the diagram \eqref{commdiagoamadomad} commutes at the level of cycles up to an operator homotopy from $\lambda\chi$ to $1$ in the definition of analytic assembly. Therefore the diagram commutes. We deduce that $\tilde{\gamma}$ is an isomorphism from Theorem \ref{isomofopsmdapada} and the fact that $\gamma:K_*^{\rm geo}(X)\to K_*^{\rm an}(X)$ is an isomorphism (see \cite{baumoyono} or \cite{baumvanerpII}). 
\end{proof}

\begin{definition}[Chern characters on $\tilde{K}_*^{\rm geo}(X)$]
Consider a cycle $(M,\xi,f)$ for $\tilde{K}_*^{\rm geo}(X)$. Let $[M]\in H_*^{\rm lf}(M)$ denote the fundamental class in the localy finite homology, $\mathrm{Td}(M)\in H^{\rm ev}(M;\mathbb{Q})$ the Todd class and $\ch(\xi)\in H^{\rm ev}_c(M;\mathbb{Q})$ the Chern character in compactly supported cohomology of $[\xi]\in K^0(X)$. We define 
$$\ch(M,\xi,f):=f_*(\ch(\xi)\cap \mathrm{Td}(M)\cap [M])\in H_*(X;\mathbb{Q}).$$
\end{definition}

We note that if $(\nabla_{E_1},\nabla_{E_2})$ is a connection for $\xi=(E_1,E_2,\sigma)$ as in Definition \ref{analalaslaslaslsalaslasl}, then $\ch(\xi)$ is represented in the compactly supported de Rham cohomology group $H^{\rm ev}_{\rm c, dR}(M)$ by Chern-Weil theory as 
$$\ch(\xi)=\left[\mathrm{Tr}_{E_1}\left(\mathrm{e}^{-\frac{\nabla_{E_1}^2}{2\pi i}}\right)-\mathrm{Tr}_{E_2}\left(\mathrm{e}^{-\frac{\nabla_{E_2}^2}{2\pi i}}\right)\right].$$

\begin{theorem}
Let $X$ be a finite CW-complex. The Chern character induces a well defined mapping 
$$\mathrm{ch}:\tilde{K}_*^{\rm geo}(X)\to H_*(X;\mathbb{Q}).$$ 
of $\Z/2$-graded abelian groups. The Chern character is a rational isomorphism.
\end{theorem}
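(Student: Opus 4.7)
\medskip

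The plan is to deduce both well-definedness and the rational isomorphism statement from the corresponding classical results for Baum-Douglas $K$-homology by leveraging Theorem \ref{isomofopsmdapada}. Recall that the latter furnishes an isomorphism $\Phi\colon K_*^{\rm geo}(X)\xrightarrow{\cong}\tilde{K}_*^{\rm geo}(X)$ sending $(M,E,f)$ to $(M,(E,0,0),f)$, with inverse $(M,\xi,f)\mapsto (Z,E_\xi,f)$ where $Z=2M_0$ is the double of a pre-compact open neighborhood $M_0$ of $\mathrm{supp}(\xi)$ with smooth boundary, and $E_\xi=E_1\cup_{\sigma|_{\partial M_0}}E_2$. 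Since the classical Baum-Douglas Chern character $\ch_{\rm BD}\colon K_*^{\rm geo}(X)\to H_*(X;\mathbb{Q})$ is a well-defined rational isomorphism, it will suffice to prove that $\ch$ is well-defined on isomorphism classes of cycles and satisfies the compatibility
\begin{equation*}
\ch(\Phi(M,E,f))=\ch_{\rm BD}(M,E,f)\quad\text{and}\quad \ch(M,\xi,f)=\ch_{\rm BD}(\Phi^{-1}(M,\xi,f)).
\end{equation*}
The first identity is immediate from the definition because $\ch((E,0,0))=\ch(E)$ and $[M]$ is the ordinary fundamental class for $M$ compact. The second identity, once established, shows that $\ch$ descends to $\tilde{K}_*^{\rm geo}(X)$ (via the inverse of $\Phi$) and coincides with $\ch_{\rm BD}\circ\Phi^{-1}$, which is a rational isomorphism.

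The key computation is therefore the second identity. Choose hermitean connections $\nabla_{E_1},\nabla_{E_2}$ for $\xi=(E_1,E_2,\sigma)$ which agree in the sense $\nabla_{E_1}=\sigma^*\nabla_{E_2}\sigma$ outside a compact subset of $M_0$. The doubled manifold $Z$ inherits a canonical spin$^c$-structure, and by extending the connections appropriately across $\partial M_0$ one obtains a hermitean connection $\nabla_{E_\xi}$ on $E_\xi\to Z$ whose Chern--Weil form restricts to $\ch(E_1)$ on one copy of $\overline{M_0}$ and to $\ch(E_2)$ on the other. Consequently
\begin{equation*}
\ch_{\rm BD}(Z,E_\xi,f)=f_*\!\left(\int_{\overline{M_0}}\!\bigl(\ch(E_1)-\ch(E_2)\bigr)\wedge\mathrm{Td}(M_0)\right),
\end{equation*}
which, since $\ch(E_1)-\ch(E_2)$ is exact outside $\mathrm{supp}(\sigma)$, represents $f_*(\ch(\xi)\cap\mathrm{Td}(M)\cap [M])=\ch(M,\xi,f)$ in $H_*(X;\mathbb{Q})$. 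This step uses the standard fact that for an elliptic complex $\xi$ on $M$, the class $\ch(\xi)\in H^{\rm ev}_c(M;\mathbb{Q})$ is represented in compactly supported de Rham cohomology by any closed compactly supported form cohomologous to $\ch(E_1)-\ch(E_2)$, which is the Chern--Simons-type argument underlying the construction of the Chern character for elements of $K^0(M)\cong \tilde{K}^0(M)$.

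It remains to verify well-definedness of $\ch$ on isomorphism classes of cycles (before passing to $\tilde K^{\rm geo}_*$), which is immediate since the Chern character, the Todd class and the fundamental class are all natural under spin$^c$-preserving diffeomorphisms. The fact that $\ch$ descends through the relations generating $\sim_{\rm BD}$ then follows for free from the identity $\ch=\ch_{\rm BD}\circ\Phi^{-1}$ and the known well-definedness of $\ch_{\rm BD}$ on $K_*^{\rm geo}(X)$. Finally, the rational isomorphism property of $\ch$ is inherited directly from the corresponding property of $\ch_{\rm BD}$ (see \cite{Baum_Douglas,baumvanerpII}) combined with Theorem \ref{isomofopsmdapada}. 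The principal technical obstacle is the comparison formula in the middle paragraph; once the Chern--Weil representative of $\ch(E_\xi)$ on the double $Z$ is written down carefully and matched with a compactly supported representative of $\ch(\xi)$ on $M$, the rest of the proof reduces to formal manipulations and citations of classical results.
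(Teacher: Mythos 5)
Your proposal takes a genuinely different route from the paper. The paper establishes well-definedness directly by checking that $\ch$ is invariant under each of the four generating relations: invariance under disjoint union/direct sum and degenerate equivalence is immediate, invariance under vector bundle modification follows from the standard identity $(p_V)_*(\ch[\beta_V]\cap\mathrm{Td}(M^V)\cap[M^V])=\mathrm{Td}(M)\cap[M]$, and invariance under bordism follows from naturality of the Todd class and the vanishing of $\iota_*\circ\partial$ in the long exact sequence of the pair $(W,\partial W)$. With well-definedness in hand, the rational isomorphism statement follows from Theorem~\ref{isomofopsmdapada} and the single \emph{easy} identity $\ch\circ\Phi=\ch_{\rm BD}$ (which you also record). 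Your strategy instead proves the \emph{hard} cycle-level comparison $\ch(M,\xi,f)=\ch_{\rm BD}(\Phi^{-1}(M,\xi,f))$ and derives both well-definedness and the rational isomorphism from it at once, deferring all relation-checking to the classical Baum--Douglas setting. This is logically sound, but it does more work than the paper's argument, and your sketch of the key Chern--Weil computation on the double $Z=\overline{M_0}\cup_{\partial M_0}(-\overline{M_0})$ has a gap: the formula
$\ch_{\rm BD}(Z,E_\xi,f)=f_*\bigl(\int_{\overline{M_0}}(\ch(E_1)-\ch(E_2))\wedge\mathrm{Td}(M_0)\bigr)$
implicitly uses $\mathrm{Td}(Z)|_{-\overline{M_0}}=\mathrm{Td}(M_0)$, whereas the opposite spin$^c$-structure has conjugate determinant line bundle, so $\mathrm{Td}(-\overline{M_0})=\e^{-c_1/2}\hat A(M_0)\neq\mathrm{Td}(M_0)$ whenever $c_1$ of the spin$^c$-structure is non-zero. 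The identity you want is still true --- it can be deduced from Proposition~\ref{assememdldbordismsotorcompact} and the rational injectivity of the homological Chern character on $K$-homology --- but the naive Chern--Weil matching you sketch needs to account for the sign/conjugation coming from the orientation reversal, and you correctly flag this as the principal technical obstacle. In short: your approach trades the four routine relation checks for a single delicate clutching computation; the paper's direct route avoids that computation entirely and only needs the trivial half of the comparison.
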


\begin{proof}[Sketch of proof]
It is clear from the definition that $\mathrm{ch}$ respects the disjoint union/direct sum relation and degenerate equivalence (since $\ch(\xi)=0\in H^{\rm ev}_c(M;\mathbb{Q})$ if $M$ is degenerate). A standard computation with characteristic classes shows that if $V\to M$ is an even rank spin$^c$-vector bundle then 
$$(p_V)_*(\ch[\beta_V]\cap \mathrm{Td}(M^V)\cap [M^V])=\mathrm{Td}(M)\cap [M],$$ 
so $\ch$ is unaltered by vector bundle modification. Finally, if $(W,\xi,f)$ is a cycle with boundary then 
\begin{align*}
\ch(\partial (W,\xi,f))=&(f\circ \iota)_*(\ch(\xi|_{\partial W})\cap \mathrm{Td}(\partial W)\cap [\partial W])=\\
=&(f\circ \partial)_*(\ch(\xi)\cap \mathrm{Td}(W)\cap [W])=0,
\end{align*}
where $\iota:\partial W\hookrightarrow W$ denotes the boundary inclusion. Therefore $\ch$ respects bordism. As such $\ch$ is well defined. It follows that $\ch$ is a rational isomorphism from Theorem \ref{isomofopsmdapada} and the fact that $\ch:K_*^{\rm geo}(X)\to H_*(X;\mathbb{Q})$ is a rational isomorphism.
\end{proof}

\begin{remark}
In \cite[Part 5]{Baum_Douglas}, Baum-Douglas introduced what is now called the Baum-Douglas index problem. In its most general form, it is stated as follows. Let $X$ be a finite CW-complex and $x\in K_*^{\rm an}(X)$ a $K$-homology class. How to construct a geometric cycle $(M,E,f)$ such that $\gamma[(M,E,f)]=x$? This problem was solved when $x$ was the $K$-homology class defined from an elliptic pseudodifferential operator by Baum-Douglas \cite{Baum_Douglas}, thus giving a geometric proof of Atiyah-Singer's index problem. The Baum-Douglas index problem played a pivotal role in Baum-van Erp's solution \cite{baumvanerp} to the index problem for $H$-elliptic operators on contact manifolds. This line of thought has also been put to use in the study of secondary invariants and Higson-Roe's analytic structure group \cite{DGIII}. 

In light of the theory developed in this section, we can formulate a non-compact version of Baum-Douglas index problem. If $X$ is a finite CW-complex and $x\in K_*^{\rm an}(X)$ is a $K$-homology class, how to construct a geometric cycle with coefficients in elliptic complexes $(M,\xi,f)$ such that $\tilde{\gamma}[(M,\xi,f)]=x$? The noncompact version of the Baum-Douglas index problem is weaker than the original Baum-Douglas index problem, but Lemma \ref{bordismsotorcompact} describes how to go from a non-compact solution to a compact one.
\end{remark}

\section{Dualities on Carnot manifolds}
\label{sec:ordinadadp}

\subsection{Poincaré duality -- geometric and analytic}
\label{ordinadinadianadp}

We first study Poincaré duality on a compact manifold $X$. Analytic Poincare duality is an isomorphism 
\[ \mathsf{PD}^{\rm an} : K^*(T^*X) \rightarrow K_*(X),\]
that in rough terms reconstructs an elliptic pseudodifferential operator from the index class of its principal symbol in $K^*(T^*X)$. The construction could be framed in terms of Kasparov's Poincaré duality or in terms of the tangent groupoid. We take the latter approach as a definition and state Kasparov's index theorem relating the two.

Let $\mathbb{T} X\rightrightarrows X\times [0,\infty)$ denote Connes' tangent groupoid, see for instance \cite{connesbook} or Example \ref{ex:tangentgroupoidladladld} above. The kernel of the restriction mapping $C^*(\mathbb{T} X|_{X\times [0,1]})) \to  C_0(T^*X)$ is isomorphic to the contractible $C^*$-algebra $C_0((0, 1],\mathbb{K}(L^2(X)))$. Therefore the induced restriction mapping
\[ e_0 : KK_0(C(X), C^*(\mathbb{T} X|_{X\times [0,1]})) \rightarrow KK_0(C(X), C_0(T^*X)) \]
is an isomorphism.

For a $C(X)$-$C^*$-algebra $A$, we note that, viewing $X$ as a trivial groupoid, there is a natural isomorphism $A \rtimes C(X) \simeq A$. Define the natural transformation:
\[ \alpha_A : KK(\mathbb{C}, A) \rightarrow KK^X(\mathbb{C}, A) \xrightarrow{\rtimes C(X)} KK^X(C(X), A) \xrightarrow{\mathsf{Forget}} KK(C(X), A).  \]

\begin{definition}[Analytic Poincare duality]
We define analytic Poincare duality
\[ \mathsf{PD}^{\rm an} : K^*(T^*X) \rightarrow K_*^{\rm an}(X),\]
as the mapping 
\[\mathsf{PD}^{\rm an}:= e_1 \circ e_0^{-1} \circ \alpha_{C_0(T^* X)}. \]
\end{definition}

\begin{proposition}
\label{chooseanop}
Assume that $D$ is an elliptic differential pseudodifferential operator $C^\infty(X;E_1)\to C^\infty(X;E_2)$ with principal symbol $\sigma(D)$. We let $\xi_D:=(p^*E_1,p^*E_2,\sigma(D))$ denote the associated elliptic complex on $T^*X$, defining its index class $[\sigma(D)]:=[\xi_D]\in K^0(T^*X)$. Then it holds that 
$$\mathsf{PD}^{\rm an}[\xi_D]=[D]\in K_0(X).$$
\end{proposition}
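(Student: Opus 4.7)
\medskip

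\noindent\textbf{Proof proposal.} The plan is to realize both sides of the identity as evaluations at the two endpoints of a single $KK$-class on Connes' tangent groupoid $\mathbb{T}X$, namely the class $[\mathbb{D}]$ coming from Corollary \ref{clalslasmadadado} applied to $X$ equipped with the trivial Carnot structure of Example \ref{ex:trivialdcndododa}. For the trivial Carnot structure, $T_HX=TX$ as a commutative Lie groupoid, the parabolic tangent groupoid $\mathbb{T}_HX$ coincides with $\mathbb{T}X$, and the Heisenberg calculus reduces to the classical Kohn-Nirenberg calculus. In particular, an elliptic pseudodifferential operator $D$ in the classical sense is $H$-elliptic in the sense of the preceding sections, and its principal symbol $\sigma(D)$ agrees with $\sigma_H(D)$ under the Fourier transform identification $C^*(T_HX)\cong C_0(T^*X)$, so that $[\sigma(D)]\in K^0(T^*X)$ coincides with the class $[\sigma_H(D)]$ of Definition \ref{definilalald}. (If $D$ is only a pseudodifferential operator rather than a differential operator, one first passes to the standard Kohn-Nirenberg parabolic tangent groupoid, but the argument is the same.)

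First, I would invoke Corollary \ref{clalslasmadadado} to obtain the higher order unbounded Kasparov class $[\mathbb{D}]\in KK_0^{[0,\infty)}(C_0([0,\infty)\times X),C^*(\mathbb{T}X))$, and then restrict to $[0,1]$ to obtain an element that I also denote by $[\mathbb{D}]\in KK_0^{[0,1]}(C([0,1]\times X),C^*(\mathbb{T}X|_{X\times[0,1]}))$. Forgetting the $C([0,1])$-structure but retaining $C(X)$-linearity produces a class
\[
\widetilde{[\mathbb{D}]}\in KK_0(C(X),C^*(\mathbb{T}X|_{X\times[0,1]})).
\]
The key observation is then that, via the restriction isomorphism $e_0$, Proposition \ref{properldaldlaadlda} (applied to the trivial Carnot case) identifies $e_0\bigl(\widetilde{[\mathbb{D}]}\bigr)$ precisely with $\alpha_{C_0(T^*X)}[\sigma(D)]$: both classes are obtained from the same underlying $KK$-cycle $(C_0(T^*X;E_1\oplus E_2),a)$ where $a\in\tilde\Sigma^m_H(X;E_1\oplus E_2)$ lifts $\sigma(\tilde D)$, with $C(X)$ acting through the bundle projection $T^*X\to X$; the $C(X)$-structure on $\alpha_{C_0(T^*X)}[\sigma(D)]$ comes from the $C_0(T^*X)$-module structure by the very definition of $\alpha$, while the $C(X)$-structure on $\widetilde{[\mathbb{D}]}|_{t=0}$ comes from the restriction of the $C_0([0,1]\times X)$-structure, and these coincide under the identification of $T^*X$ as a bundle over $X$.

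Next, at the other endpoint, the isomorphism $C^*(X\times X)\cong\mathbb{K}(L^2(X))$ identifies $e_1\bigl(\widetilde{[\mathbb{D}]}\bigr)\in KK_0(C(X),\mathbb{K}(L^2(X)))=KK_0(C(X),\C)$ with the class of $(L^2(X;E_1\oplus E_2),\tilde D(1+\tilde D^2)^{-1/2})$, which is by definition $[D]\in K_0^{\rm an}(X)$. Combining the two endpoint identifications with the definition
\[
\mathsf{PD}^{\rm an}=e_1\circ e_0^{-1}\circ\alpha_{C_0(T^*X)},
\]
we obtain $\mathsf{PD}^{\rm an}[\sigma(D)]=e_1(\widetilde{[\mathbb{D}]})=[D]$, as required.

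The main technical point to verify carefully is the endpoint identification at $t=0$: one must show that the $C(X)$-structures on the two relevant $KK$-classes agree, which amounts to tracking how $\alpha$ converts the canonical $C_0(T^*X)$-structure into a $C(X)$-module structure and comparing this with the $C(X)$-structure on $\pmb{\mathpzc{E}}(X;E_1\oplus E_2)|_{t=0}=C^*(TX;E_1\oplus E_2)\cong C_0(T^*X;E_1\oplus E_2)$ inherited via the range map on the tangent groupoid. This is a bookkeeping exercise, but it is the only nontrivial ingredient: the rest follows formally from the deformation picture of $\mathsf{PD}^{\rm an}$ and Proposition \ref{properldaldlaadlda}.
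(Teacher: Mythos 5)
Your proposal is correct and takes essentially the same approach as the paper: the paper's proof just says to apply Proposition \ref{properldaldlaadlda} to the trivial filtration, and your argument unpacks exactly what that means, namely restricting the deformation class $[\mathbb{D}]$ from Corollary \ref{clalslasmadadado} to $[0,1]$, comparing its evaluation at $t=0$ with $\alpha_{C_0(T^*X)}[\sigma(D)]$ (which is where the $C(X)$-module bookkeeping matters), and using evaluation at $t=1$ to recover $[D]$. The parenthetical about "passing to the standard Kohn--Nirenberg parabolic tangent groupoid" for pseudodifferential $D$ is superfluous — the Heisenberg calculus $\Psi^m_H$ already contains pseudodifferential operators, and $\mathbb{T}_HX = \mathbb{T}X$ for the trivial filtration is the groupoid you are already using — but this does not affect the argument.
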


This proposition is well known. For a proof, one can apply Proposition \ref{properldaldlaadlda} to the trivial filtration of $X$. We remark that any elliptic complex $\xi$ on $T^*X$ is equivalent to one of the form $\xi_D$ for an elliptic pseudodifferential operator $D$. Therefore the property $\mathsf{PD}^{\rm an}[\xi_D]=[D]$ in Proposition \ref{chooseanop} characterizes Poincaré duality in degree $0$.

\begin{theorem}[Kasparov's index theorem]
Let $X$ be a compact manifold and let $[\bar{\partial}_{T^*X}]\in KK_0(C_0(T^*X),\C)$ denote the class of the Dolbeault-Dirac operator defined from the almost complex structure on $T^*X$. Then 
$$\mathsf{PD}^{\rm an}(x)=\alpha_{C_0(T^*X)}(x)\otimes_{C_0(T^*X)}[\bar{\partial}_{T^*X}], \quad x\in K^*(T^*X).$$
\end{theorem}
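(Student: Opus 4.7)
The plan is to reduce the statement to a comparison of two $KK$-classes in $KK_0(C_0(T^*X),\mathbb{C})$. By the very definition of $\mathsf{PD}^{\rm an}$, one has
$$\mathsf{PD}^{\rm an}(x) = \alpha_{C_0(T^*X)}(x)\otimes_{C_0(T^*X)} D_{\mathbb{T}X},$$
where $D_{\mathbb{T}X} := [e_0]^{-1}\otimes_{C^*(\mathbb{T}X|_{X\times[0,1]})}[e_1]$ is the deformation class obtained by traversing Connes' tangent groupoid from $t=0$ to $t=1$ and composing with the Morita equivalence $\mathbb{K}(L^2(X))\sim_M\mathbb{C}$. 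This uses associativity of the Kasparov product together with the naturality of $\alpha$ with respect to the two evaluations, both of which are $C(X)$-linear when we identify $C^*(\mathbb{T}X|_{X\times [0,1]})$ as a $C(X)$-algebra via either evaluation. The statement of the theorem is therefore equivalent to the identity
$$D_{\mathbb{T}X} \;=\; [\bar{\partial}_{T^*X}] \quad\text{in}\quad KK_0(C_0(T^*X),\mathbb{C}).$$

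To establish this identity I would proceed in two steps. First, I would observe that both $D_{\mathbb{T}X}$ and $[\bar{\partial}_{T^*X}]$ are Poincar\'e dual to the identity of $C(X)$ in Kasparov's sense: that is, both are inverses (in $KK$) of the Thom class $\tau_{TX}\in KK_0(C(X),C_0(T^*X))$ associated with the canonical spin$^c$-structure on $T^*X$. For $[\bar{\partial}_{T^*X}]$ this is the classical Dolbeault-Dirac description of the analytic index on an almost complex manifold, while for $D_{\mathbb{T}X}$ it is exactly the content of Connes' original proof of the Atiyah-Singer index theorem via the tangent groupoid (see also Theorem \ref{outertringaleldldle} in the trivial-filtration case, where $\psi_X$ is the identity and the tangent groupoid index map coincides by construction with the analytic index pairing $\int_{T^*X}\mathrm{ch}(\cdot)\wedge\mathrm{Td}(T^*X)$).

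Second, I would use a standard naturality-plus-excision argument to upgrade this common ``Poincar\'e dual of $1$'' property to an actual equality of $KK$-classes. Concretely, take a smooth embedding $X\hookrightarrow \mathbb{R}^N$ with open tubular neighborhood $U\cong$ a rank-$(N-\dim X)$ spin$^c$-bundle over $X$; this embedding induces a wrong-way map at the level of tangent groupoids compatible with the almost-complex structure on $T^*X$. By naturality of deformation-groupoid constructions with respect to open inclusions and by the functoriality of the Dolbeault class under complex embeddings, both $D_{\mathbb{T}X}$ and $[\bar{\partial}_{T^*X}]$ pull back from the corresponding classes on $\mathbb{R}^N$; by a Mayer-Vietoris / partition-of-unity argument on $X$, it suffices to verify the equality over contractible charts, hence over $\mathbb{R}^n$. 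On $\mathbb{R}^n$ both classes reduce, under the Bott/Thom isomorphism, to the generator of $KK_0(C_0(\mathbb{R}^{2n}),\mathbb{C})\cong\mathbb{Z}$; checking that they agree (and not differ by a sign) is a direct unbounded-Kasparov-product calculation identifying $D_{\mathbb{T}\mathbb{R}^n}$ with the Dolbeault operator on $T^*\mathbb{R}^n=\mathbb{C}^n$.

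The main obstacle I expect is the second step, specifically pinning down the sign/orientation conventions so that the tangent-groupoid deformation class literally equals, rather than merely matches up to a sign, the Dolbeault-Dirac class for the \emph{canonical} almost complex structure on $T^*X$. The Poincar\'e-duality characterization determines each class only up to a unit in $KK_0(\mathbb{C},\mathbb{C})=\mathbb{Z}$, so the local Euclidean computation is unavoidable and is really where the content of Kasparov's theorem sits; all the preceding formal manipulations only serve to localize the problem to that single model calculation.
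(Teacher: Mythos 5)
The paper does not prove this theorem; it cites \cite{kaspindex} and \cite{cumero}, so there is no in-text argument to compare against. Your step 1 is fine, though it is an implication rather than an equivalence (since $C_0(T^*X)$ is nonunital you cannot plug in a unit to reverse the reduction); the strategy of reducing the statement to the single $KK$-identity $D_{\mathbb{T}X}=[\bar{\partial}_{T^*X}]$ in $KK_0(C_0(T^*X),\C)$ and then localizing to a Euclidean model is the standard one found in those references.

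Step 2, however, contains a type mismatch you should repair. The Thom class $\tau_{TX}$ lies in $KK_0(C(X),C_0(T^*X))$, so its $KK$-inverse lies in $KK_0(C_0(T^*X),C(X))$ — not in $KK_0(C_0(T^*X),\C)$ where $D_{\mathbb{T}X}$ and $[\bar{\partial}_{T^*X}]$ live. Neither class can therefore literally be "the inverse of the Thom class," and "uniqueness of inverses" is not the correct rubric. The usable reformulation is this: since $\tau_{TX}$ is a $KK$-equivalence, the map $\tau_{TX}\otimes_{C_0(T^*X)}(-)\colon KK_0(C_0(T^*X),\C)\to KK_0(C(X),\C)$ is injective, so it suffices to establish $\tau_{TX}\otimes_{C_0(T^*X)}D_{\mathbb{T}X}=\tau_{TX}\otimes_{C_0(T^*X)}[\bar{\partial}_{T^*X}]$ and then cancel the invertible factor. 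Framed this way, your concern about the class being determined only up to a unit in $KK_0(\C,\C)$ evaporates — injectivity of the fixed $KK$-equivalence pins it down exactly. The genuine content, as you correctly identify, is the verification of that last equality, which reduces by naturality of the tangent groupoid under open embeddings and a Mayer--Vietoris argument to the Euclidean model $T^*\R^n\cong\C^n$, where both products give the Bott class. Your outline locates the crux of the theorem correctly; the global bookkeeping just needs the type correction above.
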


The reader can find Kasparov's index theorem as \cite[Theorem $5$]{kaspindex} or \cite[Section 12.3]{cumero}. Let us proceed by describing the geometric analogue of Poincaré duality.

\begin{definition}[Geometric Poincaré duality]
\label{geomeodmoemd}
We define geometric Poincare duality
\[ \mathsf{PD}^{\rm geo} : K^*(T^*X) \rightarrow \tilde{K}_*^{\rm geo}(X),\]
by setting 
$$\mathsf{PD}^{\rm geo}(\xi):=(T^*X,\xi,p),$$
for any elliptic complex $\xi$ on $T^*X$. Here $p:T^*X\to X$ denotes the projection map.
\end{definition}

\begin{remark}
As Definition \ref{geomeodmoemd} stands,  $\mathsf{PD}^{\rm geo}$ is only defined $K^0(T^*X) \rightarrow \tilde{K}_0^{\rm geo}(X)$. For the odd case, we use that $K^1(T^*X)=K^0(T^*X\times \R)$ as the definition of odd $K$-theory in topological $K$-theory, and 
\[ \mathsf{PD}^{\rm geo} : K^1(T^*X) \rightarrow \tilde{K}_1^{\rm geo}(X),\]
is defined by 
$$\mathsf{PD}^{\rm geo}(\xi):=(T^*X\times \R,\xi,p),$$
for any elliptic complex $\xi$ on $T^*X\times \R$. 
\end{remark}

It is clear that geometric Poincaré duality is well defined. Indeed, geometric Poincaré duality maps degenerate elliptic complexes to degenerate geometric cycles and homotopies of elliptic complexes to bordisms of geometric cycles.

\begin{remark}
Geometric Poincaré duality was in \cite{baumvanerp} defined using the ordinary Baum-Douglas model of geometric $K$-homology. Let us describe their construction and use Theorem \ref{isomofopsmdapada} to reconcile the two definitions.

For a compact manifold $X$, consider $\Sigma X = S(T^*X \times 1_\mathbb{R})$, i.e. the unit sphere bundle of $T^*X \times 1_\mathbb{R}$ in some metric. Since $\Sigma X$ is a hypersurface in the stably almost complex manifold $T^*X\oplus 1_\R$, $\Sigma X$ is a compact spin$^c$-manifold. We have that $\dim(\Sigma X)=2\dim(X)$ is even. Let $B^*X$ be the unit ball bundle of $T^*X$ and $S^*X$ the unit sphere bundle of $T^*X$. Then we can write
\[ \Sigma X = B^*X \cup_{S^*X} B^*X, \]
where we identify the first copy of $B^*X$ with the upper hemisphere and the second copy of $B^*X$ with the lower hemisphere. The role of $M_0$ in Lemma \ref{bordismsotorcompact} will be played by $B^*X$ so $\Sigma X=Z$.

Let $\xi=(E_1, E_2,\sigma)$ be a cycle of $K^0(T^*X)$. We note that since $T^*X$ is homotopy equivalent to $X$, the bundles $E_1$ and $E_2$ can be assumed to be pulled back from $X$. Therefore the dilation action on the fibres of $T^*X\to X$ lifts to $E_1$ and $E_2$. By rescaling, we can assume that the support of $\sigma$ is contained in the interior of $B^*X$. For $M_0=B^*X$ in Lemma \ref{bordismsotorcompact}, the bundle $E_\sigma$ over $\Sigma X=Z$ is exactly
\[ E_\xi = E_1 \cap_\sigma E_2. \]
The geometric Poincare duality map of \cite{baumvanerp} is
\[K^0(T^*X) \rightarrow K_0^{\rm geo}(X), \ \xi\mapsto(\Sigma X, E_\xi, p). \]
Therefore, the geometric Poincare duality map of \cite{baumvanerp} is compatible with the geometric Poincaré duality map of Definition \ref{geomeodmoemd} under the isomorphism of Theorem \ref{isomofopsmdapada}.
\end{remark}

\begin{theorem}
\label{bacvapapdcomo}
Let $X$ be a compact manifold. Then all the morphisms in the following diagram are isomorphisms and the diagram commutes:
\[
\begin{tikzcd}
& K^*(T^* X) \arrow[dl, "\mathsf{PD}^{\rm geo}"] \arrow[dr, "\mathsf{PD}^{\rm an}"] & \\
\tilde{K}_*^{\rm geo}(X) \arrow[rr, "\tilde{\gamma}"] & & K_*^{\rm an}(X)
\end{tikzcd}
\]
\end{theorem}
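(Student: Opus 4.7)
The plan is to leverage the two isomorphism statements already established in the paper ($\tilde{\gamma}$ is an isomorphism by Theorem \ref{isofromalnalnadlnd} and $\mathsf{PD}^{\rm an}$ is an isomorphism by Kasparov's Poincaré duality/index theorem) so that the content of the theorem reduces to proving commutativity of the triangle; once commutativity is known, $\mathsf{PD}^{\rm geo}=\tilde{\gamma}^{-1}\circ\mathsf{PD}^{\rm an}$ is automatically an isomorphism. So the whole task is to verify $\tilde{\gamma}\circ \mathsf{PD}^{\rm geo}=\mathsf{PD}^{\rm an}$.

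To do this, I would first reduce to testing the identity on classes of the form $[\xi_D]\in K^0(T^*X)$, where $\xi_D=(p^*E_1,p^*E_2,\sigma(D))$ arises from an elliptic pseudodifferential operator $D:C^\infty(X;E_1)\to C^\infty(X;E_2)$ (and analogously for the odd case with suspension). This reduction is legitimate because every cycle for $K^*(T^*X)$ is equivalent to one of this form, essentially by Atiyah–Singer / Baum–Douglas: any elliptic complex on $T^*X$, being pulled back from $X$ after a rescaling homotopy that pushes the support into a ball bundle, is the symbol complex of an elliptic pseudodifferential operator. The odd case is handled by stabilising with $\R$ and using odd-order operators, again in a standard fashion.

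On the analytic side, Proposition \ref{chooseanop} gives $\mathsf{PD}^{\rm an}[\xi_D]=[D]\in K_*^{\rm an}(X)$. For the geometric side, I would apply Proposition \ref{assememdldbordismsotorcompact} to the cycle $(T^*X,\xi_D,p)$ taking $M_0\subseteq T^*X$ to be the interior of the unit ball bundle $B^*X$ (chosen large enough to contain $\mathrm{supp}(\sigma(D))$ after a compactly supported homotopy of $\xi_D$). The doubled spin$^c$-manifold is $Z=B^*X\cup_{S^*X}B^*X=\Sigma X$ with its canonical almost complex spin$^c$-structure, and the clutched bundle $E_{\xi_D}$ is precisely the Baum–Douglas bundle $p^*E_1\cup_{\sigma(D)}p^*E_2$. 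Thus $\tilde{\gamma}\bigl(\mathsf{PD}^{\rm geo}[\xi_D]\bigr)=\tilde{\gamma}(T^*X,\xi_D,p)=\gamma(\Sigma X,E_{\xi_D},p\cup p)$ via the commuting diagram in Theorem \ref{isofromalnalnadlnd}. But $\gamma(\Sigma X,E_{\xi_D},p\cup p)=[D]$ is exactly Baum–Douglas' geometric solution to the index problem for an elliptic pseudodifferential operator \cite[Part 5]{Baum_Douglas}, so both compositions equal $[D]$.

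The main obstacle is the last identification $\gamma(\Sigma X,E_{\xi_D},p\cup p)=[D]$: although it is classical, invoking it cleanly requires either appealing to Baum–Douglas' original argument or giving an alternative direct proof via Kasparov's index theorem. The latter route is perhaps more conceptual and works uniformly: by Proposition \ref{anaalalsaaskasprood}, $\tilde{\gamma}(T^*X,\xi_D,p)=p_*\bigl([T^*X,\xi_D]\otimes_{C_0(T^*X)}[\slashed{D}_{T^*X}]\bigr)$, and since the spin$^c$-structure on $T^*X$ is induced from its canonical almost complex structure one has $[\slashed{D}_{T^*X}]=[\bar\partial_{T^*X}]$ in $KK_*(C_0(T^*X),\C)$; unpacking the definition of $\alpha_{C_0(T^*X)}$ shows that $p_*\circ(\cdot\otimes[T^*X,\xi_D])$ on $KK(\C,C_0(T^*X))$ agrees with $\alpha_{C_0(T^*X)}$, so $\tilde{\gamma}(T^*X,\xi_D,p)=\alpha_{C_0(T^*X)}(\xi_D)\otimes [\bar\partial_{T^*X}]=\mathsf{PD}^{\rm an}(\xi_D)$ by Kasparov's index theorem. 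Either way, the triangle commutes on a generating set, hence on all of $K^*(T^*X)$, completing the proof.
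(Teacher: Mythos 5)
Your proposal is correct, but it takes a different route from what the paper does. The paper's ``proof'' of Theorem \ref{bacvapapdcomo} is essentially a citation: it defers entirely to \cite{baumvanerpII} for the result in the compact-cycle formulation ($K_*^{\rm geo}(X)$ with compact spin$^c$-manifolds and vector bundle coefficients), and then applies Theorem \ref{isofromalnalnadlnd} to translate that statement into the $\tilde{K}_*^{\rm geo}(X)$ formulation with non-compact manifolds and elliptic complex coefficients. Your argument, by contrast, actually re-derives the commutativity from first principles using the machinery developed in the paper itself: reduction to symbol classes $[\xi_D]$, Proposition \ref{chooseanop} on the analytic side, and on the geometric side either Proposition \ref{assememdldbordismsotorcompact} combined with Baum--Douglas's original geometric index theorem, or (more intrinsically) Proposition \ref{anaalalsaaskasprood} combined with Kasparov's index theorem. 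Your second route is particularly clean because it matches the two Kasparov-product descriptions of $\tilde{\gamma}$ and $\mathsf{PD}^{\rm an}$ directly, identifying $[\slashed{D}_{T^*X}]=[\bar{\partial}_{T^*X}]$ and observing that pulling $[T^*X,\xi_D]$ back along $p^*$ recovers $\alpha_{C_0(T^*X)}([\xi_D])$; this avoids any detour through the compact model entirely. The only caveat is that $\mathsf{PD}^{\rm an}$ being an isomorphism is not literally ``established in the paper'' --- it is Kasparov's Poincar\'e duality, taken as a known external fact, just as $\gamma$ being an isomorphism (underlying Theorem \ref{isofromalnalnadlnd}) is imported from \cite{Baum_Douglas,baumoyono,baumvanerpII}. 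Overall: what you do differently is prove what the paper cites; what the paper's approach buys is brevity, and what yours buys is self-containment and a transparent identification of where the commutativity actually comes from.
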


The reader can find a proof of this result in \cite{baumvanerpII}. The result in \cite{baumvanerpII} was stated in terms of $K_0^{\rm geo}(X)$ and ordinary geometric cycles, but the results in the two formulations are equivalent by Theorem \ref{isofromalnalnadlnd}.

\subsection{Poincaré duality and Carnot manifolds}

We now turn to studying Poincaré dualities on Carnot manifolds. We proceed as in Subsection \ref{ordinadinadianadp}. 

\begin{lemma}
The restriction mapping 
\[ e_0 : KK_0(C(X), C^*(\mathbb{T}_H X|_{X\times [0,1]})) \rightarrow KK_0(C(X), C^*(T_H X)) \]
is an isomorphism.
\end{lemma}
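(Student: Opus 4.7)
The plan is to identify this as a standard consequence of a short exact sequence of groupoid $C^*$-algebras combined with a contractibility argument, mirroring the classical case of Connes' tangent groupoid recalled in Subsection \ref{ordinadinadianadp}.

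First, I would set up the relevant short exact sequence. By Theorem \ref{restrictqotuegroudp}, the invariant closed inclusion $T_HX \times \{0\} \hookrightarrow \mathbb{T}_HX|_{X\times [0,1]}$ (whose complement is the open invariant subset $X \times X \times (0,1]$) produces the short exact sequence of $C^*$-algebras
\begin{equation*}
0 \rightarrow C^*(\mathbb{T}_HX|_{X\times (0,1]}) \rightarrow C^*(\mathbb{T}_HX|_{X\times [0,1]}) \xrightarrow{\mathrm{ev}_0} C^*(T_HX) \rightarrow 0.
\end{equation*}
Since $\mathbb{T}_HX|_{X \times (0,1]}$ is simply the pair groupoid $X \times X$ over each $t \in (0,1]$, we have a canonical isomorphism
\begin{equation*}
C^*(\mathbb{T}_HX|_{X\times (0,1]}) \cong C_0((0,1]) \otimes \mathbb{K}(L^2(X)),
\end{equation*}
upon fixing the volume density on $X$.

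Next, I would observe that the right-hand side is contractible in $KK$-theory, being the tensor product of $\mathbb{K}$ with a cone. Consequently, for any separable $C^*$-algebra $A$, one has $KK_*(A, C^*(\mathbb{T}_HX|_{X\times (0,1]})) = 0$. Applying this to $A = C(X)$ and inserting this vanishing into the six-term exact sequence in the second variable of $KK$-theory associated with the short exact sequence above gives that the homomorphism $e_0$ induced by $\mathrm{ev}_0$ is an isomorphism in both even and odd degrees.

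The main obstacle, if any, is only the verification that the groupoid $C^*$-algebra of the open subgroupoid $\mathbb{T}_HX|_{X \times (0,1]}$ is indeed the stated tensor product; but this follows directly from identifying the restricted tangent groupoid with $(X \times X) \times (0,1]$ as a Lie groupoid (the zoom action is not needed at this stage), and then invoking the fact that $C^*$ of the pair groupoid of a compact manifold is isomorphic to the compact operators on its $L^2$-space. The rest is formal six-term exactness. Note that the same argument proves that $\mathrm{ev}_0 : C^*(\mathbb{T}_HX|_{X\times [0,1]}) \to C^*(T_HX)$ is itself a $KK$-equivalence, so the isomorphism $e_0$ is just a special instance of this invertibility.
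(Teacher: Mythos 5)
Your proof is correct and matches the paper's argument: both identify the kernel of $e_0$ with $C^*(\mathbb{T}_HX|_{X\times(0,1]})\cong C_0((0,1],\mathbb{K}(L^2(X)))$, note that this algebra is contractible, and conclude via the six-term $KK$-sequence. You have merely spelled out the details that the paper leaves implicit.
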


\begin{proof}
The kernel of $e_0$ is $C^*(\mathbb{T}_H X|_{X\times (0,1]})\cong C_0((0, \infty),\mathbb{K}(L^2(X)))$, which is contractible.
\end{proof}

\begin{definition}[Analytic Poincare${}_H$ duality]
We define analytic Poincare${}_H$ duality
\[ \mathsf{PD}^{\rm an}_H : K_*(C^*(T_H X)) \rightarrow K_*^{\rm an}(X),\]
as the mapping 
\[\mathsf{PD}^{\rm an}_H := e_1 \circ e_0^{-1} \circ \alpha_{C^*(T_H X)}. \]
\end{definition}

We note the following consequence of Proposition \ref{properldaldlaadlda}. Recall the definition of the symbol class $[\sigma_H(D)]\in K_0(C^*(T_HX))$ of an $H$-elliptic operator from Definition \ref{definilalald}.

\begin{proposition}
\label{chooseanopheisenberg}
Let $X$ be a compact Carnot manifold. Assume that $D$ is an $H$-elliptic differential pseudodifferential operator $C^\infty(X;E_1)\to C^\infty(X;E_2)$. Then it holds that 
$$\mathsf{PD}^{\rm an}_H[\sigma_H(D)]=[D]\in K_0(X).$$
\end{proposition}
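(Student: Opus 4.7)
The plan is to show both sides of the desired equality can be factored through the same intermediate $KK$-class living on the deformation groupoid, namely the class $[\mathbb{D}] \in KK_0^{[0,\infty)}(C_0([0,\infty)\times X), C^*(\mathbb{T}_HX))$ of Definition \ref{definilalald}. First I would pick an $\mathbb{D}\in \pmb{\Psi}^m_H(X;E_1\oplus E_2)$ lifting the doubled operator $\tilde D$, and form the even higher-order Kasparov cycle $(\pmb{\mathpzc{E}}(X;E_1\oplus E_2),\tilde{\mathbb{D}})$ whose $C_0([0,\infty)\times X)$-linearity is guaranteed by Corollary \ref{clalslasmadadado}. Restricting the groupoid to $t\in [0,1]$ yields a class
\[
[\mathbb{D}_{[0,1]}] \in KK_0(C(X\times [0,1]), C^*(\mathbb{T}_HX|_{X\times [0,1]})),
\]
again using that commutators of $\mathbb{D}$ with functions in $C^\infty(X)$ drop the Heisenberg order by one (Corollary \ref{commwithfunticododo}).

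The second step is the two evaluations. Directly from Proposition \ref{properldaldlaadlda} (and functoriality of restriction of the groupoid to $t=0$ and $t=1$) one has
\[
\mathrm{ev}_{t=1,\ast}[\mathbb{D}_{[0,1]}] = [D]\in KK_0(C(X),\C),
\]
while at $t=0$ one obtains a class in $KK_0(C(X), C^*(T_HX))$ that I will denote by $y$. By construction, forgetting the $C(X)$-module structure sends $y$ to $[i^*]\otimes [\mathbb{D}]\otimes [\mathrm{ev}_{t=0}] = [\sigma_H(D)]\in K_0(C^*(T_HX))$, again by Proposition \ref{properldaldlaadlda}. In the language of the definition of $\mathsf{PD}_H^{\mathrm{an}}$, this means $[\mathbb{D}_{[0,1]}]$ is the canonical preimage under $e_0$ of the class $\alpha_{C^*(T_HX)}([\sigma_H(D)])$, so that
\[
e_0^{-1}\bigl(\alpha_{C^*(T_HX)}[\sigma_H(D)]\bigr)=[\mathbb{D}_{[0,1]}],
\]
and applying $e_1$ yields $[D]$.

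The main obstacle in this plan is the third step, namely verifying that $y=\alpha_{C^*(T_HX)}[\sigma_H(D)]$, i.e.\ that the $C(X)$-linear $KK$-class obtained by evaluating the tangent-groupoid cycle at $t=0$ really is the canonical $\alpha$-lift of the $K$-theory class $[\sigma_H(D)]$. Concretely one must show that, upon viewing $C^*(T_HX)$ as an $X$-algebra, the Kasparov product of $[\sigma_H(D)]$ (regarded as a descent from an $X$-equivariant class) with the unit of $C(X)$ coincides with $y$. This is a compatibility between the zoom-scaling that defines the principal symbol and the $C(X)$-structure of $C^*(T_HX)$; it can be handled by observing that the $C(X)$-action on $\pmb{\mathpzc{E}}(X;E_1\oplus E_2)\otimes_{\mathrm{ev}_{t=0}}C^*(T_HX)$ coincides with the range-map action on $C^*(T_HX)$, so the $\rtimes C(X)$ step in the definition of $\alpha$ produces nothing new beyond what is already encoded by the left $C(X)$-module structure of $[\mathbb{D}]$. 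Once this identification is in place, the two factorisations
\[
\mathsf{PD}_H^{\mathrm{an}}[\sigma_H(D)]=e_1\circ e_0^{-1}\circ\alpha_{C^*(T_HX)}[\sigma_H(D)]=\mathrm{ev}_{t=1,\ast}[\mathbb{D}_{[0,1]}]=[D]
\]
close up, finishing the proof.
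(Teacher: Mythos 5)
Your proposal is correct and takes essentially the same route as the paper. The paper states Proposition \ref{chooseanopheisenberg} as a ``consequence of Proposition \ref{properldaldlaadlda}'' without writing out the proof, and what you have written is precisely the argument being invoked: use the higher-order unbounded Kasparov cycle $[\mathbb{D}]$ on the parabolic tangent groupoid as the interpolating object, restrict it to $t\in[0,1]$ to produce the $e_0$-preimage, and compare the two evaluations. You also correctly isolate the one nontrivial compatibility check --- that the $C(X)$-linear class obtained by evaluating $[\mathbb{D}_{[0,1]}]$ at $t=0$ coincides with $\alpha_{C^*(T_HX)}[\sigma_H(D)]$ --- and the justification you give (that $C^*(T_HX)\rtimes C(X)\cong C^*(T_HX)$ so the descent step in $\alpha$ merely reproduces the $C(X)$-module structure already present on the Kasparov module $C^*(T_HX;E_1\oplus E_2)$, which commutes with the symbol modulo compacts by the centrality of $C(X)$ in the Heisenberg symbol algebra, cf.\ Corollary \ref{commwithfunticododo}) is the right one. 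In short, you have supplied the details the paper leaves implicit, in the manner the authors intended.
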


To describe the geometric analogue of Poincaré duality on a Carnot manifold we need to assume $\pmb{F}$-regularity. The reader is encouraged to revisit Example \ref{knasdpandpinas} and the construction of the metaplectic correction $\mathfrak{M}(\mathcal{H})$ of a a bundle of flat representations $\mathcal{H}\to \Gamma_X$ in Section \ref{sec:connesthomandadiaofofd}.

\begin{definition}[Geometric Poincaré${}_H$ duality]
\label{geomeodmoemdheisenberg}
Let $X$ be an $\pmb{F}$-regular Carnot manifold and choose a bundle of flat representations $\mathcal{H}\to \Gamma_X$. We define geometric Poincaré${}_H$ duality
\[ \mathsf{PD}^{\rm geo} _H: K_*(C^*(T_HX)) \rightarrow \tilde{K}_*^{\rm geo}(X),\]
as follows. For $*=0$, we set 
$$\mathsf{PD}^{\rm geo}_H(x):=(\Gamma_X,(p\mathcal{H},q\mathcal{H},qup)\otimes \mathfrak{M}(\mathcal{H}),p_\Gamma),$$
where $x\in K_0(C^*(T_HX))$ has been lifted to a pre-image in $K_0(I_X)$ represented by a relative $K$-theory cycle $(p,q,u)$ as in Example \ref{knasdpandpinas}. Here $p_\Gamma:\Gamma_X\to X$ denotes the projection map.

For $*=1$, we set 
$$\mathsf{PD}^{\rm geo}_H(x):=(\Gamma_X^\partial,(p\mathcal{H},q\mathcal{H},qup)\otimes \mathfrak{M}(\mathcal{H}),p_\Gamma),$$
where $x\in K_1(C^*(T_HX))$ has been lifted to a pre-image in $K_0(I_X^\partial)$ represented by a relative $K$-theory cycle $(p,q,u)$ as in Example \ref{knasdpandpinas} but over $\Gamma_X^\partial$. 
\end{definition}

\begin{theorem}
\label{leftuppertriangle}
Let $X$ be a compact $\pmb{F}$-regular Carnot manifold. Then the geometric Poincaré${}_H$ duality $\mathsf{PD}^{\rm geo} _H: K_*(C^*(T_HX)) \rightarrow \tilde{K}_*^{\rm geo}(X)$ is a well defined map making the following diagram commutative
\[
\begin{tikzcd}
K_*(C^*(TX)) \arrow[rr, "\psi"] \arrow[rd, "\mathsf{PD}^{\rm geo}"] &  & K_*(C^*(T_H X))  \arrow[ld, "\mathsf{PD}^{\rm geo}_H"] \\
 & \tilde{K}_*^{\rm geo}(X) &
\end{tikzcd}
\]
\end{theorem}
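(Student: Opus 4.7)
The plan is to first verify commutativity of the diagram on carefully chosen representatives, and then deduce well-definedness of $\mathsf{PD}^{\rm geo}_H$ from this together with the fact that $\psi$ is an isomorphism. Throughout I will use the double square diagram from Theorem \ref{maincomputationforisg} as the workhorse, so most of the work is bookkeeping the relationship between $\psi_I$, $[\mathcal{H}]$, $\mathfrak{M}(\mathcal{H})$, and the Thom isomorphism $\tau_\Xi$.

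\textbf{Localizing the representative.} Given $y\in K^*(T^*X)$, I will first produce an elliptic complex $\xi$ on $T^*X$ representing $y$ with $\operatorname{supp}(\xi)\subseteq \Xi_X$. This is possible because the complement $T^*X\setminus \Xi_X$ is Zariski-closed of positive fibrewise codimension, so the pushforward $j_!:K^*(\Xi_X)\to K^*(T^*X)$ along the open inclusion is surjective; this is essentially the same surjectivity argument used in the proof of Theorem \ref{onsurjinteroffm}. Once $\xi$ is chosen, Definition \ref{geomeodmoemd} combined with Lemma \ref{bordismsotosupport} (applied to a precompact neighborhood of $\operatorname{supp}(\xi)$ with smooth boundary inside $\Xi_X$) gives $\mathsf{PD}^{\rm geo}(y)=[(\Xi_X,\xi|_{\Xi_X},p|_{\Xi_X})]\in \tilde{K}_*^{\rm geo}(X)$.

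\textbf{Diagram chase.} For the right-hand side, I use the right square of Theorem \ref{maincomputationforisg}: since the inclusion $C_0(\Xi_X)\hookrightarrow C_0(T^*X)$ factors $\psi$ through $\psi_I$ and $I_X\hookrightarrow C^*(T_HX)$, the element $\psi_I[\xi|_{\Xi_X}]\in K_*(I_X)$ is a valid lift of $\psi(y)$. The left square then gives
\[
[\mathcal{H}\otimes \mathfrak{M}(\mathcal{H})]\bigl(\psi_I[\xi|_{\Xi_X}]\bigr)=\tau_\Xi^{-1}[\xi|_{\Xi_X}]=[\slashed{D}_\Xi][\xi|_{\Xi_X}]\in K^*(\Gamma_X),
\]
so after unwinding Definition \ref{geomeodmoemdheisenberg} the cycle I obtain is $(\Gamma_X,\eta,p_\Gamma)$ where $\eta$ represents the pushforward $\pi_{\Xi,!}[\xi|_{\Xi_X}]$ along the spin$^c$-submersion $\pi_\Xi:\Xi_X\to \Gamma_X$ (spin$^c$-structures chosen compatibly via Proposition \ref{spincongamma}).

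\textbf{Identifying the two cycles and concluding.} The remaining and main obstacle is to show
\[
[(\Xi_X,\xi|_{\Xi_X},p|_{\Xi_X})]=[(\Gamma_X,\pi_{\Xi,!}[\xi|_{\Xi_X}],p_\Gamma)] \qquad \text{in } \tilde{K}_*^{\rm geo}(X).
\]
This is the statement that pushforward along spin$^c$-submersions is functorial in geometric $K$-homology with coefficients in elliptic complexes. My plan is to prove it by an explicit bordism: the mapping cylinder of $\pi_\Xi$ defines a spin$^c$-manifold with boundary $\Xi_X\sqcup(-\Gamma_X)$, and one can extend $\xi|_{\Xi_X}$ across by coupling it to a fibrewise spin$^c$-Dirac deformation, exactly as in the proof of the analogous statement in \cite{baumvanerpII}. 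As a sanity check, one may apply the analytic assembly $\tilde{\gamma}$ (Theorem \ref{isofromalnalnadlnd}) to both sides: the left side becomes $\mathsf{PD}^{\rm an}(y)$ by Theorem \ref{bacvapapdcomo}, while the right side becomes $\mathsf{PD}^{\rm an}_H(\psi(y))$ whose equality with $\mathsf{PD}^{\rm an}(y)$ follows from naturality of the boundary map of the adiabatic parabolic tangent groupoid (Theorem \ref{outertringaleldldle}). Once commutativity is established, well-definedness of $\mathsf{PD}^{\rm geo}_H$ is immediate: since $\psi$ is an isomorphism (Theorem \ref{nistorconnethomfortwistgroup}), every $x\in K_*(C^*(T_HX))$ equals $\psi(y)$ for a unique $y$, and commutativity pins down $\mathsf{PD}^{\rm geo}_H(x)=\mathsf{PD}^{\rm geo}(y)$ independently of the choice of $\mathcal{H}$, lift to $K_*(I_X)$, and relative $K$-cycle representative. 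The statement that $\psi$ is an isomorphism, already in the diagram, then completes the proof.
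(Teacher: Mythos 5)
Your overall strategy matches the paper's: reduce commutativity to the $KK$-theoretic relations in Theorem~\ref{maincomputationforisg} together with a ``geometric Thom isomorphism'' relating the cycle on $\Xi_X$ to the cycle on $\Gamma_X$, and deduce well-definedness of $\mathsf{PD}^{\rm geo}_H$ as a byproduct of $\psi$ being an isomorphism. Your diagram chase through Theorem~\ref{maincomputationforisg} is correct (the $[\mathfrak{M}(\mathcal{H})]$ factors cancel exactly as you compute), and so is the localization step putting the support of a representative of $y$ inside $\Xi_X$.

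The genuine gap is the bordism you propose to identify $[(\Xi_X,\xi|_{\Xi_X},p|_{\Xi_X})]$ with $[(\Gamma_X,\tau_\Xi^{-1}[\xi|_{\Xi_X}],p_\Gamma)]$. The mapping cylinder of the vector bundle projection $\pi_\Xi\colon\Xi_X\to\Gamma_X$ is \emph{not} a smooth manifold with boundary: the identification $(v,1)\sim\pi_\Xi(v)$ collapses each fibre $\Xi_{X,\gamma}\times\{1\}$ to a point, producing cone singularities along $\Gamma_X$, and in any case $\Gamma_X$ would sit in the interior rather than in the boundary. What is actually needed here is the compatibility between the Thom isomorphism and \emph{vector bundle modification} in geometric $K$-homology: one applies the vector bundle modification relation along $\Xi_X\to\Gamma_X$ and then uses the clutching construction of Lemma~\ref{bordismsotorcompact} to pass between the sphere-bundle model and the total-space model; this is precisely the content of \cite[Lemma~2.7.4]{baumvanerp}, which the paper cites for the bottom right square of its diagram. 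Your ``sanity check'' via $\tilde{\gamma}$ cannot be promoted to a substitute proof in the form you state it, since identifying $\tilde{\gamma}$ of the right-hand cycle with $\mathsf{PD}^{\rm an}_H(\psi(y))$ is itself a nontrivial assertion (essentially the content of Theorem~\ref{somecomewithe}), and the reference to Theorem~\ref{outertringaleldldle} concerns the index pairing rather than the Poincar\'e duality maps. Replace the mapping cylinder argument with the vector-bundle-modification argument and the proof is sound.
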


\begin{proof}
Consider the map $\beta:K^*(\Gamma_X) \to \tilde{K}_*^{\rm geo}(X)$, $\xi\mapsto (\Gamma_X,\xi,p_\Gamma)$ and the following diagram:
\begin{equation}
\label{complicaatedadldaldmadl}
\begin{tikzcd}[column sep=large, row sep = large]
K_*(I_X) \arrow[rr, "j_*"] \arrow[d, "\otimes \mathcal{H}"] & & K_*(C^*(T_HX)) \arrow[d, "\psi"] \\
K^*(\Gamma_X) \arrow[dr, "{[\mathfrak{M}(\mathcal{H})]}"] & K^*(\Xi_X) \arrow[r, "\iota_\Xi^X!"]  & K^*(T^*X) \arrow[d, "\mathsf{PD}^{\rm geo}"] \\
&K^*(\Gamma_X) \arrow[u, "\tau_\Xi^X"] \arrow[r, "\beta"]  & K_*^{\rm geo}(X)
\end{tikzcd}
\end{equation}
By definition, we have that 
$$\mathsf{PD}^{\rm geo}_H\circ j_*=\beta([\mathcal{H}\otimes \mathfrak{M}(\mathcal{H})]\otimes \cdot).$$ 
Therefore, the theorem follows from the commutativity of the diagram \eqref{complicaatedadldaldmadl}. The bottom right square commutes due to the same general argument as in \cite[Lemma 2.7.4]{baumvanerp}. The upper (irregular) hexagon commutes by Theorem \ref{maincomputationforisg}.
\end{proof}

We can now conclude some computations of geometric Poincaré${}_H$ dual classes to symbol classes from the computations of Section \ref{sec:ktheoomon}. From Corollary \ref{charhellfofodotwocordonk} the following proposition follows. 

\begin{proposition}
\label{charhellfofodotwocordonkwithgeo}
Let $X$ be a compact regular polycontact manifold with polycontact structure $H$, equipped with a Riemannian metric $g$. Assume that $D_\gamma\in \mathcal{DO}_H^{2}(X;E)$ is as in Equation \eqref{secondoroddbbamddevep} and is $H$-elliptic. For $N>>0$, we have that 
$$\mathsf{PD}^{\rm geo} _H[\sigma_H(D_\gamma)]=\left[\left(S(H^\perp)\times S^1,E_{\gamma,N}, p\right)\right],$$
where $p:S(H^\perp)\times S^1\to X$ denotes the projection map and $E_{\gamma,N}\to S(H^\perp) \times S^1$ is obtained from clutching $(\oplus_{k\leq N}(p^*H)^{\otimes^{\rm sym}_\C k}\otimes E)\times [0,1]\to  S(H^\perp) \times [0,1]$ along $\oplus_{k\leq N}\gamma_k$ at the boundary (where $\gamma_k$ is as in Proposition \ref{charhellfofodotwo}).
\end{proposition}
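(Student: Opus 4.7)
The plan is to apply Corollary \ref{charhellfofodotwocordonk} in conjunction with the definition of $\mathsf{PD}^{\rm geo}_H$ (Definition \ref{geomeodmoemdheisenberg}) and the explicit clutching isomorphism $\tilde{K}_*^{\rm geo}(X)\cong K_*^{\rm geo}(X)$ from Theorem \ref{isomofopsmdapada} whose inverse is provided by Lemma \ref{bordismsotorcompact}. First I will recall, following Example \ref{polycontactexama} and \ref{pluricontactexamokkoa}, that for a regular polycontact manifold $X$ we have $\Gamma_X=H^\perp\setminus X$ as spin$^c$-manifolds (with the spin$^c$-structure of Proposition \ref{spincongamma}), and the Fock bundle
$$\mathpzc{F}_X=\bigoplus_{k=0}^\infty (p_\Gamma^*H)^{\otimes_\C^{\rm sym}k}\to \Gamma_X,$$
is a bundle of flat orbit representations with trivial metaplectic correction bundle $\mathfrak{M}(\mathpzc{F}_X)\cong 1$. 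Hence the geometric Poincar\'e${}_H$ duality map evaluated on $[\sigma_H(D_\gamma)]$ reduces (via Definition \ref{geomeodmoemdheisenberg}) to the cycle $[(\Gamma_X,\xi,p_\Gamma)]\in\tilde{K}_0^{\rm geo}(X)$, where $\xi$ is any elliptic complex on $\Gamma_X$ representing a preimage of $[\sigma_H(D_\gamma)]$ under the composition $K^0(\Gamma_X)\xrightarrow{\otimes\mathpzc{F}_X}K_0(I_X)\to K_0(C^*(T_HX))$.

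Next, I would invoke Corollary \ref{charhellfofodotwocordonk}, which provides exactly such a preimage $x\in K^0(\Gamma_X)=K^0(H^\perp\setminus X)$ and, via Lemma \ref{redielaclacladcompactgamma} applied to the family $P_N$ projecting onto $\bigoplus_{k\leq N}(p_\Gamma^*H)^{\otimes_\C^{\rm sym}k}$, describes $x$ after the Bott suspension $K^0(H^\perp\setminus X)\cong K^0(S(H^\perp)\times\R)\hookrightarrow K^0(S(H^\perp)\times S^1)$ as the difference $[E_{\gamma,N}]-[(\bigoplus_{k\leq N}(p_\Gamma^*H)^{\otimes_\C^{\rm sym}k}\otimes E)\times S^1]$. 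In particular, for $N$ large, $x$ can be represented by an elliptic complex $\xi$ on $\Gamma_X$ whose support is contained in a precompact open annular domain $M_0\subseteq \Gamma_X$ of the form $\{\lambda_0<|\xi|<\lambda_1\}$ with respect to the dilation action on $H^\perp\setminus X$, and such that $\xi$ restricted to a neighbourhood of $\partial M_0$ is given by $\bigoplus_{k\leq N}\gamma_k$ (after trivializing the vector bundles along the radial direction).

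The key step will then be to apply Lemma \ref{bordismsotorcompact} with $M_0$ as above: the doubling $Z=2M_0$ is diffeomorphic (as a spin$^c$-manifold) to $S(H^\perp)\times S^1$, since $M_0$ is an open collar on $S(H^\perp)\times (\lambda_0,\lambda_1)$ and the boundary identifications produce the two circle factors. Under this diffeomorphism, the clutching bundle $E_\xi = E_1\cup_{\sigma|_{\partial M_0}}E_2$ constructed in Lemma \ref{bordismsotorcompact} matches precisely the bundle $E_{\gamma,N}$ obtained by clutching $(\bigoplus_{k\leq N}(p_\Gamma^*H)^{\otimes_\C^{\rm sym}k}\otimes E)\times [0,1]$ along $\bigoplus_{k\leq N}\gamma_k$, and the map $f$ descends to the projection $p:S(H^\perp)\times S^1\to X$. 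Combining these identifications, Lemma \ref{bordismsotorcompact} gives $[(\Gamma_X,\xi,p_\Gamma)]=[(S(H^\perp)\times S^1,E_{\gamma,N},p)]\in\tilde{K}_0^{\rm geo}(X)$.

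The main technical obstacle is ensuring the geometric/spin$^c$-identifications in the last paragraph are carried out compatibly: one must verify that the orientation of the doubling $Z=2M_0$ relative to the spin$^c$-structure on $\Gamma_X$ from Proposition \ref{spincongamma} matches the obvious product spin$^c$-structure on $S(H^\perp)\times S^1$, and that the clutching maps $\bigoplus_{k\leq N}\gamma_k$ of Proposition \ref{charhellfofodotwo} agree up to homotopy of elliptic complexes with the radial form of $\xi$ near $\partial M_0$ coming from the finite-rank approximation in Lemma \ref{redielaclaclad}. Both verifications are local in the radial direction and amount to checking that the Bott-type suspension used in Corollary \ref{charhellfofodotwocordonk} is the same suspension that appears implicitly in Lemma \ref{bordismsotorcompact}; the remaining direct sum/degenerate equivalences then conclude the proof.
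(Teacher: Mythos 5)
Your proposal is correct and follows exactly the route the paper intends: the paper itself just says "From Corollary \ref{charhellfofodotwocordonk} the following proposition follows," so the argument is precisely the unwinding you carry out -- take the preimage $x\in K^0(\Gamma_X)$ from Corollary \ref{charhellfofodotwocordonk}, use that the Fock bundle $\mathpzc{F}_X$ has trivial metaplectic correction on a polycontact manifold so $\mathsf{PD}^{\rm geo}_H[\sigma_H(D_\gamma)]=[(\Gamma_X,\xi,p_\Gamma)]$ for $\xi$ representing $x$, and then pass to a compact clutching cycle via Lemma \ref{bordismsotorcompact}. Your observation that the doubling $2M_0$ of the annular domain $M_0\cong S(H^\perp)\times(\lambda_0,\lambda_1)$ is $S(H^\perp)\times S^1$ and that the clutching data produced by Lemma \ref{bordismsotorcompact} matches $\oplus_{k\le N}\gamma_k$ from Corollary \ref{charhellfofodotwocordonk} are the right points to verify (one small wording slip: the two boundary components of $M_0$ get glued to produce a single circle factor, not "two circle factors"); the spin$^c$-orientation concern you flag is genuine but is handled by Proposition \ref{spincongamma} together with the identification of $\Xi_X$ with $p_\Gamma^*H$ and its complex structure.
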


From Corollary \ref{charhellfofodszego} the following proposition follows. 

\begin{proposition}
\label{charhellfofodszegowithgeo}
Let $X$ be a compact regular polycontact manifold with polycontact structure $H$, equipped with a Riemannian metric $g$. Assume that $P\in \Psi_H^0(X;E)$ is an idempotent Hermite operator. Then 
$$\mathsf{PD}^{\rm geo} _H[\sigma_H(2P-1)]=\left[\left(S(H^\perp),\sigma_H(P)(\oplus_{k}(p^*H)^{\otimes^{\rm sym}_\C k}\otimes E), p\right)\right],$$
where $p:S(H^\perp)\to X$ denotes the projection and $\sigma_H(P)(\oplus_{k}(p^*H)^{\otimes^{\rm sym}_\C k}\otimes E)\to S(H^\perp)$ is the vector bundle obtained from the range of the Carnot symbol of $P$. 
\end{proposition}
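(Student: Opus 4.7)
The plan is to deduce the statement directly from the odd-case computation in Corollary \ref{charhellfofodszego} by unwinding the definition of the geometric Poincaré${}_H$ duality map for $*=1$. First, I would specialize the general machinery to the polycontact setting: since $X$ is regular polycontact, Example \ref{polycontactexama} and Example \ref{pluricontactexamokkoa} identify $\Gamma_X = H^\perp\setminus X$ and $\Gamma_X^\partial = S(H^\perp)$, and provide the Fock bundle
$$\mathpzc{F}_X \cong \bigoplus_{k=0}^\infty (p_\Gamma^*H)^{\otimes^{\rm sym}_\C k} \longrightarrow \Gamma_X$$
as a distinguished bundle of flat orbit representations, with $I_X\cong C_0(\Gamma_X,\mathbb{K}(\mathpzc{F}_X))$. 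I take $\mathcal{H}=\mathpzc{F}_X$ throughout, and note that by Example \ref{pluricontactexamokkoa} the metaplectic correction bundle $\mathfrak{M}(\mathpzc{F}_X)\to \Gamma_X$ is trivial, so it contributes nothing to the geometric cycle.

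Next, I would produce a pre-image of the odd symbol class in $K_0(I_X^\partial)$. Since $P$ is a Hermite idempotent, $\sigma_H(P)$ is supported in the flat orbits and, viewed via $\pi_\musFlat$ from Proposition \ref{nicaoaooacjad}, acts on $\mathpzc{F}_X\otimes E|_{\Gamma_X^\partial}$ as a projection onto the finite-rank hermitean subbundle $\sigma_H(P)(\mathpzc{F}_X\otimes E)\to S(H^\perp)$ (the rank is controlled because $P$ is order $0$ Hermite and $\sigma_H^0(P)$ is a projection in $\Sigma^0_H(X;E)$). Thus the triple $(\sigma_H(P)|_{\Gamma_X^\partial},0,u)$, with $u$ any unitary lift interpolating between $0$ and $\sigma_H(P)|_{\Gamma_X^\partial}$ through multipliers, represents a class in $K_0(I_X^\partial)$ whose image in $K_1(C^*(T_HX))$ is $[\sigma_H(2P-1)]$; this is precisely the content of Corollary \ref{charhellfofodszego} combined with the boundary/suspension identification $K_1(I_X)\cong K_0(I_X^\partial)$ used to set up the odd case of Definition \ref{geomeodmoemdheisenberg}. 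Equivalently, under the Morita equivalence $K_0(I_X^\partial)\cong K^0(\Gamma_X^\partial)$ implemented by $\mathpzc{F}_X$, this pre-image is exactly the class of the vector bundle $\sigma_H(P)(\mathpzc{F}_X\otimes E)\to S(H^\perp)$.

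Finally, I would plug this pre-image into the odd-case formula of Definition \ref{geomeodmoemdheisenberg}. With $p=\sigma_H(P)|_{\Gamma_X^\partial}$, $q=0$ and $u$ chosen as above, the resulting triple $(p\mathpzc{F}_X,\,q\mathpzc{F}_X,\,qup)\otimes \mathfrak{M}(\mathpzc{F}_X)$ over $\Gamma_X^\partial=S(H^\perp)$ reduces (using triviality of $\mathfrak{M}(\mathpzc{F}_X)$ and the fact that a cycle of the form $(F,0,0)$ represents the class of the vector bundle $F$ under the isomorphism of Theorem \ref{isomofopsmdapada}) to the geometric cycle
$$\bigl(S(H^\perp),\,\sigma_H(P)(\mathpzc{F}_X\otimes E),\,p\bigr),$$
which is the desired expression once $\mathpzc{F}_X$ is expanded as $\bigoplus_k (p^*H)^{\otimes^{\rm sym}_\C k}$.

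The main obstacle I anticipate is bookkeeping rather than substance: carefully identifying the odd $K_1$-cycle $[\sigma_H(2P-1)]$ with the $K_0$-class of the projection $\sigma_H(P)|_{\Gamma_X^\partial}$ across the suspension isomorphism $K_1(I_X)\cong K_0(I_X^\partial)$ used implicitly in Definition \ref{geomeodmoemdheisenberg}, and then matching the degenerate-cycle convention $(F,0,0)\leftrightarrow [F]$ of Theorem \ref{isomofopsmdapada} so that no spurious orientation, sign, or metaplectic twist is introduced. Once these identifications are pinned down, the statement is essentially a direct transcription of Corollary \ref{charhellfofodszego} through the definition of $\mathsf{PD}^{\rm geo}_H$.
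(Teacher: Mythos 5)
Your proposal is correct and takes essentially the same approach as the paper, which proves this proposition as an immediate consequence of Corollary \ref{charhellfofodszego} (via Lemma \ref{oddredielaclacladcompactgamma}) plugged into the odd case of Definition \ref{geomeodmoemdheisenberg}, using that in the polycontact setting the Fock bundle gives $I_X\cong C_0(\Gamma_X,\mathbb{K}(\mathpzc{F}_X))$ with trivial metaplectic correction. The only minor inaccuracy is your phrasing of the relative cycle: since $\Gamma_X^\partial=S(H^\perp)$ is compact and $\sigma_H(P)|_{\Gamma_X^\partial}$ is a projection onto a finite-rank subbundle, it already lies in $I_X^\partial$, so one may simply take $(p,q,u)=(\sigma_H(P)|_{\Gamma_X^\partial},0,1)$ rather than invoking a ``unitary lift interpolating between $0$ and $\sigma_H(P)$''.
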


\subsection{The commuting tetraeder}

We now come to the main $K$-theoretical result of this part. It relates the different dualities from the last section to the map $\psi:K^*(T^*X)\to K_*(C^*(T_HX))$ constructed in Theorem \ref{nistorconnethomfortwistgroup}

\begin{theorem}
\label{commutingtetraederthm}
Let $X$ be a compact $\pmb{F}$-regular Carnot manifold. Then all morphism in the following diagram are isomorphisms and the diagram commutes
\[
\begin{tikzcd}
& K_*(C^*(T_H X)) \arrow{ddddl}[swap]{\mathsf{PD}_H^{\rm geo}}\arrow[ddddr, "\mathsf{PD}_H^{\rm an}"] & \\
&&\\
& K^*(T^* X) \arrow[ddl, "\mathsf{PD}^{\rm geo}"] \arrow{ddr}[swap]{\mathsf{PD}^{\rm an}} \arrow[uu, "\psi"] & \\
&&\\
\tilde{K}_*^{\rm geo}(X) \arrow[rr, "\tilde{\gamma}"] & & K_*(X)
\end{tikzcd}
\]
If $X$ is a Carnot manifold which is not necessarily $\pmb{F}$-regular, the diagram above with the upper left arrow removed only consists of isomorphisms and commutes.
\end{theorem}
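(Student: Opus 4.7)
The plan is to decompose the tetraeder into the three triangular faces meeting at the top vertex $K_*(C^*(T_HX))$ plus the bottom triangle, and to recognize each face as a previously established commutation. The bottom triangle
$\mathsf{PD}^{\rm an}=\tilde{\gamma}\circ \mathsf{PD}^{\rm geo}$
together with the isomorphism statements for $\mathsf{PD}^{\rm geo}$, $\mathsf{PD}^{\rm an}$ and $\tilde{\gamma}$ is exactly Theorem \ref{bacvapapdcomo} combined with Theorem \ref{isofromalnalnadlnd}. The upper-left face $\mathsf{PD}^{\rm geo}=\mathsf{PD}_H^{\rm geo}\circ\psi$ is Theorem \ref{leftuppertriangle}, and $\psi$ is an isomorphism by Theorem \ref{nistorconnethomfortwistgroup}. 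Combining these two identities one already deduces that $\mathsf{PD}_H^{\rm geo}=\mathsf{PD}^{\rm geo}\circ\psi^{-1}$ is an isomorphism in the $\pmb{F}$-regular case.

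The main obstacle, and the only remaining commutation to verify, is the upper-right face
\[
\mathsf{PD}_H^{\rm an}\circ \psi=\mathsf{PD}^{\rm an},
\]
which is also the face that does not require the $\pmb{F}$-regularity assumption. The idea is to interpret both sides by means of deformation groupoids sitting inside the adiabatic parabolic tangent groupoid $\mathbb{A}_HX\rightrightarrows X\times[0,\infty)^2$ recalled in Section \ref{subsec:parabolictanget}. Recall that $\mathbb{A}_HX$ restricts to Connes' tangent groupoid $\mathbb{T}X$ on any slice $t>0$ fixed in the second variable, to van Erp--Yuncken's parabolic tangent groupoid $\mathbb{T}_HX$ on any slice $s>0$ fixed in the first, and to Nistor's adiabatic deformation $(T_HX)_{\rm adb}$ on the edge $\{t=s=0\}$. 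The three constructions $\mathsf{PD}^{\rm an}$, $\mathsf{PD}_H^{\rm an}$ and $\psi$ are all of the form ``invert an evaluation map whose kernel is contractible, then evaluate elsewhere,'' induced from the short exact sequences associated with the three groupoids above. Assembling them inside $KK^{X\times [0,\infty)^2}(C(X),C^*(\mathbb{A}_HX|_{X\times [0,1]^2}))$ and using naturality of the restriction-to-a-closed-subspace maps together with the fact that each restriction mapping whose kernel is contractible is a $KK$-equivalence, one obtains the identity $\mathsf{PD}_H^{\rm an}\circ \psi=\mathsf{PD}^{\rm an}$ by a diagram chase along the square $[0,1]^2$ (the same diagram chase as in Theorem \ref{outertringaleldldle}, now performed one level higher so as to keep track of the $C(X)$-action rather than pairing with $\C$).

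With the three triangular faces established, the fourth face $\mathsf{PD}_H^{\rm an}=\tilde{\gamma}\circ \mathsf{PD}_H^{\rm geo}$ is forced by surjectivity of $\psi$, the bottom triangle and the two upper faces. The isomorphism statements then follow by composing the already-known isomorphisms: $\mathsf{PD}_H^{\rm an}=\mathsf{PD}^{\rm an}\circ\psi^{-1}$ and, when $X$ is $\pmb{F}$-regular, $\mathsf{PD}_H^{\rm geo}=\mathsf{PD}^{\rm geo}\circ\psi^{-1}$. For the second assertion of the theorem, concerning a general (not necessarily $\pmb{F}$-regular) Carnot manifold, I note that nothing in the arguments above, apart from Theorem \ref{leftuppertriangle}, uses the existence of flat coadjoint orbits; removing the arrow $\mathsf{PD}_H^{\rm geo}$ from the tetraeder yields exactly the data that survives, and it is already covered by the bottom triangle together with the upper-right face just established. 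The hard part of the proof is therefore the verification of the upper-right face via a careful bookkeeping of evaluation and restriction morphisms on the square $[0,\infty)^2$ of parameters for $\mathbb{A}_HX$; once this is done, the rest is formal.
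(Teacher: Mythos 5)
Your decomposition into four faces, and your plan of attack for each, match the paper's proof exactly: the bottom triangle is Theorem \ref{bacvapapdcomo}, the upper-left face is Theorem \ref{leftuppertriangle}, the upper-right face is the paper's Theorem \ref{rightuppertriangle} (proved precisely by the adiabatic-parabolic-tangent-groupoid diagram chase over $[0,1]^2$ that you sketch, lifted from pairing with $\C$ to $KK(C(X),\,\cdot\,)$ via the natural transformation $\alpha$), and the back face is then formal. This is the same argument as the paper, including the observation that the upper-right face does not use $\pmb{F}$-regularity, which gives the second assertion.
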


The bottom triangle commutes by Theorem \ref{bacvapapdcomo} with all maps being isomorphisms, with no assumption or reference to Carnot structures. The left upper triangle commutes by Theorem \ref{leftuppertriangle} -- commutativity of this part of the diagram requires $\pmb{F}$-regularity. If the diagram in Theorem \ref{commutingtetraederthm} commutes, the fact that the bottom triangle commutes with all maps being isomorphisms and that $\psi$ is an isomorphism (see Theorem \ref{nistorconnethomfortwistgroup}) implies that all of the morphisms are isomorphisms. We shall below in Theorem \ref{rightuppertriangle} prove that the right upper triangle commutes with no assumption of regularity on the Carnot structure.

\begin{theorem}
\label{rightuppertriangle}
Let $X$ be a compact Carnot manifold. Then the following diagram commutes
\[
\begin{tikzcd}
& K_*(C^*(T_H X))\arrow[ddr, "\mathsf{PD}_H^{\rm an}"] & \\
&&\\
 K^*(T^* X) \arrow[rr, "\mathsf{PD}^{\rm an}"] \arrow[uur, "\psi"] &&K_*^{\rm an}(X) 
\end{tikzcd}
\]
\end{theorem}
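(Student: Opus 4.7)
The plan is to use the adiabatic parabolic tangent groupoid $\mathbb{A}_HX$ of Subsection \ref{subsec:parabolictanget} as a two-parameter deformation realizing both sides of the desired identity. Set $B := C^*(\mathbb{A}_HX|_{X \times [0,1]^2})$. Its restrictions to the four corners give $B_{(0,0)} = B_{(1,0)} = C_0(T^*X)$, $B_{(0,1)} = C^*(T_HX)$, $B_{(1,1)} = \mathbb{K}(L^2(X))$, and the four edges realize respectively Nistor's adiabatic groupoid $C^*((T_HX)_{\rm adb}|_{[0,1]})$ on the left edge (inducing $\psi$), the parabolic tangent groupoid $C^*(\mathbb{T}_HX|_{[0,1]})$ on the top edge (inducing $\mathsf{PD}^{\rm an}_H$), Connes' tangent groupoid $C^*(\mathbb{T}X|_{[0,1]})$ on the right edge (inducing $\mathsf{PD}^{\rm an}$), and the trivial deformation $C_0([0,1]) \otimes C_0(T^*X)$ on the bottom edge. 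The whole setup is $C(X)$-linear since the unit space of $\mathbb{A}_HX$ fibres over $X$.

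First I would verify the necessary $KK$-equivalences. The restriction $r_N : B \to B_{\{0\} \times [0,1]}$ has kernel $C^*(\mathbb{A}_HX|_{(0,1] \times [0,1]}) \cong C_0((0,1]) \otimes C^*(\mathbb{T}X|_{[0,1]})$, which is contractible in the first tensor factor, so $[r_N]$ is a $KK$-equivalence; symmetrically, the restriction to the bottom edge $r_T : B \to B_{[0,1] \times \{0\}}$ is a $KK$-equivalence with contractible kernel $C_0((0,1]) \otimes C^*(\mathbb{T}_HX|_{[0,1]})$. The endpoint evaluations on each edge — in particular $e_H^0 : B_{[0,1]\times\{1\}} \to C^*(T_HX)$ and $e_R^0 : B_{\{1\}\times[0,1]} \to C_0(T^*X)$ — are $KK$-equivalences by the standard contractible-kernel argument (the analogue of Corollary \ref{adiadefcororo}), and the two endpoint evaluations on the trivial bottom edge satisfy $[e_T^0] = [e_T^1]$ by the obvious homotopy collapsing $[0,1]$ to a point.

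Second, setting $\kappa := [e_{00}]^{-1} \otimes [e_{11}] \in KK(C_0(T^*X), \mathbb{K}(L^2(X)))$, I would compute $\kappa$ through both the "upper-left L" $L = \{0\} \times [0,1] \cup [0,1] \times \{1\}$ and the "lower-right L" $R = [0,1] \times \{0\} \cup \{1\} \times [0,1]$. Using the factorizations $[e_{c}] = [r_E] \otimes [e_E^c]$ whenever the corner $c$ lies on the edge $E$, and invoking invertibility of $[r_N]$, $[e_N^0]$ and $[e_H^0]$, one telescopes
\[
\kappa \;=\; [e_N^0]^{-1} \otimes [e_N^1] \otimes [e_H^0]^{-1} \otimes [e_H^1] \;=\; \psi \otimes \tau_H
\]
through $L$, and analogously obtains $\kappa = \tau_C$ through $R$ (the bottom edge collapsing to the identity thanks to $[e_T^0] = [e_T^1]$). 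Here $\tau_H := [e_H^0]^{-1} \otimes [e_H^1]$ and $\tau_C := [e_R^0]^{-1} \otimes [e_R^1]$ are the Kasparov-theoretic models of $\mathsf{PD}^{\rm an}_H$ and $\mathsf{PD}^{\rm an}$ at the level of $KK(-,-)$ before applying $\alpha$.

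Finally, since all maps above are $C(X)$-linear, the natural transformation $\alpha$ of Subsection \ref{ordinadinadianadp} transports the identity $\psi \otimes \tau_H = \tau_C$ to the desired identity $\mathsf{PD}^{\rm an}_H \circ \psi = \mathsf{PD}^{\rm an}$ as maps $K^*(T^*X) \to K_*^{\rm an}(X)$, using also the $C(X)$-linear Morita equivalence $\mathbb{K}(L^2(X)) \sim_M \mathbb{C}$ via $L^2(X)$. The main obstacle will be the structural identification of restrictions of $\mathbb{A}_HX$ at the $C^*$-algebra level, most notably $\mathbb{A}_HX|_{(0,1] \times [0,1]} \cong \mathbb{T}X|_{[0,1]} \times (0,1]$: because the Heisenberg dilation $\delta_t$ makes the smooth structure of $\mathbb{A}_HX$ non-trivial in $t$, one must check this product decomposition at the $C^*$-algebraic level rather than assuming it set-theoretically, and similarly verify the compatibility of the edge restriction maps with the corner evaluations that the telescoping argument relies on.
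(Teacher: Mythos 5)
Your proof is correct and uses essentially the same approach as the paper: both hinge on the adiabatic parabolic tangent groupoid $\mathbb{A}_HX$ as a two-parameter deformation together with the naturality of $\alpha$, and your explicit telescoping argument through the two L-shaped paths is a more detailed rendering of the same $KK$-diagram chase the paper performs in diagrams \eqref{diagaondaon1} and \eqref{diagaondaon2}. As a side remark, your edge identifications (top edge $\mathbb{T}_HX$, right edge $\mathbb{T}X$) are the ones consistent with the definition $\mathbb{A}_HX=(TX\times[0,\infty)\times\{0\})\dot{\cup}(\mathbb{T}_HX\times(0,\infty))$ and with the fiber table in Section \ref{subsec:parabolictanget}, whereas the inline display there describing $\mathbb{A}_HX|_{[0,\infty)\times\{s\}}$ and $\mathbb{A}_HX|_{\{t\}\times[0,\infty)}$ appears to have the labels $\mathbb{T}X$ and $\mathbb{T}_HX$ interchanged.
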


Theorem \ref{rightuppertriangle} was proven for contact manifolds in \cite[Theorem 5.4.1]{baumvanerp}. Our proof follows a similar line of thought.

\begin{proof}
Recall the construction of the adiabatic parabolic tangent groupoid $\mathbb{A}_HX$ of $X$ from Section \ref{subsec:parabolictanget}.
The following diagram is commutative
\[
\begin{tikzcd}
C^*(\mathbb{A}_H X) \arrow[r] \arrow[d] & C^*(\mathbb{T} X) \arrow[d] \\
C^*(\mathbb{T}_H X) \arrow[r] & C^*(X \times X),
\end{tikzcd}
\]
where the maps are the appropriate evaluations at $1$.
Therefore, we have the following commutative diagram in $KK$
\[
\begin{tikzcd}
KK_*(C(X), T^*X) \arrow[r] \arrow[d] & KK_*(C(X), C^*(\mathbb{T} X)) \arrow[d] \arrow[r] & KK_*(C(X), C^*(TX)) \arrow[dl] \arrow[dd] \\
KK_*(C(X), C^*(\mathbb{T}_H X)) \arrow[r,] \arrow[d] & K_*(X) \\
KK_*(C(X), C^*(T_H X)) \arrow{ur}& & KK_*(C(X), C^*((T_HX)_{\rm adb})) \arrow[ll]
\end{tikzcd}
\]
Thus the following diagram commutes
\begin{equation}
\label{diagaondaon1}
\begin{tikzcd}
& KK_*(C(X), C^*(TX)) \arrow{dl}[swap]{\psi} \arrow[d] \\
KK_*(C(X), C^*(T_H X)) \arrow[r] & K_*(X)
\end{tikzcd}
\end{equation}
Since $\alpha$ is a natural transformation, the diagram commutes
\begin{equation}
\label{diagaondaon2}
\begin{tikzcd}
KK_*(\mathbb{C}, C^*(T_H X)) \arrow[d, "\alpha_{C^*(T_H X)}"] & KK_*(\mathbb{C}, C^*(TX)) \arrow[l, "\psi"] \arrow[d, "\alpha_{C^*(TX)}"] \\
KK_*(C(X), C^*(T_H X)) & KK_*(C(X), C^*(TX)) \arrow[l, "\psi"]
\end{tikzcd}
\end{equation}
Combining \eqref{diagaondaon1} with \eqref{diagaondaon2} we arrive at the conclusion of the theorem.
\end{proof}

\section{Index theorems for $H$-elliptic operators}
\label{sec:indexingener}

In this section we conclude abstract index formulas on a compact Carnot manifold from the commuting tetraeder in Theorem \ref{commutingtetraederthm}. We note that using the formalism of Section \ref{sec:ktheoomon}, the index theorems of this section extends ad verbatim to graded $H$-elliptic operators.

\begin{theorem}
\label{oenonoinon0on}
Let $X$ be a compact Carnot manifold. Assume that $D:C^\infty(X;E_1)\to C^\infty(X;E_2)$ is an $H$-elliptic pseudodifferential operator. Then the Baum-Douglas index problem for $[D]$ is solved by $\mathsf{PD}^{\rm geo}(\psi^{-1}[\sigma_H(D)])$.
\end{theorem}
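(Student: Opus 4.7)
The plan is to show the equality $\tilde{\gamma}(\mathsf{PD}^{\rm geo}(\psi^{-1}[\sigma_H(D)])) = [D]$ in $K_*^{\rm an}(X)$ by a diagram chase in the lower portion of the tetraeder of Theorem \ref{commutingtetraederthm}. Crucially, note that the statement involves only the classical geometric Poincaré duality $\mathsf{PD}^{\rm geo}$ of Definition \ref{geomeodmoemd} (not its Heisenberg analogue), so no $\pmb{F}$-regularity assumption is needed; only the portions of the tetraeder established without $\pmb{F}$-regularity will be used.

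First, I would invoke Theorem \ref{nistorconnethomfortwistgroup} to justify that $\psi : K^*(T^*X) \to K_*(C^*(T_H X))$ is an isomorphism, so that $\psi^{-1}[\sigma_H(D)] \in K^*(T^*X)$ is well defined. Second, I would apply Theorem \ref{bacvapapdcomo} (the commutativity of the bottom triangle of the tetraeder), which gives
\[
\tilde{\gamma}\bigl(\mathsf{PD}^{\rm geo}(\psi^{-1}[\sigma_H(D)])\bigr) = \mathsf{PD}^{\rm an}(\psi^{-1}[\sigma_H(D)]).
\]
Third, I would apply Theorem \ref{rightuppertriangle} (the right upper triangle, which holds for arbitrary Carnot manifolds), giving
\[
\mathsf{PD}^{\rm an}(\psi^{-1}[\sigma_H(D)]) = \mathsf{PD}^{\rm an}_H\bigl(\psi(\psi^{-1}[\sigma_H(D)])\bigr) = \mathsf{PD}^{\rm an}_H[\sigma_H(D)].
\]
Fourth, I would invoke Proposition \ref{chooseanopheisenberg}, which identifies $\mathsf{PD}^{\rm an}_H[\sigma_H(D)]$ with $[D] \in K_*^{\rm an}(X)$. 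Concatenating these three equalities yields the claim.

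There is essentially no hard step here: the theorem is a formal consequence of the commuting tetraeder together with the identification of the Heisenberg-analytic Poincaré dual of $[\sigma_H(D)]$ with the $K$-homology class $[D]$. The genuine content lies in the preceding theorems, in particular Theorem \ref{rightuppertriangle} (proved via a diagram chase with the adiabatic parabolic tangent groupoid $\mathbb{A}_H X$), Theorem \ref{bacvapapdcomo} (analytic-geometric compatibility of ordinary Poincaré duality, cited from \cite{baumvanerpII}), and Proposition \ref{chooseanopheisenberg} (which itself is a consequence of Proposition \ref{properldaldlaadlda} relating the three $KK$-classes $[D]$, $[\sigma_H(D)]$, $[\mathbb{D}]$ via the evaluation maps of the parabolic tangent groupoid). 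The only nuance to emphasize is that, by Theorem \ref{isofromalnalnadlnd}, $\tilde{\gamma}$ is indeed an isomorphism onto $K_*^{\rm an}(X)$, and hence ``solving the Baum–Douglas index problem'' in the sense of producing a cycle in $\tilde{K}_*^{\rm geo}(X)$ is equivalent to the equality above in $K_*^{\rm an}(X)$.
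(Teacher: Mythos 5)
Your proof is correct and is essentially the paper's argument: the paper's two-line proof cites Proposition \ref{chooseanopheisenberg} for $[D]=\mathsf{PD}^{\rm an}_H[\sigma_H(D)]$ and then invokes the commuting tetraeder of Theorem \ref{commutingtetraederthm}, which you have simply unpacked into its two relevant constituents (Theorem \ref{bacvapapdcomo} for the bottom triangle and Theorem \ref{rightuppertriangle} for the right upper triangle). Your observation that only the $\pmb{F}$-regularity-free portion of the tetraeder is needed is also made explicitly in the statement of Theorem \ref{commutingtetraederthm}.
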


\begin{proof}
By Proposition \ref{chooseanopheisenberg}, $[D]=\mathsf{PD}^{\rm an}_H[\sigma_H(D)]$ and by Theorem \ref{commutingtetraederthm}, 
$$\tilde{\gamma}(\mathsf{PD}^{\rm geo}(\psi^{-1}[\sigma_H(D)]))=\mathsf{PD}^{\rm an}_H[\sigma_H(D)].$$
\end{proof}

The next index formula which Theorem \ref{oenonoinon0on} implies has already been studied in the literature, see \cite{eskeewertthesis,eskeewertpaper,mohsen2}.

\begin{corollary}
Let $X$ be a compact Carnot manifold. Assume that $D:C^\infty(X;E_1)\to C^\infty(X;E_2)$ is an $H$-elliptic pseudodifferential operator. Then it holds that 
$$\ind(D)=\int_{T^*X} \ch(\psi^{-1}[\sigma_H(D)])\wedge \mathrm{Td}(X).$$
\end{corollary}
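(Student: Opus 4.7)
The plan is to deduce the corollary directly from Theorem \ref{outertringaleldldle} once the Fredholm index of $D$ has been identified with the Kasparov pairing $\ind_{T_HX}[\sigma_H(D)]$. First, I would establish this identification. By Proposition \ref{properldaldlaadlda}, if $\mathbb{D}\in \pmb{\Psi}^m_H(X;E_1,E_2)$ is any lift of $D$, then $[\sigma_H(D)]=[i^*]\otimes [\mathbb{D}]\otimes[\mathrm{ev}_{t=0}]$ and $[D]=[\mathbb{D}]\otimes[\mathrm{ev}_{t=1}]$. Pairing $[\sigma_H(D)]$ with the boundary class $\ind_{T_HX}\in KK_0(C^*(T_HX),\C)$ defined from the short exact sequence
\[
0\to C_0(0,\infty)\otimes \mathbb{K}(L^2(X))\to C^*(\mathbb{T}_HX|_{[0,1]})\to C^*(T_HX)\to 0,
\]
unwinds, via the exact triangle in $KK$ and the standard identification of $K_0(\mathbb{K}(L^2(X)))\cong \Z$ with the Fredholm index, to the pairing of $[D]$ with the trace on $\mathbb{K}(L^2(X))$; this yields $\ind(D)=\ind_{T_HX}[\sigma_H(D)]$.

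Having established this, the corollary follows immediately by applying the conclusion of Theorem \ref{outertringaleldldle} to the element $x=[\sigma_H(D)]\in K_0(C^*(T_HX))$:
\[
\ind(D)=\ind_{T_HX}[\sigma_H(D)]=\int_{T^*X}\ch\bigl(\psi^{-1}[\sigma_H(D)]\bigr)\wedge \mathrm{Td}(T^*X),
\]
where the statement of the corollary reads $\mathrm{Td}(X)$ for $\mathrm{Td}(T^*X)=p^*\mathrm{Td}(TX\otimes \C)$ under the natural pullback by $p:T^*X\to X$ (this is the customary Atiyah--Singer convention where the Todd class of the base is pulled back to the cotangent bundle).

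The main obstacle, which is essentially a bookkeeping matter rather than a real difficulty, is the first step: verifying that the Kasparov pairing with $\ind_{T_HX}$ really computes the analytic Fredholm index. For elliptic operators this is the classical statement underlying Connes' tangent groupoid proof of Atiyah--Singer; in the Heisenberg setting, once $D$ is represented by the higher-order unbounded Kasparov module $(\pmb{\mathpzc{E}}(X;E_1\oplus E_2),\tilde{\mathbb{D}})$ of Corollary \ref{clalslasmadadado}, the compact resolvent of $\mathrm{ev}_{t=1}(\tilde{\mathbb{D}})$ (coming from Theorem \ref{hellipticmeanshelliptic} and Proposition \ref{regualdldlalaoad}) ensures that evaluation at $t=1$ indeed represents the Fredholm index class, while the deformation to $t=0$ gives the symbol class; the two are matched by $\ind_{T_HX}$ by naturality of boundary maps. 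Alternatively, one could bypass this by using Theorem \ref{commutingtetraederthm} together with Proposition \ref{chooseanop} applied to any elliptic pseudodifferential operator $D'$ on $X$ with $[\sigma(D')]=\psi^{-1}[\sigma_H(D)]$, obtaining $[D]=[D']$ in $K_0(X)$ and so $\ind(D)=\ind(D')$, whereupon the classical Atiyah--Singer formula for $D'$ concludes the proof.
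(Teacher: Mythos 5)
Your proof is correct, but it takes a genuinely different route from the paper. The paper presents this corollary as a direct consequence of Theorem \ref{oenonoinon0on}, i.e.\ it first shows $[D]=\tilde{\gamma}\bigl(\mathsf{PD}^{\rm geo}(\psi^{-1}[\sigma_H(D)])\bigr)$ via the commuting tetraeder (Theorem \ref{commutingtetraederthm}) and Proposition \ref{chooseanopheisenberg}, and then obtains the integral formula by applying the homological Chern character to the geometric cycle $(T^*X,\psi^{-1}[\sigma_H(D)],p)$. Your primary argument instead invokes the triangle of Theorem \ref{outertringaleldldle} directly, which already carries the cohomological formula $\ind_{T_HX}(x)=\int_{T^*X}\ch(\psi^{-1}[x])\wedge\mathrm{Td}(T^*X)$, and reduces the corollary to the single identification $\ind(D)=\ind_{T_HX}[\sigma_H(D)]$. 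This is a legitimate short-cut: it avoids the geometric $K$-homology machinery entirely, trading it for the explicit computation that the Kasparov pairing with the boundary class $\ind_{T_HX}$ recovers the Fredholm index. The paper never states this identification as a standalone lemma, though as you note it follows from Proposition \ref{properldaldlaadlda} together with Proposition \ref{chooseanopheisenberg} (since $\mathsf{PD}^{\rm an}_H$ composed with evaluation at the unit in $K^0(X)$ is precisely $\ind_{T_HX}$, by construction from the adiabatic groupoid). Your alternative route at the end, via Theorem \ref{commutingtetraederthm} and an auxiliary classically elliptic operator $D'$, is closer in spirit to the paper's argument. One small point worth being explicit about: the discrepancy between $\mathrm{Td}(X)$ in the corollary and $\mathrm{Td}(T^*X)$ in Theorem \ref{outertringaleldldle} is, as you say, purely notational, both denoting the pullback to $T^*X$ of the Todd class of $TX\otimes\C$ in the Atiyah--Singer convention.
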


\begin{remark}
The reader can revisit Subsection \ref{section:analyticprops} for more details on the analytic interpretation of $\ind(D)$. Indeed, by Theorem \ref{indexindepofs}, the index $\ind(D)$ is the same when interpreting as the index of $D$ acting on Sobolev spaces, or as a closed operator on $L^2$ or as when acting on $C^\infty$.
\end{remark}

\begin{theorem}
\label{geomesolution}
Let $X$ be a compact $\pmb{F}$-regular Carnot manifold. Assume that $D:C^\infty(X;E_1)\to C^\infty(X;E_2)$ is an $H$-elliptic pseudodifferential operator. Then the Baum-Douglas index problem for $[D]$ is solved by $\mathsf{PD}^{\rm geo}_H([\sigma_H(D)])$.
\end{theorem}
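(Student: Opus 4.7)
The statement is a direct corollary of two results already assembled in the monograph, so my plan is simply to run a short diagram chase rather than redo any analytic or $K$-theoretic work.

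First, I would recall the meaning of the Baum-Douglas index problem in the noncompact model $\tilde{K}_*^{\rm geo}(X)$ developed in Section \ref{geomedadonaodn}: one seeks a class $\eta \in \tilde{K}_*^{\rm geo}(X)$ such that $\tilde{\gamma}(\eta)=[D]\in K_*(X)$. By Definition \ref{geomeodmoemdheisenberg}, the candidate $\mathsf{PD}^{\rm geo}_H([\sigma_H(D)])$ is a well defined element of $\tilde{K}_*^{\rm geo}(X)$: $\pmb{F}$-regularity gives the bundle of flat orbit representations $\mathcal{H}\to\Gamma_X$ and metaplectic correction $\mathfrak{M}(\mathcal{H})$ from Theorem \ref{trivialdldaaddo} and Proposition \ref{metaplecticlinecorrection}, while the surjection $K_*(I_X)\to K_*(C^*(T_HX))$ of Theorem \ref{onsurjinteroffm} ensures that the symbol class $[\sigma_H(D)]$ admits a pre-image of the form $(p,q,u)$ as in Example \ref{knasdpandpinas}, and this pre-image is well defined modulo bordism by a standard relative $K$-theory argument.

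The two inputs I need are then:
\begin{enumerate}
\item[(i)] Proposition \ref{chooseanopheisenberg}, which identifies $\mathsf{PD}^{\rm an}_H[\sigma_H(D)]=[D]\in K_*(X)$.
\item[(ii)] The front triangle
\[
\begin{tikzcd}
& K_*(C^*(T_HX)) \arrow[dl, "\mathsf{PD}^{\rm geo}_H"'] \arrow[dr, "\mathsf{PD}^{\rm an}_H"] &\\
\tilde{K}_*^{\rm geo}(X) \arrow[rr, "\tilde{\gamma}"] & & K_*(X)
\end{tikzcd}
\]
from the commuting tetraeder of Theorem \ref{commutingtetraederthm}, i.e.\ the identity $\tilde{\gamma}\circ \mathsf{PD}^{\rm geo}_H = \mathsf{PD}^{\rm an}_H$. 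This commutativity rests on the $\pmb{F}$-regular assumption: it is exactly the content of the upper-left face handled by Theorem \ref{leftuppertriangle} combined with the bottom face handled by Theorem \ref{bacvapapdcomo}.
\end{enumerate}
Combining (i) and (ii) yields
\[
\tilde{\gamma}\bigl(\mathsf{PD}^{\rm geo}_H[\sigma_H(D)]\bigr)\;=\;\mathsf{PD}^{\rm an}_H[\sigma_H(D)]\;=\;[D],
\]
which is precisely the claim that $\mathsf{PD}^{\rm geo}_H([\sigma_H(D)])$ solves the Baum-Douglas index problem for $[D]$.

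There is no real obstacle left at this stage; the proof is a two-line diagram chase. Every hard step has been moved upstream: the analytic side (that $\sigma_H(D)$ assembles to $[D]$ through the tangent groupoid) was settled in Proposition \ref{chooseanopheisenberg} via Proposition \ref{properldaldlaadlda} and Corollary \ref{clalslasmadadado}, while the geometric side (well-definedness of $\mathsf{PD}^{\rm geo}_H$ and its compatibility with $\tilde{\gamma}$) was settled in Theorem \ref{leftuppertriangle} using Theorem \ref{maincomputationforisg} and the surjectivity result Theorem \ref{onsurjinteroffm}. If I wanted to emphasize anything in the write-up, it would just be a remark that the resulting cycle is independent — up to the equivalence relation defining $\tilde{K}_*^{\rm geo}(X)$ — of the choices of bundle of flat orbit representations $\mathcal{H}$, of the lift in $K_*(I_X)$, and of the relative cycle $(p,q,u)$, all of which follow from Remark \ref{undiqoqdoflat} and the bordism invariance established in Section \ref{geomedadonaodn}.
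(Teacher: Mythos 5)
Your argument is exactly the paper's: invoke Proposition \ref{chooseanopheisenberg} to identify $[D]=\mathsf{PD}^{\rm an}_H[\sigma_H(D)]$, then apply the commuting tetraeder (Theorem \ref{commutingtetraederthm}) to conclude $\tilde{\gamma}(\mathsf{PD}^{\rm geo}_H[\sigma_H(D)])=\mathsf{PD}^{\rm an}_H[\sigma_H(D)]=[D]$. The extra remarks you add about well-definedness of $\mathsf{PD}^{\rm geo}_H$ are sound but were already absorbed into the construction upstream, so the core proof is the same two-line diagram chase.
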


\begin{proof}
By Proposition \ref{chooseanopheisenberg}, $[D]=\mathsf{PD}^{\rm an}_H[\sigma_H(D)]$ and by Theorem \ref{commutingtetraederthm}, 
$$\tilde{\gamma}(\mathsf{PD}^{\rm geo}_H([\sigma_H(D)]))=\mathsf{PD}^{\rm an}_H[\sigma_H(D)].$$
\end{proof}

From Theorem \ref{geomesolution}, and the definition of $\mathsf{PD}^{\rm geo}_H$ (see Definition \ref{geomeodmoemdheisenberg}), we conclude the following.

\begin{corollary}
\label{geosolforfregular}
Let $X$ be a compact $\pmb{F}$-regular Carnot manifold and choose a bundle of flat representations $\mathcal{H}\to \Gamma_X$. Assume that $D:C^\infty(X;E_1)\to C^\infty(X;E_2)$ is an $H$-elliptic pseudodifferential operator. Suppose that $[\sigma_H(D)]\in K_*(C^*(T_HX))$ is the image under $j:I_X\to C^*(T_HX)$ of a relative $K$-theory cycle $(p,q,u)$ for $I_X$ with $gp,gq\in I_X$ for all $g\in C_0(\Gamma_X)$.  Then the Baum-Douglas index problem for $[D]$ is solved by the geometric cycle with coefficients in the ellliptic complexes
$$(\Gamma_X,(p\mathcal{H},q\mathcal{H},qup)\otimes \mathfrak{M}(\mathcal{H}),p_\Gamma).$$
In this case,
$$\ind(D)=\int_{\Gamma_X}\ch[(p\mathcal{H},q\mathcal{H},qup)]\wedge \e^{c_1(\mathfrak{M}(\mathcal{H})}\wedge \mathrm{Td}(\Gamma_X).$$
\end{corollary}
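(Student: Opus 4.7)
The proof is essentially an unpacking of Theorem \ref{geomesolution} combined with the computation of the Chern character of a Baum-Douglas cycle with coefficients in elliptic complexes developed in Section \ref{geomedadonaodn}. First I would invoke Theorem \ref{geomesolution} directly: it asserts that $\mathsf{PD}_H^{\rm geo}([\sigma_H(D)])$ solves the Baum-Douglas index problem for $[D]$. The task then reduces to identifying this class explicitly under the given hypotheses.

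To identify the cycle, I would unravel Definition \ref{geomeodmoemdheisenberg}. By assumption, the hypothesized relative $K$-theory cycle $(p,q,u)$ lifts $[\sigma_H(D)]$ through the surjection $j_*\colon K_*(I_X)\to K_*(C^*(T_HX))$ of Theorem \ref{onsurjinteroffm}, and the local compactness condition $gp, gq\in I_X$ for all $g\in C_0(\Gamma_X)$ puts us squarely in the setting of Example \ref{knasdpandpinas}. The bundle-of-Hilbert-spaces isomorphism $\pi_{\musFlat}\colon I_X\to C_0(\Gamma_X,\mathbb{K}(\mathcal{H}))$ of Theorem \ref{trivialdldaaddo} sends $(p,q,u)$ to a relative cycle whose entries are locally compact fibrewise projections and a locally unitary multiplier; the associated triple $(p\mathcal{H},q\mathcal{H},qup)$ is precisely an elliptic complex on $\Gamma_X$ representing the $K$-theory class one gets under the Morita equivalence. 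Tensoring with the metaplectic correction line bundle $\mathfrak{M}(\mathcal{H})$ as prescribed by Definition \ref{geomeodmoemdheisenberg} produces the stated cycle $(\Gamma_X,(p\mathcal{H},q\mathcal{H},qup)\otimes\mathfrak{M}(\mathcal{H}),p_\Gamma)$.

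For the cohomological index formula I would use the pairing of $[D]$ with the unit $[1]\in K^0(X)$. Since $X$ is compact, $\ind(D)=\langle [D],1\rangle$. Applying the Chern character defined on $\tilde{K}_*^{\rm geo}(X)$ in Section \ref{geomedadonaodn} to the cycle above, and using its naturality together with the Chern character formula
\[
\ch(M,\eta,f)=f_*\bigl(\ch[\eta]\cap\mathrm{Td}(M)\cap [M]\bigr),
\]
evaluation on $1$ yields $\int_{\Gamma_X}\ch[\xi\otimes\mathfrak{M}(\mathcal{H})]\wedge\mathrm{Td}(\Gamma_X)$ with $\xi=(p\mathcal{H},q\mathcal{H},qup)$. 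The multiplicativity of the Chern character together with $\ch[\mathfrak{M}(\mathcal{H})]=\e^{c_1(\mathfrak{M}(\mathcal{H}))}$ for the line bundle $\mathfrak{M}(\mathcal{H})$ then gives the announced formula.

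The only point requiring a moment of care is that the integral $\int_{\Gamma_X}$ is over the noncompact manifold $\Gamma_X$; however, $\xi$ is an isomorphism outside a compact subset of $\Gamma_X$, so $\ch[\xi]$ lies in compactly supported cohomology $H^{\rm ev}_c(\Gamma_X;\mathbb{Q})$, and thus $\ch[\xi]\wedge\e^{c_1(\mathfrak{M}(\mathcal{H}))}\wedge\mathrm{Td}(\Gamma_X)$ is compactly supported and the integral converges absolutely. This is exactly the compact-support feature that motivated the noncompact model for geometric $K$-homology in Section \ref{geomedadonaodn}, so no further work is needed. The bulk of the content in the corollary therefore lies in the preceding sections: Theorem \ref{geomesolution} supplies the index theorem, while Definition \ref{geomeodmoemdheisenberg}, Example \ref{knasdpandpinas}, and the Chern character formalism of Section \ref{geomedadonaodn} combine to translate it into the explicit cohomological statement.
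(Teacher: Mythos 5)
Your proposal is correct and matches the paper's intended argument: the corollary is a direct consequence of Theorem \ref{geomesolution} together with Definition \ref{geomeodmoemdheisenberg} (the paper's ``proof'' is literally one sentence pointing to those two references), and the index formula is then recovered by applying the Chern character from Section \ref{geomedadonaodn} and evaluating on the unit. Your extra observation about compact support of $\ch[\xi]$ ensuring convergence of the integral over the noncompact $\Gamma_X$ is a useful clarification that is implicit in the construction of $\tilde{K}_*^{\rm geo}$.
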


The reader can return to Section \ref{sec:connesthomandadiaofofd} for further details on the construction of the bundle of flat representations $\mathcal{H}$ and its metaplectic correction line bundle $\mathfrak{M}(\mathcal{H})$. Further details on the symbol class $[\sigma_H(D)]\in K_*(C^*(T_HX))$ and the construction of pre-images under $j:I_X\to C^*(T_HX)$ as a relative $K$-theory cycle $(p,q,u)$ can be found in Section \ref{sec:ktheoomon}.

\begin{theorem}
Let $X$ be a compact $\pmb{F}$-regular Carnot manifold. Then there is a Chern character
$$\ch_H:K_*(C^*(T_HX))\to H^*_{\rm dR}(X),$$
defined from 
$$\ch_H(j_*x):=(p_\Gamma)_*\left(\ch(x\otimes \mathcal{H})\cup \e^{c_1(\mathfrak{M}(\mathcal{H}))+\frac12 c_1(\Gamma_X)}\right),$$
for $x\in K_*(I_X)$. For $y\in K_*(C^*(T_HX))$, we have that 
$$\ind_{T_HX}(y)=\int_X \ch_H(y)\wedge \hat{A}(X).$$
\end{theorem}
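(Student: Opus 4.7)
The plan is to prove the two claims in sequence: well-definedness of $\ch_H$ and the index formula. For well-definedness, I would use the surjectivity of $j_*:K_*(I_X)\to K_*(C^*(T_HX))$ from Theorem \ref{onsurjinteroffm} to see that every $y$ admits a lift $x$. Independence of the lift follows by identifying the formula, via Theorem \ref{maincomputationforisg}, with (a pushforward of) the Chern character of $\psi^{-1}(y)\in K^*(T^*X)$, which only depends on $y$. Independence of the choice of bundle of flat representations $\mathcal{H}$ follows from the defining property of the metaplectic correction: replacing $\mathcal{H}$ by $\mathcal{H}\otimes L$ for a line bundle $L\to \Gamma_X$ replaces $\mathfrak{M}(\mathcal{H})$ by $\mathfrak{M}(\mathcal{H})\otimes L^{-1}$ (Remark \ref{undiqoqdoflat}, Proposition \ref{metaplecticlinecorrection}), so that the factor $e^{c_1(L)}$ appearing in $\ch(x\otimes \mathcal{H}\otimes L)$ is cancelled by the factor $e^{-c_1(L)}$ appearing in $e^{c_1(\mathfrak{M}(\mathcal{H}\otimes L))}$.

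For the index formula, the starting point is Corollary \ref{geosolforfregular}, which computes
\[
\ind_{T_HX}(j_*x)\;=\;\int_{\Gamma_X}\ch(x\otimes \mathcal{H})\wedge e^{c_1(\mathfrak{M}(\mathcal{H}))}\wedge \mathrm{Td}(\Gamma_X).
\]
Using the spin$^c$-decomposition $\mathrm{Td}(\Gamma_X) = e^{c_1(\Gamma_X)/2}\wedge \hat{A}(\Gamma_X)$ from Proposition \ref{spincongamma}, together with the projection formula along $p_\Gamma:\Gamma_X\to X$, this becomes
\[
\int_X (p_\Gamma)_*\bigl(\ch(x\otimes\mathcal{H})\wedge e^{c_1(\mathfrak{M}(\mathcal{H})) + c_1(\Gamma_X)/2}\wedge \hat{A}(\Gamma_X)\bigr).
\]
Comparing with the claimed formula $\int_X\ch_H(y)\wedge\hat{A}(X)$ and using the projection formula once more reduces everything to the characteristic class identity $\hat{A}(\Gamma_X) = p_\Gamma^*\hat{A}(X)$ in $H^*_{\rm dR}(\Gamma_X)$.

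The hard part will be establishing this identity. The natural strategy is to use the open inclusion $\Xi_X\hookrightarrow T^*X$ and the almost complex structure on $T^*X$: the restriction $T(T^*X)|_{\Gamma_X}$ splits (at the level of real bundles) as $T\Gamma_X\oplus \Xi_X$, and also equals $p_\Gamma^*(TX\oplus T^*X)$ from the splitting of $T(T^*X)$ over $X$. Taking $\hat{A}$-classes this gives $\hat{A}(\Gamma_X)\cdot \hat{A}(\Xi_X) = p_\Gamma^*\hat{A}(X)^2$. Since $\Xi_X\cong p_\Gamma^*(\mathfrak{t}_HX/\mathfrak{z}_X)^*$ as real vector bundles, and $\hat{A}(\mathfrak{t}_HX/\mathfrak{z}_X)\cdot \hat{A}(\mathfrak{z}_X) = \hat{A}(X)$ from the splitting $\mathfrak{t}_HX = \mathfrak{z}_X\oplus(\mathfrak{t}_HX/\mathfrak{z}_X)\cong TX$, one obtains $\hat{A}(\Xi_X) = p_\Gamma^*\hat{A}(X)\cdot p_\Gamma^*\hat{A}(\mathfrak{z}_X)^{-1}$ and hence $\hat{A}(\Gamma_X) = p_\Gamma^*\hat{A}(X)\cdot p_\Gamma^*\hat{A}(\mathfrak{z}_X)$.

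The claimed identity therefore amounts to $\hat{A}(\mathfrak{z}_X) = 1$ in $H^*_{\rm dR}(X)$. This is automatic in the (poly)contact examples of most interest, where $\mathfrak{z}_X$ has real rank $\leq 3$ so that all Pontryagin classes vanish for degree reasons; the main obstacle is to establish (or suitably reinterpret) the identity in full generality. I would approach this either by showing that the $\Aut_{\rm gr}(\mathfrak{g})$-action on $\mathfrak{z}$ used to build $\mathfrak{z}_X$ from the Carnot frame bundle forces the vanishing of its rational Pontryagin classes in the $\pmb{F}$-regular setting, or alternatively by absorbing the extra $\hat{A}(\mathfrak{z}_X)$-factor into a refined interpretation of $\hat{A}(X)$ associated with the graded splitting $TX\cong \mathfrak{z}_X\oplus(\mathfrak{t}_HX/\mathfrak{z}_X)$ of the osculating algebroid. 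Once this identity is in hand, the two displayed expressions for $\ind_{T_HX}(j_*x)$ coincide, completing the proof.
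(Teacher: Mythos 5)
Your systematic reduction of the index formula to the characteristic class identity $\hat{A}(\Gamma_X)=p_\Gamma^*\hat{A}(X)$, and then to $\hat{A}(\mathfrak{z}_X)=1$, is correct and is essentially the right way to unpack the assertion. The paper's own proof is a one-line citation to a Riemann--Roch theorem (Carey--Wang), stating $(\ch_H(y)\cup\hat{A}(X))\cap[X]=\ch_*^{\rm geo}(\mathsf{PD}^{\rm geo}_H(y))$, and that statement requires exactly the same identity $\hat{A}(T\Gamma_X)=p_\Gamma^*\hat{A}(TX)$ once one unwinds $\ch_*^{\rm geo}$ of the cycle $(\Gamma_X,\,(x\otimes\mathcal{H})\otimes\mathfrak{M}(\mathcal{H}),\,p_\Gamma)$ and uses the spin$^c$ decomposition $\mathrm{Td}(\Gamma_X)=e^{c_1(\Gamma_X)/2}\hat{A}(T\Gamma_X)$. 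So you have not merely taken a different route; you have exposed the precise point the paper is implicitly relying on and not checking.

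Your worry is in fact warranted: the required identity $\hat{A}(\mathfrak{z}_X)=1$ fails in general, and the theorem as written is off by exactly the factor $\hat{A}(\mathfrak{z}_X)$. The cleanest way to see this is the trivial Carnot case (Example \ref{ex:trivialdcndododa}), which is $\pmb{F}$-regular with $\mathfrak{z}_X=TX$, $\Gamma_X=T^*X$, $\mathcal{H}$ and $\mathfrak{M}(\mathcal{H})$ trivial, and $c_1(\Gamma_X)=c_1(TX\otimes\C)=0$ rationally. There the claimed formula reduces to $\ind_{TX}(y)=\int_{T^*X}\ch(y)\wedge p^*\hat{A}(X)$, whereas the Atiyah--Singer theorem (and Theorem \ref{outertringaleldldle}) gives $\int_{T^*X}\ch(y)\wedge\mathrm{Td}(T^*X)=\int_{T^*X}\ch(y)\wedge p^*\hat{A}(X)^2$. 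The discrepancy is precisely $\hat{A}(\mathfrak{z}_X)=\hat{A}(TX)$. Thus either $\hat{A}(X)$ in the statement should read $\hat{A}(TX)\wedge\hat{A}(\mathfrak{z}_X)$, or the defining formula for $\ch_H$ should carry an additional factor $p_\Gamma^*\hat{A}(\mathfrak{z}_X)$; neither of the two strategies you suggest at the end (vanishing of Pontryagin classes of $\mathfrak{z}_X$ in the $\pmb{F}$-regular setting, or absorbing the factor into a redefined $\hat{A}(X)$) succeeds without changing the assertion, since the trivial filtration is already $\pmb{F}$-regular. One smaller inaccuracy in your proposal: the claim that the identity is automatic "in the (poly)contact examples, where $\mathfrak{z}_X$ has real rank $\leq 3$" is not right for ranks $2$ and $3$; an oriented rank-$2$ bundle has $p_1=e^2$, which need not vanish, so $\hat{A}(\mathfrak{z}_X)=1$ is only automatic in the genuine contact case where $\mathrm{rk}(\mathfrak{z}_X)=1$.
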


\begin{proof}
The map $\ch_H$ is well defined since $j_*:K_*(I_X)\to K_*(C^*(T_HX))$ is surjective and by the Riemann-Roch theorem (cf. \cite[Section 5]{careywang}) it satisfies $(\ch_H(y)\cup \hat{A}(X))\cap [X]=\ch_*^{\rm geo}(\mathsf{PD}^{\rm geo}_H(y))$ where $[X]$ denotes the fundamental class in homology and $\mathsf{PD}^{\rm geo}_H:K_*^{\rm geo}(X)\to H_*(X)$ denotes the geometric Chern character in homology. 
\end{proof}

The index theorems above hold in the generality of $\pmb{F}$-regular Carnot manifolds. Let us utilize the $K$-theory computations of Section \ref{sec:ktheoomon} in two concrete examples. 

\subsection{Example: A Boutet de Monvel type index theorem}

The following index theorem follows from Proposition \ref{charhellfofodszegowithgeo} and Theorem \ref{geomesolution}. It extends an index formula of \cite{bdmindex} from boundaries of strictly convex domains in complex manifolds, and the results of \cite{melroseeptein} from co-oriented contact manifolds, to polycontact manifolds.

\begin{theorem}
\label{bdmdmdam}
Let $X$ be a compact, regular, polycontact manifold with polycontact structure $H$. Assume that $P\in \Psi_H^0(X;E)$ is an idempotent Hermite operator. We write $[P]\in K_1^{\rm an}(X)$ for the $K$-homology class of $(L^2(X;E),2P-1)$. Let $p:S(H^\perp)\to X$ denote the projection. Set $\mathpzc{F}_X:=\bigoplus_k (p^*H)^{\otimes^{\rm sym}_\C k}$.

Then $(S(H^\perp),\sigma_H(P)(\mathpzc{F}_X\otimes E),p)$ solves the Baum-Douglas index problem for $[P]$. Moreover, for any $u\in C(X, GL_N(\C))$ the Toeplitz operator 
$$T_u:=(P\otimes 1_{\C^N})u(P\otimes 1_{\C^N}):PL^2(X;E\otimes \C^N)\to PL^2(X;E\otimes \C^N),$$
is Fredholm and its index is 
$$\ind(T_u)=\int_{S(H^\perp)}\ch[p^*u]\wedge \ch(\sigma_H(P)(\mathpzc{F}_X\otimes E))\wedge \mathrm{Td}(S(H^\perp)).$$
\end{theorem}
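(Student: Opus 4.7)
The proof divides cleanly into two steps, both leveraging the machinery developed earlier in the monograph. The first step, identifying the geometric Baum-Douglas cycle representing $[P]$, is essentially a direct application of already-established results; the second, deriving the index formula, reduces to a standard pairing computation in $K$-theory once the geometric representative is in hand.

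For the first step, I would begin by observing that $2P-1 \in \Psi_H^0(X;E)$ is formally self-adjoint and, because $P$ is an idempotent Hermite operator, its Heisenberg symbol $\sigma_H(2P-1) = 2\sigma_H(P)-1$ acts as a self-adjoint unitary on $\mathpzc{F}_X \otimes E$ over $\Gamma_X = H^\perp \setminus X$, with $\sigma_H(P)$ cutting out a finite-rank subbundle of $\mathpzc{F}_X \otimes E$ in each fibre. This forces $2P-1$ to be $H$-elliptic of order zero, and Definition \ref{definilalald} identifies the odd $K$-homology class of this operator with $[P]=[(L^2(X;E),2P-1)]$. Theorem \ref{geomesolution} then asserts that the Baum-Douglas index problem for $[P]$ is solved by $\mathsf{PD}^{\rm geo}_H[\sigma_H(2P-1)]$, and Proposition \ref{charhellfofodszegowithgeo} computes this explicitly as the geometric cycle $(S(H^\perp), \sigma_H(P)(\mathpzc{F}_X \otimes E), p)$, giving the first claim.

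For the second step, I would use the Toeplitz index theorem identifying
$$\ind(T_u) = \ind((P\otimes 1)u(P\otimes 1)) = \langle [u], [P] \rangle,$$
where $\langle \cdot,\cdot\rangle : K^1(X) \times K_1^{\rm an}(X) \to \mathbb{Z}$ is the Kasparov pairing; this is a consequence of the factorization $[P] = [(L^2(X;E),2P-1)]$ and the standard expression of the Toeplitz index as a pairing of a unitary class with an odd spectral cycle. Since $\gamma$ is a natural isomorphism between geometric and analytic $K$-homology, the pairing can be evaluated on the geometric representative from Step 1. Replacing $[P]$ by $\tilde\gamma(S(H^\perp), \sigma_H(P)(\mathpzc{F}_X\otimes E), p)$ and applying the cohomological formula for the pairing of an odd $K^1$-class against a compact odd-dimensional Baum-Douglas cycle — namely, pulling $[u]$ back along $p$ and integrating its odd Chern character against the twisting data and the Todd class of the spin$^c$-manifold — immediately yields the stated index formula.

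The main obstacle is a bookkeeping issue rather than a conceptual one: ensuring that the odd $K$-homology class of the $H$-elliptic order zero operator $2P-1$ constructed via Corollary \ref{clalslasmadadado} and Definition \ref{definilalald} actually coincides on the nose with $[(L^2(X;E),2P-1)]$, since the former uses the bounded transform $(2P-1)(1+(2P-1)^2)^{-1/2}$ while the latter uses $2P-1$ directly. Because $2P-1$ is a self-adjoint unitary modulo compacts (its symbol is a self-adjoint involution on the Fock bundle), the two cycles are operator-homotopic, so they define the same class. The remaining subtlety lies in justifying the compact-type cohomological pairing formula on the possibly non-compact spin$^c$-manifold $S(H^\perp)$, but this follows from the fact that $S(H^\perp)$ is itself compact (being the sphere bundle of a real vector bundle of positive rank over the compact manifold $X$), so that the pairing reduces to an Atiyah-Singer calculation on a closed spin$^c$-manifold.
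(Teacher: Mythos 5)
Your proof is correct and follows essentially the same route as the paper, which deduces the theorem directly from Proposition \ref{charhellfofodszegowithgeo} and Theorem \ref{geomesolution}, combined with the standard Toeplitz pairing $\ind(T_u)=\langle[u],[P]\rangle$ evaluated on the geometric representative via Atiyah-Singer on the compact spin$^c$-manifold $S(H^\perp)$. Your extra remarks — that $H$-ellipticity of $2P-1$ follows because the Hermite condition forces $\sigma_H(2P-1)=-1$ off the flat orbits and a self-adjoint unitary on them, and that the order-zero cycle $(L^2(X;E),2P-1)$ agrees with the class from Definition \ref{definilalald} by operator homotopy — are details the paper leaves implicit but which are worth spelling out.
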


The reader should note that by Theorem \ref{vanerpandszego}, recalled from \cite{vanerpszego}, there is a plethora of idempotent Hermite operators on polycontact manifolds. 

\begin{example}
Suppose that $X$ is a compact, co-oriented contact manifold and $P\in \Psi^0_H(X)$ is a Szegö projection, i.e. that $\sigma^0_{(x,\pi)}(P)$ is the ground state projection if $\pi$ is a non-abelian representation with positive central character and $\sigma^0_{(x,\pi)}(P)=0$ otherwise. In this case, $\sigma_H(P)\mathpzc{F}_X$ is the trivial line bundle over the component $X\times \R_{>0}\subseteq \Gamma_X$. Theorem \ref{bdmdmdam} ensures that 
$$(\Gamma_X^{\partial},\sigma_H(P)(\mathpzc{F}_X),p)\sim_{\rm BD}(X,X\times \C,\mathrm{id}),$$ 
solves the Baum-Douglas index problem for $[P]\in K_1^{\rm an}(X)$. This recovers the classical Boutet de Monvel index theorem \cite{bdmindex} (see \cite{melroseeptein} for the general co-oriented contact case):
$$\ind(T_u)=\int_{X}\ch[u]\wedge \mathrm{Td}(X).$$
For a more direct proof, see \cite{baumvanerpbdmindex}. We also have the following generalization.
\end{example}

Recall the notion of a generalized Szegö projection at level $N$ from  \cite[Chapter 6]{melroseeptein}. See more in Example \ref{ex:szegoexampe2} above or further examples in \cite[Chapter 15.3]{bdmguille} or \cite{engliszhanghigher}.

\begin{corollary}
Let $X$ be a compact, co-oriented contact manifold and $P_N\in \Psi_H^0(X,\C)$ a generalized Szegö projection at level $N$. The index problem for $[P_N]$ is solved by the geometric cycle 
$$(\Gamma_X^{\partial},\sigma_H(P_N)(\mathpzc{F}_X),p)=(X,\oplus_{k=0}^N H^{\otimes^{\rm sym}_\C k},\mathrm{id}).$$ 
In particular, for any $u\in C(X, GL_q(\C))$ the Toeplitz operator 
$$T_u:=(P_N\otimes 1_{\C^q})u(P_N\otimes 1_{\C^q}):P_NL^2(X;\C^q)\to P_NL^2(X;\C^q),$$
is Fredholm and its index is 
$$\ind(T_u)=\sum_{k=0}^N\int_{X}\ch[u]\wedge \ch(H^{\otimes^{\rm sym}_\C k})\wedge \mathrm{Td}(X).$$
\end{corollary}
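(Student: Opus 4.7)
The plan is to deduce the corollary from Theorem \ref{bdmdmdam} by specializing the polycontact geometry to a co-oriented contact manifold and exploiting the very special form of the Heisenberg symbol of a generalized Szegö projection at level $N$. A co-oriented contact manifold is a polycontact manifold of rank $\mathrm{rk}(H)/2=n$ and codimension $1$ (see Example \ref{pluricontactexamokkoa}), and co-orientability gives a global trivialization $H^\perp\cong X\times\R$. Consequently $\Gamma_X=H^\perp\setminus X$ splits as $X\times\R_{>0}\,\dot{\cup}\,X\times\R_{<0}$, and the sphere bundle decomposes as a disjoint union $S(H^\perp)=X\,\dot{\cup}\,(-X)$. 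Under the spin$^c$-structure on $\Gamma_X$ induced from the inclusion $\Gamma_X\subseteq T^*X$ (Proposition \ref{spincongamma}), the first copy inherits the canonical spin$^c$-structure on $X$ and the second copy inherits the opposite orientation; this is precisely the identification $\Gamma_X\cong (X\,\dot{\cup}\,(-X))\times\R_{>0}$ recalled in the introduction and in Example \ref{contactexample}.

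Next I would analyze the Heisenberg symbol $\sigma_H(P_N)$. By the defining property of a generalized Szegö projection at level $N$ recalled in Example \ref{ex:szegoexampe} and \ref{ex:szegoexampe2}, $\sigma_H(P_N)$ acts on a fibre of the Fock bundle $\mathpzc{F}_X=\bigoplus_{k\ge 0}H^{\otimes_\C^{\rm sym}k}$ over a point $(x,\xi)\in\Gamma_X$ as the orthogonal projection onto $\bigoplus_{k=0}^N H_x^{\otimes_\C^{\rm sym}k}$ when $\xi\in X\times\R_{>0}$ and as $0$ when $\xi\in X\times\R_{<0}$. Equivalently, as a vector bundle over $S(H^\perp)=X\,\dot{\cup}\,(-X)$,
\[
\sigma_H(P_N)(\mathpzc{F}_X)=\Bigl(\bigoplus_{k=0}^N H^{\otimes_\C^{\rm sym}k}\Bigr)\,\dot{\cup}\,0.
\]

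The only genuine input needed is Theorem \ref{bdmdmdam}, applied with $E=\C$ and $P=P_N$. It asserts that the Baum-Douglas index problem for $[P_N]$ is solved by the geometric cycle $\bigl(S(H^\perp),\sigma_H(P_N)(\mathpzc{F}_X),p\bigr)$. Using the decomposition of the previous paragraph, the summand supported on $(-X)$ is the zero bundle and may be discarded (it represents the trivial Baum-Douglas class), so the cycle reduces to $\bigl(X,\bigoplus_{k=0}^N H^{\otimes_\C^{\rm sym}k},\mathrm{id}\bigr)$, as claimed. The index formula in Theorem \ref{bdmdmdam} then gives
\[
\ind(T_u)=\int_{S(H^\perp)}\!\!\ch[p^*u]\wedge\ch\bigl(\sigma_H(P_N)(\mathpzc{F}_X)\bigr)\wedge\mathrm{Td}(S(H^\perp)),
\]
and substituting the decomposition above, together with the fact that the induced spin$^c$-structure on the positive component agrees with the original spin$^c$-structure of $X$, collapses the right-hand side to $\sum_{k=0}^N\int_X\ch[u]\wedge\ch\bigl(H^{\otimes_\C^{\rm sym}k}\bigr)\wedge\mathrm{Td}(X)$. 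The main (and essentially only) subtlety is keeping track of the sign/orientation coming from the $(-X)$ component; this has been handled explicitly in \cite{baumvanerp} and is encoded in the identification of spin$^c$-structures in Proposition \ref{spincongamma}, so no new difficulty arises beyond confirming that $\sigma_H(P_N)$ vanishes identically on $X\times\R_{<0}$.
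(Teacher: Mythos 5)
Your proof is correct and follows exactly the approach the paper implicitly intends: the paper presents this corollary as an immediate generalization of the preceding Example (the level-$0$ Szegő case), both being direct specializations of Theorem \ref{bdmdmdam}. You correctly identify the only computation that needs doing, namely that $\sigma_H(P_N)(\mathpzc{F}_X)$ restricts to $\bigoplus_{k=0}^{N}H^{\otimes^{\rm sym}_\C k}$ over $\Gamma_X^+$ and to the zero bundle over $\Gamma_X^-$, which follows from the characterization of a generalized Szegő projection at level $N$ recorded in Example \ref{ex:szegoexampe2}, and then discard the zero summand and read off the index formula by additivity of the Chern character.

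One small remark: the parenthetical appeal to Example \ref{pluricontactexamokkoa} for the claim that a co-oriented contact manifold is polycontact is a slight misattribution (that example discusses pluricontact structures), but the fact itself is standard and is stated directly in Example \ref{polycontactexama}; this does not affect the argument. You also implicitly rely on the identification of the spin$^c$-structure on the positive component $\Gamma_X^+\cong X$ with the contact spin$^c$-structure of $X$ (via $TX\cong H\oplus\R$); this is made explicit in the corollary immediately following the present one in the paper, so your handling of the orientation bookkeeping is consistent with the paper's.
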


Let us solve the index problem for when $X$ is an arbitrary contact manifold, not necessarily co-oriented. Take a projection $P\in \Psi^0_H(X)$ with $\sigma^0_{(x,\pi)}(P)$ being the ground state projection for all non-abelian representations $\pi$. See more in Example \ref{ex:szegoexampe2} above. Also here we use the Fock bundle $\mathpzc{F}_X:=\oplus_{k=0}^\infty (p^*H)^{\otimes^{\rm sym}_\C k},\to \Gamma_X$, defined from the complex structure on $p^*H\to \Gamma_X$ induced from the Kirillov form.  In this case, $\sigma_H(P)\mathpzc{F}_X$ is the trivial line bundle on $\Gamma_X$.

\begin{corollary}
For a compact contact manifold $X$ and $P\in \Psi^0_H(X)$ a projection as in the preceding paragraph. The index problem for $[P]\in K_1^{\rm an}(X)$ is solved by the geometric cycle 
$$(\Gamma_X^{\partial},\sigma_H(P)(\mathpzc{F}_X),p)=(\Gamma_X^{\partial},\Gamma_X^{\partial}\times\C,p).$$ 
In particular, for any $u\in C(X, GL_q(\C))$ the Toeplitz operator 
$$T_u:=(P\otimes 1_{\C^q})u(P\otimes 1_{\C^q}):PL^2(X;\C^q)\to PL^2(X;\C^q),$$
is Fredholm and its index is 
$$\ind(T_u)=\int_{\Gamma_X^\partial}\ch[p^*u]\wedge  \mathrm{Td}(\Gamma_X^{\partial}).$$
Moreover, $[P]=0$ if $X$ is co-oriented.
\end{corollary}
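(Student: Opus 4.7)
The plan is to deduce this corollary directly from Theorem \ref{bdmdmdam} by observing that a contact manifold is a polycontact manifold (a depth-two Carnot manifold with codimension-one horizontal bundle), and then computing the coefficient bundle $\sigma_H(P)(\mathpzc{F}_X\otimes E)$ with $E$ being the trivial line bundle. First I would invoke the hypothesis that $\sigma^0_{(x,\pi)}(P)$ is the ground state projection in every non-abelian representation $\pi$. In a fibre of the Fock space bundle $\mathpzc{F}_X = \bigoplus_{k=0}^{\infty} (p^*H)^{\otimes^{\rm sym}_\C k}$ over a point $\xi\in \Gamma_X$, the Bargmann representation (cf. Example \ref{heisebendnedcd}) identifies the $k$-th summand as the $k$-th excited eigenspace of a harmonic oscillator, and the ground state is exactly the summand $k=0$, which is the trivial line bundle $\Gamma_X\times\C$. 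Hence $\sigma_H(P)(\mathpzc{F}_X)$ is the trivial line bundle on $\Gamma_X$, and its restriction to $\Gamma_X^\partial$ is $\Gamma_X^\partial\times\C$, as claimed.

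Next I would apply Theorem \ref{bdmdmdam} verbatim. Its first conclusion gives the geometric cycle $(\Gamma_X^\partial,\sigma_H(P)(\mathpzc{F}_X),p)$ solving the Baum-Douglas index problem for $[P]\in K_1^{\rm an}(X)$, which by the computation above equals $(\Gamma_X^\partial,\Gamma_X^\partial\times\C,p)$. For the index formula, Theorem \ref{bdmdmdam} yields
\[
\ind(T_u)=\int_{S(H^\perp)}\ch[p^*u]\wedge\ch(\sigma_H^0(P)\mathpzc{F}_X)\wedge \mathrm{Td}(S(H^\perp)),
\]
and since the coefficient bundle is trivial its Chern character is $1$, while $S(H^\perp)=\Gamma_X^\partial$ by definition; this gives the stated formula. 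Here I should also double-check that one does not need the bundle $E$ — the corollary is stated with scalar coefficients, but the proof goes through identically (and could be stated for $E$ a line bundle, giving an extra Chern character factor).

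For the vanishing statement when $X$ is co-oriented, the key observation is geometric rather than analytic. Co-orientability means that $H^\perp\to X$ is an orientable real line bundle; choosing a trivialization produces an isomorphism $\Gamma_X^\partial = S(H^\perp)\cong X\sqcup X$. However, as discussed in the introduction (following Theorem \ref{firstdedcsofsol}) and Proposition \ref{spincongamma}, the spin$^c$-structure on $\Gamma_X^\partial$ is induced from the inclusion $\Gamma_X\hookrightarrow T^*X$, so the two components inherit opposite spin$^c$-orientations: one copy is isomorphic to $X$ and the other to $-X$ as spin$^c$-manifolds. Consequently, in $\tilde{K}_*^{\rm geo}(X)$ we have
\[
[(\Gamma_X^\partial,\Gamma_X^\partial\times\C,p)] = [(X,\C,\mathrm{id})] + [(-X,\C,\mathrm{id})] = 0,
\]
since orientation reversal negates geometric cycles. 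Combined with Theorem \ref{geomesolution} (or more directly the isomorphism $\tilde\gamma$ of Theorem \ref{isofromalnalnadlnd}), this gives $[P]=0\in K_1^{\rm an}(X)$.

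The main obstacle is justifying the opposite spin$^c$-orientations on the two components of $S(H^\perp)$ in the co-oriented case; this requires tracking the convention used to equip $\Gamma_X$ with its spin$^c$-structure via the embedding into $T^*X$ (Proposition \ref{spincongamma}), and checking that the two rays $\R_{>0}\theta$ and $\R_{<0}\theta$ in $H^\perp$ give opposite complex structures on the normal bundle of $\Gamma_X^\partial \hookrightarrow T^*X$. Once this orientation bookkeeping is in place, the remainder is a direct unpacking of Theorem \ref{bdmdmdam}.
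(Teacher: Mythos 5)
Your proposal is correct and follows essentially the same route as the paper: invoke Theorem \ref{bdmdmdam} with trivial coefficient bundle (the ground-state line in $\mathpzc{F}_X$), and for the co-oriented case decompose $\Gamma_X^\partial$ into two copies of $X$ carrying opposite spin$^c$-structures, so the geometric cycle $(X,\C,\mathrm{id})\dot{\cup}(\overline{X},\C,\mathrm{id})$ is nullbordant and $[P]=0$. The orientation bookkeeping you flag as the main obstacle is settled in the paper exactly as you suggest: the complex structure on $p^*H|_{\Gamma_X^+}$ descends to $H\to X$ and the identification $\Xi_X\cong p^*H$ makes $\Gamma_X\cong(X\times\R_+)\dot{\cup}(\overline{X}\times\R_+)$ spin$^c$-preserving.
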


\begin{proof}
Theorem \ref{bdmdmdam} provides the solution for the index problem and the index formula for Toeplitz operators is immediate. Concerning the vanishing in the co-oriented case, if $X$ is co-oriented, the co-orientation ensures that $\Gamma_X=\Gamma_X^+\dot{\cup}\Gamma_X^-$, where we can identify $\Gamma_X^\pm =X\times \R_\pm$. The complex structure on $p^*H|_{\Gamma_X^+}\to \Gamma_X^+$ descends to $H\to X$, and $X$ is equipped with the spin$^c$-structure induced from the stably almost complex structure (using $TX\cong H\oplus \R$). We note that $\Xi_X\to \Gamma_X$ can be identitfed with $p^*H\to \Gamma_X$, and so the spin$^c$-structure on $\Gamma_X$ is the one making the identification defined from the co-orientation 
$$\Gamma_X=(X\times \R_+)\dot{\cup} (\overline{X}\times \R_+),$$
spin$^c$-preserving. Therefore, the class $[P]\in K_1^{\rm an}(X)$ admits the geometric solution $(X,X\times \C,\mathrm{id})\dot{\cup}(\overline{X},X\times \C,\mathrm{id})$ which is nullbordant. In particular, $[P]=0\in K_1^{\rm an}(X)$ if $X$ is co-oriented.
\end{proof}

\subsection{Example: Baum-van Erp type operators for polycontact manifolds}

The following index theorem follows from Proposition \ref{charhellfofodotwocordonkwithgeo} and Theorem \ref{geomesolution}. It extends an index formula of \cite{baumvanerp} from co-oriented contact manifolds to polycontact manifolds.

\begin{theorem}
\label{charhellfofodotwocordonkwithgeoindex}
Let $X$ be a compact regular polycontact manifold with polycontact structure $H$, equipped with a Riemannian metric $g$. Assume that $D_\gamma\in \mathcal{DO}_H^{2}(X;E)$ is as in Equation \eqref{secondoroddbbamddevep} and is $H$-elliptic. For a sufficiently large $N>>0$, the index problem for $[D_\gamma]$ is solved by the geometric cycle
$$\left(S(H^\perp)\times S^1,E_{\gamma,N}, p\right),$$
where $p:S(H^\perp)\times S^1\to X$ denotes the projection map and $E_{\gamma,N}\to S(H^\perp) \times S^1$ is obtained from clutching $(\oplus_{k\leq N}(p^*H)^{\otimes^{\rm sym}_\C k}\otimes E)\times [0,1]\to  S(H^\perp) \times [0,1]$ along $\oplus_{k\leq N}\gamma_k$ at the boundary (where $\gamma_k$ is as in Proposition \ref{charhellfofodotwo}).
\end{theorem}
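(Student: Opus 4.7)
The plan is to assemble the result as a direct consequence of the general machinery developed earlier, specifically by combining Theorem \ref{geomesolution} with Proposition \ref{charhellfofodotwocordonkwithgeo}. The key observation is that the theorem is essentially a concretization of the abstract geometric solution to the index problem for the specific class of operators $D_\gamma$ on regular polycontact manifolds, where the symbol geometry is explicit enough that we can read off a Baum-Douglas cycle with coefficients in elliptic complexes and, via the clutching trick of Lemma \ref{bordismsotorcompact}, convert it to an ordinary geometric cycle on the compact spin$^c$-manifold $S(H^\perp) \times S^1$.

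First I would verify that we are in a setting where the geometric solution of Corollary \ref{geosolforfregular} applies. A regular polycontact manifold is $\pmb{F}$-regular by Example \ref{polycontactexama}, since the Kirillov form is nondegenerate on the whole set $H^\perp \setminus X = \Gamma_X$, and $\Gamma_X^\partial = S(H^\perp)$ is compact. In this case the chosen bundle of flat representations can be taken to be the symmetric Fock bundle $\mathpzc{F}_X \cong \bigoplus_{k=0}^\infty (p^*H)^{\otimes_\C^{\rm sym} k}$ of Example \ref{pluricontactexamokkoa}, for which the metaplectic correction line bundle $\mathfrak{M}(\mathpzc{F}_X)$ is trivial (as noted in that example). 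So the correction term $\e^{c_1(\mathfrak{M}(\mathcal{H}))}$ in Corollary \ref{geosolforfregular} can be dropped, simplifying the output.

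Next, I would invoke Theorem \ref{geomesolution} to conclude that the Baum-Douglas index problem for $[D_\gamma] \in K_0^{\rm an}(X)$ is solved by $\mathsf{PD}^{\rm geo}_H([\sigma_H(D_\gamma)])$. It then remains to compute this geometric Poincar\'e${}_H$-dual cycle explicitly. This is exactly the content of Proposition \ref{charhellfofodotwocordonkwithgeo}, which in turn rests on the $K$-theoretic computation of Corollary \ref{charhellfofodotwocordonk}. The latter produces, for $N$ sufficiently large (large enough for the finite rank approximation of Lemma \ref{redielaclaclad} and its sharper polycontact version in Lemma \ref{redielaclacladcompactgamma} to apply), a preimage of $[\sigma_H(D_\gamma)]$ in $K^0(\Gamma_X) \cong K^0(S(H^\perp) \times \R)$ whose image in $K^0(S(H^\perp) \times S^1)$ is $[E_{\gamma,N}] - [(\oplus_{k \le N}(p^*H)^{\otimes_\C^{\rm sym} k} \otimes E) \times S^1]$, the clutching difference bundle built from the family of automorphisms $\oplus_{k \le N}\gamma_k$ provided by Proposition \ref{charhellfofodotwo}.

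The main obstacle — really the only nontrivial step, already handled upstream — is controlling the finite rank approximation uniformly on $\Gamma_X$, i.e.\ checking that the hypotheses of Lemma \ref{redielaclaclad} and Lemma \ref{redielaclacladcompactgamma} hold for the deformed symbol $\hat{\sigma}$ of Lemma \ref{tehcnilmultplleme} associated to $D_\gamma$. This uses the characterization of $H$-ellipticity in terms of invertibility of the maps $\gamma_k$ in Proposition \ref{charhellfofodotwo} together with the norm estimates $\|\gamma_k^{-1}\|_{L^\infty} \lesssim (1+k)^{-1}$ obtained there from compactness of $S(H^\perp)$, which guarantee that truncating $\hat{\sigma}$ to the first $N$ symmetric powers produces an elliptic complex on $\Gamma_X$ whose $K$-theory class is independent of $N$ for $N$ large and equals $[\hat{\sigma}]$. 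Combining this explicit cycle with Theorem \ref{geomesolution} and the triviality of $\mathfrak{M}(\mathpzc{F}_X)$ yields the claimed geometric solution $(S(H^\perp) \times S^1, E_{\gamma,N}, p)$, completing the proof.
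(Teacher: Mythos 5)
Your proposal is correct and takes essentially the same route as the paper, which proves the theorem in a single sentence as a direct consequence of Proposition \ref{charhellfofodotwocordonkwithgeo} and Theorem \ref{geomesolution}. The additional justification you provide---unwinding $\pmb{F}$-regularity, the triviality of the metaplectic correction for the Fock bundle, and the chain of lemmas behind the finite rank approximation---is consistent with the paper's development and simply makes explicit what the paper leaves to the reader.
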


Recall that $H$-ellipticity of $D_\gamma$ was characterized in Theorem \ref{charhellfofodo}, see also \cite[Theorem 1.1]{horcharnice}.

\begin{example}
Let us compute an example with co-oriented contact manifolds in order to explain the two spin$^c$-structures in Baum-van Erp's index formula  \cite{baumvanerp} for operators of the form $D_\gamma$ on a co-oriented contact manifold. In this case, we have for any relative $K$-theory cycle $(p,q,u)$ as in Example \ref{knasdpandpinas} that 
\begin{align*}
(\Gamma_X,(p\mathcal{H},q\mathcal{H},qup)&\otimes \mathfrak{M}(\mathcal{H}),p)\sim_{\rm BD}\\
\sim_{\rm BD}&((0,\infty)\times X,(p\mathcal{H}_+,q\mathcal{H}_+,qup),p)+\\
&\qquad\qquad +((-\infty,0)\times X,(p\mathcal{H}_-,q\mathcal{H}_-,qup),p).
\end{align*}
The elliptic complexes $(p\mathcal{H}_\pm,q\mathcal{H}_\pm,qup)$ correspond to classes $[u_\pm]\in K^1(X)$ under the Bott isomorphism or to $K$-theory classes $[E_\pm]-N[1]$ in $K^0(X\times S^1)$. We can conclude that 
\begin{align*}
(\Gamma_X,(p\mathcal{H},q\mathcal{H},qup)&\otimes \mathfrak{M}(\mathcal{H}),p)\sim_{\rm BD}\\
\sim_{\rm BD} 
&(X\times S^1,E_+,p_1)+(-(X\times S^1),E_-,p_1),
\end{align*}
in $\tilde{K}_1^{\rm geo}(X)$, where $p_1:X\times S^1\to X$ denotes projection onto the first factor. We note here the additional sign that is coming from the difference in signs of orientation and spin$^c$-structure on $(0,\infty)$ versus that of $(-\infty,0)$. 
\end{example}

We can give an example of an index result for a differential operator $D_\gamma\in \mathcal{DO}_H^{2}(X;E)$, analogous to that in Theorem \ref{charhellfofodotwocordonkwithgeoindex}, for a four-dimensional manifold $X$ with an Engel structure, see Example \ref{engelalgebra} and \ref{ex:regudlaladoad}. If $X$ is a four-manifold, we say that a pair of vector fields $(Y_1,Y_2)$ generate an Engel structure if $Y_1,Y_2,[Y_1,Y_2], [Y_1,[Y_1,Y_2]]$ span $TX$ in all points. By results of Vogel \cite{Vogel}, existence of a pair generating an Engel structure is equivalent to $X$ being parallelizable. We equipp $X$ with the depth three Carnot structure where $Y_1$ and $Y_2$ are sections of $T^{-1}X$, and $[Y_1,Y_2]$ is a section of $T^{-2}X$. We conclude the following index result from work of Helffer \cite{helffercomp}. We use the notation 
$$S_0:=\{\lambda\in \C: \mathrm{Re}(\lambda)\neq 0\mbox{  or  } |\mathrm{Im}(\lambda)|<1/2\}.$$

\begin{theorem}
\label{indexforengel}
Let $X$ be a compact, four-dimensional Carnot manifold with the depth three Carnot structure defined from a pair $(Y_1,Y_2)$ generating an Engel structure. Assume that $E\to X$ is a hermitean vector bundle with connection $\nabla$ and $\gamma\in C^\infty(X;\End(E))$. Consider a differential operator $D_\gamma\in \mathcal{DO}_H^{2}(X;E)$ such that 
$$D_\gamma=\nabla_{Y_1}^*\nabla_{Y_1}+\nabla_{Y_2}^*\nabla_{Y_2}+\gamma \nabla_{[Y_1,Y_2]}+ \mathcal{DO}_H^{1}(X;E).$$ 

Then $D_\gamma$ is $H$-elliptic if and only if $\gamma-\lambda:E\to E$ is a vector bundle isomorphism for any $\lambda\in S_0$. In this case, it holds that 
$$\ind(D_\gamma)=0.$$
\end{theorem}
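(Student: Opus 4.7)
\noindent\textit{Paragraph 1 (Represented symbols).} The plan is first to apply the Rockland characterization (Lemma~\ref{charHELLrockslslalem}). The osculating Lie group at every $x\in X$ is the Engel group $\mathsf{G}_E$ of Example~\ref{engelalgebra}, whose unitary dual consists of the two-dimensional plane of $1$-dimensional characters $\chi_\xi$ coming from $\mathfrak{g}_E/[\mathfrak{g}_E,\mathfrak{g}_E]$, together with the generic $2$-dimensional irreducibles $\pi_{p,q}$ parametrised by the top coarse stratum $\tilde\Gamma\cong\mathbb R^\times_p\times\mathbb R_q$. Realising $\pi_{p,q}$ on $L^2(\mathbb R_t)$ via the Vergne polarisation $\mathfrak{h}_V=\langle Y_1,Y_2,Y_3\rangle$ of Example~\ref{engelalgebra} and applying Baker--Campbell--Hausdorff fibrewise, I would establish
\[
\sigma^{2}_{(x,\pi_{p,q})}(D_\gamma)=L_{\gamma(x),p,q}\otimes\mathrm{id}_{E_x},\qquad L_{\alpha,p,q}:=-\partial_t^{\,2}+\Bigl(\tfrac{q}{2p}+\tfrac{p t^{2}}{2}\Bigr)^{\!2}+i\alpha\, p\, t,
\]
whereas on every non-trivial character $\chi_\xi$ the symbol reduces to the positive scalar $\xi(Y_1)^{2}+\xi(Y_2)^{2}$ times $\mathrm{id}_{E_x}$, which is automatically bijective. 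Thus $H$-ellipticity reduces to bijectivity of the complex anharmonic Schr\"odinger operator $L_{\gamma(x),p,q}$ on $L^{2}(\mathbb R)\otimes E_x$ for every $x\in X$ and every $(p,q)\in\mathbb R^\times\times\mathbb R$.

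\noindent\textit{Paragraph 2 (Helffer's spectral input).} The second step is to import the spectral analysis of $L_{\alpha,p,q}$ from Helffer's manuscript~\cite{helffercomp}. Its key content is that, as $(p,q)$ varies over $\mathbb R^\times\times\mathbb R$ and $\alpha$ over $\mathbb C$, the set $\{\alpha\in\mathbb C : 0\in\mathrm{spec}(L_{\alpha,p,q})\text{ for some }(p,q)\}$ has closure exactly $\mathbb C\setminus S_0=i(-\infty,-\tfrac12]\cup i[\tfrac12,\infty)$. Combining this with the holomorphic functional calculus for the endomorphism $\gamma(x)\in\End(E_x)$, bijectivity of every $L_{\gamma(x),p,q}$ is equivalent to $\mathrm{spec}(\gamma(x))\cap S_0=\emptyset$, i.e.\ to $\gamma(x)-\lambda$ being a bundle isomorphism for every $\lambda\in S_0$. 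This proves the first assertion of the theorem.

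\noindent\textit{Paragraph 3 (Index vanishing and the main obstacle).} For the index statement I would deform $\gamma$ through $H$-elliptic operators. Because $\mathbb C\setminus S_0$ is the disjoint union of two closed rays $\mathcal R_\pm=\pm i[\tfrac12,\infty)$ at positive distance~$1$, a bounded-contour Riesz projection yields a continuous global splitting $E=E_+\oplus E_-$ with $\mathrm{spec}(\gamma|_{E_\pm}(x))\subseteq\mathcal R_\pm$. The affine path $\gamma_t:=(1-t)\gamma+t\bigl(i\,\mathrm{id}_{E_+}\oplus(-i)\,\mathrm{id}_{E_-}\bigr)$ shifts the eigenvalues $iy$ on $E_+$ to $i((1-t)y+t)$, keeping $(1-t)y+t\geq\tfrac12$ for $y\geq\tfrac12$ and symmetrically on $E_-$, so every $\gamma_t$ satisfies the spectral criterion of step~2 and $D_{\gamma_t}$ is $H$-elliptic throughout $t\in[0,1]$. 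The endpoint $\gamma_0:=i\,\mathrm{id}_{E_+}\oplus(-i)\,\mathrm{id}_{E_-}$ is skew-adjoint, so $\sigma_H^{2}(D_{\gamma_0})=\sigma_H^{2}(D_{\gamma_0}^*)$ and hence $[D_{\gamma_0}]=[D_{\gamma_0}^*]\in K_0(X)$ by Theorem~\ref{indexindepofs}(iii); since $\mathrm{ind}(D)=-\mathrm{ind}(D^*)$ for any Fredholm $D$, this forces $\mathrm{ind}(D_{\gamma_0})=0$, and homotopy invariance yields $\mathrm{ind}(D_\gamma)=0$. The main obstacle is the Helffer step of Paragraph~2, namely the precise determination of the $\alpha$'s for which $0$ enters $\mathrm{spec}(L_{\alpha,p,q})$; once this spectral input is granted, both halves of the theorem follow from formal Rockland and $K$-theoretic deformation arguments.
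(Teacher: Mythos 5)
Paragraph~2 contains an internal inconsistency that you must resolve. You assert that the closure of $\{\alpha\in\C : 0\in\mathrm{spec}(L_{\alpha,p,q})\text{ for some }(p,q)\}$ is $\C\setminus S_0$, and then conclude that bijectivity of every $L_{\gamma(x),p,q}$ is equivalent to $\mathrm{spec}(\gamma(x))\cap S_0=\emptyset$. These two claims are incompatible: avoiding a bad set whose closure is $\C\setminus S_0$ is the condition $\mathrm{spec}(\gamma(x))\cap(\C\setminus S_0)=\emptyset$, that is, $\gamma-\lambda$ invertible for $\lambda\notin S_0$, not for $\lambda\in S_0$. One of the two claims must be corrected, and the choice matters for Paragraph~3: your Riesz splitting of $E$ along the rays $\pm i[\tfrac12,\infty)$ presupposes that $\mathrm{spec}(\gamma)$ sits in $\C\setminus S_0$, whereas the alternative reading puts $\mathrm{spec}(\gamma)$ inside the connected set $S_0$ (which contains $0$), so no Riesz split is available --- and none is needed.

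Concerning the index vanishing, the paper takes a far shorter route: it deforms along the straight line $\gamma\mapsto t\gamma$ for $t\in[0,1]$, noting that the admissible region for the spectrum of $\gamma$ is star-shaped about the origin so that each $D_{t\gamma}$ remains $H$-elliptic, and lands at $t=0$ on the formally self-adjoint sub-Laplacian $\nabla_{Y_1}^*\nabla_{Y_1}+\nabla_{Y_2}^*\nabla_{Y_2}$, forcing $\ind(D_\gamma)=0$. Your Riesz-split/skew-adjoint construction reaches the same endpoint property (principal symbol equal to that of its adjoint), so it is a valid if more elaborate alternative, but note that (i) it depends on resolving the sign ambiguity above, since the linear scaling to $\gamma=0$ is legitimate precisely when $0$ satisfies the spectral condition, and (ii) there is a small slip: the constant endpoint $i\,\mathrm{id}_{E_+}\oplus(-i)\,\mathrm{id}_{E_-}$ of the affine path $\gamma_t=(1-t)\gamma+t(\,\cdot\,)$ is reached at $t=1$, not $t=0$.
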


\begin{proof}
The $H$-ellipticity of $D_\gamma$ was characterized by Helffer \cite{helffercomp}. From the characterization of $H$-ellipticity and the fact that $S_0$ is invariant under rescaling by $t\in [0,1]$, we can construct a path $(D_{t\gamma})_{t\in [0,1]}$ with $D_{1\gamma}=D_\gamma$ and $D_{0\gamma}=\nabla_{Y_1}^*\nabla_{Y_1}+\nabla_{Y_2}^*\nabla_{Y_2}$. From self-adjointness of $D_{0\gamma}$, it is clear that 
$$\ind(D_\gamma)=\ind(D_{1,\gamma})=\ind(D_{0,\gamma})=0.$$
\end{proof}

\begin{remark}
\label{akfnaodns}
The results of \cite{helffercomp} holds in larger generality, and applies to operators of the form $D_\gamma=\sum_j \nabla_{Y_j}^2+\sum_{j,k} \gamma_{j,k} \nabla_{[Y_j,Y_k]}+ \mathcal{DO}_H^{1}(X;E)$ for suitable bracket generating vector fields $(Y_j)_j$. In particular, it is of interest to ask if $\ind(D_\gamma)=0$ for large classes of Carnot manifolds whose osculating Lie algebra has step length $>2$ in all points.
\end{remark}

\section[An outlook on graded Rockland sequences]{An outlook on graded Rockland sequences and their index theory}
\label{sec:grafadpknapdnarock}

By virtue of the BGG-complexes, there are several graded Rockland sequences of interest in geometry. See Subsubsection \ref{ex:BBBBBBGGGGGG1} and Example \ref{ex:BBBBBBGGGGGG2} above or \cite{morecap,GenericDave,Dave_Haller1}. While the (curved) BGG-complexes, at least for a flat connection, is constructed from a quasi-isomorphism of complexes its index is often computable. However, in light of usage of BGG-like complexes in the Baum-Connes conjecture \cite{julgsun1,julgcr,julgsp,yunckensl3}, it is of interest to understand the $K$-homological aspects of the associated classes. Here we present some relevant features deducible from abstract principles. Explicit results, both in the abstract and in examples, will be left to future work.

\subsection{The symbol class of a Rockland sequence}

To set up the symbol class, we shall use a model of $K$-theory akin to Atiyah's notion of an elliptic complex. The model is not new, but we have not been able to locate a reference. The reader can compare the model to Higson-Roe's notion of a Hilbert-Poincaré complex (see \cite{HRSur1}).

\begin{definition}
Let $B$ be a $C^*$-algebra and assume that
\begin{align*}
0\to\mathpzc{E}_1\xrightarrow{d_1}\mathpzc{E}_2\xrightarrow{d_2}\cdots \to \mathpzc{E}_N\xrightarrow{d_N}\mathpzc{E}_{N+1}\to 0,\\
0\leftarrow\mathpzc{E}_1\xleftarrow{b_1}\mathpzc{E}_2\xleftarrow{b_2}\cdots \leftarrow \mathpzc{E}_N\xleftarrow{b_N}\mathpzc{E}_{N+1}\leftarrow 0,
\end{align*}
are two sequences of $B$-Hilbert $C^*$-modules with adjointable maps between. We say that this data $(\mathpzc{E}_\bullet,d_\bullet,b_\bullet)$ is a $B$-Fredholm sequence of length $N+1$ if it holds that 
$$b_jd_j+d_{j-1}b_{j-1}-1\in \mathbb{K}_B(\mathpzc{E}_j),$$
for all $j$ (for $b_j=0$ and $d_j=0$ if $j\notin\{1,\ldots, N+1\}$). If $(\mathpzc{E}_\bullet,d_\bullet,b_\bullet)$ is a $B$-Fredholm sequence with 
$$b_jd_j+d_{j-1}b_{j-1}=1,$$
for all $j$, we say that $(\mathpzc{E}_\bullet,d_\bullet,b_\bullet)$ is acyclic.

Two $B$-Fredholm sequences of length $N+1$ are said to be isomorphic if there are unitary isomorphisms of the $B$-modules intertwining the operators $d_\bullet$ and $b_\bullet$. 
\end{definition}

The reader should note that prescribing a length $2$ $B$-Fredholm sequence is the same prescribing a $B$-Fredholm map $\mathpzc{E}_1\to \mathpzc{E}_2$ and an inverse modulo compact operators. There is a notion of direct sum of $B$-Fredholm sequences
$$(\mathpzc{E}_\bullet,d_\bullet,b_\bullet)+(\mathpzc{E}_\bullet',d_\bullet',b_\bullet')=(\mathpzc{E}_\bullet\oplus \mathpzc{E}'_\bullet,d_\bullet\oplus d_\bullet',b_\bullet\oplus b_\bullet').$$
We say that two $B$-Fredholm sequences $(\mathpzc{E}_\bullet,d_\bullet,b_\bullet)$ and $(\mathpzc{E}_\bullet',d_\bullet',b_\bullet')$ are homotopic if there is a $C([0,1],B)$-Fredholm sequence $(\tilde{\mathpzc{E}}_\bullet,\tilde{d}_\bullet,\tilde{b}_\bullet)$ such that 
$$(\tilde{\mathpzc{E}}_\bullet,\tilde{d}_\bullet,\tilde{b}_\bullet)\otimes_{\mathrm{ev}_0} B\cong (\mathpzc{E}_\bullet,d_\bullet,b_\bullet)\quad\mbox{and}\quad (\tilde{\mathpzc{E}}_\bullet,\tilde{d}_\bullet,\tilde{b}_\bullet)\otimes_{\mathrm{ev}_1} B\cong (\mathpzc{E}'_\bullet,d'_\bullet,b'_\bullet)$$
For notational convenience, we also write 
$$0\to\mathpzc{E}_1{}_{d_1}\leftrightarrows^{b_1}\mathpzc{E}_2{}_{d_2}\leftrightarrows^{b_2}\cdots \leftrightarrows \mathpzc{E}_N{}_{d_N}\leftrightarrows^{b_N}\mathpzc{E}_{N+1}\to 0,$$ 
for a $B$-Fredholm sequence $(\mathpzc{E}_\bullet,d_\bullet,b_\bullet)$.

\begin{definition}
For a $C^*$-algebra $B$, let $\tilde{K}_0(B)$ denote the set of equivalence classes of $B$-Fredholm sequence under the equivalence relation generated by homotopy and declaring acyclic $B$-Fredholm sequences equivalent to the zero complex. 
\end{definition}

\begin{theorem}
For a $C^*$-algebra $B$, the mapping 
\begin{align*}
KK_0(\C,B)&\to \tilde{K}_0(B), \\
& \left[\left(\begin{matrix}\mathpzc{E}_+\\\oplus\\ \mathpzc{E}_-\end{matrix}, \begin{pmatrix} 0& F_+\\ F_- &0\end{pmatrix}\right)\right]\mapsto \left[\left(0\to\mathpzc{E}_-{}_{F_+}\leftrightarrows^{F_-}\mathpzc{E}_+\to 0\right)\right],
\end{align*}
is a natural isomorphism of abelian groups. 
\end{theorem}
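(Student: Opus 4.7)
The map in question sends the even Kasparov cycle $(\mathpzc{E}_+\oplus\mathpzc{E}_-,F)$ with off-diagonal $F_\pm$ to the length-$2$ $B$-Fredholm sequence $0\to\mathpzc{E}_-{}_{F_+}\!\leftrightarrows^{F_-}\mathpzc{E}_+\to 0$. The Kasparov conditions $F^2-1,\;F-F^*\in\mathbb{K}_B(\mathpzc{E}_+\oplus\mathpzc{E}_-)$ translate precisely to $F_-F_+-1\in\mathbb{K}_B(\mathpzc{E}_+)$ and $F_+F_--1\in\mathbb{K}_B(\mathpzc{E}_-)$, i.e. the defining Fredholm condition in the length-$2$ case. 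Well-definedness on classes is a threefold check: unitary equivalence of Kasparov cycles becomes isomorphism of length-$2$ sequences; a degenerate Kasparov cycle (one where $F^2=1$ and $F^*=F$ on the nose) yields an acyclic length-$2$ sequence; and an operator homotopy, viewed as a Kasparov $(\C,C([0,1],B))$-cycle, produces a $C([0,1],B)$-Fredholm sequence realising the required homotopy on $\tilde K_0(B)$. Additivity is immediate since $\oplus$ of Kasparov cycles corresponds to $\oplus$ of Fredholm sequences, and the latter descends to the group structure on $\tilde K_0(B)$ (with the class of the "flipped" sequence providing the additive inverse, just as for Kasparov cycles).

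\textbf{Inverse on length $2$.} First I would show the theorem for length-$2$ sequences directly: a length-$2$ Fredholm sequence $0\to\mathpzc{E}_-{}_d\!\leftrightarrows^b\mathpzc{E}_+\to 0$ produces the even Kasparov cycle $(\mathpzc{E}_+\oplus\mathpzc{E}_-,\,F_d)$ with $F_d=\bigl(\begin{smallmatrix}0&d^*\\d&0\end{smallmatrix}\bigr)$; the parametrix $b$ witnesses that $d$ is a $B$-Fredholm operator, and a linear homotopy $tF_d+(1-t)\bigl(\begin{smallmatrix}0&b\\d&0\end{smallmatrix}\bigr)$ through Fredholm-like operators (all invertible modulo compacts) shows that the class of $(\mathpzc{E}_+\oplus\mathpzc{E}_-,F_d)$ in $KK_0(\C,B)$ depends only on the equivalence class of the length-$2$ sequence. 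Composing with the map of the theorem gives the identity on length-$2$ classes by inspection, and on $KK_0(\C,B)$ by the standard fact that every class has a representative whose symmetrization $\frac12(F+F^*)$ differs from $F$ by a compact perturbation.

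\textbf{Length reduction: the main obstacle.} The substantive point is to show that every $B$-Fredholm sequence of length $N+1\geq 3$ is equivalent, modulo acyclic sequences and homotopy, to one of length $2$; granted this, the length-$2$ inverse extends to a two-sided inverse. The strategy is to collapse two adjacent terms at a time. Given a sequence with operators $(d_\bullet,b_\bullet)$, one inserts an acyclic length-$3$ block of the form $0\to\mathpzc{E}_{j}\xrightarrow{1}\mathpzc{E}_{j}\leftrightarrows 0\to 0$ and performs a Kasparov-style rotation homotopy in $C([0,1],B)$ that transports $d_j:\mathpzc{E}_j\to\mathpzc{E}_{j+1}$ into an isomorphism onto a direct summand of $\mathpzc{E}_{j+1}$; the complementary summand, together with a new differential built from $d_{j+1}\circ(1-d_jb_j)$ and parametrix $b_{j+1}$, then plays the role of $\mathpzc{E}_{j+1}$ in a shorter Fredholm sequence. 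The main obstacle is that $d_j$ is only adjointable, not a partial isometry, so one cannot take polar decomposition directly in the Hilbert $C^*$-module sense; instead, functional calculus applied to the positive, Fredholm element $b_jd_j\in\operatorname{End}^*_B(\mathpzc{E}_j)$ (which is $1$ modulo $\mathbb{K}_B$ up to a boundary contribution from $d_{j-1}b_{j-1}$) provides a replacement, and delivers an honest homotopy within $C([0,1],B)$-Fredholm sequences. Iterating $N-1$ times reduces the length to $2$. Naturality in $B$ is automatic because every step is functorial under $*$-homomorphisms $\varphi:B\to B'$ via the base change $\cdot\otimes_B B'$, and injectivity/surjectivity of the map in the theorem follow from the length-$2$ identification combined with this length reduction.
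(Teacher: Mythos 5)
Your proposal takes a genuinely different route from the paper, and the crucial step does not go through as written.

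The paper's proof sketch builds the inverse in one shot by folding (totalization): given a $B$-Fredholm sequence $(\mathpzc{E}_\bullet,d_\bullet,b_\bullet)$ of arbitrary length, set $\mathpzc{E}_+:=\bigoplus_k\mathpzc{E}_{2k}$, $\mathpzc{E}_-:=\bigoplus_k\mathpzc{E}_{2k+1}$, and $F_0:=\sum_k (d_{2k+1}+b_{2k}):\mathpzc{E}_-\to\mathpzc{E}_+$ with parametrix $\sum_k(d_{2k}+b_{2k+1})$. This $F_0$ is $B$-Fredholm, and functional calculus replaces it by an almost-unitary representative, producing a Kasparov cycle. Nothing is ever split; no complement of a range is ever taken.

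Your proposal instead reduces the length iteratively, collapsing adjacent terms by ``transport $d_j$ into an isomorphism onto a direct summand of $\mathpzc{E}_{j+1}$'' and then discarding it. That step is the gap: for adjointable operators between Hilbert $C^*$-modules the range is in general neither closed nor complementable, and there is no polar decomposition available, so there is no direct-summand complement onto which to project. (This is precisely the technical obstruction that standard $KK$-theoretic arguments---Kasparov stabilization, cutting down by positive elements---are designed to circumvent, and it is the reason the paper never attempts such a splitting.) Your proposed workaround via functional calculus on $b_jd_j$ does not repair it, because $b_j$ is only a parametrix and not an adjoint, so $b_jd_j$ is not positive and is not even self-adjoint; the relevant Fredholm condition only gives $b_jd_j+d_{j-1}b_{j-1}-1\in\mathbb{K}_B$, which provides no route to the decomposition you need. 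A smaller but related slip occurs already in your length-$2$ construction: you write down $F_d=\bigl(\begin{smallmatrix}0&d^*\\ d&0\end{smallmatrix}\bigr)$, but the Kasparov condition $F_d^2-1\in\mathbb{K}_B$ requires $d^*d-1\in\mathbb{K}_B$, and the Fredholm hypothesis only gives $bd-1\in\mathbb{K}_B$; the adjoint $d^*$ and the parametrix $b$ need not agree modulo compacts, so a genuine functional-calculus modification (as in the paper's sketch) is required even here.

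If you want to keep an iterative flavour, the safe reduction is the algebraic double-complex collapse performed purely on the totalized modules, which is exactly what the folding $\mathpzc{E}_\pm=\bigoplus\mathpzc{E}_{\rm even/odd}$ implements without ever isolating a sub-module. I would recommend restructuring the proof around the paper's folding construction and reserving functional calculus for the single symmetrization step at the end, which is where it actually applies.
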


\begin{proof}[Sketch of proof]
The proof of this theorem is by standard techniques, so we only sketch it. We note that the mapping is well defined since it maps homotopies to homotopies. To show that the mapping is bijective, an inverse is constructed as follows. If $(\mathpzc{E}_\bullet,d_\bullet,b_\bullet)$ is a $B$-Fredholm sequence, one sets $\mathpzc{E}_+:=\bigoplus_k \mathpzc{E}_{2k}$, $\mathpzc{E}_-:=\bigoplus_k \mathpzc{E}_{2k+1}$ and $F_0:=\sum d_{2k+1}+b_{2k}:\mathpzc{E}_-\to \mathpzc{E}_+$. The operator $F_0$ is $B$-Fredholm, with inverse modulo compacts of the form $\sum d_{2k}+b_{2k+1}$. Functional calculus techniques applied to $F_0$ produces an almost unitary $F_+:\mathpzc{E}_-\to \mathpzc{E}_+$ homotopic to $F_0$ in the space of $B$-Fredholm operators. Modulo homotopy and acyclic $B$-Fredholm sequences, this procedure produces an inverse to the mapping in the theorem.
\end{proof}

\begin{corollary}
\label{indeineodmfrom}
The mapping 
$$\tilde{K}_0(B)\to K_0(B), \quad (\mathpzc{E}_\bullet,d_\bullet,b_\bullet)\mapsto \ind\left(\sum d_{2k+1}+b_{2k}:\bigoplus_k \mathpzc{E}_{2k+1}\to \bigoplus_k \mathpzc{E}_{2k}\right),$$
is an isomorphism of abelian groups. 
\end{corollary}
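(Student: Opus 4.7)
My plan is to deduce Corollary \ref{indeineodmfrom} from the theorem that immediately precedes it, which identifies $\tilde{K}_0(B)$ with $KK_0(\C,B)$, together with the standard identification of $KK_0(\C,B)$ with $K_0(B)$ via the index.

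First I would check that the assignment
$$\Phi : \tilde{K}_0(B) \longrightarrow K_0(B), \qquad (\mathpzc{E}_\bullet,d_\bullet,b_\bullet)\mapsto \ind\!\left(F_{d,b}\right),$$
with $F_{d,b}:=\sum_k d_{2k+1}+b_{2k}:\bigoplus_k \mathpzc{E}_{2k+1}\to \bigoplus_k \mathpzc{E}_{2k}$, is well-defined. The operator $F_{d,b}$ is adjointable and $B$-Fredholm: its almost-inverse is $G_{d,b}:=\sum_k d_{2k}+b_{2k+1}$, and the relation $b_jd_j+d_{j-1}b_{j-1}-1\in\mathbb{K}_B(\mathpzc{E}_j)$ for all $j$ implies both $G_{d,b}F_{d,b}-1$ and $F_{d,b}G_{d,b}-1$ are $B$-compact. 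Acyclic sequences give exactly $G_{d,b}F_{d,b}=F_{d,b}G_{d,b}=1$, so $F_{d,b}$ is invertible and $\ind(F_{d,b})=0$. A homotopy of Fredholm sequences is, by definition, a Fredholm sequence over $C([0,1],B)$, which assembles into a $C([0,1],B)$-Fredholm operator and hence gives a homotopy of index classes. Thus $\Phi$ is a well-defined group homomorphism (additivity being immediate from the block-diagonal form of $F_{d\oplus d',b\oplus b'}$).

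The core of the argument is then to observe that $\Phi$ composed with the isomorphism $\Psi:KK_0(\C,B)\xrightarrow{\sim}\tilde{K}_0(B)$ of the preceding theorem agrees with the standard identification $KK_0(\C,B)\cong K_0(B)$. By the explicit form of $\Psi$, a Kasparov module
$$\left(\mathpzc{E}_+\oplus\mathpzc{E}_-,\begin{pmatrix}0&F_+\\ F_-&0\end{pmatrix}\right)$$
is sent to the length-two Fredholm sequence $0\to \mathpzc{E}_-\, _{F_+}\!\!\leftrightarrows^{F_-}\!\mathpzc{E}_+\to 0$, and applying $\Phi$ to this sequence produces exactly $\ind(F_+:\mathpzc{E}_-\to\mathpzc{E}_+)\in K_0(B)$. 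This is, by definition, the class in $K_0(B)$ associated to the even Kasparov module under the canonical isomorphism $KK_0(\C,B)\cong K_0(B)$ (one may represent $F_+$ up to homotopy as a difference of projections to see this directly, as was sketched in the previous theorem's proof).

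Combining the two facts, $\Phi\circ\Psi$ equals the standard isomorphism $KK_0(\C,B)\cong K_0(B)$, which is bijective; since $\Psi$ is an isomorphism by the preceding theorem, $\Phi$ is an isomorphism as well. The only point that requires a small amount of care is checking that the ``folded'' operator $F_{d,b}$ behaves well under the equivalence relation generated by homotopy and acyclic sequences, and that it matches $F_+$ in the length-two case; the former is essentially formal, and the latter is tautological from the definitions. No single step is a serious obstacle, but if there is one to flag it is the verification that all the cross-terms in $F_{d,b}G_{d,b}$ and $G_{d,b}F_{d,b}$ vanish on the nose (not merely modulo compacts), which is forced by $d_{j+1}d_j=0$ and $b_{j-1}b_j=0$ up to compacts — here one uses the Fredholm sequence axiom carefully to control the parity of indices and confirm that the diagonal terms are exactly the $b_jd_j+d_{j-1}b_{j-1}$ appearing in the definition.
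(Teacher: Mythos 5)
Your overall route is the same as the paper's intent: the corollary should follow at once from the preceding theorem by checking that $\Phi$ composed with the isomorphism $\Psi:KK_0(\C,B)\to\tilde{K}_0(B)$ is the standard identification $KK_0(\C,B)\cong K_0(B)$, and this composition is tautological on length-two sequences. That part of your argument is correct.

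The genuine gap is in your final paragraph, where you claim that the cross-terms $d_{j+1}d_j$ and $b_{j-1}b_j$ appearing in $G_{d,b}F_{d,b}$ and $F_{d,b}G_{d,b}$ vanish modulo compacts and that this is ``forced by the Fredholm sequence axiom.'' It is not: once the sequence has length $\ge 4$ the axiom $b_jd_j+d_{j-1}b_{j-1}-1\in\mathbb{K}_B(\mathpzc{E}_j)$ does not imply $d_{j+1}d_j\in\mathbb{K}_B$. For instance, take $B=\C$, $\mathpzc{E}_1=\mathpzc{E}_4=\mathcal{H}$, $\mathpzc{E}_2=\mathpzc{E}_3=\mathcal{H}\oplus\mathcal{H}$ and
$$d_1=\begin{pmatrix}1\\0\end{pmatrix},\; b_1=\begin{pmatrix}1&0\end{pmatrix},\; d_2=\begin{pmatrix}a&0\\0&1\end{pmatrix},\; b_2=\begin{pmatrix}0&0\\0&1\end{pmatrix},\; d_3=\begin{pmatrix}1&0\end{pmatrix},\; b_3=\begin{pmatrix}1\\0\end{pmatrix},$$
for an arbitrary bounded operator $a$ on $\mathcal{H}$. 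The axioms hold with equality, so this is even an acyclic Fredholm sequence, yet $d_2d_1=\bigl(\begin{smallmatrix}a\\0\end{smallmatrix}\bigr)$ need not be compact. Consequently $G_{d,b}F_{d,b}-1$ contains $a$ as a block and $G_{d,b}$ is \emph{not} an almost-inverse of $F_{d,b}$, and your assertion that acyclic sequences yield $G_{d,b}F_{d,b}=F_{d,b}G_{d,b}=1$ on the nose fails for the same reason. (In this example $F_{d,b}$ is still invertible of index zero, so the corollary is not contradicted, but the argument you give does not establish it.) The root cause is that the paper's definition of a $B$-Fredholm sequence should be read as also requiring $d_{j+1}d_j\in\mathbb{K}_B(\mathpzc{E}_j,\mathpzc{E}_{j+2})$ and $b_{j-1}b_j\in\mathbb{K}_B(\mathpzc{E}_{j+1},\mathpzc{E}_{j-1})$ --- conditions that hold in every intended application, since for an $H$-almost-complex one has $\sigma_H(D_{j+1})\sigma_H(D_j)=0$ and hence $d_{j+1}d_j\in C^\infty_c(T_HX)$ for the chosen lifts. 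With that extra hypothesis included in the definition, your verification that $G_{d,b}$ is an almost-inverse of $F_{d,b}$ and that acyclic sequences produce invertible folded operators becomes correct, and the rest of your proof goes through.
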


We are now in a position to define the symbol class in $K$-theory of a graded Rockland sequence, recall its definition from Section \ref{ex:BBBBBBGGGGGG2}. 

\begin{definition}
Consider a graded Rockland sequence
$$0\to C^\infty(X;\pmb{E}_1)\xrightarrow{D_1}C^\infty(X;\pmb{E}_2)\xrightarrow{D_2}\cdots \xrightarrow{D_{N-1}}C^\infty(X;\pmb{E}_{N})\xrightarrow{D_{N}} C^\infty(X;\pmb{E}_{N+1})\to 0,$$
of order $\pmb{m}=(m_1,\ldots, m_N)$. Let $d_j\in \tilde{\Sigma}^{m_j}_{H,{\rm gr}}(X;\pmb{E}_j,\pmb{E}_{j+1})$ be lifts of $\sigma_{H,{\rm gr}}^{m_j}(D_j)$ and $b_j\in \tilde{\Sigma}^{-m_j}_{H,{\rm gr}}(X;\pmb{E}_{j+1},\pmb{E}_{j})$ be lifts of $\sigma_{H,{\rm gr}}^{m_j}(B_j)$, where  $B_j\in \Psi_{H,{\rm gr}}^{-m_j}(X;\pmb{E}_{j+1},\pmb{E}_j)$ is as in Theorem \ref{odnoaindaonafofbnaon}.

Letting $D_\bullet$ denote the Rockland sequence, we define the symbol class 
$$[\sigma_H(D_\bullet)]\in K_0(C^*(T_HX)),$$ 
as the image under the isomorphism in Corollary \ref{indeineodmfrom} of the class of the $C^*(T_HX)$-Fredholm sequence:
\begin{align}
\label{symbolcomomok}
0\to &\pmb{\mathpzc{E}}^{s_1}_{\rm gr}(X;\pmb{E}_1)_{t=0}{}_{d_1}\leftrightarrows^{b_1}\pmb{\mathpzc{E}}^{s_2}_{\rm gr}(X;\pmb{E}_2)_{t=0}{}_{d_2}\leftrightarrows^{b_2}\cdots \\
\nonumber
&\cdots{}_{d_{N-1}}\leftrightarrows^{b_{N-1}}\pmb{\mathpzc{E}}^{s_{N}}_{\rm gr}(X;\pmb{E}_{N})_{t=0}{}_{d_N}\leftrightarrows^{b_N}\pmb{\mathpzc{E}}^{s_{N+1}}_{\rm gr}(X;\pmb{E}_{N+1})_{t=0}\to 0,
\end{align}
where $s_1,\ldots, s_N,s_{N+1}$ are such that $s_j-s_{j+1}=m_j$ for any $j$.
\end{definition}

We note that the symbol complex \eqref{symbolcomomok} is a well defined $C^*(T_HX)$-Fredholm sequence by Lemma \ref{actionofbolddd} and the properties of $B_1,\ldots, B_N$ given in Theorem \ref{odnoaindaonafofbnaon}.

\subsection{The $K$-homology class of a Rockland sequence}

We now turn to the convoluted business of associating a $K$-homology class with a graded Rockland sequence. For a graded vector bundle $\pmb{E}$, we write $\pmb{E}_{{\rm tr}}$ for the associated trivially graded vector bundle. We fix families of order reducing operators $\mathfrak{D}_{\pmb{E},s}\in \Psi^s_{H,{\rm gr}}(X;\pmb{E}_{{\rm tr}},\pmb{E})$ - invertible operators constructed as direct sums of suitable powers of operators as in Proposition \ref{existenceofposiskdd}.

\begin{proposition}
Consider a graded Rockland sequence
\begin{align}
\label{gradedonaodnaodn}
0\to C^\infty(X;\pmb{E}_1)\xrightarrow{D_1}&C^\infty(X;\pmb{E}_2)\xrightarrow{D_2}\cdots\\
\nonumber
& \xrightarrow{D_{N-1}}C^\infty(X;\pmb{E}_{N})\xrightarrow{D_{N}} C^\infty(X;\pmb{E}_{N+1})\to 0,
\end{align}
of order $\pmb{m}=(m_1,\ldots, m_N)$. Take $s_1,\ldots, s_N,s_{N+1}$ such that $s_j-s_{j+1}=m_j$ for any $j$. 

Let $\pmb{E}_{j,{\rm tr}}$ denote $\pmb{E}_{j}$ equipped with the trivial grading. Define the operators 
$$D_{j,{\rm tr}}:=\mathfrak{D}_{\pmb{E}_{j+1},s_{j+1}} D_j\mathfrak{D}_{\pmb{E}_j,-s_j}\in \Psi^0_H(X;\pmb{E}_{j,{\rm tr}},\pmb{E}_{j+1,{\rm tr}}).$$
Then the sequence 
\begin{align}
\label{gradedonaodnaodntrigially}
0\to C^\infty(X;\pmb{E}_{1,{\rm tr}})\xrightarrow{D_{1,{\rm tr}}}&C^\infty(X;\pmb{E}_{2,{\rm tr}})\xrightarrow{D_{2,{\rm tr}}}\cdots \\
\nonumber
&\xrightarrow{D_{N-1,{\rm tr}}}C^\infty(X;\pmb{E}_{N,{\rm tr}})\xrightarrow{D_{N,{\rm tr}}} C^\infty(X;\pmb{E}_{N+1,{\rm tr}})\to 0,
\end{align}
is a Rockland sequence.
\end{proposition}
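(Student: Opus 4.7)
The plan is to verify the two defining properties of a graded Rockland sequence for the order-reduced complex: first, that it is a graded $H$-almost-complex of order $\pmb{m}_{\rm tr}=(0,\ldots,0)$, and second, that its represented symbols satisfy the Rockland condition in every non-trivial irreducible unitary representation of each osculating fibre. I would use the characterization in item (3) of Theorem \ref{odnoaindaonafofbnaon}, which reduces the Rockland condition to the existence of parametrices satisfying a certain symbolic identity, as this interacts very cleanly with the order-reducing conjugations defining $D_{j,{\rm tr}}$.

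First I would check that $D_{j,{\rm tr}}\in \Psi^0_H(X;\pmb{E}_{j,{\rm tr}},\pmb{E}_{j+1,{\rm tr}})$, which follows from the composition formula for graded Heisenberg pseudodifferential operators together with the order relation $s_j-s_{j+1}=m_j$ telescoping the total order to $0$. For the almost-complex condition, a direct composition gives
$$D_{j+1,{\rm tr}}D_{j,{\rm tr}}=\mathfrak{D}_{\pmb{E}_{j+2},s_{j+2}}D_{j+1}\bigl(\mathfrak{D}_{\pmb{E}_{j+1},-s_{j+1}}\mathfrak{D}_{\pmb{E}_{j+1},s_{j+1}}\bigr)D_j\mathfrak{D}_{\pmb{E}_j,-s_j}=\mathfrak{D}_{\pmb{E}_{j+2},s_{j+2}}D_{j+1}D_j\mathfrak{D}_{\pmb{E}_j,-s_j},$$
using that the $\mathfrak{D}_{\pmb{E},s}$ are honest two-sided inverses by construction. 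Since $\sigma^{m_j+m_{j+1}}_{H,{\rm gr}}(D_{j+1}D_j)=0$ by the original almost-complex condition, multiplicativity of the principal symbol then forces $\sigma_H^0(D_{j+1,{\rm tr}})\sigma_H^0(D_{j,{\rm tr}})=0$.

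For the Rockland condition, I would pick parametrices $B_j\in \Psi_{H,{\rm gr}}^{-m_j}(X;\pmb{E}_{j+1},\pmb{E}_j)$ satisfying $\sigma_H^{-m_j}(B_j)\sigma_H^{m_j}(D_j)+\sigma_H^{m_{j-1}}(D_{j-1})\sigma_H^{-m_{j-1}}(B_{j-1})=1$ in the graded symbol algebra, whose existence is guaranteed by Theorem \ref{odnoaindaonafofbnaon} applied to the original Rockland sequence \eqref{gradedonaodnaodn}. Setting $B_{j,{\rm tr}}:=\mathfrak{D}_{\pmb{E}_j,s_j}B_j\mathfrak{D}_{\pmb{E}_{j+1},-s_{j+1}}\in \Psi^0_H(X;\pmb{E}_{j+1,{\rm tr}},\pmb{E}_{j,{\rm tr}})$, the inner $\mathfrak{D}_{\pmb{E},-s}\mathfrak{D}_{\pmb{E},s}=1$ cancellations telescope to
$$B_{j,{\rm tr}}D_{j,{\rm tr}}+D_{j-1,{\rm tr}}B_{j-1,{\rm tr}}=\mathfrak{D}_{\pmb{E}_j,s_j}\bigl(B_jD_j+D_{j-1}B_{j-1}\bigr)\mathfrak{D}_{\pmb{E}_j,-s_j}.$$
Taking principal symbols on both sides and using that the parenthesis has principal symbol $1$, the right-hand side has principal symbol equal to $\sigma_H^0(\mathfrak{D}_{\pmb{E}_j,s_j}\mathfrak{D}_{\pmb{E}_j,-s_j})=1$. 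Invoking item (3) of Theorem \ref{odnoaindaonafofbnaon} in the reverse direction then shows that the sequence \eqref{gradedonaodnaodntrigially} satisfies the Rockland condition, completing the proof.

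The main (and rather minor) obstacle will be bookkeeping: ensuring that the invertibility of $\mathfrak{D}_{\pmb{E},s}$ is an honest identity $\mathfrak{D}_{\pmb{E},-s}\mathfrak{D}_{\pmb{E},s}=1$ inside the calculus rather than only modulo smoothing operators, and verifying compatibility of graded principal symbol calculus with these conjugations across differing gradings on the source and target bundles. Both points follow from the construction of $\mathfrak{D}_{\pmb{E},s}$ via complex powers of positive $H$-elliptic operators (Proposition \ref{existenceofposiskdd} and Theorem \ref{cpxpoweroieoso}) and from Proposition \ref{strucuturaldescofsymboslalld}; once these are in hand the remainder of the argument is purely formal manipulation within the symbol calculus.
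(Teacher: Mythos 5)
The paper states this proposition without proof, so there is nothing to compare against; your argument fills the gap and is correct. The three-step verification — order bookkeeping via the telescoping relation $s_j - s_{j+1} = m_j$, the almost-complex condition by conjugating $D_{j+1}D_j$ and using multiplicativity of the principal symbol, and the Rockland condition by transporting the parametrices $B_j$ of Theorem~\ref{odnoaindaonafofbnaon}(3) through the same conjugation — is exactly the expected route and each step holds. One small remark: the paper's stated convention $\mathfrak{D}_{\pmb{E},s}\in \Psi^s_{H,{\rm gr}}(X;\pmb{E}_{\rm tr},\pmb{E})$ cannot literally be used in the formula defining $D_{j,{\rm tr}}$, since then neither $D_j\mathfrak{D}_{\pmb{E}_j,-s_j}$ nor $\mathfrak{D}_{\pmb{E}_{j+1},s_{j+1}}D_j$ composes; you have silently (and correctly) adopted the only sensible reading, namely that $\mathfrak{D}_{\pmb{E},s}$ and $\mathfrak{D}_{\pmb{E},-s}$ are mutually inverse with one direction mapping $\pmb{E}_{\rm tr}\to\pmb{E}$ and the other $\pmb{E}\to\pmb{E}_{\rm tr}$, so that $\mathfrak{D}_{\pmb{E},-s}\mathfrak{D}_{\pmb{E},s}=1$. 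It would be worth stating this explicitly. Also note that your concern about $\mathfrak{D}_{\pmb{E},-s}\mathfrak{D}_{\pmb{E},s}=1$ being an exact identity is harmless even if it failed: all that is used is equality of principal symbols, so a parametrix identity modulo smoothing would suffice.
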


Denoting the graded Rockland sequence \eqref{gradedonaodnaodn} by $D_\bullet$, we call the Rockland sequence \eqref{gradedonaodnaodntrigially} the order reduction of $D_\bullet$ and we denote this by $D^{\rm tr}_\bullet$. We note the following proposition which is immediate from the definition of $D^{\rm tr}_\bullet$.

\begin{proposition}
\label{gradedundeunagaidn}
Consider a graded Rockland sequence $D_\bullet$ as in \eqref{gradedonaodnaodn} and its order reduction $D^{\rm tr}_\bullet$ as in \eqref{gradedonaodnaodntrigially}. Then it holds that 
$$[\sigma_H(D_\bullet)]=[\sigma_H(D_\bullet^{\rm tr})]\in K_0(C^*(T_HX)).$$
\end{proposition}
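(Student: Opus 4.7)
The strategy is to realize the order reduction at the level of Hilbert $C^*$-modules as a unitary isomorphism of the two symbol Fredholm sequences, from which equality of their classes in $\tilde{K}_0(C^*(T_HX))$, and hence in $K_0(C^*(T_HX))$ via Corollary \ref{indeineodmfrom}, is automatic.

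First, I would use that each positive invertible operator $\pmb{\mathfrak{D}}_{\pmb{E}_j,s_j}$ in the graded Heisenberg calculus induces, by construction of the Hilbert $C^*$-modules $\pmb{\mathpzc{E}}^s_{\rm gr}$, a unitary isomorphism
\[
\Lambda_j : \pmb{\mathpzc{E}}(X;\pmb{E}_{j,{\rm tr}}) \longrightarrow \pmb{\mathpzc{E}}^{s_j}_{\rm gr}(X;\pmb{E}_j)
\]
of $C^*(\mathbb{T}_HX)$-Hilbert modules (cf.\ the paragraph introducing $\Lambda_{\pmb{E},s}$ in Section \ref{secondnaaoaoac}). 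Localizing at $t=0$ via the evaluation map $C^*(\mathbb{T}_HX)\to C^*(T_HX)$ yields unitaries $\Lambda_j|_{t=0}$ of $C^*(T_HX)$-Hilbert modules.

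Second, I would choose the lifts $d_j\in\tilde{\Sigma}^{m_j}_{H,{\rm gr}}(X;\pmb{E}_j,\pmb{E}_{j+1})$ and $b_j\in\tilde{\Sigma}^{-m_j}_{H,{\rm gr}}(X;\pmb{E}_{j+1},\pmb{E}_j)$ of $\sigma^{m_j}_{H,{\rm gr}}(D_j)$ and $\sigma^{-m_j}_{H,{\rm gr}}(B_j)$ coherently with the order reductions. Concretely, let $\tilde{\mathfrak{d}}_j\in \tilde{\Sigma}^{s_j}_{H,{\rm gr}}(X;\pmb{E}_{j,{\rm tr}},\pmb{E}_j)$ be the distributional multiplier underlying $\Lambda_j|_{t=0}$ and observe that $\tilde{\mathfrak{d}}_j$ is invertible in $\tilde{\Sigma}^\bullet_{H,{\rm gr}}$. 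Setting
\[
d_{j,{\rm tr}} := \tilde{\mathfrak{d}}_{j+1}^{-1}\, d_j\, \tilde{\mathfrak{d}}_j \in \tilde{\Sigma}^{0}_{H,{\rm gr}}(X;\pmb{E}_{j,{\rm tr}},\pmb{E}_{j+1,{\rm tr}}),\qquad b_{j,{\rm tr}} := \tilde{\mathfrak{d}}_{j}^{-1}\, b_j\, \tilde{\mathfrak{d}}_{j+1},
\]
one checks by multiplicativity of the principal Heisenberg symbol (Proposition \ref{strucuturaldescofsymboslalld}) that $d_{j,{\rm tr}}$ and $b_{j,{\rm tr}}$ are lifts of $\sigma_H(D_{j,{\rm tr}})$ and of the corresponding parametrix symbol for $D^{\rm tr}_\bullet$, respectively. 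In particular these lifts satisfy the $C^*(T_HX)$-Fredholm sequence identities modulo $C^*(T_HX)$-compact operators on the modules $\pmb{\mathpzc{E}}(X;\pmb{E}_{j,{\rm tr}})_{t=0}$.

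Third, the relations $d_{j,{\rm tr}} = \Lambda_{j+1}^{-1}|_{t=0}\, d_j\, \Lambda_j|_{t=0}$ and $b_{j,{\rm tr}} = \Lambda_j^{-1}|_{t=0}\, b_j\, \Lambda_{j+1}|_{t=0}$ mean precisely that the direct sum $\bigoplus_j \Lambda_j|_{t=0}$ implements a unitary isomorphism of $C^*(T_HX)$-Fredholm sequences between \eqref{symbolcomomok} for $D^{\rm tr}_\bullet$ and \eqref{symbolcomomok} for $D_\bullet$. Unitarily isomorphic Fredholm sequences represent the same class in $\tilde{K}_0(C^*(T_HX))$, and transporting along the isomorphism of Corollary \ref{indeineodmfrom} gives $[\sigma_H(D_\bullet)]=[\sigma_H(D_\bullet^{\rm tr})]\in K_0(C^*(T_HX))$. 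The only mildly delicate step is verifying that the lifts $d_{j,{\rm tr}}, b_{j,{\rm tr}}$ constructed by conjugation of $d_j, b_j$ by the $\tilde{\mathfrak{d}}_j$ are again valid lifts in $\tilde{\Sigma}^{\bullet}_{H,{\rm gr}}$ in the sense of \eqref{tidlemsossom}; this follows from closure of $\tilde{\Sigma}^{\bullet}_{H,{\rm gr}}$ under products together with the fact that $\tilde{\mathfrak{d}}_j$ and $\tilde{\mathfrak{d}}_j^{-1}$ are homogeneous modulo compactly supported smooth kernels of all orders.
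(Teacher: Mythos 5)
Your proof is correct and follows exactly the route the paper leaves implicit: the paper's own ``proof'' is the single sentence preceding the proposition, ``We note the following proposition which is immediate from the definition of $D^{\rm tr}_\bullet$,'' and your argument is precisely the spelled-out version of what makes it immediate. You identify that $\Lambda_{\pmb{E}_j,s_j}^{-1}$ implements a unitary isomorphism of the symbol $C^*(T_HX)$-Fredholm sequences and that such an isomorphism preserves the class in $\tilde{K}_0(C^*(T_HX))$, hence in $K_0(C^*(T_HX))$ via Corollary \ref{indeineodmfrom}; this is all the content there is, and it matches the paper's intent.

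One small bookkeeping caveat, which does not affect the validity: the graded order you assign to $\tilde{\mathfrak{d}}_j$ (namely $s_j$) is not quite right, since $\Lambda_j=\Lambda_{\pmb{E}_j,s_j}^{-1}=\bigoplus_k \pmb{\mathfrak{D}}_{\pmb{E}_j[k]}^{-(s_j-k)/2m}$ has components of ungraded order $k-s_j$ mapping into $\pmb{E}_j[k]$, which does not correspond to a single graded order in the sense of $\Psi^\bullet_{H,{\rm gr}}(X;\pmb{E}_{j,{\rm tr}},\pmb{E}_j)$ because the source $\pmb{E}_{j,{\rm tr}}$ is concentrated in degree $0$. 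What one actually needs, and what you do use, is only that $\Lambda_j|_{t=0}$ is a unitary between the relevant $C^*(T_HX)$-Hilbert modules and that conjugation transports the lift $d_j$ of $\sigma_H(D_j)$ to a lift of $\sigma_H(D_{j,{\rm tr}})$; the intermediate claim about membership of $\tilde{\mathfrak{d}}_j$ in $\tilde{\Sigma}^{s_j}_{H,{\rm gr}}$ can simply be dropped. With that adjustment the argument is clean and complete.
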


\begin{definition}
Consider a graded Rockland sequence $D_\bullet$ as in \eqref{gradedonaodnaodn} on a compact Carnot manifold $X$. Take operators $B_{j,{\rm tr}}\in \Psi_{H}^{0}(X;\pmb{E}_{j+1,{\rm tr}},\pmb{E}_{\rm tr})$ as in Theorem \ref{odnoaindaonafofbnaon} defined from the order reduction $D^{\rm tr}_\bullet$. 

We define the $K$-homology class $[D_\bullet]\in K_0^{\rm an}(X)$ as the class of the even $K$-cycle 
$$\left(L^2(X;\oplus_k \pmb{E}_{k,{\rm tr}}), F\right),$$
where $L^2(X;\oplus_k \pmb{E}_{k,{\rm tr}})$ is graded by the parity of $k$ and $F$ is obtained from polar decomposition of 
$$F_0:=\begin{pmatrix} 0& F_{0,+}\\ F_{0,+}^* &0\end{pmatrix},\quad\mbox{for}\quad F_{0,+}:=\sum_k D_{2k+1,{\rm tr}}+B_{2k,{\rm tr}}.$$
\end{definition}

In the case of the BGG-sequence for the complex semi-simple Lie group $SL(3,\C)$, an associated $SL(3,\C)$-equivariant $K$-homology class was constructed by Yuncken \cite{yunckensl3}. If the class constructed \cite{yunckensl3} can be reconciled with the construction above, it would provide a general method to construct $\gamma$-elements for complex semi-simple Lie groups. 

\begin{theorem}
\label{theoreomom}
Consider a graded Rockland sequence $D_\bullet$ on a compact Carnot manifold $X$. Then it holds that 
$$[D_\bullet]=\mathsf{PD}^{\rm an}_H([\sigma_H(D_\bullet)])\in K_0^{\rm an}(X).$$
In particular, the $K$-homology class $[D_\bullet]$ only depends on the Rockland sequence and not on auxiliary choices.
\end{theorem}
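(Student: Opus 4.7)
The plan is to reduce the statement to an application of Proposition \ref{chooseanopheisenberg} by assembling the Rockland sequence and its parametrices into a single $H$-elliptic operator between trivially graded bundles. First, Proposition \ref{gradedundeunagaidn} tells us that the symbol class is invariant under the order reduction $D_\bullet \mapsto D^{\rm tr}_\bullet$, and by construction the same is true for the $K$-homology class $[D_\bullet]$ (it is defined directly from the order reduction). So without loss of generality we may assume from the outset that $\pmb{m}=(0,\ldots,0)$, that each $\pmb{E}_j$ is trivially graded, and that we are given operators $D_j\in\Psi^0_H(X;\pmb{E}_j,\pmb{E}_{j+1})$ and parametrices $B_j\in\Psi^0_H(X;\pmb{E}_{j+1},\pmb{E}_j)$ as in Theorem \ref{odnoaindaonafofbnaon}.

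Next, I would form the operator
\[
F_{0,+} := \sum_k \bigl(D_{2k+1} + B_{2k}\bigr):
C^\infty\!\Bigl(X;\bigoplus_k \pmb{E}_{2k+1}\Bigr)\longrightarrow
C^\infty\!\Bigl(X;\bigoplus_k \pmb{E}_{2k}\Bigr),
\]
and the associated self-adjoint operator $F_0=\begin{pmatrix}0&F_{0,+}\\F_{0,+}^*&0\end{pmatrix}$ on $\bigoplus_k\pmb{E}_k$. The identities from Theorem \ref{odnoaindaonafofbnaon}, namely $\sigma_H(B_j)\sigma_H(D_j)+\sigma_H(D_{j-1})\sigma_H(B_{j-1})=1$, together with the relations $D_{j+1}D_j,\,B_{j-1}B_j\in\Psi^{-1}_H$ coming from the (almost) complex property, imply that $\sigma_H(F_{0,+})\sigma_H(B_{0,+})=\sigma_H(B_{0,+})\sigma_H(F_{0,+})=1$ in $\Sigma^0_H$, where $B_{0,+}:=\sum_k B_{2k+1}+D_{2k}$. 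Hence $F_0$ is an $H$-elliptic operator of order $0$, and its $K$-homology class (in the bounded sense, obtained by polar decomposition) is by Definition \ref{definilalald} exactly the class $[D_\bullet]\in K_0^{\rm an}(X)$.

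The third step is to match the symbol class. By the isomorphism of Corollary \ref{indeineodmfrom}, the class $[\sigma_H(D_\bullet)]\in K_0(C^*(T_HX))$ is, under the conventions above, the index of the operator
\[
\sum_k \bigl(d_{2k+1}+b_{2k}\bigr):\bigoplus_k \pmb{\mathpzc{E}}_{\rm gr}^{s_{2k+1}}(X;\pmb{E}_{2k+1})_{t=0}\longrightarrow \bigoplus_k \pmb{\mathpzc{E}}_{\rm gr}^{s_{2k}}(X;\pmb{E}_{2k})_{t=0}
\]
for arbitrary lifts $d_\bullet,b_\bullet$ of the principal symbols. Taking for these lifts precisely $\mathrm{ev}_{t=0}(\mathbb{D}_j)$ and $\mathrm{ev}_{t=0}(\mathbb{B}_j)$ for some $\mathbb{D}_j,\mathbb{B}_j\in\pmb{\Psi}^0_H$ lifting $D_j,B_j$, we see that this index class is nothing but $[\sigma_H(F_0)]$ in the sense of Definition \ref{definilalald}. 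Combining these identifications, Proposition \ref{chooseanopheisenberg} applied to the $H$-elliptic operator $F_0$ yields
\[
[D_\bullet]=[F_0]=\mathsf{PD}^{\rm an}_H[\sigma_H(F_0)]=\mathsf{PD}^{\rm an}_H[\sigma_H(D_\bullet)],
\]
and the independence of auxiliary choices follows from the corresponding independence of $[\sigma_H(D_\bullet)]$ (which is built in to the definition via the model $\tilde K_0$) together with the fact that $\mathsf{PD}^{\rm an}_H$ is a well-defined map.

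The main obstacle I foresee is the verification that $F_{0,+}$ really is $H$-elliptic and not merely Fredholm at the symbolic level, since the Rockland condition gives parametrices only up to lower order Heisenberg operators and one has to track these remainders carefully when assembling the sum, especially to ensure that the order reduction step does not interfere with the construction of the $B_j$. Once this is settled, the rest is bookkeeping: the identification of the Fredholm-sequence model of $K$-theory with the assembled operator is the standard translation used to prove Corollary \ref{indeineodmfrom}, and the reduction to the trivially graded case is forced by the very definitions of $[D_\bullet]$ and $[\sigma_H(D_\bullet)]$.
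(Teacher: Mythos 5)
Your proposal is correct and follows essentially the same route as the paper's (very terse) proof: reduce via Proposition~\ref{gradedundeunagaidn} and the definition of $[D_\bullet]$ to the order-reduced sequence $D^{\rm tr}_\bullet$, assemble it into a single $H$-elliptic operator $F_0$ whose symbol class coincides with $[\sigma_H(D_\bullet)]$, and invoke Proposition~\ref{chooseanopheisenberg}. One small point: your claim that $\sigma_H(B_{0,+})\sigma_H(F_{0,+})=1$ exactly is not an automatic consequence of the relations you cite, since the almost-complex property only kills the cross-terms $\sigma_H(D_{j+1})\sigma_H(D_j)$, not $\sigma_H(B_{j-1})\sigma_H(B_j)$; either you should note that the product is $1+N$ with $N$ strictly (block-)triangular and hence unipotent, which already gives invertibility, or you should normalize the $B_j$ by the standard Laplacian construction ($\sigma_H(B_j)=\sigma_H(D_j)^*\Delta_{j+1}^{-1}$), for which $\sigma_H(B_{j-1})\sigma_H(B_j)=0$ holds. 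Either way $F_0$ is $H$-elliptic, and your worry about this step is resolved; the rest of your argument is the intended bookkeeping.
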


\begin{proof}
By Proposition \ref{gradedundeunagaidn} and the definition of $D_\bullet$, we can assume that $D_\bullet=D_\bullet^{\rm tr}$. The theorem now follows from Proposition \ref{chooseanopheisenberg}.
\end{proof}

\begin{remark}
The abstract methods of Section \ref{sec:indexingener} applies to graded Rockland sequences due to Theorem \ref{theoreomom}. We nevertheless believe it to be of interest to further clarify the analytic and geometric structure of the $K$-homology class $[D_\bullet]\in K_0^{\rm an}(X)$ and the symbol class $[\sigma_H(D_\bullet)]\in K_0(C^*(T_HX))$.

In principle, the localization and approximation methods of Section \ref{sec:ktheoomon} apply to the symbol class $[\sigma_H(D_\bullet)]\in K_0(C^*(T_HX))$ for an $\pmb{F}$-regular Carnot manifold. However, we believe it would be interesting if there was a more direct method of localization for the symbol class $[\sigma_H(D_\bullet)]$ to an $I_X$–Fredholm complex in case of an $\pmb{F}$-regular manifold, similar to Lemma \ref{tehcnilmultplleme} but for symbols of Rockland sequences. The concrete BGG-complexes arising on $X=G/B$, where $B$ is the Borel subgroup, as in the original work of Bernstein-Gelfand-Gelfand \cite{bggoriginal}, or the examples computed in \cite{GenericDave,Dave_Haller1} can prove to be interesting testing grounds.
\end{remark}

\end{document}